\numberwithin{equation}{subsection}
 \theoremstyle{plain}
\newtheorem{thm}[equation]{Theorem}
 \theoremstyle{plain}
\newtheorem{cond}[equation]{Condition}
\theoremstyle{plain}
\theoremstyle{plain}
\newtheorem{state}[equation]{Statement}
\theoremstyle{plain}
\newtheorem{claim}[equation]{Claim}
\theoremstyle{plain}
\theoremstyle{plain}
  \newtheorem{prop}[equation]{Proposition}
\theoremstyle{plain}
 \newtheorem{lemma}[equation]{Lemma}
\theoremstyle{plain}
\theoremstyle{plain}
\newtheorem{cor}[equation]{Corollary}
\theoremstyle{plain}
\theoremstyle{definition}
  \newtheorem{defn}[equation]{Definition}
\theoremstyle{definition}
 \theoremstyle{definition}
  \newtheorem{exam}[equation]{Example}
\theoremstyle{remark}
\newtheorem{rmk}[equation]{Remark}
\newcommand{\Z}{\mathbb{Z}}
\newcommand{\ZZ}{\mathbb{Z}}
\newcommand{\Q}{\mathbb{Q}}
\newcommand{\Qp}{\mathbb{Q}_p}
\newcommand{\R}{\mathbb{R}}
\newcommand{\C}{\mathbb{C}}
\newcommand{\bG}{\mathbb{G}}
\newcommand{\F}{\mathbb{F}}
\newcommand{\N}{\mathbb{N}}
\newcommand{\A}{\mathbb{A}}
\newcommand{\fM}{\mathfrak{M}}
\newcommand{\fp}{\mathfrak{p}}
\newcommand{\fS}{\mathfrak{S}}
\newcommand{\fm}{\mathfrak{m}}
\newcommand{\bA}{\mathbb{A}}
\newcommand{\bP}{\mathbb{P}}
\newcommand{\bT}{\mathbb{T}}
\newcommand{\bV}{\mathbb{V}}
\newcommand{\cB}{\mathcal{B}}
\newcommand{\cC}{\mathcal{C}}
\newcommand{\cE}{\mathcal{E}}
\newcommand{\cF}{\mathcal{F}}
\newcommand{\cG}{\mathcal{G}}
\newcommand{\cH}{\mathcal{H}}
\newcommand{\cI}{\mathcal{I}}
\newcommand{\cJ}{\mathcal{J}}
\newcommand{\cL}{\mathcal{L}}
\newcommand{\cM}{\mathcal{M}}
\newcommand{\cN}{\mathcal{N}}
\newcommand{\cO}{\mathcal{O}}
\newcommand{\cP}{\mathcal{P}}
\newcommand{\cS}{\mathcal{S}}
\newcommand{\cX}{\mathcal{X}}
\newcommand{\cY}{\mathcal{Y}}
\newcommand{\al}{\alpha}
\newcommand{\eps}{\varepsilon}
\newcommand{\phz}{\varphi}
\newcommand{\La}{\Lambda}
\newcommand{\Zp}{\mathbb{Z}_p}
\newcommand{\val}{\mathrm{val}}
\newcommand{\Id}{\mathrm{id}}
\newcommand{\Gal}{\mathrm{Gal}}
\newcommand{\Hom}{\mathrm{Hom}}
\newcommand{\Res}{\mathrm{Res}}
\newcommand{\Ind}{\mathrm{Ind}}
\newcommand{\ind}{\mathrm{c}\text{-}\mathrm{Ind}}
\newcommand{\End}{\mathrm{End}}
\newcommand{\GL}{\mathrm{GL}}
\newcommand{\U}{\mathrm{U}}
\newcommand{\Spec}{\mathrm{Spec}\ }
\newcommand{\Fp}{\F_p}
\newcommand{\un}[1]{\underline{#1}}
\renewcommand{\bf}[1]{\mathbf{#1}}
\newcommand{\Rep}{\mathrm{Rep}}
\newcommand{\Diag}{\mathrm{Diag}}
\newcommand{\tld}[1]{\widetilde{#1}}
\newcommand{\JH}{\mathrm{JH}}
\newcommand{\BC}{\mathrm{BC}}
\newcommand{\Inv}{\mathrm{Inv}}
\newcommand{\WD}{\mathrm{WD}}
\newcommand{\Tcris}{\mathrm{T}^*_{\mathrm{cris}}}
\newcommand{\rbar}{\overline{r}}
\newcommand{\rhobar}{\overline{\rho}}
\newcommand{\taubar}{\overline{\tau}}
\newcommand{\FL}{\mathrm{FL}}
\newcommand{\Spf}{\mathrm{Spf}}
\newcommand{\semis}{\mathrm{ss}}
\newcommand{\speci}{\mathrm{sp}}
\newcommand{\obv}{\mathrm{extr}}
\newcommand{\out}{\mathrm{out}}
\newcommand{\Adm}{\mathrm{Adm}}
\newcommand{\soc}{\mathrm{soc}}
\newcommand{\Proj}{\mathrm{Proj}}
\newcommand{\defeq}{\stackrel{\textrm{\tiny{def}}}{=}}
\newcommand{\s}{^\times}
\newcommand{\ovl}[1]{\overline{#1}}
\DeclareMathOperator{\Dim}{dim}
\DeclareMathOperator{\Mod}{Mod}
\DeclareMathOperator{\Lie}{Lie}
\DeclareMathOperator{\Fil}{Fil}
\DeclareMathOperator{\Mat}{Mat}
\DeclareMathOperator{\gr}{gr}
\DeclareMathOperator{\coker}{coker}
\DeclareMathOperator{\Gr}{Gr}
\DeclareMathOperator{\Fl}{Fl}
\DeclareMathOperator{\Iw}{\cI}
\newcommand{\ra}{\rightarrow}
\newcommand{\iarrow}{\hookrightarrow}
\newcommand{\into}{\hookrightarrow}
\newcommand{\onto}{\twoheadrightarrow}
\title[Moduli of F.L. representations and a mod-\MakeLowercase{$p$} local-global compatibility result]{Moduli of Fontaine--Laffaille representations and a mod-\MakeLowercase{$p$} local-global compatibility result}
\author{Daniel Le}
\address{
Department of Mathematics,
Purdue University,
150 N. University Street,
West Lafayette, Indiana 47907, USA
}
\email{ledt@purdue.edu}
\author{Bao V.~Le Hung}
\address{Department of Mathematics,
Northwestern University,
2033 Sheridan Road,
Evanston, Illinois 60208, USA}
\email{lhvietbao@googlemail.com}
\author{Stefano Morra}
\address{Universit\'e Paris 8, Laboratoire d'Analyse, G\'eom\'etrie et Applications,  LAGA, Universit\'e Sorbonne Paris Nord, CNRS, UMR 7539,  F-93430, Villetaneuse, France}
\email{morra@math.univ-paris13.fr}
\author{Chol Park}
\address{Department of Mathematical Sciences, Ulsan National Institute of Science and Technology,
UNIST-gil 50, Ulsan 44919, South Korea}
\email{cholpark@unist.ac.kr}
\author{Zicheng Qian}
\address{Morningside Center of Mathematics, No.55, Zhongguancun East Road, Beijing, 100190, China }
\email{qianzicheng@amss.ac.cn}
\begin{document}

\begin{abstract}
Let $F/F^+$ be a CM field and let $\tld{v}$ be a finite unramified place of $F$ above the prime $p$.
Let $\rbar: \Gal(\ovl{\Q}/F)\rightarrow \GL_n(\ovl{\F}_p)$ be a continuous representation which we assume to be modular for a unitary group over $F^+$ which is compact at all real places.
We prove, under Taylor--Wiles hypotheses, that the smooth $\GL_n(F_{\tld{v}})$-action on the corresponding Hecke isotypical part of the mod-$p$ cohomology with infinite level above $\tld{v}|_{F^+}$ determines $\rbar|_{\Gal(\ovl{\Q}_p/F_{\tld{v}})}$, when this latter restriction is Fontaine--Laffaille and has a suitably generic semisimplification.
\end{abstract}
\maketitle
\tableofcontents

\clearpage{}%
\section{Introduction}

\subsection{Motivation and the main result}
Let $p$ be a prime number and let $K$ denote a finite extension of $\Qp$.
One formulation of a hypothetical mod-$p$ local Langlands correspondence is the existence of an injection from the set of $n$-dimensional representations of $\Gal(\ovl{\Q}_p/K)$ over $\ovl{\F}_p$ up to conjugation to the set of isomorphism classes of admissible representations of $\GL_n(K)$ over $\ovl{\F}_p$ which is compatible with global correspondences occurring in the mod-$p$ cohomology of locally symmetric spaces.
At present, the only known case of such a correspondence is when $K=\Qp$ and $n=2$ (see \cite{breuilI, breuilII} for the semisimple case).
In this case, a $p$-adic version which is compatible with deformations (see~\cite{colmez, paskunas-IHES, kisin-asterisque}) led to a proof of many cases of the Fontaine--Mazur conjecture \cite{kisin-fontaine-mazur,emerton-local-global}.
The current literature suggests that the situation is enormously more involved beyond the case of $\GL_2(\Qp)$.

We now introduce a global context for our discussions on mod-$p$ local-global compatibility alluded to earlier.
Let $F/F^+$ be a CM extension in which all $p$-adic places of $F^+$ split.
Let $G_{/F^+}$ be an outer form of $\GL_n$ which splits over $F$ and is definite at all infinite places.

We fix a place $v\mid p$ of $F^+$ with lift $\tld{v}$ in $F$ and let $K \defeq F_{\tld{v}}\cong F^{+}_{v}$. We denote by $\cO_K$ the ring of integers of $K$ and by $k$ its residue field. We fix an isomorphism $G_{/F} \cong \GL_{n/F}$ which identifies $G_{/F^{+}_{v}}$ with $\GL_{n/F_{\tld{v}}}$.
Let $U^v \leq G(\A_{F^+}^{\infty,v})$ be a compact open subgroup. We define the space of $\ovl{\F}_p$-valued algebraic automorphic forms on $G(F^+) \backslash G(\A_{F^+}^\infty)$ of level $U^v$ to be
$$S(U^v,\ovl{\F}_p)\defeq \left\{\textrm{continuous }f:\,G(F^{+})\backslash G(\bA^{\infty}_{F^{+}})/U^v \rightarrow \ovl{\F}_p\right\}.$$
This space carries commuting actions of a Hecke algebra $\bT$ and $G(F^+_v) \cong \GL_n(K)$.
To a continuous and essentially conjugate self-dual representation $\rbar: \Gal(\ovl{\Q}/F) \ra \GL_n(\ovl{\F}_p)$, one can attach a maximal ideal $\fm_{\rbar}=\fm_{\rbar,U^v} \subseteq \bT$ and set $\pi(\rbar)\defeq S(U^v,\ovl{\F}_p)[\fm_{\rbar}]$.
Mod-$p$ local-global compatibility asserts that 
\[
\underset{\substack{\ra\\U^v}}{\lim}\ S(U^v,\ovl{\F}_p)[\fm_{\rbar,U^v}]\cong \otimes'_{w\nmid \infty} \pi_w
\]
where $\pi_w$ is the hypothetical $G(F^+_w)$-representation corresponding to $\rbar|_{\Gal(\ovl{F}_{\tld{w}}/F_{\tld{w}})}$.
In particular, the $\GL_n(K)$-representation $\pi(\rbar)$ is expected to be isomorphic to $\pi_v\otimes(\pi^v)^{U^v}\cong \pi_v^{\oplus d_{U^v}}$ where $d_{U^v}$ is some positive integer.
Two natural questions arise:
\begin{enumerate}[label=(\Alph*)]
\item
\label{it:intro:LLC1}
Does $\rhobar$ determine the isomorphism class of the $\GL_n(K)$-representation $\pi(\rbar)$ (up to multiplicity)?
\item
\label{it:intro:LLC2}
Does the smooth $\GL_n(K)$-representation $\pi(\rbar)$ determine the conjugacy class of $\rhobar$?
\end{enumerate}
The limited evidence towards Question~\ref{it:intro:LLC1} at present comes mainly in the form of results towards the weight part of Serre's conjecture, i.e.~results on the $\GL_n(\cO_K)$-socle of $\pi(\rbar)$.
The main result of this paper affirmatively answers Question~\ref{it:intro:LLC2} in the generic Fontaine--Laffaille case under mild Taylor--Wiles hypotheses.

\begin{thm}[Corollary~\ref{thm:main:LGC:prime}]
\label{thm1:intro}
With the above setup, assume moreover that
\begin{enumerate}[label=(\roman*)]
\item
$K/\Q_p$ is unramified;
\item
\label{it1:thm1:intro}
$U^v$ is sufficiently small;
\item
\label{it1:thm1:intro}
$\rbar(\Gal(\ovl{\Q}/F(\zeta_p)))$ is adequate;
\item
\label{it2:thm1:intro}
$\Hom_{\ovl{\F}_p[\GL_n(\cO_{K})]}\big(V,\pi(\rbar)|_{\GL_n(\cO_{K})}\big)\neq 0$ for a Serre weight
$V$ which is $5n$-generic Fontaine--Laffaille (and in particular $\rbar$ is automorphic).
\end{enumerate}
Then the isomorphism class of the smooth $\GL_n(K)$-representation $\pi(\rbar)$ determines the isomorphism class of $\rhobar$.
\end{thm}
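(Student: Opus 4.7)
My plan is to recover $\rhobar$ from $\pi(\rbar)$ in two stages of increasing refinement, using the patching machinery of Caraiani--Emerton--Gee--Geraghty--Paskunas--Shin as the main technical vehicle. Throughout, the $5n$-genericity of the Fontaine--Laffaille weight $V$ and the Taylor--Wiles adequacy hypothesis ensure that the relevant multiplicities are governed by clean combinatorics and that a patched module $M_\infty$ attached to $\pi(\rbar)$ is available, faithful over its supporting local Galois deformation ring $R_\infty$.

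First I would identify the semisimplification $\rhobar^{\mathrm{ss}}$ by computing the set of Jordan--H\"older factors of $\soc_{\GL_n(\cO_K)}\pi(\rbar)$. The weight part of Serre's conjecture in the generic Fontaine--Laffaille case (available from prior work of the authors) describes this set combinatorially as $W^?(\rhobar^{\mathrm{ss}})$, which is rigid enough to be inverted, determining $\rhobar^{\mathrm{ss}}$ from the smooth representation alone. Next, to pin down $\rhobar$ itself, I would compute the multiplicity spaces
$$\Hom_{\GL_n(\cO_K)}\bigl(\sigma, \pi(\rbar)\bigr)$$
as $\sigma$ ranges over tame principal series types lifting the Serre weights in $W^?(\rhobar^{\mathrm{ss}})$. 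Via $M_\infty$, the dimension at $\rhobar$ equals the length of $M_\infty(\sigma)/\fm_{\rhobar}$ over $R_\infty$; equivalently, it is an intersection multiplicity of the potentially-crystalline components attached to $\sigma$ at $\rhobar$. The moduli of Fontaine--Laffaille representations developed in the body of the paper provides an explicit local model of the Fontaine--Laffaille deformation ring in which these components are cut out by concrete equations, and I expect that as $\sigma$ varies over a sufficient family of tame types, the corresponding cycles separate points of the Fontaine--Laffaille moduli in the generic locus, so that the collected multiplicities determine $\rhobar$.

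The hard part will be Step 2: translating intrinsic invariants of the smooth $\GL_n(K)$-representation $\pi(\rbar)$, which are automorphic in nature, into geometric invariants on the Fontaine--Laffaille stack, which are purely local-Galois. The patched module $M_\infty$ is the bridge, but making this work requires delicate multiplicity-one and saturation statements for $M_\infty(\sigma)$ that rely in turn on the $5n$-genericity hypothesis to avoid degenerate cycle intersections on the local model, and on the Taylor--Wiles adequacy condition for the construction of $M_\infty$ in the first place.
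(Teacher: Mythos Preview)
Your Step~2 contains a genuine gap: multiplicities of tame types cannot separate points within a fixed stratum of the Fontaine--Laffaille moduli. Once the set of modular Serre weights of $\rhobar$ is fixed (which, as the paper shows, determines the element $\cC\in\cP_\cJ$ containing $\rhobar$), the potentially crystalline deformation rings $R_{\rhobar}^{\tau,\mu}$ for the relevant tame types are formally smooth over $\cO$, so the lengths $\dim M_\infty(\sigma)/\fm$ you propose to compute are constant as $\rhobar$ varies over $\cC$. But $\cC$ typically has positive-dimensional quotient by $T$-conjugation (already for $\GL_2(\Q_{p^f})$ the strata look like $[\bG_m^r/\bG_m]$), so multiplicity data alone cannot pin down~$\rhobar$.

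What the paper actually does in Step~2 is extract \emph{scalar eigenvalues} in $\F^\times$, not dimensions: following Breuil--Diamond, one defines normalized $U_p$-operators $\tld{\mathbf{U}}^{\tau_1}_\tau$ on $\Hom_{\mathbf{K}}(\sigma(\tau)^\circ,\pi(\rbar))$ (by carefully choosing lattices so that certain maps become isomorphisms), and shows via a comparison with crystalline Frobenius eigenvalues that these act by the values $f_{s_\cJ^{-1},I_\cJ}(x)\in\F^\times$ of explicit invariant rational functions on $\tld{\cF\cL}_\cJ$. The technical core of the paper (\S\S~4--7) is then a delicate combinatorial argument proving that enough such invariant functions exist to separate $T$-conjugacy classes in every~$\cC$. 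Note also that your restriction to principal series types would fail even at this refined level: the paper explicitly exhibits strata where principal series types yield only the semisimple invariants $D_1^k$, and one must use supercuspidal types to access the remaining coordinates.
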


\begin{rmk}
\emph{A priori}, Question \ref{it:intro:LLC2} depends on the tame level $U^v$.
By decreasing $U^v$, one can make $d_{U^v}$ arbitrarily large.
On the other hand, one can sometimes introduce coefficient systems (minimally ramified types) to make the multiplicity $1$; see for example \cite[\S 7.3]{LLLM}.
\end{rmk}

We elaborate on the genericity condition appearing in item~\ref{it2:thm1:intro} above. By a \emph{Serre weight} we mean an irreducible smooth $\ovl{\F}_p$-representation of $\GL_n(\cO_{K})$.
The isomorphism class of $V$ is determined by
(an equivalence class of) a tuple $$\lambda=(\lambda_{j})_{1\leq j\leq [k:\Fp]}\in (\Z^n)^{[k:\Fp]}$$ (see (\ref{ex:bij:SW}) for example).
Then $V$ is \emph{$5n$-generic Fontaine--Laffaille} if for each $1\leq j\leq [k:\Fp]$ the tuple $\lambda_{j}=(\lambda_{j,i})_{1\leq i\leq n}\in \Z^n$ satisfies $\lambda_{j,1}-\lambda_{j,n}\leq p-6n$ and
\begin{equation*}
\lambda_{j,i}-\lambda_{j,i+1}\geq 5n
\end{equation*}
for all $1\leq i\leq n-1$.
This explicit and purely combinatorial genericity condition is of the same nature as the genericity conditions %
appearing in \cite{BD,HLM,LMP}, in contrast to the more elaborate conditions appearing in  \cite{MLM} (depending on an implicit polynomial $P\in\Z[X_1,\dots,X_n]$) or in \cite{PQ} (related to the geometry of the moduli spaces of Galois representations).

\subsection{Previous results}
\label{subsub:previous:results}

\cite{BD} considered the case of $\GL_2(\Q_{p^f})$, the first case of Theorem~\ref{thm1:intro} beyond $\GL_2(\Q_p)$. %
Let $K$ be $\Q_{p^f}$.
The subspace $\soc_{\GL_2(\cO_K)} \pi(\rbar)$ determines the restriction of $\rhobar\defeq \rbar|_{\Gal(\ovl{\Q}_p/K)}$ to the inertial subgroup if $\rhobar$ is irreducible, and the central character of $\pi(\rbar)$ determines the unramified twist.
The reducible case is more interesting.
Fixing the semisimplification of the restriction to inertia of a reducible $\rhobar$ i.e.~fixing the inertial weights, one can take unramified twists of the characters and vary the extension class.
When the ratio of the inertial weights is \emph{generic}, this gives a universal\footnote{we are ignoring a trivial $\bG_m$-action} algebraic family of $\rhobar$'s\footnote{really a family of $(\varphi,\Gamma)$-modules} over an $[\A^f/\bG_m]$-bundle over $\bG_m^2$.
The image of $\rhobar$ in $\bG_m^2(\ovl{\F}_p)$ is determined by Hecke eigenvalues, and so it remains to identify the corresponding point in $[\A^f/\bG_m]$.
The possible sets of Jordan--H\"older factors of $\soc_{\GL_2(\cO_K)} \pi(\rbar)$, i.e.~modular Serre weights, stratify $[\A^f/\bG_m]$ into $2^f$ strata defined by membership in the $f$ coordinate hyperplanes in $\A^f$ through the origin. (\cite{CEGM} gives a Galois-theoretic description of the stratification in terms of class field theory.)
Each stratum is isomorphic to $[\bG_m^r/\bG_m]$ for some $0\leq r\leq f$, and the $\ovl{\F}_p$-point corresponding to $\rhobar$ is determined by the action of ``normalized'' or ``hidden'' Hecke operators.
We delay the discussion of these normalized Hecke operators, the main conceptual breakthrough of \cite{BD}, until \S\,\ref{subsub:intro:lift}.

Next is the case of $\GL_3(\Q_p)$.
One has a stratification in terms of the socle and cosocle filtrations of $\rhobar$.
These filtrations are determined by the set of ``ordinary Serre weights" by \cite{gee-geraghty,MP}, and the semisimplification of $\rhobar$ is again determined by Hecke eigenvalues of these Serre weights.
Fixing the semisimplification, the only cases where a stratum has more than one point are in the maximally nonsplit niveau one case and two nonsplit niveau two cases considered in \cite{HLM} and \cite{LMP}, respectively.
There is an isomorphism\footnote{again ignoring a trivial $\bG_m$-action} of each of these strata to an open subscheme of $\bP^1$ given by the Fontaine--Laffaille invariant defined using a universal family of Fontaine--Laffaille modules.
In each of these $\bP^1$'s, there are two special points distinguished by the appearance of additional modular Serre weights, later termed \emph{extremal} Serre weights in \cite{OBW}.
Finally, the remaining points in $\bG_m(\ovl{\F}_p)$ are distinguished by a ``normalized Hecke operator", adapting the method of \cite{BD}.

In both the work on $\GL_3(K)$ with $K/\Q_p$ unramified \cite{EnnsGL3} and $\GL_n(\Q_p)$ \cite{PQ}, Theorem~\ref{thm1:intro} is established in a single open stratum containing only maximally nonsplit niveau one representations.
Already in these two contexts, there are many other interesting strata which Theorem~\ref{thm1:intro} includes.
Our proof unifies and generalizes the above approaches.

\begin{rmk}
In \cite{Scholze}, Scholze introduced a completely different approach to Question~\ref{it:intro:LLC2} using the $p$-adic cohomology of Lubin--Tate spaces and affirmatively answers this question for $n=2$. Recently, Liu has generalized some of the results of \cite{Scholze}, however there are essential difficulties to resolving Question~\ref{it:intro:LLC2} for $n>2$ using Scholze's approach, cf.~\cite{Kegang}.
\end{rmk}

\subsection{Main ingredients in the proof}\label{sub:ingredient}
We first introduce a moduli stack $\mathrm{FL}$ of Fontaine--Laffaille modules of a fixed generic weight which is a smooth modification of the flag variety for $(\Res_{\F_q/\F_p} \GL_n)_{\ovl{\F}_p}$.
If $P$ is a parabolic subgroup of $\GL_n$ with Levi quotient $M$ and $\rhobar: \Gal(\ovl{\Q}_p/K) \ra P(\ovl{\F}_p)$ is a Galois representation, we let $\rhobar^{M\textrm{-}\semis}$ be the composition of $\rhobar$ and the projection to $M(\ovl{\F}_p)$.
Fixing a parabolic subgroup $P$ of $\GL_n$ with Levi quotient $M$ and a tame inertial $\F$-type $\ovl{\tau}: I_K \ra M(\ovl{\F}_p)$ which extends to an irreducible representation $\Gal(\ovl{\Q}_p/K) \ra M(\ovl{\F}_p)$ lying in $\mathrm{FL}$, there is a locally closed substack $\mathrm{FL}_{\taubar,P}$ of $\mathrm{FL}$ whose $\ovl{\F}_p$-points correspond to $\rhobar$'s (up to conjugacy) which factor through $P$ and satisfy $\rhobar^{M\textrm{-}\semis}|_{I_K} \cong \taubar$.
Then $\mathrm{FL}$ can be written as a union of substacks of the form $\mathrm{FL}_{\taubar,P}$.
Now $\mathrm{FL}_{\taubar,P}$ is an $[N/Z(M)]$-bundle over $Z(M)$, where $N$ is a closed subgroup scheme of the unipotent radical of $(\Res_{\F_q/\F_p} P)_{/\ovl{\F}_p}$ and $Z(M)$ is the center of $M$.
The point in $Z(M)(\ovl{\F}_p)$ corresponding to $\rhobar$ is again given by Hecke eigenvalues.
The space $N$ can be partitioned based on vanishing of certain root entries.
A crucial fact is that the partition of $\mathrm{FL}$ coming from intersections of translated Bruhat cells in the flag variety for $(\Res_{\F_q/\F_p} \GL_n)_{\ovl{\F}_p}$ is a refinement of the partition described above.
Moreover, this Bruhat partition can be understood in terms of \emph{extremal weights} introduced in \cite{OBW}, and in particular can be understood in terms of $\soc_{\GL_n(\cO_K)} \pi(\rbar)$. Finally, we show that the remaining parts are distinguished by ``normalized Hecke operators"---this is the main challenge in our work (see \S\,\ref{subsub:intro:lift} and \S\,\ref{subsub:intro:separate pt}).

\vspace{2mm}

We now elaborate on this description. %
For simplicity, we assume that $K=\Qp$ for the rest of \S\,\ref{sub:ingredient}.
We also fix as coefficients a sufficiently large finite extension $E$ of $\Qp$, with ring of integers $\cO$ and residual field $\F$.

\subsubsection{Moduli of Fontaine--Laffaille modules and the niveau partition}\label{subsub:intro:FL moduli}
Fontaine--Laffaille theory \cite{Fontaine-Laffaille} gives a linear algebraic description of a large class of mod-$p$ representations of $\Gal(\ovl{\Q}_p/\Qp)$. This allows for a group theoretic description of the moduli of these Galois representations.

We fix a tuple $\lambda=(\lambda_1,\dots,\lambda_n)\in\Z^n$ satisfying $\lambda_1\geq \cdots\geq\lambda_n$, $\lambda_1-\lambda_n<p-n$ and let $\eta\defeq (n-1,n-2,\dots,1,0)\in \Z^n$.
Let $\mathrm{FL}_n^{\lambda+\eta}$ be the moduli stack  of rank $n$ Fontaine--Laffaille modules of weight  $\lambda+\eta$ (denoted $\mathrm{FL}$ above).
This is represented by the quotient of the basic (quasi-)affine $U\backslash\GL_n$ by the $T$-conjugation action (where $U,\ T$ denote respectively the subgroup of upper-triangular unipotent matrices and of diagonal matrices in $\GL_n$, %
see Proposition~\ref{thm:repr:FLgp}).
A Galois representation $\rhobar:\Gal(\ovl{\Q}_p/\Qp)\ra\GL_n(\F)$ is \emph{Fontaine--Laffaille} if it arises from a Fontaine--Laffaille module over $\F$ via the Fontaine--Laffaille functor constructed in \cite{Fontaine-Laffaille} (more precisely, after suitable twist).
Hence, Fontaine--Laffaille mod-$p$ local Galois representations can be studied through the geometry of $U\backslash\GL_n$ together with the $T$-conjugation action on it.
For each $x\in U\backslash\GL_n(\F)$, we write $\rhobar_{x,\lambda+\eta}$ for the (isomorphism class of the) mod-$p$ local Galois representation attached to $x$ via the Fontaine--Laffaille functor.

Let $W$ be the Weyl group of $\GL_n$ which we identify with the group of permutations of the set $\{1,\cdots,n\}$.
Using base change of Fontaine--Laffaille modules, we show that for each point $x\in U\backslash\GL_n(\F)$, $\rhobar_{x,\lambda+\eta}$ is semisimple if and only if $x$ lies in the schematic image of $Tw$ under $\GL_n\twoheadrightarrow U\backslash\GL_n$ for some permutation $w\in W$ uniquely determined by $x$ (see Lemma~\ref{lem: class of irr}). Moreover, there exists a semisimple representation $\taubar(w^{-1},\lambda+\eta):I_{\Qp}\rightarrow\GL_n(\F)$ such that $\rhobar_{x,\lambda+\eta}|_{I_{\Q_p}}\cong \taubar(w^{-1},\lambda+\eta)$ for all $\F$-points $x$ lying in the schematic image of $Tw$. In other words, the \emph{semisimple locus} in $U\backslash\GL_n$ is exactly given by the disjoint union of schematic images of $Tw$ in $U\backslash\GL_n$, for all permutations $w\in W$. Motivated by the classification of mod-$p$ local Galois representations by their semisimplifications, we introduce a partition $\{\cN_w\}_{w\in W}$ on $U\backslash\GL_n$ (the \emph{niveau partition}) characterized by the fact that a point $x\in U\backslash\GL_n(\F)$ lies in $\cN_w(\F)$ if and only if
$\rhobar_{x,\lambda+\eta}^{\semis}|_{I_{\Qp}}\cong \taubar(w^{-1},\lambda+\eta)$.
The points $x\in\cN_w$ admit the following characterization: the closure of the orbit of $x$ under $T$-conjugation intersects the schematic image of $Tw$ in $U\backslash\GL_n$.
This characterization comes from a geometric interpretation of taking semisimplification of a mod-$p$ local Galois representation.

The part $\cN_w$ need not be irreducible, a reflection of the fact that Galois representations with fixed semisimplification can have different submodule structures.
Indeed, $\cN_w$ can be written as a union of irreducible locally closed subschemes $\cN_{w,P}$ (whose quotient by $T$-conjugation is the stack $\mathrm{FL}_{\taubar(w^{-1},\lambda+\eta),P}$ mentioned above) where $P$ is a minimal possible parabolic containing $Tw$.

The partition $\{\cN_w\}_{w\in W}$ is still too coarse for our purpose, and we will introduce two other partitions of $U\backslash\GL_n$ in \S\,\ref{subsub:intro:SW} and \S\,\ref{subsub:intro:separate pt} which refine it.

\subsubsection{Serre weights and the partition $\cP$}\label{subsub:intro:SW}

Let $\cX_{n}$ denote the Emerton--Gee stack of rank $n$ projective \'etale $(\varphi,\Gamma)$ modules (\cite[Theorem 1.2.1]{EGstack}) and $\cX_{n,\mathrm{red}}$ its reduced substack which is an algebraic stack over $\F_p$.
We recall some properties of the Emerton--Gee stack:
\begin{enumerate}
\item there is a natural bijection between $|\cX_{n}(\F)|=|\cX_{n,\mathrm{red}}(\F)|$ and the set of isomorphism classes of $\rhobar:\Gal(\ovl{\Q}_p/\Qp)\ra\GL_n(\F)$;
\item
\label{it:properties:2}
there is a natural bijection between the set of irreducible components of $\cX_{n,\mathrm{red}}$ and the set of Serre weights (namely, absolutely irreducible $\F$-representations of $\GL_n(\F_p))$;
\item $\mathrm{FL}_n^{\lambda+\eta}$ is identified with the irreducible component of $\cX_{n,\mathrm{red}}$ corresponding, via the bijection in item~(\ref{it:properties:2}) just above (normalized as in \cite[\S\,7.4]{MLM}), to the Serre weight $F(\lambda)$.
\end{enumerate}
Here $F(\lambda)$ is the absolutely irreducible $\F$-representation of $\GL_n(\F_p)$ of highest weight $\lambda$. Let $\omega:\Q_p^\times\rightarrow\F^\times$ be the character corresponding to the mod-$p$ cyclotomic character.
Under mild technical assumptions, the fact that $F(\lambda)\otimes_{\F}(\omega^{n-1}\circ\det)$ embeds into $\pi(\rbar)|_{\GL_n(\Zp)}$ (cf.~item~\ref{it2:thm1:intro} in Theorem~\ref{thm1:intro})
ensures that $\rhobar=\rbar|_{\Gal(\ovl{\Q}_p/F_{\widetilde{v}})}\in |\mathrm{FL}_n^{\lambda+\eta}(\F)|$.
Given a semisimple, suitably generic, Galois representation $\rhobar':\Gal(\ovl{\Q}_p/\Q_p)\ra\GL_n(\F)$ we have a set of Serre weights $W^?(\rhobar')$, containing a subset $W_{\mathrm{extr}}(\rhobar')$ consisting of \emph{extremal Serre weights} (see \cite[\S\,9.2 and \S\,9.3]{GHS}).
We consider the following set of Serre weights
\begin{equation}\label{equ:intro:relevant SW}
\underset{\rhobar\in |\mathrm{FL}_n^{\lambda+\eta}(\F)|}{\bigcup}W_{\mathrm{extr}}(\rhobar^{\rm{ss}})
\end{equation}
as well as the set of irreducible components of $\cX_{n,\mathrm{red}}$ corresponding to it.
Such irreducible components have the property that their intersections with $\mathrm{FL}_n^{\lambda+\eta}$ correspond to right translates of Schubert varieties (in $B\backslash \GL_n$ with $B$ the upper-triangular Borel subgroup) under the local model diagram
\begin{equation}
\label{eq:LMD}
\xymatrix{
&U\backslash \GL_n\ar[dr]\ar[dl]&
\\
\mathrm{FL}_n^{\lambda+\eta}&&B\backslash \GL_n
}
\end{equation}
where the left arrow is the quotient by $T$-conjugation and the right arrow is the quotient by left $T$-multiplication.

Motivated by this, we consider the partition on $\mathrm{FL}_n^{\lambda+\eta}$ induced, by intersection, from the irreducible components of $\cX_{n,\mathrm{red}}$ indexed by~(\ref{equ:intro:relevant SW}).
This partition lifts to the partition $\cP$ on $U\backslash\GL_n$ defined as the coarsest common refinement of the stratifications $\{U\backslash UwBu\}_{w\in W}$ for each $u \in W$.
In particular, each $\cC\in\cP$ is stable under both the left $T$-multiplication action and the $T$-conjugation action.
For each $\rhobar\in|\mathrm{FL}_n^{\lambda+\eta}(\F)|$, we can associate a unique $\cC\in\cP$ such that $\rhobar_{x,\lambda+\eta}\cong\rhobar$ for some closed point $x\in\cC(\F)$. %
For $n=2$ and $3$, this partition recovers the strata described in \S\,\ref{subsub:previous:results}.

Given $\cC\in\cP$ and $x\in\cC(\F)$, we say that $\taubar(w^{-1},\lambda+\eta)$ is a \emph{specialization} of $\rhobar_{x,\lambda+\eta}$ if the schematic image of $Tw$ in $U\backslash\GL_n$ lies in the Zariski closure of $\cC$ (see the paragraph before Lemma~\ref{lem: specialization and open cell} for a different definition). This provides us with an important characterization of $\cP$: given two points $x,x'\in U\backslash\GL_n(\F)$, $\rhobar_{x,\lambda+\eta}$ and $\rhobar_{x',\lambda+\eta}$ share the same set of specializations if and only if $x,x'\in\cC(\F)$ for some $\cC\in\cP$ (see Theorem~\ref{thm: obv wt and partition}).
The results of \cite{OBW} prove that set of specializations can be detected from the set of (generalized) \emph{extremal Serre weights} and that these weights are modular.
We obtain the following.

\begin{thm}[cf.~Lemma~\ref{lem:glob:algorithm}]\label{thm3:SW}
Let $\soc_{\GL_n(\Zp)}\left(\pi(\rbar)\right)$ be the maximal semisimple subrepresentation of $\pi(\rbar)|_{\GL_n(\Zp)}$. Then the isomorphism class of the $\GL_n(\Zp)$-representation $\soc_{\GL_n(\Zp)}\left(\pi(\rbar)\right)$ determines the element $\cC\in\cP$ associated with $\rhobar$.
\end{thm}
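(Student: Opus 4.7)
The plan is to translate the information carried by $\soc_{\GL_n(\Zp)}(\pi(\rbar))$ through three combinatorial intermediaries: the set of modular Serre weights occurring in $\pi(\rbar)$, the subset of (generalized) obvious Serre weights attached to $\rhobar$, and finally the set of specializations of $\rhobar$. The conclusion then follows from the geometric characterization of $\cP$ provided by Theorem~\ref{thm: obv wt and partition}.

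First, since distinct Serre weights are pairwise non-isomorphic as $\GL_n(\Zp)$-representations, the isomorphism class of the semisimple $\GL_n(\Zp)$-representation $\soc_{\GL_n(\Zp)}(\pi(\rbar))$ immediately determines the set
\[
W(\rbar) \defeq \{V \textrm{ a Serre weight} : \Hom_{\GL_n(\Zp)}(V,\pi(\rbar)) \neq 0\}
\]
of Serre weights appearing in the socle. By item~\ref{it2:thm1:intro} of Theorem~\ref{thm1:intro} the representation $\rhobar$ lies in $|\mathrm{FL}_n^{\lambda+\eta}(\F)|$, so all of the elements of $W(\rbar)$ of interest live in a combinatorially explicit finite set indexed, via the local model diagram~\eqref{eq:LMD}, by Schubert-theoretic data attached to the flag variety for $(\Res_{\F_q/\F_p}\GL_n)_{\ovl{\F}_p}$.

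Next, within $W(\rbar)$ we isolate the subset of (generalized) obvious Serre weights for $\rhobar$. Obvious weights admit a purely combinatorial description as translates of $\lambda$ by certain Weyl elements compatible with the semisimple inertial types $\taubar(w^{-1},\lambda+\eta)$, so that recognising whether a given Serre weight is obvious (and to which semisimple type it corresponds) is a combinatorial check. The essential analytic input is the forthcoming \cite{OBW}, which guarantees that all (generalized) obvious weights attached to $\rhobar$ are in fact modular, hence contained in $W(\rbar)$. Combining these two facts we recover the full set $W_{\mathrm{obv}}(\rhobar)$ of (generalized) obvious weights of $\rhobar$ from $W(\rbar)$ alone.

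Finally, \cite{OBW} shows that $W_{\mathrm{obv}}(\rhobar)$ determines the set of specializations of $\rhobar$, and Theorem~\ref{thm: obv wt and partition} then shows that the set of specializations determines the element $\cC \in \cP$ associated with $\rhobar$. Chaining these implications yields the claim. The main obstacle is the second step: one must simultaneously produce the combinatorial recognition of obvious weights inside $W(\rbar)$ and invoke the modularity of the full set of obvious weights. The former is a feature of the weight part of Serre's conjecture in the generic Fontaine--Laffaille range; the latter is precisely the content of \cite{OBW}. Once these inputs are in place, the remaining reduction to $\cC$ is a formal consequence of the geometric characterization of $\cP$ already established in Theorem~\ref{thm: obv wt and partition}.
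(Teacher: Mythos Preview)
Your argument has a genuine gap at the second step, where you claim to recover $W_{\mathrm{obv}}(\rhobar)$ from $W(\rbar)$. Recall that by definition
\[
W_{\mathrm{obv}}(\rhobar)\;=\;W^g(\rhobar)\;\cap\;\bigcup_{\rhobar\leadsto\rhobar^{\speci}}W_{\mathrm{obv}}(\rhobar^{\speci}),
\]
so membership in $W_{\mathrm{obv}}(\rhobar)$ already presupposes knowledge of the specializations, which is exactly what you want to determine in step~3. Your ``combinatorial recognition'' only identifies the set $SW(\lambda)=\bigcup_{w}W_{\mathrm{obv}}(\taubar(w^{-1},\lambda+\eta))$ of Serre weights that are obvious for \emph{some} semisimple type; together with the modularity input $W_{\mathrm{obv}}(\rhobar)\subseteq W(\rbar)$ from \cite{OBW}, this yields only the containment $W_{\mathrm{obv}}(\rhobar)\subseteq W(\rbar)\cap SW(\lambda)$. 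You give no argument for the reverse inclusion, i.e.\ that a modular weight $V$ which happens to lie in $W_{\mathrm{obv}}(\taubar)$ for some $\taubar$ must actually lie in $W_{\mathrm{obv}}(\rhobar^{\speci})$ for an \emph{actual specialization} $\rhobar^{\speci}$ of $\rhobar$. In Schubert-variety language (Proposition~\ref{prop: wt and Sch var} and Lemma~\ref{lem: SP and Sch cell}), $V\in W^g(\rhobar)\cap SW(\lambda)$ places $x$ in the closed Schubert variety attached to $V$, whereas $V\in W_{\mathrm{obv}}(\rhobar)$ requires $x$ to lie in one of the open cells $\tld{\cS}^\circ(w_1^{-1}w_0,w_0(s')^{-1})$ for some $(s',w_1')$ giving $V$; these do not obviously cover the closed variety.

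The paper's proof (Lemma~\ref{lem:glob:algorithm}) sidesteps this circularity. Rather than attempting to isolate $W_{\mathrm{obv}}(\rhobar)$, it runs over all $F(\lambda)$-relevant types $\tau_0=\tau(s,\lambda+\eta-s(\eta))$ and applies the criterion of Lemma~\ref{lem: criterion for shape}: the shape $\tld{w}^*(\rhobar,\tau_0)$ equals $t_\eta$ if and only if $\JH(\ovl{\sigma(\tau_0)})\cap W_{M_\infty}(\rhobar)=\{F(\lambda)\}$, a condition phrased entirely in terms of $W_{M_\infty}(\rhobar)$. By Lemma~\ref{lem:shape:Schubert} this happens exactly when $x\in\cM_{s^{-1}}^\circ$, i.e.\ when $\taubar(s,\lambda+\eta)$ is a specialization (Lemma~\ref{lem: specialization and open cell}); the set of such $s$ then determines $\cC$ via Lemma~\ref{lem: open cover}\ref{it: open cover:iii}. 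Obvious weights do enter, but only inside the \emph{proof} of Lemma~\ref{lem: criterion for shape} (through Lemma~\ref{lemma:shape:detect} and Condition~\ref{cond:modularity of obv wt}), not as an intermediary that must itself be extracted from $W(\rbar)$.
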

\noindent When $n=2$ or $3$, this result directly generalizes the analysis in \cite{BD,HLM,LMP} (see \S\,\ref{subsub:previous:results}).
Note that every modular Serre weight is extremal when $n=2$.

\subsubsection{Relevant types and invariant functions}\label{subsub:intro:lift}
In this and the next subsection we address the main technical difficulty in our work, i.e.~how the geometric points of $\cC\in\cP$ are distinguished by a set of $T$-invariant functions, which can be interpreted as ``normalized Hecke eigenvalues".
Local-global compatibility dictates that these are equal to normalized Frobenius eigenvalues of Weil--Deligne representations.

Let $\tau:I_{\Qp}\rightarrow\GL_n(E)$ be a tame inertial type, i.e.~a smooth semisimple representation of~$I_{\Qp}$ which extends to $W_{\Qp}$.
We assume throughout that $\tau$ is multiplicity free as $I_{\Qp}$-representation.
Let $\tau_1\subseteq\tau$ a sub inertial type.
For each Weil--Deligne representation $\varsigma: W_{\Qp}\rightarrow\GL_n(E)$ satisfying $\varsigma|_{I_{\Qp}}\cong \tau$, there exists a unique sub $W_{\Qp}$-representation $\varsigma_1\subseteq \varsigma$ such that $\varsigma_1|_{I_{\Qp}}\cong \tau_1$.
Then $\wedge^{\Dim \varsigma_1}\varsigma_1$ is a character of $W_{\Qp}$ on which the geometric Frobenius element corresponding to $p$ (via local class field theory) acts by a scalar, which we denote by $\phi_{\tau,\tau_1}(\varsigma)$.
We thus obtain a function $\phi_{\tau,\tau_1}$ on the moduli stack $\mathrm{WD}_\tau$ of Weil--Deligne representations with inertial type $\tau$.
We remark that there exists an isomorphism between $\mathrm{WD}_\tau$ and a split torus $\bG_m^r$ such that $\phi_{\tau,\tau_1}$ is the product of the first $r_1$ projections to $\bG_m$, where $r_1\leq r$ are the numbers of irreducible sub inertial types of $\tau_1$ and $\tau$ respectively.

Our goal is to compute the mod-$p$ reduction of a normalization of $\phi_{\tau,\tau_1}$ as a rational function on $\mathrm{FL}_n^{\lambda+\eta}$ (for various tame inertial types $\tau$).
To do this, we compare $\mathrm{FL}_n^{\lambda+\eta}$ and $\mathrm{WD}_\tau$ inside the $p$-adic formal stack $\cX^\tau$ of potentially crystalline representations with inertial type $\tau$ and parallel Hodge--Tate weights $n-1,\dots,1,0$.
The reduced special fiber $\cX^\tau_{\mathrm{red}}$ of $\cX^\tau$ is a topological union of irreducible components of $\cX_{n,\mathrm{red}}$ (see \cite[\S 8.1]{EGstack}) which we choose to include $\mathrm{FL}_n^{\lambda+\eta}$.
There is a natural morphism from the rigid generic fiber $\cX^{\tau,\rm{rig}}$ of $\cX^\tau$ towards the rigid-analytification $\mathrm{WD}_\tau^{\rm{rig}}$ of $\mathrm{WD}_\tau$. %
We may pull back the function $\phi_{\tau,\tau_1}$ from $\mathrm{WD}_\tau^{\rm{rig}}$ to $\cX^{\tau,\rm{rig}}$. In general, one can use this to produce functions on parts of the special fiber $\overline{\cX}^\tau$ of $\cX^\tau$ as follows: for example, if on the tube of a locally closed substack $\cY$ of $\overline{\cX}^\tau$, the $p$-adic valuation of $\phi_{\tau,\tau_1}$ is bounded below by $d$, then one gets a function $p^{-d}\phi_{\tau,\tau_1}$ on $\cY$.
In general it is hard to analyze the kind of functions one gets this way.
However it turns out we can solve this problem when
$\tau$ is \emph{$F(\lambda)$-relevant}, namely chosen from a set of special tame inertial types $\{\tau_w\}_{w\in W}$ for which the Serre weight $F(\lambda)$ is an ``outer'' Jordan--H\"older factor in the mod-$p$ reduction of the $\GL_n(\Fp)$-representation attached to $\tau$ via the inertial local Langlands correspondence.
(The set of outer Jordan--H\"older factor for such representations is introduced in \cite[\S 2.3.1]{MLM}.)
For each $w\in W$, we have the following diagram
\begin{equation}\label{equ:intro:lift:inv}
\xymatrix{
U\backslash UTw_0Uw_0w\ar[d] \ar@{^{(}->}[rr] &&\tld{\cX}^{\tau_w,\circ}\ar[d] &&\tld{\cX}^{\tau_w,\rm{rig},\circ}\ar[d]\ar@{_{(}->}[ll]&&\\
\mathrm{FL}_n^{\lambda+\eta} \ar@{^{(}->}[rr]&&\cX^{\tau_w}&&\cX^{\tau_w,\rm{rig}}\ar@{_{(}->}[ll]\ar@{->>}[rr] && \mathrm{WD}_{\tau_w}^{\rm{rig}}
}
\end{equation}
where $w_0\in W$ is the longest element and $\tld{\cX}^{\tau_w,\circ}$ is a $p$-adic formal scheme whose special fiber is identified with $U\backslash UTw_0Uw_0w$. The first vertical map is given by the composition $U\backslash UTw_0Uw_0w\hookrightarrow U\backslash\GL_n\twoheadrightarrow \mathrm{FL}_n^{\lambda+\eta}$, and thus $U\backslash UTw_0Uw_0w$ is a $T$-torsor over some open substack of $\mathrm{FL}_n^{\lambda+\eta}$.
The second vertical map is a $T_\cO^{\wedge_p}$-torsor followed by an open immersion.
The third vertical map is induced from the second by taking rigid analytic fiber.

Given a sub inertial type $\tau_{w,1}\subseteq\tau_w$, there exists $d\in\Z$ such that $p^{-d}\phi_{\tau_w,\tau_{w,1}}$, after pulling back to $\tld{\cX}^{\tau_w,\rm{rig},\circ}$ via~(\ref{equ:intro:lift:inv}), extends to an invertible function on $\tld{\cX}^{\tau_w,\circ}$, which specializes to an invertible function on $U\backslash UTw_0Uw_0w$.
Explicitly, we can attach to $\tau_{w,1}$ a subset $I \subset \{1,\ldots,n\}$ such that $w(I) = I$ and the above invertible function is given by
$$f_{w,I}: U\backslash UTw_0Uw_0w\cong Tw_0Uw_0w\twoheadrightarrow T\xrightarrow{\prod_{k\in I}\epsilon_k}\bG_m$$
where $\epsilon_k:T\twoheadrightarrow \bG_m$ is the projection to $k$-th diagonal entry. The condition $w(I)=I$ ensures that the rational function $f_{w,I}$ on $U\backslash\GL_n$ is invariant under $T$-conjugation, and thus descends to $\mathrm{FL}_n^{\lambda+\eta}$.

Let $w\in W$, $x\in U\backslash UTw_0Uw_0w(\F)$ and $\rho:\Gal(\ovl{\Q}_p/\Qp)\ra\GL_n(\cO)$ be a potentially crystalline lift of $\rhobar_{x,\lambda+\eta}$ with type $\tau_w$ and Hodge--Tate weights $n-1,\dots,1,0$. Let $\mathrm{WD}^\ast(\rho)$ be the dual of Weil--Deligne representation associated with $\rho$, where $\rho\mapsto \mathrm{WD}(\rho)$ is the covariant functor to Weil--Deligne representations of \cite[Appendix B]{CDT}.
We have the following
\begin{thm}[Theorem~\ref{thm:Feval-2}]\label{thm4:inv lift}
There exists an integer $d\in\Z$ depending only on $\lambda$, $\tau_w$ and $\tau_{w,1}$, such that for any potentially crystalline lift  $\rho:\Gal(\ovl{\Q}_p/\Qp)\ra\GL_n(\cO)$ of $\rhobar_{x,\lambda+\eta}$, with type $\tau_w$ and Hodge--Tate weights $n-1,\dots,1,0$, we have
\begin{itemize}
\item $\phi_{\tau_w,\tau_{w,1}}(\mathrm{WD}^\ast(\rho))\in p^d\cO^\times$; and
\item the image of $p^{-d}\phi_{\tau_w,\tau_{w,1}}(\mathrm{WD}^\ast(\rho))$ under $\cO^\times\twoheadrightarrow \F^\times$ is given by $f_{w,I}(x)$.
\end{itemize}
\end{thm}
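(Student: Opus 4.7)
The plan is to reduce the theorem to an explicit calculation on the formal scheme $\tld{\cX}^{\tau_w,\circ}$, where points parameterize Breuil--Kisin modules with descent data of type $\tau_w$, and where both the dual Weil--Deligne representation (after passing to the generic fiber) and the Fontaine--Laffaille module (after reducing modulo $p$) admit explicit descriptions from the same data. First, following the approach of \cite{MLM}, I would present points of $\tld{\cX}^{\tau_w,\circ}$ as Breuil--Kisin modules $\fM$ equipped with an eigenbasis for the inertial action of type $\tau_w$ in which the matrix of Frobenius has a ``shape'' governed by $w_0w$: its entries vanish outside the pattern dictated by the cell $Tw_0Uw_0w$, and its diagonal entries are units in $\cO[\![u]\!]$ times an explicit power of $u$ determined by the pairing of the Hodge--Tate weights $(n-1,\dots,0)$ with the descent datum. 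Passing to the generic fiber produces the filtered $(\varphi,N)$-module over $K_0$, whose underlying $(\varphi,N)$-module is read off from the reduction modulo $u$ of this matrix.

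Second, the sub-inertial type $\tau_{w,1}\subseteq \tau_w$ corresponds to the subset $I\subseteq\{1,\dots,n\}$ via the identification of eigencharacters of the inertial action, and the condition $w(I)=I$ is precisely what guarantees that the sum of eigenspaces indexed by $I$ is stable under the Frobenius on $\fM$. This subobject produces the distinguished subrepresentation $\varsigma_1\subseteq \varsigma=\mathrm{WD}^\ast(\rho)$, so that the eigenvalue of geometric Frobenius on $\wedge^{\Dim\varsigma_1}\varsigma_1$ equals the product of the diagonal Frobenius eigenvalues on $\fM[1/u]$ indexed by $I$, up to a universal twist by a power of $p$ coming from the conversion between the Breuil--Kisin Frobenius and the geometric Frobenius on the dual Weil--Deligne representation. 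The integer $d$ is then read off as a sum $\sum_{k\in I}h_k$ of the Hodge--Tate weights paired with the eigencharacters indexed by $I$, and depends only on $\lambda$, $\tau_w$ and $\tau_{w,1}$.

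With this choice of $d$, the function $p^{-d}\phi_{\tau_w,\tau_{w,1}}$, pulled back to $\tld{\cX}^{\tau_w,\rig,\circ}$, extends to an invertible regular function on $\tld{\cX}^{\tau_w,\circ}$ whose value at each point is the product, over $k\in I$, of the diagonal Frobenius entries evaluated at $u=0$. Specializing to the special fiber $U\backslash UTw_0Uw_0w$ and comparing the Breuil--Kisin eigenbasis with the Fontaine--Laffaille basis via the compatibility of the two functors on their common domain, this specialization recovers precisely $f_{w,I}$. Both bullets in the theorem then follow by evaluating this universal identity at the $\cO$-point corresponding to $\rho$.

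The hard part will be the bookkeeping: constructing the universal Breuil--Kisin module on $\tld{\cX}^{\tau_w,\circ}$ with coordinates explicit enough to extract the diagonal entries, and carefully tracking the three normalizations (the Breuil--Kisin Frobenius, the crystalline Frobenius on the weakly admissible $(\varphi,N)$-module, and the geometric Frobenius on the dual Weil--Deligne representation), each differing from the next by a twist determined by $\lambda$ and $\tau_w$. The genericity built into $F(\lambda)$-relevant types is essential here: it ensures that the Schubert-cell shape is preserved under all integral lifts, so that the normalization depends only on combinatorial data and not on the particular $\rho$.
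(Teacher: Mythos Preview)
Your proposal is correct and follows essentially the same approach as the paper: the argument there passes through the explicit $\cO$-scheme $\tld{U}(t_\eta,\leq\eta)$ (your $\tld{\cX}^{\tau_w,\circ}$) parametrizing Breuil--Kisin modules of extremal shape $t_\eta$, extracts the leading diagonal Frobenius coefficients to produce a normalized function $\varphi_{\tau,\tau_1}$, and then identifies $\phi_{\tau,\tau_1}$ with $p^{d_{\tau,\tau_1}}\varphi_{\tau,\tau_1}\circ\tld{\pi}_\tau$ on the rigid generic fiber while identifying $\varphi_{\tau,\tau_1}$ with $f_{s_\cJ^{-1},I_\cJ}$ on the special fiber. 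The integer $d$ is indeed the combinatorial quantity $d_{\tau,\tau_1}=\sum_{(k,j)\in I_\cJ^{\tau_1}}(n-k)$, consistent with your description in terms of Hodge--Tate weights.
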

Motivated by this result, we define the set of \emph{invariant functions} as
$$\Inv\defeq \{f_{w,I}\mid w\in W,\,I\subseteq\{1,\dots,n\},\,w(I)=I\}.$$
These are rational functions on $U\backslash\GL_n$ which will be studied in the following paragraph.

\subsubsection{Invariant functions distinguish $T$-conjugacy classes}\label{subsub:intro:separate pt}
Let $\cC\in\cP$ and write $\Inv(\cC)\subseteq\Inv$ for the subset of invariant functions whose zero and pole divisors are away from $\cC$.
The central technical result we prove is the following.
\begin{thm}[Corollary~\ref{cor: separate points}]\label{thm5:inv fun prime}
The set $\Inv(\cC)$ distinguishes $T$-conjugacy classes in~$\cC$. In other words, for any Noetherian $\F$-algebra $R$ the elements $x,x'\in\cC(R)$ lie in the same $T$-conjugacy class if and only if $g(x)=g(x')\in R^\times$ for all $g\in\Inv(\cC)$.
\end{thm}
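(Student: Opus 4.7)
My plan is to combine the combinatorial description of the partition $\cP$ with explicit $T$-invariant coordinates on each open Bruhat cell. For $\cC\in\cP$, let $W(\cC)\defeq \{w\in W : \cC\subseteq U\backslash UTw_0Uw_0w\}$. Since each rational function $f_{w,I}$ is regular and nowhere vanishing precisely on the open cell $U\backslash UTw_0Uw_0w$, one has
\begin{equation*}
\Inv(\cC) = \{f_{w,I}\mid w\in W(\cC),\, w(I)=I\}.
\end{equation*}

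Fix $w\in W(\cC)$. The section $Tw_0Uw_0w\xrightarrow{\sim}U\backslash UTw_0Uw_0w$ restricts to a $T$-equivariant locally closed immersion $\iota_w:\cC\hookrightarrow T\times N_w$, where $N_w\cong w_0Uw_0$ is the unipotent factor (constrained by the remaining Bruhat conditions defining $\cC$). A direct computation shows that $T$-conjugation acts on the $T$-factor by the twisted conjugation $a\cdot t = at\,(waw^{-1})^{-1}$ and on $N_w$ by the adjoint action of $w_0(waw^{-1})w_0$. The $T$-invariant characters of $T$ under this twisted action are exactly $\prod_{k\in I}\epsilon_k$ with $w(I)=I$, which coincide with the restrictions of $f_{w,I}$ via $\pi_w\defeq\mathrm{pr}_T\circ\iota_w$. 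Hence on each $T$-factor, the restrictions of $\Inv(\cC)$ separate twisted $T$-orbits.

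The crux of the proof is to promote this cell-wise separation to a global statement on $\cC$, i.e.\ to show that the product map $\Pi_\cC:\cC\to\prod_{w\in W(\cC)}T$, $x\mapsto(\pi_w(x))_w$, is injective modulo the simultaneous twisted conjugation action. Given $x,x'\in\cC(R)$ satisfying $f(x)=f(x')$ for every $f\in\Inv(\cC)$, the previous paragraph produces, possibly after passing to a suitable cover, local conjugators $a_w\in T$ with $a_w\cdot\pi_w(x')=\pi_w(x)$, each determined up to the centralizer $T^w\defeq\{a:waw^{-1}=a\}$. The task is then to synchronize the family $(a_w)_w$ into a single $a\in T$ with $a\cdot x=x'$. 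This synchronization is forced by the remaining Bruhat conditions defining $\cC$, namely $\cC\subseteq U\backslash Uw_u(\cC)Bu$ for $u\notin w_0\cdot W(\cC)$, which impose algebraic relations between the $\pi_w$'s and the unipotent coordinates $\nu_w\defeq\mathrm{pr}_{N_w}\circ\iota_w$, forcing these coordinates to transform compatibly and hence forcing the $a_w$'s to agree modulo a common stabilizer that acts trivially on $\cC$.

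The principal obstacle is making this rigidification precise and thereby constructing the global conjugator $a$. I would first reduce to the case where $\cC$ is a locally closed piece of some stratum $\cN_{w,P}$ with $P$ a minimal parabolic containing $Tw$, and then use the bundle description $\cN_{w,P}\cong[N/Z(M)]$-bundle over $Z(M)$ from~\S\ref{subsub:intro:FL moduli}. In this description the toric directions are identified with $Z(M)$ and detected by the $f_{w,I}$ with $I$ a union of $w$-orbits whose associated characters generate $X^\ast(Z(M))$, while the unipotent fiber $[N/Z(M)]$ is pinned down by further invariants $f_{w',I'}$ for $w'\in W(\cC)$ different from $w$, whose existence is guaranteed by the refinement-of-Bruhat-strata definition of $\cP$. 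Gluing the analysis across the cells produces the required $a$, and the theorem then follows by a formal torsor-descent argument.
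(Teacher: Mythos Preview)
Your proposal has a genuine gap at the decisive step. You correctly set up the problem: for each $w\in W(\cC)$ the functions $f_{w,I}$ with $w(I)=I$ only see the torus factor $\pi_w(x)$, so what remains is to control the unipotent coordinates $\nu_w(x)$. Your plan is that ``further invariants $f_{w',I'}$ for $w'\in W(\cC)$ different from $w$'' pin down the unipotent fiber, and that their existence ``is guaranteed by the refinement-of-Bruhat-strata definition of $\cP$.'' But this existence claim is exactly the content of the theorem, and the Bruhat description of $\cP$ does not deliver it for free. The definition of $\cP$ tells you which closed Bruhat conditions $\cC$ satisfies, not that some $f_{w',I'}|_\cC$ is a nonconstant function of the unipotent coordinates on the $w$-chart. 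The ``synchronization'' argument you sketch is circular: you can only force the local conjugators $a_w$ to agree once you already know that the invariants see enough of the unipotent data, and that is what you are trying to prove.

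To see that this is a real obstruction rather than a gap one can fill in by routine means, consider the example in the introduction (\S~1.3.4, the case $n=4$, $w=1$, $\Lambda=\{(i,j):i\geq 3,\,j\leq 2\}$): the quotient has one nontrivial unipotent coordinate $u$, and the paper observes that for \emph{every} $s\neq(13)(24)$ the restriction $f_{s,I}|_\cC$ lies in $\F[(D_1^k)^{\pm1}]$, hence is blind to $u$. Only the single function $f_{(13)(24),\{1,3\}}$ detects $u$, and finding it is not a consequence of the Bruhat stratification. This is why the paper's actual proof proceeds quite differently: it first writes down explicit $T$-invariant generators $F_\xi^{\Omega^\pm}$ of $\cO(\cN_{\xi,\Lambda})^{T}$ indexed by combinatorial gadgets called $\Lambda$-lifts (Proposition~\ref{prop: pass to scheme}), then for each such generator constructs a specific $f_{w',I'}$ by an elaborate inductive procedure on ``constructible'' $\Lambda$-lifts (\S\S~5--6), and finally verifies by direct computation that these $f_{w',I'}|_\cC$ recover $F_\xi^{\Omega^\pm}|_\cC$ (\S~7). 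The combinatorics occupying \S\S~5--7 are not decoration; they are the mechanism that produces the $w'$ and $I'$ whose existence you assert without argument.
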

\noindent In \S\,\ref{subsub:intro:Hecke}, we describe how to determine $g(x)$ for all $g \in \Inv(\cC)$ from $\pi(\rbar)$ if $\rhobar_{x,\lambda+\eta} \cong \rbar|_{G_K}$.

The first difficulty in proving Theorem~\ref{thm5:inv fun prime} is that the affine schemes $\cC\in\cP$ are genuinely complicated.
They are in general far from being affine spaces or split tori and \emph{a priori} may not be irreducible.
For example, the unique open $\cC\in\cP$ when $n=3$ is isomorphic to a $\bG_m^2\times (\bP^1 \setminus \{0,1,\infty\})$-bundle over $\bG_m^3$ (see \cite{HLM}).
Instead, we would like to find a partition coarser than $\cP$ which is well-suited for explicit computations of the invariant functions.

Recall the partition $\{\cN_w\}_{w\in W}$ from \S\,\ref{subsub:intro:FL moduli}. For simplicity, we restrict ourselves here to those $\cN_w$ which are irreducible.
We fix a tuple of positive integers $(n_1,\dots,n_r)$ satisfying $\sum_{m=1}^r n_m=n$, and consider the standard Levi subgroup $M\subseteq\GL_n$ given by the diagonal blocks $\GL_{n_1}\times\cdots\times\GL_{n_r}$. We consider the parabolic subgroup $w_0Bw_0M\subseteq\GL_n$ and write $N^-$ for its unipotent radical. The unipotent group $N^-$ corresponds to a set of negative roots $\Phi_{N^-}$ (with respect to the Borel $B$).
The Weyl group of $M$ is isomorphic to a product (indexed by the Levi blocks of $M$) of permutation groups and we choose an element $w$ of it such that each of its factors is transitive.
Then our construction of $\cN_w$ in \S\,\ref{sub:niveau:elements} implies that the quotient map $\GL_n\twoheadrightarrow U\backslash\GL_n$ induces an isomorphism
\begin{equation}\label{equ:intro:matrix lift}
TN^-w\xrightarrow{\sim}\cN_w.
\end{equation}
In particular, we see that $\cN_w$ is the product of a torus $T$ with the affine space $N^-$.
The isomorphism~(\ref{equ:intro:matrix lift}) gives us standard coordinates to do computation on $\cN_w$, by writing $D_w^k:\cN_w\rightarrow\bG_m$ (resp.~$u_w^\al:\cN_w\rightarrow\bG_a$) for the morphism induced from extracting the $k$-th diagonal entry of $T$ (resp.~extracting the $\al$-th entry of $N^-$) for each $1\leq k\leq n$ (resp.~for each $\al\in\Phi_{N^-}$).
(Our notation here is slightly different from that of \S\,\ref{sub:std:note} for convenience.)

Note that the quotient of $\cN_w$ by $T$-conjugation is still a genuine stack in general.
To prove Theorem~\ref{thm5:inv fun prime}, we would like to have a $T$-conjugation stable partition of $\cN_w$ (coarser than $\cP$), such that the quotient by $T$-conjugation of each element (of the partition) is an affine scheme with simple coordinates.
A natural candidate is given by $\{\cN_{w,\Lambda}\}_{\Lambda\subseteq\Phi_{N^-}}$ where
$$\cN_{w,\Lambda}(R)\defeq \{A\in \cN_w(R)\mid \hbox{$u_w^\al(A)\neq0$ for each $\al\in\Lambda$ and $u_w^\al(A)=0$ for each $\al\in\Phi_{N^-}\setminus\Lambda$}\}.$$
For each $\Lambda\subseteq\Phi_{N^-}$, $\cN_{w,\Lambda}$ is a split torus of rank $n+\#\Lambda$ with ring of global sections given by the following ring of Laurent polynomials
$$\cO(\cN_{w,\Lambda})=\F[(D_w^k)^{\pm1},\,(u_w^\al)^{\pm1}\mid 1\leq k\leq n,\,\al\in\Lambda]$$
The quotient of $\cN_{w,\Lambda}$ by $T$-conjugation, written $\cN_{w,\Lambda}\slash{{\sim}_{T\text{-\textnormal{cnj}}}}$, does exist as an affine scheme whose ring of global sections is given by the following invariant subring
\begin{equation*}
\cO(\cN_{w,\Lambda}\slash{{\sim}_{T\text{-\textnormal{cnj}}}})=\F[(D_w^k)^{\pm1},\,(u_w^\al)^{\pm1}\mid 1\leq k\leq n,\,\al\in\Lambda]^{T\text{-\textnormal{cnj}}}.
\end{equation*}
Note that $\cN_{w,\Lambda}$ is a topological union of elements in the partition $\cP$ (see Lemma~\ref{lem:union:partition}).

Now we consider an element $\cC$ in $\cP$ with $\cC\subseteq\cN_{w,\Lambda}$. In order to prove Theorem~\ref{thm5:inv fun prime}, we study how to generate the ring $\cO(\cN_{w,\Lambda}\slash{{\sim}_{T\text{-\textnormal{cnj}}}})|_\cC$ from the set
\begin{equation*}
\{(g|_\cC)^{\pm 1}\mid g\in\Inv(\cC)\}.
\end{equation*}
A key observation is that we can find a set of \emph{special units} (cf.~Definition~\ref{def: constructible lifts}) in the ring $\cO(\cN_{w,\Lambda}\slash{{\sim}_{T\text{-\textnormal{cnj}}}})$ that satisfy the following:
\begin{itemize}
\item the set of special units generates the ring $\cO(\cN_{w,\Lambda}\slash{{\sim}_{T\text{-\textnormal{cnj}}}})$;
\item the restriction to $\cC$ of each special unit can be generated from the set $\{(g|_\cC)^{\pm 1}\mid g\in\Inv(\cC)\}$.
\end{itemize}
The construction of these special units is combinatorial and the technical heart of this paper (see \S\,\ref{sub: examples} and the examples below).
These conditions together suffice to prove Theorem~\ref{thm5:inv fun prime} in these cases.
When $\cN_w$ is not irreducible, essentially the same idea works, except that we need to treat different irreducible components of $\cN_w$ and define an analogue of $\cN_{w,\Lambda}$.

We give some examples below to illustrate some of the difficulties.
\begin{exam}\label{exam:intro:open}
Let $w = 1$, $\Lambda=\Phi_{N^-}$ be the set of all negative roots and $\cC$ be an element of $\cP$ satisfying $\cC\subseteq\cN_{1,\Phi_{N^-}}$. If we let
\[
F_1^{(i,j)} \defeq \frac{u_1^{(i,j)}}{u_1^{(i,i-1)}u_1^{(i-1,j)}}
\]
for each $1\leq j<j+1<i\leq n$,
then we have
\[
\cN_{1,\Phi_{N^-}} \cong \Spec \F[(D_1^k)^{\pm1}\mid 1\leq k\leq n][(u_1^\al)^{\pm1}\mid \al\in\Phi_{N^-}]
\]
and
\[
\cN_{1,\Phi_{N^-}}\slash{{\sim}_{T\text{-\textnormal{cnj}}}} \cong \Spec \F[(D_1^k)^{\pm1}\mid 1\leq k\leq n][(F_1^{(i,j)})^{\pm1}\mid 1\leq j<j+1<i\leq n].
\]
Since $f_{1,\{k\}} = D_1^k$ for each $1\leq k\leq n$, it suffices to recover $F_1^{(i,j)}$ from $f_{s,I}$ for various choices of $(s,I)$. For each $1\leq j<j+1<i\leq n$, we have the following two possibilities.
\begin{itemize}
\item $u_1^{(i,j)}-u_1^{(i,i-1)}u_1^{(i-1,j)}=0$ on $\cC$, and thus $F_1^{(i,j)}|_{\cC}=1$;
\item $u_1^{(i,j)}-u_1^{(i,i-1)}u_1^{(i-1,j)}\neq 0$ on $\cC$, in which case we set $s^{(i,j)}\defeq (i\,i-2\cdots j+1\,j)$ and observe that $f_{s^{(i,j)},\{i-1\}}\in\Inv(\cC)$ with $f_{s^{(i,j)},\{i-1\}}|_\cC=D_1^{i-1}|_\cC(1-F_1^{(i,j)}|_\cC^{-1})$.
\end{itemize}
This is clearly sufficient to prove Theorem~\ref{thm5:inv fun prime} for each $\cC\subseteq \cN_{1,\Phi_{N^-}}$. Note that we actually have $f_{s^{(i,j)},\{i-1\}}=f_{(i\,j),\{i-1\}}$, but the permutation $s^{(i,j)}$ is better than $(i\,j)$ because we have $\cC\subseteq U\backslash UTw_0Uw_0s^{(i,j)}$ (but not necessarily $\cC\subseteq U\backslash UTw_0Uw_0(i\,j)$), as long as $u_1^{(i,j)}-u_1^{(i,i-1)}u_1^{(i-1,j)}\neq 0$ on $\cC$. In \cite{PQ}, similar results were obtained under some unnatural open conditions on $\mathrm{FL}_n^{\lambda+\eta}$ which cannot be detected from $\pi(\rbar)$ in terms of Serre weights, whereas here $u_1^{(i,j)}-u_1^{(i,i-1)}u_1^{(i-1,j)}$ is either zero or invertible on $\cC$ for each $\cC\subseteq \cN_{1,\Phi_{N^-}}$.
\end{exam}

\begin{exam}
Let $n=4$, $w = 1$ (this implies $M=T$, $r=n=4$, $N^-=w_0Uw_0$ and $\Phi_{N^-}$ is the set of all negative roots), and $\Lambda = \{(i,j)\mid i\geq 3, j \leq 2\}$.
If we let
\[
u \defeq \frac{u_1^{(3,1)}u_1^{(4,2)}}{u_1^{(3,2)}u_1^{(4,1)}},
\]
then we have
\[
\cN_{1,\Lambda} \cong \Spec \F[(D_1^k)^{\pm1}\mid 1\leq k\leq 4][(u_1^\al)^{\pm1}\mid \al\in\Lambda]
\]
and
\[
\cN_{1,\Lambda}\slash{{\sim}_{T\text{-\textnormal{cnj}}}} \cong \Spec \F[(D_1^k)^{\pm1}\mid 1\leq k\leq 4][u^{\pm1}].
\]
Depending on the vanishing of the minor $\delta \defeq u_1^{(3,1)}u_1^{(4,2)}-u_1^{(3,2)}u_1^{(4,1)}$,
we can decompose $\cN_{1,\Lambda}$ into two $T$-stable pieces each of which is an element in $\cP$.
Since $f_{1,\{k\}} = D_1^k$ for each $1\leq k\leq 4$, we see that Theorem~\ref{thm5:inv fun prime} holds for $\cC$ on which $\delta = 0$ (so $u=1$ on $\cC$) by considering the subset $\{ f_{1,\{k\}} \mid 1\leq k\leq 4 \} \subseteq \Inv(\cC)$.
Otherwise if $u\neq 1$ on $\cC$, we also consider $f_{(13)(24),\{1,3\}}|_\cC = \left(D_1^1 D_1^3 \frac{1-u}{u}\right)|_\cC$ to see that Theorem~\ref{thm5:inv fun prime} holds.

We now let $n$ be arbitrary, but still take $w$ to be $1$.
When $\Lambda$ is rather smaller than $\Phi_{N^-}$, it seems to be challenging to find elements of $\Inv(\cC)$ not in the subalgebra generated by $\{(D_1^k)^{\pm1}\mid 1\leq k\leq n\}$ (if they exist).
In the above example, for any $f_{s,I} \in \Inv(\cC)$ with $s\neq (13)(24)$, $f_{s,I}|_\cC \in \F[(D_1^k|_\cC)^{\pm1}\mid 1\leq k\leq 4]$.
Furthermore, data we have accumulated suggests that this behavior is typical.
So it appears that the combinatorics involved in Theorem~\ref{thm5:inv fun prime} are rather delicate.
Moreover, since it is not sufficient to consider only invariant functions of the form $f_{1,\{k\}}$, it is essential that we use tame types that are not principal series in contrast to \cite{BD,HLM,LMP,EnnsGL3,PQ} (see \S\,\ref{subsub:intro:Hecke}).
\end{exam}

\subsubsection{Mod-$p$ reduction of normalized Hecke eigenvalues}\label{subsub:intro:Hecke}

We studied in \S\,\ref{subsub:intro:lift} how to lift invariant functions (which live in characteristic $p$) to Frobenius eigenvalues of Weil--Deligne representations (which live in characteristic $0$).
Using classical local-global compatibility, we interpret invariant functions as the mod-$p$ reduction of normalized eigenvalues of Hecke operators acting on a space of automorphic forms in characteristic $0$.
We now explain how to extract the mod-$p$ reduction of normalized eigenvalues from the $\GL_n(\Qp)$-representation $\pi(\rbar)$ adapting the method of \cite{BD}.

Let $\mathbf{K}\defeq \GL_n(\Zp)$ be the standard compact open subgroup of $\GL_n(\Qp)$.
Let $\theta$ be an irreducible $E$-representation of $\mathbf{K}$ equipped with a $\mathbf{K}$-stable $\cO$-lattice $\theta^\circ\subseteq\theta$. We consider the compact induction $\ind_{\mathbf{K}}^{\GL_n(\Qp)}\theta^\circ$ and write $\cH_{\mathbf{K}}^{\GL_n(\Qp)}(\theta^\circ)\defeq \End_{\GL_n(\Qp)}(\ind_{\mathbf{K}}^{\GL_n(\Qp)}\theta^\circ)$ for the Hecke algebra associated to it.
The space
\begin{equation}\label{equ:std Hom space}
\Hom_{\GL_n(\Qp)}(\ind_{\mathbf{K}}^{\GL_n(\Qp)}\theta^\circ, \pi(\rbar))
\end{equation}
which admits a natural right action of $\cH_{\mathbf{K}}^{\GL_n(\Qp)}(\theta^\circ)$.

We fix our choice of $\theta$ based on the setup in \S\,\ref{subsub:intro:lift}, as we now explain. We fix an integer $1\leq i\leq n$ and let $\omega^{(i)}(p)$ be the diagonal matrix $(p \mathrm{Id}_i, \mathrm{Id}_{n-i})$. Let $L \subseteq\GL_n$ be the standard Levi subgroup with diagonal blocks $\GL_i\times\GL_{n-i}$ and consider the pair of parahoric subgroups $\mathbf{P}^+,\mathbf{P}^- \subseteq \mathbf{K}$ whose image under $\mathbf{K}\twoheadrightarrow\GL_n(\Fp)$ is given by the $\F_p$-points of the standard (resp.~of the opposite of the standard) maximal parabolic subgroup of $\GL_n$ with Levi $L$.
Hence, each representation of $L(\F_p)$ is a smooth representation of $\mathbf{P}^+$ (resp.~$\mathbf{P}^-$) by inflation.
The parahoric subgroups $\mathbf{P}^+,\mathbf{P}^-$ are related by $\omega^{(i)}(p)\mathbf{P}^+\omega^{(i)}(p)^{-1}=\mathbf{P}^{-}$.
Let $\tau_1$ and $\tau_2$ be tame inertial types over $\Qp$ of dimension $i$ and $n-i$, respectively, such that $\tau\defeq \tau_1 \oplus \tau_2$ is multiplicity free as an $E$-representation of $I_{\Qp}$. Using the inertial local Langlands correspondence, we attach to $\tau=\tau_1\oplus\tau_2$ an irreducible $E$-representation $\sigma(\tau)$ of $\GL_n(\F_p)$. We similarly attach representations $\sigma(\tau_1)$ and $\sigma(\tau_2)$ and define the irreducible $E$-representation $\sigma\defeq \sigma(\tau_1)\otimes_E\sigma(\tau_2)$ of $L(\F_p)$.  Then $\sigma(\tau)$ and $\ind_{\mathbf{P}^+}^{\mathbf{K}}\sigma$ are isomorphic.
We also fix an arbitrary $L(\F_p)$-stable $\cO$-lattice $\sigma^\circ\subseteq\sigma$ and consider the Hecke algebra $\cH_{\mathbf{P}^+}^{\GL_n(\Qp)}(\sigma^\circ)\defeq \End_{\GL_n(\Qp)}(\ind_{\mathbf{P}^+}^{\GL_n(\Qp)}\sigma^\circ)$.

The Hecke algebra $\cH_{\mathbf{P}^+}^{\GL_n(\Qp)}(\sigma^\circ)$ contains an element $\mathbf{U}^{\tau_1}_\tau$, sometimes known as a \emph{$U_p$-operator} (see \S\,\ref{subsub:Up:act}).
Under a local Langlands correspondence in families \cite[\S 4]{CEGGPS}, the operator $\mathbf{U}^{\tau_1}_\tau$ recovers the function $\phi_{\tau,\tau_1}^{-1}$ on the moduli of Weil--Deligne representations, up to a specific scalar in $p^\Z$ depending on $i$ and $n$.
Hence, according to Theorem~\ref{thm4:inv lift}, our next goal is to capture the $p$-adic leading term of the eigenvalues of the action of $\mathbf{U}^{\tau_1}_\tau$.

We first observe that the equality $\mathbf{P}^+\omega^{(i)}(p)^{-1}\mathbf{P}^+=\omega^{(i)}(p)^{-1}\mathbf{P}^-\mathbf{P}^+$ induces the following decomposition of~$\mathbf{U}^{\tau_1}_\tau$
\begin{equation*}\label{equ:intro:factor}
\xymatrix{
\ind_{\mathbf{P}^+}^{\GL_n(\Qp)}\sigma^\circ\ar^{S_\sigma}[rrr]\ar@/^2pc/^{\mathbf{U}^{\tau_1}_\tau}[rrrrrr] &&&\ind_{\mathbf{P}^-}^{\GL_n(\Qp)}\sigma^\circ\ar^{t_i}_{\sim}[rrr] &&&\ind_{\mathbf{P}^{+}}^{\GL_n(\Qp)}\sigma^\circ
}
\end{equation*}
where $S_\sigma$ is the embedding induced (by applying $\ind_{\mathbf{K}}^{\GL_n(\Qp)}$) from an embedding of $\cO$-modules
\begin{equation}\label{equ:intro:embedding of lattice}
\sigma(\tau)\supseteq\ind_{\mathbf{P}^+}^{\mathbf{K}}\sigma^\circ\hookrightarrow \ind_{\mathbf{P}^-}^{\mathbf{K}}\sigma^\circ,
\end{equation}
and $t_i$ is the intertwining isomorphism induced from $\omega^{(i)}(p)\mathbf{P}^+\omega^{(i)}(p)^{-1}=\mathbf{P}^{-}$.
Note that~(\ref{equ:intro:embedding of lattice}) is an isomorphism after inverting $p$ and so identifies $\ind_{\mathbf{P}^-}^{\mathbf{K}} \sigma^\circ$ with a $\mathbf{K}$-stable $\cO$-lattice in~$\sigma(\tau)$.

Let $\sigma(\tau)^\circ$ be another $\mathbf{K}$-stable $\cO$-lattice in $\sigma(\tau)$ with $\sigma(\tau)^\circ \subseteq \ind_{\mathbf{P}^+}^{\mathbf{K}} \sigma^\circ$, and denote this inclusion by $S_{\sigma(\tau)}^+$.
Let $\kappa \in \Z$ be the maximal integer such that $p^{-\kappa} \sigma(\tau)^\circ \subseteq \ind_{\mathbf{P}^-}^{\mathbf{K}} \sigma^\circ$ via~\eqref{equ:intro:embedding of lattice}, and we define $S_{\sigma(\tau)}^-$ as the composite $\sigma(\tau)^\circ \overset{p^{-\kappa}}{\ra} p^{-\kappa} \sigma(\tau)^\circ \subseteq \ind_{\mathbf{P}^-}^{\mathbf{K}} \sigma^\circ$.
The maps $S_{\sigma(\tau)}^+$ and $S_{\sigma(\tau)}^-$ fit into the following commutative diagram involving $\mathbf{U}^{\tau_1}_\tau$ (see~(\ref{eqn:Up}))
\begin{equation}\label{equ:intro:integral}
\xymatrix{
\ind_{\mathbf{P}^+}^{\GL_n(\Q_p)}\sigma^\circ\ar^{S_\sigma}[rr] \ar@/^2pc/^{\mathbf{U}^{\tau_1}_\tau}[rrrr] &&\ind_{\mathbf{P}^-}^{\GL_n(\Q_p)}\sigma^\circ\ar^{t_i}_{\sim}[rr] &&\ind_{\mathbf{P}^{+}}^{\GL_n(\Q_p)}\sigma^\circ\\
\ind_{\mathbf{K}}^{\GL_n(\Q_p)} \sigma(\tau)^\circ \ar^{S_{\sigma(\tau)}^+}[u] \ar^{p^\kappa}[rr] &&\ind_{\mathbf{K}}^{\GL_n(\Q_p)}\sigma(\tau)^\circ\ar^{S_{\sigma(\tau)}^-}[u] && \ind_{\mathbf{K}}^{\GL_n(\Q_p)} \sigma(\tau)^\circ. \ar^{S_{\sigma(\tau)}^+}[u]
}
\end{equation}
Applying $\Hom_{\cO[\GL_n(\Qp)]}(-,\pi)$ to~(\ref{equ:intro:integral}) for a $\cO[\GL_n(\Qp)]$-module $\pi$ we obtain the following diagram (by abusing the notation of maps in~(\ref{equ:intro:integral}) for the induced maps between $\Hom$-spaces)
\begin{equation}\label{equ:intro:integral:Hom}
\xymatrix{
\Hom(\ind_{\mathbf{P}^+}^{\GL_n(\Qp)}\sigma^\circ,\pi)\ar_{S_{\sigma(\tau)}^+}[d] &\Hom(\ind_{\mathbf{P}^-}^{\GL_n(\Qp)}\sigma^\circ,\pi)\ar_{S_\sigma}[l]\ar_{S_{\sigma(\tau)}^-}[d] &\Hom(\ind_{\mathbf{P}^{+}}^{\GL_n(\Qp)}\sigma^\circ,\pi) \ar_{t_i}^{\sim}[l] \ar_{S_{\sigma(\tau)}^+}[d] \ar@/_2pc/_{\mathbf{U}^{\tau_1}_\tau}[ll]\\
\Hom(\ind_{\mathbf{K}}^{\GL_n(\Qp)} \sigma(\tau)^\circ,\pi) &\Hom(\ind_{\mathbf{K}}^{\GL_n(\Qp)}\sigma(\tau)^\circ,\pi) \ar_{p^\kappa}[l]
& \Hom(\ind_{\mathbf{K}}^{\GL_n(\Qp)} \sigma(\tau)^\circ,\pi) \ar@{-->}_{\tld{\mathbf{U}}^{\tau_1}_\tau}[l]
}
\end{equation}
where $\Hom$ denotes $\Hom_{\cO[\GL_n(\Qp)]}$. If $S_{\sigma(\tau)}^+$
is an isomorphism in~(\ref{equ:intro:integral:Hom}), then we can consider $(S_{\sigma(\tau)}^+)^{-1}$ and define a new map
$$\tld{\mathbf{U}}^{\tau_1}_\tau\defeq S_{\sigma(\tau)}^-\circ t_i\circ (S_{\sigma(\tau)}^+)^{-1}:~\Hom(\ind_{\mathbf{K}}^{\GL_n(\Qp)} \sigma(\tau)^\circ,\pi)\rightarrow\Hom(\ind_{\mathbf{K}}^{\GL_n(\Qp)} \sigma(\tau)^\circ,\pi).$$
We call the map $\tld{\mathbf{U}}^{\tau_1}_\tau$ a \emph{normalized $U_p$-operator}. Note that $\tld{\mathbf{U}}^{\tau_1}_\tau$ (if defined) is an isomorphism if and only if $S_{\sigma(\tau)}^-$ is an isomorphism in~(\ref{equ:intro:integral:Hom}). We caution the reader that $S_{\sigma(\tau)}^+$ and $S_{\sigma(\tau)}^-$ are almost never isomorphisms in~(\ref{equ:intro:integral}), but the induced maps in~(\ref{equ:intro:integral:Hom}) can nevertheless be isomorphisms for suitably chosen $\sigma^\circ$, $\sigma(\tau)^\circ$ and $\pi$. We also note that the maps $S_\sigma$, $p^{\kappa}$ and $\mathbf{U}^{\tau_1}_\tau$ in~(\ref{equ:intro:integral:Hom}) are usually zero when $\pi=\pi(\rbar)$.

Given a permutation $w\in W$ and a subset $I\subseteq\{1,\dots,n\}$ satisfying $w(I)=I$, we have an invariant function $f_{w,I}\in\Inv$, a $F(\lambda)$-relevant type $\tau_w$ as well as a sub inertial type $\tau_{w,1}\subseteq\tau_w$ from \S\,\ref{subsub:intro:lift}. We choose $\tau\defeq \tau_w\otimes_{\cO}\tld{\omega}^{n-1}$, $\tau_1\defeq \tau_{w,1}\otimes_{\cO}\tld{\omega}^{n-1}$ where $\tld{\omega}$ is the Teichm\"uller lift of the mod-$p$ cyclotomic character $\omega$. Let $x\in U\backslash\GL_n(\F)$ be a point satisfying $\rhobar_{x,\lambda+\eta}\cong\rhobar$, and $\cC\in\cP$ be the unique element satisfying $x\in\cC(\F)$.
The following is our main result on the action of $\tld{\mathbf{U}}^{\tau_1}_\tau$.
\begin{thm}[see the proof of Theorem~\ref{thm:main:LGC}]\label{thm2:Hecke}
Assume that $f_{w,I}\in\Inv(\cC)$ with $I\neq \emptyset,\{1,\dots,n\}$. Then there exist $\sigma^\circ$ and $\sigma(\tau)^\circ$ depending only on $\lambda, w, I$ such that
$$
 S_{\sigma(\tau)}^-\circ t_i =f_{w,I}(x)\cdot S_{\sigma(\tau)}^+ :~\Hom(\ind_{\mathbf{P}^{+}}^{\GL_n(\Qp)}\sigma^\circ,\pi(\rbar))\xrightarrow{\sim}\Hom(\ind_{\mathbf{K}}^{\GL_n(\Qp)} \sigma(\tau)^\circ,\pi(\rbar)).
$$
\end{thm}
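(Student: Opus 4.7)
The plan is to identify, via classical local-global compatibility and Theorem~\ref{thm4:inv lift}, the scalar by which the normalized operator $\tld{\mathbf{U}}^{\tau_1}_\tau$ acts on the relevant Hom space, and then translate this eigenvalue statement into the claimed identity of maps. I would first choose the $L(\F_p)$-stable lattice $\sigma^\circ\subseteq\sigma$ and the $\GL_n(\Zp)$-stable lattice $\sigma(\tau)^\circ\subseteq\sigma(\tau)$ (depending only on $\lambda$, $w$ and $I$) so that the normalization exponent $\kappa$ in diagram~(\ref{equ:intro:integral}) matches, up to an explicit $p$-power depending only on $i$ and $n$, the integer $d$ of Theorem~\ref{thm4:inv lift}. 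Tracing~(\ref{equ:intro:integral}) then yields the integral identity $\mathbf{U}^{\tau_1}_\tau\circ S_{\sigma(\tau)}^+ = p^\kappa\,(t_i\circ S_{\sigma(\tau)}^-)$ of $\cO[\GL_n(\Qp)]$-module maps; passing to $\Hom_{\cO[\GL_n(\Qp)]}(-,\pi(\rbar))$ via diagram~(\ref{equ:intro:integral:Hom}) reduces the theorem to proving (i) that $S_{\sigma(\tau)}^+$ induces an isomorphism on Hom spaces and (ii) that $p^{-\kappa}\mathbf{U}^{\tau_1}_\tau$ acts on the Hom source by the scalar $f_{w,I}(x)$.

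To access the eigenvalue in (ii), I would lift the computation to characteristic $0$: via the global automorphic realization of $\pi(\rbar)$ and a Taylor--Wiles patching argument producing a compatible lift of the Hecke eigensystem, one obtains a potentially crystalline lift $\rho:\Gal(\ovl{\Q}_p/\Qp)\ra\GL_n(\cO)$ of $\rhobar_{x,\lambda+\eta}$ of type $\tau_w$ and parallel Hodge--Tate weights $(n-1,\dots,0)$. Such a $\rho$ exists because the $F(\lambda)$-relevance of $\tau_w$ places $\rhobar_{x,\lambda+\eta}$ in $|\mathrm{FL}_n^{\lambda+\eta}(\F)|\subseteq|\cX^{\tau_w}(\F)|$. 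By the local Langlands correspondence in families of~\cite{CEGGPS}, $\mathbf{U}^{\tau_1}_\tau$ acts on the corresponding characteristic $0$ Hecke eigenspace by $\phi_{\tau,\tau_1}^{-1}(\mathrm{WD}^*(\rho))$, up to the same explicit $p$-power fixed above. Combining this with Theorem~\ref{thm4:inv lift} yields $p^{-\kappa}\mathbf{U}^{\tau_1}_\tau\equiv f_{w,I}(x)\pmod{\fm}$ on the Hom source, and the hypothesis $f_{w,I}\in\Inv(\cC)$ forces $f_{w,I}(x)\in\F^\times$: the resulting unit eigenvalue simultaneously establishes (i) and (ii), whence the identity claimed in the theorem.

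The main obstacle will be the precise construction of the lattices $\sigma^\circ$ and $\sigma(\tau)^\circ$ producing the matching $\kappa=d$ on the nose. This requires a detailed analysis of the cosocle filtration of the mod-$p$ reduction of $\sigma(\tau)$ and of $\ind_{\mathbf{P}^\pm}^{\mathbf{K}}\sigma^\circ$ in the generic Fontaine--Laffaille regime, compatibly with the outer Jordan--H\"older position of $F(\lambda)$ prescribed by $F(\lambda)$-relevance of $\tau_w$, together with a verification that the chosen embedding $S_{\sigma(\tau)}^+$ does induce an isomorphism on Hom into $\pi(\rbar)$. The latter step crucially invokes hypothesis~(\ref{it2:thm1:intro}) of Theorem~\ref{thm1:intro}, that $F(\lambda)$ embeds in $\pi(\rbar)|_{\GL_n(\Zp)}$, together with multiplicity-freeness properties of the mod-$p$ socle available in the generic regime.
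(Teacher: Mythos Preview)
Your overall strategy is essentially the paper's: choose the lattice $\sigma(\tau)^\circ$ by its cosocle, pass to a patched module to lift to characteristic zero, invoke local Langlands in families together with Theorem~\ref{thm4:inv lift}, and reduce. But two points in your logical flow are off and one obstacle is misidentified.

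First, the sentence ``the resulting unit eigenvalue simultaneously establishes (i) and (ii)'' is circular: the operator $\tld{\mathbf{U}}^{\tau_1}_\tau$ is not even defined until (i) holds, so you cannot use its eigenvalue to prove (i). In the paper the order is the reverse. One takes $\sigma(\tau)^\circ$ to be the lattice with cosocle $F(\lambda)\otimes\omega^{n-1}\circ\det$ (the choice of $\sigma^\circ$ is irrelevant). The isomorphism (i), and simultaneously the analogous statement for $S_{\sigma(\tau)}^-$, then follows from the single fact
\[
\JH(\ovl{\sigma(\tau)})\cap \big(W_{M_\infty}(\rhobar)\otimes\omega^{n-1}\circ\det\big)=\{F(\lambda)\otimes\omega^{n-1}\circ\det\},
\]
since the cokernels of $S_{\sigma(\tau)}^\pm\otimes_\cO\F$ then contain no modular weight. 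This equality is the real obstacle: it needs more than ``multiplicity-freeness of the socle''. One needs that \emph{no other} Jordan--H\"older factor of $\ovl{\sigma(\tau)}$ is modular, which the paper extracts from the shape $\tld{w}^*(\rhobar,\tau\otimes\tld{\omega}^{1-n})=t_\eta$ (equivalent to $x\in\cM^\circ_{s_\cJ^{-1}}$, i.e.\ to $f_{w,I}\in\Inv(\cC)$) combined with the inclusion $W^g(\rhobar)\subseteq W^?(\rhobar^{\speci})$ for the relevant specialization; see Lemma~\ref{lem: criterion for shape} and Lemma~\ref{lem: check cond}.

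Second, your ``main obstacle'' of arranging $\kappa=d$ is a red herring. In the paper $\kappa$ is not an input to be tuned but the output determined by the fixed lattice choice above; the relation $\kappa=d_0-\tfrac{fi(i-1)}{2}$ is computed \emph{a posteriori} and is never used. What is needed is only that $p^{-\kappa}\eta_\infty(\mathbf{U}^{\tau_1}_\tau)$ lies in $R_{\rhobar}^{\tau,\mu}$ and reduces to $f_{w,I}(x)$. For integrality the paper does not work directly on $\pi(\rbar)$ but first on an auxiliary \emph{minimal} arithmetic module $M_\infty'$ (Lemma~\ref{lemma:minimalM}), for which $M_\infty'(\sigma(\tau)^\circ)$ is free of rank one over the regular ring $R_\infty'(\tau)$; there $\tld{\mathbf{U}}^{\tau_1}_\tau$ is forced to act by a ring element, hence $p^{-\kappa}\eta_\infty(\mathbf{U}^{\tau_1}_\tau)\in R_{\rhobar}^{\tau,\mu}$ (Proposition~\ref{prop:pi-inv}). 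Only then does one compute its reduction by picking any $\cO$-point $\rho^\circ$ of $R_{\rhobar}^{\tau,\mu}$ and combining Proposition~\ref{prop:LLC} with Theorem~\ref{thm:Feval-2} (Proposition~\ref{prop:Up inv}). Your proposal to ``produce a compatible lift of the Hecke eigensystem'' is the right instinct, but the passage through a minimal patched module is what makes the integrality step go through cleanly.
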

The existence of $\sigma^\circ$ and $\sigma(\tau)^\circ$ such that both $S_{\sigma(\tau)}^+$ and $S_{\sigma(\tau)}^-$ are isomorphisms follows from the fact that the mod-$p$ reduction of $\sigma(\tau)$ contains a unique (counting multiplicity) \emph{modular Serre weight} of $\rbar$, which is $F(\lambda)\otimes_{\F}\omega^{n-1}\circ\det$. %

\subsubsection{Conclusion}\label{subsub:intro:conclusion}
Now we deduce Theorem~\ref{thm1:intro}, when $K=\Qp$, from the results in \S\,\ref{sub:ingredient}. (Recall that in Theorem~\ref{thm1:intro} $K/\Qp$ is a finite unramified extension, but in the introduction we only treat the case $K=\Q_p$.)

Let $\cC\in\cP$ and $x\in\cC(\F)$ be a point such that $\rhobar\cong \rhobar_{x,\lambda+\eta}$.
First of all, Theorem~\ref{thm3:SW} implies that the isomorphism class of $\pi(\rbar)$ determines the $\cC\in\cP$. Then it follows from Theorem~\ref{thm2:Hecke} and Theorem~\ref{thm4:inv lift} that the isomorphism class of $\pi(\rbar)$ determines the set $\{g(x)\mid g\in\Inv(\cC)\}$.
Finally, we deduce from Theorem~\ref{thm5:inv fun prime} that the set $\{g(x)\mid g\in\Inv(\cC)\}$ uniquely determines the $T$-conjugacy class of $x$, or equivalently the isomorphism class of $\rhobar$. Hence, we conclude that $\pi(\rbar)$ determines the isomorphism class of $\rhobar$, which finishes the proof of Theorem~\ref{thm1:intro} when $K=\Qp$.

\subsection{Overview of the paper}
We give a short overview of the various sections of the paper.

\emph{The reader who is primarily interested in the mod-$p$ Langlands program and local-global compatibility can skip \S\,\ref{sec:comb:lifts}, \S\,\ref{sec:const:inv}, \S\,\ref{sec:inv:cons}, \S\,\ref{sub:stack:scheme} and \S\,\ref{sub:main:state}: these concern the (shifted-)conjugation invariant functions on the basic (quasi-)affine and can be read independently from the rest of the paper.}

In \S\,\ref{sec:prel} we recall some categories of semi-linear algebra objects from $p$-adic Hodge theory: Fontaine--Laffaille theory in characteristic $p$ (\S\,\ref{subsub:FLT}), Breuil--Kisin modules with tame descent (\S\,\ref{subsubBKD}), \'etale $\varphi$-modules and their relation to Galois representations (\S\,\ref{subsub:Phi-mod} and \S\,\ref{sub:MFSandGR}).

In \S\,\ref{sec:geo:FL} we study the geometry of moduli spaces of Fontaine--Laffaille modules in characteristic $p$ and introduce the partition given by  translated Schubert cells (\S\,\ref{sub:finest:elements}).
The coarser niveau stratification, and its relation with Galois representations, is studied in \S\,\ref{sub:niveau:elements}.
Finally, \S\,\ref{sub:std:note} describes the standard coordinates which will be used to analyze the behavior of invariant functions on moduli of Fontaine--Laffaille modules.

In \S\,\ref{sub:def:inv} we define the functions on the basic (quasi-)affine which will be relevant to us (the ``invariant functions'').
The reader who is only interested in their arithmetic applications can skip \S\,\ref{sub:stack:scheme}, \S\,\ref{sub:main:state} and \S\,\ref{sec:comb:lifts} until \S\,\ref{sec:inv:cons}. %
These sections concern the construction of sufficiently many good invariant functions for Statement~\ref{state: separate points prime}, and form the technical heart of this work.

In \S\,\ref{sec:FL:SW} we study the embedding of the moduli space of Fontaine--Laffaille modules into the Emerton--Gee stack, using the local model for Galois representations of \cite{MLM} and recalled in \S\,\ref{subsec:LMforEG}.
We introduce the notion of \emph{extremal weights} from \cite{OBW} (\ref{sec:SWandGR}) and analyze in \S\,\ref{subsec:RTandSW} and \S\,\ref{subsec:PandOW} how it is related to the partition on the moduli of Fontaine--Laffaille modules from \S\,\ref{sub:finest:elements}.

In \S\,\ref{sec:CLIF} we study crystalline Frobenii on certain tubes of tamely potentially crystalline Emerton--Gee stacks.
After combinatorial preliminaries on tame inertial types (\S\,\ref{sub:dd} and \S\,\ref{sub:ext:shape}) we explain in \S\,\ref{sub:inv WD} how to recover the invariant functions of \S\,\ref{sub:def:inv} as renormalized crystalline Frobenii on such tubes.

Finally in \S\,\ref{sec:LGC} we analyze the automorphic side.
We first introduce the ``normalized Hecke operators'' for smooth representations of $\GL_n(K)$ in characteristic $p$ (\S\,\ref{subsub:ind:Hecke}) and then describe how they capture the invariant functions and establish local-global compatibility (\S\,\ref{subsub:AxSetup}) under an axiomatic setup for patching functors. \S\,\ref{sub:GSetup} finally produces patching functors from spaces of automorphic forms on compact unitary groups giving the main result on local-global compatibility.

\subsection{Notation}
\label{subsec:notation}

If $F$ is any field, we write $G_F\defeq \Gal(\overline{F}/F)$ for the absolute Galois group, where $\overline{F}$ is a separable closure of $F$. If $F$ is a number field and $v$ is a place of $F$ then we write $F_v$ for the completion of $F$ at the place $v$, and if we further assume $v$ is a finite place of $F$ then we write $\cO_{F_v}$ for the ring of integers of $F_v$ and $k_{v}$ for the residue field. If $F$ is a local field, we write $I_F$ to denote the inertia subgroup of $G_F$.
Moreover, if $W_{F}\leq G_{F}$ denotes the Weil group, we normalize Artin's reciprocity map $\mathrm{Art}_{F}: F\s\stackrel{\sim}{\ra} W_{F}^{\mathrm{ab}}$ in such a way that uniformizers are sent to geometric Frobenius elements.

We fix once and for all an algebraic closure $\overline{\Q}$ of $\Q$. All number fields are considered as subfields of our fixed $\overline{\Q}$. Similarly, if $\ell\in \Q$ is a prime, we fix algebraic closures $\overline{\Q}_\ell$ as well as embeddings $\overline{\Q}\iarrow\overline{\Q}_{\ell}$. All finite extensions of $\Q_{\ell}$ will thus be considered as subfields in $\overline{\Q}_{\ell}$. Moreover, the residue field of $\overline{\Q}_{\ell}$ is denoted by $\overline{\F}_\ell$.

Let $p>2$ be a prime. We write $\varepsilon:G_{\Q_p}\rightarrow\Z_p^\times$ for the cyclotomic character with mod-$p$ reduction~$\omega$, and write $\tld{\omega}:G_{\Q_p}\rightarrow\Z_p^\times$ for the Teichm\"uller lift of $\omega$.

For $f>0$ we let $K$ be the unramified extension of $\Qp$ of degree $f$.
We write $k$ for its residue field (of cardinality $q=p^f$) and  $W(k)$ for its ring of integers.
We write $\val_p: K^\times\rightarrow\Z$ for the $p$-adic valuation normalized by $\val_p(p)=1$, and then write $|\cdot|\defeq q^{-\val_p(\cdot)}$ for the $p$-adic norm.

\subsubsection{Galois theory}
\label{sec:notation:GT}

We write $e=p^f-1$ and fix a primitive $e$-th root $\pi\in \overline{K}$ of $-p$.
Define the extension $L = K(\pi)$. %
The choice of the root $\pi$ let us define a character
$$
\tld{\omega}_K:\,\Gal(L/K)\rightarrow W(k)\s,\quad g\mapsto \frac{g(\pi)}{\pi}.
$$

Let $E\subset \overline{\Q}_p$ be a finite extension of $\Qp$, which will be our coefficient field.
We write $\cO$ for its ring of integers, fix an uniformizer $\varpi\in \cO$ and let $\mathfrak{m}_E=(\varpi)$.
We write $\F\defeq \cO/\mathfrak{m}_E$ for its residue field.
We will always assume that $E$ is sufficiently large.
In particular, we will assume that any embedding $\sigma: K\iarrow \overline{\Q}_p$ factors through $E\subset \overline{\Q}_p$. We abuse the notation $\val_p$ and $|\cdot|\defeq q^{-\val_p(\cdot)}$ for their extension to $E^\times$.

We fix an embedding $\sigma_0: K\into E$. The embedding $\sigma_0$ induces maps $\cO_K \iarrow \cO$ and $k \iarrow \F$; we will abuse the notation and denote these all by $\sigma_0$. We let $\phz$ denote the $p$-th power Frobenius on $k$ and set $\sigma_j \defeq \sigma_0 \circ \phz^{-j}$. The choice of $\sigma_0$ gives $\omega_f\defeq \sigma_0 \circ \tld{\omega}_K:I_K \ra \cO^{\times}$, a fundamental character of niveau $f$, and an identification between the set $\cJ\defeq \{\sigma:K\into E\}$ and $\Z/f$. It is clear that $\tld{\omega}=\prod_{j\in\cJ}\sigma_j\circ\tld{\omega}_K=\omega_f^{\frac{p^f-1}{p-1}}$.
We fix once and for all a sequence $\un{p}\defeq (p_n)_{n\in \N}$ where $p_n\in\ovl{K}$ satisfies $p_{n+1}^p=p_n$, $p_0=-p$.
We let $K_\infty\defeq \underset{n\in\N}{\bigcup}K(p_n)$ and $G_{K_\infty}\defeq \Gal(\ovl{K}/K_\infty)$.

We now fix $n>1$, the dimension of the Galois representation we deal with in this paper.
We set $r\defeq n!$ and fix an unramified extension $K'\supseteq K$ of relative degree $r$ and residue field $k'$.
We assume that $E$ is sufficiently large so that it contains any embedding $\sigma'_0:K'\into \ovl{\Q}_p$ and fix $\sigma'_0:K'\into E$ which extends $\sigma_0$.
If $f'\defeq rf$ and $j'\in\{0,\dots,f'-1\}$ we set $\sigma_{j'}\defeq \sigma'_0\circ \phz^{-j'}$, hence an identification of the set of embeddings $K'\into E$ with $\Z/f'$ so that restriction to $K$ induces the natural projection $\Z/f'\onto \Z/f$.

Let $\rho: G_K\rightarrow \GL_n(E)$ be a $p$-adic, de Rham Galois representation.
For $\sigma: K\iarrow E\subset\overline{\Q}_p$, we define $\mathrm{HT}_\sigma(\rho)$ to be the multiset of $\sigma$-labeled Hodge-Tate weights of $\rho$, i.e. the set of integers $i$ such that $\dim_E\big(\rho\otimes_{\sigma,\Q_p}\C_p(-i)\big)^{G_K}\neq 0$ (with the usual notation for Tate twists).
In particular, the cyclotomic character $\varepsilon$ has Hodge--Tate weights 1 for all embedding $\sigma\into E$.

An inertial type for $K$ is a conjugacy class of a morphism $\tau: I_K\ra \GL_n(E)$ with open kernel and which extends to the Weil group $W_K$ of $G_K$.
The \emph{inertial type
 of $\rho$} is the isomorphism class of $\mathrm{WD}(\rho)|_{I_K}$, where $\mathrm{WD}(\rho)$ is the Weil--Deligne representation attached to $\rho$ as in \cite[Appendix B.1]{CDT} (in particular, $\rho\mapsto\mathrm{WD}(\rho)$ is \emph{covariant}).

\subsubsection{Linear algebraic groups}
\label{intro:LAG}
We consider the linear algebraic group ${\GL_n}_{/\Z}$ defined over $\Z$.
We omit the subscript $\Z$ where there is no risk of confusion.
Let $\Phi^+\subseteq \Phi$ (resp. $\Phi^{\vee,+}\subseteq \Phi^\vee$) be the subset of positive roots (resp. coroots) of $\GL_n$ with respect to the Borel $B\subseteq \GL_n$ of upper triangular matrices.
We further write $B=T\rtimes U$ where $U\subseteq B$ is the subgroup of upper triangular unipotent matrices of $\GL_n$ and $T\subseteq \GL_n$ is the torus of diagonal matrices. We use the notation $u_\al:~U\twoheadrightarrow \bG_a$ for the projection to the $\al$-entry, for each $\al\in\Phi^+$.
Write $W$ (resp. $W_a$, resp. $\tld{W}$) for the Weyl group (resp. the affine Weyl group, resp. the extended affine Weyl group) of $\GL_n$.

We have an injective group homomorphism $W\into \GL_n(\Z)$, which identifies an element $w\in W$ with the matrix whose $i$-th column is given by the $w(i)$-th vector in the standard basis of $\Z^n$. %
(We use the same symbol $w$ to denote the image of $w\in W$ via $W\into \GL_n(\Z)$; this will not cause confusion.)
We write $w_0\in W$ for the longest element in the Weyl group of $\GL_n$.
We let $X^*(T)$ denote the group of characters of $T$, $X_*(T)$ for the group of cocharacters of $T$, which are both  identified with $\Z^n$ in the usual way.
For instance, the $i$-th element of the standard basis $\eps_i\defeq (0,\dots,0,1,0\dots,0)$ (with the $1$ in the $i$-th position) corresponds to character extracting the $i$-diagonal entry of a diagonal matrix.
We write $\langle\cdot\,,\cdot\rangle: X^*(T)\times X_*(T)\rightarrow \Z$ for the standard pairing.
Let $\Phi$ (resp. $\Phi^\vee$) denote the set of roots (resp. coroots) of $\GL_n$ and $\La_R\subseteq X^*(T)$ the root lattice.
We then have
\begin{equation}
\label{eqn:iso:weyl}
W_a=\La_R\rtimes W\qquad\mbox{ and }\qquad \tld{W}=X^*(T)\rtimes W.
\end{equation}
Let $\un{G}$ be the group $\big(\Res_{\cO_K/\ZZ_p} \GL_n\big) \times_{\Zp} \cO$, and similarly define $\un{T}$, $\un{Z}$, $\un{B}$, $\un{U}$.
There is a natural isomorphism $\un{G} \cong \prod_{i\in \cJ} {\GL_n}_{/\cO}$.
One has similar isomorphisms for $\un{T}$, $\un{Z}$, $X^*(\un{T})$, $\un{\Phi}$, $\un{\Phi}^\vee$ where $\un{\Phi}$ (resp. $\un{\Phi}^\vee$) denotes the set of roots (resp. coroots) of $\un{G}$.
If $\mu \in X^*(\un{T})$, then we correspondingly write $\mu = \sum_{j\in \cJ} \mu_j$.
We use similar notation for similar decompositions.
Again we identify $X^*(\un{T})$ with $(\Z^n)^\cJ$ in the usual way and let $\eps_{j,i}\in (\Z^n)^\cJ$ be $(0,\dots,0,1,0,\dots,0)$ in the $j$-th coordinate, where $1$ appears in position $i$, and $n$-tuple $\un{0}$ otherwise.
In particular, we sometimes abuse notation and identify $\mu_j$ with an element of $\Z^n$, and write $0\in X^*(\un{T})$ for the element corresponding to zero element in $(\Z^n)^\cJ$.
The arithmetic Frobenius induces an automorphism $\pi$ on $X^*(\un{T})$.
It is characterized by $\pi(\lambda)_j=\lambda_{j+1}$.
Again, we write $X_*(\un{T})$ for the group of cocharacters of $\un{T}$, and write $\langle\cdot\,,\cdot\rangle$ for the standard pairing $\langle\cdot\,,\cdot\rangle: X^*(\un{T})\times X_*(\un{T})\rightarrow \Z$.

Let $\un{\Phi}^+\subseteq \un{\Phi}$ (resp. $\un{\Phi}^{\vee,+}\subseteq \un{\Phi}^\vee$) be the subset of positive roots (resp. coroots) of $\un{G}$ with respect to the upper triangular Borel in each embedding.
Let $\un{\Delta}\subseteq\un{\Phi}^+$ be the set of simple roots, and $\un{\Delta}^\vee\subseteq\un{\Phi}^{\vee,+}$ be the set of simple coroots.
We define dominant (co)characters with respect to these choices.
Let $X^*_+(\un{T})$ be the set of dominant weights.
We denote by $X_1(\un{T}) \subset X^*_+(\un{T})$ be the subset of weights $\lambda\in X^*_+(\un{T})$  satisfying $0\leq \langle \lambda,\alpha^\vee\rangle\leq p-1$ for all simple roots $\alpha\in \un{\Delta}$.
We call $X_1(\un{T})$ the set of $p$-restricted weights.
We write $X^0(\un{T})$ for the set consisting of elements $\lambda\in X^*(\un{T})$ such that $\langle \lambda,\alpha^\vee\rangle=0$ for all roots $\alpha\in \un{\Phi}$.
Let $\eta_j\in X^*(\un{T})$ be $(n-1,n-2\dots,1,0)$ in the $j$-th coordinate and $0$ otherwise, and let $\eta$ be $\sum_{j\in\cJ} \eta_j \in X^*(\un{T})$.
We sometimes abuse notation and consider $\eta_j$ as the element $(n-1,n-2\dots,1,0)\in \Z^n\cong X^*(T)$: this should cause no confusion.
Then $\eta$ is a lift of the half sum of the positive roots of $\un{G}$.

Let $\un{W}$ be the Weyl group of $\un{G}$.
We abuse notation and write $w_0$ for its longest element.
Let $\un{W}_a$ and $\widetilde{\un{W}}$ be the affine Weyl group and extended affine Weyl group, respectively, of $\un{G}$.
Let $\un{\La}_R \subset X^*(\un{T})$ denote the root lattice of $\un{G}$.
As above we have identifications $\un{W}\cong W^\cJ$, $\un{W}_a\cong W_a^\cJ$, $\tld{\un{W}}\cong \tld{W}^\cJ$ and isomorphisms analogous to~(\ref{eqn:iso:weyl}).

The Weyl group $\tld{\un{W}}$ acts naturally on $X^*(\un{T})$.
If $\lambda\in X^*(\un{T})$ and $\tld{w}_\cJ\in\tld{\un{W}}$ we write $\tld{w}_\cJ(\lambda)$ to denote the image of $\lambda$ by this action.
The image of $\lambda\in X^*(\un{T})$ via the standard injection $X^*(\un{T})\into \tld{\un{W}}$ is denoted by $t_\lambda$.
We have similar actions of
$\un{W}$ and $\un{W}_a$ on $X^*(\un{T})$.
These actions of $\tld{\un{W}}$, $\un{W}$ and $\un{W}_a$ on $X^*(\un{T})$ are compatible with one another when considering the natural inclusions $\un{W}\subseteq \un{W}_a\subseteq \tld{\un{W}}$.
Moreover, the Weyl groups $\un{W}$, $\tld{\un{W}}$, $\un{W}_a$ act on $X^*(\un{T})$ via the \emph{$p$-dot action}, given by $t_\lambda w \cdot \mu = t_{p\lambda} w (\mu+\eta) - \eta$.

\subsubsection{Miscellaneaous}
\label{sub:sub:misc}

For any ring $S$ we write $\mathrm{M}_n(S)$ to denote the set of $n$ by $n$ matrix with entries in $S$.
If $\alpha=\eps_i-\eps_j$ %
is a root of $\GL_n$, we also call the $(i,j)$-th entry of a matrix $A\in \mathrm{M}_n(S)$ the $\alpha$-entry.

If $M$ is an $R$ module and $h:R\ra R'$ is an homomophism of rings we write $h^*(M)$ to denote the pullbck of $M$ along $h$, i.e.~the $R'$-module $M\otimes_{R,h}R'$.
We have an obvious map $h^*: M\ra h^*(M)$ sending $m$ to $m\otimes 1$, which is an $h$-semilinear map.

If $X$ is a scheme over $\Z$ and $R$ is any ring, we write $X_R$ for the fibered product $X\times_{\Spec \Z}\Spec R$.

Let $V$ be a representation of a finite group $\Gamma$ over an $E$-vector space.
We write $\JH(\ovl{V})$ to denote the set of Jordan--H\"older factors of the mod $\varpi$-reduction of an $\cO$-lattice in $V$.
This set is independent of the choice of the lattice.

\subsection{Acknowledgements}
Part of the work was carried out during visits at the Ecole Normale Sup\'erieure de Lyon (2018), Ulsan National Institute of Science and Technology (2019), Laboratoire Analyse G\'eom\'etrie Applications (2020).
We would like to heartily thank these institutions for their support.

We sincerely thank Christophe Breuil and Florian Herzig for their constant support and interest in this work.
We thank Toby Gee for comments on an earlier draft.

D.L. was supported by the National Science Foundation under agreements Nos.~DMS-1128155 and DMS-1703182 and an AMS-Simons travel grant. B.LH. acknowledges support from the National Science Foundation under grant Nos.~DMS-1128155, DMS-1802037 and the Alfred P. Sloan Foundation.
S.M. was supported by the ANR-18-CE40-0026 (CLap CLap) and the Institut Universitaire de France. C.P. was supported by Samsung Science and Technology Foundation under Project Number~SSTF-BA2001-02.
\clearpage{}%
\clearpage{}%
\section{Preliminaries}
\label{sec:prel}
Throughout this section we let $K/\Q_p$ be a finite unramified extension of degree $f$.
Recall that the choice of $\sigma_0:k\into \F$ identifies $\{0,\dots,f-1\}$ with $\cJ$ via $j\mapsto \sigma_j\defeq\sigma_0\circ\phz^{-j}$.

\subsection{Inertial types}
\label{subsec:IT}

We record here some notation and facts pertaining to tame inertial types for~$K$. We start with defining the genericity conditions that will be used throughout the paper.
\begin{defn}
\label{defn:mGenFL}
Let $\mu\in X^*(\un{T})$ and $m\in\N$. We say that \emph{$\mu$ is $m$-generic Fontaine--Laffaille} if $m<\langle\mu,\alpha^\vee\rangle<p-m$ for all positive roots $\alpha\in \un{\Phi}^+$.
We say that $\mu$ is \emph{Fontaine--Laffaille} if $0\leq \langle\mu,\alpha^\vee\rangle\leq p-2$ for all positive roots $\alpha\in \un{\Phi}^+$.
\end{defn}
Note that if $\mu$ is $1$-generic Fontaine--Laffaille then it is, in particular, Fontaine--Laffaille. We also note that if $\mu\in X^*_+(\un{T})$ is dominant and $\mu+\eta$ is Fontaine-Laffaille, then $\mu+\eta$ is $0$-generic Fontaine--Laffaille.

Recall from \S\,\ref{sec:notation:GT} that an inertial type  for $K$ is a conjugacy class of representations $I_K \ra \GL_n(E)$ which have an open kernel and extend to the Weil group of $K$.
Similarly, we define an inertial $\F$-type for $K$ as a conjugacy class of representations $I_K \ra \GL_n(\F)$ which have open kernel and extend to the Weil group of $K$.
An inertial ($\F$-)type is \emph{tame} if it factors through the tame inertial quotient.
Given an inertial type $\tau$, one obtains an inertial $\F$-type $\ovl{\tau}$ by taking the semisimplification of the reduction of any $I_K$-stable $\cO$-lattice in $\tau$.

We have a combinatorial description of tame inertial types from \cite[Example 2.4.1]{MLM}:

\begin{defn}
 \label{defn:tau}
For $(s_\cJ,\mu)\in \un{W}\times X^*(\un{T})$ define the inertial type $\tau(s_\cJ, \mu+\eta):I_K \ra \GL_n(\cO)$ as follows:  If $s_{\cJ} = (s_0, \ldots, s_{f-1})$,  set $s_{\tau} \defeq s_0 s_{1} s_{2} \cdots s_{f-1} \in W$  and  $\bm{\alpha}_{(s_\cJ,\mu)} \in X^*(\un{T})$ such that $\bm{\alpha}_{(s_\cJ,\mu),0} = \mu_0+\eta_0$ and $\bm{\alpha}_{(s_\cJ,\mu),j} = s_{f-1}^{-1} s_{f-2}^{-1} \ldots s_{f-j}^{-1}(\mu_{f-j}+\eta_{f-j})$ for $1 \leq j \leq f-1$.
Recall from \S\,\ref{sec:notation:GT} that $r=n!$ so that $(s_{\tau})^r=1$. %
Then by letting $\chi_i\defeq \omega_{f'}^{\sum_{0 \leq k \leq r-1} \bf{a}^{(0)}_{s_{\tau}^{k}(i)} p^{fk}} $ with $\bf{a}^{(0)} \defeq \sum_{j =0}^{f-1}  \bm{\alpha}_{(s_\cJ,\mu),j} p^j\in \Z^n$, we define:
\begin{equation} \label{eq:def:type}
\tau(s_\cJ,\mu+\eta) \defeq \bigoplus_{1 \leq i \leq n}\chi_i.
\end{equation}
We set $\taubar(s_\cJ,\mu+\eta)$ to be the reduction of $\tau(s_\cJ,\mu+\eta)$ to the residue field of $\cO$.

\end{defn}
\begin{defn}
\label{defi:gen}
Let $\tau$ be a tame inertial type.
\begin{enumerate}
\item
\label{def:LApres}
A \emph{lowest alcove presentation} of $\tau$ is a pair $(s_\cJ, \mu) \in \un{W} \times X^*(\un{T})$ where $\mu+\eta$ is $0$-generic Fontaine--Laffaille and such that $\tau\cong \tau(s_\cJ, \mu + \eta)$.
Given a lowest alcove presentation $(s_\cJ, \mu)$ for $\tau$ we associate to it the element $\tld{w}(\tau)\defeq t_{\mu+\eta}s_{\cJ}\in \tld{\un{W}}$.
\item
\label{def:LApres:cmpt}
We say that the lowest alcove presentation of $\tau$ is \emph{compatible with} $\zeta\in X^*(\un{Z})$ if $\tld{w}(\tau) \un{W}_a$ corresponds to $\zeta$ via the isomorphism $\tld{\un{W}}/\un{W}_a\stackrel{\sim}{\ra} X^*(\un{Z})$.
Lowest alcove presentations of tame inertial types are said to be \emph{compatible} if they are compatible with the same element of $X^*(\un{Z})$.
\end{enumerate}
\end{defn}
(Note that lowest alcove presentations for a given tame inertial type are not unique, but in generic cases one can pass from one to another by \cite[Proposition 2.2.15]{LLL})

For a local Galois representation $\rhobar: G_{K}\ra\GL_n(\F)$, we consider the inertial representation $\rhobar^{\mathrm{ss}}|_{I_{K}}$ and let $[\rhobar^{\mathrm{ss}}|_{I_K}]$ be the Teichm\"uller lift  of $\rhobar^{\mathrm{ss}}|_{I_{K}}$.
Then (the conjugacy class of) $[\rhobar^{\mathrm{ss}}|_{I_{K}}]$ is a tame inertial type.

\begin{defn}
\label{def:gen:Gal}
We say that $\rhobar$ is \emph{$m$-generic} if $[\rhobar^{\mathrm{ss}}|_{I_K}]$ has a lowest alcove presentation $(s_\cJ, \mu)$ where $\mu+\eta$ is $m$-generic Fontaine--Laffaille.
\end{defn}

Let $\tau \defeq \tau(s_\cJ, \mu+\eta)$ be a tame inertial type for $K$ with $\mu+\eta$ being $1$-generic Fontaine--Laffaille.
Recall from \S\,\ref{sec:notation:GT} the fixed unramified extension $K'$ of $K$ of degree $r=n!$, with the embedding $\sigma_0':K'\into E$ extending $\sigma_0: K\into E$.
Let $\tau'$ denote the type $\tau$ viewed as a tame inertial type for $K'$.
(We call $\tau'$ the \emph{base change} of $\tau$.)
Define $\bm{\alpha}'_{(s_\cJ,\mu)} \in  X^*(T)^{\Hom(k', \F)}$ by
\[
\bm{\alpha}'_{(s_\cJ,\mu), j + kf} \defeq  s_{\tau}^{-k} (\bm{\alpha}_{(s_\cJ, \mu), j}) \text{ for } 0 \leq j \leq f-1, 0 \leq k \leq r-1.
\]
(The embedding $\sigma_0'$ induces an isomorphism $X^*(T)^{\Hom(k', \F)} \cong X^*(\un{T})^{r}$.)
If $s'_{\cJ'}=(s'_{j'})_{j'\in\cJ'}\in\un{W}^r$ is the element characterized by $s'_{j'}=s_j$ for $j\equiv j'\pmod{f}$ and similarly for $\mu'\in  X^*(\un{T})^{r}$, then $\tau' \cong \tau'(s'_{\cJ'}, \mu')\cong \tau'(1, \bm{\alpha}'_{(s_\cJ,\mu)})$ by \eqref{eq:def:type}.
The \emph{orientation} $s'_{\mathrm{or}} \in \un{W}^{r}$ of $\bm{\alpha}'_{(s_\cJ,\mu)}$ is defined by
\begin{equation} \label{primeorient}
s'_{\mathrm{or}, j + kf} \defeq s_{\tau}^{k+1} (s_{f-1}^{-1}s_{f-2}^{-1}\cdots s_{j+1}^{-1})\text{ for } 0 \leq j \leq f-1, 0 \leq k \leq r-1
\end{equation}
(see \cite[equation (5.4)]{MLM}).

\subsection{Fontaine--Laffaille theory}
\label{subsub:FLT}
The goal of this section is to define the stack of Fontaine--Laffaille modules. In all what follows $R$ is a Noetherian $\F$-algebra. The $R$-algebra $k\otimes_{\Fp}R$ is endowed with a canonical $R$-algebra endomorphism $\phz$ that acts as the arithmetic Frobenius on $k$, and as the identity on $R$.

Since all schemes and stacks are defined over $\Spec \F$ we omit the subscript $\bullet_{\F}$ from the notation when considering the base change to $\F$ of an object $\bullet$ defined over $\cO$ (e.g.~$\GL_{n,\F}$ will be denoted by $\GL_n$ and so on).

\begin{defn}
A \emph{pseudo Fontaine--Laffaille module with $R$-coefficients} is a finite projective $k\otimes_{\Fp}R$-module $M$ together with:
\begin{enumerate}
\item an exhaustive and separated decreasing filtration $\{\Fil^iM\}_{i\in \Z}$ by $k\otimes_{\Fp}R$-modules (the \emph{Hodge filtration of $M$}), whose associated graded pieces $\gr^i(M)\defeq \Fil^iM/\Fil^{i+1}M$ are projective $k\otimes_{\Fp}R$-modules;  %
\item a $\phz$-semilinear bijection $\phi_M: \gr^{\bullet}(M) \cong M$.
(We will often omit the $M$ in the subscript of $\phi_{M}$ and $\phi_{i,M}\defeq \phi_M|_{\gr^{i}(M)}$ when the module $M$ is clear from the context.)
\end{enumerate}
\end{defn}

Via the decomposition $k\otimes_{\Fp}R\cong\prod_{j\in\cJ}R$ induced by $x\otimes 1\mapsto (\sigma_j(x))_{j\in\cJ}$, we write $\epsilon_j\in k\otimes_{\Fp}R$ for the idempotent element corresponding to the component $j$.
A pseudo Fontaine--Laffaille module $M$ admits a canonical decomposition
$M\stackrel{\sim}{\ra}\prod_{j\in\cJ}M^{(j)}$ with $M^{(j)}\defeq \epsilon_jM$, a projective $R$-module. The action of $x\otimes 1 \in k\otimes_{\Fp} R$ on $M^{(j)}$ is given by $\sigma_j(x)\in R$.
Each $M^{(j)}$ inherits a decreasing, exhaustive and separated filtration $\Fil^i M^{(j)}$ by $R$-modules, and a collection of $R$-linear morphisms $$\phi_i^{(j)}:\gr^i(M^{(j)})\ra M^{(j+1)}$$ where $\gr^i(M^{(j)})\defeq \Fil^iM^{(j)}/\Fil^{i+1}M^{(j)}$ (which is projective).
Note that for each $j\in\cJ$ this family of morphisms $\phi_i^{(j)}$ induces a morphism $\phi^{(j)}:\gr^\bullet(M^{(j)})\ra M^{(j+1)}$.

\begin{defn}\label{def: Fontaine--Laffaille module}
A \emph{Fontaine--Laffaille module with $R$-coefficients} is a pseudo Fontaine--Laffaille module $M$ with $R$-coefficients such that for each $j\in\cJ$ $$\min\{i\in\Z\mid\Fil^iM^{(j)}=0\}-\max\{i\in\Z\mid\Fil^iM^{(j)}=M\}\leq p-1.$$
Fontaine--Laffaille modules with $R$-coefficients form a category, with morphisms being $k\otimes_{\Fp} R$-linear homomorphisms which respect the filtration and the maps $\phi$. There is an evident notion of base change along an $\F$-algebra homomorphism $R\to S$.
\end{defn}

For $\lambda\in X^*(\un{T})$, we write $\lambda=(\lambda_{j})_{j\in\cJ}$ with $\lambda_{j}=(\lambda_{j,1},\dots,\lambda_{j,n})\in\Z^{n}$.
\begin{defn}
A Fontaine--Laffaille module \emph{of weight $\lambda\in X_+^*(\un{T})$} (with $R$-coefficients) is a Fontaine--Laffaille module $(M,\{\Fil^i M\}_i,\{\phi_i\}_i)$ with $R$-coefficients such that $\gr^{i}(M^{(j)})\neq 0$ if and only if $i\in\{\lambda_{j,1},\lambda_{j,2},\dots,\lambda_{j,n}\}$, for each $j\in\cJ$.
\end{defn}

Note that if a Fontaine--Laffaille module is of weight $\lambda\in X_+^*(\un{T})$ then $\lambda$ is Fontaine--Laffaille.

We now fix a dominant weight $\lambda\in X_+^*(\un{T})$ such that $\lambda+\eta$ is Fontaine--Laffaille. Note that such a weight $\lambda+\eta$ is, in particular, $0$-generic Fontaine--Laffaille. Let $\mathrm{FL}_n^{\lambda+\eta}$ be the sheafification of the functor that sends a Noetherian $\F$-algebra $R$ to the groupoid of Fontaine--Laffaille modules of weight $\lambda+\eta$ with $R$-coefficients.
\begin{defn}
\label{def: compatible basis}
Let $(M, \{\Fil^iM\}_{i\in \Z}, \{\phi_i\}_{i\in\Z})\in \mathrm{FL}_n^{\lambda+\eta}(R)$, where $R$ is a Noetherian $\F$-algebra with residue field $\F$.
A \emph{basis} $\beta=(\beta^{(j)})_{j\in\cJ}$ for $M$ is a $\cJ$-tuple where for all $j\in\cJ$ the (ordered) $n$-tuple $\beta^{(j)}=(\beta^{(j)}_1,\dots, \beta^{(j)}_n)$ is a basis for $M^{(j)}$.

We say that a basis $\beta$ for $M$ is \emph{compatible} (\emph{with the Hodge filtration}) if for each $j\in\cJ$
$$\Fil^{\lambda_{j,i}+(n-i)}M^{(j)}=R\cdot\beta^{(j)}_1+\cdots+R\cdot\beta^{(j)}_{i}$$
for all $i\in\{1,2,\cdots,n\}$.
\end{defn}
Note that bases for $M$ do not necessarily exist, but they always do Zariski locally on $R$. Each compatible basis $\beta$ for $M$ induces a basis $\gr^{\bullet}(\beta)=(\gr^{\bullet}(\beta^{(j)}))_{j\in\cJ}$ for $\gr^{\bullet}(M)$, which together determine a matrix for $\phi_M$ called \emph{the matrix of $\phi_M$ attached to $\beta$}.

We now show that $ \mathrm{FL}_n^{\lambda+\eta}$ is representable
by an algebraic stack.
We let $\tld{\cF\cL}_{\cJ}= \un{U}\backslash\un{G}$ be the basic (quasi-)affine for $\un{G}$ (which is a quasi-affine variety, cf.~\cite[Theorem 2.1 and Corollary 2.7]{Grosshans}).
We define the shifted
conjugation action of $\un{T}$ on $\tld{\cF\cL}_{\cJ}$ by the formula (noting that $T$ normalizes $U$)
\begin{equation}
\label{it:T-act:1}
(A\cdot t)^{(j)}\defeq (t^{(j+1)}\big)^{-1} A^{(j)}t^{(j)}
\end{equation}
for all $j\in\cJ$, where $t= (t^{(j)})_{j\in\cJ}\in \un{T}(R)$ and $A=(A^{(j)})_{j\in\cJ}\in \tld{\cF\cL}_{\cJ}(R)$.

\begin{prop}
\label{thm:repr:FLgp}
$\mathrm{FL}_n^{\lambda+\eta}$ is representable by $\big[\tld{\cF\cL}_{\cJ}\slash{{\sim}_{\un{T}\text{-\textnormal{sh.cnj}}}}\big]$
where $\un{T}$ acts via the shifted conjugation action.
\end{prop}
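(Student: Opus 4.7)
The plan is to construct a morphism $\Psi:\un{G}\to\mathrm{FL}_n^{\lambda+\eta}$ by sending a matrix $A=(A^{(j)})_{j\in\cJ}\in\un{G}(R)$ to the ``standard'' Fontaine--Laffaille module $M_A$ defined as follows: take $M_A\defeq (k\otimes_{\F_p}R)^{n}$ with standard basis $\beta_{\std}^{(j)}$ on each component $M_A^{(j)}=R^n$, equip $M_A^{(j)}$ with the Hodge filtration
\[
\Fil^{i}M_A^{(j)}\defeq \bigoplus_{\ell:\,\lambda_{j,\ell}+(n-\ell)\geq i} R\cdot\beta_{\std,\ell}^{(j)},
\]
and let $\phi_{M_A}^{(j)}:\gr^\bullet(M_A^{(j)})\to M_A^{(j+1)}$ be the $R$-linear bijection whose matrix in the bases $\gr(\beta_{\std}^{(j)})$ and $\beta_{\std}^{(j+1)}$ is $A^{(j)}$. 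Then $M_A$ is a Fontaine--Laffaille module of weight $\lambda+\eta$, and $\Psi$ yields a morphism of prestacks whose stackification I will show induces the desired equivalence.

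Next I would check that $\Psi$ is (Zariski-)locally essentially surjective. Given $M\in\mathrm{FL}_n^{\lambda+\eta}(R)$, the $0$-genericity of $\lambda+\eta$ forces the integers $\lambda_{j,i}+(n-i)$ to be strictly decreasing in $i$, so each $\gr^{\lambda_{j,i}+(n-i)}M^{(j)}$ is projective of rank one. Zariski-locally on $\Spec R$, these line bundles are free; a local generator lifts along the surjection $\Fil^{\lambda_{j,i}+(n-i)}M^{(j)}\twoheadrightarrow \gr^{\lambda_{j,i}+(n-i)}M^{(j)}$ to produce $\beta_i^{(j)}$, and the tuple $\beta=(\beta^{(j)})_{j\in\cJ}$ is a compatible basis in the sense of Definition~\ref{def: compatible basis}. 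Reading off the matrix $A^{(j)}$ of $\phi_{M}^{(j)}$ in such a compatible basis then exhibits $M\cong M_A$.

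I would then analyze the fiber groupoid of $\Psi$ by classifying changes of compatible basis. Two compatible bases $\beta$, $\beta'$ of the same $M$ differ by $\beta'=\beta\cdot b$ for a unique $b=(b^{(j)})_{j\in\cJ}\in\un{B}(R)$: indeed, the strict decrease of $\lambda_{j,i}+(n-i)$ in $i$ forces the transition matrix on each component to be upper triangular with units on the diagonal. Writing $b^{(j)}=u^{(j)}t^{(j)}$ with $u^{(j)}\in\un{U}(R)$, $t^{(j)}\in\un{T}(R)$, the unipotent factor acts trivially on graded pieces, so $\gr(\beta'^{(j)})=\gr(\beta^{(j)})\cdot t^{(j)}$. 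A short computation then gives
\[
A'^{(j)}=(b^{(j+1)})^{-1}A^{(j)}t^{(j)}=(t^{(j+1)})^{-1}(u^{(j+1)})^{-1}A^{(j)}t^{(j)}
\]
for the new matrix of $\phi_M^{(j)}$. This identifies the isomorphism groupoid on $\un{G}$ above the prestack $\mathrm{FL}_n^{\lambda+\eta}$ with the $\un{B}$-action $b\cdot A^{(j)}=(b^{(j+1)})^{-1}A^{(j)}\cdot D(b^{(j)})$, where $D:\un{B}\twoheadrightarrow\un{T}$ is the diagonal projection.

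Finally, I would observe that the $\un{U}$-part of this $\un{B}$-action (obtained by setting the diagonal equal to~$1$) is the twisted left translation $A^{(j)}\mapsto (u^{(j+1)})^{-1}A^{(j)}$, which has the same orbits as the standard left $\un{U}$-translation because $u\mapsto (u^{(j+1)})_{j\in\cJ}$ is an automorphism of $\un{U}$. Thus the action factors through the basic quasi-affine $\tld{\cF\cL}_\cJ=\un{U}\backslash\un{G}$, and the induced residual $\un{T}$-action on $\tld{\cF\cL}_\cJ$ is exactly the shifted conjugation~\eqref{it:T-act:1}. Stackifying and using the Zariski-local essential surjectivity from step two, $\Psi$ descends to the desired equivalence $\bigl[\tld{\cF\cL}_\cJ\slash{{\sim}_{\un{T}\text{-sh.cnj}}}\bigr]\xrightarrow{\sim}\mathrm{FL}_n^{\lambda+\eta}$. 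The main point requiring care is the explicit change-of-basis calculation in the third paragraph, in particular the fact (guaranteed by $0$-genericity of $\lambda+\eta$) that compatible bases differ precisely by $\un{B}$, and that on the associated graded this reduces to the diagonal piece so that after quotienting by $\un{U}$ only the shifted $\un{T}$-conjugation survives.
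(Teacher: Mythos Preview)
Your proposal is correct and follows essentially the same approach as the paper: both identify the framed moduli (Fontaine--Laffaille modules with a choice of compatible basis) with $\un{G}$, show compatible bases exist Zariski-locally, and compute the change-of-basis $\un{B}$-action by the formula $(A\cdot b)^{(j)}=(b^{(j+1)})^{-1}A^{(j)}\ovl{b}^{(j)}$, which factors through $\un{U}\backslash\un{G}$ leaving precisely the shifted $\un{T}$-conjugation. The paper phrases the first two steps as ``$\mathrm{FL}_n^{\lambda+\eta,\Box}\to\mathrm{FL}_n^{\lambda+\eta}$ is a $\un{B}$-torsor'' while you construct the map $\Psi$ explicitly and analyze its fibers, but the content is identical.
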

\begin{proof} Let $\mathrm{FL}_n^{\lambda+\eta,\Box}$ be the functor which classifies objects $(M, \{\Fil^iM\}_{i\in \Z}, \{\phi_i\}_{i\in\Z})$ of $\mathrm{FL}_n^{\lambda+\eta}$ together with a choice of compatible basis $\beta$. It is represented by $\un{G}$, the isomorphism given by extracting the matrix $\mathrm{Mat}_{\gr^{\bullet}\beta,\beta}(\phi_M) $ of $\phi_M$ with respect to the bases $\gr^{\bullet}\beta$ and $\beta$. Since compatible bases exists Zariski locally, the forgetful map $\mathrm{FL}_n^{\lambda+\eta,\Box}\to \mathrm{FL}_n^{\lambda+\eta}$ is an $\un{B}$-torsor (recall from \S\,\ref{intro:LAG} that $\un{B}$ is the Borel subgroup of $\un{G}$ corresponding to matrices which are upper triangular in each embedding).

We conclude by computing the resulting $\un{B}$-action on $\un{G}$: the effect of changing $\beta$ on $\mathrm{Mat}_{\gr^{\bullet}\beta,\beta}(\phi_M)$ is given by the action of $\un{B}$  on $\un{G}$ given by the formula
\begin{equation}
(A\cdot b)^{(j)}\defeq (b^{(j+1)}\big)^{-1} A^{(j)}\ovl{b}^{(j)}
\end{equation}
for all $j\in\cJ$, where $b= (b^{(j)})_{j\in\cJ}\in \un{B}$, $\ovl{b}= (\ovl{b}^{(j)})_{j\in\cJ}$ is the image of $b$ in $\un{B}/\un{U}=\un{T}$, and $A=(A^{(j)})_{j\in\cJ}\in \un{G}$.
\end{proof}

We now discuss the effect of changing the field $K$ by an unramified extension.
Recall that we have fixed an unramified extension $K'/K$, with residue field $k'$ of degree $r=n!$ over $k$.
Tensoring $k'$ over $k$ gives a natural transformation of functors%
\begin{align*}
\BC:{\mathrm{FL}}_n^{\lambda+\eta}&\longrightarrow{\mathrm{FL}}_n^{\lambda'+\eta'}\\
\big(M,\{\Fil^i M\}_i,\{\phi_i\}_i\big)&\longmapsto
\big(M\otimes_k k' ,\{\Fil^i M\otimes_k k'\}_i,\{\phi_i\otimes_k\mathrm{id}\}_i\big)
\end{align*}
where $\lambda'=(\lambda'_{j'})_{j'\in\cJ'}\in X^*(\un{T})^r$ is characterized by $\lambda'_{j'}=\lambda_{j}$ when $j\equiv j'\pmod{f}$ and similarly for $\eta'$.
We have a similar result as Proposition~\ref{thm:repr:FLgp} for ${\mathrm{FL}}_n^{\lambda'+\eta'}$.
Passing to the quotient by the $\un{T}$-shifted conjugation and using our identifications of $\cJ'$ and $\cJ$ with $\Z/f'$ and $\Z/f$ respectively, we deduce a commutative diagram of stacks over $\Spec \F$:
\begin{equation*}
\xymatrix{\tld{\cF\cL}_{\cJ}\ar[r]\ar_{\tld{\BC}}[d]&
\big[\tld{\cF\cL}_{\cJ}\slash{{\sim}_{\un{T}\text{-\textnormal{sh.cnj}}}}\big]\ar^-{\sim}[r]\ar[d]&
\mathrm{FL}_n^{\lambda+\eta}\ar^{\BC}[d]\\
\tld{\cF\cL}_{\cJ'}\ar[r]&
\big[\tld{\cF\cL}_{\cJ'}\slash{{\sim}_{\un{T}^r\text{-\textnormal{sh.cnj}}}}\big]\ar^-{\sim}[r]&
\mathrm{FL}_n^{\lambda'+\eta'}
}
\end{equation*}
where $\tld{\BC}$ is the diagonal embedding compatible with the identification of $(M\otimes_k k')^{(j')}$ and $M^{(j)}$ when $j\equiv j'\pmod{f}$.

\subsection{Breuil--Kisin modules with descent}
\label{subsubBKD}
\label{par:GPD:KM}

In this section, we review Breuil--Kisin modules with descent data, and their necessary properties. %
We follow closely \cite[\S\,3.1]{LLLM2} and \cite[\S\,3.2]{LLL}.
Throughout \S\,\ref{subsubBKD}, $\tau=\tau(s_\cJ,\mu+\eta)$ is a tame inertial type with $\mu+\eta$ being $1$-generic Fontaine--Laffaille.

\vspace{2mm}

Write $e'\defeq p^{f'}-1$ and fix $\pi'\defeq(-p)^{\frac{1}{e'}}\in \ovl{\Q}_p$ such that $(\pi')^{\frac{e'}{e}}=\pi$.  Write $L'\defeq K'(\pi')$, $\Delta'\defeq \Gal(L'/K')\subseteq \Delta\defeq \Gal(L'/K)$.
As in \S\,\ref{sec:notation:GT}, the character $\tld{\omega}_{K'}:\Delta'\ra W(k')^\times$, $g\mapsto \frac{g(\pi')}{\pi'}$ is independent of the choice of $\pi'$ and $(\tld{\omega}_{K'})^{\frac{p^{f'}-1}{p^f-1}}=\tld{\omega}_{K}$. Let $\tau'$ be the inertial type for $K'$ induced from $\tau$. Then we can view $\tau$ as a $\Delta$-representation whose restriction to $\Delta'$ is given by $\tau'$.

For a $p$-adically complete Noetherian $\cO$-algebra $R$, let $\fS_{L', R} \defeq (W(k') \otimes_{\Zp} R)[\![u']\!]$.
The ring $\fS_{L', R}$ is endowed with an action of $\Delta=\Gal(L'/K)$: for any $g$ in $\Delta'$, we have $g(u') \defeq (\tld{\omega}_{K'}(g)\otimes_{\Zp}1_R) u'$ and $g$ acts trivially on the coefficients.
Let $\sigma \in\Gal(L'/\Qp)$ be the lift of the arithmetic Frobenius on $W(k')$ which fixes $\pi'$.
Then $\sigma^f$ acts on $\fS_{L', R}$, by letting $\sigma^f$ act trivially on both $u'$ and $R$, and through the usual action on $W(k')$.
(One checks that the above rule defines a group action of $\Delta$ on $\fS_{L', R}$.)
If we let $v \defeq (u')^{p^{f'}-1}$ then
\[
(\fS_{L', R})^{\Delta = 1} = (W(k) \otimes_{\Zp} R)[\![v]\!].
\]
As usual, we have the endomorphism $\varphi:\fS_{L', R} \ra \fS_{L', R}$ which acts as $\phz$ on $W(k')$, acts trivially on $R$, and sends $u'$ to $(u')^{p}$.

\begin{defn}
\label{defn:FCris:dd}
A \emph{Breuil--Kisin module} over $R$ with height in $[0, h]$ and descent datum of type~$\tau$ is a triple $(\fM, \phi_{\fM},\{\hat{g}\}_{g\in\Delta})$ where:
\begin{enumerate}
\item $\fM$ is a projective $\fS_{L', R}$-module;
\item $\phi_{\fM}:\phz^*(\fM)\ra\fM$ is an injective $\fS_{L', R}$-linear map  whose cokernel is $E(u')^h$-torsion;
\item $\{\hat{g}:\fM\ra\fM\}_{g\in\Delta}$ is the datum of a semilinear $\Delta$-action on $\fM$ compatible with $\phi_\fM$ (in particular this induces an isomorphism $\iota_\fM: (\sigma^f)^*(\fM)\cong\fM$ (cf. \cite[Remark~5.1.4~(1)]{MLM}));
\item for each $0 \leq j' \leq f' - 1$:
\[
\fM^{(j')}/ u' \fM^{(j')}\cong (\tau')^{\vee} \otimes_{\cO} R
\]
as $\Delta'$-representations.
(As for Fontaine--Laffaille modules, we have a decomposition $\fM\cong \oplus_{j'=0}^{f'-1}\fM^{(j')}$ induced by $W(k')\otimes_{\Zp}R\cong \prod_{j'=0}^{f'-1}R$.)
\end{enumerate}

We let $Y^{[0, h], \tau}$ denote the functor on $p$-adically complete Noetherian $\cO$-algebras taking $R$ to the groupoid of Breuil--Kisin modules over $R$ with height in $[0,h]$ and descent data of type $\tau$. (Recall that $\tau'$ denotes the type $\tau$ viewed as a tame inertial type for $K'$.) We also define $Y^{[0, h],\tau'}$ in a similar fashion.
\end{defn}

If $\chi:\Delta'\ra\cO^\times$ we write $\fM^{(j')}_\chi$ to denote the $\chi$-isotypical component of $\fM^{(j)}$.
Since $\Delta/\Delta'$ is cyclic of order $r$, generated by $\sigma^f$, whenever we have $(\fM,\phi_\fM)\in Y^{[0,h],\tau}(R)$ we have an isomorphism
\begin{align}
\label{eq:pull:back}
\fM^{(j')}_{\chi}\stackrel{\sim}{\rightarrow}\big((\sigma^f)^*(\fM)\big)^{(j'+f)}_{\chi}
\stackrel{\sim}{\rightarrow}\fM^{(j'+f)}_{\chi^{p^{f}}}
\end{align}
for each $j'\in\cJ'$ and $\chi:\Delta'\rightarrow \cO^\times$, where the first isomorphism is induced by the obvious one (i.e.~$(\sigma^f)^*$) and the second is induced by $\iota_\fM$.

\begin{defn}
\label{defn:eigenbasis}
An \emph{eigenbasis} (cf.~\cite[Definition 5.1.6]{MLM}) for $(\fM,\phi_\fM)\in Y^{[0,h],\tau}(R)$ is a collection $\beta=(\beta^{(j')})_{j'\in\cJ}$ where each $\beta^{(j')}=(\beta^{(j')}_i)_{1\leq i\leq n}$ is a basis for $\fM^{(j')}$ over $R$ such that $\Delta'$ acts on $\beta^{(j')}_i$ by the character $\chi_i^{-1}$ (defined in equation~(\ref{eq:def:type})) and satisfying
\begin{equation}
\label{eq:eigenbasis}
\iota_{\fM}((\sigma^f)^*(\beta^{(j'-f)}))=\beta^{(j')}
\end{equation}
for all $j'\in \cJ'$, $1\leq i\leq n$.
If $\beta$ is an eigenbasis for $(\fM,\phi_{\fM})\in Y^{[0, h], \tau}(R)$ we write $C^{(j')}_{\fM,\beta}$ to denote the matrix of $\phi_\fM^{(j')}$ with respect to $\beta$, i.e.~the element of $\Mat_n(\fS_{L', R})$ such that
\[
\phi_\fM^{(j')}\Big(\phz^*\big(\beta^{(j'-1)}\big)\Big)=\beta^{(j')}C^{(j')}_{\fM,\beta}.
\]
The notion of eigenbasis, hence the sequence $(C^{(j')}_{\fM,\beta})_{0\leq j\leq f-1}$, depends on the chosen lowest alcove presentation of $\tau$, since the sequence of character $(\chi_i)_{i}$ does, cf.~\cite[Remark 3.2.12]{LLL}.
\end{defn}

As $\cO$ is chosen to be sufficiently large and the order of $\Delta'$ is coprime to $p$, the objects in $Y^{[0, h], \tau}(R)$ have an eigenbasis Zariski locally.

Let $A_{\fM,\beta}=(A_{\fM,\beta}^{(j')})_{j'\in\cJ'}$ be the tuple of matrices $A_{\fM,\beta}^{(j')}\in \Mat_n(R[\![v]\!])$ defined via
\begin{equation}\label{equ: explicit formula for the morphism}
C^{(j')}_{\fM,\beta}=(s'_{\mathrm{or},j'}) (u')^{\bf{a}_{(s_\cJ,\mu)}^{\prime\, (j')}}\,
 A_{\fM,\beta}^{(j')}\, (u')^{-\bf{a}_{(s_\cJ,\mu)}^{\prime\, (j')}}(s'_{\mathrm{or},j'})^{-1}
\end{equation}
where $\bf{a}_{(s_\cJ,\mu)}^{\prime\, (j')}\defeq \sum_{i=0}^{f'-1}\bm{\alpha}'_{(s_\cJ,\mu),-j'+i}p^i$ and $-j'+i$ is taken modulo $f'$.
By \cite[\S\,5.1]{MLM}, the matrices $C^{(j')}_{\fM,\beta}$, $A_{\fM,\beta}^{(j')}$ only depend on $j'$ modulo $f$ (see the paragraphs after Definition 5.1.6 and Remark 5.1.7 in \emph{loc.~cit.}; note that ($C^{(j')}_{\fM,\beta})_{j'\in\cJ'}$ and $(A_{\fM,\beta}^{(j')})_{j'\in\cJ'}$ do depend on the choice of the lowest alcove presentation $(s_\cJ,\mu)$
of the tame inertial type $\tau$, see~\cite[Remark 5.1.5]{MLM}, \cite[Remark 3.1.12]{LLL}).
\subsection{\'Etale $\varphi$-modules}
\label{subsub:Phi-mod}

This section follows \cite[\S\,5.4]{MLM}. %
Recall that we have fixed a tame inertial type $\tau=\tau(s_\cJ,\mu+\eta)$ with $\mu+\eta$ being $1$-generic Fontaine--Laffaille.

\vspace{2mm}

Let $\cO_{\cE}$ denote the $p$-adic completion of $(W(k)[\![v]\!])[1/v]$, endowed with a Frobenius endomorphisms $\phz$ which extends the Frobenius $\phz$ on $W(k')$ and satisfies $\phz(v) = v^p$.
Let $R$ be a $p$-adically complete Noetherian $\cO$-algebra.
The ring $\cO_{\cE}\widehat{\otimes}_{\Zp}R$ is naturally endowed with a Frobenius endomorphism $\phz$ and we write $\Phi\text{-}\Mod^{\text{\'et},n}(R)$ for the groupoid of \'etale $\phz$-modules over $\cO_{\cE}\widehat{\otimes}_{\Zp}R$.
Its objects are projective modules $\cM$ of rank $n$ over $\cO_{\cE}\widehat{\otimes}_{\Zp}R$, endowed with a
$\cO_{\cE}\widehat{\otimes}_{\Zp}R$-linear isomorphism $\phi_{\cM}:\phz^*(\cM)\stackrel{\sim}{\longrightarrow}\cM$. %
As usual, we obtain a category fibered in groupoids $\Phi\text{-}\Mod^{\text{\'et},n}$ over $p$-adically complete Noetherian $\cO$-algebras.
Given an object $(\cM,\phi_{\cM})\in \Phi\text{-}\Mod^{\text{\'et},n}(R)$ we have an $R$-linear decomposition $\cM\cong \oplus_{j\in\cJ}\cM^{(j)}$ together with $R$-linear and $(v\mapsto v^p)$-semilinear isomorphisms
\[
\phi^{(j)}_\cM: \cM^{(j-1)}\ra \cM^{(j)}.
\]

\vspace{2mm}

Now let $\cO_{\cE,L'}$ denote the $p$-adic completion of $(W(k')[\![u']\!])[1/u']$, endowed with a Frobenius endomorphisms $\phz$ extending the Frobenius $\phz$ on $W(k')$ and such that $\phz(u') = (u')^p$.
We have an analogous definition for the groupoid $\Phi\text{-}\Mod^{\text{\'et},n}_{dd,L'}(R)$ of \'etale $\phz$-modules over $\cO_{\cE,L'}\widehat{\otimes}_{\Zp}R$ of rank $n$ with descent data. %
(An object of $\Phi\text{-}\Mod^{\text{\'et},n}_{dd,L'}(R)$ is the datum of an \'etale $\phz$-module over $\cO_{\cE,L'}\widehat{\otimes}_{\Zp}R$ of rank $n$, together with a collection of $R$-linear isomorphisms $\hat{g}:\cM\ra\cM$ satisfying the properties of Definition~\ref{defn:FCris:dd}, replacing $\fM$ and $\phi_\fM$ by $\cM$ and $\phi_{\cM}$, respectively. Note that the $\Delta$-action on $W(k')[\![u']\!]$ extends, by continuity, to a continuous action on $\cO_{\cE,L'}$.).
We write $\Phi\text{-}\Mod^{\text{\'et},n}_{dd,L'}$ for the corresponding groupoid-valued functor over $p$-adically complete Noetherian $\cO$-algebra.

As before, given an object $(\cM,\phi_{\cM},\{\hat{g}\}_{g\in \Delta})\in \Phi\text{-}\Mod^{\text{\'et},n}_{dd,L'}(R)$ we have an $R$-linear decomposition $\cM\cong \oplus_{j'\in\cJ'}\cM^{(j')}$ together with $R$-linear and $(u'\mapsto (u')^p)$-semilinear isomorphisms
\[
\phi^{(j')}_\cM: \cM^{(j'-1)}\ra \cM^{(j')}.
\]
Moreover, for all $g\in\Delta'$ we have an $R$-linear, $\phi_\cM^{(j')}$-compatible automorphism $\hat{g}^{(j')}: \cM^{(j')}\ra \cM^{(j')}$ giving an $R$-linear action of $\Delta'$ on each factor $\cM^{(j')}$.

\vspace{2mm}

If $(\fM,\phi_{\fM})\in {Y}^{[0,n-1], \tau}(R)$, then $\fM \otimes_{W(k')[\![u']\!]} \cO_{\cE, L'}$, endowed with a Frobenius and descent data induced from those on $\fM$, is an object of $\Phi\text{-}\Mod^{\text{\'et},n}_{dd,L'}(R)$. This produces a natural transformation of functors ${Y}^{[0,n-1], \tau}\ra \Phi\text{-}\Mod^{\text{\'et},n}_{dd,L'}$.
Moreover, since $v = (u')^{p^{f'} - 1}$, taking $\Delta$-fixed elements produces a natural transformation $\Phi\text{-}\Mod^{\text{\'et},n}_{dd,L'}\ra \Phi\text{-}\Mod^{\text{\'et},n}$ between functors over $p$-adically complete Noetherian $\cO$-algebras.
Composition of the two functors above produces a morphism of groupoid-valued functors over $p$-adically complete  Noetherian $\cO$-algebras:
\begin{align}
\label{eq:KMtoPHI}
\eps_\tau: Y^{[0,n-1], \tau}&\longrightarrow \Phi\text{-}\Mod^{\text{\'et},n}\\
\nonumber
(\fM,\phi_{\fM})&\mapsto (\fM \otimes_{W(k')[\![u']\!]}\cO_{\cE, L'},\phi_{\fM}\otimes_{W(k')[\![u']\!]}1_{\cO_{\cE, L'}} )^{\Delta=1}.
\end{align}
Recall that we have fixed a lowest alcove presentation $(s_\cJ,\mu)$ of $\tau$. By \cite[Proposition 5.4.1 and 5.4.3]{MLM}, if $\mu+\eta$ is $n$-generic Fontaine--Laffaille, then the morphism $\eps_\tau$ is a closed immersion of stacks over $\Spf\, \cO$.

\subsection{Galois representations and $\phz$-modules}
\label{sub:MFSandGR}
In this section we study the relations between the groupoids introduced above and $p$-adic Galois representations.
We keep the setting of the previous sections; in particular $\tau=\tau(s_\cJ,\mu+\eta)$ is a tame inertial type with $\mu+\eta$ being $1$-generic Fontaine--Laffaille.

If $R$ is a complete local Noetherian $\cO$-algebra with residue field $\F$, we write $\Rep^n_{R}(G_K)$ for the groupoid of $p$-adic  representations of $G_K$ on free $R$-modules of rank $n$, and %
we have the anti-equivalence of groupoids of J--M. Fontaine:
\[
\bV_K^*:\Phi\text{-}\Mod^{\text{\'et},n}(R) \ra \Rep^n_R(G_{K_{\infty}}),
\]
which induces $$T_{dd}^*:Y^{[0,n-1], \tau}(R)\rightarrow \Rep^n_R(G_{K_{\infty}})$$ as the composite of the functor~(\ref{eq:KMtoPHI}) followed by $\bV_K^*$.
By~\cite[Proposition 5.4.3]{MLM} we see that $T_{dd}^*$ is a fully faithful functor if $\mu+\eta$ is $n$-generic Fontaine--Laffaille.

If $R$ is an $\F$-algebra, by the main result of \cite[Th\'eor\`eme 6.1]{Fontaine-Laffaille} we have a fully faithful contravariant functor
\[
\Tcris: \mathrm{FL}_n^{\lambda+\eta}(R)\ra \Rep^n_{R}(G_K)
\]
(see also \cite[Theorem 2.1.3]{HLM}).

\begin{rmk}
Note that the definition of Fontaine--Laffaille modules, Definition~\ref{def: Fontaine--Laffaille module}, is a bit more general than the one in \cite{Fontaine-Laffaille}, since the original definition of a Fontaine--Laffaille module $M$ in \cite{Fontaine-Laffaille} further requires $\min\{i\in\Z\mid\Fil^iM=0\}-\max\{i\in\Z\mid\Fil^iM=M\}\leq p-1$. However, we still have a fully faithful functor $\Tcris: \mathrm{FL}_n^{\lambda+\eta}(R)\ra \Rep^n_{R}(G_K)$ with our definition, Definition~\ref{def: Fontaine--Laffaille module}, which is a minor variation of the results in \cite[Th\'eor\`eme 6.1]{Fontaine-Laffaille} by twisting an appropriate Lubin--Tate character.
\end{rmk}

We further define the map
\begin{equation}
\label{eq:to:Gal}
\rhobar_{\bullet,\lambda+\eta}:%
\xymatrix{
\tld{\cF\cL}_\cJ(R)\ar[r]&
\big[\tld{\cF\cL}_\cJ\slash{{\sim}_{\un{T}\text{-\textnormal{sh.cnj}}}}\big](R)\ar^-{\sim}[r]
&\FL_n^{\lambda+\eta}(R)\ar^-{\Tcris}[r]
& \Rep^n_{R}(G_K)
}
\end{equation}
(where the first arrow is the natural quotient map, and the second is described in Proposition~\ref{thm:repr:FLgp} above). We write $\rhobar_{x,\lambda+\eta}$ for the image of $x\in\tld{\cF\cL}_\cJ(R)$ under the map above.

For convenience, we record the effect of the functor $\Tcris$ on Fontaine--Laffaille modules of rank one, which will be used later.
\begin{lemma}
\label{lem:FL:rk1}
Let $M$ be a Fontaine--Laffaille module with $\F$-coefficient.
Assume that $M$ has rank one; for each $j\in \cJ$, let $\lambda_{j,1}\in[0,p-2]$ be the unique integer such that $\gr^{\lambda_{j,1}}(M^{(j)})\neq 0$.
Then $\Tcris(M)|_{I_K}\cong \omega_f^{\sum_{j\in\cJ}\lambda_{f-j,1}p^j}$.
\end{lemma}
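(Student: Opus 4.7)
The plan is to first reduce to a canonical form up to unramified twist, and then identify the inertial character by matching against an explicit crystalline lift.

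First, a rank-one Fontaine--Laffaille module $M$ over $\F$ with fixed weights $(\lambda_{j,1})_{j\in\cJ}$ admits a compatible basis $(e_j)_{j\in\cJ}$ in which the Frobenius takes the form $\phi^{(j)}(\gr e_j)=c_je_{j+1}$ for scalars $c_j\in\F^\times$, and the cyclic product $c\defeq \prod_{j\in\cJ}c_j\in\F^\times$ is the unique isomorphism invariant (rescaling $e_j$ by $b_j$ only changes the $c_j$ by $b_j/b_{j+1}$). Rank-one Fontaine--Laffaille modules with all weights zero classify unramified characters of $G_K$ via $\Tcris$, and they realize every value of $c\in\F^\times$. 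Hence any two rank-one Fontaine--Laffaille modules with the same weight profile differ by an unramified twist, and so $\Tcris(M)|_{I_K}$ depends only on $(\lambda_{j,1})_{j\in\cJ}$.

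Second, I would identify this inertial character by producing an explicit crystalline lift. Using $\omega_f=\sigma_0\circ\tld\omega_K$ and $\sigma_j=\sigma_0\circ\phz^{-j}$, together with the identity $\phz^{-j}([y])=[y^{p^{f-j}}]$ on Teichm\"uller representatives, one obtains $\sigma_j\circ\tld\omega_K=\omega_f^{p^{f-j}}$ as characters $I_K\to\cO^\times$. The character
\[
\tld\chi\defeq \prod_{j\in\cJ}(\sigma_j\circ\tld\omega_K)^{\lambda_{j,1}}:I_K\longrightarrow\cO^\times
\]
extends to a crystalline character of $G_K$ (a product of partial Lubin--Tate characters of $K$) whose $\sigma_j$-Hodge--Tate weight equals $\lambda_{j,1}$ for each $j\in\cJ$. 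Its mod-$\varpi$ reduction $\chi$ is therefore the image under $\Tcris$ of some rank-one Fontaine--Laffaille module with the same weight profile as $M$, and by construction $\chi|_{I_K}=\omega_f^{\sum_{j\in\cJ}\lambda_{j,1}p^{f-j}}$, which equals $\omega_f^{\sum_{j\in\cJ}\lambda_{f-j,1}p^j}$ after re-indexing $j\mapsto f-j$ modulo $p^f-1$.

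Combining the two steps, $M$ and the Fontaine--Laffaille module associated to $\chi$ have the same weight profile, hence differ only by an unramified twist, so $\Tcris(M)|_{I_K}\cong\chi|_{I_K}$, which is the claimed formula. The main obstacle is bookkeeping with the conventions relating $\omega_f$, $\sigma_j$, and Hodge--Tate weights: the identity $\sigma_j\circ\tld\omega_K=\omega_f^{p^{f-j}}$ is exactly what produces $\lambda_{f-j,1}$ rather than $\lambda_{j,1}$ in the final exponent, and this must be tracked carefully through the contravariance of $\Tcris$ and the identification of $\cJ$ with $\Z/f$ via $j\mapsto\sigma_j$.
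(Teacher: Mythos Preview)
Your argument is correct, but it takes a genuinely different route from the paper. The paper's proof is a one-line citation to \cite[Th\'eor\`eme 5.3(iii)]{Fontaine-Laffaille}, which computes the inertial action of $\Tcris$ on rank-one modules directly. You instead unwind this by (1) observing that rank-one Fontaine--Laffaille modules with a fixed weight profile form a torsor under unramified twists, and (2) identifying the inertial character via an explicit crystalline lift built from Lubin--Tate characters.

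Your approach is more self-contained and pedagogical, and it makes the appearance of the index shift $\lambda_{f-j,1}$ transparent via the identity $\sigma_j\circ\tld{\omega}_K=\omega_f^{p^{f-j}}$. The one place where you still lean on Fontaine--Laffaille theory is the assertion that the reduction of a crystalline character with $\sigma_j$-Hodge--Tate weight $\lambda_{j,1}$ is $\Tcris$ of a Fontaine--Laffaille module with weight $\lambda_{j,1}$ at $j$; this is the basic compatibility of the theory with Hodge--Tate weights and is strictly more foundational than the inertial formula you are proving, so there is no circularity. Both proofs are valid; the paper's is more economical, yours more illuminating about where the exponent comes from.
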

\begin{proof}
This is a direct consequence of \cite[Th\'eor\`eme 5.3(iii)]{Fontaine-Laffaille}.
\end{proof}
\clearpage{}%
\clearpage{}%
\section{The geometry of $\tld{\cF\cL}_\cJ$}
\label{sec:geo:FL}
In this section, we construct and study an explicit partition $\cP_{\cJ}$ on $\tld{\cF\cL}_{\cJ}$ by locally closed subschemes of $\tld{\cF\cL}_{\cJ}$.

Throughout this section, $R$ denotes a Noetherian $\F$-algebra.
Since all schemes are defined over $\Spec \F$ we omit the subscript $\bullet_{\F}$ from the notation when considering the base change to $\F$ of an object $\bullet$ defined over $\cO$ (e.g.~$\GL_{n,\F}$ will be denoted by $\GL_n$ and so on).
This shall cause no confusion.

\subsection{A partition on $\tld{\cF\cL}_\cJ$}
\label{sub:finest:elements}
Recall that $\tld{\cF\cL}$ denotes the representative for the sheafification of the functor $R\mapsto U(R)\backslash\GL_n(R)$ on the $fpqc$ site of Noetherian $\F$-algebras.

In this section, we introduce an explicit partition $\cP$ on $\tld{\cF\cL}$ which admits a natural interpretation related to the usual Bruhat decomposition on the flag variety (see Proposition~\ref{prop: refinement of Bruhat partition}). At the end of this section, we use the partition $\cP$ on $\tld{\cF\cL}$ to define a partition $\cP_\cJ$ on $\tld{\cF\cL}_\cJ$.

We write $\mathbf{n}\defeq \{1,\dots,n\}$ and denote the power set of $\mathbf{n}$ by $\wp(\mathbf{n})$.
Let $S\subseteq \mathbf{n}$ be a subset.
For each $A\in \GL_n(R)$ we write $f_S(A)$ for the minor of $A$ with rows in $\{n-\#S+1,\dots,n\}$  and with columns in $S$. For each $w\in W$ and each $A\in\GL_n(R)$, our convention says that the $i$-th row (resp.~the $i$-th column) of $A$ is the same as the $w(i)$-th row of $wA$ (resp.~the $w(i)$-th column of $Aw^{-1}$).
Note that for each subset $S\subseteq\mathbf{n}$ the function $f_{w(S)}$ can be identified with the composition
$$\GL_{n}\xrightarrow{\cdot w}\GL_{n}\xrightarrow{f_S}\bA^1$$
up to $\pm$ signs. For each $S\subseteq \mathbf{n}$, it is clear that the map $f_S: \GL_{n}\rightarrow \bA^1$ descends to a map of schemes
\[
f_S:\tld{\cF\cL}\rightarrow\bA^1.
\]

For each $S\subseteq \mathbf{n}$, we let $\cH_S\subsetneq \tld{\cF\cL}$ be the vanishing locus of $f_S$.
From now on, we will consider intersection, union and complement of constructible subset(s) of $\tld{\cF\cL}$. We use the notation $\cdot^{\rm{c}}$ for the complement of a subset of $\mathbf{n}$, or the complement of a constructible subset of $\tld{\cF\cL}$.
For each $K\subseteq \wp(\mathbf{n})$, we define the locally closed subscheme
\begin{equation*}
\cC_K\defeq\underset{S\in K}{\bigcap}\cH_S\cap \underset{S\notin K}{\bigcap}\cH_S^{\rm{c}}.
\end{equation*}
Note that $\cC_K$ can be empty for certain choices of $K\subseteq \wp(\mathbf{n})$.

We define $\cP$ to be the set of non-empty locally closed subschemes of $\tld{\cF\cL}$ of the form $\cC_K$ for some choice of $K\subseteq \wp(\mathbf{n})$.

\begin{lemma}\label{lem: elementary, elements}
The subschemes $\cH_S$ and $\cC_K$ satisfy the following elementary properties.
\begin{enumerate}
\item If $K,K'\subseteq\wp(\mathbf{n})$ with $K\neq K'$, then $\cC_K\cap \cC_{K'}=\emptyset$;
\item $\tld{\cF\cL}=\bigcup_{K\subseteq \wp(\mathbf{n})}\cC_K$;
\item For each $S\subseteq\mathbf{n}$, $\cH_S=\bigcup_{S\in K}\cC_K$;
\item For each $K\subseteq\wp(\mathbf{n})$, $\bigcap_{S\in K}\cH_S=\bigcup_{K\subseteq K'}\cC_{K'}$;
\item $\cC_{\emptyset}$ is the unique element in $\cP$ which is an open subscheme of $\tld{\cF\cL}$.
\end{enumerate}
\end{lemma}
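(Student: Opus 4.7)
My plan is to reduce items (1)--(4) to direct unpacking of the definitions at the level of points of $\tld{\cF\cL}$, and to handle item (5) using irreducibility of $\tld{\cF\cL}$ together with a non-vanishing input.

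For a point $x$ of $\tld{\cF\cL}$, I will set $K(x)\defeq\{S\in\wp(\mathbf{n})\mid f_S(x)=0\}$; by construction $x\in\cC_{K(x)}$, and this single observation is the heart of item (2), since the $\cC_K$ visibly cover $\tld{\cF\cL}$. For item (1), if $K\neq K'$ some $S$ lies in the symmetric difference; say $S\in K\setminus K'$. Then $\cC_K\subseteq \cH_S$ while $\cC_{K'}\subseteq \cH_S^{\rm{c}}$, forcing $\cC_K\cap \cC_{K'}=\emptyset$. For item (3), the inclusion $\supseteq$ is definitional, and conversely any $x\in\cH_S$ has $S\in K(x)$, so $x\in\cC_{K(x)}$ with $S\in K(x)$. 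Item (4) is analogous: $x\in\bigcap_{S\in K}\cH_S$ means precisely $K\subseteq K(x)$, which by item (2) places $x\in \cC_{K(x)}$ with $K\subseteq K(x)$.

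For item (5), I first note that $\cC_{\emptyset}=\bigcap_{S\in\wp(\mathbf{n})}\cH_S^{\rm{c}}$ is a finite intersection of open subschemes (finiteness being automatic since $\wp(\mathbf{n})$ has $2^n$ elements), hence open in $\tld{\cF\cL}$. To see it is non-empty---and hence a genuine member of $\cP$---I will exhibit a closed point at which every $f_S$ is non-vanishing; a suitable Vandermonde-like matrix $(a_i^{j-1})_{i,j}$ (over a sufficiently large extension of $\F$) does the job, since all of its minors can be written as products of nonzero Vandermonde-type determinants. For uniqueness, I will invoke that $\tld{\cF\cL}=U\backslash\GL_n$ is irreducible, being a quotient of the irreducible scheme $\GL_n$; so every non-empty open subscheme is dense. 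When $K\neq\emptyset$ and $S\in K$, one has $\cC_K\subseteq \cH_S$, and $\cH_S$ is a \emph{proper} closed subscheme (any permutation matrix $w$ with $w(\{n-\#S+1,\dots,n\})=S$ gives $f_S(w)=\pm 1\neq 0$), so no non-empty $\cC_K$ with $K\neq\emptyset$ can be open.

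The only step that is not purely formal is the non-vanishing input in item (5): that each $f_S$ is not identically zero on $\tld{\cF\cL}$ and that a \emph{single} point witnesses non-vanishing of all $f_S$ simultaneously. Both are routine facts about minors of generic matrices, so I do not anticipate any serious obstacle; the remainder of the lemma is pure bookkeeping with the definitions of $\cH_S$ and $\cC_K$.
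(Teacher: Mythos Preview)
Your proof is correct and matches the paper's approach; the paper simply records that the statements are immediate consequences of the definitions, and your write-up supplies those details. One small remark on item~(5): your Vandermonde claim (``all of its minors can be written as products of nonzero Vandermonde-type determinants'') is imprecise---the minors $f_S$ are generalized Vandermonde determinants (Vandermonde times a Schur polynomial), and in positive characteristic the latter factor requires a genericity argument rather than a structural one. But this detour is unnecessary: you already show each $\cH_S$ is a proper closed subscheme via permutation matrices, and you already invoke irreducibility of $\tld{\cF\cL}$, so the finite intersection $\cC_\emptyset=\bigcap_S\cH_S^{\rm c}$ of non-empty opens is automatically non-empty.
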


\begin{proof}
These are immediate consequences from the definitions.
\end{proof}

By Lemma~\ref{lem: elementary, elements} the set $\cP$ forms a topological partition of $\tld{\cF\cL}$ by reduced locally closed subschemes.

Let $S_\bullet$ denote a sequence $\mathbf{n}=S_1\supset S_2\supset \cdots \supset S_n$ satisfying $\# S_i=n-i+1$ for all $1\leq i\leq n$.
For convenience, we call a sequence $S_\bullet$ as above a \emph{strictly decreasing sequence}. For each $w\in W$, we associate a strictly decreasing sequence $S_{\bullet,w}$ by
$$S_{i,w}=w^{-1}(\{i,i+1,\cdots,n-1,n\})$$
for each $1\leq i\leq n$. By abuse of the notation, we also write $S_{\bullet, w}$ for the subset of $\wp(\mathbf{n})$ consisting of $S_{i,w}$ for all $1\leq i\leq n$.
Then it is easy to see that there is a bijection
\begin{equation}\label{eq:bjc:sis}
W\stackrel{\sim}{\longrightarrow}\left\{ \text{strictly decreasing sequences}\right\},\,\,
w\longmapsto S_{\bullet,w}.
\end{equation}

For $\al\in\Phi^+$ we write $u_\al:~U\twoheadrightarrow \bG_a$ for the projection to the $\al$-entry.
\begin{lemma}\label{lem: explicit projection}
For each $w\in W$, the natural projection
$$ w_0 B w_0 w=T w_0 U w_0 w\cong T\times w_0 U w_0 w\twoheadrightarrow T$$
is given by the restriction of
\begin{equation}\label{equ: explicit formula for torus}
\Diag\left(\pm f_{S_{1,w}}f_{S_{2,w}}^{-1},\cdots,\pm f_{S_{n-1,w}}f_{S_{n,w}}^{-1},\pm f_{S_{n,w}}\right),
\end{equation}
and the composition
$$ w_0 B w_0 w=T w_0 U w_0 w\cong T\times w_0 U w_0 w\twoheadrightarrow w_0 U w_0 w \cong U\xrightarrow{u_\al} \bG_{a}$$
is given by the restriction of
\begin{equation}\label{equ: explicit formula for root}
\pm f_{S_{w_0(i)+1,w}\sqcup\{w^{-1}w_0(i^\prime)\}}f_{S_{w_0(i),w}}^{-1}
\end{equation}
for each $\al=(i,i^\prime)\in\Phi^+$ with $1\leq i<i^\prime\leq n$. Here $\pm$ means up to a sign.
\end{lemma}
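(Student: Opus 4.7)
The key idea is to work with the factorization $A = t u^{-} w$ where $t \in T$, $u^{-} \in U^{-} = w_{0} U w_{0}$, which is the defining form of an element of $T w_{0} U w_{0} w$. The two projections onto $T$ and $U$ then amount to extracting $t$ and $u \defeq w_{0} u^{-} w_{0}$, respectively, from $A$.

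First I would handle the effect of right-multiplication by $w^{-1}$. Since $w$ is a permutation matrix with $j$-th column $e_{w(j)}$, right multiplication permutes columns: $(Aw^{-1})_{ij} = A_{i, w^{-1}(j)}$. Consequently, for any $S \subseteq \mathbf{n}$,
\[
f_{S}(A) = \pm f_{w(S)}(Aw^{-1}) = \pm f_{w(S)}(tu^{-}),
\]
the sign accounting for the reordering of columns. Applied to $S = S_{i,w} = w^{-1}(\{i,\ldots,n\})$, this reduces every minor in the statement to a minor of the lower triangular matrix $tu^{-}$.

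Next I would compute those minors using the lower triangular structure of $tu^{-}$, whose diagonal is $(t_{1},\ldots,t_{n})$. The minor of $tu^{-}$ with rows and columns $\{n+1-i,\ldots,n\}$ is simply the determinant of the bottom right $i \times i$ block, which is lower triangular with diagonal $t_{n+1-i},\ldots,t_{n}$; hence
\[
f_{S_{i,w}}(A) = \pm\, t_{n+1-i}\, t_{n+2-i}\,\cdots\, t_{n}.
\]
Taking ratios gives the diagonal entries of $t$ and proves \eqref{equ: explicit formula for torus}. For the root formula, fix $\alpha = (i,i')$ with $i<i'$ and consider the minor of $tu^{-}$ with rows $\{n+1-i,\ldots,n\}$ and columns $\{n+1-i'\}\cup\{n+2-i,\ldots,n\}$ (the set obtained from $\{n+1-i,\ldots,n\}$ by swapping $n+1-i$ with $n+1-i'$, noting $n+1-i' < n+1-i$). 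The submatrix is lower triangular once the columns are placed in increasing order: its first row has a single nonzero entry $(tu^{-})_{n+1-i,\, n+1-i'}$ (the other entries are strictly above the diagonal of $tu^{-}$ and so vanish), and the remaining rows form a lower triangular block with diagonal $t_{n+2-i},\ldots,t_{n}$. Cofactor expansion along the first row yields
\[
f_{S_{w_{0}(i)+1,w}\sqcup\{w^{-1}w_{0}(i')\}}(A)
= \pm\, (tu^{-})_{n+1-i,\,n+1-i'}\cdot t_{n+2-i}\cdots t_{n}
= \pm\, t_{n+1-i}(u^{-})_{n+1-i,\,n+1-i'}\cdot t_{n+2-i}\cdots t_{n}.
\]

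Finally, dividing by $f_{S_{w_{0}(i),w}}(A) = \pm\, t_{n+1-i}t_{n+2-i}\cdots t_{n}$ gives $\pm (u^{-})_{n+1-i,\,n+1-i'}$, and since $u = w_{0} u^{-} w_{0}$ implies $u_{i,i'} = (u^{-})_{n+1-i,\,n+1-i'}$, this equals $\pm u_{\alpha}(u)$, which is \eqref{equ: explicit formula for root}. No step here is conceptually subtle; the main thing to watch is sign bookkeeping from column reorderings and from the conjugation $u \mapsto w_{0} u w_{0}$, but the lemma is stated only up to sign so this is harmless.
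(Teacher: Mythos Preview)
Your approach is correct and is exactly what the paper has in mind (the paper's proof is simply ``This is a simple computation of minors of matrices in $w_0Bw_0w(R)$''). There is, however, a bookkeeping slip in your first computation: since $\#S_{i,w}=n-i+1$, the row set for $f_{S_{i,w}}$ is $\{i,\ldots,n\}$, and after pulling off $w$ the column set becomes $w(S_{i,w})=\{i,\ldots,n\}$ as well; so the relevant minor of $tu^{-}$ is the bottom-right $(n-i+1)\times(n-i+1)$ block, giving $f_{S_{i,w}}(A)=\pm\,t_i t_{i+1}\cdots t_n$ rather than $\pm\,t_{n+1-i}\cdots t_n$. With this correction the ratios immediately yield the $i$-th diagonal entry $t_i$. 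Your second computation (for the root formula) is already correct as written, because there the index really is $w_0(i)=n+1-i$.
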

\begin{proof}
This is a simple computation of minors of matrices in $w_0Bw_0w(R)$.
\end{proof}

For a strictly decreasing sequence $S_\bullet$ we define the following open subschemes of $\tld{\cF\cL}$:
\begin{equation}
\cM_{S_\bullet}^\circ\defeq \underset{1\leq i\leq n}{\bigcap}\cH_{S_i}^{\rm{c}}.
\end{equation}
If the strictly decreasing sequence is given by $S_{\bullet,w}$ for some $w\in W$ (which is always possible from the bijection in (\ref{eq:bjc:sis})) we write $\cM^\circ_w$ for $\cM^\circ_{S_{\bullet,w}}$. Note that $\cM_{w}^\circ$ is a topological union of elements of $\cP$. More precisely,
\begin{equation}\label{equ: cM_w is union of elements}
\cM^\circ_w=\bigcup_{K\subseteq S_{\bullet,w}^{\rm{c}}}\cC_K.
\end{equation}
Moreover, we observe that $\underset{\substack{w\in W}}{\bigcap}\cM_{w}^\circ=\cC_{\emptyset}$ is the unique element in $\cP$ which is an open subscheme of $\tld{\cF\cL}$. We also consider
\begin{equation}
\overline{\cM}_{w}\defeq \cC_{S_{\bullet,w}^{\rm{c}}}
\end{equation}
which is clearly a closed subscheme of $\cM^\circ_w$. By computing different minors of matrices in $T w(R)$, one easily check that $\overline{\cM}_{w}$ is actually the schematic image of $T w$ in $\cM^\circ_w$, both characterized by the vanishing of $f_S$ for all $S\notin S_{\bullet,w}$.
We now see that the open subschemes $\cM^\circ_w$ have a more familiar description in terms of Schubert cells.

\begin{lemma}\label{lem: hypersurfaces are Schubert}
Let $w\in W$. Then we have
\begin{equation}\label{equ: open cell}
\cM_w^\circ=U\backslash U w_0 B w_0w,
\end{equation}
and
\begin{equation}\label{equ: hypersurface}
\cH_{S_{i+1,w}}=\overline{ U\backslash U s_iw_0 B w_0w}
\end{equation}
for each $1\leq i\leq n-1$, where $s_i=(i,i+1)\in W$.
\end{lemma}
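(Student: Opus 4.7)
The plan is to reduce both equalities to the case $w = e$ via the automorphism $A \mapsto Aw$ of $\tld{\cF\cL}$, and then to prove the reduced statements by direct computations with minors. The reduction follows from the identity $f_S(Aw) = \pm f_{w(S)}(A)$ observed in the paragraph preceding Lemma~\ref{lem: elementary, elements}: right multiplication by $w$ sends $\cH_{\{i,\dots,n\}}$ onto $\cH_{S_{i,w}}$ and $\cH_{\{i+1,\dots,n\}}$ onto $\cH_{S_{i+1,w}}$, and sends $U\backslash U w_0Bw_0$ onto $U\backslash U w_0 B w_0 w$ and $U\backslash Us_iw_0Bw_0$ onto $U\backslash U s_iw_0Bw_0w$.

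For the reduced statement~(\ref{equ: open cell}), namely $\cM_e^\circ = U\backslash Uw_0Bw_0$, the inclusion $\supseteq$ is immediate from $U$-invariance: for $A = u\cdot (w_0bw_0)$ with $w_0bw_0$ lower triangular, the minor $f_{\{i,\dots,n\}}(A)$ equals the determinant of a lower triangular submatrix, namely $\prod_{j=i}^n (w_0bw_0)_{jj} \in R^\times$. For the reverse inclusion, given $A\in\GL_n(R)$ with all $f_{\{i,\dots,n\}}(A)$ invertible, I produce the desired factorization $A = u \cdot (w_0bw_0)$ by descending Gaussian elimination: one successively left-multiplies by suitable upper-unipotent elementary matrices to clear the entries above the diagonal, column by column from right to left; at step $i$, the pivot occupying row $i$ and column $i$ (after the previous clearings) is $f_{\{i,\dots,n\}}(A)/f_{\{i+1,\dots,n\}}(A)$, and hence invertible by hypothesis.

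For the reduced statement~(\ref{equ: hypersurface}), namely $\cH_{\{i+1,\dots,n\}} = \overline{U\backslash Us_iw_0Bw_0}$, the containment $\supseteq$ results from the direct vanishing computation $f_{\{i+1,\dots,n\}}(s_i\cdot (w_0bw_0))=0$: left multiplication by $s_i$ swaps rows $i$ and $i+1$ of $w_0bw_0$, so that the submatrix of $s_i\cdot (w_0bw_0)$ with rows $\{i+1,\dots,n\}$ and columns $\{i+1,\dots,n\}$ has first row equal to the row $i$ of $w_0bw_0$ restricted to columns $\{i+1,\dots,n\}$, which vanishes by lower-triangularity. Equality of closures then follows by comparing dimensions and irreducibility: $\cH_{\{i+1,\dots,n\}}$ is irreducible, being the image under the smooth surjection $\GL_n \ra \tld{\cF\cL}$ (with connected fibers) of the vanishing locus of an irreducible minor polynomial in $\GL_n$; the cell $U\backslash Us_iw_0Bw_0$ is irreducible as the image of $U\times w_0Bw_0$, and its dimension is $n(n+1)/2 - 1$, obtained by computing the stabilizer of $s_i$ in $U\times w_0Bw_0$ acting by $(u,b)\cdot g = ugb^{-1}$, which is the one-parameter subgroup $s_iUs_i\cap w_0Bw_0$ corresponding to the unique simple root inverted by $s_i$. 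As both sides are irreducible closed subsets of $\tld{\cF\cL}$ of the same dimension $n(n+1)/2 - 1$ with one contained in the other, they coincide.

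The main technical step is the descending Gaussian elimination producing the factorization in the first equality, which must be carried out over a general Noetherian $\F$-algebra $R$ while tracking the invertibility of each pivot in terms of the minors $f_{\{i,\dots,n\}}(A)$; once this is achieved, the rest of the argument is routine bookkeeping with dimensions and irreducibility.
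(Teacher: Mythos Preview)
Your proof is correct. The argument for~(\ref{equ: open cell}) is essentially the same as the paper's: both show the two inclusions, with the nontrivial direction supplied by Gaussian elimination with invertible pivots $f_{\{i,\dots,n\}}(A)/f_{\{i+1,\dots,n\}}(A)$.

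For~(\ref{equ: hypersurface}) your route differs from the paper's. The paper invokes the Bruhat decomposition to write
\[
(\cM_w^\circ)^{\mathrm{c}}=\bigcup_{i=1}^{n-1}\cH_{S_{i+1,w}}=\bigcup_{i=1}^{n-1}\overline{U\backslash Us_iw_0Bw_0w},
\]
and then identifies which closed Schubert variety equals which $\cH_{S_{i+1,w}}$ by checking that $f_{S_{k,w}}$ does not vanish on $U\backslash Us_iw_0Bw_0w$ for $k\neq i+1$ (using Lemma~\ref{lem: explicit projection} and the inclusion $U\backslash Us_iw_0Bw_0w\hookrightarrow U\backslash Uw_0Bw_0s_iw$). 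In particular, irreducibility of $\cH_{S_{i+1,w}}$ is a \emph{consequence} of the argument rather than an input. Your approach bypasses Bruhat entirely: you establish the containment $\overline{U\backslash Us_iw_0Bw_0}\subseteq\cH_{\{i+1,\dots,n\}}$ by the direct vanishing of the minor on $s_iw_0Bw_0$, then conclude by a dimension count together with the classical fact that a minor of a generic matrix is an irreducible polynomial (so $\cH_S$ is an irreducible hypersurface). This is more economical for the present lemma and avoids any appeal to the closure order on Schubert cells, at the price of importing that irreducibility fact; the paper's argument, by contrast, extracts the result from the Schubert-cell structure that is in any case used throughout the paper.
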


\begin{proof}
The RHS of (\ref{equ: open cell}) is clearly inside the LHS as for all $1\leq i\leq n$ we have
\begin{equation}\label{equ: simple nonvanishing}
f_{S_{i,w}}\neq 0
\end{equation}
on $w_0Bw_0w\subsetneq\GL_n$, and hence on $U\backslash U w_0 B w_0w\subsetneq\tld{\cF\cL}$. Conversely, any matrix $A\in \GL_n(R)$ satisfying (\ref{equ: simple nonvanishing}) for each $1\leq i\leq n$ can be written as $uw_0bw_0w$ for some $u\in U(R)$ and $b\in B(R)$, and thus the LHS of (\ref{equ: open cell}) is also in the RHS. Hence the equality (\ref{equ: open cell}) holds. It follows from the definition of $\cM_w^\circ$, (\ref{equ: open cell}) and the property of Bruhat stratification that
$$
\bigcup_{i=1}^{n-1}\cH_{S_{i+1,w}}=(\cM_w^\circ)^{\rm{c}}=\left(U\backslash U w_0 B w_0w\right)^{\rm{c}}=\underset{w'<w_0}{\bigsqcup}U\backslash U w' B w_0w=\bigcup_{i=1}^{n-1}\overline{ U\backslash U s_iw_0 B w_0w}.
$$
Hence we observe that both sides of (\ref{equ: hypersurface}) are irreducible components of $(\cM_w^\circ)^{\rm{c}}$, and it suffices to notice that $f_{S_{k,w}}\neq0$ on $U\backslash U s_iw_0 B w_0w$ for all $k\in\mathbf{n}\setminus\{i+1\}$ by using Lemma~\ref{lem: explicit projection} and the fact that
$$U\backslash U s_iw_0 B w_0w\hookrightarrow U\backslash U w_0 B w_0s_iw\xleftarrow{\sim} w_0 B w_0s_iw,$$ which completes the proof.
\end{proof}

\begin{lemma}\label{lem: insert}
Let $A \in \GL_n(R)$. Suppose that we have a sequence $\emptyset \subsetneq S_n \subsetneq S_{n-1} \subsetneq \cdots \subsetneq S_k \subsetneq S^\prime\subseteq \mathbf{n}$ such that $\#S_\ell=n+1-\ell$ for each $k\leq \ell\leq n$. Assume further that $f_{S^\prime}(A) \neq 0$ and $f_{S_\ell}(A) \neq 0$ for each $k\leq \ell\leq n$. Then there exists $i \in S^\prime\setminus S_k$ such that $f_{S_k \sqcup \{i\}}(A) \neq 0$.
\end{lemma}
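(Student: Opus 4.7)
The plan is to run a linear-algebra argument which essentially uses only the boundary hypotheses $f_{S_k}(A)\neq 0$ and $f_{S'}(A)\neq 0$; the intermediate nonvanishings $f_{S_\ell}(A)\neq 0$ for $k<\ell\leq n$ play no role and are presumably included to match the situation in which the lemma will be invoked. By localizing and reducing modulo a suitable prime (the only real subtlety for general $R$, addressed at the end) one reduces to the case where $R=F$ is a field. Let $B\defeq A[\{k-1,k,\ldots,n\};S']$ be the $(n+2-k)\times\#S'$ submatrix of $A$ formed by the bottom $n+2-k$ rows and the columns in $S'$; this is well-posed because $\#S'\geq n+2-k$, and the square matrix computing $f_{S'}(A)$ contains $B$ as a block of rows.

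The key step is to show $\rk(B)=n+2-k$. The square submatrix of $B$ on rows $\{k,\ldots,n\}$ and columns $S_k$ has determinant $f_{S_k}(A)\neq 0$, so $\rk(B)\geq n+1-k$ and moreover the $n+1-k$ rows of $B$ indexed by $\{k,\ldots,n\}$ are already linearly independent. If $\rk(B)$ were equal to $n+1-k$, then row $k-1$ of $B$ would be a linear combination $\sum_{\ell=k}^n\lambda_\ell\cdot(\text{row }\ell\text{ of }B)$; using this same combination as a row operation on the full square matrix $A[\{n-\#S'+1,\ldots,n\};S']$ (whose rows include $\{k-1,k,\ldots,n\}$) would zero out row $k-1$ while preserving the determinant, forcing $f_{S'}(A)=0$---a contradiction.

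Once $\rk(B)=n+2-k$ is known, the conclusion is immediate. The columns of $B$ indexed by $S_k$ are linearly independent, spanning a subspace $V\subset F^{n+2-k}$ of dimension $n+1-k<\rk(B)$; hence some column of $B$ lies outside $V$, and such a column is necessarily indexed by some $i\in S'\setminus S_k$. Together with the $S_k$-columns it then forms a basis of $F^{n+2-k}$, so the corresponding $(n+2-k)\times(n+2-k)$ determinant $f_{S_k\sqcup\{i\}}(A)$ is nonzero.

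The main obstacle is the passage from a field to a general Noetherian $R$, since nonvanishing in $R$ becomes delicate once there are nilpotents. The uniform substitute is the Cramer-rule identity
\[
f_{S_k}(A)\cdot A[k-1;j]-\sum_{\ell=k}^n c_\ell\, A[\ell;j]=\pm f_{S_k\sqcup\{j\}}(A)\qquad (j\in S'\setminus S_k),
\]
where $c_\ell$ are the appropriate cofactors of $A[\{k,\ldots,n\};S_k]$. Combined with the row-operation argument above, this identity shows that if all $f_{S_k\sqcup\{i\}}(A)$ vanish then $f_{S_k}(A)^N\cdot f_{S'}(A)=0$ in $R$ for some $N\geq 0$, contradicting the hypotheses whenever $f_{S_k}(A)\cdot f_{S'}(A)$ is not nilpotent---in particular whenever $R$ is reduced, which is the case in all our applications.
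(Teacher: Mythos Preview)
Your argument is correct and follows essentially the same linear-algebraic idea as the paper. The paper first normalizes so that $S_\ell=\{\ell,\ldots,n\}$ and $S'=\mathbf{n}$, then uses an explicit $u\in U(R)$ to clear the block $(uA)[\{1,\ldots,k-1\};\{k,\ldots,n\}]$; after this, $f_{S_k\sqcup\{i\}}(A)/f_{S_k}(A)$ becomes the $(k-1,i)$-entry of the complementary block $A'$ and $\det(A')=f_{S'}(A)/f_{S_k}(A)\neq 0$ forces some such entry to be nonzero. Your rank argument on $B=A[\{k-1,\ldots,n\};S']$ is the same block-LU computation in disguise, without the preliminary normalization; your Cramer identity at the end is exactly the paper's formula $f_{S_k\sqcup\{i\}}(A)f_{S_k}(A)^{-1}=(uA)_{k-1,i}$ cleared of denominators. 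You are also right that the intermediate nonvanishings $f_{S_\ell}(A)\neq 0$ for $k<\ell\leq n$ are not used---the paper's construction of $u$ only needs $f_{S_k}(A)$ invertible.

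One small slip: from $f_{S_k}(A)\cdot f_{S'}(A)=0$ you cannot conclude a contradiction merely from $R$ being \emph{reduced} (consider $R=\F\times\F$ with $f_{S_k}(A)=(1,0)$, $f_{S'}(A)=(0,1)$); you need $R$ to be a domain, or $f_{S_k}(A)$ to be a non-zero-divisor. The paper's own proof has the same limitation---it writes $f_{S_k}(A)^{-1}$, so is implicitly assuming $f_{S_k}(A)\in R^\times$, which is indeed the case in every application of the lemma (there one has $A\in\cH_{S_\ell}^{\rm c}(R)$, i.e.\ the relevant minors are units).
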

\begin{proof}
Upon replacing $A$ with $Aw$ for a certain $w\in W$, we may assume without loss of generality that
$$S_\ell=\{\ell,\dots,n-1,n\}$$
for each $k\leq \ell\leq n$. We may assume further that $S^\prime=\mathbf{n}$ by replacing $A$ with its submatrix given by $\{n-\#S^\prime+1,\dots,n\}$-th rows and $S^\prime$-th columns. Then there exists $u\in U(R)$ such that the submatrix of $uA$ given by $\{1,\dots,k-1\}$-th rows and $\{k,\dots,n\}$-th columns, is zero. We consider the submatrix $A^\prime$ of $uA$ given by $\{1,\dots,k-1\}$-th rows and columns. Using that $f_{S^\prime}(A)\neq 0$ and $f_{S_k}(uA)=f_{S_k}(A)\neq 0$, we deduce that $\mathrm{det}(A^\prime)=f_{S^\prime}(A)f_{S_k}(A)^{-1}\neq 0$, and that $f_{S_k\sqcup\{i\}}(A)f_{S_k}(A)^{-1}$ equals the $(k-1,i)$-entry of $A^\prime$ for each $1\leq i\leq k-1$.
As $\mathrm{det}(A^\prime)\neq 0$, there exists $1\leq i\leq k-1$ such that $(k-1,i)$-entry of $A^\prime$ is non-zero, and hence $f_{S_k\sqcup\{i\}}(A)\neq 0$.
\end{proof}

\begin{prop}\label{prop: union of open elements}
Let $\Sigma\subseteq\wp(\mathbf{n})$ be a subset contained in some strictly decreasing sequence.
Then we have
\begin{equation}\label{equ: affine cover by Borel}
\bigcap_{S\in\Sigma}\cH_S^{\rm{c}} = \underset{S_\bullet \supseteq\Sigma}{\bigcup}\cM_{S_\bullet}^\circ
\end{equation}
where $S_\bullet$ runs through all strictly decreasing sequences that contain $\Sigma$. In particular, the set $\left\{\cM_{w}^\circ\mid w\in W\right\}$ forms an affine open cover of $\tld{\cF\cL}$.
\end{prop}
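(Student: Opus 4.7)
The plan is to prove both inclusions in~(\ref{equ: affine cover by Borel}) and then deduce the affine cover statement. The inclusion $\supseteq$ is immediate from the definitions: if $S_\bullet \supseteq \Sigma$, then every $S \in \Sigma$ equals some $S_i$, hence $\cM_{S_\bullet}^\circ \subseteq \cH_{S_i}^{\rm{c}} = \cH_S^{\rm{c}}$, so the union over such $S_\bullet$ is contained in $\bigcap_{S \in \Sigma} \cH_S^{\rm{c}}$.

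For the reverse inclusion $\subseteq$, since both sides are open subschemes of $\tld{\cF\cL}$, it suffices to verify it on field-valued points. Given a field $F$ and $A \in \GL_n(F)$ with $f_S(A) \neq 0$ for every $S \in \Sigma$, I will construct $S_n, S_{n-1}, \dots, S_1$ by induction on decreasing $k$ so that $S_\bullet$ is a strictly decreasing sequence containing $\Sigma$ with $f_{S_i}(A) \neq 0$ for each $i$; the key invariant to maintain is that $S_k \subseteq T$ for every $T \in \Sigma$ with $\#T \geq n+1-k$. In the inductive step from $S_k$ to $S_{k-1}$: if $\Sigma$ contains an element $T$ of cardinality $n+2-k$, set $S_{k-1} := T$ (the invariant together with the cardinality comparison forces $T \supsetneq S_k$); otherwise let $S'$ be the smallest element of $\Sigma$ of cardinality strictly greater than $n+2-k$ (or $\mathbf{n}$ if none exists) and apply Lemma~\ref{lem: insert} to the chain $S_n \subsetneq \dots \subsetneq S_k \subsetneq S'$, obtaining some $i \in S' \setminus S_k$ with $f_{S_k \sqcup \{i\}}(A) \neq 0$; set $S_{k-1} := S_k \sqcup \{i\}$, which preserves the invariant since $S_{k-1} \subseteq S'$.

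The main obstacle is the base case $k = n$, which cannot be handled directly by Lemma~\ref{lem: insert} since no preceding chain exists. To initialize, I take $T_{\min}$ to be the smallest element of $\Sigma$ (or $\mathbf{n}$ if $\Sigma$ is empty): since $f_{T_{\min}}(A) \neq 0$, the square submatrix of $A$ with rows $\{n-\#T_{\min}+1, \dots, n\}$ and columns indexed by $T_{\min}$ is invertible, so in particular its last row is nonzero and yields some $j \in T_{\min}$ with $A_{n,j} = f_{\{j\}}(A) \neq 0$. Setting $S_n := \{j\}$ starts the induction and verifies the invariant.

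For the final statement, taking $\Sigma = \emptyset$ in~(\ref{equ: affine cover by Borel}) together with the bijection~(\ref{eq:bjc:sis}) gives $\tld{\cF\cL} = \bigcup_{w \in W} \cM_w^\circ$. Each $\cM_w^\circ = U \backslash U w_0 B w_0 w$ by Lemma~\ref{lem: hypersurfaces are Schubert}, and the open Bruhat decomposition identifies this quotient with the affine scheme $w_0 B w_0 w \cong T \times w_0 U w_0 w$, so the cover is by affine open subschemes.
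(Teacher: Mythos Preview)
Your proof is correct and uses the same key ingredient (Lemma~\ref{lem: insert}) as the paper, but the induction is organized differently. The paper inducts on $\#\Sigma$: when $\#\Sigma<n$ it invokes Lemma~\ref{lem: insert} to produce a single new set $\Sigma'\setminus\Sigma$ fitting into the chain, then applies the inductive hypothesis to $\Sigma'$. You instead build $S_n,S_{n-1},\dots,S_1$ directly, carrying the containment invariant $S_k\subseteq T$ for all $T\in\Sigma$ with $\#T\ge n+1-k$ to guarantee $\Sigma\subseteq S_\bullet$ at the end. Your version has the mild advantage of making the initialization step (producing the singleton $S_n$ inside $T_{\min}$ from a nonzero entry of the last row of the relevant invertible submatrix) fully explicit; the paper's phrasing ``by Lemma~\ref{lem: insert}'' at that first step tacitly requires the same observation, since the lemma as stated already demands a chain starting at a singleton $S_n$. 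Your reduction to field-valued points is also a clean way to avoid the ambiguity between ``nonzero'' and ``unit'' over a general $R$.
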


\begin{proof}
The inclusion $\supseteq$ follows from the definition of $\cM_{S_\bullet}$.
We now prove the inclusion $\subseteq$ by induction on $\#\Sigma$. It suffices to show that, for each $A \in \bigcap_{S\in\Sigma}\cH_S^{\rm{c}}(R)$, there exists a strictly decreasing sequence $S_\bullet$ containing $\Sigma$ such that $A \in \cM_{S_\bullet}^\circ(R)$. We pick an arbitrary $A \in \bigcap_{S\in\Sigma}\cH_S^{\rm{c}}(R)$. If $\#\Sigma=n$, we can simply take $S_\bullet\defeq \Sigma$. If $\#\Sigma < n$, then by Lemma~\ref{lem: insert} there exists $\Sigma'\subseteq\wp(\mathbf{n})$ satisfying the following conditions:
\begin{enumerate}
\item $\Sigma'$ is contained in a certain strictly decreasing sequence;
\item $\Sigma'\supsetneq \Sigma$ and $\#\Sigma'=\#\Sigma+1$;
\item $A \in \bigcap_{S\in\Sigma'}\cH_S^{\rm{c}}(R)$.
\end{enumerate}
We apply our inductive assumption to $\Sigma'$ and obtain a strictly decreasing sequence $S_\bullet$ containing $\Sigma'$ (hence $\Sigma$ as well) such that $A \in \cM_{S_\bullet}^\circ(R)$. This finishes the proof of the inclusion $\subseteq$.

It follows from (\ref{equ: open cell}) that $\cM_{w}^\circ=U\backslash U w_0 B w_0w\cong B$ as a scheme, hence it is affine. The fact that
$$
\tld{\cF\cL}=\underset{w\in W}{\bigcup}\cM_{w}^\circ
$$
is the special case of (\ref{equ: affine cover by Borel}) when $\Sigma=\emptyset$.
\end{proof}

\begin{lemma}\label{lem: open cover}
Let $K\subseteq \wp(\mathbf{n})$ with $\cC_K\neq \emptyset$. Then
\begin{enumerate}[label=(\roman*)]
\item
\label{it: open cover:i}
for each strictly decreasing sequence $S_\bullet$, we have $\cC_K\subseteq \cM^\circ_{S_\bullet}$ if and only if $S_\bullet\cap K=\emptyset$;
\item
\label{it: open cover:ii}
there exists a strictly decreasing sequence $S_\bullet$ such that $\cC_K\subseteq \cM^\circ_{S_\bullet}$;
\item
\label{it: open cover:iii}
$\wp(\mathbf{n})\setminus K=\bigcup_{S_\bullet\cap K=\emptyset}S_\bullet$ and the following equality holds
$$\cC_K=\underset{S\in K}{\bigcap}\cH_S\cap\underset{S_\bullet\cap K=\emptyset}{\bigcap}\cM^\circ_{S_\bullet};$$
\item
\label{it: open cover:iv}
$\cC_K$ is affine.
\end{enumerate}
\end{lemma}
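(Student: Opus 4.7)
The plan is to derive all four items from one application of Proposition~\ref{prop: union of open elements} at a chosen point of $\cC_K$, which is nonempty by hypothesis. Most of the work is bookkeeping; the actual geometric input is the fact (already proved) that the $\cM^\circ_{S_\bullet}$ form an affine open cover of $\tld{\cF\cL}$.

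I would first dispatch~\ref{it: open cover:i} directly from definitions: if $S_\bullet\cap K = \emptyset$, then for every $S\in S_\bullet$ we have $S\notin K$ and hence $\cC_K\subseteq \cH_S^{\rm{c}}$, so $\cC_K\subseteq\bigcap_{S\in S_\bullet}\cH_S^{\rm{c}} = \cM^\circ_{S_\bullet}$; conversely, if some $S\in S_\bullet$ lies in $K$ then $\cC_K\subseteq\cH_S$, and being nonempty it cannot simultaneously lie in $\cM^\circ_{S_\bullet}\subseteq\cH_S^{\rm{c}}$.

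For~\ref{it: open cover:ii}, I would pick any geometric point $x$ of $\cC_K$ and apply Proposition~\ref{prop: union of open elements} with $\Sigma = \emptyset$, obtaining a strictly decreasing sequence $S_\bullet$ with $x\in\cM^\circ_{S_\bullet}$; then $f_S(x)\neq 0$ for every $S\in S_\bullet$, forcing $S\notin K$, so~\ref{it: open cover:i} applies. Item~\ref{it: open cover:iii} is the same trick applied to $\Sigma = \{S\}$ for each $S\in\wp(\mathbf{n})\setminus K$ (any singleton trivially extends to a strictly decreasing sequence), which yields a strictly decreasing sequence $S_\bullet\ni S$ with $S_\bullet\cap K = \emptyset$; the asserted formula for $\cC_K$ then follows by rewriting
\[
\bigcap_{S_\bullet\cap K = \emptyset}\cM^\circ_{S_\bullet} \;=\; \bigcap_{S\in\bigcup_{S_\bullet\cap K = \emptyset}S_\bullet}\cH_S^{\rm{c}} \;=\; \bigcap_{S\notin K}\cH_S^{\rm{c}}
\]
and substituting into the definition of $\cC_K$.

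For~\ref{it: open cover:iv}, I would invoke~\ref{it: open cover:ii} to fix a strictly decreasing sequence $S_{\bullet,0}$ with $\cC_K\subseteq\cM^\circ_{S_{\bullet,0}}$; the latter is affine by Proposition~\ref{prop: union of open elements}. Inside this affine scheme $\cC_K$ is the locally closed subscheme cut out by the finitely many equations $f_S = 0$ for $S\in K$ together with the non-vanishing of the finite product $\prod_{S\notin K,\,S\notin S_{\bullet,0}} f_S$ (non-vanishing of $f_S$ for $S\in S_{\bullet,0}$ being automatic from the inclusion $\cC_K\subseteq\cM^\circ_{S_{\bullet,0}}$), and so is a closed subscheme of a basic open of an affine scheme, hence affine. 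No step looks like a genuine obstacle; the only mildly delicate point is the combinatorial manipulation in~\ref{it: open cover:iii}, but it reduces cleanly to one invocation of Proposition~\ref{prop: union of open elements}.
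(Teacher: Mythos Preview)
Your proposal is correct and follows essentially the same route as the paper: item~\ref{it: open cover:i} is immediate from definitions, items~\ref{it: open cover:ii} and~\ref{it: open cover:iii} come from picking a point of $\cC_K$ and invoking Proposition~\ref{prop: union of open elements} (the paper cites the underlying Lemma~\ref{lem: insert} directly for~\ref{it: open cover:ii}, which amounts to the same thing), and item~\ref{it: open cover:iv} is deduced from the affineness of $\cM^\circ_{S_\bullet}$. The only cosmetic difference is that for~\ref{it: open cover:iv} the paper appeals to the intersection formula in~\ref{it: open cover:iii}, whereas you work explicitly inside a single $\cM^\circ_{S_{\bullet,0}}$; both arguments are straightforward.
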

\begin{proof}
Note that \ref{it: open cover:i} follows directly from the definition of $\cC_K$ and $\cM^\circ_{S_\bullet}$. In order to prove \ref{it: open cover:ii} and \ref{it: open cover:iii}, we pick an arbitrary $A\in\cC_K(R)$ for some Noetherian $\F$-algebra $R$. It follows from Lemma~\ref{lem: insert} (by taking $k=0$ and $\Sigma_0=\mathbf{n}$) that there exists a strictly decreasing sequence $S_\bullet$ such that $f_{S_i}(A)\neq 0$ for all $1\leq i\leq n$. This means that $S_\bullet\cap K=\emptyset$ and hence $\cC_K\subseteq \cM^\circ_{S_\bullet}$ by \ref{it: open cover:i}, which implies \ref{it: open cover:ii}. It follows from Proposition~\ref{prop: union of open elements} (for $\Sigma=\{S\}$) that, for each $S\notin K$, there exists strictly decreasing sequence $S_\bullet$ containing $S$ such that $f_{S_i}(A)\neq 0$ for all $1\leq i\leq n$.
Hence $\cC_K\subseteq \cM^\circ_{S_\bullet}\subseteq \cH_S^{\rm{c}}$ or equivalently $S\in S_\bullet\subseteq \wp(\mathbf{n})\setminus K$. We deduce that $\wp(\mathbf{n})\setminus K=\bigcup_{S_\bullet\cap K=\emptyset}S_\bullet$, which implies (using the definition of $\cM^\circ_{S_\bullet}$)
$$
\underset{S_\bullet\cap K=\emptyset}{\bigcap}\cM^\circ_{S_\bullet}=\underset{S\in S_\bullet\subseteq \wp(\mathbf{n})\setminus K}{\bigcap}\cH_S^{\rm{c}}=\underset{S\notin K}{\bigcap}\cH_S^{\rm{c}}.
$$
Hence we finish the proof of \ref{it: open cover:iii} using the definition of $\cC_K$. Finally, note that $\cM^\circ_{S_\bullet}$ is affine for each strictly decreasing sequence $S_\bullet$ and it is easy to see that $\bigcap_{S_\bullet\cap K=\emptyset}\cM^\circ_{S_\bullet}$ appeared in \ref{it: open cover:iii} is still affine, so that \ref{it: open cover:iv} follows from \ref{it: open cover:iii}.
\end{proof}

Let $\Omega\subseteq\Phi^+$ be an arbitrary subset of the set of positive roots. We write $U_\Omega\subseteq U$ for the closed subscheme of $U$ defined by the condition that the $\al$-entry is zero for each $\al\in\Phi^+\setminus\Omega$. It is clear that the composition $w T U_\Omega w^\prime\hookrightarrow \GL_{n}\twoheadrightarrow\tld{\cF\cL}$ factors through $w T U_\Omega w^\prime\hookrightarrow\cM_{ww^\prime}^\circ$, as for each $S\in S_{\bullet,ww^\prime}$ the function $f_S$ is invertible on $w T U_\Omega w^\prime$ .

\begin{lemma}\label{lem: is union of elements}
The schematic image of $w T U_\Omega w^\prime$ in $\cM_{ww^\prime}^\circ$ is integral of the form
$$\cM_{ww^\prime}^\circ\cap\bigcap_{S\in K}\cH_S$$
for some $K\subseteq \wp(\mathbf{n})$ with $K\cap S_{\bullet,ww'}=\emptyset$, and is a topological union of elements of $\cP$. If moreover $\cC_K\neq\emptyset$, then $\cC_K$ is the unique element of $\cP$ that is an open subscheme of the schematic image.
\end{lemma}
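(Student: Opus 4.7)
Set $Y$ to be the scheme-theoretic image of $wTU_\Omega w'$ in $\cM_{ww'}^\circ$ and define
\[
K\defeq \{S\subseteq \mathbf{n}\mid f_S \text{ vanishes identically on } wTU_\Omega w'\}.
\]
Since $wTU_\Omega w'\cong T\times U_\Omega$ is a product of a torus and an affine space, hence integral, its scheme-theoretic image $Y$ is integral as well. Because the morphism $wTU_\Omega w'\to\tld{\cF\cL}$ factors through $\cM_{ww'}^\circ$ (on which $f_S$ is invertible for every $S\in S_{\bullet, ww'}$ by Lemma~\ref{lem: hypersurfaces are Schubert}), we have $K\cap S_{\bullet, ww'}=\emptyset$; and from the very definition of $K$ we obtain the closed containment $Y\subseteq Z\defeq\cM_{ww'}^\circ\cap \bigcap_{S\in K}\cH_S$.

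The central step is to prove $Y=Z$. My approach is to analyze both sides in the explicit coordinates provided by Lemmas~\ref{lem: explicit projection} and~\ref{lem: hypersurfaces are Schubert}, which identify $\cM_{ww'}^\circ$ with $w_0 B w_0\cdot (ww')\cong T\times w_0Uw_0$ and express each $f_S|_{\cM_{ww'}^\circ}$ as a concrete polynomial in these coordinates. Since the morphism $wTU_\Omega w'\to\cM_{ww'}^\circ$ factors through $\cM_{ww'}^\circ$, each element $wtuw^{-1}\in \GL_n$ admits an (essentially unique) factorization $wtuw^{-1}=u_0\cdot t_0\cdot u_0^-$ with $u_0\in U$, $t_0\in T$ and $u_0^-\in w_0 U w_0$, giving an explicit parametrization of $Y$ inside $T\times w_0 U w_0\cong \cM_{ww'}^\circ$. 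A direct calculation using Lemma~\ref{lem: explicit projection} matches the vanishing of the coordinates of $(t_0,u_0^-)$ with the vanishing of specific minors $f_S$, and shows that the image of this parametrization is set-theoretically dense in $Z$. Combined with $Y\subseteq Z$ and the integrality of $Y$, this yields the equality $Y=Z$ of reduced closed subschemes of $\cM_{ww'}^\circ$.

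Given $Y=Z$, the remaining assertions follow from Lemma~\ref{lem: elementary, elements}. Indeed, set-theoretically
\[
Z=\bigsqcup_{\substack{K'\supseteq K\\ K'\cap S_{\bullet, ww'}=\emptyset}}\cC_{K'},
\]
realizing $Y$ as a topological union of elements of $\cP$. If moreover $\cC_K\neq \emptyset$, then $\cC_K = Z\setminus\bigcup_{S'\notin K\cup S_{\bullet, ww'}}(Z\cap \cH_{S'})$ is an open subscheme of $Z$; since $Z=Y$ is irreducible and $\cC_K$ is nonempty open, it is necessarily dense in $Y$, and by the disjointness of elements of $\cP$ no other stratum of $\cP$ contained in $Y$ can be open, giving the uniqueness claim.

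The main obstacle is the identification $Y=Z$: one must perform the big-cell factorization of $wTU_\Omega w^{-1}$ explicitly and carry out the combinatorial matching between the vanishing minors $\{f_S : S\in K\}$ and the coordinates of the resulting parametrization via the formulas of Lemma~\ref{lem: explicit projection}. The bookkeeping of this matching, in terms of which positive/negative roots in $w(\Omega)$ contribute to the image and which are absorbed by the left $U$-quotient, is the technical heart of the argument.
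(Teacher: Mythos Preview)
Your approach is correct and essentially coincides with the paper's. The paper streamlines the ``big-cell factorization'' step you describe by first shrinking $\Omega$ to the subset of roots $\alpha$ with $w(\alpha)<0$ (the others being absorbed by the left $U$-quotient), after which $wTU_\Omega w'\to\cM_{ww'}^\circ$ becomes a closed immersion and the formulas of Lemma~\ref{lem: explicit projection} identify its image \emph{exactly} (not merely up to closure) as the vanishing locus of an explicit finite set $K_1$ of minors; one then enlarges $K_1$ to the set $K\supseteq K_1$ of all vanishing minors to single out $\cC_K$ as the unique open stratum. Making this reduction explicit at the outset turns your ``technical heart'' bookkeeping into a direct read-off rather than a density argument.
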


\begin{proof}
Upon shrinking $\Omega$ to a smaller subset, we may assume without loss of generality that
\begin{equation}\label{equ: nondegenerate subset}
U w T U_\Omega w^\prime\cong U\times (w T U_\Omega w^\prime)
\end{equation}
or equivalently, $w(\al)<0$ for each $\al\in\Omega$, and in particular $w T U_\Omega w^\prime$ is naturally isomorphic to its schematic image in $\cM_{ww^\prime}^\circ$. Hence the schematic image of $w T U_\Omega w^\prime$ in $\cM_{ww^\prime}^\circ$ is integral as $w T U_\Omega w^\prime$ is. We may rewrite
$$w T U_\Omega w^\prime=w_0 T (w_0w U_\Omega w^{-1}w_0)w_0ww^\prime\hookrightarrow w_0 T U w_0ww^\prime.$$
Upon modifying notation, it suffices to show that the schematic image of $w_0 T U_\Omega w_0w$ in $\cM_w^\circ$ (written $X$ for short) is a topological union of elements in $\cP$, for each $\Omega\subseteq\Phi^+$ and each $w\in W$. It follows from Lemma~\ref{lem: explicit projection} that the projection
$$w_0 T U w_0 w\twoheadrightarrow U\xrightarrow{u_\al} \bG_{a}$$
is given by
$$f_{S_{w_0(i)+1,w}\sqcup\{w^{-1}w_0(i^\prime)\}}f_{S_{w_0(i),w}}^{-1}|_{w_0 T U w_0 w}$$
for each $\al=(i,i^\prime)\in\Phi^+$ with $1\leq i<i^\prime\leq n$. Therefore we conclude that $w_0 T U_\Omega w_0w$ is the closed subscheme of $w_0 T U w_0 w$ characterized by the condition
$$f_{S_{w_0(i)+1,w}\sqcup\{w^{-1}w_0(i^\prime)\}}=0$$
for each $\al\in\Phi^+\setminus\Omega$, which implies that the $X$ is characterized in $\cM_w^\circ$ by the same condition. In other words, we have
$$X=\cM_w^\circ\cap\bigcap_{S\in K_1}\cH_S$$
where $K_1\defeq \{S_{w_0(i)+1,w}\sqcup\{w^{-1}w_0(i^\prime)\}\mid \al\in\Phi^+\setminus\Omega\}$. In particular, $X$ is a topological union of elements in $\cP$. Now we define $K\defeq \{S\subseteq\mathbf{n}\mid f_S|_X=0\}$ and note that $K_1\subseteq K\subseteq\wp(\mathbf{n})$. As $X$ is an integral scheme, we may write $L_X$ for its function field and notice that $f_S|_X$ is a non-zero element in $L_X$ for each $S\notin K$. Consequently, the open subscheme $\cC_K\subseteq X$ equals the non-vanishing locus of $\underset{S\notin K}{\prod}f_S$ inside the integral scheme $X$.
In particular, $\cC_K$ is the unique element of $\cP$ which is an open dense subscheme of $X$.
The proof is thus finished.
\end{proof}

\begin{defn}\label{def: Sch var}
For $w,u\in W$ we define
\[
\tld{\cS}^\circ(w,u)\defeq U\backslash U w B u=U\backslash B w U u\subseteq\tld{\cF\cL}
\] and write $\tld{\cS}(w,u)$ for the closure of $\tld{\cS}^\circ(w,u)$ inside $\tld{\cF\cL}$.
We call  $\tld{\cS}^\circ(w,u)$ (resp.~$\tld{\cS}(w,u)$) the \emph{Schubert cell} (resp.~the \emph{Schubert variety}) associated with the pair $(w,u)\in W\times W$.
\end{defn}

\begin{prop}\label{prop: refinement of Bruhat partition}
The partition $\cP$ of $\tld{\cF\cL}$ is the coarsest common refinement of the partitions $\{\tld{\cS}^\circ(w,u)\mid w\in W\}$ for all $u\in W$. In particular, each $\cC_K$ with $\cC_K\neq\emptyset$ uniquely determines a map $\delta: W\rightarrow W$ such that
$$\cC_K=\underset{u\in W}{\bigcap}\tld{\cS}^\circ(\delta(u),u).$$
\end{prop}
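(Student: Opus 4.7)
The plan is to prove that the partition $\cP$ coincides with the coarsest common refinement $\bigvee_{u \in W} \cP_u$ of the partitions $\cP_u \defeq \{\tld{\cS}^\circ(w,u) \mid w \in W\}$. Granted this equality, every non-empty $\cC_K$ is a single stratum of the common refinement: for each $u \in W$ it is contained in a unique Schubert cell, which determines a well-defined map $\delta \colon W \to W$ satisfying $\cC_K = \bigcap_{u \in W} \tld{\cS}^\circ(\delta(u),u)$. I will verify both refinement inclusions.

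For the first inclusion---that each $\cC_K$ is a union of strata of the common refinement---I would invoke Lemma~\ref{lem: hypersurfaces are Schubert}, which identifies $\cH_{S_{i+1,w}}$ with the Schubert variety $\tld{\cS}(s_i w_0, w_0 w)$. The key combinatorial observation is that any non-trivial subset $S \subsetneq \mathbf{n}$ is of the form $S_{i+1,w}$ for suitable $(i,w)$: set $i = n - \#S$ and choose any $w \in W$ with $w(S) = \{i+1, \ldots, n\}$. Hence every non-trivial $\cH_S$ is a Schubert variety, which decomposes as a closed union of Schubert cells in $\cP_{w_0w}$, and each $\cC_K$, being a Boolean combination of $\cH_S$'s, is a union of strata of the common refinement.

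For the reverse inclusion---that each Schubert cell $\tld{\cS}^\circ(w,u)$ is a union of $\cC_K$'s---I would first reduce to the case $u=1$: right translation by $u^{-1}$ is an automorphism of $\tld{\cF\cL}$ sending $\tld{\cS}^\circ(w,u)$ to $\tld{\cS}^\circ(w,1)$ and pulling back each $f_S$ to $\pm f_{u(S)}$, hence preserving the partition $\cP$. The essential content is then that the vanishing pattern of the minors $(f_S)_S$ at a point of $\tld{\cF\cL}$ determines its Schubert cell in $\cP_1 = \{U\backslash UwB\}_w$. This follows from the classical characterization of Bruhat cells: the $R$-span of the bottom $k$ rows of a representative matrix is invariant under left multiplication by $U$, so these spans assemble into a left-$U$-invariant flag whose relative position with respect to the standard flag---encoded by the rank function of submatrices with rows $\{n-k+1,\ldots,n\}$---determines the Bruhat cell. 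Since such ranks are detected precisely by the vanishing of appropriate $f_S$'s, each Schubert cell is cut out in $\tld{\cF\cL}$ by conditions of the form $f_S = 0$ or $f_S \neq 0$.

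The principal technical point is this second direction, specifically the careful bookkeeping needed to match the classical (top-left) rank-function description of Bruhat cells to the bottom-right minors $f_S$ so that $U$-invariance on the left is preserved. Combining both inclusions yields $\cP = \bigvee_u \cP_u$; every non-empty $\cC_K$ thus lies in a unique Schubert cell of each $\cP_u$, giving the well-defined map $\delta$, and equals the intersection $\bigcap_{u \in W} \tld{\cS}^\circ(\delta(u),u)$.
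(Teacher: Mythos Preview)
Your proof is correct, and the first direction (each $\cC_K$ is a union of strata of the common refinement) matches the paper's argument via Lemma~\ref{lem: hypersurfaces are Schubert} exactly.

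For the reverse direction the two approaches diverge. The paper deduces that each Schubert cell $\tld{\cS}^\circ(w,u)=U\backslash UwTUu$ is a union of $\cC_K$'s as the special case $\Omega=\Phi^+$ of Lemma~\ref{lem: is union of elements}, whose proof uses Lemma~\ref{lem: explicit projection} to write the schematic image of $wTU_\Omega w'$ inside $\cM^\circ_{ww'}$ explicitly as an intersection of hypersurfaces $\cH_S$. Your route---reducing to $u=1$ by right translation and then invoking the classical rank-function characterization of Bruhat cells---is more direct and avoids that auxiliary lemma, at the cost of a small point you gloss over: the ranks of the bottom-$k$-row corner submatrices involve $r\times r$ minors for $r\leq k$, which are not themselves of the form $f_S$. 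The bridge is that the $f_S$ with $\#S=k$ are exactly the Pl\"ucker coordinates of the $k$-plane $V_k$, hence their vanishing pattern determines the column matroid of $V_k$, and the matroid rank of any column set $J$ equals the rank of the corresponding submatrix. This is standard and worth one sentence. (A minor slip: the pullback of $f_S$ under right translation by $u^{-1}$ is $\pm f_{u^{-1}(S)}$, not $\pm f_{u(S)}$; this is inconsequential since either is a bijection on the set of minors.) The paper's approach has the advantage that Lemma~\ref{lem: is union of elements} is reused later (e.g.\ in Proposition~\ref{prop: niveau is a union of elements}), while yours is self-contained for this proposition.
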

\begin{proof}
We use the notation $\cP^\prime$ for the coarsest common refinement of the partitions $\{\tld{\cS}^\circ(w,u)\mid w\in W\}$ for all $u\in W$, and we will show that $\cP^\prime$ coincides with $\cP$.

We recall from Lemma~\ref{lem: hypersurfaces are Schubert} that both $\cH_S$ and $\cH_S^{\rm{c}}$ are union of elements in $\cP^\prime$ for each $\emptyset\neq S\subsetneq\mathbf{n}$. This implies that each element of $\cP$ (which is defined by intersection of locally closed subschemes of the form $\cH_S$ or $\cH_S^{\rm{c}}$) is a topological union of elements in $\cP^\prime$. Hence $\cP^\prime$ is finer than $\cP$.

As a special case of Lemma~\ref{lem: is union of elements}, we deduce that each Schubert cell $\tld{\cS}^\circ(w,u)=U\backslash U w T U u$ is a topological union of elements in $\cP$. This implies that $\cP$ is finer than $\cP^\prime$.

We have already shown that $\cP=\cP^\prime$. Now we fix a $\cC_K\neq\emptyset$. Then for each $u\in W$, as $\{ \tld{\cS}^\circ(w,u)\mid w\in W\}$ is a partition of $\tld{\cF\cL}$, there exists a unique $\delta(u)\in W$ such that $\cC_K\subseteq \tld{\cS}^\circ(\delta(u),u)$. Therefore we have
\begin{equation}\label{equ: inclusion of subscheme}
\cC_K\subseteq\underset{u\in W}{\bigcap}\tld{\cS}^\circ(\delta(u),u).
\end{equation}
But the locally closed subschemes of $\tld{\cF\cL}$ of the form $\bigcap_{u\in W}\tld{\cS}^\circ(\delta(u),u)$ for some $\delta: W\rightarrow W$ clearly form a partition of $\tld{\cF\cL}$ (note that $\bigcap_{u\in W}\tld{\cS}^\circ(\delta(u),u)$ could be empty for some choice of $\delta$). Indeed, they exhaust all possible elements of $\cP^\prime$. As we know that $\cP=\cP^\prime$, the inclusion (\ref{equ: inclusion of subscheme}) is necessarily an equality. Hence we finish the proof.
\end{proof}

\subsubsection{Product structures}
\label{subsubsec:PS}

We recall the set $\cJ$ from \S\,\ref{subsec:notation}. For each $K_\cJ=(K_j)_{j\in\cJ}\subseteq \wp(\mathbf{n})^f$, we define the following (possibly empty) locally closed subscheme of $\tld{\cF\cL}_\cJ$
$$\cC_{K_\cJ}\defeq \left(\cC_{K_j}\right)_{j\in\cJ}.%
$$
Hence we obtain a partition $\cP_\cJ$ on $\tld{\cF\cL}_\cJ$ by locally closed subschemes of the form $\cC_{K_\cJ}$.
Note that $\cC_{K_\cJ}$ is stable under the shifted $\un{T}$-conjugation action (defined in equation (\ref{it:T-act:1})).
We would frequently use the notation $\cC\subseteq\tld{\cF\cL}_\cJ$ for an arbitrary (non-empty) element of the partition $\cP_\cJ$.
For each $w_\cJ\in \un{W}$ we also define $$\cM_{w_\cJ}^\circ\defeq \prod_{j\in\cJ}\cM_{w_j}^\circ\quad\mbox{and}\quad\overline{\cM}_{w_\cJ}\defeq \prod_{j\in\cJ}\overline{\cM}_{w_j}.$$
It is clear that $\cM_{w_\cJ}^\circ$ is open in $\tld{\cF\cL}_\cJ$, and $\overline{\cM}_{w_\cJ}$ is the unique element in $\cP_\cJ$ which is a closed subscheme of $\cM_{w_\cJ}^\circ$.
Again, by letting
\[
\tld{\cS}^\circ(w_\cJ,u_\cJ)\defeq\prod_{j\in\cJ}\tld{\cS}^\circ(w_j,u_j) \quad\mbox{ and } \quad\tld{\cS}(w_\cJ,u_\cJ)\defeq\prod_{j\in\cJ}\tld{\cS}(w_j,u_j)\subseteq\tld{\cF\cL}_\cJ
\]
for elements $w_\cJ=(w_j)_{j\in\cJ},u_\cJ=(u_j)_{j\in\cJ}\in\un{W}$, Lemma~\ref{lem: hypersurfaces are Schubert} generalizes and we see for instance that
\[
\cM_{w_\cJ}^\circ=\tld{\cS}^\circ(w_0,w_0w_\cJ).
\]

As each element of $\cP_\cJ$ is contained in one of $\cM_{w_\cJ}^\circ$ (see Lemma~\ref{lem: open cover}), we deduce that $\overline{\cM}_{w_\cJ}$ exhausts all elements of the partition $\cP_\cJ$ which are closed subschemes of $\tld{\cF\cL}_\cJ$, when $w_\cJ$ runs through the elements of $\un{W}$. We use the notation $f_{S,j}$ for the composition
\begin{equation}\label{equ: j projection}
\tld{\cF\cL}_\cJ\xrightarrow{\Proj_j}\tld{\cF\cL}\xrightarrow{f_S}\bP^1_{/\F}
\end{equation}
where $\Proj_j$ is the projection to the $j$-th factor.

We end this section by studying the relation between $\cP_\cJ$, $\cP_{\cJ'}$ and the base change map $\tld{\BC}:\tld{\cF\cL}_\cJ\ra \tld{\cF\cL}_{\cJ'}$ introduced in \S\,\ref{subsub:FLT}.
\begin{lemma}\label{lem: unramified base change of elements}
Let $\cC\in \cP_\cJ$.
There exists a unique element $\cC'\in \cP_{\cJ'}$ such that $\tld{\BC}^{-1}(\cC')=\cC$.
In particular, $\cP_\cJ$ is the partition  on $\tld{\cF\cL}_{\cJ}$ induced from the partition $\cP_{\cJ'}$ on $\tld{\cF\cL}_{\cJ'}$ by pulling back along the embedding $\tld{\mathrm{BC}}$.
\end{lemma}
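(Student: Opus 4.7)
The plan is to unravel the definitions and observe that $\tld{\BC}$, being diagonal, preserves the defining data of the partition factor-wise.

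First I would recall that under the identification $\tld{\cF\cL}_{\cJ'}\cong \prod_{j'\in\cJ'} \tld{\cF\cL}$ the base change $\tld{\BC}$ is given on $R$-points by
\[
(A^{(j)})_{j\in\cJ}\longmapsto (B^{(j')})_{j'\in\cJ'},\qquad B^{(j')}\defeq A^{(j'\bmod f)},
\]
using the identifications $\cJ\cong \Z/f$ and $\cJ'\cong \Z/f'$ (with $f'=rf$) so that restriction corresponds to reduction mod $f$. In particular, for any subset $S\subseteq \mathbf{n}$ the minor $f_S$ satisfies $f_{S,j'}\circ \tld{\BC}=f_{S,\,j'\bmod f}$, so vanishing/non-vanishing is preserved coordinate-wise.

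Next, given $\cC=\cC_{K_\cJ}\in\cP_\cJ$ with $K_\cJ=(K_j)_{j\in\cJ}\subseteq \wp(\mathbf{n})^f$, I would define $K_{\cJ'}=(K_{j'})_{j'\in\cJ'}\in \wp(\mathbf{n})^{f'}$ by the rule $K_{j'}\defeq K_{j'\bmod f}$ and set $\cC'\defeq \cC_{K_{\cJ'}}\in \cP_{\cJ'}$ (it is non-empty because, for any $R$-point $A$ of $\cC$, its image $\tld{\BC}(A)$ is visibly an $R$-point of $\cC'$ by the coordinate-wise compatibility just noted). The same compatibility yields $\tld{\BC}^{-1}(\cC')=\cC$: a point $(A^{(j)})_{j\in\cJ}$ lies in the preimage if and only if for every $j'\in\cJ'$ the matrix $A^{(j'\bmod f)}$ has vanishing pattern of minors prescribed by $K_{j'}=K_{j'\bmod f}$, which is exactly the condition that $(A^{(j)})_{j\in\cJ}\in \cC_{K_\cJ}$.

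For uniqueness, if $\cC'_1,\cC'_2\in\cP_{\cJ'}$ both satisfy $\tld{\BC}^{-1}(\cC'_i)=\cC$, then $\tld{\BC}(\cC)\subseteq \cC'_1\cap\cC'_2$; since $\cP_{\cJ'}$ is a partition of $\tld{\cF\cL}_{\cJ'}$ by pairwise disjoint locally closed subschemes (the analogue of Lemma~\ref{lem: elementary, elements}(1) in the product), $\cC'_1\neq \cC'_2$ would force $\tld{\BC}(\cC)=\emptyset$, contradicting $\cC\neq\emptyset$. The final assertion of the lemma is immediate: every $\cC\in\cP_\cJ$ arises as the $\tld{\BC}$-preimage of a unique element of $\cP_{\cJ'}$, so $\cP_\cJ$ is exactly the partition induced by pullback. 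There is no real obstacle here; the content is entirely that $\cP_\cJ$ and $\cP_{\cJ'}$ are compatible product partitions and that $\tld{\BC}$ respects the coordinate-wise structure.
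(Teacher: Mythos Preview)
Your proof is correct and follows essentially the same approach as the paper: define $\cC'=\cC_{K_{\cJ'}}$ with $K_{j'}\defeq K_{j'\bmod f}$ and use that $\tld{\BC}$ is the diagonal embedding. The paper's proof is terser (it says ``All the other claims are clear'' after writing down $K'_{\cJ'}$), but you have supplied exactly the details that justify this.
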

\begin{proof}
If $\cC=\cC_{K_{\cJ}}$ for $K_\cJ\in \wp(\mathbf{n})^{f}$ then we pick $\cC'\defeq \cC_{K'_{\cJ'}}$ with $K'_{\cJ'}\in \wp(\mathbf{n})^{f'}$ characterized by $K'_{j'}\defeq K_j$ for each $j'\equiv j$ modulo $f$.
All the other claims are clear.
\end{proof}

\subsection{Niveau partition}
\label{sub:niveau:elements}
In this section, we introduce a new partition $\{\cN_{w_\cJ}\mid w_\cJ\in \un{W}\}$ on $\tld{\cF\cL}_\cJ$ motivated by the notion of niveau for mod-$p$ Galois representations. Roughly speaking, two mod-$p$ Galois representations arise from the same $\cN_{w_\cJ}$ if and only if they have the same semisimplification. The main results of this section are Propositions~\ref{prop: niveau rep vers pt},~\ref{prop: niveau is a union of elements} and~\ref{prop: niveau partition}. We fix throughout this section a $\lambda\in X^*_+(\un{T})$ with $\lambda+\eta$ being Fontaine--Laffaille (see Definition~\ref{defn:mGenFL}).

The following description of semisimple Galois representations arising from $\tld{\cF\cL}_\cJ$, with weight $\lambda+\eta=(\lambda_j+\eta_j)_{j\in\cJ}$, is well-known.
\begin{lemma}\label{lem: class of irr}
Let $x\in \tld{\cF\cL}_\cJ(\F)$ be a closed point such that $\rhobar_{x,\lambda+\eta}$ is semisimple. Then there exists $w_\cJ\in \un{W}$ uniquely determined by $x$ such that
$$x\in \overline{\cM}_{w_\cJ}(\F)\textnormal{ and }\rhobar_{x,\lambda+\eta}|_{I_K}\cong \taubar(w^{-1}_\cJ,\lambda+\eta).$$
\end{lemma}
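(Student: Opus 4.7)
My plan is to exploit semisimplicity of $\rhobar_{x,\lambda+\eta}$ to reduce to the rank-one case after an unramified base change, read off $w_\cJ$ from a resulting monomial Frobenius matrix, and match the inertial characters with the explicit formula defining $\taubar$.

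First I would use that $\Tcris$ is fully faithful (\S\ref{sub:MFSandGR}) to conclude that the Fontaine--Laffaille module $M$ attached to $x$ is semisimple, hence $M = \bigoplus_i M_i$ with each $M_i$ simple. Base changing via $\BC$ of \S\ref{subsub:FLT} along a sufficiently large unramified extension $k'/k$ that splits the niveau of each simple factor yields $\BC(M) = \bigoplus_\ell N_\ell$ with each $N_\ell$ rank one over $k'$. For each $\ell$ and $j' \in \cJ'$ the $\F$-line $N_\ell^{(j')}$ carries a single Hodge weight $\mu_{j',\ell}$; these weights are pairwise distinct in $\ell$ (since $\lambda+\eta$ is Fontaine--Laffaille) and depend only on $j' \bmod f$ by the construction of $\BC$. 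Choosing a generator of each $N_\ell^{(j')}$ and ordering by decreasing Hodge weight produces a compatible basis for $\BC(M)$ (Definition~\ref{def: compatible basis}) with respect to which each $\phi^{(j')}$ is a diagonal times a permutation $w'_{j'} \in W$, because $\phi$ preserves every rank-one summand. Periodicity of the Hodge weight data forces $w'_{j'}$ to depend only on $j' \bmod f$, so $w'_{\cJ'}$ descends to a unique $w_\cJ \in \un{W}$; this places $\tld{\BC}(x) \in \overline{\cM}_{w'_{\cJ'}}$, and Lemma~\ref{lem: unramified base change of elements} then yields $x \in \overline{\cM}_{w_\cJ}(\F)$. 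Uniqueness of $w_\cJ$ is automatic since the $\overline{\cM}_{w_\cJ}$'s are pairwise disjoint elements of $\cP_\cJ$.

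For the inertial type, I would apply Lemma~\ref{lem:FL:rk1} to each rank-one piece $N_\ell$ to compute $\Tcris(N_\ell)|_{I_{K'}} \cong \omega_{f'}^{\sum_{j'\in\cJ'} \mu_{f'-j',\ell}\, p^{j'}}$, sum these, and restrict to $I_K = I_{K'}$ (which coincide since $K'/K$ is unramified). The hard part will be to match the resulting direct sum of characters with the characters $\chi_i = \omega_{f'}^{\sum_k \bf{a}^{(0)}_{s_\tau^k(i)}\, p^{fk}}$ defining $\taubar(w_\cJ^{-1},\lambda+\eta)$ in Definition~\ref{defn:tau}: one must verify that the cycles of $s_\tau = w_{f-1}^{-1}\cdots w_0^{-1}$ track exactly how $\phi^f$ permutes the pieces $\{N_\ell\}$ (thereby recovering the niveau decomposition of $\rhobar_{x,\lambda+\eta}$), and that the exponents $\bf{a}^{(0)}_{s_\tau^k(i)}$ aggregate the Hodge weights $\mu_{j,\ell}$ in the precise order dictated by the shifted $\un{T}$-conjugation convention on $\tld{\cF\cL}_\cJ$. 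Once this combinatorial matching is carried out, the lemma follows.
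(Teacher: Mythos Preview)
Your overall strategy matches the paper's: pass to $K'$, split $\BC(M)$ into rank-one pieces, extract a monomial Frobenius matrix, then invoke Lemma~\ref{lem:FL:rk1} and Definition~\ref{defn:tau} for the inertial characters. The matching of characters you flag as ``the hard part'' is carried out in the paper exactly as you outline, by introducing $s_{\cJ'}$ via the equation $\nu_{j'}=s_{f'-1}^{-1}\cdots s_{j'}^{-1}(\lambda'_{j'}+\eta'_{j'})$ and then observing $w_{j'}=s_{j'}^{-1}$.

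There is, however, a genuine gap in your periodicity step. The claim that the individual weights $\mu_{j',\ell}$ depend only on $j'\bmod f$ is false: the rank-one summands $N_\ell$ of $\BC(M)$ are \emph{not} base-changed from $M$ when $M$ has simple factors of rank $>1$. For instance, if $f=1$, $n=2$, and $M$ is simple of niveau~$2$ with Hodge weights $a>b$, then $\BC(M)=N_1\oplus N_2$ with $\mu_{0,1}=a$, $\mu_{1,1}=b$ and $\mu_{0,2}=b$, $\mu_{1,2}=a$; neither sequence is $1$-periodic. So your compatible basis built from the $N_\ell$ is not itself $f$-periodic, and ``periodicity of the Hodge weight data'' does not directly force $w'_{j'}$ to depend only on $j'\bmod f$.

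The conclusion is nevertheless correct, and the fix is already in your hands: use Lemma~\ref{lem: unramified base change of elements} in the other direction. You have shown $\tld{\BC}(x)\in\overline{\cM}_{w'_{\cJ'}}$. Let $\cC\in\cP_\cJ$ be the stratum containing $x$; then $\tld{\BC}(x)$ lies in the unique $\cC'\in\cP_{\cJ'}$ with $\tld{\BC}^{-1}(\cC')=\cC$, hence $\cC'=\overline{\cM}_{w'_{\cJ'}}$. But the description of $\cC'$ in the proof of Lemma~\ref{lem: unramified base change of elements} shows $\cC'$ is $f$-periodic, which forces $w'_{\cJ'}$ to be the diagonal image of some $w_\cJ\in\un{W}$, and gives $\cC=\overline{\cM}_{w_\cJ}$. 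This is precisely how the paper argues.
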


\begin{proof}
Let
$$(M_x,\{\Fil^h M_x\}_h,\{\phi_{x,h}\}_h)\in\mathrm{FL}_n^{\lambda+\eta}(\F)$$
be a Fontaine--Laffaille module attached to $x$ and $\cC$ be the (unique) element of $\cP_\cJ$ such that $x\in\cC(\F)$. Recall that $K'$ is an unramified extension of $K$ with degree $r=n!$, and thus we have
$$\rhobar_{x,\lambda+\eta}|_{G_{K'}}\cong \oplus_{i=1}^n\rhobar_i$$
for certain characters $\rhobar_i:~G_{K'}\rightarrow\F^\times$. We write $\bG_{m,\cJ'}$ for the $\F$-scheme given by the product of $f'$-copies of $\bG_{m}$, and for an element $\mu\in X^\ast(\bG_m)^{f^\prime}$ we have the map
$$\rhobar_{x,\mu}:~\bG_{m,\cJ'}(\F)\rightarrow \Rep^1_{\F}(G_{K'})$$
(obtained from the map (\ref{eq:to:Gal}) in the case when $n=1$ and $K$ taken to be $K'$). We set $y\defeq \tld{\BC}(x)\in\tld{\cF\cL}_{\cJ'}(\F)$ and write $\lambda'=(\lambda'_{j'})_{j'\in\cJ'}\in X^\ast(\un{T}^r)$ for the image of $\lambda$ under the diagonal embedding (see the very end of \S\,\ref{subsub:FLT} for the map $\tld{\BC}$). Therefore we can choose, for each $1\leq i\leq n$, a point $x_i\in \bG_{m,\cJ'}(\F)$ such that $\rhobar_{x_i,\mu_i}\cong \rhobar_i$. Here $\mu_i\in X^\ast(\bG_m)^{f^\prime}$ is a weight uniquely determined by $\rhobar_i$ and $\lambda+\eta$. We write
$$(M_{x_i},\{\Fil^h M_{x_i}\}_h,\{\phi_{x_i,h}\}_h)\in\mathrm{FL}_1^{\mu_i}(\F)$$
for the rank one Fontaine--Laffaille module of weight $\mu_i$ attached to $x_i$. Hence there exists an isomorphism
\begin{equation}\label{equ: isom of FL mod}
(M_y,\{\Fil^h M_y\}_h,\{\phi_{y,h}\}_h)\cong\bigoplus_{i=1}^n(M_{x_i},\{\Fil^h M_{x_i}\}_h,\{\phi_{x_i,h}\}_h)
\end{equation}
inside $\mathrm{FL}_n^{\lambda'+\eta'}(\F)$ where $(M_y,\{\Fil^h M_y\}_h,\{\phi_{y,h}\}_h)$ is the Fontaine--Laffaille module of weight $\lambda'+\eta'$ attached to $y$.
We write $\nu=(\nu_{j'})_{j'\in\cJ'}\defeq (\mu_1,\dots,\mu_n)\in X^\ast(\un{T}^r)$ and assume without loss of generality that $\nu_0=\lambda'_0+\eta'_0$. Since $M_{x_i}$ has rank one, choosing a basis $\beta_{x_i}=(\beta_{x_i}^{(j')})_{j'\in\cJ'}$ for $M_{x_i}=\prod_{j'\in\cJ'}M_{x_i}^{(j')}$ is the same as choosing a non-zero vector $\beta_{x_i}^{(j')}\in M_{x_i}^{(j')}$ for each $j'\in\cJ'$. There exists $s_{\cJ'}=(s_{j'})_{j'\in\cJ'}\in\un{W}^r$ such that
$$\nu_{j'}=s_{f'-1}^{-1}\cdots s_{j'+1}^{-1} \cdot s_{j'}^{-1}(\lambda'_{j'}+\eta'_{j'})$$
for each $1\leq i\leq n$ and $j'\in\cJ'$, and hence
$$\rhobar_{x,\lambda+\eta}|_{I_K}\cong \rhobar_{y,\lambda'+\eta'}|_{I_{K'}}\cong \oplus_{i=1}^n\rhobar_i|_{I_{K'}}\cong \taubar(s_{\cJ'},\lambda'+\eta')$$
according to Lemma~\ref{lem:FL:rk1} (describing $\Tcris$ in the rank one case) as well as Definition~\ref{defn:tau}. For each $j'\in\cJ'$, we claim that
$$\beta_y^{(j')}\defeq (\beta_{x_1}^{(j')},\dots,\beta_{x_n}^{(j')})\cdot s_{f'-1}^{-1}\cdots s_{j'+1}^{-1}\cdot s_{j'}^{-1}$$
is the unique permutation of the basis $\{\beta_{x_1}^{(j')},\dots,\beta_{x_n}^{(j')}\}$ of $M_y^{(j')}$, which is compatible with the Hodge filtration of  $M_y$ (see also Definition~\ref{def: compatible basis}). As $\phi_y$ sends $\gr^\bullet M_{x_i}\subseteq \gr^\bullet M_y$ to $M_{x_i}\subseteq M_y$ for each $1\leq i\leq n$, the matrix of $\phi_y=\{\phi_{y,h}\}_h: \gr^\bullet(M_y)\rightarrow M_y$ attached to the basis $\beta_y\defeq (\beta_y^{(j')})_{j'\in\cJ'}$ lies in
$$(\un{T}^r\cdot w_{\cJ'})(\F)\subseteq\un{G}^r(\F)$$
with $w_{\cJ'}=(w_{j'})_{j'\in\cJ'}$ defined by
$$w_{j'}\defeq \left(s_{f'-1}^{-1}\cdots s_{j'+1}^{-1} \right)^{-1}\cdot \left(s_{f'-1}^{-1}\cdots s_{j'+1}^{-1}\cdot s_{j'}^{-1} \right)=s_{j'}^{-1}.$$
In particular, we have $y\in\overline{\cM}_{w_{\cJ'}}(\F)$. Finally, the fact
$$y\in\tld{\BC}(\cC)(\F)\cap\overline{\cM}_{w_{\cJ'}}(\F)\neq \emptyset$$
together with Lemma~\ref{lem: unramified base change of elements} implies that there exists $w_\cJ\in \un{W}$ such that $\cC=\overline{\cM}_{w_\cJ}$ and $w_{\cJ'}$ is the image of $w_\cJ$ under the diagonal embedding $\un{W}\hookrightarrow\un{W}^r$. Hence we finish the proof by the identification of representations of $I_{K'}=I_K$
$$\taubar(s_{\cJ'},\lambda'+\eta')=
\taubar(w^{-1}_{\cJ'},\lambda'+\eta')=\taubar(w^{-1}_\cJ,\lambda+\eta)$$
which follows directly from the Definition~\ref{defn:tau}.
\end{proof}

Let $P\supseteq B$ be a standard parabolic subgroup of $\GL_{n}$ with standard Levi subgroup $M$ and unipotent radical $N$. Let $P^-\subseteq\GL_{n}$ be the opposite parabolic subgroup satisfying $P\cap P^-=M$.
(Be careful to distinguish our notation for the Levi subgroup and a Fontaine--Laffaille module.) We write $W_M\subseteq W$ for the Weyl group of $M$, embedded inside the Weyl group $W$ of $\GL_{n}$. Then there exists a positive integer $r_0$ with $r_0\leq n$ and a tuple of integers $(n_m)_{1\leq m\leq r_0}$ partitioning $n$ such that $M$ is the image of the standard embedding
$$\GL_{n_1}\times\cdots\times\GL_{n_{r_0}}\hookrightarrow \GL_{n}.$$
Given a point $x\in\tld{\cF\cL}_\cJ(\F)$, we say that $x$ is \emph{$P$-ordinary} if there exists $\rhobar:~G_K\rightarrow P^-(\F)$ such that $\rhobar_{x,\lambda+\eta}\cong \rhobar$. We write $\gr_{P-}(\rhobar_{x,\lambda+\eta})$ for the isomorphism class of the composition of $\rhobar$ with $P^-(\F)\twoheadrightarrow M(\F)$. (Note that the appearance of $P^-$ is due to the fact that $\Tcris$ is contravariant.) Thanks to the full faithfulness of $\Tcris$, $x$ is $P$-ordinary if and only if
the following holds: there exists a filtration by Fontaine--Laffaille submodules
\begin{equation}
\label{eq:FL:sumod}
0\subsetneq M_{x,1}\subsetneq\cdots\subsetneq M_{x,r_0-1}\subsetneq M_{x,r_0}=M_x
\end{equation}
such that the $k\otimes_{\Fp}\F$-module $M_{x,m}\subseteq M_x$ has rank $n_m^+\defeq \sum_{d=1}^mn_d$ for all $1\leq m\leq r_0$.
Note that the above implies that $\rhobar_{x,\lambda+\eta}$ has image contained in $P^-(\F)$ and furthermore the $n_m^+$-dimensional $G_K$-representation $\Tcris(M_{x,m})$ is isomorphic to the quotient of $\rhobar_{x,\lambda+\eta}$ induced from the standard surjective morphism $P^-\onto \GL_{n_m^+}$ for all $1\leq m\leq r_0$.

\begin{lemma}\label{lem: filtration vers parabolic}
Let $x\in\tld{\cF\cL}_\cJ(\F)$ be a point. Then $x$ is $P$-ordinary if and only if there exist $A=(A^{(j)})_{j\in\cJ}\in\un{G}(\F)$ and $u_\cJ=(u_j)_{j\in\cJ}\in \un{W}$ such that $x$ is the image of $A$ in $\tld{\cF\cL}_\cJ(\F)$ and
\begin{equation}\label{equ: P form}
A^{(j)}\in u_j P u_{j-1}^{-1}
\end{equation}
for all $j\in\cJ$.
\end{lemma}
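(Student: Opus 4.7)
The plan is to translate the existence of a Fontaine--Laffaille filtration on $M_x$ into a block structure for the matrix of $\phi_{M_x}$ after a suitable reordering of basis vectors. Recall from the proof of Proposition~\ref{thm:repr:FLgp} that a lift $A\in\un{G}(\F)$ of $x$ corresponds to a choice of compatible basis $\beta=(\beta^{(j)})_{j\in\cJ}$ for $M_x$, and that $A^{(j)}$ is the matrix of $\phi^{(j)}:\gr^\bullet M_x^{(j)}\ra M_x^{(j+1)}$ with respect to $\gr^\bullet\beta^{(j)}$ and $\beta^{(j+1)}$. A key observation used throughout is that since $\lambda$ is dominant and $\lambda+\eta$ is Fontaine--Laffaille, the sequence $((\lambda+\eta)_{j,i})_{i=1}^n$ is strictly decreasing in $i$; hence the Hodge weights appearing in $M_x^{(j)}$ are pairwise distinct.

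For the ``only if'' direction, I would begin with a filtration as in~\eqref{eq:FL:sumod} and build an adapted compatible basis. Distinctness of Hodge weights forces each weight $(\lambda+\eta)_{j,i}$ to arise in a unique graded piece $M_{x,m(j,i)}^{(j)}/M_{x,m(j,i)-1}^{(j)}$, and the natural map $\gr^{(\lambda+\eta)_{j,i}} M_{x,m(j,i)}^{(j)}\ra \gr^{(\lambda+\eta)_{j,i}} M_x^{(j)}$ is then an isomorphism of lines. This lets us choose a compatible basis $\beta^{(j)}$ of $M_x^{(j)}$ with $\beta_i^{(j)}\in M_{x,m(j,i)}^{(j)}$ for every $i$. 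Letting $u_{j-1}\in W$ be the permutation that reorders this basis into filtration order (first those in $M_{x,1}^{(j)}$, then those in $M_{x,2}^{(j)}$, etc., ordered by decreasing Hodge weight within each block), and setting $\gamma^{(j)}\defeq \beta^{(j)}u_{j-1}$, the fact that $\phi^{(j)}$ preserves the submodule filtration means its matrix in the $\gamma$-bases lies in $P$; a standard change-of-basis computation then yields $A^{(j)}\in u_j P u_{j-1}^{-1}$.

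For the converse, given $A$ lifting $x$ and $u_\cJ\in\un{W}$ with $A^{(j)}\in u_j P u_{j-1}^{-1}$, I would let $\beta$ be the compatible basis corresponding to $A$ and set $\gamma^{(j)}\defeq \beta^{(j)}u_{j-1}$, so that the matrix of $\phi^{(j)}$ in the $\gamma$-bases lies in $P$. Defining $M_{x,m}^{(j)}$ as the span of the first $n_m^+$ entries of $\gamma^{(j)}$, the block form of the matrix guarantees $\phi^{(j)}(\gr^\bullet M_{x,m}^{(j)})\subseteq M_{x,m}^{(j+1)}$, while the fact that $M_{x,m}^{(j)}$ is the span of a subset of the Hodge-compatible vectors $\{\beta_i^{(j)}\}$ means its intersection with $\Fil^\bullet M_x^{(j)}$ automatically equips it with a compatible Hodge filtration. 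Assembling over $j$ then yields a filtration by Fontaine--Laffaille submodules of $M_x$ witnessing the $P$-ordinarity of $x$.

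The main technical point to verify carefully lies in the ``only if'' step, specifically in producing a compatible basis of $M_x^{(j)}$ whose members each sit in the appropriate step of the submodule filtration; this is exactly where the pairwise distinctness of the Hodge--Tate weights is essential. The remainder of the argument is bookkeeping between the ``Hodge'' ordering and the ``filtration'' ordering of the same basis, together with the standard effect of a change of basis on the matrix of $\phi$.
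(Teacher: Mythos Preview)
Your proposal is correct and takes essentially the same approach as the paper: the paper also constructs a compatible basis $\beta^{(j)}$ by choosing each $\beta_k^{(j)}$ inside the smallest step $M_{x,m_k}^{(j)}$ of the submodule filtration where the $k$-th Hodge weight appears (your $m(j,k)$ is exactly the paper's $m_k$), then finds $u_{j-1}$ reordering $\beta^{(j)}$ into filtration order and reads off $A^{(j)}=u_jA_1^{(j)}u_{j-1}^{-1}$ with $A_1^{(j)}\in P(\F)$. The paper only spells out the ``only if'' direction and remarks that the converse is obtained by reversing the argument, exactly as you do.
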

\begin{proof}
We only prove the $\Rightarrow$ direction as the opposite direction can be proved by reversing the argument. Consider the increasing sequence of Fontaine--Laffaille submodules attached to $x$, written as in (\ref{eq:FL:sumod}). We write as usual $M_x=\prod_{j\in\cJ}M_x^{(j)}$ and $M_{x,m}=\prod_{j\in\cJ}M_{x,m}^{(j)}$ for each $1\leq m\leq r_0$.

We choose a basis $\beta=(\beta^{(j)})_{j\in\cJ}$ of $M_x$, compatible with the Hodge filtration of $M_x$, in the following way.
Write $\beta^{(j)}=\{\beta^{(j)}_1,\dots,\beta^{(j)}_n\}$ for all $j\in\cJ$. For each $j\in\cJ$, we choose $\beta^{(j)}_k$ by an increasing induction on $k$. Assume for the moment that we have chosen $\beta^{(j)}_1,\dots,\beta^{(j)}_{k-1}$ for some $1\leq k\leq n$ such that $\beta^{(j)}_1,\dots,\beta^{(j)}_{k-1}$ forms a basis of $\mathrm{Fil}^{\lambda_{j,k}+(n-k)+1}M_x^{(j)}$, then we want to choose the next vector $\beta^{(j)}_k$. We define $m_k$ as the smallest integer satisfying $1\leq m_k\leq r_0$ and
$$\mathrm{Fil}^{\lambda_{j,k}+(n-k)}M_x^{(j)}\cap M_{x,m_k}^{(j)}\neq \mathrm{Fil}^{\lambda_{j,k}+(n-k)+1}M_x^{(j)}\cap M_{x,m_k}^{(j)}.$$
As $(\mathrm{Fil}^{\lambda_{j,k}+(n-k)}M_x^{(j)}\cap M_{x,m_k}^{(j)})/(\mathrm{Fil}^{\lambda_{j,k}+(n-k+1)}M_x^{(j)}\cap M_{x,m_k}^{(j)})$ is one dimensional and is a subspace of $\mathrm{Fil}^{\lambda_{j,k}+(n-k)}M_x^{(j)}/\mathrm{Fil}^{\lambda_{j,k}+(n-k+1)}M_x^{(j)}$, which is also one dimensional, we have
\begin{equation}\label{equ: sum of subspace}
(\mathrm{Fil}^{\lambda_{j,k}+(n-k)}M_x^{(j)}\cap M_{x,m_k}^{(j)})+\mathrm{Fil}^{\lambda_{j,k}+(n-k)+1}M_x^{(j)}=\mathrm{Fil}^{\lambda_{j,k}+(n-k)}M_x^{(j)}.
\end{equation}
Hence, we choose an arbitrary non-zero vector
$$\beta^{(j)}_k\in \mathrm{Fil}^{\lambda_{j,k}+(n-k)}M_x^{(j)}\cap M_{x,m_k}^{(j)}\setminus \mathrm{Fil}^{\lambda_{j,k}+(n-k)+1}M_x^{(j)}\cap M_{x,m_k}^{(j)}$$
and note that $\beta^{(j)}_1,\dots,\beta^{(j)}_k$ necessarily forms a basis of $\mathrm{Fil}^{\lambda_{j,k}+(n-k)}M_x^{(j)}$ thanks to (\ref{equ: sum of subspace}). According to the choice above, it is clear that $\beta$ is compatible with the Hodge filtration of $M_x$.

We consider an element $u_\cJ=(u_j)_{j\in\cJ}\in\un{W}$ and the following reordering $\beta\cdot \pi^{-1}(u_\cJ)=(\beta^{(j)}\cdot u_{j-1})_{j\in\cJ}$ of the basis $\beta$, where $\beta^{(j)}\cdot u_{j-1}$ is the basis of $M_x^{(j)}$ given by $\beta_{u_{j-1}(1)}^{(j)},\cdots,\beta_{u_{j-1}(n)}^{(j)}$. Now we observe that, for each $j\in\cJ$, there exists $u_{j-1}$ such that for each $1\leq m\leq r_0$ the list of vectors
\begin{equation}\label{equ: an induced basis}
\beta_{u_{j-1}(1)}^{(j)},\beta_{u_{j-1}(2)}^{(j)},\cdots,\beta_{u_{j-1}(n_m^+)}^{(j)}
\end{equation}
forms a basis of $M_{x,m}^{(j)}$ inducing a basis compatible with the Hodge filtration on the quotient $M_{x,m}^{(j)}/M_{x,m-1}^{(j)}$.
We write $A=(A^{(j)})_{j\in\cJ}$ (resp.~$A_1=(A_1^{(j)})_{j\in\cJ}$) for the matrix of $\phi_{x}$ (induced from $\{\phi_{x,r_0,h}\}_h$) attached to the basis $\beta$ (resp.~$\beta\cdot \pi^{-1}(u_\cJ)$). It follows from (\ref{eq:FL:sumod}) that $A_1^{(j)}\in P(\F)$ for each $j\in\cJ$, which implies (\ref{equ: P form}) as we have $A^{(j)}=u_jA_1^{(j)}u_{j-1}^{-1}$ for each $j\in\cJ$.
\end{proof}

\begin{lemma}\label{lem: filtration vers graded pieces}
Let $x\in\tld{\cF\cL}_\cJ(\F)$ be a $P$-ordinary point, and let $u_\cJ=(u_j)_{j\in\cJ}\in \un{W}$ be as in Lemma~\ref{lem: filtration vers parabolic}.
If there exists
$$A_0=(A_0^{(j)})_{j\in\cJ}\in\prod_{j\in\cJ}u_j M u_{j-1}^{-1}(\F)$$
whose image $y$ in $\tld{\cF\cL}_{\cJ}(\F)$ satisfies $\rhobar_{y,\lambda+\eta}\cong\gr_{P^-}(\rhobar_{x,\lambda+\eta})$, then there exists $t=(t^{(j)})_{j\in\cJ}\in\un{T}(\F)$ and
$$A=(A^{(j)})_{j\in\cJ}\in\prod_{j\in\cJ}u_j P u_{j-1}^{-1}(\F)$$
such that the image of $A$ in $\tld{\cF\cL}_{\cJ}(\F)$ is $x$ and the image of $A$ under
\begin{equation}\label{equ: pass to Levi}
\prod_{j\in\cJ}u_j P u_{j-1}^{-1}(\F)\twoheadrightarrow\prod_{j\in\cJ}u_j M u_{j-1}^{-1}(\F)
\end{equation}
is $A_0\cdot t$ (cf.~(\ref{it:T-act:1})).
\end{lemma}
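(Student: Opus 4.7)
First, I would invoke Lemma~\ref{lem: filtration vers parabolic} to produce an auxiliary matrix $A^\sharp=(A^{\sharp,(j)})_{j\in\cJ}\in\prod_{j\in\cJ}u_jPu_{j-1}^{-1}(\F)$ whose image in $\tld{\cF\cL}_\cJ(\F)$ is $x$, and let $A^\sharp_0\in\prod_{j\in\cJ}u_jMu_{j-1}^{-1}(\F)$ denote its image under the natural projection to the Levi, with corresponding point $y^\sharp\in\tld{\cF\cL}_\cJ(\F)$. Next, unwinding the construction of $A^\sharp$ from the proof of Lemma~\ref{lem: filtration vers parabolic}, I would argue that the Fontaine--Laffaille module attached to $A^\sharp_0$ is canonically isomorphic to the associated graded $\bigoplus_{m=1}^{r_0}M_{x,m}/M_{x,m-1}$ of the filtration~(\ref{eq:FL:sumod}): after conjugating by $u_\cJ$ the matrix $A^\sharp$ becomes block upper-triangular in the reordered basis $\beta\cdot\pi^{-1}(u_\cJ)$, and its Levi projection records precisely the Frobenius on the graded pieces. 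Applying the contravariant $\Tcris$ and matching the resulting graded summands with those defining $\gr_{P^-}$, this will yield $\rhobar_{y^\sharp,\lambda+\eta}\cong\gr_{P^-}(\rhobar_{x,\lambda+\eta})$.

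Combined with the hypothesis on $A_0$, this will give $\rhobar_{y,\lambda+\eta}\cong\rhobar_{y^\sharp,\lambda+\eta}$, so the full faithfulness of $\Tcris$ and Proposition~\ref{thm:repr:FLgp} will produce $b=vt\in\un{B}(\F)$ with $v\in\un{U}(\F)$, $t\in\un{T}(\F)$ and $A^\sharp_0=A_0\cdot b$; explicitly,
\[
A^{\sharp,(j)}_0=(t^{(j+1)})^{-1}(v^{(j+1)})^{-1}A_0^{(j)}t^{(j)}\quad\text{for all } j\in\cJ.
\]
Setting $\tld{u}^{(j)}\defeq (t^{(j+1)})^{-1}(v^{(j+1)})^{-1}t^{(j+1)}\in\un{U}$, the identity above rewrites as $A^{\sharp,(j)}_0=\tld{u}^{(j)}\cdot(A_0\cdot t)^{(j)}$. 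The crucial observation is that the coset structure $A_0^{(j)},A^{\sharp,(j)}_0\in u_jMu_{j-1}^{-1}$ forces $\tld{u}^{(j)}=A^{\sharp,(j)}_0\cdot\bigl((A_0\cdot t)^{(j)}\bigr)^{-1}\in u_jMu_{j-1}^{-1}\cdot u_{j-1}Mu_j^{-1}=u_jMu_j^{-1}$, so in fact $\tld{u}^{(j)}\in\un{U}\cap u_jMu_j^{-1}\subseteq u_jPu_j^{-1}$.

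Finally, I would define $A\defeq\tld{u}^{-1}\cdot A^\sharp$ via direct (unshifted) left multiplication, so that $A^{(j)}=(\tld{u}^{(j)})^{-1}A^{\sharp,(j)}$. The inclusion $(\tld{u}^{(j)})^{-1}\in u_jPu_j^{-1}$ together with $A^{\sharp,(j)}\in u_jPu_{j-1}^{-1}$ places $A$ in $\prod_{j\in\cJ}u_jPu_{j-1}^{-1}(\F)$; since $\tld{u}^{-1}\in\un{U}(\F)$, the image of $A$ in $\tld{\cF\cL}_\cJ(\F)$ agrees with that of $A^\sharp$ and so equals $x$; and a short Levi decomposition shows that the projection of $A$ to $\prod_ju_jMu_{j-1}^{-1}$ equals $(\tld{u}^{(j)})^{-1}A^{\sharp,(j)}_0=(A_0\cdot t)^{(j)}$, giving the desired $t$. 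The main obstacle in the plan is the coset-theoretic verification in the middle paragraph that the unipotent correction $\tld{u}^{(j)}$ produced from Proposition~\ref{thm:repr:FLgp} actually lies in $u_jMu_j^{-1}$ rather than merely in the bigger parabolic $u_jPu_j^{-1}$: this rests precisely on $A_0^{(j)}$ and $A^{\sharp,(j)}_0$ sharing the common outer coset $u_j(\cdot)u_{j-1}^{-1}$, so that the $u_j$'s cancel and force the unipotent correction into the Levi.
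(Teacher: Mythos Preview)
Your proof is correct and follows a different route from the paper's. The paper re-opens the inductive construction of Lemma~\ref{lem: filtration vers parabolic}: given the basis $\gr_P(\beta)$ of $\bigoplus_m M_{x,m}/M_{x,m-1}$ realizing $A_0$, it chooses the compatible basis $\beta$ of $M_x$ so as to lift $\gr_P(\beta)$, producing a Frobenius matrix $A_1$ whose Levi projection is already exactly $A_0$; a single shifted $\un{T}$-conjugation then corrects the image $x_1\mapsto x$. You instead take the output $A^\sharp$ of Lemma~\ref{lem: filtration vers parabolic} as a black box, compare Levi projections through the $\un{B}$-torsor description of $\mathrm{FL}_n^{\lambda+\eta}$ in Proposition~\ref{thm:repr:FLgp}, and absorb the resulting unipotent discrepancy via the coset computation $\tld u^{(j)}\in U\cap u_jMu_j^{-1}$. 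The paper's method is more constructive and avoids the coset manipulation; yours is group-theoretically cleaner and does not require revisiting the inductive basis choice. The ``obstacle'' you flag is genuine but routine: since $T\subset u_{j'}Mu_{j'}^{-1}$ for every $j'$, the element $(A_0\cdot t)^{(j)}$ stays in $u_jMu_{j-1}^{-1}$, so the quotient $\tld u^{(j)}=A^{\sharp,(j)}_0\bigl((A_0\cdot t)^{(j)}\bigr)^{-1}$ lands in $u_jMu_j^{-1}$ automatically. Your step~2 also goes through without any minimality hypothesis on $u_\cJ$: the un-reordered projections $\gamma^{(j)}_k\defeq\ovl{\beta^{(j)}_k}$ already form a basis of the graded compatible with its Hodge filtration (both $M_{x,m}$ and $\Fil^\bullet M_x$ being spanned by subsets of $\beta$), and the Frobenius matrix in this basis is precisely $A^\sharp_0$.
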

\begin{proof}
This is a simple refinement of the proof of Lemma~\ref{lem: filtration vers parabolic} in the sense that we can choose the basis $\beta$ more carefully. We write $M_{x,0}\defeq 0\subseteq M_x$ for convenience. According to Proposition~\ref{thm:repr:FLgp}, our assumption on $A_0\in \prod_{j\in\cJ}u_j M u_{j-1}^{-1}(\F)$ simply means that there exists a basis of the Fontaine--Laffaille module $\bigoplus_{m=1}^{r_0}M_{x,m}/M_{x,m-1}$, written $\gr_P(\beta)$, such that it is compatible with the Hodge filtration and the matrix of Frobenius attached to the basis $\gr_P(\beta)$ is given by $A_0$. We recall from the proof of Lemma~\ref{lem: filtration vers parabolic} that the basis $\beta\cdot \pi^{-1}(u_\cJ)$ satisfies the condition that (\ref{equ: an induced basis}) forms a basis of $M_{x,m}$ for each $1\leq m\leq r_0$. Therefore the basis $\beta\cdot \pi^{-1}(u_\cJ)$ induces a basis of $\bigoplus_{m=1}^{r_0}M_{x,m}/M_{x,m-1}$ which is compatible with the Hodge filtration by our minimality assumption on the length of $u_{j-1}$ (see Lemma~\ref{lem: filtration vers parabolic}). It is clear that, given the basis $\gr_P(\beta)$ of $\bigoplus_{m=1}^{r_0}M_{x,m}/M_{x,m-1}$, we can always choose the basis $\beta$ as in Lemma~\ref{lem: filtration vers parabolic} with the extra requirement that the basis of $\bigoplus_{m=1}^{r_0}M_{x,m}/M_{x,m-1}$ induced from $\beta\cdot \pi^{-1}(u_\cJ)$ is exactly $\gr_P(\beta)$. We write $A_1=(A_1^{(j)})_{j\in\cJ}$ for the matrix of Frobenius $\{\phi_{x,0,h}\}_h$ under $\beta$ and write $x_1$ for the image of $A_1$ in $\tld{\cF\cL}_\cJ(\F)$. Hence the image of $A_1$ under (\ref{equ: pass to Levi}) is $A_0$. As $A_1$ is constructed from $M_x$ by a choice of basis (compatible with Hodge filtration), it is clear that $\rhobar_{x_1,\lambda+\eta}\cong \rhobar_{x,\lambda+\eta}$ and there exists $t=(t^{(j)})_{j\in\cJ}\in \un{T}(\F)$ such that $x=x_1\cdot t$. Hence we set $A\defeq A_1\cdot t$ and finish the proof.
\end{proof}

We use the notation $w^\flat_\cJ=(w^\flat_j)_{j\in\cJ}$
with
\begin{equation}\label{equ: full niveau}
w^\flat_j\defeq w_j\cdot w_{j-1}\cdots w_{j-f+1}
\end{equation}
for each $w_\cJ=(w_j)_{j\in\cJ}\in\un{W}$. Note that $w^\flat_{j-1}=w_j^{-1}w^\flat_jw_j$ for each $j\in\cJ$.

\begin{defn}\label{def: associate Levi}
We say that a Levi subgroup $M'\subseteq\GL_{n}$ is \emph{$W$-standard} if its conjugation by an element of $W$ is standard. We define $M_w$ as the minimal $W$-standard Levi subgroup that contains $w$ and call it \emph{the Levi subgroup associated with $w$}. Note by definition that we have $M_{uwu^{-1}}=u M_w u^{-1}$ for any choice of $w,u\in W$.
\end{defn}
Each element $w\in W$ induces a partition of $\mathbf{n}=\{1,\dots,n\}$ into orbits of $w$. For each Noetherian $\F$-algebra $R$, $M_w(R)\subseteq\GL_n(R)$ consists of those matrices whose $(i,i^\prime)$-entry is zero if $i$ and $i^\prime$ lies in different orbits of $w$. We have the following useful observation from Definition~\ref{def: associate Levi}: given an element $w\in W$ and a $W$-standard Levi subgroup $M'\subseteq\GL_{n}$ such that $M_w\subseteq M'$, then $M'=M_w$ if and only if the number of Levi blocks inside $M'$ equals the number of orbits of $w$.

Given two elements $w_\cJ=(w_j)_{j\in\cJ},~u_\cJ=(u_j)_{j\in\cJ}\in \un{W}$ and a standard parabolic subgroup of $ P\subseteq\GL_{n}$ with standard Levi subgroup $ M$ and unipotent radical $N$, we observe that the composition
$$\prod_{j\in\cJ}T u_jN  u_j^{-1} w_j\rightarrow \un{G}\rightarrow\tld{\cF\cL}_\cJ$$
factors through %
$$\prod_{j\in\cJ}T u_j N u_j^{-1} w_j\rightarrow\cM_{w_\cJ}^\circ.$$
For each $j\in\cJ$, if we assume that $u_j^{-1}w_ju_{j-1}\in M$, (in which case we have $u_j M u_{j-1}^{-1}=u_j M u_j^{-1} w_j$ and $u_j P u_{j-1}^{-1}=u_j P u_j^{-1} w_j$), then $T u_jN  u_j^{-1} w_j$ is the fiber of
$$u_j P u_{j-1}^{-1}\twoheadrightarrow u_j M u_{j-1}^{-1}$$
over $T w_j$.

Now we return to the set up of Lemma~\ref{lem: filtration vers graded pieces}. We assume further that there is no strictly smaller standard parabolic subgroup $P'\subsetneq P$ such that $x$ is $P'$-ordinary, which implies that $\gr_{P^-}(\rhobar_{x,\lambda+\eta})$ is semisimple. It follows from Lemma~\ref{lem: class of irr} that there exists $y\in\overline{\cM}_{w_\cJ}(\F)$ for some $w_\cJ\in\un{W}$ such that $\gr_{P^-}(\rhobar_{x,\lambda+\eta})\cong\rhobar_{y,\lambda+\eta}$. As $\overline{\cM}_{w_\cJ}$ is the schematic image of $\un{T}w_\cJ$ in $\cM_{w_\cJ}^\circ$, we may choose $A_0\in \un{T}w_\cJ(\F)$ whose image in $\overline{\cM}_{w_\cJ}(\F)$ is $y$. Then it follows from Lemma~\ref{lem: filtration vers graded pieces} that there exists $A\in u_j P u_{j-1}^{-1}(\F)$ and $t=(t^{(j)})_{j\in\cJ}\in\un{T}(\F)$ such that the image of $A$ under (\ref{equ: pass to Levi}) is $A_0\cdot t\in \un{T}w_\cJ(\F)$, and the image of $A$ in $\tld{\cF\cL}_\cJ$ is $x$. It is clear that $A_0\in\un{T}w_\cJ(\F)\cap \prod_{j\in\cJ}u_j M u_{j-1}^{-1}(\F)$ which implies that $$u_j^{-1}w_ju_{j-1}\in M$$ for each $j\in\cJ$. In particular, we deduce that
$$u_j^{-1}w^\flat_ju_j=(u_j^{-1}w_ju_{j-1})\cdot (u_{j-1}^{-1}w_{j-1}u_{j-2})\cdots (u_{j-f+1}^{-1}w_{j-f+1}u_j)\in M$$
for each $j\in\cJ$. Then we observe that the number of orbits of $u_j^{-1}w^\flat_ju_j$, which equals the number of orbits of $w^\flat_j$, which (by Lemma~\ref{lem: class of irr}) equals the number of irreducible direct summands of $\gr_{P^-}(\rhobar_{x,\lambda+\eta})$, which finally equals the number of Levi blocks of $M$.
It follows from the paragraph after Definition~\ref{def: associate Levi} that we must have
$$M_{u_j^{-1}w^\flat_ju_j}=M$$
for each $j\in\cJ$. Consequently, we arrive at the following definition.
\begin{defn}\label{def: niveau partition}
For each $w_\cJ\in\un{W}$, we define $\Xi_{w_\cJ}\subseteq\{w_\cJ\}\times\un{W}$ as the subset consisting of pairs $\xi=(w_\cJ,u_\cJ)$ such that $M_{u_j^{-1}w^\flat_ju_j}$ is a standard Levi subgroup of $\GL_{n}$ independent of $j\in\cJ$, written $M_\xi$, and such that $u_j^{-1}w_ju_{j-1}\in M_\xi $ for each $j\in\cJ$. Note that there exists a unique standard parabolic subgroup $P_\xi\subseteq \GL_{n}$ containing $M_\xi$ and we denote the unipotent radical of $P_\xi$ by $N_\xi$. For each element $\xi=(w_\cJ,u_\cJ)\in\Xi_{w_\cJ}$, we define $\cN_\xi$ as the schematic image of
$$\prod_{j\in\cJ}T u_j N_\xi u_j^{-1} w_j$$
in $\cM_{w_\cJ}^\circ$.
\end{defn}

\begin{rmk}
For a fixed $w_\cJ\in\un{W}$, the following closed subscheme of $\un{G}$
$$
\prod_{j\in\cJ}u_j M_\xi u_{j-1}^{-1}
$$
does not depend on the choice of $\xi=(w_\cJ,u_\cJ)\in \Xi_{w_\cJ}$. In fact, this directly follows from the observation that (see Definition~\ref{def: niveau partition} for the properties of $\xi$)
$$u_j M_\xi u_{j-1}^{-1}=u_j M_\xi u_j^{-1}w_j=u_j M_{u_j^{-1}w^\flat_ju_j}u_j^{-1}w_j=M_{w^\flat_j}w_j$$
for each $j\in\cJ$. Given two pairs $\xi=(w_\cJ,u_\cJ),~\xi^\prime=(w_\cJ,u_\cJ')\in\Xi_{w_\cJ}$, one can show that $\cN_\xi=\cN_{\xi^\prime}$ if and only if
$$\prod_{j\in\cJ}u_j P_\xi u_{j-1}^{-1}=\prod_{j\in\cJ}u_j' P_{\xi^\prime} (u_{j-1}')^{-1}$$
if and only if $M_\xi=M_{\xi^\prime}$ and $u_j^{-1}u_j'\in W_{M_\xi}$ for each $j\in\cJ$.
\end{rmk}

\begin{prop}\label{prop: niveau rep vers pt}
A point $x\in \tld{\cF\cL}_\cJ(\F)$ belongs to $\cN_\xi(\F)$ for some $\xi\in\Xi_{w_\cJ}$ if and only if
$$\rhobar_{x,\lambda+\eta}^{\semis}|_{I_K}\cong \overline{\tau}(w^{-1}_\cJ,\lambda+\eta).$$
\end{prop}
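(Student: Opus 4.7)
The strategy is to reduce both conditions to statements about $P$-ordinary filtrations on the Fontaine--Laffaille module $M_x$ attached to $x$, and then invoke Lemma~\ref{lem: class of irr} (which computes the inertial type associated with a semisimple Fontaine--Laffaille module representative) together with Lemmas~\ref{lem: filtration vers parabolic} and~\ref{lem: filtration vers graded pieces} (which bridge the matrix-theoretic and Galois-theoretic sides of $P$-ordinarity).

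For the forward direction, suppose $x\in\cN_\xi(\F)$ with $\xi=(w_\cJ,u_\cJ)\in\Xi_{w_\cJ}$. Write a representative $A=(A^{(j)})_{j\in\cJ}$ of $x$ in the form $A^{(j)}=t^{(j)}u_jn^{(j)}u_j^{-1}w_j$. Using the defining condition $u_j^{-1}w_ju_{j-1}\in M_\xi$ (equivalently $w_j=u_jm_{0,j}u_{j-1}^{-1}$ with $m_{0,j}\in M_\xi$) together with the fact that $M_\xi$ normalizes $N_\xi$, one rewrites $A^{(j)}\in (u_jM_\xi u_{j-1}^{-1})\cdot (u_{j-1}N_\xi u_{j-1}^{-1})\subseteq u_jP_\xi u_{j-1}^{-1}$. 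By Lemma~\ref{lem: filtration vers parabolic}, $x$ is $P_\xi$-ordinary, and the Levi component of $A^{(j)}$ is $t^{(j)}w_j$, so the image in $\prod_ju_jM_\xi u_{j-1}^{-1}$ lies in $\un{T}w_\cJ$. The condition that $M_\xi=M_{u_j^{-1}w^\flat_ju_j}$ (built into $\Xi_{w_\cJ}$) forces $u_j^{-1}w_j^\flat u_j$ to have cycle structure matching the Levi blocks of $M_\xi$, which by Lemma~\ref{lem: class of irr} applied inside each block produces an irreducible summand; hence $\gr_{P_\xi^-}(\rhobar_{x,\lambda+\eta})$ is semisimple and its inertial restriction is $\taubar(w^{-1}_\cJ,\lambda+\eta)$. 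Since semisimplification commutes with restriction to $I_K$ and the unipotent radical acts trivially on graded pieces, this equals $\rhobar_{x,\lambda+\eta}^{\semis}|_{I_K}$.

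For the reverse direction, assume $\rhobar_{x,\lambda+\eta}^{\semis}|_{I_K}\cong \taubar(w^{-1}_\cJ,\lambda+\eta)$. The $r_0$ Frobenius orbits of $w^\flat_j$ on $\{1,\dots,n\}$ determine the sizes $(n_m)_{1\leq m\leq r_0}$ of irreducible constituents of $\rhobar_{x,\lambda+\eta}^{\semis}$, and hence a standard parabolic $P\subseteq\GL_n$ with Levi $M$ having blocks of those sizes. A refinement of a Jordan--H\"older filtration of $\rhobar_{x,\lambda+\eta}$ compatible with the isotypic decomposition of its semisimplification, transported by full faithfulness of $\Tcris$ to a filtration of $M_x$ by Fontaine--Laffaille submodules as in~\eqref{eq:FL:sumod}, shows that $x$ is $P$-ordinary with semisimple $\gr_{P^-}(\rhobar_{x,\lambda+\eta})$. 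The discussion preceding Definition~\ref{def: niveau partition} (combining Lemmas~\ref{lem: filtration vers parabolic},~\ref{lem: class of irr}, and~\ref{lem: filtration vers graded pieces}) then produces $u_\cJ\in\un{W}$ and $w''_\cJ\in\un{W}$ with $(w''_\cJ,u_\cJ)\in\Xi_{w''_\cJ}$, $M_{(w''_\cJ,u_\cJ)}=M$, and a representative $A\in\prod_jTu_jN_\xi u_j^{-1}w''_j(\F)$ of $x$, so that $x\in\cN_{(w''_\cJ,u_\cJ)}$.

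The main obstacle is to upgrade $w''_\cJ$ to the specific $w_\cJ$ prescribed by the statement. Here one exploits the fact that $\taubar(w''^{-1}_\cJ,\lambda+\eta)\cong\taubar(w^{-1}_\cJ,\lambda+\eta)$: inspection of Definition~\ref{defn:tau} and the cyclic Frobenius structure shows that such an isomorphism forces $w''_\cJ$ and $w_\cJ$ to differ by conjugation/left-multiplication by a tuple $v_\cJ\in\un{W}$ with each coordinate in $W_M$ (the Weyl group of the common Levi $M$). The flexibility in the choice of ordered basis of each graded piece $M_{x,m}/M_{x,m-1}$ in the proof of Lemma~\ref{lem: filtration vers graded pieces} is precisely this $W_M$-action; replacing $u_\cJ$ by $u_\cJ v_\cJ^{-1}$ produces $(w_\cJ,u_\cJ v_\cJ^{-1})\in\Xi_{w_\cJ}$ with $x\in\cN_{(w_\cJ,u_\cJ v_\cJ^{-1})}$. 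The careful combinatorial bookkeeping of this last step---matching the orientation datum used to define the tame type with the choice of permutation of Levi factors---is the delicate part of the argument, and is where the precise definition of $\Xi_{w_\cJ}$ (which pins down the first coordinate) interacts nontrivially with the ambiguity of the semisimplification.
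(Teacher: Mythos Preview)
Your overall strategy matches the paper's: both directions go through Lemma~\ref{lem: class of irr}, Lemmas~\ref{lem: filtration vers parabolic}--\ref{lem: filtration vers graded pieces}, and the discussion before Definition~\ref{def: niveau partition}. Your forward direction is essentially correct (though you invoke an easy converse of Lemma~\ref{lem: class of irr}, namely that a point of $\overline{\cM}_{w_\cJ}$ with $w_\cJ^\flat$ having block-wise single orbits gives a semisimple representation; this is implicit in the paper but not stated as part of the lemma).

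The issue is in your reverse direction. You correctly observe that the discussion before Definition~\ref{def: niveau partition} produces an element $w''_\cJ$ (via Lemma~\ref{lem: class of irr} applied to $\rhobar_{x,\lambda+\eta}^{\semis}$) and some $\xi''\in\Xi_{w''_\cJ}$ with $x\in\cN_{\xi''}$, and that one must still match $w''_\cJ$ with the prescribed $w_\cJ$. But your proposed resolution---exploiting ``$W_M$-freedom'' in the choice of $u_\cJ$ to trade $w''_\cJ$ for $w_\cJ$---does not work as stated. The point is that $w''_\cJ$ is \emph{uniquely} determined by $\rhobar_{x,\lambda+\eta}^{\semis}$ through Lemma~\ref{lem: class of irr} (the $\overline{\cM}_{w_\cJ}$ are pairwise disjoint elements of $\cP_\cJ$), independent of any choices of ordered bases for the graded pieces. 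Changing $u_\cJ$ by an element of $W_{M_\xi}$ changes only the second coordinate of $\xi$, not the first: it takes you to another element of $\Xi_{w''_\cJ}$, never to $\Xi_{w_\cJ}$ for a different $w_\cJ$.

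The correct and much shorter resolution, which the paper's one-line proof takes for granted, is that the assignment $w_\cJ\mapsto\taubar(w_\cJ^{-1},\lambda+\eta)$ is \emph{injective} when $\lambda+\eta$ is $0$-generic Fontaine--Laffaille. Since both $(w_\cJ^{-1},\lambda)$ and $(w''^{-1}_\cJ,\lambda)$ are lowest alcove presentations of the same tame inertial type with the same regular second coordinate, they coincide (this is the uniqueness of lowest alcove presentations in the generic case, cf.~the remark after Definition~\ref{defi:gen}; concretely, the entries of $\lambda_j+\eta_j$ are distinct and small enough that the base-$p$ digits of the fundamental-character exponents recover $\bm{\alpha}_{(w_\cJ^{-1},\lambda),j}$ and hence $w_\cJ$). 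So $w_\cJ=w''_\cJ$, and you are done immediately without any combinatorial bookkeeping.
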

\begin{proof}
This follows directly from Lemma~\ref{lem: class of irr} and the discussion right before Definition~\ref{def: niveau partition}.
\end{proof}

\begin{prop}\label{prop: niveau is a union of elements}
Let $\xi\in\Xi_{w_{\cJ}}$. Then $\cN_\xi$ is integral, and
\begin{equation}\label{equ: intersection expression}
\cN_\xi=\prod_{j\in\cJ}\left(\cM_{w_j}^{\circ}\cap\bigcap_{S\in K_j}\cH_S\right)
\end{equation}
for a uniquely determined $K_\cJ=(K_j)_{j\in\cJ}\subseteq \wp(\mathbf{n}_\cJ)$ with $\cC_{K_{\cJ}}\neq\emptyset$ and $K_j\cap S_{\bullet,w_j}=\emptyset$ for all $j\in\cJ$.
In particular, $\cN_\xi$ is a topological union of elements in $\cP_\cJ$.
\end{prop}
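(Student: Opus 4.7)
The plan is to apply Lemma~\ref{lem: is union of elements} in each embedding $j \in \cJ$ separately, and then use the product decomposition $\tld{\cF\cL}_\cJ = \prod_{j\in\cJ}\tld{\cF\cL}$ together with the definition of $\cP_\cJ$ as the induced product partition. The main preliminary step is a rewriting of the $j$-th factor $Tu_j N_\xi u_j^{-1} w_j$ into the form $wTU_\Omega w'$ considered in Lemma~\ref{lem: is union of elements}. Setting $v_j \defeq u_j^{-1} w_j u_{j-1}$, the definition of $\Xi_{w_\cJ}$ guarantees $v_j \in W_{M_\xi}$, so $v_j$ normalizes both $T$ and $N_\xi$. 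Using in addition that $W$ normalizes $T$, a direct manipulation gives
\begin{equation*}
Tu_j N_\xi u_j^{-1} w_j \;=\; u_j T N_\xi v_j u_{j-1}^{-1} \;=\; u_j T v_j N_\xi u_{j-1}^{-1} \;=\; (w_j u_{j-1})\, T\, U_{\Phi_{N_\xi}}\, u_{j-1}^{-1},
\end{equation*}
where $\Phi_{N_\xi} \subseteq \Phi^+$ denotes the set of positive roots appearing in $N_\xi$ (so that $N_\xi = U_{\Phi_{N_\xi}}$). This is precisely the form required by Lemma~\ref{lem: is union of elements}, with $w = w_j u_{j-1}$, $w' = u_{j-1}^{-1}$, and $ww' = w_j$.

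Applying Lemma~\ref{lem: is union of elements} to each $j$ then yields a unique subset $K_j \subseteq \wp(\mathbf{n})$ with $K_j \cap S_{\bullet, w_j} = \emptyset$ such that the schematic image of the $j$-th factor in $\cM_{w_j}^\circ$ is integral, equal to $\cM_{w_j}^\circ \cap \bigcap_{S\in K_j}\cH_S$, is a topological union of elements of $\cP$, and contains $\cC_{K_j}$ as its unique open dense member (which is therefore nonempty). Taking the product over $j\in \cJ$ produces the displayed expression~(\ref{equ: intersection expression}); by the very definition of $\cP_\cJ$ as the product partition, it follows at once that $\cN_\xi$ is a topological union of elements of $\cP_\cJ$ containing the nonempty element $\cC_{K_\cJ} = \prod_{j\in \cJ}\cC_{K_j}$. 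Uniqueness of $K_\cJ$ reduces to that of each $K_j$, which is immediate since $K_j$ can be recovered intrinsically as $\{ S \subseteq \mathbf{n} : f_{S,j}\text{ vanishes identically on }\cN_\xi\}$.

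For integrality of $\cN_\xi$, observe that each factor is the schematic image of $(w_j u_{j-1}) T U_{\Phi_{N_\xi}} u_{j-1}^{-1}$, which as a scheme is isomorphic to $T \times \bA^{\#\Phi_{N_\xi}}$ and in particular geometrically integral; hence each factor is geometrically integral, and their product over $\F$ is integral. The only substantive step in the argument is the algebraic manipulation expressing the $j$-th factor in the form $wTU_\Omega w'$, which hinges on $v_j \in W_{M_\xi}$ (built into the definition of $\Xi_{w_\cJ}$) normalizing $N_\xi$; everything else is formal. I do not expect any significant obstacle beyond making this observation cleanly.
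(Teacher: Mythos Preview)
Your proof is correct and follows the same route as the paper, which simply cites Lemma~\ref{lem: is union of elements} (together with Lemma~\ref{lem: elementary, elements} and~(\ref{equ: cM_w is union of elements})) without spelling out the details. You have made explicit the algebraic rewriting needed to put the $j$-th factor into the form $wTU_\Omega w'$ required by Lemma~\ref{lem: is union of elements}, and your argument via $v_j \in W_{M_\xi}$ is valid; note that an even shorter rewriting $T u_j N_\xi u_j^{-1} w_j = u_j\, T\, N_\xi\, (u_j^{-1} w_j)$ already suffices, since $u_j$ normalizes $T$ and $N_\xi \subseteq U$.
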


\begin{proof}
The fact that $\cN_\xi$ is integral, the equality (\ref{equ: intersection expression}), and the unique existence of $K_{\cJ}$ follow immediately from Lemma~\ref{lem: is union of elements}, which together with Lemma~\ref{lem: elementary, elements} and (\ref{equ: cM_w is union of elements}) implies that $\cN_\xi$ is a topological union of elements in $\cP_\cJ$.
\end{proof}

\begin{defn}
\label{defn:Nxi:irr:cmpt}
For each $w_\cJ \in \un{W}$, we define $\cN_{w_\cJ}$ as the topological union of $\cN_\xi$ for all $\xi\in\Xi_{w_\cJ}$. As $\cN_\xi$ is closed in $\cM_{w_\cJ}^\circ$ for each $\xi\in\Xi_{w_\cJ}$, $\cN_{w_\cJ}$ is naturally a reduced closed subscheme of $\cM_{w_\cJ}^\circ$.
\end{defn}
\begin{prop}\label{prop: niveau partition}
The set of locally closed subschemes $\{\cN_{w_\cJ}\mid w_\cJ\in\un{W}\}$ forms a topological partition of $\tld{\cF\cL}_\cJ$.
Moreover, a point $x\in\tld{\cF\cL}_\cJ(\F)$ belongs to $\cN_{w_\cJ}(\F)$ if and only if
$$\rhobar_{x,\lambda+\eta}^{\semis}|_{I_K}\cong \taubar(w^{-1}_\cJ,\lambda+\eta).$$
\end{prop}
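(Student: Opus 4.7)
The ``moreover'' statement will follow directly from Definition~\ref{defn:Nxi:irr:cmpt} combined with Proposition~\ref{prop: niveau rep vers pt}: since $\cN_{w_\cJ}$ is defined as the topological union $\bigcup_{\xi\in\Xi_{w_\cJ}}\cN_\xi$, the condition $x\in\cN_{w_\cJ}(\F)$ is equivalent to $x\in\cN_\xi(\F)$ for some $\xi\in\Xi_{w_\cJ}$, which by Proposition~\ref{prop: niveau rep vers pt} is itself equivalent to $\rhobar_{x,\lambda+\eta}^{\semis}|_{I_K}\cong\taubar(w_\cJ^{-1},\lambda+\eta)$.

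To establish the partition property, I would first check covering. Given $x\in\tld{\cF\cL}_\cJ(\F)$ with associated Fontaine--Laffaille module $M_x\in\mathrm{FL}_n^{\lambda+\eta}(\F)$, one forms its semisimplification $M_x^{\semis}$ in the abelian category of Fontaine--Laffaille modules (still of weight $\lambda+\eta$). Using that $\Tcris$ is fully faithful and compatible with direct sums, $M_x^{\semis}$ corresponds to a semisimple point $y\in\tld{\cF\cL}_\cJ(\F)$ with $\rhobar_{y,\lambda+\eta}\cong\rhobar_{x,\lambda+\eta}^{\semis}$. Applying Lemma~\ref{lem: class of irr} to $y$ produces a $w_\cJ\in\un{W}$ with $\rhobar_{y,\lambda+\eta}|_{I_K}\cong\taubar(w_\cJ^{-1},\lambda+\eta)$, and the ``moreover'' statement then places $x\in\cN_{w_\cJ}(\F)$.

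For the required disjointness/coincidence dichotomy, suppose that $x\in\cN_{w_\cJ}(\F)\cap\cN_{w_\cJ'}(\F)$ for some $w_\cJ, w_\cJ'\in\un{W}$. The ``moreover'' statement forces $\taubar(w_\cJ^{-1},\lambda+\eta)\cong\taubar((w_\cJ')^{-1},\lambda+\eta)$ as $I_K$-representations, and applying the equivalence in reverse shows $\cN_{w_\cJ}(\F)=\cN_{w_\cJ'}(\F)$ entirely. Since each $\cN_{w_\cJ}$ is by construction a reduced locally closed subscheme exhibited as a topological union of elements of $\cP_\cJ$ (via Proposition~\ref{prop: niveau is a union of elements} and Definition~\ref{defn:Nxi:irr:cmpt}), agreement on $\F$-points upgrades to equality as subschemes. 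Hence the collection $\{\cN_{w_\cJ}\mid w_\cJ\in\un{W}\}$, once coincident entries are identified, partitions $\tld{\cF\cL}_\cJ$ topologically.

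The one point requiring care is the semisimple lift $y$ constructed in the covering step: this relies on the fact that semisimplification in $\mathrm{FL}_n^{\lambda+\eta}$ is well-defined and compatible with the Galois-theoretic semisimplification under $\Tcris$, which is in turn a consequence of the exactness of $\Tcris$. I expect this to be the only non-formal input; once it is in place, the proposition follows purely by combining Lemma~\ref{lem: class of irr} with Proposition~\ref{prop: niveau rep vers pt}.
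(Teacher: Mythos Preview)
Your proof is correct and follows essentially the same route as the paper's (very terse) argument, which simply invokes Lemma~\ref{lem: class of irr}, Proposition~\ref{prop: niveau rep vers pt}, and the definition of $\cN_{w_\cJ}$. The only minor difference is in the covering step: you construct the semisimple point $y$ directly by semisimplifying the Fontaine--Laffaille module $M_x$ and appealing to full faithfulness and exactness of $\Tcris$, whereas the paper has already implicitly produced such a $y$ in the parabolic analysis preceding Definition~\ref{def: niveau partition} (via Lemmas~\ref{lem: filtration vers parabolic} and~\ref{lem: filtration vers graded pieces}), which is what Proposition~\ref{prop: niveau rep vers pt} itself rests on. Both routes are valid and amount to the same thing.

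One small point: your claim that agreement on $\F$-points upgrades to equality as subschemes is not quite justified as written, since $\F$ need not be algebraically closed. The clean fix is to note that the ``moreover'' characterization (and hence your disjointness argument) works verbatim over any finite extension $\F'/\F$, so the $\cN_{w_\cJ}$'s that coincide on $\F$-points in fact coincide on all $\F'$-points, and since they are reduced and of finite type over $\F$ this gives equality. Alternatively, since each $\cN_{w_\cJ}$ is a union of elements of $\cP_\cJ$ and the latter already form a partition, it suffices to know which $\cP_\cJ$-strata each $\cN_{w_\cJ}$ contains, and this is detected by any point of each stratum.
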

\begin{proof}
It follows from Lemma~\ref{lem: class of irr} that, for each $x\in\tld{\cF\cL}_\cJ(\F)$, the semisimplification of $\rhobar_{x,\lambda+\eta}$ has the form $\taubar(w^{-1}_\cJ,\lambda+\eta)$ for some $w_\cJ\in\un{W}$. Hence the desired result follows directly from Proposition~\ref{prop: niveau rep vers pt} and the definition of $\cN_{w_\cJ}$.
\end{proof}

\begin{rmk}\label{rmk: irr comp of niveau}
Given $w_\cJ\in\un{W}$, the scheme $\cN_{w_\cJ}$ is not irreducible in general. As $\cN_{w_\cJ}$ is topological union of the integral schemes $\cN_\xi$ for all $\xi\in\Xi_{w_\cJ}$, each irreducible component of $\cN_{w_\cJ}$ must have the form $\cN_\xi$ for some $\xi\in\Xi_{w_\cJ}$. The converse is not true, namely there exist $w_\cJ\in\un{W}$ and $\xi\in\Xi_{w_\cJ}$ such that $\cN_\xi$ is strictly contained in some irreducible component of $\cN_{w_\cJ}$ (see for example the case $w_\cJ=1$). One can prove that $\cN_\xi$ is an irreducible component if and only if there exists $x\in\cN_\xi(\F)$ such that $\rhobar_{x,\lambda+\eta}$ is \emph{maximally non-split}, namely each non-zero semisimple subquotient of $\rhobar_{x,\lambda+\eta}$ is irreducible.
\end{rmk}

\subsection{Standard coordinates}\label{sub:std:note}
In this section, we further fix some notation that will be frequently used in later sections. In particular, we introduce a standard coordinate on $\cN_\xi$ (see (\ref{equ: std section prime})) for each $w_\cJ\in\un{W}$ and each $\xi\in\Xi_{w_{\cJ}}$.

We fix a choice of $w_\cJ\in\un{W}$ and $\xi\in\Xi_{w_{\cJ}}$ (as in Definition~\ref{def: niveau partition}) and use the usual notation $ M_\xi, N_\xi\subseteq  P_\xi$ for the subgroups of $\GL_{n}$ associated with $\xi$. We write $\Phi^+_\xi\subseteq\Phi^+$ for the subset such that $N_\xi\subseteq U$ is the closed subscheme characterized by the vanishing of the $\al$-entry for each $\al\in\Phi^+\setminus \Phi^+_\xi$.

We associate a tuple of integers $\un{n}^\xi=(n_k^\xi)_{1\leq k\leq r_\xi}$ with $ M_\xi$ such that $n=\sum_{k=1}^{r_\xi}n_k^\xi$ and
$$ M_\xi\cong \GL_{n_1^\xi}\times\cdots\GL_{n_{r_\xi}^\xi}$$
where $r_\xi$ is the number of Levi blocks of $ M_\xi$. We set
\begin{equation}\label{equ: single block of integers}
[m]_\xi\defeq\left\{k\Bigm\vert 1+\sum_{d=1}^{m-1}n_d^\xi\leq k\leq \sum_{d=1}^mn_d^\xi\right\}
\end{equation}
for each $1\leq m\leq r_\xi$. For each $\al=(i_\al,i_\al^\prime)\in \Phi^+_\xi$, there exists a unique pair of integers $(h_\al, \ell_\al)$ such that $i_\al\in[h_\al]_\xi$, $i_\al^\prime\in[\ell_\al]_\xi$ and $1\leq h_\al<\ell_\al\leq r_\xi$. We consider the set $\Phi^+_{\GL_{r_\xi}}$ of positive roots of $\GL_{r_\xi}$ and there exists a natural map $\Phi^+_\xi\rightarrow\Phi^+_{\GL_{r_\xi}}$ given by
$$\al\mapsto (h_\al, \ell_\al).$$
We often call a root $\gamma\in\Phi^+_{\GL_{r_\xi}}$ a \emph{block} as it corresponds to a block (subgroup) of $N_\xi$, and $\gamma$ can be written as $\gamma=(h,\ell)$ for a pair of integers satisfying $1\leq h<\ell\leq r_\xi$.

We set
$$ N_{\xi,j}^+\defeq u_j N_\xi u_j^{-1}\cap U\textnormal{ and } N_{\xi,j}^-\defeq u_j N_\xi u_j^{-1}\cap w_0 U w_0.$$
Note that multiplication inside $u_j N_\xi u_j^{-1}$ induces an isomorphism of schemes
$$u_j N_\xi u_j^{-1}=N_{\xi,j}^+  N_{\xi,j}^-\cong  N_{\xi,j}^+\times  N_{\xi,j}^-.$$
We deduce from Definition~\ref{def: niveau partition} and the definition of $\tld{\cF\cL}_\cJ$ that the composition
$$\prod_{j\in\cJ}  T N_{\xi,j}^- w_j\hookrightarrow\prod_{j\in\cJ}T u_j N_\xi u_j^{-1} w_j\rightarrow \cN_\xi$$
induces an isomorphism
\begin{equation}\label{equ: std section}
\prod_{j\in\cJ} T N_{\xi,j}^- w_j\xrightarrow{\sim} \cN_\xi.
\end{equation}
Note that the LHS of (\ref{equ: std section}) is a closed subscheme of $\un{G}$, and thus (\ref{equ: std section}) is a standard way to lift the subscheme $\cN_\xi\subseteq \tld{\cF\cL}_\cJ$ into $\un{G}$.

We now define
$$\mathrm{Supp}_{\xi,\cJ}\defeq\{(\al,j)\in\Phi^+_\xi\times\cJ\mid u_j(\al)<0\}$$
and
$$\mathrm{Supp}_{\xi,j}\defeq \mathrm{Supp}_{\xi,\cJ}\cap\left(\Phi^+_\xi\times\{j\}\right)$$
for each $j\in\cJ$. We note that $\mathrm{Supp}_{\xi,j}$ is closed under the natural addition induced from $\Phi^+_\xi$, for each fixed $j\in\cJ$. We would abuse the notation $\mathrm{Supp}_{\xi,j}$ for the corresponding subset of $\Phi^+_\xi$ (by omitting $j$) whenever necessary. For each $1\leq \ell\leq n$ and $j\in\cJ$, we write $D_{\xi,\ell}^{(j)}$ for the composition of the following morphisms
\begin{equation}
\label{eq:Dxij}
D_{\xi,\ell}^{(j)}:~\cN_\xi\cong \prod_{j\in\cJ}T N_{\xi,j}^- w_j\twoheadrightarrow T N_{\xi,j}^- w_j\twoheadrightarrow T\twoheadrightarrow \bG_{m}
\end{equation}
where the last morphism is extracting the $\ell$-th diagonal entry. Similarly, for each $(\al,j)\in \mathrm{Supp}_{\xi,\cJ}$, we also consider the composition
$$
u_\xi^{(\al,j)}:~\cN_\xi\cong \prod_{j\in\cJ}T N_{\xi,j}^-  w_j\twoheadrightarrow N_{\xi,j}^-\twoheadrightarrow\bG_{a}
$$
where the last morphism is extracting the $u_j(\al)$-entry. Given a rational function $g$ on $\tld{\cF\cL}_\cJ$, if the regular locus of $g$ is an open subscheme of $\tld{\cF\cL}_\cJ$ that contains $\cN_\xi$, we write $g|_{\cN_\xi}$ for the restriction of $g$ from its regular locus to $\cN_\xi$. It is not difficult to see that (see (\ref{equ: j projection}) for notation)
\begin{equation}\label{equ: comparison of sections}
D_{\xi,\ell}^{(j)}=\pm\left.\frac{f_{S_{\ell,w_j},j}}{f_{S_{\ell+1,w_j},j}}\right|_{\cN_\xi}
\quad\mbox{ and }\quad
u_\xi^{(\al,j)}=\pm\left. \frac{f_{S_{u_j(i_\al)+1,w_j}\sqcup\{w_j^{-1}u_j(i_\al^\prime)\},j}}{ f_{S_{u_j(i_\al),w_j},j}}\right|_{\cN_\xi}
\end{equation}
Here $\pm$ means up to sign, depending only on $w_j$. Note that (\ref{equ: std section}) together with various $D_{\xi,\ell}^{(j)}$ and $u_\xi^{(\al,j)}$ induces an isomorphism of schemes
\begin{equation}\label{equ: std section prime}
\cN_\xi \cong \un{T}\times (\bG_{a})^{\#\mathrm{Supp}_{\xi,\cJ}}.
\end{equation}

We also denote by $\mathrm{Supp}^\square_\xi$ the image of $\mathrm{Supp}_{\xi,\cJ}$ under the composition
\begin{equation}
\label{eq:map:block}
\Phi^+_\xi\times\cJ\twoheadrightarrow \Phi^+_\xi\rightarrow \Phi^+_{\GL_{r_\xi}}.
\end{equation}
For each $\gamma=(h,\ell)\in\Phi^+_{\GL_{r_\xi}}$ and each $j\in\cJ$, we set
$$\mathrm{Supp}_{\xi,\cJ}^\gamma\defeq\{(\al,j)\in\mathrm{Supp}_{\xi,\cJ}\mid h_\al=h,~\ell_\al=\ell\}$$ and
$$\mathrm{Supp}_{\xi,j}^\gamma\defeq \mathrm{Supp}_{\xi,\cJ}^\gamma\cap \mathrm{Supp}_{\xi,j}.$$
Note that we have
\begin{equation}\label{equ: equivalent notion}
\mathrm{Supp}_{\xi,\cJ}^\gamma\neq \emptyset\Leftrightarrow \gamma\in \mathrm{Supp}^\square_\xi.
\end{equation}

Fix $\xi\in\Xi_{w_{\cJ}}$, and let $\Lambda$ be a subset of $\mathrm{Supp}_{\xi,\cJ}$ with $\Lambda^\square$ its image in $\mathrm{Supp}^\square_\xi$. For each $j\in\cJ$, we write $N_{\xi,\Lambda,j}^-(R)$ for the subset of $N_{\xi,j}^-(R)$ consisting of the matrices whose $u_j(\al)$-entry is non-zero if and only if $(\al,j)\in\Lambda\cap\mathrm{Supp}_{\xi,j}$. This defines a locally closed subscheme $N_{\xi,\Lambda,j}^-\subseteq N_{\xi,j}^-$. Similarly, we write $\cN_{\xi,\Lambda}\subseteq\cN_\xi$ for the fiber of (\ref{equ: std section prime}) over
$$\prod_{(\al,j)\in\Lambda}\bG_m\times \prod_{(\al,j)\in\mathrm{Supp}_{\xi,\cJ}\setminus\Lambda}\mathbf{0}$$
where $\mathbf{0}\subseteq\bG_a$ is the closed subscheme given by the zero point. In other words, the morphism $u_\xi^{(\al,j)}:~\cN_\xi\rightarrow \bG_a$ restricts to $u_\xi^{(\al,j)}|_{\cN_{\xi,\Lambda}}:~\cN_{\xi,\Lambda}\rightarrow \bG_m$ if $(\al,j)\in\Lambda$ and to $u_\xi^{(\al,j)}|_{\cN_{\xi,\Lambda}}:~\cN_{\xi,\Lambda}\rightarrow \mathbf{0}$ otherwise. We notice that the isomorphism (\ref{equ: std section}) induces an isomorphism
$$
\prod_{j\in\cJ}T N_{\xi,\Lambda,j}^-w_j\xrightarrow{\sim} \cN_{\xi,\Lambda}.
$$
Note that the isomorphism (\ref{equ: std section prime}) restricts to an isomorphism
\begin{equation}\label{equ: std section Lambda}
\cN_{\xi,\Lambda} \cong \un{T}\times (\bG_{m})^{\#\Lambda}.
\end{equation}
It is also easy to see that $\{\cN_{\xi,\Lambda}\mid \Lambda\subseteq \mathrm{Supp}_{\xi,\cJ}\}$ forms a partition of $\cN_\xi$ by integral locally closed subschemes.
\begin{lemma}\label{lem:union:partition}
The scheme $\cN_{\xi,\Lambda}$ is a topological union of elements in $\cP_\cJ$.
\end{lemma}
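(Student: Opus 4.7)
The plan is to deduce this directly from Proposition~\ref{prop: niveau is a union of elements} together with the explicit formula~(\ref{equ: comparison of sections}) relating the coordinate functions $u_\xi^{(\al,j)}$ to the minor functions $f_{S,j}$. Since each $\cC_{K_\cJ}\in\cP_\cJ$ is cut out by specifying, for every $S\subseteq\mathbf{n}$ and every $j\in\cJ$, whether $f_{S,j}$ vanishes or not, the restriction of any $f_{S,j}$ to a single element of $\cP_\cJ$ is either identically zero or nowhere vanishing.

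First, I would recall from Proposition~\ref{prop: niveau is a union of elements} that $\cN_\xi$ itself is a topological union of elements of $\cP_\cJ$, and in particular $\cN_\xi\subseteq\cM_{w_\cJ}^\circ$, so for every $j\in\cJ$ and every $\ell\in\{1,\dots,n\}$ the function $f_{S_{\ell,w_j},j}$ is invertible on $\cN_\xi$. Next, by~(\ref{equ: comparison of sections}) the coordinate $u_\xi^{(\al,j)}$ agrees, up to a unit on $\cN_\xi$, with $f_{S_\al',j}$ where $S_\al'\defeq S_{u_j(i_\al)+1,w_j}\sqcup\{w_j^{-1}u_j(i_\al^\prime)\}$. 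Therefore on $\cN_\xi$ the condition ``$u_\xi^{(\al,j)}\neq 0$'' is the condition $f_{S_\al',j}\neq 0$, while ``$u_\xi^{(\al,j)}=0$'' is $f_{S_\al',j}=0$.

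Now for each $\cC_{K_\cJ}\in\cP_\cJ$ contained in $\cN_\xi$ and each $(\al,j)\in\mathrm{Supp}_{\xi,\cJ}$, the function $f_{S_\al',j}$ is either identically zero on $\cC_{K_\cJ}$ (if $S_\al'\in K_j$) or invertible on $\cC_{K_\cJ}$ (if $S_\al'\notin K_j$), by the very definition of $\cC_{K_\cJ}$. Hence the conditions defining $\cN_{\xi,\Lambda}$ inside $\cN_\xi$ are constant on each such $\cC_{K_\cJ}$, and so $\cC_{K_\cJ}$ is either contained in $\cN_{\xi,\Lambda}$ or disjoint from it. Taking the union of those $\cC_{K_\cJ}\subseteq\cN_\xi$ for which $S_\al'\notin K_j$ precisely when $(\al,j)\in\Lambda$ and $S_\al'\in K_j$ otherwise exhibits $\cN_{\xi,\Lambda}$ as a topological union of elements of $\cP_\cJ$, which is the claim. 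There is no substantive obstacle here: the argument is a bookkeeping consequence of Proposition~\ref{prop: niveau is a union of elements} and formula~(\ref{equ: comparison of sections}).
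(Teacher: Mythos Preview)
Your proof is correct and follows essentially the same approach as the paper. The paper's proof is a one-line reference to Proposition~\ref{prop: niveau is a union of elements} and equation~(\ref{equ: comparison of sections}); you have simply unpacked why those two ingredients suffice, by observing that the vanishing or non-vanishing of each $u_\xi^{(\al,j)}$ on $\cN_\xi$ coincides with that of a single minor $f_{S,j}$, and hence is constant on each $\cC_{K_\cJ}\subseteq\cN_\xi$.
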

\begin{proof}
This follows immediately from Proposition~\ref{prop: niveau is a union of elements} and (\ref{equ: comparison of sections}).
\end{proof}

\begin{lemma}\label{lem: new partition}
Let $\xi,\xi'\in\Xi_{w_{\cJ}}$ be two elements, and let $\Lambda\subseteq\mathrm{Supp}_{\xi,\cJ}$ and $\Lambda'\subseteq\mathrm{Supp}_{\xi',\cJ}$ be two subsets. Then $\cN_{\xi,\Lambda}\cap\cN_{\xi',\Lambda'}\neq\emptyset$ if and only if $\cN_{\xi,\Lambda}=\cN_{\xi',\Lambda'}$.
\end{lemma}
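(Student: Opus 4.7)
The \emph{if} direction is immediate, so the plan is to treat the converse: assuming $\cN_{\xi,\Lambda}\cap\cN_{\xi',\Lambda'}\ne\emptyset$, pick an $\F$-point $x$ in the intersection and deduce that the defining data of the two subschemes coincide.

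The first step is to exploit the canonical matrix parametrizations provided by~(\ref{equ: std section Lambda}). These give unique representatives $A\in\prod_{j\in\cJ}\un{T}N_{\xi,\Lambda,j}^-w_j(\F)$ and $A'\in\prod_{j\in\cJ}\un{T}N_{\xi',\Lambda',j}^-w_j(\F)$ in $\un{G}(\F)$, necessarily related by $A'=vA$ for some $v\in\un{U}(\F)$. The plan is to show $v=I$, hence $A=A'$. Writing each component $A^{(j)}=t^{(j)}n^{(j)}w_j$ in the obvious Bruhat form and similarly for $A'$, the relation $v^{(j)}A^{(j)}=(A')^{(j)}$ simplifies, after conjugating $v^{(j)}$ through the torus, to an identity of shape $\tld{v}^{(j)}n^{(j)}=s^{(j)}(n')^{(j)}$ with $\tld{v}^{(j)}\in U(\F)$ and $s^{(j)}\in T(\F)$. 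Comparing diagonals forces $s^{(j)}=I$ (hence $t^{(j)}=(t')^{(j)}$), and the remaining equation $\tld{v}^{(j)}n^{(j)}=(n')^{(j)}$ expresses a lower unipotent as the product of an upper unipotent with a lower unipotent, which forces $\tld{v}^{(j)}=I$ via a reverse induction on columns starting from the rightmost. Thus $v=I$ and $A=A'$, so in particular $n^{(j)}=(n')^{(j)}$ lies in both $N_{\xi,\Lambda,j}^-(\F)$ and $N_{\xi',\Lambda',j}^-(\F)$ for every $j\in\cJ$.

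The final step is a combinatorial identification: as a locally closed subscheme of $w_0Uw_0$, the scheme $N_{\xi,\Lambda,j}^-$ is precisely the locus of lower unipotent matrices whose off-diagonal support equals $u_j(\Lambda_j\cap\mathrm{Supp}_{\xi,j})$ (invertible exactly at these negative-root positions, zero at all other off-diagonal positions). Consequently the common matrix $n^{(j)}=(n')^{(j)}$ has support simultaneously equal to $u_j(\Lambda_j\cap\mathrm{Supp}_{\xi,j})$ and $u'_j(\Lambda'_j\cap\mathrm{Supp}_{\xi',j})$, forcing these two sets to coincide and hence $N_{\xi,\Lambda,j}^-=N_{\xi',\Lambda',j}^-$ for each $j\in\cJ$. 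Combined with the identification $t^{(j)}=(t')^{(j)}$, the parametrizations yield $\cN_{\xi,\Lambda}=\cN_{\xi',\Lambda'}$. The main subtlety is the Bruhat-style reverse induction forcing $\tld{v}^{(j)}=I$ in the middle step; once this is granted, the rest is clean bookkeeping with root supports.
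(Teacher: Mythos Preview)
Your proof is correct and follows essentially the same idea as the paper's. The paper shortcuts your Bruhat-style uniqueness step by directly invoking the isomorphism $\cM_{w_\cJ}^\circ\cong\prod_{j\in\cJ}w_0Bw_0w_j$ from~(\ref{equ: open cell}): since $N_{\xi,\Lambda,j}^-,N_{\xi',\Lambda',j}^-\subseteq w_0Uw_0$, both representatives $A$ and $A'$ already lie in $\prod_{j\in\cJ}w_0Bw_0w_j$, so $A=A'$ is immediate from the injectivity of this section, and what remains is exactly your final support-comparison (each $\cN_{\xi,\Lambda}$ is cut out inside $\prod_{j\in\cJ}w_0Bw_0w_j$ by the vanishing or non-vanishing of individual entries, which can be read off from any single $\F$-point). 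Your reverse induction on columns is thus a hand-proof of a special case of~(\ref{equ: open cell}) rather than an additional ingredient.
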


\begin{proof}
Using the identification $\cM_{w_{\cJ}}^{\circ}\cong \prod_{j\in\cJ}w_0Bw_0w_j$ where $w_{\cJ}=(w_j)_{j\in\cJ}$, we may naturally embed both $\cN_{\xi,\Lambda}$ and $\cN_{\xi',\Lambda'}$ into $\prod_{j\in\cJ}w_0Bw_0w_j$. Then the locally closed subscheme $\cN_{\xi,\Lambda}$ (resp. $\cN_{\xi',\Lambda'}$) of $\prod_{j\in\cJ}w_0Bw_0w_j$ is characterized by the vanishing or non-vanishing of each single entry, which can be read off from an arbitrary element of the intersection $\cN_{\xi,\Lambda}\cap\cN_{\xi',\Lambda'}(\F)\neq\emptyset$. The proof is thus finished.
\end{proof}

\begin{rmk}\label{rmk: new partition}
It follows from Lemma~\ref{lem: new partition} that
$$\bigcup_{ w_\cJ\in\un{W}}\{\cN_{\xi,\Lambda}\mid \xi\in\Xi_{w_\cJ}\mbox{ and } \Lambda\subseteq\mathrm{Supp}_{\xi,\cJ}\}$$
forms a partition of $\tld{\cF\cL}_\cJ$.
\end{rmk}

\newpage
\section{The invariant functions on $\tld{\cF\cL}_{\cJ}$}\label{sec:invariant functions}
Recall that the quotient of $\tld{\cF\cL}_\cJ$ by shifted $\un{T}$-conjugation parameterizes isomorphism classes of mod-$p$ Galois representations which are Fontaine--Laffaille of weight $\lambda+\eta$ (Proposition~\ref{thm:repr:FLgp}).
In \S\,\ref{sub:def:inv} below, we introduce a set of rational functions on $\tld{\cF\cL}_\cJ$ that descend to $[\tld{\cF\cL}_{\cJ}\slash{{\sim}_{\un{T}\text{-\textnormal{sh.cnj}}}}]$ and call them the \emph{invariant functions}. Our main goal is to show that invariant functions \emph{separate} $\F$-points of the stack $[\tld{\cF\cL}_{\cJ}\slash{{\sim}_{\un{T}\text{-\textnormal{sh.cnj}}}}]$, namely each $x\in |[\tld{\cF\cL}_{\cJ}\slash{{\sim}_{\un{T}\text{-\textnormal{sh.cnj}}}}]|(\F)$ is uniquely determined by the set
$$\{g(x)\mid \hbox{$g$ is an invariant function which is regular over $x$}\}\subseteq \F.$$
To achieve this, we first cut $\tld{\cF\cL}_{\cJ}$ along the partition $\{\cN_{\xi,\Lambda}\}$ (see Remark~\ref{rmk: new partition}) and then give an explicit construction of the geometric quotient $\cN_{\xi,\Lambda}\slash{{\sim}_{\un{T}\text{-\textnormal{sh.cnj}}}}$ (\cite[\href{https://stacks.math.columbia.edu/tag/04AD}{\S\,04AD}]{stacks-project})
in \S\,\ref{sub:stack:scheme} (Proposition~\ref{prop: pass to scheme}), which guarantee the existence of the geometric quotient $\cC\slash{{\sim}_{\un{T}\text{-\textnormal{sh.cnj}}}}$ for each $\cC\in\cP_\cJ$ satisfying $\cC\subseteq\cN_{\xi,\Lambda}$. Then we introduce Statement~\ref{state: goal} in \S\,\ref{sub:main:state} as a convenient sufficient condition for invariant functions to distinguish $\F$-points of $[\cC\slash{{\sim}_{\un{T}\text{-\textnormal{sh.cnj}}}}]$ (see Statement~\ref{state: separate points prime}). The proof of Statement~\ref{state: goal} (and thus of Statement~\ref{state: separate points prime}) will occupy the entire \S\,\ref{sec:comb:lifts}, \S\,\ref{sec:const:inv} and \S\,\ref{sec:inv:cons}, and will be finally completed in \S\,\ref{sub:main:criterions} (Theorem~\ref{thm: constructible and inv fun} and Corollary~\ref{cor: separate points}).

Throughout this section, by $R$ we mean a Noetherian $\F$-algebra.

\subsection{Definition of invariant functions}
\label{sub:def:inv}
In this section, we introduce the set of the invariant functions as rational functions on $\tld{\cF\cL}_\cJ$ and then give the first precise statement on how they distinguish points in the stack $[\tld{\cF\cL}_{\cJ}\slash{{\sim}_{\un{T}\text{-\textnormal{sh.cnj}}}}]$ (see Statement~\ref{state: separate points prime}).

Consider the set
\begin{equation}\label{equ: general index}
\mathbf{n}_\cJ\defeq \mathbf{n}\times\cJ.
\end{equation}
There is an action of $\Z/f$ on $\cJ$ (with $a\in\Z/f$ acting by $j\mapsto j-a$ on $\cJ$), which induces an action of $\Z/f$ on $\un{W}$ with $a\in\Z/f$ acting by
$$(w_j)_{j\in\cJ}\mapsto (w_{j-a})_{j\in\cJ}$$
for each $w_\cJ=(w_j)_{j\in\cJ}\in\un{W}$. Consequently, we can form the semidirect product
$$\un{W}\rtimes \Z/f$$
with the multiplication given by
$$(w_\cJ,a)\cdot (w_\cJ',a^\prime)=((w_jw_{j-a}^\prime)_{j\in\cJ},a+a^\prime)$$
for $(w_\cJ,a), (w_\cJ',a^\prime)\in \un{W}\rtimes \Z/f$.
Hence, the group $\un{W}\rtimes \Z/f$ has a right action on $\mathbf{n}_\cJ$ given by
\begin{equation}\label{equ: std action}
(k,j)\cdot (w_\cJ,a)\defeq (w_j^{-1}(k),j-a)
\end{equation}
for each $(k,j)\in\mathbf{n}_\cJ$ and $(w_\cJ,a)\in\un{W}\rtimes\Z/f$.

Let $I_\cJ\subseteq\mathbf{n}_\cJ$ be a subset. By abuse of notation, we often write $I_\cJ=(I_j)_{j\in\cJ }$ where
$$I_j\defeq \{1\leq k\leq n\mid (k,j)\in I_\cJ\}\subseteq\mathbf{n}.$$
We consider an element $w_\cJ=(w_j)_{j\in\cJ }\in \un{W}$ and a subset $I_\cJ=(I_j)_{j\in\cJ }\subseteq\mathbf{n}_\cJ$ satisfying
\begin{equation}\label{equ: inv cond}
I_\cJ=I_\cJ\cdot (w_\cJ,1).
\end{equation}
(Equivalently, $w_j^{-1}(I_j)=I_{j-1}$ for each $j\in\cJ$.)
We recall from (\ref{equ: j projection}) the notation $f_{S,j}$ for each $S\subseteq\mathbf{n}$ and $j\in\cJ$. We let $S_{\bullet,w_j}$ be the strictly decreasing sequence corresponding to $w_j$ via (\ref{eq:bjc:sis}), and define the following morphism (with $\cM_{w_\cJ}^\circ=\tld{\cS}^\circ_{\cJ}(w_0,w_0w_\cJ)$)
\begin{equation}\label{equ: def of inv fun}
f_{w_\cJ,I_\cJ}\defeq \prod_{(k,j)\in I_\cJ}\frac{f_{S_{k,w_j},j}}{f_{S_{k+1,w_j},j}}:~\cM_{w_\cJ}^\circ\rightarrow\bG_{m},
\end{equation}
which can be viewed as a rational function on $\tld{\cF\cL}_\cJ$. Here, we understand that $f_{S_{n+1,w_j}}= 1$ for all $j\in\cJ$.
The rational function $f_{w_\cJ,I_\cJ}$ is called an \emph{invariant function} on $\tld{\cF\cL}_\cJ$. If $I_\cJ=\emptyset$, we understand $f_{w_\cJ,I_\cJ}$ to be the constant function $1$ on $\tld{\cF\cL}_\cJ$. Note that $f_{w_\cJ,I_\cJ}$ always determines $I_\cJ$ but not $w_\cJ$ in general. For example, if $I_\cJ=\mathbf{n}_\cJ$, then we always have $f_{w_\cJ,\mathbf{n}_\cJ}=\mathrm{det}_\cJ$ for each $w_\cJ\in\un{W}$, with $\mathrm{det}_\cJ$ defined by
\begin{equation}\label{equ: total det}
\mathrm{det}_\cJ:~\tld{\cF\cL}_\cJ\rightarrow \bG_{m},~(A^{(j)})_{j\in\cJ}\rightarrow \prod_{j\in\cJ}\mathrm{det}(A^{(j)}).
\end{equation}

For each choice of $w_\cJ\in\un{W}$ and $I_\cJ\subseteq\mathbf{n}_\cJ$ satisfying (\ref{equ: inv cond}), we write $I_\cJ^{\rm{c}}\defeq \mathbf{n}_\cJ\setminus I_\cJ$ and $\cM_{w_\cJ,I_\cJ}^\circ$ for the intersection of the regular loci of $f_{w_\cJ,I_\cJ}$ and $f_{w_\cJ,I_\cJ^{\rm{c}}}$ as rational functions on $\tld{\cF\cL}_\cJ$. Hence, the morphism $f_{w_\cJ,I_\cJ}:~\cM_{w_\cJ}^\circ\rightarrow\bG_{m}$ extends to a morphism $\cM_{w_\cJ,I_\cJ}^\circ\rightarrow\bG_{m}$. Note that $\cM_{w_\cJ,I_\cJ}^\circ=\cM_{w_\cJ,I_\cJ^{\rm{c}}}^\circ$ and that the set $\{f_{w_\cJ,I_\cJ},f_{w_\cJ,I_\cJ^{\rm{c}}}\}$ can be recovered from the open subscheme $\cM_{w_\cJ,I_\cJ}^\circ$. It is always true that $\cM_{w_\cJ}^\circ\subseteq\cM_{w_\cJ,I_\cJ}^\circ$ but the inclusion could be strict in general (for example, $\cM_{w_\cJ,\mathbf{n}_\cJ}^{\circ}=\tld{\cF\cL}_\cJ$ for each $w_\cJ\in \un{W}$).

For each $I\subseteq\mathbf{n}$, we can decompose $\mathbf{n}$ into a disjoint union of a minimal number of sets of consecutive integers each of which sits either in $I$ or in $I^{\rm{c}}\defeq\mathbf{n}\setminus I$. We can associate a standard Levi subgroup $M_{I}\subseteq \GL_n$, with each set of consecutive integers corresponding to a Levi block. Note that we have $M_{I}=M_{I^{\rm{c}}}$.
Hence, applying the construction to $I_\cJ=(I_j)_{j\in\cJ}$ we obtain a standard Levi subgroup $M_{I_\cJ}=(M_{I_j})_{j\in\cJ}\subseteq\un{G}$ whose associated Weyl group is denoted by $W_{I_\cJ}=(W_{I_j})_{j\in\cJ}\subseteq\un{W}$.

\begin{lemma}\label{lem: covering regular locus}
Let $w_\cJ\in\un{W}$ and let $I_\cJ\subseteq\mathbf{n}_\cJ$ satisfy (\ref{equ: inv cond}).
We have the following:
\begin{enumerate}[label=(\roman*)]
\item
\label{it: covering regular locus:i}
If $w'_\cJ\in\un{W}$ and $I'_\cJ\subseteq\mathbf{n}_\cJ$ satisfies (\ref{equ: inv cond}) then
$f_{w_\cJ,I_\cJ}=f_{w_\cJ',I_\cJ'}$ if and only if $I_\cJ=I_\cJ'$ and $w_\cJ^{-1}w_\cJ'\in W_{I_\cJ}$;
\item
\label{it: covering regular locus:ii}
$\cM_{w_\cJ,I_\cJ}^{\circ}=\bigcup_{w_\cJ'\in w_\cJ\cdot W_{I_\cJ}}\cM_{w_\cJ'}^\circ$.
\end{enumerate}
In particular, if $\cC\in\cP_\cJ$  is contained in $\cM_{w_\cJ,I_\cJ}^{\circ}$,
then there exists $w_\cJ'\in w_\cJ\cdot W_{I_\cJ}$ such that $\cC\subseteq\cM_{w_\cJ'}^{\circ}$.
\end{lemma}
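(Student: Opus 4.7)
The central tool is the telescoping of the product defining $f_{w_\cJ,I_\cJ}$. For each $j\in\cJ$, writing $I_j$ as a disjoint union of maximal runs of consecutive integers, the inner product $\prod_{k\in I_j}f_{S_{k,w_j},j}/f_{S_{k+1,w_j},j}$ collapses on each run. Extending the indicator $\chi_{I_j}$ by $\chi_{I_j}(0)=\chi_{I_j}(n+1)=0$ and setting $\partial I_j=\{k:\chi_{I_j}(k)\neq\chi_{I_j}(k-1)\}$, $\epsilon_{k,j}=\chi_{I_j}(k)-\chi_{I_j}(k-1)\in\{\pm1\}$, one obtains
\[
f_{w_\cJ,I_\cJ}=\prod_{j\in\cJ}\prod_{k\in\partial I_j}f_{S_{k,w_j},j}^{\,\epsilon_{k,j}}.
\]
The crucial feature is that for $k\in\partial I_j$ the set $\{k,\ldots,n\}$ is a union of \emph{complete} blocks of the decomposition of $\mathbf{n}$ defining the Levi $M_{I_j}$.

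For (i), the ``if'' direction follows from the observation above: any element of $W_{I_j}$ preserves $\{k,\ldots,n\}$ setwise for $k\in\partial I_j$, so the subsets $S_{k,w_j}=w_j^{-1}\{k,\ldots,n\}$ (for $k\in\partial I_j$) do not change when $w_\cJ$ is replaced by an element in its $W_{I_\cJ}$-coset; the two telescoped products then match termwise. The ``only if'' direction uses that distinct minors $f_{S,j}$ of $\un{G}$ are pairwise non-proportional regular functions, so the telescoped form is in lowest terms and the equality of rational functions $f_{w_\cJ,I_\cJ}=f_{w'_\cJ,I'_\cJ}$ forces the multisets of triples $(S_{k,w_j},j,\epsilon_{k,j})_{k\in\partial I_j}$ and $(S_{k,w'_j},j,\epsilon_{k,j})_{k\in\partial I'_j}$ to coincide. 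The sizes $\#S_{k,w_j}=n-k+1$ together with the signs recover $\chi_{I_j}$, whence $I_\cJ=I'_\cJ$; and matching $S_{k,w_j}=S_{k,w'_j}$ at each boundary $k$ forces $w_\cJ$, $w'_\cJ$ to differ by an element of $W_{I_\cJ}$, once the latter is identified with the stabilizer of the chain of ``block-union'' subsets.

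For (ii), the telescoped presentation identifies the regular locus of $f_{w_\cJ,I_\cJ}$ (resp. $f_{w_\cJ,I_\cJ^{\rm c}}$) with the non-vanishing locus of those $f_{S_{k,w_j},j}$ with $\epsilon_{k,j}<0$ (resp. $\epsilon_{k,j}>0$); their intersection yields
\[
\cM^{\circ}_{w_\cJ,I_\cJ}=\bigl\{A : f_{S_{k,w_j},j}(A)\neq 0\ \forall\, k\in\partial I_j,\ j\in\cJ\bigr\}.
\]
Each $\cM^{\circ}_{w'_\cJ}$ in the coset sits inside this open subscheme by the ``if'' direction of~(i). For the reverse inclusion, given $A\in\cM^{\circ}_{w_\cJ,I_\cJ}$ one must produce $v_\cJ\in W_{I_\cJ}$ so that every intermediate minor $f_{S_{k,w_jv_j},j}(A)$ for non-boundary $k$ is also nonzero; working one $j$ and one block of $M_{I_j}$ at a time, this reduces to the elementary fact that every invertible $m\times m$ submatrix of $A$ admits a complete flag of non-vanishing principal sub-minors in any prescribed row indexing, proved by cofactor-expansion induction (a nonzero $m\times m$ determinant forces some $(m{-}1)\times(m{-}1)$ sub-minor to be nonzero). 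The ``in particular'' conclusion is then immediate, since $\cC$ is irreducible and so is forced inside a single member of the open cover $\{\cM^{\circ}_{w'_\cJ}\}_{w'_\cJ\in w_\cJ\cdot W_{I_\cJ}}$.

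The principal obstacle is the ``only if'' direction of (i): one must recover both the combinatorial datum $I_\cJ$ and the coset of $w_\cJ$ from a mere rational-function identity on $\tld{\cF\cL}_\cJ$. This requires showing that the telescoped expression cannot be simplified further by Pl\"ucker-type identities, so that unique factorization in $\cO(\un{G})$ forces termwise matching of the minors; the coset statement then follows from identifying $W_{I_j}$ with the stabilizer of the explicit flag of ``union-of-blocks'' subsets appearing as $\{S_{k,w_j}\}_{k\in\partial I_j}$.
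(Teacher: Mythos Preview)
Your treatment of (i) and (ii) is essentially the paper's own argument in different notation: your boundary set $\partial I_j$ with signs $\epsilon_{k,j}$ is exactly the paper's decomposition $I_j^+\sqcup I_j^-$, and both proofs telescope $f_{w_\cJ,I_\cJ}$ to a lowest-terms ratio of the minors $f_{S_{k,w_j},j}$ indexed by $k\in\partial I_j$. For (ii) the paper quotes Proposition~\ref{prop: union of open elements} (itself resting on Lemma~\ref{lem: insert}), whereas you re-derive that content inline via cofactor expansion; the two arguments are equivalent, and the ``crucial observation'' that $S_\bullet\supseteq\Sigma_j$ iff $S_\bullet=S_{\bullet,w_j'}$ for some $w_j'\in w_j W_{I_j}$ is common to both.

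Your argument for the ``in particular'' clause, however, does not work as stated: irreducibility of $\cC$ does \emph{not} force containment in a single member of an open cover (e.g.\ $\bA^1=\{x\neq 0\}\cup\{x\neq 1\}$). The correct reason, which is what the paper's ``obvious from (ii)'' implicitly uses, is the dichotomy built into the definition of $\cP_\cJ$: since $\cC=\cC_{K_\cJ}$ is cut out by the simultaneous vanishing/non-vanishing of all the $f_{S,j}$, and each $\cM_{w'_\cJ}^\circ$ is the non-vanishing locus of finitely many such minors, one has either $\cC\subseteq\cM_{w'_\cJ}^\circ$ or $\cC\cap\cM_{w'_\cJ}^\circ=\emptyset$ (cf.\ Lemma~\ref{lem: open cover}\ref{it: open cover:i}). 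Since the union covers the non-empty set $\cC$, some member must contain it.
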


\begin{proof}
For convenience, in this proof we write
$$I_j^+\defeq \{k\in I_j\mid k-1\in I_j^{\rm{c}}\}\textnormal{ and }I_j^-\defeq \{k\in I_j^{\rm{c}}\mid k-1\in I_j\}$$
for each $j\in\cJ$. There is a unique way to write $f_{w_\cJ,I_\cJ}$ (resp. $f_{w_\cJ',I_\cJ'}$) as a rational function with coprime numerator and denominator, each of them a product of $f_{S_{k,w_j},j}$ for certain choices of $k\in\mathbf{n}$ and $j\in\cJ$. More precisely, $f_{S_{k,w_j},j}$ appears in the numerator (resp. in the denominator) if and only if $k\in I_j^+$ (resp. if and only if $k\in I_j^-$). We observe that $f_{w_\cJ,I_\cJ}=f_{w_\cJ',I_\cJ'}$ is equivalent to the condition that $I_\cJ=I_\cJ'$ and that $S_{k,w_j}=S_{k,w_j'}$ for each $k\in I_j^+\sqcup I_j^-$ and each $j\in\cJ$.
Hence \ref{it: covering regular locus:i} follows from the observation that $w_\cJ^{-1}w_\cJ'\in W_{I_\cJ}$ if and only if $S_{k,w_j}=S_{k,w_j'}$ for each $k\in I_j^+\sqcup I_j^-$ and each $j\in\cJ$.  Concerning \ref{it: covering regular locus:ii}, there exists $\cM_{w_j,I_j}^{\circ}\subseteq\tld{\cF\cL}$ for each $j\in\cJ$ such that
\begin{equation}\label{equ: decompose into embedding}
\cM_{w_\cJ,I_\cJ}^{\circ}=\prod_{j\in\cJ}\cM_{w_j,I_j}^{\circ}.
\end{equation}
Writing $f_{w_\cJ,I_\cJ}$ as a rational function with coprime numerator and denominator, we see from the definition of $\cM_{w_j,I_j}^{\circ}$ and Proposition~\ref{prop: union of open elements} that
\begin{equation}\label{equ: open cover one embedding}
\cM_{w_j,I_j}^{\circ}=\bigcap_{S\in\Sigma_j}\cH_S^{\rm{c}} = \underset{S_\bullet \supseteq\Sigma_j}{\bigcup}\cM_{S_\bullet}^\circ
\end{equation}
(taking $\Sigma$ to be $\Sigma_j\defeq \{S_{k,w_j}\mid k\in I_j^+\sqcup I_j^-\}$ in \emph{loc.~cit.}).
Then a crucial observation is that $S_\bullet \supseteq\Sigma_j$ if and only if $S_\bullet=S_{\bullet,w_j'}$ for some $w_j'\in W$ satisfying $w_j^{-1}w_j'\in W_{I_j}$, which together with (\ref{equ: decompose into embedding}) and (\ref{equ: open cover one embedding}) finish the proof of \ref{it: covering regular locus:ii}. The last part is obvious from \ref{it: covering regular locus:ii}.
\end{proof}

\begin{lemma}\label{lem: descend to quotient}
The rational function $f_{w_\cJ,I_\cJ}$ descends to $\tld{\cF\cL}_{\cJ}\slash{{\sim}_{\un{T}\text{-\textnormal{sh.cnj}}}}$ for each $w_\cJ\in\un{W}$ and $I_\cJ\subseteq\mathbf{n}_\cJ$ satisfying (\ref{equ: inv cond}).
\end{lemma}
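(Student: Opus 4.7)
The plan is to verify the descent by a direct computation: the rational function $f_{w_\cJ,I_\cJ}$ is a product of ratios of minors, and I will show that these minors are already well-defined on $\tld{\cF\cL}_\cJ=\un{U}\backslash\un{G}$ and that the total weight under the shifted $\un{T}$-conjugation action is zero. The first point (descent along the $\un{U}$-quotient) is automatic: for $S\subseteq\mathbf{n}$, $f_S$ is the determinant of the submatrix of $A^{(j)}$ on the bottom $\#S$ rows and on the columns indexed by $S$, and left multiplication by an upper triangular unipotent acts on those rows by adding multiples of strictly lower rows, which remain in the chosen submatrix, hence preserves the determinant.

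The key calculation is the transformation law of each $f_{S_{k,w_j},j}$ under the action $(A\cdot t)^{(j)}=(t^{(j+1)})^{-1}A^{(j)}t^{(j)}$. Since left (resp.\ right) multiplication by a diagonal matrix scales rows (resp.\ columns), one obtains
\[
f_{S_{k,w_j},j}(A\cdot t)\;=\;\Bigl(\prod_{i=k}^{n}(t^{(j+1)}_i)^{-1}\Bigr)\Bigl(\prod_{\ell\in w_j^{-1}(\{k,\ldots,n\})}t^{(j)}_\ell\Bigr)f_{S_{k,w_j},j}(A),
\]
using that $S_{k,w_j}=w_j^{-1}(\{k,\ldots,n\})$ has size $n-k+1$. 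Taking the ratio with $f_{S_{k+1,w_j},j}$ (and employing the convention $f_{S_{n+1,w_j},j}=1$ when $k=n$), all diagonal factors telescope except one from each side, so the ratio transforms by the single scalar $(t^{(j+1)}_k)^{-1}\cdot t^{(j)}_{w_j^{-1}(k)}$. In particular, the locus where $f_{w_\cJ,I_\cJ}$ is regular is stable under the action.

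The plan is then to multiply these scalars over all $(k,j)\in I_\cJ$ and show the product is $1$. The denominator contributions give $\prod_{j\in\cJ}\prod_{k\in I_j}(t^{(j+1)}_k)^{-1}$, which after the reindexing $j\mapsto j-1$ becomes $\prod_{j\in\cJ}\prod_{k\in I_{j-1}}(t^{(j)}_k)^{-1}$. The numerator contributions give $\prod_{j\in\cJ}\prod_{k\in I_j}t^{(j)}_{w_j^{-1}(k)}$, and the substitution $\ell=w_j^{-1}(k)$ together with hypothesis~(\ref{equ: inv cond})---which says precisely that $w_j^{-1}(I_j)=I_{j-1}$---rewrites this as $\prod_{j\in\cJ}\prod_{\ell\in I_{j-1}}t^{(j)}_\ell$. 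The two products cancel, proving invariance. There is no genuine obstacle here: the whole content of the lemma is that the combinatorial condition~(\ref{equ: inv cond}) is designed exactly so that the weights contributed by the consecutive-minor ratios sum to zero.
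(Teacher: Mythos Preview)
Your proof is correct and follows essentially the same approach as the paper: compute the transformation of each ratio $f_{S_{k,w_j},j}/f_{S_{k+1,w_j},j}$ under the shifted conjugation (obtaining the scalar $(t^{(j+1)}_k)^{-1}t^{(j)}_{w_j^{-1}(k)}$), then take the product over $(k,j)\in I_\cJ$ and use the condition $w_j^{-1}(I_j)=I_{j-1}$ to cancel. Your version is slightly more explicit in spelling out the reindexing and the preliminary observation about descent along $\un{U}$, but the argument is the same.
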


\begin{proof}
For each Noetherian $\F$-algebra $R$, we use the notation
$$t=(t^{(j)})_{j\in\cJ}\in \un{T}(R),~A=(A^{(j)})_{j\in\cJ}\in \un{G}(R)$$
and recall the right action of $\un{T}$:
$$\un{G}(R)\times \un{T}(R)\rightarrow \un{G}(R),~(A,t)\mapsto A\cdot t=((t^{(j+1)})^{-1}A^{(j)}t^{(j)})_{j\in\cJ}.$$
We define $X$ as the fiber of $\un{G}\twoheadrightarrow\tld{\cF\cL}_\cJ$ over $\cM_{w_\cJ,I_\cJ}^{\circ}$ and abuse the notation $f_{w_\cJ,I_\cJ}$ for the composition
$$f_{w_\cJ,I_\cJ}:~X\rightarrow\cM_{w_\cJ,I_\cJ}^{\circ}\rightarrow\bG_{m}.$$
It suffices to show that
\begin{equation}\label{equ: T inv}
f_{w_\cJ,I_\cJ}(A\cdot t)=f_{w_\cJ,I_\cJ}(A)
\end{equation}
for each $A=(A^{(j)})_{j\in\cJ}\in X(R)$ and $t=(t^{(j)})_{j\in\cJ}\in\un{T}(R)$. For each $(k,j)\in\mathbf{n}_\cJ$, we write $t^{(j)}_k$ for the $k$-th diagonal entry of $t^{(j)}$. We observe that
$$\frac{f_{S_{k,w_j},j}}{f_{S_{k+1,w_j},j}}((t^{(j+1)})^{-1}A^{(j)}t^{(j)})=(t^{(j+1)}_k)^{-1}t^{(j)}_{w_j^{-1}(k)} \frac{f_{S_{k,w_j},j}}{f_{S_{k+1,w_j},j}}(A^{(j)}).$$
This together with (\ref{equ: inv cond}) implies (\ref{equ: T inv}) by taking product over all $(k,j)\in I_\cJ$.
\end{proof}

We set
$$\Inv\defeq \{f_{w_\cJ,I_\cJ}\mid w_\cJ\in\un{W},\,I_\cJ\subseteq\mathbf{n}_\cJ,\,
I_\cJ\cdot (w_\cJ,1)=I_\cJ\}.$$
For each $\cC\in\cP_\cJ$, we write $\Inv(\cC)\subseteq\Inv$ for the subset consisting of those $f_{w_\cJ,I_\cJ}$ which are invertible over $\cC$ (namely $\cC\subseteq\cM_{w_\cJ,I_\cJ}^\circ$). The set $\Inv(\cC)$ induces a morphism of stacks
$$
\iota_\cC:~[\cC\slash{{\sim}_{\un{T}\text{-\textnormal{sh.cnj}}}}]\rightarrow
(\bG_{m})^{\# \Inv(\cC)}.
$$
The following is the main property satisfied by the set $\Inv(\cC)$.
\begin{state}\label{state: separate points prime}
For each $\cC\in\cP_\cJ$ and Noetherian $\F$-algebra $R$, the following map induced from~$\iota_\cC$
$$|[\cC\slash{{\sim}_{\un{T}\text{-\textnormal{sh.cnj}}}}]|(R)\rightarrow
(R^\times)^{\# \Inv(\cC)}.$$
is injective.
\end{state}
We will deduce Statement~\ref{state: separate points prime} from Statement~\ref{state: goal}) (to be introduced in \S\,\ref{sub:main:state}) whose proof is very involved and will occupy \S\,\ref{sec:comb:lifts}, \S\,\ref{sec:const:inv} and \S\,\ref{sec:inv:cons}.

\subsection{From stacks to schemes}\label{sub:stack:scheme}
Recall that we expect the set of invariant functions $\Inv(\cC)$ to satisfy Statement~\ref{state: separate points prime}, which \emph{a priori} involves the algebraic stack $[\cC\slash{{\sim}_{\un{T}\text{-\textnormal{sh.cnj}}}}]$. In this section, we give an explicit construction of the geometric quotient $\cN_{\xi,\Lambda}\slash{{\sim}_{\un{T}\text{-\textnormal{sh.cnj}}}}$ in Proposition~\ref{prop: pass to scheme}, which implies the existence of the geometric quotient $\cC_{\xi,\Lambda}\slash{{\sim}_{\un{T}\text{-\textnormal{sh.cnj}}}}$ (see Proposition~\ref{prop: quotient of strata}). This allows us to introduce a convenient sufficient condition for Statement~\ref{state: separate points prime} in \S\,\ref{sub:main:state}. Note that $\cN_{\xi,\Lambda}$ is a topological union of elements in $\cP_\cJ$ (see Lemma~\ref{lem:union:partition}).

We fix an element $\xi=(w_\cJ,u_\cJ)\in\Xi_{w_\cJ}$ for some $w_\cJ\in \un{W}$ throughout this section.

\subsubsection{$\mathrm{Supp}^\square_\xi$ as a graph}\label{subsub:graph}
We recall the set $\mathrm{Supp}^\square_\xi\subseteq\Phi^+_{\GL_{r_\xi}}$ from the end of \S\,\ref{sub:std:note}. We can associate an undirected graph $\mathfrak{G}_\xi$ with $\mathrm{Supp}^\square_\xi$ in the following way:
\begin{itemize}
\item the set of vertices of $\mathfrak{G}_\xi$, written $V(\mathfrak{G}_\xi)$, is in bijection with $\{1,\dots,r_\xi\}$;
\item the set of edges of $\mathfrak{G}_\xi$, written $E(\mathfrak{G}_\xi)$, is in bijection with $\mathrm{Supp}^\square_\xi$, so that there exists an edge connecting two vertex $h<\ell$ if and only if $(h,\ell)\in \mathrm{Supp}^\square_\xi$.
\end{itemize}
Similarly, we write $E(\cdot)$ (resp. $V(\cdot)$) for the set of edges (resp. the set of vertices) for an arbitrary graph.

\begin{defn}
\label{def:loop}
Let $\mathfrak{G}\subseteq\mathfrak{G}_\xi$ be an arbitrary subgraph. A \emph{directed loop} inside $\mathfrak{G}$, written $\Gamma$, is defined to be an ordered pair of non-empty subsets $E(\Gamma)^+, E(\Gamma)^-\subseteq E(\mathfrak{G})$ satisfying the following:
\begin{itemize}
\item we have $\sum_{\gamma\in E(\Gamma)^+}\gamma=\sum_{\gamma\in E(\Gamma)^-}\gamma$;
\item we have either $E(\Gamma)^+\cap E(\Gamma)^-=\emptyset$ or $E(\Gamma)^+=E(\Gamma)^-=\{\gamma\}$ for some $\gamma\in E(\mathfrak{G})$; and
\item for any proper non-empty subset $E^{+}\subsetneq E(\Gamma)^+$ (resp.~$E^{-}\subsetneq E(\Gamma)^-$) we have $\sum_{\gamma\in E^{+}}\gamma\neq \sum_{\gamma\in E^{-}}\gamma$.
\end{itemize}
\end{defn}
If $\Gamma$ is a directed loop we also define $V(\Gamma)\subseteq V(\mathfrak{G})$ as the subset consisting of all the elements $m\in V(\mathfrak{G})$ such that at least one of $(m,m')$ and $(m',m)$ belongs to $E(\Gamma)^+\cup E(\Gamma)^-$ for some choice of $m'\in V(\mathfrak{G})$. Note that if $E(\Gamma)^+\cap E(\Gamma)^-=\emptyset$, then this notion of directed loop in Definition~\ref{def:loop} coincides with the usual one, namely picking up a connected subgraph of $\mathfrak{G}$ which is homeomorphic to a circle and then equipping this subgraph with a choice of direction such that the in-degree and out-degree of each vertex are one. In other words, we extend the usual notion of directed loop by allowing some degenerate cases when $E(\Gamma)^+=E(\Gamma)^-=\{\gamma\}$ for some $\gamma\in E(\mathfrak{G})$

\subsubsection{Functions invariant under shifted $\un{T}$-conjugation}\label{subsub:conj inv}
We recall the set $\mathbf{n}_\cJ$ from (\ref{equ: general index}) and, for $1\leq m\leq r_\xi$, the set $[m]_\xi$ from (\ref{equ: single block of integers}). We also recall from (\ref{equ: std action}) that there is a right action of $\un{W}\rtimes\Z/f$ on $\mathbf{n}_\cJ$. We write $\langle (w_\cJ,1)\rangle$ for the cyclic subgroup of $\un{W}\rtimes\Z/f$ generated by $(w_\cJ,1)$. We define $$I_\cJ^m\defeq \{(k,j)\mid u_j^{-1}(k)\in [m]_\xi\}\subseteq\mathbf{n}_\cJ$$ for each $1\leq m\leq r_\xi$. We also define
\begin{equation}\label{equ: ss function}
F_\xi^m\defeq \prod_{(k,j)\in I_\cJ^m}D_{\xi,k}^{(j)}:~\cN_\xi\ra\bG_{m}
\end{equation}
for each $1\leq m\leq r_\xi$, where $D_{\xi,k}^{(j)}$ was defined by equation (\ref{eq:Dxij}).
\begin{lemma}\label{lem: bijection orbit}
The map $m\mapsto I_\cJ^m$ gives a bijection between $\{1,\dots,r_\xi\}$ and the set of $\langle(w_\cJ,1)\rangle$-orbits inside $\mathbf{n}_\cJ$. In particular, $I_\cJ^m$ depends only on $w_\cJ$ and not on $\xi$.
\end{lemma}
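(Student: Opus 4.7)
The plan is to show first that each $I_\cJ^m$ is a single $\langle(w_\cJ,1)\rangle$-orbit, and then that these exhaust all orbits, from which both assertions follow.

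First I would check stability: given $(k,j) \in I_\cJ^m$, i.e.~$u_j^{-1}(k) \in [m]_\xi$, I need $u_{j-1}^{-1}(w_j^{-1}(k)) \in [m]_\xi$. This follows directly from the defining property $u_j^{-1}w_ju_{j-1} \in M_\xi$ (Definition~\ref{def: niveau partition}), equivalently $u_{j-1}^{-1}w_j^{-1}u_j \in M_\xi$, since $M_\xi$ permutes each block $[m]_\xi$ internally. Thus each $I_\cJ^m$ is stable under $(w_\cJ,1)$, and the sets $I_\cJ^m$ for $1 \leq m \leq r_\xi$ partition $\mathbf{n}_\cJ$ because the blocks $[m]_\xi$ partition $\mathbf{n}$ and each $u_j$ is a permutation.

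Next I would prove transitivity. Using the multiplication rule in $\un{W} \rtimes \Z/f$ (recalled before equation~(\ref{equ: std action})), an easy induction gives $(w_\cJ,1)^f = (w_\cJ^\flat, 0)$ with $w_\cJ^\flat = (w_j^\flat)_{j\in \cJ}$ as in~(\ref{equ: full niveau}), so that $(k,j)\cdot(w_\cJ,1)^f = ((w_j^\flat)^{-1}(k),\, j)$. Fix $j_0$ and the subset $\{(k,j_0) : u_{j_0}^{-1}(k)\in[m]_\xi\}$ of $I_\cJ^m$. Via the bijection $k \leftrightarrow u_{j_0}^{-1}(k)$, the action of $(w_\cJ,1)^f$ on this subset is intertwined with the action of $u_{j_0}^{-1}(w_{j_0}^\flat)^{-1}u_{j_0}$ on $[m]_\xi$. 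By the definition of $\Xi_{w_\cJ}$ we have $M_{u_{j_0}^{-1}w_{j_0}^\flat u_{j_0}} = M_\xi$, so by the characterization recalled after Definition~\ref{def: associate Levi}, $u_{j_0}^{-1}w_{j_0}^\flat u_{j_0}$ acts on $[m]_\xi$ as a single cycle; in particular transitively. Combining this with the fact that $(w_\cJ,1)$ sends the $j$-coordinate $j_0$ to $j_0 - 1$, so that powers of $(w_\cJ,1)$ realize every translation on $\cJ$, the orbit of any $(k_0,j_0) \in I_\cJ^m$ meets every $j$-slice and is transitive within each slice; hence $I_\cJ^m$ is a single orbit.

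From steps one and two, the partition $\{I_\cJ^m\}_{1\leq m \leq r_\xi}$ coincides with the partition of $\mathbf{n}_\cJ$ into $\langle(w_\cJ,1)\rangle$-orbits. This immediately gives the bijection in the statement. For the final clause, the set of orbits depends only on the acting element $(w_\cJ,1) \in \un{W}\rtimes \Z/f$; hence the unordered collection $\{I_\cJ^m : 1\leq m \leq r_\xi\}$ is intrinsic to $w_\cJ$ and independent of the choice of $u_\cJ$ making $\xi = (w_\cJ,u_\cJ) \in \Xi_{w_\cJ}$. The main step—and really the only nonformal input—is the transitivity argument, which rests entirely on the defining condition $M_{u_j^{-1}w_j^\flat u_j} = M_\xi$ of $\Xi_{w_\cJ}$; everything else is formal bookkeeping with the semidirect product and the right action~(\ref{equ: std action}).
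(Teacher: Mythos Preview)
Your proof is correct and follows essentially the same approach as the paper's: both establish stability of $I_\cJ^m$ via $u_{j-1}^{-1}w_j^{-1}u_j\in M_\xi$, and both prove transitivity by reducing to the action of $(w_\cJ,1)^f$ on a single $j$-slice and invoking the key fact that $M_{u_j^{-1}w_j^\flat u_j}=M_\xi$ forces $u_j^{-1}w_j^\flat u_j$ to act as a single cycle on each $[m]_\xi$. The only cosmetic difference is that the paper phrases the transitivity step as a cardinality count (the orbit of $(k,j)$ has size $fn_m^\xi=\#I_\cJ^m$), whereas you argue it directly; both are fine.
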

\begin{proof}
Let $(k,j)\in I_\cJ^m$ be an arbitrary element, and thus $u_j^{-1}(k)\in[m]_\xi$. It follows from Definition~\ref{def: niveau partition} that $u_{j-1}^{-1}w_j^{-1}u_j=(u_j^{-1}w_ju_{j-1})^{-1}\in M_\xi$. Hence $u_{j-1}^{-1}w_j^{-1}u_j$ stablizes $[m]_\xi$, from which we deduce that
$$u_{j-1}^{-1}w_j^{-1}(k)=(u_{j-1}^{-1}w_j^{-1}u_j)(u_j^{-1}(k))\in[m]_\xi.$$
Thus we have $(k,j)\cdot (w_\cJ,1)=(w_j^{-1}(k),j-1)\in I_\cJ^m$. Consequently, $I_\cJ^m$ is a disjoint union of $\langle(w_\cJ,1)\rangle$-orbits.

Now we fix an element $(k,j)\in I_\cJ^m$ and count the cardinality of the $\langle(w_\cJ,1)\rangle$-orbit containing $(k,j)$. Let $c$ be the minimal positive integer such that $(k,j)\cdot (w_\cJ,1)^c=(k,j)$. According to definition of the action of $(w_\cJ,1)$, it is clear that there exists $b\geq 1$ such that $c=bf$ and that $(k,j)\cdot (w_\cJ,1)^c=((w_j^\flat)^{-b}(k),j)$. In other words, we have
\begin{equation}\label{equ: one orbit}
u_j^{-1}(k)=(u_jw_j^\flat u_j^{-1})^{-b}u_j^{-1}(k)
\end{equation}
and $b$ is the minimal positive integer satisfying (\ref{equ: one orbit}). Then it follows from Definition~\ref{def: niveau partition} that $M_{u_jw_j^\flat u_j^{-1}}=M_\xi$, which together with $u_j^{-1}(k)\in [m]_\xi$ imply that $b=\#[m]_\xi=n_m^\xi$. Hence we deduce that the cardinality of the $\langle(w_\cJ,1)\rangle$-orbit containing $(k,j)$ equals $\#I_\cJ^m=fn_m^\xi$, which implies that $I_\cJ^m$ forms a single $\langle(w_\cJ,1)\rangle$-orbit. Hence we finish the proof.
\end{proof}

If $(k_1,j_1),\,(k_2,j_2)\in I_\cJ^m$ for some $1\leq m\leq r_\xi$, we define
\begin{equation}\label{equ: general interval}
](k_1,j_1),(k_2,j_2)]_{w_\cJ}
\defeq \{(k_1,j_1)\cdot (w_\cJ,1)^x\mid 1\leq x\leq b\}\subseteq I_\cJ^m
\end{equation}
where $1\leq b\leq f'$ is the minimal possible integer that satisfies
$$(k_2,j_2)=(k_1,j_1)\cdot (w_\cJ,1)^{b}.$$
It is easy to see that the definition of $](k_1,j_1),(k_2,j_2)]_{w_\cJ}$ depends only on $w_\cJ$ and not on $\xi$.

Now we recall the graph $\mathfrak{G}_\xi$ from \S\,\ref{subsub:graph} and pick a directed loop $\Gamma$ inside $\mathfrak{G}_\xi$ (see \S\,\ref{subsub:graph} for the definition of a directed loop).

\begin{defn}\label{def: lift of loop}
A pair of disjoint subsets $\Omega^+,~\Omega^-\subseteq\mathrm{Supp}_{\xi,\cJ}$ is called a \emph{lift} of $\Gamma$ if $\Omega^+$ (resp.~$\Omega^-$) maps bijectively to $E(\Gamma)^+$ (resp.~$E(\Gamma)^-$) under the surjection $\mathrm{Supp}_{\xi,\cJ}\twoheadrightarrow\mathrm{Supp}^\square_\xi$. Given a subset $\Lambda\subseteq \mathrm{Supp}_{\xi,\cJ}$, we say that a pair $\Omega^+,\Omega^-$ is a \emph{lift of $\Gamma$ supported in $\Lambda$}, if it is a lift of $\Gamma$ and $\Omega^+,~\Omega^-\subseteq\Lambda$.
We say that a pair $\Omega^+,\Omega^-$ is a \emph{$\Lambda$-lift} if it is a lift supported in $\Lambda$ of a directed loop inside $\mathfrak{G}_\xi$.
\end{defn}
We use the shortened notation $\Omega^\pm$ for the pair of sets $\Omega^+$ and $\Omega^-$. Note that if $E(\Gamma)^+=E(\Gamma)^-=\{\gamma\}$ for some $\gamma\in\mathrm{Supp}^\square_\xi$, then to choose a lift $\Omega^\pm$ of $\Gamma$ is equivalent to choose two distinct elements in $\mathrm{Supp}_{\xi,\cJ}^\gamma$.

We use the notation $\al=(i_\al,i_\al^\prime)$ for each $\al\in\Phi^+$. We consider a directed loop $\Gamma$ inside $\mathfrak{G}_\xi$ as well as a lift $\Omega^\pm$ of it.
Let $m$ be an element in $V(\Gamma)$. If we write $m\rightarrow m'$ (resp. $m'\rightarrow m$) for the edge of a directed loop $\Gamma$ indicating the direction by $\rightarrow$, we write $(\al_m^+,j_m^+)\in\Omega^+\sqcup \Omega^-$ (resp. $(\al_m^-,j_m^-)\in\Omega^+\sqcup \Omega^-$) for the element corresponding to the edge $m\rightarrow m'$ (resp. $m'\rightarrow m$) under the surjection $\mathrm{Supp}_{\xi,\cJ}\twoheadrightarrow\mathrm{Supp}^\square_\xi$. Namely, there exists an element $(\al_m^+,j_m^+)$ (resp.~$(\al_m^-,j_m^-)$) in $\Omega^+\sqcup \Omega^-$ such that the following holds:
\begin{itemize}
\item  $(\al_m^+,j_m^+)\in\mathrm{Supp}_{\xi,\cJ}^{(m,m')}\cap\Omega^+$ if $m'>m$, and $(\al_m^+,j_m^+)\in \mathrm{Supp}_{\xi,\cJ}^{(m',m)}\cap \Omega^-$ if $m'<m$;
\item $(\al_m^-,j_m^-)\in\mathrm{Supp}_{\xi,\cJ}^{(m',m)}\cap \Omega^+$ if $m'<m$, and $(\al_m^-,j_m^-)\in \mathrm{Supp}_{\xi,\cJ}^{(m,m')}\cap \Omega^-$ if $m'>m$.
\end{itemize}
Then we set
\begin{equation}\label{equ: two different cases}
k_{\Omega^\pm,m}^\bullet=\left\{\begin{array}{ll}
u_{j_m^\bullet}(i_{\al_m^\bullet})&\hbox{if $m'>m$;}\\
u_{j_m^\bullet}(i_{\al_m^\bullet}^\prime)&\hbox{if $m'<m$}
\end{array}\right.
\end{equation}
for each $\bullet\in\{+,-\}$. Note that we have $u_{j_m^\bullet}^{-1}(k_{\Omega^\pm,m}^\bullet)\in[m]_\xi$ for each $\bullet\in\{+,-\}$. We define
$$F_\xi^{\Omega^\pm,\flat}\defeq \prod_{m\in V(\Gamma)}\prod_{(k,j)\in I_\cJ^{\Omega^\pm,m}} D_{\xi,k}^{(j)}:~\cN_\xi\rightarrow \bG_m$$
where
\begin{equation}\label{equ: m interval}
I_\cJ^{\Omega^\pm,m}\defeq \left\{\begin{array}{cl}
](k_{\Omega^\pm,m}^-,j_m^-),(k_{\Omega^\pm,m}^+,j_m^+)]_{w_\cJ}&\hbox{if $(k_{\Omega^\pm,m}^-,j_m^-)\neq (k_{\Omega^\pm,m}^+,j_m^+)$};\\
\emptyset&\hbox{if $(k_{\Omega^\pm,m}^-,j_m^-)=(k_{\Omega^\pm,m}^+,j_m^+)$},
\end{array}\right.
\end{equation}
for each $m\in V(\Gamma)$. We also define
\begin{equation}\label{equ: sharp factor}
F_\xi^{\Omega^\pm,\sharp}\defeq
\frac{\prod_{(\al,j)\in \Omega^+}u_\xi^{(\al,j)}}{\prod_{(\al,j)\in\Omega^-}u_\xi^{(\al,j)}}.
\end{equation}
Then we set
\begin{equation}\label{equ: loop function}
F_\xi^{\Omega^\pm}\defeq F_\xi^{\Omega^\pm,\flat}\cdot F_\xi^{\Omega^\pm,\sharp}.
\end{equation}
Hence $F_\xi^{\Omega^\pm}$ is a rational function on $\cN_\xi$.

\begin{lemma}\label{lem: loop function}
The rational function $F_\xi^{\Omega^\pm}$ descends to $[\cN_\xi\slash{{\sim}_{\un{T}\text{-\textnormal{sh.cnj}}}}]$ for each choice of lift $\Omega^\pm$ of some directed loop $\Gamma$ as above. Similarly, the function $F_\xi^m$ descends to $[\cN_\xi\slash{{\sim}_{\un{T}\text{-\textnormal{sh.cnj}}}}]$ for each $1\leq m\leq r_\xi$.
\end{lemma}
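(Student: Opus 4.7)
The approach is to reduce invariance under shifted $\un{T}$-conjugation to a vanishing identity in the character lattice $X^*(\un{T})$. Using the standard isomorphism \eqref{equ: std section}, any $R$-point of $\cN_\xi$ can be written as $(t_j v_j w_j)_{j \in \cJ}$ with $t_j \in T(R)$ and $v_j \in N_{\xi,j}^-(R)$, and a direct calculation (moving the conjugating torus past $v_j$ and splitting it into its diagonal part and the conjugation of $v_j$ by it) shows that the right action of $t \in \un{T}(R)$ scales $D_{\xi,k}^{(j)}$ by the character $\chi_{D_{\xi,k}^{(j)}} = \epsilon_{w_j^{-1}(k), j} - \epsilon_{k, j+1}$ and $u_\xi^{(\al,j)}$ by $\chi_{u_\xi^{(\al,j)}} = \epsilon_{w_j^{-1} u_j(i_\al^\prime), j} - \epsilon_{w_j^{-1} u_j(i_\al), j}$, where $\epsilon_{k,j} \in X^*(\un{T})$ extracts the $k$-th diagonal entry in the $j$-th embedding (additive notation). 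It then suffices to show that the total character (the sum, with the exponents prescribed by the definitions of $F_\xi^m$ and $F_\xi^{\Omega^\pm}$) vanishes in $X^*(\un{T})$.

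For $F_\xi^m$, the character is $\sum_{(k,j) \in I_\cJ^m}(\epsilon_{w_j^{-1}(k), j} - \epsilon_{k, j+1})$. Since $I_\cJ^m$ is a single $\langle(w_\cJ,1)\rangle$-orbit by Lemma~\ref{lem: bijection orbit}, the substitution $(k,j) \mapsto (k,j) \cdot (w_\cJ,1) = (w_j^{-1}(k), j-1)$ is a bijection of $I_\cJ^m$ to itself that carries the term $\epsilon_{w_j^{-1}(k), j}$ attached to $(k,j)$ to the term $\epsilon_{k', j'+1}$ attached to the new index $(k',j')$; the two sums therefore agree and cancel. For $F_\xi^{\Omega^\pm}$ the same substitution makes the character of $F_\xi^{\Omega^\pm, \flat}$ telescope along each interval \eqref{equ: m interval}: writing $\sigma(k,j) \defeq (w_j^{-1}(k), j)$, at each $m \in V(\Gamma)$ the contribution collapses to the boundary difference $\epsilon_{\sigma(k_{\Omega^\pm, m}^+, j_m^+)} - \epsilon_{\sigma(k_{\Omega^\pm, m}^-, j_m^-)}$.

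The crucial step is to verify that these boundary terms are exactly canceled by the character of $F_\xi^{\Omega^\pm, \sharp}$. At each vertex $m \in V(\Gamma)$, the two edges of $\Gamma$ incident to $m$ contribute two factors to $F_\xi^{\Omega^\pm, \sharp}$ whose characters have the form $\pm(\epsilon_{\sigma(u_j(i_\al^\prime), j)} - \epsilon_{\sigma(u_j(i_\al), j)})$. A case split on whether the neighboring vertex $m'$ satisfies $m' > m$ or $m' < m$---which simultaneously determines the sign convention distinguishing $\Omega^+$ from $\Omega^-$ and the choice of endpoint in the definition of $k_{\Omega^\pm, m}^\pm$ in \eqref{equ: two different cases}---shows that the ``$m$-end'' of each such character matches precisely the corresponding $k_{\Omega^\pm, m}^+$ or $k_{\Omega^\pm, m}^-$ with the opposite sign to the $F_\xi^{\Omega^\pm, \flat}$ contribution at $m$, while the ``$m'$-end'' matches the corresponding $k_{\Omega^\pm, m'}^\mp$ at the neighboring vertex. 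Summed over the loop, all boundary terms cancel. The main subtle point I expect to handle is the degenerate case $E(\Gamma)^+ = E(\Gamma)^- = \{\gamma\}$ with $(\al_+, j_+) \in \Omega^+$ and $(\al_-, j_-) \in \Omega^-$ both lifting $\gamma$: here the intervals in \eqref{equ: m interval} may degenerate to $\emptyset$ so that the telescoping in $F_\xi^{\Omega^\pm, \flat}$ is trivial, and the cancellation must take place purely inside $F_\xi^{\Omega^\pm, \sharp}$; but the fact that $(\al_\pm, j_\pm)$ lie in the common fiber $\mathrm{Supp}_{\xi, \cJ}^\gamma$ forces the two characters $\chi_{u_\xi^{(\al_\pm, j_\pm)}}$ to differ exactly by a telescoping sum along the arc connecting $(k_{\Omega^\pm, m}^+, j_m^+)$ and $(k_{\Omega^\pm, m}^-, j_m^-)$ inside $I_\cJ^m$, yielding the cancellation by exactly the same combinatorial pattern as in the non-degenerate case.
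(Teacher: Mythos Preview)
Your approach is correct and essentially the same as the paper's: both compute how the shifted $\un{T}$-conjugation scales $D_{\xi,k}^{(j)}$ and $u_\xi^{(\al,j)}$ (the paper's formulas \eqref{equ: T conj formula ss} and \eqref{equ: T conj formula u} are exactly your characters), telescope along the intervals $I_\cJ^{\Omega^\pm,m}$, and then check the boundary identity coming from \eqref{equ: two different cases}. The only difference is that you work additively in $X^*(\un{T})$ while the paper works multiplicatively with torus elements; these are equivalent reformulations. One small remark: your separate treatment of the degenerate case $E(\Gamma)^+=E(\Gamma)^-=\{\gamma\}$ is unnecessary, since the general telescoping formula $\epsilon_{\sigma(k_{\Omega^\pm,m}^+,j_m^+)}-\epsilon_{\sigma(k_{\Omega^\pm,m}^-,j_m^-)}$ remains valid (and equals zero) when the interval is empty, and the boundary cancellation with $F_\xi^{\Omega^\pm,\sharp}$ then proceeds identically to the non-degenerate case.
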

\begin{proof}
We only prove the case of $F_\xi^{\Omega^\pm}$, as the proof for $F_\xi^m$ is simpler. We write $X\subseteq\cN_\xi$ for the open subscheme defined by the condition that $u_\xi^{(\al,j)}\neq 0$ for each $(\al,j)\in\Omega^+\sqcup\Omega^-$. In particular, $X$ is inside the regular locus of $F_\xi^{\Omega^\pm}$ and we only need to prove that
$$F_\xi^{\Omega^\pm}(A\cdot t)=F_\xi^{\Omega^\pm}(A)$$
for each $A=(A^{(j)})_{j\in\cJ}\in X(R)$ and $t=(t^{(j)})_{j\in\cJ}\in \un{T}(R)$. We write $t^{(j)}=\Diag(t^{(j)}_1,\dots,t^{(j)}_n)$ for convenience. Then we observe that
\begin{equation}\label{equ: T conj formula ss}
D_{\xi,k}^{(j)}(A\cdot t)=\big(t^{(j+1)}_k\big)^{-1}t^{(j)}_{w_j^{-1}(k)}D_{\xi,k}^{(j)}(A)
\end{equation}
for each $(k,j)\in\mathbf{n}_\cJ$, and
\begin{equation}\label{equ: T conj formula u}
u_\xi^{(\al,j)}(A\cdot t)=\big(t^{(j)}_{w_j^{-1}u_j(i_\al)}\big)^{-1}t^{(j)}_{w_j^{-1}u_j(i_\al^\prime)}u_\xi^{(\al,j)}(A)
\end{equation}
for each $(\al,j)\in\mathrm{Supp}_{\xi,\cJ}$. It follows from (\ref{equ: T conj formula ss}) and the definition of $I_\cJ^{\Omega^\pm,m}$ (see (\ref{equ: general interval}) and (\ref{equ: m interval})) that
$$
\prod_{(k,j)\in I_\cJ^{\Omega^\pm,m}} D_{\xi,k}^{(j)}(A\cdot t)=\big(t^{(j_m^-)}_{w_{j_m^-}^{-1}(k_{\Omega^\pm,m}^-)}\big)^{-1}t^{(j_m^+)}_{w_{j_m^+}^{-1}(k_{\Omega^\pm,m}^+)}\prod_{(k,j)\in I_\cJ^{\Omega^\pm,m}} D_{\xi,k}^{(j)}(A)
$$
for each $m\in V(\Gamma)$. It follows from (\ref{equ: T conj formula u}) and (\ref{equ: sharp factor}) that
$$F_\xi^{\Omega^\pm,\sharp}(A\cdot t)=\frac{\prod_{(\al,j)\in \Omega^+}\big(t^{(j)}_{w_j^{-1}u_j(i_\al)}\big)^{-1}t^{(j)}_{w_j^{-1}u_j(i_\al^\prime)}}{\prod_{(\al,j)\in\Omega^-} \big(t^{(j)}_{w_j^{-1}u_j(i_\al)}\big)^{-1}t^{(j)}_{w_j^{-1}u_j(i_\al^\prime)}}F_\xi^{\Omega^\pm,\sharp}(A).$$
Hence, it remains to prove that
$$
\left(\prod_{m\in V(\Gamma)}\big(t^{(j_m^-)}_{w_{j_m^-}^{-1}(k_{\Omega^\pm,m}^-)}\big)^{-1}t^{(j_m^+)}_{w_{j_m^+}^{-1}(k_{\Omega^\pm,m}^+)}\right)\frac{\prod_{(\al,j)\in \Omega^+}\big(t^{(j)}_{w_j^{-1}u_j(i_\al)}\big)^{-1}t^{(j)}_{w_j^{-1}u_j(i_\al^\prime)}}{\prod_{(\al,j)\in\Omega^-} \big(t^{(j)}_{w_j^{-1}u_j(i_\al)}\big)^{-1}t^{(j)}_{w_j^{-1}u_j(i_\al^\prime)}}=1,
$$
which is a consequence of (\ref{equ: two different cases}). Hence we finish the proof.
\end{proof}

\subsubsection{Explicit geometric quotient}\label{subsub: remove}
Let $\Lambda$ be a subset of $\mathrm{Supp}_{\xi,\cJ}$ with $\Lambda^\square$ its image in $\mathrm{Supp}^\square_\xi$, and recall the definitions of $\cN_{\xi,\Lambda}\subseteq\cN_\xi$ from \S\,\ref{sub:std:note}. We recall from (\ref{equ: loop function}) the rational function $F_\xi^{\Omega^\pm}$ on $\cN_\xi$. If $\Omega^\pm$ is a $\Lambda$-lift (see Definition~\ref{def: lift of loop}), then $F_\xi^{\Omega^\pm}$ clearly restricts to an invertible function on $\cN_{\xi,\Lambda}$. We abuse the same notation $F_\xi^{\Omega^\pm}$ for this restriction. Similarly, we also abuse the notation $F_\xi^m$ (see (\ref{equ: ss function})) for its restriction to $\cN_{\xi,\Lambda}$. In the following, we will use functions of the form $F_\xi^{\Omega^\pm}$ and $F_\xi^m$ to explicitly construct the geometric quotient $\cN_{\xi,\Lambda}\slash{{\sim}_{\un{T}\text{-\textnormal{sh.cnj}}}}$ in Proposition~\ref{prop: pass to scheme}.

We can naturally associate a subgraph $\mathfrak{G}_{\xi,\Lambda}\subseteq\mathfrak{G}_\xi$ with the subset $\Lambda^\square\subseteq \mathrm{Supp}^\square_\xi$. We fix a choice of a subset $\cB\subseteq\Lambda$ that maps bijectively to a subset of $\Lambda^{\square}$, denoted by $\cB^\square$, under $\Lambda\twoheadrightarrow\Lambda^\square$ such that the subgraph of $\mathfrak{G}_{\xi,\Lambda}$ corresponding to $\cB^\square$ is a maximal tree (a not necessarily connected maximal possible subgraph such that the underlying topological space of each connected component is simply connected).
As a result, for each $\gamma\in \Lambda^\square$ there exists a unique directed loop $\Gamma_{\gamma,\cB}$ inside $\mathfrak{G}_{\xi,\Lambda}$ (see Definition~\ref{def:loop}) such that $E(\Gamma_{\gamma,\cB})^+\cup E(\Gamma_{\gamma,\cB})^-\subseteq\cB^\square\cup \{\gamma\}$ and $\gamma\in E(\Gamma_{\gamma,\cB})^+$. For each element $(\al,j)\in\Lambda\setminus\cB$ with $\gamma$ its image in $\Lambda^\square$, there exists a unique $\Lambda$-lift $\Omega_{(\al,j),\cB}^\pm$ of $\Gamma_{\gamma,\cB}$ such that $\Omega_{(\al,j),\cB}^+\sqcup\Omega_{(\al,j),\cB}^-\subseteq \cB\sqcup\{(\al,j)\}$ and $(\al,j)\in\Omega_{(\al,j),\cB}^+$. Then we set
\begin{equation}
\label{eq:omega:b}
F_{\xi,\Lambda}^{(\al,j),\cB}\defeq F_\xi^{\Omega_{(\al,j),\cB}^\pm}.
\end{equation}

Now we consider the following morphism
$$p_{\xi,\Lambda}:~\cN_{\xi,\Lambda}\rightarrow \bG_m^{r_\xi}\times\bG_m^{\#\Lambda-\#\cB}$$
given by $(F_\xi^1,\dots, F_\xi^{r_\xi})$ on the first $r_\xi$ coordinates and $(F_{\xi,\Lambda}^{(\al,j),\cB})_{(\al,j)\in\Lambda\setminus\cB}$ for the rest.
We write $\cO(\cN_{\xi,\Lambda})$ (resp.~$\cO(\cN_{\xi,\Lambda})^\times$) for the ring of global sections (resp.~for the group of invertible global sections) on $\cN_{\xi,\Lambda}$. We also write $\cO(\cN_{\xi,\Lambda})^{\un{T}\text{-\textnormal{sh.cnj}}}\subseteq\cO(\cN_{\xi,\Lambda})$ for the subring consisting of global sections invariant under shifted-$\un{T}$-conjugation. We understand \emph{monomials} to have degrees in $\Z$.

\begin{lemma}\label{lem: quotient of torus by torus}
Let $T_0$ be a split torus, $r\geq 1$ be an integer and $(\chi_i)_{1\leq i\leq r}\in X(T_0)^r$ be $r$-tuple of characters of $T_0$. Let $r_1$ be the rank of the span of $\chi_1,\cdots,\chi_r$ in $X(T_0)$. We consider the $T_0$-action on $\bG_m^r$ given by
$$T_0\times \bG_m^r\rightarrow\bG_m^r:~(t,x_1,\cdots,x_r)\mapsto (\chi_1(t)x_1,\cdots \chi_r(t)x_r).$$
Then the geometric quotient $\bG_m^r\slash_{{\sim}_{T_0}}$ exists and is a split torus of rank $r-r_1$. In particular, $T_0$ acts transitively on $\bG_m^r$ if and only if $r_1=r$.
\end{lemma}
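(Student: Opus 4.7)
The plan is to reformulate the action as translation by a subtorus of $\bG_m^r$, reducing the statement to the well-known theory of quotients of split tori by closed subtori.

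First, I would introduce the homomorphism of group schemes
$$\chi:\,T_0\longrightarrow \bG_m^r,\quad t\longmapsto \big(\chi_1(t),\dots,\chi_r(t)\big),$$
and observe that the given $T_0$-action on $\bG_m^r$ factors as
$$T_0\times \bG_m^r\xrightarrow{(\chi,\mathrm{id})} \bG_m^r\times \bG_m^r\xrightarrow{\mathrm{mult}} \bG_m^r,$$
that is, $T_0$ acts via the translation action of its image $H\defeq \chi(T_0)$ on $\bG_m^r$. Because a homomorphism between split tori has closed image which is again a split torus, $H$ is a closed split subtorus of $\bG_m^r$. The rank of $H$ equals the rank of the image of the dual map $X(\bG_m^r)\to X(T_0)$ of character lattices; under the standard basis of $X(\bG_m^r)$ this dual map sends the $i$-th generator to $\chi_i$, so the rank of the image is precisely $r_1$.

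Next I would deduce that the geometric quotient $\bG_m^r\slash{{\sim}_{T_0}}$ coincides with the fppf quotient $\bG_m^r/H$. Since $H$ is a closed subgroup of the affine group scheme $\bG_m^r$ and the ambient group is abelian, the quotient $\bG_m^r/H$ exists as an affine group scheme and the quotient map $\bG_m^r\to \bG_m^r/H$ is a principal $H$-bundle; in particular it is the geometric quotient (all orbits are translates of $H$, hence closed). Moreover, $\bG_m^r/H$ is again a split torus, with character lattice fitting into the short exact sequence
$$0\longrightarrow X(\bG_m^r/H)\longrightarrow X(\bG_m^r)\longrightarrow X(H)\longrightarrow 0,$$
so its rank equals $r-r_1$. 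Concretely, one can check this by computing invariants: the ring $\cO(\bG_m^r)^{T_0}$ is the $\F$-span of the monomials $x_1^{a_1}\cdots x_r^{a_r}$ with $\sum_i a_i\chi_i=0$ in $X(T_0)$, and this lattice of relations has rank $r-r_1$.

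For the last assertion, transitivity of the $T_0$-action on $\bG_m^r$ is equivalent to $H=\bG_m^r$, which in turn is equivalent to the inclusion of split tori $H\hookrightarrow\bG_m^r$ being an isomorphism, that is $r_1=r$. The argument is essentially a direct computation, so there is no serious obstacle; the only point requiring a brief justification is the existence of the quotient and its identification as a split torus, which I would handle as above by citing the standard theory of quotients of affine abelian group schemes by closed subgroups.
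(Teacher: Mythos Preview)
Your proposal is correct and is a natural elaboration of the standard argument; the paper's own proof consists solely of the sentence ``This is clear,'' so your detailed reduction to a quotient of a split torus by a closed subtorus is exactly the kind of justification the authors had in mind.
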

\begin{proof}
This is clear.
\end{proof}

\begin{prop}\label{prop: pass to scheme}
The geometric quotient $\cN_{\xi,\Lambda}\slash{{\sim}_{\un{T}\text{-\textnormal{sh.cnj}}}}$ exists and $p_{\xi,\Lambda}$ induces an isomorphism
$$\cN_{\xi,\Lambda}\slash{{\sim}_{\un{T}\text{-\textnormal{sh.cnj}}}}\xrightarrow{\sim}\bG_m^{r_\xi}\times\bG_m^{\#\Lambda-\#\cB}.$$
In particular, we have the following natural isomorphism
\begin{equation}\label{equ: inv subring}
\cO(\cN_{\xi,\Lambda})^{\un{T}\text{-\textnormal{sh.cnj}}}\cong \F[(F_\xi^m)^{\pm1}\mid 1\leq m\leq r_\xi][(F_\xi^{(\al,j),\cB})^{\pm1}\mid (\al,j)\in\Lambda\setminus\cB].
\end{equation}
\end{prop}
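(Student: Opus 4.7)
The overall plan is to apply Lemma~\ref{lem: quotient of torus by torus}: by~(\ref{equ: std section Lambda}) the scheme $\cN_{\xi,\Lambda}$ is a split torus of rank $fn+\#\Lambda$ with coordinates $\{D_{\xi,k}^{(j)}\}_{(k,j)\in\mathbf{n}_\cJ}\cup\{u_\xi^{(\al,j)}\}_{(\al,j)\in\Lambda}$, and formulas~(\ref{equ: T conj formula ss}) and~(\ref{equ: T conj formula u}) express the shifted $\un{T}$-conjugation as a torus action through the characters
\[
\chi_{(k,j)}\defeq -\eps_{j+1,k}+\eps_{j,w_j^{-1}(k)},\qquad \chi_{(\al,j)}\defeq -\eps_{j,w_j^{-1}u_j(i_\al)}+\eps_{j,w_j^{-1}u_j(i_\al')}.
\]
Hence the geometric quotient exists as a split torus, and its rank equals $fn+\#\Lambda-r_1$, where $r_1$ is the rank of the subgroup of $X(\un{T})$ spanned by the $\chi_{(k,j)}$ and $\chi_{(\al,j)}$. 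The goal is then to prove $r_1=fn-r_\xi+\#\cB$ and to identify $p_{\xi,\Lambda}$ with the corresponding quotient map.

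For the lower bound on $\dim(\cN_{\xi,\Lambda}\slash{{\sim}_{\un{T}\text{-\textnormal{sh.cnj}}}})$, I use Lemma~\ref{lem: loop function}: the characters $[F_\xi^m]$ for $1\leq m\leq r_\xi$ and $[F_{\xi,\Lambda}^{(\al,j),\cB}]$ for $(\al,j)\in\Lambda\setminus\cB$ lie in $\ker(X(\cN_{\xi,\Lambda})\to X(\un{T}))$, producing $r_\xi+\#\Lambda-\#\cB$ invariants. They are $\Z$-linearly independent, since each $F_{\xi,\Lambda}^{(\al,j),\cB}$ involves the variable $u_\xi^{(\al,j)}$ (with $(\al,j)\in\Lambda\setminus\cB$) with exponent $\pm 1$ while no other generator does by the maximality of the spanning forest $\cB^\square$, and the $F_\xi^m$'s are monomials in the $D_{\xi,k}^{(j)}$'s whose supports $\{I_\cJ^m\}$ partition $\mathbf{n}_\cJ$ by Lemma~\ref{lem: bijection orbit}. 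This yields $r_1\leq fn-r_\xi+\#\cB$.

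For the matching upper bound, I will exhibit a subtorus of $\un{T}$ of dimension $\geq r_\xi-\#\cB$ acting trivially on $\cN_{\xi,\Lambda}$. Using the embedding $\kappa_\xi:Z_\xi\hookrightarrow\un{T}$ attached to the central torus $Z_\xi\cong\bG_m^{r_\xi}$ of $M_\xi$, I will show via (\ref{equ: T conj formula ss}) and~(\ref{equ: T conj formula u}) that $\kappa_\xi(z)$ fixes every $D_{\xi,k}^{(j)}$ and acts on $u_\xi^{(\al,j)}$ by $z_{h_\al}^{-1}z_{\ell_\al}$; hence $\kappa_\xi(z)$ is trivial on $\cN_{\xi,\Lambda}$ whenever $z_h=z_\ell$ for every edge $(h,\ell)$ of $\cB^\square$, i.e.~whenever $z$ is constant along connected components of the forest on $\{1,\dots,r_\xi\}$ with edge set $\cB^\square$. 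Since this forest has $r_\xi-\#\cB$ connected components, the stabilizer has dimension $\geq r_\xi-\#\cB$, giving $r_1\geq fn-r_\xi+\#\cB$.

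Finally, to conclude that $p_{\xi,\Lambda}$ is the quotient map (and hence obtain the isomorphism~(\ref{equ: inv subring})), I must check that the generators $[F_\xi^m]$ and $[F_{\xi,\Lambda}^{(\al,j),\cB}]$ span the full invariant lattice $\ker(\Psi)$ as a $\Z$-module (not merely a sublattice of finite index, which the dimension count alone gives). The plan here is to run an elimination procedure on any invariant monomial $\prod (D_{\xi,k}^{(j)})^{a_{k,j}}\prod (u_\xi^{(\al,j)})^{b_{\al,j}}$: first use the $F_{\xi,\Lambda}^{(\al,j),\cB}$ to cancel the $u$-factors supported on $\Lambda\setminus\cB$, then note that invariance under $\un{T}$ together with the spanning-forest property of $\cB^\square$ forces the remaining $u$-exponents on $\cB$ to vanish, and finally use the $F_\xi^m$ together with the orbit decomposition from Lemma~\ref{lem: bijection orbit} to clean up the $D$-exponents. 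This step, where the combinatorics of $\cB^\square$ must be cashed out saturation-wise, is the main technical obstacle of the argument; the key point is that each $F_{\xi,\Lambda}^{(\al,j),\cB}$ contributes exponent $\pm 1$ on its designated $u_\xi^{(\al,j)}$, so the elimination produces integer (not just rational) coefficients.
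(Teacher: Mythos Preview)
Your elimination procedure in the final paragraph is essentially the paper's proof and suffices by itself: cancel the $u$-variables on $\Lambda\setminus\cB$ using that each $F_{\xi,\Lambda}^{(\al,j),\cB}$ carries $u_\xi^{(\al,j)}$ with exponent $+1$, then use $Z_\xi$-invariance to kill the remaining $u$-exponents on $\cB$, then use the $(w_\cJ,1)$-orbit structure to reduce to a monomial in the $F_\xi^m$. The paper packages the first two steps via the identity~(\ref{equ: balanced cond on deg}) coming from $Z_\xi$-invariance, but the content is the same. One point you should make explicit in step~2: the reason ``invariance plus the spanning-forest property'' forces the $\cB$-exponents to vanish is precisely that under $\kappa_\xi$ the $D$-variables are fixed while $u_\xi^{(\al,j)}$ scales by $z_{h_\al}^{-1}z_{\ell_\al}$, and since $\cB\to\cB^\square$ is a bijection onto a forest these characters of $Z_\xi$ are $\Z$-linearly independent.

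Your middle paragraph, however, has the inequality backwards. Exhibiting a subtorus of $\un{T}$ of dimension $\geq r_\xi-\#\cB$ acting trivially shows that the action factors through a quotient of dimension $\leq fn-(r_\xi-\#\cB)$, hence $r_1\leq fn-r_\xi+\#\cB$, which is the \emph{same} direction you already obtained from the independent invariants, not the opposite one. Fortunately this paragraph is unnecessary: the elimination argument directly establishes that the named invariants generate the full invariant lattice over $\Z$, which determines $r_1$ exactly and yields the isomorphism~(\ref{equ: inv subring}) without any separate dimension count. You can simply delete the stabilizer paragraph.
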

\begin{proof}
The existence of the geometric quotient $\cN_{\xi,\Lambda}\slash{{\sim}_{\un{T}\text{-\textnormal{sh.cnj}}}}$ (which is a split torus) follows directly from Lemma~\ref{lem: quotient of torus by torus}, and it suffices to prove (\ref{equ: inv subring}). More precisely, we prove that any monomial in $\cO(\cN_{\xi,\Lambda})$ (with variables $\{D_{\xi,k}^{(j)}\mid (k,j)\in\mathbf{n}_\cJ\}$ and $\{u_\xi^{(\al,j)}\mid (\al,j)\in\Lambda\}$) invariant under the $\un{T}$-action must have the form $$c\underset{1\leq m\leq r_\xi}{\prod}(F_\xi^m)^{d_m}\underset{(\al,j)\in\Lambda\setminus\cB}{\prod}(F_\xi^{(\al,j),\cB})^{n_{(\al,j)}}$$
for some choice of $c\in\F^\times$, $(d_1,\dots,d_{r_\xi})\in\Z^{r_\xi}$ and $(n_{(\al,j)})_{(\al,j)\in\Lambda\setminus\cB}\in\Z^{\#\Lambda-\#\cB}$ (and the choice is clearly unique).

We first claim that any monomial $F\in\cO(\cN_{\xi,\Lambda})^{\un{T}\text{-\textnormal{sh.cnj}}}$ with variables $\{D_{\xi,k}^{(j)}\mid (k,j)\in\mathbf{n}_\cJ\}$ is a monomial with variables $\{F_\xi^m\mid 1\leq m\leq r_\xi\}$.
In fact, $F\in\cO(\cN_{\xi,\Lambda})^{\un{T}\text{-\textnormal{sh.cnj}}}$ together with the formula (\ref{equ: T conj formula ss}) forces the degree of $D_{\xi,k}^{(j)}$ in $F$ to be a constant function on each $(w_\cJ,1)$-orbit in $\mathbf{n}_\cJ$, and the claim clearly follows.

Now we fix an arbitrary monomial $F\in\cO(\cN_{\xi,\Lambda})^{\un{T}\text{-\textnormal{sh.cnj}}}$ and write $n_{(\al,j)}\in\Z$ for the degree of $u_\xi^{(\al,j)}$, for each $(\al,j)\in\Lambda$. Let $Z_\xi$ be the center of $M_\xi$ and consider the $Z_\xi$-action on $\cN_{\xi,\Lambda}$ induced from the embedding
$Z_\xi\hookrightarrow \un{T}$ given by $z\mapsto (u_j z u_j^{-1})_{j\in\cJ}$. The fact that $F$ is $Z_\xi$-invariant together with the formula (\ref{equ: T conj formula u}) implies that
\begin{equation}\label{equ: balanced cond on deg}
\underset{\gamma\in\Lambda^\square}{\sum}\left(\underset{(\al,j)\in\Lambda\cap\mathrm{Supp}_{\xi,\cJ}^\gamma}{\sum}n_{(\al,j)}\right)\gamma=0.
\end{equation}
We write $F_\xi^{(\al,j),\cB}\defeq 1$ for each $(\al,j)\in\cB$ for convenience. Then it follows from the choice of $\cB$ (with $\cB^\square$ being a basis for the $\Z$-span of $\Lambda^\square$) and (\ref{equ: balanced cond on deg}) that
$\underset{(\al,j)\in\Lambda}{\prod}(u_\xi^{(\al,j)}(F_\xi^{(\al,j),\cB})^{-1})^{n_{(\al,j)}}$
is a monomial with variables $\{D_{\xi,k}^{(j)}\mid (k,j)\in\mathbf{n}_\cJ\}$. Consequently, we obtain an element
$$\tld{F}\defeq F\underset{(\al,j)\in\Lambda}{\prod}(F_\xi^{(\al,j),\cB})^{-n_{(\al,j)}}\in \cO(\cN_{\xi,\Lambda})^{\un{T}\text{-\textnormal{sh.cnj}}}$$
which is a monomial with variables $\{D_{\xi,k}^{(j)}\mid (k,j)\in\mathbf{n}_\cJ\}$. Hence, our claim above forces $\tld{F}$ to be a monomial with variables $\{F_\xi^m\mid 1\leq m\leq r_\xi\}$.  The proof is thus finished.
\end{proof}

\begin{prop}\label{prop: quotient of strata}
There exists a partition on $\cN_{\xi,\Lambda}\slash{{\sim}_{\un{T}\text{-\textnormal{sh.cnj}}}}$ whose pull back to $\cN_{\xi,\Lambda}$ is the restriction of $\cP$ to $\cN_{\xi,\Lambda}$. In particular, the geometric quotient $\cC\slash{{\sim}_{\un{T}\text{-\textnormal{sh.cnj}}}}$ exists for each $\cC\in\cP$ satisfying $\cC\subseteq \cN_{\xi,\Lambda}$.
\end{prop}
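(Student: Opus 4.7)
My plan is to use the fact that each element of $\cP_\cJ$ is stable under shifted $\un{T}$-conjugation to descend the partition directly along the quotient map produced in Proposition~\ref{prop: pass to scheme}. As already noted in \S~\ref{subsubsec:PS}, every $\cC\in\cP_\cJ$ is preserved under the shifted $\un{T}$-conjugation action on $\tld{\cF\cL}_\cJ$, so the intersection $\cC\cap\cN_{\xi,\Lambda}$ is a $\un{T}$-saturated locally closed subscheme of $\cN_{\xi,\Lambda}$.

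I would first let $q:\cN_{\xi,\Lambda}\ra\cN_{\xi,\Lambda}\slash{{\sim}_{\un{T}\text{-\textnormal{sh.cnj}}}}$ be the geometric quotient constructed in Proposition~\ref{prop: pass to scheme}; as a surjective homomorphism of split tori, $q$ is in particular faithfully flat and open. For each $\cC\in\cP_\cJ$ with $\cC\cap\cN_{\xi,\Lambda}\neq\emptyset$ I would then define $\cC^\vee\defeq q(\cC\cap\cN_{\xi,\Lambda})$. Because $\cC\cap\cN_{\xi,\Lambda}$ is $\un{T}$-saturated and $q$ is faithfully flat and open, $\cC^\vee$ is locally closed in the target with $q^{-1}(\cC^\vee)=\cC\cap\cN_{\xi,\Lambda}$. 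Since $\{\cC\cap\cN_{\xi,\Lambda}\mid \cC\in\cP_\cJ,\,\cC\cap\cN_{\xi,\Lambda}\neq\emptyset\}$ is a topological partition of $\cN_{\xi,\Lambda}$ (cf.~Lemma~\ref{lem:union:partition}), the family $\{\cC^\vee\}$ provides a topological partition of $\cN_{\xi,\Lambda}\slash{{\sim}_{\un{T}\text{-\textnormal{sh.cnj}}}}$ whose pullback recovers the restriction of $\cP_\cJ$ to $\cN_{\xi,\Lambda}$, which is the first assertion.

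For the final clause, given $\cC\in\cP_\cJ$ with $\cC\subseteq\cN_{\xi,\Lambda}$, the subscheme $\cC^\vee$ inherits an affine scheme structure from the ambient torus, and the restricted morphism $q|_\cC:\cC\ra\cC^\vee$ is a $\un{T}$-invariant surjection whose geometric fibres are precisely the shifted $\un{T}$-conjugation orbits in $\cC$. Combined with the fact that $q$ itself is a geometric quotient (by Proposition~\ref{prop: pass to scheme}, cf.~Lemma~\ref{lem: quotient of torus by torus}) and that $\cC$ is $\un{T}$-saturated in $\cN_{\xi,\Lambda}$, standard descent along faithfully flat torus quotients will identify $q|_\cC$ with the geometric quotient $\cC\slash{{\sim}_{\un{T}\text{-\textnormal{sh.cnj}}}}$.

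The only potential obstacle is the formal bookkeeping of topological descent along $q$, i.e.\ verifying that $\un{T}$-saturated locally closed subschemes descend to locally closed subschemes of the quotient and that the geometric quotient property restricts along $\un{T}$-saturated locally closed inclusions. Both assertions will reduce to the explicit monomial description of $q$ in Proposition~\ref{prop: pass to scheme}, so no substantial calculation is anticipated.
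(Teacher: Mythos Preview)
Your approach is correct but follows a genuinely different route from the paper's. You argue abstractly: since each $\cC\in\cP_\cJ$ is $\un{T}$-saturated and the quotient $q:\cN_{\xi,\Lambda}\to\cN_{\xi,\Lambda}\slash{{\sim}_{\un{T}\text{-\textnormal{sh.cnj}}}}$ is a geometric quotient by a linearly reductive group (a torus), saturated locally closed subschemes descend, and the restriction of $q$ to such subschemes is again a geometric quotient. The paper instead proves an explicit algebraic factorization: for every minor $f_{S,j_0}$, its restriction to $\cN_{\xi,\Lambda}$ decomposes as $F_0F_1$ with $F_0\in\cO(\cN_{\xi,\Lambda})^{\un{T}\text{-sh.cnj}}$ and $F_1\in\cO(\cN_{\xi,\Lambda})^\times$, by replacing each $u_\xi^{(\al,j_0)}$ with the invariant monomial $F_\xi^{(\al,j_0),\cB}$ and observing that the discrepancy is a monomial in the units $D_{\xi,k}^{(j_0)}$ and $u_\xi^{(\al,j)}$ for $(\al,j)\in\cB$. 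This shows directly that the vanishing loci defining $\cP_\cJ$ on $\cN_{\xi,\Lambda}$ are cut out by invariant functions, hence descend.

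Your argument is cleaner and avoids the computation; the paper's buys an explicit description of the invariant equations defining $\cC^\vee$, though this does not seem to be reused later. One small point: your claim that ``$\cC^\vee$ inherits an affine scheme structure from the ambient torus'' is not automatic from being locally closed in a torus; rather, affineness follows because $\cC$ is affine (Lemma~\ref{lem: open cover}\ref{it: open cover:iv}) and $\un{T}$ is linearly reductive, so $\cC^\vee\cong\Spec\cO(\cC)^{\un{T}}$. This is implicit in the standard descent you invoke, but worth stating since the proposition is used precisely to replace the stacky quotient by an honest scheme.
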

\begin{proof}
It suffices to show that, for each $S\subseteq\{1,\cdots,n\}$ and $j_0\in\cJ$, there exists $F_0\in \cO(\cN_{\xi,\Lambda})^{\un{T}\text{-\textnormal{sh.cnj}}}$ and $F_1\in\cO(\cN_{\xi,\Lambda})^\times$ such that $f_{S,j_0}|_{\cN_{\xi,\Lambda}}=F_0F_1$. There clearly exists a monomial $F_3$ with variables $\{D_{\xi,k}^{(j_0)}\mid k\in\mathbf{n}\}$ and a polynomial $F_4$ with variables $\{u_\xi^{(\al,j_0)}\mid (\al,j_0)\in\mathrm{Supp}_{\xi,\cJ}\}$ such that $f_{S,j_0}|_{\cN_{\xi,\Lambda}}=F_3F_4$. We define $F_5$ by replacing $u_\xi^{(\al,j_0)}$ with $F_\xi^{(\al,j_0),\cB}$ for each $u_\xi^{(\al,j_0)}$ that appears inside $F_4$. A key observation (from the definition of $\cB$ and the fact that $f_{S,j_0}|_{\cN_{\xi,\Lambda}}$ is a $\un{T}$-eigenvector for both the left and right $\un{T}$-multiplication action on $\cN_{\xi,\Lambda}$) is the existence of a monomial $F_6$ with variables $\{u_\xi^{(\al,j)}\mid (\al,j)\in\cB\}$ and $\{D_{\xi,k}^{(j_0)}\mid k\in\mathbf{n}\}$ such that $F_5=F_4F_6$. We finish the proof by taking $F_0\defeq F_5$ and $F_1\defeq F_3F_6^{-1}$.
\end{proof}

\subsection{Main results on invariant functions: statement}\label{sub:main:state}
In this section, we introduce a convenient sufficient condition (see Statement~\ref{state: goal}) which implies Statement~\ref{state: separate points prime}.

We fix a choice of $w_\cJ\in\un{W}$ and $\xi\in\Xi_{w_\cJ}$ and let $\Lambda\subseteq\mathrm{Supp}_{\xi,\cJ}$ be a subset with $\Lambda^\square$ its image in $\mathrm{Supp}^\square_\xi$. We write $\cO(X)$ (resp.~$\cO(X)^\times$) for the ring of global sections (resp.~the group of invertible global sections) on a $\F$-scheme $X$. Recall the subset $\Inv(\cC)\subseteq\Inv$ from the paragraph before Statement~\ref{state: separate points prime}.

\begin{defn}\label{def: similar and below a block}
We write
$\cO_{\xi,\Lambda}^{\rm{ss}}$ for the multiplicative subgroup of $\cO(\cN_{\xi,\Lambda})^\times$ generated by $-1$ and $D_{\xi,\ell}^{(j)}|_{\cN_{\xi,\Lambda}}$ for all $(\ell,j)\in\mathbf{n}_\cJ$. We say that two elements $F,F'\in \cO(\cN_{\xi,\Lambda})$ are \emph{similar}, written as $F\sim F'$, if there exists $F''\in\cO_{\xi,\Lambda}^{\rm{ss}}$ such that $F=F'F''$. We write $\cO_\cC^{\rm{ss}}$ for the restriction of $\cO_{\xi,\Lambda}^{\rm{ss}}$ to $\cC$ and define $F\sim F'$ similarly for two elements $F,F'\in \cO(\cC)$. We define $\cO_\cC'$ as the subring of $\cO(\cC)$ generated by $\cO_\cC^{\rm{ss}}$ and $g^{\pm 1}|_\cC$ for all $g\in\Inv(\cC)$. Then we define $\cO_\cC$ as the localisation of $\cO_\cC'$ with respect to $\cO_\cC'\cap \cO(\cC)^\times$.
\end{defn}

Even though different pairs $(\xi,\Lambda)$ may give rise to the same $\cN_{\xi,\Lambda}$, $\cN_{\xi,\Lambda}$ is uniquely determined by $\cC$ with $\cC\subseteq \cN_{\xi,\Lambda}$. We also note that $\cO_{\xi,\Lambda}^{\rm{ss}}$ depends only on $\cN_{\xi,\Lambda}$ and that $\cO_\cC^{\rm{ss}}$, $\cO_\cC'$, and $\cO_\cC$ depend only on~$\cC$.

Now we introduce our main result on invariant functions whose proof will occupy \S\,\ref{sec:comb:lifts}, \S\,\ref{sec:const:inv} and~\S\,\ref{sec:inv:cons}.
\begin{state}\label{state: goal}
For all $\Lambda$-lifts $\Omega^\pm$ (cf.~Definition~\ref{def: lift of loop}), we have
\begin{equation*}
F_\xi^{\Omega^\pm}|_\cC\in \cO_\cC
\end{equation*}
\end{state}

\begin{lemma}\label{lem: reduce to sec state}
Statement~\ref{state: goal} implies Statement~\ref{state: separate points prime}.
\end{lemma}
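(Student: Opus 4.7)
The plan is to reduce Statement~\ref{state: separate points prime} to showing that $\iota_\cC$ descends to a monomorphism from the geometric quotient, and then to identify a generating set of the quotient's coordinate ring that Statement~\ref{state: goal} allows us to handle.

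By Remark~\ref{rmk: new partition} we fix $w_\cJ\in\un{W}$, $\xi\in\Xi_{w_\cJ}$ and $\Lambda\subseteq\mathrm{Supp}_{\xi,\cJ}$ with $\cC\subseteq\cN_{\xi,\Lambda}$, so that Proposition~\ref{prop: quotient of strata} furnishes the geometric quotient $Y\defeq\cC\slash{{\sim}_{\un{T}\text{-\textnormal{sh.cnj}}}}$ as a scheme. First I would reduce, by Zariski-localizing $\Spec R$ to trivialize $\un{T}$-torsors and using the closedness of $\un{T}$-orbits in the torus $\cN_{\xi,\Lambda}$, the injectivity of $|[\cC\slash{{\sim}_{\un{T}\text{-\textnormal{sh.cnj}}}}]|(R)\to (R^\times)^{\#\Inv(\cC)}$ to the statement that $\iota_\cC$ factors through a monomorphism $Y\into(\bG_m)^{\#\Inv(\cC)}$, equivalently that the pullback map surjects onto $\cO(Y)=\cO(\cC)^{\un{T}\text{-\textnormal{sh.cnj}}}$.

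Next I would invoke Proposition~\ref{prop: pass to scheme} to realize $\cO(Y)$ as generated by the restrictions to $\cC$ of the Laurent generators of $\cO(\cN_{\xi,\Lambda}\slash{{\sim}_{\un{T}\text{-\textnormal{sh.cnj}}}})\cong \F[(F_\xi^m)^{\pm 1},(F_\xi^{(\al,j),\cB})^{\pm 1}]$. The first family is handled at once: the comparison (\ref{equ: comparison of sections}) together with Lemma~\ref{lem: bijection orbit} yield $F_\xi^m|_\cC=\pm f_{w_\cJ,I_\cJ^m}|_\cC$, where $I_\cJ^m$ is the $\langle(w_\cJ,1)\rangle$-orbit corresponding to $m$, so that $f_{w_\cJ,I_\cJ^m}\in\Inv(\cC)$ and this generator is already pulled back from the target.

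For the second family I would apply Statement~\ref{state: goal} to write $F_\xi^{(\al,j),\cB}|_\cC=h$ in $\cO(\cC)$ for some $h\in\cO_\cC$, and then argue via a character decomposition. By~(\ref{equ: T conj formula ss}) each $D_{\xi,\ell}^{(j)}|_\cC$ is a $\un{T}$-eigenvector of character $-\eps_{j+1,\ell}+\eps_{j,w_j^{-1}(\ell)}$, while $g|_\cC$ is $\un{T}$-invariant for $g\in\Inv(\cC)$, so $\cO_\cC$ is $\un{T}$-stable and decomposes into $\un{T}$-isotypic components. Since $F_\xi^{(\al,j),\cB}|_\cC$ is $\un{T}$-invariant by Lemma~\ref{lem: loop function}, only the zero-character component of $h$ survives. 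The main obstacle is to check that this component lies in the subring generated by $\Inv(\cC)|_\cC$, and this is the clean invariant-theoretic input: a monomial $\prod_{(k,j)\in I_\cJ}D_{\xi,k}^{(j)}$ has vanishing total character precisely when $I_\cJ\cdot(w_\cJ,1)=I_\cJ$, in which case~(\ref{equ: comparison of sections}) identifies it up to sign with $f_{w_\cJ,I_\cJ}|_{\cN_\xi}$, an element of $\Inv(\cC)$. Combined with the already-invariant factors $g|_\cC$, this shows that the zero-character component of $h$---hence $F_\xi^{(\al,j),\cB}|_\cC$ itself---lies in the localization of the subring of $\cO(\cC)$ generated by $\Inv(\cC)|_\cC$, completing the verification that $\iota_\cC$ is a monomorphism and thereby deducing Statement~\ref{state: separate points prime}.
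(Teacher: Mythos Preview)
Your strategy differs from the paper's. The paper argues pointwise: given $x_1,x_2\in\cC(R)$ with $g(x_1)=g(x_2)$ for all $g\in\Inv(\cC)$, it uses transitivity of the $\un{T}$-action on a suitable quotient torus (via Lemma~\ref{lem: quotient of torus by torus}) to replace $x_2$ by some $x_2\cdot t$ at which, in addition, all $D_{\xi,\ell}^{(j)}$ (for $(\ell,j)$ outside a fixed transversal) and all $u_\xi^{(\al,j)}$ for $(\al,j)\in\cB$ agree with their values at $x_1$. One then recovers all $D_{\xi,\ell}^{(j)}$ from the $f_{w_\cJ,I_\cJ^m}$. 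At that point every element of $\cO_\cC'$---and hence of its localization $\cO_\cC$---takes the same value at $x_1$ and $x_2\cdot t$, so Statement~\ref{state: goal} forces the $F_\xi^{(\al,j),\cB}$ (and hence the remaining $u_\xi^{(\al,j)}$) to agree, giving $x_1=x_2\cdot t$.

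Your ring-theoretic route has a genuine gap at the character-decomposition step. The observation that a $\un{T}$-invariant Laurent monomial in the $D_{\xi,\ell}^{(j)}$ equals $\pm\prod_m(F_\xi^m)^{a_m}$ (your statement for \emph{subsets} $I_\cJ$ must be extended to arbitrary integer exponents; this is exactly the first claim in the proof of Proposition~\ref{prop: pass to scheme}) does identify $(\cO_\cC')^{\un{T}}$ with the subring generated by $\Inv(\cC)|_\cC^{\pm1}$. But you then pass from $(\cO_\cC')^{\un{T}}$ to $(\cO_\cC)^{\un{T}}$ without justification. The ring $\cO_\cC$ is the localization of $\cO_\cC'$ at $S=\cO_\cC'\cap\cO(\cC)^\times$, and a unit $s\in S$ written as $\sum_\chi s_\chi$ in weight components need not have any $s_\chi$ that is itself a unit in $\cO(\cC)$; for instance, if $\cC$ had two components and $s$ restricted to a weight-$\chi$ monomial on one and a weight-$\chi'$ monomial on the other, every $s_\chi$ would be a zero-divisor. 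If $\cC$ were known to be irreducible this would be harmless (units in a graded integral domain are homogeneous), but the paper does not establish irreducibility of elements of~$\cP_\cJ$. The paper's pointwise argument sidesteps this entirely: once the $D$'s have been arranged to agree at two points, every element of $\cO_\cC$ evaluates equally there, with no need to describe $(\cO_\cC)^{\un{T}}$ abstractly.
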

\begin{proof}
It follows from the existence of geometric quotient $\cC\slash{{\sim}_{\un{T}\text{-\textnormal{sh.cnj}}}}$ (see Proposition~\ref{prop: quotient of strata}) that there exists a canonical bijection
$$|[\cC\slash{{\sim}_{\un{T}\text{-\textnormal{sh.cnj}}}}]|(R)\xrightarrow{\sim}\cC\slash{{\sim}_{\un{T}\text{-\textnormal{sh.cnj}}}}(R)$$
for each $R$. Hence Statement~\ref{state: separate points prime} holds if and only if $\iota_\cC$ induces a monomorphism
\begin{equation}\label{equ: monomorphism of quotient}
\cC\slash{{\sim}_{\un{T}\text{-\textnormal{sh.cnj}}}}\rightarrow \bG_m^{\#\Inv(\cC)}.
\end{equation}
Assume that Statement~\ref{state: goal} holds in the rest of the proof, and we want to show that (\ref{equ: monomorphism of quotient}) is a monomorphism. We fix a choice of $\cB\subseteq\Lambda$ as in \S\,\ref{subsub: remove} and a choice of $\mathbf{n}_\cJ^\flat\subseteq \mathbf{n}_\cJ$ satisfying $\#(\mathbf{n}_\cJ^\flat\cap I_\cJ^m)=\# I_\cJ^m-1$ for each $1\leq m\leq r_\xi$. The $\un{T}$-action on $\cN_{\xi,\Lambda}$ induces a $\un{T}$-action on $\bG_m^{fn-r_\xi}\times\bG_m^{\#\cB}$ by projection to the entries indexed by $\mathbf{n}_\cJ^\flat$ and $\cB$, and the key observation is that $\un{T}$ acts transitively on $\bG_m^{fn-r_\xi}\times\bG_m^{\#\cB}$ using (\ref{equ: T conj formula ss}), (\ref{equ: T conj formula u}) and Lemma~\ref{lem: quotient of torus by torus}.

Let $x_1,x_2\in\cC(R)\subseteq\cN_{\xi,\Lambda}(R)$ be two points such that $g(x_1)=g(x_2)$ for all $g\in\Inv(\cC)$. As the $\un{T}$-action on $\bG_m^{fn-r_\xi}\times\bG_m^{\#\cB}$ above is transitive, upon replacing $x_2$ with $x_2\cdot t$ for some $t\in\un{T}(R)$, we may assume further that
\begin{equation}\label{equ: two point with same image}
\left\{
  \begin{array}{cl}
    g(x_1)=g(x_2) & \hbox{for each $g\in\Inv(\cC)$;} \\
    D_{\xi,\ell}^{(j)}(x_1)=D_{\xi,\ell}^{(j)}(x_2) & \hbox{for each $(\ell,j)\in\mathbf{n}_\cJ^\flat$;} \\
    u_\xi^{(\al,j')}(x_1)=u_\xi^{(\al,j')}(x_2) & \hbox{for each $(\al,j')\in\cB$.}
  \end{array}
\right.
\end{equation}
For each $1\leq m\leq r_\xi$, the element $f_{w_\cJ,I_\cJ^m}\in\Inv(\cC)$ satisfies
$$f_{w_\cJ,I_\cJ^m}|_\cC=\prod_{(\ell,j)\in I_\cJ^m}D_{\xi,\ell}^{(j)}|_\cC,$$
which together with (\ref{equ: two point with same image}) and the definition of $\mathbf{n}_\cJ^\flat$ implies that $D_{\xi,\ell}^{(j)}(x_1)=D_{\xi,\ell}^{(j)}(x_2)$ for each $(\ell,j)\in\mathbf{n}_\cJ$, and thus $g(x_1)=g(x_2)$ for each $g\in\cO_{\xi,\Lambda}^{\rm{ss}}$. On the other hand, for each $(\al,j)\in\Lambda\setminus\cB$, it follows from (\ref{equ: two point with same image}), Statement~\ref{state: goal} and $g(x_1)=g(x_2)$ for each $g\in\cO_{\xi,\Lambda}^{\rm{ss}}$ that
$$F_\xi^{(\al,j),\cB}(x_1)=F_\xi^{(\al,j),\cB}(x_2)$$
and thus $u_\xi^{(\al,j)}(x_1)=u_\xi^{(\al,j)}(x_2)$ (using the definition of $F_\xi^{(\al,j),\cB}$). Hence we deduce that $x_1=x_2$ from (\ref{equ: std section Lambda}). The proof is thus finished.
\end{proof}

\newpage
\section{Combinatorics of $\Lambda$-lifts}\label{sec:comb:lifts}
In order to prove Statement~\ref{state: goal}, we need to systematically study the set of all $\Lambda$-lifts. A natural question arises: for which choice of $\Lambda$-lift $\Omega^\pm$ and $\cC\in\cP_\cJ$ satisfying $\cC\subseteq\cN_{\xi,\Lambda}$, there exists an invariant functions $g\in\Inv(\cC)$ such that $g|_\cC\sim F_\xi^{\Omega^\pm}|_\cC$? This is a very delicate question in general. To solve it, we restrict our attention to the set of \emph{constructible $\Lambda$-lifts} (see Definition~\ref{def: constructible lifts}), a special class of $\Lambda$-lifts which are closely related to invariant functions. The main result of this section (see Theorem~\ref{thm: reduce to constructible}) says that all $\Lambda$-lifts can be generated from constructible ones, and in particular it suffices to prove Statement~\ref{state: goal} for constructible $\Lambda$-lifts. The relation between constructible $\Lambda$-lifts and invariant functions will be further explored in \S\,\ref{sec:const:inv} and \S\,\ref{sec:inv:cons}.

Throughout this section, we fixed a choice of $\Lambda\subseteq\mathrm{Supp}_{\xi,\cJ}$ and write $\widehat{\Lambda}\subseteq\mathrm{Supp}_{\xi,\cJ}$ for the closure of $\Lambda$ in $\mathrm{Supp}_{\xi,\cJ}$, i.e.~the subset consisting of all elements $(\al,j)$ satisfying the condition that there exists a subset $\Omega\subseteq\Lambda\cap\mathrm{Supp}_{\xi,j}$ (depending on $(\al,j)$) such that $\sum_{(\beta,j)\in\Omega}\beta=\al$.

\subsection{Preliminary on $\Lambda$-lifts}\label{sub:prelim:lifts}
In this section, we introduce the notion \emph{a balanced pair} as a direct generalization of $\Lambda$-lifts, and then prove some elementary combinatorial results on it. Balanced pairs are technically more convenient to manipulate than $\Lambda$-lifts as standard set theoretical operations preserve balanced pairs but not $\Lambda$-lifts. In fact, a balanced pair naturally arises when we try to write down an element of $\cO(\cN_{\xi,\Lambda})^\times\cap \cO(\cN_{\xi,\Lambda})^{\un{T}\text{-\textnormal{sh.cnj}}}$ explicitly (see Remark~\ref{rmk: balanced pair and invariance}).

\begin{defn}\label{def: balanced cond}
We write $\N \Lambda^\square$ for the submonoid of the root lattice $\Z\Phi^+_{\GL_{r_\xi}}$ generated by the elements of $\Lambda^\square$, and write $\N^\Lambda$ for the free abelian monoid with basis $\Lambda$. We view an element $\Omega\in\N^\Lambda$ as a \emph{$\Lambda$-multi-set}, namely as a collection of elements $(\al,j)$ of $\Lambda$ each equipped with a multiplicity $n_{(\al,j)}\in \N $.
(Equivalently, $\Omega$ is seen as a subset of $\Lambda\times\N $ such that $\Omega$ maps injectively into $\Lambda$ under the projection $\Lambda\times\N\twoheadrightarrow\Lambda$.)
We write $\Omega^{\square}$ for the multi-set induced from $\Omega$ under the map $\Lambda\twoheadrightarrow \Lambda^\square$, with the multiplicity $n_\gamma$ of each element $\gamma\in\Omega^{\square}$ defined as the sum of all $n_{(\al,j)}$ over all $(\al,j)\in\Omega$ having image $\gamma$ under $\Lambda\twoheadrightarrow \Lambda^\square$.

We say that a pair of $\Lambda$-multi-sets $\Omega^\pm$ is \emph{balanced} if
$$\sum_{\gamma\in\Omega^{+,\square}}n_\gamma^+\gamma=\sum_{\gamma\in\Omega^{-,\square}}n_\gamma^-\gamma\in \N \Lambda^\square$$
where $n_\gamma^+$ (resp.~$n_\gamma^-$) is the multiplicity of each element $\gamma$ of $\Omega^{+,\square}$ (resp.~$\Omega^{-,\square}$). Note that a $\Lambda$-lift is, in particular, a balanced pair of $\Lambda$-multi-sets. We will frequently use the short term \emph{a balanced pair} for a balanced pair of $\Lambda$-multi-sets, whenever the choice of $\Lambda$ is clear. If $\Omega^\pm$ is a balanced pair, we define its \emph{norm} to be
$$|\Omega^\pm|\defeq \sum_{\gamma\in \Omega^{+,\square}}n_\gamma^+\gamma=\sum_{\gamma\in \Omega^{-,\square}}n_\gamma^-\gamma.$$

Let $\Omega$ and $\Omega'$ be two $\Lambda$-multi-sets which contains $(\al,j)$ with multiplicity $n_{(\al,j)}$ and $n_{(\al,j)}'$ respectively, for each $(\al,j)\in\Lambda$. We define their \emph{disjoint union} $\Omega\sqcup\Omega'$ (resp.~\emph{intersection} $\Omega\cap\Omega'$, resp.~\emph{difference} $\Omega\setminus\Omega'$) as the $\Lambda$-multi-set with the multiplicity of $(\al,j)$ given by $n_{(\al,j)}+n_{(\al,j)}'$ (resp.~by $\min\{n_{(\al,j)},\,n_{(\al,j)}'\}$, resp.~by $\max\{n_{(\al,j)}-n_{(\al,j)}',0\}$) for each $(\al,j)\in\Lambda$. Given a balanced pair $\Omega^\pm$, the balanced pair $\Omega_0^\pm$ satisfying $\Omega_0^+=\Omega^-$ and $\Omega_0^-=\Omega^+$ is called the \emph{inverse} of $\Omega^\pm$. For each $\delta\in\N \Lambda^\square$, we write $\cO_{\xi,\Lambda}^{<\delta}$ for the multiplicative subgroup of $\cO(\cN_{\xi,\Lambda})^\times$ generated by $\cO_{\xi,\Lambda}^{\rm{ss}}$ and $F_\xi^{\Omega^\pm}$ for all $\Lambda$-lifts $\Omega^\pm$ satisfying $|\Omega^\pm|<\delta$. Here we use the partial order on $\N \Lambda^\square$ inherited from $\mathrm{Supp}^\square_\xi\subseteq\Phi^+_{\GL_{r_\xi}}$.
\end{defn}

\begin{lemma}\label{lem: union of lifts}
For each balanced pair $\Omega^\pm$, there exists a sequence of $\Lambda$-lifts $\Omega_1^\pm,\dots,\Omega_s^\pm$ for some $s\geq 1$ such that we have the following disjoint unions of $\Lambda$-multi-sets
\begin{equation*}
    \Omega^+=(\Omega^+\cap\Omega^-)\sqcup\bigsqcup_{s'=1}^s\Omega_{s'}^+\qquad\mbox{ and }\qquad
\Omega^-=(\Omega^+\cap\Omega^-)\sqcup\bigsqcup_{s'=1}^s\Omega_{s'}^-.
\end{equation*}
Moreover, we have $|\Omega_{s'}^\pm|<|\Omega^\pm|$ for each $1\leq s'\leq s$ if either $s \geq 2$ or $\Omega^+\cap\Omega^-\neq\emptyset$.
\end{lemma}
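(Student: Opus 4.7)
The plan is to induct on the total cardinality $|\Omega^+|+|\Omega^-|$: first peel off the common multi-set intersection to reduce to the case $\Omega^+\cap\Omega^-=\emptyset$, then decompose the remaining disjoint balanced pair via an Eulerian cycle decomposition of a naturally associated directed multigraph.

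For the reduction, whenever $(\al,j)\in\Omega^+\cap\Omega^-$ I would set $\Omega_0^\pm\defeq\Omega^\pm\setminus\{(\al,j)\}$ (removing one copy from each side), which remains balanced with strictly smaller total cardinality. Applying the inductive hypothesis to $\Omega_0^\pm$ and placing $(\al,j)$ back into the common intersection part produces the desired decomposition of $\Omega^\pm$; the strict norm inequality $|\Omega_{s'}^\pm|\leq|\Omega_0^\pm|<|\Omega^\pm|$ is automatic whenever $\Omega^+\cap\Omega^-\neq\emptyset$.

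For the disjoint case I would encode $\Omega^\pm$ as a directed multigraph $G$ on vertex set $\{1,\dots,r_\xi\}$: each $(\al,j)\in\Omega^+$ with image $(h,\ell)\in\Lambda^\square$ (with $h<\ell$) contributes an edge labeled $(\al,j)$ from $h$ to $\ell$, while each $(\al,j)\in\Omega^-$ with image $(h,\ell)$ contributes an edge labeled $(\al,j)$ from $\ell$ to $h$. Expanding the balance identity in the basis $(e_1,\dots,e_{r_\xi})$ via $(h,\ell)=e_h-e_\ell$ translates precisely into the Eulerian condition that in-degree equals out-degree at every vertex of $G$. A standard decomposition writes $G$ as an edge-disjoint union of simple directed cycles $C_1,\dots,C_s$ (extract a simple sub-cycle from any closed walk). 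The labels on $C_{s'}$ split into $\Omega_{s'}^+$ (labels of edges from low to high index) and $\Omega_{s'}^-$ (labels of edges from high to low); cycle closure gives $\sum\Omega_{s'}^{+,\square}=\sum\Omega_{s'}^{-,\square}$. Simplicity of $C_{s'}$ forces its vertices to be distinct: if $C_{s'}$ has length $\geq 3$, each underlying undirected edge is used at most once, so $E^+\cap E^-=\emptyset$; if $C_{s'}$ is a $2$-cycle between vertices $v_1<v_2$ we get exactly the degenerate case $E^+=E^-=\{(v_1,v_2)\}$. The minimality clause of Definition~\ref{def:loop} then follows because any proper balanced sub-pair would correspond to a strict closed sub-walk, which cannot exist in a simple cycle; and the inequality $|\Omega_{s'}^\pm|<|\Omega^\pm|$ when $s\geq 2$ is immediate from edge-disjointness.

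The main obstacle I anticipate is verifying that the simplicity of the Eulerian decomposition translates cleanly into the minimality clause of Definition~\ref{def:loop} in the presence of many parallel anti-directed edges. A pair of distinct elements $(\al_1,j_1)\in\Omega^+$ and $(\al_2,j_2)\in\Omega^-$ mapping to a common block $\gamma\in\Lambda^\square$ contributes two parallel edges of opposite direction to $G$, and one must ensure that the decomposition extracts such a pair as an isolated degenerate $2$-cycle rather than absorbing it into a longer closed walk that fails minimality. A careful Hierholzer-style extraction (prioritizing shortest available closed sub-walks, or equivalently iteratively exhausting $2$-cycles before attempting longer ones) should resolve this, but the bookkeeping has to be phrased precisely to match the asymmetric form of the minimality condition in Definition~\ref{def:loop}.
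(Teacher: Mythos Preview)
Your approach is correct and is essentially the graph-theoretic unpacking of the paper's proof: the paper simply picks a minimal (under inclusion) nonempty balanced sub-pair $\Omega_0^\pm$ of $\Omega^\pm$ (after removing $\Omega^+\cap\Omega^-$), observes that minimality is exactly the condition in Definition~\ref{def:loop} so that $\Omega_0^\pm$ is a $\Lambda$-lift, and recurses on the remainder. Extracting a minimal balanced sub-pair is precisely extracting a simple directed cycle from your Eulerian multigraph $G$, so the two arguments coincide.

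The obstacle you flag is not actually an obstacle. Any decomposition of $G$ into \emph{simple} (vertex-non-repeating) directed cycles already works, with no need to prioritise $2$-cycles. A simple directed cycle of length $\geq 3$ cannot contain both an edge $h\to\ell$ and an edge $\ell\to h$ (that would force a repeated vertex), so its images automatically satisfy $E(\Gamma)^+\cap E(\Gamma)^-=\emptyset$; a simple cycle of length $2$ is exactly a pair $h\to\ell,\ \ell\to h$ and gives the degenerate case $E(\Gamma)^+=E(\Gamma)^-=\{\gamma\}$. For the minimality clause, any proper nonempty subset of the edge set of a simple cycle is a disjoint union of open directed paths, hence fails the Eulerian (balance) condition at its endpoints; translating back via $e_h-e_\ell$, no proper nonempty sub-pair of $(E(\Gamma)^+,E(\Gamma)^-)$ can balance. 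So the ``careful Hierholzer-style extraction'' you propose is unnecessary: the standard fact that an Eulerian multigraph decomposes into edge-disjoint simple cycles is all you need.
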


\begin{proof}
We argue by induction on $|\Omega^\pm|$ with respect to the partial order on $\N \Lambda^\square$ inherited from $\mathrm{Supp}_\xi^\square\subseteq\Phi^+_{\GL_{r_\xi}}$. If $\Omega^+\cap\Omega^-\neq \emptyset$, then we can simply replace $\Omega^\pm$ with the balanced pair $\Omega^+\setminus\Omega^-,\Omega^-\setminus\Omega^+$ and finish the proof by our inductive assumption.

Therefore we may assume without loss of generality that $\Omega^+\cap\Omega^-=\emptyset$. We pick up a minimal (under inclusion of $\Lambda$-multi-sets) possible non-empty $\Lambda$-multi-set $\Omega_0^+\subseteq \Omega^+$ (resp.~$\Omega_0^-\subseteq\Omega^-$) such that the pair of sets $\Omega_0^\pm$ is balanced. We observe from Definition~\ref{def:loop} that the minimality condition on $\Omega_0^\pm$ exactly means that there exists a directed loop $\Gamma$ inside $\mathfrak{G}_{\xi,\Lambda}$ such that $\Omega_0^\pm$ is a $\Lambda$-lift of $\Gamma$. If $\Omega_0^+=\Omega^+$, then we must also have $\Omega_0^-=\Omega^-$ and in particular the balanced pair $\Omega^\pm$ is a $\Lambda$-lift; otherwise, we repeat the same argument for the balanced pair $\Omega^+\setminus\Omega_0^+,~\Omega^-\setminus\Omega_0^-$ and finish the proof by our inductive assumption as the norm of $\Omega^+\setminus\Omega_0^+,~\Omega^-\setminus\Omega_0^-$ is strictly smaller than $|\Omega^\pm|$.
\end{proof}

By Lemma~\ref{lem: union of lifts} a balanced pair $\Omega^\pm$ is a $\Lambda$-lift if and only if $\Omega^+\cap\Omega^-=\emptyset$ and the pair $\Omega^+,\Omega^-$ is minimal (among all balanced pairs) under inclusion of non-empty $\Lambda$-multi-sets. Moreover, for each balanced pair $\Omega^\pm$ which is not necessarily a $\Lambda$-lift, we can define
\begin{equation}
\label{def:gen:FOmega}
F_\xi^{\Omega^\pm}\defeq \prod_{s'=1}^s F_\xi^{\Omega_{s'}^\pm}\in \cO(\cN_{\xi,\Lambda})^\times.
\end{equation}
(Recall that $F_\xi^{\Omega_{s'}^\pm}$ for $\Lambda$-lifts $\Omega_{s'}^\pm$ are defined in (\ref{equ: loop function}).)
The function $F_\xi^{\Omega^\pm}$ clearly depends on the choice of $\Omega_1^\pm,\dots,\Omega_s^\pm$ in general, but Lemma~\ref{lem: partition implies similar} below shows that $F_\xi^{\Omega^\pm}$ is independent of the choice of $\Omega_1^\pm,\dots,\Omega_s^\pm$ up to the equivalence relation $\sim$ on $\cO(\cN_{\xi,\Lambda})$ (cf.~Definition~\ref{def: similar and below a block}).

\begin{lemma}\label{lem: partition implies similar}
Let $s_1,s_2\geq 1$ be two integers, and let $\Omega_{1,1}^\pm,\dots,\Omega_{1,s_1}^\pm$ and $\Omega_{2,1}^\pm,\dots,\Omega_{2,s_2}^\pm$ be two sequences of balanced pairs that satisfy
\begin{equation}
\label{eq:cond:disj}
\bigsqcup_{s'=1}^{s_1}\Omega_{1,s'}^+=\bigsqcup_{s'=1}^{s_2}\Omega_{2,s'}^+\,\,\mbox{and}\,\, \bigsqcup_{s'=1}^{s_1}\Omega_{1,s'}^-=\bigsqcup_{s'=1}^{s_2}\Omega_{2,s'}^-.
\end{equation}
Then we have
$$\prod_{s'=1}^{s_1} F_\xi^{\Omega_{1,s'}^\pm}\sim\prod_{s'=1}^{s_2} F_\xi^{\Omega_{2,s'}^\pm}.$$
\end{lemma}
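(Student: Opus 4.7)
The plan is to isolate the non-monomial part of $F_\xi^{\Omega^\pm}$ by showing that, up to the equivalence relation $\sim$ (which allows multiplication by monomials in the $D^{(j)}_{\xi,k}$'s and signs), the function $F_\xi^{\Omega^\pm}$ depends only on the $\sharp$-factor~(\ref{equ: sharp factor}), which makes perfect sense for any balanced pair and is multiplicative under disjoint union of balanced pairs. Concretely, extend the notation of~(\ref{equ: sharp factor}) to an arbitrary balanced pair $\Omega^\pm$ by setting
$$F_\xi^{\Omega^\pm,\sharp}\defeq \frac{\prod_{(\al,j)\in \Omega^+}u_\xi^{(\al,j)}}{\prod_{(\al,j)\in \Omega^-}u_\xi^{(\al,j)}},$$
where factors are taken with multiplicities (so that if $(\al,j)$ appears in both $\Omega^+$ and $\Omega^-$, the common contributions cancel as a fraction).

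First, I would verify the base case of a $\Lambda$-lift $\Omega^\pm$ of some directed loop: by construction~(\ref{equ: loop function}) we have $F_\xi^{\Omega^\pm}=F_\xi^{\Omega^\pm,\flat}\cdot F_\xi^{\Omega^\pm,\sharp}$, and since $F_\xi^{\Omega^\pm,\flat}$ is by definition a product of functions $D_{\xi,k}^{(j)}$, it lies in $\cO_{\xi,\Lambda}^{\rm ss}$. Hence $F_\xi^{\Omega^\pm}\sim F_\xi^{\Omega^\pm,\sharp}$. Next, for a general balanced pair $\Omega_0^\pm$, applying Lemma~\ref{lem: union of lifts} yields a decomposition into $\Lambda$-lifts $\Omega_1^\pm,\dots,\Omega_s^\pm$ with $\Omega_0^+=(\Omega_0^+\cap\Omega_0^-)\sqcup\bigsqcup_{t}\Omega_t^+$ and analogously for $\Omega_0^-$. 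Then by the case of $\Lambda$-lifts,
$$F_\xi^{\Omega_0^\pm}=\prod_{t=1}^s F_\xi^{\Omega_t^\pm}\sim \prod_{t=1}^s F_\xi^{\Omega_t^\pm,\sharp}=\frac{\prod_{(\al,j)\in\bigsqcup_t\Omega_t^+}u_\xi^{(\al,j)}}{\prod_{(\al,j)\in\bigsqcup_t\Omega_t^-}u_\xi^{(\al,j)}}=F_\xi^{\Omega_0^\pm,\sharp},$$
where the last equality uses that the factors from $\Omega_0^+\cap\Omega_0^-$ cancel in the fraction defining $F_\xi^{\Omega_0^\pm,\sharp}$.

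Finally, combining these two observations with the multiplicativity of $F_\xi^{\bullet,\sharp}$ under disjoint unions of balanced pairs, we obtain
$$\prod_{s'=1}^{s_1}F_\xi^{\Omega_{1,s'}^\pm}\sim\prod_{s'=1}^{s_1}F_\xi^{\Omega_{1,s'}^\pm,\sharp}=\frac{\prod_{(\al,j)\in\bigsqcup_{s'}\Omega_{1,s'}^+}u_\xi^{(\al,j)}}{\prod_{(\al,j)\in\bigsqcup_{s'}\Omega_{1,s'}^-}u_\xi^{(\al,j)}},$$
and the analogous identity with $1$ replaced by $2$. The hypothesis~(\ref{eq:cond:disj}) says precisely that the two right-hand sides coincide as rational functions, which yields the claim.

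There is no real obstacle here: the only point that requires care is bookkeeping the contribution of the common part $\Omega_{i,s'}^+\cap\Omega_{i,s'}^-$ when $\Omega_{i,s'}^\pm$ is a balanced pair but not a $\Lambda$-lift, and confirming that it drops out both from the sharp product (it cancels in the fraction) and from the decomposition provided by Lemma~\ref{lem: union of lifts} (it is absorbed in the part discarded before choosing the $\Lambda$-lifts). Once this is granted, the argument reduces to the trivial fact that $\sim$-equivalence is compatible with products and that the $\flat$-parts in~(\ref{equ: loop function}) are monomials in the $D_{\xi,k}^{(j)}$'s.
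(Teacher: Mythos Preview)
Your proof is correct and follows essentially the same approach as the paper: both arguments reduce to the observation that $F_\xi^{\Omega^\pm}\sim F_\xi^{\Omega^\pm,\sharp}$ for any balanced pair (since the $\flat$-part is a monomial in the $D_{\xi,k}^{(j)}$'s), and then invoke the multiplicativity of the $\sharp$-factor under disjoint unions together with the hypothesis~(\ref{eq:cond:disj}). The paper's version is terser, simply stating that the similarity follows from the definition~(\ref{def:gen:FOmega}), whereas you have carefully unpacked the bookkeeping for the common part $\Omega^+\cap\Omega^-$ in a general balanced pair; this extra care is harmless and correct.
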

\begin{proof}
For each $a=1,2$ and each $1\leq s'\leq s_a$, it follows from the definition of $F_\xi^{\Omega_{a,s'}^\pm}$ given in (\ref{def:gen:FOmega}) that
$$F_\xi^{\Omega_{a,s'}^\pm}\sim F_\xi^{\Omega_{a,s'}^\pm,\sharp}\defeq \frac{\prod_{(\al,j)\in \Omega_{a,s'}^+}u_\xi^{(\al,j)}}{\prod_{(\al,j)\in \Omega_{a,s'}^+}u_\xi^{(\al,j)}}.$$
Then condition (\ref{eq:cond:disj}) obviously implies that
$$\prod_{s'=1}^{s_1} F_\xi^{\Omega_{1,s'}^\pm,\sharp}=\prod_{s'=1}^{s_2} F_\xi^{\Omega_{2,s'}^\pm,\sharp}$$
(see (\ref{equ: sharp factor})) which finishes the proof.
\end{proof}

\begin{rmk}\label{rmk: balanced pair and invariance}
It is clear that we have $F_\xi^{\Omega^\pm}\in\cO(\cN_{\xi,\Lambda})^\times\cap \cO(\cN_{\xi,\Lambda})^{\un{T}\text{-\textnormal{sh.cnj}}}$ for each balanced pair $\Omega^\pm$. Conversely, it is easy to deduce from (\ref{equ: inv subring})) that each element of $\cO(\cN_{\xi,\Lambda})^\times\cap \cO(\cN_{\xi,\Lambda})^{\un{T}\text{-\textnormal{sh.cnj}}}$ has the form $F_\xi^{\Omega^\pm}$ for some balanced pair $\Omega^\pm$, upon multiplying a monomial with variables $\{F_\xi^m\mid 1\leq m\leq r_\xi\}$.
\end{rmk}

\begin{defn}\label{def: separated condition}
Let $\Omega\subseteq\Lambda$ be a subset. We define two subsets $\mathbf{I}_\Omega$, $\mathbf{I}_\Omega^\prime$ of $\mathbf{n}$ by $\mathbf{I}_\Omega\defeq \{(i_\beta,j)\mid (\beta,j)\in\Omega\}$ and $\mathbf{I}_\Omega^\prime\defeq \{(i_\beta^\prime,j)\mid (\beta,j)\in\Omega\}$ (where we write as usual $\beta=(i_\beta,i_\beta^\prime)$ for an element $\beta\in\Phi^+$). We define $\Delta_{\Omega}\defeq (\mathbf{I}_\Omega\setminus \mathbf{I}_\Omega^\prime)\cup (\mathbf{I}_\Omega^\prime\setminus \mathbf{I}_\Omega)\subseteq\mathbf{I}_\Omega\cup\mathbf{I}_\Omega^\prime$. We say that an element $(i,j)\in\mathbf{n}_\cJ$ is an \emph{interior point} of $\Omega$ if $(i,j)\in \mathbf{I}_\Omega\cap\mathbf{I}_\Omega^\prime$. We say that $\Omega$ is \emph{$\Lambda$-separated} if for each $(i,j),(i',j)\in\mathbf{I}_\Omega\cup\mathbf{I}_\Omega^\prime$ satisfying $((i,i'),j)\in\widehat{\Lambda}$, there exists $\Omega'\subseteq\Omega\cap\mathrm{Supp}_{\xi,j}$ such that $\sum_{(\beta^{\prime\prime},j)\in\Omega'}\beta^{\prime\prime}=(i,i')$.

Now we consider a $\Lambda$-lift $\Omega^\pm$. We say that a subset $\Omega\subseteq\Omega^+\sqcup\Omega^-$ is a \emph{$\Lambda^\square$-interval} of $\Omega^\pm$ if it is a maximal possible subset with image $\Omega^\square$ in $\Lambda^\square$ such that $\sum_{\gamma\in\Omega^\square}\gamma\in\N \Lambda^\square$ is actually in $\Phi^+_{\GL_{r_\xi}}$. Hence $\Omega^+\sqcup\Omega^-$ is clearly a disjoint union of all of its $\Lambda^\square$-intervals and each $\Lambda^\square$-interval is either inside $\Omega^+$ or inside $\Omega^-$. Given a $\Lambda^\square$-interval $\Omega$ of $\Omega^\pm$, we say that an element $(i,j)\in\mathbf{I}_{\Omega^+\sqcup\Omega^-}\cup \mathbf{I}_{\Omega^+\sqcup\Omega^-}^\prime$ \emph{lies in the $\Lambda^\square$-interval $\Omega$} if $(i,j)\in \mathbf{I}_\Omega\cup \mathbf{I}_\Omega^\prime$.

For each subset $\Omega\subsetneq\Omega^+\sqcup\Omega^-\subseteq\Lambda$ we define $\widehat{\Omega}\subseteq\widehat{\Lambda}$ as the unique subset which has no interior points and each of whose element is a sum of elements in $\Omega$.
More precisely, there exists a unique partition $\Omega=\bigsqcup_{(\al,j)\in\widehat{\Omega}}\Omega_{(\al,j)}$ such that $\sum_{(\beta,j)\in\Omega_{(\al,j)}}\beta=\al$ for each $(\al,j)\in\widehat{\Omega}$.
In particular, we can associate a subset $\widehat{\Omega}^+\subseteq\widehat{\Lambda}$ (resp. $\widehat{\Omega}^-\subseteq\widehat{\Lambda}$) with $\Omega^+$ (resp. $\Omega^-$), and observe that exactly one of the following holds:
\begin{itemize}
\item $\widehat{\Omega}^+=\widehat{\Omega}^-=\{(\al,j)\}$ for some $(\al,j)\in\widehat{\Lambda}$;
\item $\widehat{\Omega}^+\cap\widehat{\Omega}^-=\emptyset$ and $\widehat{\Omega}^\pm$ is a $\widehat{\Lambda}$-lift of some directed loop inside $\mathfrak{G}_{\xi,\widehat{\Lambda}}$ satisfying $|\widehat{\Omega}^\pm|=|\Omega^\pm|$.
\end{itemize}
\end{defn}

\begin{lemma}\label{lem: reduction to separated lift}
For each $\Lambda$-lift $\Omega^\pm$, there exists a sequence of $\Lambda$-lifts $\Omega_1^\pm,\dots,\Omega_s^\pm$ for some $s\geq 1$ such that
\begin{itemize}
\item $\Omega_{s'}^+\sqcup\Omega_{s'}^-$ is $\Lambda$-separated and $|\Omega_{s'}^\pm|\leq |\Omega^\pm|$ (cf.~Definition~\ref{def: balanced cond}) for each $1\leq s'\leq s$;
\item $F_\xi^{\Omega^\pm}\sim\prod_{s'=1}^s F_\xi^{\Omega_{s'}^\pm}$.
\end{itemize}
Assume moreover that there exist $(i,j),\,(i',j)\in\mathbf{I}_{\Omega^+\sqcup\Omega^-}\cup \mathbf{I}_{\Omega^+\sqcup\Omega^-}^\prime$ such that $((i,i'),j)\in\widehat{\Lambda}$ and $(i,j),\,(i',j)$ do not lie in the same $\Lambda^\square$-interval (hence $\Omega^+\sqcup\Omega^-$ is not $\Lambda$-separated). Then we have $F_\xi^{\Omega^\pm}\in\cO_{\xi,\Lambda}^{<|\Omega^\pm|}$.
\end{lemma}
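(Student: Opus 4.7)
The plan is to prove both parts simultaneously by induction on $|\Omega^\pm|$ in the partial order on $\N\Lambda^\square$ inherited from $\Phi^+_{\GL_{r_\xi}}$, with the base case $|\Omega^\pm|$ minimal being trivial ($\Omega^\pm$ reduces to a single edge with $\Omega^+=\Omega^-$, hence is automatically $\Lambda$-separated). Granting the induction, the first assertion reduces to the ``moreover'' claim: if $\Omega^+\sqcup\Omega^-$ is $\Lambda$-separated we take $s=1$ and $\Omega_1^\pm=\Omega^\pm$; otherwise the moreover part writes $F_\xi^{\Omega^\pm}$ as an element of $\cO_{\xi,\Lambda}^{<|\Omega^\pm|}$, i.e.~up to $\sim$ as a product $\prod_t F_\xi^{\Omega_t'{}^\pm}$ of $\Lambda$-lifts with $|\Omega_t'{}^\pm|<|\Omega^\pm|$, and the induction hypothesis applied to each $\Omega_t'{}^\pm$ yields the required $\Lambda$-separated refinement with norms $\leq|\Omega_t'{}^\pm|<|\Omega^\pm|$. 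So the entire content lies in the moreover part.

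For the moreover part, fix a witness $\Psi\subseteq\Lambda\cap\mathrm{Supp}_{\xi,j}$ with $\sum_{(\beta,j)\in\Psi}\beta=(i,i')$ (WLOG $i<i'$), and let $\Omega_A$ (resp.~$\Omega_B$) be a $\Lambda^\square$-interval of $\Omega^\pm$ containing $(i,j)$ (resp.~$(i',j)$) in its chain; these exist and are distinct by hypothesis. Since $i$ appears in the chain $c_0<c_1<\cdots<c_{m_A}$ of $\Omega_A$, we split $\Omega_A=\Omega_A^{\textnormal{pre}}\sqcup\Omega_A^{\textnormal{post}}$ as sub-multisets of $\Lambda$, whose $\Lambda^\square$-sums are the sub-roots $(c_0,i)$ and $(i,c_{m_A})$; we split $\Omega_B$ similarly at $i'$. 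Now view the loop $\Gamma$ as a cyclic sequence of its $\Lambda^\square$-intervals. Cutting at the chain-positions of $(i,j)$ inside $\Omega_A$ and of $(i',j)$ inside $\Omega_B$ separates $\Omega^+\sqcup\Omega^-$ into two disjoint ``half-loops'' $P_1\sqcup P_2=\Omega^+\sqcup\Omega^-$, each consisting of full $\Lambda^\square$-intervals of $\Gamma$ together with one piece of $\Omega_A$ and one piece of $\Omega_B$ at its ends. Each element of $P_1\sqcup P_2$ inherits a $\pm$-label from the original $\Omega^\pm$ (membership is preserved when cutting an arc), and the signed $\Lambda^\square$-sum $\sum P_\kappa\cap\Omega^+-\sum P_\kappa\cap\Omega^-$ equals $(i,i')$ for $\kappa=1$ and $-(i,i')$ for $\kappa=2$.

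Define the two balanced pairs by adjoining $\Psi$ on the appropriate side:
\begin{equation*}
\Omega_1^+\defeq P_1\cap\Omega^+,\qquad \Omega_1^-\defeq(P_1\cap\Omega^-)\sqcup\Psi,
\end{equation*}
\begin{equation*}
\Omega_2^+\defeq(P_2\cap\Omega^+)\sqcup\Psi,\qquad \Omega_2^-\defeq P_2\cap\Omega^-.
\end{equation*}
Both are balanced $\Lambda$-multi-sets by the signed-sum identity above, and direct inspection of \eqref{equ: sharp factor} shows that the $\Psi$-contributions cancel in $F_\xi^{\Omega_1^\pm,\sharp}\cdot F_\xi^{\Omega_2^\pm,\sharp}=F_\xi^{\Omega^\pm,\sharp}$, so $F_\xi^{\Omega^\pm}\sim F_\xi^{\Omega_1^\pm}\cdot F_\xi^{\Omega_2^\pm}$. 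The norm calculation gives $|\Omega_1^\pm|=\sum P_1\cap\Omega^+$ and $|\Omega_2^\pm|=\sum P_2\cap\Omega^-$, both $\leq|\Omega^\pm|$ in the partial order on $\N\Lambda^\square$; the inequality is strict for $\Omega_1^\pm$ precisely when $P_2\cap\Omega^+\neq\emptyset$, and analogously for $\Omega_2^\pm$. In the generic case both inequalities are strict and Lemma~\ref{lem: union of lifts} decomposes each $F_\xi^{\Omega_\kappa^\pm}$ into $\Lambda$-lifts of norm $\leq|\Omega_\kappa^\pm|<|\Omega^\pm|$, placing $F_\xi^{\Omega^\pm}$ into $\cO_{\xi,\Lambda}^{<|\Omega^\pm|}$ as required.

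The main obstacle is the degenerate configuration $P_1\subseteq\Omega^+$ and $P_2\subseteq\Omega^-$ (or vice versa), in which one of the norms equals $|\Omega^\pm|$ and the naive bound fails. Here one exploits the intersection $\Omega_\kappa^+\cap\Omega_\kappa^-=\Omega^\pm\cap\Psi$, which is forced to be non-empty: if it were empty then the whole path $P_1$ at the $\square$-level would be a single $\Lambda^\square$-interval entirely in $\Omega^+$ joining the vertex-block of $i$ to that of $i'$, which would contradict the assumption that $(i,j)$ and $(i',j)$ lie in different $\Lambda^\square$-intervals (after a careful analysis one may need to choose $\Psi$ and the cutting intervals $\Omega_A,\Omega_B$ so as to produce a non-empty overlap, using the flexibility provided by the defining condition $((i,i'),j)\in\widehat{\Lambda}$). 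The non-empty overlap then triggers the strict part of Lemma~\ref{lem: union of lifts}, yielding $\Lambda$-lifts of norm strictly less than $|\Omega_\kappa^\pm|=|\Omega^\pm|$. The delicate combinatorial bookkeeping of these degenerate cases — especially when the cut points $(i,j),(i',j)$ are chain-endpoints rather than interior break points, or when $\Psi$ partially reuses elements of $\Omega^+\sqcup\Omega^-$ — is where the bulk of the technical work resides.
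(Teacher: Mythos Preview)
Your reduction of the first assertion to the moreover claim is incorrect, and this is the essential gap. Being not $\Lambda$-separated means there exist $(i,j),(i',j)\in\mathbf{I}_{\Omega^+\sqcup\Omega^-}\cup\mathbf{I}_{\Omega^+\sqcup\Omega^-}'$ with $((i,i'),j)\in\widehat{\Lambda}$ admitting no decomposition inside $\Omega^+\sqcup\Omega^-$; it does \emph{not} guarantee that such witnesses can be chosen in distinct $\Lambda^\square$-intervals. For instance, a single $\Lambda^\square$-interval $\Omega_A=\{((1,3),j),((5,8),j)\}$ (with $3,5$ in the same Levi block) has $(1,j),(8,j)\in\mathbf{I}_{\Omega_A}\cup\mathbf{I}_{\Omega_A}'$, and if $((1,8),j)\in\widehat{\Lambda}$ via elements of $\Lambda$ outside $\Omega^+\sqcup\Omega^-$, this is a same-interval witness to non-separatedness. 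The parenthetical in the statement only asserts the implication in the other direction. The paper handles this by a secondary induction on $\#\Delta_{\Omega^+\sqcup\Omega^-}$: after choosing $\Omega'$ minimal (hence disjoint from $\Omega^+\sqcup\Omega^-$), one constructs balanced pairs $\Omega_\sharp^\pm,\Omega_\flat^\pm$ and shows that when the norm fails to drop (precisely the same-interval situation), $\#\Delta$ strictly decreases. Your single induction on $|\Omega^\pm|$ cannot close this case.

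Your treatment of the degenerate case in the moreover part is also garbled. The two degenerate configurations $P_1\subseteq\Omega^+$ and $P_2\subseteq\Omega^-$ are independent (either one alone makes a norm inequality non-strict), so they must be ruled out separately, not conjointly. More seriously, the ``non-empty intersection $\Omega_\kappa^+\cap\Omega_\kappa^-$'' argument is a red herring: with $\Psi$ chosen minimal (as one should, to make the construction work), $\Psi$ is disjoint from $\Omega^+\sqcup\Omega^-$ and the intersection is empty. The correct argument, which you gesture at but do not carry through, is that each degenerate case \emph{by itself} forces $(i,j)$ and $(i',j)$ to lie in the same $\Lambda^\square$-interval: if, say, $P_1\subseteq\Omega^+$, then $\Omega_\flat^+\defeq P_1$ and $\Omega_\flat^-\defeq\Psi$ form a balanced pair with $\Omega_\flat^+\subseteq\Omega^+$ having $\square$-sum the single root $\gamma$ that is the block-image of $(i,i')$, hence $\Omega_\flat^+$ sits inside one $\Lambda^\square$-interval of $\Omega^\pm$ containing both endpoints. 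This contradicts the moreover hypothesis directly, with no appeal to overlaps.
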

\begin{proof}
In the following, we assume inductively that the result holds for any $\Lambda$-lift $\Omega_0^\pm$ satisfying either $|\Omega_0^\pm|<|\Omega^\pm|$ or $|\Omega_0^\pm|=|\Omega^\pm|$ and $\#\Delta_{\Omega_0^+\sqcup\Omega_0^-}<\#\Delta_{\Omega^+\sqcup\Omega^-}$. If $\Omega^\pm$ is a $\Lambda$-lift with $\Omega^+\sqcup\Omega^-$ being $\Lambda$-separated, then we simply set $s\defeq 1$ and $\Omega_1^\pm\defeq \Omega^\pm$. Hence we assume from now on that $\Omega^+\sqcup\Omega^-$ is not $\Lambda$-separated, and thus there exists a pair of elements $(i,j),~(i',j)\in\mathbf{I}_{\Omega^+\sqcup\Omega^-}\cup \mathbf{I}_{\Omega^+\sqcup\Omega^-}^\prime$ as well as a non-empty subset $\Omega'\subseteq\Lambda\cap\mathrm{Supp}_{\xi,j}$ such that:
\begin{enumerate}[label=(\roman*)]
\item
\label{it:non-La-sep:1}
$(i,i')=\sum_{(\beta^{\prime\prime},j)\in\Omega'}\beta^{\prime\prime}$ and in particular $((i,i'),j)\in\widehat{\Lambda}$; and
\item
\label{it:non-La-sep:2}
$((i,i'),j)$ is not a sum of some elements in $\Omega^+\sqcup\Omega^-$.
\end{enumerate}
We may assume without loss of generality that the non-empty set $\Omega'$ is minimal (under inclusion of subsets of $\Lambda$) among all possible choices of $(i,j),(i',j)$. If there exists $(\beta,j)\in\Omega'\cap(\Omega^+\sqcup\Omega^-)\neq \emptyset$, then at least one of the following holds
\begin{itemize}
\item $i_\beta\neq i$, $((i,i_\beta),j)\in\widehat{\Lambda}$ and $((i,i_\beta),j)$ is not a sum of some elements in $\Omega^+\sqcup\Omega^-$;
\item $i_\beta^\prime\neq i'$, $((i_\beta^\prime,i'),j)\in\widehat{\Lambda}$ and $((i_\beta^\prime,i'),j)$ is not a sum of some elements in $\Omega^+\sqcup\Omega^-$,
\end{itemize}
which clearly contradicts the minimality of $\Omega'$. Hence we deduce that $\Omega'\cap(\Omega^+\sqcup\Omega^-)= \emptyset$.

Then a key observation (based on the fact that $\Omega'\cap(\Omega^+\sqcup\Omega^-)=\emptyset$) is that there exist two balanced pairs $\Omega_\sharp^\pm$ and $\Omega_\flat^\pm$ such that the following holds:
\begin{itemize}
\item $\Omega_\sharp^+,\Omega_\sharp^-,\Omega_\flat^+,\Omega_\flat^-$ all have multiplicity one, and $\Omega_\sharp^+\cap \Omega_\sharp^-=\emptyset=\Omega_\flat^+\cap \Omega_\flat^-$;
\item $\Omega'\subseteq \Omega_\sharp^+, \Omega_\flat^-$;
\item $\Omega_\sharp^+\sqcup\Omega_\flat^+=\Omega^+\sqcup\Omega'$ and $\Omega_\sharp^-\sqcup\Omega_\flat^-=\Omega^-\sqcup\Omega'$.
\end{itemize}
Note that the three conditions above imply $(\Omega_\sharp^+\sqcup\Omega_\sharp^-)\cap(\Omega_\flat^+\sqcup\Omega_\flat^-)=\Omega'$.

We write $\Omega_\sharp^{+,\square}$ (resp.~$\Omega_\sharp^{-,\square}$, resp.~$\Omega_\flat^{+,\square}$, resp.~$\Omega_\flat^{-,\square}$) for the multi-set induced from $\Omega_\sharp^+$ (resp.~$\Omega_\sharp^-$, resp.~$\Omega_\flat^+$, resp.~$\Omega_\flat^-$) under $\Lambda\twoheadrightarrow\Lambda^\square$ (cf.~Definition~\ref{def: balanced cond}).
We also write $\Omega^{\prime,\square}$ for the multi-set induced from $\Omega'$ under $\Lambda\twoheadrightarrow\Lambda^\square$. Note that $\Omega^{\prime,\square}$, $\Omega_\sharp^{-,\square}$, $\Omega_\flat^{+,\square}$, $\Omega_\sharp^{+,\square}\setminus \Omega^{\prime,\square}$ and $\Omega_\flat^{-,\square}\setminus \Omega^{\prime,\square}$ have multiplicity one, but $\Omega_\sharp^{+,\square}$ and $\Omega_\flat^{-,\square}$ might have multiplicity greater than one.
Then we deduce from the corresponding results on $\Omega_\sharp^\pm,\Omega_\flat^\pm$ and $\Omega'$ that
\begin{itemize}
\item $\Omega^{\prime,\square}\subseteq \Omega_\sharp^{+,\square}, \Omega_\flat^{-,\square}$;
\item $(\Omega_\sharp^{+,\square}\setminus \Omega^{\prime,\square})\sqcup \Omega_\flat^{+,\square}=E(\Gamma)^+$ and $\Omega_\sharp^{-,\square}\sqcup (\Omega_\flat^{-,\square}\setminus \Omega^{\prime,\square})=E(\Gamma)^-$
\end{itemize}

If $\Omega'\subsetneq \Omega_\flat^-$, we have $\Omega^{\prime,\square}\subsetneq \Omega_\flat^{-,\square}$ (as $\Lambda$-multi-sets) and
$$\sum_{\gamma\in\Omega^{\prime,\square}}\gamma<\sum_{\gamma\in \Omega_\flat^{-,\square}}\gamma=\sum_{\gamma\in \Omega_\flat^{+,\square}}\gamma<\sum_{\gamma\in \Omega_\flat^{+,\square}}\gamma+ \sum_{\gamma\in \Omega_\flat^{-,\square}\setminus \Omega^{\prime,\square}}\gamma,$$
and so
\begin{align*}
2|\Omega_\sharp^\pm|&=\sum_{\gamma\in\Omega^{\prime,\square}}\gamma+
\sum_{\gamma\in \Omega_\sharp^{-,\square}}\gamma+
\sum_{\gamma\in \Omega_\sharp^{+,\square}\setminus \Omega^{\prime,\square}}\gamma\\
&<\sum_{\gamma\in \Omega_\flat^{+,\square}}\gamma+ \sum_{\gamma\in \Omega_\flat^{-,\square}\setminus \Omega^{\prime,\square}}\gamma
+\sum_{\gamma\in \Omega_\sharp^{-,\square}}\gamma+ \sum_{\gamma\in \Omega_\sharp^{+,\square}\setminus \Omega^{\prime,\square}}\gamma\\
&=\sum_{\gamma\in E(\Gamma)^+}\gamma+ \sum_{\gamma\in E(\Gamma)^-}\gamma=2|\Omega^\pm|.
\end{align*}

If $\Omega'=\Omega_\flat^-$, then:
\begin{itemize}
\item we have $|\Omega_\sharp^\pm|=|\Omega^\pm|$ as $(\Omega_{\flat}^-\setminus\Omega')\sqcup \Omega_{\sharp}^-=\Omega^-$ by definition of $\Omega_{\sharp}^{\pm}$, $\Omega_{\flat}^{\pm}$; and
\item
we have $\#\Delta_{\Omega_\sharp^+\sqcup\Omega_\sharp^-}<\#\Delta_{\Omega^+\sqcup\Omega^-}$ as:
\begin{itemize}
\item
we have $\Delta_{\Omega'}=\{(i,j),\,(i',j)\}$ and a natural inclusion $\Delta_{\Omega_\sharp^+\sqcup\Omega_\sharp^-}\subseteq \Delta_{\Omega^+\sqcup\Omega^-}$;
\item
the latter inclusion must be strict, as the equality $\Delta_{\Omega_\sharp^+\sqcup\Omega_\sharp^-}= \Delta_{\Omega^+\sqcup\Omega^-}$ would imply $\Delta_{\Omega^+_\flat}=\{(i,j),\ (i',j)\}$ (namely, $\Omega^+_\flat\subseteq\mathrm{Supp}_{\xi,j}$ and $(i,i')=\sum_{(\beta,j)\in \Omega^+_\flat}\beta$), which contradicts the choice of $((i,i'),j)$ as $\Omega^+_\flat\subseteq\Omega^+$.
\end{itemize}
\end{itemize}
Similarly, if $\Omega'\subsetneq \Omega_\sharp^+$, we have $|\Omega_\flat^\pm|<|\Omega^\pm|$; if $\Omega'=\Omega_\sharp^+$, we have $|\Omega_\flat^\pm|=|\Omega^\pm|$ and $\#\Delta_{\Omega_\flat^+\sqcup\Omega_\flat^-}<\#\Delta_{\Omega^+\sqcup\Omega^-}$.

Given these inequalities, we can apply our induction hypothesis on each $\Lambda$-lift in the decomposition of $\Omega_\sharp^\pm$ and $\Omega_\flat^\pm$ (obtained by  Lemma~\ref{lem: union of lifts}): we hence get
two integers $1\leq s_\sharp<s$ and a sequence of $\Lambda$-lifts $\Omega_1^\pm,\dots,\Omega_s^\pm$ such that
\begin{itemize}
\item $F_\xi^{\Omega_\sharp^\pm}\sim \prod_{s'=1}^{s_\sharp} F_\xi^{\Omega_{s'}^\pm}$ and $F_\xi^{\Omega_\flat^\pm}\sim \prod_{s'=s_{\sharp}+1}^s F_\xi^{\Omega_{s'}^\pm}$;
\item $|\Omega_{s'}^\pm|\leq\max\{|\Omega_\sharp^\pm|,|\Omega_\flat^\pm|\}$ and $\Omega_{s'}^+\sqcup\Omega_{s'}^-$ is $\Lambda$-separated for each $1\leq s'\leq s$.
\end{itemize}
This together with Lemma~\ref{lem: partition implies similar} clearly implies that
$$F_\xi^{\Omega^\pm}\sim F_\xi^{\Omega_\sharp^\pm}F_\xi^{\Omega_\flat^\pm}\sim \prod_{s'=1}^s F_\xi^{\Omega_{s'}^\pm}.$$
So the proof of the first statement of the lemma is finished by an induction on $|\Omega^\pm|$ and $\#\Delta_{\Omega^+\sqcup\Omega^-}$ as above.

As for the second statement of the lemma, we now observe that if either $\Omega'=\Omega_\flat^-$ or $\Omega'=\Omega_\sharp^+$, then $(i,j),~(i',j)$ necessarily lie in the same $\Lambda^\square$-interval of $\Omega^\pm$ (cf.~Definition~\ref{def: separated condition}).
Hence if there exists a choice of $(i,j),~(i',j)\in\mathbf{I}_{\Omega^+\sqcup\Omega^-}\cup \mathbf{I}_{\Omega^+\sqcup\Omega^-}^\prime$ and of $\emptyset\neq\Omega'\subseteq\Lambda$ satisfying items \ref{it:non-La-sep:1}--\ref{it:non-La-sep:2} above, and with moreover $(i,j),~(i',j)$ not lying in the same $\Lambda^\square$-interval of $\Omega^\pm$, then we can always assume further that $\Omega'$ is minimal without losing the condition that $(i,j),~(i',j)$ do not lie in the same $\Lambda^\square$-interval.
Consequently, we have $\Omega'\subsetneq \Omega_\sharp^+,\Omega_\flat^-$ which implies that $|\Omega_\sharp^\pm|,|\Omega_\flat^\pm|<|\Omega^\pm|$ and
$$F_\xi^{\Omega^\pm}\sim F_\xi^{\Omega_\sharp^\pm}F_\xi^{\Omega_\flat^\pm}\in\cO_{\xi,\Lambda}^{<|\Omega^\pm|}.$$
The proof is thus finished.
\end{proof}

\begin{defn}\label{def: modification of subsets}
Let $\Omega_1,\Omega_2$ be two $\Lambda$-multi-sets and $\gamma\in\widehat{\Lambda}^\square$ be a block. We say that $\Omega_1$ is a \emph{$\Lambda$-modification with level $\gamma$} of $\Omega_2$ if there exists an embedding $j\in\cJ$ together with subsets with multiplicity one $\Omega_a'\subseteq \Omega_a\cap\mathrm{Supp}_{\xi,j}$ for all $a=1,2$ such that the following holds:
\begin{itemize}
\item for each $a=1,2$, $\sum_{(\beta,j)\in\Omega_a'}\beta=\al_a$  for an element $(\al_a,j)\in\widehat{\Lambda}\cap\mathrm{Supp}_{\xi,\cJ}^\gamma$;
\item $\Omega_1\setminus\Omega_1'=\Omega_2\setminus\Omega_2'$.
\end{itemize}
For each $\delta\in\N \Lambda^\square$, we say that $\Omega_1$ and $\Omega_2$ are \emph{$\Lambda$-equivalent with level $<\delta$} if there exists a finite sequence of $\Lambda$-multi-sets $\Omega_1=\Omega_{1,0},\Omega_{1,1},\dots,\Omega_{1,s}=\Omega_2$ such that $\Omega_{1,s'}$ is a $\Lambda$-modification of $\Omega_{1,s'-1}$ with level $\gamma_{s'}$ for some $\gamma_{s'}\in\widehat{\Lambda}^\square$ satisfying $\gamma_{s'}<\delta$, for each $1\leq s'\leq s$. Here we use the following convention, for each $\Omega\subseteq\Lambda$ and each $\delta\in\N \Lambda^\square$, $\Omega$ is $\Lambda$-equivalent to itself with level $<\delta$.
\end{defn}

\begin{lemma}\label{lem: two equivalent pairs}
Let $\Omega_1^\pm,\Omega_2^\pm$ be two balanced pairs of $\Lambda$-multi-sets, and assume that $\Omega_1^+$ (resp. $\Omega_1^-$) is $\Lambda$-equivalent to $\Omega_2^+$ (resp. $\Omega_2^-$) with level $<\delta$ for some $\delta\in\N \Lambda^\square$. Then we have $F_\xi^{\Omega_1^\pm}(F_\xi^{\Omega_2^\pm})^{-1}\in \cO_{\xi,\Lambda}^{<\delta}$.
\end{lemma}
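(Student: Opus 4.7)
The plan is to reduce the statement to the case of a single $\Lambda$-modification, and then derive an explicit near-equality between $F_\xi^{\Omega_1^\pm}$ and $F_\xi^{\Omega_2^\pm}$ through an auxiliary balanced pair whose norm is bounded by $\delta$.

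First, I would observe that by Definition~\ref{def: modification of subsets} the $\Lambda$-equivalences between $\Omega_1^+$ and $\Omega_2^+$ and between $\Omega_1^-$ and $\Omega_2^-$ are each built out of a finite sequence of single $\Lambda$-modifications of level $<\delta$. Since every $\Lambda$-modification preserves the multi-set sum in $\N\Lambda^{\square}$ (the exchanged subsets $\Omega_1',\Omega_2'$ share the common $\square$-image equal to the level $\gamma$), every intermediate $\Lambda$-multi-set paired with any intermediate member on the opposite side is again a balanced pair. Composing the estimates along these sequences reduces the lemma to the elementary case where $\Omega_1^- = \Omega_2^-$ and $\Omega_2^+$ is a single $\Lambda$-modification of $\Omega_1^+$ with level $\gamma < \delta$ (the symmetric modification of $\Omega^-$ being handled analogously).

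In this elementary case, unpacking Definition~\ref{def: modification of subsets} produces an embedding $j \in \cJ$, elements $(\alpha_a,j) \in \widehat{\Lambda} \cap \mathrm{Supp}_{\xi,\cJ}^{\gamma}$, and multiplicity-one subsets $\Omega_a' \subseteq \Omega_a^+ \cap \mathrm{Supp}_{\xi,j}$ with $\sum_{(\beta,j)\in\Omega_a'}\beta = \alpha_a$ for $a=1,2$ and $\Omega_1^+\setminus\Omega_1' = \Omega_2^+\setminus\Omega_2'$. The decisive observation is that $(\Omega_1', \Omega_2')$ is itself a balanced pair of norm $\alpha_1^\square = \alpha_2^\square = \gamma < \delta$, while the identity $\Omega_1^+ \sqcup \Omega_2' = \Omega_2^+ \sqcup \Omega_1'$ of $\Lambda$-multi-sets is immediate from the modification relation. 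I would then invoke Lemma~\ref{lem: partition implies similar} applied to the two sequences of balanced pairs
\[
\{(\Omega_1^+,\Omega_1^-),\,(\Omega_2',\Omega_1')\} \qquad \text{and} \qquad \{(\Omega_2^+,\Omega_1^-),\,(\Omega_1',\Omega_1')\},
\]
which share the common total $\pm$-disjoint union $(\Omega_1^+\sqcup\Omega_2',\,\Omega_1^-\sqcup\Omega_1')$. This yields
\[
F_\xi^{\Omega_1^\pm} \cdot F_\xi^{\Omega_2',\Omega_1'} \;\sim\; F_\xi^{\Omega_2^\pm} \cdot F_\xi^{\Omega_1',\Omega_1'}.
\]

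To finish, I would check that both factors $F_\xi^{\Omega_2',\Omega_1'}$ and $F_\xi^{\Omega_1',\Omega_1'}$ lie in $\cO_{\xi,\Lambda}^{<\delta}$. For the trivial pair $(\Omega_1',\Omega_1')$ the decomposition afforded by Lemma~\ref{lem: union of lifts} involves no $\Lambda$-lifts (the common intersection exhausts both sides), so $F_\xi^{\Omega_1',\Omega_1'}\sim 1$ and hence lies in $\cO_{\xi,\Lambda}^{\mathrm{ss}} \subseteq \cO_{\xi,\Lambda}^{<\delta}$. The pair $(\Omega_2',\Omega_1')$ has norm $\gamma<\delta$ and by Lemma~\ref{lem: union of lifts} decomposes into $\Lambda$-lifts of norm $\le \gamma < \delta$, each of which contributes to $\cO_{\xi,\Lambda}^{<\delta}$ by definition. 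Rearranging the displayed similarity then gives $F_\xi^{\Omega_1^\pm}(F_\xi^{\Omega_2^\pm})^{-1} \in \cO_{\xi,\Lambda}^{<\delta}$. The only real care needed is the combinatorial bookkeeping: ensuring each intermediate pair along the composition remains balanced and that the disjoint unions match when applying Lemma~\ref{lem: partition implies similar}. I do not expect any obstacle beyond this organizational point, since the algebra of $\Lambda$-multi-sets and their $\square$-images behaves transparently under the operations involved.
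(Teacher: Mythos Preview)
Your proposal is correct and follows essentially the same approach as the paper: reduce to a single $\Lambda$-modification, form the auxiliary balanced pair $(\Omega_1',\Omega_2')$ of norm $\gamma<\delta$, and use the similarity relation (Lemma~\ref{lem: partition implies similar}) together with Lemma~\ref{lem: union of lifts}. The paper's proof is organized slightly differently---it performs one modification on each side simultaneously, constructing two auxiliary pairs $\Omega_3^\pm=(\Omega_1^{+,\prime},\Omega_2^{+,\prime})$ and $\Omega_4^\pm=(\Omega_1^{-,\prime},\Omega_2^{-,\prime})$ and observing $F_\xi^{\Omega_1^\pm}(F_\xi^{\Omega_2^\pm})^{-1}\sim F_\xi^{\Omega_3^\pm}(F_\xi^{\Omega_4^\pm})^{-1}$---thereby avoiding your degenerate pair $(\Omega_1',\Omega_1')$, but this is purely cosmetic.
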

\begin{proof}
Without loss of generality, it is enough to consider the case when $\Omega_1^+$ (resp. $\Omega_1^-$) is a $\Lambda$-modification with level $\gamma'$ of $\Omega_2^+$ (resp. $\Omega_2^-$) for some $\gamma'<\delta$. Following Definition~\ref{def: modification of subsets}, we replace the set $\Omega_a$ there with $\Omega_a^+$ (resp. $\Omega_a^-$), and obtain an element $(\al_a^+,j_+)$ (resp. $(\al_a^-,j_-)$) and a multi-subset $\Omega_a^{+,\prime}\subseteq \Omega_a^+$ (resp. $\Omega_a^{-,\prime}\subseteq \Omega_a^-$) for each $a=1,2$. We let $\Omega_3^+\defeq \Omega_1^{+,\prime}$, $\Omega_3^-\defeq \Omega_2^{+,\prime}$ (resp. $\Omega_4^+\defeq \Omega_1^{-,\prime}$, $\Omega_4^-\defeq \Omega_2^{-,\prime}$), and note that $\Omega_3^\pm$ (resp. $\Omega_4^\pm$) is clearly a balanced pair of $\Lambda$-multi-sets satisfying $|\Omega_3^\pm|<\delta$ (resp. $|\Omega_4^\pm|<\delta$). Hence, Lemma~\ref{lem: union of lifts} implies that $F_\xi^{\Omega_3^\pm}\in \cO_{\xi,\Lambda}^{<\delta}$ and $F_\xi^{\Omega_4^\pm}\in \cO_{\xi,\Lambda}^{<\delta}$. Then the other conditions $\Omega_1^+\setminus\Omega_3^+=\Omega_2^+\setminus\Omega_3^-$ and $\Omega_1^-\setminus\Omega_4^+=\Omega_2^-\setminus\Omega_4^-$ clearly imply that $F_\xi^{\Omega_1^\pm}(F_\xi^{\Omega_2^\pm})^{-1}\sim F_\xi^{\Omega_3^\pm}(F_\xi^{\Omega_4^\pm})^{-1}$. Hence, we finish the proof.
\end{proof}

\subsection{Combinatorics of $\Lambda$-decompositions} \label{sub:comb:gen}
Before we define constructible $\Lambda$-lifts, we first need to better understand decompositions of elements of $\widehat{\Lambda}$ into that of $\Lambda$. In this section, we start with introducing \emph{$\Lambda$-decompositions} and more generally \emph{pseudo $\Lambda$-decompositions} of some $(\al,j)\in\widehat{\Lambda}$. We attach some combinatorial data to each $\Lambda$-decomposition, and then use these data to study the internal structure of the set of all $\Lambda$-decompositions of some fixed $(\al,j)\in\widehat{\Lambda}$. We show that the study of a general $\Lambda$-decomposition can be reduced to that of either \emph{$\Lambda$-exceptional} or \emph{$\Lambda$-extremal} ones (see Lemma~\ref{lem: reduce to extremal}). Last but not least, we introduce the notion of \emph{$\Lambda$-ordinary $\Lambda$-decompositions} and explain how to reduce the study of $\Lambda$-exceptional or $\Lambda$-extremal $\Lambda$-decompositions to the ones that are furthermore $\Lambda$-ordinary (see Lemma~\ref{lem: reduce to ordinary}). All the combinatorial constructions in this section will be crucially used in the definition of constructible $\Lambda$-lifts and the proof of Theorem~\ref{thm: reduce to constructible} in \S\,\ref{sub: cons lifts}. These combinatorial constructions or conditions are mainly motivated by later applications in \S\,\ref{sec:inv:cons}.

\begin{defn}\label{def: decomposition of element}
Let $\Lambda^\square$ (resp.~$\widehat{\Lambda}^\square$) be the image of $\Lambda$ (resp.~$\widehat{\Lambda}$) in $\mathrm{Supp}^\square_\xi$ and $\gamma\in\Phi^+_{\GL_{r_\xi}}$ be a block. For $(\al,j)\in\widehat{\Lambda}\cap\mathrm{Supp}_{\xi,\cJ}^\gamma$, a subset $\Omega\subseteq\Lambda\cap \mathrm{Supp}_{\xi,j}$ is called a \emph{pseudo $\Lambda$-decomposition of $(\al,j)$} if the following conditions hold:
\begin{itemize}
\item $\Omega$ maps bijectively to a subset $\Omega^\square\subseteq \Lambda^\square$ under $\Lambda\twoheadrightarrow \Lambda^\square$ and $\gamma=\sum_{\gamma'\in\Omega^\square}\gamma'$;
\item there exist $(\beta,j),(\beta',j)\in\Omega$ such that $i_\beta=i_\al$ and $i_{\beta'}^\prime=i_\al^\prime$;
\item $((i_\al,i),j),((i,i_\al^\prime),j)\in\widehat{\Lambda}$ for each $(i,j)\in \mathbf{I}_\Omega\cup\mathbf{I}_{\Omega}'\setminus\{(i_\al,j),(i_\al^\prime,j)\}$.
\end{itemize}
For each pseudo $\Lambda$-decomposition $\Omega$ of $(\al,j)$, we write $(i_{\Omega,1},j)\in\mathbf{I}_\Omega$ for the unique element such that there exists $(\beta,j)\in\Omega$ that satisfies $i_\beta^\prime=i_\al^\prime$ and $i_\beta=i_{\Omega,1}$.

For each pseudo $\Lambda$-decomposition $\Omega$ of $(\al,j)$, there exists a unique pseudo $\widehat{\Lambda}$-decomposition $\widehat{\Omega}$ of $(\al,j)$ such that $\widehat{\Omega}$ has no interior points and each element of $\widehat{\Omega}$ is a sum of some elements in $\Omega$. More precisely, there exists a partition $$\Omega=\underset{(\al',j)\in\widehat{\Omega}}{\bigsqcup}\Omega_{(\al',j)}$$ such that $\Omega_{(\al',j)}\subseteq \mathrm{Supp}_{\xi,j}$ and $\sum_{(\beta,j)\in\Omega_{(\al',j)}}\beta=\al'$ for each $(\al',j)\in\widehat{\Omega}$.

A pseudo $\Lambda$-decomposition of $(\al,j)$ is called a \emph{$\Lambda$-decomposition of $(\al,j)$} if $\sum_{(\beta,j)\in\Omega}\beta=\al$. We write $\mathbf{D}_{(\al,j),\Lambda}$ for the set of $\Lambda$-decompositions of $(\al,j)$ (cf.~Definition~\ref{def: decomposition of element}). For each $\Omega\in \mathbf{D}_{(\al,j),\Lambda}$, we write $\Omega=\{((i_{\Omega,c},i_{\Omega,c-1}),j)\mid 1\leq c\leq \#\Omega\}$ satisfying $$i_\al=i_{\Omega, \#\Omega}< i_{\Omega, \#\Omega-1}<\cdots<i_{\Omega,1}<i_{\Omega,0}=i_\al^\prime.$$ We set $i_{\Omega,c}\defeq i_\al$ for each $c\geq \#\Omega$ for convenience.
\end{defn}

\begin{rmk}\label{rmk: two case for hat set}
Recall that we have defined $\widehat{\Omega}$ twice, once in Definition~\ref{def: separated condition} and once in Definition~\ref{def: decomposition of element}. These two definitions of $\widehat{\Omega}$ are identical whenever both definitions apply. However, a pseudo $\Lambda$-decomposition is \emph{a priori} not necessarily a subset of $\Omega^+\sqcup\Omega^-$ for some $\Lambda$-lift $\Omega^\pm$, so the definition of $\widehat{\Omega}$ in Definition~\ref{def: decomposition of element} is not covered by that of Definition~\ref{def: separated condition}.
\end{rmk}

\begin{defn}\label{def: partial order}
Let $\Omega,\Omega'\in \mathbf{D}_{(\al,j),\Lambda}$ be two $\Lambda$-decompositions of $(\al,j)$.
We say that \emph{$\Omega$ is smaller than $\Omega'$}, written $\Omega<\Omega'$, if there exists $c\geq 1$ such that $u_j(i_{\Omega,c})<u_j(i_{\Omega',c})$ and $i_{\Omega,c'}=i_{\Omega',c'}$ for each $0\leq c'\leq c-1$. It is easy to check that this defines a partial order on the set $\mathbf{D}_{(\al,j),\Lambda}$ and there exists a unique maximal element in $\mathbf{D}_{(\al,j),\Lambda}$ under this partial order. We denote this maximal element by $\Omega_{(\al,j),\Lambda}^{\rm{max}}$.
Note that $\Omega_{(\al,j),\Lambda}^{\rm{max}}=\{(\al,j)\}$ if and only if $(\al,j)\in\Lambda$.
\end{defn}

We fix a subset $\Lambda\subseteq\mathrm{Supp}_{\xi,\cJ}$, an element $(\al,j)\in\widehat{\Lambda}$ and a $\Lambda$-decomposition $\Omega$ of $(\al,j)$ in the following. We write $\psi\defeq (\Omega,\Lambda)$ to lighten the notation. Now we inductively define
\begin{itemize}
\item a finite sequence of integers $\#\Omega=c_\psi^0>c_\psi^1>\cdots>c_\psi^{d_\psi}\geq 0$;
\item for each $0\leq s\leq d_\psi$, an integer $e_{\psi,s}$ and a finite set of integers $\{i_\psi^{s,1},\dots,i_\psi^{s,e_{\psi,s}}\}$ satisfying the following
      \begin{itemize}
      \item $e_{\psi,s}\geq 1$ for each $1\leq s\leq d_\psi-1$;
      \item $((i_\psi^{s-1,e_{\psi,s-1}},i_\psi^{s,1}),j)\in\widehat{\Lambda}$ for each $1\leq s\leq d_\psi$ satisfying $e_{\psi,s}\geq 1$;
      \item $((i_\psi^{s,e},i_{\Omega,c_\psi^s}),j)\in\Lambda$ for each $1\leq s\leq d_\psi$ and $1\leq e\leq e_{\psi,s}$;
      \item $((i_\psi^{s,e-1},i_\psi^{s,e}),j)\in\widehat{\Lambda}$ for each $1\leq s\leq d_\psi$ and each $2\leq e\leq e_{\psi,s}$;
      \item $u_j(i_{\Omega,c_\psi^s+1})>u_j(i_\psi^{s,1})>\cdots>u_j(i_\psi^{s,e_{\psi,s}})>u_j(i_{\Omega,c_\psi^s})$ for each $1\leq s\leq d_\psi$.
      \end{itemize}
\end{itemize}
If $s=0$, we set $$c_\psi^0\defeq \#\Omega,\,\,\,e_{\psi,0}\defeq 1,\,\,\,\mbox{and}\,\,\,i_\psi^{0,1}\defeq i_{\Omega,\#\Omega}=i_\al.$$
Assume that $c_\psi^{s-1}$, $e_{\psi,s-1}$, and the set $\{i_{\psi}^{s-1,1},\cdots,i_\psi^{s-1,e_{\psi,s-1}}\}$ (with the listed properties) have been defined for $s\geq 1$. Then we define $$c_\psi^s\defeq\max\left\{c\,\bigg{|}c_\psi^{s-1}>c\,\mbox{ and }\,\#\mathbf{D}_{((i_\psi^{s-1,e_{\psi,s-1}},i_{\Omega,c}),j),\Lambda}\geq 2\right\}.$$
If such an integer $c_\psi^s$ does not exist, we stop the process and set $d_{\psi}\defeq s-1$. If $c_\psi^s$ exists, we consider the set
\begin{equation}\label{equ: next step}
\left\{i_{\Omega',1}\,\bigg{|}\, \Omega'\in\mathbf{D}_{((i_\psi^{s-1,e_{\psi,s-1}},i_{\Omega,c_\psi^s}),j),\Lambda} \right\}\setminus\{i_{\Omega,c_\psi^s+1}\}.
\end{equation}
If the set (\ref{equ: next step}) is empty, we stop the process and set $d_{\psi}\defeq s-1$.
If the set (\ref{equ: next step}) is non-empty, but the set
\begin{equation}\label{equ: next step prime}
\left\{i_{\Omega',1}\,\bigg{|}\, u_j(i_{\Omega',1})<u_j(i_{\Omega,c_\psi^s+1}),\,\,\Omega'\in\mathbf{D}_{((i_\psi^{s-1,e_{\psi,s-1}},i_{\Omega,c_\psi^s}),j),\Lambda} \right\}
\end{equation}
is empty, then we stop the process and set $d_\psi\defeq s$ and $e_{\psi,s}\defeq 0$. If the set (\ref{equ: next step prime}) is non-empty, then
we define $i_\psi^{s,1}$ by the equality
$$u_j(i_\psi^{s,1})=\max\left\{u_j(i_{\Omega',1})\,\bigg{|}\,u_j(i_{\Omega',1})<u_j(i_{\Omega,c_\psi^s+1}),\,\,\Omega'\in\mathbf{D}_{((i_\psi^{s-1,e_{\psi,s-1}},i_{\Omega,c_\psi^s}),j),\Lambda} \right\}.$$
For each fixed $s\geq 1$ with (\ref{equ: next step prime}) being non-empty, we define the integer $i_\psi^{s,e}$ by an increasing induction on $e$.
Assume that $i_\psi^{s,e}$ has been defined for some $e\geq 1$. If $\mathbf{D}_{((i_\psi^{s,e},i_{\Omega,c_\psi^s}),j),\Lambda}=\{((i_\psi^{s,e},i_{\Omega,c_\psi^s}),j)\}$, we stop the process and set $e_{\psi,s}\defeq e$. If $\mathbf{D}_{((i_\psi^{s,e},i_{\Omega,c_\psi^s}),j),\Lambda}\neq\{((i_\psi^{s,e},i_{\Omega,c_\psi^s}),j)\}$, we define $i_\psi^{s,e+1}$ by the equality
$$u_j(i_\psi^{s,e+1})=\max\{u_j(i_{\Omega',1})\mid \Omega'\in\mathbf{D}_{((i_\psi^{s,e},i_{\Omega,c_\psi^s}),j),\Lambda}\setminus\{((i_\psi^{s,e},i_{\Omega,c_\psi^s}),j)\}\}.$$
The desired properties for the sequence $i_\psi^{s,1},\dots,i_\psi^{s,e_{\psi,s}}$ clearly follows from the inductive definition above. We observe that $\#\mathbf{D}_{((i_\psi^{s-1,e_{\psi,s-1}},i_{\Omega,c}),j),\Lambda}=1$ for each $1\leq s\leq d_\psi$ and $c_\psi^s+1\leq c\leq \min\{c_\psi^{s-1},\#\Omega-1\}$, and $\#\mathbf{D}_{((i_\psi^{d_\psi,e_{\psi,d_\psi}},i_{\Omega,c}),j),\Lambda}=1$ for each $0\leq c\leq \min\{c_\psi^{d_\psi},\#\Omega-1\}$ (if $e_{\psi,d_\psi}\geq 1$).

We investigate the case $d_\psi=0$. According to our definition, $d_\psi=0$ if and only if either $c_\psi^1$ is not defined (namely $\#\mathbf{D}_{(\al,j),\Lambda}=1$) or $c_\psi^1$ is defined and the set (\ref{equ: next step}) is empty for $s=1$. However, if $c_\psi^1$ is defined and the set (\ref{equ: next step}) is empty for $s=1$, we must have $c_\psi^1+1<\#\Omega$ and $\#\mathbf{D}_{((i_\al,i_{\Omega,c_\psi^1+1}),j),\Lambda}\geq 2$ which contradicts the maximality condition in the definition of $c_\psi^1$. Consequently, $d_\psi=0$ if and only if $\#\mathbf{D}_{(\al,j),\Lambda}=1$.

\begin{defn}\label{def: extremal decomposition}
We say that $\Omega\in\mathbf{D}_{(\al,j),\Lambda}$ is \emph{$\Lambda$-exceptional} if either $i_{\Omega,1}=i_\al$ (namely $\Omega=\{(\al,j)\}$) or $i_{\Omega,1}>i_\al$ and $\#\mathbf{D}_{((i_\al,i_{\Omega,1}),j),\Lambda}=1$. We say that $\Omega\in\mathbf{D}_{(\al,j),\Lambda}$ is \emph{$\Lambda$-extremal} if it is not $\Lambda$-exceptional and satisfies
\begin{equation}\label{equ: extremal cond}
u_j(i_{\Omega,c_\psi^s+1})=\max\left\{u_j(i_{\Omega',1})\,\bigg{|}\,\Omega'\in\mathbf{D}_{((i_\psi^{s-1,e_{\psi,s-1}},i_{\Omega,c_\psi^s}),j),\Lambda} \right\}
\end{equation}
for each $1\leq s\leq d_\psi$.
\end{defn}
Note that $\Omega\in\mathbf{D}_{(\al,j),\Lambda}$ is not $\Lambda$-exceptional if and only if $d_{\psi}\geq 1$ and $c_\psi^1\geq 1$.

Let $\Omega\in\mathbf{D}_{(\al,j),\Lambda}$ be either $\Lambda$-exceptional or $\Lambda$-extremal, and let $\psi=(\Omega,\Lambda)$. Then exactly one of the following holds:
\begin{itemize}
\item $d_\psi=0$;
\item $d_\psi=1$, $c_\psi^1=0$ and $e_{\psi,1}=0$;
\item $d_\psi\geq 1$ and $e_{\psi,s}\geq 1$ for each $1\leq s\leq d_\psi$.
\end{itemize}
For each $k\in\mathbf{n}$, we attach a subset $\Omega_{\psi,k}\subseteq\Lambda$. We first define the following: for each $u_j(i_\al)\geq k>u_j(i_\al^\prime)$
$$\lceil k\rceil\defeq\min\left\{k'\in\{u_j(i_{\Omega,c})\mid \#\Omega\geq c\geq 0\} \cup\{u_j(i_\psi^{s,e})\mid 1\leq s\leq d_\psi\mbox{ and }1\leq e\leq e_{\psi,s}\}\mid k'\geq k\right\}$$
and
$$\lceil k\rceil'\defeq\min\left\{k'\in\{u_j(i_{\Omega,c_\psi^s})\mid 0\leq s\leq d_\psi\} \mid k'\geq k\right\}.$$
We are now ready to define $\Omega_{\psi,k}$ for each $k\in\mathbf{n}$ (cf.~Figure~\ref{fig:attach:set:to:seq}).
\begin{itemize}
\item If $\lceil k\rceil=u_j(i_{\Omega,c})$ for some $\#\Omega\geq c\geq 1$ and $\lceil k\rceil'=u_j(i_{\Omega,c_\psi^0})$ then $$\Omega_{\psi,k}\defeq \{((i_{\Omega,c'},i_{\Omega,c'-1}),j)\mid \#\Omega\geq c'\geq c\};$$
\item If $\lceil k\rceil=u_j(i_{\Omega,c})$ for some $\#\Omega\geq c\geq 1$ and $\lceil k\rceil'=u_j(i_{\Omega,c_\psi^s})$ for some $1\leq s\leq d_\psi$ then $$\Omega_{\psi,k}\defeq \{((i_\psi^{s,e_{\psi,s}},i_{\Omega,c_\psi^s}),j)\}\sqcup\{((i_{\Omega,c'},i_{\Omega,c'-1}),j)\mid c_\psi^s\geq c'\geq c\};$$
\item If $\lceil k\rceil=u_j(i_\psi^{s,e})$ for some $1\leq s\leq d_\psi$ and $1\leq e\leq e_{\psi,s}$ then $$\Omega_{\psi,k}\defeq \{((i_\psi^{s,e},i_{\Omega,c_\psi^s}),j)\};$$
\item If $k>u_j(i_\al)$ or $u_j(i_\al^\prime)\geq k$, we set $$\Omega_{\psi,k}\defeq \emptyset.$$
\end{itemize}
It is not difficult to observe that $\Omega_{\psi,k}\neq \emptyset$ if and only if $u_j(i_\al)\geq k>u_j(i_\al^\prime)$. Moreover, if $\Omega_{\psi,k}\neq \emptyset$, there exists $\al_{\psi,k}=(i_{\psi,k},i_{\psi,k}^\prime)$ such that $(\al_{\psi,k},j)\in\widehat{\Lambda}$ and $\Omega_{\psi,k}\in\mathbf{D}_{(\al_{\psi,k},j),\Lambda}$.

The following lemma is the main reason for us to introduce the combinatorial data above, and will be extensively used in \S\,\ref{sub: exp type I}, \S\,\ref{sub: exp type II} and \S\,\ref{sub: exp type III}.
\begin{lemma}\label{lem: unique k decomposition}
Let $\Omega\in\mathbf{D}_{(\al,j),\Lambda}$ be either $\Lambda$-exceptional or $\Lambda$-extremal, and let $\psi=(\Omega,\Lambda)$. Then $\Omega_{\psi,k}$ is $\Lambda$-exceptional and we have
\begin{equation}\label{equ: unique element bounded by k}
\{\Omega'\mid \Omega'\in\mathbf{D}_{(\al_{\psi,k},j),\Lambda},~u_j(i_{\Omega',1})\geq k\}=\{\Omega_{\psi,k}\}
\end{equation}
for each $k\in\mathbf{n}$ with $\al_{\psi,k}\notin\{\al,0\}$. %
Moreover, if $\Omega=\Omega_{(\al,j),\Lambda}^{\rm{max}}$ then $\Omega_{\psi,k}=\Omega_{(\al_{\psi,k},j),\Lambda}^{\rm{max}}$, and the equality (\ref{equ: unique element bounded by k}) still holds for each $k\in\mathbf{n}$ with $\al_{\psi,k}=\al$.
\end{lemma}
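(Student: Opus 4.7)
My plan is to argue by a direct case analysis following the three nonempty cases in the definition of $\Omega_{\psi,k}$. The crucial combinatorial input, recorded immediately before the statement of the lemma, is the \emph{uniqueness principle}: for each $1\leq s\leq d_\psi$ and each $c_\psi^s<c\leq\min\{c_\psi^{s-1},\#\Omega-1\}$ we have $\#\mathbf{D}_{((i_\psi^{s-1,e_{\psi,s-1}},i_{\Omega,c}),j),\Lambda}=1$, and similarly $\#\mathbf{D}_{((i_\psi^{d_\psi,e_{\psi,d_\psi}},i_{\Omega,c}),j),\Lambda}=1$ for $0\leq c\leq\min\{c_\psi^{d_\psi},\#\Omega-1\}$ when $e_{\psi,d_\psi}\geq 1$. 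Combining this with the explicit description of $\Omega_{\psi,k}$, a telescoping check shows that $\Omega_{\psi,k}$ is indeed a $\Lambda$-decomposition of some $(\al_{\psi,k},j)\in\widehat\Lambda$.

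To verify $\Lambda$-exceptionality I examine each case: in the first case ($\lceil k\rceil=u_j(i_{\Omega,c})$ with $\lceil k\rceil'=u_j(i_{\Omega,c_\psi^0})$), either $c=\#\Omega$ so $\Omega_{\psi,k}$ is a single element of $\Lambda$, or $c\leq \#\Omega-1$ and the tail $((i_\al,i_{\Omega,c}),j)$ has a unique $\Lambda$-decomposition by the uniqueness principle with $s=1$; in the second case ($\lceil k\rceil'=u_j(i_{\Omega,c_\psi^s})$ for some $s\geq 1$), the leading piece $((i_\psi^{s,e_{\psi,s}},i_{\Omega,c_\psi^s}),j)$ lies in $\Lambda$ by the very definition of $e_{\psi,s}$ as the first index making $\mathbf{D}$ a singleton, and the tail has a unique $\Lambda$-decomposition by the same uniqueness principle; in the third case, $\Omega_{\psi,k}$ is a single element of $\Lambda$ and hence trivially exceptional. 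For the uniqueness assertion~(\ref{equ: unique element bounded by k}), I will fix $\Omega'\in\mathbf{D}_{(\al_{\psi,k},j),\Lambda}$ with $u_j(i_{\Omega',1})\geq k$ and show that $u_j(i_{\Omega_{\psi,k},1})=\lceil k\rceil$ is already the largest first-index value realizable in $\mathbf{D}_{(\al_{\psi,k},j),\Lambda}$: in cases one and two, any strictly larger value would contradict either the maximality clause in the definition of $c_\psi^s$ (as the largest $c<c_\psi^{s-1}$ with $\#\mathbf{D}\geq 2$) or the maximality clause in the recursive definition of $u_j(i_\psi^{s,e+1})$; in case three this is automatic. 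The tail is then forced by the uniqueness principle, yielding $\Omega'=\Omega_{\psi,k}$.

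The ``moreover'' part rests on the observation that when $\Omega=\Omega_{(\al,j),\Lambda}^{\rm{max}}$, the maximality under the order of Definition~\ref{def: partial order} forces $\Omega$ to satisfy the extremality condition~(\ref{equ: extremal cond}): at each branch $c_\psi^s$, replacing $i_{\Omega,c_\psi^s+1}$ by a first index realizing the maximum on the right side of~(\ref{equ: extremal cond}) would---combined with the unique completion guaranteed by the uniqueness principle---produce a $\Lambda$-decomposition of $(\al,j)$ strictly larger than $\Omega$, contradicting maximality. Propagating this extremality through the three cases identifies $\Omega_{\psi,k}$ with $\Omega_{(\al_{\psi,k},j),\Lambda}^{\rm{max}}$; in particular when $\al_{\psi,k}=\al$ one has $\Omega_{\psi,k}=\Omega$, and the uniqueness in~(\ref{equ: unique element bounded by k}) continues to hold by the same argument. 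The main obstacle will be the bookkeeping across the three cases---in particular the subtle interplay between the ``switching'' index $i_\psi^{s,e_{\psi,s}}$ and the tail of $\Omega$ starting at position $c_\psi^s$; once the indices are correctly aligned, every step reduces to the two maximality inputs encoded in the inductive construction of $c_\psi^s$ and $i_\psi^{s,e}$.
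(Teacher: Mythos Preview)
Your approach is essentially the paper's: both proofs do a case analysis on the position of $k$ relative to the values $u_j(i_{\Omega,c})$ and $u_j(i_\psi^{s,e})$, and both rest on the uniqueness observations recorded just before the lemma. Your three cases (indexed by the definitional clauses for $\Omega_{\psi,k}$) regroup the paper's five intervals for $k$, but the content is the same.

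There is, however, one genuine gap in your uniqueness argument. You assert that $u_j(i_{\Omega_{\psi,k},1})=\lceil k\rceil$ is the \emph{largest} first-index value in $\mathbf{D}_{(\al_{\psi,k},j),\Lambda}$, and you justify this by the two maximality clauses built into the recursive construction of $c_\psi^s$ and $i_\psi^{s,e+1}$. These two clauses do \emph{not} suffice in the subcase where $\al_{\psi,k}$ is one of the ``branch roots'' $(i_\psi^{s-1,e_{\psi,s-1}},i_{\Omega,c_\psi^s})$ with $c_\psi^s\geq 1$ (in your indexing, this is case~1 with $c=c_\psi^1+1$ or case~2 with $c=c_\psi^{s+1}+1$). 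There the set $\mathbf{D}_{(\al_{\psi,k},j),\Lambda}$ has cardinality $\geq 2$ by the very definition of $c_\psi^s$, and the fact that $u_j(i_{\Omega,c_\psi^s+1})$ realizes the \emph{maximum} of $u_j(i_{\Omega',1})$ over this set is precisely the $\Lambda$-extremal hypothesis~(\ref{equ: extremal cond}) on $\Omega$---not a consequence of the construction. The definition of $i_\psi^{s,1}$ only gives the second-largest value, and the maximality of $c_\psi^s$ as an index says nothing about $u_j$-values. The paper invokes extremality explicitly here (``then $\Omega$ is $\Lambda$-extremal and we deduce~(\ref{equ: unique element bounded by k}) from the definition of $\Omega$ being $\Lambda$-extremal''), after first observing that this subcase is vacuous when $\Omega$ is $\Lambda$-exceptional (since then $d_\psi=0$, or $d_\psi=1$ with $c_\psi^1=0$ and $e_{\psi,1}=0$, which forces $\al_{\psi,k}=\al$). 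You do use extremality in the ``moreover'' part, so you clearly have the right idea; you just need to cite it as a \emph{third} maximality input in the main argument rather than claiming that ``every step reduces to the two maximality inputs encoded in the inductive construction.''
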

\begin{proof}
If $u_j(i_{\Omega,c_\psi^{s-1}})\geq k>u_j(i_{\Omega,c_\psi^s+1})$ for some $1\leq s\leq d_\psi$, then we have $\mathbf{D}_{(\al_{\psi,k},j),\Lambda}=\{\Omega_{\psi,k}\}$ by the definition of $c_\psi^s$ and the claims are clear. The case when $u_j(i_{\Omega,c_\psi^{d_\psi}})\geq k>u_j(i_\al^\prime)$ is similar. If $u_j(i_\psi^{s,e})\geq k>u_j(i_\psi^{s,e+1})$ for some $1\leq s\leq d_\psi$ and $1\leq e\leq e_{\psi,s}$, then we have $\Omega_{\psi,k}=\{(\al_{\psi,k},j)\}$ which is clearly $\Lambda$-exceptional, and we deduce (\ref{equ: unique element bounded by k}) from the definition of $i_\psi^{s,e+1}$. The case when $u_j(i_\psi^{s,e_{\psi,s}})\geq k>u_j(i_{\Omega,c_\psi^s})$ for some $1\leq s\leq d_\psi$ is similar. If $u_j(i_{\Omega,c_\psi^s+1})\geq k>u_j(i_\psi^{s,1})$ for some $1\leq s\leq d_\psi$ with $e_{\psi,s}\geq 1$, then $\Omega_{\psi,k}$ is $\Lambda$-exceptional by the definition of $c_\psi^s$. If moreover $\al_{\psi,k}\neq \al$, then $\Omega$ is $\Lambda$-extremal and we deduce (\ref{equ: unique element bounded by k}) from the definition of $\Omega$ being $\Lambda$-extremal. If $\Omega=\Omega_{(\al,j),\Lambda}^{\rm{max}}$, the claims are immediate. The proof is thus finished.
\end{proof}

For a given $\Lambda$-decomposition $\Omega$ of $(\al,j)\in\widehat{\Lambda}\cap\mathrm{Supp}_{\xi,\cJ}^\gamma$, we construct $\Omega_{s,e}$ for each $1\leq s\leq d_\psi$ and $1\leq e\leq e_{\psi,s}$, where $\psi\defeq (\Omega,\Lambda)$, as follows. We first construct an element $\Omega_{s,e}^\sharp\in\mathbf{D}_{((i_\al,i_{\Omega,c_\psi^s}),j),\Lambda}$ for each $1\leq s\leq d_\psi$ and each $1\leq e\leq e_{\psi,s}$ by an increasing induction on $s$.
We set $\Omega_{0,1}^\sharp\defeq \emptyset$ for convenience. Let $1\leq s\leq d_\psi$ be an integer and we assume inductively that for each $1\leq s'\leq s-1$ and each $1\leq e\leq e_{\psi,s'}$, there exists a $\Omega_{s',e}^\sharp\in\mathbf{D}_{((i_\al,i_{\Omega,c_\psi^{s'}}),j),\Lambda}$ which contains $((i_\psi^{s',e},i_{\Omega,c_\psi^{s'}}),j)$ (and thus $i_{\Omega_{s',e}^\sharp,1}=i_\psi^{s',e}$). If $e_{\psi,s}\geq 1$, it follows from the definition of $i_\psi^{s,1}$ that there exists $\Omega_{s,1}^\flat\in \mathbf{D}_{((i_\psi^{s-1,e_{\psi,s-1}},i_{\Omega,c_\psi^s}),j),\Lambda}$ which contains $((i_\psi^{s,1},i_{\Omega,c_\psi^s}),j)$ (and thus $i_{\Omega_{s,1}^\flat,1}=i_\psi^{s,1}$). Hence we set
$$\Omega_{s,1}^\sharp\defeq (\Omega_{s-1,e_{\psi,s-1}}^\sharp\setminus\{((i_\psi^{s-1,e_{\psi,s-1}},i_{\Omega,c_\psi^{s-1}}),j)\})\sqcup\Omega_{s,1}^\flat.$$
Here we understand $\{((i_\psi^{s-1,e_{\psi,s-1}},i_{\Omega,c_\psi^{s-1}}),j)\}$ to be $\emptyset$ if $s=1$. For each $1\leq e\leq e_{\psi,s}-1$, there exists $\Omega_{s,e}^\natural\in\mathbf{D}_{((i_\psi^{s,e},i_{\Omega,c_\psi^s}),j),\Lambda}$ such that $i_{\Omega_{s,e}^\natural,1}=i_\psi^{s,e+1}$, and thus we set
$$\Omega_{s,e+1}^\sharp\defeq (\Omega_{s,e}^\sharp\setminus\{((i_\psi^{s,e},i_{\Omega,c_\psi^s}),j)\})\sqcup\Omega_{s,e}^\natural$$
for each $1\leq e\leq e_{\psi,s}-1$. For each $0\leq s\leq d_\psi$ and each $1\leq e\leq e_{\psi,s}$, we set
\begin{equation}\label{eq: Omega_s,e}
\Omega_{s,e}\defeq \Omega_{s,e}^\sharp\sqcup\{((i_{\Omega,c},i_{\Omega,c-1}),j)\mid 1\leq c\leq c_\psi^s\}.
\end{equation}
It is clear that $\Omega_{s,e}\in \mathbf{D}_{(\al,j),\Lambda}$.

\begin{lemma}\label{lem: special equivalent lift}
Let $\gamma\in\widehat{\Lambda}^\square$ be a block, $(\al,j)\in\widehat{\Lambda}\cap\mathrm{Supp}_{\xi,\cJ}^\gamma$ be an element, and $\Omega\in\mathbf{D}_{(\al,j),\Lambda}$ be a $\Lambda$-decomposition of $(\al,j)$. Then for each $1\leq s\leq d_\psi$ and each $1\leq e\leq e_{\psi,s}$, $\Omega_{s,e}\in\mathbf{D}_{(\al,j),\Lambda}$ satisfies the following properties.
\begin{itemize}
\item $(i_\psi^{s,e},j)$ is an interior point of $\Omega_{s,e}$;
\item $\Omega_{s,e}$ is $\Lambda$-equivalent to $\Omega_{s,e'}$ with level $<\gamma$ for each $1\leq e,e'\leq e_{\psi,s}$;
\item $\Omega_{s,e}\supseteq\{((i_{\Omega,c},i_{\Omega,c-1}),j)\mid 1\leq c\leq c_\psi^s\}$.
\end{itemize}
Moreover, if $\Omega$ is not $\Lambda$-exceptional, then $\Omega_{s,e}$ is $\Lambda$-equivalent to $\Omega$ with level $<\gamma$ for each $1\leq s\leq d_\psi$ and each $1\leq e\leq e_{\psi,s}$.
\end{lemma}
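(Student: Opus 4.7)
The plan is to argue by induction on $s$, with a nested induction on $e$, carefully unwinding the inductive construction of $\Omega_{s,e}$ through the intermediate pieces $\Omega_{s,e}^\sharp$, $\Omega_{s,1}^\flat$ and $\Omega_{s,e}^\natural$. Throughout, I will write $\al = (i_\al, i_\al')$ and abbreviate $i_s^\sharp \defeq i_{\Omega, c_\psi^s}$ for the cut-off indices; note that the choice of $c_\psi^1$ together with $\Omega$ being non-$\Lambda$-exceptional forces $c_\psi^1 \geq 1$, so each sub-block $(i_\psi^{s-1, e_{\psi, s-1}}, i_s^\sharp)$ or $(i_\al, i_1^\sharp)$ gives rise to an element of $\widehat{\Lambda}$ lying in a block strictly smaller than $\gamma$ (inside $\Phi^+_{\GL_{r_\xi}}$).

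First I would handle the third bullet, which is essentially a book-keeping observation: the inductive steps defining $\Omega_{s,e}^\sharp$ only ever alter entries whose second coordinate is $i_{s'}^\sharp$ for some $s' \leq s$, so the ``lower'' chain $\{((i_{\Omega,c}, i_{\Omega, c-1}), j) \mid 1 \leq c \leq c_\psi^s\}$ appended in~\eqref{eq: Omega_s,e} is preserved. For the first bullet, I would verify that $\Omega_{s,e}^\sharp$ contains both an element of the form $((i_\psi^{s,e}, i_s^\sharp), j)$ and an element with upper index $i_\psi^{s,e}$: the former comes from $\Omega_{s, e-1}^\natural$ (or $\Omega_{s,1}^\flat$ for $e=1$) by the very definition of $i_\psi^{s,e}$ as $i_{\Omega_{s,e-1}^\natural, 1}$ (resp.~$i_{\Omega_{s,1}^\flat,1}$), while the latter appears as the element immediately ``above'' inside the chain structure of $\Omega_{s,e-1}^\natural$ (resp.~$\Omega_{s,1}^\flat$), which has length at least $2$ precisely because the index $i_\psi^{s,e}$ differs from the top index of that chain. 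This makes $(i_\psi^{s,e}, j)$ an interior point of $\Omega_{s,e}$.

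For the second bullet, observe that $\Omega_{s,e+1}^\sharp$ is obtained from $\Omega_{s,e}^\sharp$ by replacing the singleton $\{((i_\psi^{s,e}, i_s^\sharp), j)\}$ (a trivial $\Lambda$-decomposition of this element of $\widehat{\Lambda}$) by another $\Lambda$-decomposition $\Omega_{s,e}^\natural$ of the same element. By Definition~\ref{def: modification of subsets}, this is a $\Lambda$-modification whose level is the block of $((i_\psi^{s,e}, i_s^\sharp), j)$, and the latter is strictly smaller than $\gamma$ since $c_\psi^s < \#\Omega$ and the block $\gamma$ decomposes strictly through $i_s^\sharp$. Iterating over $e$ yields the required $\Lambda$-equivalence with level $<\gamma$ for all $1 \leq e, e' \leq e_{\psi,s}$.

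Finally, assuming $\Omega$ is not $\Lambda$-exceptional, I would prove the last statement by induction on $s$, reducing to two elementary comparisons: for $s=1$, comparing $\Omega$ and $\Omega_{1,1}$ shows they differ by replacing the sub-chain $\{((i_{\Omega, c}, i_{\Omega, c-1}), j) \mid c_\psi^1 + 1 \leq c \leq \#\Omega\}$ (a $\Lambda$-decomposition of $((i_\al, i_1^\sharp), j)$) by $\Omega_{1,1}^\flat$, which is a $\Lambda$-modification with level the block of $((i_\al, i_1^\sharp), j)$; this block is strictly smaller than $\gamma$ because $c_\psi^1 \geq 1$. For the step from $s-1$ to $s$ one compares $\Omega_{s-1, e_{\psi, s-1}}$ to $\Omega_{s,1}$ in the same way, obtaining a $\Lambda$-modification of level equal to the block of $((i_\psi^{s-1, e_{\psi, s-1}}, i_s^\sharp), j)$, again strictly below $\gamma$. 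Combining with the second bullet and transitivity of $\Lambda$-equivalence with level $<\gamma$ then concludes.

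The main obstacle is the first bullet: one must carefully track, through the inductive definition of $\Omega_{s,e}^\sharp$, that the index $i_\psi^{s,e}$ really does appear both as a lower and an upper index. This reduces to showing that the chains $\Omega_{s,1}^\flat$ and $\Omega_{s, e-1}^\natural$ have length at least $2$ whenever $i_\psi^{s, e}$ is defined, which in turn follows from the inequalities built into the recursive construction of the $i_\psi^{s, e}$. Everything else is essentially standard bookkeeping given the combinatorial framework already established.
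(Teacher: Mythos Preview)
Your proof is essentially correct and follows the same approach as the paper, which is extremely terse (the paper dispatches the first three bullets with ``By construction of $\Omega_{s,e}$, it is clear''). Your detailed unwinding of the inductive construction through $\Omega_{s,e}^\sharp$, $\Omega_{s,1}^\flat$, $\Omega_{s,e}^\natural$ is precisely what the paper leaves implicit.

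One minor gap: in your justification for the second bullet you claim the modification level is $<\gamma$ ``since $c_\psi^s < \#\Omega$ and the block $\gamma$ decomposes strictly through $i_s^\sharp$''. This is fine when $c_\psi^s \geq 1$, but breaks down when $c_\psi^s = 0$ (so $i_s^\sharp = i_\al'$), which can occur for $s=d_\psi$, e.g.\ when $\Omega$ is $\Lambda$-exceptional. The second bullet does not assume non-exceptionality, so this case must be handled. The uniform fix is to argue that $i_\psi^{s,e}$ lies in a Levi block strictly larger than $h_\al$ for all $s\geq 1$, $e\geq 1$: this follows inductively from the chain of conditions $((i_\psi^{s'-1,e_{\psi,s'-1}}, i_\psi^{s',1}),j)\in\widehat{\Lambda}$ and $((i_\psi^{s,e-1}, i_\psi^{s,e}),j)\in\widehat{\Lambda}$, starting from $i_\psi^{0,1}=i_\al$. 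This forces the block of $((i_\psi^{s,e}, i_s^\sharp),j)$ to be strictly below $\gamma$ regardless of whether $c_\psi^s=0$.

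For the ``moreover'' part, the paper observes a slightly cleaner fact: for \emph{any} $1\leq e\leq e_{\psi,s}$, $\Omega_{s,e}$ is already a single $\Lambda$-modification of $\Omega_{s-1,e_{\psi,s-1}}$ at level $\gamma_s$ (the block of $((i_\psi^{s-1,e_{\psi,s-1}}, i_{\Omega,c_\psi^s}),j)$), not just $\Omega_{s,1}$. One then chains $\Omega_{s,e}\to\Omega_{s-1,e_{\psi,s-1}}\to\cdots\to\Omega_{0,1}=\Omega$ directly. Your two-step route through $\Omega_{s,1}$ is equally valid.
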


\begin{proof}
By construction of $\Omega_{s,e}$, it is clear that it satisfies the three properties. For the last part,
we write $\gamma_s$ for the image of $((i_\psi^{s-1,e_{\psi,s-1}},i_{\Omega,c_\psi^s}),j)$ under $\widehat{\Lambda}\twoheadrightarrow\widehat{\Lambda}^\square$. If $\Omega$ is not $\Lambda$-exceptional, we clearly have $d_\psi\geq 1$ and $c_\psi^1\geq 1$ and thus $\gamma_s<\gamma$ for each $1\leq s\leq d_\psi$. Then we observe that $\Omega_{s,e}\in\mathbf{D}_{(\al,j),\Lambda}$ is a $\Lambda$-modification of $\Omega_{s-1,e_{\psi,s-1}}$ with level $\gamma_s<\gamma$ (see Definition~\ref{def: modification of subsets}) for each $1\leq s\leq d_\psi$ and each $1\leq e\leq e_{\psi,s}$. Hence, we finish the proof by Definition~\ref{def: modification of subsets} and the fact that $\Omega_{0,1}=\Omega$.
\end{proof}

\begin{lemma}\label{lem: reduce to extremal}
Let $\gamma\in\widehat{\Lambda}^\square$ be a block, $(\al,j)\in\widehat{\Lambda}\cap\mathrm{Supp}_{\xi,\cJ}^\gamma$ be an element, and $\Omega\in\mathbf{D}_{(\al,j),\Lambda}$ be a $\Lambda$-decomposition of $(\al,j)$. Then there exists a $\Omega'\in\mathbf{D}_{(\al,j),\Lambda}$ such that
\begin{itemize}
\item $\Omega'$ is $\Lambda$-equivalent to $\Omega$ with level $<\gamma$;
\item $\Omega'$ is either $\Lambda$-exceptional or $\Lambda$-extremal;
\item either $\Omega'=\Omega$ or $\Omega<\Omega'$.
\end{itemize}
In particular, $\Omega_{(\al,j),\Lambda}^{\rm{max}}$ is either $\Lambda$-exceptional or $\Lambda$-extremal.
\end{lemma}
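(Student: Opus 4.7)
The plan is to take $\Omega'$ to be a $\leq$-maximal element (cf.~Definition~\ref{def: partial order}) of the finite non-empty set
$\cF\defeq \{\tld{\Omega}\in\mathbf{D}_{(\al,j),\Lambda}\mid \Omega\leq \tld{\Omega}\textnormal{ and }\tld{\Omega}\textnormal{ is }\Lambda\textnormal{-equivalent to }\Omega\textnormal{ with level }<\gamma\}$.
Such a maximum exists because $\cF$ contains $\Omega$ and is finite, and it manifestly satisfies the first and third bullet points of the conclusion. It remains to show that $\Omega'$ is $\Lambda$-exceptional or $\Lambda$-extremal, which I argue by contradiction assuming neither. Writing $\psi'\defeq (\Omega',\Lambda)$, the non-exceptionality of $\Omega'$ amounts to $i_{\Omega',1}>i_\al$ and $\#\mathbf{D}_{((i_\al,i_{\Omega',1}),j),\Lambda}\geq 2$, which gives $d_{\psi'}\geq 1$ and $c_{\psi'}^{1}\geq 1$ (since $c=1$ already satisfies the defining condition of $c_{\psi'}^1$). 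The failure of extremality at some $1\leq s_0\leq d_{\psi'}$ yields $\Omega^\flat\in \mathbf{D}_{((i_{\psi'}^{s_0-1,e_{\psi',s_0-1}},i_{\Omega',c_{\psi'}^{s_0}}),j),\Lambda}$ with $u_j(i_{\Omega^\flat,1})>u_j(i_{\Omega',c_{\psi'}^{s_0}+1})$, and the goal is to produce some $\tld{\Omega}\in \cF$ strictly larger than $\Omega'$, contradicting maximality.

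When $s_0=1$, I set $\tld{\Omega}\defeq \Omega^\flat\sqcup\{((i_{\Omega',c},i_{\Omega',c-1}),j)\mid 1\leq c\leq c_{\psi'}^1\}$. This is a single $\Lambda$-modification of $\Omega'$ at the level of $((i_\al,i_{\Omega',c_{\psi'}^1}),j)$, which is strictly less than $\gamma$ because $c_{\psi'}^1\geq 1$ makes $((i_\al,i_{\Omega',c_{\psi'}^1}),j)$ a proper sub-root of $(\al,j)$; by construction $i_{\tld{\Omega},c}=i_{\Omega',c}$ for $0\leq c\leq c_{\psi'}^1$ and $i_{\tld{\Omega},c_{\psi'}^1+1}=i_{\Omega^\flat,1}$, hence $\tld{\Omega}>\Omega'$. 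When $s_0\geq 2$, the chain $(i_\al,i_{\psi'}^{1,1}),(i_{\psi'}^{1,1},i_{\psi'}^{1,2}),\ldots,(i_{\psi'}^{s_0-1,e_{\psi',s_0-1}-1},i_{\psi'}^{s_0-1,e_{\psi',s_0-1}})$ of elements of $\widehat{\Lambda}$ sums to $(i_\al,i_{\psi'}^{s_0-1,e_{\psi',s_0-1}})$, which therefore lies in $\widehat{\Lambda}$, so one may pick any $\Omega^\sharp\in \mathbf{D}_{((i_\al,i_{\psi'}^{s_0-1,e_{\psi',s_0-1}}),j),\Lambda}$. Provided $c_{\psi'}^{s_0}\geq 1$, the single-step choice $\tld{\Omega}\defeq \Omega^\sharp\sqcup\Omega^\flat\sqcup\{((i_{\Omega',c},i_{\Omega',c-1}),j)\mid 1\leq c\leq c_{\psi'}^{s_0}\}$ is a $\Lambda$-modification of $\Omega'$ at level $<\gamma$ (image of $((i_\al,i_{\Omega',c_{\psi'}^{s_0}}),j)$), and $\tld{\Omega}>\Omega'$ by the same position-wise comparison.

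The main technical obstacle is the borderline case $s_0\geq 2$ with $c_{\psi'}^{s_0}=0$, where the analogous single-step modification would replace a portion decomposing $(\al,j)$ itself, giving level exactly $\gamma$ rather than $<\gamma$. I resolve this by a two-step modification chain. First, I replace the portion of $\Omega'$ decomposing $((i_\al,i_{\Omega',c_{\psi'}^{s_0-1}}),j)$ (of level $<\gamma$ since $c_{\psi'}^{s_0-1}\geq 1$) by $\Omega^\sharp\sqcup\Omega^\natural$, where $\Omega^\natural\in \mathbf{D}_{((i_{\psi'}^{s_0-1,e_{\psi',s_0-1}},i_{\Omega',c_{\psi'}^{s_0-1}}),j),\Lambda}$; this set is non-empty because $((i_{\psi'}^{s_0-1,e_{\psi',s_0-1}},i_{\Omega',c_{\psi'}^{s_0-1}}),j)\in \Lambda$ by the very construction of $i_{\psi'}^{s_0-1,e_{\psi',s_0-1}}$ (the defining procedure stops at $e=e_{\psi',s_0-1}$ precisely when this $\mathbf{D}$ is a singleton in $\Lambda$). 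This step inserts $i_{\psi'}^{s_0-1,e_{\psi',s_0-1}}$ as a boundary of the intermediate decomposition. Second, I swap the newly formed portion decomposing $((i_{\psi'}^{s_0-1,e_{\psi',s_0-1}},i_\al^\prime),j)$ with $\Omega^\flat$; this is a modification at level $<\gamma$ since $i_{\psi'}^{s_0-1,e_{\psi',s_0-1}}>i_\al$. The final output $\tld{\Omega}=\Omega^\sharp\sqcup\Omega^\flat$ satisfies $i_{\tld{\Omega},1}=i_{\Omega^\flat,1}$, whence $\tld{\Omega}>\Omega'$. In every case, transitivity of the partial order and of $\Lambda$-equivalence with level $<\gamma$ place $\tld{\Omega}$ in $\cF$ and contradict the maximality of $\Omega'$. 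Finally, the ``in particular'' assertion follows by applying the main statement to $\Omega=\Omega_{(\al,j),\Lambda}^{\rm{max}}$: the resulting $\Omega'\geq \Omega_{(\al,j),\Lambda}^{\rm{max}}$ must equal $\Omega_{(\al,j),\Lambda}^{\rm{max}}$ by maximality, so $\Omega_{(\al,j),\Lambda}^{\rm{max}}$ is $\Lambda$-exceptional or $\Lambda$-extremal.
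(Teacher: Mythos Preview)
Your argument is correct and follows essentially the same strategy as the paper: both show that if a $\Lambda$-decomposition is neither $\Lambda$-exceptional nor $\Lambda$-extremal, one can produce a strictly $<$-larger decomposition that remains $\Lambda$-equivalent with level $<\gamma$. The paper packages this as an induction on the partial order and uses the pre-built chain $\Omega=\Omega_{0,1}\to\Omega_{1,1}\to\cdots\to\Omega_{s-1,e_{\psi,s-1}}$ from the paragraph before Lemma~\ref{lem: special equivalent lift} (so the single extra modification step always has level $\gamma_s<\gamma$, regardless of whether $c_{\psi}^{s_0}=0$). You instead pick a $\leq$-maximal element of $\cF$ directly and construct the larger element by hand; this forces your separate treatment of the borderline case $s_0\geq 2$, $c_{\psi'}^{s_0}=0$ via a two-step modification, which is exactly the relevant portion of the paper's chain unwound. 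The trade-off is that the paper's version is slightly cleaner once Lemma~\ref{lem: special equivalent lift} is available, while yours is more self-contained.
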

\begin{proof}
We argue by induction on the partial order on $\mathbf{D}_{(\al,j),\Lambda}$ introduced in Definition~\ref{def: partial order}. Now we assume inductively that for each $\Omega_0\in\mathbf{D}_{(\al,j),\Lambda}$ with $\Omega<\Omega_0$, there exists a $\Omega'\in\mathbf{D}_{(\al,j),\Lambda}$, which is $\Lambda$-equivalent to $\Omega_0$ with level $<\gamma$, such that $\Omega'$ is either $\Lambda$-exceptional or $\Lambda$-extremal, and either $\Omega'=\Omega_0$ or $\Omega_0<\Omega'$.
If $\Omega$ is $\Lambda$-exceptional or $\Lambda$-extremal, then we have nothing to prove. Otherwise, $\Omega$ is neither $\Lambda$-exceptional nor $\Lambda$-extremal, and thus there exists $1\leq s\leq d_\psi$ and $\Omega^\flat\in\mathbf{D}_{((i_\psi^{s-1,e_{\psi,s-1}},i_{\Omega,c_\psi^s}),j),\Lambda}$ such that $u_j(i_{\Omega^\flat,1})>u_j(i_{\Omega,c_\psi^s+1})$. We recall $\Omega_{s-1,e_{\psi,s-1}}^\sharp$ from the paragraph right before Lemma~\ref{lem: special equivalent lift} and set
$$\Omega_0\defeq (\Omega_{s-1,e_{\psi,s-1}}^\sharp\setminus\{((i_\psi^{s-1,e_{\psi,s-1}},i_{\Omega,c_\psi^{s-1}}),j)\})\sqcup\Omega^\flat\sqcup\{((i_{\Omega,c},i_{\Omega,c-1}),j)\mid 1\leq c\leq c_\psi^s\}.$$
Then it is clear that $\Omega<\Omega_0$ and thus there exists $\Omega'\in\mathbf{D}_{(\al,j),\Lambda}$ which is either $\Lambda$-exceptional or $\Lambda$-extremal, such that $\Omega_0$ is $\Lambda$-equivalent to $\Omega'$ with level $<\gamma$ and satisfies either $\Omega_0=\Omega'$ or $\Omega_0<\Omega'$. It is clear that $\Omega'$ satisfies all the desired properties and the proof is finished.
\end{proof}

We observe that $((i_\psi^{s-1,e_{\psi,s-1}},i_\psi^{s,1}))\in\widehat{\Lambda}$ for each $2\leq s\leq d_\psi$ with $e_{\psi,s}\geq 1$ and $((i_\psi^{s,e},i_\psi^{s,e+1}),j)\in\widehat{\Lambda}$ for each $1\leq s\leq d_\psi$ and $1\leq e\leq e_{\psi,s}-1$, which implies that
\begin{equation}\label{equ: elements in hat}
((i_\psi^{s,e},i_\psi^{s',e'}),j)\in\widehat{\Lambda}
\end{equation}
and thus
\begin{equation}\label{equ: pair of disjoint orbits}
](u_j(i_\psi^{s,e}),j),(u_j(i_\psi^{s,e}),j)]_{w_\cJ}\cap ](u_j(i_\psi^{s',e'}),j),(u_j(i_\psi^{s',e'}),j)]_{w_\cJ}=\emptyset
\end{equation}
whenever either $s<s'$ or $s=s'$ and $e<e'$ holds. For each $\Omega\in\mathbf{D}_{(\al,j),\Lambda}$, we set
\begin{equation}\label{equ: interior interval}
I_\cJ^{\psi,+}\defeq~ \bigsqcup_{c=1}^{\#\Omega-1}](u_j(i_{\Omega,c}),j),(u_j(i_{\Omega,c}),j)]_{w_\cJ}\subseteq\mathbf{n}_{\cJ}
\end{equation}
and
\begin{equation}\label{equ: interior interval prime}
I_\cJ^{\psi,-}\defeq~ \bigsqcup_{s=1}^{d_\psi}\bigsqcup_{e=1}^{e_{\psi,s}}](u_j(i_\psi^{s,e}),j),(u_j(i_\psi^{s,e}),j)]_{w_\cJ}\subseteq\mathbf{n}_{\cJ}.
\end{equation}
It is clear that $I_\cJ^{\psi,-}\neq \emptyset$ if and only if $d_\psi\geq 1$ and $e_{\psi,1}\geq 1$.

\begin{defn}\label{def: ordinary decomposition}
Let $(\al,j)$ be an element of $\widehat{\Lambda}$ and $\Omega$ be a $\Lambda$-decomposition of $(\al,j)$.
We say that $\Omega$ is \emph{$\Lambda$-ordinary} if $I_\cJ^{\psi,+}\cap I_\cJ^{\psi,-}=\emptyset$.
\end{defn}

\begin{rmk}\label{rmk: ord is Lambda ord}
If the niveau $w_\cJ$ is \emph{ordinary}, namely $r_\xi=n$ (and thus $[m]_\xi=\{m\}$ for each $1\leq m\leq n$), then $\Omega$ is $\Lambda$-ordinary for each $\Omega\in\mathbf{D}_{(\al,j),\Lambda}$ and each $(\al,j)\in\widehat{\Lambda}$.
\end{rmk}

For each $\gamma\in\widehat{\Lambda}^\square$, $(\al,j)\in\widehat{\Lambda}\cap\mathrm{Supp}_{\xi,\cJ}^\gamma$ and each $\Omega\in\mathbf{D}_{(\al,j),\Lambda}$, we define a pseudo $\Lambda$-decomposition $\Omega_\dagger$ of $(\al,j)$ in the following. Intuitively, $\Omega_\dagger$ is a kind of ``ordinarization'' of $\Omega$. We assume inductively that $\Omega'_\dagger$ has been defined for each $\gamma'\in\widehat{\Lambda}$, $(\al',j)\in\widehat{\Lambda}\cap\mathrm{Supp}_{\xi,\cJ}^{\gamma'}$ and $\Omega'\in\mathbf{D}_{(\al',j),\Lambda}$ satisfying $\gamma'<\gamma$. If $\Omega$ is $\Lambda$-ordinary, we set $\Omega_\dagger\defeq \Omega$. If $\Omega$ is not $\Lambda$-ordinary, then there exists $1\leq c_\dagger\leq \#\Omega-1$, $1\leq s_\dagger\leq d_\psi$, $1\leq e_\dagger\leq e_{\psi,s_\dagger}$
and $1\leq m_\dagger\leq r_\xi$ such that $i_{\Omega,c_\dagger},i_\psi^{s_\dagger,e_\dagger}\in[m_\dagger]_\xi$. It follows from $((i_\psi^{s_\dagger,e_\dagger},i_{\Omega,c_\psi^{s_\dagger}}),j)\in\Lambda$ that we have $c_\dagger\geq c_\psi^{s_\dagger}+1$.
We choose $s_\dagger$ and $e_\dagger$ such that $s_\dagger$ is maximal possible and $e_\dagger$ is minimal possible for the fixed $s_\dagger$, and then set
$$\Omega_\dagger^\flat\defeq \{((i_\psi^{s_\dagger,e_\dagger},i_{\Omega,c_\psi^{s_\dagger}}))\}\sqcup\{((i_{\Omega,c},i_{\Omega,c-1}),j)\mid 1\leq c\leq c_\psi^{s_\dagger}\}.$$
We claim that $\Omega_\dagger^\flat$ is $\Lambda$-ordinary (otherwise there exists $1\leq c'\leq c_\psi^{s_\dagger}-1$, $s_\dagger+1\leq s'\leq d_\psi$, $1\leq e'\leq e_{\psi,s'}$ and $1\leq m'\leq r_\xi$ such that $i_\psi^{s',e'},i_{\Omega,c'}\in[m']_\xi$, contradicting the maximality of $s_\dagger$).
Then we set $\Omega'\defeq \{((i_{\Omega,c},i_{\Omega,c-1}),j)\mid 1+c_{\dagger}\leq c\leq \#\Omega\}$ and note that $\Omega'_\dagger$ is defined by our inductive assumption. Then we set
\begin{equation}\label{eq: dagger}
\Omega_\dagger\defeq \Omega'_\dagger\sqcup\Omega_\dagger^\flat
\end{equation}
and note that $\Omega_\dagger$ is a pseudo $\Lambda$-decomposition of $(\al,j)$. We write $\widehat{\Omega}_\dagger$ for the pseudo $\widehat{\Lambda}$-decomposition of $(\al,j)$ associated with $\Omega_\dagger$ via Definition~\ref{def: decomposition of element}.
\begin{lemma}\label{lem: reduce to ordinary}(Properties of ordinarization)
Let $(\al,j)\in\widehat{\Lambda}\cap\mathrm{Supp}_{\xi,\cJ}^\gamma$ for a block $\gamma\in\widehat{\Lambda}^\square$ and $\psi=(\Omega,\Lambda)$ for $\Omega\in\mathbf{D}_{(\al,j),\Lambda}$. Then the pseudo $\Lambda$-decomposition $\Omega_\dagger$ of $(\al,j)$ satisfies the following conditions:
\begin{itemize}
\item $\Omega_{\dagger}$ is $\Lambda$-equivalent to $\Omega$ with level $<\gamma$;
\item $\Omega_{\dagger,(\al',j)}$ is $\Lambda$-ordinary for each $(\al',j)\in\widehat{\Omega}_\dagger$;
\item if $\Omega$ is $\Lambda$-exceptional and not $\Lambda$-ordinary, then there exists $1\leq c_\dagger\leq \#\Omega-1$ and $1\leq e_\dagger\leq e_{\psi,1}$ such that $\Omega_\dagger=\Omega_{((i_\al,i_{\Omega,c_\dagger}),j),\Lambda}^{\rm{max}} \sqcup\{((i_\psi^{1,e_\dagger},i'_{\al'}),j)\}$ and $\mathbf{D}_{((i_\al,i_{\Omega,c_\dagger}),j),\Lambda} =\big\{\Omega_{((i_\al,i_{\Omega,c_\dagger}),j),\Lambda}^{\rm{max}}\big\}$;
\item if $\Omega=\Omega_{(\al,j),\Lambda}^{\rm{max}}$, then $\Omega_{\dagger,(\al',j)}=\Omega_{(\al',j),\Lambda}^{\rm{max}}$ for each $(\al',j)\in\widehat{\Omega}_\dagger$;
\item if $\Omega$ is $\Lambda$-extremal, then $\Omega_{\dagger,(\al',j)}$ either equals $\Omega_{(\al',j),\Lambda}^{\rm{max}}$ or is $\Lambda$-extremal for each $(\al',j)\in\widehat{\Omega}_\dagger$;
\item for each $(\beta,j),\,(\beta',j)\in\widehat{\Omega}_\dagger$ satisfying $((i_{\beta'},i_\beta^\prime),j)\in\widehat{\Lambda}$, there exists a unique subset of $\Omega_\dagger$ which is a pseudo $\Lambda$-decomposition of $((i_{\beta'},i_\beta^\prime),j)$.
\end{itemize}
\end{lemma}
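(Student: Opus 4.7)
The plan is to argue by strong induction on the block $\gamma\in\widehat{\Lambda}^\square$ (with respect to the partial order inherited from $\mathrm{Supp}^\square_\xi$), which matches the recursive construction of $\Omega_\dagger$ in equation~\eqref{eq: dagger}. When $\Omega$ is $\Lambda$-ordinary one has $\Omega_\dagger=\Omega$ and all six conclusions are trivial (for the fourth and fifth bullet, the hypothesis is inherited; for the sixth, uniqueness follows from Definition~\ref{def: decomposition of element} since all elements of $\Omega$ sit over a single embedding~$j$ and their images in~$\Phi^+$ are pairwise non-overlapping). For the inductive step we fix the triple $(c_\dagger,s_\dagger,e_\dagger)$ and the index $m_\dagger$ as chosen in the construction, write $\Omega^\flat_\dagger$ and $\Omega'$ as there, and first verify that $\Omega^\flat_\dagger$ is $\Lambda$-ordinary, using the maximality of $s_\dagger$ and minimality of $e_\dagger$ exactly as in the parenthetical remark before equation~\eqref{eq: dagger}. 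Then the inductive hypothesis applies to $\Omega'\in\mathbf{D}_{((i_{\Omega,c_\dagger},i_\al'),j),\Lambda}$, since the block of $((i_{\Omega,c_\dagger},i_\al'),j)$ in $\widehat{\Lambda}^\square$ is strictly below $\gamma$.

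The first bullet follows by combining two applications of Lemma~\ref{lem: special equivalent lift}: one exhibits $\Omega^\flat_\dagger\sqcup\Omega'$ as $\Lambda$-equivalent to $\Omega$ through a $\Lambda$-modification of level $\gamma_{s_\dagger}<\gamma$ (using the construction of $\Omega_{s_\dagger,e_\dagger}$), and the other uses the inductive hypothesis to replace $\Omega'$ by $\Omega'_\dagger$ via $\Lambda$-modifications of level strictly below $\gamma$. For the second bullet, by construction every $(\al',j)\in\widehat{\Omega}_\dagger$ either comes from $\widehat{\Omega^\flat_\dagger}$ (handled by ordinarity of $\Omega^\flat_\dagger$) or from $\widehat{\Omega'_\dagger}$ (handled by the inductive hypothesis).

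The third bullet is a direct unpacking of the $\Lambda$-exceptional condition: if $\Omega$ is $\Lambda$-exceptional and not $\Lambda$-ordinary, then by Definition~\ref{def: extremal decomposition} and the discussion preceding it, all sub-decompositions $\mathbf{D}_{((i_\al,i_{\Omega,c}),j),\Lambda}$ for $c\geq 1$ are singletons, forcing $\Omega'=\Omega_{((i_\al,i_{\Omega,c_\dagger}),j),\Lambda}^{\mathrm{max}}$ to itself be the unique $\Lambda$-decomposition of its target, and the non-ordinarity forces the data $(s_\dagger,e_\dagger)$ in the first layer $s_\dagger=1$; the explicit expression for $\Omega_\dagger$ then follows from the recursion. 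For the fourth bullet, if $\Omega=\Omega_{(\al,j),\Lambda}^{\mathrm{max}}$ one checks that $\Omega^\flat_\dagger=\Omega_{((i_\al,i_{\Omega,c_\psi^{s_\dagger}}),j),\Lambda}^{\mathrm{max}}$ (using the definition of $i_\psi^{s_\dagger,e_\dagger}$ via the maxima in~\eqref{equ: next step prime} and the fact that these maxima are actually realized by the slots of $\Omega^{\mathrm{max}}$) and that $\Omega'=\Omega_{((i_{\Omega,c_\dagger},i_\al'),j),\Lambda}^{\mathrm{max}}$, whence the inductive hypothesis applies.

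The fifth bullet is the most delicate point and is what I expect to be the main obstacle: one must show that the recipe for extracting $(c_\dagger,s_\dagger,e_\dagger)$ interacts cleanly with $\Lambda$-extremality, i.e.\ that when $\Omega$ is $\Lambda$-extremal either $\Omega'$ becomes extremal with the same control on its own sequence $(c_{\Omega',\bullet}^\bullet, i_{\Omega',\bullet}^{\bullet,\bullet})$, or else it becomes a maximum. Concretely, the plan is to verify that the maxima appearing in~\eqref{equ: extremal cond} for $\Omega$ descend to the analogous maxima governing $\Omega'_\dagger$, together with the observation that $\Omega^\flat_\dagger$ is itself $\Lambda$-extremal (or maximal) for $((i_\al,i_{\Omega,c_\psi^{s_\dagger}}),j)$ because no lower-indexed $i_\psi^{s,e}$ with the same Levi block can intervene below $i_\psi^{s_\dagger,e_\dagger}$ by choice of $(s_\dagger,e_\dagger)$. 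For the last bullet, using the second bullet and the structure of $\widehat{\Omega}_\dagger$ as a disjoint union of $\widehat{\Omega^\flat_\dagger}$ and $\widehat{\Omega'_\dagger}$, any pair $(\beta,j),(\beta',j)$ as in the statement either sits within one of the two pieces (handled by ordinarity or induction) or straddles them, in which case the relation $((i_{\beta'},i_\beta'),j)\in\widehat{\Lambda}$ combined with the construction of $\Omega^\flat_\dagger$ forces $(i_{\beta'},i_\beta')$ to be assembled from $((i_\psi^{s_\dagger,e_\dagger},i_{\Omega,c_\psi^{s_\dagger}}),j)$ together with a sub-decomposition of the lower slab, giving both existence and uniqueness. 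The $\Lambda$-separated property inherent to $\widehat{\Lambda}$ and the fact that each $\widehat{\Omega^\flat_\dagger}$ has no interior points ensure no ambiguity arises.
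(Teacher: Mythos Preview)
Your inductive strategy on $\gamma$, following the recursive construction $\Omega_\dagger=\Omega'_\dagger\sqcup\Omega^\flat_\dagger$, is exactly the paper's approach. However, you have systematically swapped the roles of the two pieces. By the construction preceding equation~\eqref{eq: dagger}, $\Omega'=\{((i_{\Omega,c},i_{\Omega,c-1}),j)\mid c_\dagger+1\leq c\leq\#\Omega\}$ is a $\Lambda$-decomposition of $((i_\al,i_{\Omega,c_\dagger}),j)$ (the ``top'' portion, near $i_\al$), while $\Omega^\flat_\dagger$ is a $\Lambda$-decomposition of $((i_\psi^{s_\dagger,e_\dagger},i_\al'),j)$ (the ``bottom'' portion, near $i_\al'$). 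Your proposal has these reversed throughout: you write $\Omega'\in\mathbf{D}_{((i_{\Omega,c_\dagger},i_\al'),j),\Lambda}$ and $\Omega^\flat_\dagger=\Omega_{((i_\al,i_{\Omega,c_\psi^{s_\dagger}}),j),\Lambda}^{\mathrm{max}}$, both of which are backward. Once you correct this, your treatment of the second, third, and sixth bullets goes through; for the fourth and fifth bullets the paper's key observation is that $\Omega^\flat_\dagger$ (not $\Omega'$) inherits the relevant property (maximal, exceptional, or extremal) directly from $\Omega$, while a separate easy argument shows that $\Omega'$ is also maximal (respectively extremal) whenever $\Omega$ is, so that the inductive hypothesis applies to $\Omega'$.

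A minor point: your appeal to Lemma~\ref{lem: special equivalent lift} for the first bullet is not quite right---that lemma concerns the auxiliary sets $\Omega_{s,e}$, not $\Omega^\flat_\dagger$. What you actually need is the direct observation that $\Omega$ and $\Omega'\sqcup\Omega^\flat_\dagger$ differ by a single $\Lambda$-modification (Definition~\ref{def: modification of subsets}) at the level of the block of $((i_{\Omega,c_\dagger},i_{\Omega,c_\psi^{s_\dagger}}),j)$, which is $<\gamma$ since $1\leq c_\dagger\leq\#\Omega-1$ forces $i_{\Omega,c_\dagger}$ to lie in a different Levi block than $i_\al$.
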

\begin{proof}
This follows from an immediate induction on $\gamma$ as in the construction of $\Omega_\dagger=\Omega'_\dagger\sqcup\Omega_\dagger^\flat$. The key observation is that $\Omega_\dagger^\flat$ is $\Lambda$-ordinary, and equals $\Omega_{((i_\psi^{s_\dagger,e_\dagger},i_\al^\prime),j),\Lambda}^{\rm{max}}$ (resp.~is $\Lambda$-exceptional, resp.~either equals $\Omega_{(\al,j),\Lambda}^{\rm{max}}$ or is $\Lambda$-extremal) if $\Omega$ equals $\Omega_{(\al,j),\Lambda}^{\rm{max}}$ (resp.~is $\Lambda$-exceptional, resp.~is $\Lambda$-extremal). Note that each element of $\widehat{\Omega}_\dagger$ is of the form $((i_\psi^{s,e},i_{\Omega,c}),j)$ for some $1\leq s\leq d_\psi$, $1\leq e\leq e_{\psi,s}$ and $0\leq c\leq \#\Omega-1$. The last claim follows from (\ref{equ: elements in hat}) and the fact that $((i_{\Omega,c},i_{\Omega,c'}),j)\in\widehat{\Lambda}$ for each $0\leq c'<c\leq \#\Omega$.
\end{proof}

\begin{rmk}\label{rmk: ord and pseudo decomp}
Let $\Omega$ be a $\Lambda$-decomposition of some $(\al,j)\in\widehat{\Lambda}$. It is clear that $\Omega_\dagger\in\mathbf{D}_{(\al,j),\Lambda}$ if and only if $\Omega_\dagger=\Omega$ if and only if $\Omega$ is $\Lambda$-ordinary. In other words, if $\Omega$ is not $\Lambda$-ordinary, then $\Omega_\dagger$ is a pseudo $\Lambda$-decomposition which is not a $\Lambda$-decomposition. This is actually the main reason for us to introduce the notion of pseudo $\Lambda$-decompositions (see Definition~\ref{def: decomposition of element}), which is a convenient generalization of $\Lambda$-decompositions that covers objects of the form $\Omega_\dagger$ for arbitrary $\Omega\in\mathbf{D}_{(\al,j),\Lambda}$.
\end{rmk}

\subsection{Constructible $\Lambda$-lifts}\label{sub: cons lifts}
In this section, we introduce a key notion of this paper, namely \emph{constructible $\Lambda$-lifts}. The main result of this section (see Theorem~\ref{thm: reduce to constructible}) says that all $\Lambda$-lifts can be generated from constructible ones. The heart of the proof of Theorem~\ref{thm: reduce to constructible} is to understand precisely which constructible $\Lambda$-lifts are sufficient to build up all $\Lambda$-lifts. Note that Definition~\ref{def: constructible lifts} is directly motivated by \S\,\ref{sub: exp type I}, \S\,\ref{sub: exp type II} and \S\,\ref{sub: exp type III}, and the conditions in Definition~\ref{def: constructible lifts} precisely ensure that there exists an invariant function (to be constructed in \S\,\ref{sec:const:inv}) whose restriction to $\cN_{\xi,\Lambda}$ (if defined) is closely related to the given constructible $\Lambda$-lift.

\begin{defn}\label{def: constructible lifts}
Let $\Omega^\pm$ be a $\Lambda$-lift. As in Definition~\ref{def: separated condition}, we can associate a subset $\widehat{\Omega}^+$ (resp.~$\widehat{\Omega}^-$) of $\widehat{\Lambda}$ which does not have any interior points, and we have partitions
\begin{equation*}
\Omega^+=\underset{(\al,j)\in \widehat{\Omega}^+}{\bigsqcup}\Omega_{(\al,j)}^+\,\,\,\mbox{and}\,\,\,\Omega^-= \underset{(\al,j)\in \widehat{\Omega}^-}{\bigsqcup}\Omega_{(\al,j)}^-.
\end{equation*}
We write $\psi$ for an arbitrary pair in
\begin{equation}\label{equ: set of pairs}
\{(\Omega_{(\al,j)}^+,\Lambda)\mid (\al,j)\in\widehat{\Omega}^+\}\sqcup\{(\Omega_{(\al,j)}^-,\Lambda)\mid (\al,j)\in\widehat{\Omega}^-\}.
\end{equation}
For each pair $\psi$ in (\ref{equ: set of pairs}), we use the notation $\Omega_\psi$ for the first factor of the pair $\psi$, $(\al_\psi,j_\psi)$ for the element $\widehat{\Lambda}$ such that $\Omega_\psi$ is a $\Lambda$-decomposition of $(\al_\psi,j_\psi)$ and $\gamma_\psi$ for the block that satisfies $(\al_\psi,j_\psi)\in\mathrm{Supp}_{\xi,\cJ}^{\gamma_\psi}$.

We say that $\Omega^\pm$ is a \emph{constructible $\Lambda$-lift of type $\rm{I}$} if it satisfies
\begin{enumerate}[label=(\roman*)]
\item \label{it: I 1} $\widehat{\Omega}^+=\widehat{\Omega}^-=\{(\al,j)\}$ for some $(\al,j)\in\widehat{\Lambda}$: we write $\psi_1\defeq (\Omega^+,\Lambda)$ and $\psi_2\defeq (\Omega^-,\Lambda)$;
\item \label{it: I 2} $\Omega^-=\Omega_{(\al,j),\Lambda}^{\rm{max}}$ and $\Omega^+$ is either $\Lambda$-exceptional or $\Lambda$-extremal;
\item \label{it: I 3} both $\Omega^+$ and $\Omega^-$ are $\Lambda$-ordinary (see Definition~\ref{def: ordinary decomposition});
\item \label{it: I 4} $(u_j(i_{\psi_2}^{s,e}),j)\notin I_\cJ^{\psi_1,-}$ for each $1\leq s\leq d_{\psi_2}$ and each $1\leq e\leq e_{\psi_2,s}$ satisfying $u_j(i_{\psi_2}^{s,e})>u_j(i_{\Omega^+,1})$;
\item \label{it: I 5} $((i,i'),j),\,((i',i),j)\notin\widehat{\Lambda}$ for each interior point $(i,j)$ of $\Omega^+$ and each $(i',j)$ of $\Omega^-$;
\item \label{it: I 6} $i_{\psi_1}^{s,e}\neq i_{\Omega^-,c}$ and $((i_{\psi_1}^{s,e},i_{\Omega^-,c}),j)\notin\widehat{\Lambda}$ for each $1\leq c\leq \#\Omega^--1$, $1\leq s\leq d_{\psi_1}$ and $1\leq e\leq e_{\psi_1,s}$;
\item \label{it: I 7} $i_{\psi_2}^{s,e}\neq i_{\Omega^+,c}$ and $((i_{\psi_2}^{s,e},i_{\Omega^+,c}),j)\notin\widehat{\Lambda}$ for each $1\leq c\leq \#\Omega^+-1$, $1\leq s\leq d_{\psi_2}$ and $1\leq e\leq e_{\psi_2,s}$ satisfying $u_j(i_{\psi_2}^{s,e})>u_j(i_{\Omega^+,1})$.
\end{enumerate}

We say that $\Omega^\pm$ is a \emph{constructible $\Lambda$-lift of type $\rm{II}$} if it satisfies
\begin{enumerate}[label=(\roman*)]
\item \label{it: II 1} $\widehat{\Omega}^+=\{(\al,j)\}$ for some $(\al,j)\in\widehat{\Lambda}$ and $\Omega^-$ is a pseudo $\Lambda$-decomposition of $(\al,j)$ (see Definition~\ref{def: decomposition of element}) with $\widehat{\Omega}^+\cap\widehat{\Omega}^-=\emptyset$: we write $\psi_1\defeq (\Omega^+,\Lambda)$;
\item \label{it: II 2} $\Omega^+$ is either $\Lambda$-exceptional or $\Lambda$-extremal, and $\Omega_{(\al',j)}^-=\Omega_{(\al',j),\Lambda}^{\rm{max}}$ for each $(\al',j)\in \widehat{\Omega}^-$;
\item \label{it: II 3} $\Omega^+$ is $\Lambda$-ordinary, and $\Omega_{(\al',j)}^-$ is $\Lambda$-ordinary for each $(\al',j)\in \widehat{\Omega}^-$;
\item \label{it: II 4} if there exist $\psi=(\Omega_{(\al',j)}^-,\Lambda)$ (for some $(\al',j)\in\widehat{\Omega}^-\}$) and $1\leq m\leq r_\xi$ such that $i_\psi^{s,e},i_{\psi_1}^{s',e'}\in[m]_\xi$ for some $1\leq s\leq d_\psi$, $1\leq e\leq e_{\psi,s}$, $1\leq s'\leq d_{\psi_1}$, and $1\leq e'\leq e_{\psi_1,s'}$, then we have
    $i_{\al'}^\prime=i_\al^\prime$ and either $u_j(i_\psi^{s,e})\leq u_j(i_{\Omega^+,1})<u_j(i_{\Omega^-,1})$ or $u_j(i_{\psi_1}^{s',e'})\leq u_j(i_{\Omega^-,1})<u_j(i_{\Omega^+,1})$;
\item \label{it: II 5} for each $(\al',j)\in\widehat{\Omega}^-$ and for each $1\leq s\leq d_{\psi_1}$ and $1\leq e\leq e_{\psi_1,s}$ such that $i_{\al'}^\prime,i_{\psi_1}^{s,e}\in[m]_\xi$ for some $1\leq m\leq r_\xi$, we have $u_j(i_{\psi_1}^{s,e})\leq u_j(i_{\Omega^-,1})<u_j(i_{\Omega^+,1})$;
\item \label{it: II 6} $((i,i'),j)\notin\widehat{\Lambda}$ for each pair of elements $(i,j),(i',j)\in\mathbf{I}_{\Omega^+\sqcup\Omega^-}\cup\mathbf{I}_{\Omega^+\sqcup\Omega^-}^\prime$ that do not lie in the same $\Lambda^\square$-interval of $\Omega^\pm$ (cf.~Definition~\ref{def: separated condition}); %
\item \label{it: II 7} if there exist $(\al',j)\in\widehat{\Omega}^-$ and $0\leq c\leq \#\Omega^-_{(\al',j)}$ such that $i_{\Omega^-_{(\al',j)},c}\neq i_\al^\prime$ and either $i_{\psi_1}^{s,e}=i_{\Omega^-_{(\al',j)},c}$ or $((i_{\psi_1}^{s,e},i_{\Omega^-_{(\al',j)},c}),j)\in\widehat{\Lambda}$ for some $1\leq s\leq d_{\psi_1}$ and $1\leq e\leq e_{\psi_1,s}$, then we have either $u_j(i_{\psi_1}^{s,e})\leq u_j(i_{\Omega^-,1})<u_j(i_{\Omega^+,1})$ or $u_j(i_{\al'})\leq u_j(i_{\Omega^-,1})<u_j(i_{\Omega^+,1})$;
\item \label{it: II 8} $i_\psi^{s,e}\neq i_{\Omega^+,c}$ and $((i_\psi^{s,e},i_{\Omega^+,c}),j)\notin\widehat{\Lambda}$ for each $\psi\in\{(\Omega_{(\al',j)}^-,\Lambda)\mid (\al',j)\in\widehat{\Omega}^-\}$ and for each $1\leq c\leq \#\Omega^+-1$, $1\leq s\leq d_\psi$ and $1\leq e\leq e_{\psi,s}$;
\item \label{it: II 9} if $\Omega^+=\Omega_{(\al,j),\Lambda}^{\rm{max}}$ and $u_j(i_{\al'}^\prime)<u_j(i_{\Omega^-,1})$ for the unique $(\al',j)\in\widehat{\Omega}^-$ satisfying $i_{\al'}=i_\al$, then $\Omega^+$ is $\Lambda$-exceptional and $i_{\Omega^-,1}=i_{\psi_1}^{1,1}$;
\item \label{it: II 10} if $\Omega^+\neq\Omega_{(\al,j),\Lambda}^{\rm{max}}$ and $\Omega^+$ is $\Lambda$-exceptional, then we have $u_j(i_{\al'}^\prime)>\max\{u_j(i_{\Omega^-,1}),u_j(i_{\Omega^+,1})\}$ for the unique $(\al',j)\in\widehat{\Omega}^-$ satisfying $i_{\al'}=i_\al$;
\item \label{it: II 11} if $\Omega^+\neq\Omega_{(\al,j),\Lambda}^{\rm{max}}$ and $\Omega^+$ is $\Lambda$-exceptional, then for each $(\al',j)\in\widehat{\Omega}^-$, exactly one of the following holds:
    \begin{itemize}
    \item $u_j(i_{\al'}^\prime)>u_j(i_{\Omega^+,1})$;
    \item $i_{\al'}^\prime=i_\al^\prime$ and $u_j(i_{\al'})\geq u_j(i_{\Omega^-,1})>u_j(i_{\Omega^+,1})$;
    \item $u_j(i_{\al'})\leq u_j(i_{\Omega^-,1})<u_j(i_{\Omega^+,1})$.
    \end{itemize}
\end{enumerate}

We say that $\Omega^\pm$ is a \emph{constructible $\Lambda$-lift of type $\rm{III}$} if it satisfies
\begin{enumerate}[label=(\roman*)]
\item \label{it: III 1} if both $\Omega^+$ and $\Omega^-$ are pseudo $\Lambda$-decomposition of some $(\al,j)\in\widehat{\Lambda}$, then we have $\widehat{\Omega}^+\neq \{(\al,j)\}\neq \widehat{\Omega}^-$;
\item \label{it: III 2} $\Omega_{(\al,j)}^+=\Omega_{(\al,j),\Lambda}^{\rm{max}}$ (resp.~$\Omega_{(\al,j)}^-=\Omega_{(\al,j),\Lambda}^{\rm{max}}$) for each $(\al,j)\in \widehat{\Omega}^+$ (resp.~for each $(\al,j)\in \widehat{\Omega}^-$);
\item \label{it: III 3} $\Omega_{(\al,j)}^+$ (resp.~$\Omega_{(\al,j)}^-$) is $\Lambda$-ordinary for each $(\al,j)\in \widehat{\Omega}^+$ (resp.~for each $(\al,j)\in \widehat{\Omega}^-$);
\item \label{it: III 4} the subsets $I_\cJ^{\psi,+}\cup I_\cJ^{\psi,-}\subseteq\mathbf{n}_\cJ$ are pairwise disjoint for $\psi$ running among all the pairs in (\ref{equ: set of pairs});
\item \label{it: III 5} $\{(u_j(i_\al),j),\,(u_j(i_\al^\prime),j)\}\cap I_\cJ^{\psi,-}=\emptyset$ for each $(\al,j)\in\widehat{\Omega}^+\sqcup\widehat{\Omega}^-$ and each pair $\psi$ in (\ref{equ: set of pairs});
\item \label{it: III 6} if $\Omega^+$ and $\Omega^-$ are not pseudo $\Lambda$-decompositions of the same element in $\widehat{\Lambda}$, then for each pair of elements $(\beta,j),\,(\beta',j)\in\widehat{\Omega}^+\sqcup\widehat{\Omega}^-$ satisfying $((i_{\beta'},i_\beta^\prime),j)\in\widehat{\Lambda}$, there exists a pseudo $\Lambda$-decomposition $\Omega\subseteq\Omega^+\sqcup\Omega^-$ of some $(\al,j)\in\widehat{\Lambda}$ such that $(i_{\beta'},j)\in\mathbf{I}_{\widehat{\Omega}}$ and $(i_\beta^\prime,j)\in\mathbf{I}_{\widehat{\Omega}}^\prime$;
\item \label{it: III 7} $((i,i'),j)\notin\widehat{\Lambda}$ for each pair of elements $(i,j),(i',j)\in\mathbf{I}_{\Omega^+\sqcup\Omega^-}\cup\mathbf{I}_{\Omega^+\sqcup\Omega^-}^\prime$ that do not lie in the same $\Lambda^\square$-interval of $\Omega^\pm$;
\item \label{it: III 8} for each pair $\psi$ in (\ref{equ: set of pairs}) and each element $(i,j_\psi)\in \mathbf{I}_{\Omega^+\sqcup\Omega^-}\cup \mathbf{I}_{\Omega^+\sqcup\Omega^-}^\prime$ which does not lie in a $\Lambda^\square$-interval containing $\Omega_{\psi}$, there does not exist $1\leq s\leq d_\psi$ and $1\leq e\leq e_{\psi,s}$ such that $((i_\psi^{s,e},i),j_\psi)\in\widehat{\Lambda}$;
\item \label{it: III 9} if $\Omega^+$ and $\Omega^-$ are not pseudo $\Lambda$-decompositions of the same element in $\widehat{\Lambda}$, then for each pair of distinct $\Lambda^\square$-intervals $\Omega,\Omega'$ which are pseudo $\Lambda$-decompositions of some $(\al,j),(\al',j')\in\widehat{\Lambda}$ respectively, there do not exist $(i,j)$, $(i',j')$ that satisfy the following:
    \begin{itemize}
    \item $((i_\al,i),j), ((i,i_\al^\prime),j)\in\widehat{\Lambda}$;
    \item either $i'\in\{i_{\al'},i_{\al'}^\prime\}$ or $((i_{\al'},i'),j'), ((i',i_{\al'}^\prime),j')\in\widehat{\Lambda}$
    \item $i,i'\in[m]_\xi$ for some $1\leq m\leq r_\xi$.
    \end{itemize}
\end{enumerate}
We say that $\Omega^{\pm}$ is a \emph{constructible $\Lambda$-lift} if it is a constructible $\Lambda$-lift of either type $\rm{I}$, type $\rm{II}$, or type $\rm{III}$. We write $\cO_{\xi,\Lambda}^{\rm{con}}$ for the subgroup of $\cO(\cN_{\xi,\Lambda})^\times$ generated by $\cO_{\xi,\Lambda}^{\rm{ss}}$ and $F_\xi^{\Omega^\pm}$ for all constructible $\Lambda$-lifts $\Omega^\pm$.

In the following, for example, we will write Condition~\rm{I}-\ref{it: I 1} for the condition~\ref{it: I 1} in the definition of constructible $\Lambda$-lifts of type $\rm{I}$.
\end{defn}

\begin{rmk}\label{rmk: motivation of def}
The definition of constructible $\Lambda$-lifts above is directly motivated by constructions of invariant functions in \S\,\ref{sec:const:inv} and \S\,\ref{sec:inv:cons}. In other words, given a constructible $\Lambda$-lift $\Omega^\pm$, we will construct in \S\,\ref{sec:const:inv} an invariant function $f_\xi^{\Omega^\pm}$ whose restriction to $\cN_{\xi,\Lambda}$ (if defined) is closely related to $F_\xi^{\Omega^\pm}$ (see \S\,\ref{sub:exp:for} for precise statements). The set of constructible $\Lambda$-lifts of one type is clearly disjoint for the set of constructible $\Lambda$-lift of another type, by Condition~\rm{I}-\ref{it: I 1}, \rm{II}-\ref{it: II 1} and \rm{III}-\ref{it: III 1}. Among the list of conditions in Definition~\ref{def: constructible lifts}, there are three families of conditions that stand out. The first family of conditions, notably \rm{I}-\ref{it: I 2}, \rm{II}-\ref{it: II 2} and \rm{III}-\ref{it: III 2}, all require certain $\Lambda$-decompositions to be $\Lambda$-exceptional or $\Lambda$-extremal (or even maximal). The second family of conditions, notably \rm{I}-\ref{it: I 3}, \rm{I}-\ref{it: I 4}, \rm{II}-\ref{it: II 3} to \rm{II}-\ref{it: II 5}, \rm{III}-\ref{it: III 3} to \rm{III}-\ref{it: III 5} and \rm{III}-\ref{it: III 9}, all require that certain $(w_\cJ,1)$-orbits inside $\mathbf{n}_\cJ$ are disjoint. The third family of conditions, notably \rm{I}-\ref{it: I 5} to \rm{I}-\ref{it: I 7}, \rm{II}-\ref{it: II 6} to \rm{II}-\ref{it: II 8}, \rm{III}-\ref{it: III 6}, and \rm{III}-\ref{it: III 8}, all require that certain elements $((i,i'),j)$ do not lie in $\widehat{\Lambda}$. The first and third families of conditions are related to controlling the relative position of the zero and pole divisor of $f_\xi^{\Omega^\pm}$ (as a rational function on $\tld{\cF\cL}_\cJ$) with respect to $\cN_{\xi,\Lambda}$, while the second family ensures that the restriction $f_\xi^{\Omega^\pm}|_{\cN_{\xi,\Lambda}}$ (if defined) is closely related to $F_\xi^{\Omega^\pm}$. The rest of conditions, namely \rm{II}-\ref{it: II 9} to \rm{II}-\ref{it: II 11}, will be used to reduce the number of necessary cases to be discussed in \S\,\ref{sub: exp type II}.
\end{rmk}

The rest of this section is devoted to proving Theorem~\ref{thm: reduce to constructible}, which says that the set of constructible $\Lambda$-lifts is sufficient to generate all $\Lambda$-lifts. We start with three simple lemmas which will be frequently used in the rest of the section.

Recall that for a given $\Lambda$-decomposition $\Omega$ of $(\al,j)\in\widehat{\Lambda}\cap\mathrm{Supp}_{\xi,\cJ}^\gamma$, we construct $\Omega_{s,e}$ in (\ref{eq: Omega_s,e}) for each $1\leq s\leq d_\psi$ and $1\leq e\leq e_{\psi,s}$, where $\psi\defeq (\Omega,\Lambda)$. (See also Lemma~\ref{lem: special equivalent lift} for its properties.) Hence, if $\psi$ is a pair in (\ref{equ: set of pairs}), then we write $\Omega_{\psi,s,e}$ for the corresponding $\Lambda$-decomposition of $(\al_\psi,j_\psi)$, and we further define
$$\Omega_{\psi,s,e}^+\defeq \left\{
  \begin{array}{ll}
     \Omega_{\psi,s,e} & \hbox{if $\Omega_\psi\subseteq\Omega^+$;} \\
     \Omega_\psi & \hbox{if $\Omega_\psi\subseteq\Omega^-$}
  \end{array}
\right.
\,\mbox{ and }\,
\Omega_{\psi,s,e}^-\defeq \left\{
  \begin{array}{ll}
     \Omega_{\psi} & \hbox{if $\Omega_\psi\subseteq\Omega^+$;} \\
     \Omega_{\psi,s,e} & \hbox{if $\Omega_\psi\subseteq\Omega^-$.}
  \end{array}
\right.$$
It is clear that $\Omega_{\psi,s,e}^{+},\Omega_{\psi,s,e}^-\in \mathbf{D}_{(\al_\psi,j_\psi),\Lambda}$ and so $\Omega_{\psi,s,e}^{\pm}$ forms a balanced pair.

For each $\delta\in\N \Lambda^\square$, we recall the group $\cO_{\xi,\Lambda}^{<\delta}$ from Definition~\ref{def: balanced cond}.

\begin{lemma}\label{lem: reduce non ord to smaller norm}
Let $\Omega^\pm$ be a $\Lambda$-lift, and let $\psi,\psi'$ be two distinct pairs in (\ref{equ: set of pairs}) such that there exists $1\leq m\leq r_\xi$ satisfying
\begin{equation*}
i_\psi^{s,e},i_{\psi'}^{s',e'}\in[m]_{\xi}
\end{equation*}
for some $1\leq s\leq d_\psi$ and $1\leq e\leq e_{\psi,s}$ and for some $1\leq s'\leq d_{\psi'}$ and $1\leq e'\leq e_{\psi',s'}$. Then we have
$$F_\xi^{\Omega^\pm}\cdot F_\xi^{\Omega_{\psi,s,e}^\pm}\cdot F_\xi^{\Omega_{\psi',s',e'}^\pm}\in \cO_{\xi,\Lambda}^{<|\Omega^\pm|}.$$
Moreover,
$$
\left\{
\begin{array}{ll}
    F_\xi^{\Omega_{\psi,s,e}^\pm}\in \cO_{\xi,\Lambda}^{<|\Omega^\pm|} & \hbox{if $\Omega_\psi$ is not $\Lambda$-exceptional or $\gamma_{\psi}<|\Omega^\pm|$;} \\
    F_\xi^{\Omega_{\psi',s',e'}^\pm}\in \cO_{\xi,\Lambda}^{<|\Omega^\pm|}  & \hbox{if $\Omega_{\psi'}$ is not $\Lambda$-exceptional or $\gamma_{\psi'}<|\Omega^\pm|$.}
  \end{array}
\right.
$$
\end{lemma}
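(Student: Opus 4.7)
I begin with the two bullets of the ``moreover.'' In each case $\Omega_{\psi,s,e}^{\pm}$ is a balanced pair of two $\Lambda$-decompositions of $(\alpha_{\psi}, j_{\psi})$, so $|\Omega_{\psi,s,e}^{\pm}| = \gamma_{\psi}$. If $\gamma_{\psi} < |\Omega^{\pm}|$, then Lemma~\ref{lem: union of lifts} writes $F_{\xi}^{\Omega_{\psi,s,e}^{\pm}}$ as a product of $F_{\xi}^{\Omega_{s''}^{\pm}}$ over $\Lambda$-lifts $\Omega_{s''}^{\pm}$ of norm $\leq \gamma_{\psi} < |\Omega^{\pm}|$, which places it in $\cO_{\xi,\Lambda}^{<|\Omega^{\pm}|}$. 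If instead $\Omega_{\psi}$ is not $\Lambda$-exceptional, Lemma~\ref{lem: special equivalent lift} provides a chain of $\Lambda$-modifications of level $<\gamma_{\psi}$ between $\Omega_{\psi,s,e}$ and $\Omega_{\psi}$; applying Lemma~\ref{lem: two equivalent pairs} to compare $\Omega_{\psi,s,e}^{\pm}$ with the trivial balanced pair $(\Omega_{\psi}, \Omega_{\psi})$ (whose $F_{\xi}$ is the empty product $1$ by Lemma~\ref{lem: union of lifts} with $s = 0$) yields $F_{\xi}^{\Omega_{\psi,s,e}^{\pm}} \in \cO_{\xi,\Lambda}^{<\gamma_{\psi}} \subseteq \cO_{\xi,\Lambda}^{<|\Omega^{\pm}|}$, where the inclusion uses $\gamma_{\psi} \leq |\Omega^{\pm}|$ (since $(\alpha_{\psi}, j_{\psi})$ is a summand of $\widehat{\Omega}^{+}$ or $\widehat{\Omega}^{-}$).

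For the main claim, form the balanced pair $\tilde{\Omega}^{\pm}$ obtained from $\Omega^{\pm}$ by substituting $\Omega_{\psi,s,e}$ for $\Omega_{\psi}$ and $\Omega_{\psi',s',e'}$ for $\Omega_{\psi'}$ on whichever of the $\pm$ sides they originally occupied. A bookkeeping of multi-sets gives the partition
\[
\tilde{\Omega}^{+} \sqcup \Omega_{\psi} \sqcup \Omega_{\psi'} = \Omega^{+} \sqcup \Omega_{\psi,s,e}^{+} \sqcup \Omega_{\psi',s',e'}^{+}, \qquad \tilde{\Omega}^{-} \sqcup \Omega_{\psi} \sqcup \Omega_{\psi'} = \Omega^{-} \sqcup \Omega_{\psi,s,e}^{-} \sqcup \Omega_{\psi',s',e'}^{-},
\]
(with obvious variants depending on which sides $\Omega_{\psi}, \Omega_{\psi'}$ lie on). Since the trivial balanced pair $(\Omega_{\psi} \sqcup \Omega_{\psi'}, \Omega_{\psi} \sqcup \Omega_{\psi'})$ has $F_{\xi}$ equal to $1$ (Lemma~\ref{lem: union of lifts}), Lemma~\ref{lem: partition implies similar} gives
\[
F_{\xi}^{\Omega^{\pm}} \cdot F_{\xi}^{\Omega_{\psi,s,e}^{\pm}} \cdot F_{\xi}^{\Omega_{\psi',s',e'}^{\pm}} \sim F_{\xi}^{\tilde{\Omega}^{\pm}},
\]
and clearly $|\tilde{\Omega}^{\pm}| = |\Omega^{\pm}|$. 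It therefore suffices to show that $F_{\xi}^{\tilde{\Omega}^{\pm}} \in \cO_{\xi,\Lambda}^{<|\Omega^{\pm}|}$.

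This last step is the main obstacle. By Lemma~\ref{lem: special equivalent lift}, $(i_{\psi}^{s,e}, j_{\psi})$ is an interior point of $\Omega_{\psi,s,e}$ and $(i_{\psi'}^{s',e'}, j_{\psi'})$ is an interior point of $\Omega_{\psi',s',e'}$, so after the swap both are interior points of $\tilde{\Omega}^{+} \sqcup \tilde{\Omega}^{-}$ lying in the common Levi block $[m]_{\xi}$. The plan is to exploit this Levi coincidence. Decomposing $\tilde{\Omega}^{\pm}$ into $\Lambda$-lifts via Lemma~\ref{lem: union of lifts}, the ``strict decrease'' clause there reduces the problem to the case where $\tilde{\Omega}^{\pm}$ is already a single $\Lambda$-lift with $\tilde{\Omega}^{+} \cap \tilde{\Omega}^{-} = \emptyset$. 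In that remaining case, the two interior indices $i_{\psi}^{s,e}, i_{\psi'}^{s',e'} \in [m]_{\xi}$ lie in a single $\langle (w_{\cJ}, 1)\rangle$-orbit (Lemma~\ref{lem: bijection orbit}) and, via the structure of the $\Lambda$-decomposition paths around them, produce a pair of indices in $\mathbf{I}_{\tilde{\Omega}^{+} \sqcup \tilde{\Omega}^{-}} \cup \mathbf{I}_{\tilde{\Omega}^{+} \sqcup \tilde{\Omega}^{-}}'$ connected by an element of $\widehat{\Lambda}$ but not contained in a common $\Lambda^{\square}$-interval. Lemma~\ref{lem: reduction to separated lift} then places $F_{\xi}^{\tilde{\Omega}^{\pm}}$ in $\cO_{\xi,\Lambda}^{<|\tilde{\Omega}^{\pm}|} = \cO_{\xi,\Lambda}^{<|\Omega^{\pm}|}$, completing the proof.
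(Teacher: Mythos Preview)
Your handling of the ``moreover'' clauses and your construction of the balanced pair $\tilde{\Omega}^\pm$ (the paper's $\Omega_2^\pm$) together with the similarity $F_\xi^{\Omega^\pm}\cdot F_\xi^{\Omega_{\psi,s,e}^\pm}\cdot F_\xi^{\Omega_{\psi',s',e'}^\pm}\sim F_\xi^{\tilde{\Omega}^\pm}$ are correct and coincide with the paper. The problem is the last step.

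You reduce via Lemma~\ref{lem: union of lifts} to the case where $\tilde{\Omega}^\pm$ is itself a $\Lambda$-lift, and then propose to dispatch this case by manufacturing a $\widehat{\Lambda}$-connection between indices in $[m]_\xi$ and invoking Lemma~\ref{lem: reduction to separated lift}. This does not work: two indices $i,i'\in[m]_\xi$ lie in the \emph{same} Levi block, so $(i,i')\notin\Phi_\xi^+$ (the set $\Phi_\xi^+$ consists only of roots crossing distinct blocks of $M_\xi$), and hence $((i,i'),j)\notin\widehat{\Lambda}$. The phrase ``via the structure of the $\Lambda$-decomposition paths around them'' does not supply the required element of $\widehat{\Lambda}$, nor do you verify the ``not in a common $\Lambda^\square$-interval'' hypothesis of Lemma~\ref{lem: reduction to separated lift}.

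The paper's argument is both simpler and different in kind: it observes that the case you are trying to handle is \emph{empty}. In any $\Lambda$-lift of a directed loop, each vertex $m$ of the loop is touched by exactly two edges, so at most one index in $[m]_\xi$ can occur simultaneously in $\mathbf{I}_{\tilde{\Omega}^+\sqcup\tilde{\Omega}^-}$ and in $\mathbf{I}'_{\tilde{\Omega}^+\sqcup\tilde{\Omega}^-}$; that is, there is at most one interior point of $\tilde{\Omega}^+\sqcup\tilde{\Omega}^-$ in each $[m]_\xi$. But by Lemma~\ref{lem: special equivalent lift}, both $(i_\psi^{s,e},j_\psi)$ and $(i_{\psi'}^{s',e'},j_{\psi'})$ are interior points of $\tilde{\Omega}^+\sqcup\tilde{\Omega}^-$ lying in $[m]_\xi$, and they arise from the two disjoint substituted pieces (since $\psi\neq\psi'$). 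Hence $\tilde{\Omega}^\pm$ is never a $\Lambda$-lift, and Lemma~\ref{lem: union of lifts} immediately gives $F_\xi^{\tilde{\Omega}^\pm}\in\cO_{\xi,\Lambda}^{<|\Omega^\pm|}$.
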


\begin{proof}
We set
\begin{equation}\label{eq: def of Omega1}
\left\{
\begin{array}{ll}
     \Omega_1^+\defeq(\Omega^+\sqcup\Omega_{\psi,s,e}^+)\setminus\Omega_\psi; & \hbox{} \\
    \Omega_1^-\defeq(\Omega^-\sqcup\Omega_{\psi,s,e}^-)\setminus\Omega_\psi.  & \hbox{}
  \end{array}
\right.
\end{equation}
Then $\Omega_1^\pm$ is clearly a balanced pair of sets satisfying $|\Omega_1^\pm|=|\Omega^\pm|$ and $F_\xi^{\Omega^\pm}F_\xi^{\Omega_{\psi,s,e}^\pm}\sim F_\xi^{\Omega_1^\pm}$. Note that we have either $\Omega_{\psi'}\subseteq\Omega_1^{+}$ or $\Omega_{\psi'}\subseteq\Omega_1^{-}$. We also set
$$
\left\{
\begin{array}{ll}
     \Omega_2^+\defeq(\Omega_1^+\sqcup\Omega_{\psi',s',e'}^+)\setminus\Omega_{\psi'}; & \hbox{} \\
    \Omega_2^-\defeq(\Omega_1^-\sqcup\Omega_{\psi',s',e'}^-)\setminus\Omega_{\psi'}.  & \hbox{}
  \end{array}
\right.
$$
Then $\Omega_2^\pm$ is also a balanced pair such that $|\Omega_2^\pm|=|\Omega_1^\pm|=|\Omega^\pm|$ and
\begin{equation}\label{equ: similar product}
F_\xi^{\Omega^\pm}F_\xi^{\Omega_{\psi,s,e}^\pm}F_\xi^{\Omega_{\psi',s',e'}^\pm}\sim F_\xi^{\Omega_1^\pm}F_\xi^{\Omega_{\psi',s',e'}^\pm}\sim F_\xi^{\Omega_2^\pm}.
\end{equation}

Now we observe that both $(i_\psi^{s,e},j_\psi)$ and $(i_{\psi'}^{s',e'},j_{\psi'})$ are interior points of $\Omega_2^+\sqcup\Omega_2^-$ by Lemma~\ref{lem: special equivalent lift}, which implies that $\Omega_2^\pm$ is not a $\Lambda$-lift, as a $\Lambda$-lift can not have two distinct interior points in the same $[m]_{\xi}$ for some $1\leq m\leq r_{\xi}$. Hence, we deduce from Lemma~\ref{lem: union of lifts} that $F_\xi^{\Omega_2^\pm}\in \cO_{\xi,\Lambda}^{<|\Omega^\pm|}$, which together with (\ref{equ: similar product}) implies $F_\xi^{\Omega^\pm}F_\xi^{\Omega_{\psi,s,e}^\pm}F_\xi^{\Omega_{\psi',s',e'}^\pm}\in \cO_{\xi,\Lambda}^{<|\Omega^\pm|}$. Finally, the last part is a direct consequence of Lemma~\ref{lem: special equivalent lift} together with Lemma~\ref{lem: two equivalent pairs}. The proof is thus finished.
\end{proof}

\begin{lemma}\label{lem: reduce non ord to smaller norm II}
Let $\Omega^\pm$ be a $\Lambda$-lift, and let $\psi,\psi'$ be two distinct pairs in (\ref{equ: set of pairs}) such that
\begin{equation}\label{equ: intersection of index II}
\big\{(u_{j_{\psi'}}(i_{\al_{\psi'}}),j_{\psi'}),\,(u_{j_{\psi'}}(i_{\al_{\psi'}}'),j_{\psi'})\big\}\,\cap\, ](u_{j_\psi}(i_\psi^{s,e}),j_\psi),(u_{j_\psi}(i_\psi^{s,e}),j_\psi)]_{w_\cJ}\neq \emptyset
\end{equation}
for some $1\leq s\leq d_\psi$ and $1\leq e\leq e_{\psi,s}$. Then we have
$$F_\xi^{\Omega^\pm}\cdot F_\xi^{\Omega_{\psi,s,e}^\pm}\in \cO_{\xi,\Lambda}^{<|\Omega^\pm|}.$$
Moreover, $F_\xi^{\Omega_{\psi,s,e}^\pm}\in \cO_{\xi,\Lambda}^{<|\Omega^\pm|}$ if $\Omega_{\psi}$ is not $\Lambda$-exceptional or $\gamma_{\psi}<|\Omega^\pm|$.
\end{lemma}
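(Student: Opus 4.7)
The approach mirrors that of Lemma~\ref{lem: reduce non ord to smaller norm}. Define
\[
\Omega_1^+\defeq (\Omega^+\sqcup\Omega_{\psi,s,e}^+)\setminus\Omega_\psi,\qquad \Omega_1^-\defeq (\Omega^-\sqcup\Omega_{\psi,s,e}^-)\setminus\Omega_\psi,
\]
so that $\Omega_1^\pm$ is a balanced pair with $|\Omega_1^\pm|=|\Omega^\pm|$ and $F_\xi^{\Omega^\pm}\cdot F_\xi^{\Omega_{\psi,s,e}^\pm}\sim F_\xi^{\Omega_1^\pm}$. By Lemma~\ref{lem: special equivalent lift}, the point $(i_\psi^{s,e},j_\psi)$ is an interior point of $\Omega_{\psi,s,e}$, and therefore of $\Omega_1^+\sqcup\Omega_1^-$.

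Now hypothesis~(\ref{equ: intersection of index II}), combined with Lemma~\ref{lem: bijection orbit}, supplies an integer $1\leq m\leq r_\xi$ with $i_\psi^{s,e}\in [m]_\xi$ and with either $i_{\al_{\psi'}}\in[m]_\xi$ or $i_{\al_{\psi'}}^\prime\in[m]_\xi$. Since $\psi\neq\psi'$, the $\widehat{\Lambda}$-element $(\al_{\psi'},j_{\psi'})$ is untouched in the passage from $\Omega^\pm$ to $\Omega_1^\pm$, so the indicated endpoint persists inside $\mathbf{I}_{\Omega_1^+\sqcup\Omega_1^-}\cup\mathbf{I}_{\Omega_1^+\sqcup\Omega_1^-}^\prime$.

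The main obstacle is to verify that this configuration prevents $\Omega_1^\pm$ from being a single $\Lambda$-lift: a $\Lambda$-lift cannot support a genuine interior point and a distinct endpoint of one of its $\widehat{\Lambda}$-elements inside the same $(w_\cJ,1)$-orbit $I_\cJ^m$ (equivalently, inside the same Levi block $[m]_\xi$), which is exactly the structural incompatibility already exploited in the proof of Lemma~\ref{lem: reduce non ord to smaller norm} and which is visible on the defining data of a $\Lambda$-lift through Lemma~\ref{lem: bijection orbit} and Definition~\ref{def:loop}. Once this is established, Lemma~\ref{lem: union of lifts} decomposes $\Omega_1^\pm$ into $\Lambda$-lifts of strictly smaller norms (or else produces a nonempty common part), forcing $F_\xi^{\Omega_1^\pm}\in\cO_{\xi,\Lambda}^{<|\Omega^\pm|}$ and hence the first assertion.

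For the moreover statement, observe first that $\gamma_\psi\leq|\Omega^\pm|$ because $\gamma_\psi$ is the image of $(\al_\psi,j_\psi)$ under $\widehat{\Lambda}\twoheadrightarrow\widehat{\Lambda}^\square$ and $\Omega_\psi$ contributes $\gamma_\psi$ with positive multiplicity to $|\Omega^\pm|$; in particular $\cO_{\xi,\Lambda}^{<\gamma_\psi}\subseteq\cO_{\xi,\Lambda}^{<|\Omega^\pm|}$. If $\gamma_\psi<|\Omega^\pm|$, then $|\Omega_{\psi,s,e}^\pm|=\gamma_\psi<|\Omega^\pm|$ and Lemma~\ref{lem: union of lifts} directly gives $F_\xi^{\Omega_{\psi,s,e}^\pm}\in\cO_{\xi,\Lambda}^{<|\Omega^\pm|}$. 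Otherwise $\Omega_\psi$ is not $\Lambda$-exceptional, so Lemma~\ref{lem: special equivalent lift} provides a $\Lambda$-equivalence between $\Omega_{\psi,s,e}$ and $\Omega_\psi$ with level $<\gamma_\psi$. Applying Lemma~\ref{lem: two equivalent pairs} to $\Omega_{\psi,s,e}^\pm$ and the trivial balanced pair $(\Omega_\psi,\Omega_\psi)$, whose associated function is the empty product $1$, yields $F_\xi^{\Omega_{\psi,s,e}^\pm}\in\cO_{\xi,\Lambda}^{<\gamma_\psi}\subseteq\cO_{\xi,\Lambda}^{<|\Omega^\pm|}$, completing the proof.
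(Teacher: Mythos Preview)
Your proposal is correct and follows essentially the same approach as the paper: you define $\Omega_1^\pm$ exactly as in (\ref{eq: def of Omega1}), observe that $(i_\psi^{s,e},j_\psi)$ becomes an interior point of $\Omega_1^+\sqcup\Omega_1^-$ while $\Omega_{\psi'}$ persists in it, and conclude via Lemma~\ref{lem: union of lifts}; your treatment of the ``moreover'' clause via Lemma~\ref{lem: special equivalent lift} and Lemma~\ref{lem: two equivalent pairs} is also what the paper does. One small inaccuracy: the incompatibility you invoke is not literally the one from Lemma~\ref{lem: reduce non ord to smaller norm} (there it was \emph{two interior points} in the same block, here it is an interior point together with an endpoint of a distinct $\widehat{\Lambda}$-piece), but both reduce to the same vertex-degree count --- a $\Lambda$-lift has exactly two elements whose images in $\Lambda^\square$ are incident to any given vertex $m$, whereas your configuration forces three.
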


\begin{proof}
The proof is very similar to that of Lemma~\ref{lem: reduce non ord to smaller norm}. We set $\Omega_1^{\pm}$ as in (\ref{eq: def of Omega1}). Then we have
$|\Omega_1^\pm|=|\Omega^\pm|$ and $F_\xi^{\Omega^\pm}F_\xi^{\Omega_{\psi,s,e}^\pm}\sim F_\xi^{\Omega_1^\pm}$.
Since $\Omega_{\psi'}\subseteq\Omega_1^+\sqcup\Omega_1^-$ and $(i_{\psi}^{s,e},j_{\psi})$ is an interior point of $\Omega_1^+\sqcup\Omega_1^-$, $\Omega_1^{\pm}$ is not a $\Lambda$-lift due to (\ref{equ: intersection of index II}), so that we conclude by Lemma~\ref{lem: union of lifts}. The last part is a direct consequence of Lemma~\ref{lem: special equivalent lift} together with Lemma~\ref{lem: two equivalent pairs}.
\end{proof}

\begin{lemma}\label{lem: reduce non sep to smaller norm}
Let $\Omega^\pm$ be a $\Lambda$-lift, and let $\psi$ be a pair in (\ref{equ: set of pairs}). Assume that there exists an element $(i^\prime,j_{\psi})\in \mathbf{I}_{\Omega^+\sqcup\Omega^-}\cup \mathbf{I}_{\Omega^+\sqcup\Omega^-}^\prime$ such that
\begin{itemize}
\item $(i^\prime,j_\psi)$ does not lie in the $\Lambda^\square$-interval of $\Omega^\pm$ containing $\Omega_\psi$;
\item there exist $1\leq s\leq d_\psi$ and $1\leq e\leq e_{\psi,s}$ satisfying either $i^\prime,i_\psi^{s,e}\in[m]_{\xi}$ for some $1\leq m\leq r_\xi$ or $((i_\psi^{s,e},i^\prime),j_\psi)\in\widehat{\Lambda}$.
\end{itemize}
Then we have
$$F_\xi^{\Omega^\pm}\cdot F_\xi^{\Omega_{\psi,s,e}^\pm}\in \cO_{\xi,\Lambda}^{<|\Omega^\pm|}.$$
Moreover, $F_\xi^{\Omega_{\psi,s,e}^\pm}\in \cO_{\xi,\Lambda}^{<|\Omega^\pm|}$ if $\Omega_\psi$ is not $\Lambda$-exceptional or $\gamma_{\psi}<|\Omega^\pm|$.
\end{lemma}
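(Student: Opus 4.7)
The plan is to mimic the templates of Lemmas~\ref{lem: reduce non ord to smaller norm} and~\ref{lem: reduce non ord to smaller norm II}. First I would introduce the balanced pair
$$\Omega_1^+ \defeq (\Omega^+\sqcup\Omega_{\psi,s,e}^+)\setminus\Omega_\psi, \qquad \Omega_1^- \defeq (\Omega^-\sqcup\Omega_{\psi,s,e}^-)\setminus\Omega_\psi,$$
which satisfies $|\Omega_1^\pm| = |\Omega^\pm|$ and, by Lemma~\ref{lem: partition implies similar}, $F_\xi^{\Omega^\pm}\cdot F_\xi^{\Omega_{\psi,s,e}^\pm}\sim F_\xi^{\Omega_1^\pm}$. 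Thus the first claim reduces to showing $F_\xi^{\Omega_1^\pm}\in \cO_{\xi,\Lambda}^{<|\Omega^\pm|}$. By Lemma~\ref{lem: special equivalent lift} the element $(i_\psi^{s,e},j_\psi)$ is an interior point of $\Omega_{\psi,s,e}$, hence of $\Omega_1^+\sqcup\Omega_1^-$; moreover the $\Lambda^\square$-intervals of $\Omega_1^\pm$ disjoint from $\Omega_{\psi,s,e}$ coincide with the $\Lambda^\square$-intervals of $\Omega^\pm$ disjoint from $\Omega_\psi$, so the hypothesis that $(i',j_\psi)$ does not lie in the $\Lambda^\square$-interval containing $\Omega_\psi$ carries over to $\Omega_1^\pm$.

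Next I would handle the two alternatives in the second hypothesis separately. In the subcase $((i_\psi^{s,e},i'),j_\psi)\in\widehat{\Lambda}$, the two points $(i_\psi^{s,e},j_\psi)$ and $(i',j_\psi)$ both lie in $\mathbf{I}_{\Omega_1^+\sqcup\Omega_1^-}\cup\mathbf{I}_{\Omega_1^+\sqcup\Omega_1^-}^\prime$, but in different $\Lambda^\square$-intervals of $\Omega_1^\pm$; if $\Omega_1^\pm$ happens to be a $\Lambda$-lift, the second assertion of Lemma~\ref{lem: reduction to separated lift} yields $F_\xi^{\Omega_1^\pm}\in \cO_{\xi,\Lambda}^{<|\Omega_1^\pm|}$, while otherwise Lemma~\ref{lem: union of lifts} decomposes $F_\xi^{\Omega_1^\pm}$ into a product of $F_\xi^{\Omega_{s''}^\pm}$ with strictly smaller norms. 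In the subcase $i',i_\psi^{s,e}\in[m]_\xi$, by Lemma~\ref{lem: bijection orbit} the orbit $](u_{j_\psi}(i_\psi^{s,e}),j_\psi),(u_{j_\psi}(i_\psi^{s,e}),j_\psi)]_{w_\cJ}$ equals $I_\cJ^m$ and in particular contains $(u_{j_\psi}(i'),j_\psi)$. If $(i',j_\psi)$ is a boundary of some $\Omega_{\psi''}$ with $\psi''\neq\psi$, the orbit intersection hypothesis of Lemma~\ref{lem: reduce non ord to smaller norm II} is immediately satisfied and yields the claim; if instead $(i',j_\psi)$ is an interior point of some $\Omega_{\psi''}$, then $\Omega_1^+\sqcup\Omega_1^-$ possesses two distinct interior points $(i_\psi^{s,e},j_\psi)$ and $(i',j_\psi)$ in the same Levi block $[m]_\xi$, so $\Omega_1^\pm$ cannot be a $\Lambda$-lift (a directed loop in $\mathfrak{G}_{\xi,\Lambda}$ visits each vertex at most once), and Lemma~\ref{lem: union of lifts} again decomposes $F_\xi^{\Omega_1^\pm}$ into strictly smaller lifts.

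The ``moreover'' assertion follows from Lemma~\ref{lem: special equivalent lift}, which gives a $\Lambda$-equivalence between $\Omega_{\psi,s,e}$ and $\Omega_\psi$ with level $<\gamma_\psi$ whenever $\Omega_\psi$ is not $\Lambda$-exceptional (and this level is automatically $<|\Omega^\pm|$ when $\gamma_\psi<|\Omega^\pm|$), combined with Lemma~\ref{lem: two equivalent pairs} to convert this $\Lambda$-equivalence into membership of $F_\xi^{\Omega_{\psi,s,e}^\pm}(F_\xi^{\Omega_\psi^\pm})^{-1}$ in $\cO_{\xi,\Lambda}^{<|\Omega^\pm|}$, from which $F_\xi^{\Omega_{\psi,s,e}^\pm}\in\cO_{\xi,\Lambda}^{<|\Omega^\pm|}$ upon combining with the first part.

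The main obstacle will be verifying that the $\Lambda^\square$-interval structure transfers correctly from $\Omega^\pm$ to $\Omega_1^\pm$ in the first subcase, and carefully bookkeeping the interior-point analysis in the second subcase to rule out that $\Omega_1^\pm$ remains a $\Lambda$-lift despite containing two same-block interior points; these points require the combinatorics developed in \S~\ref{sub:prelim:lifts} and \S~\ref{sub:comb:gen} rather than any new idea beyond what is already used in Lemmas~\ref{lem: reduce non ord to smaller norm} and~\ref{lem: reduce non ord to smaller norm II}.
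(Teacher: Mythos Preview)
Your proposal is correct and follows essentially the same template as the paper: introduce $\Omega_1^\pm$, reduce to showing $F_\xi^{\Omega_1^\pm}\in\cO_{\xi,\Lambda}^{<|\Omega^\pm|}$, and split on the two alternatives in the hypothesis. The only difference is in the subcase $i',i_\psi^{s,e}\in[m]_\xi$: the paper dispatches this in one line by observing that if $\Omega_1^\pm$ were a $\Lambda$-lift then the vertex $m$ of the associated directed loop would be touched by the two elements of $\Omega_{\psi,s,e}$ making $(i_\psi^{s,e},j_\psi)$ interior \emph{and} by an element from the $\Lambda^\square$-interval containing $(i',j_\psi)$, contradicting that each vertex of a directed loop has degree two---so your case split on whether $(i',j_\psi)$ is a boundary or an interior point, and the appeal to Lemma~\ref{lem: reduce non ord to smaller norm II}, are unnecessary detours (and the ``moreover'' follows directly from Lemmas~\ref{lem: special equivalent lift} and~\ref{lem: two equivalent pairs} without invoking the first part).
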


\begin{proof}
The proof is also very similar to that of Lemma~\ref{lem: reduce non ord to smaller norm}. We set $\Omega_1^{\pm}$ as in (\ref{eq: def of Omega1}). Then we have
$|\Omega_1^\pm|=|\Omega^\pm|$ and $F_\xi^{\Omega^\pm}F_\xi^{\Omega_{\psi,s,e}^\pm}\sim F_\xi^{\Omega_1^\pm}$. If $\Omega_1^\pm$ is not a $\Lambda$-lift, then we deduce from Lemma~\ref{lem: union of lifts} that $F_\xi^{\Omega_1^\pm}\in \cO_{\xi,\Lambda}^{<|\Omega^\pm|}$. Assume now that $\Omega_1^\pm$ is a $\Lambda$-lift. As $(i^\prime,j_\psi)$ does not lie in the $\Lambda^\square$-interval of $\Omega^\pm$ containing $\Omega_\psi$, we deduce that $(i^\prime,j_{\psi})\in \mathbf{I}_{\Omega_1^+\sqcup\Omega_1^-}\cup \mathbf{I}_{\Omega_1^+\sqcup\Omega_1^-}^\prime$, and $(i^\prime,j_\psi),\,(i_\psi^{s,e},j_\psi)$ do not lie in the same $\Lambda^\square$-interval of $\Omega_1^\pm$.
If $i^\prime,i_\psi^{s,e}\in[m]_{\xi}$ for some $1\leq m\leq r_\xi$, then $\Omega_1^\pm$ is not a $\Lambda$-lift any more. If $((i_\psi^{s,e},i^\prime),j_\psi)\in\widehat{\Lambda}$, then $F_\xi^{\Omega_1^\pm}\in \cO_{\xi,\Lambda}^{<|\Omega^\pm|}$ by Lemma~\ref{lem: reduction to separated lift}, using the fact that $(i^\prime,j_\psi),\,(i_\psi^{s,e},j_\psi)$ do not lie in the same $\Lambda^\square$-interval of $\Omega_1^\pm$. Finally, by Lemma~\ref{lem: special equivalent lift} together with Lemma~\ref{lem: two equivalent pairs} it is clear that $F_\xi^{\Omega_{\psi,s,e}^\pm}\in \cO_{\xi,\Lambda}^{<|\Omega^\pm|}$ if $\Omega_\psi$ is not $\Lambda$-exceptional. The proof is thus finished.
\end{proof}

The following is a road map which summarizes the logic of the proof of Theorem~\ref{thm: reduce to constructible}. The source of each red arrow is used as an ingredient in the proof of the target. Taking Proposition~\ref{prop: maximal vers exceptional: ord} for example, the terms \emph{type \rm{I}} and \emph{$<|\cdot|$} in blue mean that all the balanced pairs $\Omega^\pm$ treated in Proposition~\ref{prop: maximal vers exceptional: ord} can be generated from balanced pairs $\Omega_0^\pm$ satisfying one of the following
\begin{itemize}
\item $|\Omega_0^\pm|<|\Omega^\pm|$;
\item $\Omega_0^\pm$ is a constructible $\Lambda$-lift of type \rm{I};
\item $\Omega_0^\pm$ can be generated from the balanced pairs treated in the lemmas or propositions that have red arrows towards Proposition~\ref{prop: maximal vers exceptional: ord}.
\end{itemize}

\begin{figure}[ht]
\label{fig:red:constructible}
\includegraphics[scale=0.30]{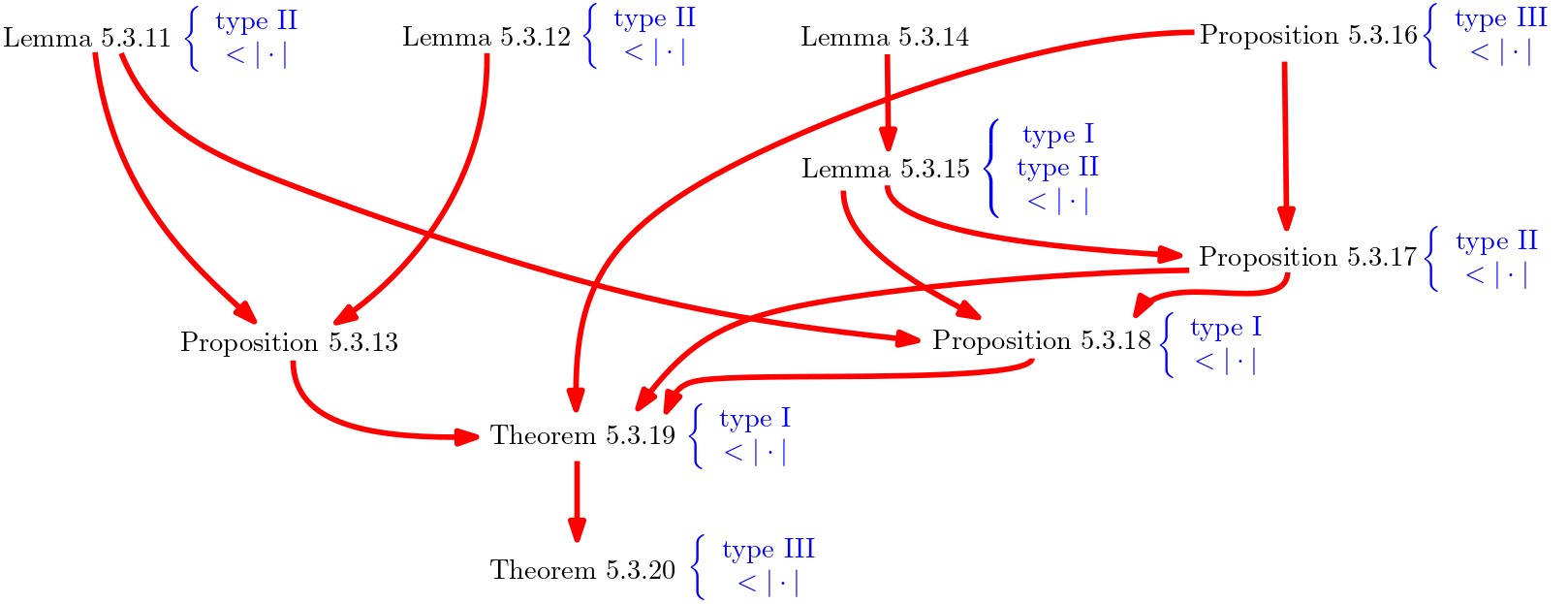}
\end{figure}%

\begin{lemma}\label{lem: special case 1}
Let $(\al,j)$ be an element of $\widehat{\Lambda}\cap\mathrm{Supp}_{\xi,\cJ}^\gamma$, and let $\Omega^\pm$ be a balanced pair such that
\begin{itemize}
\item $\Omega^+\in\mathbf{D}_{(\al,j),\Lambda}$ is $\Lambda$-exceptional and $\Lambda$-ordinary with $\Omega^+\neq \Omega_{(\al,j),\Lambda}^{\rm{max}}$;
\item $\Omega^-$ is a pseudo $\Lambda$-decomposition of $(\al,j)$;
\item $((i_{\Omega^-,1},i_\al^\prime),j)\in\Omega^-\cap\widehat{\Omega}^-$ and $u_j(i_{\Omega^-,1})<u_j(i_{\Omega^+,1})$;
\item $\Omega^-\setminus\{((i_{\Omega^-,1},i_\al^\prime),j)\}$ is a pseudo $\Lambda$-decomposition of some $((i_\al,i_\sharp),j)\in\widehat{\Lambda}$ with $u_j(i_\sharp)>u_j(i_{\Omega^+,1})$.
\end{itemize}
Then one of the following holds:
\begin{itemize}
\item $F_\xi^{\Omega^\pm}\in\cO_{\xi,\Lambda}^{<\gamma}$;
\item there exists a pseudo $\Lambda$-decomposition $\Omega'$ of $(\al,j)$
such that
      \begin{itemize}
      \item the balanced pair $\Omega^+,\Omega'$ is a constructible $\Lambda$-lift of type $\rm{II}$;
      \item $F_\xi^{\Omega_0^\pm}\in\cO_{\xi,\Lambda}^{<\gamma}$ for the balanced pair $\Omega_0^\pm$ defined by $\Omega_0^+\defeq \Omega'$ and $\Omega_0^-\defeq \Omega^-$.
      \end{itemize}
\end{itemize}
In particular, we have $F_\xi^{\Omega^\pm}\in\cO_{\xi,\Lambda}^{\rm{con}}\cdot \cO_{\xi,\Lambda}^{<\gamma}$.
\end{lemma}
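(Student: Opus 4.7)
The idea is to construct $\Omega'$ as a modification of $\Omega^-$, so that $(\Omega^+,\Omega')$ satisfies every requirement of Definition~\ref{def: constructible lifts}(II) for a constructible $\Lambda$-lift of type~II. Once such an $\Omega'$ is available, the general multiplicativity
\[
F_\xi^{\Omega^+,\Omega^-}\sim F_\xi^{\Omega^+,\Omega'}\cdot F_\xi^{\Omega',\Omega^-}
\]
(from the definition of $F_\xi^{\Omega^\pm,\sharp}$ in~\eqref{equ: sharp factor} together with Lemma~\ref{lem: partition implies similar}) reduces the lemma to showing $F_\xi^{\Omega',\Omega^-}=F_\xi^{\Omega_0^\pm}\in\cO_{\xi,\Lambda}^{<\gamma}$. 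If at some point a required condition in Definition~\ref{def: constructible lifts}(II) cannot be arranged, I will instead show directly that $F_\xi^{\Omega^\pm}\in\cO_{\xi,\Lambda}^{<\gamma}$ via one of Lemmas~\ref{lem: reduction to separated lift}, \ref{lem: reduce non ord to smaller norm}, \ref{lem: reduce non ord to smaller norm II}, or \ref{lem: reduce non sep to smaller norm}, which gives the first alternative in the conclusion. The final ``in particular'' statement is then immediate, since $F_\xi^{\Omega^+,\Omega'}\in\cO_{\xi,\Lambda}^{\mathrm{con}}$ by construction.

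\textbf{Construction of $\Omega'$ in two steps.} Write $\Omega^-=\{((i_{\Omega^-,1},i_\al'),j)\}\sqcup\Omega^{-,\flat}$, where $\Omega^{-,\flat}$ is a pseudo $\Lambda$-decomposition of $((i_\al,i_\sharp),j)$. First, for every element $(\al'',j)\in\widehat{\Omega^{-,\flat}}$ I apply Lemma~\ref{lem: reduce to extremal} to replace $\Omega^-_{(\al'',j)}$ by $\Omega^{\mathrm{max}}_{(\al'',j),\Lambda}$; second, I apply the ordinarization procedure from Definition~\ref{def: ordinary decomposition}/Lemma~\ref{lem: reduce to ordinary}, so that every resulting block is $\Lambda$-ordinary. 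The $\{((i_{\Omega^-,1},i_\al'),j)\}$-piece is left untouched. Each modification in these two steps has level a block $\gamma''<\gamma$, so by Lemma~\ref{lem: two equivalent pairs} the total discrepancy between $\Omega^-$ and $\Omega'$ contributes to $F_\xi^{\Omega_0^\pm}$ an element of $\cO_{\xi,\Lambda}^{<\gamma}$. Moreover the indices $i_{\Omega^-,1}$ and $i_\sharp$ are preserved, so the key numerical inequalities $u_j(i_{\Omega',1})=u_j(i_{\Omega^-,1})<u_j(i_{\Omega^+,1})<u_j(i_\sharp)$ still hold.

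\textbf{Verification of the constructibility conditions.} I then verify each clause of Definition~\ref{def: constructible lifts}(II) for the pair $(\Omega^+,\Omega')$. Clauses \ref{it: II 1}--\ref{it: II 3} follow from the construction above, together with the hypotheses that $\Omega^+$ is $\Lambda$-exceptional and $\Lambda$-ordinary. Clause \ref{it: II 9} is vacuous because $\Omega^+\neq\Omega^{\mathrm{max}}_{(\al,j),\Lambda}$. Clause \ref{it: II 10} follows because the unique $(\al',j)\in\widehat{\Omega'}$ with $i_{\al'}=i_\al$ is $((i_\al,i_\sharp),j)$, and by hypothesis $u_j(i_\sharp)>u_j(i_{\Omega^+,1})>u_j(i_{\Omega',1})$. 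Clause \ref{it: II 11} is read off from \ref{it: II 10} together with the decomposition structure of $\Omega'$. For the remaining clauses \ref{it: II 4}--\ref{it: II 8}, if any one of them fails then a pair of indices either lies in the same Levi block $[m]_\xi$ illegally, or witnesses an extra element of $\widehat{\Lambda}$; in each such failure case I apply the appropriate reduction lemma among \ref{lem: reduce non ord to smaller norm}, \ref{lem: reduce non ord to smaller norm II}, and \ref{lem: reduce non sep to smaller norm} (possibly after first invoking Lemma~\ref{lem: reduction to separated lift}) to deduce that $F_\xi^{\Omega^\pm}\in\cO_{\xi,\Lambda}^{<\gamma}$, falling into the first alternative.

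\textbf{Main obstacle.} The principal difficulty is bookkeeping: after replacing each $\Omega^-_{(\al'',j)}$ by its maximum and then ordinarizing, the combinatorial invariants $c_\psi^s,e_{\psi,s},i_\psi^{s,e}$ of the new components change, and one must re-examine each of conditions \ref{it: II 4}--\ref{it: II 8} for the modified $\Omega'$. The asymmetry induced by the inequality $u_j(i_{\Omega^-,1})<u_j(i_{\Omega^+,1})<u_j(i_\sharp)$ is crucial: indices sitting below $u_j(i_{\Omega^+,1})$ behave differently from those above, and this is exactly the dichotomy encoded in \ref{it: II 4}, \ref{it: II 5}, \ref{it: II 7}. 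Thus the verification naturally splits into cases according to the position of the offending index relative to $u_j(i_{\Omega^+,1})$, and in each ``bad'' case the relevant reduction lemma is chosen to match that position.
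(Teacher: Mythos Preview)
Your proposal corresponds essentially to what the paper calls Case~C, and in that regime it is correct. The genuine gap is in your treatment of the failure of Conditions~II-\ref{it: II 4} and II-\ref{it: II 5}. You assert that if one of these fails for the pair $(\Omega^+,\Omega')$, then an application of Lemma~\ref{lem: reduce non ord to smaller norm} or~\ref{lem: reduce non ord to smaller norm II} yields $F_\xi^{\Omega^+,\Omega'}\in\cO_{\xi,\Lambda}^{<\gamma}$. But those lemmas only give
\[
F_\xi^{\Omega^+,\Omega'}\cdot F_\xi^{\Omega_{\psi_1,1,e}^\pm}\in\cO_{\xi,\Lambda}^{<\gamma},
\]
and the ``moreover'' clause that would let you strip off $F_\xi^{\Omega_{\psi_1,1,e}^\pm}$ requires either that $\Omega_{\psi_1}=\Omega^+$ is not $\Lambda$-exceptional or that $\gamma_{\psi_1}<\gamma$. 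Here $\Omega^+$ \emph{is} $\Lambda$-exceptional by hypothesis and $\gamma_{\psi_1}=\gamma$, so neither holds. The factor $F_\xi^{\Omega_{\psi_1,1,e}^\pm}$ corresponds to a balanced pair of two $\Lambda$-decompositions of $(\al,j)$, hence has norm $\gamma$, and there is no a~priori reason it lies in $\cO_{\xi,\Lambda}^{<\gamma}$. (Only the $\Omega'$-side pair $\psi$ has $\gamma_\psi<\gamma$, which disposes of its contribution but not of $\psi_1$'s.)

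The paper's remedy is not to fall into the first alternative at all, but rather to build a \emph{different} $\Omega'$. When such a bad coincidence $i_{\psi_1}^{1,e_1}\in[m_\natural]_\xi\ni i_\natural$ occurs (with $i_\natural$ cutting the interval $[i_\al,i_\sharp]$ or $[i_{\Omega^-,1},i_\al']$), one replaces the bottom piece $((i_{\Omega^-,1},i_\al'),j)$ by $((i_{\psi_1}^{1,e_1},i_\al'),j)$, with $e_1$ chosen \emph{minimal}. This choice forces II-\ref{it: II 4} and II-\ref{it: II 5} to hold for the new $\Omega'$ by minimality, while the discrepancy between the old and new $\Omega'$ still lies in $\cO_{\xi,\Lambda}^{<\gamma}$. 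This is the content of the paper's Cases~A and~B; your single construction misses them.
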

\begin{proof}
We write $\psi_1=(\Omega^+,\Lambda)$ for short and note that $e_{\psi_1,1}\geq 1$ (as the pseudo $\Lambda$-decomposition $\Omega^-$ satisfying $u_j(i_{\Omega^-,1})<u_j(i_{\Omega^+,1})$ guarantees the existence of some $\Omega_0\in\mathbf{D}_{(\al,j),\Lambda}$ satisfying $u_j(i_{\Omega_0,1})<u_j(i_{\Omega^+,1})$).
Upon replacing $\Omega^-_{(\al',j)}$ with $(\Omega_{(\al',j),\Lambda}^{\rm{max}})_\dagger$ (cf.~(\ref{eq: dagger})), we may assume that $\Omega^-_{(\al',j)}=\Omega_{(\al',j),\Lambda}^{\rm{max}}$ is $\Lambda$-ordinary for each $(\al',j)\in\widehat{\Omega}^-\setminus \{((i_{\Omega^-,1},i_\al^\prime),j)\}$. We will also use the following facts without further comments
\begin{itemize}
\item for each $1\leq e<e'\leq e_{\psi_1,1}$, we have $((i_{\psi_1}^{1,e},i_{\psi_1}^{1,e'}),j)\in\widehat{\Lambda}$ and in particular there does not exist $1\leq m\leq r_\xi$ such that $i_{\psi_1}^{1,e},i_{\psi_1}^{1,e'}\in[m]_\xi$;
\item for each $1\leq e\leq e_{\psi_1,1}$, if we write $\psi$ for the pair $(\{((i_{\psi_1}^{1,e},i_\al^\prime),j)\},\Lambda)$, then exactly one of the following holds:
    \begin{itemize}
    \item $e=e_{\psi_1,1}\geq 1$ and $d_\psi=0$;
    \item $d_\psi=1$, $e_{\psi,1}=e_{\psi_1,1}-e\geq 1$ and $i_{\psi_1}^{1,e'}=i_{\psi}^{1,e'-e}$ for each $e+1\leq e'\leq e_{\psi_1,1}$.
    \end{itemize}
\end{itemize}

\textbf{Case~A:} We first consider the case when there exist $1\leq e_1\leq e_{\psi_1,1}$ and $i_\natural\in\mathbf{n}$ such that
\begin{itemize}
\item $i_\natural,i_{\psi_1}^{1,e_1}\in[m_\natural]_\xi$ for some $1\leq m_\natural\leq r_\xi$;
\item either $i_\natural=i_\sharp$ or $((i_\al,i_\natural),j),~((i_\natural,i_\sharp),j)\in\widehat{\Lambda}$.
\end{itemize}
In this case, we choose $e_1$ to be minimal possible, and then set $\Omega_\natural^+\defeq \Omega^+$ and $\Omega_\natural^-\defeq \Omega_{\sharp,\natural}\sqcup \{((i_{\psi_1}^{1,e_1},i_\al^\prime),j)\}$ where $\Omega_{\sharp,\natural}\defeq (\Omega_{((i_\al,i_\natural),j),\Lambda}^{\rm{max}})_\dagger$.
Note that $\Omega_{\sharp,\natural}$ satisfies the condition that $\Omega_{\sharp,\natural,(\al',j)}=\Omega_{(\al',j),\Lambda}^{\rm{max}}$ is $\Lambda$-ordinary for each $(\al',j)\in\widehat{\Omega}_{\sharp,\natural}$. We consider the balanced pair $\Omega_1^\pm$ defined by $\Omega_1^+\defeq \Omega_\natural^-$ and $\Omega_1^-\defeq \Omega^-$. If $i_\natural=i_\sharp$, then $\Omega_1^\pm$ is not a $\Lambda$-lift and thus $F_\xi^{\Omega_1^\pm}\in\cO_{\xi,\Lambda}^{<\gamma}$ by Lemma~\ref{lem: union of lifts}. If $((i_\natural,i_\sharp),j)\in\widehat{\Lambda}$, then we deduce $F_\xi^{\Omega_1^\pm}\in\cO_{\xi,\Lambda}^{<\gamma}$ from Lemma~\ref{lem: reduction to separated lift}. Hence we always have $F_\xi^{\Omega_1^\pm}\in\cO_{\xi,\Lambda}^{<\gamma}$ and $F_\xi^{\Omega^\pm}=F_\xi^{\Omega_\natural^\pm}F_\xi^{\Omega_1^\pm}$. Consequently, by taking $\Omega'\defeq \Omega_\natural^-$, it suffices to check the conditions in the definition of constructible $\Lambda$-lifts of type $\rm{II}$ for the balanced pair $\Omega_\natural^\pm$. If $\Omega_\natural^\pm$ is not a $\Lambda$-lift, then $F_\xi^{\Omega_\natural^\pm}\in \cO_{\xi,\Lambda}^{<\gamma}$ thanks to Lemma~\ref{lem: union of lifts}. If $\Omega_\natural^\pm$ is a $\Lambda$-lift, then Conditions~\rm{II}-\ref{it: II 1}, \rm{II}-\ref{it: II 2}, \rm{II}-\ref{it: II 3}, and \rm{II}-\ref{it: II 9} are true by our assumption on $\Omega_\natural^\pm$. Conditions~\rm{II}-\ref{it: II 10} and \rm{II}-\ref{it: II 11} hold for $\Omega_\natural^\pm$ as $u_j(i_\natural)\geq u_j(i_\sharp)>u_j(i_{\Omega^+,1})$ and $u_j(i_{\psi_1}^{1,e_1})<u_j(i_{\Omega^+,1})$. If $\Omega_\natural^\pm$ fails Condition~\rm{II}-\ref{it: II 6}, then we deduce $F_\xi^{\Omega_\natural^\pm}\in \cO_{\xi,\Lambda}^{<\gamma}$ from Lemma~\ref{lem: reduction to separated lift}. If $\Omega_\natural^\pm$ fails either Condition~\rm{II}-\ref{it: II 4} or Condition~\rm{II}-\ref{it: II 5}, there exist $1\leq e_1'\leq e_1-1$ and $i_\natural'\in\mathbf{n}$ such that
\begin{itemize}
\item $i_\natural',i_{\psi_1}^{1,e_1'}\in[m_\natural']_\xi$ for some $1\leq m_\natural'\leq r_\xi$;
\item $((i_\al,i_\natural'),j),~((i_\natural',i_\sharp),j)\in\widehat{\Lambda}$,
\end{itemize}
which clearly contradicts the minimality of the choice of $e_1$. Condition~\rm{II}-\ref{it: II 7} holds as we have $$u_j(i_{\psi_1}^{1,e})\leq u_j(i_{\psi_1}^{1,1})<u_j(i_{\Omega^+,1})<u_j(i_\sharp)\leq u_j(i_\natural)\leq u_j(i_{\Omega_{\natural,(\al',j)},c})$$ for each $1\leq e\leq e_{\psi_1,1}$, $(\al',j)\in\widehat{\Omega}_\natural\setminus\{((i_{\psi_1}^{1,e_1},i_\al^\prime),j)\}$ and $0\leq c\leq \#\Omega_{\natural,(\al',j)}$. If $\Omega_\natural^\pm$ fails Condition~\rm{II}-\ref{it: II 8}, then we deduce $F_\xi^{\Omega_\natural^\pm}\in \cO_{\xi,\Lambda}^{<\gamma}$ from Lemma~\ref{lem: reduce non sep to smaller norm} (using the fact $\gamma_\psi<\gamma$ for each $\psi\in \{(\Omega_{(\al',j)}^-,\Lambda)\mid (\al',j)\in\widehat{\Omega}^-\}$). If $\Omega_\natural^\pm$ satisfies all the conditions from Condition~\rm{II}-\ref{it: II 1} to Condition~\rm{II}-\ref{it: II 8}, then it is clearly a constructible $\Lambda$-lift of type $\rm{II}$.

\textbf{Case~B:} If $e_1$ does not exist but there exist $1\leq e_1^\flat\leq e_{\psi_1,1}$ and $i_\flat\in\mathbf{n}$ such that
\begin{itemize}
\item $i_\flat,i_{\psi_1}^{1,e_1^\flat}\in[m_\flat]_\xi$ for some $1\leq m_\flat\leq r_\xi$;
\item $((i_{\Omega^-,1},i_\flat),j),~((i_\flat,i_\al^\prime),j)\in\widehat{\Lambda}$;
\item $u_j(i_{\psi_1}^{1,e_1^\flat})>u_j(i_{\Omega^-,1})$,
\end{itemize}
we choose $e_1^\flat$ to be minimal possible, and then set $\Omega_\flat^+\defeq \Omega^+$ and
$$\Omega_\flat^-\defeq \Omega_\sharp\sqcup\Omega_{\flat,\natural}\sqcup \{((i_{\psi_1}^{1,e_1^\flat},i_\al^\prime),j)\}$$ where $\Omega_\sharp\defeq \Omega^-\setminus\{((i_{\Omega^-,1},i_\al^\prime),j)\}$ and $\Omega_{\flat,\natural}\defeq (\Omega_{((i_{\Omega^-,1},i_\flat),j),\Lambda}^{\rm{max}})_\dagger$.
Note that $\Omega_{\flat,\natural}$ satisfies the condition that $\Omega_{\flat,\natural,(\al',j)}=\Omega_{(\al',j),\Lambda}^{\rm{max}}$ is $\Lambda$-ordinary for each $(\al',j)\in\widehat{\Omega}_{\flat,\natural}$. We consider the balanced pair $\Omega_2^\pm$ defined by $\Omega_2^+\defeq \Omega_\flat^-$ and $\Omega_2^-\defeq \Omega^-$. Then we clearly have $F_\xi^{\Omega^\pm}=F_\xi^{\Omega_\flat^\pm}F_\xi^{\Omega_2^\pm}$ and $F_\xi^{\Omega_2^\pm}\in\cO_{\xi,\Lambda}^{<\gamma}$ (as $\Omega_\sharp\subseteq\Omega^-\cap\Omega_\flat^-$).  Consequently, by taking $\Omega'\defeq \Omega_\flat^-$, it suffices to check the conditions in the definition of constructible $\Lambda$-lifts of type $\rm{II}$ for the balanced pair $\Omega_\flat^\pm$. If $\Omega_\flat^\pm$ is not a $\Lambda$-lift, then $F_\xi^{\Omega_\flat^\pm}\in \cO_{\xi,\Lambda}^{<\gamma}$ thanks to Lemma~\ref{lem: union of lifts}. If $\Omega_\flat^\pm$ is a $\Lambda$-lift, then Conditions~\rm{II}-\ref{it: II 1}, \rm{II}-\ref{it: II 2}, \rm{II}-\ref{it: II 3}, and \rm{II}-\ref{it: II 9} are true by our assumption on $\Omega_\flat^\pm$. Conditions~\rm{II}-\ref{it: II 10} and \rm{II}-\ref{it: II 11} hold for $\Omega_\flat^\pm$ as $u_j(i_\sharp)>u_j(i_{\Omega^+,1})$ and $u_j(i_{\Omega^-,1})<u_j(i_{\psi_1}^{1,e_1^\flat})<u_j(i_{\Omega^+,1})$. If $\Omega_\flat^\pm$ fails Condition~\rm{II}-\ref{it: II 6}, then we deduce $F_\xi^{\Omega_\flat^\pm}\in \cO_{\xi,\Lambda}^{<\gamma}$ from Lemma~\ref{lem: reduction to separated lift}. If $\Omega_\flat^\pm$ fails either Condition~\rm{II}-\ref{it: II 4} or Condition~\rm{II}-\ref{it: II 5}, there exist $1\leq e_1^{\flat,\prime}\leq e_1^\flat-1$ and $i_\flat'\in\mathbf{n}$ such that
\begin{itemize}
\item $i_\flat',i_{\psi_1}^{1,e_1^{\flat,\prime}}\in[m_\flat']_\xi$ for some $1\leq m_\flat'\leq r_\xi$;
\item $((i_{\Omega^-,1},i_\flat'),j),~((i_\flat',i_\flat),j)\in\widehat{\Lambda}$;
\item $u_j(i_{\psi_1}^{1,e_1^{\flat,\prime}})>u_j(i_{\psi_1}^{1,e_1^\flat})$ (as $((i_{\psi_1}^{1,e_1^{\flat,\prime}},i_{\psi_1}^{1,e_1^\flat}),j)\in\widehat{\Lambda}$),
\end{itemize}
which clearly contradicts the minimality of the choice of $e_1^\flat$. Condition~\rm{II}-\ref{it: II 7} holds for $\Omega_\flat^\pm$ as we have
$$u_j(i_{\Omega_{\flat,(\al^{\prime\prime},j)}^-,c^\flat})\leq u_j(i_{\Omega^-,1})<u_j(i_{\psi_1}^{1,e_1^\flat})<u_j(i_{\psi_1}^{1,e})\leq u_j(i_{\psi_1}^{1,1})<u_j(i_{\Omega^+,1})<u_j(i_\sharp)\leq u_j(i_{\Omega_{\flat,(\al',j)}^-,c})$$
for each $1\leq e\leq e_1^\flat-1$, $(\al',j)\in\widehat{\Omega}_\sharp$, $0\leq c\leq \#\Omega_{\sharp,(\al',j)}$, $(\al^{\prime\prime},j)\in\widehat{\Omega}_{\flat,\natural}$ and $0\leq c^\flat\leq \#\Omega_{\flat,\natural,(\al^{\prime\prime},j)}$. If $\Omega_\flat^\pm$ fails Condition~\rm{II}-\ref{it: II 8}, then we deduce $F_\xi^{\Omega_\flat^\pm}\in \cO_{\xi,\Lambda}^{<\gamma}$ from Lemma~\ref{lem: reduce non sep to smaller norm} (using the fact $\gamma_\psi<\gamma$ for each $\psi\in \{(\Omega_{(\al',j)}^-,\Lambda)\mid (\al',j)\in\widehat{\Omega}_\flat^-\}$). If $\Omega_\flat^\pm$ satisfies all the conditions from Condition~\rm{II}-\ref{it: II 1} to Condition~\rm{II}-\ref{it: II 8}, then it is clearly a constructible $\Lambda$-lift of type $\rm{II}$.

\textbf{Case~C:} If neither $e_1$ nor $e_1^\flat$ exists, we check the conditions in the definition of constructible $\Lambda$-lifts of type $\rm{II}$ for the balanced pair $\Omega^\pm$. If $\Omega^\pm$ is not a $\Lambda$-lift, then $F_\xi^{\Omega^\pm}\in \cO_{\xi,\Lambda}^{<\gamma}$ thanks to Lemma~\ref{lem: union of lifts}. If $\Omega^\pm$ is a $\Lambda$-lift, then Conditions~\rm{II}-\ref{it: II 1}, \rm{II}-\ref{it: II 2}, \rm{II}-\ref{it: II 3}, and \rm{II}-\ref{it: II 9} are true by our assumption on $\Omega^\pm$. Conditions~\rm{II}-\ref{it: II 10} and \rm{II}-\ref{it: II 11} hold for $\Omega^\pm$ as $u_j(i_\sharp)>u_j(i_{\Omega^+,1})$ and $u_j(i_{\Omega^-,1})<u_j(i_{\Omega^+,1})$. If $\Omega^\pm$ fails Condition~\rm{II}-\ref{it: II 6}, then we deduce $F_\xi^{\Omega^\pm}\in \cO_{\xi,\Lambda}^{<\gamma}$ from Lemma~\ref{lem: reduction to separated lift}. Conditions~\rm{II}-\ref{it: II 4} and \rm{II}-\ref{it: II 5} hold for $\Omega^\pm$ thanks to the non-existence of $e_1$ and $e_1^\flat$. Condition~\rm{II}-\ref{it: II 7} holds for $\Omega^\pm$ as we have
$$u_j(i_{\psi_1}^{1,e})\leq u_j(i_{\psi_1}^{1,1})<u_j(i_{\Omega^+,1})<u_j(i_\sharp)\leq u_j(i_{\Omega^-_{(\al',j)},c})$$
for each $1\leq e\leq e_{\psi_1,1}$, $(\al',j)\in\widehat{\Omega}_\sharp$ and $0\leq c\leq \#\Omega^-_{(\al',j)}$. If $\Omega^\pm$ fails Condition~\rm{II}-\ref{it: II 8}, then we deduce $F_\xi^{\Omega^\pm}\in \cO_{\xi,\Lambda}^{<\gamma}$ from Lemma~\ref{lem: reduce non sep to smaller norm} (using the fact $\gamma_\psi<\gamma$ for each $\psi\in \{(\Omega_{(\al',j)}^-,\Lambda)\mid (\al',j)\in\widehat{\Omega}^-\}$). Finally, if $\Omega^\pm$ satisfies all the conditions from Condition~\rm{II}-\ref{it: II 1} to Condition~\rm{II}-\ref{it: II 8}, then it is clearly a constructible $\Lambda$-lift of type $\rm{II}$. The proof is thus finished.
\end{proof}

\begin{lemma}\label{lem: special case 2}
Let $(\al,j)$ be an element of $\widehat{\Lambda}\cap\mathrm{Supp}_{\xi,\cJ}^\gamma$, and let $\Omega^\pm$ be a balanced pair such that
\begin{itemize}
\item $\Omega^+\in\mathbf{D}_{(\al,j),\Lambda}$ is $\Lambda$-exceptional and $\Lambda$-ordinary with $\Omega^+\neq \Omega_{(\al,j),\Lambda}^{\rm{max}}$;
\item $\Omega_{(\al,j),\Lambda}^{\rm{max}}$ is not $\Lambda$-ordinary and $\Omega^-=(\Omega_{(\al,j),\Lambda}^{\rm{max}})_\dagger$;
\item $u_j(i_{\Omega^-,1})>u_j(i_{\Omega^+,1})$.
\end{itemize}
Then one of the following holds:
\begin{itemize}
\item $F_\xi^{\Omega^\pm}\in\cO_{\xi,\Lambda}^{<\gamma}$;
\item there exists a pseudo $\Lambda$-decomposition $\Omega'$ of $(\al,j)$ such that
      \begin{itemize}
      \item the balanced pair $\Omega^+,\Omega'$ is a constructible $\Lambda$-lift of type $\rm{II}$;
      \item $F_\xi^{\Omega_0^\pm}\in\cO_{\xi,\Lambda}^{<\gamma}$ for the balanced pair $\Omega_0^\pm$ defined by $\Omega_0^+\defeq \Omega'$ and $\Omega_0^-\defeq \Omega^-$.
      \end{itemize}
\end{itemize}
In particular, we have $F_\xi^{\Omega^\pm}\in\cO_{\xi,\Lambda}^{\rm{con}}\cdot \cO_{\xi,\Lambda}^{<\gamma}$.
\end{lemma}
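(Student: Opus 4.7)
The plan is to mirror the three-case structure of Lemma~\ref{lem: special case 1}, adapted to the new form of $\Omega^-$. Set $\psi_1\defeq(\Omega^+,\Lambda)$; since $\Omega^+$ is $\Lambda$-exceptional with $\Omega^+\neq\Omega_{(\al,j),\Lambda}^{\rm{max}}$ and $u_j(i_{\Omega^-,1})>u_j(i_{\Omega^+,1})$, one has $d_{\psi_1}\geq 1$ and $e_{\psi_1,1}\geq 1$. By Lemma~\ref{lem: reduce to ordinary} applied to $\Omega^-=(\Omega_{(\al,j),\Lambda}^{\rm{max}})_\dagger$, each component $\Omega^-_{(\al',j)}$ is $\Lambda$-ordinary and equals $\Omega^{\rm{max}}_{(\al',j),\Lambda}$; moreover, the outermost step of the recursion $\Omega^-=\Omega'_\dagger\sqcup\Omega_\dagger^\flat$ produces a distinguished ``pivot'' element of the form $((i_{\psi,s_\dagger,e_\dagger},i_{\Omega^{\rm{max}},c_\psi^{s_\dagger}}),j)\in\Omega^-$, which will play the role of $((i_{\Omega^-,1},i_\al'),j)$ of Lemma~\ref{lem: special case 1}.

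Following Cases A--C of the previous lemma, the strategy distinguishes according to collisions between the intermediate indices $i_{\psi_1}^{1,e}$ and Levi blocks of auxiliary elements reachable by $\widehat{\Lambda}$-paths. In Case A one seeks the minimal $1\leq e_1\leq e_{\psi_1,1}$ such that $i_{\psi_1}^{1,e_1}\in[m_\natural]_\xi$ for some block containing an element $i_\natural$ either equal to or $\widehat{\Lambda}$-connected to $i_\al$ (through the pivot), and constructs $\Omega'$ by splicing the ordinarized maximal decomposition $(\Omega_{((i_\al,i_\natural),j),\Lambda}^{\rm{max}})_\dagger$ with the bridging edge $((i_{\psi_1}^{1,e_1},i_\al'),j)$ together with the undisturbed outer components of $\Omega^-$. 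Case B looks for an analogous minimal $e_1^\flat$ using paths emanating from the pivot rather than from $i_\al$, producing $\Omega'$ by the parallel construction. Case C applies when neither $e_1$ nor $e_1^\flat$ exists, in which case $\Omega'\defeq\Omega^-$ itself is shown to be constructible.

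In each successful case, setting $\Omega_0^+\defeq\Omega'$ and $\Omega_0^-\defeq\Omega^-$, one verifies $F_\xi^{\Omega_0^\pm}\in\cO_{\xi,\Lambda}^{<\gamma}$: the pair $\Omega_0^\pm$ either fails to be a $\Lambda$-lift (invoke Lemma~\ref{lem: union of lifts}) or fails $\Lambda$-separatedness on the bridging element used in the splicing (invoke Lemma~\ref{lem: reduction to separated lift}), while the cancellation of the outer unchanged components of $\Omega^-$ ensures the remaining contribution has norm strictly less than $\gamma$. For the balanced pair $(\Omega^+,\Omega')$, Conditions~\rm{II}-\ref{it: II 1}--\ref{it: II 3} and~\rm{II}-\ref{it: II 9} are immediate from the hypotheses and the ordinarization; Conditions~\rm{II}-\ref{it: II 10}--\ref{it: II 11} follow from the order chain $u_j(i_{\psi_1}^{1,e})<u_j(i_{\Omega^+,1})<u_j(i_{\Omega^-,1})$; any violation of Conditions~\rm{II}-\ref{it: II 4}--\ref{it: II 5} contradicts the minimality of $e_1$ or $e_1^\flat$; and violations of Conditions~\rm{II}-\ref{it: II 6}--\ref{it: II 8} force the associated $F_\xi$ into $\cO_{\xi,\Lambda}^{<\gamma}$ via Lemmas~\ref{lem: reduction to separated lift} and~\ref{lem: reduce non sep to smaller norm}. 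Combining $F_\xi^{\Omega^\pm}\sim F_\xi^{(\Omega^+,\Omega')}\cdot F_\xi^{\Omega_0^\pm}$ (using the multiplicativity of the $\sharp$-factor~\eqref{equ: sharp factor}) yields the desired membership $F_\xi^{\Omega^\pm}\in\cO_{\xi,\Lambda}^{\rm{con}}\cdot\cO_{\xi,\Lambda}^{<\gamma}$.

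The principal obstacle is locating the correct analogue of the pivot $((i_{\Omega^-,1},i_\al'),j)$ of Lemma~\ref{lem: special case 1}, which here is no longer a distinguished element explicitly exhibited in $\widehat{\Omega}^-$ but must be extracted by unwinding the recursive construction $\Omega^-=\Omega'_\dagger\sqcup\Omega_\dagger^\flat$ defining the ordinarization. A secondary subtlety is Condition~\rm{II}-\ref{it: II 11}, whose three-way dichotomy for each $(\al',j)\in\widehat{\Omega}^-$ requires distinguishing how the ordinarization distributed the maximal decomposition across Levi blocks of $M_\xi$, and verifying the dichotomy holds for every maximal subchain of $\Omega^-$ produced by the recursion.
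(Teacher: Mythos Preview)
Your proposal has two genuine problems, both stemming from transplanting the three-case argument of Lemma~\ref{lem: special case 1} too literally.

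First, the claim $e_{\psi_1,1}\geq 1$ is false here. In Lemma~\ref{lem: special case 1} it was forced by the hypothesis $u_j(i_{\Omega^-,1})<u_j(i_{\Omega^+,1})$, which guaranteed a $\Lambda$-decomposition whose first interior point lies \emph{below} $u_j(i_{\Omega^+,1})$. In the present lemma the inequality is reversed, so nothing prevents $e_{\psi_1,1}=0$; the paper's proof explicitly treats this (its Case~B begins ``for example if $e_{\psi_1,1}=0$'').

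Second, and more substantively, the paper does \emph{not} use a three-case structure. It first extracts from the ordinarization a clean splitting $\Omega^-=\Omega_\sharp\sqcup\Omega_\flat$ with $\Omega_\sharp$ a pseudo $\Lambda$-decomposition of some $((i_\al,i_\sharp),j)$, $\Omega_\flat$ a $\Lambda$-decomposition of some $((i_\flat,i_\al'),j)$, and the chain $u_j(i_\sharp)>u_j(i_\flat)\geq u_j(i_{\Omega^-,1})>u_j(i_{\Omega^+,1})$. Then there are only two cases. Case~A searches for a minimal $e_1$ such that $i_{\psi_1}^{1,e_1}$ shares a Levi block with some $i_\natural$ satisfying \emph{simultaneously} one of three positional constraints (either $i_\natural=i_\sharp$, or $i_\natural$ lies on a $\widehat{\Lambda}$-path between $i_\al$ and $i_\sharp$, or $i_\natural$ lies on a $\widehat{\Lambda}$-path between $i_\flat$ and $i_\al'$) \emph{and} the inequality $u_j(i_\natural)>u_j(i_{\Omega^+,1})$. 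Case~B is when no such $e_1$ exists, and one verifies the constructible conditions directly for $(\Omega^+,\Omega^-)$.

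The constraint $u_j(i_\natural)>u_j(i_{\Omega^+,1})$ is essential and absent from your sketch: it is precisely what makes Conditions~\rm{II}-\ref{it: II 10} and~\rm{II}-\ref{it: II 11} hold for the constructed pair $(\Omega^+,\Omega_\natural^-)$, since in that pair $u_j(i_{\Omega_\natural^-,1})=u_j(i_{\psi_1}^{1,e_1})<u_j(i_{\Omega^+,1})$ and the top element of $\widehat{\Omega}_\natural^-$ ends at or above $u_j(i_\natural)$. Your order chain $u_j(i_{\psi_1}^{1,e})<u_j(i_{\Omega^+,1})<u_j(i_{\Omega^-,1})$ is insufficient, because once you splice in the bridging edge the relevant $i_{\Omega',1}$ changes. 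Furthermore, the unified minimality criterion across all three positional options for $i_\natural$ is what makes the contradiction-by-minimality argument for Conditions~\rm{II}-\ref{it: II 4} and~\rm{II}-\ref{it: II 5} go through; separating into two independent searches with separate minimality, as you propose, would require an additional argument ruling out cross-collisions between the $\Omega_\sharp$-pieces (retained unchanged in $\Omega_\natural^-$ in the third sub-option) and smaller values of~$e$.
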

\begin{proof}
We write $\psi_1=(\Omega^+,\Lambda)$ for short. As $\Omega_{(\al,j),\Lambda}^{\rm{max}}$ is not $\Lambda$-ordinary and $\Omega^-=(\Omega_{(\al,j),\Lambda}^{\rm{max}})_\dagger$, there exist a pseudo $\Lambda$-decomposition $\Omega_\sharp$ of some $((i_\al,i_\sharp),j)\in\widehat{\Lambda}$ and $\Lambda$-decomposition $\Omega_\flat$ of some $((i_\flat,i_\al^\prime),j)\in\widehat{\Lambda}$ such that
\begin{itemize}
\item $\Omega^-=\Omega_\sharp\sqcup\Omega_\flat$;
\item $\Omega^-_{(\al',j)}=\Omega_{(\al',j),\Lambda}^{\rm{max}}$ is $\Lambda$-ordinary for each $(\al',j)\in\widehat{\Omega}^-$;
\item $u_j(i_\sharp)>u_j(i_\flat)\geq u_j(i_{\Omega_\flat,1})=u_j(i_{\Omega^-,1})>u_j(i_{\Omega^+,1})$.
\end{itemize}

\textbf{Case~A:} If there exist $1\leq e_1\leq e_{\psi_1,1}$ and $i_\natural\in\mathbf{n}$ such that
\begin{itemize}
\item $i_\natural,i_{\psi_1}^{1,e_1}\in[m_\natural]_\xi$ for some $1\leq m_\natural\leq r_\xi$;
\item exactly one of the following holds:
      \begin{itemize}
      \item $i_\natural=i_\sharp$;
      \item $((i_\al,i_\natural),j),\,((i_\natural,i_\sharp),j)\in\widehat{\Lambda}$;
      \item $((i_\flat,i_\natural),j),\,((i_\natural,i_\al'),j)\in\widehat{\Lambda}$;
      \end{itemize}
\item $u_j(i_\natural)>u_j(i_{\Omega^+,1})$,
\end{itemize}
then we choose $e_1$ to be minimal possible and set $\Omega_\natural^+\defeq \Omega^+$. If $i_\natural=i_\sharp$, we set $\Omega_\natural^-\defeq \Omega_\sharp\sqcup\{((i_{\psi_1}^{1,e_1},i_\al^\prime),j)\}$. If $((i_\al,i_\natural),j),\,((i_\natural,i_\sharp),j)\in\widehat{\Lambda}$, we set $\Omega_\natural^-\defeq \Omega_{\sharp,\natural}\sqcup \{((i_{\psi_1}^{1,e_1},i_\al^\prime),j)\}$ where $\Omega_{\sharp,\natural}\defeq (\Omega_{((i_\al,i_\natural),j),\Lambda}^{\rm{max}})_\dagger$. If $((i_\flat,i_\natural),j),\,((i_\natural,i_\al'),j)\in\widehat{\Lambda}$, we set $\Omega_\natural^-\defeq \Omega_\sharp\sqcup\Omega_{\flat,\natural}\sqcup \{((i_{\psi_1}^{1,e_1},i_\al^\prime),j)\}$ where $\Omega_{\flat,\natural}\defeq (\Omega_{((i_\flat,i_\natural),j),\Lambda}^{\rm{max}})_\dagger$. Note that $\Omega_{\sharp,\natural}$ satisfies the condition that $\Omega_{\sharp,\natural,(\al',j)}=\Omega_{(\al',j),\Lambda}^{\rm{max}}$ is $\Lambda$-ordinary for each $(\al',j)\in\widehat{\Omega}_{\sharp,\natural}$, and similarly for $\Omega_{\flat,\natural}$. It is not difficult to see that the balanced pair $\Omega_1^\pm$ defined by $\Omega_1^+\defeq \Omega_\natural^-$ and $\Omega_1^-\defeq \Omega^-$ satisfies $F_\xi^{\Omega_1^\pm}\in\cO_{\xi,\Lambda}^{<\gamma}$ and $F_\xi^{\Omega^\pm}=F_\xi^{\Omega_\natural^\pm}F_\xi^{\Omega_1^\pm}$ in all three cases above. Consequently, by taking $\Omega'\defeq \Omega_\natural^-$, it suffices to check the conditions in the definition of constructible $\Lambda$-lifts of type $\rm{II}$ for the balanced pair $\Omega_\natural^\pm$. If $\Omega_\natural^\pm$ is not a $\Lambda$-lift, then $F_\xi^{\Omega_\natural^\pm}\in \cO_{\xi,\Lambda}^{<\gamma}$ thanks to Lemma~\ref{lem: union of lifts}. If $\Omega_\natural^\pm$ is a $\Lambda$-lift, then Conditions~\rm{II}-\ref{it: II 1}, \rm{II}-\ref{it: II 2}, \rm{II}-\ref{it: II 3}, and \rm{II}-\ref{it: II 9} are true by our assumption on $\Omega_\natural^\pm$. Conditions~\rm{II}-\ref{it: II 10} and \rm{II}-\ref{it: II 11} hold for $\Omega_\natural^\pm$ as $u_j(i_\natural)>u_j(i_{\Omega^+,1})$ and $u_j(i_{\psi_1}^{1,e_1})<u_j(i_{\Omega^+,1})$. If $\Omega_\natural^\pm$ fails Condition~\rm{II}-\ref{it: II 6}, then we deduce $F_\xi^{\Omega_\natural^\pm}\in \cO_{\xi,\Lambda}^{<\gamma}$ from Lemma~\ref{lem: reduction to separated lift}. If $\Omega_\natural^\pm$ fails either Condition~\rm{II}-\ref{it: II 4} or Condition~\rm{II}-\ref{it: II 5}, there exist $1\leq e_1'\leq e_1-1$ and $i_\natural'\in\mathbf{n}$ such that
\begin{itemize}
\item $i_\natural',i_{\psi_1}^{1,e_1'}\in[m_\natural']_\xi$ for some $1\leq m_\natural'\leq r_\xi$;
\item one of the following holds:
      \begin{itemize}
      \item $i_\natural'=i_\sharp$;
      \item $((i_\al,i_\natural'),j),\,((i_\natural',i_\sharp),j)\in\widehat{\Lambda}$;
      \item $((i_\flat,i_\natural'),j),\,((i_\natural',i_\al'),j)\in\widehat{\Lambda}$;
      \end{itemize}
\item $u_j(i_\natural')>u_j(i_{\Omega^+,1})$,
\end{itemize}
which clearly contradicts the minimality of the choice of $e_1$. Condition~\rm{II}-\ref{it: II 7} holds as we have $$u_j(i_{\psi_1}^{1,e})\leq u_j(i_{\psi_1}^{1,1})<u_j(i_{\Omega^+,1})<u_j(i_\natural)\leq u_j(i_{\Omega_{\natural,(\al',j)},c})$$ for each $1\leq e\leq e_{\psi_1,1}$, $(\al',j)\in\widehat{\Omega}_\natural\setminus\{((i_{\psi_1}^{1,e_1},i_\al^\prime),j)\}$ and $0\leq c\leq \#\Omega_{\natural,(\al',j)}$. If $\Omega_\natural^\pm$ fails Condition~\rm{II}-\ref{it: II 8}, then we deduce $F_\xi^{\Omega_\natural^\pm}\in \cO_{\xi,\Lambda}^{<\gamma}$ from Lemma~\ref{lem: reduce non sep to smaller norm} (using the fact $\gamma_\psi<\gamma$ for each $\psi\in \{(\Omega_{(\al',j)}^-,\Lambda)\mid (\al',j)\in\widehat{\Omega}^-\}$). If $\Omega_\natural^\pm$ satisfies all the conditions from Condition~\rm{II}-\ref{it: II 1} to Condition~\rm{II}-\ref{it: II 8}, then it is clearly a constructible $\Lambda$-lift of type $\rm{II}$.

\textbf{Case~B:} If such $e_1$ does not exist (for example if $e_{\psi_1,1}=0$), we check the conditions in the definition of constructible $\Lambda$-lifts of type $\rm{II}$ for the balanced pair $\Omega^\pm$. If $\Omega^\pm$ is not a $\Lambda$-lift, then $F_\xi^{\Omega^\pm}\in \cO_{\xi,\Lambda}^{<\gamma}$ thanks to Lemma~\ref{lem: union of lifts}. If $\Omega^\pm$ is a $\Lambda$-lift, then Conditions~\rm{II}-\ref{it: II 1}, \rm{II}-\ref{it: II 2}, \rm{II}-\ref{it: II 3}, and \rm{II}-\ref{it: II 9} are true by our assumption on $\Omega^\pm$. Conditions~\rm{II}-\ref{it: II 10} and \rm{II}-\ref{it: II 11} hold for $\Omega^\pm$ as $u_j(i_\sharp)>u_j(i_\flat)>u_j(i_{\Omega^+,1})$, $((i_\flat,i_\al^\prime),j)\in \widehat{\Omega}^-$ and $u_j(i_{\Omega^-,1})=u_j(i_{\Omega_\flat,1})>u_j(i_{\Omega^+,1})$. If $\Omega^\pm$ fails Condition~\rm{II}-\ref{it: II 6}, then we deduce $F_\xi^{\Omega^\pm}\in \cO_{\xi,\Lambda}^{<\gamma}$ from Lemma~\ref{lem: reduction to separated lift}. Conditions~\rm{II}-\ref{it: II 4} and \rm{II}-\ref{it: II 5} hold for $\Omega^\pm$ due to the non-existence of $e_1$. Condition~\rm{II}-\ref{it: II 7} holds as we have $$u_j(i_{\psi_1}^{1,e})\leq u_j(i_{\psi_1}^{1,1})<u_j(i_{\Omega^+,1})<u_j(i_\sharp)\leq u_j(i_{\Omega^-_{(\al',j)},c})$$ for each $1\leq e\leq e_{\psi_1,1}$, $(\al',j)\in\widehat{\Omega}_\sharp$ and $0\leq c\leq \#\Omega^-_{(\al',j)}$. If $\Omega^\pm$ fails Condition~\rm{II}-\ref{it: II 8}, then we deduce $F_\xi^{\Omega^\pm}\in \cO_{\xi,\Lambda}^{<\gamma}$ from Lemma~\ref{lem: reduce non sep to smaller norm} (using the fact $\gamma_\psi<\gamma$ for each $\psi\in \{(\Omega_{(\al',j)}^-,\Lambda)\mid (\al',j)\in\widehat{\Omega}^-\}$). Finally, if $\Omega^\pm$ satisfies all the conditions from Condition~\rm{II}-\ref{it: II 1} to Condition~\rm{II}-\ref{it: II 8}, then it is clearly a constructible $\Lambda$-lift of type $\rm{II}$. The proof is thus finished.
\end{proof}

\begin{prop}\label{prop: maximal vers exceptional: non ord}
Let $(\al,j)$ be an element of $\widehat{\Lambda}\cap\mathrm{Supp}_{\xi,\cJ}^\gamma$, and $\Omega^\pm$ be a balanced pair such that
\begin{itemize}
\item $\Omega^+\in\mathbf{D}_{(\al,j),\Lambda}$ is $\Lambda$-exceptional and $\Lambda$-ordinary;
\item $\Omega_{(\al,j),\Lambda}^{\rm{max}}$ is not $\Lambda$-ordinary and $\Omega^-=(\Omega_{(\al,j),\Lambda}^{\rm{max}})_\dagger$.
\end{itemize}
Then one of the following holds:
\begin{itemize}
\item $F_\xi^{\Omega^\pm}\in\cO_{\xi,\Lambda}^{<\gamma}$;
\item there exists a pseudo $\Lambda$-decomposition $\Omega'$ of $(\al,j)$ such that
      \begin{itemize}
      \item the balanced pair $\Omega^+,\Omega'$ is a constructible $\Lambda$-lift of type $\rm{II}$;
      \item $F_\xi^{\Omega_0^\pm}\in\cO_{\xi,\Lambda}^{<\gamma}$ for the balanced pair $\Omega_0^\pm$ defined by $\Omega_0^+\defeq \Omega'$ and $\Omega_0^-\defeq \Omega^-$.
      \end{itemize}
\end{itemize}
In particular, we have $F_\xi^{\Omega^\pm}\in\cO_{\xi,\Lambda}^{\rm{con}}\cdot\cO_{\xi,\Lambda}^{<\gamma}$.
\end{prop}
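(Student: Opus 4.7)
The plan is to reduce Proposition~\ref{prop: maximal vers exceptional: non ord} to a case analysis that invokes Lemmas~\ref{lem: special case 1} and~\ref{lem: special case 2}, since these already handle two complementary regimes with the hypothesis $\Omega^-=(\Omega_{(\al,j),\Lambda}^{\rm max})_\dagger$. First, I would observe the free reduction $\Omega^+\neq\Omega_{(\al,j),\Lambda}^{\rm max}$, which holds automatically because $\Omega^+$ is $\Lambda$-ordinary while $\Omega_{(\al,j),\Lambda}^{\rm max}$ is not. Then I would unwind the dagger construction of $\Omega^-$: by~(\ref{eq: dagger}), write $\Omega^-=\Omega'_\dagger\sqcup\Omega_\dagger^\flat$, where $\Omega_\dagger^\flat=\{((i_{\psi}^{s_\dagger,e_\dagger},i_{\Omega_{\rm max},c_{\psi}^{s_\dagger}}),j)\}\sqcup\{((i_{\Omega_{\rm max},c},i_{\Omega_{\rm max},c-1}),j)\mid 1\le c\le c_\psi^{s_\dagger}\}$ is $\Lambda$-ordinary, and $\Omega'_\dagger$ is the dagger of the remainder $\{((i_{\Omega_{\rm max},c},i_{\Omega_{\rm max},c-1}),j)\mid 1+c_\dagger\le c\le \#\Omega_{(\al,j),\Lambda}^{\rm max}\}$, viewed as a pseudo $\Lambda$-decomposition of $((i_\al,i_{\Omega_{\rm max},c_\dagger}),j)$.

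Next, I would split into cases based on the position of $u_j(i_{\Omega^-,1})$ relative to $u_j(i_{\Omega^+,1})$. \emph{Case 1}: $u_j(i_{\Omega^-,1})>u_j(i_{\Omega^+,1})$. All the hypotheses of Lemma~\ref{lem: special case 2} are then satisfied (our assumptions on $\Omega^+$ and $\Omega^-$ match verbatim), and we conclude directly. \emph{Case 2}: $u_j(i_{\Omega^-,1})<u_j(i_{\Omega^+,1})$. Here $i_{\Omega^-,1}$ must come from the ``bottom'' block $\Omega_\dagger^\flat$; in particular, the unique element of $\widehat{\Omega}^-$ supported at $i_\al^\prime$ is $((i_{\Omega^-,1},i_\al^\prime),j)$, and $\Omega^-\setminus\{((i_{\Omega^-,1},i_\al^\prime),j)\}$ is then a pseudo $\Lambda$-decomposition of some $((i_\al,i_\sharp),j)\in\widehat{\Lambda}$ with $u_j(i_\sharp)\ge u_j(i_{\Omega_{\rm max},c_\dagger})>u_j(i_{\Omega^+,1})$. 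The inequality for $i_\sharp$ uses the minimality built into the definition of $c_\dagger,s_\dagger,e_\dagger$ together with the fact that $\Omega^+$ is $\Lambda$-exceptional and $\Lambda$-ordinary (so that no interior point of $\Omega^+$ can sit in the Levi block $[m_\dagger]_\xi$, forcing $i_\sharp$ to remain above $u_j(i_{\Omega^+,1})$). With these hypotheses verified, Lemma~\ref{lem: special case 1} applies and produces the desired $\Omega'$. \emph{Case 3}: $u_j(i_{\Omega^-,1})=u_j(i_{\Omega^+,1})$, i.e.~$i_{\Omega^-,1}=i_{\Omega^+,1}$. I would reduce this to Case 1 or 2 by observing that the pair $\Omega^\pm$ shares a common first factor, which allows us to apply Lemma~\ref{lem: union of lifts} together with Lemma~\ref{lem: reduction to separated lift} to decrease the norm of the balanced pair, yielding $F_\xi^{\Omega^\pm}\in\cO_{\xi,\Lambda}^{<\gamma}$ directly.

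Throughout, whenever the hypotheses of Lemma~\ref{lem: special case 1} or Lemma~\ref{lem: special case 2} are not immediately satisfied because $\Omega_\dagger^\flat$ does not match the expected shape (e.g.~the ``first block'' of $\Omega^-$ is more intricate), I would first modify $\Omega^-$ by a sequence of $\Lambda$-modifications of level $<\gamma$ using Lemma~\ref{lem: two equivalent pairs}, thereby replacing $\Omega^-$ by a pseudo $\Lambda$-decomposition whose top piece has exactly the shape needed, at the cost of an error term in $\cO_{\xi,\Lambda}^{<\gamma}$. The resulting $\Omega'$ produced by the applicable lemma then furnishes the second bullet of the proposition, and $F_\xi^{\Omega_0^\pm}\in\cO_{\xi,\Lambda}^{<\gamma}$ follows by combining the output of the lemma with the error from the modification.

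The main obstacle will be the bookkeeping in Case 2: one must verify that the specific element $((i_{\Omega^-,1},i_\al^\prime),j)$ coming out of the dagger construction sits in $\Omega^-\cap\widehat{\Omega}^-$ (and not merely in the larger multi-set), that removing it leaves a bona fide pseudo $\Lambda$-decomposition of an element of $\widehat{\Lambda}$, and that the resulting $i_\sharp$ strictly dominates $u_j(i_{\Omega^+,1})$. This hinges on the interplay between the minimality conditions defining $c_\psi^s$ and $e_\dagger$ and the assumption that $\Omega^+$ is $\Lambda$-exceptional and $\Lambda$-ordinary, and is the point at which all the combinatorial preparation of \S~\ref{sub:comb:gen} is put to work.
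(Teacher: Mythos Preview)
Your approach is essentially the same as the paper's: split on the relative position of $u_j(i_{\Omega^-,1})$ and $u_j(i_{\Omega^+,1})$ and feed the two inequalities into Lemma~\ref{lem: special case 2} and Lemma~\ref{lem: special case 1} respectively, disposing of the equality case because the pair is then not a $\Lambda$-lift. A few points where the paper is tighter:

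\begin{itemize}
\item The initial reduction in the paper is not merely $\Omega^+\neq\Omega_{(\al,j),\Lambda}^{\rm max}$ (which, as you note, is automatic by ordinariness) but the stronger $u_j(i_{\Omega^+,1})<u_j(i_{\Omega_{(\al,j),\Lambda}^{\rm max},1})$. This is obtained by a factorization through $\Omega_{(\al,j),\Lambda}^{\rm max}$: if it fails then neither $(\Omega^+,\Omega_{(\al,j),\Lambda}^{\rm max})$ nor $(\Omega^-,\Omega_{(\al,j),\Lambda}^{\rm max})$ is a $\Lambda$-lift, whence $F_\xi^{\Omega^\pm}\in\cO_{\xi,\Lambda}^{<\gamma}$ by Lemma~\ref{lem: union of lifts}. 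With this in hand, the trichotomy drops out cleanly from the first step of the recursion in (\ref{eq: dagger}): either $c_\psi^{s_\dagger}\ge 1$ (giving $i_{\Omega^-,1}=i_{\Omega_{(\al,j),\Lambda}^{\rm max},1}$, hence the first bullet) or $c_\psi^{s_\dagger}=0$ (so $\Omega_\dagger^\flat$ is the singleton $\{((i_\psi^{s_\dagger,e_\dagger},i_\al^\prime),j)\}$, which immediately lies in $\widehat{\Omega}^-$).
\item Your Case~3 only needs Lemma~\ref{lem: union of lifts}: if $i_{\Omega^-,1}=i_{\Omega^+,1}$ then $((i_{\Omega^+,1},i_\al^\prime),j)\in\Omega^+\cap\Omega^-$, so $\Omega^\pm$ is not a $\Lambda$-lift. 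Lemma~\ref{lem: reduction to separated lift} is not invoked.
\item Your fallback step---modifying $\Omega^-$ by $\Lambda$-modifications via Lemma~\ref{lem: two equivalent pairs} when the shape is off---is not needed. The paper applies Lemmas~\ref{lem: special case 1} and~\ref{lem: special case 2} directly to the given $\Omega^-=(\Omega_{(\al,j),\Lambda}^{\rm max})_\dagger$; the dagger construction already yields the required form.
\end{itemize}

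One caution on your Case~2 justification of $u_j(i_\sharp)>u_j(i_{\Omega^+,1})$: the claim that $\Lambda$-ordinariness of $\Omega^+$ forbids interior points of $\Omega^+$ from lying in $[m_\dagger]_\xi$ is not what $\Lambda$-ordinary means (it is a disjointness condition on $I_\cJ^{\psi_1,+}$ and $I_\cJ^{\psi_1,-}$, not a statement about Levi blocks of interior points). The paper does not spell out this verification either; it is subsumed in the assertion that the trichotomy follows from the structure of the dagger, with $i_\sharp=i_{\Omega_{(\al,j),\Lambda}^{\rm max},c_\dagger}$ arising as the end of $\Omega'_\dagger$.
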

\begin{proof}
It is harmless to assume that $u_j(i_{\Omega^+,1})<u_j(i_{\Omega_{(\al,j),\Lambda}^{\rm{max}},1})$ (and thus $\Omega^+\neq \Omega_{(\al,j),\Lambda}^{\rm{max}}$), otherwise neither the balanced pair $\Omega^+,\Omega_{(\al,j),\Lambda}^{\rm{max}}$ nor the balanced pair $\Omega^-,\Omega_{(\al,j),\Lambda}^{\rm{max}}$ is a $\Lambda$-lift, which implies $F_\xi^{\Omega^\pm}\in\cO_{\xi,\Lambda}^{<\gamma}$ by Lemma~\ref{lem: union of lifts}.

It follows from the definition of $\Omega^-=(\Omega_{(\al,j),\Lambda}^{\rm{max}})_\dagger$ (cf.~(\ref{eq: dagger})) that exactly one of the following holds:
\begin{itemize}
\item $u_j(i_{\Omega^-,1})>u_j(i_{\Omega^+,1})$;
\item $i_{\Omega^-,1}=i_{\Omega^+,1}$;
\item $u_j(i_{\Omega^-,1})<u_j(i_{\Omega^+,1})$ and $((i_{\Omega^-,1},i_\al^\prime),j)\in\Omega^-\cap\widehat{\Omega}^-$.
\end{itemize}
If $u_j(i_{\Omega^-,1})>u_j(i_{\Omega^+,1})$, we conclude by applying Lemma~\ref{lem: special case 2} to the balanced pair $\Omega^\pm$. If $i_{\Omega^-,1}=i_{\Omega^+,1}$, then the balanced pair $\Omega^\pm$ is not a $\Lambda$-lift, which implies $F_\xi^{\Omega^\pm}\in\cO_{\xi,\Lambda}^{<\gamma}$ by Lemma~\ref{lem: union of lifts}. If $u_j(i_{\Omega^-,1})<u_j(i_{\Omega^+,1})$ and $((i_{\Omega^-,1},i_\al^\prime),j)\in\Omega^-\cap\widehat{\Omega}^-$, then we conclude by applying Lemma~\ref{lem: special case 1} to the balanced pair $\Omega^\pm$.
The proof is thus finished.
\end{proof}

\begin{lemma}\label{lem: special pseudo decomposition}
Let $(\al,j)$ be an element of $\widehat{\Lambda}\cap\mathrm{Supp}_{\xi,\cJ}^\gamma$ and $\Omega\in\mathbf{D}_{(\al,j),\Lambda}$. Assume that $\Omega$ is not $\Lambda$-ordinary.
Then there exists a pseudo $\Lambda$-decomposition $\Omega'$ of $(\al,j)$ such that
\begin{itemize}
\item $\Omega'$ is $\Lambda$-equivalent to $\Omega$ with level $<\gamma$;
\item $\widehat{\Omega}'\neq \{(\al,j)\}$ and $\Omega'_{(\al',j)}=\Omega_{(\al',j),\Lambda}^{\rm{max}}$ is $\Lambda$-ordinary for each $(\al',j)\in\widehat{\Omega}'$;
\item $u_j(i_{\Omega',1})\geq u_j(i_{\Omega,1})$.
\end{itemize}
\end{lemma}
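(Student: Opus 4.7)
The plan is to proceed by Noetherian induction on $\gamma \in \widehat{\Lambda}^\square$ with respect to the partial order inherited from $\Phi^+_{\GL_{r_\xi}}$. The base case (when $\gamma$ is minimal in $\widehat{\Lambda}^\square$) is vacuous: the only element of $\mathbf{D}_{(\al,j),\Lambda}$ is $\{(\al,j)\}$, which is automatically $\Lambda$-ordinary. Assume henceforth that the statement holds for all blocks strictly smaller than $\gamma$, and fix $\Omega\in \mathbf{D}_{(\al,j),\Lambda}$ which is not $\Lambda$-ordinary.

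First I would apply the ordinarization $\Omega_\dagger$ constructed in equation~\eqref{eq: dagger}. Since $\Omega$ is not $\Lambda$-ordinary, the recursive construction produces a non-trivial decomposition $\Omega_\dagger = \Omega_\dagger'\sqcup \Omega_\dagger^\flat$, so that $\widehat{\Omega}_\dagger$ contains at least two elements. Lemma~\ref{lem: reduce to ordinary} then supplies the first bullet ($\Omega_\dagger$ is $\Lambda$-equivalent to $\Omega$ with level $<\gamma$) together with the fact that each $\Omega_{\dagger,(\al',j)}$ is already $\Lambda$-ordinary. Next I would upgrade each piece to be a $\Lambda$-maximum: for $(\al',j)\in\widehat{\Omega}_\dagger$, write $\gamma_{(\al',j)}\in\widehat{\Lambda}^\square$ for its image, so $\gamma_{(\al',j)}<\gamma$. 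If $\Omega^{\max}_{(\al',j),\Lambda}$ happens to be $\Lambda$-ordinary, replace $\Omega_{\dagger,(\al',j)}$ by it using a $\Lambda$-modification of level $\gamma_{(\al',j)}<\gamma$; otherwise apply the inductive hypothesis at block $\gamma_{(\al',j)}$ to $\Omega^{\max}_{(\al',j),\Lambda}$ to obtain a pseudo $\Lambda$-decomposition of $(\al',j)$ satisfying all three bullets, and substitute it for $\Omega_{\dagger,(\al',j)}$. Concatenating these substitutions yields the candidate pseudo $\Lambda$-decomposition $\Omega'$ of $(\al,j)$ verifying bullets (i) and (ii).

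The third bullet is the main obstacle. Let $(\al'',j)\in\widehat{\Omega}'$ be the unique piece with $i_{\al''}^\prime=i_\al^\prime$; then $i_{\Omega',1}=i_{\Omega'_{(\al'',j)},1}$. When the integer $c_\psi^{s_\dagger}$ appearing in~\eqref{eq: dagger} satisfies $c_\psi^{s_\dagger}\ge 1$, the edge $((i_{\Omega,1},i_\al^\prime),j)$ is preserved in $\Omega_\dagger^\flat$, so $i_{\Omega_{\dagger,(\al'',j)},1}=i_{\Omega,1}$; the partial order comparison $\Omega_{\dagger,(\al'',j)}\le \Omega^{\max}_{(\al'',j),\Lambda}$ (Definition~\ref{def: partial order}), together with the third bullet of the inductive hypothesis, then transfers the desired inequality $u_j(i_{\Omega',1})\ge u_j(i_{\Omega,1})$. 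The delicate case is $c_\psi^{s_\dagger}=0$, in which the bottom edge of $\Omega_\dagger^\flat$ becomes $((i_\psi^{s_\dagger,e_\dagger},i_\al^\prime),j)$, potentially with $u_j(i_\psi^{s_\dagger,e_\dagger})<u_j(i_{\Omega,1})$. Here I would re-examine the choice of the triple $(c_\dagger,s_\dagger,e_\dagger)$: if any other non-ordinarity triple $(c,s,e)$ of $\psi$ satisfies $c\ge 2$, I would use it instead, since the resulting $c_\psi^s\ge 1$ falls into the previous case. Otherwise, every non-ordinarity pair forces $c_\psi^{s}=0$ for all admissible $s$; in that situation the maximum $\Omega^{\max}_{((i_{\Omega,1},i_\al^\prime),j),\Lambda}$ (which exists because $((i_{\Omega,1},i_\al^\prime),j)\in\Lambda\subseteq\widehat{\Lambda}$) can be placed as the bottom piece of $\Omega'$, automatically giving $u_j(i_{\Omega',1})\ge u_j(i_{\Omega,1})$ by maximality, and the compatibility of this replacement with the level-$<\gamma$ constraint is then verified via a separate $\Lambda$-modification argument. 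Handling this bookkeeping cleanly, and ensuring it meshes with the promotion step above, is where I expect the bulk of the technical work to lie.
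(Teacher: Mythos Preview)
Your inductive framework and the handling of the case $c_\psi^{s_\dagger}\geq 1$ are fine, but the treatment of the case $c_\psi^{s_\dagger}=0$ has a real gap. The fallbacks you propose do not work as stated. First, the integers $c_\psi^s$ are determined by $\psi$ alone, not by the choice of non-ordinarity triple, and the construction of $\Omega_\dagger$ in~\eqref{eq: dagger} requires $s_\dagger$ maximal precisely so that $\Omega_\dagger^\flat$ is $\Lambda$-ordinary; switching to a smaller $s$ destroys this property. Second, placing $\{((i_{\Omega,1},i_\al'),j)\}$ at the bottom forces the remaining ``top'' piece to be a pseudo $\Lambda$-decomposition of some $((i_\al,i_\heartsuit),j)$ with $i_\heartsuit$ in the same $[m]_\xi$-block as $i_{\Omega,1}$, and you have not explained how to produce such a piece while keeping every $\widehat{\Omega}'$-component maximal and $\Lambda$-ordinary.

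The paper avoids this difficulty by never passing through $\Omega_\dagger$. Instead it cuts $\Omega$ at the index $c_\dagger$ (not at $c_\psi^{s_\dagger}$): setting $\beta\defeq(i_{\Omega,c_\dagger},i_\al')$, the bottom block is $\Omega_1\defeq\Omega_{(\beta,j),\Lambda}^{\max}$. Since $c_\dagger\geq 1$, the original tail $\{((i_{\Omega,c},i_{\Omega,c-1}),j)\mid 1\leq c\leq c_\dagger\}$ is itself a $\Lambda$-decomposition of $(\beta,j)$ with first index $i_{\Omega,1}$, so maximality gives $u_j(i_{\Omega_1,1})\geq u_j(i_{\Omega,1})$ immediately, with no case split on $c_\psi^{s_\dagger}$. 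One then applies the inductive hypothesis to $\Omega_1$ (if not $\Lambda$-ordinary) to obtain $\Omega_1'$, and sets the top part to $(\Omega_{((i_\al,i_\psi^{s_\dagger,e_\dagger}),j),\Lambda}^{\max})_\dagger$; the junction is a valid pseudo $\Lambda$-decomposition because $i_{\Omega,c_\dagger}$ and $i_\psi^{s_\dagger,e_\dagger}$ lie in the common block $[m_\dagger]_\xi$. The missing idea in your argument is exactly this: split at $c_\dagger$ rather than at the place dictated by $\Omega_\dagger$, so that the bottom piece always sees the original bottom edge of $\Omega$.
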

\begin{proof}
We argue by induction on the block $\gamma$.
As $\Omega$ is not $\Lambda$-ordinary, we consider $1\leq c_\dagger\leq \#\Omega-1$, $1\leq s_\dagger\leq d_\psi$ (with $\psi=(\Omega,\Lambda)$) and $1\leq e_\dagger\leq e_{\psi,s_\dagger}$ as defined at (\ref{eq: dagger}). %
We set $\beta\defeq (i_{\Omega,c_\dagger},i_\al^\prime)$ and
$$ \Omega_1\defeq\Omega_{(\beta,j),\Lambda}^{\rm{max}}\in\mathbf{D}_{(\beta,j),\Lambda}.$$
Note that we must have $u_j(i_{\Omega_{(\al,j),\Lambda}^{\rm{max}},1})\geq u_j(i_{\Omega_1,1})\geq u_j(i_{\Omega,1})$. We write $\gamma_1$ for the image of $(\beta,j)$ under $\widehat{\Lambda}\twoheadrightarrow\widehat{\Lambda}^\square$. It is obvious that $\gamma_1<\gamma$.

Note that $\Omega_1$ is either $\Lambda$-exceptional or $\Lambda$-extremal. If $\Omega_1$ is $\Lambda$-ordinary, we set $\Omega_1'\defeq \Omega_1$. If $\Omega_1$ is not $\Lambda$-ordinary, we may apply our inductive assumption to $\Omega_1$ (as $\gamma_1<\gamma$) and obtain a pseudo $\Lambda$-decomposition $\Omega_1'$ of $(\beta,j)$ that satisfies
\begin{itemize}
\item $\Omega_1'$ is $\Lambda$-equivalent to $\Omega_1$ with level $<\gamma_1$;
\item $\widehat{\Omega}_1'\neq \{(\beta,j)\}$ and $\Omega'_{(\al',j)}=\Omega_{(\al',j),\Lambda}^{\rm{max}}$ is $\Lambda$-ordinary for each $(\al',j)\in\widehat{\Omega}_1'$;
\item $u_j(i_{\Omega_1',1})\geq u_j(i_{\Omega_1,1})$.
\end{itemize}
Then we define
$$\Omega'\defeq (\Omega_{((i_\al,i_{\psi_1}^{s_\dagger,e_\dagger}),j),\Lambda}^{\rm{max}})_\dagger\sqcup \Omega_1'$$
We can clearly deduce from $u_j(i_{\Omega_1,1})\geq u_j(i_{\Omega,1})$, $i_{\Omega',1}=i_{\Omega_1',1}$ and $u_j(i_{\Omega_1',1})\geq u_j(i_{\Omega_1,1})$ that $u_j(i_{\Omega',1})\geq u_j(i_{\Omega,1})$, which implies that $\Omega'$ satisfies the desired conditions. In all, the proof is finished by an induction on $\gamma$.
\end{proof}

\begin{lemma}\label{lem: special case 3}
Let $(\al,j)$ be an element of $\widehat{\Lambda}\cap\mathrm{Supp}_{\xi,\cJ}^\gamma$, and let $\Omega^\pm$ be a balanced pair such that
\begin{itemize}
\item $\Omega^-=\Omega_{(\al,j),\Lambda}^{\rm{max}}$ is $\Lambda$-exceptional and $\Lambda$-ordinary with $e_{\psi_2,1}\geq 1$, where $\psi_2=(\Omega^-,\Lambda)$;
\item $\Omega^+\in\mathbf{D}_{(\al,j),\Lambda}$ and $i_{\Omega^+,1}=i_{\psi_2}^{1,e^+}$ for some $1\leq e^+\leq e_{\psi_2,1}$.
\end{itemize}
Then one of the following holds:
\begin{itemize}
\item $F_\xi^{\Omega^\pm}\in\cO_{\xi,\Lambda}^{<\gamma}$;
\item there exists $\Omega'\in\mathbf{D}_{(\al,j),\Lambda}$ which is $\Lambda$-equivalent to $\Omega^+$ with level~$<\gamma$ such that $i_{\Omega',1}=i_{\psi_2}^{1,1}$ and the balanced pair $\Omega',\Omega^-$ is a constructible $\Lambda$-lift of type $\rm{I}$;
\item there exists a pseudo $\Lambda$-decomposition $\Omega'$ of $(\al,j)$ which is $\Lambda$-equivalent to $\Omega^+$ with level~$<\gamma$ such that $i_{\Omega',1}=i_{\psi_2}^{1,1}$ and the balanced pair $\Omega^-,\Omega'$ is a constructible $\Lambda$-lift of type $\rm{II}$.
\end{itemize}
In particular, we always have $F_\xi^{\Omega^\pm}\in\cO_{\xi,\Lambda}^{\rm{con}}\cdot \cO_{\xi,\Lambda}^{<\gamma}$.
\end{lemma}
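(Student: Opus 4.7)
The plan is to follow the same strategy as Lemmas~\ref{lem: special case 1} and \ref{lem: special case 2}: construct a candidate $\Omega'$ from $\Omega^+$, then either verify that $(\Omega',\Omega^-)$ satisfies all the conditions of a constructible $\Lambda$-lift of type~$\mathrm{I}$ (if $\Omega'$ turns out to be an honest $\Lambda$-decomposition of $(\al,j)$) or that $(\Omega^-,\Omega')$ satisfies those of type~$\mathrm{II}$ (if $\Omega'$ is only a pseudo $\Lambda$-decomposition), or use the reduction lemmas~\ref{lem: union of lifts}, \ref{lem: reduction to separated lift}, \ref{lem: reduce non ord to smaller norm}, \ref{lem: reduce non ord to smaller norm II}, \ref{lem: reduce non sep to smaller norm} to deduce $F_\xi^{\Omega^\pm}\in \cO_{\xi,\Lambda}^{<\gamma}$. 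First I would apply Lemma~\ref{lem: reduce to extremal} and, if needed, the ordinarization construction behind Proposition~\ref{prop: maximal vers exceptional: non ord} (combined with Lemma~\ref{lem: reduce to ordinary}) to replace $\Omega^+$ by a $\Lambda$-equivalent (at level $<\gamma$) decomposition that is $\Lambda$-exceptional or $\Lambda$-extremal, and $\Lambda$-ordinary; by Lemma~\ref{lem: two equivalent pairs} this is harmless. The $\Lambda$-exceptional structure of $\Omega^-$ (with $e_{\psi_2,1}\ge 1$) forces the new first index either to remain of the form $i_{\psi_2}^{1,e}$ for some $1\le e\le e_{\psi_2,1}$ (still denoted by $e^+$), or to jump above $i_{\Omega^-,1}$, in which case the pair $\Omega^+,\Omega^-$ is no longer a $\Lambda$-lift and Lemma~\ref{lem: union of lifts} puts us in case~(a).

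The candidate $\Omega'$ is built by ``climbing'' along the chain $\{((i_{\psi_2}^{1,e-1},i_{\psi_2}^{1,e}),j)\}_{2\le e\le e^+}\subseteq \widehat{\Lambda}$: the key facts are that each such element lies in $\widehat{\Lambda}$, and that $((i_{\psi_2}^{1,e},i_\al^\prime),j)\in \Lambda$ for every $1\le e\le e_{\psi_2,1}$. Concretely, write $\Omega^+=\{((i_{\psi_2}^{1,e^+},i_\al^\prime),j)\}\sqcup \Omega^+_\flat$ with $\Omega^+_\flat\in \mathbf{D}_{((i_\al,i_{\psi_2}^{1,e^+}),j),\Lambda}$, and set
\[
\Omega' \defeq \{((i_{\psi_2}^{1,1},i_\al^\prime),j)\}\sqcup \Omega^+_\flat \sqcup \bigsqcup_{e=2}^{e^+} \Omega_e,
\]
where $\Omega_e$ is either the single element $((i_{\psi_2}^{1,e-1},i_{\psi_2}^{1,e}),j)$ when this lies in $\Lambda$, or a $\Lambda$-decomposition of it extracted from $(\Omega_{((i_{\psi_2}^{1,e-1},i_{\psi_2}^{1,e}),j),\Lambda}^{\rm max})_{\dagger}$ otherwise. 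If every $\Omega_e$ can be chosen as a genuine $\Lambda$-decomposition of its target root, the resulting $\Omega'$ lies in $\mathbf{D}_{(\al,j),\Lambda}$ with $i_{\Omega',1}=i_{\psi_2}^{1,1}$ and we aim for outcome~(b) of type~$\mathrm{I}$; otherwise $\Omega'$ is only a pseudo $\Lambda$-decomposition of $(\al,j)$ with $\widehat{\Omega}'\neq \{(\al,j)\}$ and we aim for outcome~(c) of type~$\mathrm{II}$. In both sub-cases, Lemma~\ref{lem: special equivalent lift} and Lemma~\ref{lem: two equivalent pairs} show that $\Omega'$ is $\Lambda$-equivalent to $\Omega^+$ with level $<\gamma$, and that the residual balanced pair $\Omega_0^\pm$ satisfies $F_\xi^{\Omega_0^\pm}\in \cO_{\xi,\Lambda}^{<\gamma}$.

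The main obstacle is the verification of the conditions in Definition~\ref{def: constructible lifts} for the constructed lift. For type~$\mathrm{I}$, Conditions~\ref{it: I 1}--\ref{it: I 3} are built in by construction; Conditions~\ref{it: I 5}--\ref{it: I 7} reduce to the absence of unexpected $\widehat{\Lambda}$-relations between the lifted chain and the interior points of $\Omega'$ and $\Omega^-$, and whenever one fails the corresponding obstruction triggers Lemma~\ref{lem: reduction to separated lift} or Lemma~\ref{lem: reduce non sep to smaller norm}; Condition~\ref{it: I 4} is controlled using the disjointness of $(w_\cJ,1)$-orbits recorded in~(\ref{equ: pair of disjoint orbits}), which prevents overlap between the orbits attached to $\{i_{\psi_2}^{s,e}\}$ and those attached to $\{i_{\psi_1}^{s',e'}\}$ since $\Omega^-$ is $\Lambda$-ordinary, failures being handled by Lemma~\ref{lem: reduce non ord to smaller norm} or Lemma~\ref{lem: reduce non ord to smaller norm II}. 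For type~$\mathrm{II}$, Conditions~\ref{it: II 1}--\ref{it: II 3} are immediate, Conditions~\ref{it: II 9}--\ref{it: II 11} are precisely what enforces $i_{\Omega',1}=i_{\psi_2}^{1,1}$ and is the reason the climbing must be carried all the way to $e=1$, while Conditions~\ref{it: II 4}--\ref{it: II 8} are the delicate ones, requiring the same orbit-disjointness and non-relation arguments as for type~$\mathrm{I}$ together with a more intricate case analysis on the relative positions of the chains $\{i_{\psi_2}^{1,e}\}_{e}$ and $\{i_{\psi_1}^{s,e}\}_{s,e}$ (where $\psi_1=(\Omega^-,\Lambda)$ in the type~$\mathrm{II}$ labeling). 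Each failure of one of these ``fine'' conditions yields $F_\xi^{\Omega^\pm}\in \cO_{\xi,\Lambda}^{<\gamma}$ via the appropriate reduction lemma, completing the proof.
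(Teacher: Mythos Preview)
Your ``climbing'' construction of $\Omega'$ is wrong: the roots in
\[
\Omega' = \{((i_{\psi_2}^{1,1},i_\al^\prime),j)\}\sqcup \Omega^+_\flat \sqcup \bigsqcup_{e=2}^{e^+} \Omega_e
\]
sum to $(i_{\psi_2}^{1,1},i_\al') + (i_\al,i_{\psi_2}^{1,e^+}) + (i_{\psi_2}^{1,1},i_{\psi_2}^{1,e^+}) = \al + 2(i_{\psi_2}^{1,1},i_{\psi_2}^{1,e^+})$, not to $\al$. Since by~(\ref{equ: pair of disjoint orbits}) the elements $i_{\psi_2}^{1,1}$ and $i_{\psi_2}^{1,e^+}$ lie in different $[m]_\xi$, the discrepancy is nonzero even at the block level, so $\Omega'$ is neither a $\Lambda$-decomposition nor a pseudo $\Lambda$-decomposition of $(\al,j)$, and $(\Omega',\Omega^-)$ is not even balanced. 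The problem is directional: to get $i_{\Omega',1}=i_{\psi_2}^{1,1}$ you must \emph{shrink} the top part to a decomposition of $((i_\al,i_{\psi_2}^{1,1}),j)$, not enlarge $\Omega^+_\flat$ by appending the chain $\{((i_{\psi_2}^{1,e-1},i_{\psi_2}^{1,e}),j)\}$.

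The paper's argument is both simpler and avoids this pitfall. The reduction to $e^+=1$ is a single application of Lemma~\ref{lem: reduction to separated lift}: any balanced pair $\Omega_0^\pm$ of $\Lambda$-decompositions of $(\al,j)$ with $i_{\Omega_0^+,1}=i_{\psi_2}^{1,1}$ and $i_{\Omega_0^-,1}=i_{\psi_2}^{1,e^+}$ has $F_\xi^{\Omega_0^\pm}\in\cO_{\xi,\Lambda}^{<\gamma}$ because $((i_{\psi_2}^{1,1},i_{\psi_2}^{1,e^+}),j)\in\widehat{\Lambda}$ connects two distinct $\Lambda^\square$-intervals. After this, one first replaces $\Omega^+$ by $\Omega_{((i_\al,i_{\psi_2}^{1,1}),j),\Lambda}^{\rm max}\sqcup\{((i_{\psi_2}^{1,1},i_\al'),j)\}$, so that if $\Omega^+$ is neither $\Lambda$-exceptional nor $\Lambda$-extremal, Lemma~\ref{lem: reduce to extremal} forces $i_{\Omega_\star,1}=i_{\Omega^-,1}$ and one lands in $\cO_{\xi,\Lambda}^{<\gamma}$. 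The remaining bifurcation is then by $\Lambda$-ordinarity of $\Omega^+$, not by whether the chain elements lie in $\Lambda$: if $\Omega^+$ is $\Lambda$-ordinary one checks type~$\mathrm{I}$ directly for $(\Omega^+,\Omega^-)$ (Conditions~\ref{it: I 4} and~\ref{it: I 7} are immediate from the inequality $u_j(i_{\psi_2}^{1,e})\le u_j(i_{\psi_2}^{1,1})=u_j(i_{\Omega^+,1})$, not from orbit-disjointness); if not, one invokes Lemma~\ref{lem: special pseudo decomposition} to produce a pseudo $\Lambda$-decomposition $\Omega_1^-$ with $i_{\Omega_1^-,1}=i_{\psi_2}^{1,1}$ and checks type~$\mathrm{II}$ for $(\Omega^-,\Omega_1^-)$.
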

\begin{proof}
Note that $e_{\psi_2,1}\geq 1$ is the same as saying $\#\mathbf{D}_{(\al,j),\Lambda}\geq 2$ in this case. Replacing $\Omega^+$ with
$$\Omega_{((i_\al,i_{\Omega^+,1}),j),\Lambda}^{\rm{max}}\sqcup\{((i_{\Omega^+,1},i_\al^\prime),j)\}$$
if necessary, it is harmless to assume from now on that $\Omega_{((i_\al,i_{\Omega^+,1}),j),\Lambda}^{\rm{max}}\subseteq\Omega^+$. If we consider a balanced pair $\Omega_0^\pm$ with both $\Omega_0^+$ and $\Omega_0^-$ being $\Lambda$-decomposition of $(\al,j)$ satisfying $i_{\Omega_0^+,1}=i_{\psi_2}^{1,1}$ and $i_{\Omega_0^-,1}=i_{\psi_2}^{1,e^+}$, then we deduce $F_\xi^{\Omega_0^\pm}\in \cO_{\xi,\Lambda}^{<\gamma}$ from Lemma~\ref{lem: reduction to separated lift} and the fact $((i_{\psi_2}^{1,1},i_{\psi_2}^{1,e^+}),j)\in\widehat{\Lambda}$ if $e^+>1$. Consequently, we may assume in the rest of the proof that $e^+=1$.

If $\Omega^+$ is neither $\Lambda$-exceptional nor $\Lambda$-extremal, then it follows from Lemma~\ref{lem: reduce to extremal} that there exists $\Omega_\star\in\mathbf{D}_{(\al,j),\Lambda}$ such that
\begin{itemize}
\item $\Omega_\star$ is $\Lambda$-equivalent to $\Omega^+$ with level $<\gamma$;
\item $\Omega^+<\Omega_\star$;
\item $\Omega_\star$ is either $\Lambda$-exceptional or $\Lambda$-extremal.
\end{itemize}
It follows from $i_{\Omega^+,1}=i_{\psi_2}^{1,1}$, $\Omega^-=\Omega_{(\al,j),\Lambda}^{\rm{max}}$, $\Omega_{((i_\al,i_{\Omega^+,1}),j),\Lambda}^{\rm{max}}\subseteq\Omega^+$ and $\Omega^+<\Omega_\star$ that we must have $i_{\Omega_\star,1}=i_{\Omega^-,1}=i_{\Omega_{(\al,j),\Lambda}^{\rm{max}},1}$. Hence $\Omega^-$ is $\Lambda$-equivalent to $\Omega_\star$ (and thus $\Omega^+$ as well) with level $<\gamma$, which implies $F_\xi^{\Omega^\pm}\in\cO_{\xi,\Lambda}^{<\gamma}$. Consequently, we may assume that $\Omega^+$ is either $\Lambda$-exceptional or $\Lambda$-extremal in the rest of the proof.

If $\Omega^+$ is $\Lambda$-ordinary, then we check the conditions in the definition of constructible $\Lambda$-lifts of type $\rm{I}$ for the balanced pair $\Omega^\pm$. We write $\psi_1=(\Omega^+,\Lambda)$ for short. If $\Omega^\pm$ is not a $\Lambda$-lift, then $F_\xi^{\Omega^\pm}\in \cO_{\xi,\Lambda}^{<\gamma}$ thanks to Lemma~\ref{lem: union of lifts}. If $\Omega^\pm$ is a $\Lambda$-lift, then Conditions~\rm{I}-\ref{it: I 1}, \rm{I}-\ref{it: I 2}, and \rm{I}-\ref{it: I 3} are true by our assumption on $\Omega^\pm$. We can also deduce Conditions~\rm{I}-\ref{it: I 4} and \rm{I}-\ref{it: I 7} from the fact $u_j(i_{\psi_2}^{1,e})\leq u_j(i_{\psi_2}^{1,1})=u_j(i_{\Omega^+,1})<u_j(i_{\Omega^-,1})$ for each $1\leq e\leq e_{\psi_2,1}$. If $\Omega^+$ is $\Lambda$-exceptional, Condition~\rm{I}-\ref{it: I 6} holds for $\Omega^\pm$ as we have $$u_j(i_{\psi_1}^{1,e})<u_j(i_{\Omega^+,1})<u_j(i_{\Omega^-,1})\leq u_j(i_{\Omega^-,c})$$ for each $1\leq e\leq e_{\psi_1,1}$ and $1\leq c\leq \#\Omega^--1$. If $\Omega^+$ is not $\Lambda$-exceptional (and thus $\Lambda$-extremal by previous assumption) and fails Condition~\rm{I}-(\ref{it: I 6}, then we deduce $F_\xi^{\Omega^\pm}\in \cO_{\xi,\Lambda}^{<\gamma}$ from Lemma~\ref{lem: reduce non sep to smaller norm}. If $\Omega^\pm$ fails Condition~\rm{I}-\ref{it: I 5}, then $F_\xi^{\Omega^\pm}\in \cO_{\xi,\Lambda}^{<\gamma}$ thanks to Lemma~\ref{lem: reduction to separated lift}. If $\Omega^\pm$ satisfies all the conditions from \rm{I}-\ref{it: I 1}to \rm{I}-\ref{it: I 7}, then it is a constructible $\Lambda$-lift of type $\rm{I}$.

If $\Omega^+$ is not $\Lambda$-ordinary, we apply Lemma~\ref{lem: special pseudo decomposition} to $\Omega^+$ and define $\Omega_1^-$ as the pseudo $\Lambda$-decomposition of $(\al,j)$ associated with $\Omega^+$ as in Lemma~\ref{lem: special pseudo decomposition}.
In particular, $\Omega_1^-$ satisfies the following conditions
\begin{itemize}
\item $\Omega_1^-$ is $\Lambda$-equivalent to $\Omega^+$ with level $<\gamma$;
\item $\widehat{\Omega}_1^-\neq \{(\al,j)\}$ and $\Omega_{1,(\al',j)}^-=\Omega_{(\al',j),\Lambda}^{\rm{max}}$ is $\Lambda$-ordinary for each $(\al',j)\in\widehat{\Omega}_1^-$;
\item $u_j(i_{\Omega_1^-,1})\geq u_j(i_{\Omega^+,1})$.
\end{itemize}
If $u_j(i_{\Omega_1^-,1})>u_j(i_{\Omega^+,1})$, then we must have $i_{\Omega_1^-,1}=i_{\Omega^-,1}$, and thus the balanced pair $\Omega_1^-,\Omega^-$ is not a $\Lambda$-lift, which implies $F_\xi^{\Omega^\pm}\in\cO_{\xi,\Lambda}^{<\gamma}$. Hence, we may assume from now on that $i_{\Omega_1^-,1}=i_{\Omega^+,1}=i_{\psi_2}^{1,1}$ and check the conditions in the definition of constructible $\Lambda$-lifts of type $\rm{II}$ for the balanced pair $\Omega_1^\pm$ defined by $\Omega_1^+\defeq \Omega^-$ and $\Omega_1^-$. If $\Omega_1^\pm$ is not a $\Lambda$-lift, then $F_\xi^{\Omega_1^\pm}\in \cO_{\xi,\Lambda}^{<\gamma}$ thanks to Lemma~\ref{lem: union of lifts}. If $\Omega_1^\pm$ is a $\Lambda$-lift, then Conditions~\rm{II}-\ref{it: II 1}, \rm{II}-\ref{it: II 2}, \rm{II}-\ref{it: II 3}, \rm{II}-\ref{it: II 9}, \rm{II}-\ref{it: II 10}, and \rm{II}-\ref{it: II 11} are true by our assumption on $\Omega_1^\pm$.
If $\Omega_1^\pm$ fails Condition~\rm{II}-\ref{it: II 6}, then we deduce $F_\xi^{\Omega_1^\pm}\in \cO_{\xi,\Lambda}^{<\gamma}$ from Lemma~\ref{lem: reduction to separated lift}. Conditions~\rm{II}-\ref{it: II 4}, \rm{II}-\ref{it: II 5} and \rm{II}-\ref{it: II 7} hold for $\Omega_1^\pm$ as we have $$u_j(i_{\psi_2}^{1,e})\leq u_j(i_{\psi_2}^{1,1})=u_j(i_{\Omega^+,1})=u_j(i_{\Omega_1^-,1})<u_j(i_{\Omega^-,1})=u_j(i_{\Omega_1^+,1})$$ for each $1\leq e\leq e_{\psi_2,1}$ (modulo difference on notation between Definition~\ref{def: constructible lifts} and this proof).
If $\Omega_1^\pm$ fails Condition~\rm{II}-\ref{it: II 8}, then we deduce $F_\xi^{\Omega_1^\pm}\in \cO_{\xi,\Lambda}^{<\gamma}$ from Lemma~\ref{lem: reduce non sep to smaller norm} (using the fact $\gamma_\psi<\gamma$ for each $\psi\in \{(\Omega_{1,(\al',j)}^-,\Lambda)\mid (\al',j)\in\widehat{\Omega}_1^-\}$). Finally, if $\Omega_1^\pm$ satisfies all the conditions from Condition~\rm{II}-\ref{it: II 1} to Condition~\rm{II}-\ref{it: II 8}, then it is clearly a constructible $\Lambda$-lift of type $\rm{II}$. The proof is thus finished.
\end{proof}

\begin{prop}\label{prop: special type III}
Let $(\al,j)$ be an element of $\widehat{\Lambda}\cap\mathrm{Supp}_{\xi,\cJ}^\gamma$ and $\Omega^\pm$ be a balanced pair such that both $\Omega^+$ and $\Omega^-$ are pseudo $\Lambda$-decompositions of $(\al,j)$ satisfying $\widehat{\Omega}^+\neq \{(\al,j)\}\neq \widehat{\Omega}^-$.
Then one of the following holds:
\begin{itemize}
\item $F_\xi^{\Omega^\pm}\in\cO_{\xi,\Lambda}^{<\gamma}$;
\item there exists a constructible $\Lambda$-lift $\Omega_0^\pm$ of type $\rm{III}$ such that
      \begin{itemize}
      \item both $\Omega_0^+$ and $\Omega_0^-$ are pseudo $\Lambda$-decompositions of $(\al,j)$;
      \item $F_\xi^{\Omega_0^\pm}(F_\xi^{\Omega^\pm})^{-1}\in\cO_{\xi,\Lambda}^{<\gamma}$.
      \end{itemize}
\end{itemize}
In particular, we have $F_\xi^{\Omega^\pm}\in\cO_{\xi,\Lambda}^{\rm{con}}\cdot \cO_{\xi,\Lambda}^{<\gamma}$.
\end{prop}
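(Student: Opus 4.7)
The plan is to produce, from $\Omega^\pm$, a normalized balanced pair $\Omega_0^\pm$ that differs from $\Omega^\pm$ only modulo $\cO_{\xi,\Lambda}^{<\gamma}$ and is a candidate for a constructible $\Lambda$-lift of type $\rm{III}$ of the desired form; then to check Conditions $\rm{III}$-\ref{it: III 1} through $\rm{III}$-\ref{it: III 9} one by one, showing that any failure forces $F_\xi^{\Omega_0^\pm}\in \cO_{\xi,\Lambda}^{<\gamma}$ and hence also $F_\xi^{\Omega^\pm}\in \cO_{\xi,\Lambda}^{<\gamma}$.

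The normalization step will replace each piece $\Omega^+_{(\al',j)}$ (and each $\Omega^-_{(\al',j)}$) by the ``ordinarized maximal decomposition'' $(\Omega_{(\al',j),\Lambda}^{\rm{max}})_\dagger$. Each such replacement is $\Lambda$-equivalent to the original with level $<\gamma_\psi\leq\gamma$ by Lemma~\ref{lem: reduce to ordinary}, so Lemma~\ref{lem: two equivalent pairs} gives $F_\xi^{\Omega_0^\pm}(F_\xi^{\Omega^\pm})^{-1}\in \cO_{\xi,\Lambda}^{<\gamma}$. By construction $\Omega_0^\pm$ is again a balanced pair of pseudo $\Lambda$-decompositions of $(\al,j)$, satisfies Condition $\rm{III}$-\ref{it: III 1} by the hypothesis $\widehat{\Omega}^+\neq\{(\al,j)\}\neq \widehat{\Omega}^-$, satisfies Conditions $\rm{III}$-\ref{it: III 2} and $\rm{III}$-\ref{it: III 3} by the properties of $\dagger$ recorded in Lemma~\ref{lem: reduce to ordinary}, and vacuously satisfies Condition $\rm{III}$-\ref{it: III 6} because $\Omega_0^+$ and $\Omega_0^-$ are pseudo $\Lambda$-decompositions of the same element.

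The remaining conditions for $\Omega_0^\pm$ will then be checked one at a time. If $\Omega_0^\pm$ fails to be a $\Lambda$-lift, Lemma~\ref{lem: union of lifts} immediately yields $F_\xi^{\Omega_0^\pm}\in \cO_{\xi,\Lambda}^{<\gamma}$. If Condition $\rm{III}$-\ref{it: III 4} (resp.~$\rm{III}$-\ref{it: III 5}) fails, two pairs $\psi,\psi'$ in~(\ref{equ: set of pairs}) give intersecting orbit data, and the ``moreover'' clauses of Lemma~\ref{lem: reduce non ord to smaller norm} or Lemma~\ref{lem: reduce non ord to smaller norm II}, applied with $\gamma_\psi,\gamma_{\psi'}<\gamma$, show that the corresponding $F_\xi^{\Omega_{\psi,s,e}^\pm}$ lies in $\cO_{\xi,\Lambda}^{<\gamma}$, and therefore so does $F_\xi^{\Omega_0^\pm}$. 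A failure of Condition $\rm{III}$-\ref{it: III 7} produces a pair of elements that violates $\Lambda$-separatedness of $\Omega_0^+\sqcup\Omega_0^-$, whence the second assertion of Lemma~\ref{lem: reduction to separated lift} yields the reduction. Conditions $\rm{III}$-\ref{it: III 8} and $\rm{III}$-\ref{it: III 9} are handled analogously by Lemma~\ref{lem: reduce non sep to smaller norm} and Lemma~\ref{lem: reduce non ord to smaller norm} respectively, again exploiting $\gamma_\psi<\gamma$.

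The main obstacle will be the bookkeeping in this last step: after $\dagger$-normalization the decomposition $\widehat{\Omega}_0^\pm$ can be strictly finer than $\widehat{\Omega}^\pm$, so the index set~(\ref{equ: set of pairs}) typically enlarges and the various conditions must be tested over this richer collection. The saving grace is that every newly-appearing $\gamma_\psi$ is a strict sub-block of $\gamma$, so every bad configuration produced by a failed condition still falls within the scope of one of the reduction lemmas cited above. Once all nine conditions hold for $\Omega_0^\pm$, we obtain either the desired constructible $\Lambda$-lift of type $\rm{III}$ or the membership $F_\xi^{\Omega^\pm}\in \cO_{\xi,\Lambda}^{<\gamma}$, and the final assertion $F_\xi^{\Omega^\pm}\in \cO_{\xi,\Lambda}^{\rm{con}}\cdot\cO_{\xi,\Lambda}^{<\gamma}$ follows immediately.
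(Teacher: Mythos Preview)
Your approach is essentially identical to the paper's own proof: normalize via $\dagger$ applied to $\Omega_{(\al',j),\Lambda}^{\rm{max}}$ on each $\widehat{\Omega}^\pm$-piece, invoke Lemma~\ref{lem: reduce to ordinary} and Lemma~\ref{lem: two equivalent pairs} for the $\cO_{\xi,\Lambda}^{<\gamma}$-compatibility, then run through the type-$\rm{III}$ checklist with the same reduction lemmas you cite.

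One small overcomplication: Condition~$\rm{III}$-\ref{it: III 9}, like Condition~$\rm{III}$-\ref{it: III 6}, begins with the hypothesis ``if $\Omega^+$ and $\Omega^-$ are not pseudo $\Lambda$-decompositions of the same element in $\widehat{\Lambda}$''. Since in your setup both $\Omega_0^+$ and $\Omega_0^-$ \emph{are} pseudo $\Lambda$-decompositions of the same $(\al,j)$, Condition~$\rm{III}$-\ref{it: III 9} is vacuous for exactly the same reason you already noted for $\rm{III}$-\ref{it: III 6}; no appeal to Lemma~\ref{lem: reduce non ord to smaller norm} is needed there. The paper simply records that $\rm{III}$-\ref{it: III 1}, \ref{it: III 2}, \ref{it: III 3}, \ref{it: III 6}, \ref{it: III 9} ``clearly hold'' for $\Omega_0^\pm$ and then dispatches \ref{it: III 4}, \ref{it: III 5}, \ref{it: III 7}, \ref{it: III 8} via the lemmas exactly as you do.
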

\begin{proof}
First of all, as $\widehat{\Omega}^+\neq \{(\al,j)\}\neq \widehat{\Omega}^-$, we observe that $\Omega^+$ (resp.~$\Omega^-$) is clearly $\Lambda$-equivalent to $\Omega_0^+\defeq\bigsqcup_{(\al',j)\in\widehat{\Omega}^+}(\Omega_{(\al',j),\Lambda}^{\rm{max}})_\dagger$ (resp.~$\Omega_0^-\defeq \bigsqcup_{(\al',j)\in\widehat{\Omega}^+}(\Omega_{(\al',j),\Lambda}^{\rm{max}})_\dagger$) with level $<\gamma$ (using Lemma~\ref{lem: reduce to ordinary}). Hence $\Omega^+_{0,(\al',j)}=\Omega_{(\al',j),\Lambda}^{\rm{max}}$ (resp.~$\Omega^-_{0,(\al',j)}=\Omega_{(\al',j),\Lambda}^{\rm{max}}$) is $\Lambda$-ordinary for each $(\al',j)\in\widehat{\Omega}_0^+$ (resp.~for each $(\al',j)\in\widehat{\Omega}_0^-$).

We check the definition of constructible $\Lambda$-lifts of type $\rm{III}$ for the balanced pair $\Omega_0^\pm$. If $\Omega_0^\pm$ is not a $\Lambda$-lift, then $F_\xi^{\Omega_0^\pm}\in \cO_{\xi,\Lambda}^{<\gamma}$ thanks to Lemma~\ref{lem: union of lifts}. If $\Omega_0^\pm$ is a $\Lambda$-lift, then Conditions~\rm{III}-\ref{it: III 1},~\rm{III}-\ref{it: III 2},~\rm{III}-\ref{it: III 3} \rm{III}-\ref{it: III 6} and \rm{III}-\ref{it: III 9} clearly hold. If $\Omega_0^\pm$ fails Condition~\rm{III}-\ref{it: III 4}, then we deduce $F_\xi^{\Omega_0^\pm}\in \cO_{\xi,\Lambda}^{<\gamma}$ by Lemma~\ref{lem: reduce non ord to smaller norm}, as $\gamma_\psi<\gamma$ and $\gamma_{\psi'}<\gamma$ in this case.
If $\Omega_0^\pm$ fails Condition~\rm{III}-\ref{it: III 5} for some choice of $\psi,\psi'$ in (\ref{equ: set of pairs}), then
we deduce $F_\xi^{\Omega_0^\pm}\in \cO_{\xi,\Lambda}^{<\gamma}$ by Lemma~\ref{lem: reduce non ord to smaller norm II}.
Similarly, if $\Omega_0^\pm$ fails Condition~\rm{III}-\ref{it: III 8}, we deduce $F_\xi^{\Omega_0^\pm}\in \cO_{\xi,\Lambda}^{<\gamma}$ from Lemma~\ref{lem: reduce non sep to smaller norm}.
If $\Omega_0^\pm$ fails Condition~\rm{III}-\ref{it: III 7}, we deduce $F_\xi^{\Omega_0^\pm}\in \cO_{\xi,\Lambda}^{<\gamma}$ from Lemma~\ref{lem: reduction to separated lift}.

Finally, if $\Omega_0^\pm$ satisfies all the conditions from Condition~\rm{III}-\ref{it: III 1} to Condition~\rm{III}-\ref{it: III 9}, then it is clearly a constructible $\Lambda$-lift of type $\rm{III}$. The proof is thus finished.
\end{proof}

\begin{prop}\label{prop: special type II}
Let $(\al,j)$ be an element of $\widehat{\Lambda}\cap\mathrm{Supp}_{\xi,\cJ}^\gamma$ and $\Omega^\pm$ be a balanced pair such that
\begin{itemize}
\item $\Omega^-$ is a pseudo $\Lambda$-decomposition of $(\al,j)$ satisfying $\widehat{\Omega}^-\neq \{(\al,j)\}$;
\item $\Omega^+\in\mathbf{D}_{(\al,j),\Lambda}$ is $\Lambda$-ordinary
\item either $\Omega^+=\Omega_{(\al,j),\Lambda}^{\rm{max}}$ or $\Omega^+$ is $\Lambda$-extremal.
\end{itemize}
Then there exists a pseudo $\Lambda$-decomposition $\Omega'$ of $(\al,j)$ with $\widehat{\Omega}'\neq \{(\al,j)\}$ such that the balanced pair $\Omega_0^\pm$ defined by $\Omega_0^+\defeq \Omega^+$ and $\Omega_0^-\defeq\Omega'$ satisfies one of the following
\begin{itemize}
\item $F_\xi^{\Omega_0^\pm}\in\cO_{\xi,\Lambda}^{<\gamma}$;
\item $\Omega_0^\pm$ is a constructible $\Lambda$-lift of type $\rm{II}$;
\item there exists $\Omega''\in\mathbf{D}_{(\al,j),\Lambda}$ such that
      \begin{itemize}
      \item the balanced pair $\Omega'',\Omega^+$ satisfies the conditions in Lemma~\ref{lem: special case 3};
      \item $F_\xi^{\Omega_1^\pm}\in\cO_{\xi,\Lambda}^{<\gamma}$ for the balanced pair defined by $\Omega_1^+\defeq \Omega''$ and $\Omega_1^-\defeq \Omega_0^-$.
      \end{itemize}
\end{itemize}
In particular, we have $F_\xi^{\Omega^\pm}\in\cO_{\xi,\Lambda}^{\rm{con}}\cdot \cO_{\xi,\Lambda}^{<\gamma}$.
\end{prop}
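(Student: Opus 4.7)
The plan mirrors that of Proposition~\ref{prop: special type III} together with the case analysis developed in Lemmas~\ref{lem: special case 1} and \ref{lem: special case 2}. The idea is to produce an ``ordinarized'' candidate for $\Omega'$, to check the eleven conditions defining a constructible $\Lambda$-lift of type $\rm{II}$ one by one for the resulting balanced pair, and to absorb each failure either into $\cO_{\xi,\Lambda}^{<\gamma}$ via the reduction Lemmas~\ref{lem: union of lifts}, \ref{lem: reduction to separated lift}, \ref{lem: reduce non ord to smaller norm}, \ref{lem: reduce non ord to smaller norm II}, \ref{lem: reduce non sep to smaller norm}, or into the third alternative of the conclusion via Lemma~\ref{lem: special case 3}.

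Concretely, set
$$\Omega'\defeq\bigsqcup_{(\al',j)\in\widehat{\Omega}^-}(\Omega_{(\al',j),\Lambda}^{\rm{max}})_\dagger.$$
By Lemma~\ref{lem: reduce to ordinary} this is a pseudo $\Lambda$-decomposition of $(\al,j)$ which is $\Lambda$-equivalent to $\Omega^-$ with level $<\gamma$, each of whose components $\Omega'_{(\al'',j)}=\Omega_{(\al'',j),\Lambda}^{\rm{max}}$ is $\Lambda$-ordinary; and $\widehat{\Omega}'\neq\{(\al,j)\}$ since $\widehat{\Omega}^-\neq\{(\al,j)\}$. Put $\Omega_0^+\defeq\Omega^+$ and $\Omega_0^-\defeq\Omega'$; by Lemma~\ref{lem: two equivalent pairs} the quotient $F_\xi^{\Omega^\pm}(F_\xi^{\Omega_0^\pm})^{-1}$ lies in $\cO_{\xi,\Lambda}^{<\gamma}$, so it suffices to analyze $F_\xi^{\Omega_0^\pm}$. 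If $\Omega_0^\pm$ fails to be a $\Lambda$-lift, Lemma~\ref{lem: union of lifts} places us in the first alternative. Otherwise Conditions \rm{II}-\ref{it: II 1}, \rm{II}-\ref{it: II 2}, \rm{II}-\ref{it: II 3} are immediate from the hypotheses and construction; a failure of \rm{II}-\ref{it: II 4} or \rm{II}-\ref{it: II 5} is absorbed by Lemma~\ref{lem: reduce non ord to smaller norm} (using that $\gamma_\psi<\gamma$ for $\psi\in\{(\Omega'_{(\al'',j)},\Lambda)\mid (\al'',j)\in\widehat{\Omega}'\}$); a failure of \rm{II}-\ref{it: II 6} by Lemma~\ref{lem: reduction to separated lift}; and a failure of \rm{II}-\ref{it: II 8} by Lemma~\ref{lem: reduce non sep to smaller norm}.

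The remaining work is to treat Conditions \rm{II}-\ref{it: II 7}, \rm{II}-\ref{it: II 9}, \rm{II}-\ref{it: II 10} and \rm{II}-\ref{it: II 11}, which constrain the relative positions of $u_j(i_{\Omega_0^-,1})$, $u_j(i_{\Omega^+,1})$ and $u_j(i_{\al'}^\prime)$ for the unique $(\al',j)\in\widehat{\Omega}_0^-$ with $i_{\al'}=i_\al$ (when it exists). When all four hold, $\Omega_0^\pm$ is by definition a constructible $\Lambda$-lift of type $\rm{II}$ and we are in the second alternative. Otherwise we build the auxiliary
$$\Omega''\defeq \Omega_{((i_\al,i_{\Omega_0^-,1}),j),\Lambda}^{\rm{max}}\sqcup\{((i_{\Omega_0^-,1},i_\al^\prime),j)\}\in\mathbf{D}_{(\al,j),\Lambda},$$
which is $\Lambda$-exceptional with $i_{\Omega'',1}=i_{\Omega_0^-,1}$: then the balanced pair $\Omega'',\Omega^+$ fits the hypotheses of Lemma~\ref{lem: special case 3} (with the role of $e^+$ matched by the position $i_{\Omega_0^-,1}=i_{\psi_2}^{1,e^+}$ produced by Conditions \rm{II}-\ref{it: II 9}--\rm{II}-\ref{it: II 11} failing), while the residual balanced pair $\Omega_1^\pm\defeq(\Omega'',\Omega_0^-)$ shares the top term $((i_{\Omega_0^-,1},i_\al^\prime),j)$ so that, after cancellation and Lemma~\ref{lem: union of lifts}, $F_\xi^{\Omega_1^\pm}\in\cO_{\xi,\Lambda}^{<\gamma}$. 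This is exactly the third alternative.

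The main obstacle is the bookkeeping in Step~3: one must split into sub-cases analogous to the \textbf{Case~A}/\textbf{Case~B}/\textbf{Case~C} dichotomy of Lemmas~\ref{lem: special case 1} and \ref{lem: special case 2}, according to whether $\Omega^+$ equals $\Omega_{(\al,j),\Lambda}^{\rm{max}}$ or is $\Lambda$-extremal, and to whether $u_j(i_{\Omega_0^-,1})$ lies above or below $u_j(i_{\Omega^+,1})$. Verifying that every failure of \rm{II}-\ref{it: II 7}, \rm{II}-\ref{it: II 9}, \rm{II}-\ref{it: II 10}, \rm{II}-\ref{it: II 11} either is killed by one of the reduction lemmas or precisely matches the hypotheses of Lemma~\ref{lem: special case 3} is the combinatorially delicate core of the argument. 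Granting this, the final assertion $F_\xi^{\Omega^\pm}\in\cO_{\xi,\Lambda}^{\rm{con}}\cdot\cO_{\xi,\Lambda}^{<\gamma}$ follows by combining the three alternatives with the reduction $F_\xi^{\Omega^\pm}\sim F_\xi^{\Omega_0^\pm}$ modulo $\cO_{\xi,\Lambda}^{<\gamma}$ from Step~1.
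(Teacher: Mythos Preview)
Your overall strategy is right, but there are two concrete gaps that prevent the argument from going through as written.

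\textbf{The choice of $\Omega'$.} You take $\Omega'=\bigsqcup_{(\al',j)\in\widehat{\Omega}^-}(\Omega_{(\al',j),\Lambda}^{\rm{max}})_\dagger$, i.e.~you ordinarize each piece of $\widehat{\Omega}^-$ independently. The paper instead chooses specific integers $i_\sharp,i_\flat$ (extracted from the first two elements of $\widehat{\Omega}^-$ adjacent to $i_\al$) and sets $\Omega'=(\Omega_{((i_\al,i_\sharp),j),\Lambda}^{\rm{max}})_\dagger\sqcup(\Omega_{((i_\flat,i_\al^\prime),j),\Lambda}^{\rm{max}})_\dagger$. The point of this more careful choice is that the element $(\al',j)\in\widehat{\Omega}'$ with $i_{\al'}=i_\al$ then automatically satisfies $u_j(i_{\al'}^\prime)>u_j(i_{\Omega',1})$, so the hypothesis of Condition~\rm{II}-\ref{it: II 9} is never met and \rm{II}-\ref{it: II 9}--\ref{it: II 11} hold for free. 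With your $\Omega'$ there is no reason for \rm{II}-\ref{it: II 9} to hold; in particular it can fail when $\Omega^+=\Omega_{(\al,j),\Lambda}^{\rm{max}}$ is $\Lambda$-extremal, a case not covered by Lemma~\ref{lem: special case 3} at all (since that lemma requires $\Omega^-=\Omega_{(\al,j),\Lambda}^{\rm{max}}$ to be $\Lambda$-\emph{exceptional}).

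\textbf{The construction of $\Omega''$.} Even in the case where $\Omega^+=\Omega_{(\al,j),\Lambda}^{\rm{max}}$ is $\Lambda$-exceptional, your $\Omega''=\Omega_{((i_\al,i_{\Omega_0^-,1}),j),\Lambda}^{\rm{max}}\sqcup\{((i_{\Omega_0^-,1},i_\al^\prime),j)\}$ does not fit the hypotheses of Lemma~\ref{lem: special case 3}: that lemma requires $i_{\Omega'',1}=i_{\psi_1}^{1,e^+}$ for some $1\leq e^+\leq e_{\psi_1,1}$ with $\psi_1=(\Omega^+,\Lambda)$, and there is no reason $i_{\Omega_0^-,1}$ should be one of these specific integers. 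The paper's $\Omega''$ is instead $\Omega_{\psi_1,1,e}$ for an index $e$ produced directly by the failure of \rm{II}-\ref{it: II 4}, \ref{it: II 5}, or \ref{it: II 7} (not of \rm{II}-\ref{it: II 9}--\ref{it: II 11}, which are already arranged to hold). This $\Omega''$ has $i_{\Omega'',1}=i_{\psi_1}^{1,e}$ by construction, and the balanced pair $\Omega_1^+=\Omega'',\,\Omega_1^-=\Omega_0^-$ fails to be a $\Lambda$-lift (or falls to Lemma~\ref{lem: reduction to separated lift}) precisely because the failed condition places $i_{\psi_1}^{1,e}$ in the same block or $\widehat{\Lambda}$-relation as some point of $\Omega_0^-$. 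Relatedly, your Step~2 claims that failures of \rm{II}-\ref{it: II 4} and \ref{it: II 5} are absorbed using only $\gamma_\psi<\gamma$ for $\psi$ on the $\Omega'$-side, but those conditions also involve $\psi_1$ with $\gamma_{\psi_1}=\gamma$; when $\Omega^+$ is exceptional the ``Moreover'' clause of Lemmas~\ref{lem: reduce non ord to smaller norm} and \ref{lem: reduce non ord to smaller norm II} does not apply to $\psi_1$, and it is exactly this residual factor $F_\xi^{\Omega_{\psi_1,1,e}^\pm}$ that becomes the paper's $\Omega''$.
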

\begin{proof}
Let $(\beta,j),(\beta',j)\in\widehat{\Omega}^-$ be the elements satisfying $i_\beta=i_\al$ and $i_\beta^\prime,i_{\beta'}\in[m]_\xi$ for some $1\leq m\leq r_\xi$. We define two integers $i_\sharp,\,i_\flat$ by $u_j(i_\sharp)=\max\{u_j(i_\beta^\prime),\,u_j(i_{\beta'})\}$, $u_j(i_\flat)=\min\{u_j(i_\beta^\prime),\,u_j(i_{\beta'})\}$, and then set
$$\Omega'\defeq (\Omega_{((i_\al,i_\sharp),j),\Lambda}^{\rm{max}})_\dagger\sqcup(\Omega_{((i_\flat,i_\al^\prime),j),\Lambda}^{\rm{max}})_\dagger.$$ Note that $u_j(i_\beta^\prime)>u_j(i_{\beta''})\geq u_j(i_{\Omega^-,1})$ for the elements $(\beta,j),\,(\beta'',j)\in\widehat{\Omega}'$ characterized by $i_\beta=i_\al$ and $i_{\beta''}^\prime=i_\al^\prime$.
We also note that $\Omega'_{(\al',j)}=\Omega_{(\al',j),\Lambda}^{\rm{max}}$ is $\Lambda$-ordinary for each $(\al',j)\in\widehat{\Omega}'$ (cf. Lemma~\ref{lem: reduce to ordinary}).

Now we check the conditions in the definition of constructible $\Lambda$-lifts of type $\rm{II}$ for the balanced pair $\Omega_0^\pm$ defined by $\Omega_0^+\defeq \Omega^+$ and $\Omega_0^-\defeq \Omega'$. If $\Omega_0^\pm$ is not a $\Lambda$-lift, then $F_\xi^{\Omega_0^\pm}\in \cO_{\xi,\Lambda}^{<\gamma}$ thanks to Lemma~\ref{lem: union of lifts}. If $\Omega_0^\pm$ is a $\Lambda$-lift, then Conditions~\rm{II}-\ref{it: II 1}, \rm{II}-\ref{it: II 2}, \rm{II}-\ref{it: II 3}, \rm{II}-\ref{it: II 9}, \rm{II}-\ref{it: II 10}, and \rm{II}-\ref{it: II 11} are true by our assumption on $\Omega_0^\pm$. If $\Omega_0^\pm$ fails Condition~\rm{II}-\ref{it: II 6}, then we deduce $F_\xi^{\Omega_0^\pm}\in \cO_{\xi,\Lambda}^{<\gamma}$ from Lemma~\ref{lem: reduction to separated lift}. If $\Omega^\pm$ fails Condition~\rm{II}-\ref{it: II 8}, then we deduce $F_\xi^{\Omega_0^\pm}\in \cO_{\xi,\Lambda}^{<\gamma}$ from Lemma~\ref{lem: reduce non sep to smaller norm} (using the fact $\gamma_\psi<\gamma$ for each $\psi\in \{(\Omega_{(\al',j)}^-,\Lambda)\mid (\al',j)\in\widehat{\Omega}^-\}$). If $\Omega_0^+$ is not $\Lambda$-exceptional and $\Omega_0^\pm$ fails Condition~\rm{II}-\ref{it: II 4} (resp. Condition~\rm{II}-\ref{it: II 5}, resp. Condition~\rm{II}-\ref{it: II 7}), then we deduce $F_\xi^{\Omega_0^\pm}\in \cO_{\xi,\Lambda}^{<\gamma}$ by Lemma~\ref{lem: reduce non ord to smaller norm} (resp. by Lemma~\ref{lem: reduce non ord to smaller norm II}, resp. by Lemma~\ref{lem: reduce non sep to smaller norm}).

We now treat Condition~\rm{II}-\ref{it: II 4}, Condition~\rm{II}-\ref{it: II 5}, and Condition~\rm{II}-\ref{it: II 7}, when $\Omega_0^+$ is $\Lambda$-exceptional and in particular $\Omega_0^+=\Omega_{(\al,j),\Lambda}^{\rm{max}}$ by our assumption.

If $\Omega_0^\pm$ fails Condition~\rm{II}-\ref{it: II 5} and $\Omega_0^+=\Omega_{(\al,j),\Lambda}^{\rm{max}}$ is $\Lambda$-exceptional, then there exists $1\leq e\leq e_{\psi_1,1}$, $(\al',j)\in\widehat{\Omega}^-$ and $1\leq m\leq r_{\xi}$ such that $i'_{\al'},i_{\psi_1}^{1,e}\in[m]_{\xi}$. We set $\Omega''\defeq \Omega_{\psi_1,1,e}\in\mathbf{D}_{(\al,j),\Lambda}$ with $i_{\Omega'',1}=i_{\psi_1}^{1,e}$ (cf. the paragraph before Lemma~\ref{lem: reduce non ord to smaller norm}) and note that the balanced pair $\Omega'',\Omega_0^+$ satisfies the conditions of Lemma~\ref{lem: special case 3}. We set $\Omega_1^+\defeq \Omega''$ and $\Omega_1^-\defeq \Omega_0^-$, and then observe that the balanced pair $\Omega_1^\pm$ is not a $\Lambda$-lift, so that we deduce $F_\xi^{\Omega_1^{\pm}}\in \cO_{\xi,\Lambda}^{<\gamma}$ by Lemma~\ref{lem: union of lifts}.

If $\Omega_0^\pm$ fails Condition~\rm{II}-\ref{it: II 7} and $\Omega_0^+=\Omega_{(\al,j),\Lambda}^{\rm{max}}$ is $\Lambda$-exceptional, then there exists $1\leq e\leq e_{\psi_1,1}$, $(\al',j)\in\widehat{\Omega}_0^-$ and $1\leq c\leq \#\Omega^-_{0,(\al',j)}-1$ such that $i_{\Omega^-_{0,(\al',j)},c}\neq i'_{\al}$ and $i_{\psi_1}^{1,e}$ satisfies either $i_{\psi_1}^{1,e}=i_{\Omega^-_{0,(\al',j)},c}$ or $((i_{\psi_1}^{1,e},i_{\Omega^-_{0,(\al',j)},c}),j)\in\widehat{\Lambda}$. We define $\Omega''$ and $\Omega_1^\pm$ the same way as in the last paragraph and note that the balanced pair $\Omega'',\Omega_0^+$ satisfies the conditions of Lemma~\ref{lem: special case 3}. If $i_{\psi_1}^{1,e}=i_{\Omega^-_{0,(\al',j)},c}$ then $\Omega_1^{\pm}$ is not a $\Lambda$-lift so that we have $F_\xi^{\Omega_1^{\pm}}\in \cO_{\xi,\Lambda}^{<\gamma}$ by Lemma~\ref{lem: union of lifts}, and if $((i_{\psi_1}^{1,e},i_{\Omega^-_{0,(\al',j)},c}),j)\in\widehat{\Lambda}$ then we have $F_\xi^{\Omega_1^{\pm}}\in\cO_{\xi,\Lambda}^{<\gamma}$ by Lemma~\ref{lem: reduction to separated lift} as $i_{\Omega^-_{0,(\al',j)},c}\neq i'_{\al}$.

If $\Omega_0^\pm$ fails Condition~\rm{II}-\ref{it: II 4} and $\Omega_0^+=\Omega_{(\al,j),\Lambda}^{\rm{max}}$ is $\Lambda$-exceptional, then there exists $(\al',j)\in\widehat{\Omega}_0^-$ (with $\psi=(\Omega^-_{0,(\al',j)},\Lambda)$), $1\leq s\leq d_\psi$, $1\leq e\leq e_{\psi_1,1}$, $1\leq e'\leq e_{\psi,s}$ and $1\leq m\leq r_\xi$ such that $i_{\psi}^{s,e'},\,i_{\psi_1}^{1,e}\in[m]_\xi$. We define $\Omega''$ and $\Omega_1^\pm$ the same way as before and note that the balanced pair $\Omega'',\Omega_0^+$ satisfies the conditions of Lemma~\ref{lem: special case 3}. Then we deduce $F_\xi^{\Omega_1^{\pm}}\in\cO_{\xi,\Lambda}^{<\gamma}$ by Lemma~\ref{lem: reduce non sep to smaller norm}

Finally, if $\Omega^\pm$ satisfies all the conditions from Condition~\rm{II}-\ref{it: II 1} to Condition~\rm{II}-\ref{it: II 8}, then it is clearly a constructible $\Lambda$-lift of type $\rm{II}$. The proof is thus finished.
\end{proof}

\begin{prop}\label{prop: maximal vers exceptional: ord}
Let $(\al,j)$ be an element of $\widehat{\Lambda}\cap\mathrm{Supp}_{\xi,\cJ}^\gamma$ and $\Omega^\pm$ be a balanced pair such that
\begin{itemize}
\item $\Omega^+\in\mathbf{D}_{(\al,j),\Lambda}$ is $\Lambda$-exceptional and $\Lambda$-ordinary;
\item $\Omega^-=\Omega_{(\al,j),\Lambda}^{\rm{max}}$ is $\Lambda$-ordinary.
\end{itemize}
Then one of the following holds:
\begin{itemize}
\item $F_\xi^{\Omega^\pm}\in\cO_{\xi,\Lambda}^{<\gamma}$;
\item $\Omega^\pm$ is a constructible $\Lambda$-lift of type $\rm{I}$;
\item there exists a pseudo $\Lambda$-decomposition $\Omega'$ of $(\al,j)$ such that
      \begin{itemize}
      \item the balanced pair $\Omega^+,\Omega'$ satisfies the conditions of Lemma~\ref{lem: special case 1};
      \item the balanced pair $\Omega^-,\Omega'$ satisfies the conditions of Proposition~\ref{prop: special type II};
      \end{itemize}
\item there exists $\Omega'\in\mathbf{D}_{(\al,j),\Lambda}$ such that
      \begin{itemize}
      \item the balanced pair $\Omega',\Omega^-$ satisfies the conditions of Lemma~\ref{lem: special case 3};%
      \item $F_\xi^{\Omega_0^\pm}\in\cO_{\xi,\Lambda}^{<\gamma}$ for the balanced pair $\Omega_0^\pm$ defined by $\Omega_0^+\defeq \Omega'$ and $\Omega_0^-\defeq \Omega^+$.
      \end{itemize}
\end{itemize}
In particular, we have $F_\xi^{\Omega^\pm}\in\cO_{\xi,\Lambda}^{\rm{con}}\cdot \cO_{\xi,\Lambda}^{<\gamma}$.
\end{prop}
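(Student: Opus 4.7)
The plan is to follow the template used in Lemmas~\ref{lem: special case 1}, \ref{lem: special case 2}, and especially \ref{lem: special case 3}: verify the seven conditions in the definition of a constructible $\Lambda$-lift of type $\mathrm{I}$ (Definition~\ref{def: constructible lifts}) for the given pair $\Omega^\pm$, and whenever a condition fails invoke one of the reduction lemmas (\ref{lem: union of lifts}, \ref{lem: reduction to separated lift}, \ref{lem: reduce non ord to smaller norm}, \ref{lem: reduce non ord to smaller norm II}, \ref{lem: reduce non sep to smaller norm}) or one of the earlier results in this section to either push $F_\xi^{\Omega^\pm}$ into $\cO_{\xi,\Lambda}^{<\gamma}$ or produce an auxiliary (pseudo) $\Lambda$-decomposition matching one of the listed alternatives.

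First, Conditions $\mathrm{I}$-\ref{it: I 1}, $\mathrm{I}$-\ref{it: I 2}, $\mathrm{I}$-\ref{it: I 3} hold by hypothesis. If $\Omega^+=\Omega_{(\alpha,j),\Lambda}^{\rm{max}}$ then $\Omega^+=\Omega^-$ and $F_\xi^{\Omega^\pm}=1$ trivially; otherwise $\Omega^+\neq\Omega_{(\alpha,j),\Lambda}^{\rm{max}}$, so in particular $e_{\psi_2,1}\geq 1$. If $\Omega^\pm$ is not a $\Lambda$-lift (e.g.\ $i_{\Omega^+,1}=i_{\Omega^-,1}$) then Lemma~\ref{lem: union of lifts} concludes, so assume it is a $\Lambda$-lift. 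Failure of $\mathrm{I}$-\ref{it: I 5} is handled by Lemma~\ref{lem: reduction to separated lift}, since the offending data forces $\Omega^+\sqcup\Omega^-$ to be non-$\Lambda$-separated with interior points lying in distinct $\Lambda^\square$-intervals. For $\mathrm{I}$-\ref{it: I 6}, when $\Omega^+$ is $\Lambda$-exceptional the chain $u_j(i_{\psi_1}^{1,e})<u_j(i_{\Omega^+,1})<u_j(i_{\Omega^-,1})\leq u_j(i_{\Omega^-,c})$ forces the condition to hold automatically; when $\Omega^+$ is $\Lambda$-extremal and $\mathrm{I}$-\ref{it: I 6} fails, apply Lemma~\ref{lem: reduce non sep to smaller norm} to the decomposition $\Omega_{\psi_1,s,e}$ supplied by Lemma~\ref{lem: special equivalent lift}.

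The delicate cases are $\mathrm{I}$-\ref{it: I 4} and $\mathrm{I}$-\ref{it: I 7}, which concern the interaction between the extremal data $i_{\psi_2}^{s,e}$ of $\Omega^-$ and the structure of $\Omega^+$ above the threshold $u_j(i_{\Omega^+,1})$. If $\mathrm{I}$-\ref{it: I 7} fails with offending pair $(s,e,c)$ then either Lemma~\ref{lem: reduce non sep to smaller norm} applied to $\Omega_{\psi_2,s,e}$ yields $F_\xi^{\Omega^\pm}\in\cO_{\xi,\Lambda}^{<\gamma}$, or the offending index is such that $i_{\Omega^+,1}=i_{\psi_2}^{1,e^+}$ for some $e^+\geq 1$, in which case we set $\Omega'\defeq\Omega^+$ and observe that the pair $\Omega',\Omega^-$ satisfies the hypotheses of Lemma~\ref{lem: special case 3} while the balanced pair $\Omega_0^\pm$ with $\Omega_0^+\defeq\Omega'$, $\Omega_0^-\defeq\Omega^+$ is trivial, producing the fourth alternative. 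If $\mathrm{I}$-\ref{it: I 4} fails, a parallel analysis using Lemma~\ref{lem: reduce non ord to smaller norm II} either concludes directly or produces a pseudo $\Lambda$-decomposition $\Omega'$ (obtained by splicing the relevant $i_{\psi_2}^{s,e}$ into $\Omega^+$ via the ordinarization procedure of Definition~\ref{def: ordinary decomposition}) that simultaneously fulfils the hypotheses of Lemma~\ref{lem: special case 1} for $(\Omega^+,\Omega')$ and of Proposition~\ref{prop: special type II} for $(\Omega^-,\Omega')$, giving the third alternative.

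The main obstacle is the bookkeeping distinguishing the third and fourth alternatives in the analysis of $\mathrm{I}$-\ref{it: I 4} and $\mathrm{I}$-\ref{it: I 7}: one must verify, for the chosen $\Omega'$, all the numerical inequalities on $u_j(i_{\Omega',1}),\,u_j(i_\sharp),\,u_j(i_{\Omega^+,1}),\,u_j(i_{\Omega^-,1})$ required by the hypotheses of Lemma~\ref{lem: special case 1}, Lemma~\ref{lem: special case 3}, and Proposition~\ref{prop: special type II}, together with the combinatorial clauses of Definition~\ref{def: constructible lifts}. Once these are checked, the ``in particular'' clause $F_\xi^{\Omega^\pm}\in\cO_{\xi,\Lambda}^{\rm{con}}\cdot\cO_{\xi,\Lambda}^{<\gamma}$ follows by chaining through the corresponding concluding statements of the invoked lemmas.
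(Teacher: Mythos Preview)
Your overall plan matches the paper's: check the seven type-$\mathrm{I}$ conditions for $\Omega^\pm$ and, on failure, either invoke a reduction lemma or manufacture an $\Omega'$ landing in one of the listed alternatives. However, your handling of conditions $\mathrm{I}$-\ref{it: I 4} and $\mathrm{I}$-\ref{it: I 7} contains genuine gaps.

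For $\mathrm{I}$-\ref{it: I 7}: your dichotomy ``either Lemma~\ref{lem: reduce non sep to smaller norm} concludes, or $i_{\Omega^+,1}=i_{\psi_2}^{1,e^+}$ and we take $\Omega'=\Omega^+$'' is incorrect. Lemma~\ref{lem: reduce non sep to smaller norm} fails to conclude precisely when $\Omega^-=\Omega_{(\alpha,j),\Lambda}^{\max}$ is $\Lambda$-exceptional (since $\gamma_{\psi_2}=\gamma$), but this does \emph{not} force $i_{\Omega^+,1}$ to equal any $i_{\psi_2}^{1,e^+}$; the failure of $\mathrm{I}$-\ref{it: I 7} only relates an $i_{\psi_2}^{1,e}$ to an \emph{interior} point $i_{\Omega^+,c}$. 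The paper instead sets $\Omega'\defeq\Omega_{\psi_2,1,e}$ (the $\Lambda$-decomposition from the paragraph before Lemma~\ref{lem: reduce non ord to smaller norm}), which has $i_{\Omega',1}=i_{\psi_2}^{1,e}$ and therefore satisfies the hypotheses of Lemma~\ref{lem: special case 3}. The balanced pair $(\Omega',\Omega^+)$ is then either not a $\Lambda$-lift (if $i_{\psi_2}^{1,e}=i_{\Omega^+,c}$) or non-$\Lambda$-separated across $\Lambda^\square$-intervals (if $((i_{\psi_2}^{1,e},i_{\Omega^+,c}),j)\in\widehat{\Lambda}$), so Lemma~\ref{lem: union of lifts} or Lemma~\ref{lem: reduction to separated lift} gives $F_\xi^{(\Omega',\Omega^+)}\in\cO_{\xi,\Lambda}^{<\gamma}$.

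For $\mathrm{I}$-\ref{it: I 4}: Lemma~\ref{lem: reduce non ord to smaller norm II} is the wrong tool (it concerns endpoints $i_{\alpha_{\psi'}},i_{\alpha_{\psi'}}'$, not the pair $i_{\psi_1}^{1,e_1},i_{\psi_2}^{s,e_2}\in[m]_\xi$ arising here), and Lemma~\ref{lem: reduce non ord to smaller norm} would not conclude either when both $\Omega^+,\Omega^-$ are $\Lambda$-exceptional. The paper does not reduce at all in this case: it directly builds
\[
\Omega'\defeq\big(\Omega_{((i_\alpha,i_{\psi_2}^{s,e_2}),j),\Lambda}^{\max}\big)_\dagger\sqcup\{((i_{\psi_1}^{1,e_1},i_\alpha'),j)\}
\]
from the witnessing indices, and checks that $(\Omega^+,\Omega')$ meets the hypotheses of Lemma~\ref{lem: special case 1} (in particular $u_j(i_{\Omega',1})=u_j(i_{\psi_1}^{1,e_1})<u_j(i_{\Omega^+,1})$ and $u_j(i_\sharp)\geq u_j(i_{\psi_2}^{s,e_2})>u_j(i_{\Omega^+,1})$) while $(\Omega^-,\Omega')$ meets those of Proposition~\ref{prop: special type II}. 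Your ``splicing via ordinarization'' gestures at this but does not name the construction or verify the required inequalities.

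Finally, your discussion of the $\Lambda$-extremal case for $\mathrm{I}$-\ref{it: I 6} is irrelevant: by hypothesis $\Omega^+$ is $\Lambda$-exceptional, so $\mathrm{I}$-\ref{it: I 6} holds automatically via the chain of inequalities you wrote.
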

\begin{proof}
We write $\psi_1\defeq (\Omega^+,\Lambda)$ and $\psi_2\defeq (\Omega^-,\Lambda)$ for short. It is harmless to assume that $u_j(i_{\Omega^+,1})<u_j(i_{\Omega^-,1})=u_j(i_{\Omega_{(\al,j),\Lambda}^{\rm{max}},1})$, since the result is clear otherwise.

We check the definition of constructible $\Lambda$-lifts of type $\rm{I}$ for the balanced pair $\Omega^\pm$. If $\Omega^\pm$ is not a $\Lambda$-lift, then $F_\xi^{\Omega^\pm}\in \cO_{\xi,\Lambda}^{<\gamma}$ thanks to Lemma~\ref{lem: union of lifts}. If $\Omega^\pm$ is a $\Lambda$-lift, then Conditions~\rm{I}-\ref{it: I 1}, \rm{I}-\ref{it: I 2}, and \rm{I}-\ref{it: I 3} are true by our assumption on $\Omega^\pm$.

If $\Omega^\pm$ fails Condition~\rm{I}-\ref{it: I 4}, there exist $1\leq s\leq d_{\psi_2}$, $1\leq e_2\leq e_{\psi_2,s}$, and $1\leq e_1\leq e_{\psi_1,1}$ such that
\begin{itemize}
\item $i_{\psi_1}^{1,e_1},\,i_{\psi_2}^{s,e_2}\in[m]_\xi$ for some $1\leq m\leq r_\xi$;
\item $u_j(i_{\psi_2}^{s,e_2})>u_j(i_{\Omega^+,1})>u_j(i_{\psi_1}^{1,e_1})$.
\end{itemize}
We set $\Omega'\defeq(\Omega_{((i_\al,i_{\psi_2}^{s,e_2}),j),\Lambda}^{\rm{max}})_\dagger\sqcup\{((i_{\psi_1}^{1,e_1},i_\al^\prime),j)\}$. Then we note that the balanced pair $\Omega^+,\Omega'$ satisfies the conditions in Lemma~\ref{lem: special case 1}, and the balanced pair $\Omega^-,\Omega'$ satisfies the conditions in Lemma~\ref{prop: special type II}.

Condition~\rm{I}-\ref{it: I 6} holds for $\Omega^\pm$ as we have $$u_j(i_{\psi_1}^{1,e})<u_j(i_{\Omega^+,1})<u_j(i_{\Omega^-,1})\leq u_j(i_{\Omega^-,c})$$ for each $1\leq e\leq e_{\psi_1,1}$ and $1\leq c\leq \#\Omega^--1$. If $\Omega^\pm$ fails Condition~\rm{I}-\ref{it: I 5}, then $F_\xi^{\Omega^\pm}\in \cO_{\xi,\Lambda}^{<\gamma}$ thanks to Lemma~\ref{lem: reduction to separated lift}. If $\Omega^\pm$ fails Condition~\rm{I}-\ref{it: I 7} and if $\Omega^-$ is not $\Lambda$-exceptional, then we deduce $F_\xi^{\Omega^\pm}\in \cO_{\xi,\Lambda}^{<\gamma}$ by Lemma~\ref{lem: reduce non sep to smaller norm}. If $\Omega^\pm$ fails Condition~\rm{I}-\ref{it: I 7} and if $\Omega^-$ is $\Lambda$-exceptional, then there exists $1\leq e\leq e_{\psi_2,1}$ such that either $i_{\psi_2}^{1,e}=i_{\Omega^+,c}$ or $((i_{\psi_2}^{1,e},i_{\Omega^+,c}),j)\in\widehat{\Lambda}$ for some $1\leq c\leq \#\Omega^+-1$. We set $\Omega_1^+\defeq \Omega^+$ and $\Omega_1^-\defeq \Omega_{\psi_2,1,e}\in\mathbf{D}_{(\al,j),\Lambda}$ (see the paragraph before Lemma~\ref{lem: reduce non ord to smaller norm}) with $i_{\Omega_1^-,1}=i_{\psi_2}^{1,e}$, and note that the balanced pair $\Omega_1^-,\Omega^-$ satisfies the conditions in Lemma~\ref{lem: special case 3}.
It remains to check that $F_\xi^{\Omega_1^\pm}\in\cO_{\xi,\Lambda}^{<\gamma}$. If $i_{\psi_2}^{1,e}=i_{\Omega^+,c}$ then $\Omega_1^\pm$ is not a $\Lambda$-lift so that we have $F_\xi^{\Omega_1^\pm}\in \cO_{\xi,\Lambda}^{<\gamma}$ by Lemma~\ref{lem: union of lifts}, and if $((i_{\psi_2}^{1,e},i_{\Omega^+,c}),j)\in\widehat{\Lambda}$ then we have $F_\xi^{\Omega_1^\pm}\in\cO_{\xi,\Lambda}^{<\gamma}$ by Lemma~\ref{lem: reduction to separated lift}.

Finally, if $\Omega^\pm$ satisfies all the conditions from \rm{I}-\ref{it: I 1} to \rm{I}-\ref{it: I 7}, then it is clearly a constructible $\Lambda$-lift of type $\rm{I}$. The last claim follows from Lemma~\ref{lem: special case 1}, Lemma~\ref{lem: special case 3} and Proposition~\ref{prop: special type II}. The proof is thus finished.
\end{proof}

\begin{thm}\label{thm: pair of decompositions}
Let $(\al,j)$ be an element of $\widehat{\Lambda}\cap\mathrm{Supp}_{\xi,\cJ}^\gamma$, and $\Omega^\pm$ be a balanced pair. Assume that both $\Omega^+$ and $\Omega^-$ are pseudo $\Lambda$-decompositions of $(\al,j)$. Then we have $F_\xi^{\Omega^\pm}\in\cO_{\xi,\Lambda}^{\rm{con}}\cdot \cO_{\xi,\Lambda}^{<\gamma}$.
\end{thm}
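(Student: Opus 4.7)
The plan is a three-way case analysis on whether each of $\widehat{\Omega}^+$ and $\widehat{\Omega}^-$ equals $\{(\al,j)\}$, combined with systematic reductions to canonical forms through the preparatory lemmas. Every configuration will be reduced to one in which one of Propositions~\ref{prop: special type III}, \ref{prop: special type II}, \ref{prop: maximal vers exceptional: ord}, or \ref{prop: maximal vers exceptional: non ord} applies, with the residual ``special'' cases routed through Lemmas~\ref{lem: special case 1}, \ref{lem: special case 2}, \ref{lem: special case 3}.

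In Case~1, $\widehat{\Omega}^\pm\neq\{(\al,j)\}$, so both sides are proper pseudo $\Lambda$-decompositions and Proposition~\ref{prop: special type III} closes the argument directly. In Case~2, exactly one of $\widehat{\Omega}^\pm$ equals $\{(\al,j)\}$; up to swap, say $\widehat{\Omega}^+=\{(\al,j)\}$ so $\Omega^+\in\mathbf{D}_{(\al,j),\Lambda}$. I would first use Lemma~\ref{lem: reduce to extremal} (together with Lemma~\ref{lem: two equivalent pairs}) to replace $\Omega^+$, modulo $\cO_{\xi,\Lambda}^{<\gamma}$, by a $\Lambda$-exceptional or $\Lambda$-extremal decomposition. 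If the result is not $\Lambda$-ordinary, its ordinarization (Lemma~\ref{lem: reduce to ordinary}) is a proper pseudo $\Lambda$-decomposition $\Lambda$-equivalent to it at level $<\gamma$, sending us back to Case~1. Otherwise $\Omega^+$ is $\Lambda$-ordinary and Proposition~\ref{prop: special type II} applies when $\Omega^+=\Omega_{(\al,j),\Lambda}^{\rm{max}}$ or is $\Lambda$-extremal; the $\Lambda$-exceptional subcase with $\Omega^+\neq\Omega_{(\al,j),\Lambda}^{\rm{max}}$ is handled by comparison with $\Omega_{(\al,j),\Lambda}^{\rm{max}}$ via Lemma~\ref{lem: special case 3}.

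In Case~3, both $\widehat{\Omega}^\pm=\{(\al,j)\}$, so $\Omega^\pm\in\mathbf{D}_{(\al,j),\Lambda}$. Set $\Omega^{\rm{max}}\defeq\Omega_{(\al,j),\Lambda}^{\rm{max}}$ and let $\Omega^\star$ equal $\Omega^{\rm{max}}$ when $\Omega^{\rm{max}}$ is $\Lambda$-ordinary and $(\Omega^{\rm{max}})_\dagger$ otherwise. Factor
\[
F_\xi^{\Omega^\pm}\ \sim\ F_\xi^{\Omega^+,\Omega^\star}\cdot F_\xi^{\Omega^\star,\Omega^-}.
\]
When $\Omega^{\rm{max}}$ is not $\Lambda$-ordinary, $\Omega^\star$ is a proper pseudo $\Lambda$-decomposition, so each factor falls under Case~2 (or is handled directly by Proposition~\ref{prop: maximal vers exceptional: non ord} after reducing $\Omega^\pm$ to $\Lambda$-exceptional, $\Lambda$-ordinary form). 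When $\Omega^{\rm{max}}$ is $\Lambda$-ordinary, reduce each of $\Omega^\pm$ via Lemma~\ref{lem: reduce to extremal} and ordinarization to $\Lambda$-exceptional, $\Lambda$-ordinary form, routing into Case~2 whenever ordinarization produces a proper pseudo $\Lambda$-decomposition, and then apply Proposition~\ref{prop: maximal vers exceptional: ord} to each factor. The outcomes of these propositions are either constructible type~I lifts, $\cO_{\xi,\Lambda}^{<\gamma}$-terms, or chained through Lemmas~\ref{lem: special case 1}, \ref{lem: special case 2}, \ref{lem: special case 3} and Proposition~\ref{prop: special type II}, each of which yields membership in $\cO_{\xi,\Lambda}^{\rm{con}}\cdot \cO_{\xi,\Lambda}^{<\gamma}$.

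The main obstacle is not the overall strategy but the bookkeeping: one must check at each step that substitutions remain controlled at level $<\gamma$ and that the emerging pairs satisfy the restrictive hypotheses of the invoked proposition—in particular the distinctions between $\Lambda$-exceptional and $\Lambda$-extremal, and between $\Lambda$-ordinary and not, which drive the case split. The most delicate subcase is $\Omega^+$ being $\Lambda$-exceptional with $\Omega^+\neq\Omega_{(\al,j),\Lambda}^{\rm{max}}$, which does not fall under Proposition~\ref{prop: maximal vers exceptional: ord} directly and must be routed through Lemma~\ref{lem: special case 3}. As indicated by the road map preceding Lemma~\ref{lem: special case 1}, all these reductions terminate without circular dependency.
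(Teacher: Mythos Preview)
Your overall architecture matches the paper's: factor through $\Omega^{\rm{max}}\defeq\Omega_{(\al,j),\Lambda}^{\rm{max}}$ (or its ordinarization), sort by whether each side is a genuine $\Lambda$-decomposition or a proper pseudo $\Lambda$-decomposition, and route each configuration to the appropriate preparatory proposition. However, there is a genuine gap, and your last paragraph misidentifies where it lies.

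Lemma~\ref{lem: reduce to extremal} does \emph{not} reduce a $\Lambda$-decomposition to $\Lambda$-exceptional form; it reduces it to either $\Lambda$-exceptional \emph{or $\Lambda$-extremal} form. In your Case~3 with $\Omega^{\rm{max}}$ $\Lambda$-ordinary, after reducing $\Omega^+$ you may land with $\Omega^+$ $\Lambda$-extremal and $\Lambda$-ordinary while $\Omega^-=\Omega^{\rm{max}}$ is $\Lambda$-ordinary. None of the results you invoke covers this: Propositions~\ref{prop: maximal vers exceptional: ord} and~\ref{prop: maximal vers exceptional: non ord} and Lemmas~\ref{lem: special case 1}--\ref{lem: special case 3} all require $\Omega^+$ to be $\Lambda$-exceptional, while Proposition~\ref{prop: special type II} requires $\widehat{\Omega}^-\neq\{(\al,j)\}$. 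This is exactly the terminal case of the paper's proof, where one must directly verify that $\Omega^\pm$ is (or reduces to something in $\cO_{\xi,\Lambda}^{\rm{con}}\cdot\cO_{\xi,\Lambda}^{<\gamma}$ via) a constructible $\Lambda$-lift of type~I by checking Conditions~I-\ref{it: I 1}--I-\ref{it: I 7} one at a time, using Lemmas~\ref{lem: reduction to separated lift}, \ref{lem: reduce non ord to smaller norm}, \ref{lem: reduce non sep to smaller norm} and (inside the failure of I-\ref{it: I 4} and I-\ref{it: I 7}) Lemma~\ref{lem: special case 3}. You have omitted this verification entirely.

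Relatedly, your identification of the ``most delicate subcase'' is backwards. The case $\Omega^+$ $\Lambda$-exceptional with $\Omega^+\neq\Omega^{\rm{max}}$ \emph{is} covered by Proposition~\ref{prop: maximal vers exceptional: ord}: its hypothesis is only that $\Omega^+$ be $\Lambda$-exceptional and $\Lambda$-ordinary, with no maximality requirement. Lemma~\ref{lem: special case 3}, which you invoke for this subcase, has the narrow hypothesis $i_{\Omega^+,1}=i_{\psi_2}^{1,e^+}$ that is not satisfied in general; in the paper it appears only as an auxiliary inside the type~I verification above, not as a standalone reduction.
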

\begin{proof}
We may start with defining two balanced pairs $\Omega_1^\pm$, $\Omega_2^\pm$ by $\Omega_1^+\defeq \Omega^+$, $\Omega_1^-\defeq \Omega_{(\al,j),\Lambda}^{\rm{max}}$, $\Omega_2^+\defeq \Omega_{(\al,j),\Lambda}^{\rm{max}}$, $\Omega_2^-\defeq \Omega^-$, and then observe that $F_\xi^{\Omega^\pm}\sim F_\xi^{\Omega_1^\pm}F_\xi^{\Omega_2^\pm}$. Hence it suffices to prove $F_\xi^{\Omega^\pm}\in\cO_{\xi,\Lambda}^{\rm{con}}\cdot \cO_{\xi,\Lambda}^{<\gamma}$ for all balanced pairs $\Omega^\pm$ with a pseudo $\Lambda$-decomposition $\Omega^+$ of $(\al,j)$ and $\Omega^-=\Omega_{(\al,j),\Lambda}^{\rm{max}}$.

If $\widehat{\Omega}^+\neq \{(\al,j)\}$ and $\Omega^-=\Omega_{(\al,j),\Lambda}^{\rm{max}}$ is $\Lambda$-ordinary, we deduce $F_\xi^{\Omega^\pm}\in\cO_{\xi,\Lambda}^{\rm{con}}\cdot \cO_{\xi,\Lambda}^{<\gamma}$ by applying Proposition~\ref{prop: special type II} to the balanced pair $\Omega^-,\Omega^+$ (inverse of $\Omega^\pm$). If $\widehat{\Omega}^+\neq \{(\al,j)\}$ and $\Omega^-=\Omega_{(\al,j),\Lambda}^{\rm{max}}$ is not $\Lambda$-ordinary, we deduce $F_\xi^{\Omega^\pm}\in\cO_{\xi,\Lambda}^{\rm{con}}\cdot \cO_{\xi,\Lambda}^{<\gamma}$ by applying Proposition~\ref{prop: special type III} to the balanced pair $\Omega^+,\Omega^-_\dagger$. Hence we assume in the rest of the proof that $\Omega^+$ is a $\Lambda$-decomposition of $(\al,j)$. According to Lemma~\ref{lem: reduce to extremal}, it is harmless to assume that $\Omega^+$ is either $\Lambda$-exceptional or $\Lambda$-extremal. If $\Omega^+$ is not $\Lambda$-ordinary, then we consider the balanced pair $\Omega^+_\dagger,\Omega^-$ and deduce $F_\xi^{\Omega^\pm}\in\cO_{\xi,\Lambda}^{\rm{con}}\cdot \cO_{\xi,\Lambda}^{<\gamma}$ from previous discussion. If $\Omega^+$ is $\Lambda$-exceptional and $\Lambda$-ordinary, then we deduce $F_\xi^{\Omega^\pm}\in\cO_{\xi,\Lambda}^{\rm{con}}\cdot \cO_{\xi,\Lambda}^{<\gamma}$ from Proposition~\ref{prop: maximal vers exceptional: non ord} and Proposition~\ref{prop: maximal vers exceptional: ord}.
Hence, it remains to treat the case when $\Omega^+$ is $\Lambda$-extremal and $\Lambda$-ordinary and $\Omega^-=\Omega_{(\al,j),\Lambda}^{\rm{max}}$. We may also assume that $\Omega^+\neq \Omega_{(\al,j),\Lambda}^{\rm{max}}$, since the statement is trivial otherwise. If $\Omega^-=\Omega_{(\al,j),\Lambda}^{\rm{max}}$ is not $\Lambda$-ordinary, then we may deduce $F_\xi^{\Omega^\pm}\in\cO_{\xi,\Lambda}^{\rm{con}}\cdot \cO_{\xi,\Lambda}^{<\gamma}$ by applying Proposition~\ref{prop: special type II} to the balanced pair $\Omega^+,\Omega^-_\dagger$.

Therefore we can assume from now on that $\Omega^+$ is $\Lambda$-extremal and $\Lambda$-ordinary and $\Omega^-=\Omega_{(\al,j),\Lambda}^{\rm{max}}$ is $\Lambda$-ordinary. In this case, we prove $F_\xi^{\Omega^\pm}\in\cO_{\xi,\Lambda}^{\rm{con}}\cdot \cO_{\xi,\Lambda}^{<\gamma}$  by checking the definition of constructible $\Lambda$-lifts of type $\rm{I}$. If $\Omega^\pm$ is not a $\Lambda$-lift, then $F_\xi^{\Omega^\pm}\in \cO_{\xi,\Lambda}^{<\gamma}$ thanks to Lemma~\ref{lem: union of lifts}. If $\Omega^\pm$ is a $\Lambda$-lift, then Conditions~\rm{I}-\ref{it: I 1}, \rm{I}-\ref{it: I 2}, and \rm{I}-\ref{it: I 3} are true by our assumption on $\Omega^\pm$. If $\Omega^\pm$ fails Condition~\rm{I}-\ref{it: I 5}, then $F_\xi^{\Omega^\pm}\in \cO_{\xi,\Lambda}^{<\gamma}$ thanks to Lemma~\ref{lem: reduction to separated lift}. If $\Omega^\pm$ fails Condition~\rm{I}-\ref{it: I 6}, then we deduce $F_\xi^{\Omega^\pm}\in \cO_{\xi,\Lambda}^{<\gamma}$ from Lemma~\ref{lem: reduce non sep to smaller norm}, as $\Omega^+$ is not $\Lambda$-exceptional. If $\Omega^\pm$ fails Condition~\rm{I}-\ref{it: I 7}, then we deduce $F_\xi^{\Omega^\pm}\in \cO_{\xi,\Lambda}^{\rm{con}}\cdot\cO_{\xi,\Lambda}^{<\gamma}$, by the same argument as at the end of the proof of Proposition~\ref{prop: maximal vers exceptional: ord}(see the construction of $\Omega_1^\pm$ there). If $\Omega^\pm$ fails Condition~\rm{I}-\ref{it: I 4} and $\Omega^-$ is not $\Lambda$-exceptional, then we deduce $F_\xi^{\Omega^\pm}\in \cO_{\xi,\Lambda}^{<\gamma}$ from Lemma~\ref{lem: reduce non ord to smaller norm}. If $\Omega^\pm$ fails Condition~\rm{I}-\ref{it: I 4} and $\Omega^-$ is $\Lambda$-exceptional, then we may choose $\Omega_0^-\in\mathbf{D}_{(\al,j),\Lambda}$ such that $i_{\Omega_0^-,1}=i_{\psi_2}^{1,e}$ for some $1\leq e\leq e_{\psi_2,1}$ where $u_j(i_{\psi_2}^{1,e})\in\,](u_j(i_{\psi_1}^{s,e_1}),j),(u_j(i_{\psi_1}^{s,e_1}),j)]_{w_\cJ}$ for some $1\leq s\leq d_{\psi_1}$ and $1\leq e_1\leq e_{\psi_1,s}$, so that letting $\Omega_0^+\defeq \Omega^+$ it is enough to check that $F_\xi^{\Omega_0^\pm}\in \cO_{\xi,\Lambda}^{\rm{con}}\cdot\cO_{\xi,\Lambda}^{<\gamma}$ by Lemma~\ref{lem: special case 3}. But this follows immediately from Lemma~\ref{lem: reduce non sep to smaller norm} as $\Omega^+$ is not $\Lambda$-exceptional.

Finally, if $\Omega^\pm$ satisfies all the conditions from \rm{I}-\ref{it: I 1} to \rm{I}-\ref{it: I 7}, then it is a constructible $\Lambda$-lift of type $\rm{I}$ and thus $F_\xi^{\Omega^\pm}\in\cO_{\xi,\Lambda}^{\rm{con}}$.
The proof is thus finished.
\end{proof}

\begin{thm}\label{thm: reduce to constructible}
For each $\Lambda$-lift $\Omega^\pm$, we have $F_\xi^{\Omega^\pm}\in\cO_{\xi,\Lambda}^{\rm{con}}$.
\end{thm}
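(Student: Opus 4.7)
The plan is to proceed by strong induction on $|\Omega^\pm|$ with respect to the partial order on $\N\Lambda^\square$ inherited from $\mathrm{Supp}_\xi^\square\subseteq\Phi^+_{\GL_{r_\xi}}$. The inductive hypothesis yields $\cO_{\xi,\Lambda}^{<|\Omega^\pm|}\subseteq\cO_{\xi,\Lambda}^{\mathrm{con}}$, so it is enough to show $F_\xi^{\Omega^\pm}\in\cO_{\xi,\Lambda}^{\mathrm{con}}\cdot\cO_{\xi,\Lambda}^{<|\Omega^\pm|}$; the base of the induction falls under Case~1 below, so the recursion begins cleanly. I will split into two cases based on the structure of $\widehat{\Omega}^+\sqcup\widehat{\Omega}^-$.

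Case~1: $\widehat{\Omega}^+=\widehat{\Omega}^-=\{(\al,j)\}$ for some $(\al,j)\in\widehat{\Lambda}$. Then both $\Omega^+$ and $\Omega^-$ are pseudo $\Lambda$-decompositions of the same $(\al,j)$, and Theorem~\ref{thm: pair of decompositions} immediately gives $F_\xi^{\Omega^\pm}\in\cO_{\xi,\Lambda}^{\mathrm{con}}\cdot\cO_{\xi,\Lambda}^{<|\Omega^\pm|}$.

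Case~2: $\widehat{\Omega}^+\cap\widehat{\Omega}^-=\emptyset$ with $|\widehat{\Omega}^+\sqcup\widehat{\Omega}^-|\geq 2$. Decompose $\Omega^+=\bigsqcup_{(\al,j)\in\widehat{\Omega}^+}\Omega^+_{(\al,j)}$ (and similarly for $\Omega^-$) into $\Lambda^\square$-intervals. For each $\Lambda^\square$-interval $\Omega^+_{(\al,j)}$ the balanced pair $(\Omega^+_{(\al,j)},(\Omega_{(\al,j),\Lambda}^{\rm{max}})_\dagger)$ is a pair of pseudo $\Lambda$-decompositions of $(\al,j)$ whose common norm is the block $\gamma_{(\al,j)}<|\Omega^\pm|$, so Theorem~\ref{thm: pair of decompositions} together with the inductive hypothesis forces the associated $F_\xi$-value into $\cO_{\xi,\Lambda}^{\mathrm{con}}$. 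Multiplying all such factors (for each piece, on both signs) into $F_\xi^{\Omega^\pm}$, I reduce to proving the claim for the balanced pair $\Omega'^\pm$ obtained from $\Omega^\pm$ by substituting each $\Omega^\pm_{(\al,j)}$ with $(\Omega_{(\al,j),\Lambda}^{\rm{max}})_\dagger$. Lemma~\ref{lem: reduce to ordinary} guarantees that every $\Lambda^\square$-interval of $\Omega'^\pm$ is $\Lambda$-ordinary and of maximal form, so Conditions~\rm{III}-\ref{it: III 1}, \rm{III}-\ref{it: III 2}, \rm{III}-\ref{it: III 3} of Definition~\ref{def: constructible lifts} are automatic. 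It then remains to verify the remaining type-III conditions for $\Omega'^\pm$, or to absorb any failure into $\cO_{\xi,\Lambda}^{<|\Omega^\pm|}$.

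For the latter, a failure of Condition~\rm{III}-\ref{it: III 4} (two distinct pairs $\psi,\psi'$ whose interior indices share some $[m]_\xi$) is handled by Lemma~\ref{lem: reduce non ord to smaller norm}, using that both $\gamma_\psi,\gamma_{\psi'}$ are strictly smaller than $|\Omega^\pm|$. A failure of Condition~\rm{III}-\ref{it: III 5} (resp.~\rm{III}-\ref{it: III 7}, resp.~\rm{III}-\ref{it: III 8}) is absorbed via Lemma~\ref{lem: reduce non ord to smaller norm II} (resp.~Lemma~\ref{lem: reduction to separated lift}, resp.~Lemma~\ref{lem: reduce non sep to smaller norm}). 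Condition~\rm{III}-\ref{it: III 6} is part of the definition of the partition of $\Omega'^+\sqcup\Omega'^-$ into $\Lambda^\square$-intervals, while Condition~\rm{III}-\ref{it: III 9} is a compatibility statement whose failure can again be reduced by the same sequence of lemmas, mimicking the end of the proof of Proposition~\ref{prop: special type III}. If every condition holds, $\Omega'^\pm$ is a constructible $\Lambda$-lift of type~III and $F_\xi^{\Omega'^\pm}\in\cO_{\xi,\Lambda}^{\mathrm{con}}$ by definition.

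The principal obstacle — already largely absorbed into the preceding sequence of lemmas — is that every failure of a type-III condition must be downgraded to a factor in $\cO_{\xi,\Lambda}^{<|\Omega^\pm|}$, not merely $\cO_{\xi,\Lambda}^{<\gamma}$ for some block $\gamma<|\Omega^\pm|$; this is precisely why the induction is organized by the partial order on $\N\Lambda^\square$ and why the ordinarization procedure of \S\ref{sub:comb:gen} was set up in advance. Outside of this bookkeeping the argument of Case~2 is essentially a mechanical case analysis structurally identical to the one in Proposition~\ref{prop: special type III}, generalized to allow $\Omega^+$ and $\Omega^-$ to be pseudo $\Lambda$-decompositions of distinct elements of $\widehat{\Lambda}$.
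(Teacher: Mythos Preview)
Your induction scheme and the reduction to the balanced pair $\Omega'^\pm$ obtained by replacing each $\Omega^\pm_{(\al,j)}$ with $(\Omega_{(\al,j),\Lambda}^{\rm{max}})_\dagger$ match the paper's approach exactly. The principal gap is your handling of Condition~\rm{III}-\ref{it: III 6}: it is \emph{not} ``part of the definition of the partition into $\Lambda^\square$-intervals.'' After daggering, new elements appear in $\widehat{\Omega'}^+\sqcup\widehat{\Omega'}^-$, and for $(\beta,j),(\beta',j)$ arising from distinct $(\al,j),(\al',j)\in\widehat{\Omega}^+\sqcup\widehat{\Omega}^-$ with $((i_{\beta'},i_\beta^\prime),j)\in\widehat{\Lambda}$, there is no a priori pseudo $\Lambda$-decomposition of the required form inside $\Omega'^+\sqcup\Omega'^-$. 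The paper's treatment of this failure is the longest step in its proof and depends on first reducing, via Lemma~\ref{lem: reduction to separated lift}, to the case where the original $\Omega^+\sqcup\Omega^-$ is $\Lambda$-separated---a reduction you omit entirely. With $\Lambda$-separatedness one can deduce $((i_{\al'},i_\al^\prime),j)\in\widehat{\Omega}^+\sqcup\widehat{\Omega}^-$, and then build auxiliary balanced pairs $\Omega_\sharp^\pm,\Omega_\flat^\pm$ (with $|\Omega_\flat^\pm|<|\Omega^\pm|$ and $\Omega_\sharp^\pm$ covered by Theorem~\ref{thm: pair of decompositions}) to conclude; without it the argument breaks.

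There are further inaccuracies. Condition~\rm{III}-\ref{it: III 1} is not automatic: $\Omega'^\pm$ may turn out to be a pair of pseudo $\Lambda$-decompositions of a common element (the paper disposes of this by a direct appeal to Theorem~\ref{thm: pair of decompositions}), and $\Omega'^\pm$ may even fail to be a $\Lambda$-lift (handled by Lemma~\ref{lem: union of lifts}). Your assertion that ``both $\gamma_\psi,\gamma_{\psi'}$ are strictly smaller than $|\Omega^\pm|$'' is false whenever one of $\widehat{\Omega'}^\pm$ is a singleton, so the ``Moreover'' clauses of Lemmas~\ref{lem: reduce non ord to smaller norm}--\ref{lem: reduce non sep to smaller norm} do not directly apply; the paper pairs those lemmas with Theorem~\ref{thm: pair of decompositions}, since the obstructing factor $F_\xi^{\Omega_{\psi,s,e}^\pm}$ is itself a balanced pair of pseudo $\Lambda$-decompositions of a common element. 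Finally, the failure of \rm{III}-\ref{it: III 9} is not ``mimicking the end of the proof of Proposition~\ref{prop: special type III}'' (it holds automatically there); the paper instead constructs a specific non-$\Lambda$-lift $\Omega_\star^\pm$ by modifying two offending $\Lambda^\square$-intervals and again invokes Theorem~\ref{thm: pair of decompositions}.
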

\begin{proof}
As usual, we can associate with $\Omega^\pm$ the sets $\widehat{\Omega}^+$, $\widehat{\Omega}^-$ and then $\Omega_{(\al,j)}^+$ (resp.~$\Omega_{(\al,j)}^-$) for each $(\al,j)\in \widehat{\Omega}^+$ (resp.~for each $(\al,j)\in \widehat{\Omega}^-$). We argue by induction on the norm $|\Omega^\pm|$ (cf.~Definition~\ref{def: balanced cond}). In other words, we only need to prove that
\begin{equation}\label{equ: main statement in induction}
F_\xi^{\Omega^\pm}\in \cO_{\xi,\Lambda}^{\rm{con}}\cdot \cO_{\xi,\Lambda}^{<|\Omega^\pm|}\subseteq\cO(\cN_{\xi,\Lambda})^\times
\end{equation}
for each $\Lambda$-lift $\Omega^\pm$. It follows from the first half of Lemma~\ref{lem: reduction to separated lift} that it suffices to prove (\ref{equ: main statement in induction}) when $\Omega^+\sqcup\Omega^-$ is $\Lambda$-separated. If $\widehat{\Omega}^+=\widehat{\Omega}^-$, then the result is covered by Theorem~\ref{thm: pair of decompositions}. Hence we assume from now on that $\Omega^\pm$ is a $\Lambda$-lift such that $\widehat{\Omega}^+\cap\widehat{\Omega}^-=\emptyset$ and $\Omega^+\sqcup\Omega^-$ is $\Lambda$-separated.

For each $(\al,j)\in\widehat{\Omega}^+\sqcup\widehat{\Omega}^-$, we consider the following pseudo $\Lambda$-decomposition $\Omega_{(\al,j),\natural}\defeq (\Omega_{(\al,j),\Lambda}^{\rm{max}})_\dagger$ of $(\al,j)$. Then we define
$$
\Omega_{(\al,j),\natural}^+\defeq\left\{
  \begin{array}{ll}
    \Omega_{(\al,j),\natural} & \hbox{if $(\al,j)\in\widehat{\Omega}^+$;} \\
    \Omega^-_{(\al,j)} & \hbox{if $(\al,j)\in\widehat{\Omega}^-$}
  \end{array}
\right.
\,\mbox{ and }\,
\Omega_{(\al,j),\natural}^-\defeq\left\{
  \begin{array}{ll}
    \Omega^+_{(\al,j)} & \hbox{if $(\al,j)\in\widehat{\Omega}^+$;} \\
    \Omega_{(\al,j),\natural} & \hbox{if $(\al,j)\in\widehat{\Omega}^-$.}
  \end{array}
\right.
$$
We also define
$$\Omega_\natural^+\defeq \underset{(\al,j)\in\widehat{\Omega}^+}{\bigsqcup}\Omega_{(\al,j),\natural}\,\,\mbox{and}\,\,\Omega_\natural^-\defeq \underset{(\al,j)\in\widehat{\Omega}^-}{\bigsqcup}\Omega_{(\al,j),\natural}.$$
Then it follows from Theorem~\ref{thm: pair of decompositions} that $F_\xi^{\Omega_{(\al,j),\natural}^\pm}\in \cO_{\xi,\Lambda}^{\rm{con}}\cdot \cO_{\xi,\Lambda}^{<|\Omega^\pm|}$ for each $(\al,j)\in \widehat{\Omega}^+\sqcup\widehat{\Omega}^-$, and therefore
\begin{equation}\label{equ: switch to a new pair}
F_\xi^{\Omega_\natural^\pm}(F_\xi^{\Omega^\pm})^{-1}\sim \prod_{(\al,j)\in\widehat{\Omega}^+\sqcup\widehat{\Omega}^-}F_\xi^{\Omega_{(\al,j),\natural}^\pm}\in \cO_{\xi,\Lambda}^{\rm{con}}\cdot \cO_{\xi,\Lambda}^{<|\Omega^\pm|}.
\end{equation}
Hence it suffices to prove that $F_\xi^{\Omega_\natural^\pm}\in \cO_{\xi,\Lambda}^{\rm{con}}\cdot \cO_{\xi,\Lambda}^{<|\Omega^\pm|}$ by checking the definition of constructible $\Lambda$-lifts of type $\rm{III}$. If both $\Omega_\natural^+$ and $\Omega_\natural^-$ are pseudo $\Lambda$-decompositions of some $(\al,j)\in\widehat{\Lambda}$, then we clearly have $F_\xi^{\Omega_\natural^\pm}\in \cO_{\xi,\Lambda}^{\rm{con}}\cdot \cO_{\xi,\Lambda}^{<|\Omega^\pm|}$ by Theorem~\ref{thm: pair of decompositions}, and thus we may assume from now on that such $(\al,j)$ does not exist. If $\Omega_\natural^\pm$ is not a $\Lambda$-lift, then we clearly have $F_\xi^{\Omega_\natural^\pm}\in \cO_{\xi,\Lambda}^{<|\Omega^\pm|}$ by Lemma~\ref{lem: union of lifts}. If $\Omega_\natural^\pm$ is a $\Lambda$-lift, then Conditions~\rm{III}-\ref{it: III 1},~\rm{III}-\ref{it: III 2}, and \rm{III}-\ref{it: III 3} clearly hold. If $\Omega_\natural^\pm$ fails Condition~\rm{III}-\ref{it: III 4}, then we deduce from Lemma~\ref{lem: reduce non ord to smaller norm}, Lemma~\ref{lem: reduce non sep to smaller norm}, and Theorem~\ref{thm: pair of decompositions} that $F_\xi^{\Omega_\natural^\pm}\in \cO_{\xi,\Lambda}^{\rm{con}}\cdot \cO_{\xi,\Lambda}^{<|\Omega^\pm|}$. If $\Omega_\natural^\pm$ fails Condition~\rm{III}-\ref{it: III 5}, then we deduce from Lemma~\ref{lem: reduce non ord to smaller norm II} and Theorem~\ref{thm: pair of decompositions} that $F_\xi^{\Omega_\natural^\pm}\in \cO_{\xi,\Lambda}^{\rm{con}}\cdot \cO_{\xi,\Lambda}^{<|\Omega^\pm|}$. If $\Omega_\natural^\pm$ fails Condition~\rm{III}-\ref{it: III 7}, then we deduce $F_\xi^{\Omega_\natural^\pm}\in \cO_{\xi,\Lambda}^{<|\Omega^\pm|}$ from Lemma~\ref{lem: reduction to separated lift}. If $\Omega_\natural^\pm$ fails Condition~\rm{III}-\ref{it: III 8}, then we deduce from Lemma~\ref{lem: reduce non sep to smaller norm} and Theorem~\ref{thm: pair of decompositions} that $F_\xi^{\Omega_\natural^\pm}\in \cO_{\xi,\Lambda}^{\rm{con}}\cdot \cO_{\xi,\Lambda}^{<|\Omega^\pm|}$. If $\Omega_\natural^\pm$ fails Condition~\rm{III}-\ref{it: III 9} for some $\Omega$, $\Omega'$, $(\al,j)$, $(\al',j')$, $(i,j)$, $(i',j')$ and $m$ there, then we choose an arbitrary $\Omega_\star\in\mathbf{D}_{(\al,j),\Lambda}$ (resp.~$\Omega_\star'\in\mathbf{D}_{(\al',j'),\Lambda}$) which has $(i,j)$ as an interior point (resp.~which satisfies $(i',j')\in\mathbf{I}_{\Omega_\star'}\cup\mathbf{I}_{\Omega_\star'}^\prime$) and construct a new balanced pair $\Omega_\star^\pm$ by replacing the $\Lambda^\square$-intervals $\Omega,\Omega'$ with $\Omega_\star,\Omega_\star'$ respectively. On the one hand, it is clear that $F_\xi^{\Omega_\natural^\pm}(F_\xi^{\Omega_\star^\pm})^{-1}\in \cO_{\xi,\Lambda}^{\rm{con}}\cdot \cO_{\xi,\Lambda}^{<|\Omega^\pm|}$ by applying Theorem~\ref{thm: pair of decompositions} to the pair $\Omega,\Omega_\star$ and the pair $\Omega',\Omega_\star'$. On the other hand, the balanced pair $\Omega_\star^\pm$ is not a $\Lambda$-lift as $(i,j),(i',j')\in[m]_\xi$ (and $\Omega,\Omega'$ are distinct $\Lambda^\square$-intervals of $\Omega_\natural^\pm$), which implies that $F_\xi^{\Omega_\star^\pm}\in \cO_{\xi,\Lambda}^{<|\Omega^\pm|}$ by Lemma~\ref{lem: union of lifts}, and thus $F_\xi^{\Omega_\natural^\pm}\in \cO_{\xi,\Lambda}^{\rm{con}}\cdot \cO_{\xi,\Lambda}^{<|\Omega^\pm|}$.

Now we assume that $\Omega_\natural^\pm$ fails Condition~\rm{III}-\ref{it: III 6} for some $(\beta,j),\,(\beta',j)\in\widehat{\Omega}_\natural^+\sqcup\widehat{\Omega}_\natural^-$ satisfying $((i_{\beta'},i_\beta^\prime),j)\in\widehat{\Lambda}$.
If $(i_{\beta'},j)$ and $(i_\beta^\prime,j)$ do not lie in the same $\Lambda^\square$-interval of $\Omega_\natural^\pm$, then we deduce $F_\xi^{\Omega_\natural^\pm}\in \cO_{\xi,\Lambda}^{<|\Omega^\pm|}$ from Lemma~\ref{lem: reduction to separated lift}. Otherwise there exists a $\Lambda^\square$-interval $\Omega_\natural$ of $\Omega_\natural^\pm$ in which both $(i_{\beta'},j)$ and $(i_\beta^\prime,j)$ lie. According to our construction of $\Omega_\natural^\pm$, there is a natural bijection between $\Lambda^\square$-intervals of $\Omega^\pm$ and $\Lambda^\square$-intervals of $\Omega_\natural^\pm$, and therefore the $\Lambda^\square$-interval $\Omega_\natural$ of $\Omega_\natural^\pm$ uniquely determines a $\Lambda^\square$-interval $\Omega$ of $\Omega^\pm$. Let $(\al,j)\in\widehat{\Omega}^+\sqcup\widehat{\Omega}^-$ (resp. $(\al',j)\in\widehat{\Omega}^+\sqcup\widehat{\Omega}^-$) with $(\beta,j)\in\widehat{\Omega}_{(\al,j),\natural}$ (resp. $(\beta',j)\in\widehat{\Omega}_{(\al',j),\natural}$), and note that we have
$$((i_{\al'},i_{\beta'}),j),\,((i_{\beta'},i_{\al'}^\prime),j),\,((i_{\al},i_\beta^\prime),j),\, ((i_\beta^\prime,i_{\al}^\prime),j)\in\widehat{\Lambda}\sqcup(\{0\}\times\{j\}).$$
If $(\al,j)=(\al',j)$, then due to the construction of $\Omega_\natural^\pm$ there is necessarily a pseudo $\Lambda$-decomposition of $((i_{\beta'},i_\beta^\prime),j)$ which is a subset of $\Omega_{(\al,j),\natural}$ (using Lemma~\ref{lem: reduce to ordinary}), and thus contradicts our assumption. Hence, we may assume that $(\al,j)\neq (\al',j)$ and $((i_{\beta'},i_\beta^\prime),j)\in\widehat{\Lambda}$. As $(\beta,j)$ is an element of $\widehat{\Omega}_{(\al,j),\natural}$, we have either $i_\beta^\prime=i_\al^\prime$ or $((i_\beta^\prime,i_\al^\prime),j)\in\widehat{\Lambda}$. Similarly, we have either $i_{\al'}=i_{\beta'}$ or $((i_{\al'},i_{\beta'}),j)\in\widehat{\Lambda}$. Consequently, we deduce that $((i_{\al'},i_\al^\prime),j)\in\widehat{\Lambda}$, which necessarily implies that $((i_{\al'},i_\al^\prime),j)\in\widehat{\Omega}^+\sqcup\widehat{\Omega}^-$ as $\Omega^+\sqcup\Omega^-$ is $\Lambda$-separated. (Be careful that $\Omega_\natural^+\sqcup\Omega_\natural^-$ is not $\Lambda$-separated in general, as a pseudo $\Lambda$-decomposition is $\Lambda$-separated if and only if it is a $\Lambda$-decomposition.) We write $\Omega'$ for an arbitrary $\Lambda$-decomposition of $((i_{\beta'},i_\beta^\prime),j)$. We may assume without loss of generality that $(\al,j),\,(\al',j)\in\widehat{\Omega}^-$ and $((i_{\al'},i_\al^\prime),j)\in\widehat{\Omega}^+$.
If $i_{\beta'}=i_{\al'}$ and $i_\beta^\prime=i_\al^\prime$, then $\Omega_{((i_{\al'},i_\al^\prime),j),\natural}\subseteq\Omega_\natural^+\sqcup\Omega_\natural^-$ is clearly a pseudo $\Lambda$-decomposition of $((i_{\beta'},i_\beta^\prime),j)=((i_{\al'},i_\al^\prime),j)$ which again contradicts our assumption.
Hence, we have either $i_{\beta'}\neq i_{\al'}$ or $i_\beta^\prime\neq i_\al^\prime$. Then $\Omega_\natural^\pm$ has exactly two $\Lambda^\square$-intervals given by $\Omega_\natural^-$ and $\Omega_\natural^+$ (with $\Omega_\natural^+$ being a pseudo $\Lambda$-decomposition of $((i_{\al'},i_\al^\prime),j)$ and $\widehat{\Omega}^+=\{((i_{\al'},i_\al^\prime),j)\}$) and there exist two balanced pairs $\Omega_\sharp^\pm$ and $\Omega_\flat^\pm$ (cf. the proof of Lemma~\ref{lem: reduction to separated lift}) such that
\begin{itemize}
\item $\Omega_\sharp^+=\Omega_\natural^+$, $\Omega_\flat^+=\Omega'\subseteq\Omega_\sharp^-$ and $(\Omega_\sharp^-\setminus\Omega')\sqcup\Omega_\flat^-=\Omega_\natural^-$;
\item both $\Omega_\sharp^+$ and $\Omega_\sharp^-$ are pseudo $\Lambda$-decompositions of $((i_{\al'},i_\al^\prime),j)$;
\item $|\Omega_\flat^\pm|<|\Omega_\natural^\pm|=|\Omega_\sharp^\pm|=|\Omega^\pm|$.
\end{itemize}
As we clearly have $F_\xi^{\Omega_\flat^\pm}\in\cO_{\xi,\Lambda}^{<|\Omega^\pm|}$ and $F_\xi^{\Omega_\sharp^\pm}\in\cO_{\xi,\Lambda}^{\rm{con}}\cdot \cO_{\xi,\Lambda}^{<|\Omega^\pm|}$ by Theorem~\ref{thm: pair of decompositions}, we deduce that
$$F_\xi^{\Omega_\natural^\pm}=F_\xi^{\Omega_\flat^\pm}\cdot F_\xi^{\Omega_\sharp^\pm}\in\cO_{\xi,\Lambda}^{\rm{con}}\cdot \cO_{\xi,\Lambda}^{<|\Omega^\pm|}.$$

Finally, if $\Omega^\pm$ satisfies all the conditions from \rm{III}-\ref{it: III 1} to \rm{III}-\ref{it: III 9}, then it is a constructible $\Lambda$-lift of type $\rm{III}$ and thus $F_\xi^{\Omega^\pm}\in\cO_{\xi,\Lambda}^{\rm{con}}$. In all, we have shown that $F_\xi^{\Omega_\natural^\pm}\in \cO_{\xi,\Lambda}^{\rm{con}}\cdot \cO_{\xi,\Lambda}^{<|\Omega^\pm|}$ in all cases, which together with (\ref{equ: switch to a new pair}) implies (\ref{equ: main statement in induction}). The proof is thus finished by an induction on $|\Omega^\pm|$.
\end{proof}

\newpage

\section{Construction of invariant functions}
\label{sec:const:inv}
We fix a choice of $w_\cJ\in\un{W}$, $\xi=(w_\cJ,u_\cJ)\in\Xi_{w_\cJ}$ and a subset $\Lambda\subseteq\mathrm{Supp}_{\xi,\cJ}$ whose image in $\mathrm{Supp}^\square_\xi$ is $\Lambda^\square$. In this section, we construct an invariant function $f_\xi^{\Omega^\pm}\in\Inv$ for each constructible $\Lambda$-lift $\Omega^\pm$. The construction when $\Omega^\pm$ is of type \rm{I}, \rm{II} or \rm{III} is done in \S\,\ref{sub: type I}, \S\,\ref{sub: type II} and \S\,\ref{sub: type III} respectively. More precisely, for each constructible $\Lambda$-lift, we will construct an element $v_\cJ^{\Omega^\pm}=(v_j^{\Omega^\pm})_{j\in\cJ}\in\un{W}$ and a subset $I_\cJ^{\Omega^\pm}\subseteq\mathbf{n}_{\cJ}$ satisfying
$I_\cJ^{\Omega^\pm}\cdot(v_\cJ^{\Omega^\pm},1)=I_\cJ^{\Omega^\pm}$ (cf. Lemma~\ref{lem: invariance of function}), and then we define the invariant function $f_\xi^{\Omega^\pm}$ by (cf.~(\ref{equ: def of inv fun}))
$$
f_\xi^{\Omega^\pm}\defeq f_{v_\cJ^{\Omega^\pm}, I_\cJ^{\Omega^\pm}}.
$$
The relation between $f_\xi^{\Omega^\pm}$ and $F_\xi^{\Omega^\pm}$ will be further explored in \S\,\ref{sec:inv:cons}.

We recall $\widehat{\Lambda}$ from the beginning of \S\,\ref{sec:comb:lifts} and write $\widehat{\Lambda}^\square$ for its image in $\mathrm{Supp}^\square_\xi$. We recall the set $\mathbf{n}_\cJ$ from (\ref{equ: general index}) and the notation $[m]_\xi$ from (\ref{equ: single block of integers}). We also recall from (\ref{equ: std action}) the right action of $\un{W}\rtimes\Z/f$ on $\mathbf{n}_\cJ$. For each pair of elements $(k_1,j_1), (k_2,j_2)\in \mathbf{n}_\cJ$ lying in the same orbit of $\langle (w_\cJ,1) \rangle\subseteq\un{W}\rtimes\Z/f$, we recall (see \S\,\ref{subsub:conj inv}) the definition of $](k_1,j_1),(k_2,j_2)]_{w_\cJ}\subseteq\mathbf{n}_\cJ$. For a $\Lambda$-lift $\Omega^\pm$, we also recall the sets $\widehat{\Omega}^+$ and $\widehat{\Omega}^-$ from Definition~\ref{def: separated condition}. Note that we have the partitions
$$\Omega^+=\bigsqcup_{(\al,j)\in\widehat{\Omega}^+}\Omega_{(\al,j)}^+\quad\mbox{ and }\quad\Omega^-=\bigsqcup_{(\al,j)\in\widehat{\Omega}^-}\Omega_{(\al,j)}^-.$$

We fix some notation which will be frequently used throughout the rest of \S\,\ref{sec:const:inv} as well as \S\,\ref{sec:inv:cons}. We fix a $\Lambda$-lift $\Omega^\pm$, and give the sets $\widehat{\Omega}^+\sqcup\widehat{\Omega}^-$ a numbering. We write $\Z/t$ for the cyclic group of order $t$ for each $t\geq 2$. If $\#\widehat{\Omega}^+=\#\widehat{\Omega}^-=1$, then we write $\widehat{\Omega}^+=\{(\al_1,j_1)\}$, $\widehat{\Omega}^-=\{(\al_2,j_2)\}$ ($(\al_1,j_1)$ and $(\al_2,j_2)$ might be equal) and then set $t\defeq 2$, $m_2\defeq h_{\al_2}=h_{\al_1}$ and $m_1\defeq \ell_{\al_2}=\ell_{\al_1}$. Otherwise, $\widehat{\Omega}^\pm$ is a $\widehat{\Lambda}$-lift of some directed loop $\widehat{\Gamma}$ inside $\mathfrak{G}_{\xi,\widehat{\Lambda}}$ that satisfies $E(\widehat{\Gamma})^+\cap E(\widehat{\Gamma})^-=\emptyset$. We set $t\defeq \#E(\widehat{\Gamma})^++\#E(\widehat{\Gamma})^-\geq 3$ and there exists a set of integers $\{m_a\mid a\in \Z/t\}\subseteq\{1,\dots,r_\xi\}$ such that either $(m_{a-1},m_a)\in E(\widehat{\Gamma})^+$ or $(m_a,m_{a-1})\in E(\widehat{\Gamma})^-$ for each $a\in\Z/t$. It is clear that $\{m_a\mid a\in \Z/t\}$ is uniquely determined up to a cyclic permutation on the index set $\Z/t$. We fix a choice of $\{m_a\mid a\in \Z/t\}$ from now on. We write $(\Z/t)^+$ (resp. $(\Z/t)^-$) for the subset of $\Z/t$ characterized by $a\in (\Z/t)^+$ (resp.~$a\in (\Z/t)^-$) if and only if $(m_{a-1},m_a)\in E(\widehat{\Gamma})^+$ (resp.~ $(m_a,m_{a-1})\in E(\widehat{\Gamma})^-$). Hence we have a decomposition $\Z/t=(\Z/t)^+\sqcup(\Z/t)^-$. We write $(\al_a,j_a)$ for the unique element of $\widehat{\Omega}^+$ (resp.~$\widehat{\Omega}^-$) whose image in $\mathrm{Supp}_\xi^\square$ is $(m_{a-1},m_a)$ (resp. $(m_a,m_{a-1})$). Then for $\bullet\in\{+,-\}$ we set
$$\Omega_a\defeq \Omega_{(\al_a,j_a)}^\bullet\quad\mbox{ and }\quad \psi_a\defeq(\Omega_{(\al_a,j_a)}^\bullet,\Lambda)$$
for each $a\in(\Z/t)^\bullet$.
For each $a\in\Z/t$, we set $$c_a\defeq \#\Omega_a\,\, \mbox{and} \,\,d_a\defeq d_{\psi_a}.$$ For each $1\leq s\leq d_a$ and each $1\leq e\leq e_{a,s}\defeq e_{\psi_a,s}$, we set
$$c_a^s\defeq c_a-c_{\psi_a}^s,\,\,i_a^{s,e}\defeq i_{\psi_a}^{s,e},\,\,\mbox{ and }\,\,k_a^{s,e}\defeq u_{j_a}(i_a^{s,e}).$$
For each $0\leq c\leq c_a$, we set
$$i_{a,c}\defeq i_{\Omega_{\psi_a},c_a-c}\,\,\mbox{ and }\,\,k_{a,c}\defeq u_{j_a}(i_{a,c}),$$
so that we have
\begin{equation}\label{equ: inequality for part a}
k_{a,0}>k_{a,1}>\cdots>k_{a,c_a-1}>k_{a,c_a}
\end{equation}
and
\begin{equation}\label{equ: inequality for part a prime}
k_{a,c_a^s-1}>k_a^{s,1}>\cdots>k_a^{s,e_{a,s}}>k_{a,c_a^s}
\end{equation}
for each $1\leq s\leq d_a$ (satisfying $e_{a,s}\geq 1$).

\subsection{Construction of type $\rm{I}$}\label{sub: type I}
In this section, we fix a constructible $\Lambda$-lift $\Omega^\pm$ of type $\rm{I}$ as in Definition~\ref{def: constructible lifts} and construct an element $v_\cJ^{\Omega^\pm}=(v_j^{\Omega^\pm})_{j\in\cJ}\in\un{W}$ as well as a subset $I_\cJ^{\Omega^\pm}\subseteq\mathbf{n}_{\cJ}$.

Following the notation at the beginning of \S\,\ref{sec:const:inv}, we have $\widehat{\Omega}^+=\{(\al_1,j_1)\}=\{(\al_2,j_2)\}=\widehat{\Omega}^-$, $\Omega^-=\Omega_2=\Omega_{(\al_1,j_1),\Lambda}^{\rm{max}}$, and $\Omega^+=\Omega_1$ is a $\Lambda$-decomposition of $(\al_1,j_1)$ which is either $\Lambda$-exceptional or $\Lambda$-extremal. In particular, we have $t=2$, $k_{2,0}=k_{1,0}$ and $k_{2,c_2}=k_{1,c_1}$. As $\Omega^+\neq\Omega^-$ and we clearly have $\#\mathbf{D}_{(\al_1,j_1),\Lambda}\geq 2$, we deduce that $d_1,d_2\geq 1$. It follows from $\Omega^-=\Omega_{(\al_1,j_1),\Lambda}^{\rm{max}}$ that $k_{2,c_2-1}>k_{1,c_1-1}$, $e_{2,1}\geq 1$ (namely $k_2^{1,1}$ is defined) and $k_2^{1,1}\geq k_{1,c_1-1}$. We set
$$e_{\sharp,2}\defeq\max\{e\mid 1\leq e\leq e_{2,d_2}\mbox{ and } k_2^{d_2,e}>k_{1,c_1-1}\},$$
and if such a $e_{\sharp,2}$ does not exist (i.e. $k_2^{d_2,e}\leq k_{1,c_1-1}$ for all $1\leq e\leq e_{2,d_2}$) then we set $e_{\sharp,2}\defeq 0$.
Hence, the following set (which is empty if $d_2=1$ and $e_{\sharp,2}=0$)
$$\{k_2^{1,1},\dots,k_2^{1,e_{2,1}},\cdots,k_2^{d_2-1,1},\cdots,k_2^{d_2-1,e_{2,d_2-1}},k_2^{d_2,1},\dots,k_2^{d_2,e_{\sharp,2}}\}$$
exhausts all possible $k_2^{s,e}$ between $k_{1,0}$ and $k_{1,c_1-1}$. Thanks to (\ref{equ: pair of disjoint orbits}), we define
$$I_\cJ^{\Omega^\pm,\sharp}\defeq \bigsqcup_{e=1}^{e_{\sharp,2}}](k_2^{d_2,e},j_1),(k_2^{d_2,e},j_1)]_{w_\cJ}\sqcup\bigsqcup_{s=1}^{d_2-1}\bigsqcup_{e=1}^{e_{2,s}}](k_2^{s,e},j_1),(k_2^{s,e},j_1)]_{w_\cJ}.$$
Note that we understand $I_\cJ^{\Omega^\pm,\sharp}$ to be $\emptyset$ if $d_2=1$ and $e_{\sharp,2}=0$.

We are now ready to define $v_\cJ^{\Omega^\pm}$ and $I_\cJ^{\Omega^\pm}$. Our definition of $v_\cJ^{\Omega^\pm}=(v_j^{\Omega^\pm})_{j\in\cJ}$ is always of the form
$$
v_j^{\Omega^\pm}\defeq\left\{
  \begin{array}{ll}
    v_{j}^{\Omega^\pm,\sharp}v_{j}^{\Omega^\pm,\flat}w_{j} & \hbox{if $j=j_1$;} \\
    w_j & \hbox{otherwise}
  \end{array}
\right.
$$
with $v_{j_1}^{\Omega^\pm,\sharp}$ and $v_{j_1}^{\Omega^\pm,\flat}$ to be defined below. The construction of $v_{j_1}^{\Omega^\pm,\sharp}$ and $v_{j_1}^{\Omega^\pm,\flat}$ is visualized in Figure~\ref{fig:ex-TypeI}.

If $\Omega^+$ is $\Lambda$-exceptional (and thus $d_1=1$ and $c_1^1=c_1$, as $d_1\geq 1$), then we have either $e_{1,1}=0$ (namely $k_1^{1,1}$ is not defined) or $e_{1,1}\geq 1$ and $k_2^{1,1}\geq k_{1,c_1-1}>k_1^{1,1}$. If $\Omega^+$ is $\Lambda$-extremal (and thus $d_1\geq 1$ and $c_1^1<c_1$), then we have $e_{1,s}\geq 1$ for each $1\leq s\leq d_1$ and $k_1^{1,1}>k_{1,c_1^1}\geq k_{1,c_1-1}$, and moreover $k_2^{1,1}\neq k_1^{1,1}$ thanks to Condition~$\rm{I}$-\ref{it: I 4}. Consequently, if $k_1^{1,1}$ is defined ($e_{1,1}\geq 1$), we always have $k_1^{1,1}\neq k_2^{1,1}$. If $e_{1,1}\geq 1$ and $k_2^{1,1}<k_1^{1,1}$, we define
$$v_{j_1}^{\Omega^\pm,\sharp}\defeq (k_{2,c_2},k_{2,c_2-1},\dots,k_{2,1},k_1^{1,1},\dots, k_1^{1,e_{1,1}},\dots,k_1^{d_1,1},\dots, k_1^{d_1,e_{1,d_1}}),$$
$$v_{j_1}^{\Omega^\pm,\flat}\defeq (k_{1,c_1-1},\dots,k_{1,1},k_{1,0},k_2^{1,1},\dots, k_2^{1,e_{2,1}},\dots, k_2^{d_2-1,e_{2,d_2-1}},k_2^{d_2,1},\dots, k_2^{d_2,e_{\sharp,2}})$$
and
$$I_\cJ^{\Omega^\pm}\defeq ](k_{1,0},j_1),(k_{1,0},j_1)]_{w_\cJ}\cup I_\cJ^{\Omega^\pm,\sharp}\cup I_\cJ^{\psi_1,+}.$$
(Recall that $I_\cJ^{\psi_1,+}$ is defined in \eqref{equ: interior interval}.)
If either $e_{1,1}=0$ or $k_2^{1,1}>k_1^{1,1}$, we define
$$v_{j_1}^{\Omega^\pm,\sharp}\defeq (k_{2,c_2},k_{2,c_2-1},\dots,k_{2,1},k_{1,0},k_1^{1,1},\dots, k_1^{1,e_{1,1}},\dots,k_1^{d_1,1},\dots, k_1^{d_1,e_{1,d_1}}),$$
$$v_{j_1}^{\Omega^\pm,\flat}\defeq(k_{1,c_1-1},\dots,k_{1,1},k_2^{1,1},\dots, k_2^{1,e_{2,1}},\dots,k_2^{d_2-1,e_{2,d_2-1}},k_2^{d_2,1},\dots, k_2^{d_2,e_{\sharp,2}})$$
and
$$I_\cJ^{\Omega^\pm}\defeq I_\cJ^{\Omega^\pm,\sharp}\cup I_\cJ^{\psi_1,+}.$$
It is easy to see that $v_{j_1}^{\Omega^\pm,\sharp}$ (resp. $v_{j_1}^{\Omega^\pm,\flat}$) is well-defined in $W$ due to the Condition~$\rm{I}$-\ref{it: I 6} (resp. Condition~$\rm{I}$-\ref{it: I 7} and the definition of $e_{\sharp,2}$). In particular, we have
$v_{j_1}^{\Omega^\pm,\sharp}=(k_{2,c_2},\dots,k_{2,1},k_{1,0})$
if $d_1=1$ and $e_{1,1}=0$.

\subsection{Construction of type $\rm{II}$}\label{sub: type II}
In this section, we fix a constructible $\Lambda$-lift $\Omega^\pm$ of type $\rm{II}$ as in Definition~\ref{def: constructible lifts} and construct an element $v_\cJ^{\Omega^\pm}=(v_j^{\Omega^\pm})_{j\in\cJ}\in\un{W}$ as well as a subset $I_\cJ^{\Omega^\pm}\subseteq\mathbf{n}_{\cJ}$.

Following the notation at the beginning of \S\,\ref{sec:const:inv}, we have $\widehat{\Omega}^+=\{(\al_1,j_1)\}$, $\widehat{\Omega}^-=\{(\al_a,j_a)\mid 2\leq a\leq t\}$ with $j_a=j_1$ for each $2\leq a\leq t$. Moreover, we have $\Omega_a=\Omega_{(\al_a,j_a),\Lambda}^{\rm{max}}$ for each $2\leq a\leq t$, and that $\Omega^+$ is a $\Lambda$-decomposition of $(\al_1,j_1)$ which is either $\Lambda$-exceptional or $\Lambda$-extremal. As $\widehat{\Omega}^+\cap\widehat{\Omega}^-=\emptyset$ and $\Omega^-$ is a pseudo $\Lambda$-decomposition of $(\al_1,j_1)$, we have $t\geq 3$, $k_{1,0}=k_{t,0}$, $k_{1,c_1}=k_{2,c_2}$. As we clearly have $\#\mathbf{D}_{(\al_1,j_1),\Lambda}\geq 2$ (namely $d_1\geq 1$), we deduce that $\Omega^+$ is $\Lambda$-exceptional if and only if $d_1=1$ and $c_1^1=c_1$.

If $k_{2,c_2-1}<k_{1,c_1-1}$ (which implies $e_{1,1}\geq 1$), we set
$$e_{\sharp,1}\defeq\max\{e\mid 1\leq e\leq e_{1,d_1}\mbox{ and } k_1^{d_1,e}>k_{2,c_2-1}\},$$
and if such a $e_{\sharp,1}$ does not exist (i.e. $k_1^{d_1,e}\leq k_{2,c_2-1}$ for all $1\leq e\leq e_{1,d_1}$) then we set $e_{\sharp,1}\defeq 0$.
Hence the following set (which is empty if $d_1=1$ and $e_{\sharp,1}=0$)
$$\{k_1^{1,1},\dots,k_1^{1,e_{1,1}},\cdots,k_1^{d_1-1,1},\cdots,k_1^{d_1-1,e_{1,d_1-1}},k_1^{d_1,1},\dots,k_1^{d_1,e_{\sharp,1}}\},$$
exhausts all possible $k_1^{s,e}$ between $k_{1,0}$ and $k_{2,c_2-1}$.

If $k_{2,c_2-1}>k_{1,c_1-1}$, we set
$$e_{\sharp,2}\defeq\max\{e\mid 1\leq e\leq e_{2,d_2}\mbox{ and } k_2^{d_2,e}>k_{1,c_1-1}\},$$
and if such a $e_{\sharp,2}$ does not exist (i.e. $k_2^{d_2,e}\leq k_{1,c_1-1}$ for all $1\leq e\leq e_{2,d_2}$) then we set $e_{\sharp,2}\defeq 0$.
Hence the following set (which is empty if $d_2=1$ and $e_{\sharp,2}=0$)
$$\{k_2^{1,1},\dots,k_2^{1,e_{2,1}},\cdots,k_2^{d_2-1,1},\cdots,k_2^{d_2-1,e_{2,d_2-1}},k_2^{d_2,1},\dots,k_2^{d_2,e_{\sharp,2}}\},$$
exhausts all possible $k_2^{s,e}$ between $k_{2,0}$ and $k_{1,c_1-1}$. Thanks to (\ref{equ: pair of disjoint orbits}), we define
$$I_\cJ^{\Omega^\pm,\sharp,2}\defeq \bigsqcup_{e=1}^{e_{\sharp,2}}](k_2^{d_2,e},j_1),(k_2^{d_2,e},j_1)]_{w_\cJ}\sqcup\bigsqcup_{s=1}^{d_2-1}\bigsqcup_{e=1}^{e_{2,s}}](k_2^{s,e},j_1),(k_2^{s,e},j_1)]_{w_\cJ}.$$

We are now ready to define $v_\cJ^{\Omega^\pm}$ and $I_\cJ^{\Omega^\pm}$. Our definition of $v_\cJ^{\Omega^\pm}=(v_j^{\Omega^\pm})_{j\in\cJ}$ is always of the form
$$
v_j^{\Omega^\pm}\defeq\left\{
  \begin{array}{ll}
    v_{j}^{\Omega^\pm,\sharp}v_{j}^{\Omega^\pm,\flat}w_{j} & \hbox{if $j=j_1$;} \\
    w_j & \hbox{otherwise}
  \end{array}
\right.
$$
with $v_{j_1}^{\Omega^\pm,\sharp}$ and $v_{j_1}^{\Omega^\pm,\flat}$ to be defined below. The construction of $v_{j_1}^{\Omega^\pm,\sharp}$ is visualized in Figure~\ref{fig:ex-TypeII}.

For each $3\leq a\leq t-1$, we set
$$
v_{j_1}^{\Omega^\pm,a}\defeq
\left\{
  \begin{array}{ll}
    (k_{a,c_a},k_{a,c_a-1},\dots,k_{a,1},k_{a,0},k_a^{1,1},\cdots,k_a^{d_a,e_{a,d_a}}) & \hbox{if $d_a\geq 1$;} \\
    (k_{a,c_a},k_{a,c_a-1},\dots,k_{a,1},k_{a,0}) & \hbox{if $d_a=0$.}
  \end{array}
\right.
$$
(Note that this is well-defined as $\Omega_a=\Omega_{(\al_a,j_a),\Lambda}^{\rm{max}}$ is $\Lambda$-ordinary.)
Then we observe that, since $\Omega^-$ is a pseudo $\Lambda$-decomposition of $(\al,j)$, $v_{j_1}^{\Omega^\pm,a}$ clearly commutes with each other for different $3\leq a\leq t-1$, and thus we can define
$$v_{j_1}^{\Omega^\pm,\flat}\defeq
\prod_{a=3}^{t-1}v_{j_1}^{\Omega^\pm,a}.$$
We also define
$$I_\cJ^{\Omega^\pm,\flat}\defeq \bigsqcup_{a=2}^{t-1}](k_{a,0},j_1),(k_{a+1,c_{a+1}},j_1)]_{w_\cJ}$$
and note that the sets in the union are disjoint as $\Omega^-$ is a pseudo $\Lambda$-decomposition of $(\al,j)$.

As $\Omega^\pm$ is a $\Lambda$-lift, we always have $k_{2,c_2-1}\neq k_{1,c_1-1}$. We define
$$k_t^\prime\defeq
\left\{
  \begin{array}{ll}
    k_{t,c_t} & \hbox{if $d_t=0$;} \\
    k_t^{1,1} & \hbox{if $d_t\geq 1$.}
  \end{array}
\right.
$$
Note that if $e_{1,1}\geq 1$ and $k_{2,c_2-1}<k_{1,c_1-1}$,
then we have $k_{2,c_2-1}\leq k_1^{1,1}$. Now we claim that if $e_{1,1}\geq 1$, then $k_t^\prime\neq k_1^{1,1}$.
Indeed, if $k_{2,c_2-1}>k_{1,c_1-1}$, then we deduce $k_t^\prime\neq k_1^{1,1}$ from Condition~$\rm{II}$-\ref{it: II 4} and~$\rm{II}$-\ref{it: II 7}. If $k_{2,c_2-1}<k_{1,c_1-1}$ and $k_{2,c_2-1}<k_1^{1,1}$, then we deduce $k_t^\prime\neq k_1^{1,1}$ from Condition~$\rm{II}$-\ref{it: II 4} and~$\rm{II}$-\ref{it: II 7}. %
If $k_{2,c_2-1}<k_{1,c_1-1}$ and $k_{2,c_2-1}=k_1^{1,1}$, then we deduce $k_t^\prime\neq k_1^{1,1}=k_{2,c_2-1}$ from the fact that $\Omega^-$ is a pseudo $\Lambda$-decomposition of $(\al_1,j_1)$ satisfying $\#\widehat{\Omega}^-\geq 2$.

Now we are ready to define $v_{j_1}^{\Omega^\pm,\sharp}$ and $I_\cJ^{\Omega^\pm}$.
If $k_{2,c_2-1}<k_{1,c_1-1}$ and $k_t^\prime>k_1^{1,1}$, we define
\begin{multline*}
v_{j_1}^{\Omega^\pm,\sharp}\defeq
(k_{2,c_2-1},\dots,k_{2,1},k_{2,0},k_2^{1,1},\dots,k_2^{d_2,e_{2,d_2}},k_{1,c_1},k_{1,c_1-1},\dots,k_{1,1},\\
k_t^{1,1},\dots,k_t^{d_t,e_{t,d_t}},k_{t,c_t},k_{t,c_t-1},\dots,k_{t,1},k_{1,0},k_1^{1,1},\dots, k_1^{d_1,e_{\sharp,1}})
\end{multline*}
and
$$I_\cJ^{\Omega^\pm}\defeq I_\cJ^{\psi_1,+}\cup \bigcup_{a=2}^tI_\cJ^{\psi_a,-}\cup I_\cJ^{\Omega^\pm,\flat}\cup ](k_{1,c_1},j_1),(k_{1,c_1},j_1)]_{w_\cJ}.$$
If $k_{2,c_2-1}<k_{1,c_1-1}$ and $k_t^\prime<k_1^{1,1}$, we define
\begin{multline*}
v_{j_1}^{\Omega^\pm,\sharp}\defeq
(k_{2,c_2-1},\dots,k_{2,1},k_{2,0},k_2^{1,1},\dots,k_2^{d_2,e_{2,d_2}},k_{1,c_1},k_{1,c_1-1},\dots,k_{1,1},k_{1,0},\\
k_t^{1,1},\dots,k_t^{d_t,e_{t,d_t}},k_{t,c_t},k_{t,c_t-1},\dots,k_{t,1},k_1^{1,1},\dots, k_1^{d_1,e_{\sharp,1}})
\end{multline*}
and
$$I_\cJ^{\Omega^\pm}\defeq I_\cJ^{\psi_1,+}\cup \bigcup_{a=2}^tI_\cJ^{\psi_a,-}\cup I_\cJ^{\Omega^\pm,\flat}\cup ](k_{1,c_1},j_1),(k_{1,c_1},j_1)]_{w_\cJ}\cup ](k_{1,0},j_1),(k_{1,0},j_1)]_{w_\cJ}.$$
If $k_{2,c_2-1}>k_{1,c_1-1}$ and either $e_{1,1}=0$ or $k_t^\prime>k_1^{1,1}$, we define
\begin{multline*}
v_{j_1}^{\Omega^\pm,\sharp}\defeq
(k_{1,c_1},k_{2,c_2-1},\dots,k_{2,1},k_{2,0},k_2^{1,1},\dots,k_2^{d_2,e_{\sharp,2}},k_{1,c_1-1},\dots,k_{1,1},\\
k_t^{1,1},\dots,k_t^{d_t,e_{t,d_t}},k_{t,c_t},k_{t,c_t-1},\dots,k_{t,1},k_{1,0},k_1^{1,1},\dots, k_1^{d_1,e_{1,d_1}})
\end{multline*}
and
$$I_\cJ^{\Omega^\pm}\defeq I_\cJ^{\psi_1,+}\cup I_\cJ^{\Omega^\pm,\sharp,2}\cup \bigcup_{a=3}^tI_\cJ^{\psi_a,-}\cup I_\cJ^{\Omega^\pm,\flat}.$$
If $k_{2,c_2-1}>k_{1,c_1-1}$, $e_{1,1}\geq 1$ and $k_t^\prime<k_1^{1,1}$, we define
\begin{multline*}
v_{j_1}^{\Omega^\pm,\sharp}\defeq
(k_{1,c_1},k_{2,c_2-1},\dots,k_{2,1},k_{2,0},k_2^{1,1},\dots,k_2^{d_2,e_{\sharp,2}},k_{1,c_1-1},\dots,k_{1,1},k_{1,0},\\
k_t^{1,1},\dots,k_t^{d_t,e_{t,d_t}},k_{t,c_t},k_{t,c_t-1},\dots,k_{t,1},k_1^{1,1},\dots, k_1^{d_1,e_{1,d_1}}).
\end{multline*}
and
$$I_\cJ^{\Omega^\pm}\defeq I_\cJ^{\psi_1,+}\cup I_\cJ^{\Omega^\pm,\sharp,2}\cup \bigcup_{a=3}^tI_\cJ^{\psi_a,-}\cup I_\cJ^{\Omega^\pm,\flat}\cup ](k_{1,0},j_1),(k_{1,0},j_1)]_{w_\cJ}.$$
Note that in each case above, the permutation $v_{j_1}^{\Omega^\pm,\sharp}$ is well-defined as the integers appearing in $v_{j_1}^{\Omega^\pm,\sharp}$ are all distinct thanks to Condition~\rm{II}-\ref{it: II 4}, \rm{II}-\ref{it: II 5}, \rm{II}-\ref{it: II 6}, \rm{II}-\ref{it: II 7} and \rm{II}-\ref{it: II 8} in Definition~\ref{def: constructible lifts}.

\subsection{Construction of type $\rm{III}$}\label{sub: type III}
In this section, we fix a constructible $\Lambda$-lift $\Omega^\pm$ of type $\rm{III}$ as in Definition~\ref{def: constructible lifts} and construct an element $v_\cJ^{\Omega^\pm}=(v_j^{\Omega^\pm})_{j\in\cJ}\in\un{W}$ as well as a subset $I_\cJ^{\Omega^\pm}\subseteq\mathbf{n}_{\cJ}$.

Let $a,a'\in\Z/t$ be two distinct elements and $\varepsilon\in\{1,-1\}$.
We say that \emph{$a'$ is $\varepsilon$-adjacent to $a$} if $a'=a+\varepsilon$ and either $(k_{a,0},j_a)=(k_{a',0},j_{a'})$ or $(k_{a,c_a},j_a)=(k_{a',c_{a'}},j_{a'})$. We say that \emph{$a'$ is $\varepsilon$-connected to $a$} if there exist an integer $t'\geq 1$ and a sequence of elements $a=a_0,\dots,a_{t'}=a'$ in $\Z/t$ such that $a_{t''}$ is $\varepsilon$-adjacent to $a_{t''-1}$ for each $1\leq t''\leq t'$. It is obvious by definition that $a'$ is $\varepsilon$-adjacent (resp.~$\varepsilon$-connected) to $a$ if and only if $a$ is $-\varepsilon$-adjacent (resp.~$-\varepsilon$-connected) to $a'$.
We say that a subset $\Sigma\subseteq\Omega^+\sqcup\Omega^-$ is a \emph{connected component} of $\Omega^+\sqcup\Omega^-$ if it is a maximal subset (under inclusion) satisfying the condition that, for each pair of distinct elements $a,a'$ inside, $a'$ is $\varepsilon$-connected to $a$ for some $\varepsilon\in\{1,-1\}$.
In other words, if we consider the graph whose vertices are indexed by $\mathbf{n}_\cJ$ and whose edges are indexed by $\Omega^+\sqcup\Omega^-$, then $\Sigma$ is a connected component of $\Omega^+\sqcup\Omega^-$ if and only if $\Sigma$ corresponds to the set of edges of a connected component of this graph.
We write $\pi_0(\Omega^\pm)$ for the set of connected components of $\Omega^+\sqcup\Omega^-$ and it is clear that we have
$$\Omega^+\sqcup\Omega^-=\underset{\Sigma\in\pi_0(\Omega^\pm)}{\bigsqcup}\Sigma.$$
As $\Omega_a$ is clearly a subset of one connected component for each $a\in\Z/t$, we have a natural decomposition
$$\Z/t=\underset{\Sigma\in\pi_0(\Omega^\pm)}{\bigsqcup}(\Z/t)_{\Sigma}$$
where $a\in(\Z/t)_{\Sigma}$ if and only if $\Omega_a\subseteq\Sigma$, for each $\Sigma\in\pi_0(\Omega^\pm)$. For each $\Sigma\in\pi_0(\Omega^\pm)$, we define $(\Z/t)_\Sigma^+\defeq (\Z/t)_\Sigma\cap(\Z/t)^+$, $(\Z/t)_\Sigma^-\defeq (\Z/t)_\Sigma\cap(\Z/t)^-$, and $b_\Sigma\defeq \#(\Z/t)_\Sigma$, and we write $j_\Sigma\in\cJ$ for the embedding determined by $\Sigma\in\pi_0(\Omega^\pm)$. As $\Omega^\pm$ is a constructible $\Lambda$-lift of type $\rm{III}$ and so $\Omega_a=\Omega_{(\al_a,j_a),\Lambda}^{\rm{max}}$ for each $a\in\Z/t$, we have $\Omega_a$ is $\Lambda$-extremal (resp.~$\Omega_a$ is $\Lambda$-exceptional) if and only if $d_a\geq 1$ and $k_a^{1,1}>k_{a,c_a-1}$ (resp.~if and only if either $d_a=0$ or $d_a= 1$ and $k_a^{1,1}<k_{a,c_a-1}$).

We fix a connected component $\Sigma\in\pi_0(\Omega^\pm)$ for the moment. For each $a\in(\Z/t)_\Sigma$, we set
$$\mathbf{n}^{a,+}\defeq \{k_{a,c}\mid 1\leq c\leq c_a\}\quad\mbox{ and }\quad\mathbf{n}^{a,-}\defeq \{k_{a,0}\}\sqcup\{k_a^{s,e}\mid 1\leq s\leq d_a,~1\leq e\leq e_{a,s}\}.$$
We define
$$k_a^\prime\defeq
\left\{
  \begin{array}{ll}
    k_a^{1,1} & \hbox{if $d_a\geq 1$;} \\
    k_{a,c_a} & \hbox{if $d_a=0$.}
  \end{array}
\right.
$$
By conditions III-\ref{it: III 3}, III-\ref{it: III 4} and III-\ref{it: III 5} and the definition of $\Sigma$ we observe that $k_{a',1}\notin \mathbf{n}^{a,-}$ and $k_a^\prime\neq k_{a',c_{a'}-1}$ for each pair of (possibly equal) elements $a,a'\in(\Z/t)_\Sigma$.
We also define
$$\mathbf{n}_\Sigma\defeq \underset{a\in(\Z/t)_\Sigma}{\bigcup}(\mathbf{n}^{a,+}\sqcup\mathbf{n}^{a,-}).$$

We start with defining $v_\cJ^{\Omega^\pm}$ and $I_\cJ^{\Omega^\pm}$ for a constructible $\Lambda$-lift $\Omega^\pm$ of type $\rm{III}$ satisfying $t=2$.

We first consider the case $t=\#\pi_0(\Omega^\pm)=2$. For each $a=1,2$, we define $v_j^{\Omega^\pm,a}\defeq 1$ for each $j\neq j_a$ and
$$v_{j_a}^{\Omega^\pm,a}\defeq (k_{a,c_a},\dots,k_{a,1},k_{a,0},k_a^{1,1},\dots,k_a^{d_a,e_{a,d_a}}).$$
Then we define $v_j^{\Omega^\pm}\defeq v_j^{\Omega^\pm,1}v_j^{\Omega^\pm,2}w_j$ for each $j\in\cJ$
and
$$
I_\cJ^{\Omega^\pm}\defeq I_\cJ^{\psi_1,+}\cup I_\cJ^{\psi_2,-}\cup ](k_{1,c_1},j_1),(k_{2,c_2},j_2)]_{w_\cJ}\cup ](k_{2,0},j_2),(k_{1,0},j_1)]_{w_\cJ}.
$$

Now we consider the case $t=2$ and $\#\pi_0(\Omega^\pm)=1$, and in particular $\Omega^+\sqcup\Omega^-$ is a connected component which is not circular (due to the condition \rm{III}-\ref{it: III 1}). We have either $k_{1,0}=k_{2,0}$ or $k_{1,c_1}=k_{2,c_2}$ and exactly one of them holds. If $k_{1,0}=k_{2,0}$ and $k_1^\prime>k_2^\prime$, we define
$$
v_{j_1}^{\Omega^\pm,\sharp}\defeq
(k_2^{1,1},\dots,k_2^{d_2,e_{2,d_2}},k_{2,c_2},k_{2,c_2-1},\dots,k_{2,1},k_1^{1,1},\dots,k_1^{d_1,e_{1,d_1}},k_{1,c_1},k_{1,c_1-1},\dots,k_{1,1},k_{1,0}).
$$
and
$$I_\cJ^{\Omega^\pm}\defeq I_\cJ^{\psi_1,+}\cup I_\cJ^{\psi_2,-}\cup ](k_{1,c_1},j_1),(k_{2,c_2},j_1)]_{w_\cJ}\cup ](k_{1,0},j_1),(k_{1,0},j_1)]_{w_\cJ}.$$
If $k_{1,0}=k_{2,0}$ and $k_1^\prime<k_2^\prime$, we define
$$
v_{j_1}^{\Omega^\pm,\sharp}\defeq
(k_2^{1,1},\dots,k_2^{d_2,e_{2,d_2}},k_{2,c_2},k_{2,c_2-1},\dots,k_{2,1},k_{1,0},k_1^{1,1},\dots,k_1^{d_1,e_{1,d_1}},k_{1,c_1},k_{1,c_1-1},\dots,k_{1,1}).
$$
and
$$I_\cJ^{\Omega^\pm}\defeq I_\cJ^{\psi_1,+}\cup I_\cJ^{\psi_2,-}\cup ](k_{1,c_1},j_1),(k_{2,c_2},j_1)]_{w_\cJ}.$$
If $k_{1,c_1}=k_{2,c_2}$ and $k_{1,c_1-1}>k_{2,c_2-1}$, we define
$$
v_{j_1}^{\Omega^\pm,\sharp}\defeq
(k_{2,c_2-1},\dots,k_{2,1},k_{2,0},k_2^{1,1},\dots,k_2^{d_2,e_{2,d_2}},k_{1,c_1},k_{1,c_1-1},\dots,k_{1,1},k_{1,0},k_1^{1,1},\dots,k_1^{d_1,e_{\sharp,1}}).
$$
and
$$I_\cJ^{\Omega^\pm}\defeq I_\cJ^{\psi_1,+}\cup I_\cJ^{\psi_2,-}\cup ](k_{1,c_1},j_1),(k_{1,c_1},j_1)]_{w_\cJ}\cup ](k_{2,0},j_1),(k_{1,0},j_1)]_{w_\cJ}.$$
If $k_{1,c_1}=k_{2,c_2}$ and $k_{1,c_1-1}<k_{2,c_2-1}$, we define
$$
v_{j_1}^{\Omega^\pm,\sharp}\defeq
(k_{1,c_1},k_{2,c_2-1},\dots,k_{2,1},k_{2,0},k_2^{1,1},\dots,k_2^{d_2,e_{\sharp,2}},k_{1,c_1-1},\dots,k_{1,1},k_{1,0},k_1^{1,1},\dots,k_1^{d_1,e_{1,d_1}}).
$$
and
$$I_\cJ^{\Omega^\pm}\defeq I_\cJ^{\psi_1,+}\cup I_\cJ^{\Omega^\pm,\sharp,2}\cup ](k_{2,0},j_1),(k_{1,0},j_1)]_{w_\cJ}.$$
Here the definitions of $e_{\sharp,1}$, $e_{\sharp,2}$ and $I_\cJ^{\Omega^\pm,\sharp,2}$ are parallel to the ones that have already appeared in~\S\,\ref{sub: type II}.
Finally we define $v_j^{\Omega^\pm}\defeq w_j$ for each $j\neq j_1$ and $v_{j_1}^{\Omega^\pm}\defeq v_{j_1}^{\Omega^\pm,\sharp}w_{j_1}$ for all four cases above.

We devote the rest of this section to the cases when $\Omega^\pm$ is a constructible $\Lambda$-lift of type $\rm{III}$ with $t\geq 3$. We say that $k\in\mathbf{n}_\Sigma$ is a \emph{$1$-end} (resp.~\emph{$-1$-end}) of $\Sigma$ if there exist a unique $a\in(\Z/t)_\Sigma$ and $k'\in\{k_{a+1,0},k_{a+1,c_{a+1}}\}$ (resp.~$k'\in\{k_{a-1,0},k_{a-1,c_{a-1}}\}$) such that $k\in\{k_{a,0},k_{a,c_a}\}$ and the elements $(k,j_a), (k',j_{a+1})$ (resp.~$(k,j_a),(k',j_{a-1})$) are different elements in the same $(w_\cJ,1)$-orbit.
We say that $\Sigma$ is \emph{circular} if it has neither $1$-end nor $-1$-end.
It is clear that exactly one of the following holds:
\begin{itemize}
\item each $\Sigma\in\pi_0(\Omega^\pm)$ has exactly one $1$-end and exactly one $-1$-end;
\item $\pi_0(\Omega^\pm)=\{\Omega^+\sqcup\Omega^-\}$ and $\Omega^+\sqcup\Omega^-$ is circular.
\end{itemize}
We will use the term \emph{direction} for an element $\varepsilon\in\{1,-1\}$. The $(\Z/t)_\Sigma^+$ and $(\Z/t)_\Sigma^-$ are visualized in Figure~\ref{fig:two:dir}.

\begin{defn}\label{def: next element}
Let $\Sigma\in\pi_0(\Omega^\pm)$ be a connected component and $k,k'\in\mathbf{n}_\Sigma$ be two elements. If $k$ is not a $1$-end of $\Sigma$, then we say that \emph{$k'$ is the $1$-successor of $k$} if exactly one of the following holds:
\begin{itemize}
\item $k\in\mathbf{n}^{a,+}\setminus\{k_{a,c_a}\}$ for some $a\in(\Z/t)_\Sigma^+$ and $k'=\max\{k''\in\mathbf{n}^{a,+}\mid k''<k\}$;
\item $k\in\mathbf{n}^{a,-}\setminus\{k_{a,0}\}$ for some $a\in(\Z/t)_\Sigma^-$ and $k'=\min\{k''\in\mathbf{n}^{a,-}\mid k''>k\}$;
\item $k=k_{a,c_a}$ for some $a\in(\Z/t)_\Sigma^-$ and $k'=\min\mathbf{n}^{a,-}$;
\item $k=k_{a,0}$ for some $a\in(\Z/t)_\Sigma^+$ and $k'=\max\mathbf{n}^{a,+}$.
\end{itemize}
If $k$ is not a $-1$-end of $\Sigma$, then we say that \emph{$k'$ is the $-1$-successor of $k$} if exactly one of the following holds:
\begin{itemize}
\item $k\in\mathbf{n}^{a,+}\setminus\{k_{a,c_a}\}$ for some $a\in(\Z/t)_\Sigma^-$ and $k'=\max\{k''\in\mathbf{n}^{a,+}\mid k''<k\}$;
\item $k\in\mathbf{n}^{a,-}\setminus\{k_{a,0}\}$ for some $a\in(\Z/t)_\Sigma^+$ and $k'=\min\{k''\in\mathbf{n}^{a,-}\mid k''>k\}$;
\item $k=k_{a,c_a}$ for some $a\in(\Z/t)_\Sigma^+$ and $k'=\min\mathbf{n}^{a,-}$;
\item $k=k_{a,0}$ for some $a\in(\Z/t)_\Sigma^-$ and $k'=\max\mathbf{n}^{a,+}$.
\end{itemize}
Let $\varphi:~\mathbf{n}_\varphi\rightarrow\mathbf{n}_\Sigma$ be an injective map with $\mathbf{n}_\varphi\subseteq\mathbf{n}_\Sigma$ a non-empty subset. For each $\varepsilon\in\{1,-1\}$, we say that $\varphi$ \emph{has a $\varepsilon$-crawl from $k$ to $k'$} if there exist an integer $s\geq 1$ and a sequence of elements $k=k_0,\dots,k_s=k'$ in $\mathbf{n}_\Sigma$ such that $\varphi(k_{s'-1})=k_{s'}$ is the $\varepsilon$-successor of $k_{s'-1}$ for each $1\leq s'\leq s$. The set $\{k_{s'}\mid 0\leq s'\leq s-1\}$ is called the \emph{orbit} of the $\varepsilon$-crawl above. See Figure~\ref{fig:crawl} for an example of $\varepsilon$-crawl.
\end{defn}

For each $a\in(\Z/t)_\Sigma$ and $\varepsilon\in\{1,-1\}$, we write
$$k_a^{[\varepsilon]}\defeq
\left\{
  \begin{array}{ll}
     k_{a,c_a-1} & \hbox{if $a\in(\Z/t)_\Sigma^+$ and $\varepsilon=1$;} \\
     k_{a,c_a-1} & \hbox{if $a\in(\Z/t)_\Sigma^-$ and $\varepsilon=-1$;} \\
     k_a^\prime & \hbox{if $a\in(\Z/t)_\Sigma^-$ and $\varepsilon=1$;} \\
     k_a^\prime & \hbox{if $a\in(\Z/t)_\Sigma^+$ and $\varepsilon=-1$.}
  \end{array}
\right.
$$
It is clear that $k_a^{[\varepsilon]}$ is the unique element in $\mathbf{n}^{a,+}\sqcup\mathbf{n}^{a,-}$ with a unique $\varepsilon$-successor of the form $k_{a,c_a}$ or $k_{a,0}$.

\begin{defn}\label{def: jump of map}
Let $\Sigma\in\pi_0(\Omega^\pm)$ be a connected component and $\varphi:~\mathbf{n}_\varphi\rightarrow\mathbf{n}_\Sigma$ be an injective map for some non-empty subset $\mathbf{n}_\varphi\subseteq\mathbf{n}_\Sigma$. For $k\in\mathbf{n}_\varphi$, we say that $\varphi$ \emph{has a $\varepsilon$-jump at $k$} for some $\varepsilon\in\{1,-1\}$ if there exist an element $a\in(\Z/t)_\Sigma$ and an integer $1\leq b\leq b_\Sigma$ such that
$$k_{a+b'\varepsilon,c_{a+b'\varepsilon}-1}>k>k_{a+b'\varepsilon}^\prime$$ (and thus $\Omega_{a+b'\varepsilon}$ is $\Lambda$-exceptional) for each $1\leq b'\leq b-1$ and exactly one of the following holds:
\begin{itemize}
\item $k=k_a^{[\varepsilon]}>k_{a+b\varepsilon}^\prime$, $\varphi(k)=k_{a+b\varepsilon,1}$ and $\varphi$ has a $-\varepsilon$-crawl from $k_{a+b\varepsilon,0}$ to the $\varepsilon$-successor of~$k_a^{[\varepsilon]}$;
\item $k=k_a^{[\varepsilon]}<k_{a+b\varepsilon,c_{a+b\varepsilon}-1}$, $\varphi(k)=\min\{k'\in\mathbf{n}^{a+b\varepsilon,-}\mid k'>k\}$ and $\varphi$ has a $-\varepsilon$-crawl from $k_{a+b\varepsilon,c_{a+b\varepsilon}}$ to the $\varepsilon$-successor of $k_a^{[\varepsilon]}$.
\end{itemize}
We note from the injectivity of $\varphi$ that $k_{a+b\varepsilon,1}$ is the $\varepsilon$-successor of $k_{a+b\varepsilon,0}$ in the first case, and $\min\mathbf{n}^{a+b\varepsilon,-}$ is the $\varepsilon$-successor of $k_{a+b\varepsilon,c_{a+b\varepsilon}}$ in the second case.
We say that \emph{the $\varepsilon$-jump at $k$ covers $k_{a+b\varepsilon,0}$} in the first case, and \emph{the $\varepsilon$-jump at $k$ covers $k_{a+b\varepsilon,c_{a+b\varepsilon}}$} in the second case.
We also say that \emph{the $\varepsilon$-jump at $k$ covers $k'$} for each $k'\in\bigcup_{1\leq b'\leq b-1}\{k_{a+b'\varepsilon,0},k_{a+b'\varepsilon,c_{a+b'\varepsilon}}\}$. We understand $\{k\}$ to be the orbit of a $\varepsilon$-jump at $k$. See Figure~\ref{fig:jumps} for typical examples of $\varepsilon$-jumps.
\end{defn}

\begin{defn}\label{def: oriented permutation}
Let $\Sigma\in\pi_0(\Omega^\pm)$ be a connected component and $\varphi:~\mathbf{n}_\varphi\rightarrow\mathbf{n}_\Sigma$ be an injective map for some non-empty subset $\mathbf{n}_\varphi\subseteq\mathbf{n}_\Sigma$. For each $\varepsilon\in\{1,-1\}$ and each pair of (possibly equal) elements $k,k'\in\mathbf{n}_\Sigma$, we say that $\varphi$ \emph{has a $\varepsilon$-tour from $k$ to $k'$} if there exists $s\geq 1$ and a sequence of elements $k=k_0,\dots,k_s=k'$ such that, for each $1\leq s'\leq s$, we have $\varphi(k_{s'-1})=k_{s'}$ and exactly one of the following holds:
\begin{itemize}
\item $k_{s'}$ is the $\varepsilon$-successor of $k_{s'-1}$;
\item $\varphi$ has a $\varepsilon$-jump at $k_{s'-1}$.
\end{itemize}
We call the set $\{k_{s'}\mid 0\leq s'\leq s-1\}$ the \emph{orbit} of the $\varepsilon$-tour. We can say that a $\varepsilon$-tour contains a $\varepsilon$-crawl, a $\varepsilon$-jump or another $\varepsilon$-tour by checking their orbits.

A permutation $\varphi:~\mathbf{n}_\Sigma\rightarrow\mathbf{n}_\Sigma$ is called \emph{oriented} if $\varphi$ has a $1$-tour and a $-1$-tour satisfying the following
\begin{enumerate}[label=(\roman*)]
\item \label{it: disjoint orbits} the orbit of $1$-tour is disjoint from that of $-1$-tour, and $\varphi$ fixes each element of $\mathbf{n}_\Sigma$ that appears in neither orbit;
\item \label{it: end of tour}
      \begin{itemize}
      \item if $\Sigma$ is not circular, then the fixed $\varepsilon$-tour goes from the $-\varepsilon$-end to $\varepsilon$-end for each $\varepsilon\in\{1,-1\}$;
      \item if $\Sigma$ is circular, then the orbit of the fixed $\varepsilon$-tour is a single orbit of the permutation $\varphi$, for each $\varepsilon\in\{1,-1\}$;
      \end{itemize}
\item \label{it: unique cover} for each $k\in\bigcup_{a\in(\Z/t)_\Sigma}\{k_{a,0},k_{a,c_a}\}$ which is neither the $1$-end or $-1$-end of $\Sigma$, there exists a unique $\varepsilon\in\{1,-1\}$ such that
    \begin{itemize}
    \item $k$ lies in the orbit of the fixed $\varepsilon$-tour of $\varphi$;
    \item the fixed $-\varepsilon$-tour of $\varphi$ contains a unique $-\varepsilon$-jump that covers $k$.
    \end{itemize}
\item \label{it: interaction of jumps} if there exist $a\in(\Z/t)_\Sigma$, $k,k'\in\mathbf{n}_\Sigma$ and $\varepsilon\in\{1,-1\}$ such that
      \begin{itemize}
      \item $\varphi(k)=k_{a,0}$ and the fixed $\varepsilon$-tour of $\varphi$ contains a $\varepsilon$-jump at $k$ that covers $k_{a,c_a}$;
      \item $\varphi(k')=k_{a,1}$ and the fixed $-\varepsilon$-tour of $\varphi$ contains a $-\varepsilon$-jump at $k'$ that covers $k_{a,0}$,
      \end{itemize}
      then we have $k'>k$;
\item \label{it: degeneration of tour} if there exists $\varepsilon\in\{1,-1\}$ and $a\in(\Z/t)_\Sigma$ such that the fixed $\varepsilon$-tour contains a $\varepsilon$-jump at $k_a^{[\varepsilon]}$ which satisfies either $\varphi(k_a^{[\varepsilon]})=k_a^{[\varepsilon]}$ or $\varphi(k_a^{[\varepsilon]})\in(\mathbf{n}^{a,+}\sqcup\mathbf{n}^{a,-})\setminus (\mathbf{n}^{a-\varepsilon,+}\sqcup\mathbf{n}^{a-\varepsilon,-})$, then $\Sigma$ is circular, $c_a\geq 2$, $k_a^{[\varepsilon]}=k_{a,c_a-1}=\min\{k_{a',c_{a'}-1}\mid a'\in\Z/t\}$ and $\varphi(k_{a,c_a-1})=k_{a,1}$;
\end{enumerate}
For each oriented permutation $\varphi$ and each $\varepsilon\in\{1,-1\}$, we always fix a choice of $\varepsilon$-tour as above, and say that $\varphi$ has \emph{direction $\varepsilon$} at some $k\in\mathbf{n}_\Sigma$ if $k$ belongs to the orbit of the fixed $\varepsilon$-tour. Two examples of oriented permutation (when $\mathbf{n}^{a,+}=\{k_{a,c_a}\}$ and $\mathbf{n}^{a,-}=\{k_{a,0}\}$ for each $a\in(\Z/t)_\Sigma$) are visualized in Figure~\ref{fig:OrPer}. Item~\ref{it: interaction of jumps} is also visualized in Figure~\ref{fig:item4Def}.
\end{defn}

Assuming that there exists an oriented permutation of $\mathbf{n}_\Sigma$ for each $\Sigma\in\pi_0(\Omega^\pm)$, we define $v_\cJ^{\Omega^\pm}$ and $I_\cJ^{\Omega^\pm}$ for a constructible $\Lambda$-lift $\Omega^\pm$ of type $\rm{III}$ with $t\geq 3$.
For each $\Sigma\in\pi_0(\Omega^\pm)$, we define $(v_\Sigma^{\Omega^\pm})^{-1}\in W$ to be an arbitrary element of $W$ which fixes $\mathbf{n}\setminus\mathbf{n}_\Sigma$ and restricts to an oriented permutation of $\mathbf{n}_\Sigma$. It follows from Condition~\rm{III}-\ref{it: III 4} of Definition~\ref{def: constructible lifts} that, for each $a,a'\in\Z/t$ lying in different connected components with $j_a=j_{a'}$, we have $(\mathbf{n}^{a,+}\sqcup\mathbf{n}^{a,-})\cap(\mathbf{n}^{a',+}\sqcup\mathbf{n}^{a',-})=\emptyset$, which implies that $v_\Sigma^{\Omega^\pm}$ commutes with $v_{\Sigma'}^{\Omega^\pm}$ for each distinct pair $\Sigma,\Sigma'\in\pi_0(\Omega^\pm)$ with $j_\Sigma=j_{\Sigma'}$. Hence we can define $v_\cJ^{\Omega^\pm}=(v_j^{\Omega^\pm})_{j\in\cJ}$ by letting
$$
v_j^{\Omega^\pm}\defeq \left(\underset{\substack{\Sigma\in\pi_0(\Omega^\pm),\,j_\Sigma=j}}{\prod}v_\Sigma^{\Omega^\pm}\right)w_j
$$
for each $j\in\cJ$. If $\pi_0(\Omega^\pm)=\{\Omega^+\sqcup\Omega^-\}$ and $\Omega^+\sqcup\Omega^-$ is circular, then we write $\mathbf{n}_{\Omega^+\sqcup\Omega^-,1}$ for the orbit of the fixed $1$-tour of the oriented permutation $(v_{\Omega^+\sqcup\Omega^-}^{\Omega^\pm})^{-1}|_{\mathbf{n}_{\Omega^+\sqcup\Omega^-}}$, and set
$$I_\cJ^{\Omega^\pm}\defeq \underset{k\in\mathbf{n}_{\Omega^+\sqcup\Omega^-,1}}{\bigcup}](k,j_{\Omega^+\sqcup\Omega^-}),(k,j_{\Omega^+\sqcup\Omega^-})]_{w_\cJ}.$$
If $\Omega^\pm$ does not have a circular connected component, then we write $k_\Sigma$ (resp.~$k_\Sigma^\prime$) for the $-1$-end (resp.~$1$-end) of $\Sigma$ and write $\mathbf{n}_{\Sigma,1}$ for the orbit of the fixed $1$-tour for the oriented permutation $(v_\Sigma^{\Omega^\pm})^{-1}|_{\mathbf{n}_\Sigma}$, for each $\Sigma\in\pi_0(\Omega^\pm)$. We write $h\defeq\#\pi_0(\Omega^\pm)$ and order $\pi_0(\Omega^\pm)$ as $\{\Sigma_{h'}\mid h'\in\Z/h\}$ in a way that $(k_{\Sigma_{h'}}^\prime,j_{\Sigma_{h'}})$ and $(k_{\Sigma_{h'+1}},j_{\Sigma_{h'+1}})$ lie in the same $(w_\cJ,1)$-orbit, for each $h'\in\Z/h$. Then we define
$$I_\cJ^{\Omega^\pm}\defeq \underset{h'\in\Z/h}{\bigcup}\left(](k_{\Sigma_{h'}}^\prime,j_{\Sigma_{h'}}),(k_{\Sigma_{h'+1}},j_{\Sigma_{h'+1}})]_{w_\cJ}\cup\underset{k\in\mathbf{n}_{\Sigma_{h'},1}\setminus\{k_{\Sigma_{h'}}\}}{\bigcup}](k,j_{\Sigma_{h'}}),(k,j_{\Sigma_{h'}})]_{w_\cJ}\right).$$

The rest of this section is devoted to the construction of an oriented permutation of $\mathbf{n}_\Sigma$ for each $\Sigma\in\pi_0(\Omega^\pm)$ when $t\geq 3$.
\begin{lemma}\label{lem: backwards jump}
Let $\Sigma\in\pi_0(\Omega^\pm)$ be a connected component. Fix $a\in(\Z/t)_\Sigma$ and $1\leq b\leq b_\Sigma$ such that $a+b$ is $1$-connected to $a$. Assume that
\begin{equation}\label{eq: assumption on backward jump}
\left\{
  \begin{array}{cl}
    k_{a+b',c_{a+b'}-1}<k_{a+b'-1,c_{a+b'-1}-1} & \hbox{for each $1\leq b'\leq b$ with $a+b'\in(\Z/t)_\Sigma^-$;} \\
    k_{a+b'}^\prime>k_{a+b'-1}^\prime & \hbox{for each $1\leq b'\leq b$ with $a+b'\in(\Z/t)_\Sigma^+$.}
  \end{array}
\right.
\end{equation}
Then there exists a sequence $0\leq b_0<b_1<\cdots<b_s=b$ for some $s\geq 0$ such that
$$k_{a+b',c_{a+b'}-1}>k_{a+b_0}^{[-1]}>k_{a+b'}^\prime$$ for each $0\leq b'<b_0$ and the following hold: for each $1\leq s'\leq s$
\begin{itemize}
\item $k_{a+b',c_{a+b'}-1}>k_{a+b_{s'}}^{[-1]}>k_{a+b'}^\prime$ for each $b_{s'-1}<b'<b_{s'}$;
\item if $a+b_{s'-1}\in (\Z/t)_\Sigma^-$, then $k_{a+b_{s'}}^{[-1]}>\max\{k_{a+b_{s'-1},c_{a+b_{s'-1}}-1},k_{a+b_{s'-1}}^\prime\}$;
\item if $a+b_{s'-1}\in (\Z/t)_\Sigma^+$, then $k_{a+b_{s'}}^{[-1]}<\min\{k_{a+b_{s'-1},c_{a+b_{s'-1}}-1},k_{a+b_{s'-1}}^\prime\}$.
\end{itemize}
\end{lemma}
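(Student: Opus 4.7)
The plan is to construct the sequence by a greedy descending induction, starting from $b_s = b$ and working toward smaller indices. At each stage, given the currently last element $b_{s'}$ of the partial sequence, I consider the set
\[
\mathcal{B}_{s'} \defeq \bigl\{b' \in \{0,\dots,b_{s'}-1\} \,\bigm|\, \text{either } k_{a+b'}^\prime \geq k_{a+b_{s'}}^{[-1]} \text{ or } k_{a+b_{s'}}^{[-1]} \geq k_{a+b',c_{a+b'}-1}\bigr\},
\]
i.e., the indices where the candidate barrier $k_{a+b_{s'}}^{[-1]}$ fails the in-between inequality. If $\mathcal{B}_{s'}$ is empty, I relabel and stop with the current $b_{s'}$ playing the role of $b_0$. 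Otherwise, I define $b_{s'-1}\defeq \max \mathcal{B}_{s'}$ and continue. Since the indices strictly decrease and are bounded below by $0$, the construction terminates in finitely many steps, yielding the required sequence $0 \leq b_0 < b_1 < \cdots < b_s = b$.

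The in-between inequalities $k_{a+b',c_{a+b'}-1} > k_{a+b_{s'}}^{[-1]} > k_{a+b'}^\prime$ for $b_{s'-1} < b' < b_{s'}$ are immediate from the maximality of $b_{s'-1}$ in $\mathcal{B}_{s'}$, and similarly the initial clause (for $0 \leq b' < b_0$) is immediate from $\mathcal{B}_0 = \emptyset$. The content of the proof is therefore concentrated in verifying the jump inequality at $b_{s'-1}$. Depending on whether $a+b_{s'-1} \in (\Z/t)_\Sigma^+$ or $(\Z/t)_\Sigma^-$, I split into cases according to which of the two defining inequalities of $\mathcal{B}_{s'}$ actually fails at $b_{s'-1}$, and propagate the failure using the monotonicity hypothesis (\ref{eq: assumption on backward jump}) applied to the indices between $b_{s'-1}+1$ and $b_{s'}$.

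Concretely, if $a+b_{s'-1} \in (\Z/t)_\Sigma^-$, then I expect the failing inequality to be $k_{a+b_{s'}}^{[-1]} \geq k_{a+b_{s'-1},c_{a+b_{s'-1}}-1} = k_{a+b_{s'-1}}^{[-1]}$, since the ceilings $k_{a+b',c_{a+b'}-1}$ for $a+b'\in(\Z/t)_\Sigma^-$ are strictly monotone by (\ref{eq: assumption on backward jump}) while the floor $k_{a+b',c_{a+b'}-1}$ at intermediate indices of the opposite type must still dominate $k_{a+b_{s'}}^{[-1]}$. Combined with the inequality $k_{a+b_{s'}}^{[-1]} > k_{a+b_{s'-1}}^\prime$ (which I verify separately, again using the chain of inequalities dictated by (\ref{eq: assumption on backward jump})), this yields the required strict dominance over $\max\{k_{a+b_{s'-1},c_{a+b_{s'-1}}-1},k_{a+b_{s'-1}}^\prime\}$. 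The case $a+b_{s'-1}\in(\Z/t)_\Sigma^+$ is symmetric, with roles of floors and ceilings swapped and all inequalities reversed.

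The main obstacle is the case analysis in this last step: one has to carefully track, for each intermediate index between $b_{s'-1}$ and $b_{s'}$, how the two competing monotone sequences (the ceilings on $(\Z/t)_\Sigma^-$ and the floors on $(\Z/t)_\Sigma^+$) interact with the candidate barrier $k_{a+b_{s'}}^{[-1]}$. The upshot is that the minimality/maximality built into the greedy construction forces exactly the direction (upward or downward) of the jump dictated by the type of $a+b_{s'-1}$, so the final verification is bookkeeping once the correct greedy step is identified.
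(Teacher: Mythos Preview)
Your greedy descending construction is exactly the paper's argument: the paper phrases it as increasing induction on $b$, at each step locating the largest $b_\flat < b$ where the in-between inequality fails and then appending $b$ to the inductively obtained sequence for $b_\flat$---this is the same as your $b_{s'-1} = \max \mathcal{B}_{s'}$. The case analysis you sketch for the jump condition (which failure mode occurs at $b_{s'-1}$ depending on the sign of $a+b_{s'-1}$) also matches the paper's, though both arguments rely on a fact left implicit: $1$-adjacent indices $a'$ and $a'+1$ in $(\Z/t)_\Sigma$ necessarily lie in opposite parts $(\Z/t)_\Sigma^\pm$, which is what lets one pass from ``the type of $a+b_{s'}$'' or ``the type of $a+(b_{s'-1}+1)$'' (determined by the failure mode together with (\ref{eq: assumption on backward jump})) to the type of $a+b_{s'-1}$ itself.
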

\begin{proof}
We argue by increasing induction on $b\geq 1$. First of all, it is clear that either $k_{a+b',c_{a+b'}-1}>k_{a+b}^{[-1]}>k_{a+b'}^\prime$ for each $0\leq b'<b$, or there exists an integer $0\leq b_\flat<b$ such that
\begin{itemize}
\item $k_{a+b',c_{a+b'}-1}>k_{a+b}^{[-1]}>k_{a+b'}^\prime$ for each $b_\flat<b'<b$;
\item either $k_{a+b}^{[-1]}>k_{a+b_\flat,c_{a+b_\flat}-1}$ or $k_{a+b}^{[-1]}<k_{a+b_\flat}^\prime$.
\end{itemize}
We may assume without loss of generality that $b_\flat$ exists, otherwise we simply take $s\defeq 0$ and $b_0\defeq b$. If $b_\flat=b-1$ and $k_{a+b}^{[-1]}>k_{a+(b-1),c_{a+(b-1)}-1}$, then we must have $a+b\in(\Z/t)_\Sigma^+$ and $k_{a+b}^{[-1]}>k_{a+(b-1)}^\prime$. If $b_\flat=b-1$ and $k_{a+b}^{[-1]}<k_{a+(b-1)}^\prime$, then we must have $a+b\in(\Z/t)_\Sigma^-$ and $k_{a+b}^{[-1]}<k_{a+(b-1),c_{a+(b-1)}-1}$. If $b_\flat\leq b-2$ and $k_{a+b}^{[-1]}>k_{a+b_\flat,c_{a+b_\flat}-1}$, then we have $k_{a+(b_\flat+1),c_{a+(b_\flat+1)}-1}>k_{a+b}^{[-1]}>k_{a+b_\flat,c_{a+b_\flat}-1}$
which forces $a+b_\flat\in(\Z/t)_\Sigma^-$, and so
$k_{a+b}^{[-1]}>k_{a+(b_\flat+1)}^\prime>k_{a+b_\flat}^\prime$.
If $b_\flat\leq b-2$ and $k_{a+b}^{[-1]}<k_{a+b_\flat}^\prime$, then we have
$k_{a+b_\flat}^\prime>k_{a+b}^{[-1]}>k_{a+(b_\flat+1)}^\prime$
which forces $a+b_\flat\in(\Z/t)_\Sigma^+$, and so
$k_{a+b}^{[-1]}<k_{a+(b_\flat+1),c_{a+(b_\flat+1)}-1}<k_{a+b_\flat,c_{a+b_\flat}-1}$.
Up to this stage, we have just shown that
\begin{itemize}
\item $k_{a+b',c_{a+b'}-1}>k_{a+b}^{[-1]}>k_{a+b'}^\prime$ for each $b_\flat<b'<b$;
\item if $a+b_\flat\in (\Z/t)_\Sigma^-$, then $k_{a+b}^{[-1]}>\max\{k_{a+b_\flat,c_{a+b_\flat}-1},k_{a+b_\flat}^\prime\}$;
\item if $a+b_\flat\in (\Z/t)_\Sigma^+$, then $k_{a+b}^{[-1]}<\min\{k_{a+b_\flat,c_{a+b_\flat}-1},k_{a+b_\flat}^\prime\}$.
\end{itemize}
Using our inductive assumption, we obtain a sequence $0\leq b_0<b_1<\cdots<b_{s_\flat}=b_\flat$. We set $s\defeq s_\flat+1$ and $b_s\defeq b$ and it is not difficult to check that the so obtained sequence $0\leq b_0<b_1<\cdots<b_s=b$ satisfies all the desired properties. The proof is thus finished.
\end{proof}

\begin{prop}\label{prop:oriented:non circular}
Let $\Sigma\in\pi_0(\Omega^\pm)$ be a connected component which is not circular. Then there exists an oriented permutation of $\mathbf{n}_\Sigma$.
\end{prop}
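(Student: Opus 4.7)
\medskip

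\noindent\emph{Proof plan.} The strategy is to construct the permutation $\varphi$ of $\mathbf{n}_\Sigma$ directly by building its fixed $1$-tour and its fixed $-1$-tour separately, and then check the five conditions in Definition~\ref{def: oriented permutation} one by one. Since $\Sigma$ is not circular, it has a unique $-1$-end $k_\Sigma$ and unique $1$-end $k'_\Sigma$; condition~\ref{it: end of tour} then forces the fixed $\varepsilon$-tour to begin at the $-\varepsilon$-end and terminate at the $\varepsilon$-end, which fixes the global shape of each tour. Condition~\ref{it: degeneration of tour} is vacuous here, so the real task is to choose, for each intermediate block $\Omega_a$ with $a\in(\Z/t)_\Sigma$, which of the two boundary points in $\{k_{a,0},k_{a,c_a}\}$ lies on the $1$-tour and which is covered by a $-1$-jump (and symmetrically), in a way compatible with~\ref{it: disjoint orbits}, \ref{it: unique cover} and~\ref{it: interaction of jumps}.

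For the $\varepsilon$-tour, first I would traverse $\Sigma$ block by block from the $-\varepsilon$-end: inside each block $\Omega_a$ that is not exceptional in the $\varepsilon$-direction (i.e.\ $d_a\geq 1$ with $k_a^\prime$ in the correct relative position), the tour proceeds as a $\varepsilon$-crawl through $\mathbf{n}^{a,+}$ or $\mathbf{n}^{a,-}$ via $\varepsilon$-successors (Definition~\ref{def: next element}). When the crawl reaches an element of the form $k_a^{[\varepsilon]}$, I would decide whether to continue into the neighbouring block $\Omega_{a+\varepsilon}$ by a further crawl, or to perform a $\varepsilon$-jump over a string of exceptional blocks $\Omega_{a+\varepsilon},\ldots,\Omega_{a+b\varepsilon}$ and land in the first block where the landing element defined in Definition~\ref{def: jump of map} becomes available. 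The length $b$ of the jump is exactly the integer produced by Lemma~\ref{lem: backwards jump} (applied in direction $\varepsilon$), so that each intermediate block is genuinely exceptional and the jump terminates on a well-defined element; this is the step where the combinatorial input of that lemma is essential.

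The two tours are then assembled so that the orbits are disjoint: at each intermediate boundary $k\in\{k_{a,0},k_{a,c_a}\}$ with $a\in(\Z/t)_\Sigma$, my rule is that $k$ is visited on the unique tour whose direction of traversal enters $\Omega_a$ through that side, and is covered by a jump of the opposite tour which reaches over $\Omega_a$. This gives condition~\ref{it: unique cover} by construction, and condition~\ref{it: disjoint orbits} follows because the $\varepsilon$-crawl orbits are restricted to the subset of $\mathbf{n}^{a,\pm}$ that is not used by the $-\varepsilon$-crawl. All points of $\mathbf{n}_\Sigma$ not visited by either tour are declared fixed, giving a genuine permutation. Condition~\ref{it: interaction of jumps} will be forced by recording, for each pair of opposite jumps that target the two ends of a common block $\Omega_a$, the inequality $k'>k$ between their sources; this follows from the recipe because the $1$-jump targeting $k_{a,0}$ is computed from the previous crawl endpoint on the $-1$-tour using Lemma~\ref{lem: backwards jump}, and the $-1$-jump targeting $k_{a,1}$ is computed from the previous crawl endpoint on the $1$-tour, and both quantities can be compared using the inequalities produced by that lemma.

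The main obstacle is thus not the existence of the tours, but the bookkeeping required to show that the above recipe is consistent: one has to verify that every intermediate $k_{a,0},k_{a,c_a}$ is covered by exactly one jump (and lies on exactly one tour), that the jumps produced by Lemma~\ref{lem: backwards jump} indeed meet the conditions in Definition~\ref{def: jump of map}, and that inequality in~\ref{it: interaction of jumps} is never reversed. I would handle this by an induction on $b_\Sigma$, peeling off from the $-1$-end of $\Sigma$ either one non-exceptional block (extending both tours trivially) or a maximal string of exceptional blocks together with the next non-exceptional block (extending each tour by a single crawl–jump pair and invoking Lemma~\ref{lem: backwards jump} to position the jump); in both cases the inductive hypothesis on the shorter sub-component produces all the required inequalities.
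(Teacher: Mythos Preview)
Your high-level outline is in the right spirit, but it diverges from the paper's argument in a way that leaves real gaps.

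The paper's proof is fundamentally \emph{asymmetric}: it first builds the $1$-tour $\varphi_0$ from the $-1$-end to the $1$-end by a greedy step-by-step procedure, and only afterwards constructs the $-1$-tour. The greedy step for $\varphi_0$ does not use Lemma~\ref{lem: backwards jump} at all: when the current position is $k=k_a^{[1]}$, the integer $b$ governing the $1$-jump is defined directly as the first index where the inequality $k_{a+b',c_{a+b'}-1}>k>k_{a+b'}'$ fails, and a short case analysis (four cases according to the sign of $a+b$ and whether $k$ lies above or below the relevant threshold) specifies the landing point. Your claim that ``the length $b$ of the jump is exactly the integer produced by Lemma~\ref{lem: backwards jump}'' misreads that lemma: its input is a pair $(a,b)$ together with the monotonicity hypotheses~(\ref{eq: assumption on backward jump}), and its output is a \emph{sequence} $b_0<\cdots<b_s=b$ decomposing the stretch into several $-1$-jumps. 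It neither selects $b$ nor applies to the forward direction, since hypotheses~(\ref{eq: assumption on backward jump}) are not available a priori there.

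The key point you miss is that a $\varepsilon$-jump, by Definition~\ref{def: jump of map}, already contains a $-\varepsilon$-crawl. So once $\varphi_0$ is built, pieces of the $-1$-tour are already present inside the $1$-jumps; the second step only has to connect these fragments. This is where Lemma~\ref{lem: backwards jump} enters: between consecutive $1$-crawl segments (bounded by elements $k_{[1]},k_{[1]}'\in\mathrm{E}_\Sigma$) one associates $a_1,a_1'$ and the hypotheses~(\ref{eq: assumption on backward jump}) hold precisely because of how the $1$-tour was built; the resulting sequence $b_0<\cdots<b_s$ then dictates the intermediate $-1$-jumps. Your symmetric recipe (``$k$ is visited by the tour entering $\Omega_a$ from that side'') does not capture this; in the paper the decision of whether a boundary point lies on the $1$-tour or is covered by a $1$-jump is determined by the explicit inequalities in the case analysis, not by a direction-of-traversal rule.

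Finally, the verification of~\ref{it: interaction of jumps} in the paper is not via your proposed induction on $b_\Sigma$, but a direct comparison: for $\varepsilon=1$ one uses that the $1$-jump source satisfies $k<k_{a+1}'$ while the $-1$-jump source satisfies $k'\geq k_{a+1}'$; for $\varepsilon=-1$ a similar comparison with $k_{a+1,c_{a+1}-1}$ or $k_{a,c_a-1}$ applies. These inequalities come directly from the greedy construction of $\varphi_0$, which is why the asymmetric approach makes the check clean.
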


\begin{proof}
It is clear that $\Sigma$ has exactly one $-1$-end and one $1$-end. The proof is divided into two steps.

We first construct a certain injective map $\varphi_0:~\mathbf{n}_{\varphi_0}\rightarrow\mathbf{n}_\Sigma$ which has a $1$-tour from the $-1$-end of $\Sigma$ to the $1$-end of $\Sigma$ and with $\mathbf{n}_{\varphi_0}$ minimal possible. We construct $\varphi_0$ by the following inductive procedure. Let $\varphi:~\mathbf{n}_\varphi\rightarrow\mathbf{n}_\Sigma$ an injective map constructed from the previous step, we want to construct another $\varphi':~\mathbf{n}_{\varphi'}\rightarrow\mathbf{n}_\Sigma$ that satisfies $\mathbf{n}_\varphi\subsetneq\mathbf{n}_{\varphi'}$ and $\varphi'|_{\mathbf{n}_\varphi}=\varphi$. If $\varphi$ already has a $1$-tour from the $-1$-end of $\Sigma$ to the $1$-end of $\Sigma$, then we set $\varphi_0\defeq \varphi$. Otherwise (by inductive construction) $\varphi$ has a $1$-tour from the $-1$-end of $\Sigma$ to some $k\in\mathbf{n}_\Sigma\setminus\mathbf{n}_\varphi$. We write $a_0$ for the unique element of $(\Z/t)_\Sigma$ such that the $1$-end of $\Sigma$ has the form $k_{a_0,0}$ or $k_{a_0,c_{a_0}}$.

If either $k\neq k_a^{[1]}$ for any $a\in(\Z/t)_\Sigma$ or the $1$-successor of $k$ is the $1$-end, we define $\mathbf{n}_{\varphi'}\defeq \mathbf{n}_\varphi\sqcup\{k\}$ and $\varphi'(k)$ to be the $1$-successor of $k$.

If $k= k_a^{[1]}$ for some $a\in(\Z/t)_\Sigma$, then we define $b\geq 1$ to be the unique integer such that $k_{a+b',c_{a+b'}-1}>k>k_{a+b'}^\prime$ for each $1\leq b'\leq b-1$, and either $k>k_{a+b,c_{a+b}-1}$ or $k<k_{a+b}^\prime$.

If $b$ does not exist, then we define $\varphi'$ as the unique injective map such that $\varphi'$ has a $1$-jump at $k$ with $\varphi'(k)\in(\mathbf{n}^{a_0,+}\sqcup\mathbf{n}^{a_0,-})\setminus(\mathbf{n}^{a_0-1,+}\sqcup\mathbf{n}^{a_0-1,-})$, and $\mathbf{n}_{\varphi'}\supseteq \mathbf{n}_\varphi\sqcup\{k\}$ is minimal possible.

If $b=1$ and either $k=k_{a,c_a-1}>k_{a+1,c_{a+1}-1}$ or $k=k_a^\prime<k_{a+1}^\prime$, we define $\mathbf{n}_{\varphi'}\defeq \mathbf{n}_\varphi\sqcup\{k\}$ and $\varphi'(k)$ to be the $1$-successor of $k$. We assume in the rest of the construction of $\varphi'$ that $b$ exists and
\begin{itemize}
\item if $k=k_{a,c_a-1}$, then $k_{a,c_a-1}<k_{a+1,c_{a+1}-1}$;
\item if $k=k_a^\prime$, then $k_a^\prime>k_{a+1}^\prime$.
\end{itemize}

If $a+b\in(\Z/t)_\Sigma^+$ and $k<k_{a+b}^\prime$, then we define $\varphi'$ as the unique injective map such that $\varphi'$ has a $1$-jump at $k$ with $\varphi'(k)=k_{a+b,0}\in\mathbf{n}^{a+b-1,-}$, and $\mathbf{n}_{\varphi'}\supseteq \mathbf{n}_\varphi\sqcup\{k\}$ is minimal possible.

If $a+b\in(\Z/t)_\Sigma^+$ and $k>\max\{k_{a+b,c_{a+b}-1},k_{a+b}^\prime\}$, then we define $\varphi'$ as the unique injective map such that $\varphi'$ has a $1$-jump at $k$ with $\varphi'(k)=k_{a+b,1}\in\mathbf{n}^{a+b,+}$, and $\mathbf{n}_{\varphi'}\supseteq \mathbf{n}_\varphi\sqcup\{k\}$ is minimal possible.

If $a+b\in(\Z/t)_\Sigma^-$ and $k>k_{a+b,c_{a+b}-1}$, then we define $\varphi'$ as the unique injective map such that $\varphi'$ has a $1$-jump at $k$ with $\varphi'(k)=k_{a+b-1,1}\in\mathbf{n}^{a+b-1,+}$, and $\mathbf{n}_{\varphi'}\supseteq \mathbf{n}_\varphi\sqcup\{k\}$ is minimal possible.

If $a+b\in(\Z/t)_\Sigma^-$ and $k<\min\{k_{a+b,c_{a+b}-1},k_{a+b}^\prime\}$, then we define $\varphi'$ as the unique injective map such that $\varphi'$ has a $1$-jump at $k$ with $\varphi'(k)=\min\{k'\in\mathbf{n}^{a+b,-}\mid k'>k\}$, and $\mathbf{n}_{\varphi'}\supseteq \mathbf{n}_\varphi\sqcup\{k\}$ is minimal possible.

Our definition of $b$ and division of cases ensure that $\varphi'$ is always well-defined (mainly checking Definition~\ref{def: jump of map}), and in fact the cases above exhaust all possibilities. Up to this stage, we finish the construction of our desired $\varphi_0$. An example of $\varphi_0$ is visualized in Figure~\ref{fig:step1}.

Now we extend $\varphi_0$ to an oriented permutation of $\Sigma$. It suffices to extend $\varphi_0$ to another injective map which also has a $-1$-tour from the $1$-end of $\Sigma$ to $-1$-end of $\Sigma$ and such that $\mathbf{n}_{\varphi_1}$ is minimal possible. In fact, if $\varphi_1$ exists, then we can trivially extend $\varphi_1$ to an oriented permutation $\varphi_2$ of $\mathbf{n}_\Sigma$ by setting $\varphi_2(k)\defeq k$ for each $k\in\mathbf{n}_\Sigma\setminus\mathbf{n}_{\varphi_1}$. Roughly speaking, each $1$-jump in $\varphi_0$ already produces some $-1$-crawl in $\varphi_0$, and thus our construction of $\varphi_1$ reduces to construct the desired $-1$-tour from the $1$-end of $\Sigma$ to $-1$-end of $\Sigma$ by connecting the $-1$-crawls in $\varphi_0$ together.

We choose two elements $k_{[1]},k_{[1]}'\in\mathrm{E}_\Sigma\defeq \bigcup_{a\in(\Z/t)_\Sigma}\{k_{a,0},k_{a,c_a}\}$ such that exactly one of the following holds:
\begin{itemize}
\item $k_{[1]}=k_{[1]}'$ is in the orbit of the $1$-tour of $\varphi_0$ from the $-1$-end to the $1$-end, and $\varphi_0$ has no $1$-crawl either from an element in $\mathrm{E}_\Sigma$ to $k_{[1]}$ or from $k_{[1]}$ to an element in $\mathrm{E}_\Sigma$;
\item $k_{[1]}\neq k_{[1]}'$, $\varphi_0$ has a $1$-crawl from $k_{[1]}$ to $k_{[1]}'$, and the orbit of this $1$-crawl is maximal (under inclusion of subsets of $\mathbf{n}_\Sigma$) among all possible such choices.
\end{itemize}
We can uniquely determine $a_1\in(\Z/t)_\Sigma$ and $k_{[-1]}'\in\{k_{a_1,0},k_{a_1,c_{a_1}}\}$ such that exactly one of the two possibilities holds:
\begin{itemize}
\item $k_{[1]}=k_{[-1]}'$ is the $-1$-end of $\Sigma$;
\item \begin{itemize}
      \item $\{k_{a_1,0},k_{a_1,c_{a_1}}\}=\{k_{[1]},k_{[-1]}'\}$;
      \item $\varphi_0$ has a $1$-tour from the $-1$-end to $k_{[1]}$ which contains a $1$-jump that covers $k_{[-1]}'$.
      \end{itemize}
\end{itemize}
Similarly, we can uniquely determine $a_1'\in(\Z/t)_\Sigma$ and $k_{[-1]}\in\{k_{a_1',0},k_{a_1',c_{a_1'}}\}$ such that exactly one of the two possibilities holds:
\begin{itemize}
\item $k_{[1]}'=k_{[-1]}$ is the $1$-end of $\Sigma$;
\item \begin{itemize}
      \item $\{k_{a_1',0},k_{a_1',c_{a_1'}}\}=\{k_{[1]}',k_{[-1]}\}$;
      \item $\varphi_0$ has a $1$-tour from $k_{[1]}'$ to the $1$-end which contains a $1$-jump that covers $k_{[-1]}$.
      \end{itemize}
\end{itemize}
Then we apply Lemma~\ref{lem: backwards jump} by replacing $a$ and $a+b$ there with $a_1$ and $a_1'$ respectively, and obtain a sequence of integers $0\leq b_0<\cdots<b_s$ for some $s\geq0$ as stated there. Then we require that $\varphi_1$ has a $-1$-tour from $k_{[-1]}$ to $k_{[-1]}'$ which satisfies
\begin{itemize}
\item for each $1\leq s'\leq s$, $\varphi_1$ has a $-1$-jump at $k_{a_1+b_{s'}}^{[-1]}$ with $\varphi_1(k_{a_1+b_{s'}}^{[-1]})\in(\mathbf{n}^{a_1+b_{s'-1},+}\sqcup\mathbf{n}^{a_1+b_{s'-1},-})\setminus\{k_{a_1+b_{s'-1},0},k_{a_1+b_{s'-1},c_{a_1+b_{s'-1}}}\}$;
\item if $b_0>0$, then $\varphi_1$ has a $-1$-jump at $k_{a_1+b_0}^{[-1]}$ with $\varphi_1(k_{a_1+b_0}^{[-1]})\in(\mathbf{n}^{a_1,+}\sqcup\mathbf{n}^{a_1,-})\setminus (\mathbf{n}^{a_1+1,+}\sqcup\mathbf{n}^{a_1+1,-})$.
\end{itemize}
Note that this $-1$-tour from $k_{[-1]}$ to $k_{[-1]}'$ is uniquely determined by the conditions above. Once we run through all possible choices of the pair $k_{[1]},k_{[1]}'$ as above, we complete the construction of $\varphi_1$. An example of the $-1$-tour from $k_{[-1]}$ to $k_{[-1]}'$ is visualized in Figure~\ref{fig:BJump}. The construction of an orientation permutation $\varphi_2$ which extends $\varphi_1$ is immediate by letting $\varphi_2$ fix $\mathbf{n}_\Sigma\setminus\mathbf{n}_{\varphi_1}$.

It is easy to see that $\varphi_2$ satisfies item~\ref{it: disjoint orbits}, ~\ref{it: end of tour}, and ~\ref{it: unique cover} of Definition~\ref{def: oriented permutation}, from the construction above. Item~\ref{it: degeneration of tour} also trivially holds as such a $\varepsilon$-jump never exist if $\Sigma$ is not circular. It remains to check item~\ref{it: interaction of jumps} in Definition~\ref{def: oriented permutation}. Given $a,k,k',\varepsilon$ as in item~\ref{it: interaction of jumps}, we want to show that $k'>k$. If $\varepsilon=1$, then the construction of $\varphi_0$ (especially the $1$-jump at $k$) forces $k<k_{a+1}^\prime$, which together with $k'\geq k_{a+1}^\prime$ (using the fact that $\varphi_2$ has a $-1$-jump at $k'$ which covers $k_{a,0}$) implies $k'>k$. If $\varepsilon=-1$ and $c_a=1$, then the construction of $\varphi_0$ (especially the $1$-jump at $k'$) forces $k'>k_{a+1,c_{a+1}-1}$, which together with $k\leq k_{a+1,c_{a+1}-1}$ (using the fact that $\varphi_2$ has a $-1$-jump at $k$ which covers $k_{a,c_a}$) implies $k'>k$. If $\varepsilon=-1$ and $c_a\geq 2$, then the construction of $\varphi_0$ (especially the $1$-jump at $k'$) forces $k'>k_{a,c_a-1}$, which together with $k\leq k_{a,c_a-1}$ (using the fact that $\varphi_2$ has a $-1$-jump at $k$ which covers $k_{a,c_a}$) implies $k'>k$. The proof is thus finished.
\end{proof}

\begin{prop}\label{prop:oriented:circular}
Let $\pi_0(\Omega^\pm)=\{\Omega^+\sqcup\Omega^-\}$ and $\Omega^+\sqcup\Omega^-$ is circular. Then there exists an oriented permutation of $\mathbf{n}_{\Omega^+\sqcup\Omega^-}$.
\end{prop}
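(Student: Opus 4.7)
The plan is to adapt the two-step construction of Proposition~\ref{prop:oriented:non circular}, with the essential novelty being that, in the circular situation, there are no $\pm 1$-ends to anchor the tours, so we must manufacture one by exploiting the degenerate jump allowed in item~\ref{it: degeneration of tour} of Definition~\ref{def: oriented permutation}. Write $\Sigma \defeq \Omega^+ \sqcup \Omega^-$ and $\mathrm{E}_\Sigma \defeq \bigcup_{a \in \Z/t} \{k_{a,0}, k_{a,c_a}\}$.

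First I would single out a preferred element playing the role of an artificial end. Let $\Z/t_* \defeq \{a \in \Z/t \mid c_a \geq 2\}$. If $\Z/t_* \neq \emptyset$, choose $a_* \in \Z/t_*$ realizing $k_{a_*,c_{a_*}-1} = \min\{k_{a,c_a-1} \mid a \in \Z/t_*\}$; otherwise (so $c_a = 1$ for all $a$) choose any $a_*$ and declare the preferred element to be $k_{a_*,0}$ or $k_{a_*,c_{a_*}}$. The idea is to prescribe that the $1$-tour terminates with a $1$-jump at $k_{a_*}^{[1]} = k_{a_*,c_{a_*}-1}$ sending it to $k_{a_*,1}$, which is precisely the degeneration permitted by item~\ref{it: degeneration of tour}; the minimality of $k_{a_*,c_{a_*}-1}$ among $\{k_{a,c_a-1} \mid a \in \Z/t_*\}$ is forced by the same item.

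Next, mimicking Step~1 of the proof of Proposition~\ref{prop:oriented:non circular}, I would construct inductively an injective map $\varphi_0$ with a $1$-tour that starts at the $1$-successor of $k_{a_*,c_{a_*}-1}$, walks once around the cyclic structure, and terminates at $k_{a_*,c_{a_*}-1}$ by a $1$-jump with image $k_{a_*,1}$. The combinatorial recipe (successor moves along $\mathbf{n}^{a,\pm}$, promoted to $1$-jumps whenever an obstruction $k_{a+b,c_{a+b}-1} < k < k_{a+b}^\prime$ is met) is verbatim the same as in the non-circular case, and it can only fail when we are forced to jump across $a_*$; by the minimality of $k_{a_*,c_{a_*}-1}$ any such forced jump would land on $\{k_{a_*,0}, k_{a_*,c_{a_*}}\}$, which is exactly the moment when we close up by the prescribed degenerate $1$-jump. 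Then, following Step~2 of Proposition~\ref{prop:oriented:non circular}, I would build the $-1$-tour by traversing the $-1$-crawls already present in $\varphi_0$ and inserting $-1$-jumps between them using Lemma~\ref{lem: backwards jump} applied cyclically; again the cycle closes through a dual degenerate $-1$-jump at $k_{a_*,1}$ (resp.\ at the analogous minimum for the $-1$ direction), or more simply by traversing the remaining elements of $\mathrm{E}_\Sigma$ in the $-1$ direction.

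Finally, I would verify items~\ref{it: disjoint orbits}--\ref{it: degeneration of tour} of Definition~\ref{def: oriented permutation}. Items~\ref{it: disjoint orbits}, \ref{it: end of tour} and~\ref{it: unique cover} follow from the same bookkeeping as in Proposition~\ref{prop:oriented:non circular}: each non-end element of $\mathrm{E}_\Sigma$ is visited by exactly one tour and covered by a unique jump of the opposite tour, because the $1$-jumps we introduce are exactly the minimal ones required to complete the cycle. Item~\ref{it: interaction of jumps} is checked by the same case analysis as in the non-circular proof, using that a $1$-jump at $k$ covering $k_{a,c_a}$ forces $k < k_{a+1}^\prime$ while a $-1$-jump at $k'$ covering $k_{a,0}$ forces $k' \geq k_{a+1}^\prime$, and an analogous comparison via $k_{a,c_a-1}$ or $k_a^\prime$ in the opposite orientation. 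The new item~\ref{it: degeneration of tour} is precisely the defining property of our choice of $a_*$. The main obstacle I anticipate is the careful choice of $a_*$ when several candidates realize the same minimum and of the exact location where the $-1$-tour's degenerate jump is placed, since one must ensure compatibility with item~\ref{it: interaction of jumps}: this will rely crucially on the strict minimality of $k_{a_*,c_{a_*}-1}$ together with Lemma~\ref{lem: backwards jump} to route the backward jumps without collision.
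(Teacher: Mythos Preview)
Your overall strategy (cut the circle at a well-chosen index $a_*$, run the non-circular construction, and re-glue) is correct in spirit, but two concrete choices you make do not work.

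First, your anchor index. You minimize $k_{a,c_a-1}$ only over $\Z/t_* = \{a : c_a \geq 2\}$, whereas item~\ref{it: degeneration of tour} of Definition~\ref{def: oriented permutation} demands that the degenerate jump occur at an index realizing $\min\{k_{a',c_{a'}-1} \mid a' \in \Z/t\}$, i.e.\ the minimum over \emph{all} of $\Z/t$. If some $a'$ with $c_{a'}=1$ has $k_{a',0}=k_{a',c_{a'}-1}<k_{a_*,c_{a_*}-1}$, your construction violates~\ref{it: degeneration of tour}. The paper minimizes over all of $\Z/t$ and first normalizes by possibly replacing $\Omega^\pm$ with its inverse so that the minimizer $a_0$ lies in $(\Z/t)^-$ and satisfies $k_{a_0,0}=k_{a_0+1,0}$ together with a condition on $k_{a_0}^\prime$ versus $k_{a_0+1}^\prime$; this extra normalization is what makes the verification of~\ref{it: degeneration of tour} go through.

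Second, you propose to close the $1$-tour by a degenerate $1$-jump at $k_{a_*}^{[1]}=k_{a_*,c_{a_*}-1}$ with image $k_{a_*,1}$. But such a jump has $b=b_\Sigma$, hence by Definition~\ref{def: jump of map} it would require $k_{a_*,c_{a_*}-1}>k_{a'}^\prime$ for \emph{every} $a'\neq a_*$; there is no reason this should hold, and when it fails your prescription is not a legal $\varepsilon$-jump at all. The paper avoids this issue entirely: it anchors the $1$-tour at the endpoint $k_{a_0,c_{a_0}}\in\mathrm{E}_\Sigma$ and lets it close up by the \emph{ordinary} inductive recipe (the closing step is either a crawl whose $1$-successor is $k_{a_0,c_{a_0}}$ or a $1$-jump landing at $k_{a_0-1,1}\in\mathbf{n}^{a_0-1,+}$). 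No degenerate $1$-jump is ever forced. Any degenerate jump, if it occurs, is a $-1$-jump at $k_{a_0,c_{a_0}-1}$ arising from the construction of the $-1$-tour (from $k_{a_0,c_{a_0}-1}$ to itself), and the paper then verifies~\ref{it: degeneration of tour} by ruling out all other possibilities using the normalization above. In short: anchor at an element of $\mathrm{E}_\Sigma$, not at $k_{a_*,c_{a_*}-1}$, and do not hard-wire a degenerate jump into the $1$-tour.
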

\begin{proof}
Note that $\Omega^\pm$ is a constructible $\Lambda$-lift of type \rm{III} if and only if so is its inverse (see Definition~\ref{def: separated condition} for inverse). By replacing $\Omega^\pm$ with its inverse, we are simply exchanging $\Omega^+$ and $\Omega^-$, and thus exchanging $(\Z/t)^+$ and $(\Z/t)^-$. Also, the fact that $\Omega^+\sqcup\Omega^-$ is circular clearly remains if we exchange $\Omega^+$ and $\Omega^-$. Upon replacing $\Omega^\pm$ with its inverse, there exists a unique $a_0\in(\Z/t)^-$ such that $k_{a_0,c_{a_0}-1}=\min\{k_{a,c_a-1}\mid a\in\Z/t\}$ and exactly one of the following holds:
\begin{itemize}
\item $k_{a_0,0}=k_{a_0+1,0}$, $k_{a_0}^\prime<k_{a_0+1}^\prime$ and $c_{a_0}\geq 2$;
\item $k_{a_0,0}=k_{a_0+1,0}$ and $k_{a_0}^\prime>k_{a_0+1}^\prime$.
\end{itemize}

The rest of the proof is similar to that of Proposition~\ref{prop:oriented:non circular} and is divided into two steps. We first construct a certain injective map $\varphi_0:~\mathbf{n}_{\varphi_0}\rightarrow\mathbf{n}_\Sigma$ which has a $1$-tour from $k_{a_0,c_{a_0}}$ to itself and with $\mathbf{n}_{\varphi_0}$ minimal possible. We construct $\varphi_0$ by the following inductive procedure. Let $\varphi:~\mathbf{n}_\varphi\rightarrow\mathbf{n}_\Sigma$ an injective map constructed from the previous step, we want to construct another $\varphi':~\mathbf{n}_{\varphi'}\rightarrow\mathbf{n}_\Sigma$ that satisfies $\mathbf{n}_\varphi\subsetneq\mathbf{n}_{\varphi'}$ and $\varphi'|_{\mathbf{n}_\varphi}=\varphi$. If $\varphi$ already has a $1$-tour from $k_{a_0,c_{a_0}}$ to itself, then we set $\varphi_0\defeq \varphi$. Otherwise (by inductive construction) $\varphi$ has a $1$-tour from $k_{a_0,c_{a_0}}$ to some $k\in\mathbf{n}_\Sigma\setminus\mathbf{n}_\varphi$. The construction of $\varphi'$ is parallel to the one in the proof of Proposition~\ref{prop:oriented:non circular} and we can define $b$ similarly. The construction for each case remain the same except the following two cases
\begin{itemize}
\item $k_{a_0,c_{a_0}}$ is the $1$-successor of $k$, and we define $\varphi'$ by $\mathbf{n}_{\varphi'}\sqcup\{k\}$ and $\varphi'(k)=k_{a_0,c_{a_0}}$;
\item $k=k_a^{[1]}$ for some $a\in(\Z/t)_\Sigma$, $k$ does not have $k_{a_0,c_{a_0}}$ as $1$-successor and $b$ does not exist, in which case we define $\varphi'$ as the unique injective map such that $\varphi'$ has a $1$-jump at $k$ with $\varphi'(k)=k_{a_0-1,1}\in\mathbf{n}^{a_0-1,+}$, and $\mathbf{n}_{\varphi'}\supseteq \mathbf{n}_\varphi\sqcup\{k\}$ is minimal possible.
\end{itemize}
We fix the $1$-tour of $\varphi_0$ from $k_{a_0,c_{a_0}}$ to itself in the rest of the proof.

Now that $\varphi_0$ has been defined, we extend $\varphi_0$ to an oriented permutation of $\Sigma$ again by extending $\varphi_0$ to some injective map $\varphi_1$ which also has a $-1$-tour from $k_{a_0,c_{a_0}-1}$ to itself, and with $\mathbf{n}_{\varphi_1}$ minimal possible among all such choices of $\varphi_1$. We can run the same argument as in Proposition~\ref{prop:oriented:non circular}, namely choosing a pair $k_{[1]},k_{[1]}'$, attach with it $a_1,a_1',k_{[-1]},k_{[-1]}'$ and then apply Lemma~\ref{lem: backwards jump} to construct a $-1$-tour of $\varphi_1$ from $k_{[-1]}$ to $k_{[-1]}'$. However, the definition of $a_1,a_1',k_{[-1]},k_{[-1]}'$ is slightly different as $\Sigma$ has neither $1$-end or $-1$-end. In fact, $k_{[1]}$, $a_1$ and $k_{[-1]}'$ satisfy exactly one of the two possibilities
\begin{itemize}
\item $a_1=a_0$, $k_{[1]}=k_{a_0,0}$ and $k_{[-1]}'=k_{a_0,c_{a_0}-1}\neq k_{[1]}$;
\item \begin{itemize}
      \item $\{k_{a_1,0},k_{a_1,c_{a_1}}\}=\{k_{[1]},k_{[-1]}'\}$ and $a_1\neq a_0$;
      \item the fixed $1$-tour of $\varphi_0$ contains a $1$-tour from the $k_{a_0,c_{a_0}}$ to $k_{[1]}$ which contains a $1$-jump that covers $k_{[-1]}'$.
      \end{itemize}
\end{itemize}
Similarly, $k_{[1]}'$, $a_1'$ and $k_{[-1]}$ satisfy exactly one of the two possibilities
\begin{itemize}
\item $a_1'=a_0$, $k_{[1]}'=k_{a_0,c_{a_0}}$ and $k_{[-1]}=k_{a_0,c_{a_0}-1}$;
\item \begin{itemize}
      \item $\{k_{a_1',0},k_{a_1',c_{a_1'}}\}=\{k_{[1]}',k_{[-1]}\}$ and $a_1'\neq a_0$;
      \item the fixed $1$-tour of $\varphi_0$ contains a $1$-tour from $k_{[1]}'$ to the $k_{a_0,c_{a_0}}$ which contains a $1$-jump that covers $k_{[-1]}$.
      \end{itemize}
\end{itemize}
Finally, $\varphi_1$ has a $-1$-tour from $k_{[-1]}$ to $k_{[-1]}'$ characterized by
\begin{itemize}
\item for each $1\leq s'\leq s$, $\varphi_1$ has a $-1$-jump at $k_{a_1+b_{s'}}^{[-1]}$ with $\varphi_1(k_{a_1+b_{s'}}^{[-1]})\in(\mathbf{n}^{a_1+b_{s'-1},+}\sqcup\mathbf{n}^{a_1+b_{s'-1},-})\setminus\{k_{a_1+b_{s'-1},0},k_{a_1+b_{s'-1},c_{a_1+b_{s'-1}}}\}$;
\item if $b_0>0$ and $a_1\neq a_0$, then $\varphi_1$ has a $-1$-jump at $k_{a_1+b_0}^{[-1]}$ with $\varphi_1(k_{a_1+b_0}^{[-1]})\in(\mathbf{n}^{a_1,+}\sqcup\mathbf{n}^{a_1,-})\setminus(\mathbf{n}^{a_1-1,+}\sqcup\mathbf{n}^{a_1-1,-})$;
\item if $b_0>0$ and $a_1=a_0$, then $\varphi_1$ has a $-1$-jump at $k_{a_1+b_0}^{[-1]}$ with $\varphi_1(k_{a_1+b_0}^{[-1]})=k_{a_0,1}\neq k_{a_0,c_{a_0}}$.
\end{itemize}
The construction of $\varphi_1$ is finished by running through all possible choices of the pair $k_{[1]},k_{[1]}'$. We extend $\varphi_1$ to an oriented permutation $\varphi_2$ of $\mathbf{n}_\Sigma$ by setting $\varphi_2(k)\defeq k$ for each $k\in\mathbf{n}_\Sigma\setminus\mathbf{n}_{\varphi_1}$. It is easy to see that $\varphi_2$ satisfies items~\ref{it: disjoint orbits}, ~\ref{it: end of tour}, and ~\ref{it: unique cover} of Definition~\ref{def: oriented permutation}, from the construction above. The same argument as in Proposition~\ref{prop:oriented:non circular} proves that $\varphi_2$ satisfies item~\ref{it: interaction of jumps} of Definition~\ref{def: oriented permutation}.

It remains to check item~\ref{it: degeneration of tour} of Definition~\ref{def: oriented permutation} and we borrow the notation $\varepsilon\in\{1,-1\}$ and $a\in(\Z/t)_\Sigma=\Z/t$ from there. If $\varepsilon=1$, then the construction of $\varphi_0$ forces $a=a_0$, $c_{a_0-1}=1$ and $\varphi_0(k_{a_0}^{[1]})=k_{a_0}^{[1]}=k_{a_0,c_{a_0}}=k_{a_0-1,1}$, which is impossible as $k_{a_0,c_{a_0}}=k_{a_0-1,c_{a_0-1}}\leq k_{a_0-1}^\prime$ contradicts the definition of a $1$-jump at $k_{a_0,c_{a_0}}$. If $\varepsilon=-1$ and $c_{a_0}=1$, then the construction of $\varphi_1$ forces $a=a_0$ (and thus $k_a^{[\varepsilon]}=k_{a_0,0}=\min\{k_{a,c_a-1}\mid a\in \Z/t\}$) and we must have $c_{a_0+1}=1$, $k_{a_0+1}^\prime<k_{a_0}^\prime$ and
$$k_{a_0+2,c_{a_0+2}-1}>k_{a_0,c_{a_0}-1}=k_{a_0+1,c_{a_0+1}-1}=k_{a_0,0}>k_{a_0}^\prime$$
by our choice of $a_0$. This implies that the fixed $1$-tour of $\varphi_2$ contains a $1$-jump at $k_{a_0}^\prime$ that covers $k_{a_0,0}$ and $k_{a_0+1,c_{a_0+1}}$, and thus $\varphi_2(k_{a_0,0})\notin\{k_{a_0,0},k_{a_0,1}\}$ by the construction of $\varphi_1$, which contradicts our assumption on $\varepsilon$ and $a$ in item~\ref{it: degeneration of tour} of Definition~\ref{def: oriented permutation}. Hence we deduce that $\varepsilon=-1$ and $c_{a_0}\geq 2$ which together with the construction of $\varphi_1$ force $a=a_0$, $k_a^{[\varepsilon]}=k_{a_0,c_{a_0}-1}$ and $\varphi(k_a^{[\varepsilon]})=k_{a_0,1}$. The proof is thus finished.
\end{proof}

\subsection{Invariance condition}\label{subsub: invariance}
In this section, we show that our construction of the permutations $v_\cJ^{\Omega^\pm}=(v_j^{\Omega^\pm})_{j\in\cJ}\in\un{W}$ and the subsets $I_\cJ^{\Omega^\pm}\subseteq\mathbf{n}_{\cJ}$ actually gives an invariant function in the sense of (\ref{equ: def of inv fun}), for each constructible $\Lambda$-lift $\Omega^\pm$ of either type $\rm{I}$, type $\rm{II}$, or type $\rm{III}$.

\begin{lemma}\label{lem: invariance of function}
Let $\Omega^\pm$ be a constructible $\Lambda$-lift of either type $\rm{I}$, type $\rm{II}$, or type $\rm{III}$. Then we have
\begin{equation*}
I_\cJ^{\Omega^\pm}\cdot(v_\cJ^{\Omega^\pm},1)=I_\cJ^{\Omega^\pm}.
\end{equation*}
\end{lemma}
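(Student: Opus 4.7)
The assertion is that $(v_j^{\Omega^\pm})^{-1}(I_j^{\Omega^\pm}) = I_{j-1}^{\Omega^\pm}$ for every $j\in\cJ$, equivalently $v_j^{\Omega^\pm}(I_{j-1}^{\Omega^\pm}) = I_j^{\Omega^\pm}$. The plan is a case analysis following the trichotomy of Definition~\ref{def: constructible lifts} and the explicit constructions of \S~\ref{sub: type I}, \S~\ref{sub: type II}, \S~\ref{sub: type III}. In every case the permutations $v_j^{\Omega^\pm}$ coincide with $w_j$ away from a finite set $\cJ^\star$ of \emph{special} embeddings, namely $\cJ^\star=\{j_1\}$ in types I and II, and $\cJ^\star=\{j_\Sigma\mid \Sigma\in\pi_0(\Omega^\pm)\}$ in type III. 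For $j\notin\cJ^\star$ the identity $w_j(I_{j-1}^{\Omega^\pm})=I_j^{\Omega^\pm}$ is immediate from the structure of $I_\cJ^{\Omega^\pm}$: in types I and II the set is a union of full $(w_\cJ,1)$-orbits of the form $](k,j_1),(k,j_1)]_{w_\cJ}$, and in type III the additional ``connecting'' orbit-segments $](k_{\Sigma_{h'}}^\prime, j_{\Sigma_{h'}}), (k_{\Sigma_{h'+1}}, j_{\Sigma_{h'+1}})]_{w_\cJ}$ have their endpoints lying only over $\cJ^\star$, so each such segment is simply translated by one step under $(w_\cJ,1)$ and the shifted pieces reassemble into $I_j^{\Omega^\pm}$.

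At each $j^\star\in\cJ^\star$ I would write $v_{j^\star}^{\Omega^\pm}=\pi_{j^\star}\,w_{j^\star}$, where $\pi_{j^\star}$ is the product of the cycles produced by the construction: the disjoint-support cycles $v_{j_1}^{\Omega^\pm,\sharp}\,v_{j_1}^{\Omega^\pm,\flat}$ in types I and II, and the commuting factors $v_\Sigma^{\Omega^\pm}$ for those $\Sigma\in\pi_0(\Omega^\pm)$ with $j_\Sigma=j^\star$ in type III. The required identity then reduces to showing that $\pi_{j^\star}$ stabilizes the subset $w_{j^\star}(I_{j^\star-1}^{\Omega^\pm})$ of $\mathbf{n}$; by Lemma~\ref{lem: bijection orbit} and the description of $(w_\cJ,1)$-orbits as $I_\cJ^m$, this subset is explicitly a union of sets $u_{j^\star}([m]_\xi)$ (coming from the full-orbit components of $I_\cJ^{\Omega^\pm}$), together with certain translated orbit-segments in type III. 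The invariance then becomes a combinatorial check: for each cycle $(k_0,\ldots,k_s)$ appearing in $\pi_{j^\star}$, every consecutive pair $(k_l,k_{l+1})$ must either both lie in $w_{j^\star}(I_{j^\star-1}^{\Omega^\pm})$ or both lie in its complement.

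For types I and II this combinatorial check is a direct consequence of the defining conditions of Definition~\ref{def: constructible lifts}: the disjoint-orbit conditions \ref{it: I 3}, \ref{it: I 4} (resp.~\ref{it: II 3}--\ref{it: II 5}) together with the non-membership conditions \ref{it: I 5}--\ref{it: I 7} (resp.~\ref{it: II 6}--\ref{it: II 8}) control how the integers appearing in each cycle are distributed among the Levi-block strata defining $I_{j^\star}^{\Omega^\pm}$, while the extremality conditions \ref{it: I 2}, \ref{it: II 2} align the cycle endpoints with the endpoints of the relevant orbit-segments. The hard part will be type III, where one must invoke the structural properties of an oriented permutation collected in Definition~\ref{def: oriented permutation}: items~\ref{it: disjoint orbits}--\ref{it: end of tour} match the orbit of the fixed $1$-tour of $(v_\Sigma^{\Omega^\pm})^{-1}$ with the $j_\Sigma$-slice of the connecting part of $I_\cJ^{\Omega^\pm}$, items~\ref{it: unique cover} and~\ref{it: interaction of jumps} ensure that each endpoint $k_{a,0}$ or $k_{a,c_a}$ is accounted for exactly once across the two tours, and item~\ref{it: degeneration of tour}, together with the inequalities from Lemma~\ref{lem: backwards jump} used in setting up the $-1$-tour in Propositions~\ref{prop:oriented:non circular} and~\ref{prop:oriented:circular}, rules out pathological configurations. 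Combined with the conditions \ref{it: III 2}--\ref{it: III 9} of Definition~\ref{def: constructible lifts}, these structural facts allow the cycle-by-cycle bookkeeping to go through and complete the proof.
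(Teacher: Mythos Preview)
Your overall strategy (split by embedding, handle $j\notin\cJ^\star$ via the $(w_\cJ,1)$-action, and reduce the case $j=j^\star$ to a statement about the extra permutation $\pi_{j^\star}$) is reasonable, but there are two concrete errors that make the argument, as written, break down.

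First, the claim that ``in types I and II the set $I_\cJ^{\Omega^\pm}$ is a union of full $(w_\cJ,1)$-orbits'' is false for type~II. The piece $I_\cJ^{\Omega^\pm,\flat}=\bigsqcup_{a=2}^{t-1}](k_{a,0},j_1),(k_{a+1,c_{a+1}},j_1)]_{w_\cJ}$ consists of \emph{proper} segments of orbits: the endpoints satisfy $k_{a,0}=u_{j_1}(i_{\al_a})$ and $k_{a+1,c_{a+1}}=u_{j_1}(i_{\al_{a+1}}')$, and these are distinct precisely because $\widehat{\Omega}^-$ has no interior points. The same phenomenon occurs in type~III non-circular via the connecting segments.

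Second, and as a consequence, your reduction at $j=j^\star$ is wrong. What you need is $\pi_{j^\star}\bigl(w_{j^\star}(I_{j^\star-1}^{\Omega^\pm})\bigr)=I_{j^\star}^{\Omega^\pm}$, and this is only equivalent to ``$\pi_{j^\star}$ stabilizes $w_{j^\star}(I_{j^\star-1}^{\Omega^\pm})$'' when $w_{j^\star}(I_{j^\star-1}^{\Omega^\pm})=I_{j^\star}^{\Omega^\pm}$, which fails exactly when there are proper segments with endpoints at embedding $j^\star$ (type~II and type~III non-circular). The two sets differ by swapping, for each relevant segment, its ``starting index'' with its ``ending index'', and the extra permutation must effect this swap rather than fix the set.

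The paper avoids this difficulty by a cleaner decomposition: rather than splitting by $j$, it splits by whether $(k',j)$ lies in $\mathbf{n}_{\Sigma,1}\times\{j_\Sigma\}$. Outside this set Conditions~III-(iii),(iv) force $(v_\cJ^{\Omega^\pm},1)$ to agree with $(w_\cJ,1)$ and the image visibly stays in $I_\cJ^{\Omega^\pm}$. Inside this set, the key observation is that $(v_\Sigma^{\Omega^\pm})^{-1}$ sends $\mathbf{n}_{\Sigma,1}$ into $\mathbf{n}_{\Sigma,1}\cup\{k_\Sigma'\}$ by the very definition of oriented permutation, so $(k,j_\Sigma)\cdot(v_\cJ^{\Omega^\pm},1)$ lands either in a full orbit $](\cdot,j_\Sigma),(\cdot,j_\Sigma)]_{w_\cJ}\subseteq I_\cJ^{\Omega^\pm}$ or in a connecting segment. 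No cycle-by-cycle bookkeeping is needed, and the Conditions~I-(v)--(vii), II-(vi)--(viii), III-(vi)--(ix) you invoke are not used at all here---they are consumed later in \S\ref{sec:inv:cons}, not in this lemma.
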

\begin{proof}
We only prove it when $\Omega^\pm$ is a constructible $\Lambda$-lift of type $\rm{III}$ as the other cases are similar.

We first treat the case when $\pi_0(\Omega^\pm)=\{\Omega^+\sqcup\Omega^-\}$ and $\Omega^+\sqcup\Omega^-$ is circular. It follows from Condition~$\rm{III}$-\ref{it: III 3} and $\rm{III}$-\ref{it: III 4} of Definition~\ref{def: constructible lifts} that
$(k',j)\cdot (v_\cJ^{\Omega^\pm},1)=(k',j)\cdot (w_\cJ,1)\in I_\cJ^{\Omega^\pm}$ for each $(k',j)\in I_\cJ^{\Omega^\pm}\setminus\left(\mathbf{n}_{\Omega^+\sqcup\Omega^-,1}\times\{j_{\Omega^+\sqcup\Omega^-}\}\right)$. On the other hand, we have
\begin{multline*}
(k,j_{\Omega^+\sqcup\Omega^-})\cdot (v_\cJ^{\Omega^\pm},1)=((v_{\Omega^+\sqcup\Omega^-}^{\Omega^\pm})^{-1}(k),j_{\Omega^+\sqcup\Omega^-})\cdot (w_\cJ,1)\\
\in\, ]((v_{\Omega^+\sqcup\Omega^-}^{\Omega^\pm})^{-1}(k),j_{\Omega^+\sqcup\Omega^-}),((v_{\Omega^+\sqcup\Omega^-}^{\Omega^\pm})^{-1}(k),j_{\Omega^+\sqcup\Omega^-})]_{w_\cJ}\subseteq I_\cJ^{\Omega^\pm}
\end{multline*}
for each $k\in\mathbf{n}_{\Omega^+\sqcup\Omega^-,1}$. Hence we finish the proof in this case.

Now we treat the case when $\pi_0(\Omega^\pm)=\{\Sigma_{h'}\mid h'\in\Z/h\}$ and $\Omega^\pm$ do not have circular connected component. It still follows from Condition~$\rm{III}$-\ref{it: III 3} and $\rm{III}$-\ref{it: III 4} of Definition~\ref{def: constructible lifts} that $(k',j)\cdot (v_\cJ^{\Omega^\pm},1)=(k',j)\cdot (w_\cJ,1)\in I_\cJ^{\Omega^\pm}$ for each
$(k',j)\in  I_\cJ^{\Omega^\pm}\setminus \left(\bigcup_{\Sigma\in\pi_0(\Omega^\pm)}\mathbf{n}_{\Sigma,1}\times\{j_\Sigma\}\right)$. If $k\in\mathbf{n}_{\Sigma,1}\setminus\{v_\Sigma^{\Omega^\pm}(k_\Sigma^\prime)\}$ for some $\Sigma\in\pi_0(\Omega^\pm)$, then we have
$$(k,j_\Sigma)\cdot (v_\cJ^{\Omega^\pm},1)=((v_\Sigma^{\Omega^\pm})^{-1}(k),j_\Sigma)\cdot (w_\cJ,1)\in\,]((v_\Sigma^{\Omega^\pm})^{-1}(k),j_\Sigma),((v_\Sigma^{\Omega^\pm})^{-1}(k),j_\Sigma)]_{w_\cJ}\subseteq I_\cJ^{\Omega^\pm}.$$
If $\Sigma=\Sigma_{h'}$ for some $h'\in\Z/h$ and $k=v_{\Sigma_{h'}}^{\Omega^\pm}(k_{\Sigma_{h'}}^\prime)$, then we have
$$(v_{\Sigma_{h'}}^{\Omega^\pm}(k_{\Sigma_{h'}}^\prime),j_{\Sigma_{h'}})\cdot (v_\cJ^{\Omega^\pm},1)=(k_{\Sigma_{h'}}^\prime,j_{\Sigma_{h'}})\cdot (w_\cJ,1)\in\,](k_{\Sigma_{h'}}^\prime,j_{\Sigma_{h'}}),(k_{\Sigma_{h'+1}},j_{\Sigma_{h'+1}})]_{w_\cJ}\subseteq I_\cJ^{\Omega^\pm}.$$
Up to this stage, we have shown that $(k',j)\cdot (v_\cJ^{\Omega^\pm},1)\in I_\cJ^{\Omega^\pm}$ for each $(k',j)\in I_\cJ^{\Omega^\pm}$. The proof is thus finished.
\end{proof}

\newpage
\section{Invariant functions and constructible $\Lambda$-lifts}
\label{sec:inv:cons}
We fix $w_\cJ\in\un{W}$, $\xi\in\Xi_{w_\cJ}$ and a subset $\Lambda\subseteq\mathrm{Supp}_{\xi,\cJ}$ throughout this section. In this section, we use the invariant functions $f_\xi^{\Omega^\pm}\in\Inv$ constructed in \S\,\ref{sec:const:inv} to prove a list of results stated in~\S\,\ref{sub:exp:for}, whose proof will be given in \S\,\ref{sub: exp type I}, \S\,\ref{sub: exp type II}, and \S\,\ref{sub: exp type III}) when $\Omega^\pm$ is a constructible $\Lambda$-lift of type~\rm{I}, of type~\rm{II}, and of type~\rm{III}, respectively. Finally, we combine the results in \S\,\ref{sub:exp:for} with that of \S\,\ref{sub: cons lifts} to complete the proof of Statement~\ref{state: goal} in Theorem~\ref{thm: constructible and inv fun} and Corollary~\ref{cor: separate points}.

\subsection{Explicit invariant functions: statements}
\label{sub:exp:for}
We fix an element $\cC\in\cP_\cJ$ satisfying $\cC\subseteq\cN_{\xi,\Lambda}$ and recall the subring $\cO_\cC\subseteq\cO(\cC)$ from Definition~\ref{def: similar and below a block}. We state here a list of crucial ingredients for the proof of Theorem~\ref{thm: constructible and inv fun} and Corollary~\ref{cor: separate points}. Some rough idea behind these results is summarized in Remark~\ref{rmk: rough idea}.

The following two propositions are for constructible $\Lambda$-lifts of type $\rm{I}$. The proofs of these two propositions will occupy \S\,\ref{sub: exp type I}.
\begin{prop}\label{prop: type I exceptional}
Let $\Omega^\pm$ be a constructible $\Lambda$-lift of type $\rm{I}$, and assume that $\Omega^+$ is $\Lambda$-exceptional. Then we have
$$F_\xi^{\Omega^\pm}|_\cC+\sum_{\Omega_0^\pm}\varepsilon(\Omega_0^\pm)F_\xi^{\Omega_0^\pm}|_\cC\in\cO_\cC$$
where $\Omega_0^\pm$ runs through balanced pairs satisfying $\Omega^+<\Omega_0^+$ and $\Omega^-=\Omega_0^-$ with $\varepsilon(\Omega_0^\pm)\in\{-1,1\}$ a sign determined by $\Omega^\pm$ and $\Omega_0^\pm$.
\end{prop}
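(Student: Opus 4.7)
The plan is to compute $f_\xi^{\Omega^\pm}|_\cC$ explicitly and match it with the combination on the right. By Lemma~\ref{lem: invariance of function}, $f_\xi^{\Omega^\pm}$ belongs to $\Inv$, and one verifies from the explicit form of $v_\cJ^{\Omega^\pm}$ and $I_\cJ^{\Omega^\pm}$ constructed in \S\ref{sub: type I} together with Conditions~\rm{I}-\ref{it: I 5}, \rm{I}-\ref{it: I 6} and \rm{I}-\ref{it: I 7} that $\cC \subseteq \cM_{v_\cJ^{\Omega^\pm},I_\cJ^{\Omega^\pm}}^\circ$, so that $f_\xi^{\Omega^\pm}|_\cC$ is a well-defined unit in $\cO_\cC$. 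The proposition then reduces to establishing the algebraic identity
\[
f_\xi^{\Omega^\pm}|_\cC \;\sim\; F_\xi^{\Omega^\pm}|_\cC + \sum_{\Omega_0^\pm} \varepsilon(\Omega_0^\pm)\, F_\xi^{\Omega_0^\pm}|_\cC,
\]
modulo a factor in $\cO_\cC^{\rm{ss}}$; since both sides would then differ by an element of $\cO_\cC^{\rm{ss}} \subseteq \cO_\cC$, the conclusion follows.

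To prove this identity I would expand each minor $f_{S,j}$ appearing in the product defining $f_{v_\cJ^{\Omega^\pm},I_\cJ^{\Omega^\pm}}$ by the Leibniz formula, using the identification (\ref{equ: std section prime}) to write matrix entries of points of $\cN_\xi$ as monomials in the coordinates $D_{\xi,\ell}^{(j)}$ and $u_\xi^{(\alpha,j)}$. Each Leibniz term corresponds to a matching between selected rows and columns of the minor. On $\cC \subseteq \cN_{\xi,\Lambda}$ the coordinates $u_\xi^{(\alpha,j)}$ vanish for $(\alpha,j) \notin \Lambda$, and Condition~\rm{I}-\ref{it: I 5} together with the explicit shape of $v_{j_1}^{\Omega^\pm,\sharp}$ and $v_{j_1}^{\Omega^\pm,\flat}$ kill all Leibniz terms whose matching produces a pair $((i,i'),j)$ with $(i,j)$ an interior point of $\Omega^+$ and $(i',j)$ an interior point of $\Omega^-$. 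The surviving matchings assemble into balanced pairs $\Omega_0^\pm$ with $\Omega_0^- = \Omega^-$ (since the $\Omega^-$-side is a maximal $\Lambda$-decomposition and cannot be refined further in the expansion) and $\Omega^+ \le \Omega_0^+$ (since the cyclic structure of $v_{j_1}^{\Omega^\pm,\sharp}$ forces the pivot in the expansion to weakly increase $i_{\Omega_0^+,1}$ in the sense of Definition~\ref{def: partial order}), equality giving the principal term $F_\xi^{\Omega^\pm}|_\cC$. The signs $\varepsilon(\Omega_0^\pm) \in \{\pm 1\}$ are then exactly the signs of the corresponding matchings, up to the ambient $\pm$ ambiguity in (\ref{equ: comparison of sections}) which is absorbed into $\sim$.

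The main obstacle is the combinatorial bookkeeping, which must be carried out case-by-case according to the four branches of the construction of $v_{j_1}^{\Omega^\pm,\sharp}$ and $v_{j_1}^{\Omega^\pm,\flat}$ in \S\ref{sub: type I}, distinguished by whether $e_{1,1}=0$ and by the relative order of $k_1^{1,1}$ and $k_2^{1,1}$. In each case one must verify bijectively that the surviving matchings correspond exactly to balanced pairs $\Omega_0^\pm$ with $\Omega_0^- = \Omega^-$ and $\Omega^+ \le \Omega_0^+$, and that the matching corresponding to $\Omega_0^+ = \Omega^+$ produces $F_\xi^{\Omega^\pm}|_\cC$ with coefficient $+1$ after normalization by $\cO_\cC^{\rm{ss}}$. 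The $\Lambda$-exceptional hypothesis on $\Omega^+$ is critical here: it forces either $\Omega^+ = \{(\alpha,j_1)\}$ or $\mathbf{D}_{((i_\alpha,i_{\Omega^+,1}),j_1),\Lambda}$ to be a singleton, which prevents extra correction terms indexed by balanced pairs incomparable to $\Omega^+$. Conditions~\rm{I}-\ref{it: I 3}, \rm{I}-\ref{it: I 4}, \rm{I}-\ref{it: I 6} and \rm{I}-\ref{it: I 7} ensure disjointness of the relevant $(w_\cJ,1)$-orbits and non-membership in $\widehat{\Lambda}$, so that each surviving monomial factors cleanly as an element of $\cO_\cC^{\rm{ss}}$ times a genuine $F_\xi^{\Omega_0^\pm}$.
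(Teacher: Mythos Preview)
Your overall strategy (expand the minors, identify surviving terms with balanced pairs $\Omega_0^\pm$ with $\Omega_0^-=\Omega^-$ and $\Omega^+\le\Omega_0^+$) matches the paper's, but there is a genuine gap in your first step.

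You assert that Conditions~\rm{I}-\ref{it: I 5}--\ref{it: I 7} force $\cC\subseteq\cM_{v_\cJ^{\Omega^\pm},I_\cJ^{\Omega^\pm}}^\circ$, so that $f_\xi^{\Omega^\pm}|_\cC$ is always a unit. This is false in general. For those $(k,j_1)\in\mathbf{n}_\cJ$ with $\al^{\Omega^\pm}_{k,j_1}=\al_1$, the set $\mathbf{D}^{\Omega^\pm}_{k,j_1}$ is $\{\Omega^+\}\sqcup\{\Omega'\in\mathbf{D}_{(\al_1,j_1),\Lambda}\mid \Omega^+<\Omega'\}$ (Lemma~\ref{lem: std type I exceptional}), and by Lemma~\ref{lem: general formula for det} the corresponding minor restricts to $\cN_{\xi,\Lambda}$ as (up to $\cO_{\xi,\Lambda}^{\rm ss}$) the sum
\[
F_\xi^{\Omega^+,\star}=F_\xi^{\Omega^+}+\sum_{\Omega^+<\Omega'}\varepsilon(\Omega')F_\xi^{\Omega'}.
\]
Nothing prevents $F_\xi^{\Omega^+,\star}|_\cC=0$, in which case $f_\xi^{\Omega^\pm}\notin\Inv(\cC)$ (cf.~Remark~\ref{rmk: rough idea}). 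The paper handles this by a case split: if $F_\xi^{\Omega^+,\star}|_\cC=0$ then the expression in the proposition is $0\in\cO_\cC$ and there is nothing to prove; only when $F_\xi^{\Omega^+,\star}|_\cC\neq 0$ does one apply Lemma~\ref{lem: from sets to formula} to conclude $f_\xi^{\Omega^\pm}\in\Inv(\cC)$ and compute it.

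A second, softer point: your ``Leibniz expansion plus matching'' sketch is essentially correct but hides the mechanism that makes the answer clean. The paper does not expand and sort raw matchings; it first proves (Lemma~\ref{lem: std type I exceptional}, using Lemma~\ref{lem: existence of partition} and Lemma~\ref{lem: unique k decomposition}) that for every $(k,j)$ outside the critical range the set $\mathbf{D}^{\Omega^\pm}_{k,j}$ is the singleton $\{\Omega_{1,k,j}\sqcup\Omega_{2,k,j}\}$, and then checks via a telescoping over $I_\cJ^{\Omega^\pm,\star}$ that the contributions away from the critical range collapse to $F_\xi^{\Omega^\pm,1}=1$ and $F_\xi^{\Omega^\pm,2}=(F_\xi^{\Omega^-})^{-1}$. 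Your justification that the $\Omega^-$-side ``cannot be refined further'' is not the operative reason; what is used is the uniqueness statement \eqref{equ: unique element bounded by k} for both $\Omega_{1,k,j}$ and $\Omega_{2,k,j}$, which hinges on $\Omega^-=\Omega_{(\al,j),\Lambda}^{\rm max}$ \emph{and} on the $\Lambda$-exceptional hypothesis on $\Omega^+$.
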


\begin{prop}\label{prop: type I extremal}
Let $\Omega^\pm$ be a constructible $\Lambda$-lift of type $\rm{I}$, and assume that $\Omega^+$ is $\Lambda$-extremal. Then we have
$$F_\xi^{\Omega^\pm}|_\cC\in\cO_\cC\,.$$
\end{prop}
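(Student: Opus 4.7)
The plan is to evaluate the invariant function $f_\xi^{\Omega^\pm}=f_{v_\cJ^{\Omega^\pm},I_\cJ^{\Omega^\pm}}$ constructed in \S~\ref{sub: type I} on $\cC$ in terms of the standard coordinates of $\cN_{\xi,\Lambda}$, and to show that it agrees with $F_\xi^{\Omega^\pm}|_\cC$ up to a factor in $\cO_\cC^{\rm{ss}}$. Since $f_\xi^{\Omega^\pm}|_\cC$ is invertible on $\cC$ and hence lies in $\cO_\cC$ by Definition~\ref{def: similar and below a block}, this equality will give the stated conclusion.

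First I would substitute the explicit formulas of \S~\ref{sub: type I} for $v_{j_1}^{\Omega^\pm,\sharp}$, $v_{j_1}^{\Omega^\pm,\flat}$ and $I_\cJ^{\Omega^\pm}$ (branching on whether $k_2^{1,1}<k_1^{1,1}$ or $k_2^{1,1}>k_1^{1,1}$; note that $k_1^{1,1}$ is defined because $\Omega^+$ is $\Lambda$-extremal, so $e_{\psi_1,1}\geq 1$) into the defining formula
\[
f_\xi^{\Omega^\pm}|_\cC\;=\;\prod_{(k,j)\in I_\cJ^{\Omega^\pm}}\left.\frac{f_{S_{k,v_j^{\Omega^\pm}},j}}{f_{S_{k+1,v_j^{\Omega^\pm}},j}}\right|_\cC.
\]
Using \eqref{equ: comparison of sections} and the fact that $\cC\subseteq\cN_{\xi,\Lambda}$ forces $u_\xi^{(\al,j)}=0$ for every $(\al,j)\in\mathrm{Supp}_{\xi,\cJ}\setminus\Lambda$, each minor $f_{S,j_1}|_\cC$ unfolds as a Laurent polynomial in the coordinates $D_{\xi,\ell}^{(j)}$ and $u_\xi^{(\al,j)}$ for $(\al,j)\in\Lambda$. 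Telescoping the ratios indexed by $I_\cJ^{\Omega^\pm,\sharp}\cup I_\cJ^{\psi_1,+}$ and the auxiliary orbits $](k_{1,0},j_1),(k_{1,0},j_1)]_{w_\cJ}$ appearing in \S~\ref{sub: type I} produces an element of $\cO_\cC^{\rm{ss}}$ times $F_\xi^{\Omega^\pm,\flat}|_\cC$; this part is a straightforward monomial computation.

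The heart of the argument is to identify the remaining minor expansion with $F_\xi^{\Omega^\pm,\sharp}|_\cC$, i.e.~with the product $\prod_{(\al,j)\in\Omega^+}u_\xi^{(\al,j)}\big/\prod_{(\al,j)\in\Omega^-}u_\xi^{(\al,j)}$. The key point is that in the extremal case the interlacing of the sequences $(k_1^{s,e})$ and $(k_2^{s,e})$ prescribed by $v_{j_1}^{\Omega^\pm,\sharp}$ is exactly the one dictated by Condition~$\rm{I}$-\ref{it: I 4} together with the extremality equality \eqref{equ: extremal cond}; performing Leibniz expansion of the relevant minors along the rows and columns indexed by $\mathbf{I}_{\Omega^+}\cup\mathbf{I}_{\Omega^+}^\prime\cup\mathbf{I}_{\Omega^-}\cup\mathbf{I}_{\Omega^-}^\prime$ leaves a unique surviving monomial on $\cC$, and that monomial is precisely $\pm F_\xi^{\Omega^\pm,\sharp}|_\cC$. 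Every other monomial would require the non-vanishing on $\cC$ of some $u_\xi^{(\beta,j_1)}$ with $(\beta,j_1)\notin\Lambda$, or equivalently the existence of some $((i,i'),j_1)\in\widehat{\Lambda}$ placed at the wrong spot; such a configuration is exactly what Conditions~$\rm{I}$-\ref{it: I 5}, $\rm{I}$-\ref{it: I 6} and $\rm{I}$-\ref{it: I 7} exclude, ruling out all parasitic terms. The contrast with Proposition~\ref{prop: type I exceptional} is precisely the condition $e_{\psi_1,1}\geq 1$: in the exceptional case additional monomials indexed by $\Omega_0^+>\Omega^+$ survive the expansion and account for the sum correction, whereas in the extremal case these terms are cancelled by the new ratios $f_{S_{k_1^{s,e},v_{j_1}^{\Omega^\pm}},j_1}/f_{S_{k_1^{s,e}+1,v_{j_1}^{\Omega^\pm}},j_1}$ that $v_{j_1}^{\Omega^\pm,\sharp}$ inserts.

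The main obstacle will be the combinatorial verification that no parasitic Leibniz monomial survives on $\cN_{\xi,\Lambda}$. This has to be checked separately in the two case distinctions $k_2^{1,1}\lessgtr k_1^{1,1}$ of \S~\ref{sub: type I}; in each case one must trace how the integers $k_a^{s,e}$ and $k_{a,c}$ appear as row or column labels and check that any alternative expansion encodes a subset $\Omega'\subseteq\Lambda\cap\mathrm{Supp}_{\xi,j_1}$ producing an element of $\widehat{\Lambda}$ that is forbidden by Conditions~$\rm{I}$-\ref{it: I 5}\,--\,$\rm{I}$-\ref{it: I 7} (the same mechanism that powers Lemmas~\ref{lem: reduction to separated lift} and~\ref{lem: reduce non sep to smaller norm}). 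Once this vanishing is established, extracting the sign $\varepsilon(\Omega^\pm)$ coming from $w_{j_1}$ and the permutation $v_{j_1}^{\Omega^\pm}$, and absorbing it into $\cO_\cC^{\rm{ss}}$ via Definition~\ref{def: similar and below a block}, yields $F_\xi^{\Omega^\pm}|_\cC\in\cO_\cC$ as required.
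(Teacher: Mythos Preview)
Your overall strategy is the paper's: show $f_\xi^{\Omega^\pm}\in\Inv(\cC)$ and $f_\xi^{\Omega^\pm}|_\cC\sim F_\xi^{\Omega^\pm}|_\cC$. The paper organizes the minor computation through the sets $\mathbf{D}^{\Omega^\pm}_{k,j}$ of \S~\ref{sub: notation for each k j}, introduces auxiliary sets $\Omega_{1,k,j},\Omega_{2,k,j}\subseteq\Lambda$ with $\al^{\Omega^\pm}_{k,j}=\al^{\Omega^\pm}_{1,k,j}+\al^{\Omega^\pm}_{2,k,j}$, proves in Lemma~\ref{lem: std type I extremal} that $\mathbf{D}^{\Omega^\pm}_{k,j}=\{\Omega_{1,k,j}\sqcup\Omega_{2,k,j}\}$ is a singleton for every $(k,j)$, and then reads off the formula from Lemma~\ref{lem: from sets to formula}. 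Your direct Leibniz expansion would unwind to the same thing, but the paper's bookkeeping is what makes the verification manageable.

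Two points in your sketch are off. First, your claim that ``every other monomial would require the non-vanishing of some $u_\xi^{(\beta,j_1)}$ with $(\beta,j_1)\notin\Lambda$'' is not the issue: such monomials vanish automatically on $\cN_{\xi,\Lambda}$, and the definition of $\mathbf{D}^{\Omega^\pm}_{k,j}$ already restricts to decompositions supported in $\Lambda$. The real work is ruling out \emph{other} $\Lambda$-supported decompositions of $\al^{\Omega^\pm}_{k,j}$. This uses Lemma~\ref{lem: existence of partition} (to split any candidate into pieces summing to $\al^{\Omega^\pm}_{1,k,j}$ and $\al^{\Omega^\pm}_{2,k,j}$, via Conditions~\rm{I}-\ref{it: I 5}--\ref{it: I 7}) together with Lemma~\ref{lem: unique k decomposition}, which is where extremality enters decisively. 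Second, your account of the exceptional/extremal contrast is not right: nothing is ``cancelled by new ratios''. The mechanism is that when $\Omega^+$ is $\Lambda$-extremal one has $\al_{\psi_1,k}\neq\al_1$ for every $k$ (since $c_{\psi_1}^1\geq 1$ forces $\Omega_{\psi_1,k}\subsetneq\Omega^+$), so Lemma~\ref{lem: unique k decomposition} applies at every step and $\mathbf{D}^{\Omega^\pm}_{k,j}$ is always a singleton; in the exceptional case there is a range of $k$ with $\al_{\psi_1,k}=\al_1$, and there Lemma~\ref{lem: unique k decomposition} does not constrain the decomposition, producing the extra terms in Proposition~\ref{prop: type I exceptional}.
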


We now state the results for constructible $\Lambda$-lifts of type $\rm{II}$, whose proof will occupy \S\,\ref{sub: exp type II}.
\begin{prop}\label{prop: type II exceptional}
Let $\Omega^\pm$ be a constructible $\Lambda$-lift of type $\rm{II}$, and assume that $\Omega^+$ is $\Lambda$-exceptional. Then we have
$$F_\xi^{\Omega^\pm}|_\cC+\sum_{\Omega_0^\pm}\varepsilon(\Omega_0^\pm)F_\xi^{\Omega_0^\pm}|_\cC\in\cO_\cC$$
where $\Omega_0^\pm$ runs through balanced pairs satisfying $\Omega^+<\Omega_0^+$ and $\Omega^-=\Omega_0^-$ with $\varepsilon(\Omega_0^\pm)\in\{-1,1\}$ a sign determined by $\Omega^\pm$ and $\Omega_0^\pm$.
\end{prop}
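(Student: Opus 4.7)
The plan is to compute $f_\xi^{\Omega^\pm}|_\cC$ explicitly via the construction in \S~\ref{sub: type II} and to compare it with $F_\xi^{\Omega^\pm}|_\cC$. Since $f_\xi^{\Omega^\pm}=f_{v_\cJ^{\Omega^\pm},I_\cJ^{\Omega^\pm}}\in\Inv(\cC)$, its restriction to $\cC$ automatically lies in $\cO_\cC$; the whole problem is to show that this restriction equals $F_\xi^{\Omega^\pm}|_\cC+\sum_{\Omega_0^\pm}\varepsilon(\Omega_0^\pm)F_\xi^{\Omega_0^\pm}|_\cC$ modulo $\cO_\cC^{\rm{ss}}$, exactly as in the parallel type~$\rm{I}$ argument of Proposition~\ref{prop: type I exceptional}. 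The computation proceeds by lifting $\cC\subseteq\cN_{\xi,\Lambda}$ to $\un{G}$ through the standard coordinates of~(\ref{equ: std section}), (\ref{equ: std section prime}) and (\ref{equ: std section Lambda}), and then expanding each minor $f_{S_{k,v_{j_1}^{\Omega^\pm}},j_1}$ entering $f_\xi^{\Omega^\pm}$ by the Leibniz formula.

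The heart of the argument will be a case-by-case analysis of the four shapes of $v_{j_1}^{\Omega^\pm,\sharp}$ prescribed in \S~\ref{sub: type II}, distinguished by the relative order of $k_{2,c_2-1}$ and $k_{1,c_1-1}$ and of $k_t^\prime$ and $k_1^{1,1}$. For each shape, I would isolate the single Leibniz term producing $F_\xi^{\Omega^\pm}|_\cC$—namely the term whose matrix entries match the prescribed support on the $\Omega^+$-rows and the $\Omega^-$-columns—and enumerate the remaining terms that survive the restriction to $\cC$. Conditions~\rm{II}-\ref{it: II 6}, \rm{II}-\ref{it: II 7} and \rm{II}-\ref{it: II 8} guarantee that any off-diagonal matrix entry indexed by a root outside $\widehat{\Lambda}$ vanishes on $\cC$, reducing the Leibniz sum to terms indexed by legitimate pseudo-$\Lambda$-decompositions of $(\al,j)$. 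Among these, Conditions~\rm{II}-\ref{it: II 10} and \rm{II}-\ref{it: II 11} force the $\Omega^-$-component to stay fixed, while Condition~\rm{II}-\ref{it: II 9}, which applies when $\Omega^+=\Omega_{(\al,j),\Lambda}^{\rm{max}}$, rules out spurious enlargements in the opposite direction. Taken together, the surviving terms correspond bijectively to balanced pairs $\Omega_0^\pm$ with $\Omega_0^-=\Omega^-$ and $\Omega_0^+\in \mathbf{D}_{(\al,j),\Lambda}$ satisfying $\Omega^+<\Omega_0^+$ in the partial order of Definition~\ref{def: partial order}, and the sign $\varepsilon(\Omega_0^\pm)$ is read off as the signature of the permutation exchanging the row assignments for $\Omega^+$ and $\Omega_0^+$.

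The main obstacle will be verifying that the four constructions of $v_{j_1}^{\Omega^\pm,\sharp}$ from \S~\ref{sub: type II} have been engineered correctly: in the cases where $k_{2,c_2-1}>k_{1,c_1-1}$, the denominator indices collected in $I_\cJ^{\Omega^\pm,\sharp,2}$ introduce additional minors whose Leibniz expansions must be controlled simultaneously with the numerator minors, and one has to check that the resulting cross-terms either cancel pairwise or fold correctly into the sum $\sum\varepsilon(\Omega_0^\pm)F_\xi^{\Omega_0^\pm}|_\cC$. In particular, the interaction between the $I_\cJ^{\psi_a,-}$ factors for $2\le a\le t$ and the $k_1^{s,e}$ contributions is delicate, and will require the observation (already used in \S~\ref{sub: type II}) that $k_t^\prime\neq k_1^{1,1}$ whenever $e_{1,1}\geq 1$. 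Once this bookkeeping is completed, the proposition follows immediately from $f_\xi^{\Omega^\pm}|_\cC\in\cO_\cC$ together with the identification established above.
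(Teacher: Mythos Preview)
Your framework is the same as the paper's---compute $f_\xi^{\Omega^\pm}|_\cC$ through the standard coordinates and match it with the desired sum---but your description of the mechanism is off in a way that would prevent you from actually carrying out the argument. You anticipate that several of the minors $f_{S^{j_1,\Omega^\pm}_k,j_1}$ in the product defining $f_\xi^{\Omega^\pm}$ will have multiple surviving Leibniz terms on $\cN_{\xi,\Lambda}$, leading to ``cross-terms'' that must ``cancel pairwise or fold correctly''. This is precisely what does \emph{not} happen, and the heart of the paper's argument is proving it does not. The paper establishes (Lemma~\ref{lem: std type II exceptional}) that the set $\mathbf{D}^{\Omega^\pm}_{k,j}$ of surviving Leibniz terms is a singleton $\{\bigsqcup_a\Omega_{a,k,j}\}$ for \emph{every} $(k,j)$ except those with $\al^{\Omega^\pm}_{1,k,j_1}=\al_1$; only at these $(k,j_1)$ does one get the genuine sum over $\{\Omega^+\}\sqcup\{\Omega'\mid \Omega^+<\Omega'\}$. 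This uniqueness is exactly what lets Lemma~\ref{lem: from sets to formula} apply with a single exceptional $(k_\star,j_\star)=(k_{1,c_1-1},j_1)$, after which the product factors cleanly as $F_\xi^{\Omega^+,\star}\cdot\prod_aF_\xi^{\Omega^\pm,a}$ and the telescoping computations~(\ref{equ: II exceptional formula 0}),~(\ref{equ: II exceptional formula 1}) finish the job.

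Establishing that uniqueness is nontrivial: it requires the partition argument of Claim~\ref{equ: partition type II exceptional}, which shows that any element of $\mathbf{D}^{\Omega^\pm}_{k,j_1}$ decomposes as $\bigsqcup_a\Omega_{a,k,j_1}^\natural$ with $\sum_\beta\beta=\al^{\Omega^\pm}_{a,k,j_1}$, and then a case analysis pinning each $\Omega_{a,k,j_1}^\natural$ down via Lemma~\ref{lem: unique k decomposition}. This is where Conditions~\rm{II}-\ref{it: II 4} through~\rm{II}-\ref{it: II 11} are actually used---not to make entries ``outside $\widehat{\Lambda}$'' vanish (entries outside $\Lambda$ vanish on $\cN_{\xi,\Lambda}$ by definition, no condition needed), but to rule out alternative partitions of $\al^{\Omega^\pm}_{k,j_1}$ into sums of roots in $\widehat{\Lambda}$. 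Your assignment of roles to the individual conditions (e.g.\ \rm{II}-\ref{it: II 10}, \rm{II}-\ref{it: II 11} ``force the $\Omega^-$-component to stay fixed'') does not match how they enter; they are used in the specific case $\Omega_{\sharp,s_0'}=\Omega^+$ inside the proof of Claim~\ref{equ: partition type II exceptional}. Finally, you omit the easy but necessary observation that if $F_\xi^{\Omega^+,\star}|_\cC=0$ the conclusion is immediate, so one may assume $f_{S^{j_1,\Omega^\pm}_k,j_1}|_\cC\neq0$ everywhere before invoking Lemma~\ref{lem: from sets to formula}.
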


\begin{prop}\label{prop: type II extremal}
Let $\Omega^\pm$ be a constructible $\Lambda$-lift of type $\rm{II}$, and assume that $\Omega^+$ is $\Lambda$-extremal. Then we have
$$F_\xi^{\Omega^\pm}|_\cC\in\cO_\cC\,.$$
\end{prop}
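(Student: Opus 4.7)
The plan is to compute the restriction $f_\xi^{\Omega^\pm}|_{\cC}$ of the invariant function constructed in \S~\ref{sub: type II} explicitly in terms of the standard coordinates $D_{\xi,\ell}^{(j)}$ and $u_\xi^{(\al,j)}$ of \S~\ref{sub:std:note}, and match it with $F_\xi^{\Omega^\pm}|_\cC$ modulo $\cO_\cC$. The starting point is Lemma~\ref{lem: explicit projection} together with (\ref{equ: comparison of sections}): these turn each factor $f_{S_{k,v_j^{\Omega^\pm}},j}$ appearing in $f_{v_\cJ^{\Omega^\pm},I_\cJ^{\Omega^\pm}}$ into a minor of the matrix whose image in $\cN_{\xi,\Lambda}$ is prescribed by (\ref{equ: std section prime}). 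The careful choice of $v_{j_1}^{\Omega^\pm,\sharp}$, $v_{j_1}^{\Omega^\pm,\flat}$ in \S~\ref{sub: type II} is designed precisely so that these minors, once written in the $D_{\xi,\ell}^{(j)}$, $u_\xi^{(\al,j)}$ coordinates, admit a principal cofactor expansion whose leading monomial reproduces $F_\xi^{\Omega^\pm,\sharp}$ (the product of $u_\xi$-coordinates attached to $\Omega^+\sqcup\Omega^-$).

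First, I would treat the factors indexed by the set $I_\cJ^{\Omega^\pm,\flat}\cup\bigcup_{a\geq 2}I_\cJ^{\psi_a,-}$: these correspond to the ``outer'' parts $\Omega_a$ with $a\neq 1$, each of which equals $\Omega_{(\al_a,j_a),\Lambda}^{\mathrm{max}}$ by Condition~\rm{II}-\ref{it: II 2}. Running induction on $\gamma_{\psi_a}$ (which is strictly smaller than $\gamma=\gamma_{\psi_1}$ for $a\geq 2$, so the inductive hypothesis supplied by Propositions~\ref{prop: type I exceptional}--\ref{prop: type II exceptional} is available), these blocks contribute exactly the $\Omega_a$-factor of $F_\xi^{\Omega^\pm,\sharp}$ up to elements of $\cO_\cC$, and Conditions~\rm{II}-\ref{it: II 4}--\rm{II}-\ref{it: II 5} guarantee that the orbit shifts do not overlap the orbits attached to the ``inner'' factor $\Omega_1$. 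Second, I would handle the factors belonging to the $I_\cJ^{\psi_1,+}$ portion coming from $v_{j_1}^{\Omega^\pm,\sharp}$; here the extremal hypothesis on $\Omega^+$ plays its decisive role: it is equivalent to the statement that $i_{\Omega^+,c_1^1+1}$ is the maximal column index for which $\mathbf{D}_{((i_{\psi_1}^{0,1},i_{\Omega^+,c_1^1}),j),\Lambda}$ has more than one element with that specific leading coordinate, so the cofactor expansion of the minor produces only the diagonal term plus terms that, by Conditions~\rm{II}-\ref{it: II 6}--\rm{II}-\ref{it: II 8}, either vanish on $\cN_{\xi,\Lambda}$ or are already expressible through units in $\cO_\cC^{\mathrm{ss}}$ and the already-treated outer factors.

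The next step is to combine the four cases of \S~\ref{sub: type II} according to the relative sizes of $k_{2,c_2-1}$ vs.\ $k_{1,c_1-1}$ and of $k_t'$ vs.\ $k_1^{1,1}$. In each case one simply tracks the indexing set $I_\cJ^{\Omega^\pm}$ produced by Lemma~\ref{lem: invariance of function} and checks that the orbit-shift contributions from the $D_{\xi,\ell}^{(j)}$-factors reconstruct $F_\xi^{\Omega^\pm,\flat}$ (as defined in~(\ref{equ: loop function})). The compatibility is forced by the $\Lambda$-ordinarity hypotheses \rm{II}-\ref{it: II 3}, which are exactly the disjointness statement needed for the intervals $](k,j),(k,j)]_{w_\cJ}$ appearing in the $I_\cJ^{\Omega^\pm}$'s to be compatible with the combinatorics of the $F_\xi^m$.

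The main obstacle is the bookkeeping in the cofactor expansion of the ``inner'' minor attached to $v_{j_1}^{\Omega^\pm,\sharp}$: when $\Omega^+$ is extremal but non-exceptional, the correction terms that would a priori appear correspond to $\Omega_0^+\in \mathbf{D}_{(\al_1,j_1),\Lambda}$ with $\Omega^+<\Omega_0^+$, and I must use the extremality equation~(\ref{equ: extremal cond}) together with Condition~\rm{II}-\ref{it: II 11}--\rm{II}-\ref{it: II 7} to show that for each such $\Omega_0^+$ the associated row/column pattern forces at least one entry that is zero on $\cN_{\xi,\Lambda}$ (since the corresponding element of $\Phi^+_\xi$ lies outside $\widehat{\Lambda}$). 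Concretely, I expect the sign $\varepsilon(\Omega_0^\pm)$ from Proposition~\ref{prop: type II exceptional} to collapse here because the ``competing'' decompositions $\Omega_0^+$ that would give a nontrivial $F_\xi^{\Omega_0^\pm}$ are all ruled out by the extremality plus the non-adjacency conditions \rm{II}-\ref{it: II 8}. Once this vanishing is established, the equality $F_\xi^{\Omega^\pm}|_\cC \equiv f_\xi^{\Omega^\pm}|_\cC$ modulo $\cO_\cC$ follows, and hence $F_\xi^{\Omega^\pm}|_\cC \in \cO_\cC$ as required.
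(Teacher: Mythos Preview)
Your overall strategy---compute $f_\xi^{\Omega^\pm}|_\cC$ via the minors $f_{S^{j,\Omega^\pm}_k,j}$ and match the result with $F_\xi^{\Omega^\pm}|_\cC$---is the paper's approach, and the structural outline (treat the outer pieces $\Omega_a$ for $a\geq 2$ and the inner piece $\Omega_1$ separately, then assemble via Lemma~\ref{lem: from sets to formula}) is correct. However, the appeal to an ``inductive hypothesis supplied by Propositions~\ref{prop: type I exceptional}--\ref{prop: type II exceptional}'' for the outer factors is both unnecessary and misplaced: the paper proves each of these propositions by a self-contained minor computation, with no mutual induction (the induction on $|\Omega^\pm|$ only enters later, in \S~\ref{sub:main:criterions}). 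For $a\geq 2$ one simply uses that $\Omega_a=\Omega_{(\al_a,j_a),\Lambda}^{\rm{max}}$, so Lemma~\ref{lem: unique k decomposition} forces $\Omega_{a,k,j_1}^\natural=\Omega_{a,k,j_1}$ directly.

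The genuine gap is in your resolution of the ``main obstacle.'' You propose to eliminate competing decompositions $\Omega_0^+\in\mathbf{D}_{(\al_1,j_1),\Lambda}$ with $\Omega^+<\Omega_0^+$ by arguing that their cofactor terms contain an entry outside $\widehat{\Lambda}$, hence zero on $\cN_{\xi,\Lambda}$. This mechanism cannot work: any $\Omega_0^+\in\mathbf{D}_{(\al_1,j_1),\Lambda}$ is \emph{by definition} supported in $\Lambda\subseteq\widehat{\Lambda}$, so every factor $u_\xi^{(\beta,j_1)}$ in the associated monomial is invertible on $\cN_{\xi,\Lambda}$. The paper's argument (Lemma~\ref{lem: std type II extremal}) is different and much simpler. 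It shows that $\mathbf{D}^{\Omega^\pm}_{k,j}$ is a singleton for \emph{every} $(k,j)$, so no competing terms ever arise. The key observation is that by Lemma~\ref{lem: unique k decomposition} the set $\Omega_{\psi_1,k}$ is $\Lambda$-exceptional for every $k$, whereas $\Omega^+$ is $\Lambda$-extremal (hence not $\Lambda$-exceptional); therefore $\Omega_{\psi_1,k}\neq\Omega^+$ for all $k$, which forces $\al^{\Omega^\pm}_{1,k,j_1}\neq\al_1$ for all $k$. This is precisely what makes the sum over $\Omega'$ in~(\ref{equ: formula for special elt II}) vanish: there is no $k$ at which the root equals $\al_1$, so $\Omega_{1,k,j_1}^\natural=\Omega_{1,k,j_1}$ automatically by Lemma~\ref{lem: unique k decomposition}. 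The remainder of the computation is then literally that of the exceptional case with $F_\xi^{\Omega^+,\star}$ replaced by the single monomial $F_\xi^{\Omega^+}$, yielding $f_\xi^{\Omega^\pm}|_\cC\sim F_\xi^{\Omega^\pm}|_\cC\in\cO_\cC$ and in particular $f_\xi^{\Omega^\pm}\in\Inv(\cC)$ unconditionally.
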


Finally, we state the result for constructible $\Lambda$-lifts of type $\rm{III}$, whose proof will occupy \S\,\ref{sub: exp type III}, after fixing some notation. We define $\cO_{\xi,\Lambda}^{\rm{ps}}$ as the subgroup of $\cO(\cN_{\xi,\Lambda})^\times$ generated by $\cO_{\xi,\Lambda}^{\rm{ss}}$ and $F_\xi^{\Omega^\pm}$ for all balanced pairs with both $\Omega^+$ and $\Omega^-$ being pseudo $\Lambda$-decompositions of some $(\al,j)\in\widehat{\Lambda}$. We write $\cO_\cC^{\rm{ps}}$ and $\cO_\cC^{<\delta}$ for the restriction of $\cO_{\xi,\Lambda}^{\rm{ps}}$ and $\cO_{\xi,\Lambda}^{<\delta}$ to $\cC$ respectively (for each $\delta\in\N\Lambda^\square$). For each subset $Y\subseteq\cO(\cC)$, we write $\langle Y\rangle$ for the subring of $\cO(\cC)$ generated by $Y$, and write $\langle Y\rangle_+$ for the localization of $\langle Y\rangle$ with respect to $\langle Y\rangle\cap\cO(\cC)^\times$.

\begin{prop}\label{prop: type III}
Let $\Omega^\pm$ be a constructible $\Lambda$-lift of type $\rm{III}$. If both $\Omega^+$ and $\Omega^-$ are pseudo $\Lambda$-decomposition of some $(\al,j)\in\widehat{\Lambda}$, then we have
$$F_\xi^{\Omega^\pm}|_\cC\in\cO_\cC\,.$$
Otherwise, we have
$$F_\xi^{\Omega^\pm}|_\cC\in\langle\cO_\cC^{\rm{ps}}\cdot\cO_\cC^{<|\Omega^\pm|}\cdot\cO_\cC\rangle_+.$$
\end{prop}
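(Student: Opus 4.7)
The proof will parallel the strategies used for Propositions~\ref{prop: type I exceptional}--\ref{prop: type II extremal}, but will have to cope with the richer combinatorics of connected components and oriented permutations introduced in \S~\ref{sub: type III}. The plan is to start from the explicit invariant function $f_\xi^{\Omega^\pm}=f_{v_\cJ^{\Omega^\pm},I_\cJ^{\Omega^\pm}}$ built in \S~\ref{sub: type III}, restrict it to $\cC$, and expand the relevant minors in the standard coordinates $(D_{\xi,k}^{(j)},u_\xi^{(\alpha,j)})$ coming from~(\ref{equ: std section prime}). The expansion will produce a distinguished "diagonal" term together with a controlled collection of "correction" terms; the distinguished term will be $F_\xi^{\Omega^\pm}|_\cC$ (up to an element of $\cO_\cC^{\rm ss}$, hence up to $\cO_\cC$), while each correction term will be shown to lie either in $\cO_\cC^{<|\Omega^\pm|}$ or, in the non-circular case, in $\cO_\cC^{\rm ps}$.

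I would first dispose of the case where both $\Omega^+$ and $\Omega^-$ are pseudo $\Lambda$-decompositions of the same $(\alpha,j)\in\widehat\Lambda$. Here $t=2$ and $v_\cJ^{\Omega^\pm}$ acts non-trivially only at the single embedding $j_1=j$; Conditions~III-\ref{it: III 1}, III-\ref{it: III 2}, III-\ref{it: III 7} and III-\ref{it: III 8} guarantee that no off-diagonal minors interfere, and a direct minor computation, entirely analogous to the $t=2$ cases in \S~\ref{sub: type II}, will give $f_\xi^{\Omega^\pm}|_\cC\sim F_\xi^{\Omega^\pm}|_\cC$, so that $F_\xi^{\Omega^\pm}|_\cC\in\cO_\cC$.

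For the general case I would decompose $\Omega^+\sqcup\Omega^-$ into its connected components $\Sigma\in\pi_0(\Omega^\pm)$, and treat each oriented permutation $v_\Sigma^{\Omega^\pm}$ separately, using that by Condition~III-\ref{it: III 4} the various $I_\cJ^{\psi,\pm}$ are pairwise disjoint so the $\Sigma$ contributions to $f_\xi^{\Omega^\pm}|_\cC$ split as a product. Within a fixed $\Sigma$, the $\varepsilon$-successors specified in Definition~\ref{def: next element} produce the factors $D_{\xi,\bullet}^{(\bullet)}$, whereas each $\varepsilon$-jump (Definition~\ref{def: jump of map}) produces a factor of the form $u_\xi^{(\beta,j_\Sigma)}$ plus a "correction" coming from the sub-minor through the skipped end-points. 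Matching these factors with $F_\xi^{\Omega^\pm}$ via the formulas~(\ref{equ: loop function}) and~(\ref{equ: sharp factor}), one identifies the leading contribution; the conditions~III-\ref{it: disjoint orbits}--III-\ref{it: degeneration of tour} from Definition~\ref{def: oriented permutation} are exactly what is needed for the remaining terms, reorganized, to correspond to balanced pairs obtained from $\Omega^\pm$ by replacing one $\varepsilon$-jump by its alternative; such pairs are either of strictly smaller norm (contributing to $\cO_\cC^{<|\Omega^\pm|}$) or have matching pseudo $\Lambda$-decompositions on both sides (contributing to $\cO_\cC^{\rm ps}$), and the denominators needed to carry the rearrangement through are invertible, hence swallowed by the localization in $\langle\cdot\rangle_+$.

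The main obstacle will be the bookkeeping needed for Condition~III-\ref{it: interaction of jumps} of Definition~\ref{def: oriented permutation}: at a common end-point $k_{a,0}$ (or $k_{a,c_a}$) where both a $\varepsilon$-jump and a $(-\varepsilon)$-jump occur, the inequality $k'>k$ dictates the relative position of the two alternative contributions and prevents uncancelled cross-terms, but verifying that all sign choices line up with the ones already fixed in Propositions~\ref{prop: type I exceptional}--\ref{prop: type II extremal} (which will enter through the inductive hypothesis on lifts of strictly smaller norm) demands a careful case-analysis. The circular case, relying on Proposition~\ref{prop:oriented:circular}, adds one last subtlety: the distinguished index $a_0$ and Conditions~III-\ref{it: III 1}, III-\ref{it: III 9} must be used to show that the "closing" jump at $k_{a_0,c_{a_0}-1}$ only creates terms in $\cO_\cC^{\rm ps}\cdot\cO_\cC^{<|\Omega^\pm|}$, so that the whole expression remains in $\langle\cO_\cC^{\rm ps}\cdot\cO_\cC^{<|\Omega^\pm|}\cdot\cO_\cC\rangle_+$. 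Once this last step is completed, the proposition follows by induction on $|\Omega^\pm|$ jointly with the analogous inductive step for the type~I and type~II propositions.
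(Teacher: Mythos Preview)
Your high-level picture---expand $f_\xi^{\Omega^\pm}|_\cC$ in the standard coordinates, isolate $F_\xi^{\Omega^\pm}|_\cC$ as the main term, and show the remaining terms fall into $\cO_\cC^{\rm ps}$ or $\cO_\cC^{<|\Omega^\pm|}$---is the right shape, but there is a genuine structural gap. The paper does \emph{not} compute $f_\xi^{\Omega^\pm}|_\cC$ directly for an arbitrary constructible $\Lambda$-lift of type~III. It first \emph{reduces} (Lemma~\ref{lem: class of type III}, proved via Lemmas~\ref{lem: reduce to type III with cond}, \ref{lem: unique indexed decomposition}, \ref{lem: type III pair of pseudo}, and the $\diamond$-construction of Lemmas~\ref{lem: diamond operation}--\ref{lem: limit of diamond operation}) to a modified lift $\Omega_0^\pm$ satisfying either Condition~\ref{cond: simple type III} (every $\mathbf{D}^{\Omega_0^\pm}_{k,j}$ is a singleton) or Condition~\ref{cond: non simple type III: terminal} (all non-singleton behaviour is concentrated in a single range $(k_\star',k_\star]$ at a single embedding, and moreover $k_\star''$ does not exist). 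Only after this reduction does Lemma~\ref{lem: from sets to formula} apply in the clean way you envisage; without it, $\#\mathbf{D}^{\Omega^\pm}_{k,j}\geq 2$ can occur at several $(k,j)$ whose interaction with $I_\cJ^{\Omega^\pm}$ is not controlled, and your ``correction terms'' cannot be sorted as you describe. The reduction itself costs a factor in $\cO_{\xi,\Lambda}^{\rm ps}\cdot\cO_{\xi,\Lambda}^{<|\Omega^\pm|}$, which is precisely why the second conclusion of the proposition lands in $\langle\cO_\cC^{\rm ps}\cdot\cO_\cC^{<|\Omega^\pm|}\cdot\cO_\cC\rangle_+$.

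A smaller error: in the case where both $\Omega^+$ and $\Omega^-$ are pseudo $\Lambda$-decompositions of the same $(\alpha,j)$, you assert $t=2$. But Condition~III-\ref{it: III 1} forces $\widehat{\Omega}^+\neq\{(\alpha,j)\}\neq\widehat{\Omega}^-$, so $\#\widehat{\Omega}^+,\#\widehat{\Omega}^-\geq 2$ and hence $t\geq 3$; the paper handles this case via Lemma~\ref{lem: type III pair of pseudo}, which shows Condition~\ref{cond: simple type III} holds, and then the clean product-formula computation (Lemma~\ref{lem: product formula}) gives $f_\xi^{\Omega^\pm}|_\cC\sim F_\xi^{\Omega^\pm}|_\cC$ directly. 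Your proposed analogy with ``the $t=2$ cases in \S~\ref{sub: type II}'' does not match the actual structure.
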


\begin{rmk}\label{rmk: rough idea}
The main idea behind Proposition~\ref{prop: type I exceptional}, Proposition~\ref{prop: type I extremal}, Proposition~\ref{prop: type II exceptional}, Proposition~\ref{prop: type II extremal} and Proposition~\ref{prop: type III} is to compute the restriction $f_\xi^{\Omega^\pm}|_\cC$ explicitly for each constructible $\Lambda$-lift and each $\cC\in\cP_\cJ$ satisfying $\cC\subseteq\cN_{\xi,\Lambda}$. However, the subtlety is that we do not always have $f_\xi^{\Omega^\pm}\in\Inv(\cC)$ and the restriction $f_\xi^{\Omega^\pm}|_\cC$ might not make sense. In the proof of the results above, we actually know exactly when $f_\xi^{\Omega^\pm}\in\Inv(\cC)$ holds, and even if $f_\xi^{\Omega^\pm}\notin\Inv(\cC)$, we can still prove the same technical results stated as above, which is sufficient for our application in \S\,\ref{sub:main:criterions}.
\end{rmk}

\subsection{Explicit determinants}\label{sub:exp:det}
Before starting the proof of the propositions in \S\,\ref{sub:exp:for}, we need an elementary result (see Lemma~\ref{lem: general formula for det}) on explicit formula for determinants of various submatrices of an upper-triangular matrix.

Given a pair of subsets
\begin{equation}\label{equ: pair of subsets}
\mathbf{I}=\{i_1<\cdots<i_h\}\,\,\mbox{ and }\,\, \mathbf{I}^\prime=\{i^\prime_1<\cdots<i^\prime_h\}\subseteq\mathbf{n},
\end{equation}
we associate the element
$$\al_{\mathbf{I},\mathbf{I}^\prime}\defeq \sum_{s=1}^h(i_s,i_s^\prime)$$
in the root lattice, where $(i_s,i_s^\prime)$ is understood to be the zero element in the root lattice if $i_s=i_s^\prime$. Note that we have an identity $\al_{\mathbf{I},\mathbf{I}^\prime}=\sum_{s=1}^h(i_s,\sigma(i_s))$ for any bijection $\sigma:\mathbf{I}\rightarrow \mathbf{I}^{\prime}$. We write $\sigma_{\mathbf{I},\mathbf{I}^\prime}:~\mathbf{I}\rightarrow\mathbf{I}^\prime$ for the bijection that sends $i_s$ to $i_s^\prime$ for each $1\leq s\leq h$. We say that $\mathbf{I}$ is \emph{lower than} $\mathbf{I}^\prime$, written as $\mathbf{I}\leq \mathbf{I}^\prime$, if $i_s\leq i^\prime_s$ for all $1\leq s\leq h$.  We notice that $\al_{\mathbf{I},\mathbf{I}^\prime}$ lies in the submonoid of the root lattice generated by $\Phi^+$, if $\mathbf{I}\leq\mathbf{I}^\prime$. We also note that $\al_{\mathbf{I},\mathbf{I}^\prime}=0$ if and only if $\mathbf{I}=\mathbf{I}^\prime$.

\begin{defn}\label{def: decomposition into roots}
Let $\mathbf{I},\mathbf{I}^\prime\subseteq\mathbf{n}$ be a pair of subsets (\ref{equ: pair of subsets}) with associated element $\al_{\mathbf{I},\mathbf{I}^\prime}$ in the root lattice.
A subset $\Omega\subset\Phi^+$ is called an \emph{$(\mathbf{I},\mathbf{I}^\prime)$-indexed decomposition of $\alpha_{\bf{I},\bf{I}'}$}, or an \emph{$(\mathbf{I},\mathbf{I}^\prime)$-indexed decomposition} for short, if the following holds:
\begin{itemize}
\item $i_\al\in \mathbf{I}$ and $i_\al^\prime\in\mathbf{I}^\prime$ for each $\al=(i_\al,i_\al^\prime)\in\Omega$;
\item $\al_{\mathbf{I},\mathbf{I}^\prime}=\sum_{\al\in\Omega}\al$;
\item the map $\Omega\rightarrow \mathbf{I}$ sending $\al\mapsto i_\al$ and the map $\Omega\rightarrow \mathbf{I}'$ sending $\al\mapsto i_\al'$ are both injective.
\end{itemize}
For an arbitrary subset $\Theta\subseteq \Phi^+$, an $(\mathbf{I},\mathbf{I}^\prime)$-indexed decomposition $\Omega$ of $\alpha_{\bf{I},\bf{I}'}$ is said to be \emph{supported in $\Theta$} if $\Omega\subseteq\Theta$. Roughly speaking, an $(\mathbf{I},\mathbf{I}^\prime)$-indexed decomposition supported in $\Theta$ is simply one way to decompose $\al_{\mathbf{I},\mathbf{I}^\prime}$ into a sum of certain elements in $\Theta$. We use the convention that $\emptyset$ is an $(\mathbf{I},\mathbf{I})$-indexed decomposition, for each $\mathbf{I}\subseteq\mathbf{n}$.
\end{defn}

Note that an $(\mathbf{I},\mathbf{I}^\prime)$-indexed decomposition does not always exist (cf.~Lemma~\ref{lem: lower vers index}). For each $(\mathbf{I},\mathbf{I}^\prime)$-indexed decomposition $\Omega$, we consider the subset $\mathbf{J}_{\Omega}\subseteq\mathbf{n}$ uniquely determined by the property
$$\mathbf{I}=\mathbf{J}_{\Omega}\sqcup \{i_\al\mid \al\in\Omega\}\textnormal{ and }\mathbf{I}^\prime=\mathbf{J}_{\Omega}\sqcup\{i_\al^\prime\mid \al\in\Omega\}.$$
Note that $\mathbf{J}_{\Omega}$ exists since $$\al_{\mathbf{I}\setminus\{i_\al\mid \al\in\Omega\},\mathbf{I}'\setminus\{i_\al^\prime\mid \al\in\Omega\}}=\al_{\mathbf{I},\mathbf{I}^\prime}-\sum_{\al\in\Omega}\al=0.$$ There exists a bijection $\sigma_{\Omega}:~\mathbf{I}\rightarrow \mathbf{I}^\prime$ that sends $i_\al$ to $i_\al^\prime$ for each $\al\in\Omega$ and restricts to the identity on $\mathbf{J}_{\Omega}$. Hence, $\sigma_{\mathbf{I},\mathbf{I}^\prime}\sigma_{\Omega}^{-1}$ is a permutation of $\mathbf{I}^\prime$ and we write $\varepsilon_{\mathbf{I},\mathbf{I}^\prime}(\Omega)\in\{1,-1\}$ for its sign.

\begin{lemma}\label{lem: lower vers index}
We have $\mathbf{I}\leq \mathbf{I}^\prime$ if and only if there exists an $(\mathbf{I},\mathbf{I}^\prime)$-indexed decomposition.
\end{lemma}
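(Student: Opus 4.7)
The plan is to prove the two implications separately, both reducing to elementary bookkeeping inside the character lattice $X^*(T)\cong \Z^n$. Neither direction involves any real obstacle; the lemma is essentially a combinatorial fact about sequences of integers, but it is worth setting up the argument cleanly because identical manipulations will recur in later computations with invariant functions.

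For the direction $\mathbf{I}\leq \mathbf{I}'\Rightarrow$ existence, I would simply exhibit a canonical decomposition. Using the bijection $\sigma_{\mathbf{I},\mathbf{I}'}$ from Definition~\ref{def: decomposition into roots}, set
\[
\Omega_0\defeq \{(i_s,i_s^\prime)\mid 1\leq s\leq h,\ i_s<i_s^\prime\}.
\]
The hypothesis $\mathbf{I}\leq \mathbf{I}'$ guarantees that every pair with $i_s\neq i_s^\prime$ actually satisfies $i_s<i_s^\prime$, so $\Omega_0\subseteq \Phi^+$; distinctness of the elements of $\Omega_0$ follows from distinctness of the $i_s$. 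Using the convention that $(i_s,i_s)=0$ in the root lattice, one has $\sum_{\al\in \Omega_0}\al = \sum_{s=1}^h (\eps_{i_s}-\eps_{i_s^\prime}) = \al_{\mathbf{I},\mathbf{I}'}$, which is the required identity.

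For the converse, suppose an $(\mathbf{I},\mathbf{I}')$-indexed decomposition $\Omega$ exists. For any $k\in \mathbf{n}$, I would pair the identity $\al_{\mathbf{I},\mathbf{I}'}=\sum_{\al\in \Omega}\al$ with the cocharacter dual to $\eps_1+\cdots+\eps_k$. The left-hand side evaluates to $\#\{s\mid i_s\leq k\}-\#\{s\mid i_s^\prime\leq k\}$, while the right-hand side is
\[
\sum_{\al\in \Omega}\bigl(\mathbf{1}_{i_\al\leq k}-\mathbf{1}_{i_\al^\prime\leq k}\bigr) = \#\{\al\in \Omega\mid i_\al\leq k<i_\al^\prime\} \geq 0,
\]
the inequality coming from $i_\al<i_\al^\prime$ for every $\al\in \Phi^+$. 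Thus $\#\{s\mid i_s\leq k\}\geq \#\{s\mid i_s^\prime\leq k\}$ for all $k$, and this system of inequalities is equivalent to $i_s\leq i_s^\prime$ for every $1\leq s\leq h$, i.e.~to $\mathbf{I}\leq \mathbf{I}'$.

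Since both directions are essentially one-line computations, there is no serious obstacle. The only point that requires a moment of care is the convention about diagonal pairs $(i_s,i_s)$ in the $\Rightarrow$ direction (to ensure that $\Omega_0$ is genuinely a subset of $\Phi^+$) and the standard equivalence between the dominance order on increasing sequences and the family of partial-sum inequalities in the $\Leftarrow$ direction.
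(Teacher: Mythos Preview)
Your proof is correct. The forward direction is identical to the paper's. For the converse, the paper takes a different route: starting from the bijection $\sigma_\Omega:\mathbf{I}\to\mathbf{I}'$ (which satisfies $\sigma_\Omega(i)\geq i$ for all $i$), it repeatedly swaps any pair $i<i'$ with $\sigma_\Omega(i')<\sigma_\Omega(i)$, producing a new $(\mathbf{I},\mathbf{I}')$-indexed decomposition at each step, until $\sigma_\Omega$ becomes order-preserving and hence equals $\sigma_{\mathbf{I},\mathbf{I}'}$. Your pairing argument with $\eps_1^\vee+\cdots+\eps_k^\vee$ is more direct: it immediately yields the majorization inequalities $\#\{s:i_s\leq k\}\geq\#\{s:i_s'\leq k\}$ for all $k$, which (as you note) are equivalent to $\mathbf{I}\leq\mathbf{I}'$. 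The paper's approach is constructive (it actually produces the ``sorted'' decomposition), whereas yours is nonconstructive but avoids the need to check termination of the swapping procedure.
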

\begin{proof}
If $\mathbf{I}\leq \mathbf{I}^\prime$, then we can choose an obvious $(\mathbf{I},\mathbf{I}^\prime)$-indexed decomposition to be
$$\{(i_s,i_s^\prime)\mid i_s<i_s^\prime\}.$$
Conversely, assume that there exists an $(\mathbf{I},\mathbf{I}^\prime)$-indexed decomposition, called $\Omega$, from which we obtain a map $\sigma_{\Omega}:~\mathbf{I}\rightarrow \mathbf{I}^\prime$ as above.
The choice of $\Omega$ is equivalent to the choice of
\begin{equation}\label{equ: pair vers index}
\{(i,\sigma_{\Omega}(i))\mid i\in\mathbf{I}\}\subseteq\mathbf{I}\times\mathbf{I}^\prime.
\end{equation}
If there exists $i<i^\prime\in \mathbf{I}$ such that $i<i^\prime\leq \sigma_{\Omega}(i^\prime)<\sigma_{\Omega}(i)$, then we replace the elements $(i,\sigma_{\Omega}(i))$, $(i^\prime,\sigma_\Sigma(i^\prime))$ in (\ref{equ: pair vers index}) with $(i,\sigma_{\Omega}(i^\prime))$, $(i^\prime,\sigma_\Sigma(i))$, and hence obtain another subset of $\mathbf{I}\times\mathbf{I}^\prime$ which corresponds to a new $(\mathbf{I},\mathbf{I}^\prime)$-indexed decomposition. We can repeat this procedure until we have $\sigma_{\Omega}(i)<\sigma_{\Omega}(i^\prime)$ for each $i<i^\prime$, which exactly means $\mathbf{I}\leq \mathbf{I}^\prime$.
\end{proof}

Let $R$ be a Noetherian $\F$-algebra. For each $A\in B(R)$, we have a unique decomposition $A=A^\prime A^{\prime\prime}$ with $A^\prime\in T(R)$ and $A^{\prime\prime}\in U(R)$. We write $D_i(A)$ for the $i$-th diagonal entry of $A^\prime$ (hence of $A$ as well) and $u_\al(A)$ for the $\al$-entry of $A^{\prime\prime}$ (hence the $\al$-entry of $A$ is $D_{i_\al}(A)u_\al(A)$). For each pair of subsets (\ref{equ: pair of subsets}), we write $\mathrm{det}_{\mathbf{I},\mathbf{I}^\prime}(A)\defeq \mathrm{det}(A_{\mathbf{I},\mathbf{I}^\prime})$ where $A_{\mathbf{I},\mathbf{I}^\prime}$ is the submatrix of $A$ given by $\mathbf{I}$-th rows and $\mathbf{I}^\prime$-th columns. Hence, we obtain the elements $D_i$, $u_\al$ and $\mathrm{det}_{\mathbf{I},\mathbf{I}^\prime}$ in the ring of global sections of $B$. %

We have the following formula of determinant.
\begin{lemma}\label{lem: general formula for det}
If $\mathbf{I}\leq \mathbf{I}^\prime$, then we have
\begin{equation}\label{equ: std formula}
\mathrm{det}_{\mathbf{I},\mathbf{I}^\prime}=\left(\prod_{i\in\mathbf{I}} D_i\right)\sum_{\Omega}\varepsilon_{\mathbf{I},\mathbf{I}^\prime}(\Omega)\left(\prod_{\al\in\Omega}u_\al\right)
\end{equation}
where $\Omega$ runs through all $(\mathbf{I},\mathbf{I}^\prime)$-indexed decompositions. %
\end{lemma}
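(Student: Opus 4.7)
The plan is to derive the formula by a direct application of the Leibniz expansion of a determinant, together with a careful book-keeping of signs that matches the combinatorics of $(\mathbf{I},\mathbf{I}^\prime)$-indexed decompositions. Write $\mathbf{I}=\{i_1<\cdots<i_h\}$ and $\mathbf{I}^\prime=\{i_1^\prime<\cdots<i_h^\prime\}$, and expand
\[
\det A_{\mathbf{I},\mathbf{I}^\prime}=\sum_{\sigma\in\mathfrak{S}_h}\mathrm{sgn}(\sigma)\prod_{s=1}^{h}A_{i_s,\,i_{\sigma(s)}^\prime}.
\]
Since $A\in B(R)$ is upper-triangular, $A_{i,j}=0$ whenever $i>j$; for $i=j$ one has $A_{i,i}=D_i(A)$, and for $i<j$ one has $A_{i,j}=D_i(A)u_{(i,j)}(A)$. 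Consequently only those $\sigma$ with $i_s\leq i_{\sigma(s)}^\prime$ for every $s$ contribute.

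For each such $\sigma$ introduce
\[
\Omega_\sigma\defeq\{(i_s,\,i_{\sigma(s)}^\prime)\mid 1\leq s\leq h,\ i_s<i_{\sigma(s)}^\prime\}\subseteq\Phi^+.
\]
The identity $\al_{\mathbf{I},\mathbf{I}^\prime}=\sum_{s=1}^{h}(\eps_{i_s}-\eps_{i_{\sigma(s)}^\prime})$ (valid for any bijection $\mathbf{I}\to\mathbf{I}^\prime$) shows that $\Omega_\sigma$ is an $(\mathbf{I},\mathbf{I}^\prime)$-indexed decomposition of $\al_{\mathbf{I},\mathbf{I}^\prime}$, and then the corresponding Leibniz term becomes
\[
\mathrm{sgn}(\sigma)\Bigl(\prod_{s=1}^{h}D_{i_s}(A)\Bigr)\prod_{\al\in\Omega_\sigma}u_\al(A).
\]
The assignment $\sigma\mapsto\Omega_\sigma$ is a bijection from $\{\sigma\in\mathfrak{S}_h\mid i_s\leq i_{\sigma(s)}^\prime\}$ onto the set of $(\mathbf{I},\mathbf{I}^\prime)$-indexed decompositions: its inverse sends $\Omega$ to the unique $\sigma\in\mathfrak{S}_h$ satisfying $\sigma_\Omega(i_s)=i_{\sigma(s)}^\prime$, as follows from the definition of $\sigma_\Omega$ and the fact that the $i_s^\prime$'s are listed in increasing order. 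In particular the non-existence of such $\sigma$ forces the right-hand side to vanish, matching Lemma~\ref{lem: lower vers index} and the hypothesis $\mathbf{I}\leq\mathbf{I}^\prime$.

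It remains to check that $\mathrm{sgn}(\sigma)=\varepsilon_{\mathbf{I},\mathbf{I}^\prime}(\Omega_\sigma)$ under this bijection. By construction $\sigma_{\Omega_\sigma}$ coincides with the map $i_s\mapsto i_{\sigma(s)}^\prime$, so $\sigma_{\mathbf{I},\mathbf{I}^\prime}\circ\sigma_{\Omega_\sigma}^{-1}$ is the permutation of $\mathbf{I}^\prime$ carrying $i_{\sigma(s)}^\prime$ to $i_s^\prime$; transported through the order-preserving identification $\mathbf{I}^\prime\stackrel{\sim}{\to}\{1,\dots,h\}$, this is precisely $\sigma^{-1}$, and hence has sign $\mathrm{sgn}(\sigma)$. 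Summing over $\sigma$ and factoring out $\prod_{i\in\mathbf{I}}D_i$ yields \eqref{equ: std formula}. Sign tracking is the only delicate step, but it reduces to the one identification above and presents no real obstacle.
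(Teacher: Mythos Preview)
Your proof is correct and follows essentially the same approach as the paper: both use the Leibniz expansion of the determinant and identify the surviving terms with $(\mathbf{I},\mathbf{I}')$-indexed decompositions. The only cosmetic difference is that the paper first factors $A=A'A''$ with $A'\in T(R)$, $A''\in U(R)$ and reduces to the determinant of the unipotent part, whereas you expand $A$ directly and factor out $\prod_{i\in\mathbf{I}}D_i$ at the end; your proof is also more explicit about the bijection $\sigma\leftrightarrow\Omega_\sigma$ and the sign identity $\mathrm{sgn}(\sigma)=\varepsilon_{\mathbf{I},\mathbf{I}'}(\Omega_\sigma)$, which the paper leaves implicit.
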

\begin{proof}
For each $A\in B(R)$, we write $A=A^\prime A^{\prime\prime}$ with $A^\prime\in T(R)$ and $A^{\prime\prime}\in U(R)$. We first observe that
$$\mathrm{det}_{\mathbf{I},\mathbf{I}^\prime}(A)=\left(\prod_{i\in\mathbf{I}} D_i(A)\right)\mathrm{det}_{\mathbf{I},\mathbf{I}^\prime}(A^{\prime\prime}).$$
Then the formula (\ref{equ: std formula}) reduces to the formula of $\mathrm{det}_{\mathbf{I},\mathbf{I}^\prime}(A^{\prime\prime})$, which follows directly from definition of determinant and the fact that the only possibly non-zero entries of $A^{\prime\prime}$ are $1$ on the diagonal and $u_\al(A^{\prime\prime})=u_\al(A)$ for some $\al\in\Phi^+$. The proof is thus finished.
\end{proof}

\subsection{Data associated with a constructible $\Lambda$-lift}\label{sub: notation for each k j}
In this section, we apply Lemma~\ref{lem: general formula for det} to prove Lemma~\ref{lem: from sets to formula} which gives a criterion for $f_\xi^{\Omega^\pm}$ to be regular over $\cN_{\xi,\Lambda}$ as well as an explicit formula for $f_\xi^{\Omega^\pm}|_{\cN_{\xi,\Lambda}}$.
We recall the definitions of $D_{\xi,\ell}^{(j)}$ and $u_\xi^{(\al,j)}$ from \S\,\ref{sub:std:note}.

Let $\Omega^\pm$ be a constructible $\Lambda$-lift (cf.~Definition~\ref{def: constructible lifts}). We have associated with $\Omega^\pm$ an element $v_\cJ^{\Omega^\pm}=(v_j^{\Omega^\pm})_{j\in\cJ}\in\un{W}$ and a subset $I_\cJ^{\Omega^\pm}\subseteq\mathbf{n}_\cJ$ in \S\,\ref{sub: type I}, \S\,\ref{sub: type II}, and \S\,\ref{sub: type III}. We write $S^{j,\Omega^\pm}_\bullet$ be the sequence corresponding to $v_j^{\Omega^\pm}$ for each $j\in\cJ$ via (\ref{eq:bjc:sis}).
We recall from (\ref{equ: std section}) that $\prod_{j\in\cJ} T N_{\xi,\Lambda,j}^- w_j$ is a standard lift of $\cN_{\xi,\Lambda}$ into $\un{G}$. If $R$ is Noetherian $\F$-algebra, then we write $A=(A^{(j)})_{j\in\cJ}$ for an arbitrary matrix in $\prod_{j\in\cJ} T N_{\xi,j}^- w_j (R)$. We define
$$\mathbf{I}^{\Omega^\pm}_{k,j}\defeq u_j^{-1}\left(\{k,\cdots,n\}\right)\,\,\,\mbox{ and }\,\,\,\mathbf{I}^{\Omega^\pm,\prime}_{k,j}\defeq u_j^{-1}w_j(v_j^{\Omega^\pm})^{-1}\left(\{k,\cdots,n\}\right)$$
and write $\al_{k,j}^{\Omega^\pm}\defeq \al_{\mathbf{I}^{\Omega^\pm}_{k,j},\mathbf{I}^{\Omega^\pm,\prime}_{k,j}}\in\Z\Phi^+$ for the element associate with the pair of subsets $\mathbf{I}^{\Omega^\pm}_{k,j},~\mathbf{I}^{\Omega^\pm,\prime}_{k,j}\subseteq\mathbf{n}$. It is easy to see that
\begin{equation}\label{equ: difference at k}
\al_{k,j}^{\Omega^\pm}=\al_{k+1,j}^{\Omega^\pm}+(u_j^{-1}(k),u_j^{-1}w_j(v_j^{\Omega^\pm})^{-1}(k)),
\end{equation}
and, in particular, we have $\al_{k,j}^{\Omega^\pm}=\al_{k+1,j}^{\Omega^\pm}$ if $(v_j^{\Omega^\pm})^{-1}(k)=w_j^{-1}(k)$.
For each $(k,j)\in\mathbf{n}_\cJ$, we define
$$\mathbf{D}^{\Omega^\pm}_{k,j}\defeq\left\{(\mathbf{I}^{\Omega^\pm}_{k,j}, \mathbf{I}^{\Omega^\pm,\prime}_{k,j})\mbox{-indexed decompositions supported in }\{\beta\in\Phi^+\mid (\beta,j)\in\Lambda\}\right\}\times\{j\}.$$
We also define
$$I_\cJ^{\Omega^\pm,\star}\defeq \left\{(k,j)\in I_\cJ^{\Omega^\pm} \mid \mathbf{D}^{\Omega^\pm}_{k,j}\neq \mathbf{D}^{\Omega^\pm}_{k+1,j}\right\}\subseteq I_\cJ^{\Omega^\pm}.$$
For each subset $\Omega\subseteq\Lambda$, we use the shortened notation
$$F_\xi^\Omega\defeq \underset{(\beta,j)\in\Omega}{\prod}u_\xi^{(\beta,j)}.$$

\begin{lemma}\label{lem: from sets to formula}
Let $\Omega^\pm$ be a constructible $\Lambda$-lift, $\cC\in\cP_\cJ$ be an element satisfying $\cC\subseteq\cN_{\xi,\Lambda}$, and $(k_\star,j_\star)\in I_\cJ^{\Omega^\pm,\star}$. Assume moreover that
\begin{itemize}
\item for each $(k,j)\in \mathbf{n}_\cJ$, we have $f_{S^{j,\Omega^\pm}_{k},j}|_\cC\neq 0$;
\item for each $(k,j)\in I_\cJ^{\Omega^\pm,\star}$, we have $\mathbf{D}^{\Omega^\pm}_{k+1,j}=\{\Omega_{k+1,j}\}$ for some $\Omega_{k+1,j}\subseteq\Lambda\cap\mathrm{Supp}_{\xi,j}$;
\item for each $(k,j)\in I_\cJ^{\Omega^\pm,\star}\setminus\{(k_\star,j_\star)\}$, we have $\mathbf{D}^{\Omega^\pm}_{k,j}=\{\Omega_{k,j}\}$ for some $\Omega_{k,j}\subseteq\Lambda\cap\mathrm{Supp}_{\xi,j}$.
\end{itemize}
Then we have $f_\xi^{\Omega^\pm}\in \Inv(\cC)$ and
\begin{equation*}
f_\xi^{\Omega^\pm}|_\cC\sim \frac{\underset{(k,j)\in I_\cJ^{\Omega^\pm,\star}\setminus\{(k_\star,j_\star)\}}{\prod}F_\xi^{\Omega_{k,j}}|_\cC}{\underset{(k,j)\in I_\cJ^{\Omega^\pm,\star}}{\prod}F_\xi^{\Omega_{k+1,j}}|_\cC}\cdot\underset{\Omega\in \mathbf{D}^{\Omega^\pm}_{k_\star,j_\star}}{\sum}\varepsilon(\Omega)F_\xi^{\Omega}|_\cC
\end{equation*}
where $\varepsilon(\Omega)\in\{1,-1\}$ is a sign determined by $\Omega$ for each $\Omega\in \mathbf{D}^{\Omega^\pm}_{k_\star,j_\star}$.
\end{lemma}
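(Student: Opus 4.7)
The plan is to expand the invariant function $f_\xi^{\Omega^\pm} = f_{v_\cJ^{\Omega^\pm}, I_\cJ^{\Omega^\pm}}$ as a product of minor ratios, to evaluate each minor on the standard lift $\prod_{j\in\cJ} T N_{\xi,\Lambda,j}^- w_j$ of $\cN_{\xi,\Lambda}$ using Lemma~\ref{lem: general formula for det}, and then to observe that the product telescopes so that only the factors indexed by $I_\cJ^{\Omega^\pm,\star}$ survive. The hypothesis that $f_{S^{j,\Omega^\pm}_k,j}|_\cC\neq 0$ for all $(k,j)$ immediately gives $f_\xi^{\Omega^\pm}\in\Inv(\cC)$: indeed, $f_{v_\cJ^{\Omega^\pm},I_\cJ^{\Omega^\pm}}$ and $f_{v_\cJ^{\Omega^\pm},(I_\cJ^{\Omega^\pm})^{\mathrm{c}}}$ are both products of quotients $f_{S^{j,\Omega^\pm}_k,j}/f_{S^{j,\Omega^\pm}_{k+1},j}$, so the non-vanishing assumption guarantees regularity and invertibility on $\cC$.

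Next, I would compute $f_{S^{j,\Omega^\pm}_k,j}|_{\cN_{\xi,\Lambda}}$ for each $(k,j)\in\mathbf{n}_\cJ$. Writing the standard lift as $A^{(j)}=T_j L_j w_j$ with $T_j\in T$ diagonal and $L_j\in N_{\xi,\Lambda,j}^-$, the minor $f_{S^{j,\Omega^\pm}_k,j}(A^{(j)})$ is (up to a sign depending only on $w_j$, $u_j$, and $v_j^{\Omega^\pm}$) the determinant of a submatrix of the upper-triangular matrix $u_j^{-1}T_ju_j\cdot u_j^{-1}L_ju_j$, whose rows are indexed by $\mathbf{I}^{\Omega^\pm}_{k,j}=u_j^{-1}(\{k,\dots,n\})$ and whose columns are indexed by $\mathbf{I}^{\Omega^\pm,\prime}_{k,j}=u_j^{-1}w_j(v_j^{\Omega^\pm})^{-1}(\{k,\dots,n\})$. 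Since on $\cN_{\xi,\Lambda}$ the only non-zero off-diagonal entries of $u_j^{-1}L_ju_j$ are those indexed by $\Lambda\cap\mathrm{Supp}_{\xi,j}$, Lemma~\ref{lem: general formula for det} yields
\begin{equation*}
f_{S^{j,\Omega^\pm}_k,j}|_{\cN_{\xi,\Lambda}}\sim \Big(\prod_{i\in\mathbf{I}^{\Omega^\pm}_{k,j}} D_{\xi, u_j(i)}^{(j)}\Big)\cdot\sum_{\Omega\in \mathbf{D}^{\Omega^\pm}_{k,j}}\varepsilon_{k,j}(\Omega)\,F_\xi^{\Omega}
\end{equation*}
where the sign $\varepsilon_{k,j}(\Omega)$ depends only on the pair $(\mathbf{I}^{\Omega^\pm}_{k,j},\mathbf{I}^{\Omega^\pm,\prime}_{k,j})$ and on $\Omega$.

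Substituting into the definition $f_\xi^{\Omega^\pm}=\prod_{(k,j)\in I_\cJ^{\Omega^\pm}}f_{S^{j,\Omega^\pm}_k,j}/f_{S^{j,\Omega^\pm}_{k+1},j}$, the diagonal prefactors collapse (via the identity~(\ref{equ: difference at k}), which shows that consecutive $\mathbf{I}^{\Omega^\pm}_{k,j}$ and $\mathbf{I}^{\Omega^\pm}_{k+1,j}$ differ by a single index) into an element of $\cO^{\mathrm{ss}}_{\xi,\Lambda}$, hence disappear under $\sim$. Moreover, for each $(k,j)\in I_\cJ^{\Omega^\pm}\setminus I_\cJ^{\Omega^\pm,\star}$ one has $\mathbf{D}^{\Omega^\pm}_{k,j}=\mathbf{D}^{\Omega^\pm}_{k+1,j}$, so the sum in the numerator and the sum in the denominator are identical and cancel. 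This leaves only factors indexed by $(k,j)\in I_\cJ^{\Omega^\pm,\star}$: by hypothesis, the decomposition sets $\mathbf{D}^{\Omega^\pm}_{k+1,j}$ (for all such $(k,j)$) and $\mathbf{D}^{\Omega^\pm}_{k,j}$ (for $(k,j)\neq(k_\star,j_\star)$) are singletons $\{\Omega_{k+1,j}\}$ and $\{\Omega_{k,j}\}$, while $\mathbf{D}^{\Omega^\pm}_{k_\star,j_\star}$ is arbitrary. Collecting these contributions and tracking the resulting signs gives exactly the displayed formula.

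The main obstacle will be the careful bookkeeping of the two kinds of auxiliary data: (i) the signs $\varepsilon_{k,j}(\Omega)$ arising from permuting rows and columns when reducing $f_{S,j}$ to the upper-triangular determinant on which Lemma~\ref{lem: general formula for det} applies, together with the sign discrepancies between $u_j$- and $v_j^{\Omega^\pm}$-permutations at successive indices $k$ and $k+1$; and (ii) the diagonal prefactors $\prod_{i\in\mathbf{I}^{\Omega^\pm}_{k,j}}D^{(j)}_{\xi,u_j(i)}$, which must be shown to telescope in the product over $(k,j)\in I_\cJ^{\Omega^\pm}$ to land in $\cO^{\mathrm{ss}}_{\xi,\Lambda}$. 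Once these combinatorial identifications are checked, the formula follows by direct substitution, and the sign attached to each $\Omega\in\mathbf{D}^{\Omega^\pm}_{k_\star,j_\star}$ is precisely $\varepsilon_{k_\star,j_\star}(\Omega)$ up to an overall factor coming from the telescoping, which is absorbed into $\sim$.
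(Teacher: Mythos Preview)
Your proposal is correct and follows essentially the same approach as the paper. The paper's proof is more terse: it observes directly that each minor $f_{S^{j,\Omega^\pm}_k,j}|_{\cN_{\xi,\Lambda}}$ is individually $\sim$ to the sum $\sum_{\Omega\in\mathbf{D}^{\Omega^\pm}_{k,j}}\varepsilon(\Omega)F_\xi^\Omega$, since the diagonal prefactor $\prod_i D_{\xi,u_j(i)}^{(j)}$ and any global sign already lie in $\cO^{\mathrm{ss}}_{\xi,\Lambda}$ (which contains $-1$ and all $D_{\xi,\ell}^{(j)}$), so no telescoping of prefactors is needed and the ``obstacle'' you flag dissolves immediately.
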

\begin{proof}
As $f_{S^{j,\Omega^\pm}_{k},j}|_\cC\neq 0$ for each $(k,j)\in \mathbf{n}_\cJ$, it is clear that $f_\xi^{\Omega^\pm}\in \Inv(\cC)$. It follows directly from our assumption and Lemma~\ref{lem: general formula for det} that
\begin{itemize}
\item for each $(k,j)\in I_\cJ^{\Omega^\pm}\setminus I_\cJ^{\Omega^\pm,\star}$, $f_{S^{j,\Omega^\pm}_{k},j}|_{\cN_{\xi,\Lambda}}\sim f_{S^{j,\Omega^\pm}_{k+1},j}|_{\cN_{\xi,\Lambda}}$;
\item for each $(k,j)\in I_\cJ^{\Omega^\pm,\star}$, $f_{S^{j,\Omega^\pm}_{k+1},j}|_{\cN_{\xi,\Lambda}}\sim F_\xi^{\Omega_{k+1,j}}$;
\item for each $(k,j)\in I_\cJ^{\Omega^\pm,\star}\setminus\{(k_\star,j_\star)\}$, $f_{S^{j,\Omega^\pm}_{k},j}|_{\cN_{\xi,\Lambda}}\sim F_\xi^{\Omega_{k,j}}$;
\item the global section $f_{S^{j,\Omega^\pm}_{k_\star},j_\star}\in\cO(\tld{\cF\cL}_\cJ)$ satisfies $f_{S^{j,\Omega^\pm}_{k_\star},j_\star}|_{\cN_{\xi,\Lambda}}\sim \sum_{\Omega\in \mathbf{D}^{\Omega^\pm}_{k_\star,j_\star}}\varepsilon(\Omega)F_\xi^{\Omega}$
    where $\varepsilon(\Omega)\in\{1,-1\}$ is a sign determined by $\Omega$, for each $\Omega\in \mathbf{D}^{\Omega^\pm}_{k_\star,j_\star}$.
\end{itemize}
The lemma follows directly from the above formulas by further restriction to $\cC$.
\end{proof}

Thanks to Lemma~\ref{lem: from sets to formula}, the proof of the propositions in \S\,\ref{sub:exp:for} can be completely reduced to the study of the set $\mathbf{D}^{\Omega^\pm}_{k,j}$ for each $(k,j)\in\mathbf{n}_\cJ$, which will be done in \S\,\ref{sub: exp type I}, \S\,\ref{sub: exp type II} and \S\,\ref{sub: exp type III}.

\subsection{Explicit formula: type~\rm{I}}\label{sub: exp type I}
In this section, we explicitly write down the set $\mathbf{D}^{\Omega^\pm}_{k,j}$ for each $(k,j)\in\mathbf{n}_\cJ$ when the $\Omega^\pm$ is a constructible $\Lambda$-lift of type $\rm{I}$. Consequently, we apply Lemma~\ref{lem: from sets to formula} and finish the proofs of Proposition~\ref{prop: type I exceptional} and Proposition~\ref{prop: type I extremal}. We will frequently use all the notation from \S\,\ref{sub: type I}, \S\,\ref{sub: notation for each k j} and the beginning of \S\,\ref{sec:const:inv}.

We start this section with the following elementary lemma, which will be frequently used throughout this section.
\begin{lemma}\label{lem: existence of partition}
Let $\Omega^\pm$ be a constructible $\Lambda$-lift of type $\rm{I}$ with $\Omega^+\sqcup\Omega^-\subseteq\mathrm{Supp}_{\xi,j}$ for some $j\in\cJ$. Assume that there exist a pair of elements $(\beta_1,j),(\beta_2,j)\in\widehat{\Lambda}$ together with $(k,j)\in\mathbf{n}_\cJ$ such that
\begin{itemize}
\item $i_{\beta_1}\neq i_{\beta_2}$, $i_{\beta_1}^\prime\neq i_{\beta_2}^\prime$;%
\item $\al^{\Omega^\pm}_{k,j}=\beta_1+\beta_2$;
\item there does not exist $\Omega^{\prime\prime}\in\mathbf{D}_{((i_{\beta_1},i_{\beta_2}^\prime),j),\Lambda}$ such that $u_j(i_{\Omega^{\prime\prime},1})\geq k$.
\end{itemize}
Then for each $\Omega'\in\mathbf{D}^{\Omega^\pm}_{k,j}$, there exists a partition $\Omega'=\Omega'_1\sqcup\Omega'_2$ such that
$$\sum_{(\beta,j)\in\Omega'_1}\beta=\beta_1\,\,\mbox{and}\,\,\sum_{(\beta,j)\in\Omega'_2}\beta=\beta_2.$$
\end{lemma}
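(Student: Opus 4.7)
The plan is to view each $\Omega'\in \mathbf{D}^{\Omega^\pm}_{k,j}$ as a directed graph whose edge set is $\Omega'$, with each root $\al=(i_\al,i_\al^\prime)\in\Omega'$ contributing one edge from $i_\al$ to $i_\al^\prime$. Since $\Omega'\subseteq\Phi^+$, every edge goes from a smaller vertex to a larger one, so the graph is a DAG. The identity $\al^{\Omega^\pm}_{k,j}=\beta_1+\beta_2$ together with the hypothesis $i_{\beta_1}\neq i_{\beta_2}$, $i_{\beta_1}^\prime\neq i_{\beta_2}^\prime$ translates into the degree condition that $i_{\beta_1}$ and $i_{\beta_2}$ are net sources (out-degree exceeds in-degree by one), $i_{\beta_1}^\prime$ and $i_{\beta_2}^\prime$ are net sinks, and every other vertex has equal in- and out-degree.

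Next I would construct a greedy directed trail $P_1$ starting at $i_{\beta_1}$: at each current vertex, follow any not-yet-used outgoing edge of $\Omega'$, and stop when no outgoing edge remains. Because the graph is acyclic and at every intermediate vertex the residual in- and out-degrees stay equal until one becomes zero, a standard Eulerian-type argument shows $P_1$ terminates at a net sink, namely $i_{\beta_1}^\prime$ or $i_{\beta_2}^\prime$.

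The main step, and the one where the third hypothesis enters, is excluding termination at $i_{\beta_2}^\prime$. Suppose $P_1$ ends at $i_{\beta_2}^\prime$, and write $P_1$ as a sequence of roots $\gamma_1,\ldots,\gamma_s$ joining the increasing chain $i_{\beta_1}=v_0<v_1<\cdots<v_s=i_{\beta_2}^\prime$. Then $\Omega''\defeq\{\gamma_1,\ldots,\gamma_s\}\subseteq\Omega'\subseteq\Lambda\cap\mathrm{Supp}_{\xi,j}$ and the chain $v_0<\cdots<v_s$ together with consecutive sharing of endpoints makes $\Omega''$ a $\Lambda$-decomposition of $((i_{\beta_1},i_{\beta_2}^\prime),j)\in\widehat{\Lambda}$, i.e.~$\Omega''\in\mathbf{D}_{((i_{\beta_1},i_{\beta_2}^\prime),j),\Lambda}$. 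Each intermediate vertex $v_l$ with $1\leq l\leq s-1$ is the starting index $i_{\gamma_{l+1}}$ of a root in $\Omega'$, so $v_l\in\mathbf{I}^{\Omega^\pm}_{k,j}=u_j^{-1}(\{k,\dots,n\})$ and hence $u_j(v_l)\geq k$; in particular $u_j(i_{\Omega'',1})=u_j(v_{s-1})\geq k$, contradicting the third hypothesis.

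Therefore $P_1$ must end at $i_{\beta_1}^\prime$, so a telescoping computation gives $\sum_{\al\in P_1}\al=e_{i_{\beta_1}}-e_{i_{\beta_1}^\prime}=\beta_1$, and setting $\Omega'_1\defeq P_1$ and $\Omega'_2\defeq\Omega'\setminus P_1$ produces the desired partition, with $\sum_{\al\in\Omega'_2}\al=(\beta_1+\beta_2)-\beta_1=\beta_2$. The hard part will be the bookkeeping to ensure the greedy trail really is a trail (no repeated edges) that respects the acyclic structure, and that the subset $\Omega''$ genuinely meets all the bullet-point conditions of Definition~\ref{def: decomposition of element}; everything else is essentially a translation of the $(\mathbf{I},\mathbf{I}')$-indexed decomposition language into the language of directed paths in a DAG.
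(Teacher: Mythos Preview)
Your proof is correct and follows essentially the same line as the paper's, though phrased in more explicitly graph-theoretic language. One simplification you may have missed: the definition of an $(\mathbf{I},\mathbf{I}')$-indexed decomposition (see the paragraph introducing $\mathbf{J}_\Omega$ after Definition~\ref{def: decomposition into roots}) requires the disjoint union $\mathbf{I}=\mathbf{J}_{\Omega'}\sqcup\{i_\al\mid\al\in\Omega'\}$, so the $i_\al$ for $\al\in\Omega'$ are pairwise distinct, and likewise the $i_\al'$. In your graph picture this means every vertex has in-degree and out-degree at most~$1$, so the graph is automatically a disjoint union of simple directed paths rather than a general DAG. With exactly two pure sources $i_{\beta_1},i_{\beta_2}$ and two pure sinks $i_{\beta_1}',i_{\beta_2}'$, there are exactly two maximal paths and only two possible source--sink pairings; the third hypothesis rules out the wrong one. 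This is precisely the paper's argument. Your greedy trail is therefore just following the unique outgoing edge at each step, so the Eulerian-type reasoning and the bookkeeping about repeated edges are unnecessary. Your verification that the resulting $\Omega''$ lies in $\mathbf{D}_{((i_{\beta_1},i_{\beta_2}'),j),\Lambda}$ with $u_j(i_{\Omega'',1})\geq k$ is the same contradiction the paper reaches, and is the real content of the proof.
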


\begin{proof}
Let $\Omega'\in\mathbf{D}^{\Omega^\pm}_{k,j}$ be an arbitrary element. As we have $\al^{\Omega^\pm}_{k,j}=\sum_{(\beta,j)\in\Omega'}\beta$, we must have
$$\mathbf{I}_{\Omega'}\setminus\mathbf{I}_{\Omega'}^\prime=\{(i_{\beta_1},j),~(i_{\beta_2},j)\}\,\,\mbox{and}\,\,\mathbf{I}_{\Omega'}^\prime\setminus\mathbf{I}_{\Omega'}=\{(i_{\beta_1}^\prime,j),~(i_{\beta_2}^\prime,j)\}.$$
Hence if $\Omega'$ does not admit the desired partition, then there must exist a partition $\Omega'=\Omega^{\prime\prime}_1\sqcup\Omega^{\prime\prime}_2$ such that $\Omega^{\prime\prime}_1\in\mathbf{D}_{((i_{\beta_1},i_{\beta_2}^\prime),j),\Lambda}$ and $\Omega^{\prime\prime}_2\in\mathbf{D}_{((i_{\beta_2},i_{\beta_1}^\prime),j),\Lambda}$. Moreover, as $\Omega'\in\mathbf{D}^{\Omega^\pm}_{k,j}$, we necessarily have $u_j(i_{\Omega^{\prime\prime}_1,1}),~u_j(i_{\Omega^{\prime\prime}_2,1})\geq k$, and thus $\Omega^{\prime\prime}_1$ contradicts our assumption.
The proof is thus finished.
\end{proof}

\subsubsection{Proof of Proposition~\ref{prop: type I exceptional}}\label{sub: exp type I exceptional}

Given a constructible $\Lambda$-lift $\Omega^\pm$ of type $\rm{I}$ with $\Omega^+$ being $\Lambda$-exceptional, we set $\Omega_{2,k,j}\defeq \emptyset$ if $j\neq j_1$, and define $\Omega_{2,k,j_1}$ as
$$
\Omega_{2,k,j_1}\defeq
\left\{\begin{array}{cl}
\Omega_{\psi_2,k}&\hbox{if $k>k_{1,c_1-1}$};\\
\emptyset&\hbox{if $k\leq k_{1,c_1-1}$}.
\end{array}\right.
$$
Similarly, we set $\Omega_{1,k,j}\defeq \emptyset$ if $j\neq j_1$, and define $\Omega_{1,k,j_1}$ as
$$
\Omega_{1,k,j_1}\defeq
\left\{\begin{array}{cl}
\Omega_{\psi_1,k}\setminus\{((i_{1,0},i_{1,1}),j_1)\}&\hbox{if $k>k_2^{1,1}$};\\
\Omega_{\psi_1,k}&\hbox{if $k\leq k_2^{1,1}$}.
\end{array}\right.$$

For each $(k,j)\in\mathbf{n}_\cJ$, we write $\al^{\Omega^\pm}_{1,k,j}\defeq \sum_{(\beta,j_1)\in\Omega_{1,k,j}}\beta$ and $\al^{\Omega^\pm}_{2,k,j}\defeq \sum_{(\beta,j_1)\in\Omega_{2,k,j}}\beta$.
It follows from the definition of $\Omega_{1,k,j}$ and $\Omega_{2,k,j}$ above that $\al^{\Omega^\pm}_{1,k,j},~\al^{\Omega^\pm}_{2,k,j}\in\Phi^+\sqcup\{0\}$. Hence we can write $\al^{\Omega^\pm}_{a,k,j}=(i_{a,k,j},i_{a,k,j}^\prime)$ for each $a=1,2$ and $(k,j)\in\mathbf{n}_\cJ$ with $\al^{\Omega^\pm}_{a,k,j}\neq 0$.

\begin{lemma}\label{lem: std type I exceptional}
Let $\Omega^\pm$ be a constructible $\Lambda$-lift of type $\rm{I}$, and assume that $\Omega^+$ is $\Lambda$-exceptional. Then we have $\Omega_{1,k,j}\cap\Omega_{2,k,j}=\emptyset$ for each $(k,j)\in\mathbf{n}_\cJ$. Moreover, we have
$$\mathbf{D}^{\Omega^\pm}_{k,j_1}=\{\Omega^+\}\sqcup\{\Omega\in\mathbf{D}_{(\al_1,j_1),\Lambda}\mid \Omega^+<\Omega\}$$
for each $k\in\mathbf{n}$ satisfying $\al^{\Omega^\pm}_{k,j_1}=\al_1=(i_{1,0},i_{1,c_1})$, and $\mathbf{D}^{\Omega^\pm}_{k,j}=\{\Omega_{1,k,j}\sqcup\Omega_{2,k,j}\}$ for other choices of $(k,j)\in\mathbf{n}_\cJ$.
\end{lemma}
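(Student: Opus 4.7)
The plan is to run a careful case analysis separating $j\neq j_1$ from $j=j_1$, and within the latter to read off $\al^{\Omega^\pm}_{k,j_1}$ directly from the two cycles $v_{j_1}^{\Omega^\pm,\sharp}$ and $v_{j_1}^{\Omega^\pm,\flat}$. For $j\neq j_1$ we have $v_j^{\Omega^\pm}=w_j$, so $\mathbf{I}^{\Omega^\pm,\prime}_{k,j}=\mathbf{I}^{\Omega^\pm}_{k,j}$, forcing $\al^{\Omega^\pm}_{k,j}=0$ and hence $\mathbf{D}^{\Omega^\pm}_{k,j}=\{\emptyset\}$, which is consistent with $\Omega_{1,k,j}=\Omega_{2,k,j}=\emptyset$. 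Disjointness of $\Omega_{1,k,j_1}\cap\Omega_{2,k,j_1}$ will follow at once from Condition $\mathrm{I}$-(\ref{it: I 7}) (which prevents any index $i_{\psi_2}^{s,e}$ from lying in the support of $\Omega_{\psi_1,k}$, and vice versa via symmetry on the vanishing of the relevant roots in $\widehat{\Lambda}$).

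For $j=j_1$ the core computation is telescopic. Reading off the cycles $v_{j_1}^{\Omega^\pm,\sharp}$ and $v_{j_1}^{\Omega^\pm,\flat}$ constructed in \S~\ref{sub: type I}, I will show by formula~(\ref{equ: difference at k}) and induction in $k$ that
\[
\al^{\Omega^\pm}_{k,j_1}=\al^{\Omega^\pm}_{1,k,j_1}+\al^{\Omega^\pm}_{2,k,j_1},
\]
where the first summand comes from the $\sharp$-cycle (and matches $\sum_{(\beta,j_1)\in\Omega_{1,k,j_1}}\beta$) and the second from the $\flat$-cycle (matching $\sum_{(\beta,j_1)\in\Omega_{2,k,j_1}}\beta$). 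The two sub-cases of the construction of $v_{j_1}^{\Omega^\pm,\sharp}$ (depending on whether $e_{1,1}=0$ or $k_2^{1,1}\gtrless k_1^{1,1}$) must be handled in parallel, and the identification $\Omega_{a,k,j_1}=\Omega_{\psi_a,k}$ (modulo removing $((i_{1,0},i_{1,1}),j_1)$ when $k>k_2^{1,1}$) is then immediate from the definitions.

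Once this identification is in place, for $k$ with $\al^{\Omega^\pm}_{k,j_1}=\al_1$ the subset $\mathbf{I}^{\Omega^\pm,\prime}_{k,j_1}$ contains $i_{1,c_1}=i'_{\al_1}$ but not $i_{1,0}=i_{\al_1}$, while $\mathbf{I}^{\Omega^\pm}_{k,j_1}$ satisfies the reverse; an $(\mathbf{I},\mathbf{I}')$-indexed decomposition supported in $\Lambda\cap\mathrm{Supp}_{\xi,j_1}$ is then exactly an element $\Omega\in\mathbf{D}_{(\al_1,j_1),\Lambda}$, and the condition $u_{j_1}(i_{\Omega,1})\geq k$ imposed by the support constraint translates via the inequalities in \S~\ref{sub: type I} to $\Omega^+\leq\Omega$. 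The equality $\mathbf{D}^{\Omega^\pm}_{k,j_1}=\{\Omega^+\}\sqcup\{\Omega:\Omega^+<\Omega\}$ follows.

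For the remaining $(k,j_1)$ the plan is to apply Lemma~\ref{lem: existence of partition} with $\beta_a$ equal to the summand $\al^{\Omega^\pm}_{a,k,j_1}\in\Phi^+$, so that any $\Omega'\in\mathbf{D}^{\Omega^\pm}_{k,j_1}$ splits as $\Omega'_1\sqcup\Omega'_2$ with $\sum_{\beta\in\Omega'_a}\beta=\al^{\Omega^\pm}_{a,k,j_1}$. Uniqueness of each piece then comes from Lemma~\ref{lem: unique k decomposition}: since $\Omega^-=\Omega_{(\al_1,j_1),\Lambda}^{\max}$ and $\Omega^+$ is $\Lambda$-exceptional, the sets $\Omega_{\psi_a,k}$ are $\Lambda$-exceptional for every relevant $k$, and the constraint on the leading index (inherited from $(k,j_1)\in I_\cJ^{\Omega^\pm,\star}$) matches the hypothesis of that lemma. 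I expect the main obstacle to be verifying the hypotheses of Lemma~\ref{lem: existence of partition}, namely that no $\Omega''\in\mathbf{D}_{((i_{\beta_1},i'_{\beta_2}),j_1),\Lambda}$ with $u_{j_1}(i_{\Omega'',1})\geq k$ exists: this is where Conditions $\mathrm{I}$-(\ref{it: I 5}), (\ref{it: I 6}) and (\ref{it: I 7}) enter crucially, by forbidding the cross roots $((i_{\beta_1},i'_{\beta_2}),j_1)$ from lying in $\widehat{\Lambda}$ and hence preventing any such $\Omega''$. The residual bookkeeping — especially distinguishing between transitions across $k_{1,c_1-1}$, $k_2^{1,1}$, and $k_1^{1,1}$ in the two sub-cases — will be the most delicate part, but should reduce to elementary inspection once the above framework is set up.
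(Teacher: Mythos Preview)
Your overall architecture matches the paper's proof: trivialize $j\neq j_1$, verify the additive splitting $\al^{\Omega^\pm}_{k,j_1}=\al^{\Omega^\pm}_{1,k,j_1}+\al^{\Omega^\pm}_{2,k,j_1}$ by telescoping through (\ref{equ: difference at k}), apply Lemma~\ref{lem: existence of partition} to split any $\Omega^\natural_{k,j_1}$, and invoke Lemma~\ref{lem: unique k decomposition} for uniqueness of each piece. Two places in your sketch are too optimistic, however.

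First, disjointness of $\Omega_{1,k,j_1}$ and $\Omega_{2,k,j_1}$ does not come from Condition~$\mathrm{I}$-\ref{it: I 7}. That condition separates the auxiliary indices $i_{\psi_2}^{s,e}$ from the interior indices $i_{\Omega^+,c}$, but it does not prevent an actual root $(\beta,j_1)$ from lying in both decomposition sets. The paper instead argues directly: any common $(\beta,j_1)$ must have $i_\beta^\prime$ among the interior indices of both $\Omega^+$ and $\Omega^-$, and since $\Omega^\pm$ is a $\Lambda$-lift this forces $i_\beta^\prime=i_{1,c_1}=i_{2,c_2}$. Then the membership $(i_{1,c_1},j_1)\in\mathbf{I}^\prime_{\Omega_{1,k,j_1}}$ requires $k\leq k_{1,c_1-1}$, while membership in $\mathbf{I}^\prime_{\Omega_{2,k,j_1}}$ requires $k>k_{1,c_1-1}$, a contradiction.

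Second, and more seriously, your claim that Conditions~$\mathrm{I}$-\ref{it: I 5}--\ref{it: I 7} verify the hypothesis of Lemma~\ref{lem: existence of partition} by ``forbidding the cross roots $((i_{\beta_1},i^\prime_{\beta_2}),j_1)$ from lying in $\widehat{\Lambda}$'' fails in one of the three regimes. When $\min\{k_{1,1},k_{2,c_2-1}\}\geq k>k_2^{1,1}$ one has $\al^{\Omega^\pm}_{2,k,j_1}=(i_{1,0},i_{1,c_1})=\al_1$ and $\al^{\Omega^\pm}_{1,k,j_1}=(i_{1,1},i_{1,c})$ for some $2\leq c\leq c_1-1$; the relevant cross root is $((i_{1,1},i_{1,c_1}),j_1)$, which certainly \emph{does} lie in $\widehat{\Lambda}$. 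The paper handles this by a finer argument: any $\Omega''\in\mathbf{D}_{((i_{1,1},i_{1,c_1}),j_1),\Lambda}$ with $u_{j_1}(i_{\Omega'',1})\geq k$ extends to $\Omega_0''=\Omega''\sqcup\{((i_{1,0},i_{1,1}),j_1)\}\in\mathbf{D}_{(\al_1,j_1),\Lambda}$, and the definition of $k_2^{1,1}$ together with $\Omega^-=\Omega_{(\al_1,j_1),\Lambda}^{\max}$ forces $u_{j_1}(i_{\Omega'',1})=k_{2,c_2-1}$, so $((i_{1,1},i_{2,c_2-1}),j_1)\in\widehat{\Lambda}$, and \emph{this} is what violates Condition~$\mathrm{I}$-\ref{it: I 5}. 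In the other two regimes (namely $k_{1,1}\geq k>\max\{k_2^{1,1},k_{2,c_2-1}\}$ and $k_2^{1,1}\geq k>k_{1,c_1-1}$) your description is essentially correct, with $\mathrm{I}$-\ref{it: I 5} and $\mathrm{I}$-\ref{it: I 7} respectively excluding the cross root from $\widehat{\Lambda}$ directly.
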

\begin{proof}
Assume that there exists an element $(\beta,j)\in \Omega_{1,k,j}\cap\Omega_{2,k,j}$ for some $(k,j)\in\mathbf{n}_\cJ$. According to the definition of $\Omega_{1,k,j}$ and $\Omega_{2,k,j}$ above, we necessarily have $j=j_1$ and there exist $1\leq c_1'\leq c_1$ and $1\leq c_2'\leq c_2$ such that $i_\beta^\prime=i_{1,c_1'}=i_{2,c_2'}$, which implies that $i_\beta^\prime=i_{1,c_1}=i_{2,c_2}$ as $\Omega^\pm$ is a $\Lambda$-lift. Then we observe that $(i_{1,c_1},j_1)\in\mathbf{I}_{\Omega_{1,k,j_1}}^\prime$ implies $k\leq k_{1,c_1-1}$. On the other hand, $(i_{1,c_1},j_1)\in\mathbf{I}_{\Omega_{2,k,j_1}}^\prime$ implies $k> k_{1,c_1-1}$. This contradicts the existence of $(\beta,j_1)$. Hence $\Omega_{1,k,j}\cap\Omega_{2,k,j}=\emptyset$ for each $(k,j)\in\mathbf{n}_\cJ$.

As we clearly have $\al^{\Omega^\pm}_{k,j}=0$ and thus $\mathbf{D}^{\Omega^\pm}_{k,j}=\{\emptyset\}$ if $j\neq j_1$, it suffices to study the root $\al^{\Omega^\pm}_{k,j_1}$ and the set $\mathbf{D}^{\Omega^\pm}_{k,j_1}$ for each $k\in\mathbf{n}$.
We claim that $\al^{\Omega^\pm}_{k,j_1}=\al^{\Omega^\pm}_{1,k,j_1}+\al^{\Omega^\pm}_{2,k,j_1}$, which immediately implies that $\Omega_{1,k,j_1}\sqcup\Omega_{2,k,j_1}\in \mathbf{D}^{\Omega^\pm}_{k,j_1}$, for each $k\in\mathbf{n}$. We set $\al^{\Omega^\pm}_{n+1,j}\defeq 0$, $\al^{\Omega^\pm}_{1,n+1,j}\defeq 0$ and $\al^{\Omega^\pm}_{2,n+1,j}\defeq 0$ for convenience and check by decreasing induction on $k$. The claim is clear by the following observations:
\begin{itemize}
\item if $\al^{\Omega^\pm}_{a,k,j_1}=\al^{\Omega^\pm}_{a,k+1,j_1}$ for each $a=1,2$, then we clearly have $(v_{j_1}^{\Omega^\pm})^{-1}(k)=w_{j_1}^{-1}(k)$ and $\al^{\Omega^\pm}_{k,j_1}=\al^{\Omega^\pm}_{k+1,j_1}$;
\item otherwise, there exists a unique $a\in\{1,2\}$ determined by $k$ such that $\al^{\Omega^\pm}_{a,k,j_1}\neq \al^{\Omega^\pm}_{a,k+1,j_1}$, and moreover $\al^{\Omega^\pm}_{k,j_1}-\al^{\Omega^\pm}_{k+1,j_1}=\al^{\Omega^\pm}_{a,k,j_1}-\al^{\Omega^\pm}_{a,k+1,j_1}$.
\end{itemize}

Let $(k,j_1)\in\mathbf{n}_\cJ$ be a pair and $\Omega_{k,j_1}^\natural$ be an arbitrary element of $\mathbf{D}^{\Omega^\pm}_{k,j_1}$, and we want to show that there exists a partition
\begin{equation}\label{equ: partition type I exceptional}
\Omega_{k,j_1}^\natural=\Omega_{1,k,j_1}^\natural\sqcup\Omega_{2,k,j_1}^\natural
\end{equation}
such that
$\sum_{\beta\in \Omega_{a,k,j_1}^\natural}\beta=\al^{\Omega^\pm}_{a,k,j_1}$
for each $a=1,2$. If $\al^{\Omega^\pm}_{1,k,j_1}=0$ (resp.~$\al^{\Omega^\pm}_{2,k,j_1}=0$), then we can clearly set $\Omega_{1,k,j_1}^\natural\defeq \emptyset$ and $\Omega_{2,k,j_1}^\natural\defeq \Omega_{k,j_1}^\natural$ (resp.~$\Omega_{2,k,j_1}^\natural\defeq \emptyset$ and $\Omega_{1,k,j_1}^\natural\defeq \Omega_{k,j_1}^\natural$). Hence it is harmless to assume that $\al^{\Omega^\pm}_{1,k,j_1}\neq 0\neq \al^{\Omega^\pm}_{2,k,j_1}$.
This condition implies that $k_{1,1}\geq k>k_{1,c_1-1}$.
We now produce the desired partition by checking the hypotheses of Lemma~\ref{lem: existence of partition} in each of the following cases (which exhausts all possible cases):
\begin{itemize}
\item if $k_{1,1}\geq k>\max\{k_2^{1,1},k_{2,c_2-1}\}$ then $i_{1,k,j_1}=i_{1,1}$ and $i_{2,k,j_1}^\prime=i_{2,c}$ for some $1\leq c\leq c_2-1$, which implies that $((i_{1,k,j_1},i_{2,k,j_1}^\prime),j_1)\notin \widehat{\Lambda}$, due to Condition~\rm{I}-\ref{it: I 5}.

\item if $k_2^{1,1}\geq k>k_{1,c_1-1}$ then we have $i_{1,k,j_1}^\prime=i_{1,c}$ for some $1\leq c\leq c_1-1$, and $i_{2,k,j_1}=i_2^{s,e}$ for some $1\leq s\leq d_2$ and $1\leq e\leq e_{2,s}$ satisfying $k_2^{s,e}>k_{1,c_1-1}$. Hence we deduce from Condition~\rm{I}-\ref{it: I 7} that $((i_{2,k,j_1},i_{1,k,j_1}^\prime),j_1)\notin\widehat{\Lambda}$.

\item if $\min\{k_{1,1},k_{2,c_2-1}\}\geq k>k_2^{1,1}$ (in particular $d_2=1$, $c_2^1=c_2$) then $\al^{\Omega^\pm}_{2,k,j_1}=(i_{2,0},i_{2,c_2})=(i_{1,0},i_{1,c_1})$ and $\al^{\Omega^\pm}_{1,k,j_1}=(i_{1,1},i_{1,c})$ for some $2\leq c\leq c_1-1$. Let $\Omega'\in\mathbf{D}_{((i_{1,1},i_{1,c_1}),j_1)}$ be an arbitrary element, and $\Omega'_0\defeq \Omega'\sqcup\{((i_{1,0},i_{1,1}),j_1)\}\in\mathbf{D}_{(\al_1,j_1)}$. According to the definition of $k_2^{1,1}$ (and the fact that $\Omega^-=\Omega_{(\al_1,j_1),\Lambda}^{\rm{max}}$), if there exists $\Omega'\in\mathbf{D}_{((i_{1,1},i_{1,c_1}),j_1)}$ such that $u_j(i_{\Omega',1})\geq k$, we must have $k_{1,0}>k_{1,1}>u_j(i_{\Omega',1})=u_j(i_{\Omega'_0,1})=k_{2,c_2-1}$ (and thus $c_2\geq 2$).
This implies $((i_{1,1},i_{2,c_{2}-1}),j_1)\in\widehat{\Lambda}$ which contradicts Condition~\rm{I}-\ref{it: I 5}.
\end{itemize}
Hence, by Lemma~\ref{lem: existence of partition} we have a partition as in (\ref{equ: partition type I exceptional}).

We consider a pair $(k,j_1)\in\mathbf{n}_\cJ$ satisfying $\al^{\Omega^\pm}_{k,j_1}=(i_{1,0},i_{1,c_1})$, which implies $\al^{\Omega^\pm}_{1,k,j_1}=(i_{1,0},i_{1,c_1})$ and $\al^{\Omega^\pm}_{2,k,j_1}=0$ by the constructions of $\Omega_{a,k,j_1}$ together with Condition $\rm{I}$-\ref{it: I 6}.
We observe that $\al^{\Omega^\pm}_{1,k,j_1}=\al_1=(i_{1,0},i_{1,c_1})$ if and only if exactly one of the following holds:
\begin{itemize}
\item $e_{1,1}=0$ and $k_{1,c_1-1}\geq k>k_{1,c_1}$;
\item $e_{1,1}\geq 1$ and $k_{1,c_1-1}\geq k>k_1^{1,1}$.
\end{itemize}
In particular, we have $\Omega_{2,k,j_1}=\emptyset$ and $\al^{\Omega^\pm}_{2,k,j_1}=0$ for such $k$.
For any  $\Omega\in\mathbf{D}^{\Omega^\pm}_{k,j_1}\subseteq\mathbf{D}_{(\al_1,j_1),\Lambda}$ we have $u_{j_1}(i_{\Omega,1})\geq k$.
By the definition of $e_{1,1}$ and $k_1^{1,1}$ (if it exists), any such $\Omega$ satisfies $ u_{j_1}(i_{\Omega,1})\geq k_{1,c_1-1}$.
Furthermore, if the equality holds, then $\Omega=\Omega^+$ since $\Omega^+$ is $\Lambda$-exceptional.
Thus
$$\mathbf{D}^{\Omega^\pm}_{k,j_1}=\{\Omega^+\}\sqcup\{\Omega\in\mathbf{D}_{(\al_1,j_1),\Lambda}\mid u_{j_1}(i_{\Omega',1})>k_{1,c_1-1}\}.$$

We now check the equality $\Omega_{1,k,j_1}^\natural=\Omega_{1,k,j_1}$ for each $(k,j_1)\in\mathbf{n}_\cJ$ satisfying $\al^{\Omega^\pm}_{1,k,j_1}\neq\al_1=(i_{1,0},i_{1,c_1})$.
Such a $(k,j_1)$ satisfies either $k>k_{1,c_1-1}$ or $e_{1,1}\geq 1$ and $k\leq k_1^{1,1}$.
In the first case, the equality follows from the fact that $\Omega^+$ is $\Lambda$-exceptional (which implies that $\#\mathbf{D}_{((i_{1,0},i_{1,c}),j_1)}=1$ for each $1\leq c\leq c_1-1$).
In the second case, the equality follows from Lemma~\ref{lem: unique k decomposition}.

Finally, the equality  $\Omega_{2,k,j_1}^\natural=\Omega_{2,k,j_1}$ follows from Lemma~\ref{lem: unique k decomposition}. The proof is thus finished.
\end{proof}

\begin{proof}[Proof of Proposition~\ref{prop: type I exceptional}]
Note that we fix a $\cC\in\cP_\cJ$ satisfying $\cC\subseteq\cN_{\xi,\Lambda}$. We recall that $I_\cJ^{\Omega^\pm,\star}\subseteq I_\cJ^{\Omega^\pm}$ is the subset consisting of those $(k,j)$ satisfying $\mathbf{D}^{\Omega^\pm}_{k,j}\neq \mathbf{D}^{\Omega^\pm}_{k+1,j}$, and it is clear that $I_\cJ^{\Omega^\pm,\star}\subseteq\mathbf{n}\times\{j_1\}\subseteq\mathbf{n}_\cJ$ in our case.
It follows from Condition~\rm{I}-\ref{it: I 4} that
$$](k_2^{s,e},j_1),(k_2^{s,e},j_1)]_{w_\cJ}\cap ](k_1^{1,e'},j_1),(k_1^{1,e'},j_1)]_{w_\cJ}=\emptyset$$
for each $1\leq s\leq d_2$, $1\leq e\leq e_{2,s}$ and each $1\leq e'\leq e_{1,1}$ satisfying $k_2^{s,e}>k_{1,c_1-1}>k_1^{1,e'}$. This together with Condition~\rm{I}-\ref{it: I 3} implies that
$$
I_\cJ^{\Omega^\pm,\star}=\{(k_{1,c},j_1)\mid 1\leq c\leq c_1-1\}\sqcup\{(k_2^{s,e},j_1)\mid 1\leq s\leq d_2,~1\leq e\leq e_{2,s},~k_2^{s,e}>k_{1,c_1-1}\}.
$$
It follows from Lemma~\ref{lem: std type I exceptional} that
$$f_{S^{j,\Omega^\pm}_{k},j}|_{\cN_{\xi,\Lambda}}\sim F_\xi^{\Omega^+,\star}\defeq F_\xi^{\Omega^+}+\sum_{\substack{\Omega'\in \mathbf{D}_{(\al_1,j_1)},\, \Omega^+<\Omega'}}\varepsilon(\Omega')F_\xi^{\Omega'}$$
for each $(k,j)$ satisfying $\al^{\Omega^\pm}_{k,j_1}=\al_1$, and
$$f_{S^{j,\Omega^\pm}_{k},j}|_{\cN_{\xi,\Lambda}}\sim F_\xi^{\Omega_{1,k,j_1}}F_\xi^{\Omega_{2,k,j_1}}$$
otherwise. Here $\varepsilon(\Omega')\in\{1,-1\}$ is a sign determined by $\Omega'$. If $F_\xi^{\Omega^+,\star}|_\cC=0$, then Proposition~\ref{prop: type I exceptional} clearly follows as $F_\xi^{\Omega^+,\star}|_\cC(F_\xi^{\Omega^-}|_\cC)^{-1}=0\in\cO_\cC$. If $F_\xi^{\Omega^+,\star}|_\cC\neq 0$, then we take $(k_\star,j_\star)\defeq (k_{1,c_1-1},j_1)$ and deduce from Lemma~\ref{lem: from sets to formula} and Lemma~\ref{lem: std type I exceptional} that $f_\xi^{\Omega^\pm}\in\Inv(\cC)$ and
\begin{equation}\label{equ: exp formula type I exceptional}
f_\xi^{\Omega^\pm}|_\cC\sim F_\xi^{\Omega^\pm,1}|_\cC\cdot F_\xi^{\Omega^\pm,2}|_\cC\cdot F_\xi^{\Omega^+,\star}|_\cC
\end{equation}
where
$$
F_\xi^{\Omega^\pm,a}\defeq (F_\xi^{\Omega_{a,k_\star+1,j_1}})^{-1}\underset{(k,j_1)\in I_\cJ^{\Omega^\pm,\star}\setminus\{(k_\star,j_1)\}}{\prod}F_\xi^{\Omega_{a,k,j_1}}(F_\xi^{\Omega_{a,k+1,j_1}})^{-1}
$$
for each $a=1,2$. We write $\mathbf{n}^a\subseteq\mathbf{n}\setminus\{k_\star\}$ for the subset consisting of those $k$ satisfying $(k,j_1)\in I_\cJ^{\Omega^\pm,\star}$ and $\Omega_{a,k,j_1}\neq \Omega_{a,k+1,j_1}$, for each $a=1,2$. Then it follows from our definition of $\Omega_{1,k,j_1}$ and $\Omega_{2,k,j_1}$ that
$$\mathbf{n}^1=\left\{\begin{array}{cl}
\{k_{1,c}\mid 1\leq c\leq c_1-2\}& \hbox{if $k_2^{1,1}=k_{1,c_1-1}$};\\
\{k_2^{1,1}\}\sqcup\{k_{1,c}\mid 1\leq c\leq c_1-2\}& \hbox{if $k_2^{1,1}>k_{1,c_1-1}$}
\end{array}\right.$$
and
$$\mathbf{n}^2=\{k_2^{s,e}\mid 1\leq s\leq d_2,~1\leq e\leq e_{2,s},~k_2^{s,e}>k_{1,c_1-1}\}.$$
Then we observe that
$$
F_\xi^{\Omega^\pm,a}=(F_\xi^{\Omega_{a,k_\star+1,j_1}})^{-1}\underset{k\in\mathbf{n}^a}{\prod}F_\xi^{\Omega_{a,k,j_1}}(F_\xi^{\Omega_{a,k+1,j_1}})^{-1}
$$
for each $a=1,2$. If $k_2^{1,1}=k_{1,c_1-1}$, then we have
$$F_\xi^{\Omega_{1,k_\star+1,j_1}}=\underset{1\leq c\leq c_1-2}{\prod}u_\xi^{((i_{1,c},i_{1,c+1}),j_1)}$$
and $F_\xi^{\Omega_{1,k_{1,c},j_1}}=u_\xi^{((i_{1,c},i_{1,c+1}),j_1)}F_\xi^{\Omega_{1,k_{1,c}+1,j_1}}$ for each $1\leq c\leq c_1-2$, which imply that
\begin{equation}\label{equ: I exceptional formula 1}
F_\xi^{\Omega^\pm,1}=1.
\end{equation}
If $k_2^{1,1}>k_{1,c_1-1}$, then we have
$$F_\xi^{\Omega_{1,k_\star+1,j_1}}=\underset{0\leq c\leq c_1-2}{\prod}u_\xi^{((i_{1,c},i_{1,c+1}),j_1)},\qquad F_\xi^{\Omega_{1,k_2^{1,1},j_1}} =u_\xi^{((i_{1,0},i_{1,1}),j_1)}F_\xi^{\Omega_{1,k_2^{1,1}+1,j_1}},$$ and $$F_\xi^{\Omega_{1,k_{1,c},j_1}}=u_\xi^{((i_{1,c},i_{1,c+1}),j_1)}F_\xi^{\Omega_{1,k_{1,c}+1,j_1}}$$ for each $1\leq c\leq c_1-2$, which again implies (\ref{equ: I exceptional formula 1}). Similarly, by checking the definition of $\Omega_{2,k_\star+1,j_1}$ as well as the definition of $\Omega_{2,k,j_1}$ and $\Omega_{2,k+1,j_1}$ for each $k\in\mathbf{n}^2$, we deduce that
\begin{equation}\label{equ: I exceptional formula 2}
F_\xi^{\Omega^\pm,2}=\left(\underset{0\leq c\leq c_2-1}{\prod}u_\xi^{((i_{2,c},i_{2,c+1}),j_1)}\right)^{-1}=(F_\xi^{\Omega^-})^{-1}.
\end{equation}
We can clearly combine (\ref{equ: I exceptional formula 1}) and (\ref{equ: I exceptional formula 2}) with (\ref{equ: exp formula type I exceptional}) and deduce that
$$ F_\xi^{\Omega^\pm}|_\cC+\sum_{\Omega_0^\pm}\varepsilon(\Omega_0^\pm)F_\xi^{\Omega_0^\pm}|_\cC=(F_\xi^{\Omega^-}|_\cC)^{-1}F_\xi^{\Omega^+,\star}|_\cC\sim f_\xi^{\Omega^\pm}|_\cC\in\cO_\cC
$$
where $\Omega_0^\pm$ runs through balanced pair satisfying $\Omega^+<\Omega_0^+$ and $\Omega^-=\Omega_0^-$ with $\varepsilon(\Omega_0^\pm)\defeq \varepsilon(\Omega_0^+)$. The proof is thus finished.
\end{proof}

\subsubsection{Proof of Proposition~\ref{prop: type I extremal}}\label{sub: exp type I extremal}
Given a constructible $\Lambda$-lift $\Omega^\pm$ of type $\rm{I}$ with $\Omega^+$ being $\Lambda$-extremal (and thus $e_{1,1}\geq 1$ and $k_1^{1,1}>k_{1,c_1-1}$), we define a subset $\Omega_{a,k,j}\subseteq \Lambda$ for each pair $(k,j)\in\mathbf{n}_\cJ$ and each $a=1,2$ as follows.

We set $\Omega_{2,k,j}\defeq \emptyset$ if $j\neq j_1$. If $k_2^{1,1}>k_1^{1,1}$, we define $\Omega_{2,k,j_1}$ as
$$
\Omega_{2,k,j_1}\defeq
\left\{\begin{array}{cl}
\Omega_{\psi_2,k}&\hbox{if $k>k_{1,c_1-1}$};\\
\emptyset&\hbox{if $k\leq k_{1,c_1-1}$}.
\end{array}\right.
$$
If $k_2^{1,1}<k_1^{1,1}$, we define $\Omega_{2,k,j_1}$ as
$$
\Omega_{2,k,j_1}\defeq
\left\{\begin{array}{cl}
\Omega_{\psi_2,k}\setminus\{((i_{1,0},i_{2,1}),j_1)\}&\hbox{if $k>k_1^{1,1}$};\\
\Omega_{\psi_2,k}&\hbox{if $k_1^{1,1}\geq k>k_{1,c_1-1}$};\\
\emptyset&\hbox{if $k\leq k_{1,c_1-1}$}.
\end{array}\right.
$$
Similarly, we set $\Omega_{1,k,j}\defeq \emptyset$ if $j\neq j_1$. If $k_2^{1,1}>k_1^{1,1}$, we define $\Omega_{1,k,j_1}$ as
$$
\Omega_{1,k,j_1}\defeq
\left\{\begin{array}{cl}
\Omega_{\psi_1,k}\setminus\{((i_{1,0},i_{1,1}),j_1)\}&\hbox{if $k>k_2^{1,1}$};\\
\Omega_{\psi_1,k}&\hbox{if $k\leq k_2^{1,1}$}.
\end{array}\right.$$
If $k_2^{1,1}<k_1^{1,1}$, we define $\Omega_{1,k,j_1}\defeq \Omega_{\psi_1,k}$ for each $k\in\mathbf{n}$.

For each $(k,j)\in\mathbf{n}_\cJ$, we write $\al^{\Omega^\pm}_{1,k,j}\defeq \sum_{(\beta,j_1)\in\Omega_{1,k,j}}\beta$ and $\al^{\Omega^\pm}_{2,k,j}\defeq \sum_{(\beta,j_1)\in\Omega_{2,k,j}}\beta$. It follows from the definition of $\Omega_{1,k,j}$ and $\Omega_{2,k,j}$ above that $\al^{\Omega^\pm}_{1,k,j},~\al^{\Omega^\pm}_{2,k,j}\in\Phi^+\sqcup\{0\}$. Hence we can write $\al^{\Omega^\pm}_{a,k,j}=(i_{a,k,j},i_{a,k,j}^\prime)$ for each $a=1,2$ and $(k,j)\in\mathbf{n}_\cJ$ with $\al^{\Omega^\pm}_{a,k,j}\neq 0$.

\begin{lemma}\label{lem: std type I extremal}
Let $\Omega^\pm$ be a constructible $\Lambda$-lift of type $\rm{I}$, and assume that $\Omega^+$ is $\Lambda$-extremal. Then we have $\Omega_{1,k,j}\cap\Omega_{2,k,j}=\emptyset$ and $\mathbf{D}^{\Omega^\pm}_{k,j}=\{\Omega_{1,k,j}\sqcup\Omega_{2,k,j}\}$ for each $(k,j)\in\mathbf{n}_\cJ$.
\end{lemma}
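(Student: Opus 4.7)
The plan mirrors the structure of the proof of Lemma~\ref{lem: std type I exceptional}, accounting for the fact that $\Omega^+$ is now a genuine multi-edge decomposition of $(\al_1,j_1)$. Throughout, I would treat the two sub-cases $k_2^{1,1}>k_1^{1,1}$ and $k_2^{1,1}<k_1^{1,1}$ (the case $k_2^{1,1}=k_1^{1,1}$ is excluded by Condition~\rm{I}-\ref{it: I 4}) of the definition of $\Omega_{a,k,j_1}$ in parallel.

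For the disjointness $\Omega_{1,k,j}\cap\Omega_{2,k,j}=\emptyset$: as in Lemma~\ref{lem: std type I exceptional}, a common element would force $j=j_1$ and produce $(\beta,j_1)$ with $i_\beta^\prime=i_{1,c_1}=i_{2,c_2}$ (using that $\Omega^\pm$ is a $\Lambda$-lift). Reading off from the piecewise definitions in \S\ref{sub: exp type I extremal} the ranges of $k$ in which such an element appears in $\Omega_{1,k,j_1}$ and in $\Omega_{2,k,j_1}$ yields disjoint ranges in each sub-case; this contradicts the existence of the common element.

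For the existence half $\Omega_{1,k,j}\sqcup\Omega_{2,k,j}\in\mathbf{D}^{\Omega^\pm}_{k,j}$: I would verify by decreasing induction on $k$ (with $\al^{\Omega^\pm}_{a,n+1,j_1}=0$) the identity $\al^{\Omega^\pm}_{k,j_1}=\al^{\Omega^\pm}_{1,k,j_1}+\al^{\Omega^\pm}_{2,k,j_1}$. The step from $k+1$ to $k$ is governed by (\ref{equ: difference at k}), which compares $(v_{j_1}^{\Omega^\pm})^{-1}(k)$ with $w_{j_1}^{-1}(k)$; this must be done by reading off the explicit cycle decompositions $v_{j_1}^{\Omega^\pm,\sharp}v_{j_1}^{\Omega^\pm,\flat}$ from the extremal case of \S\ref{sub: type I} and checking that, in each sub-case, exactly one of $\Omega_{1,k,j_1}$, $\Omega_{2,k,j_1}$ changes and produces the expected new summand.

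For uniqueness, I fix $\Omega^\natural\in\mathbf{D}^{\Omega^\pm}_{k,j_1}$ and split $\Omega^\natural=\Omega_1^\natural\sqcup\Omega_2^\natural$ via Lemma~\ref{lem: existence of partition}, applied in each range: $k>\max\{k_2^{1,1},k_{2,c_2-1},k_1^{1,1}\}$, the intermediate ranges associated to the interior breakpoints $k_1^{s,e}$ and $k_2^{s,e}$ of $\Omega^+$ and $\Omega^-$, and the low range $k\leq k_{1,c_1-1}$. In each range the third hypothesis of Lemma~\ref{lem: existence of partition} (non-existence of $\Omega''\in\mathbf{D}_{((i_{\beta_1},i_{\beta_2}^\prime),j_1),\Lambda}$ with $u_{j_1}(i_{\Omega'',1})\geq k$) follows from one of Conditions~\rm{I}-\ref{it: I 5}, \rm{I}-\ref{it: I 6}, \rm{I}-\ref{it: I 7}, exactly as in Lemma~\ref{lem: std type I exceptional}. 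Finally I would apply Lemma~\ref{lem: unique k decomposition}: to the $\Omega_2^\natural$-factor via the $\Lambda$-exceptional decomposition $\Omega^-=\Omega_{(\al_1,j_1),\Lambda}^{\rm{max}}$, and to the $\Omega_1^\natural$-factor via the $\Lambda$-extremal decomposition $\Omega^+$.

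The main obstacle is the range in which $\al^{\Omega^\pm}_{1,k,j_1}=\al_1$ (which, in the exceptional case, produced the extra family $\{\Omega\mid \Omega^+<\Omega\}$ in $\mathbf{D}^{\Omega^\pm}_{k,j_1}$). Here one must show that $\Lambda$-extremality of $\Omega^+$, i.e.\ the maximality condition (\ref{equ: extremal cond}), together with Condition~\rm{I}-\ref{it: I 4}, rules out any $\Omega'\in\mathbf{D}_{(\al_1,j_1),\Lambda}$ strictly greater than $\Omega^+$ with $u_{j_1}(i_{\Omega',1})\geq k$ for the relevant $k$; this is precisely the content of the $\Lambda$-extremal case of Lemma~\ref{lem: unique k decomposition}, and reduces the set $\mathbf{D}^{\Omega^\pm}_{k,j_1}$ to the desired singleton. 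This is the delicate step where the $\Lambda$-extremal hypothesis is genuinely used.
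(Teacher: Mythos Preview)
Your overall architecture is right, but the final paragraph misidentifies the key step and misquotes Lemma~\ref{lem: unique k decomposition}.

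You write that the ``main obstacle'' is the range in which $\al^{\Omega^\pm}_{1,k,j_1}=\al_1$, and propose to handle it by invoking the $\Lambda$-extremal case of Lemma~\ref{lem: unique k decomposition}. But that lemma explicitly requires $\al_{\psi,k}\notin\{\al,0\}$; the ``moreover'' clause covering $\al_{\psi,k}=\al$ applies only when $\Omega=\Omega^{\max}_{(\al,j),\Lambda}$. Here $\Omega^+$ is $\Lambda$-extremal and $\Omega^+\neq\Omega^-=\Omega^{\max}_{(\al_1,j_1),\Lambda}$, so the lemma gives you nothing in that range. The paper's actual observation is that \emph{this range is empty}: when $\Omega^+$ is $\Lambda$-extremal one has $d_1\geq 1$ with $c_{\psi_1}^1\geq 1$, and tracing the definition of $\Omega_{\psi_1,k}$ shows $i_{\psi_1,k}\neq i_{\al_1}$ for every $k$ (the breakpoint $k_{1,c_1^1}$ intervenes before the decomposition can reach $i_{\al_1}$). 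Hence $\al^{\Omega^\pm}_{1,k,j_1}\neq\al_1$ for all $k$, which is exactly what distinguishes the extremal case from the exceptional one and explains why the answer is a singleton. Once you have this, Lemma~\ref{lem: unique k decomposition} applies directly (its hypothesis $\al_{\psi,k}\neq\al$ is always satisfied for $\psi_1$), and the two residual cases $\Omega_{1,k,j_1}=\Omega_{\psi_1,k}\setminus\{((i_{1,0},i_{1,1}),j_1)\}$ and $\Omega_{2,k,j_1}=\Omega_{\psi_2,k}\setminus\{((i_{1,0},i_{2,1}),j_1)\}$ reduce to it by adjoining the missing edge.

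A minor point: your range-by-range plan for the partition step via Lemma~\ref{lem: existence of partition} is more laborious than necessary. The paper instead checks directly that either $((i_{1,k,j_1},i_{2,k,j_1}^\prime),j_1)\notin\widehat{\Lambda}$ (Conditions~\rm{I}-\ref{it: I 5},~\rm{I}-\ref{it: I 6}) or $((i_{2,k,j_1},i_{1,k,j_1}^\prime),j_1)\notin\widehat{\Lambda}$ (Conditions~\rm{I}-\ref{it: I 5},~\rm{I}-\ref{it: I 7}), leaving only the two endpoint configurations; the first of these is ruled out by the observation above, and the second is handled as in Lemma~\ref{lem: std type I exceptional}.
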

\begin{proof}
A similar argument as in the proof of Lemma~\ref{lem: std type I exceptional} and a case by case check shows that $\Omega_{1,k,j}\cap\Omega_{2,k,j}=\emptyset$, $\al^{\Omega^\pm}_{k,j}=\al^{\Omega^\pm}_{1,k,j}+\al^{\Omega^\pm}_{2,k,j}$ and $\Omega_{1,k,j}\sqcup\Omega_{2,k,j}\in\mathbf{D}^{\Omega^\pm}_{k,j}$ for each $(k,j)\in\mathbf{n}_\cJ$. As we clearly have $\al^{\Omega^\pm}_{k,j}=0$ and thus $\mathbf{D}^{\Omega^\pm}_{k,j}=\{\emptyset\}$ if $j\neq j_1$, it suffices to study the set $\mathbf{D}^{\Omega^\pm}_{k,j_1}$ for each $k\in\mathbf{n}$.

Now we consider a pair $(k,j_1)\in\mathbf{n}_\cJ$ and let $\Omega_{k,j_1}^\natural$ be an arbitrary element of $\mathbf{D}^{\Omega^\pm}_{k,j_1}$. We want to show that there exists a partition
\begin{equation}\label{equ: partition type I extremal}
\Omega_{k,j_1}^\natural=\Omega_{1,k,j_1}^\natural\sqcup\Omega_{2,k,j_1}^\natural
\end{equation}
such that $\sum_{\beta\in \Omega_{a,k,j_1}^\natural}\beta=\al^{\Omega^\pm}_{a,k,j_1}$
for each $a=1,2$. It is harmless to assume that $\al^{\Omega^\pm}_{1,k,j_1}\neq 0\neq \al^{\Omega^\pm}_{2,k,j_1}$ (in particular we have $k>k_{1,c_1-1}$).
If $i_{1,k,j_1}\neq i_{1,0}$ and $i_{2,k,j_1}^\prime\neq i_{1,c_1}$, then we have $((i_{1,k,j_1},i_{2,k,j_1}^\prime),j_1)\notin\widehat{\Lambda}$ (see Condition~\rm{I}-\ref{it: I 5} and \rm{I}-\ref{it: I 6}) and can deduce the partition (\ref{equ: partition type I extremal}) from Lemma~\ref{lem: existence of partition}. Similarly, if $i_{2,k,j_1}\neq i_{1,0}$ and $i_{1,k,j_1}^\prime\neq i_{1,c_1}$, then we have $((i_{2,k,j_1},i_{1,k,j_1}^\prime),j_1)\notin\widehat{\Lambda}$ (see Condition~\rm{I}-\ref{it: I 5} and \rm{I}-\ref{it: I 7}) and can deduce the partition (\ref{equ: partition type I extremal}) from Lemma~\ref{lem: existence of partition}.
Thus, we just need to consider the following two cases:
\begin{itemize}
\item $i_{1,k,j_1}=i_{1,0}$ and $i_{1,k,j_1}^\prime=i_{1,c_1}$: this can not happen as $\Omega^+$ is $\Lambda$-extremal.

\item $i_{2,k,j_1}=i_{1,0}$ and $i_{2,k,j_1}^\prime=i_{1,c_1}$: this forces $\min\{k_{2,c_2-1},k_{1,1}\}\geq k>k_2^{1,1}$, in particular $d_2=1$, $c_2^1=c_2$, $\al^{\Omega^\pm}_{2,k,j_1}=\al_1=(i_{1,0},i_{1,c_1})$ and $\al^{\Omega^\pm}_{1,k,j_1}=(i_{1,1},i_{1,c})$ for some $2\leq c\leq c_1-1$.
The same argument as in the proof of Lemma~\ref{lem: std type I exceptional} gives the desired partition.
\end{itemize}

Now it remains to show that $\Omega_{a,k,j_1}^\natural=\Omega_{a,k,j_1}$ for each $a=1,2$ and each $(k,j_1)\in\mathbf{n}_\cJ$. Note that $\alpha^{\Omega^\pm}_{1,k,j_1}\neq (i_{1,0},i_{1,c_1})=\alpha_1$ for each $(k,j_1)\in\mathbf{n}_\cJ$, as $\Omega^+$ is $\Lambda$-extremal.
After applying Lemma~\ref{lem: unique k decomposition} the only non-trivial cases are:
\begin{itemize}
\item $\Omega_{1,k,j_1}=\Omega_{\psi_1,k}\setminus\{((i_{1,0},i_{1,1}),j_1)\}$: this forces $k>k_{2}^{1,1}>k_{1}^{1,1}$.  Lemma~\ref{lem: unique k decomposition} implies $\Omega^\natural_{1,k,j_1}\sqcup \{((i_{1,0},i_{1,1}),j_1)\}= \Omega_{\psi_1,k}$ and we are done.

\item $\Omega_{2,k,j_1}=\Omega_{\psi_2,k}\setminus\{((i_{1,0},i_{2,1}),j_1)\}$: this forces $k>k_{1}^{1,1}>k_{2}^{1,1}$.
We conclude by the same argument as in the previous case.
\end{itemize}
The proof is thus finished.
\end{proof}

\begin{proof}[Proof of Proposition~\ref{prop: type I extremal}]
Note that we fix a $\cC\in\cP_\cJ$ satisfying $\cC\subseteq\cN_{\xi,\Lambda}$.
We recall that $I_\cJ^{\Omega^\pm,\star}\subseteq I_\cJ^{\Omega^\pm}$ is the subset consisting of those $(k,j)$ satisfying $\mathbf{D}^{\Omega^\pm}_{k,j}\neq \mathbf{D}^{\Omega^\pm}_{k+1,j}$, and it is clear that $I_\cJ^{\Omega^\pm,\star}\subseteq\mathbf{n}\times\{j_1\}\subseteq\mathbf{n}_\cJ$ in our case.
It follows from Conditions~\rm{I}-\ref{it: I 3} and \rm{I}-\ref{it: I 4} that
$$
I_\cJ^{\Omega^\pm,\star}
=\{(k_{1,c},j_1)\mid 0\leq c\leq c_1-1\}\sqcup\{(k_2^{s,e},j_1)\mid 1\leq s\leq d_2,~1\leq e\leq e_{2,s},~k_2^{s,e}>k_{1,c_1-1}\}$$
if $k_2^{1,1}<k_1^{1,1}$, and
$$I_\cJ^{\Omega^\pm,\star}=\{(k_{1,c},j_1)\mid 1\leq c\leq c_1-1\}\sqcup\{(k_2^{s,e},j_1)\mid 1\leq s\leq d_2,~1\leq e\leq e_{2,s},~k_2^{s,e}>k_{1,c_1-1}\}$$
if $k_2^{1,1}>k_1^{1,1}$.
Hence it follows from Lemma~\ref{lem: from sets to formula} and Lemma~\ref{lem: std type I extremal} that $f_\xi^{\Omega^\pm}\in\Inv(\cC)$ and
\begin{equation}\label{equ: exp formula type I extremal}
f_\xi^{\Omega^\pm}|_\cC\sim F_\xi^{\Omega^\pm,1}|_\cC\cdot F_\xi^{\Omega^\pm,2}|_\cC
\end{equation}
where
$$
F_\xi^{\Omega^\pm,a}\defeq \underset{(k,j_1)\in I_\cJ^{\Omega^\pm,\star} }{\prod}F_\xi^{\Omega_{a,k,j_1}}(F_\xi^{\Omega_{a,k+1,j_1}})^{-1} $$
for each $a=1,2$. We write $\mathbf{n}^a\subseteq\mathbf{n}$ for the subset consisting of those $k$ satisfying $(k,j_1)\in I_\cJ^{\Omega^\pm,\star}$ and $\Omega_{a,k,j_1}\neq \Omega_{a,k+1,j_1}$, for each $a=1,2$. Then it follows from our definition of $\Omega_{1,k,j_1}$ and $\Omega_{2,k,j_1}$ that
$$\mathbf{n}^1=\left\{\begin{array}{cl}
\{k_2^{1,1}\}\sqcup\{k_{1,c}\mid 1\leq c\leq c_1-2\}&\hbox{if $k_2^{1,1}>k_1^{1,1}$};\\
\{k_{1,c}\mid 0\leq c\leq c_1-2\}&\hbox{if $k_2^{1,1}<k_1^{1,1}$}
\end{array}\right.$$
and
$$\mathbf{n}^2=\{k_{1,c_1-1}\}\sqcup\{k_2^{s,e}\mid 1\leq s\leq d_2,~1\leq e\leq e_{2,s},~k_2^{s,e}>k_{1,c_1-1}\}.$$
Then we observe that
$$
F_\xi^{\Omega^\pm,a}=\underset{k\in\mathbf{n}^a}{\prod}F_\xi^{\Omega_{a,k,j_1}}(F_\xi^{\Omega_{a,k+1,j_1}})^{-1}
$$
for each $a=1,2$. By carefully checking our definition of $\Omega_{1,k,j}$ and $\Omega_{2,k,j}$ for various $(k,j)\in\mathbf{n}_\cJ$, we observe that
\begin{equation}\label{equ: I extremal formula}
F_\xi^{\Omega^\pm,1}=F_\xi^{\Omega^+}\,\mbox{and}\,\,F_\xi^{\Omega^\pm,2}=(F_\xi^{\Omega^-})^{-1}.
\end{equation}
We can clearly combine (\ref{equ: I extremal formula}) with (\ref{equ: exp formula type I extremal}) and deduce that
$$
F_\xi^{\Omega^\pm}|_\cC\sim F_\xi^{\Omega^+}|_\cC(F_\xi^{\Omega^-}|_\cC)^{-1}\sim f_\xi^{\Omega^\pm}|_\cC\in\cO_\cC.
$$
The proof is thus finished.
\end{proof}

\subsection{Explicit formula: type~\rm{II}}\label{sub: exp type II}
In this section, we explicitly write down the set $\mathbf{D}^{\Omega^\pm}_{k,j}$ for each $(k,j)\in\mathbf{n}_\cJ$ when $\Omega^\pm$ is a constructible $\Lambda$-lift of type $\rm{II}$. Consequently, we apply Lemma~\ref{lem: from sets to formula} and finish the proofs of Proposition~\ref{prop: type II exceptional} and Proposition~\ref{prop: type II extremal} at the end of this section. We will frequently use all the notation from \S\,\ref{sub: type II}, \S\,\ref{sub: notation for each k j} and the beginning of \S\,\ref{sec:const:inv}.

We want to define a set $\Omega_{a,k,j}$ for each $1\leq a\leq t$ and each $(k,j)\in\mathbf{n}_\cJ$. Recall that $k_t^\prime=k_{t,c_t}$ if $d_t=0$, and $k_t^\prime=k_t^{1,1}$ if $d_t\geq 1$.
For each $1\leq a\leq t$, we set $\Omega_{a,k,j}\defeq \emptyset$ if $j\neq j_1$. For each $3\leq a\leq t-1$, we define $\Omega_{a,k,j_1}\defeq \Omega_{\psi_a,k}$ for each $k\in\mathbf{n}$.
If $k_{2,c_2-1}<k_{1,c_1-1}$, we define $\Omega_{2,k,j_1}\defeq \Omega_{\psi_2,k}$ for each $k\in\mathbf{n}$.
If $k_{2,c_2-1}>k_{1,c_1-1}$, we define $\Omega_{2,k,j_1}$ as
$$
\Omega_{2,k,j_1}\defeq
\left\{\begin{array}{cl}
\Omega_{\psi_2,k}&\hbox{if $k>k_{1,c_1-1}$};\\
\emptyset&\hbox{if $k\leq k_{1,c_1-1}$}.
\end{array}\right.
$$
If either $e_{1,1}=0$ or $k_t^\prime>k_1^{1,1}$, we define $\Omega_{t,k,j_1}\defeq \Omega_{\psi_t,k}$ for each $k\in\mathbf{n}$.
If $e_{1,1}\geq 1$ and $k_t^\prime<k_1^{1,1}$, we define $\Omega_{t,k,j_1}$ as
$$
\Omega_{t,k,j_1}\defeq
\left\{\begin{array}{cl}
\Omega_{\psi_t,k}\setminus\{((i_{1,0},i_{t,1}),j_1)\}&\hbox{if $k>k_1^{1,1}$};\\
\Omega_{\psi_t,k}&\hbox{if $k\leq k_1^{1,1}$}.
\end{array}\right.
$$
Note that if $k_{2,c_2-1}<k_{1,c_1-1}$, then we automatically have $e_{1,1}\geq 1$ and $k_{2,c_2-1}\leq k_1^{1,1}$.
If $k_{2,c_2-1}<k_{1,c_1-1}$ and $k_t^\prime>k_1^{1,1}$, then we define $\Omega_{1,k,j_1}$ as
$$
\Omega_{1,k,j_1}\defeq
\left\{\begin{array}{cl}
\Omega_{\psi_1,k}\setminus\{((i_{1,0},i_{1,1}),j_1)\}&\hbox{if $k>k_t^\prime$};\\
\Omega_{\psi_1,k}&\hbox{if $k_t^\prime\geq k>k_{2,c_2-1}$};\\
\emptyset&\hbox{if $k\leq k_{2,c_2-1}$}.
\end{array}\right.
$$
If $k_{2,c_2-1}<k_{1,c_1-1}$ and $k_t^\prime<k_1^{1,1}$, then we define $\Omega_{1,k,j_1}$ as
$$
\Omega_{1,k,j_1}\defeq
\left\{\begin{array}{cl}
\Omega_{\psi_1,k}&\hbox{if $k>k_{2,c_2-1}$};\\
\emptyset&\hbox{if $k\leq k_{2,c_2-1}$}.
\end{array}\right.
$$
If $k_{2,c_2-1}>k_{1,c_1-1}$ and either $e_{1,1}=0$ or $k_t^\prime>k_1^{1,1}$, then we define $\Omega_{1,k,j_1}$ as
$$
\Omega_{1,k,j_1}\defeq
\left\{\begin{array}{cl}
\Omega_{\psi_1,k}\setminus\{((i_{1,0},i_{1,1}),j_1)\}&\hbox{if $k>k_t^\prime$};\\
\Omega_{\psi_1,k}&\hbox{if $k\leq k_t^\prime$}.
\end{array}\right.
$$
If $k_{2,c_2-1}>k_{1,c_1-1}$, $e_{1,1}\geq 1$ and $k_t^\prime<k_1^{1,1}$, then we define $\Omega_{1,k,j_1}\defeq \Omega_{\psi_1,k}$ for each $k\in\mathbf{n}$.

For each $(k,j)\in\mathbf{n}_\cJ$ and each $1\leq a\leq t$, we write $\al^{\Omega^\pm}_{a,k,j}\defeq \sum_{(\beta,j_1)\in\Omega_{a,k,j}}\beta$. It follows from the definition of $\Omega_{a,k,j}$ above that $\al^{\Omega^\pm}_{a,k,j}\in\Phi^+\sqcup\{0\}$, and thus we write $\al^{\Omega^\pm}_{a,k,j}=(i_{a,k,j},i_{a,k,j}^\prime)$ for each $(k,j)\in\mathbf{n}_\cJ$ and each $1\leq a\leq t$ with $\al^{\Omega^\pm}_{a,k,j}\neq 0$.

\begin{lemma}\label{lem: std type II exceptional}
Let $\Omega^\pm$ be a constructible $\Lambda$-lift of type $\rm{II}$, and assume that $\Omega^+$ is $\Lambda$-exceptional. Then we have $\Omega_{a,k,j}\cap\Omega_{a',k,j}=\emptyset$ for each $1\leq a<a'\leq t$ and each $(k,j)\in \mathbf{n}_\cJ$. Moreover, we have
$$\mathbf{D}^{\Omega^\pm}_{k,j_1}=\bigg\{\Omega^+\sqcup\bigsqcup_{a=2}^t\Omega_{a,k,j_1}\bigg\}\sqcup \bigg\{\Omega'\sqcup\bigsqcup_{a=2}^t\Omega_{a,k,j_1}\mid \Omega^+<\Omega'\in\mathbf{D}_{(\al_1,j_1),\Lambda}\bigg\}$$
for each $k\in\mathbf{n}$ satisfying $\al^{\Omega^\pm}_{1,k,j_1}=\al_1=(i_{1,0},i_{1,c_1})$, and
$\mathbf{D}^{\Omega^\pm}_{k,j}=\{\bigsqcup_{a\in\Z/t}\Omega_{a,k,j}\}$ for other choices of $(k,j)\in\mathbf{n}_\cJ$.
\end{lemma}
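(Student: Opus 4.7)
The plan is to follow the template of Lemma~\ref{lem: std type I exceptional}, generalized from the two-component case ($t=2$) to arbitrary $t$. There are three things to establish: (i) pairwise disjointness of the $\Omega_{a,k,j_1}$; (ii) that the naive union $\bigsqcup_a \Omega_{a,k,j_1}$ lies in $\mathbf{D}^{\Omega^\pm}_{k,j_1}$; and (iii) that for generic $k$ this is the \emph{only} element, while at the distinguished values where $\al^{\Omega^\pm}_{1,k,j_1}=\al_1$ one picks up extra contributions precisely from the set $\{\Omega'\in\mathbf{D}_{(\al_1,j_1),\Lambda}\mid \Omega^+<\Omega'\}$. Since $\Omega_{a,k,j}=\emptyset$ when $j\neq j_1$, one has $\al^{\Omega^\pm}_{k,j}=0$ and $\mathbf{D}^{\Omega^\pm}_{k,j}=\{\emptyset\}$, so the statement reduces to computing $\mathbf{D}^{\Omega^\pm}_{k,j_1}$ for $k\in\mathbf{n}$.

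First, I would verify pairwise disjointness $\Omega_{a,k,j_1}\cap\Omega_{a',k,j_1}=\emptyset$ for $a\neq a'$. Since $\Omega_a=\Omega_{(\al_a,j_1),\Lambda}^{\rm{max}}$ for $2\leq a\leq t$ and all $(\al_a,j_1)$ are pairwise distinct blocks with $\widehat{\Omega}^+\cap\widehat{\Omega}^-=\emptyset$, an element common to two $\Omega_{a,k,j_1}$'s would force a coincidence of endpoints $i_{a,c}=i_{a',c'}$ contradicting Conditions~\rm{II}-\ref{it: II 6} and \rm{II}-\ref{it: II 8}. The cases involving $\Omega_{1,k,j_1}$ and $\Omega_{t,k,j_1}$ (which drop the leading element $((i_{1,0},i_{1,1}),j_1)$ or $((i_{1,0},i_{t,1}),j_1)$ in certain ranges of $k$) must be handled by splitting according to the sign of $k_{2,c_2-1}-k_{1,c_1-1}$ and $k_t'-k_1^{1,1}$, exactly as in Lemma~\ref{lem: std type I extremal}.

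Next I would prove $\al^{\Omega^\pm}_{k,j_1}=\sum_{a=1}^t \al^{\Omega^\pm}_{a,k,j_1}$ by decreasing induction on $k$ using (\ref{equ: difference at k}), setting $\al^{\Omega^\pm}_{n+1,j_1}=0$ and $\al^{\Omega^\pm}_{a,n+1,j_1}=0$. The inductive step traces through the explicit sequence $v_{j_1}^{\Omega^\pm,\sharp}v_{j_1}^{\Omega^\pm,\flat}$ constructed in \S~\ref{sub: type II} in each of the four configurations for the pair $(k_{2,c_2-1}\lessgtr k_{1,c_1-1},\,k_t'\lessgtr k_1^{1,1})$; at each step, at most one index $a$ contributes a change and one verifies $\al^{\Omega^\pm}_{k,j_1}-\al^{\Omega^\pm}_{k+1,j_1}=\al^{\Omega^\pm}_{a,k,j_1}-\al^{\Omega^\pm}_{a,k+1,j_1}$. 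This immediately gives $\bigsqcup_a\Omega_{a,k,j_1}\in\mathbf{D}^{\Omega^\pm}_{k,j_1}$.

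The hardest step is to show, for a general $\Omega^\natural_{k,j_1}\in\mathbf{D}^{\Omega^\pm}_{k,j_1}$, the existence of a partition $\Omega^\natural_{k,j_1}=\bigsqcup_a\Omega^\natural_{a,k,j_1}$ with $\sum_{\beta\in\Omega^\natural_{a,k,j_1}}\beta=\al^{\Omega^\pm}_{a,k,j_1}$. The strategy is iterated application of Lemma~\ref{lem: existence of partition}: I peel off one block at a time, checking in each case that $((i_{a,k,j_1},i_{a',k,j_1}^\prime),j_1)\notin\widehat{\Lambda}$ for $a\neq a'$. The required non-membership in $\widehat{\Lambda}$ is exactly what Conditions \rm{II}-\ref{it: II 4}--\rm{II}-\ref{it: II 8} guarantee, together with the hypothesis that $\Omega_a$ is $\Lambda$-ordinary and $(\al_a,j_1)$ belong to different $\Lambda^\square$-intervals. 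The single exceptional range is when $\al^{\Omega^\pm}_{1,k,j_1}=\al_1$: here (proceeding as in Lemma~\ref{lem: std type I exceptional} and using Condition~\rm{II}-\ref{it: II 9}) the partition peels off $\Omega^\natural_{a,k,j_1}$ for $a\geq 2$ uniquely via Lemma~\ref{lem: unique k decomposition}, while $\Omega^\natural_{1,k,j_1}$ may be any element of $\mathbf{D}_{(\al_1,j_1),\Lambda}$ with leading index satisfying $u_{j_1}(i_{\Omega^\natural_{1,k,j_1},1})\geq k_{1,c_1-1}$. Since $\Omega^+$ is $\Lambda$-exceptional with $u_{j_1}(i_{\Omega^+,1})=k_{1,c_1-1}$, these split as $\{\Omega^+\}\sqcup\{\Omega'\mid \Omega^+<\Omega'\}$, yielding the claimed formula. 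The main obstacle is the sheer combinatorial bookkeeping across the four sign configurations for $(k_{2,c_2-1},k_t')$; I would organize it as a single case analysis mirroring the four cases in the definition of $v_{j_1}^{\Omega^\pm,\sharp}$ in \S~\ref{sub: type II}, invoking the relevant conditions of Definition~\ref{def: constructible lifts} pointwise.
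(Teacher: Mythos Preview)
Your outline for (i) and (ii) is fine and matches the paper's treatment. The gap is in step (iii), the partition of an arbitrary $\Omega^\natural_{k,j_1}\in\mathbf{D}^{\Omega^\pm}_{k,j_1}$ into pieces $\Omega^\natural_{a,k,j_1}$. You propose to obtain this by ``iterated application of Lemma~\ref{lem: existence of partition}'', peeling off one block at a time. This does not work as stated, for two reasons. First, Lemma~\ref{lem: existence of partition} is formulated only for constructible $\Lambda$-lifts of type~\rm{I}; you cannot simply invoke it here. Second, and more seriously, that lemma requires $\al^{\Omega^\pm}_{k,j}=\beta_1+\beta_2$ with \emph{both} $(\beta_1,j),(\beta_2,j)\in\widehat{\Lambda}$, i.e.\ both summands are single positive roots. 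When $t\geq 3$ and several $\al^{\Omega^\pm}_{a,k,j_1}$ are nonzero, the complement of any one of them is a sum of $\geq 2$ roots and is typically not in $\widehat{\Lambda}$, so the hypothesis of the lemma fails. There is no obvious inductive order in which the complement remains a single root at each stage, since the $\Omega_{a,k,j_1}$ are only partial pieces of the $\Omega_a$ and the chain structure of the pseudo $\Lambda$-decomposition $\Omega^-$ need not persist.

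The paper handles this by a genuinely different device. Given $\Omega^\natural_{k,j_1}$, it chooses \emph{minimal} nonempty subsets $\Omega_\sharp\subseteq\bigsqcup_a\Omega_{a,k,j_1}$ and $\Omega_\flat\subseteq\Omega^\natural_{k,j_1}$ with $\sum_{\beta\in\Omega_\sharp}\beta=\sum_{\beta\in\Omega_\flat}\beta$, writes $\widehat{\Omega}_\sharp=\{(\al_{\sharp,s'},j_1)\}_{s'\in\Z/s}$ and $\widehat{\Omega}_\flat=\{(\al_{\flat,s'},j_1)\}_{s'\in\Z/s}$ with the cyclic matching $i^\prime_{\al_{\sharp,s'}}=i^\prime_{\al_{\flat,s'}}$, $i_{\al_{\flat,s'}}=i_{\al_{\sharp,s'+1}}$, and then proves $s=1$ via an extended case analysis using Conditions~\rm{II}-\ref{it: II 4}--\rm{II}-\ref{it: II 8} and, crucially, Conditions~\rm{II}-\ref{it: II 10}, \rm{II}-\ref{it: II 11} in the case $\Omega^+\neq\Omega_{(\al_1,j_1),\Lambda}^{\rm{max}}$ (not Condition~\rm{II}-\ref{it: II 9} as you write). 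Once every such minimal pair is simple, the desired partition follows by exhausting $\Omega^\natural_{k,j_1}$. After the partition exists, the uniqueness of each $\Omega^\natural_{a,k,j_1}$ for $a\geq 2$ (and for $a=1$ away from $\al^{\Omega^\pm}_{1,k,j_1}=\al_1$) then comes from Lemma~\ref{lem: unique k decomposition}, as you indicate. You should replace the ``iterated Lemma~\ref{lem: existence of partition}'' step by this minimal-balanced-pair argument.
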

\begin{proof}
A similar argument as in the proof of Lemma~\ref{lem: std type I exceptional} and case-by-case checking show that $\Omega_{a,k,j}\cap\Omega_{a',k,j}=\emptyset$ for each $1\leq a<a'\leq t$ and $\al^{\Omega^\pm}_{k,j}=\sum_{a=1}^t\al^{\Omega^\pm}_{a,k,j}$ as well as $\bigsqcup_{a\in\Z/t}\Omega_{a,k,j}\in\mathbf{D}^{\Omega^\pm}_{k,j}$ for each $(k,j)\in\mathbf{n}_\cJ$. As we clearly have $\al^{\Omega^\pm}_{k,j}=0$ and thus $\mathbf{D}^{\Omega^\pm}_{k,j}=\{\emptyset\}$ if $j\neq j_1$, it suffices to study the set $\mathbf{D}^{\Omega^\pm}_{k,j_1}$ for each $k\in\mathbf{n}$.

Let $(k,j_1)\in\mathbf{n}_\cJ$ be a pair and $\Omega_{k,j_1}^\natural$ be an arbitrary element of $\mathbf{D}^{\Omega^\pm}_{k,j_1}$.
We first show the following
\begin{claim}
\label{equ: partition type II exceptional}
There exists a partition $\Omega_{k,j_1}^\natural=\bigsqcup_{a\in\Z/t}\Omega_{a,k,j_1}^\natural$
such that
$\sum_{(\beta,j_1)\in \Omega_{a,k,j_1}^\natural}\beta=\al^{\Omega^\pm}_{a,k,j_1}$
for each $a\in\Z/t$.
\end{claim}
\begin{proof}[Proof of Claim~\ref{equ: partition type II exceptional}]
We fix a choice of $(k,j_1)\in\mathbf{n}_\cJ$ such that $\Omega_{a,k,j_1}\neq \emptyset$ for at least two different choices of $a\in\Z/t$ (otherwise, the claim is trivial). Then we choose two non-empty subsets $$\Omega_\sharp\subseteq\underset{a\in\Z/t}{\bigsqcup}\Omega_{a,k,j_1}\,\,\mbox{and}\,\,\Omega_\flat\subseteq\Omega_{k,j_1}^\natural$$ such that
$\sum_{\beta\in\Omega_\sharp}\beta=\sum_{\beta\in\Omega_\flat}\beta$
and there do not exist proper non-empty subsets of $\Omega_\sharp$ and $\Omega_\flat$ satisfying the similar equality. According to our assumption on $\Omega_\sharp$ and $\Omega_\flat$, there exist $s\geq 1$ and an ordering $\widehat{\Omega}_\sharp=\{(\al_{\sharp,s'},j_1)\mid s'\in\Z/s\}$ and an ordering $\widehat{\Omega}_\flat=\{(\al_{\flat,s'},j_1)\mid s'\in\Z/s\}$ such that $i_{\al_{\sharp,s'}}^\prime=i_{\al_{\flat,s'}}^\prime$ and $i_{\al_{\flat,s'}}=i_{\al_{\sharp,s'+1}}$ for each $s'\in\Z/s$. In particular, we observe that
\begin{equation}\label{equ: connecting root: type II}
((i_{\al_{\sharp,s'+1}},i_{\al_{\sharp,s'}}^\prime),j_1)\in\widehat{\Lambda}
\end{equation}
for each $s'\in\Z/s$. Moreover, we observe that the sets $\{i_{\al_{\sharp,s'}},i_{\al_{\sharp,s'}}^\prime\}$ are disjoint for different choices of $s'\in\Z/s$ and we have (cf.~Definition~\ref{def: separated condition})
$$\Delta_{\Omega_\sharp}=\Delta_{\Omega_\flat}=\underset{s'\in\Z/s}{\bigsqcup}\{(i_{\al_{\sharp,s'}},j),(i_{\al_{\sharp,s'}}^\prime,j)\}.$$
It follows from $\Omega_\sharp\subseteq \bigsqcup_{a\in\Z/t}\Omega_{a,k,j_1}$ that, for each $s'\in\Z/s$, there exists a unique $a\in\Z/t$ such that $\Omega_{\sharp,s'}\subseteq\Omega_{a,k,j_1}\neq \emptyset$, and thus we have a well-defined map $\phi:~\Z/s\rightarrow\Z/t$. We prove that we always have $s=1$ by dividing into several cases.

We first treat the case when $\Omega_\sharp\cap\Omega_{1,k,j_1}=\emptyset$. We choose $s'\in\Z/s$ such that $2\leq \phi(s')\leq t$ is maximal possible and $u_{j_1}(i_{\al_{\sharp,s'}}^\prime)$ is maximal possible for the fixed choice of $\phi(s')$. Then we deduce from $((i_{\al_{\sharp,s'+1}},i_{\al_{\sharp,s'}}^\prime),j_1)\in\widehat{\Lambda}$ that $\phi(s'+1)=\phi(s')$ (as $\phi(s')$ is maximal) and thus $u_{j_1}(i_{\al_{\sharp,s'+1}})>u_{j_1}(i_{\al_{\sharp,s'+1}}^\prime)\geq u_{j_1}(i_{\al_{\sharp,s'}}^\prime)$, which together with the maximality of $u_{j_1}(i_{\al_{\sharp,s'}}^\prime)$ implies $s=1$.

Secondly we treat the case when $\Omega_\sharp\subseteq\Omega_{1,k,j_1}\neq \emptyset$. We choose $s'\in\Z/s$ such that $u_{j_1}(i_{\al_{\sharp,s'}}^\prime)$ is maximal possible and deduce from $((i_{\al_{\sharp,s'+1}},i_{\al_{\sharp,s'}}^\prime),j_1)\in\widehat{\Lambda}$ that  $u_{j_1}(i_{\al_{\sharp,s'+1}})>u_{j_1}(i_{\al_{\sharp,s'+1}}^\prime)\geq u_{j_1}(i_{\al_{\sharp,s'}}^\prime)$, which together with the maximality of $u_{j_1}(i_{\al_{\sharp,s'}}^\prime)$ implies $s=1$.

Now we treat the case when $\Omega_\sharp\cap\Omega_{1,k,j_1}\neq\emptyset$ and $\Omega_\sharp\cap\Omega_{a,k,j_1}\neq\emptyset$ for some $2\leq a\leq t$ (and thus $s\geq 2$). We choose an arbitrary $s_0'\in \Z/s$ satisfying $\phi(s_0')=1$ and note that $s_0'+1\neq s_0'\neq s_0'-1$ as $s\geq 2$. We divide into the following cases.

If there does not exist any choice of $s_0'$ such that $i_{\al_{\sharp,s_0'}}^\prime=i_{1,c_1}$, then we choose our $s_0'$ such that $\phi(s_0'+1)\neq 1$. Then the inclusion (\ref{equ: connecting root: type II}) (when $s=s_0'$)
together with either Condition~\rm{II}-\ref{it: II 6} or \rm{II}-\ref{it: II 8} implies that $i_{\al_{\sharp,s_0'+1}}=i_{1,0}$, which forces $\phi(s_0'+1)=t$ and $i_{\al_{\sharp,s'}}\neq i_{1,0}$ for each $s'\neq s_0'+1$. Note that $i_{\al_{\sharp,s_0'+1}}=i_{1,0}$ also forces $\phi(s_0'+2)=1$. If $i_{\al_{\sharp,s_0'+2}}=i_{1,c}$ for some $1\leq c\leq c_1-1$, then the inclusion (\ref{equ: connecting root: type II}) (when $s=s_0'+1$)
violates Condition~\rm{II}-\ref{it: II 6} as $i_{\al_{\sharp,s_0'+1}}^\prime\neq i_{1,c_1}$ by our assumption. Otherwise, we have $i_{\al_{\sharp,s_0'+1}}^\prime=i_{t,c'}$ for some $1\leq c'\leq c_t$ and $i_{\al_{\sharp,s_0'+2}}=i_1^{1,e}$ for some $1\leq e\leq e_{1,1}$, and so we deduce from Condition~\rm{II}-\ref{it: II 7} and the inclusion (\ref{equ: connecting root: type II}) (when $s=s_0'+1$) that $k_1^{1,e}\leq k_{2,c_2-1}<k_{1,c_1-1}$, which is a contradiction as there does not exist $k$ such that $(i_1^{1,e},j_1)\in\mathbf{I}_{\Omega_{1,k,j_1}}$ in this case.

If there exists $s_0'$ such that $i_{\al_{\sharp,s_0'}}^\prime=i_{1,c_1}$ and $i_{\al_{\sharp,s_0'}}\neq i_{1,0}$, then we must have $i_{\al_{\sharp,s'}}^\prime\neq i_{\al_{\sharp,s_0'}}^\prime=i_{1,c_1}$ for each $s'\neq s_0'$ and $\phi(s_0'-1)\neq 1$. If $i_{\al_{\sharp,s_0'}}=i_{1,c}$ for some $1\leq c\leq c_1-1$, then the inclusion (\ref{equ: connecting root: type II}) (when $s=s_0'-1$)
violates Condition~\rm{II}-\ref{it: II 6} as $i_{\al_{\sharp,s_0'-1}}^\prime\neq i_{1,c_1}$. Otherwise, $i_{\al_{\sharp,s_0'-1}}^\prime=i_{\phi(s_0'-1),c'}$ for some $1\leq c'\leq c_{\phi(s_0'-1)}$ and $i_{\al_{\sharp,s_0'}}=i_1^{1,e}$ for some $1\leq e\leq e_{1,1}$, then we deduce from Condition~\rm{II}-\ref{it: II 7} and the inclusion (\ref{equ: connecting root: type II}) (when $s=s_0'-1$) that exactly one of the following holds:
\begin{itemize}
\item $k_1^{1,e}\leq k_{2,c_2-1}<k_{1,c_1-1}$;
\item $k_{\phi(s_0'-1),0}\leq k_{2,c_2-1}<k_{1,c_1-1}$.
\end{itemize}
If $k_1^{1,e}\leq k_{2,c_2-1}<k_{1,c_1-1}$, we obtain a contradiction as there does not exist $k$ such that $(i_1^{1,e},j_1)\in\mathbf{I}_{\Omega_{1,k,j_1}}$ in this case. If $k_{\phi(s_0'-1),0}\leq k_{2,c_2-1}<k_{1,c_1-1}$, then $\Omega_{\sharp,s_0'-1}\subseteq\Omega_{\phi(s_0'-1),k,j_1}\neq \emptyset$ forces $k\leq k_{\phi(s_0'-1),0}\leq k_{2,c_2-1}<k_{1,c_1-1}$ which implies $\Omega_{1,k,j_1}=\emptyset$ and thus a contradiction.

Hence we may assume from now on that $i_{\al_{\sharp,s_0'}}^\prime=i_{1,c_1}$ and $i_{\al_{\sharp,s_0'}}=i_{1,0}$ (namely $\Omega_{\sharp,s_0'}=\Omega_{1,k,j_1}=\Omega^+$), which implies
\begin{itemize}
\item $k\leq k_{1,c_1-1}$ and either $e_{1,1}=0$ or $k>k_1^{1,1}$;
\item $i_{\al_{\sharp,s'}}^\prime\neq i_{1,c_1}$ and $i_{\al_{\sharp,s'}}\neq i_{1,0}$ for each $s'\neq s_0'$;
\item $\phi(s')\neq 1$ for each $s'\neq s_0'$.
\end{itemize}

If $\Omega^+=\Omega_{(\al_1,j_1),\Lambda}^{\rm{max}}$, then as $\Omega_{\flat,s_0'}$ is a $\Lambda$-decomposition of $((i_{\al_{\sharp,s_0'+1}},i_{1,c_1}),j_1)$ satisfying $u_{j_1}(i_{\Omega_{\flat,s_0'},1})\geq k$, we must have $i_{\Omega_{\flat,s_0'},1}=i_{1,c_1-1}=i_{\Omega_{(\al_1,j_1),\Lambda}^{\rm{max}},1}$ (using $k\leq k_{1,c_1-1}$ and either $e_{1,1}=0$ or $k>k_1^{1,1}$), which implies $$((i_{\al_{\sharp,s_0'+1}},i_{1,c_1-1}),j_1)\in\widehat{\Lambda}$$ and thus violates either Condition~\rm{II}-\ref{it: II 6} or \rm{II}-\ref{it: II 8} as $\phi(s_0'+1)\neq 1$ and $i_{\al_{\sharp,s_0'+1}}\neq i_{1,0}$.

If $\Omega^+\neq \Omega_{(\al_1,j_1),\Lambda}^{\rm{max}}$, then it follows from Condition~\rm{II}-\ref{it: II 11} (as well as $k\leq k_{1,c_1-1}$ and either $e_{1,1}=0$ or $k>k_1^{1,1}$) that
\begin{itemize}
\item $k_{t,c_t}>k_{1,c_1-1}\geq k$;
\item for each $3\leq a\leq t-1$, we have either $k_{a,c_a}>k_{1,c_1-1}$ or $k_{a,0}<k_{2,c_2-1}<k_{1,c_1-1}$,
\end{itemize}
which implies that $\Omega_{a,k,j_1}=\emptyset$ for each $3\leq a\leq t$ and $\phi(s_0'+1)=2$. If $k_{2,c_2-1}>k_{1,c_1-1}$, then we have $\Omega_{2,k,j_1}=\emptyset$ (using $k\leq k_{1,c_1-1}$) which contradicts $\phi(s_0'+1)=2$. Thus $k_{2,c_2-1}\leq k_{1,c_1-1}$, hence $e_{1,1}\geq 1$, $k_{2,c_2-1}\leq k_1^{1,1}$ and $\Omega_{\flat,s_0'}$ is a $\Lambda$-decomposition of $((i_{\al_{\sharp,s_0'+1}},i_{1,c_1}),j_1)$ satisfying $u_{j_1}(i_{\Omega_{\flat,s_0'},1})\geq k>k_1^{1,1}\geq k_{2,c_2-1}$, contradicting $\Omega_{\al_2,j_1}^-=\Omega_{(\al_2,j_1),\Lambda}^{\max}$.

Up to this stage, we have shown that $s=1$ for all possible choices of $\Omega_\sharp,\Omega_\flat$.
This finishes the proof of Claim~\ref{equ: partition type II exceptional}.
\end{proof}

Now we continue the proof of Lemma~\ref{lem: std type II exceptional}. It remains to analyze $\Omega_{a,k,j_1}^\natural$ for each $(k,j_1)\in\mathbf{n}_\cJ$ and $a\in\Z/t$ such that $\Omega_{a,k,j_1}\neq \emptyset$.

By Lemma~\ref{lem: unique k decomposition}, $\Omega_{a,k,j_1}\neq \emptyset$ (which equals either $\Omega_{\psi_a,k}$ or $\Omega_{\psi_a,k}\setminus\{(i_{a,0},i_{a,1})\}$) is the unique $\Lambda$-decomposition $\Omega'$ of $\alpha^{\Omega^\pm}_{a,k,j_1}$ such that $u_{j_1}(i_{\Omega',1})\geq k$, provided either $\Omega_{a}$ is maximal or $\alpha_{\psi_a,k}\neq (i_{a,0},i_{a,c_a})$.
Hence $\Omega^\natural_{a,k,j_1}=\Omega_{a,k,j_1}$ in such cases.

Thus it remains to study $\Omega_{1,k,j_1}$ when $\Omega^+\neq \Omega_{(\alpha_1,j_1),\Lambda}^{\max}$ and $\alpha_{\psi_1,k}=\alpha_1=(i_{1,0},i_{1,c_1})$.
Condition~\rm{II}-\ref{it: II 10} implies $k_t^\prime>k_{1,c_1-1}$.
Furthermore $k_t^\prime>k_{1,c_1-1}\geq k$ and either $e_{1,1}=0$ or $k>k_1^{1,1}$ (in particular $\alpha^{\Omega^\pm}_{1,k,j_1}=\alpha_1$).
But then since $\Omega^+$ is $\Lambda$-exceptional, any $\Lambda$-decomposition $\Omega'$ of $\alpha^{\Omega^\pm}_{1,k,j_1}$ with $u_{j_1}(i_{\Omega',1})\geq k$ must either equal $\Omega^+$ or satisfy $u_{j_1}(i_{\Omega',1})>k_{1,c_1-1}$.
Thus we have either $\Omega^+=\Omega_{a,k,j_1}^\natural$ or $\Omega^+<\Omega_{a,k,j_1}^\natural$. The proof is thus finished.
\end{proof}

\begin{lemma}\label{lem: std type II extremal}
Let $\Omega^\pm$ be a constructible $\Lambda$-lift of type $\rm{II}$, and assume that $\Omega^+$ is $\Lambda$-extremal. Then we have $\Omega_{a,k,j}\cap\Omega_{a',k,j}=\emptyset$ for each $1\leq a<a'\leq t$ and $\mathbf{D}^{\Omega^\pm}_{k,j}=\{\bigsqcup_{a\in\Z/t}\Omega_{a,k,j}\}$
for each $(k,j)\in\mathbf{n}_\cJ$.
\end{lemma}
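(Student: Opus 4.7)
The plan is to mirror the structure of the proof of Lemma~\ref{lem: std type II exceptional}, with simplifications in the final step reflecting the fact that $\Omega^+$ is $\Lambda$-extremal rather than $\Lambda$-exceptional. First I would dispose of the trivial case $j\neq j_1$ by noting that then every $\Omega_{a,k,j}$ is empty and $\alpha^{\Omega^\pm}_{k,j}=0$. Then for $(k,j_1)$, a direct case-by-case check — running through the four regimes that appear in the definitions of $\Omega_{1,k,j_1}$ and $\Omega_{t,k,j_1}$ (namely the sign of $k_{2,c_2-1}-k_{1,c_1-1}$ and, when $e_{1,1}\geq 1$, the sign of $k_t^\prime - k_1^{1,1}$) — establishes pairwise disjointness $\Omega_{a,k,j_1}\cap\Omega_{a',k,j_1}=\emptyset$ and the identity $\alpha^{\Omega^\pm}_{k,j_1}=\sum_{a\in\Z/t}\alpha^{\Omega^\pm}_{a,k,j_1}$, so that $\bigsqcup_{a\in\Z/t}\Omega_{a,k,j_1}\in\mathbf{D}^{\Omega^\pm}_{k,j_1}$. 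The indexing conventions ensure that at each ``boundary'' value of $k$, at most one of the $\Omega_{a,k,j_1}$ jumps, which keeps bookkeeping manageable.

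Next, given an arbitrary $\Omega^\natural_{k,j_1}\in \mathbf{D}^{\Omega^\pm}_{k,j_1}$, I would prove the partition statement: there exists a decomposition $\Omega^\natural_{k,j_1}=\bigsqcup_{a\in\Z/t}\Omega^\natural_{a,k,j_1}$ with $\sum_{(\beta,j_1)\in\Omega^\natural_{a,k,j_1}}\beta=\alpha^{\Omega^\pm}_{a,k,j_1}$. As in Claim~\ref{equ: partition type II exceptional}, I would argue by contradiction, extracting a minimal non-empty pair $\Omega_\sharp\subseteq \bigsqcup_a\Omega_{a,k,j_1}$, $\Omega_\flat\subseteq \Omega^\natural_{k,j_1}$ of equal root-sum and ordering $\widehat{\Omega}_\sharp=\{(\alpha_{\sharp,s'},j_1)\mid s'\in\Z/s\}$, $\widehat{\Omega}_\flat=\{(\alpha_{\flat,s'},j_1)\mid s'\in\Z/s\}$ with $i_{\alpha_{\sharp,s'+1}}=i_{\alpha_{\flat,s'}}$ and $i^\prime_{\alpha_{\sharp,s'}}=i^\prime_{\alpha_{\flat,s'}}$, thereby forcing $((i_{\alpha_{\sharp,s'+1}},i^\prime_{\alpha_{\sharp,s'}}),j_1)\in \widehat{\Lambda}$ for every $s'$. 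Then, using the assignment map $\phi\colon \Z/s\to\Z/t$, the same maximality argument as in the exceptional case disposes of the regimes $\Omega_\sharp\cap\Omega_{1,k,j_1}=\emptyset$ and $\Omega_\sharp\subseteq\Omega_{1,k,j_1}$.

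The main obstacle is the remaining regime where $\Omega_\sharp$ meets both $\Omega_{1,k,j_1}$ and some $\Omega_{a,k,j_1}$ with $2\leq a\leq t$. Here the proof of Lemma~\ref{lem: std type II exceptional} relied on Conditions~\rm{II}-\ref{it: II 9}--\ref{it: II 11}, which are vacuous in our setting; however, now $\Omega^+$ is $\Lambda$-extremal, so $k_1^{1,1}>k_{1,c_1-1}$ and $e_{1,1}\geq 1$. I would choose $s_0'\in\Z/s$ with $\phi(s_0')=1$ and exploit the defining inequalities of $\Omega_{1,k,j_1}$ together with Conditions~\rm{II}-\ref{it: II 4}--\ref{it: II 8} and the $\Lambda$-extremality inequality $k_1^{1,1}>k_{1,c_1-1}$ to force $i^\prime_{\alpha_{\sharp,s_0'}}\neq i_{1,c_1}$ and $i_{\alpha_{\sharp,s_0'}}\neq i_{1,0}$ (the situation $\Omega_{\sharp,s_0'}=\Omega^+$ that causes the extra term in the exceptional case cannot arise, because $\Omega^+$ not being $\Lambda$-exceptional guarantees $k>k_1^{1,1}>k_{1,c_1-1}$ is incompatible with $\alpha^{\Omega^\pm}_{1,k,j_1}=\alpha_1$). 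An inductive descent through the possible positions of $i_{\alpha_{\sharp,s_0'}}$ and $i^\prime_{\alpha_{\sharp,s_0'-1}}$ (either $i_{1,c}$ for some $1\leq c\leq c_1-1$, or $i_1^{1,e}$ for some $e$) contradicts either Condition~\rm{II}-\ref{it: II 6}, \rm{II}-\ref{it: II 7}, or \rm{II}-\ref{it: II 8}, forcing $s=1$.

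Finally, granted the partition, it remains to identify $\Omega^\natural_{a,k,j_1}=\Omega_{a,k,j_1}$. Since $\Omega^+$ is $\Lambda$-extremal (so $\alpha^{\Omega^\pm}_{1,k,j_1}\neq \alpha_1$ for every $k$ by construction), and since the remaining $\Omega^-_{(\alpha_a,j_1)}=\Omega_{(\alpha_a,j_1),\Lambda}^{\rm{max}}$ for $2\leq a\leq t$, every $\Omega_{a,k,j_1}$ is either of the form $\Omega_{\psi_a,k}$ or $\Omega_{\psi_a,k}\setminus\{((i_{a,0},i_{a,1}),j_1)\}$, and Lemma~\ref{lem: unique k decomposition} gives uniqueness directly; there is no residual term. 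This yields the singleton $\mathbf{D}^{\Omega^\pm}_{k,j_1}=\{\bigsqcup_{a\in\Z/t}\Omega_{a,k,j_1}\}$ and completes the proof.
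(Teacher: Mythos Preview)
Your proposal is correct and follows essentially the same approach as the paper: rerun the proof of Lemma~\ref{lem: std type II exceptional} and observe that $\Lambda$-extremality of $\Omega^+$ rules out the one problematic case $\Omega_{\sharp,s_0'}=\Omega_{1,k,j_1}=\Omega^+$ in the partition claim, and allows Lemma~\ref{lem: unique k decomposition} to apply directly in the identification step. One small imprecision: you write that extremality ``forces $i^\prime_{\alpha_{\sharp,s_0'}}\neq i_{1,c_1}$ and $i_{\alpha_{\sharp,s_0'}}\neq i_{1,0}$'', but what is actually needed (and what your parenthetical correctly states) is only that these two equalities cannot hold \emph{simultaneously}---the cases where exactly one holds are already dispatched by the argument of Lemma~\ref{lem: std type II exceptional} without invoking Conditions~\rm{II}-\ref{it: II 9}--\ref{it: II 11}.
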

\begin{proof}
The same proof as Lemma~\ref{lem: std type II exceptional} works with the following observation: since $\Omega^+$ is $\Lambda$-extremal, $\Omega_{\psi_1,k}\neq \Omega^+$ for any $k$.
This implies (in notation of \emph{loc.~cit.}):
\begin{itemize}

\item In the proof of Claim~\ref{equ: partition type II exceptional}, $i_{\alpha_{\sharp, s_0'}}^\prime=i_{1,c_1}$ and $i_{\alpha_{\sharp, s_0'}}=i_{1,0}$ can't simultaneously happen.

\item $\Omega_{1,k,j_1}^\natural=\Omega_{1,k,j_1}$ because Lemma~\ref{lem: unique k decomposition} automatically applies.
\end{itemize}
This completes the proof.
\end{proof}

\begin{proof}[Proof of Proposition~\ref{prop: type II exceptional}]
Note that we fix a $\cC\in\cP_\cJ$ satisfying $\cC\subseteq\cN_{\xi,\Lambda}$.
We recall that $I_\cJ^{\Omega^\pm,\star}\subseteq I_\cJ^{\Omega^\pm}$ is the subset consisting of those $(k,j)$ satisfying $\mathbf{D}^{\Omega^\pm}_{k,j}\neq \mathbf{D}^{\Omega^\pm}_{k+1,j}$, and it is clear that $I_\cJ^{\Omega^\pm,\star}\subseteq\mathbf{n}\times\{j_1\}\subseteq\mathbf{n}_\cJ$ in our case.
It follows from Condition~\rm{II}-\ref{it: II 3} and \rm{II}-\ref{it: II 4} that
\begin{multline*}
I_\cJ^{\Omega^\pm,\star}=\{(k_{1,c},j_1)\mid 1\leq c\leq c_1\}\\
\sqcup\{(k_a^{s,e},j_1)\mid 2\leq a\leq t,~1\leq s\leq d_a,~1\leq e\leq e_{a,s}\}\sqcup\{k_{a,c_a}\mid 3\leq a\leq t\}
\end{multline*}
if $k_{2,c_2-1}<k_{1,c_1-1}$ and $k_t^\prime>k_1^{1,1}$,
\begin{multline*}
I_\cJ^{\Omega^\pm,\star}=\{(k_{1,c},j_1)\mid 0\leq c\leq c_1\}\\
\sqcup\{(k_a^{s,e},j_1)\mid 2\leq a\leq t,~1\leq s\leq d_a,~1\leq e\leq e_{a,s}\}\sqcup\{k_{a,c_a}\mid 3\leq a\leq t\}
\end{multline*}
if $k_{2,c_2-1}<k_{1,c_1-1}$ and $k_t^\prime<k_1^{1,1}$,
\begin{multline*}
I_\cJ^{\Omega^\pm,\star}=\{(k_{1,c},j_1)\mid 1\leq c\leq c_1-1\}\\
\sqcup\{(k_a^{s,e},j_1)\mid 2\leq a\leq t,~1\leq s\leq d_a,~1\leq e\leq e_{a,s},~k_a^{s,e}>k_{1,c_1-1}\}\sqcup\{k_{a,c_a}\mid 3\leq a\leq t\}
\end{multline*}
if $k_{2,c_2-1}>k_{1,c_1-1}$ and either $e_{1,1}=0$ or $k_t^\prime>k_1^{1,1}$, and
\begin{multline*}
I_\cJ^{\Omega^\pm,\star}=\{(k_{1,c},j_1)\mid 0\leq c\leq c_1-1\}\\
\sqcup\{(k_a^{s,e},j_1)\mid 2\leq a\leq t,~1\leq s\leq d_a,~1\leq e\leq e_{a,s},~k_a^{s,e}>k_{1,c_1-1}\}\sqcup\{k_{a,c_a}\mid 3\leq a\leq t\}
\end{multline*}
if $k_{2,c_2-1}>k_{1,c_1-1}$, $e_{1,1}\geq 1$ and $k_t^\prime<k_1^{1,1}$.
It follows from Lemma~\ref{lem: general formula for det} and Lemma~\ref{lem: std type II exceptional} that
$$
f_{S^{j,\Omega^\pm}_{k},j}|_{\cN_{\xi,\Lambda}}\sim
\left\{
  \begin{array}{ll}
    F_\xi^{\Omega^+,\star}\prod_{2\leq a\leq t}F_\xi^{\Omega_{a,k,j}} & \hbox{for each $(k,j)$ satisfying $\al^{\Omega^\pm}_{1,k,j}=\al_1$;} \\
    \prod_{a\in\Z/t}F_\xi^{\Omega_{a,k,j}} & \hbox{otherwise}
  \end{array}
\right.
$$
where
\begin{equation}\label{equ: formula for special elt II}
F_\xi^{\Omega^+,\star}\defeq F_\xi^{\Omega^+}+\sum_{\substack{\Omega'\in \mathbf{D}_{(\al_1,j_1)},\, \Omega^+<\Omega'}}\varepsilon(\Omega')F_\xi^{\Omega'}.
\end{equation}
Here, $\varepsilon(\Omega')\in\{-1,1\}$ is a sign determined by $\Omega'$. If $F_\xi^{\Omega^+,\star}|_\cC=0$, then Proposition~\ref{prop: type II exceptional} clearly follows as $F_\xi^{\Omega^+,\star}(F_\xi^{\Omega^-})^{-1}|_\cC=0\in\cO_\cC$. If $F_\xi^{\Omega^+,\star}|_\cC\neq 0$, then we take $(k_\star,j_\star)\defeq (k_{1,c_1-1},j_1)$ and deduce from Lemma~\ref{lem: from sets to formula} and Lemma~\ref{lem: std type II exceptional} that $f_\xi^{\Omega^\pm}\in\Inv(\cC)$ and
\begin{equation}\label{equ: exp formula type II exceptional}
f_\xi^{\Omega^\pm}|_\cC\sim F_\xi^{\Omega^+,\star}|_\cC \underset{a\in\Z/t}{\prod}F_\xi^{\Omega^\pm,a}|_\cC
\end{equation}
where
$$
F_\xi^{\Omega^\pm,a}\defeq (F_\xi^{\Omega_{a,k_\star+1,j_1}})^{-1}\underset{(k,j_1)\in I_\cJ^{\Omega^\pm,\star}\setminus\{(k_\star,j_1)\}}{\prod}F_\xi^{\Omega_{a,k,j_1}}(F_\xi^{\Omega_{a,k+1,j_1}})^{-1}
$$
for each $a\in\Z/t$. For each $a\in\Z/t$, we write $\mathbf{n}^a\subseteq\mathbf{n}\setminus\{k_\star\}$ as the subset consisting of those $k$ satisfying $(k,j_1)\in I_\cJ^{\Omega^\pm,\star}$ and $\Omega_{a,k,j_1}\neq \Omega_{a,k+1,j_1}$. Then we observe from our definition of various $\Omega_{a,k,j_1}$ that
$$\mathbf{n}^a=\{k_{a,c_a}\}\sqcup\{k_a^{s,e}\mid 1\leq s\leq d_a,~1\leq e\leq e_{a,s}\}$$
for each $3\leq a\leq t$,
$$\mathbf{n}^2=\left\{\begin{array}{cl}
\{k_{1,c_1}\}\sqcup\{k_2^{s,e}\mid 1\leq s\leq d_2,~1\leq e\leq e_{2,s}\}&\hbox{if $k_{2,c_2-1}<k_{1,c_1-1}$};\\
\{k_2^{s,e}\mid 1\leq s\leq d_2,~1\leq e\leq e_{2,s},~k_2^{s,e}>k_{1,c_1-1}\}&\hbox{if $k_{2,c_2-1}>k_{1,c_1-1}$},
\end{array}\right.$$
and
$$\mathbf{n}^1=\left\{\begin{array}{cl}
\{k_t^\prime\}\sqcup\{k_{1,c}\mid 1\leq c\leq c_1-2\}&\hbox{if either $e_{1,1}=0$ or $k_t^\prime>k_1^{1,1}$};\\
\{k_{1,c}\mid 0\leq c\leq c_1-2\}&\hbox{if $e_{1,1}\geq 1$ and $k_t^\prime<k_1^{1,1}$}.
\end{array}\right.$$
Then we observe from the definition of $\Omega_{a,k,j_1}$ for each $1\leq a\leq t$ that
\begin{equation}\label{equ: II exceptional formula 0}
F_\xi^{\Omega^\pm,a}=\left(\underset{0\leq c\leq c_a-1}{\prod}u_\xi^{((i_{a,c},i_{a,c+1}),j_1)}\right)^{-1}=(F_\xi^{\Omega_a})^{-1}
\end{equation}
for each $2\leq a\leq t$ and
\begin{equation}\label{equ: II exceptional formula 1}
F_\xi^{\Omega^\pm,1}=1.
\end{equation}
We can clearly combine (\ref{equ: II exceptional formula 0}) and (\ref{equ: II exceptional formula 1}) with (\ref{equ: exp formula type II exceptional}) and deduce that
$$
F_\xi^{\Omega^\pm}|_\cC+\sum_{\Omega_0^\pm}\varepsilon(\Omega_0^\pm)F_\xi^{\Omega_0^\pm}|_\cC\sim F_\xi^{\Omega^+,\star}|_\cC(F_\xi^{\Omega^-}|_\cC)^{-1} =F_\xi^{\Omega^+,\star}|_\cC\underset{2\leq a\leq t}{\prod}(F_\xi^{\Omega_a}|_\cC)^{-1}\sim f_\xi^{\Omega^\pm}|_\cC\in\cO_\cC
$$
where $\Omega_0^\pm$ runs through balanced pair satisfying $\Omega^+<\Omega_0^+$ and $\Omega^-=\Omega_0^-$ with $\varepsilon(\Omega_0^\pm)\defeq \varepsilon(\Omega_0^+)$. The proof is thus finished.
\end{proof}

\begin{proof}[Proof of Proposition~\ref{prop: type II extremal}]
The proof above carries over \emph{verbatim} except that in (\ref{equ: formula for special elt II}) the sum over $\Omega'$ disappears (and we always have $f_\xi^{\Omega^\pm}\in\Inv(\cC)$).
\end{proof}

\subsection{Explicit formula: type~\rm{III}}\label{sub: exp type III}
In this section, we explicitly write down the set $\mathbf{D}^{\Omega^\pm}_{k,j}$ for each $(k,j)\in\mathbf{n}_\cJ$ when $\Omega^\pm$ is a constructible $\Lambda$-lift of type $\rm{III}$. Consequently, we apply Lemma~\ref{lem: from sets to formula} and finish the proof of Proposition~\ref{prop: type III}. We will use frequently all the notation from \S\,\ref{sub: type III}, \S\,\ref{sub: notation for each k j}, and the beginning of \S\,\ref{sec:const:inv}. For simplicity of presentation, we assume $t\geq 3$ throughout this section. The proof of Proposition~\ref{prop: type III} when $t=2$ is simpler and omitted.

We start with constructing a pair of integers $1\leq k_a^\flat<k_a^\sharp\leq n$ and a set $\Omega_{a,k,j}\subseteq\Lambda$ for each $(k,j)\in\mathbf{n}_\cJ$ and $a\in\Z/t$ (uniquely determined by the choice of $v_\cJ^{\Omega^\pm}$). We fix a connected component $\Sigma\in\pi_0(\Omega^\pm)$ for convenience and assume that $a\in(\Z/t)_\Sigma$ throughout the construction. Recall that $(v_\Sigma^{\Omega^\pm})^{-1}|_{\mathbf{n}_\Sigma}$ is an oriented permutation of $\mathbf{n}_\Sigma$ and we always fix a choice of $1$-tour and $-1$-tour as in Definition~\ref{def: oriented permutation}. All the constructions below depend on this choice. Let $\varepsilon\in\{1,-1\}$ be the direction such that $k_{a,1}$ is the $\varepsilon$-successor of $k_{a,0}$, and define $k_a^\sharp$ as follows:
\begin{itemize}
\item If $c_a\geq 2$ and $v_\Sigma^{\Omega^\pm}(k_{a,1})\neq k_{a,1}$, we set $k_a^\sharp\defeq \max\{v_\Sigma^{\Omega^\pm}(k_{a,1}),k_{a,1}\}$.
\item If the fixed $\varepsilon$-tour of $(v_\Sigma^{\Omega^\pm})^{-1}|_{\mathbf{n}_\Sigma}$ contains a $\varepsilon$-jump at $k_{a,0}$ or a $\varepsilon$-crawl at $k_{a,0}$, we set $k_a^\sharp\defeq k_{a,0}$.
\item Otherwise, we define $k_a^\sharp$ as the unique element such that the fixed $\varepsilon$-tour of $(v_\Sigma^{\Omega^\pm})^{-1}|_{\mathbf{n}_\Sigma}$ contains a $\varepsilon$-jump at $k_a^\sharp$ which covers $k_{a,0}$ and satisfies $(v_\Sigma^{\Omega^\pm})^{-1}(k_a^\sharp)\not\in\mathbf{n}^{a,+}\setminus\{k_{a,c_a}\}$.
\end{itemize}
Observe that in the first item either $v_\Sigma^{\Omega^\pm}(k_{a,1})=k_{a,0}$ or there is an $\varepsilon$-jump at $v_\Sigma^{\Omega^\pm}(k_{a,1})$ that covers~$k_{a,0}$.

Let $\varepsilon\in\{1,-1\}$ be the direction such that $\min\mathbf{n}^{a,-}$ is the $\varepsilon$-successor of $k_{a,c_a}$, and define $k_a^\flat$ as follows:
\begin{itemize}
\item If the fixed $\varepsilon$-tour of $(v_\Sigma^{\Omega^\pm})^{-1}|_{\mathbf{n}_\Sigma}$ contains a $\varepsilon$-jump at $k_{a,c_a}$ or a $\varepsilon$-crawl at $k_{a,c_a}$, we set $k_a^\flat\defeq k_{a,c_a}$.
\item Otherwise, we define $k_a^\flat$ as the unique element such that the fixed $\varepsilon$-tour of $(v_\Sigma^{\Omega^\pm})^{-1}|_{\mathbf{n}_\Sigma}$ contains a $\varepsilon$-jump at $k_a^\flat$ which covers $k_{a,c_a}$.
\end{itemize}

The definition of $k_a^\sharp$ and $k_a^\flat$ is visualized in Figure~\ref{fig:sharp:flat}.

\begin{lemma}\label{lem: non empty interval}
The following inequalities hold:
\begin{enumerate}[label=(\roman*)]
\item \label{it: bound sharp} $k_{a,0}\geq k_a^\sharp> k_a^\prime$;
\item \label{it: bound flat} $k_{a,c_a-1}> k_a^\flat\geq k_{a,c_a}$;
\item \label{it: bound sharp flat} $k_a^\sharp>k_a^\flat$,
\end{enumerate}
for each $a\in(\Z/t)_\Sigma$.
\end{lemma}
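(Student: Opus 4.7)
The three inequalities are established by a direct case analysis keyed to the defining cases of $k_a^\sharp$ and $k_a^\flat$. Fix $\Sigma\in\pi_0(\Omega^\pm)$ and $a\in(\Z/t)_\Sigma$, and let $\varepsilon\in\{1,-1\}$ be the direction such that $k_{a,1}$ is the $\varepsilon$-successor of $k_{a,0}$, so that $\min\mathbf{n}^{a,-}$ is the $(-\varepsilon)$-successor of $k_{a,c_a}$. Write $\varphi\defeq (v_\Sigma^{\Omega^\pm})^{-1}|_{\mathbf{n}_\Sigma}$, an oriented permutation in the sense of Definition~\ref{def: oriented permutation}. The essential inputs will be the basic orderings (\ref{equ: inequality for part a}) and (\ref{equ: inequality for part a prime}), the constraints that Definition~\ref{def: jump of map} places on any element at which a jump occurs, and the axioms from Definition~\ref{def: oriented permutation}. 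Throughout, one uses $\Omega_a=\Omega_{(\al_a,j_a),\Lambda}^{\rm{max}}$ (Condition~\rm{III}-\ref{it: III 2}).

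For inequality (i), the three defining cases of $k_a^\sharp$ are handled separately. In the case $k_a^\sharp=k_{a,0}$ the upper bound is immediate, and $k_{a,0}>k_a^\prime$ follows from (\ref{equ: inequality for part a}) (together with (\ref{equ: inequality for part a prime}) when $d_a\geq 1$). In the third case, where $\varphi$ has a $\varepsilon$-jump at $k_a^\sharp$ covering $k_{a,0}$ with $\varphi(k_a^\sharp)\notin\mathbf{n}^{a,+}\setminus\{k_{a,c_a}\}$, Definition~\ref{def: jump of map} forces $k_a^\sharp=k_{\tilde a}^{[\varepsilon]}$ for some $\tilde a\in(\Z/t)_\Sigma$, and the sandwich $k_{a,0}\geq k_a^\sharp>k_a^\prime$ is read off from the explicit inequalities in that definition (the jump at $k_a^\sharp$ ``skips across'' $k_{a,0}$ while staying above $k_a^\prime$). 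In the first case, $c_a\geq 2$ and $k_a^\sharp=\max\{v_\Sigma^{\Omega^\pm}(k_{a,1}),k_{a,1}\}$; the observation stated immediately after the definition of $k_a^\sharp$ gives that either $v_\Sigma^{\Omega^\pm}(k_{a,1})=k_{a,0}$ or $\varphi$ has a $\varepsilon$-jump at $v_\Sigma^{\Omega^\pm}(k_{a,1})$ covering $k_{a,0}$, and in both subcases Definition~\ref{def: jump of map} yields $v_\Sigma^{\Omega^\pm}(k_{a,1})\leq k_{a,0}$. The strict lower bound then follows from $k_{a,1}\geq k_{a,c_a-1}>k_a^\prime$ when $d_a=0$, and from the analogous analysis using (\ref{equ: inequality for part a prime}) when $d_a\geq 1$.

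Inequality (ii) is entirely parallel. If $k_a^\flat=k_{a,c_a}$ then (\ref{equ: inequality for part a}) yields $k_{a,c_a-1}>k_a^\flat$. Otherwise, $\varphi$ has a $(-\varepsilon)$-jump at $k_a^\flat$ covering $k_{a,c_a}$, so by Definition~\ref{def: jump of map} (applied with direction $-\varepsilon$) we have $k_a^\flat=k_{\tilde a}^{[-\varepsilon]}$ for some $\tilde a$, which delivers $k_{a,c_a-1}>k_a^\flat\geq k_{a,c_a}$ directly from the explicit inequalities in that definition.

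Inequality (iii) requires slightly more care. If $\Omega_a$ is extremal (i.e.~$d_a\geq 1$ and $k_a^{1,1}>k_{a,c_a-1}$), then $k_a^\prime=k_a^{1,1}\geq k_{a,c_a-1}$ by (\ref{equ: inequality for part a prime}); combining (i) and (ii) gives $k_a^\sharp>k_a^\prime\geq k_{a,c_a-1}>k_a^\flat$, and (iii) is immediate. If $\Omega_a$ is exceptional, the intervals $(k_a^\prime,k_{a,0}]$ and $[k_{a,c_a},k_{a,c_a-1})$ overlap, and the inequality cannot be extracted from (i) and (ii) alone; instead one uses the axiomatic structure of $\varphi$. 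By item~\ref{it: unique cover} of Definition~\ref{def: oriented permutation}, each of $k_{a,0}$ and $k_{a,c_a}$ is covered by a unique jump, and these jumps lie in the fixed $\varepsilon$-tour and the fixed $(-\varepsilon)$-tour respectively (using that the $\varepsilon$-successor structure forces the tour through $\mathbf{n}^{a,+}$ to be of one direction and the tour through $\mathbf{n}^{a,-}$ of the other). Thus, trivial cases aside, $k_a^\sharp$ belongs to the orbit of the fixed $\varepsilon$-tour and $k_a^\flat$ to the orbit of the fixed $(-\varepsilon)$-tour, and item~\ref{it: disjoint orbits} already gives $k_a^\sharp\neq k_a^\flat$. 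The desired strict inequality $k_a^\sharp>k_a^\flat$ then follows from item~\ref{it: interaction of jumps} of Definition~\ref{def: oriented permutation}, whose hypotheses are exactly met by the pair of jumps at $k_a^\sharp$ and $k_a^\flat$ together with the fact that they cover $k_{a,0}$ and $k_{a,c_a}$ respectively. The residual degenerate configurations allowed by item~\ref{it: degeneration of tour} occur only when $\Sigma$ is circular and $k_a^{[\varepsilon]}=k_{a,c_a-1}$ attains the global minimum $\min\{k_{a',c_{a'}-1}\mid a'\in\Z/t\}$; in this situation both $k_a^\sharp$ and $k_a^\flat$ are pinned down explicitly and the inequality is verified by inspection.

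The main obstacle will be the exceptional subcase of (iii), where one must carefully reconcile the two defining cases of $k_a^\sharp$ with the two defining cases of $k_a^\flat$ and invoke the axiom on interactions of jumps; everything else is direct bookkeeping from the definitions.
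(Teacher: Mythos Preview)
Your plan for (i) and (ii) is essentially the paper's, though the paper streamlines (i) by not separating the three defining bullets of $k_a^\sharp$: whenever $k_a^\sharp\neq k_{a,0}$ there is some $\varepsilon$-jump at $k_{a'}^{[\varepsilon]}$ covering $k_{a,0}$, and one bounds $k_a^\sharp\leq\max\{k_{a'}^{[\varepsilon]},k_{a,1}\}<k_{a,0}$ by an adjacency argument (splitting on whether $a'=a-\varepsilon$). Your ``read off from the explicit inequalities'' for the upper bound in the third case is too quick: the bound $k_{a'}^{[\varepsilon]}<k_{a,0}$ is not literally one of the inequalities in Definition~\ref{def: jump of map} and needs this extra step.

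For (iii) there is a real gap. Item~\ref{it: interaction of jumps} does \emph{not} merely require that the two jumps cover $k_{a,0}$ and $k_{a,c_a}$: it also requires $\varphi(k_a^\flat)=k_{a,0}$ and $\varphi(k_a^\sharp)=k_{a,1}$, and these equalities are not automatic. For instance, when the jump at $k_a^\flat$ covers $k_{a,c_a}$ via the second bullet of Definition~\ref{def: jump of map}, one has $\varphi(k_a^\flat)=\min\{k'\in\mathbf{n}^{a,-}\mid k'>k_a^\flat\}$, and this equals $k_{a,0}$ only if $k_a^\flat>k_a^\prime$ (since $k_a^\prime$ is the second-largest element of $\mathbf{n}^{a,-}$); part (ii) alone gives only $k_a^\flat<k_{a,c_a-1}$, which in the exceptional case is not enough. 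The paper's argument is by contradiction: assuming $k_a^\sharp\leq k_a^\flat$, one combines (i) and (ii) to get $k_a^\prime<k_a^\sharp\leq k_a^\flat<k_{a,c_a-1}$, and \emph{these} extra bounds now force $\varphi(k_a^\flat)=k_{a,0}$ and $\varphi(k_a^\sharp)=k_{a,1}$, so item~\ref{it: interaction of jumps} applies and yields $k_a^\sharp>k_a^\flat$, the desired contradiction. Your extremal/exceptional split is a valid shortcut for the extremal case (since then $k_a^\prime>k_{a,c_a-1}$ and (i)$+$(ii) alone suffice), but the exceptional case still needs the contradiction argument in full. The appeal to item~\ref{it: degeneration of tour} is unnecessary.
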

\begin{proof}
We first check item~\ref{it: bound sharp}. We write $\varepsilon\in\{1,-1\}$ for the unique direction such that $k_{a,1}$ is the $\varepsilon$-successor of $k_{a,0}$. If $k_a^\sharp=k_{a,0}$, then we have nothing to prove. Otherwise, there exists a unique choice of $a'\in(\Z/t)_\Sigma$ such that the fixed $\varepsilon$-tour of $(v_\Sigma^{\Omega^\pm})^{-1}|_{\mathbf{n}_\Sigma}$ contains a $\varepsilon$-jump at $k_{a'}^{[\varepsilon]}$, and this $\varepsilon$-jump covers $k_{a,0}$. If $a'=a-\varepsilon$, then we have $k_{a'}^{[\varepsilon]}<k_{a',0}=k_{a,0}$. If $a'\neq a-\varepsilon$, then we have $k_{a'}^{[\varepsilon]}<k_{a-\varepsilon,c_{a-\varepsilon}-1}\leq k_{a-\varepsilon,0}=k_{a,0}$. This implies $k_a^\sharp\leq \max\{k_{a'}^{[\varepsilon]},k_{a,1}\}<k_{a,0}$. On the other hand, we always have $k_a^\sharp\geq k_{a'}^{[\varepsilon]}>k_a^\prime$ as the $\varepsilon$-jump at $k_{a'}^{[\varepsilon]}$ covers $k_{a,0}$. The proof of item~\ref{it: bound sharp} is thus finished.

Now we check item~\ref{it: bound flat}. We write $\varepsilon\in\{1,-1\}$ for the unique direction such that $\min\mathbf{n}^{a,-}$ is the $\varepsilon$-successor of $k_{a,c_a}$. If $k_a^\flat=k_{a,c_a}$, then we have nothing to prove. Otherwise, there exists a unique choice of $a'\in(\Z/t)_\Sigma$ such that the fixed $\varepsilon$-tour of $(v_\Sigma^{\Omega^\pm})^{-1}|_{\mathbf{n}_\Sigma}$ contains a $\varepsilon$-jump at $k_{a'}^{[\varepsilon]}$, and this $\varepsilon$-jump covers $k_{a,c_a}$. If $a'=a-\varepsilon$, then we have $k_{a'}^{[\varepsilon]}>k_{a',c_{a'}}=k_{a,c_a}$. If $a'\neq a-\varepsilon$, then we have $k_{a'}^{[\varepsilon]}>k_{a-\varepsilon}^\prime\geq k_{a-\varepsilon,c_{a-\varepsilon}}=k_{a,c_a}$. Hence, we always have $k_a^\flat=k_{a'}^{[\varepsilon]}>k_{a,c_a}$. On the other hand, we always have $k_a^\flat= k_{a'}^{[\varepsilon]}<k_{a,c_a-1}$ as the $\varepsilon$-jump at $k_{a'}^{[\varepsilon]}$ covers $k_{a,c_a}$. The proof of item~\ref{it: bound flat} is thus finished.

It remains to check item~\ref{it: bound sharp flat}. If either $k_a^\sharp=k_{a,0}$ or $k_a^\flat=k_{a,c_a}$, then we have nothing to prove. Hence, we assume from now that $k_a^\sharp<k_{a,0}$ and $k_a^\flat>k_{a,c_a}$. We write $\varepsilon\in\{1,-1\}$ for the unique direction such that $\min\mathbf{n}^{a,-}$ is the $\varepsilon$-successor of $k_{a,c_a}$, and thus $k_{a,1}$ is the $-\varepsilon$-successor of $k_{a,0}$. Our assumption ensures the existence of $a',a''\in(\Z/t)_\Sigma$ such that
\begin{itemize}
\item the fixed $\varepsilon$-tour of $(v_\Sigma^{\Omega^\pm})^{-1}|_{\mathbf{n}_\Sigma}$ contains a $\varepsilon$-jump at $k_{a'}^{[\varepsilon]}$, and this $\varepsilon$-jump covers $k_{a,c_a}$;
\item the fixed $-\varepsilon$-tour of $(v_\Sigma^{\Omega^\pm})^{-1}|_{\mathbf{n}_\Sigma}$ contains a $-\varepsilon$-jump at $k_{a''}^{[-\varepsilon]}$, and this $-\varepsilon$-jump covers $k_{a,0}$.
\end{itemize}
These items imply that $k_a^\flat=k_{a'}^{[\varepsilon]}<k_{a,c_a-1}$ and $k_a^\sharp=k_{a''}^{[-\varepsilon]}>k'_a$.
Hence, if $k^\sharp_a\leq k^\flat_a$ we must have $(v_\Sigma^{\Omega^\pm})^{-1}(k_{a'}^{[\varepsilon]})=k_{a,0}$ and $(v_\Sigma^{\Omega^\pm})^{-1}(k_{a''}^{[-\varepsilon]})=k_{a,1}$ and this contradicts item~\ref{it: interaction of jumps} of Definition~\ref{def: oriented permutation}.
\end{proof}

We now define $\Omega_{a,k,j}$. We set $\Omega_{a,k,j}\defeq\emptyset$ if $j\neq j_a$, and set

$$
\Omega_{a,k,j_a}\defeq
\left\{\begin{array}{cl}
\Omega_{\psi_a,k}\setminus\{((i_{a,0},i_{a,1}),j_a)\}&\hbox{if $c_a\geq 2$ and $k_a^\sharp\geq k>\min\{v_{j_a}^{\Omega^\pm}(k_{a,1}),k_a^\sharp\}$};\\
\Omega_{\psi_a,k}&\hbox{if $c_a\geq 2$ and $\min\{v_{j_a}^{\Omega^\pm}(k_{a,1}),k_a^\sharp\}\geq k>k_a^\flat$};\\
\Omega_{\psi_a,k}&\hbox{if $c_a=1$ and $k_a^\sharp\geq k>k_a^\flat$};\\
\emptyset&\hbox{if $k>k_a^\sharp$ or $k\leq k_a^\flat$}.
\end{array}\right.
$$
Observe that when $c_a\geq 2$, the condition $k_a^\sharp>v_\Sigma^{\Omega^\pm}(k_{a,1})$ is equivalent to $k_a^\sharp=k_{a,1}>v_\Sigma^{\Omega^\pm}(k_{a,1})$.
We deduce that $\Omega_{a,k,j}\neq \emptyset$ if and only if $j=j_a$ and $k_a^\sharp\geq k>k_a^\flat$.

For each $a\in\Z/t$ and $(k,j)\in\mathbf{n}_\cJ$, we write $\al^{\Omega^\pm}_{a,k,j}\defeq \sum_{(\beta,j_a)\in\Omega_{a,k,j}}\beta\in\Phi^+_\xi\sqcup\{0\}$, and $\al^{\Omega^\pm}_{a,k,j}=(i_{a,k,j},i_{a,k,j}^\prime)$ whenever $\Omega_{a,k,j}\neq \emptyset$. For technical convenience, we put extra definition $\Omega_{a,n+1,j}\defeq\emptyset$, $\mathbf{D}^{\Omega^\pm}_{n+1,j}\defeq\{\emptyset\}$ and $\al^{\Omega^\pm}_{n+1,j}\defeq 0$ for each $a\in\Z/t$ and $j\in\cJ$. Recall $\mathbf{n}_{\Sigma,1}$ from the paragraph right before Lemma~\ref{lem: backwards jump}. We also write $\mathbf{n}_{\Sigma,-1}$ for the orbit of the fixed $-1$-tour for the oriented permutation $(v_\Sigma^{\Omega^\pm})^{-1}|_{\mathbf{n}_\Sigma}$, for each $\Sigma\in\pi_0(\Omega^\pm)$.

\begin{lemma}\label{lem: std type III}
Let $\Omega^\pm$ be a constructible $\Lambda$-lift of type $\rm{III}$. Then we have
\begin{enumerate}[label=(\roman*)]
\item \label{it: empty intersection} $\{i_{a,k,j},i_{a,k,j}^\prime\}\cap\{i_{a',k,j},i_{a',k,j}^\prime\}=\emptyset$ \emph{(}and thus $\Omega_{a,k,j}\cap\Omega_{a',k,j}=\emptyset$\emph{)} for each $(k,j)\in\mathbf{n}_\cJ$ and each $a\neq a'\in\Z/t$ with $\Omega_{a,k,j}\neq \emptyset\neq \Omega_{a',k,j}$;
\item \label{it: unique decomposition} if $\Omega_{a,k,j_a}\neq\emptyset$, then $\Omega_{a,k,j_a}=\Omega_{(\al^{\Omega^\pm}_{a,k,j_a},j_a),\Lambda}^{\rm{max}}$ is $\Lambda$-exceptional and
    \begin{equation}\label{equ: key bound on k}
    \{\Omega'\in \mathbf{D}_{(\al^{\Omega^\pm}_{a,k,j_a},j_a),\Lambda}\mid u_{j_a}(i_{\Omega',1})\geq k\}=\{\Omega_{a,k,j_a}\};
    \end{equation}
\item \label{it: indexed decomposition}
    for each $(k,j)\in\mathbf{n}_\cJ$, $\bigsqcup_{a\in\Z/t}\Omega_{a,k,j}\in \mathbf{D}^{\Omega^\pm}_{k,j}$;
\item \label{it: gap}
    for each $\Sigma\in\pi_0(\Omega^\pm)$,
    $(k,j)\in(\mathbf{n}_{\Sigma,1}\sqcup\mathbf{n}_{\Sigma,-1})\times\{j_\Sigma\}$ if and only if there exists $a\in(\Z/t)_{\Sigma}$ such that $\Omega_{a,k,j}\neq \Omega_{a,k+1,j}$.
\end{enumerate}
\end{lemma}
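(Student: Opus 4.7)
The plan is to adapt the strategy used for Lemma~\ref{lem: std type II exceptional}, exploiting that under a type $\rm{III}$ lift every $\Omega_a$ is maximal (Condition~$\rm{III}$-\ref{it: III 2}) and $\Lambda$-ordinary (Condition~$\rm{III}$-\ref{it: III 3}), and that the orbits $I_\cJ^{\psi_a,\pm}$ are pairwise disjoint (Conditions~$\rm{III}$-\ref{it: III 4} and $\rm{III}$-\ref{it: III 5}). I would prove (i) and (iii) for each connected component $\Sigma$ separately, then derive (ii) from Lemma~\ref{lem: unique k decomposition}, and finally read off (iv) from the construction of $v_\Sigma^{\Omega^\pm}$.

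For (i), observe first that $\Omega_{a,k,j}\ne\emptyset$ forces $j=j_a$, hence only indices $a\in(\Z/t)_\Sigma$ with $j_\Sigma=j$ can contribute to a nontrivial overlap. Given such $a\ne a'$, the sets $\mathbf{I}_{\Omega_{a,k,j}}\cup\mathbf{I}_{\Omega_{a,k,j}}'$ and $\mathbf{I}_{\Omega_{a',k,j}}\cup\mathbf{I}_{\Omega_{a',k,j}}'$ project to disjoint subsets of $\{1,\dots,r_\xi\}$ via the Levi-block map of \S~\ref{sub:std:note} unless the endpoints $\{i_{a,0},i_{a,c_a}\}$ and $\{i_{a',0},i_{a',c_{a'}}\}$ share a common value; this shared endpoint is exactly the one that the removal of $((i_{a,0},i_{a,1}),j_a)$ in the definition of $\Omega_{a,k,j_a}$ (when $k_a^\sharp=k_{a,0}$) is designed to avoid, and Conditions~$\rm{III}$-\ref{it: III 7}, $\rm{III}$-\ref{it: III 8}, $\rm{III}$-\ref{it: III 9} rule out the remaining degenerate configurations.

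For (iii) I would argue by decreasing induction on $k$, using (\ref{equ: difference at k}): the quantity $\alpha^{\Omega^\pm}_{k,j}$ changes exactly when $(v_j^{\Omega^\pm})^{-1}(k)\ne w_j^{-1}(k)$, i.e.~when $k$ belongs to a nontrivial orbit of some $v_\Sigma^{\Omega^\pm}$ with $j_\Sigma=j$. The definition of $\Omega_{a,k,j_a}$ via the thresholds $k_a^\sharp,k_a^\flat$ (together with Lemma~\ref{lem: non empty interval}) is engineered so that each such change on the left is matched by the change of a single $\alpha^{\Omega^\pm}_{a,k,j}$ on the right, with the case analysis around $v_{j_a}^{\Omega^\pm}(k_{a,1})$ (when $c_a\ge 2$) handling the transposition between $k_{a,1}$ and $k_{a,0}$. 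Granted this, $\bigsqcup_a\Omega_{a,k,j}\in \mathbf{D}^{\Omega^\pm}_{k,j}$ is immediate since each $\Omega_{a,k,j_a}\subseteq\Lambda\cap\mathrm{Supp}_{\xi,j_a}$ by construction. Item (ii) then follows because, by Conditions~$\rm{III}$-\ref{it: III 2}--\ref{it: III 3}, $\Omega_a=\Omega_{(\alpha_a,j_a),\Lambda}^{\rm{max}}$ is $\Lambda$-ordinary, so Lemma~\ref{lem: unique k decomposition} applies to identify $\Omega_{\psi_a,k}$ as the unique $\Lambda$-decomposition of $(\alpha_{\psi_a,k},j_a)$ whose $i_{\Omega',1}$ lies above $k$, and the auxiliary removal of $((i_{a,0},i_{a,1}),j_a)$ respects this maximality since $\Omega^+$ being $\Lambda$-exceptional is preserved under dropping the outermost edge. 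Item (iv) is built into the construction: the definition of $\Omega_{a,k,j_a}$ jumps precisely at the elements of $\mathbf{n}_\Sigma$ where $v_\Sigma^{\Omega^\pm}$ acts nontrivially, and these are exactly the elements of $\mathbf{n}_{\Sigma,1}\sqcup\mathbf{n}_{\Sigma,-1}$ by item~\ref{it: disjoint orbits} of Definition~\ref{def: oriented permutation}.

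The main obstacle will be the bookkeeping in (iii): verifying that every type of step (a crawl, a jump, the special top-edge transposition $k_{a,0}\leftrightarrow k_{a,1}$ when $c_a\ge 2$ and $v_{j_a}^{\Omega^\pm}(k_{a,1})\ne k_{a,1}$, and the reconnections at $1$-ends/$-1$-ends) contributes the correct increment to both sides of $\alpha^{\Omega^\pm}_{k,j}=\sum_a\alpha^{\Omega^\pm}_{a,k,j}$. This requires a case split aligned with the construction of the oriented permutation in Propositions~\ref{prop:oriented:non circular} and~\ref{prop:oriented:circular}, and crucially relies on Condition~$\rm{III}$-\ref{it: III 6} to guarantee that whenever a jump crosses between two different $\Lambda^\square$-intervals, the resulting root is supplied by some pseudo $\Lambda$-decomposition already present in $\Omega^+\sqcup\Omega^-$, so that no spurious roots are produced.
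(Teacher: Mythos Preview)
Your high-level plan---decreasing induction on $k$ for \ref{it: indexed decomposition}, invoking Lemma~\ref{lem: unique k decomposition} for \ref{it: unique decomposition}, and reading \ref{it: gap} off the case analysis---matches the paper's approach. The gap is in which ingredients actually drive \ref{it: empty intersection} and \ref{it: indexed decomposition}.

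For \ref{it: empty intersection}, Conditions~$\rm{III}$-\ref{it: III 7}, $\rm{III}$-\ref{it: III 8}, $\rm{III}$-\ref{it: III 9} are not the tools; the paper instead uses $\rm{III}$-\ref{it: III 4}, $\rm{III}$-\ref{it: III 5} (to exclude $i_{a',k,j}=i_{a,k,j}'$) and then, for the two dangerous cases $i_{a,k,j}'=i_{a',k,j}'$ and $i_{a,k,j}=i_{a',k,j}$ with $a'=a\pm\varepsilon$, a direct contradiction from the definitions of $k_a^\sharp,k_a^\flat$ and the structure of the oriented permutation. In particular the second case needs the inequality $k'>k$ supplied by item~\ref{it: interaction of jumps} of Definition~\ref{def: oriented permutation} (via the observation that $k_a^\sharp\in\{k_{a,1},k'\}$ when $c_a\ge 2$). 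Your parenthetical ``when $k_a^\sharp=k_{a,0}$'' is also off: the removal of $((i_{a,0},i_{a,1}),j_a)$ occurs precisely when $v_{j_a}^{\Omega^\pm}(k_{a,1})<k_a^\sharp$, which forces $k_a^\sharp=k_{a,1}$.

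For \ref{it: indexed decomposition}, Condition~$\rm{III}$-\ref{it: III 6} plays no role. The induction step splits into $(v_j^{\Omega^\pm})^{-1}(k)=w_j^{-1}(k)$ versus not; in the latter case one tracks a single $\varepsilon$-crawl or a single $\varepsilon$-jump at $k=k_a^{[\varepsilon]}$ and verifies the identity $\alpha_{k,j}^{\Omega^\pm}-\alpha_{k+1,j}^{\Omega^\pm}=\sum_{b'}\bigl(\alpha_{a+b'\varepsilon,k,j}^{\Omega^\pm}-\alpha_{a+b'\varepsilon,k+1,j}^{\Omega^\pm}\bigr)$ by inspecting which $\Omega_{a+b'\varepsilon,k,j}$ become full or empty. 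The delicate points are (a) when $(v_\Sigma^{\Omega^\pm})^{-1}(k)=k_{a+b\varepsilon,1}$ and there is a $-\varepsilon$-jump at $v_\Sigma^{\Omega^\pm}(k_{a+b\varepsilon,0})$, you need $k>k_{a+b\varepsilon}^\flat$, which comes from item~\ref{it: interaction of jumps} of Definition~\ref{def: oriented permutation}; and (b) the degenerate case $b=b_\Sigma$, handled by item~\ref{it: degeneration of tour} of Definition~\ref{def: oriented permutation}. Neither of these follows from Condition~$\rm{III}$-\ref{it: III 6}; they are axioms of the oriented permutation that were engineered in Propositions~\ref{prop:oriented:non circular} and~\ref{prop:oriented:circular} precisely so this bookkeeping closes.
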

\begin{proof}
We first prove item~\ref{it: empty intersection}. It is clear that $i_{a',k,j}\neq i_{a,k,j}^\prime$ for each $a\neq a'\in\Z/t$ and each $(k,j)\in\mathbf{n}_\cJ$, thanks to Conditions~\rm{III}-\ref{it: III 4} and \rm{III}-\ref{it: III 5} and the fact that $\Omega^\pm$ is a $\Lambda$-lift.

If there exist $a\neq a'\in\Z/t$ such that $\Omega_{a,k,j}\neq \emptyset\neq \Omega_{a',k,j}$ and $i_{a,k,j}^\prime=i_{a',k,j}^\prime$, then there exist $\Sigma\in\pi_0(\Omega^\pm)$ and $\varepsilon\in\{1,-1\}$ such that $a,a-\varepsilon\in(\Z/t)_\Sigma$ with $a'=a-\varepsilon$ and $i_{a,k,j}^\prime=i_{a',k,j}^\prime=i_{a,c_a}=i_{a-\varepsilon,c_{a-\varepsilon}}$. Upon exchanging $a,a'$, we may assume that $(v_\Sigma^{\Omega^\pm})^{-1}|_{\mathbf{n}_\Sigma}$ has direction $\varepsilon$ at $k_{a,c_a}$ (cf.~Definition~\ref{def: oriented permutation}) and thus $k_{a-\varepsilon}^\flat>k_a^\flat=k_{a,c_a}$. Hence, $(v_\Sigma^{\Omega^\pm})^{-1}|_{\mathbf{n}_\Sigma}$ has a $-\varepsilon$-jump at $k_{a-\varepsilon}^\flat$ which covers $k_{a,c_a}$, which implies $k_{a-\varepsilon}^\flat\in\{k_a^\sharp,k_{a,c_a-1}\}$.
The condition $i_{a,k,j}^\prime=i_{a,c_a}$ forces $k\leq \min\{k_a^\sharp,k_{a,c_a-1}\}\leq k_{a-\varepsilon}^\flat$ hence $k\leq k_{a-\varepsilon}^{\flat}$.
But $\Omega_{a-\varepsilon,k,j}\neq \emptyset$ forces $k>k_{a-\varepsilon}^\flat$, which is a contradiction.

If there exist $a\neq a'\in\Z/t$ such that $\Omega_{a,k,j}\neq \emptyset\neq \Omega_{a',k,j}$ and $i_{a,k,j}=i_{a',k,j}$, then there exist $\Sigma\in\pi_0(\Omega^\pm)$ and $\varepsilon\in\{1,-1\}$ such that $a,a+\varepsilon\in(\Z/t)_\Sigma$ with $a'=a+\varepsilon$ and $i_{a,k,j}=i_{a',k,j}=i_{a,0}=i_{a+\varepsilon,0}$. Upon exchanging $a,a'$, we may assume that $(v_\Sigma^{\Omega^\pm})^{-1}|_{\mathbf{n}_\Sigma}$ has direction $\varepsilon$ at $k_{a,0}$ (cf.~Definition~\ref{def: oriented permutation}) and thus $k_{a,0}=k_{a+\varepsilon}^\sharp>k_a^\sharp$. Hence, $(v_\Sigma^{\Omega^\pm})^{-1}|_{\mathbf{n}_\Sigma}$ has a $-\varepsilon$-jump at $k'\in\{k_{a+\varepsilon}^\flat,k_{a+\varepsilon}^\prime\}$ which covers $k_{a,0}$.
The condition $i_{a+\varepsilon,k,j}=i_{a,0}$ forces $k>\max\{k_{a+\varepsilon}^\flat\ k_{a+\varepsilon}^\prime\}$ hence $k>k'$.
The condition $i_{a,k,j}=i_{a,0}$ forces either $c_a=1$ or $c_a\geq 2$ and $\min\{v_{j_a}^{\Omega^\pm}(k_{a,1}),k_a^\sharp\}\geq k>k'$.
In the first case $k'=k_{a}^\sharp<k$ and hence $\Omega_{a,k,j}=\emptyset$, a contradiction.
In the second case, since $k_a^\sharp\in\{k_{a,1},\ k'\}$ we have $k_a^\sharp=k_{a,1}\geq k>k'$.
However this forces $v_{j_a}^{\Omega^\pm}(k_{a,1})=k'$, which is a contradiction.
Thus we finished the proof of item~\ref{it: empty intersection}.

Now we consider item~\ref{it: unique decomposition}. If $\Omega_{a,k,j}=\Omega_{\psi_a,k}$, then item~\ref{it: unique decomposition} is a direct consequence of Lemma~\ref{lem: unique k decomposition}. Therefore it suffices to prove item~\ref{it: unique decomposition} when $\Omega_{a,k,j}=\Omega_{\psi_a,k}\setminus\{((i_{a,0},i_{a,1}),j_a)\}\neq \emptyset$. But since    $\Omega_{a,k,j}\subseteq\Omega_{\psi_a,k}$ has $i_{a,k,j_a}^\prime=i_{\al_{\psi_a,k}}^\prime$, this follows again from the corresponding property of $\Omega_{\psi_a,k}$ in Lemma~\ref{lem: unique k decomposition}.

We now prove item~\ref{it: indexed decomposition} for each $(k,j)\in\mathbf{n}_\cJ$ by a decreasing induction on $k$.
The initial step is given by $\Omega_{a,n+1,j}=\emptyset$, $\mathbf{D}^{\Omega^\pm}_{n+1,j}=\{\emptyset\}$ and $\al^{\Omega^\pm}_{n+1,j}=0$ for each $a\in\Z/t$ and $j\in\cJ$.
Suppose we already have $\bigsqcup_{a\in\Z/t}\Omega_{a,k+1,j}\in \mathbf{D}^{\Omega^\pm}_{k+1,j}$ for $(k,j)\in \mathbf{n}_\cJ$. We want to show
\begin{equation}\label{equ: target inclusion}
\underset{a\in\Z/t}{\bigsqcup}\Omega_{a,k,j}\in \mathbf{D}^{\Omega^\pm}_{k,j}.
\end{equation}
We divide into two cases:

\textbf{Case~A:} $(v_j^{\Omega^\pm})^{-1}(k)=w_j^{-1}(k)$, namely $(v_\Sigma^{\Omega^\pm})^{-1}(k)=k$ for each $\Sigma\in\pi_0(\Omega^\pm)$ with $j_\Sigma=j$. Then it is clear that $\al^{\Omega^\pm}_{k,j}=\al^{\Omega^\pm}_{k+1,j}$ and $\mathbf{D}^{\Omega^\pm}_{k,j}\supseteq\mathbf{D}^{\Omega^\pm}_{k+1,j}$. If $(k,j)\notin\mathbf{n}_\Sigma\times\{j_\Sigma\}$ for any $\Sigma\in\pi_0(\Omega^\pm)$, then we clearly have $\Omega_{a,k,j}=\Omega_{a,k+1,j}$ for each $a\in\Z/t$. If there exists $\Sigma\in\pi_0(\Omega^\pm)$ such that $k\in\mathbf{n}_\Sigma$, $j=j_\Sigma$ and $k$ does not lie in the orbit of either the fixed $1$-tour or $-1$-tour of $(v_\Sigma^{\Omega^\pm})^{-1}|_{\mathbf{n}_\Sigma}$, then there exists a unique $a\in(\Z/t)_\Sigma$ such that exactly one of the following holds
\begin{itemize}
\item $k\in\mathbf{n}^{a,+}$, $k>k_a^\sharp$;
\item $k\in\mathbf{n}^{a,-}$, $k<k_a^\flat$,
\end{itemize}
which forces $\Omega_{a,k,j}=\Omega_{a,k+1,j}=\emptyset$ and $\Omega_{a',k,j}=\Omega_{a',k+1,j}$ for each $a'\in\Z/t \setminus\{a\}$. Thus (\ref{equ: target inclusion}) holds.

\textbf{Case~B:} $(v_j^{\Omega^\pm})^{-1}(k)\neq w_j^{-1}(k)$. Thus there exists $\Sigma\in\pi_0(\Omega^\pm)$ and $\varepsilon\in\{1,-1\}$ such that $j_\Sigma=j$ and $k$ lies in the orbit of the fixed $\varepsilon$-tour of $(v_\Sigma^{\Omega^\pm})^{-1}|_{\mathbf{n}_\Sigma}$. Note that $\Sigma$ is uniquely determined by $(k,j)$ (as $\mathbf{n}_\Sigma\cap\mathbf{n}_{\Sigma'}=\emptyset$ for different $\Sigma,\Sigma'\in\pi_0(\Omega^\pm)$ satisfying $j_\Sigma=j_{\Sigma'}=j$). In the following, we prove $\bigsqcup_{a\in\Z/t}\Omega_{a,k,j}\in \mathbf{D}^{\Omega^\pm}_{k,j}$ by a direct comparison between $\Omega_{a,k,j}$ and $\Omega_{a,k+1,j}$ for each $a\in\Z/t$. We use the notation $(i,i')$ for an element of $\Phi^+\sqcup\{0\}\sqcup\Phi^-$, for arbitrary two integers $1\leq i,i'\leq n$.

If there exists $a\in(\Z/t)_\Sigma$ such that $k\in(\mathbf{n}^{a,+}\setminus\{k_{a,c_a}\})\sqcup\{k_{a,0}\}$, $k_{a,1}$ is the $\varepsilon$-successor of $k_{a,0}$ and the fixed $\varepsilon$-tour of $(v_\Sigma^{\Omega^\pm})^{-1}|_{\mathbf{n}_\Sigma}$ contains a $\varepsilon$-crawl at $k$, then by inspection
$\Omega_{a,k,j}=\Omega_{a,k+1,j}\sqcup\{((u_j^{-1}(k),u_j^{-1}(v_\Sigma^{\Omega^\pm})^{-1}(k)),j)\}$, $\al^{\Omega^\pm}_{k,j}=\al^{\Omega^\pm}_{k+1,j}+(u_j^{-1}(k),u_j^{-1}(v_\Sigma^{\Omega^\pm})^{-1}(k))$
and $\Omega_{a',k,j}=\Omega_{a',k+1,j}$ for each $a'\in\Z/t\setminus\{a\}$. Thus (\ref{equ: target inclusion}) holds.

If there exists $a\in(\Z/t)_\Sigma$ such that $k\in(\mathbf{n}^{a,-}\setminus\{k_{a,0}\})\sqcup\{k_{a,c_a}\}$, $\min\mathbf{n}^{a,-}$ is the $\varepsilon$-successor of $k_{a,c_a}$ and the fixed $\varepsilon$-tour of $(v_\Sigma^{\Omega^\pm})^{-1}|_{\mathbf{n}_\Sigma}$ contains a $\varepsilon$-crawl at $k$, then by inspection
we necessarily have $u_j(i_{a,k,j})=k$, $u_j(i_{a,k+1,j})=(v_\Sigma^{\Omega^\pm})^{-1}(k)$, which implies that
$$\al^{\Omega^\pm}_{k,j}-\al^{\Omega^\pm}_{k+1,j}=(u_j^{-1}(k),u_j^{-1}(v_\Sigma^{\Omega^\pm})^{-1}(k))=(i_{a,k,j},i_{a,k+1,j})=\al^{\Omega^\pm}_{a,k,j}-\al^{\Omega^\pm}_{a,k+1,j}.$$
As we clearly have $\Omega_{a',k,j}=\Omega_{a',k+1,j}$ for each $a'\in\Z/t\setminus\{a\}$ in this case, (\ref{equ: target inclusion}) holds.

If the fixed $\varepsilon$-tour of $(v_\Sigma^{\Omega^\pm})^{-1}|_{\mathbf{n}_\Sigma}$ contains a $\varepsilon$-jump at $k=k_a^{[\varepsilon]}$ for some $a,\varepsilon,b$ as in Definition~\ref{def: jump of map}, then we have
\begin{itemize}
\item $\Omega_{a+b'\varepsilon,k,j}=\Omega_{a+b'\varepsilon}$ and $\Omega_{a+b'\varepsilon,k+1,j}=\emptyset$ for each $1\leq b'\leq b-1$ satisfying $k_{a+b'\varepsilon,c_{a+b'\varepsilon}}=k_{a+(b'+1)\varepsilon,c_{a+(b'+1)\varepsilon}}$;
\item $\Omega_{a+b'\varepsilon,k,j}=\emptyset$ and $\Omega_{a+b'\varepsilon,k+1,j}=\Omega_{a+b'\varepsilon}$ for each $1\leq b'\leq b-1$ satisfying $k_{a+b'\varepsilon,0}=k_{a+(b'+1)\varepsilon,0}$;
\item if $k=k_{a,c_a-1}$ and $b\leq b_\Sigma-1$, then we have $\Omega_{a,k,j}=\Omega_{a,k+1,j}\sqcup\{((u_j^{-1}(k),i_{a,c_a}),j)\}$;
\item if $k=k_a^\prime$ and $b\leq b_\Sigma-1$, then we have $\al^{\Omega^\pm}_{a,k,j}=\al^{\Omega^\pm}_{a,k+1,j}-(i_{a,0},u_j^{-1}(k))$;
\item if $k_{a+b\varepsilon,0}=k_{a+(b-1)\varepsilon,0}$ and $b\leq b_\Sigma-1$, then we have $$\Omega_{a+b\varepsilon,k,j}=\Omega_{a+b\varepsilon,k+1,j}\sqcup\{((i_{a+b\varepsilon,0},u_j^{-1}(v_\Sigma^{\Omega^\pm})^{-1}(k)),j)\}$$ with $u_j^{-1}(v_\Sigma^{\Omega^\pm})^{-1}(k)=i_{a+b\varepsilon,1}$ (and note that $k>k_{a+b\varepsilon}^\flat$ follows from item~\ref{it: interaction of jumps} of Definition~\ref{def: oriented permutation} when the fixed $-\varepsilon$-tour of $(v_\Sigma^{\Omega^\pm})^{-1}|_{\mathbf{n}_\Sigma}$ contains a $-\varepsilon$-jump at $v_\Sigma^{\Omega^\pm}(k_{a+b\varepsilon,0})$);
\item if $k_{a+b\varepsilon,c_{a+b\varepsilon}}=k_{a+(b-1)\varepsilon,c_{a+(b-1)\varepsilon}}$ and $b\leq b_\Sigma-1$, then we have $\Omega_{a+b\varepsilon,k,j}=\emptyset$ and $$\al_{a+b\varepsilon,k+1,j}=((u_j^{-1}(v_\Sigma^{\Omega^\pm})^{-1}(k),i_{a+b\varepsilon,c_{a+b\varepsilon}}),j)$$
    (and note that $\Omega_{a+b\varepsilon,k+1,j}=\Omega_{a+b\varepsilon}$ follows from item~\ref{it: interaction of jumps} of Definition~\ref{def: oriented permutation} when $(v_\Sigma^{\Omega^\pm})^{-1}(k)=k_{a+b\varepsilon,0}$,);
\item if $b=b_\Sigma$, then $c_a\geq 2$, $k=k_{a,c_a-1}$, $(v_\Sigma^{\Omega^\pm})^{-1}(k)=k_{a,1}$ (cf.~item~\ref{it: degeneration of tour} of Definition~\ref{def: oriented permutation}) and $$\Omega_{a,k,j}=\Omega_{a,k+1,j}\sqcup\{((u_j^{-1}(k),i_{a,c_a}),j),((i_{a,0},u_j^{-1}(v_\Sigma^{\Omega^\pm})^{-1}(k)),j)\};$$
\item $\Omega_{a',k,j}=\Omega_{a',k+1,j}$ for each $a'\notin\{a+b'\varepsilon\mid 0\leq b'\leq b\}$.
\end{itemize}
In all cases above, we check that
$$\al^{\Omega^\pm}_{k,j}-\al^{\Omega^\pm}_{k+1,j}=(u_j^{-1}(k),u_j^{-1}(v_\Sigma^{\Omega^\pm})^{-1}(k))=\underset{0\leq b'\leq b }{\sum}\al^{\Omega^\pm}_{a+b'\varepsilon,k,j}-\al^{\Omega^\pm}_{a+b'\varepsilon,k+1,j},$$
thus (\ref{equ: target inclusion}) holds.
Note that an example of the above comparison between $\Omega_{a,k,j}$ and $\Omega_{a,k+1,j}$ is visualized in Figure~\ref{fig:fromKtoK1}.

We have now checked (\ref{equ: target inclusion}) in all possible cases, and the argument above actually proves item~\ref{it: gap} at the same time.
\end{proof}

\begin{lemma}\label{lem: full subset case}
Let $\Omega^\pm$ be a constructible $\Lambda$-lift of type $\rm{III}$, $\Sigma\in\pi_0(\Omega^\pm)$ be a connected component, $a\in(\Z/t)_\Sigma^-$ be an element such that there exists $k\in\mathbf{n}$ that satisfies $\Omega_{a,k,j_a}=\Omega_a$. Then there exists unique $k_{a,\star}$ and $k_{a,\star}^\prime$ such that the following conditions are equivalent:
\begin{itemize}
\item $\Omega_{a,k,j_a}=\Omega_a$;
\item $k_{a,\star}\geq k>k_{a,\star}^\prime$.
\end{itemize}
Furthermore, $k_{a,\star}\in\{k_{a,c_a-1}, v_\Sigma^{\Omega^\pm}(k_{a,1}),k_a^\sharp\}\cap\mathbf{n}_{\Sigma,-1}$,
$k_{a,\star}^\prime=\max\{k_a^\prime,k_a^\flat\}\in\mathbf{n}_{\Sigma,1}$, and $k_{a,\star}\leq k_{a,c_a-1}$.
\end{lemma}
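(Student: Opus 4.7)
The strategy is to combine the case-by-case definition of $\Omega_{a,k,j_a}$ from Section~\ref{sub: type III} with Lemma~\ref{lem: unique k decomposition}. Since $a\in(\Z/t)_\Sigma^-$ we have $\psi_a=(\Omega_a,\Lambda)$ and hence $\Omega_{\psi_a}=\Omega_a$, which by Condition~III-\ref{it: III 2} equals $\Omega_{(\alpha_a,j_a),\Lambda}^{\rm{max}}$. The chain of inclusions $\Omega_{a,k,j_a}\subseteq\Omega_{\psi_a,k}\subseteq\Omega_{\psi_a}=\Omega_a$ reduces the equality $\Omega_{a,k,j_a}=\Omega_a$ to the conjunction of two conditions: a \emph{branch condition}, namely that we lie in the second branch of the definition of $\Omega_{a,k,j_a}$ when $c_a\geq 2$ or in the third branch when $c_a=1$, and a \emph{maximality condition} $\Omega_{\psi_a,k}=\Omega_{\psi_a}$.

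First I would dispose of the case $c_a=1$, where $\Omega_a=\{(\alpha_a,j_a)\}$ and the branch condition reads $k_a^\sharp\geq k>k_a^\flat$; since necessarily $d_a=0$, the maximality condition is equivalent to $k_{a,0}\geq k>k_{a,1}$ and is implied by the branch condition via Lemma~\ref{lem: non empty interval}. Using $k_a^\prime=k_{a,c_a}=k_{a,1}\leq k_a^\flat$ one reads off $k_{a,\star}=k_a^\sharp$ and $k_{a,\star}^\prime=k_a^\flat=\max\{k_a^\prime,k_a^\flat\}$, and $k_{a,\star}\leq k_{a,c_a-1}=k_{a,0}$ follows from item~\ref{it: bound sharp} of Lemma~\ref{lem: non empty interval}.

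For the case $c_a\geq 2$, the branch condition becomes $\min\{v_{j_a}^{\Omega^\pm}(k_{a,1}),k_a^\sharp\}\geq k>k_a^\flat$. By the second statement of Lemma~\ref{lem: unique k decomposition} the maximality condition translates to $\alpha_{\psi_a,k}=\alpha_a$, which upon unfolding the case definition of $\Omega_{\psi_a,k}$ from Section~\ref{sub:comb:gen} becomes the simultaneous equalities $\lceil k\rceil=k_{a,c_a-1}$ and $\lceil k\rceil^\prime=k_{a,0}$. A short combinatorial check on the strictly monotone sequences $\{k_{a,c}\}$ and $\{k_a^{s,e}\}$ (in the non-degenerate regime, $c_a^1=c_a-c_{\psi_a}^1\leq c_a-1$ forces $k_{a,c_a^1}\geq k_{a,c_a-1}$, contradicting the compatibility of the two conditions on $\lceil k\rceil,\lceil k\rceil^\prime$) shows that these equalities are simultaneously satisfiable only when either $d_a=0$ or we are in the degenerate trichotomy case $d_a=1,c_{\psi_a}^1=0,e_{a,1}=0$; in both subcases the maximality condition simplifies to $k_{a,c_a-1}\geq k>k_{a,c_a}=k_a^\prime$. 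Intersecting with the branch condition produces the interval
\[
\min\{k_{a,c_a-1},v_{j_a}^{\Omega^\pm}(k_{a,1}),k_a^\sharp\}\geq k>\max\{k_a^\prime,k_a^\flat\},
\]
which identifies $k_{a,\star}$ and $k_{a,\star}^\prime$, with $k_{a,\star}\leq k_{a,c_a-1}$ immediate.

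The main obstacle is the verification of the membership assertions $k_{a,\star}\in\mathbf{n}_{\Sigma,-1}$ and $k_{a,\star}^\prime\in\mathbf{n}_{\Sigma,1}$, which requires tracing through the oriented permutation $(v_\Sigma^{\Omega^\pm})^{-1}|_{\mathbf{n}_\Sigma}$. For $a\in(\Z/t)_\Sigma^-$, Definition~\ref{def: next element} shows that $k_{a,1}$ is the $(-1)$-successor of $k_{a,0}$ and $\min\mathbf{n}^{a,-}$ is the $(+1)$-successor of $k_{a,c_a}$; the constructions of $k_a^\sharp$ and $k_a^\flat$ at the start of Section~\ref{sub: exp type III} are by design compatible with these directions, forcing $k_a^\sharp\in\mathbf{n}_{\Sigma,-1}$ and $k_a^\flat\in\mathbf{n}_{\Sigma,1}$. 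The element $v_{j_a}^{\Omega^\pm}(k_{a,1})$ lies on the $(-1)$-tour as the image under $v_\Sigma^{\Omega^\pm}$ of $k_{a,1}$, while the endpoint $k_{a,c_a-1}$ (when it realises the minimum) is handled via item~\ref{it: degeneration of tour} of Definition~\ref{def: oriented permutation}, and $k_a^\prime=k_{a,c_a}\in\mathbf{n}_{\Sigma,1}$ follows from the same directional computation at $k_{a,c_a}$. Combining these facts with the disjoint-orbit property (item~\ref{it: disjoint orbits} of Definition~\ref{def: oriented permutation}) yields the desired membership assertions.
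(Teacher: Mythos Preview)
Your overall strategy matches the paper's: both unfold the case-definition of $\Omega_{a,k,j_a}$ to read off the interval structure, and then verify $k_{a,\star}\in\mathbf{n}_{\Sigma,-1}$, $k_{a,\star}^\prime\in\mathbf{n}_{\Sigma,1}$ by tracing through the oriented permutation. The paper simply asserts these follow ``from the definition'' and ``a case by case checking using the definition of $k_a^\sharp$ and $k_a^\flat$''; you attempt to supply the details.

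There is, however, a gap in your analysis of the maximality condition $\Omega_{\psi_a,k}=\Omega_a$ when $c_a\geq 2$. Your parenthetical rules out only the regime $c_{\psi_a}^1\geq 1$ (equivalently $c_a^1\leq c_a-1$), yet you then conclude that just the two cases $d_a=0$ and $(d_a=1,\,c_{\psi_a}^1=0,\,e_{a,1}=0)$ survive. This misses the subcase $d_a=1$, $c_{\psi_a}^1=0$, $e_{a,1}\geq 1$, which belongs to the third branch of the trichotomy: here the $k_a^{1,e}$ satisfy $k_{a,c_a-1}>k_a^{1,e}>k_{a,c_a}$, and the maximality condition becomes $k_{a,c_a-1}\geq k>k_a^{1,1}=k_a^\prime$ rather than $k>k_{a,c_a}$. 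Your final formula $k_{a,\star}^\prime=\max\{k_a^\prime,k_a^\flat\}$ is still correct, because it is phrased in terms of $k_a^\prime$ and $k_a^\prime=k_a^{1,1}$ in the missing case; but the justification is incomplete, and the membership $k_a^\prime\in\mathbf{n}_{\Sigma,1}$ (needed when $k_a^\prime>k_a^\flat$) must also be checked for this case. As a minor related point, in your second surviving subcase the identity ``$k_{a,c_a}=k_a^\prime$'' is not what the definition in \S\ref{sub: exp type III} literally gives, since there $k_a^\prime$ is declared to be $k_a^{1,1}$ whenever $d_a\geq 1$.
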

\begin{proof} The existence is clear from the definition of $\Omega_{a,k,j_a}$, as is the fact that $k_{a,\star}^\prime=\max\{k_a^\prime,k_a^\flat\}$ and $k_{a,\star}\in \{k_{a,c_a-1}, v_\Sigma^{\Omega^\pm}(k_{a,1}),k_a^\sharp\}$. The fact that $k_{a,\star}\in \mathbf{n}_{\Sigma,-1}$ and $k_{a,\star}^\prime \in  \mathbf{n}_{\Sigma,1}$ follows from a case by case checking using the definition of $k_a^\sharp$ and $k_a^\flat$. %
Finally, we note that $\Omega_{a,k_{a,\star},j_a}=\Omega_a$ forces $\Omega_{\psi_a,k_{a,\star}}=\Omega_a$ which necessarily implies $k_{a,\star}\leq k_{a,c_a-1}$ by the definition of $\Omega_{\psi_a,k_{a,\star}}$. %
\end{proof}

\begin{lemma}\label{lem: criterion for emptyset}
Let $\Omega^\pm$ be a constructible $\Lambda$-lift of type $\rm{III}$, and let $a\in\Z/t$, $\varepsilon\in\{1,-1\}$ and $(k,j)\in\mathbf{n}_\cJ$ be elements such that the following conditions hold:
\begin{itemize}
\item $\Omega_{a,k,j}\neq \emptyset$ and $i_{a,k,j}^\prime=i_{a,c_a}=i_{a+\varepsilon,c_{a+\varepsilon}}$;
\item either $c_{a+\varepsilon}=1$ or $k\leq k_{a+\varepsilon,c_{a+\varepsilon}-1}$.
\end{itemize}
Then we have $\Omega_{a+\varepsilon,k,j}=\emptyset$.
\end{lemma}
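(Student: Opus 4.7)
I would first dispose of the easy reductions: if $j\neq j_a$ then $\Omega_{a,k,j}=\emptyset$ contradicts the first hypothesis, while if $j\neq j_{a+\varepsilon}$ the conclusion is immediate. So the only case to treat is $j=j_a=j_{a+\varepsilon}$; in this case the identity $i_{a,c_a}=i_{a+\varepsilon,c_{a+\varepsilon}}$ translates, after applying $u_j$, into $k_{a,c_a}=k_{a+\varepsilon,c_{a+\varepsilon}}$, placing $a$ and $a+\varepsilon$ inside a common connected component $\Sigma\in\pi_0(\Omega^\pm)$ whose oriented permutation $(v_\Sigma^{\Omega^\pm})^{-1}|_{\mathbf{n}_\Sigma}$ governs the behaviour at this shared point.

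Next I would argue by contradiction, assuming $\Omega_{a+\varepsilon,k,j}\neq\emptyset$; this forces $k_{a+\varepsilon}^{\sharp}\geq k>k_{a+\varepsilon}^{\flat}$. Combining with $\Omega_{a,k,j}\neq\emptyset$, which via Lemma~\ref{lem: non empty interval}(ii) gives $k>k_a^{\flat}\geq k_{a,c_a}=k_{a+\varepsilon,c_{a+\varepsilon}}$, and with the second hypothesis (reading off $k\leq k_{a+\varepsilon,c_{a+\varepsilon}-1}$ when $c_{a+\varepsilon}\geq 2$, and $k\leq k_{a+\varepsilon,0}=k_{a+\varepsilon}^{\sharp}\leq k_{a+\varepsilon,c_{a+\varepsilon}-1}$ -- understood suitably -- when $c_{a+\varepsilon}=1$ via Lemma~\ref{lem: non empty interval}), I would locate $k$ in the interval
\[
k_{a+\varepsilon,c_{a+\varepsilon}}\;<\;k\;\leq\;k_{a+\varepsilon,c_{a+\varepsilon}-1}.
\]

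The key computation is that for every such $k$ one has $i'_{a+\varepsilon,k,j}=i_{a+\varepsilon,c_{a+\varepsilon}}$. I would unwind the definitions of $\lceil k\rceil$, $\lceil k\rceil'$ and $\Omega_{\psi_{a+\varepsilon},k}$ from \S~\ref{sub:comb:gen}. By construction the candidates for $\lceil k\rceil$ falling inside $(k_{a+\varepsilon,c_{a+\varepsilon}},k_{a+\varepsilon,c_{a+\varepsilon}-1}]$ are $k_{a+\varepsilon,c_{a+\varepsilon}-1}=u_j(i_{\Omega_{\psi_{a+\varepsilon}},1})$ together with the values $k_{\psi_{a+\varepsilon}}^{s,e}$ -- but the strict inequalities satisfied by the $k_{\psi}^{s,e}$ force such an intermediate $k_{\psi}^{s,e}$ to arise only when $c_{\psi_{a+\varepsilon}}^{d_{\psi_{a+\varepsilon}}}=0$, in which case the matching $\lceil k\rceil'=u_j(i_{\Omega,c_\psi^{d_\psi}})$ still corresponds to $i_{\Omega,0}=i_{\alpha_{a+\varepsilon}}'=i_{a+\varepsilon,c_{a+\varepsilon}}$. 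A direct inspection of Cases~1, 2, 3 in the definition of $\Omega_{\psi,k}$ then shows that in every subcase the right endpoint of the resulting root equals $i_{\Omega,0}=i_{a+\varepsilon,c_{a+\varepsilon}}$, and removing the element $((i_{a+\varepsilon,0},i_{a+\varepsilon,1}),j)$ (the alternative to $\Omega_{\psi_{a+\varepsilon},k}$ when $c_{a+\varepsilon}\geq 2$) only changes the left endpoint. Hence $i'_{a+\varepsilon,k,j}=i_{a+\varepsilon,c_{a+\varepsilon}}$.

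Combining this with the hypothesis $i'_{a,k,j}=i_{a,c_a}=i_{a+\varepsilon,c_{a+\varepsilon}}$ yields $i'_{a,k,j}=i'_{a+\varepsilon,k,j}$, which violates Lemma~\ref{lem: std type III}(i) (as both $\Omega_{a,k,j}$ and $\Omega_{a+\varepsilon,k,j}$ are assumed nonempty). This contradicts our assumption and proves $\Omega_{a+\varepsilon,k,j}=\emptyset$. The main technical obstacle is the case analysis in the third paragraph: one must carefully verify that each of Cases~1-3 in the definition of $\Omega_{\psi_{a+\varepsilon},k}$, together with the possible presence of extra $k_\psi^{s,e}$-candidates in the interval, conspires to produce the same right endpoint $i_{a+\varepsilon,c_{a+\varepsilon}}$; this is where the bookkeeping of the $c_\psi^s$-sequence from \S~\ref{sub:comb:gen} is essential.
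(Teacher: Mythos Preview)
Your proof is correct and follows essentially the same route as the paper: assume $\Omega_{a+\varepsilon,k,j}\neq\emptyset$, show from the second hypothesis that $i'_{a+\varepsilon,k,j}=i_{a+\varepsilon,c_{a+\varepsilon}}$, and derive a contradiction with item~\ref{it: empty intersection} of Lemma~\ref{lem: std type III}. The paper's version compresses your third paragraph into the single phrase ``by checking the definition of $\Omega_{a+\varepsilon,k,j}$''; your case analysis of $\lceil k\rceil$ and $\lceil k\rceil'$ is a faithful expansion of this. One minor slip: in the case $c_{a+\varepsilon}=1$ you write $k_{a+\varepsilon,0}=k_{a+\varepsilon}^{\sharp}$, but only $k_{a+\varepsilon}^{\sharp}\leq k_{a+\varepsilon,0}$ holds in general (Lemma~\ref{lem: non empty interval}\ref{it: bound sharp}); the conclusion $k\leq k_{a+\varepsilon,c_{a+\varepsilon}-1}$ is unaffected.
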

\begin{proof}
By checking the definition of $\Omega_{a+\varepsilon,k,j}$, we deduce from the second item that either $\Omega_{a+\varepsilon,k,j}=\emptyset$ or $\Omega_{a+\varepsilon,k,j}\neq \emptyset$ and $i_{a+\varepsilon,k,j}^\prime=i_{a+\varepsilon,c_{a+\varepsilon}}$. However, as we have $i_{a+\varepsilon,k,j}^\prime\neq i_{a,k,j}^\prime$ whenever $\Omega_{a,k,j}\neq \emptyset\neq \Omega_{a+\varepsilon,k,j}$ thanks to item~\ref{it: empty intersection} of Lemma~\ref{lem: std type III}, this together with $i_{a,k,j}^\prime=i_{a,c_a}=i_{a+\varepsilon,c_{a+\varepsilon}}$ forces $\Omega_{a+\varepsilon,k,j}=\emptyset$.
\end{proof}

\begin{lemma}\label{lem: product formula}
Let $\Omega^\pm$ be a constructible $\Lambda$-lift of type $\rm{III}$, and let $\Sigma\in\pi_0(\Omega^\pm)$ be a connected component. Then for each $a'\in(\Z/t)_\Sigma$ we have
$$\underset{k\in\mathbf{n}_{\Sigma,1}}{\prod}F_\xi^{\Omega_{a',k,j_\Sigma}}(F_\xi^{\Omega_{a',k+1,j_\Sigma}})^{-1}=
\left\{\begin{array}{cl}
F_\xi^{\Omega_{a'}}&\hbox{if $a'\in (\Z/t)_\Sigma^+$};\\
(F_\xi^{\Omega_{a'}})^{-1}&\hbox{if $a'\in (\Z/t)_\Sigma^-$}.
\end{array}\right.$$
\end{lemma}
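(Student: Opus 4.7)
The strategy is a direct telescoping computation. Let $P_{a'}^+$ denote the left-hand side of the claimed identity and define $P_{a'}^-$ analogously with $\mathbf{n}_{\Sigma,1}$ replaced by $\mathbf{n}_{\Sigma,-1}$. I first observe that the \emph{total} telescoping satisfies
\[
\prod_{k=1}^{n} F_\xi^{\Omega_{a',k,j_\Sigma}} \bigl(F_\xi^{\Omega_{a',k+1,j_\Sigma}}\bigr)^{-1} \;=\; F_\xi^{\Omega_{a',1,j_\Sigma}} \cdot \bigl(F_\xi^{\Omega_{a',n+1,j_\Sigma}}\bigr)^{-1} \;=\; 1,
\]
since the right endpoint is $F_\xi^{\emptyset}=1$ by convention, and the left endpoint is also $1$ because $1 \leq k_{a'}^\flat$ by Lemma~\ref{lem: non empty interval}(\ref{it: bound flat}) forces $\Omega_{a',1,j_\Sigma}=\emptyset$ by the definition of $\Omega_{a,k,j_a}$. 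Lemma~\ref{lem: std type III}(\ref{it: gap}) then guarantees that only factors with $k \in \mathbf{n}_{\Sigma,1}\sqcup \mathbf{n}_{\Sigma,-1}$ contribute, so $P_{a'}^+\cdot P_{a'}^- = 1$. It therefore suffices to identify $P_{a'}^+$.

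The plan is then a case analysis on whether $a'\in (\Z/t)_\Sigma^+$ or $a'\in (\Z/t)_\Sigma^-$. In the first case, by Definition~\ref{def: next element} the fixed $1$-tour interacts with the block $a'$ through $\mathbf{n}^{a',+}\cup\{k_{a',0}\}$. I will track the transitions of $\Omega_{a',k,j_{a'}}$ using the explicit case analysis in the proof of Lemma~\ref{lem: std type III}(\ref{it: indexed decomposition}): at a $1$-crawl at $k \in \mathbf{n}^{a',+}\setminus\{k_{a',c_{a'}}\}$ or at $k=k_{a',0}$ the telescoping ratio equals $u_\xi^{((i_{a',c+1},i_{a',c}),j_{a'})}$ for the appropriate $c$, and each element of $\Omega_{a'}$ is hit exactly once. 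Multiplying through gives $P_{a'}^+=F_\xi^{\Omega_{a'}}$. The case $a'\in (\Z/t)_\Sigma^-$ is dual: now the $1$-tour traverses $\mathbf{n}^{a',-}$, so each $1$-crawl corresponds to a transition where $\alpha^{\Omega^\pm}_{a',k,j_{a'}}-\alpha^{\Omega^\pm}_{a',k+1,j_{a'}}$ is the \emph{negative} of a root in $\Omega_{a'}$, and the same telescoping yields the reciprocal $(F_\xi^{\Omega_{a'}})^{-1}$.

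The main obstacle is the careful bookkeeping at the boundary values $k=k_{a'}^\sharp$ and $k=k_{a'}^\flat$, which are precisely the locations where $1$-jumps (rather than $1$-crawls) may occur. The three-case definition of $\Omega_{a,k,j_a}$ in \S\ref{sub: exp type III} — in particular the clause ``$\Omega_{\psi_a,k}\setminus\{((i_{a,0},i_{a,1}),j_a)\}$'' active on $\min\{v_{j_a}^{\Omega^\pm}(k_{a,1}),k_a^\sharp\}<k\leq k_a^\sharp$ — is engineered exactly so that a $1$-jump at $v_{j_{a'}}^{\Omega^\pm}(k_{a',1})$ covering $k_{a',0}$ inserts the missing element $((i_{a',0},i_{a',1}),j_{a'})$ at the right step in the telescoping; this is consistent with item~\ref{it: interaction of jumps} of Definition~\ref{def: oriented permutation}. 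I will verify this sub-case by sub-case according to whether $k_{a'}^\sharp$ equals $k_{a',0}$ or lies at an interior jump location (and dually for $k_{a'}^\flat$ with respect to $k_{a',c_{a'}}$), using Lemma~\ref{lem: non empty interval} and Lemma~\ref{lem: full subset case} to check that the relevant inequalities place the jump endpoints in the correct tour. Once these boundary cases are settled, the interior crawls give the clean telescoping described above, completing the computation in both orientation cases.
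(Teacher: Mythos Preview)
Your proposal is correct and takes essentially the same approach as the paper: a direct telescoping computation, reducing to the subset $\mathbf{n}_{\Sigma,1}^{a'}=\{k\in\mathbf{n}_{\Sigma,1}\mid\Omega_{a',k,j_\Sigma}\neq\Omega_{a',k+1,j_\Sigma}\}$ and then doing a case analysis (the paper splits into three subcases for $a'\in(\Z/t)_\Sigma^+$ and two for $a'\in(\Z/t)_\Sigma^-$, exactly the crawl/jump bookkeeping you outline). Your opening observation that $P_{a'}^+\cdot P_{a'}^-=1$ is correct and a pleasant consistency check, but the paper does not use it and you still compute $P_{a'}^+$ directly, so it does not shorten the argument. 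Two small corrections: the root should be written $((i_{a',c},i_{a',c+1}),j_{a'})$ (your indices are swapped, giving a negative root), and your sentence ``the fixed $1$-tour interacts with the block $a'$ through $\mathbf{n}^{a',+}\cup\{k_{a',0}\}$'' is a slight oversimplification --- when a $1$-jump covers $k_{a',0}$, the relevant transition for $((i_{a',0},i_{a',1}),j_{a'})$ occurs at $k=v_\Sigma^{\Omega^\pm}(k_{a',1})$, which may lie outside this set --- but you correctly handle exactly this in your ``main obstacle'' paragraph.
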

\begin{proof}
For each $a'\in(\Z/t)_\Sigma$, we define $\mathbf{n}_{\Sigma,1}^{a'}\defeq \{k\in\mathbf{n}_{\Sigma,1}\mid \Omega_{a',k,j_\Sigma}\neq \Omega_{a',k+1,j_\Sigma}\}$, and it is clear that
$$\underset{k\in\mathbf{n}_{\Sigma,1}}{\prod}F_\xi^{\Omega_{a',k,j_\Sigma}}(F_\xi^{\Omega_{a',k+1,j_\Sigma}})^{-1}=\underset{k\in\mathbf{n}_{\Sigma,1}^{a'}}{\prod}F_\xi^{\Omega_{a',k,j_\Sigma}}(F_\xi^{\Omega_{a',k+1,j_\Sigma}})^{-1}.$$

Assume that $a'\in(\Z/t)_\Sigma^+$. From the proof of Lemma~\ref{lem: std type III} exactly one of the following holds:
\begin{itemize}
\item if $(\mathbf{n}^{a',+}\setminus\{k_{a',c_{a'}}\})\cap\mathbf{n}_{\Sigma,1}\neq \emptyset$ and $v_\Sigma^{\Omega^\pm}(k_{a',1})\notin \mathbf{n}^{a',+}\setminus\{k_{a',c_{a'}}\}$, then $\mathbf{n}_{\Sigma,1}^{a'}=\{v_\Sigma^{\Omega^\pm}(k_{a',1})\}\sqcup(\mathbf{n}^{a',+}\setminus\{k_{a',c_{a'}}\})$ and

\begin{itemize}
\item[$\circ$]
$\Omega_{a',k_{a',c},j_\Sigma}=\Omega_{a',k_{a',c}+1,j_\Sigma}\sqcup\{((i_{a',c},i_{a',c+1}),j_\Sigma)\}$
for each $1\leq c\leq c_{a'}-1$;
\item[$\circ$] $\Omega_{a',v_\Sigma^{\Omega^\pm}(k_{a',1}),j_\Sigma}=\Omega_{a',v_\Sigma^{\Omega^\pm}(k_{a',1})+1,j_\Sigma}\sqcup\{((i_{a',0},i_{a',1}),j_\Sigma)\}$.
\end{itemize}

\item if $(\mathbf{n}^{a',+}\setminus\{k_{a',c_{a'}}\})\cap\mathbf{n}_{\Sigma,1}\neq \emptyset$ but $v_\Sigma^{\Omega^\pm}(k_{a',1})\in \mathbf{n}^{a',+}\setminus\{k_{a',c_{a'}}\}$, then $v_\Sigma^{\Omega^\pm}(k_{a',1})=k_{a',c_{a'}-1}$, $\mathbf{n}_{\Sigma,1}=\mathbf{n}_{\Sigma,1}^{a'}=\mathbf{n}^{a',+}\setminus\{k_{a',c_{a'}}\}$ and
\begin{itemize}
\item[$\circ$] $\Omega_{a',k_{a',c},j_\Sigma}=\Omega_{a',k_{a',c}+1,j_\Sigma}\sqcup\{((i_{a',c},i_{a',c+1}),j_\Sigma)\}$
for each $1\leq c\leq c_{a'}-2$;
\item[$\circ$] $\Omega_{a',k_{a',c_{a'}-1},j_\Sigma}=\Omega_{a',k_{a',c_{a'}-1}+1,j_\Sigma}\sqcup\{((i_{a',0},i_{a',1}),j_\Sigma),((i_{a',c_{a'}-1},i_{a',c_{a'}}),j_\Sigma)\}$.
\end{itemize}

\item if $(\mathbf{n}^{a',+}\setminus\{k_{a',c_{a'}}\})\cap\mathbf{n}_{\Sigma,1}=\emptyset$, then $\mathbf{n}_{\Sigma,1}^{a'}=\{k_{a'}^\sharp\}$ and  $\Omega_{a',k_{a'}^\sharp,j_\Sigma}=\Omega_{a'}$ and $\Omega_{a',k_{a'}^\sharp+1,j_\Sigma}=\emptyset$.
\end{itemize}
From the above descriptions, in all cases we get
$$\underset{k\in\mathbf{n}_{\Sigma,1}^{a'}}{\prod}{F_\xi^{\Omega_{a',k,j_\Sigma}}}{(F_\xi^{\Omega_{a',k+1,j_\Sigma}})^{-1}}=F_\xi^{\Omega_{a'}}.$$

Assume that $a'\in(\Z/t)_\Sigma^-$. From the proof of Lemma~\ref{lem: std type III} exactly one of the following holds:
\begin{itemize}
\item if $(\mathbf{n}^{a',-}\setminus\{k_{a',0}\})\cap\mathbf{n}_{\Sigma,1}\neq \emptyset$, then $\mathbf{n}_{\Sigma,1}^{a'}=\{k_{a'}^\flat\}\sqcup\{k'\in\mathbf{n}^{a',-}\mid k'\neq k_{a',0},\,k'> k_{a'}^\flat\}$, and
\begin{itemize}
\item[$\circ$] for each $1\leq s\leq d_{a'}$ and $2\leq e\leq e_{a',s}$ satisfying $k_{a'}^{s,e} > k_{a'}^\flat$ we have
$$
\Omega_{a',k_{a'}^{s,e},j_\Sigma}\sqcup\{((i_{a'}^{s,e-1},i_{a',c_{a'}^s}),j_\Sigma)\}=\Omega_{a',k_{a'}^{s,e}+1,j_\Sigma}\sqcup\{((i_{a'}^{s,e},i_{a',c_{a'}^s}),j_\Sigma)\};
$$
\item[$\circ$] if $d_{a'}\geq 1$, $e_{a',1}\geq 1$ and $k_{a'}^{1,1} > k_{a'}^\flat$, then we have
$$\Omega_{a',k_{a'}^{1,1},j_\Sigma}\sqcup\{((i_{a',c},i_{a',c+1}),j_\Sigma)\mid 0\leq c\leq c_{a'}^1-1\}=\Omega_{a',k_{a'}^{1,1}+1,j_\Sigma}\sqcup\{((i_{a'}^{1,1},i_{a',c_{a'}^1}),j_\Sigma)\};$$
\item[$\circ$] for each $2\leq s\leq d_{a'}$ satisfying $k_{a'}^{s,1}>k_{a'}^\flat$ we have
\begin{multline*}
\Omega_{a',k_{a'}^{s,1},j_\Sigma}\sqcup\{((i_{a',c},i_{a',c+1}),j_\Sigma)\mid c_{a'}^{s-1}\leq c\leq c_{a'}^s-1\}\sqcup\{((i_{a'}^{s-1,e_{a',s-1}},i_{a',c_{a'}^{s-1}}),j_\Sigma)\}\\
=\Omega_{a',k_{a'}^{s,1}+1,j_\Sigma}\sqcup\{((i_{a'}^{s,1},i_{a',c_{a'}^s}),j_\Sigma)\};
\end{multline*}
\item[$\circ$] we have
$\Omega_{a',k_{a'}^\flat,j_\Sigma}=\emptyset\,\,\mbox{and}\,\,\widehat{\Omega}_{a',k_{a'}^\flat+1,j_\Sigma}=((u_{j_\Sigma}^{-1}(v_\Sigma^{\Omega^\pm})^{-1}(k_{a'}^\flat),i_{a,c_a}),j_\Sigma).$
\end{itemize}
\item if $(\mathbf{n}^{a',-}\setminus\{k_{a',0}\})\cap\mathbf{n}_{\Sigma,1}=\emptyset$, then $\mathbf{n}_{\Sigma,1}^{a'}=\{k_{a'}^\flat\}$ and $\Omega_{a',k_{a'}^\flat,j_\Sigma}=\emptyset$ and $\Omega_{a',k_{a'}^\flat+1,j_\Sigma}=\Omega_{a'}.$
\end{itemize}
From the above descriptions, in all cases we get
$$\underset{k\in\mathbf{n}_{\Sigma,1}^{a'}}{\prod}{F_\xi^{\Omega_{a',k,j_\Sigma}}}{(F_\xi^{\Omega_{a',k+1,j_\Sigma}})^{-1}}=(F_\xi^{\Omega_{a'}})^{-1}.$$
The proof is thus finished.
\end{proof}

We need the following condition.
\begin{cond}\label{cond: special interval}
The constructible $\Lambda$-lift $\Omega^\pm$ of type $\rm{III}$ satisfies the following conditions:
\begin{itemize}
\item $\Omega^+$ and $\Omega^-$ are not pseudo $\Lambda$-decompositions of the same element in $\widehat{\Lambda}$;
\item for each $\Lambda^\square$-interval $\Omega$ of $\Omega^\pm$ which is a pseudo $\Lambda$-decomposition of some $(\al_\Omega,j)\in\widehat{\Lambda}$, we have
    \begin{itemize}
    \item[$\circ$] $\Omega\setminus\{((i_{\Omega,1},i_{\al_\Omega}^\prime),j)\}\subseteq\big(\Omega_{(\al_\Omega,j),\Lambda}^{\rm{max}}\big)_\dagger$ (cf.~Definition~\ref{def: decomposition of element} for $i_{\Omega,1}$);
    \item[$\circ$] if $i_{\Omega,1}\neq i_{\Omega_{(\al_\Omega,j),\Lambda}^{\rm{max}},1}$, then $((i_{\Omega,1},i_{\al_\Omega}^\prime),j)\in\widehat{\Omega}$ and $u_j(i_{\Omega,1})<u_j(i_{\Omega_{(\al_\Omega,j),\Lambda}^{\rm{max}},1})$;
    \end{itemize}
\end{itemize}
\end{cond}

Recall $\cO_{\xi,\Lambda}^{\rm{ps}}$ from the paragraph before Proposition~\ref{prop: type III}.
\begin{lemma}\label{lem: reduce to type III with cond}
For each constructible $\Lambda$-lift $\Omega^\pm$ of type $\rm{III}$, if $F_\xi^{\Omega^\pm}\not\in\cO_{\xi,\Lambda}^{\rm{ps}}\cdot \cO_{\xi,\Lambda}^{<|\Omega^\pm|}$ then there exists a constructible $\Lambda$-lift $\Omega_0^\pm$ of type $\rm{III}$ satisfying Condition~\ref{cond: special interval} and $F_\xi^{\Omega_0^\pm}(F_\xi^{\Omega^\pm})^{-1}\in\cO_{\xi,\Lambda}^{\rm{ps}}$.
\end{lemma}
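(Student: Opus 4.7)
The plan is to reduce to Condition~\ref{cond: special interval} by successively replacing each ``bad'' $\Lambda^\square$-interval of $\Omega^\pm$ by a better one, paying the difference in $\cO_{\xi,\Lambda}^{\rm{ps}}$. First note that if $\Omega^+$ and $\Omega^-$ are both pseudo $\Lambda$-decompositions of the same $(\alpha,j)\in\widehat{\Lambda}$, then by definition $F_\xi^{\Omega^\pm}\in\cO_{\xi,\Lambda}^{\rm{ps}}$, contradicting the hypothesis; hence the first bullet of Condition~\ref{cond: special interval} holds for $\Omega^\pm$ itself. So it suffices to deal with the second bullet.

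Let $\Omega$ be any $\Lambda^\square$-interval of $\Omega^\pm$ which is a pseudo $\Lambda$-decomposition of some $(\alpha_\Omega,j)\in\widehat{\Lambda}$ and fails the second bullet. I would construct a replacement pseudo $\Lambda$-decomposition $\Omega'$ of $(\alpha_\Omega,j)$ as follows. If $i_{\Omega,1}=i_{\Omega_{(\alpha_\Omega,j),\Lambda}^{\max},1}$, take $\Omega'\defeq(\Omega_{(\alpha_\Omega,j),\Lambda}^{\max})_\dagger$. Otherwise let $(\alpha',j)\defeq((i_{\alpha_\Omega},i_{\Omega,1}),j)\in\widehat{\Lambda}$ and set
\[
\Omega'\defeq (\Omega_{(\alpha',j),\Lambda}^{\max})_\dagger\;\sqcup\;\{((i_{\Omega,1},i_{\alpha_\Omega}^\prime),j)\}
\]
when $u_j(i_{\Omega,1})<u_j(i_{\Omega_{(\alpha_\Omega,j),\Lambda}^{\max},1})$ (and drop to the fully maximal choice otherwise). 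By Lemma~\ref{lem: reduce to ordinary} the recipe guarantees that $\Omega'_{(\alpha'',j)}=\Omega_{(\alpha'',j),\Lambda}^{\max}$ is $\Lambda$-ordinary for each $(\alpha'',j)\in\widehat{\Omega}'$, and $\Omega'$ satisfies the second bullet with respect to $(\alpha_\Omega,j)$. The balanced pair $(\Omega,\Omega')$ consists of two pseudo $\Lambda$-decompositions of the same $(\alpha_\Omega,j)\in\widehat{\Lambda}$, so $F_\xi^{\Omega,\Omega'}\in\cO_{\xi,\Lambda}^{\rm{ps}}$ directly from the definition.

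Let $\Omega_0^\pm$ be obtained from $\Omega^\pm$ by replacing the $\Lambda^\square$-interval $\Omega$ by $\Omega'$ (and leaving the other $\Lambda^\square$-intervals untouched). Then $F_\xi^{\Omega_0^\pm}(F_\xi^{\Omega^\pm})^{-1}\sim F_\xi^{\Omega,\Omega'}\in\cO_{\xi,\Lambda}^{\rm{ps}}$ by Lemma~\ref{lem: partition implies similar}. The key step is to show that $\Omega_0^\pm$ is again a constructible $\Lambda$-lift of type \rm{III}: one has to verify the items \rm{III}-\ref{it: III 1} through \rm{III}-\ref{it: III 9} of Definition~\ref{def: constructible lifts}. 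Items \rm{III}-\ref{it: III 2} and \rm{III}-\ref{it: III 3} are automatic from the construction of $\Omega'$ via $(\cdot)_\dagger$; the remaining items are verified by a case analysis using that $\Omega'$ and $\Omega$ share the same endpoints, the same element $((i_{\Omega,1},i_{\alpha_\Omega}^\prime),j)$ (when $i_{\Omega,1}\neq i_{\Omega_{(\alpha_\Omega,j),\Lambda}^{\max},1}$), and that all other $\Lambda^\square$-intervals of $\Omega^\pm$ remain intact. Whenever some item fails for $\Omega_0^\pm$, the obstruction can be converted into showing $F_\xi^{\Omega^\pm}\in \cO_{\xi,\Lambda}^{\rm{ps}}\cdot\cO_{\xi,\Lambda}^{<|\Omega^\pm|}$ by applying the auxiliary lemmas of \S~\ref{sub:prelim:lifts}: Lemma~\ref{lem: union of lifts} and Lemma~\ref{lem: reduction to separated lift} for items \rm{III}-\ref{it: III 1}, \rm{III}-\ref{it: III 7}; Lemma~\ref{lem: reduce non ord to smaller norm} and Lemma~\ref{lem: reduce non ord to smaller norm II} for items \rm{III}-\ref{it: III 4}, \rm{III}-\ref{it: III 5}, \rm{III}-\ref{it: III 9}; and Lemma~\ref{lem: reduce non sep to smaller norm} for items \rm{III}-\ref{it: III 6}, \rm{III}-\ref{it: III 8}.

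Finally, iterating the procedure over all $\Lambda^\square$-intervals of $\Omega^\pm$ which fail the second bullet produces the desired $\Omega_0^\pm$. The main obstacle I foresee is precisely the verification step that $\Omega_0^\pm$ remains a constructible $\Lambda$-lift of type \rm{III}: the subtlety is that the swap between $\Omega$ and $\Omega'$ can a priori disturb items \rm{III}-\ref{it: III 4}--\rm{III}-\ref{it: III 9} through interaction with the other (unchanged) $\Lambda^\square$-intervals of $\Omega^\pm$. One has to exploit carefully the facts that $\Omega$ itself already satisfied these items, that $\Omega'\setminus\{((i_{\Omega,1},i_{\alpha_\Omega}^\prime),j)\}$ is contained in a maximal ordinarized decomposition, and that the pairs $\psi$ in (\ref{equ: set of pairs}) for $\Omega_0^\pm$ are obtained from those of $\Omega^\pm$ only by modifying the one pair coming from $\Omega$; this reduces every potential failure to a configuration already covered by one of the three lemmas listed above, so that the contradiction with the assumption $F_\xi^{\Omega^\pm}\notin\cO_{\xi,\Lambda}^{\rm{ps}}\cdot\cO_{\xi,\Lambda}^{<|\Omega^\pm|}$ goes through.
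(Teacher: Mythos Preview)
Your overall strategy mirrors the paper's: replace each offending $\Lambda^\square$-interval by a better pseudo $\Lambda$-decomposition of the same $(\alpha_\Omega,j)$, note that the resulting ratio lies in $\cO_{\xi,\Lambda}^{\rm{ps}}$, and then run the type~\rm{III} checklist, with any failure producing $F_\xi^{\Omega_0^\pm}\in\cO_{\xi,\Lambda}^{\rm{ps}}\cdot\cO_{\xi,\Lambda}^{<|\Omega^\pm|}$ via the auxiliary lemmas. This skeleton is exactly right.

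The gap is in your choice of replacement. In your second case you set
\[
\Omega'=(\Omega_{(\alpha',j),\Lambda}^{\max})_\dagger\sqcup\{((i_{\Omega,1},i_{\alpha_\Omega}^\prime),j)\}
\]
with $\alpha'=(i_{\alpha_\Omega},i_{\Omega,1})$, and then assert that ``$\Omega'$ satisfies the second bullet with respect to $(\alpha_\Omega,j)$''. But the first sub-bullet of Condition~\ref{cond: special interval} demands
\[
\Omega'\setminus\{((i_{\Omega',1},i_{\alpha_\Omega}^\prime),j)\}=(\Omega_{(\alpha',j),\Lambda}^{\max})_\dagger\ \subseteq\ (\Omega_{(\alpha_\Omega,j),\Lambda}^{\max})_\dagger,
\]
and there is no reason for this inclusion to hold: the maximal $\Lambda$-decompositions of the shorter root $\alpha'$ and of $\alpha_\Omega$ are unrelated in general (the break points of $\Omega_{(\alpha',j),\Lambda}^{\max}$ need not occur among those of $\Omega_{(\alpha_\Omega,j),\Lambda}^{\max}$, and the $\dagger$-construction only makes things worse). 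Since Condition~\ref{cond: special interval} is used downstream precisely through this inclusion (see \eqref{equ: sub of max dagger}), your $\Omega_0^\pm$ does not in general satisfy the conclusion of the lemma.

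The paper avoids this entirely by making the cruder but correct replacement: it takes $\Omega'\defeq(\Omega_{(\alpha_\Omega,j),\Lambda}^{\max})_\dagger$ for \emph{every} such $\Lambda^\square$-interval, regardless of $i_{\Omega,1}$. With this choice the first sub-bullet is trivial (you are removing an element from the very set you need to be contained in), and the second sub-bullet follows from the shape of the $\dagger$-construction. Your refined recipe is an unnecessary complication; dropping it in favor of the uniform maximal choice fixes the argument, and everything else in your outline (the $\cO_{\xi,\Lambda}^{\rm{ps}}$ bookkeeping, the reference to the type~\rm{III} checklist and the lemmas of \S\ref{sub:prelim:lifts}--\S\ref{sub: cons lifts}) goes through as written.
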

\begin{proof}
We define a new balanced pair $\Omega_0^\pm$ by the following step by step replacement: for each $\Lambda^\square$-interval $\Omega$ of $\Omega^\pm$ which is a pseudo $\Lambda$-decomposition of some $(\al_\Omega,j)\in\widehat{\Lambda}$, we replace $\Omega$ inside $\Omega^+$ or $\Omega^-$ with $(\Omega_{(\al_\Omega,j),\Lambda}^{\rm{max}})_\dagger$. It is clear that $F_\xi^{\Omega_0^\pm}(F_\xi^{\Omega^\pm})^{-1}\in\cO_{\xi,\Lambda}^{\rm{ps}}$. Then we check the definition of constructible $\Lambda$-lift of type $\rm{III}$ for the balanced pair $\Omega_0^\pm$ following the arguments in the proof of Theorem~\ref{thm: reduce to constructible}. The proof is finished by the observation that either $\Omega_0^\pm$ is a constructible $\Lambda$-lift $\Omega_0^\pm$ of type $\rm{III}$ satisfying Condition~\ref{cond: special interval}, or it satisfies $F_\xi^{\Omega_0^\pm}\in\cO_{\xi,\Lambda}^{\rm{ps}}\cdot \cO_{\xi,\Lambda}^{<|\Omega^\pm|}$.
\end{proof}

Let $\Omega^\pm$ be a constructible $\Lambda$-lift of type \rm{III} with a fixed choice of $v_\cJ^{\Omega^\pm}$ and $I_\cJ^{\Omega^\pm}$. We write $\pi_0^\square(\Omega^+)$ for the set of $\Lambda^\square$-intervals of $\Omega^\pm$ that are contained in $\Omega^+$. We are mainly interested in the following conditions on $\Omega^\pm$:

\begin{cond}\label{cond: simple type III}
The constructible $\Lambda$-lift $\Omega^\pm$ of type \rm{III} satisfies $\mathbf{D}^{\Omega^\pm}_{k,j}=\{\bigsqcup_{a\in\Z/t}\Omega_{a,k,j}\}$ for each $(k,j)\in\mathbf{n}_\cJ$.
\end{cond}

\begin{cond}\label{cond: non simple type III}
The constructible $\Lambda$-lift $\Omega^\pm$ of type \rm{III} satisfies Condition~\ref{cond: special interval} and the following
\begin{itemize}
\item $\Omega^+\sqcup\Omega^-\subseteq\mathrm{Supp}_{\xi,j}$;
\item for each $a\in(\Z/t)^-$, $\Omega_a\subseteq\Omega^-$ is a $\Lambda^\square$-interval of $\Omega^\pm$ and $k_{a,\star},k_{a,\star}'$ exist (cf.~Lemma~\ref{lem: full subset case});
\item each $\Omega\in\pi_0^\square(\Omega^+)$ is a pseudo $\Lambda$-decomposition of $(\al_\Omega,j)$ for some $(\al_\Omega,j)\in\widehat{\Lambda}$;
\item if we set
\begin{equation}\label{equ: star}
k_\star\defeq\min\left(\{k_{a,\star}\mid a\in(\Z/t)^-\}\cup\{u_j(i_{\Omega_{(\al_\Omega,j),\Lambda}^{\rm{max}},1})\mid \Omega\in\pi_0^\square(\Omega^+)\}\right)
\end{equation} and
\begin{equation}\label{equ: star prime}
k_\star'\defeq\max\{k_{a,\star}^\prime\mid a\in(\Z/t)^-\},
\end{equation}
then we have $k_\star>k_\star'$ and the following are equivalent:
      \begin{itemize}
      \item[$\circ$] $\#\mathbf{D}^{\Omega^\pm}_{k,j}\geq 2$;
      \item[$\circ$] $k_\star\geq k>k_\star'$;
      \item[$\circ$] $\Omega^-=\bigsqcup_{a\in\Z/t}\Omega_{a,k,j}\in\mathbf{D}^{\Omega^\pm}_{k,j}$ and $k\leq u_j(i_{\Omega_{(\al_\Omega,j),\Lambda}^{\rm{max}},1})$ for each $\Omega\in\pi_0^\square(\Omega^+)$;
      \end{itemize}
\item for each $k_\star\geq k>k_\star'$, $\Omega_{k,j}^\natural\in\mathbf{D}^{\Omega^\pm}_{k,j}\setminus\{\Omega^-\}$ if and only if $$\Omega_{k,j}^\natural=\underset{\Omega\in\pi_0^\square(\Omega^+)}{\bigsqcup}\Omega_{\Omega,k,j}^\natural$$
    where $\Omega_{\Omega,k,j}^\natural\in\mathbf{D}_{(\al_\Omega,j),\Lambda}$ with $u_j(i_{\Omega_{\Omega,k,j}^\natural,1})\geq k$ for each $\Omega\in\pi_0^\square(\Omega^+)$.
\end{itemize}
\end{cond}

For each constructible $\Lambda$-lift $\Omega^\pm$ of type $\rm{III}$ satisfying Condition~\ref{cond: non simple type III}, we define $k_\star''$ as the maximal integer (if exists) satisfying the following conditions:
\begin{itemize}
\item there exist $\Omega\in\pi_0^\square(\Omega^+)$ such that $k_\star''=u_j(i_{\Omega',1})$ for some $\Omega'\in\mathbf{D}_{(\al_\Omega,j),\Lambda}$;
\item there exist $\Sigma\in\pi_0(\Omega^\pm)$ such that $(k_\star'',j)\in\,](k^\star,j),(k^\star,j)]_{w_\cJ}$ for some $k^\star\in\mathbf{n}_{\Sigma,1}$;
\item $k_\star> k_\star''>k_\star'$.
\end{itemize}

\begin{cond}\label{cond: non simple type III: terminal}
The constructible $\Lambda$-lift $\Omega^\pm$ of type \rm{III} satisfies Condition~\ref{cond: non simple type III} and $k_\star''$ does not exist.
\end{cond}

\begin{lemma}\label{lem: element in the orbit}
Let $\Omega^\pm$ be a constructible $\Lambda$-lift that satisfies Condition~\ref{cond: non simple type III}. Then we have $k_\star\notin \bigsqcup_{\Sigma\in\pi_0(\Omega^\pm)}\mathbf{n}_{\Sigma,1}$ and $k_\star'\in\bigsqcup_{\Sigma\in\pi_0(\Omega^\pm)}\mathbf{n}_{\Sigma,1}$.
\end{lemma}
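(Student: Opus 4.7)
The plan is to handle the two assertions separately, since they have quite different flavors. For the second assertion $k_\star'\in\bigsqcup_{\Sigma\in\pi_0(\Omega^\pm)}\mathbf{n}_{\Sigma,1}$, the argument is essentially immediate: by the definition in~(\ref{equ: star prime}) we have $k_\star'=k_{a_0,\star}^\prime$ for some $a_0\in(\Z/t)^-$ realizing the maximum, and Lemma~\ref{lem: full subset case} (applied to $a_0$) says exactly that $k_{a_0,\star}^\prime\in\mathbf{n}_{\Sigma_0,1}$, where $\Sigma_0\in\pi_0(\Omega^\pm)$ denotes the connected component containing $a_0$. Since $\mathbf{n}_{\Sigma_0,1}\subseteq \bigsqcup_{\Sigma}\mathbf{n}_{\Sigma,1}$, the claim follows.

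For the first assertion $k_\star\notin\bigsqcup_{\Sigma\in\pi_0(\Omega^\pm)}\mathbf{n}_{\Sigma,1}$, I would split according to which of the two sets in~(\ref{equ: star}) realizes the minimum. In the first case $k_\star=k_{a_0,\star}$ for some $a_0\in(\Z/t)^-$; by Lemma~\ref{lem: full subset case}, $k_{a_0,\star}\in\mathbf{n}_{\Sigma_0,-1}$ where $\Sigma_0$ is the component containing $a_0$, and item~\ref{it: disjoint orbits} of Definition~\ref{def: oriented permutation} forces $\mathbf{n}_{\Sigma_0,1}\cap\mathbf{n}_{\Sigma_0,-1}=\emptyset$, hence $k_\star\notin\mathbf{n}_{\Sigma_0,1}$. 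To rule out $k_\star\in\mathbf{n}_{\Sigma',1}$ for any other $\Sigma'\in\pi_0(\Omega^\pm)$ with $j_{\Sigma'}=j$, I would invoke Condition~\rm{III}-\ref{it: III 4} of Definition~\ref{def: constructible lifts} (together with Condition~\rm{III}-\ref{it: III 5}) which guarantees pairwise disjointness $\mathbf{n}_\Sigma\cap\mathbf{n}_{\Sigma'}=\emptyset$ for distinct connected components sharing the same embedding, completing this case.

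In the second case $k_\star=u_j(i_{\Omega_{(\al_{\Omega_0},j),\Lambda}^{\rm{max}},1})$ for some $\Omega_0\in\pi_0^\square(\Omega^+)$. Suppose for contradiction that $k_\star\in\mathbf{n}_{\Sigma,1}$ for some $\Sigma\in\pi_0(\Omega^\pm)$. Then $k_\star$ equals either $k_{a,c}$ (for some $a\in(\Z/t)_\Sigma$ and $0\leq c\leq c_a$) or $k_a^{s,e}$ (for some $1\leq s\leq d_a$, $1\leq e\leq e_{a,s}$) with that index lying in the fixed $1$-tour orbit. I would separate these two possibilities and in each rule them out using the combination of Condition~\rm{III}-\ref{it: III 8}, Condition~\ref{cond: special interval}, and the defining property of $\Omega_{(\al_{\Omega_0},j),\Lambda}^{\rm{max}}$ given in Lemma~\ref{lem: unique k decomposition}. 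Concretely, the integer $u_j(i_{\Omega_{(\al_{\Omega_0},j),\Lambda}^{\rm{max}},1})$ corresponds to an element $i$ with $((i,i_{\al_{\Omega_0}}'),j)\in\widehat{\Lambda}$ internal to the $\Lambda^\square$-interval $\Omega_0$; coincidence with $k_{a,c}$ or $k_a^{s,e}$ from a different $\Lambda^\square$-interval would produce, via Condition~\rm{III}-\ref{it: III 7} (for the former) or Condition~\rm{III}-\ref{it: III 8} (for the latter), an element of $\widehat{\Lambda}$ that contradicts the $\Lambda$-separatedness of $\Omega^+\sqcup\Omega^-$ across distinct $\Lambda^\square$-intervals. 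Coincidence with elements of $\Omega_0$ itself is ruled out by the maximality property in Lemma~\ref{lem: unique k decomposition}.

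The main obstacle is the second case of the first assertion: extracting the right contradiction requires carefully distinguishing between $k_{a,c}$-type and $k_a^{s,e}$-type elements and matching each with the correct constraint among Conditions~\rm{III}-\ref{it: III 6}, \rm{III}-\ref{it: III 7} and \rm{III}-\ref{it: III 8} of Definition~\ref{def: constructible lifts} along with Condition~\ref{cond: special interval}. The key insight that makes it work is that $k_\star$ is defined as a strict threshold beyond which $\mathbf{D}^{\Omega^\pm}_{k,j}$ becomes trivial in the equivalence formulated in Condition~\ref{cond: non simple type III}, and the $1$-tour orbit elements are precisely those where the $\Omega_{a,k,j}$-sets change, which (in view of Lemma~\ref{lem: std type III}\ref{it: gap} applied to the constructible data) cannot happen at $k_\star$ itself.
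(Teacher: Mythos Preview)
Your treatment of the assertion $k_\star'\in\bigsqcup_\Sigma\mathbf{n}_{\Sigma,1}$ and of the case $k_\star=k_{a_0,\star}$ for some $a_0\in(\Z/t)^-$ is correct and matches the paper's argument (your version just spells out the cross-component disjointness that the paper leaves implicit).

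The genuine gap is in the second case, $k_\star=u_j(i_{\Omega_{(\al_{\Omega_0},j),\Lambda}^{\rm{max}},1})$ for some $\Omega_0\in\pi_0^\square(\Omega^+)$. Your proposed route via Conditions~III-\ref{it: III 7} and III-\ref{it: III 8} handles at best the situation where $k_\star$ matches an element of $\mathbf{n}^{a,+}\sqcup\mathbf{n}^{a,-}$ with $\Omega_a$ in a \emph{different} $\Lambda^\square$-interval from $\Omega_0$; the critical remaining case is when $\Omega_{a'}\subseteq\Omega_0$ for some $a'\in(\Z/t)_\Sigma^+$. You claim this is ``ruled out by the maximality property in Lemma~\ref{lem: unique k decomposition}'', but that lemma is a uniqueness statement about $\Lambda$-decompositions and gives no obstruction to $u_j^{-1}(k_\star)$ being one of the vertices $i_{a',c}$. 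Similarly, your appeal to Lemma~\ref{lem: std type III}\ref{it: gap} fails: membership $k_\star\in\mathbf{n}_{\Sigma,1}$ is detected by a change $\Omega_{a,k_\star,j}\neq\Omega_{a,k_\star+1,j}$, and Condition~\ref{cond: non simple type III} constrains $\Omega_{a,k,j}$ only for $k_\star\geq k>k_\star'$, not for $k_\star+1$, so no contradiction arises.

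The paper proceeds quite differently. It invokes Condition~III-\ref{it: III 9} (not III-\ref{it: III 7} or III-\ref{it: III 8}) together with Condition~\ref{cond: non simple type III} to force $k_\star\in\mathbf{n}^{a',+}\sqcup\{k_{a',0}\}$ for a unique $a'\in(\Z/t)_\Sigma^+$ with $\Omega_{a'}\subseteq\Omega_0$. Condition~\ref{cond: special interval} then pins down $a\defeq a'+1\in(\Z/t)_\Sigma^-$ with $i_{a,c_a}=i_{a',c_{a'}}=i_{\al_{\Omega_0}}'$ and $k_\star=k_{a',c_{a'}-1}$. The contradiction comes only at this point, from the oriented-permutation structure: either the fixed $1$-tour of $(v_\Sigma^{\Omega^\pm})^{-1}$ has a $1$-jump at $k_{a',c_{a'}-1}$ covering $k_{a,c_a}$, whence $k_\star'\geq k_{a,\star}'\geq k_a^\flat=k_\star$, or the fixed $-1$-tour has a $-1$-jump at some $k$ covering $k_{a,c_a}$, whence $k_\star>k\geq k_{a,\star}\geq k_\star$. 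Either branch contradicts $k_\star>k_\star'$. This jump analysis, tied to the definitions of $k_a^\flat$ and $k_{a,\star}$, is the missing ingredient in your sketch.
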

\begin{proof}
The fact $k_\star'\in\bigsqcup_{\Sigma\in\pi_0(\Omega^\pm)}\mathbf{n}_{\Sigma,1}$ follows directly from (\ref{equ: star prime}) the fact that $k_{a,\star}'\in\bigsqcup_{\Sigma\in\pi_0(\Omega^\pm)}\mathbf{n}_{\Sigma,1}$ for each $a\in(\Z/t)^-$ by Lemma~\ref{lem: full subset case}.
If $k_\star=k_{a,\star}$ for some $a\in(\Z/t)^-$, we have nothing to prove as $k_{a,\star}\in \bigsqcup_{\Sigma\in\pi_0(\Omega^\pm)}\mathbf{n}_{\Sigma,-1}$ by Lemma~\ref{lem: full subset case}. Hence, we may assume
from now that $k_\star=u_j(i_{\Omega_{(\al_\Omega,j),\Lambda}^{\rm{max}},1})$ for some $\Omega\in\pi_0^\square(\Omega^+)$ (cf.~(\ref{equ: star})). Assume on the contrary that $k_\star=u_j(i_{\Omega_{(\al_\Omega,j),\Lambda}^{\rm{max}},1})\in\mathbf{n}_{\Sigma,1}$ for some $\Sigma\in\pi_0(\Omega^\pm)$. It follows from Condition~\rm{III}-\ref{it: III 9} and Condition~\ref{cond: non simple type III} that there exists a unique $a'\in(\Z/t)_\Sigma^+$ such that $\Omega_{a'}\subseteq\Omega$ and $k_\star=u_j(i_{\Omega_{(\al_\Omega,j),\Lambda}^{\rm{max}},1})\in\mathbf{n}^{a',+}\sqcup\{k_{a',0}\}$. Then we deduce from Condition~\ref{cond: special interval} that $a=a'+1\in(\Z/t)_\Sigma^-$, $i_{a,c_a}=i_{a-1,c_{a-1}}=i_{\al_\Omega}^\prime$ and $u_j(i_{\Omega_{(\al_\Omega,j),\Lambda}^{\rm{max}},1})=k_{a-1,c_{a-1}-1}$.
If the fixed $1$-tour of $(v_\Sigma^{\Omega^\pm})^{-1}|_{\mathbf{n}_\Sigma}$ contains a $1$-jump at $k_{a-1,c_{a-1}-1}$ (which necessarily covers $k_{a,c_a}$), then we have $k_\star'\geq k_{a,\star}'\geq k_a^\flat=k_{a-1,c_{a-1}-1}=k_\star$ which is a contradiction. Otherwise, the fixed $-1$-tour of $(v_\Sigma^{\Omega^\pm})^{-1}|_{\mathbf{n}_\Sigma}$ contains a $-1$-jump at some $k$ which covers $k_{a,c_a}$, which implies that $k_\star=k_{a-1,c_{a-1}-1}>k\geq k_{a,\star}\geq k_\star$, which is also contradiction. The proof is thus finished.
\end{proof}

We have the following classification of constructible $\Lambda$-lifts of type \rm{III}.
\begin{lemma}\label{lem: class of type III}
Let $\Omega^\pm$ be a constructible $\Lambda$-lift of type \rm{III}. If both $\Omega^+$ and $\Omega^-$ are pseudo $\Lambda$-decompositions of some $(\al,j)\in\widehat{\Lambda}$, then Condition~\ref{cond: simple type III} holds. Otherwise, there exists a constructible $\Lambda$-lift $\Omega_0^\pm$ of type \rm{III} satisfying $F_\xi^{\Omega_0^\pm}(F_\xi^{\Omega^\pm})^{-1}\in \cO_{\xi,\Lambda}^{\rm{ps}}\cdot\cO_{\xi,\Lambda}^{<|\Omega^\pm|}$, such that one of the following holds:
\begin{itemize}
\item $F_\xi^{\Omega_0^\pm}\in \cO_{\xi,\Lambda}^{\rm{ps}}\cdot\cO_{\xi,\Lambda}^{<|\Omega^\pm|}$;
\item $\Omega_0^\pm$ satisfies Condition~\ref{cond: simple type III};
\item $\Omega_0^\pm$ satisfies Condition~\ref{cond: non simple type III: terminal}.
\end{itemize}
\end{lemma}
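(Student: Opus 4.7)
The plan is to split the statement into its two main cases and handle each by combinatorial bookkeeping of $\Lambda^\square$-intervals together with the structural results already established for type~$\rm{III}$ constructible $\Lambda$-lifts.

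For the first case, where both $\Omega^+$ and $\Omega^-$ are pseudo $\Lambda$-decompositions of the same $(\al,j)\in\widehat{\Lambda}$, the goal is to verify Condition~\ref{cond: simple type III}, i.e.\ $\#\mathbf{D}^{\Omega^\pm}_{k,j_0}=1$ for all $(k,j_0)\in\mathbf{n}_\cJ$. By item~\ref{it: indexed decomposition} of Lemma~\ref{lem: std type III}, $\bigsqcup_{a\in\Z/t}\Omega_{a,k,j_0}\in\mathbf{D}^{\Omega^\pm}_{k,j_0}$, so the issue is uniqueness. I would mimic the proof of Claim~\ref{equ: partition type II exceptional}: given $\Omega^\natural_{k,j_0}\in\mathbf{D}^{\Omega^\pm}_{k,j_0}$, choose minimal non-empty subsets $\Omega_\sharp\subseteq \bigsqcup_a\Omega_{a,k,j_0}$ and $\Omega_\flat\subseteq \Omega^\natural_{k,j_0}$ with equal sum and show the resulting cycle length $s$ equals~$1$. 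Since $\widehat{\Omega}^+$ and $\widehat{\Omega}^-$ are both pseudo $\Lambda$-decompositions of the same $(\al,j)$, the only ``connecting root'' at the level of $\widehat{\Lambda}$ lies above~$(\al,j)$; combined with Conditions~\rm{III}-\ref{it: III 7}, \rm{III}-\ref{it: III 8} and \rm{III}-\ref{it: III 9} (which rule out short-cuts between distinct $\Lambda^\square$-intervals) and item~\ref{it: unique decomposition} of Lemma~\ref{lem: std type III} (uniqueness within a single $\Lambda^\square$-interval), this will force $s=1$ and then $\Omega^\natural_{k,j_0}=\bigsqcup_a\Omega_{a,k,j_0}$.

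For the second case, I would first apply Lemma~\ref{lem: reduce to type III with cond} to replace $\Omega^\pm$ by a constructible $\Lambda$-lift of type~$\rm{III}$ satisfying Condition~\ref{cond: special interval}, at the cost of a factor in $\cO_{\xi,\Lambda}^{\rm{ps}}$; if that intermediate lift already lies in $\cO_{\xi,\Lambda}^{\rm{ps}}\cdot\cO_{\xi,\Lambda}^{<|\Omega^\pm|}$ we are done. Otherwise I would analyse when Condition~\ref{cond: non simple type III} fails: either $\Omega^+\sqcup\Omega^-$ is not supported in a single $\mathrm{Supp}_{\xi,j}$, or some $\Omega_a$ with $a\in(\Z/t)^-$ is not a $\Lambda^\square$-interval, or some $\Lambda^\square$-interval of $\Omega^+$ is not a pseudo $\Lambda$-decomposition, or $k_\star\leq k_\star'$, or the three equivalent characterizations of $\#\mathbf{D}^{\Omega^\pm}_{k,j}\geq 2$ fail. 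In each sub-case I would re-run the combinatorial cycle argument of the first case, exploiting the constraints of Condition~\ref{cond: special interval} plus Conditions~\rm{III}-\ref{it: III 6}--\rm{III}-\ref{it: III 9} and the definition of $\Omega_{a,k,j}$ in terms of $k_a^\sharp,k_a^\flat$ from Lemma~\ref{lem: non empty interval}, to show that $\#\mathbf{D}^{\Omega^\pm}_{k,j}=1$ for every $(k,j)$, i.e.\ Condition~\ref{cond: simple type III} holds. When instead Condition~\ref{cond: non simple type III} does hold, the only remaining obstruction to Condition~\ref{cond: non simple type III: terminal} is the existence of~$k_\star''$; in that situation I would replace the $\Lambda^\square$-interval $\Omega$ of $\Omega^+$ witnessing $k_\star''$ by a different $\Lambda$-decomposition $\Omega'\in\mathbf{D}_{(\al_\Omega,j),\Lambda}$ with $u_j(i_{\Omega',1})=k_\star''$, producing a new balanced pair $\Omega_0^\pm$ whose ratio with $\Omega^\pm$ is a product of $F_\xi$-factors attached to pseudo $\Lambda$-decompositions of a single element and hence lies in $\cO_{\xi,\Lambda}^{\rm{ps}}$.

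I would then check case-by-case that $\Omega_0^\pm$, if it is a $\Lambda$-lift, still satisfies all nine conditions of type~$\rm{III}$ (Conditions~\rm{III}-\ref{it: III 1}--\rm{III}-\ref{it: III 9}) as well as Condition~\ref{cond: special interval}; if it is no longer a $\Lambda$-lift, Lemma~\ref{lem: union of lifts} already places $F_\xi^{\Omega_0^\pm}$ in $\cO_{\xi,\Lambda}^{<|\Omega^\pm|}$. A strictly decreasing invariant (for instance the integer $k_\star$, which drops every time we eliminate a witness for $k_\star''$) guarantees termination of this replacement procedure, after finitely many steps producing a lift satisfying one of the three listed conclusions. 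The main obstacle I expect is the book-keeping in the replacement step: one must verify that the modified $\Omega_0^\pm$ continues to satisfy Condition~\rm{III}-\ref{it: III 9} and Condition~\ref{cond: special interval} (which are the subtlest, since they interact non-trivially with the blocks $[m]_\xi$ and the oriented permutations governing $k_a^\sharp,k_a^\flat$), and that each replacement genuinely decreases the chosen invariant without accidentally re-introducing a new witness for~$k_\star''$ further up; Lemma~\ref{lem: element in the orbit} will be the key tool for controlling the position of~$k_\star$ inside the orbits $\mathbf{n}_{\Sigma,\pm 1}$ during this process.
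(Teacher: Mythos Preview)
Your overall architecture matches the paper: the first assertion is exactly Lemma~\ref{lem: type III pair of pseudo}, and for the second you correctly propose to pass through Lemma~\ref{lem: reduce to type III with cond}, then establish the dichotomy ``simple versus non-simple'' (this is Lemma~\ref{lem: unique indexed decomposition}), and finally iterate a replacement step to kill the witness~$k_\star''$ (this is Lemma~\ref{lem: diamond operation} followed by Lemma~\ref{lem: limit of diamond operation}). The paper's actual proof of the lemma is just the one-sentence citation of these four results.

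There is, however, a real gap in your termination argument. You propose $k_\star$ as a strictly decreasing invariant under the replacement $\Omega\mapsto\Omega'$, but nothing in your construction forces $k_\star$ to drop: $k_\star$ is a minimum over \emph{all} of $(\Z/t)^-$ and $\pi_0^\square(\Omega^+)$ (see \eqref{equ: star}), and modifying a single $\Lambda^\square$-interval of $\Omega^+$ need not move that minimum. The paper instead shows that $k_\star'$ strictly \emph{increases}. The key point you are missing is that this monotonicity is not automatic from the combinatorics of the replacement alone; it requires an additional choice. Concretely, after forming $\Omega_\diamond^\pm$ one must choose the oriented permutation $(v_{\Sigma_\diamond}^{\Omega_\diamond^\pm})^{-1}$ so that its fixed $1$-tour contains a $1$-jump at $k_\star''=k_{a_\diamond-1,c_{a_\diamond-1}-1}$ (the paragraph preceding Lemma~\ref{lem: diamond operation} explains why such a choice is available). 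This forces $k_{a_\diamond}^\flat=k_\star''$, whence $k_{\diamond,\star}'\geq k_{a_\diamond,\star}'\geq k_{a_\diamond}^\flat=k_\star''>k_\star'$. Without pinning down the oriented permutation, neither your proposed invariant nor the correct one $k_\star'$ is guaranteed to move monotonically, and the induction does not terminate. Your closing remark that Lemma~\ref{lem: element in the orbit} controls the position of $k_\star$ is not enough: that lemma only says $k_\star\notin\bigsqcup_\Sigma\mathbf{n}_{\Sigma,1}$ and $k_\star'\in\bigsqcup_\Sigma\mathbf{n}_{\Sigma,1}$, which is used elsewhere (in the proof of Proposition~\ref{prop: type III}) but does not by itself give monotonicity.
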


We first prove Proposition~\ref{prop: type III} assuming Lemma~\ref{lem: class of type III}, and then devote the rest of the section to the proof of Lemma~\ref{lem: class of type III}.  We recall $\langle Y\rangle_+$ for a subset $Y\subseteq\cO(\cC)$ from the paragraph right before Proposition~\ref{prop: type III}.

\begin{proof}[Proof of Proposition~\ref{prop: type III} assuming Lemma~\ref{lem: class of type III}]
Note that we fix a $\cC\in\cP_\cJ$ satisfying $\cC\subseteq\cN_{\xi,\Lambda}$.
We recall that $I_\cJ^{\Omega^\pm,\star}\subseteq I_\cJ^{\Omega^\pm}$ is the subset consisting of those $(k,j)$ satisfying $\mathbf{D}^{\Omega^\pm}_{k,j}\neq \mathbf{D}^{\Omega^\pm}_{k+1,j}$. As $\Omega^\pm$ is a constructible $\Lambda$-lift of type \rm{III}, it is clear that
\begin{equation}\label{equ: disjoint from -1}
(\mathbf{n}_{\Sigma,-1}\times\{j_\Sigma\})\cap I_\cJ^{\Omega^\pm}=\emptyset
\end{equation}
for each $\Sigma\in\pi_0(\Omega^\pm)$. It follows from Lemma~\ref{lem: class of type III} that we only need to treat a constructible $\Lambda$-lift $\Omega^\pm$ that satisfies either Condition~\ref{cond: simple type III} or Condition~\ref{cond: non simple type III: terminal}.

We first treat the case when $\Omega^\pm$ satisfies Condition~\ref{cond: simple type III}, namely $\mathbf{D}^{\Omega^\pm}_{k,j}=\{\bigsqcup_{a\in\Z/t}\Omega_{a,k,j}\}$ for each $(k,j)\in\mathbf{n}_\cJ$. It follows from Lemma~\ref{lem: from sets to formula} and $\mathbf{D}^{\Omega^\pm}_{k,j}=\{\bigsqcup_{a\in\Z/t}\Omega_{a,k,j}\}$ that $f_\xi^{\Omega^\pm}\in\Inv(\cC)$ and
$$
f_\xi^{\Omega^\pm}|_\cC\sim \underset{a\in\Z/t}{\prod}F_\xi^{\Omega^\pm,a}|_\cC
$$
with
$$
F_\xi^{\Omega^\pm,a}\defeq \underset{k\in\mathbf{n}_{\Sigma,1}}{\prod}F_\xi^{\Omega_{a,k,j_\Sigma}}(F_\xi^{\Omega_{a,k+1,j_\Sigma}})^{-1}
$$
for each $\Sigma\in\pi_0(\Omega^\pm)$ and each $a\in(\Z/t)_\Sigma$. It follows from Lemma~\ref{lem: product formula} that
$$F_\xi^{\Omega^\pm}|_\cC\sim F_\xi^{\Omega^+}|_\cC(F_\xi^{\Omega^-}|_\cC)^{-1} = \underset{a\in(\Z/t)^+}{\prod}F_\xi^{\Omega_a}|_\cC \cdot \underset{a\in(\Z/t)^-}{\prod}(F_\xi^{\Omega_a}|_\cC)^{-1}\sim f_\xi^{\Omega^\pm}|_\cC\in\cO_\cC.$$

Now we treat the case when $\Omega^\pm$ satisfies Condition~\ref{cond: non simple type III: terminal}, and in particular there does not exist $(\al,j)\in\widehat{\Lambda}$ such that both $\Omega^+$ and $\Omega^-$ are pseudo $\Lambda$-decompositions of $(\al,j)$. Recall from Lemma~\ref{lem: element in the orbit} that $k_\star\notin\bigsqcup_{\Sigma\in\pi_0(\Omega^\pm)}\mathbf{n}_{\Sigma,1}$ and $k_\star'\in\bigsqcup_{\Sigma\in\pi_0(\Omega^\pm)}\mathbf{n}_{\Sigma,1}$. As $\Omega_{a,k,j}$ for each $a\in\Z/t$ remains the same for each $k_\star\geq k>k_\star'$, we deduce that $k\notin\bigsqcup_{\Sigma\in\pi_0(\Omega^\pm)}\mathbf{n}_{\Sigma,1}$ for each $k_\star\geq k>k_\star'$. If there exists any $(k,j)\in I_\cJ^{\Omega^\pm,\star}$ with $k\notin \bigsqcup_{\Sigma\in\pi_0(\Omega^\pm)}\mathbf{n}_{\Sigma,1}$, then we deduce from (\ref{equ: disjoint from -1}), item~\ref{it: gap} of Lemma~\ref{lem: std type III}, and the last two items of Condition~\ref{cond: non simple type III} that $k_\star\geq k>k_\star'$ and there exist $\Omega\in\pi_0^\square(\Omega^+)$, $\Omega'\in\mathbf{D}_{(\al_\Omega,j),\Lambda}$ and $k'\in\mathbf{n}_{\Sigma,1}$ such that $k=u_j(i_{\Omega',1})$ and $(k,j)\in\,](k',j),(k',j)]_{w_\cJ}$. If $k=k_\star$, then it follows from $k_\star\notin\bigsqcup_{\Sigma\in\pi_0(\Omega^\pm)}\mathbf{n}_{\Sigma,1}$, Condition~\rm{III}-\ref{it: III 9} and Condition~\ref{cond: special interval} that $k=k_\star=u_j(i_{\Omega_{(\al_\Omega,j),\Lambda}^{\rm{max}},1})$ is the $1$-end of a connected component of $\Omega^+\sqcup\Omega^-$, which contradicts $(k,j)\in I_\cJ^{\Omega^\pm,\star}\subseteq I_\cJ^{\Omega^\pm}$. Hence, we deduce that $k<k_\star$ and $k_\star''$ must exist (as described after Condition~\ref{cond: non simple type III}) which is a contradiction. Consequently, we have
\begin{equation}\label{eq: I_J^Omega^pm type III}
I_\cJ^{\Omega^\pm,\star}=\left(\underset{\Sigma\in\pi_0(\Omega^\pm)}{\bigsqcup}\mathbf{n}_{\Sigma,1}\right)\times\{j\}.
\end{equation}
and
\begin{itemize}
\item $\mathbf{D}^{\Omega^\pm}_{k,j}=\{\bigsqcup_{a\in\Z/t}\Omega_{a,k,j}\}$ for each $(k,j)\in I_\cJ^{\Omega^\pm,\star}$;
\item $\mathbf{D}^{\Omega^\pm}_{k+1,j}=\{\bigsqcup_{a\in\Z/t}\Omega_{a,k+1,j}\}$ for each $(k,j)\in I_\cJ^{\Omega^\pm,\star}\setminus\{(k_\star',j)\}$.
\end{itemize}
It follows from Lemma~\ref{lem: general formula for det} and Condition~\ref{cond: non simple type III: terminal} that, if there exists $(k,j')\in\mathbf{n}_\cJ$ such that $f_{S^{j',\Omega^\pm}_{k},j'}|_\cC=0$, then we must have $k_\star\geq k>k_\star'$, $j'=j$ and
$$f_{S^{j,\Omega^\pm}_{k},j}|_{\cN_{\xi,\Lambda}}\sim F_\xi^{\Omega^-}+F\neq F_\xi^{\Omega^-}$$
for some polynomial $F$ satisfying $F(F_\xi^{\Omega^+})^{-1}\in\langle\cO_{\xi,\Lambda}^{\rm{ps}}\rangle_+$, which implies that
$$(F_\xi^{\Omega^\pm}|_\cC)^{-1}\sim F_\xi^{\Omega^-}|_\cC(F_\xi^{\Omega^+}|_\cC)^{-1} =-F|_\cC(F_\xi^{\Omega^+}|_\cC)^{-1} \in\langle\cO_\cC^{\rm{ps}}\rangle_+.$$
Hence, we may assume from now on that $f_{S^{j',\Omega^\pm}_{k},j'}|_\cC\neq 0$ for each $(k,j')\in\mathbf{n}_\cJ$.
It follows from (\ref{eq: I_J^Omega^pm type III}), Condition~\ref{cond: non simple type III}, and a simple variant of Lemma~\ref{lem: from sets to formula} that $f_\xi^{\Omega^\pm}\in\Inv(\cC)$ and
$$
(f_\xi^{\Omega^\pm}|_\cC)^{-1}\sim (F_\xi^{\Omega^-}|_\cC+F_\xi^{\Omega^+,\star}|_\cC) \underset{a\in\Z/t}{\prod}(F_\xi^{\Omega^\pm,a}|_\cC)^{-1}
$$
where
$$F_\xi^{\Omega^+,\star}\defeq\underset{\Omega\in\pi_0^\square(\Omega^+)}{\prod}\left(\underset{\Omega'\in\mathbf{D}_{(\al_\Omega,j)}}{\sum}\varepsilon(\Omega,\Omega')F_\xi^{\Omega'}\right)$$
and
$$
F_\xi^{\Omega^\pm,a}\defeq F_\xi^{\Omega_{a,k_\star',j}}\underset{\Sigma\in\pi_0(\Omega^\pm)}{\prod}\left(\underset{k\in\mathbf{n}_{\Sigma,1}\setminus\{k_\star'\}}{\prod}F_\xi^{\Omega_{a,k,j}}(F_\xi^{\Omega_{a,k+1,j}})^{-1}\right)
$$
for each $a\in\Z/t$. Here $\varepsilon(\Omega,\Omega')\in\{1,0,-1\}$ is a constant and
$\varepsilon(\Omega,\Omega')\neq 0$ if and only if $u_j(i_{\Omega',1})>k_\star'$.
It follows from Lemma~\ref{lem: product formula}, Lemma~\ref{lem: std type III}~\ref{it: gap}, and the fourth item in Condition~\ref{cond: non simple type III}  that
$$
F_\xi^{\Omega^\pm,a}=\left\{\begin{array}{cl}
1 & \hbox{if $a\in(\Z/t)^-$};\\
F_\xi^{\Omega_a} & \hbox{if $a\in(\Z/t)^+$}.
\end{array}\right.
$$
Since $F_\xi^{\Omega^+}=\prod_{a\in(\Z/t)^+}F_\xi^{\Omega_a}=\prod_{\Omega\in\pi_0^\square(\Omega^+)}F_\xi^\Omega$ and $\prod_{a\in(\Z/t)^+}F_\xi^{\Omega_a}=\prod_{a\in\Z/t}F_\xi^{\Omega^\pm,a}$, we have
$$F_\xi^{\Omega^-}|_\cC(F_\xi^{\Omega^+}|_\cC)^{-1}+F_\xi^{\Omega^+,\star}|_\cC(F_\xi^{\Omega^+}|_\cC)^{-1}= (F_\xi^{\Omega^-}|_\cC+F_\xi^{\Omega^+,\star}|_\cC)(F_\xi^{\Omega^+}|_\cC)^{-1} \sim(f_\xi^{\Omega^\pm}|_\cC)^{-1}$$
with
$$ F_\xi^{\Omega^+,\star}|_\cC(F_\xi^{\Omega^+}|_\cC)^{-1} =\underset{\Omega\in\pi_0^\square(\Omega^+)}{\prod}\left(\underset{\Omega'\in\mathbf{D}_{(\al_\Omega,j)}}{\sum} \varepsilon(\Omega,\Omega')F_\xi^{\Omega'}|_\cC(F_\xi^\Omega|_\cC)^{-1}\right) \in\langle\cO_\cC^{\rm{ps}}\rangle_+.$$
As $(f_\xi^{\Omega^\pm}|_\cC)^{-1}\in\cO_\cC$,
we conclude that
$$(F_\xi^{\Omega^\pm}|_\cC)^{-1}\sim F_\xi^{\Omega^-}|_\cC(F_\xi^{\Omega^+}|_\cC)^{-1}\in\langle\cO_\cC^{\rm{ps}}\cdot\cO_\cC\rangle_+,$$ which completes the proof.
\end{proof}

The rest of this section is devoted to the proof of Lemma~\ref{lem: class of type III}.
\begin{lemma}\label{lem: pair of connected sets}
Let $\Omega^\pm$ be a constructible $\Lambda$-lift of type $\rm{III}$, and let $(k,j)\in\mathbf{n}_\cJ$ be an element and $a,\, a'\in\Z/t$ be two distinct elements such that $\Omega_{a,k,j}\neq \emptyset\neq \Omega_{a',k,j}$. Assume that there does not exist $(\al,j)\in\widehat{\Lambda}$ such that both $\Omega^+$ and $\Omega^-$ are pseudo $\Lambda$-decompositions of $(\al,j)$. If $((i,i^\prime),j)\in\widehat{\Lambda}$ for some $i\in\mathbf{I}_{\Omega_{a',k,j}}$ and some $i'\in\mathbf{I}_{\Omega_{a,k,j}}^\prime$, then there exists a pseudo $\Lambda$-decomposition $\Omega$ of some $(\al',j)\in\widehat{\Lambda}$ such that $(i_{a',0},j)\in\mathbf{I}_{\widehat{\Omega}}$, $(i_{a,c_a},j)\in\mathbf{I}_{\widehat{\Omega}}^\prime$ and $\Omega\subseteq\Omega^+\sqcup\Omega^-$. More precisely, there exists $\varepsilon\in\{1,-1\}$ such that one of the following holds
\begin{enumerate}[label=(\arabic*)]
\item \label{it: conn 1} $\Omega_a\cap\Omega=\emptyset$, $\Omega_{a'}\subseteq\Omega$ and $i'=i_{a,c_a}=i_{a-\varepsilon,c_{a-\varepsilon}}$;
\item \label{it: conn 2} $\Omega_a\subseteq\Omega$, $\Omega_{a'}\cap\Omega=\emptyset$ and $i=i_{a',0}=i_{a'+\varepsilon,0}$;
\item \label{it: conn 3} $\Omega_a\cap\Omega=\emptyset=\Omega_{a'}\cap\Omega$, $i'=i_{a,c_a}=i_{a-\varepsilon,c_{a-\varepsilon}}$ and $i=i_{a',0}=i_{a'+\varepsilon,0}$;
\item \label{it: conn 4} $\Omega_a,\Omega_{a'}\subseteq\Omega$.
\end{enumerate}
\end{lemma}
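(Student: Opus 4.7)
The plan is to establish the existence of $\Omega$ by a single application of Condition~$\rm{III}$-\ref{it: III 6} of Definition~\ref{def: constructible lifts} to the pair $(\beta, j) \defeq (\al_a, j)$ and $(\beta', j) \defeq (\al_{a'}, j)$ of elements of $\widehat{\Omega}^+ \sqcup \widehat{\Omega}^-$, and then to read off the four enumerated cases by analyzing how $\Omega$ sits inside the disjoint decomposition $\Omega^+ \sqcup \Omega^- = \bigsqcup_{a'' \in \Z/t} \Omega_{a''}$. The hypothesis that $\Omega^+$ and $\Omega^-$ are not pseudo $\Lambda$-decompositions of the same element of $\widehat{\Lambda}$ is precisely the trigger of Condition~$\rm{III}$-\ref{it: III 6}.

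To invoke Condition~$\rm{III}$-\ref{it: III 6}, the preliminary step is to verify $((i_{a',0}, i_{a,c_a}), j) \in \widehat{\Lambda}$ (this is the element $((i_{\beta'}, i_{\beta}^\prime), j)$ in the notation of that condition, using that $i_{\al_{a'}}$ and $i_{\al_a}^\prime$ are the appropriate extremal indices of $\Omega_{a'}$ and $\Omega_a$). Writing $i = i_{a', c_0}$ for some $1 \leq c_0 \leq c_{a'}$ (from $i \in \mathbf{I}_{\Omega_{a', k, j}}$) and $i' = i_{a, c_0'}$ for some $0 \leq c_0' \leq c_a - 1$ (from $i' \in \mathbf{I}_{\Omega_{a, k, j}}^\prime$), the subsets $\{((i_{a', c}, i_{a', c-1}), j) \mid 1 \leq c \leq c_0\} \subseteq \Omega_{a'}$ and $\{((i_{a, c}, i_{a, c-1}), j) \mid c_0' + 1 \leq c \leq c_a\} \subseteq \Omega_a$ are $\Lambda$-decompositions of $((i, i_{a',0}), j)$ and $((i_{a,c_a}, i'), j)$ respectively, witnessing that both of these roots lie in $\widehat{\Lambda}$; combining them with the given $((i, i'), j) \in \widehat{\Lambda}$ produces $((i_{a',0}, i_{a,c_a}), j) \in \widehat{\Lambda}$.

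Condition~$\rm{III}$-\ref{it: III 6} then produces a pseudo $\Lambda$-decomposition $\Omega \subseteq \Omega^+ \sqcup \Omega^-$ of some $(\al', j) \in \widehat{\Lambda}$ with $(i_{a', 0}, j) \in \mathbf{I}_{\widehat{\Omega}}$ and $(i_{a, c_a}, j) \in \mathbf{I}_{\widehat{\Omega}}^\prime$. Since $\widehat{\Omega}$ has no interior points and $\Omega$ decomposes as a disjoint union of portions of the $\Omega_{a''}$'s, a combinatorial observation gives, for each $a'' \in \Z/t$, a dichotomy: either $\Omega_{a''} \subseteq \Omega$ or $\Omega_{a''} \cap \Omega = \emptyset$. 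Specializing to $a'' = a$ and $a'' = a'$: the index $(i_{a, c_a}, j) \in \mathbf{I}_{\widehat{\Omega}}^\prime$ either arises because $(\al_a, j) \in \widehat{\Omega}$ and $\Omega_a \subseteq \Omega$, or it arises from a neighboring piece $\Omega_{a - \varepsilon}$ along the cycle $\Z/t$ satisfying $i_{a - \varepsilon, c_{a - \varepsilon}} = i_{a, c_a}$ with $\Omega_a \cap \Omega = \emptyset$; and symmetrically for $(i_{a', 0}, j) \in \mathbf{I}_{\widehat{\Omega}}$, with direction $\varepsilon \in \{1, -1\}$. The four combinations of these two endpoint dichotomies are precisely cases \ref{it: conn 1}--\ref{it: conn 4}.

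The principal obstacle is the bookkeeping in this final step: verifying that the two endpoint dichotomies are genuinely independent, that the direction $\varepsilon$ is well defined, and that no hidden relation is imposed between $\Omega_a$, $\Omega_{a'}$ and $\Omega$ beyond the stated four alternatives. This will use Conditions~$\rm{III}$-\ref{it: III 4} and $\rm{III}$-\ref{it: III 5} (disjointness of $(w_\cJ,1)$-orbits) to rule out accidental coincidences of endpoints belonging to non-adjacent pieces along the cycle, together with the explicit tiling structure of the connected component $\Sigma \in \pi_0(\Omega^\pm)$ containing the indices under consideration.
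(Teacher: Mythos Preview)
Your overall plan matches the paper's: deduce $((i_{a',0}, i_{a,c_a}),j)\in\widehat{\Lambda}$, invoke Condition~III-\ref{it: III 6} to obtain $\Omega$, then split into cases according to how $\Omega$ meets $\Omega_a$ and $\Omega_{a'}$. However, there are two gaps.

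The minor one is in your preliminary step: the parametrization $i=i_{a',c_0}$ need not hold, since elements of $\mathbf{I}_{\Omega_{a',k,j}}$ can also have the form $i_{a'}^{s,e}$ (see the second and third cases in the definition of $\Omega_{\psi,k}$ after Lemma~\ref{lem: unique k decomposition}). This is easily repaired, since $((i_{a',0},i_{a'}^{s,e}),j)\in\widehat{\Lambda}$ by \eqref{equ: elements in hat}; the paper simply records $i_{a',0}\le i<i'\le i_{a,c_a}$ and passes directly to $((i_{a',0},i_{a,c_a}),j)\in\widehat{\Lambda}$. (Also, the roots you write should be $((i_{a',c-1},i_{a',c}),j)$, not the transposed form.)

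The substantive gap is that you never address the equalities $i'=i_{a,c_a}$ and $i=i_{a',0}$ that are part of cases~\ref{it: conn 1}--\ref{it: conn 3}. These do not follow from the $\Omega_a,\Omega_{a'}$ versus $\Omega$ dichotomy alone; the paper derives them from Condition~III-\ref{it: III 7}. Since $((i,i'),j)\in\widehat{\Lambda}$, that condition forces $(i,j)$ and $(i',j)$ to lie in the same $\Lambda^\square$-interval of $\Omega^\pm$; when, say, $\Omega_a\cap\Omega=\emptyset$ while $\Omega_{a'}\subseteq\Omega$, the common interval is the one containing $\Omega_{a'}$, so $i'$ must be the shared boundary index $i_{a,c_a}=i_{a-\varepsilon,c_{a-\varepsilon}}$ rather than an interior index of $\Omega_a$. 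Conditions~III-\ref{it: III 4} and III-\ref{it: III 5}, which you invoke, concern disjointness of $(w_\cJ,1)$-orbits and are not used here. Relatedly, your dichotomy ``$\Omega_{a''}\subseteq\Omega$ or $\Omega_{a''}\cap\Omega=\emptyset$ for each $a''$'' is not immediate for an arbitrary $\Omega$ produced by Condition~III-\ref{it: III 6}; the paper instead extracts a contiguous stretch $\bigsqcup_{0\le s'\le s_1}\Omega_{a_1-s'\varepsilon}$ inside $\Omega$ with endpoints $i_{a_1,c_{a_1}}=i_{a,c_a}$ and $i_{a_1-s_1\varepsilon,0}=i_{a',0}$, and reads off the four cases from whether $a$ equals $a_1$ or $a_1+\varepsilon$ and whether $a'$ equals $a_1-s_1\varepsilon$ or $a_1-(s_1+1)\varepsilon$.
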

\begin{proof}
It follows from $((i,i'),j)\in\widehat{\Lambda}$, $i\in\mathbf{I}_{\Omega_{a',k,j}}$ and $i'\in\mathbf{I}_{\Omega_{a,k,j}}^\prime$ that we have $i_{a',0}\leq i<i'\leq i_{a,c_a}$ and $((i_{a',0},i_{a,c_a}),j)\in\widehat{\Lambda}$. It follows from Condition~$\rm{III}$-\ref{it: III 6} that there exists a pseudo $\Lambda$-decomposition $\Omega$ of some $(\al',j)\in\widehat{\Lambda}$ such that $(i_{a',0},j)\in\mathbf{I}_{\widehat{\Omega}}$, $(i_{a,c_a},j)\in\mathbf{I}_{\widehat{\Omega}}^\prime$ and $\Omega\subseteq\Omega^+\sqcup\Omega^-$. Hence there exist $a_1\in\Z/t$, $\varepsilon\in\{1,-1\}$ and $s_1\leq \#\widehat{\Omega}-1$ such that $i_{a_1,c_{a_1}}=i_{a,c_a}$, $i_{a',0}=i_{a_1-s_1\varepsilon,0}$ and
$$\underset{0\leq s'\leq s_1}{\bigsqcup}\Omega_{a_1-s'\varepsilon}\subseteq \Omega.$$

It follows from $i_{a_1,c_{a_1}}=i_{a,c_a}$ and $i_{a',0}=i_{a_1-s_1\varepsilon,0}$ that exactly one of the following holds
\begin{itemize}
\item $a=a_1+\varepsilon$ and $a'=a_1-s_1\varepsilon$;
\item $a_1=a$ and $a'=a_1-(s_1+1)\varepsilon$;
\item $a=a_1+\varepsilon$ and $a'=a_1-(s_1+1)\varepsilon$;
\item $a_1=a$ and $a'=a_1-s_1\varepsilon$.
\end{itemize}
It is clear that these four cases correspond to the four cases in the statement of the lemma. The equalities involving $i$ and $i'$ follow from the fact that $(i,j)$ and $(i',j)$ should lie in the same $\Lambda^\square$-interval of $\Omega^\pm$, thanks to Condition~\rm{III}-\ref{it: III 7}.
\end{proof}
The four cases listed in Lemma~\ref{lem: pair of connected sets} are visualized in Figure~\ref{fig:4cases}.

\begin{lemma}\label{lem: unique non simple}
Let $\Omega^\pm$ be constructible $\Lambda$-lift of type $\rm{III}$ with $\Omega^+\sqcup\Omega^-$ being circular (cf.~the paragraph before Definition~\ref{def: next element}), and let $j\in\cJ$ be the unique embedding such that $\Omega^+\sqcup\Omega^-\subseteq\mathrm{Supp}_{\xi,j}$. Then there do not exist two different elements $k,k'\in\mathbf{n}$ such that
\begin{enumerate}
\item $\Omega^+=\bigsqcup_{a\in\Z/t}\Omega_{a,k,j}$ and $\Omega^-=\bigsqcup_{a\in\Z/t}\Omega_{a,k',j}$;
\item $k\leq \min\{k_{a,c_a-1}\mid a\in(\Z/t)^-\}$ and $k'\leq \min\{k_{a,c_a-1}\mid a\in(\Z/t)^+\}$.
\end{enumerate}
\end{lemma}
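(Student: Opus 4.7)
The plan is to argue by contradiction. Suppose $k\ne k'\in\mathbf{n}$ satisfy both conditions of the lemma. Since exchanging $\Omega^+$ with $\Omega^-$ (i.e.~replacing $\Omega^\pm$ by its inverse) swaps the roles of $(\Z/t)^+$ and $(\Z/t)^-$ as well as of $k$ and $k'$, and since Proposition~\ref{prop:oriented:circular} is formulated precisely after such a swap, I may assume without loss of generality that the distinguished element $a_0\in\Z/t$ constructed in Proposition~\ref{prop:oriented:circular} lies in $(\Z/t)^-$. Then
\[
k\;\leq\;\min\{k_{a,c_a-1}\mid a\in(\Z/t)^-\}\;\leq\;k_{a_0,c_{a_0}-1}\;=\;\min\{k_{a,c_a-1}\mid a\in\Z/t\},
\]
so in particular $k\leq k_{a,c_a-1}$ for \emph{every} $a\in\Z/t$.

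The first technical input is a version of Lemma~\ref{lem: full subset case} valid for $a\in(\Z/t)_\Sigma^+$. Since the direction $\varepsilon$ at $k_{a,0}$ and at $k_{a,c_a}$ flips when passing from $(\Z/t)^-$ to $(\Z/t)^+$, the same case analysis applied to the definition of $\Omega_{a,k,j_a}$ (using Definition~\ref{def: oriented permutation}~\ref{it: unique cover}--\ref{it: interaction of jumps}) produces, for each $a\in(\Z/t)^+$, unique bounds $k_{a,\star\star}\in\{k_{a,c_a-1},\,v_\Sigma^{\Omega^\pm}(k_{a,1}),\,k_a^\sharp\}\cap\mathbf{n}_{\Sigma,1}$ and $k_{a,\star\star}'=\max\{k_a',k_a^\flat\}\in\mathbf{n}_{\Sigma,-1}$, with $k_{a,\star\star}\leq k_{a,c_a-1}$, such that $\Omega_{a,k'',j_a}=\Omega_a$ iff $k_{a,\star\star}\geq k''>k_{a,\star\star}'$. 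Applying this to our $k$ (for $a\in(\Z/t)^+$) and Lemma~\ref{lem: full subset case} to $k'$ (for $a\in(\Z/t)^-$), I obtain the simultaneous containments $k\in\mathbf{n}_{\Sigma,1}$-controlled interval and $k'\in\mathbf{n}_{\Sigma,-1}$-controlled interval; crucially, the boundary behaviors $k_{a,\star\star}\in\mathbf{n}_{\Sigma,1}$ and $k_{a,\star}\in\mathbf{n}_{\Sigma,-1}$ are what will interact with the distinguished degenerate tour in the circular case.

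With the element $a_0$ fixed, the 1-tour of the chosen oriented permutation goes from $k_{a_0,c_{a_0}}$ to itself, and the $-1$-tour from $k_{a_0,c_{a_0}-1}$ to itself; moreover item~\ref{it: degeneration of tour} of Definition~\ref{def: oriented permutation} and Proposition~\ref{prop:oriented:circular} pin down that the only place a tour is allowed to ``wrap around'' through the pair $\{k_{a_0,c_{a_0}-1},\,k_{a_0,c_{a_0}}\}$ is at this minimal index. Since $a_0\in(\Z/t)^-$ and $\Omega_{a_0,k,j}=\emptyset$ while $k\leq k_{a_0,c_{a_0}-1}$, Lemma~\ref{lem: full subset case} applied at $a_0$ (now for any $k''$ with $\Omega_{a_0,k'',j}=\Omega_{a_0}$) forces $k_{a_0,\star}=k_{a_0,c_{a_0}-1}$ and $k_{a_0,\star}'=k_{a_0}^\flat$; combined with $k\leq k_{a_0,c_{a_0}-1}$ and $\Omega_{a_0,k,j}=\emptyset$, this yields $k\leq k_{a_0}^\flat$. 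Running the parallel argument for the neighbor $a_0-1\in(\Z/t)^+$ (using the first case of Lemma~\ref{lem: pair of connected sets}, which bounds $k_{a_0,c_{a_0}}=k_{a_0-1,c_{a_0-1}}$) and the distinguished element of $(\Z/t)^+$ produced by the analogue of Proposition~\ref{prop:oriented:circular} applied to the inverse of $\Omega^\pm$, I derive the incompatibility $k_{a,\star\star}'\geq k$ for some $a\in(\Z/t)^+$, contradicting $\Omega_{a,k,j}=\Omega_a$.

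The main obstacle will be the bookkeeping in the last paragraph: the two tours share the pair $\{k_{a_0,c_{a_0}-1},\,k_{a_0,c_{a_0}}\}$ as their common ``degeneration'' point, so the state trajectory $k''\mapsto\bigsqcup_a\Omega_{a,k'',j}$ visits $\Omega^+$ only on a very restricted sub-interval (and similarly for $\Omega^-$). Pinning down these sub-intervals explicitly, and checking that the hypotheses $k\leq \min\{k_{a,c_a-1}\mid a\in(\Z/t)^-\}$ and $k'\leq \min\{k_{a,c_a-1}\mid a\in(\Z/t)^+\}$ force both $k$ and $k'$ to lie strictly below $k_{a_0}^\flat$ (resp.~strictly below the analogous threshold for some element of $(\Z/t)^+$), is the delicate combinatorial heart of the argument and relies crucially on item~\ref{it: degeneration of tour} of Definition~\ref{def: oriented permutation}.
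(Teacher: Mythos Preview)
Your approach has a genuine gap in the core deduction, and it diverges substantially from the paper's argument.

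First, the step ``$\Omega_{a_0,k,j}=\emptyset$ and $k\le k_{a_0,c_{a_0}-1}$ force $k\le k_{a_0}^\flat$'' is not justified. The definition of $\Omega_{a_0,k,j}$ gives $\Omega_{a_0,k,j}=\emptyset$ iff $k>k_{a_0}^\sharp$ or $k\le k_{a_0}^\flat$; you have not excluded the first alternative. Your intermediate claim $k_{a_0,\star}=k_{a_0,c_{a_0}-1}$ is also unjustified (Lemma~\ref{lem: full subset case} only gives $k_{a_0,\star}\le k_{a_0,c_{a_0}-1}$), and even granting it, you would only get $k\le k_{a_0,\star}'=\max\{k_{a_0}',k_{a_0}^\flat\}$, not $k\le k_{a_0}^\flat$. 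In fact under the paper's normalization the correct conclusion is the \emph{opposite} alternative $k>k_{a_0}^\sharp$, not $k\le k_{a_0}^\flat$. The closing ``parallel argument'' invoking an oriented permutation for the inverse of $\Omega^\pm$ is also problematic: the sets $\Omega_{a,k,j}$ are defined through $k_a^\sharp$, $k_a^\flat$ and $v_\Sigma^{\Omega^\pm}$, all of which depend on the \emph{fixed} oriented permutation, so you cannot simultaneously import quantities built from a different one.

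The paper proceeds quite differently. It first normalizes by assuming $k>k'$ (not by fixing $a_0$), which combined with hypothesis~(2) gives $k'<k\le \min_{a\in\Z/t}k_{a,c_a-1}$. Then for an \emph{arbitrary} $a\in(\Z/t)^-$ with adjacent $a-1\in(\Z/t)^+$ (so $k_{a,c_a}=k_{a-1,c_{a-1}}$), the conditions $\Omega_{a-1,k,j}=\Omega_{a-1}$, $\Omega_{a,k,j}=\emptyset$, $\Omega_{a,k',j}=\Omega_a$, $\Omega_{a-1,k',j}=\emptyset$ force $k_{a-1}^\sharp\ge k>\max\{k_{a-1}^\flat,k_a^\sharp\}$ and $\min\{k_{a-1}^\flat,k_a^\sharp\}\ge k'>k_a^\flat$. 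If $k_{a-1}^\flat<k_a^\sharp$, then for any $k''$ in between one has $\Omega_{a-1,k'',j}\ne\emptyset\ne\Omega_{a,k'',j}$ with $i_{a,k'',j}'=i_{a,c_a}=i_{a-1,c_{a-1}}$, contradicting Lemma~\ref{lem: criterion for emptyset}. Hence $k_{a-1}^\flat\ge k_a^\sharp>k_{a,c_a}$, so the fixed $-1$-tour has a $-1$-jump covering $k_{a,c_a}$ for every $a\in(\Z/t)^-$. Ruling out the degenerate possibility that this jump is at $k_{a,c_{a}-1}$, one finds for each such $a$ elements $a_1,a_2\in(\Z/t)^-$ with $k_{a_1}'<k_{a_2}'$; iterating along the $-1$-tour produces a cyclic strictly increasing chain $k_{a_1}'<k_{a_2}'<\cdots<k_{a_s}'<k_{a_1}'$, which is the contradiction. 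None of item~\ref{it: degeneration of tour} of Definition~\ref{def: oriented permutation} nor the distinguished $a_0$ from Proposition~\ref{prop:oriented:circular} plays a role.
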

\begin{proof}
We assume without loss of generality that $k>k'$. It follows from Lemma~\ref{lem: full subset case} that $k_{a,c_a-1}\geq k_{a,\star}\geq k>k_{a,\star}'$ (resp.~$k_{a,c_a-1}\geq k_{a,\star}\geq k'>k_{a,\star}'$) for each $a\in(\Z/t)^+$ (resp.~for each $a\in(\Z/t)^-$). In particular, we have $k'<k\leq \min\{k_{a,c_a-1}\mid a\in \Z/t\}$.

We choose an arbitrary $a\in(\Z/t)^-$ and note that $a-1\in(\Z/t)^+$ with $k_{a,c_a}=k_{a-1,c_{a-1}}$. Our assumption together with Condition~\rm{III}-\ref{it: III 9} implies that $\Omega_{a,k,j}=\emptyset$, $\Omega_{a-1,k,j}=\Omega_{a-1}$, $\Omega_{a,k',j}=\Omega_{a}$ and $\Omega_{a-1,k',j}=\emptyset$. Hence, we deduce that $k_{a-1}^\sharp\geq k>\max\{k_{a-1}^\flat,k_a^\sharp\}$ and $\min\{k_{a-1}^\flat,k_a^\sharp\}\geq k'>k_a^\flat$. If $k_{a-1}^\flat<k_a^\sharp$, then for each $k''\in\mathbf{n}$ satisfying $k_a^\sharp\geq k''>k_{a-1}^\flat$, we have $$k'\leq k_{a-1}^\flat<k''\leq k_a^\sharp<k\leq \min\{k_{a,c_a-1}\mid a\in \Z/t\}$$ and $\Omega_{a-1,k'',j}\neq \emptyset\neq \Omega_{a,k'',j}$, which contradicts Lemma~\ref{lem: criterion for emptyset}. Therefore we must have $k_{a-1}^\flat\geq k_a^\sharp>k_{a,c_a}=k_{a-1,c_{a-1}}$, and thus the fixed $-1$-tour of $v_{\Omega^+\sqcup\Omega^-}^{\Omega^\pm}$ contains a $-1$-jump that covers $k_{a,c_a}$.

If there exists $a\in(\Z/t)^-$ such that the fixed $-1$-tour of $v_{\Omega^+\sqcup\Omega^-}^{\Omega^\pm}$ contains a $-1$-jump at $k_{a,c_a-1}$ that covers $k_{a,c_a}$, then we have $k_{a-1}^\flat=k_{a,c_a-1}\geq k>k_{a-1}^\flat$ which is a contradiction. Consequently, for each $a\in(\Z/t)^-$, there exists a unique choice of $a_1,a_2\in(\Z/t)^-$ such that the fixed $-1$-tour of $v_{\Omega^+\sqcup\Omega^-}^{\Omega^\pm}$ contains a $-1$-jump at $k_{a_1}^\prime$ that covers $k_{a,c_a}$, and moreover satisfies $(v_{\Omega^+\sqcup\Omega^-}^{\Omega^\pm})^{-1}(k_{a_1}^\prime)\in \mathbf{n}^{a_2,-}\setminus\{k_{a_2,0}\}$. In particular, we have $k_{a_1}^\prime<(v_{\Omega^+\sqcup\Omega^-}^{\Omega^\pm})^{-1}(k_{a_1}^\prime)\leq k_{a_2}^\prime$. However, if we consider all $-1$-jumps contained in the fixed $-1$-tour of $v_{\Omega^+\sqcup\Omega^-}^{\Omega^\pm}$, we obtain a sequence of elements $a_1,a_2,\dots,a_s\in(\Z/t)^-$ satisfying $k_{a_1}^\prime<k_{a_2}^\prime<\cdots<k_{a_s}^\prime<k_{a_1}^\prime$, which is a contradiction. In all, we have shown that such $k$ and $k'$ do not exist.
\end{proof}

\begin{lemma}\label{lem: unique indexed decomposition}
Let $\Omega^\pm$ be a constructible $\Lambda$-lift of type $\rm{III}$ which satisfies Condition~\ref{cond: special interval}. Then exactly one of the following two possibilities holds:
\begin{itemize}
\item $\Omega^\pm$ satisfies Condition~\ref{cond: simple type III};
\item upon replacing $\Omega^\pm$ with its inverse (cf.~Definition~\ref{def: separated condition}) without changing $v_\cJ^{\Omega^\pm}$, $\Omega^\pm$ satisfies Condition~\ref{cond: non simple type III}.
\end{itemize}
\end{lemma}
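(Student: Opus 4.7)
The two possibilities are mutually exclusive since Condition~\ref{cond: simple type III} demands $\#\mathbf{D}^{\Omega^\pm}_{k,j}=1$ everywhere, while the last item of Condition~\ref{cond: non simple type III} forces $\#\mathbf{D}^{\Omega^\pm}_{k,j}\geq 2$ for the nonempty range $k_\star\geq k>k_\star'$. Thus the plan is to assume Condition~\ref{cond: simple type III} fails and produce Condition~\ref{cond: non simple type III} after at most one inversion of $\Omega^\pm$. Throughout I will use that $\bigsqcup_{a\in\Z/t}\Omega_{a,k,j}\in\mathbf{D}^{\Omega^\pm}_{k,j}$ always holds, by Lemma~\ref{lem: std type III}~\ref{it: indexed decomposition}, so failure of Condition~\ref{cond: simple type III} means the existence of a second element $\Omega_{k,j}^\natural\in\mathbf{D}^{\Omega^\pm}_{k,j}$.

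First I would fix such a pair $(k,j)$ and an alternative decomposition $\Omega_{k,j}^\natural$, and extract from it a pair $(i,j),(i',j)\in\mathbf{I}_{\Omega_{a',k,j}}\times\mathbf{I}_{\Omega_{a,k,j}}^\prime$ with $a\neq a'$ and $((i,i'),j)\in\widehat{\Lambda}$; such a pair must exist because otherwise the two decompositions would block-decompose compatibly with the $\Omega_a$'s. Applying Lemma~\ref{lem: pair of connected sets} to this data locates a pseudo $\Lambda$-decomposition $\Omega\subseteq\Omega^+\sqcup\Omega^-$ tying $\Omega_a$ and $\Omega_{a'}$ together; Condition~\ref{cond: special interval} forces this $\Omega$ to be a $\Lambda^\square$-interval of $\Omega^\pm$ that is itself a pseudo $\Lambda$-decomposition of some $(\al_\Omega,j)\in\widehat{\Lambda}$. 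In particular every such $\Omega_a$, $\Omega_{a'}$ shares the same embedding $j$, and by propagating the argument along the loop $\Z/t$ one gets $\Omega^+\sqcup\Omega^-\subseteq\mathrm{Supp}_{\xi,j}$.

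The key reduction is to show that, after possibly replacing $\Omega^\pm$ by its inverse without altering $v_\cJ^{\Omega^\pm}$, we may arrange that every such pseudo $\Lambda$-decomposition $\Omega$ lies in $\Omega^+$ while each $\Omega_a$ with $a\in(\Z/t)^-$ is a bona fide $\Lambda^\square$-interval of $\Omega^-$. The circular case (where $\Omega^+\sqcup\Omega^-$ forms one loop) is delicate: here I would invoke Lemma~\ref{lem: unique non simple} to see that the two possible orientations cannot both produce multiple decompositions, so one of the two sides necessarily hosts the pseudo-decompositions. Given this arrangement, the constraint $\Omega_{\Omega,k,j}^\natural\in\mathbf{D}_{(\al_\Omega,j),\Lambda}$ with $u_j(i_{\Omega_{\Omega,k,j}^\natural,1})\geq k$ isolates exactly which pseudo-decomposition contributes, while the other pieces of $\Omega_{k,j}^\natural$ must coincide with $\Omega^-$; the last item of Condition~\ref{cond: special interval} and Lemma~\ref{lem: unique k decomposition} then force $\bigsqcup_{a}\Omega_{a,k,j}=\Omega^-$ at such $k$.

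Finally I would define $k_\star$ and $k_\star'$ as in \eqref{equ: star} and \eqref{equ: star prime} and verify the three-fold equivalence in Condition~\ref{cond: non simple type III}: using Lemma~\ref{lem: full subset case} for each $a\in(\Z/t)^-$ to describe the full-subset range $[k_{a,\star}^\prime+1,k_{a,\star}]$, and using Lemma~\ref{lem: unique k decomposition} on each $\Omega\in\pi_0^\square(\Omega^+)$ to pin down the range $k\leq u_j(i_{\Omega_{(\al_\Omega,j),\Lambda}^{\rm{max}},1})$ where alternative $\Omega_{\Omega,k,j}^\natural$ exists. The inequality $k_\star>k_\star'$ comes from the nonemptiness of the alternative decomposition $\Omega_{k,j}^\natural$ we started with. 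The main obstacle, and the step requiring the most care, is the structural step: ruling out configurations where different $\Lambda^\square$-intervals sit on both $\Omega^+$ and $\Omega^-$ at once, and simultaneously orienting the loop consistently; this is precisely where Lemma~\ref{lem: unique non simple}, Condition~\rm{III}-\ref{it: III 9}, and the detailed bounds of Lemma~\ref{lem: non empty interval} on $k_a^\sharp,k_a^\flat$ are needed, and where the freedom to invert $\Omega^\pm$ is consumed.
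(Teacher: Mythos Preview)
Your outline captures the right high-level strategy, but there is a genuine gap at the step where you write ``Condition~\ref{cond: special interval} forces this $\Omega$ to be a $\Lambda^\square$-interval of $\Omega^\pm$.'' Lemma~\ref{lem: pair of connected sets} gives four possible configurations for how $\Omega_a$ and $\Omega_{a'}$ sit relative to the pseudo $\Lambda$-decomposition $\Omega$, and only case~\ref{it: conn 3} leads directly to the structural conclusion you want. Condition~\ref{cond: special interval} does not by itself rule out cases~\ref{it: conn 1}, \ref{it: conn 2}, \ref{it: conn 4}; rather, it is used (together with Lemma~\ref{lem: criterion for emptyset} and item~\ref{it: unique decomposition} of Lemma~\ref{lem: std type III}) to derive a contradiction \emph{after} one has chained together a sequence of such cases along a cycle. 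The paper's mechanism is to choose a \emph{minimal} balanced pair $\Omega_\sharp\subseteq\bigsqcup_a\Omega_{a,k,j}$ and $\Omega_\flat\subseteq\Omega_{k,j}^\natural$ with $\sum_{\Omega_\sharp}\beta=\sum_{\Omega_\flat}\beta$, index $\widehat{\Omega}_\sharp$ and $\widehat{\Omega}_\flat$ by a cycle $\Z/s$, and then analyze for each $s'\in\Z/s$ which case of Lemma~\ref{lem: pair of connected sets} the connecting element $((i_{\al_{\sharp,s'+1}},i_{\al_{\sharp,s'}}'),j)$ falls into. One shows that if any $s'$ lands in case~\ref{it: conn 1}, \ref{it: conn 2}, or \ref{it: conn 4}, then propagation along $\Z/s$ produces a stretch bounded by a case~\ref{it: conn 1} and a case~\ref{it: conn 2}, and the resulting union $\Omega_{\star,s_1',s_2'+1}$ becomes a $\Lambda^\square$-interval whose existence, via Condition~\ref{cond: special interval}, forces an inequality on $k$ that contradicts $\Omega_{\phi(s_1'+1),k,j}\neq\emptyset$ (through Lemma~\ref{lem: criterion for emptyset}). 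Your plan of ``extract one pair and apply the lemma once'' skips this entire cycle-propagation, which is the heart of the proof.

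Two secondary points: first, your invocation of Condition~\rm{III}-\ref{it: III 9} and the bounds of Lemma~\ref{lem: non empty interval} for the structural step is misplaced---neither is used in the paper's argument here (the relevant separation input is Condition~\rm{III}-\ref{it: III 7}). Second, your use of Lemma~\ref{lem: unique non simple} is essentially correct but slightly mischaracterized: it is not used to orient in the circular case per se, but rather (after orientation has been fixed by the single non-simple pair $\Omega_\sharp,\Omega_\flat$) to establish the three-fold equivalence in Condition~\ref{cond: non simple type III} by ruling out the possibility that both $\Omega^+$ and $\Omega^-$ arise as $\bigsqcup_a\Omega_{a,k,j}$ at different values of $k$.
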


\begin{proof}
We fix a pair $(k,j)\in\mathbf{n}_\cJ$ and an arbitrary element $\Omega_{k,j}^\natural\in \mathbf{D}^{\Omega^\pm}_{k,j}$ throughout the proof.

We choose two non-empty subsets $$\Omega_\sharp\subseteq\underset{a\in\Z/t}{\bigsqcup}\Omega_{a,k,j}\,\,\mbox{and}\,\,\Omega_\flat\subseteq\Omega_{k,j}^\natural$$ such that
\begin{equation}\label{equ: balanced condition for a pair}
\sum_{\beta\in\Omega_\sharp}\beta=\sum_{\beta\in\Omega_\flat}\beta
\end{equation}
and there does not exist proper non-empty subsets of $\Omega_\sharp$ and $\Omega_\flat$ satisfying the similar equality. According to (\ref{equ: balanced condition for a pair}) and the minimality condition on $\Omega_\sharp$ and $\Omega_\flat$, there exist $s\geq 1$ and an ordering $\widehat{\Omega}_\sharp=\{(\al_{\sharp,s'},j)\mid s'\in\Z/s\}$ and an ordering $\widehat{\Omega}_\flat=\{(\al_{\flat,s'},j)\mid s'\in\Z/s\}$ such that $i_{\al_{\sharp,s'}}^\prime=i_{\al_{\flat,s'}}^\prime$ and $i_{\al_{\flat,s'}}=i_{\al_{\sharp,s'+1}}$ for each $s'\in\Z/s$. In particular, we observe that
\begin{equation}\label{equ: connecting element}
((i_{\al_{\sharp,s'+1}},i_{\al_{\sharp,s'}}^\prime),j)\in\widehat{\Lambda}
\end{equation}
for each $s'\in\Z/s$. We have the decompositions
$$\Omega_\sharp=\underset{s'\in\Z/s}{\bigsqcup}\Omega_{\sharp,s'}\,\,\mbox{and}\,\,\Omega_\flat=\underset{s'\in\Z/s}{\bigsqcup}\Omega_{\flat,s'}$$
that satisfy $$\sum_{(\beta,j)\in\Omega_{\sharp,s'}}\beta=\al_{\sharp,s'}\,\,\mbox{and}\,\,\sum_{(\beta,j)\in\Omega_{\flat,s'}}\beta=\al_{\flat,s'}$$ for each $s'\in\Z/s$. Moreover, we observe that the sets $\{i_{\al_{\sharp,s'}},i_{\al_{\sharp,s'}}^\prime\}$ are disjoint for different choices of $s'\in\Z/s$ and we have (cf.~Definition~\ref{def: separated condition})
$$\Delta_{\Omega_\sharp}=\Delta_{\Omega_\flat}=\underset{s'\in\Z/s}{\bigsqcup}\{(i_{\al_{\sharp,s'}},j),(i_{\al_{\sharp,s'}}^\prime,j)\}.$$
It follows from $\Omega_\sharp\subseteq \bigsqcup_{a\in\Z/t}\Omega_{a,k,j}$ that, for each $s'\in\Z/s$, there exists a unique $a\in\Z/t$ such that $\Omega_{\sharp,s'}\subseteq\Omega_{a,k,j}\neq \emptyset$, and thus we have a well-defined map $\phi:~\Z/s\rightarrow\Z/t$. If $s=1$, we say that the pair $\Omega_\sharp,\Omega_\flat$ is \emph{simple}.

If the pair $\Omega_\sharp,\Omega_\flat$ is simple, then there exist $a\in\Z/t$ and $(\al,j)\in\widehat{\Lambda}$ (with $j=j_a$) such that $\Omega_\sharp\subseteq\Omega_{a,k,j}$ and $\Omega_\sharp,\Omega_\flat\in\mathbf{D}_{(\al,j),\Lambda}$. Using the minimality condition (under inclusion of subsets) on the choice of $\Omega_\sharp,\Omega_\flat$, we observe that either $\Omega_\sharp=\Omega_\flat=\{(\al,j)\}$ or $\Omega_\sharp\cap\Omega_\flat=\emptyset$. If $\Omega_\sharp\cap\Omega_\flat=\emptyset$ and $i_\al^\prime\neq i_{a,k,j}^\prime$, this contradicts the fact that $\Omega_{a,k,j}$ is $\Lambda$-exceptional (cf. item~\ref{it: unique decomposition} of Lemma~\ref{lem: std type III}). If $\Omega_\sharp\cap\Omega_\flat=\emptyset$ and $i_\al^\prime=i_{a,k,j}^\prime$, we clearly have $i_{\Omega_{a,k,j},1}=i_{\Omega_\sharp,1}\neq i_{\Omega_\flat,1}$ (which implies $u_j(i_{\Omega_\flat,1})<u_j(i_{\Omega_\sharp,1})=u_j(i_{\Omega_{a,k,j},1})$
using $\Omega_{a,k,j}=\Omega_{(\al^{\Omega^\pm}_{a,k,j},j),\Lambda}^{\rm{max}}$), and thus contradicts (\ref{equ: key bound on k}). Consequently, we have shown that, if the pair $\Omega_\sharp,\Omega_\flat$ is simple, then we must have $\Omega_\sharp=\Omega_\flat$.

Now we treat a pair $\Omega_\sharp,\Omega_\flat$ which is not simple, namely $s\geq 2$ and thus $s'-1\neq s'\neq s'+1$ for each $s'\in\Z/s$. It follows from Lemma~\ref{lem: pair of connected sets} that the element (\ref{equ: connecting element}) should fall into one out of four cases there, for each $s'\in\Z/s$. In the following, when we say case~\ref{it: conn 1}, case~\ref{it: conn 2}, case~\ref{it: conn 3}, or case~\ref{it: conn 4}, we are always referring to Lemma~\ref{lem: pair of connected sets}. We will show that for each $s'\in\Z/s$, the element (\ref{equ: connecting element}) falls into case~\ref{it: conn 3}.

If there exists $s'\in\Z/s$ such that the element (\ref{equ: connecting element}) falls into case~\ref{it: conn 1}, then $i_{\al_{\sharp,s'}}^\prime=i_{\phi(s'),c_{\phi(s')}}$ and there exists a pseudo $\Lambda$-decomposition $\Omega_{\star,s',s'+1}$ of $((i,i_{\phi(s'),c_{\phi(s')}}),j)$ for some $i$ such that $\Omega_{\phi(s'+1)}\subseteq\Omega_{\star,s',s'+1}\subseteq\Omega^+\sqcup\Omega^-$ and $\Omega_{\phi(s')}\cap\Omega_{\star,s',s'+1}=\emptyset$. Note that we have $i_{\al_{\sharp,s'+1}}^\prime\neq i_{\al_{\sharp,s'}}^\prime=i_{\phi(s'),c_{\phi(s')}}$. The inclusions $\Omega_{\sharp,s'+1}\subseteq\Omega_{\phi(s'+1),k,j}$ and $\Omega_{\phi(s'+1)}\subseteq\Omega_{\star,s',s'+1}$ imply that
\begin{equation}\label{equ: first case}
((i_{\al_{\sharp,s'+1}}^\prime,i_{\phi(s'),c_{\phi(s')}}),j)\in\widehat{\Lambda}.
\end{equation}
If $((i_{\al_{\sharp,s'+2}},i_{\al_{\sharp,s'+1}}^\prime),j)\in\widehat{\Lambda}$ falls into either case~\ref{it: conn 1} or case~\ref{it: conn 3}, then $(i_{\al_{\sharp,s'+2}}^\prime,j)$ and $(i_{\phi(s'),c_{\phi(s')}},j)$ do not lie in the same $\Lambda^\square$-interval, which together with Condition~\rm{III}-\ref{it: III 7} and (\ref{equ: first case}) leads to a contradiction.
Hence, the element $((i_{\al_{\sharp,s'+2}},i_{\al_{\sharp,s'+1}}^\prime),j)\in\widehat{\Lambda}$ falls into either case~\ref{it: conn 2} or case~\ref{it: conn 4}.

If there exists $s'\in\Z/s$ such that the element (\ref{equ: connecting element}) falls into case~\ref{it: conn 2}, then $i_{\al_{\sharp,s'+1}}=i_{\phi(s'+1),0}$ and there exists a pseudo $\Lambda$-decomposition $\Omega_{\star,s',s'+1}$ of $((i_{\phi(s'+1),0},i'),j)$ for some $i'$ such that $\Omega_{\phi(s')}\subseteq\Omega_{\star,s',s'+1}\subseteq\Omega^+\sqcup\Omega^-$ and $\Omega_{\phi(s'+1)}\cap\Omega_{\star,s',s'+1}=\emptyset$. We can argue similarly using Condition~\rm{III}-\ref{it: III 7} and deduce that the element $((i_{\al_{\sharp,s'}},i_{\al_{\sharp,s'-1}}^\prime),j)\in\widehat{\Lambda}$ falls into either case~\ref{it: conn 1} or case~\ref{it: conn 4}.

If there exists $s'\in\Z/s$ such that the element (\ref{equ: connecting element}) falls into case~\ref{it: conn 4}, then there exists a pseudo $\Lambda$-decomposition $\Omega_{\star,s',s'+1}$ of some $((i,i'),j)$ such that $\Omega_{\phi(s')},\,\Omega_{\phi(s'+1)}\subseteq\Omega_{\star,s',s'+1}\subseteq\Omega^+\sqcup\Omega^-$. In particular, we have
$$((i_{\al_{\sharp,s'}},i'),j),\,((i,i_{\al_{\sharp,s'+1}}^\prime),j)\in\widehat{\Lambda}.$$
Similar argument as above using Condition~\rm{III}-\ref{it: III 7} implies that the element $((i_{\al_{\sharp,s'}},i_{\al_{\sharp,s'-1}}^\prime),j)\in\widehat{\Lambda}$ falls into case~\ref{it: conn 1} or case~\ref{it: conn 4}, and the element $((i_{\al_{\sharp,s'+2}},i_{\al_{\sharp,s'+1}}^\prime),j)\in\widehat{\Lambda}$ falls into either case~\ref{it: conn 2} or case~\ref{it: conn 4}. Moreover, using previous two paragraphs, we obtain a unique choice of $t_+,t_-\geq 1$ such that $((i_{\al_{\sharp,s'+t'+1}},i_{\al_{\sharp,s'+t'}}^\prime),j)\in\widehat{\Lambda}$ falls into case~\ref{it: conn 4} for each $-t_-+1\leq t'\leq t_+-1$, $((i_{\al_{\sharp,s'-t_-+1}},i_{\al_{\sharp,s'-t_-}}^\prime),j)\in\widehat{\Lambda}$ falls into case~\ref{it: conn 1} and $((i_{\al_{\sharp,s'+t_++1}},i_{\al_{\sharp,s'+t_+}}^\prime),j)\in\widehat{\Lambda}$ falls into case~\ref{it: conn 2}.

Combining the three paragraphs above, we conclude that, if there exists $s'\in\Z/s$ such that the element (\ref{equ: connecting element}) does not fall into case~\ref{it: conn 3},
then there exist $s_1',s_2'\in\Z/s$ and $t'\geq 1$ with $s_2'=s_1'+t'$ such that $((i_{\al_{\sharp,s_1'+1}},i_{\al_{\sharp,s_1'}}^\prime),j)\in\widehat{\Lambda}$ falls into case~\ref{it: conn 1}, $((i_{\al_{\sharp,s_2'+1}},i_{\al_{\sharp,s_2'}}^\prime),j)\in\widehat{\Lambda}$ falls into case~\ref{it: conn 2}, and $((i_{\al_{\sharp,s_1'+t''+1}},i_{\al_{\sharp,s_1'+t''}}^\prime),j)\in\widehat{\Lambda}$ falls into case~\ref{it: conn 4} for each $1\leq t''\leq t'-1$. More precisely, we have
\begin{equation}\label{equ: case 1 and 2}
i_{\al_{\sharp,s_1'}}^\prime=i_{\phi(s_1'),c_{\phi(s_1')}},\,\,i_{\al_{\sharp,s_2'+1}}=i_{\phi(s_2'+1),0}
\end{equation}
and $\Omega_{\star,s_1'+t'',s_1'+t''+1}$ is a pseudo $\Lambda$-decomposition of some root that satisfies
\begin{itemize}
\item $\Omega_{\star,s_1',s_1'+1}\subseteq\Omega^+\sqcup\Omega^-$ is a pseudo $\Lambda$-decomposition of $((i,i_{\phi(s_1'),c_{\phi(s_1')}}),j)$ for some $i$ and $\Omega_{\phi(s_1'+1)}\subseteq \Omega_{\star,s_1',s_1'+1}$;
\item $\Omega_{\star,s_1'+t',s_1'+t'+1}\subseteq\Omega^+\sqcup\Omega^-$ is a pseudo $\Lambda$-decomposition of $((i_{\phi(s_1'+t'+1),0},i'),j)$ for some $i'$ and $\Omega_{\phi(s_1'+t')}\subseteq \Omega_{\star,s_1'+t',s_1'+t'+1}$;
\item for each $1\leq t''\leq t'-1$, $\Omega_{\star,s_1'+t'',s_1'+t''+1}\subseteq\Omega^+\sqcup\Omega^-$ is a pseudo $\Lambda$-decomposition (of some root) that contains $\Omega_{\phi(s_1'+t'')}$ and $ \Omega_{\phi(s_1'+t''+1)}$.
\end{itemize}
This forces
$$\Omega_{\star,s_1',s_2'+1}\defeq \underset{0\leq t''\leq t'}{\bigcup}\Omega_{\star,s_1'+t'',s_1'+t''+1}$$
to be a pseudo $\Lambda$-decomposition of $((i_{\phi(s_2'+1),0},i_{\phi(s_1'),c_{\phi(s_1')}}),j)\in\widehat{\Lambda}$ which is also a $\Lambda^\square$-interval of $\Omega^\pm$.
As $\Omega_{\flat,s_1'}$ is a $\Lambda$-decomposition of $((i_{\al_{\sharp,s_1'+1}},i_{\al_{\sharp,s_1'}}^\prime),j)\in\widehat{\Lambda}$ with $u_j(i_{\Omega_{\flat,s_1'},1})\geq k$, we deduce that
\begin{equation*}
k\leq u_j(i_{\Omega_{((i_{\al_{\sharp,s_1'+1}},i_{\al_{\sharp,s_1'}}^\prime),j),\Lambda}^{\rm{max}},1})\leq u_j(i_{\Omega_{((i_{\phi(s_2'+1),0},i_{\phi(s_1'),c_{\phi(s_1')}}),j),\Lambda}^{\rm{max}},1}),
\end{equation*}
which together with $k_{\phi(s_1'+1),0}\geq k>k_{\phi(s_1'+1),c_{\phi(s_1'+1)}}$ (as $\emptyset\neq \Omega_{\sharp,s_1'+1}\subseteq\Omega_{\phi(s_1'+1),k,j}$) and Condition~\ref{cond: special interval} forces that
\begin{equation}\label{equ: common end point}
i_{\phi(s_1'+1),c_{\phi(s_1'+1)}}=i_{\phi(s_1'),c_{\phi(s_1')}}\,\,\mbox{and}\,\,k\leq u_j(i_{\Omega_{\star,s_1',s_2'+1},1}).
\end{equation}
However, as we have (using (\ref{equ: common end point}) and (\ref{equ: case 1 and 2})) $$i_{\al_{\sharp,s_1'}}^\prime=i_{\phi(s_1'),k,j}^\prime=i_{\phi(s_1'),c_{\phi(s_1')}}=i_{\phi(s_1'+1),c_{\phi(s_1'+1)}}$$ and $$k\leq u_j(i_{\Omega_{\star,s_1',s_2'+1},1})=k_{\phi(s_1'+1),c_{\phi(s_1'+1)}-1},$$ we necessarily have $\Omega_{\phi(s_1'+1),k,j}=\emptyset$ by Lemma~\ref{lem: criterion for emptyset}, which contradicts the fact that $\emptyset\neq \Omega_{\sharp,s_1'+1}\subseteq\Omega_{\phi(s_1'+1),k,j}$.

Up to this stage, we have just shown that for each $s'\in\Z/s$, the element (\ref{equ: connecting element}) falls into case~\ref{it: conn 3} in Lemma~\ref{lem: pair of connected sets}, and thus $i_{\al_{\sharp,s'}}^\prime=i_{\phi(s'),c_{\phi(s')}}$, $i_{\al_{\sharp,s'+1}}=i_{\phi(s'+1),0}$ and $\Omega_{\flat,s'}$ is a $\Lambda$-decomposition of $$\al_{\flat,s'}=((i_{\phi(s'+1),0},i_{\phi(s'),c_{\phi(s')}}),j)\in\widehat{\Lambda}$$
satisfying $u_j(i_{\Omega_{\flat,s'},1})\geq k$. Consequently, for each $s'\in\Z/s$, we have
\begin{itemize}
\item $\Omega_{\sharp,s'}$ is a $\Lambda^\square$-interval of $\Omega^\pm$;
\item there exists a $\Lambda^\square$-interval of $\Omega^\pm$ which is a pseudo $\Lambda$-decomposition of $(\al_{\flat,s'},j)$;
\item $\Omega_{\flat,s'}\in\mathbf{D}_{(\al_{\flat,s'},j),\Lambda}$ and
\begin{equation}\label{equ: bound on k prime}
k\leq u_j(i_{\Omega_{\flat,s'},1})\leq u_j(i_{\Omega_{(\al_{\flat,s'},j),\Lambda}^{\rm{max}},1})
\end{equation}
\end{itemize}
Upon replacing $\Omega^\pm$ with its inverse (cf.~Definition~\ref{def: separated condition}), we may assume that $\Omega_\sharp\subseteq\Omega^-$ by the discussion above, and note that
\begin{itemize}
\item $\phi$ is injective, $\phi(\Z/s)=(\Z/t)^-$ and $\Omega_\sharp=\Omega^-$;
\item $j_{\phi(s')}=j$ and $\Omega_{\sharp,s'}=\Omega_{\phi(s'),k,j}=\Omega_{\phi(s')}$ for each $s'\in\Z/s$;
\item for each $\Lambda^\square$-interval $\Omega\in\pi_0^\square(\Omega^+)$, there exists a unique $s'\in\Z/s$ such that $\Omega$ is a pseudo $\Lambda$-decomposition of $(\al_{\flat,s'},j)$.
\end{itemize}
For each $a\in(\Z/t)^-=\phi(\Z/s)$, as $\Omega_{a,k,j}=\Omega_a$, we deduce from Lemma~\ref{lem: full subset case} that $$k_{a,\star}\geq k>k_{a,\star}'.$$
On the other hand, as we have (\ref{equ: bound on k prime}) for each $s'\in\Z/s$, we deduce from Condition~\ref{cond: special interval} that $k\leq k_{a,c_a}$ and so $\Omega_{a,k,j}=\emptyset$ for each $a\in(\Z/t)^+$ with $k_{a,c_a}\neq k_{a',c_{a'}}$ for all $a'\in(\Z/t)^-$. Moreover, if there exist $s'\in\Z/s$ and $a\in(\Z/t)^+$ such that $i_{a,c_a}=i_{\phi(s'),c_{\phi(s')}}=i_{a+1,c_{a+1}}$ and $\Omega_{a,k,j}\neq \emptyset$, then we clearly have $k_{a,0}\geq k>k_{a,c_a}$. Note that $\Omega_{a+1,k,j}\neq\emptyset$ and $i_{\phi(s'),k,j}^\prime=i_{\phi(s'),c_{\phi(s')}}$, as $a+1=\phi(s')\in(\Z/t)^-$. It follows from Condition~\ref{cond: special interval} that either $c_a=1$ or $u_j(i_{\Omega_{(\al_{\flat,s'},j),\Lambda}^{\rm{max}},1})=k_{a,c_a-1}$, which together with (\ref{equ: bound on k prime}) and $k_{a,0}\geq k>k_{a,c_a}$ implies that $k\leq k_{a,c_a-1}$. Now, we apply Lemma~\ref{lem: criterion for emptyset} and conlude $\Omega_{a,k,j}=\emptyset$, which is a contradiction. Hence, we have $\Omega_{a,k,j}=\emptyset$ for each $a\in(\Z/t)^+$ in the current situation.

Now we return to our fixed $(k,j)$ and the associated set $\mathbf{D}^{\Omega^\pm}_{k,j}$. If all possible choices of pairs $\Omega_\sharp,\Omega_\flat$ are simple for each choice of $\Omega_{k,j}^\natural\in \mathbf{D}^{\Omega^\pm}_{k,j}$, then we always have $\Omega_\sharp=\Omega_\flat$ for each choice of pair, and we can write $\bigsqcup_{a\in\Z/t}\Omega_{a,k,j}$ (resp.~$\Omega_{k,j}^\natural$) as disjoint unions of $\Omega_\sharp$ (resp.~$\Omega_\flat$) for certain choices of pairs $\Omega_\sharp,\Omega_\flat$ and deduce that $\Omega_{k,j}^\natural=\bigsqcup_{a\in\Z/t}\Omega_{a,k,j}$.

It remains to consider the case when there exists one choice of $\Omega_{k,j}^\natural\in \mathbf{D}^{\Omega^\pm}_{k,j}$ with $\Omega_{k,j}^\natural\neq \bigsqcup_{a\in\Z/t}\Omega_{a,k,j}$, namely there exists a non-simple pair $\Omega_\sharp,\Omega_\flat$ satisfying $\Omega_\sharp\subseteq \bigsqcup_{a\in\Z/t}\Omega_{a,k,j}$ and $\Omega_\flat\subseteq \Omega_{k,j}^\natural$. According to our previous discussion, upon replacing $\Omega^\pm$ with its inverse, we have
\begin{itemize}
\item $\Omega^+\sqcup\Omega^-\subseteq\mathrm{Supp}_{\xi,j}$;
\item $\Omega_\sharp=\Omega^-=\bigsqcup_{a\in\Z/t}\Omega_{a,k,j}$;
\item $k_{a,\star}\geq k>k_{a,\star}'$ for each $a\in(\Z/t)^-$;
\item for each $\Lambda^\square$-interval $\Omega\in\pi_0^\square(\Omega^+)$, there exists $(\al_\Omega,j)\in\widehat{\Lambda}$ such that $\Omega$ is a pseudo $\Lambda$-decomposition of $(\al_\Omega,j)$;
\item there exists $\Omega_{\Omega,k,j}^\natural\in\mathbf{D}_{(\al_\Omega,j),\Lambda}$ with $u_j(i_{\Omega_{\Omega,k,j}^\natural,1})\geq k$ for each $\Omega\in\pi_0^\square(\Omega^+)$, such that $$\Omega_{k,j}^\natural=\Omega_\flat=\underset{\Omega\in\pi_0^\square(\Omega^+)}{\bigsqcup}\Omega_{\Omega,k,j}^\natural.$$
\end{itemize}

We recall the definition of $k_\star$ and $k_\star'$ from (\ref{equ: star}) and (\ref{equ: star prime}) respectively. Then it follows from previous discussion and Lemma~\ref{lem: unique non simple} that the following conditions on $k'\in\mathbf{n}$ are equivalent (for the above choice of $\Omega^\pm$)%
\begin{itemize}
\item $k_\star\geq k'>k_\star'$;
\item $\#\mathbf{D}^{\Omega^\pm}_{k',j}\geq 2$;
\item $\bigsqcup_{a\in\Z/t}\Omega_{a,k',j}=\Omega^-$ and $k'\leq u_j(i_{\Omega_{(\al_\Omega,j),\Lambda}^{\rm{max}},1})$ for each $\Omega\in\pi_0^\square(\Omega^+)$.
\end{itemize}

Finally, we conclude the following
\begin{itemize}
\item If all possible choices of the pair $\Omega_\sharp,\Omega_\flat$ for all $(k,j)\in\mathbf{n}_\cJ$ are simple, then $\Omega^\pm$ satisfies Condition~\ref{cond: simple type III}.
\item If there exists some $(k,j)\in\mathbf{n}_\cJ$ and a choice of $\Omega_\sharp,\Omega_\flat$ which are not simple, we can decide to choose $\Omega^\pm$ or its inverse from this pair $\Omega_\sharp,\Omega_\flat$ and then define $k_\star,k_\star'\in\mathbf{n}$ as above. The very existence of such $(k,j)$ ensures that $k_\star>k_\star'$. The rest of properties satisfied by the pair $\Omega_\sharp,\Omega_\flat$ implies that either $\Omega^\pm$ or its inverse satisfies Condition~\ref{cond: non simple type III}.
\end{itemize}
The proof is thus finished.
\end{proof}

\begin{lemma}\label{lem: type III pair of pseudo}
Let $\Omega^\pm$ be a constructible $\Lambda$-lift of type $\rm{III}$ with both $\Omega^+$ and $\Omega^-$ being pseudo $\Lambda$-decomposition of some $(\al,j)\in\widehat{\Lambda}$. Then $\Omega^\pm$ satisfies Condition~\ref{cond: simple type III}.
\end{lemma}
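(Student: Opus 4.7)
The plan is to adapt the strategy of the proof of Lemma~\ref{lem: unique indexed decomposition}. Fix $(k,j')\in \mathbf{n}_\cJ$ and an arbitrary element $\Omega_{k,j'}^\natural\in\mathbf{D}^{\Omega^\pm}_{k,j'}$; our goal is to show $\Omega_{k,j'}^\natural=\bigsqcup_{a\in\Z/t}\Omega_{a,k,j'}$. The case $j'\neq j$ is immediate since both sides are empty, so we assume $j'=j$. Since $\bigsqcup_{a\in\Z/t}\Omega_{a,k,j}\in \mathbf{D}^{\Omega^\pm}_{k,j}$ by Lemma~\ref{lem: std type III}\ref{it: indexed decomposition}, we decompose the difference into a minimal balanced pair: pick non-empty $\Omega_\sharp\subseteq\bigsqcup_{a\in\Z/t}\Omega_{a,k,j}$ and $\Omega_\flat\subseteq\Omega_{k,j}^\natural$ with $\sum_{\beta\in\Omega_\sharp}\beta=\sum_{\beta\in\Omega_\flat}\beta$ and minimal (under inclusion of subsets). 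As in Lemma~\ref{lem: unique indexed decomposition}, this minimality produces an integer $s\geq 1$, orderings $\widehat{\Omega}_\sharp=\{(\al_{\sharp,s'},j)\}$, $\widehat{\Omega}_\flat=\{(\al_{\flat,s'},j)\}$ indexed by $\Z/s$ with $i^\prime_{\al_{\sharp,s'}}=i^\prime_{\al_{\flat,s'}}$ and $i_{\al_{\flat,s'}}=i_{\al_{\sharp,s'+1}}$, and a map $\phi:\Z/s\to\Z/t$ determined by $\Omega_{\sharp,s'}\subseteq\Omega_{\phi(s'),k,j}$.

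First I would handle the simple case $s=1$. Here $\Omega_\sharp,\Omega_\flat\in\mathbf{D}_{(\al,j),\Lambda}$ for a common $(\al,j)\in\widehat{\Lambda}$ with $\Omega_\sharp\subseteq\Omega_{\phi(1),k,j}$. By Lemma~\ref{lem: std type III}\ref{it: unique decomposition}, $\Omega_{\phi(1),k,j}=\Omega_{(\al^{\Omega^\pm}_{\phi(1),k,j},j),\Lambda}^{\mathrm{max}}$ is $\Lambda$-exceptional and is the unique $\Lambda$-decomposition $\Omega'$ of that root with $u_j(i_{\Omega',1})\geq k$. The usual dichotomy on whether $\Omega_\sharp\cap\Omega_\flat=\emptyset$ and whether $i^\prime_\al=i^\prime_{\phi(1),k,j}$, combined with the $\Lambda$-exceptionality of $\Omega_{\phi(1),k,j}$ and the characterizing bound on $u_j(i_{\cdot,1})$, forces $\Omega_\sharp=\Omega_\flat$. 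Writing both $\bigsqcup_{a\in\Z/t}\Omega_{a,k,j}$ and $\Omega_{k,j}^\natural$ as disjoint unions of such simple pieces then yields the equality whenever all minimal pairs are simple.

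The heart of the argument, and the main obstacle, is ruling out non-simple pairs ($s\geq 2$). For this I would apply Lemma~\ref{lem: pair of connected sets} to every element $((i_{\al_{\sharp,s'+1}},i_{\al_{\sharp,s'}}^\prime),j)\in\widehat{\Lambda}$; it falls into one of cases~(1)--(4). I would then rerun the case-by-case analysis used in the proof of Lemma~\ref{lem: unique indexed decomposition}, showing that cases~(1), (2), (4) are impossible: a case~(1) or (4) step at position $s'$ produces a $\Lambda^\square$-interval $\Omega_{\star,s',s'+1}\subseteq\Omega^+\sqcup\Omega^-$ which is a pseudo $\Lambda$-decomposition of a root with right endpoint $i_{\phi(s'),c_{\phi(s')}}$, and Condition~\rm{III}-\ref{it: III 7} together with the non-simple chain forces a contradiction with the element $((i_{\al_{\sharp,s'+2}},i_{\al_{\sharp,s'+1}}^\prime),j)\in\widehat{\Lambda}$, and symmetrically for case~(2). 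The only remaining possibility is that every connecting element falls into case~(3), so every $\Omega_{\sharp,s'}$ is a full $\Lambda^\square$-interval and each $\al_{\flat,s'}=((i_{\phi(s'+1),0},i_{\phi(s'),c_{\phi(s')}}),j)\in\widehat\Lambda$.

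Finally, I would exploit the distinctive hypothesis of the lemma: both $\Omega^+$ and $\Omega^-$ are pseudo $\Lambda$-decompositions of the same $(\al,j)$. Hence $\pi_0^\square(\Omega^\pm)$ consists precisely of the $\Lambda^\square$-intervals coming from $\widehat\Omega^+$ and $\widehat\Omega^-$, which together form \emph{two} pseudo decompositions of $(\al,j)$ sharing the common endpoints $i_\al,i_\al'$. In this configuration, the case~(3) chain would require the existence of a $\Lambda^\square$-interval of $\Omega^\pm$ which is a pseudo $\Lambda$-decomposition of $(\al_{\flat,s'},j)$, but the chain of such $(\al_{\flat,s'},j)$ cyclically reconstructs the decomposition pattern of one of $\Omega^+,\Omega^-$ using endpoints of the other, which via Condition~\rm{III}-\ref{it: III 6} and \ref{it: III 8} and the already-established properties of $\Omega_{\phi(s'),k,j}=\Omega_{\phi(s')}$ (Lemma~\ref{lem: full subset case}-type considerations, using $k\leq u_j(i_{\Omega_{(\al_{\flat,s'},j),\Lambda}^{\mathrm{max}},1})$ and Lemma~\ref{lem: criterion for emptyset}) is incompatible with $\Omega_{\sharp,s'+1}=\Omega_{\phi(s'+1),k,j}\neq\emptyset$. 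This excludes non-simple pairs, reduces everything to the simple case handled above, and so $\Omega_{k,j}^\natural=\bigsqcup_{a\in\Z/t}\Omega_{a,k,j}$, i.e.~Condition~\ref{cond: simple type III} holds.
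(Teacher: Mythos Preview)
Your approach has a genuine gap at the crucial step. You propose to apply Lemma~\ref{lem: pair of connected sets} to each connecting element $((i_{\al_{\sharp,s'+1}},i_{\al_{\sharp,s'}}^\prime),j)\in\widehat{\Lambda}$, but that lemma carries the explicit hypothesis ``Assume that there does not exist $(\al,j)\in\widehat{\Lambda}$ such that both $\Omega^+$ and $\Omega^-$ are pseudo $\Lambda$-decompositions of $(\al,j)$'', which is precisely the \emph{opposite} of the hypothesis of the present lemma. This is not a technicality: the proof of Lemma~\ref{lem: pair of connected sets} rests on Condition~\rm{III}-\ref{it: III 6}, and that condition is vacuous when both $\Omega^+$ and $\Omega^-$ are pseudo $\Lambda$-decompositions of the same $(\al,j)$. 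Consequently, the four-case classification into~\ref{it: conn 1}--\ref{it: conn 4} is unavailable, and the entire case-elimination argument in your middle paragraph collapses. Your final paragraph, which is already vague about how Conditions~\rm{III}-\ref{it: III 6} and \rm{III}-\ref{it: III 8} yield the contradiction, cannot be salvaged from this starting point.

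The paper's argument avoids Lemma~\ref{lem: pair of connected sets} entirely and is more elementary. After reducing to showing every minimal pair $\Omega_\sharp,\Omega_\flat$ is simple (your first two paragraphs are fine here), the paper splits on the image of $\phi:\Z/s\to\Z/t$. If $\phi(\Z/s)\subseteq(\Z/t)^+$ or $\phi(\Z/s)\subseteq(\Z/t)^-$, a direct maximality argument (choose $s'$ so that the block index $m$ with $i_{\al_{\phi(s')}}\in[m]_\xi$ is minimal, then $u_j(i_{\al_{\sharp,s'}}^\prime)$ maximal) forces $s=1$. If $\phi$ hits both $(\Z/t)^+$ and $(\Z/t)^-$ and $s\geq 2$, one picks $s_1',s_2'$ where the sign changes; Condition~\rm{III}-\ref{it: III 7} then forces each connecting endpoint to coincide with $i_\al$ or $i_\al^\prime$, and the fact that these common endpoints can each occur for only one $s'$ leads to $\al_{\sharp,s_2'}=\al$, contradicting $\widehat{\Omega}^-\neq\{(\al,j)\}$. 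The key point you were missing is that the shared endpoints $i_\al,i_\al^\prime$ of the two pseudo decompositions, together with Condition~\rm{III}-\ref{it: III 7}, replace the role of Lemma~\ref{lem: pair of connected sets} by a much shorter argument.
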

\begin{proof}
We borrow all notation around $\Omega_\sharp,\Omega_\flat$ from the proof of Lemma~\ref{lem: unique indexed decomposition}, and prove that all possible choices of pairs $\Omega_\sharp,\Omega_\flat$ are simple, which is enough to conclude the result by the discussion in the proof of Lemma~\ref{lem: unique indexed decomposition}.

If $\phi(\Z/s)\subseteq(\Z/t)^+$ or $\phi(\Z/s)\subseteq(\Z/t)^-$, we choose $s'\in\Z/s$ such that the integer $1\leq m\leq r_\xi$ satisfying $i_{\al_{\phi(s')}}\in[m]_\xi$ is minimal possible and $u_j(i_{\al_{\sharp,s'}}^\prime)$ is maximal possible for the fixed choice of $\phi(s')$. Then we deduce from $((i_{\al_{\sharp,s'+1}},i_{\al_{\sharp,s'}}^\prime),j)\in\widehat{\Lambda}$ that $\phi(s'+1)=\phi(s')$ (by minimality of $m$) and thus $u_j(i_{\al_{\sharp,s'+1}})>u_j(i_{\al_{\sharp,s'+1}}^\prime)\geq u_j(i_{\al_{\sharp,s'}}^\prime)$ (as we have either $s=1$ or $\Omega_{\sharp,s'}\cap\Omega_{\sharp,s'+1}=\emptyset$), which together with maximality of $u_j(i_{\al_{\sharp,s'}}^\prime)$ implies $s=1$.

Assume for the moment that $\phi(\Z/s)\cap(\Z/t)^+\neq \emptyset\neq \phi(\Z/s)\cap(\Z/t)^-$ and $s\geq 2$. We choose $s_1',s_2'\in\Z/s$ such that $\phi(s_1'),\phi(s_2'+1)\in(\Z/t)^+$ and $\phi(s_1'+1),\phi(s_2')\in(\Z/t)^-$. For each $i=1,2$, we deduce from $((i_{\al_{\sharp,s_i'+1}},i_{\al_{\sharp,s_i'}}^\prime),j)\in\widehat{\Lambda}$ and Condition~\rm{III}-\ref{it: III 7} that either $i_{\al_{\sharp,s_i'+1}}=i_\al$ or $i_{\al_{\sharp,s_i'}}^\prime=i_\al^\prime$. As we clearly have $s_1'\neq s_2'$, we deduce that $i_{\al_{\sharp,s_1'+1}}\neq i_{\al_{\sharp,s_2'+1}}$ and $i_{\al_{\sharp,s_1'}}^\prime\neq i_{\al_{\sharp,s_1'}}^\prime$. Hence, we may assume without loss of generality that $i_{\al_{\sharp,s_1'+1}}=i_\al$ and $i_{\al_{\sharp,s_2'}}^\prime=i_\al^\prime$, which implies that $i_{\al_{\sharp,s'}}\neq i_\al$ for each $s'\neq s_1'+1$ and $i_{\al_{\sharp,s'}}^\prime\neq i_\al^\prime$ for each $s'\neq s_2'$. However $i_{\al_{\sharp,s_2'}}^\prime=i_\al^\prime$ also forces $\phi(s_2'-1)\in(\Z/t)^+$, which together with $((i_{\al_{\sharp,s_2'}},i_{\al_{\sharp,s_2'-1}}^\prime),j)\in\widehat{\Lambda}$, Condition~\rm{III}-\ref{it: III 7} and $i_{\al_{\sharp,s_2'-1}}^\prime\neq i_\al^\prime$ forces $i_{\al_{\sharp,s_2'}}=i_\al$ and thus $\al_{\sharp,s_2'}=\al$. This is impossible as $\widehat{\Omega}^-\neq \{(\al,j)\}$.

Up to this stage, we have shown that, if both $\Omega^+$ and $\Omega^-$ are pseudo $\Lambda$-decompositions of some $(\al,j)\in\widehat{\Lambda}$, then all possible choices of pairs $\Omega_\sharp,\Omega_\flat$ are simple. The proof is thus finished
\end{proof}

Let $\Omega^\pm$ be a constructible $\Lambda$-lift of type $\rm{III}$ satisfying Condition~\ref{cond: non simple type III}. Assume for the moment that $\Omega^\pm$ fails Condition~\ref{cond: non simple type III: terminal}, namely there exist $k_\star''$, $\Omega$, $\Omega'$, $\Sigma$, and $k^\star$ as described right after Condition~\ref{cond: non simple type III}. Then we observe that
\begin{itemize}
\item $u_j(i_{\al_\Omega}^\prime)<u_j(i_{\Omega',1})=k_\star''< k_\star\leq u_j(i_{\Omega_{(\al_\Omega,j),\Lambda}^{\rm{max}},1})\leq u_j(i_{\al_\Omega})$ and $k^\star\notin\{u_j(i_{\al_\Omega}),u_j(i_{\al_\Omega}^\prime)\}$;
\item it follows from Condition~\rm{III}-\ref{it: III 9} and $(k_\star'',j)\in\,](k^\star,j),(k^\star,j)]_{w_\cJ}$ that there exists $a^\star\in(\Z/t)_\Sigma^+$ such that $\Omega_{a^\ast}\subseteq\Omega$ and $k^\star\in(\mathbf{n}^{a^\star,+}\setminus\{k_{a^\star,c_{a^\star}}\})\sqcup\{k_{a^\star,0}\}$.
\end{itemize}
Hence, we can define a new balanced pair $\Omega_\diamond^\pm$ by
\begin{equation}\label{eq: Omega_diamond}
\Omega_\diamond^-\defeq \Omega^-\,\,\mbox{ and }\,\,\Omega_\diamond^+\defeq (\Omega^+\setminus\Omega)\sqcup\{((u_j^{-1}(k_\star''),i_{\al_\Omega}^\prime),j)\}\sqcup\Omega''
\end{equation}
where $\Omega''\subsetneq\Omega$ is the unique subset which makes $\{((u_j^{-1}(k_\star''),i_{\al_\Omega}^\prime),j)\}\sqcup\Omega''$ a pseudo $\Lambda$-decomposition of $(\al_\Omega,j)$. Note that we have
\begin{equation}\label{equ: sub of max dagger}
\Omega''\subseteq\Omega\setminus\{((i_{\Omega,1},i_{\al_\Omega}^\prime),j)\}\subseteq \big(\Omega_{(\al_\Omega,j),\Lambda}^{\rm{max}}\big)_\dagger
\end{equation}
as $\Omega^\pm$ satisfies Condition~\ref{cond: special interval}.
It is clear that we always have $F_\xi^{\Omega_\diamond^\pm}(F_\xi^{\Omega^\pm})^{-1}\in\cO_{\xi,\Lambda}^{\rm{ps}}$. If $\Omega_\diamond^\pm$ is a constructible $\Lambda$-lift of type $\rm{III}$, we set $t_\diamond\defeq \#(\widehat{\Omega}_\diamond^+\sqcup\widehat{\Omega}_\diamond^-)$, indexed $\widehat{\Omega}_\diamond^+\sqcup\widehat{\Omega}_\diamond^-$ by $\Z/t_\diamond$, and then define $\Sigma_\diamond\in\pi_0(\Omega_\diamond^\pm)$ and $a_\diamond\in(\Z/t_\diamond)_{\Sigma_\diamond}^-$ by $i_{a_\diamond,c_{a_\diamond}}=i_{a_\diamond-1,c_{a_\diamond-1}}=i_{\al_\Omega}^\prime$. As $k_\star''$ is the $-1$-end of $\Sigma_\diamond$ and $k_\star''<k_\star\leq k_{a,\star}\leq k_{a,c_a-1}=k_{a_\diamond,c_{a_\diamond}-1}$ for the unique $a\in(\Z/t)^-$ satisfying $i_{a,c_a}=i_{a-1,c_{a-1}}=i_{\al_\Omega}^\prime$, we can and do choose $v_{\Sigma_\diamond}^{\Omega_\diamond^\pm}$ such that the fixed $1$-tour of $(v_{\Sigma_\diamond}^{\Omega_\diamond^\pm})^{-1}|_{\mathbf{n}_{\Sigma_\diamond}}$ contains a $1$-jump at $k_\star''=k_{a_\diamond-1,c_{a_\diamond-1}-1}$ (using the construction in Proposition~\ref{prop:oriented:non circular}). If moreover $\Omega_\diamond^\pm$ satisfies Condition~\ref{cond: non simple type III}, we define $k_{\diamond,\star}$ and $k_{\diamond,\star}'$ so that $\#\mathbf{D}^{\Omega_\diamond^\pm}_{k,j}\geq 2$ if and only if $k_{\diamond,\star}\geq k>k_{\diamond,\star}'$.

\begin{lemma}\label{lem: diamond operation}
Let $\Omega^\pm$ be a constructible $\Lambda$-lift of type $\rm{III}$ which satisfies Condition~\ref{cond: non simple type III}. Assume moreover that $k_\star''$ exists as above. If $F_\xi^{\Omega_\diamond^\pm}\not\in\cO_{\xi,\Lambda}^{\rm{ps}}\cdot\cO_{\xi,\Lambda}^{<|\Omega^\pm|}$, then exactly one of the following holds:
\begin{itemize}
\item $\Omega_\diamond^\pm$ is a constructible $\Lambda$-lift of type $\rm{III}$ that satisfies Condition~\ref{cond: simple type III};
\item $\Omega_\diamond^\pm$ is a constructible $\Lambda$-lift of type $\rm{III}$ that satisfies Condition~\ref{cond: non simple type III} and $k_{\diamond,\star}'>k_\star'$.
\end{itemize}
\end{lemma}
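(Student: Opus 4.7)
The plan is to verify that $\Omega_\diamond^\pm$ is itself a constructible $\Lambda$-lift of type III satisfying Condition~\ref{cond: special interval}, then apply Lemma~\ref{lem: unique indexed decomposition} to split into the two cases, and finally extract the strict inequality $k_{\diamond,\star}'>k_\star'$ from the specific oriented permutation chosen on $\Sigma_\diamond$. The main obstacle is the verification in the first step, as several of the subtle geometric conditions in Definition~\ref{def: constructible lifts} must be rechecked for the new balanced pair.

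First I would verify Condition~\ref{cond: special interval} for $\Omega_\diamond^\pm$. The only $\Lambda^\square$-interval that changes is the replacement of $\Omega$ by $\{((u_j^{-1}(k_\star''),i_{\al_\Omega}^\prime),j)\}\sqcup\Omega''$, which is again a pseudo $\Lambda$-decomposition of $(\al_\Omega,j)$. The inclusion $\Omega''\subseteq\Omega\setminus\{((i_{\Omega,1},i_{\al_\Omega}^\prime),j)\}\subseteq(\Omega_{(\al_\Omega,j),\Lambda}^{\mathrm{max}})_\dagger$ from~(\ref{equ: sub of max dagger}), together with the inequality $u_j(i_{\al_\Omega}^\prime)<k_\star''<u_j(i_{\Omega_{(\al_\Omega,j),\Lambda}^{\mathrm{max}},1})$ guaranteed by the definition of $k_\star''$, gives the required bound on the leading index. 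All other $\Lambda^\square$-intervals are unchanged and therefore inherit the condition. Next I would verify that $\Omega_\diamond^\pm$ actually is a constructible $\Lambda$-lift of type III. Conditions~\rm{III}-\ref{it: III 1}, \rm{III}-\ref{it: III 2}, \rm{III}-\ref{it: III 3} and \rm{III}-\ref{it: III 6} are immediate from those of $\Omega^\pm$. For Conditions~\rm{III}-\ref{it: III 4}, \rm{III}-\ref{it: III 5}, \rm{III}-\ref{it: III 7}, \rm{III}-\ref{it: III 8} and \rm{III}-\ref{it: III 9}, if any of them fails I would argue, exactly as in the last part of the proof of Theorem~\ref{thm: reduce to constructible}, that $F_\xi^{\Omega_\diamond^\pm}$ must lie in $\cO_{\xi,\Lambda}^{\mathrm{ps}}\cdot\cO_{\xi,\Lambda}^{<|\Omega^\pm|}$ via an application of one of Lemma~\ref{lem: reduce non ord to smaller norm}, Lemma~\ref{lem: reduce non ord to smaller norm II}, Lemma~\ref{lem: reduction to separated lift} or Lemma~\ref{lem: reduce non sep to smaller norm}, noting that the pseudo $\Lambda$-decomposition factors produced can be absorbed into $\cO_{\xi,\Lambda}^{\mathrm{ps}}$ by Theorem~\ref{thm: pair of decompositions}; this contradicts the standing hypothesis.

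Having established that $\Omega_\diamond^\pm$ is a constructible $\Lambda$-lift of type III satisfying Condition~\ref{cond: special interval}, Lemma~\ref{lem: unique indexed decomposition} yields that either $\Omega_\diamond^\pm$ satisfies Condition~\ref{cond: simple type III}, or, possibly after taking inverse, it satisfies Condition~\ref{cond: non simple type III}. I would rule out the need for taking inverse by observing that $\Omega_\diamond^-=\Omega^-$ already has each $\Omega_{a'}$ for $a'\in(\Z/t_\diamond)^-$ as a $\Lambda^\square$-interval (inherited from $\Omega^\pm$ satisfying Condition~\ref{cond: non simple type III}), so the sign convention of Condition~\ref{cond: non simple type III} matches $\Omega_\diamond^\pm$ rather than its inverse.

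Finally, in the case where $\Omega_\diamond^\pm$ satisfies Condition~\ref{cond: non simple type III}, I would derive the strict inequality $k_{\diamond,\star}'>k_\star'$. By the choice of $v_{\Sigma_\diamond}^{\Omega_\diamond^\pm}$ made in the paragraph preceding the lemma, the fixed $1$-tour of $(v_{\Sigma_\diamond}^{\Omega_\diamond^\pm})^{-1}|_{\mathbf{n}_{\Sigma_\diamond}}$ contains a $1$-jump at $k_\star''=k_{a_\diamond-1,c_{a_\diamond-1}-1}$ which covers $k_{a_\diamond,c_{a_\diamond}}=k_{a_\diamond-1,c_{a_\diamond-1}}=i_{\al_\Omega}^\prime$. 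Since $a_\diamond\in(\Z/t_\diamond)_{\Sigma_\diamond}^-$, the direction $\varepsilon$ appearing in the definition of $k_{a_\diamond}^\flat$ (just before Lemma~\ref{lem: non empty interval}) equals $+1$, and the $1$-jump at $k_\star''$ is exactly the one governing $k_{a_\diamond}^\flat$; consequently $k_{a_\diamond}^\flat=k_\star''$. Combining this with the definition of $k_{a_\diamond,\star}'=\max\{k_{a_\diamond}^\prime,k_{a_\diamond}^\flat\}$ from Lemma~\ref{lem: full subset case} yields
\[
k_{\diamond,\star}'\;\geq\;k_{a_\diamond,\star}'\;\geq\;k_{a_\diamond}^\flat\;=\;k_\star''\;>\;k_\star',
\]
where the final inequality is built into the very definition of $k_\star''$. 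This completes the proof plan.
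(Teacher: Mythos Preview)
Your proposal follows essentially the same route as the paper's proof: verify Condition~\ref{cond: special interval} using~(\ref{equ: sub of max dagger}) and $k_\star''<u_j(i_{\Omega_{(\al_\Omega,j),\Lambda}^{\rm{max}},1})$, dispose of failures of Conditions~\rm{III}-\ref{it: III 4}, \ref{it: III 5}, \ref{it: III 7}, \ref{it: III 8}, \ref{it: III 9} by the argument of Theorem~\ref{thm: reduce to constructible}, apply Lemma~\ref{lem: unique indexed decomposition}, and extract $k_{\diamond,\star}'\geq k_{a_\diamond,\star}'\geq k_{a_\diamond}^\flat=k_\star''>k_\star'$ from the chosen $1$-jump.

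Two small points of difference. First, you omit the preliminary case that $\Omega_\diamond^\pm$ might fail to be a $\Lambda$-lift at all; the paper handles this separately via Lemma~\ref{lem: union of lifts} (noting $|\Omega_\diamond^\pm|=|\Omega^\pm|$). Second, your argument for ruling out the inverse is stated positively (the second bullet of Condition~\ref{cond: non simple type III} holds for $\Omega_\diamond^\pm$), which alone does not pin down the orientation. The paper argues negatively: the new $\Lambda^\square$-interval $\{((u_j^{-1}(k_\star''),i_{\al_\Omega}^\prime),j)\}\sqcup\Omega''$ in $\Omega_\diamond^+$ is a pseudo $\Lambda$-decomposition but \emph{not} a $\Lambda$-decomposition of $(\al_\Omega,j)$, so it contains more than one $\Omega_{a'}$, whence the inverse violates the second bullet of Condition~\ref{cond: non simple type III}. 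This directly forces the orientation to be $\Omega_\diamond^\pm$.
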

\begin{proof}
We keep the notation $\Omega$, $k^\star$, $\Sigma_\diamond$ and $a_\diamond$ attached to $k_\star''$ as above. It is clear that $|\Omega_\diamond^\pm|=|\Omega^\pm|$. If $\Omega_\diamond^\pm$ is not a $\Lambda$-lift, we clearly have $F_\xi^{\Omega_\diamond^\pm}\in \cO_{\xi,\Lambda}^{<|\Omega^\pm|}$ by Lemma~\ref{lem: union of lifts}. If $\Omega_\diamond^\pm$ is a $\Lambda$-lift that violates Condition~\rm{III}-\ref{it: III 4}, \rm{III}-\ref{it: III 5}, \rm{III}-\ref{it: III 7}, \rm{III}-\ref{it: III 8} or \rm{III}-\ref{it: III 9}, then we clearly have $F_\xi^{\Omega_\diamond^\pm}\in\cO_{\xi,\Lambda}^{\rm{ps}}\cdot\cO_{\xi,\Lambda}^{<|\Omega^\pm|}$ by the same argument as in the proof of Theorem~\ref{thm: reduce to constructible}. Otherwise, $\Omega_\diamond^\pm$ automatically satisfies Condition~\rm{III}-\ref{it: III 1}, \rm{III}-\ref{it: III 2}, \rm{III}-\ref{it: III 3} and \rm{III}-\ref{it: III 6}, and thus is a constructible $\Lambda$-lift of type \rm{III}. It is also clear that $\Omega_\diamond^\pm$ automatically satisfies Condition~\ref{cond: special interval} by its construction (using $k_\star''< u_j(i_{\Omega_{(\al_\Omega,j),\Lambda}^{\rm{max}},1})$ and (\ref{equ: sub of max dagger})). If $\Omega_\diamond^\pm$ satisfies Condition~\ref{cond: simple type III}, we have nothing to prove. Otherwise, $\Omega_\diamond^\pm$ satisfies Condition~\ref{cond: non simple type III} (using the fact that the inverse of $\Omega_\diamond^\pm$ can never satisfy Condition~\ref{cond: non simple type III} as $\{((u_j^{-1}(k_\star''),i_{\al_\Omega}^\prime),j)\}\sqcup\Omega''$ is not a $\Lambda$-decomposition of $(\al_\Omega,j)$). As we have chosen $v_\cJ^{\Omega_\diamond^\pm}$ such that the fixed $1$-tour of $(v_{\Sigma_\diamond}^{\Omega_\diamond^\pm})^{-1}|_{\mathbf{n}_{\Sigma_\diamond}}$ contains a $1$-jump at $k_\star''=k_{a_\diamond-1,0}=k_{a_\diamond-1,c_{a_\diamond-1}-1}$ (which necessarily covers $k_{a_\diamond,c_{a_\diamond}}$), we deduce that $k_{\diamond,\star}'\geq k_{a_\diamond,\star}'\geq k_{a_\diamond}^\flat=k_\star''>k_\star'$. The proof is thus finished.
\end{proof}

\begin{lemma}\label{lem: limit of diamond operation}
Let $\Omega^\pm$ be a constructible $\Lambda$-lift of type $\rm{III}$ which satisfies Condition~\ref{cond: non simple type III}. Then there exists a constructible $\Lambda$-lift $\Omega_\heartsuit^\pm$ of type $\rm{III}$ such that $F_\xi^{\Omega_\heartsuit^\pm}(F_\xi^{\Omega^\pm})^{-1}\in\cO_{\xi,\Lambda}^{\rm{ps}}$ and at least one of the following holds
\begin{itemize}
\item $F_\xi^{\Omega_\heartsuit^\pm}\in\cO_{\xi,\Lambda}^{\rm{ps}}\cdot\cO_{\xi,\Lambda}^{<|\Omega^\pm|}$;
\item $\Omega_\heartsuit^\pm$ satisfies Condition~\ref{cond: simple type III};
\item $\Omega_\heartsuit^\pm$ satisfies Condition~\ref{cond: non simple type III: terminal}.
\end{itemize}
\end{lemma}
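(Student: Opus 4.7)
The plan is to iteratively apply the diamond operation from equation~\eqref{eq: Omega_diamond}, producing a sequence $\Omega^\pm = \Omega_0^\pm, \Omega_1^\pm, \Omega_2^\pm, \ldots$ of constructible $\Lambda$-lifts of type~\rm{III}, and to show that this process terminates in one of the three desired configurations. If $\Omega^\pm$ already satisfies Condition~\ref{cond: non simple type III: terminal} (i.e., $k_\star''$ does not exist), we simply set $\Omega_\heartsuit^\pm \defeq \Omega^\pm$. Otherwise, the integer $k_\star''$ and its associated data $\Omega$, $k^\star$, $\Sigma_\diamond$, $a_\diamond$ are defined, and we produce $\Omega_1^\pm \defeq \Omega_\diamond^\pm$ via~\eqref{eq: Omega_diamond}, equipped with the specific choice of $v_\cJ^{\Omega_\diamond^\pm}$ described in the paragraph preceding Lemma~\ref{lem: diamond operation}. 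By the very construction of $\Omega_\diamond^\pm$, one has $F_\xi^{\Omega_1^\pm}(F_\xi^{\Omega^\pm})^{-1}\in \cO_{\xi,\Lambda}^{\rm{ps}}$.

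Next, I apply the trichotomy of Lemma~\ref{lem: diamond operation} to $\Omega_1^\pm$. If either $F_\xi^{\Omega_1^\pm}\in \cO_{\xi,\Lambda}^{\rm{ps}}\cdot\cO_{\xi,\Lambda}^{<|\Omega^\pm|}$ or $\Omega_1^\pm$ satisfies Condition~\ref{cond: simple type III}, we set $\Omega_\heartsuit^\pm \defeq \Omega_1^\pm$ and are done, using that $|\Omega_1^\pm|=|\Omega^\pm|$. Otherwise $\Omega_1^\pm$ is again a constructible $\Lambda$-lift of type~\rm{III} satisfying Condition~\ref{cond: non simple type III}, with the additional property that $k_{1,\star}' > k_\star'$. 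In that case, if $\Omega_1^\pm$ satisfies Condition~\ref{cond: non simple type III: terminal} we stop; otherwise we apply the diamond operation to $\Omega_1^\pm$, producing $\Omega_2^\pm$, and continue inductively.

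In this way we build (possibly terminating) a sequence $\Omega_0^\pm, \Omega_1^\pm, \Omega_2^\pm, \ldots$ of constructible $\Lambda$-lifts of type~\rm{III}, all satisfying Condition~\ref{cond: non simple type III} and all with common norm $|\Omega^\pm|$, such that $F_\xi^{\Omega_{\ell+1}^\pm}(F_\xi^{\Omega_\ell^\pm})^{-1} \in \cO_{\xi,\Lambda}^{\rm{ps}}$ for every step in which the sequence continues. Multiplying these together, $F_\xi^{\Omega_\ell^\pm}(F_\xi^{\Omega^\pm})^{-1} \in \cO_{\xi,\Lambda}^{\rm{ps}}$ for every $\ell$. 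Moreover the sequence of integers $k_{\ell,\star}'\in\mathbf{n}$ is strictly increasing by Lemma~\ref{lem: diamond operation}, and hence bounded by $n$, so the sequence must stop after finitely many steps.

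The termination can only occur for one of three reasons, matching exactly the three alternatives of the lemma: either case~(a) of Lemma~\ref{lem: diamond operation} is reached (yielding $F_\xi^{\Omega_\heartsuit^\pm}\in \cO_{\xi,\Lambda}^{\rm{ps}}\cdot\cO_{\xi,\Lambda}^{<|\Omega^\pm|}$), or case~(b) is reached (yielding Condition~\ref{cond: simple type III}), or the process stops because the current lift admits no further $k_\star''$ (yielding Condition~\ref{cond: non simple type III: terminal}). The main conceptual point is that Lemma~\ref{lem: diamond operation} already supplies the ``strict progress'' mechanism $k_{\diamond,\star}'>k_\star'$ in the only remaining case, so there is no genuine obstacle here: the argument is essentially a finiteness/termination wrap-up of the single-step Lemma~\ref{lem: diamond operation}, with the only mildly delicate point being bookkeeping the $\cO_{\xi,\Lambda}^{\rm{ps}}$ factors accumulated along the chain, which telescope cleanly thanks to its multiplicative closure.
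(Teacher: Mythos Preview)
Your approach is essentially identical to the paper's: iterate the diamond construction and use the strict increase of $k_\star'$ from Lemma~\ref{lem: diamond operation} to force termination, telescoping the $\cO_{\xi,\Lambda}^{\rm{ps}}$ factors along the way.

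There is one small slip. When the trichotomy of Lemma~\ref{lem: diamond operation} lands in your case~(a) at some step $\ell\geq 1$ (i.e.\ $F_\xi^{\Omega_\ell^\pm}\in\cO_{\xi,\Lambda}^{\rm{ps}}\cdot\cO_{\xi,\Lambda}^{<|\Omega^\pm|}$), you set $\Omega_\heartsuit^\pm=\Omega_\ell^\pm$. But Lemma~\ref{lem: diamond operation} does \emph{not} assert that $\Omega_\diamond^\pm$ is a constructible $\Lambda$-lift of type~\rm{III} in this case; its proof explicitly allows $\Omega_\diamond^\pm$ to fail even to be a $\Lambda$-lift. Since the statement of the present lemma requires $\Omega_\heartsuit^\pm$ to be a constructible $\Lambda$-lift of type~\rm{III}, this choice is not guaranteed to work. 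The fix is immediate: either take $\Omega_\heartsuit^\pm\defeq\Omega_{\ell-1}^\pm$ instead (which \emph{is} a constructible $\Lambda$-lift of type~\rm{III} by the inductive construction, and your telescoping gives $F_\xi^{\Omega_{\ell-1}^\pm}\in\cO_{\xi,\Lambda}^{\rm{ps}}\cdot\cO_{\xi,\Lambda}^{<|\Omega^\pm|}$), or do as the paper does and first reduce to the case $F_\xi^{\Omega^\pm}\notin\cO_{\xi,\Lambda}^{\rm{ps}}\cdot\cO_{\xi,\Lambda}^{<|\Omega^\pm|}$ (taking $\Omega_\heartsuit^\pm=\Omega^\pm$ otherwise). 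After that reduction the telescoping relation $F_\xi^{\Omega_\ell^\pm}(F_\xi^{\Omega^\pm})^{-1}\in\cO_{\xi,\Lambda}^{\rm{ps}}$ forces $F_\xi^{\Omega_\ell^\pm}\notin\cO_{\xi,\Lambda}^{\rm{ps}}\cdot\cO_{\xi,\Lambda}^{<|\Omega^\pm|}$ for every $\ell$, so case~(a) never occurs and the iteration terminates only in the second or third alternative.
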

\begin{proof}
It suffices to treat the case when $F_\xi^{\Omega^\pm}\notin\cO_{\xi,\Lambda}^{\rm{ps}}\cdot\cO_{\xi,\Lambda}^{<|\Omega^\pm|}$, $\Omega^\pm$ satisfies Condition~\ref{cond: non simple type III} and moreover $k_\star''$ exists, as otherwise we can always set $\Omega_\heartsuit^\pm\defeq \Omega^\pm$. Hence, we can define a new balanced pair $\Omega_\diamond^\pm$ as in (\ref{eq: Omega_diamond}) which satisfies $F_\xi^{\Omega_\diamond^\pm}(F_\xi^{\Omega^\pm})^{-1}\in\cO_{\xi,\Lambda}^{\rm{ps}}$. Note that $\Omega_\diamond^\pm$ is necessarily a constructible $\Lambda$-lift of type \rm{III} satisfying Condition~\ref{cond: special interval} as $F_\xi^{\Omega^\pm}\notin\cO_{\xi,\Lambda}^{\rm{ps}}\cdot\cO_{\xi,\Lambda}^{<|\Omega^\pm|}$. If $\Omega_\diamond^\pm$ satisfies Condition~\ref{cond: simple type III}, we set $\Omega_\heartsuit^\pm\defeq \Omega_\diamond^\pm$. Otherwise, it follows from Lemma~\ref{lem: diamond operation} that $\Omega_\diamond^\pm$ satisfies Condition~\ref{cond: non simple type III} with $k_{\diamond,\star}'>k_\star'$. We can repeat the construction $\Omega^\pm\mapsto \Omega_\diamond^\pm$ and carry on an induction on $k_\star'$. The induction must end and as $F_\xi^{\Omega^\pm}\notin\cO_{\xi,\Lambda}^{\rm{ps}}\cdot\cO_{\xi,\Lambda}^{<|\Omega^\pm|}$, we must arrive a constructible $\Lambda$-lift $\Omega_\heartsuit^\pm$ of type $\rm{III}$ which satisfies $F_\xi^{\Omega_\heartsuit^\pm}(F_\xi^{\Omega^\pm})^{-1}\in\cO_{\xi,\Lambda}^{\rm{ps}}$ and falls into the second or third possibility in the statement of this lemma. The proof is thus finished.
\end{proof}

\begin{proof}[Proof of Lemma~\ref{lem: class of type III}]
The first part directly follows from Lemma~\ref{lem: type III pair of pseudo}. The second part also follows from Lemma~\ref{lem: reduce to type III with cond}, Lemma~\ref{lem: unique indexed decomposition}, and Lemma~\ref{lem: limit of diamond operation}.
\end{proof}

\subsection{Main results on invariant functions: proof}\label{sub:main:criterions}
In this section, we combine the results from \S\,\ref{sub:exp:for} and \S\,\ref{sub: cons lifts} to prove our main results on invariant functions, namely Theorem~\ref{thm: constructible and inv fun} and Corollary~\ref{cor: separate points}. In particular, this completes the proof of Statement~\ref{state: goal}.

We fix a $\cC\in\cP_\cJ$ satisfying $\cC\subseteq\cN_{\xi,\Lambda}$ as usual. In the following, we introduce a list of subgroups of $\cO(\cC)^\times$ to be used afterwards. Given a set $S$ of balanced pairs, we can define the \emph{subgroup of $\cO(\cC)^\times$ associated with $S$} by the subgroup generated by $\cO_\cC^{\rm{ss}}$ and $F_\xi^{\Omega^\pm}|_\cC$ for all balanced pairs $\Omega^\pm$ in~$S$.

We recall from Proposition~\ref{prop: type III} that $\cO_\cC^{\rm{ps}}$ is the subgroup of $\cO(\cC)^\times$ associated with the set of balanced pairs $\Omega^\pm$ with both $\Omega^+$ and $\Omega^-$ being pseudo $\Lambda$-decompositions of some $(\al,j)\in\widehat{\Lambda}$. For each $\gamma\in\widehat{\Lambda}^\square$, we write $\cO_\cC^{\rm{ps},\rm{III},\gamma}\subseteq\cO_\cC^{\rm{ps}}$ for the subgroup associated with the set of all balanced pair $\Omega^\pm$ such that both $\Omega^+$ and $\Omega^-$ are pseudo $\Lambda$-decompositions of some $(\al,j)\in\widehat{\Lambda}\cap\mathrm{Supp}_{\xi,\cJ}^\gamma$ with $\widehat{\Omega}^+\neq\{(\al,j)\}\neq\widehat{\Omega}^-$ (cf. Proposition~\ref{prop: special type III}). We write $\cO_\cC^{\rm{I},\rm{ext}}\subseteq\cO_\cC^{\rm{con}}$ (resp.~$\cO_\cC^{\rm{II},\rm{ext}}\subseteq\cO_\cC^{\rm{con}}$) for the subgroup associated with the set of all constructible $\Lambda$-lifts $\Omega^\pm$ of type $\rm{I}$ (resp.~of type $\rm{II}$) with $\Omega^+$ being $\Lambda$-extremal.

We say that a balanced pair $\Omega^\pm$ is of \emph{type \rm{I}-\rm{max}} (cf. Lemma~\ref{lem: special case 3}) if there exists $(\al,j)\in\widehat{\Lambda}$ such that
\begin{itemize}
\item $\Omega^-=\Omega_{(\al,j),\Lambda}^{\rm{max}}$ is $\Lambda$-exceptional and $\Lambda$-ordinary;
\item $\Omega^+\in\mathbf{D}_{(\al,j),\Lambda}$ satisfies $i_{\Omega^+,1}=i_\psi^{1,e}$ for some $1\leq e\leq e_{\psi,1}$ (with $\psi=(\Omega^-,\Lambda)$).
\end{itemize}
For each $\gamma\in\widehat{\Lambda}^\square$, we write $\cO_\cC^{\rm{I},\rm{max},\gamma}\subseteq\cO(\cC)^\times$ for the subgroup associated with the set of all balanced pairs $\Omega^\pm$ of type \rm{I}-\rm{max} with $|\Omega^\pm|=\gamma$.

We say that a balanced pair $\Omega^\pm$ is of \emph{type \rm{II}-\rm{max}} (cf. Proposition~\ref{prop: special type II}) if there exists $(\al,j)\in\widehat{\Lambda}$ such that
\begin{itemize}
\item $\Omega^-$ is a pseudo $\Lambda$-decomposition of $(\al,j)$ satisfying $\widehat{\Omega}^-\neq \{(\al,j)\}$;
\item $\Omega^+\in\mathbf{D}_{(\al,j),\Lambda}$ is $\Lambda$-ordinary;
\item either $\Omega^+=\Omega_{(\al,j),\Lambda}^{\rm{max}}$ or $\Omega^+$ is $\Lambda$-extremal.
\end{itemize}
For each $\gamma\in\widehat{\Lambda}^\square$, we write $\cO_\cC^{\rm{II},\rm{max},\gamma}\subseteq\cO(\cC)^\times$ for the subgroup associated with the set of all balanced pairs $\Omega^\pm$ of type \rm{II}-\rm{max} with $|\Omega^\pm|=\gamma$.

We say that a balanced pair $\Omega^\pm$ is of \emph{type \rm{I}-\rm{exp}} (cf. Proposition~\ref{prop: maximal vers exceptional: ord}) if there exists $(\al,j)\in\widehat{\Lambda}$ such that
\begin{itemize}
\item $\Omega^-=\Omega_{(\al,j),\Lambda}^{\rm{max}}$ is $\Lambda$-ordinary;
\item $\Omega^+\in\mathbf{D}_{(\al,j),\Lambda}$ is $\Lambda$-exceptional and $\Lambda$-ordinary.
\end{itemize}
For each balanced pair $\Omega^\pm$ of type \rm{I}-\rm{exp}, we write $\cO_\cC^{\Omega^\pm,<}\subseteq\cO(\cC)^\times$ for the subgroup associated with the set of all balanced pairs $\Omega_0^\pm$ satisfying
\begin{itemize}
\item $\Omega_0^\pm$ is of type \rm{I}-\rm{exp};
\item $\Omega_0^-=\Omega^-$ and $\Omega^+<\Omega_0^+$.
\end{itemize}
For each $\gamma\in\widehat{\Lambda}^\square$, we write $\cO_\cC^{\rm{I},\rm{exp},\gamma}\subseteq\cO(\cC)^\times$ for the subgroup associated with the set of all balanced pairs $\Omega^\pm$ of type \rm{I}-\rm{exp} with $|\Omega^\pm|=\gamma$.

We say that a balanced pair $\Omega^\pm$ is of \emph{type \rm{II}-\rm{exp}} (cf. Proposition~\ref{prop: maximal vers exceptional: non ord}) if there exists $(\al,j)\in\widehat{\Lambda}$ such that
\begin{itemize}
\item $\Omega_{(\al,j),\Lambda}^{\rm{max}}$ is not $\Lambda$-ordinary and $\Omega^-=(\Omega_{(\al,j),\Lambda}^{\rm{max}})_\dagger$;
\item $\Omega^+\in\mathbf{D}_{(\al,j),\Lambda}$ is $\Lambda$-exceptional and $\Lambda$-ordinary.
\end{itemize}
For each balanced pair $\Omega^\pm$ of type \rm{II}-\rm{exp}, we write $\cO_\cC^{\Omega^\pm,<}\subseteq\cO(\cC)^\times$ for the subgroup associated with the set of all balanced pairs $\Omega_0^\pm$ satisfying
\begin{itemize}
\item $\Omega_0^\pm$ is of \emph{type \rm{II}-\rm{exp}};
\item $\Omega_0^-=\Omega^-$ and $\Omega^+<\Omega_0^+$.
\end{itemize}
For each $\gamma\in\widehat{\Lambda}^\square$, we write $\cO_\cC^{\rm{II},\rm{exp},\gamma}\subseteq\cO(\cC)^\times$ for the subgroup associated with the set of all balanced pairs $\Omega^\pm$ of type \rm{II}-\rm{exp} with $|\Omega^\pm|=\gamma$.

We recall the subring $\cO_\cC\subseteq\cO(\cC)$ from Definition~\ref{def: similar and below a block}.  We also recall $\langle Y\rangle_+$ for a subset $Y\subseteq\cO(\cC)$ from the paragraph right before Proposition~\ref{prop: type III}.

\begin{lemma}\label{lem: ps III}
Let $(\al,j)\in\widehat{\Lambda}\cap\mathrm{Supp}_{\xi,\cJ}^\gamma$ be an element. If $\Omega^\pm$ is a balanced pair such that
\begin{itemize}
\item $\Omega^-$ is a pseudo $\Lambda$-decomposition of $(\al,j)$ with $\widehat{\Omega}^-\neq\{(\al,j)\}$;
\item $\Omega^+$ is a pseudo $\Lambda$-decomposition of $(\al,j)$ with either $\widehat{\Omega}^+\neq\{(\al,j)\}$ or $\Omega^+\in\mathbf{D}_{(\al,j),\Lambda}$ being not $\Lambda$-ordinary;
\end{itemize}
Then we have $$F_\xi^{\Omega^\pm}|_\cC\in\langle\cO_\cC^{<\gamma}\cdot\cO_\cC\rangle_+.$$ In particular, we have $\cO_\cC^{\rm{ps},\rm{III},\gamma}\subseteq \langle\cO_\cC^{<\gamma}\cdot\cO_\cC\rangle_+$.
\end{lemma}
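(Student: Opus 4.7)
The plan is to split into a ``symmetric'' case where $\widehat{\Omega}^+\neq\{(\al,j)\}\neq\widehat{\Omega}^-$ and an ``asymmetric'' case where $\widehat{\Omega}^+=\{(\al,j)\}$ but $\Omega^+\in\mathbf{D}_{(\al,j),\Lambda}$ is not $\Lambda$-ordinary, and to reduce the asymmetric case to the symmetric one via ordinarization. The symmetric case will be handled by chaining Proposition~\ref{prop: special type III} with the first half of Proposition~\ref{prop: type III}. The asymmetric case will be reduced to the symmetric one by replacing $\Omega^+$ with $\Omega^+_\dagger$ modulo an error in $\cO_{\xi,\Lambda}^{<\gamma}$.

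In the symmetric case, Proposition~\ref{prop: special type III} applies directly and yields two alternatives: either $F_\xi^{\Omega^\pm}\in\cO_{\xi,\Lambda}^{<\gamma}$, or there exists a constructible $\Lambda$-lift $\Omega_0^\pm$ of type $\rm{III}$ with both $\Omega_0^+$ and $\Omega_0^-$ pseudo $\Lambda$-decompositions of $(\al,j)$ and with $F_\xi^{\Omega_0^\pm}(F_\xi^{\Omega^\pm})^{-1}\in\cO_{\xi,\Lambda}^{<\gamma}$. In the second alternative, the first half of Proposition~\ref{prop: type III}, which covers precisely this configuration, yields $F_\xi^{\Omega_0^\pm}|_\cC\in\cO_\cC$. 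Restricting to $\cC$ and noting that the restriction of $\cO_{\xi,\Lambda}^{<\gamma}$ to $\cC$ is $\cO_\cC^{<\gamma}$, in both alternatives we obtain $F_\xi^{\Omega^\pm}|_\cC\in\cO_\cC^{<\gamma}\cdot\cO_\cC\subseteq\langle\cO_\cC^{<\gamma}\cdot\cO_\cC\rangle_+$.

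For the asymmetric case, $\Omega^+$ is a non-$\Lambda$-ordinary element of $\mathbf{D}_{(\al,j),\Lambda}$. By Remark~\ref{rmk: ord and pseudo decomp}, the ordinarization $\Omega^+_\dagger$ is a pseudo $\Lambda$-decomposition of $(\al,j)$ which is not a $\Lambda$-decomposition, so in particular $\widehat{\Omega^+_\dagger}\neq\{(\al,j)\}$. Setting $\Omega_1^+\defeq\Omega^+_\dagger$ and $\Omega_1^-\defeq\Omega^-$ produces a balanced pair falling under the symmetric case. Lemma~\ref{lem: reduce to ordinary} guarantees that $\Omega^+_\dagger$ is $\Lambda$-equivalent to $\Omega^+$ with level~$<\gamma$, so Lemma~\ref{lem: two equivalent pairs} gives $F_\xi^{\Omega^\pm}(F_\xi^{\Omega_1^\pm})^{-1}\in\cO_{\xi,\Lambda}^{<\gamma}$. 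Combining this with the symmetric case applied to $\Omega_1^\pm$ yields $F_\xi^{\Omega^\pm}|_\cC\in\langle\cO_\cC^{<\gamma}\cdot\cO_\cC\rangle_+$.

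The ``in particular'' statement is immediate: every generator of $\cO_\cC^{\rm{ps},\rm{III},\gamma}$ is of the form $F_\xi^{\Omega^\pm}|_\cC$ for a balanced pair satisfying the hypotheses of the symmetric case, and $\cO_\cC^{\rm{ss}}\subseteq\cO_\cC\subseteq\langle\cO_\cC^{<\gamma}\cdot\cO_\cC\rangle_+$. There is no substantive obstacle here; the content lies entirely in the clean match between Proposition~\ref{prop: special type III} (producing a constructible type $\rm{III}$ lift whose plus and minus sides are pseudo decompositions of the same root) and the first half of Proposition~\ref{prop: type III} (where such lifts land in $\cO_\cC$), with Remark~\ref{rmk: ord and pseudo decomp} providing the non-triviality of $\widehat{\Omega^+_\dagger}$ that bridges the asymmetric case to the symmetric one.
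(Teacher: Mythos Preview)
Your proposal is correct and follows essentially the same approach as the paper: reduce the asymmetric case to the symmetric one by replacing $\Omega^+$ with $\Omega^+_\dagger$, then apply Proposition~\ref{prop: special type III} followed by the first half of Proposition~\ref{prop: type III}. Your justification of the ordinarization step (via Remark~\ref{rmk: ord and pseudo decomp}, Lemma~\ref{lem: reduce to ordinary}, and Lemma~\ref{lem: two equivalent pairs}) is in fact more explicit than the paper's own proof, which simply asserts the replacement is harmless.
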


\begin{proof}
If $\Omega^+\in\mathbf{D}_{(\al,j),\Lambda}$ is not $\Lambda$-ordinary, we can replace the balanced pair $\Omega^\pm$ with $\Omega^+_\dagger,\Omega^-$. Consequently, we may assume without loss of generality that both $\Omega^+$ and $\Omega^-$ are pseudo $\Lambda$-decompositions of $(\al,j)$ with $\widehat{\Omega}^+\neq\{(\al,j)\}\neq \widehat{\Omega}^-$. Then it follows from Proposition~\ref{prop: special type III} that there exists a constructible $\Lambda$-lift $\Omega_0^\pm$ of type $\rm{III}$ such that
\begin{itemize}
\item both $\Omega_0^+$ and $\Omega_0^-$ are pseudo $\Lambda$-decompositions of $(\al,j)$;
\item $F_\xi^{\Omega_0^\pm}|_\cC(F_\xi^{\Omega^\pm}|_\cC)^{-1} \in \cO_\cC^{<\gamma}$.
\end{itemize}
By Proposition~\ref{prop: type III} we have $F_\xi^{\Omega_0^\pm}\in\cO_\cC$, and so the proof is finished by $F_\xi^{\Omega_0^\pm}|_\cC(F_\xi^{\Omega^\pm}|_\cC)^{-1}\in \cO_\cC^{<\gamma}$.
\end{proof}

\begin{lemma}\label{lem: I II ext}
We have $$\cO_\cC^{\rm{I},\rm{ext}},\,\cO_\cC^{\rm{II},\rm{ext}}\subseteq\cO_\cC.$$
\end{lemma}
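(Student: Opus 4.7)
The plan is to observe that Lemma~\ref{lem: I II ext} is essentially a direct unpacking of Propositions~\ref{prop: type I extremal} and~\ref{prop: type II extremal}, combined with the fact that $\cO_\cC$ is closed under taking inverses of its units (by construction as a localization of $\cO_\cC'$ at $\cO_\cC'\cap\cO(\cC)^\times$).

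First I would recall that, by definition, $\cO_\cC^{\rm{I},\rm{ext}}$ (resp.~$\cO_\cC^{\rm{II},\rm{ext}}$) is the subgroup of $\cO(\cC)^\times$ generated by $\cO_\cC^{\rm{ss}}$ together with $F_\xi^{\Omega^\pm}|_\cC$ as $\Omega^\pm$ ranges over constructible $\Lambda$-lifts of type~\rm{I} (resp.~type~\rm{II}) with $\Omega^+$ being $\Lambda$-extremal. Since $\cO_\cC^{\rm{ss}}\subseteq\cO_\cC$ (each $D_{\xi,\ell}^{(j)}|_\cC$ lies in $\cO_\cC'$ as it equals, up to a sign, an element of $\Inv(\cC)$ of the form $f_{w_\cJ,\{(\ell,j)\}}|_\cC$), and since $\cO_\cC^{\rm{ss}}$ consists of units with inverses again in $\cO_\cC$, it is enough to show that for every generator $F_\xi^{\Omega^\pm}|_\cC$ both $F_\xi^{\Omega^\pm}|_\cC$ and its inverse lie in $\cO_\cC$.

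For the first containment, I would invoke Proposition~\ref{prop: type I extremal} directly, which gives $F_\xi^{\Omega^\pm}|_\cC\in\cO_\cC$ for each constructible $\Lambda$-lift $\Omega^\pm$ of type~\rm{I} with $\Omega^+$ extremal. To handle the inverse, I would trace through the proof of that proposition: it establishes $F_\xi^{\Omega^\pm}|_\cC\sim f_\xi^{\Omega^\pm}|_\cC$, so $F_\xi^{\Omega^\pm}|_\cC$ equals an element of $\cO_\cC^{\rm{ss}}$ times $f_\xi^{\Omega^\pm}|_\cC$. Since $f_\xi^{\Omega^\pm}\in\Inv(\cC)$, both $f_\xi^{\Omega^\pm}|_\cC$ and $(f_\xi^{\Omega^\pm}|_\cC)^{-1}$ lie in $\cO_\cC'\subseteq\cO_\cC$, and the $\cO_\cC^{\rm{ss}}$-factor is likewise invertible in $\cO_\cC$. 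Therefore $(F_\xi^{\Omega^\pm}|_\cC)^{-1}\in\cO_\cC$, which together with the previous sentence proves $\cO_\cC^{\rm{I},\rm{ext}}\subseteq\cO_\cC$.

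For the second containment, the identical argument applies with Proposition~\ref{prop: type II extremal} in place of Proposition~\ref{prop: type I extremal}: that proposition gives $F_\xi^{\Omega^\pm}|_\cC\in\cO_\cC$, and its proof actually establishes $F_\xi^{\Omega^\pm}|_\cC\sim f_\xi^{\Omega^\pm}|_\cC$ with $f_\xi^{\Omega^\pm}\in\Inv(\cC)$, so the inverse argument goes through verbatim. There is no real obstacle here; the entire lemma is a bookkeeping consequence of the explicit-formula results already proven. The only subtle point worth highlighting in the write-up is that $\cO_\cC$ is defined as a localization precisely so that when $F_\xi^{\Omega^\pm}|_\cC$ appears as a unit of $\cO(\cC)$ and lies in $\cO_\cC'$ (or differs from such an element by a unit of $\cO_\cC^{\rm{ss}}$), its inverse is automatically in $\cO_\cC$—this justifies the step where we pass from $F_\xi^{\Omega^\pm}|_\cC\in\cO_\cC$ to $(F_\xi^{\Omega^\pm}|_\cC)^{\pm 1}\in\cO_\cC$, which is what is needed to conclude that the subgroup generated by these elements is contained in $\cO_\cC$.
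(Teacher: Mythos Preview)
Your proposal is correct and follows essentially the same approach as the paper, which simply says ``This follows directly from Proposition~\ref{prop: type I extremal} and Proposition~\ref{prop: type II extremal}.'' You are just making explicit the bookkeeping step that since $F_\xi^{\Omega^\pm}|_\cC\in\cO_\cC\cap\cO(\cC)^\times$, its inverse also lies in $\cO_\cC$ by the localization definition of $\cO_\cC$.

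One small correction: your parenthetical justification that $D_{\xi,\ell}^{(j)}|_\cC$ equals (up to sign) some $f_{w_\cJ,\{(\ell,j)\}}|_\cC\in\Inv(\cC)$ is not right in general, since $\{(\ell,j)\}$ is typically not a $(w_\cJ,1)$-orbit. But this does not matter: $\cO_\cC^{\rm{ss}}\subseteq\cO_\cC'$ holds \emph{by definition} of $\cO_\cC'$ (Definition~\ref{def: similar and below a block}), so you can simply drop that parenthetical.
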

\begin{proof}
This follows directly from Proposition~\ref{prop: type I extremal} and Proposition~\ref{prop: type II extremal}.
\end{proof}

\begin{lemma}\label{lem: I max}
If $\Omega^\pm$ is a balanced pair of type \rm{I}-\rm{max} with $|\Omega^\pm|=\gamma$ for some $\gamma\in\widehat{\Lambda}^\square$, then we have
$$F_\xi^{\Omega^\pm}|_\cC\in \langle\cO_\cC^{<\gamma}\cdot\cO_\cC\rangle_+.$$
Moreover, we have $\cO_\cC^{\rm{I},\rm{max},\gamma}\subseteq \langle\cO_\cC^{<\gamma}\cdot\cO_\cC\rangle_+$.
\end{lemma}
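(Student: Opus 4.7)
The plan is to apply Lemma~\ref{lem: special case 3} to the balanced pair $\Omega^\pm$ of type $\rm{I}$-$\rm{max}$ with $|\Omega^\pm|=\gamma$. This yields three cases. In the first case, $F_\xi^{\Omega^\pm}\in\cO_{\xi,\Lambda}^{<\gamma}$, so $F_\xi^{\Omega^\pm}|_\cC\in\cO_\cC^{<\gamma}$ and we are done. In the second case, there is a constructible $\Lambda$-lift $(\Omega',\Omega^-)$ of type $\rm{I}$ which is $\Lambda$-equivalent to $\Omega^\pm$ with level $<\gamma$ and satisfies $i_{\Omega',1}=i_{\psi_2}^{1,1}$ (where $\psi_2=(\Omega^-,\Lambda)$); by Lemma~\ref{lem: two equivalent pairs}, $F_\xi^{(\Omega',\Omega^-)}(F_\xi^{\Omega^\pm})^{-1}\in\cO_{\xi,\Lambda}^{<\gamma}$, so it suffices to analyze $F_\xi^{(\Omega',\Omega^-)}|_\cC$. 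In the third case, analogously one obtains a constructible $\Lambda$-lift $(\Omega^-,\Omega'')$ of type $\rm{II}$ with $F_\xi^{(\Omega^-,\Omega'')}|_\cC$ equal to $F_\xi^{\Omega^\pm}|_\cC$ modulo $\langle\cO_\cC^{<\gamma}\cdot\cO_\cC\rangle_+$, the pseudo $\Lambda$-decomposition factor being absorbed by Lemma~\ref{lem: ps III}.

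For the constructible type $\rm{I}$ or type $\rm{II}$ lift thus obtained, the analysis splits according to whether its positive side is $\Lambda$-extremal or $\Lambda$-exceptional. In the extremal case, Proposition~\ref{prop: type I extremal} or~\ref{prop: type II extremal} directly yields $F_\xi|_\cC\in\cO_\cC$, and the result follows. In the exceptional case, Proposition~\ref{prop: type I exceptional} or~\ref{prop: type II exceptional} writes $F_\xi|_\cC$ modulo $\cO_\cC$ as a signed sum of $F_\xi^{\Omega_0^\pm}|_\cC$ over balanced pairs with $\Omega_0^-=\Omega^-$ and $\Omega_0^+>\Omega'$. Each such remainder $\Omega_0^\pm$ falls into one of three categories: (i) $\Omega_0^+\cap\Omega^-\neq\emptyset$, in which case Lemma~\ref{lem: union of lifts} yields $F_\xi^{\Omega_0^\pm}\in\cO_{\xi,\Lambda}^{<\gamma}$, hence $F_\xi^{\Omega_0^\pm}|_\cC\in\cO_\cC^{<\gamma}$; (ii) $\Omega_0^+=\Omega^-$, which gives $F_\xi^{\Omega_0^\pm}=1$ trivially; or (iii) $i_{\Omega_0^+,1}=i_{\psi_2}^{1,e}$ for some $1\leq e\leq e_{\psi_2,1}$, making $\Omega_0^\pm$ itself a balanced pair of type $\rm{I}$-$\rm{max}$ with $\Omega_0^+$ strictly higher than $\Omega^+$ in the partial order on $\mathbf{D}_{(\al,j),\Lambda}$.

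The proof then closes by descending induction on $\Omega^+$ in the poset $\mathbf{D}_{(\al,j),\Lambda}$, which has maximum $\Omega^-$: the base case $\Omega^+=\Omega^-$ gives $F_\xi^{\Omega^\pm}=1$ trivially. The second assertion $\cO_\cC^{\rm{I},\rm{max},\gamma}\subseteq \langle\cO_\cC^{<\gamma}\cdot\cO_\cC\rangle_+$ is then immediate from the first together with the definition of $\cO_\cC^{\rm{I},\rm{max},\gamma}$. I expect the main obstacle to be the combinatorial verification that each remainder pair $\Omega_0^\pm$ indeed falls into one of the three categories (i)--(iii); this relies on the maximality condition characterizing $i_{\psi_2}^{1,1}$ (namely no $\Omega'\in\mathbf{D}_{(\al,j),\Lambda}$ satisfies $u_j(i_{\psi_2}^{1,1})<u_j(i_{\Omega',1})<u_j(i_{\Omega^-,c_{\psi_2}^1+1})$), the $\Lambda$-exceptional and $\Lambda$-ordinary structure of $\Omega^-=\Omega_{(\al,j),\Lambda}^{\max}$, and Lemma~\ref{lem: unique k decomposition} to pin down the possible values of $i_{\Omega_0^+,1}$ in any $\Lambda$-decomposition satisfying $u_j(i_{\Omega_0^+,1})>u_j(i_{\psi_2}^{1,1})$.
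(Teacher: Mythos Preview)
Your approach is correct and follows the same route as the paper (apply Lemma~\ref{lem: special case 3}, then Propositions~\ref{prop: type I exceptional}--\ref{prop: type II extremal}), but you overcomplicate the $\Lambda$-exceptional sub-case. The paper's argument is more direct: since $i_{\Omega',1}=i_{\psi_2}^{1,1}$ and both $\Omega'$ and $\Omega^-=\Omega_{(\al,j),\Lambda}^{\rm{max}}$ are $\Lambda$-exceptional, any $\Omega_0^+\in\mathbf{D}_{(\al,j),\Lambda}$ with $\Omega_0^+>\Omega'$ must satisfy $u_j(i_{\Omega_0^+,1})>u_j(i_{\psi_2}^{1,1})$ (exceptionality of $\Omega'$), hence $i_{\Omega_0^+,1}=i_{\Omega^-,1}$ (maximality of $i_{\psi_2}^{1,1}$ and of $\Omega^-$), hence $\Omega_0^+=\Omega^-$ (exceptionality of $\Omega^-$). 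Thus the sum in Proposition~\ref{prop: type I exceptional} collapses to the single term $\varepsilon=\pm 1$ and one concludes $F_\xi^{(\Omega',\Omega^-)}|_\cC+\varepsilon\in\cO_\cC$ immediately---your case (iii) never occurs and the descending induction never iterates. Likewise in the type~II case, since the positive side of $(\Omega^-,\Omega')$ is $\Omega^-=\Omega_{(\al,j),\Lambda}^{\rm{max}}$, the sum in Proposition~\ref{prop: type II exceptional} is empty and one gets $F_\xi^{(\Omega^-,\Omega')}|_\cC\in\cO_\cC$ directly; your appeal to Lemma~\ref{lem: ps III} is unnecessary (Lemma~\ref{lem: two equivalent pairs} already handles the passage from $\Omega^+$ to $\Omega'$).
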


\begin{proof}
We write $(\al,j)\in\widehat{\Lambda}\cap\mathrm{Supp}_{\xi,\cJ}^\gamma$ for the element such that $\Omega^-=\Omega_{(\al,j),\Lambda}^{\rm{max}}$. If $F_\xi^{\Omega^\pm}|_\cC\in \cO_\cC^{<\gamma}$, then we have nothing to prove. Otherwise, it follows from Lemma~\ref{lem: special case 3} that there exists a pseudo $\Lambda$-decomposition $\Omega'$ of $(\al,j)$, which is $\Lambda$-equivalent to $\Omega^+$ with level $<\gamma$, such that $i_{\Omega',1}=i_{\psi_2}^{1,1}$ (with $\psi_2=(\Omega^-,\Lambda)$) and exactly one of the following holds:
\begin{itemize}
\item $\Omega'\in\mathbf{D}_{(\al,j),\Lambda}$ and the balanced pair $\Omega',\Omega^-$ is a constructible $\Lambda$-lift of type $\rm{I}$;
\item $\widehat{\Omega}'\neq \{(\al,j)\}$ and the balanced pair $\Omega^-,\Omega'$ is a constructible $\Lambda$-lift of type $\rm{II}$.
\end{itemize}
We set
$$\Omega_0^+\defeq\left\{
  \begin{array}{ll}
    \Omega' & \hbox{if $\Omega'\in\mathbf{D}_{(\al,j),\Lambda}$;} \\
    \Omega^- & \hbox{if $\widehat{\Omega}'\neq \{(\al,j)\}$}
  \end{array}
\right.
\,\,\mbox{ and }\,\,\Omega_0^-\defeq\left\{
  \begin{array}{ll}
    \Omega^- & \hbox{if $\Omega'\in\mathbf{D}_{(\al,j),\Lambda}$;} \\
    \Omega' & \hbox{if $\widehat{\Omega}'\neq \{(\al,j)\}$.}
  \end{array}
\right.
$$
If $\Omega'\in\mathbf{D}_{(\al,j),\Lambda}$ is $\Lambda$-extremal, then it follows from Proposition~\ref{prop: type I extremal} that $F_\xi^{\Omega_0^\pm}|_\cC\in\cO_\cC$ and thus $F_\xi^{\Omega^\pm}|_\cC\in \langle\cO_\cC^{<\gamma}\cdot\cO_\cC\rangle_+$. If $\Omega'\in\mathbf{D}_{(\al,j),\Lambda}$ is $\Lambda$-exceptional, then it follows from Proposition~\ref{prop: type I exceptional} that $F_\xi^{\Omega_0^\pm}|_\cC+\varepsilon\in\cO_\cC$ for some $\varepsilon\in\{1,-1\}$ and thus $F_\xi^{\Omega^\pm}\in \langle\cO_\cC^{<\gamma}\cdot\cO_\cC\rangle_+$. If $\widehat{\Omega}'\neq \{(\al,j)\}$, then it follows from Proposition~\ref{prop: type II exceptional}, Proposition~\ref{prop: type II extremal} and $\Omega_0^+=\Omega_{(\al,j),\Lambda}^{\rm{max}}$ that $F_\xi^{\Omega_0^\pm}|_\cC\in\cO_\cC$ and thus $F_\xi^{\Omega^\pm}|_\cC\in \langle\cO_\cC^{<\gamma}\cdot\cO_\cC\rangle_+$. The proof is thus finished.
\end{proof}

\begin{lemma}\label{lem: II max}
If $\Omega^\pm$ is a balanced pair of type \rm{II}-\rm{max} with $|\Omega^\pm|=\gamma$ for some $\gamma\in\widehat{\Lambda}^\square$, then we have
$$F_\xi^{\Omega^\pm}|_\cC\in \langle\cO_\cC^{\rm{ps},\rm{III},\gamma}\cdot\cO_\cC^{\rm{I},\rm{max},\gamma} \cdot\cO_\cC^{<\gamma}\cdot\cO_\cC\rangle_+.$$
Moreover, we have $\cO_\cC^{\rm{II},\rm{max},\gamma}\subseteq \langle\cO_\cC^{<\gamma}\cdot\cO_\cC\rangle_+$.
\end{lemma}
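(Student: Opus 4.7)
The plan is to mimic the structure of the proof of Lemma~\ref{lem: I max}, but apply Proposition~\ref{prop: special type II} instead of Lemma~\ref{lem: special case 3}. Write $(\al,j)\in\widehat{\Lambda}\cap\mathrm{Supp}_{\xi,\cJ}^\gamma$ for the element such that $\Omega^-$ is a pseudo $\Lambda$-decomposition of $(\al,j)$ with $\widehat{\Omega}^-\neq\{(\al,j)\}$ and $\Omega^+\in\mathbf{D}_{(\al,j),\Lambda}$. First apply Proposition~\ref{prop: special type II} to $\Omega^\pm$ to produce a pseudo $\Lambda$-decomposition $\Omega'$ of $(\al,j)$ with $\widehat{\Omega}'\neq\{(\al,j)\}$, and set $\Omega_0^+\defeq\Omega^+$, $\Omega_0^-\defeq\Omega'$. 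Using the multiplicativity of $F_\xi^{\bullet^\pm}$ in the two slots (an immediate consequence of Lemma~\ref{lem: partition implies similar} applied to the balanced pair with both components equal to $\Omega'$), we get
\begin{equation*}
F_\xi^{\Omega^\pm}|_\cC \;\sim\; F_\xi^{\Omega_0^\pm}|_\cC \cdot F_\xi^{(\Omega',\Omega^-)}|_\cC,
\end{equation*}
and the last factor lies in $\cO_\cC^{\rm{ps},\rm{III},\gamma}$ by construction since both $\Omega'$ and $\Omega^-$ are pseudo $\Lambda$-decompositions of $(\al,j)$ with neither of their associated hats equal to $\{(\al,j)\}$.

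Next analyze $F_\xi^{\Omega_0^\pm}|_\cC$ using the three possibilities afforded by Proposition~\ref{prop: special type II}. In the first case, $F_\xi^{\Omega_0^\pm}\in \cO_{\xi,\Lambda}^{<\gamma}$ and the restriction lies in $\cO_\cC^{<\gamma}$. In the second case, $\Omega_0^\pm$ is a constructible $\Lambda$-lift of type $\rm{II}$; by assumption $\Omega_0^+=\Omega^+$ is either $\Omega_{(\al,j),\Lambda}^{\rm{max}}$ or $\Lambda$-extremal, so we conclude via Proposition~\ref{prop: type II extremal} in the extremal case, and via Proposition~\ref{prop: type II exceptional} combined with the fact that the sum indexed by $\Omega_0^+<\Omega^{\prime +}_0$ is empty when $\Omega_0^+$ is the maximal decomposition. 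Either way $F_\xi^{\Omega_0^\pm}|_\cC\in\cO_\cC$. In the third case, we get $\Omega''\in\mathbf{D}_{(\al,j),\Lambda}$ with $(\Omega'',\Omega^+)$ satisfying the hypotheses of Lemma~\ref{lem: special case 3} and with $F_\xi^{(\Omega'',\Omega')}\in\cO_{\xi,\Lambda}^{<\gamma}$; factoring
\begin{equation*}
F_\xi^{\Omega_0^\pm}|_\cC \;\sim\; F_\xi^{(\Omega^+,\Omega'')}|_\cC \cdot F_\xi^{(\Omega'',\Omega')}|_\cC
\end{equation*}
shows the first factor lies in $\cO_\cC^{\rm{I},\rm{max},\gamma}$ (noting that $(\Omega^+,\Omega'')$ is the inverse of a type $\rm{I}$-$\rm{max}$ pair, and the inverse is tautologically also of type $\rm{I}$-$\rm{max}$ up to swapping $\pm$) and the second in $\cO_\cC^{<\gamma}$. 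Assembling these yields the first claim
\begin{equation*}
F_\xi^{\Omega^\pm}|_\cC \in \langle \cO_\cC^{\rm{ps},\rm{III},\gamma}\cdot\cO_\cC^{\rm{I},\rm{max},\gamma}\cdot\cO_\cC^{<\gamma}\cdot\cO_\cC\rangle_+.
\end{equation*}

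The ``moreover'' part then follows immediately: by Lemma~\ref{lem: ps III} we have $\cO_\cC^{\rm{ps},\rm{III},\gamma}\subseteq\langle\cO_\cC^{<\gamma}\cdot\cO_\cC\rangle_+$, and by Lemma~\ref{lem: I max} we have $\cO_\cC^{\rm{I},\rm{max},\gamma}\subseteq\langle\cO_\cC^{<\gamma}\cdot\cO_\cC\rangle_+$, so substituting both back absorbs the extra factors into $\langle\cO_\cC^{<\gamma}\cdot\cO_\cC\rangle_+$. The main obstacle is bookkeeping: one must be careful that in the second subcase (where $\Omega^+=\Omega_{(\al,j),\Lambda}^{\rm{max}}$ and is $\Lambda$-exceptional, so Proposition~\ref{prop: type II exceptional} applies) the error sum over pairs strictly larger than $\Omega^+$ is genuinely empty, and that the composition law for $F_\xi$ across different ``middle'' decompositions is consistent up to the equivalence $\sim$ used throughout---this is precisely what makes the factorization through $\Omega'$ and $\Omega''$ legitimate, as guaranteed by Lemma~\ref{lem: partition implies similar}.
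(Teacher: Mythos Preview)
Your proof is correct and follows essentially the same approach as the paper, which likewise applies Proposition~\ref{prop: special type II}, factors through the auxiliary pseudo $\Lambda$-decomposition $\Omega'$ to produce the $\cO_\cC^{\rm{ps},\rm{III},\gamma}$ factor, and handles the three cases identically (in the third case the paper writes the type \rm{I}-\rm{max} pair directly as $(\Omega'',\Omega^+)$ rather than passing to inverses). One minor wording point: your parenthetical that the inverse is ``tautologically also of type \rm{I}-\rm{max}'' is imprecise since the definition is asymmetric in $\pm$, but the conclusion is unaffected because $\cO_\cC^{\rm{I},\rm{max},\gamma}$ is by definition a subgroup and hence closed under inverses.
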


\begin{proof}
We write $(\al,j)\in\widehat{\Lambda}\cap\mathrm{Supp}_{\xi,\cJ}^\gamma$ for the element such that $\Omega^+\in \mathbf{D}_{(\al,j),\Lambda}$. If $F_\xi^{\Omega^\pm}|_\cC\in \cO_\cC^{<\gamma}$, then we have nothing to prove. Otherwise, it follows from Proposition~\ref{prop: special type II} that there exists a pseudo $\Lambda$-decomposition $\Omega'$ of $(\al,j)$ with $\widehat{\Omega}'\neq \{(\al,j)\}$ such that the balanced pair $\Omega_0^\pm$ defined by $\Omega_0^+\defeq \Omega^+$ and $\Omega_0^-\defeq\Omega'$ satisfies one of the following:
\begin{itemize}
\item $F_\xi^{\Omega_0^\pm}|_\cC\in\cO_\cC^{<\gamma}$;
\item $\Omega_0^\pm$ is a constructible $\Lambda$-lift of type $\rm{II}$;
\item there exists $\Omega''\in\mathbf{D}_{(\al,j),\Lambda}$ such that the balanced pair $\Omega'',\Omega^+$ satisfies the conditions in Lemma~\ref{lem: special case 3} and $F_\xi^{\Omega_1^\pm}|_\cC\in\cO_\cC^{<\gamma}$ for the balanced pair defined by $\Omega_1^+\defeq \Omega''$ and $\Omega_1^-\defeq \Omega_0^-$.
\end{itemize}
It is clear that $F_\xi^{\Omega_2^\pm}|_\cC\in\cO_\cC^{\rm{ps},\rm{III},\gamma}$ for the balanced pair defined by $\Omega_2^+\defeq \Omega'$ and $\Omega_2^-\defeq \Omega^-$, and there is nothing to prove if $F_\xi^{\Omega_0^\pm}|_\cC\in\cO_\cC^{<\gamma}$. If $\Omega_0^\pm$ is a constructible $\Lambda$-lift of type $\rm{II}$ and $\Omega_0^+=\Omega^+\in\mathbf{D}_{(\al,j),\Lambda}$ is $\Lambda$-extremal, then it follows from Proposition~\ref{prop: type II extremal} that $F_\xi^{\Omega_0^\pm}|_\cC\in\cO_\cC$ and thus $F_\xi^{\Omega^\pm}|_\cC\in \langle\cO_\cC^{\rm{ps},\rm{III},\gamma}\cdot\cO_\cC^{<\gamma}\cdot\cO_\cC\rangle_+$. If $\Omega_0^\pm$ is a constructible $\Lambda$-lift of type $\rm{II}$ and $\Omega_0^+=\Omega^+=\Omega_{(\al,j),\Lambda}^{\rm{max}}$ is $\Lambda$-exceptional, then it follows from Proposition~\ref{prop: type II exceptional} that $F_\xi^{\Omega_0^\pm}|_\cC\in\cO_\cC$ and thus $F_\xi^{\Omega^\pm}|_\cC\in \langle\cO_\cC^{\rm{ps},\rm{III},\gamma}\cdot\cO_\cC^{<\gamma}\cdot\cO_\cC\rangle_+$.
If such $\Omega''$ exists, then we also have $F_\xi^{\Omega_3^\pm}|_\cC\in \cO_\cC^{\rm{I},\rm{max},\gamma}$ for the balanced pair $\Omega_3^\pm$ defined by $\Omega_3^+\defeq \Omega''$ and $\Omega_3^-\defeq \Omega_0^+=\Omega^+=\Omega_{(\al,j),\Lambda}^{\rm{max}}$. Therefore we deduce that $F_\xi^{\Omega_0^\pm}|_\cC\in \cO_\cC^{\rm{I},\rm{max},\gamma}\cdot\cO_\cC^{<\gamma}$,
which clearly implies the desired result on $F_\xi^{\Omega^\pm}|_\cC$. The last part is immediate from Lemma~\ref{lem: ps III} and Lemma~\ref{lem: I max}. Hence, the proof is finished.
\end{proof}

\begin{lemma}\label{lem: I exp}
If $\Omega^\pm$ is a balanced pair of type \rm{I}-\rm{exp} with $|\Omega^\pm|=\gamma$ for some $\gamma\in\widehat{\Lambda}^\square$, then we have
$$F_\xi^{\Omega^\pm}|_\cC\in \langle\cO_\cC^{\Omega^\pm,<}\cdot\cO_\cC^{\rm{ps},\rm{III},\gamma}\cdot\cO_\cC^{\rm{I},\rm{max},\gamma} \cdot\cO_\cC^{\rm{II},\rm{max},\gamma}\cdot\cO_\cC^{<\gamma}\cdot\cO_\cC\rangle_+.$$
\end{lemma}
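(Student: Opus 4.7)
The plan is to invoke Proposition~\ref{prop: maximal vers exceptional: ord} on the pair $\Omega^\pm$, which is eligible since $\Omega^+\in\mathbf{D}_{(\al,j),\Lambda}$ is $\Lambda$-exceptional and $\Lambda$-ordinary and $\Omega^-=\Omega_{(\al,j),\Lambda}^{\rm{max}}$ is $\Lambda$-ordinary, where $(\al,j)\in\widehat{\Lambda}\cap\mathrm{Supp}_{\xi,\cJ}^\gamma$ is the common root. The proposition yields four alternatives. The first, $F_\xi^{\Omega^\pm}|_\cC\in\cO_\cC^{<\gamma}$, gives the statement at once. The third (resp.\ fourth) alternative produces a factorization $F_\xi^{\Omega^\pm}|_\cC\sim F_\xi^{(\Omega^+,\Omega')}|_\cC\cdot F_\xi^{(\Omega',\Omega^-)}|_\cC$ through an auxiliary pseudo $\Lambda$-decomposition $\Omega'$ (resp.\ a $\Lambda$-decomposition $\Omega'\in\mathbf{D}_{(\al,j),\Lambda}$) where, by Lemma~\ref{lem: special case 1}, Proposition~\ref{prop: special type II}, or Lemma~\ref{lem: special case 3}, each factor either sits in $\cO_\cC^{<\gamma}$, is a pair of type $\rm{II}$-$\rm{max}$, or is a pair of type $\rm{I}$-$\rm{max}$; by Lemma~\ref{lem: I max}, Lemma~\ref{lem: II max}, and Lemma~\ref{lem: ps III}, the product then lies in $\langle\cO_\cC^{\rm{ps},\rm{III},\gamma}\cdot\cO_\cC^{\rm{I},\rm{max},\gamma}\cdot\cO_\cC^{\rm{II},\rm{max},\gamma}\cdot\cO_\cC^{<\gamma}\cdot\cO_\cC\rangle_+$, which is contained in the set on the right hand side.

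The substantive case is the second alternative, where $\Omega^\pm$ is itself a constructible $\Lambda$-lift of type $\rm{I}$. Since $\Omega^+$ is $\Lambda$-exceptional, Proposition~\ref{prop: type I exceptional} gives
\[
F_\xi^{\Omega^\pm}|_\cC+\sum_{\Omega_0^\pm}\varepsilon(\Omega_0^\pm)F_\xi^{\Omega_0^\pm}|_\cC\in\cO_\cC,
\]
the sum ranging over pairs with $\Omega_0^-=\Omega^-$ and $\Omega^+<\Omega_0^+\in\mathbf{D}_{(\al,j),\Lambda}$. It therefore suffices to place each $F_\xi^{\Omega_0^\pm}|_\cC$ in the targeted ring. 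For this I would apply Lemma~\ref{lem: reduce to extremal} to replace $\Omega_0^+$ by a $\Lambda$-equivalent representative $\Omega_0'\geq\Omega_0^+$ which is either $\Lambda$-exceptional or $\Lambda$-extremal, then Lemma~\ref{lem: reduce to ordinary} to pass to the ordinarization $\widetilde\Omega_0\defeq(\Omega_0')_\dagger$; by Lemma~\ref{lem: two equivalent pairs} both replacements alter $F_\xi^{\Omega_0^\pm}|_\cC$ only modulo $\cO_\cC^{<\gamma}$, and each component of $\widetilde\Omega_0$ becomes $\Lambda$-ordinary and either equal to a maximum or $\Lambda$-extremal.

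It then remains to match the pair $(\widetilde\Omega_0,\Omega^-)$ to a standard family. When $\widetilde\Omega_0\in\mathbf{D}_{(\al,j),\Lambda}$ is $\Lambda$-exceptional and $\Lambda$-ordinary, the pair is again of type $\rm{I}$-$\rm{exp}$ with $\Omega^+<\widetilde\Omega_0$, so $F_\xi^{(\widetilde\Omega_0,\Omega^-)}|_\cC\in\cO_\cC^{\Omega^\pm,<}$ by definition. When $\widetilde\Omega_0\in\mathbf{D}_{(\al,j),\Lambda}$ is $\Lambda$-extremal and $\Lambda$-ordinary, I would reapply Proposition~\ref{prop: maximal vers exceptional: ord} to $(\widetilde\Omega_0,\Omega^-)$; since the top entry is now extremal rather than exceptional, the second alternative of that proposition is handled by Proposition~\ref{prop: type I extremal} (giving a contribution in $\cO_\cC^{\rm{I},\rm{ext}}\subseteq\cO_\cC$ by Lemma~\ref{lem: I II ext}), and the third and fourth alternatives are absorbed into $\cO_\cC^{\rm{I},\rm{max},\gamma}$ and $\cO_\cC^{\rm{II},\rm{max},\gamma}$ via Lemma~\ref{lem: I max} and Lemma~\ref{lem: II max}. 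Finally, when $\widehat{\widetilde\Omega_0}\neq\{(\al,j)\}$, the pseudo $\widetilde\Omega_0$ together with the max $\Omega^-$ form a pair of type $\rm{II}$-$\rm{max}$, so $F_\xi|_\cC\in\cO_\cC^{\rm{II},\rm{max},\gamma}$ by Lemma~\ref{lem: II max}.

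The principal obstacle I anticipate is in the $\Lambda$-extremal subcase above: ensuring that the recursion through Proposition~\ref{prop: maximal vers exceptional: ord} terminates in the prescribed generators rather than re-introducing a type $\rm{I}$-$\rm{exp}$ contribution with a still larger top component. The key observation is that once $\widetilde\Omega_0$ is $\Lambda$-extremal, the hypothesis of the present lemma no longer applies; the only remaining contributions then come from constructible type $\rm{I}$-$\rm{ext}$ lifts (Proposition~\ref{prop: type I extremal}) and from the auxiliary $\rm{I}$-$\rm{max}$/$\rm{II}$-$\rm{max}$ pairs, which explains precisely why both of those generator groups (in addition to $\cO_\cC^{\Omega^\pm,<}$, $\cO_\cC^{\rm{ps},\rm{III},\gamma}$, $\cO_\cC^{<\gamma}$ and $\cO_\cC$) are needed in the conclusion.
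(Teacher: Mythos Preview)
Your overall architecture matches the paper's: invoke Proposition~\ref{prop: maximal vers exceptional: ord}, dispose of the first and fourth alternatives quickly, and in the second alternative apply Proposition~\ref{prop: type I exceptional} and then place each summand $F_\xi^{\Omega_0^\pm}|_\cC$ in the target ring after reducing via Lemma~\ref{lem: reduce to extremal}. Two of your subcases (non-ordinary, and exceptional ordinary) are handled correctly and agree with the paper. However, there are two genuine gaps.

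\textbf{Third alternative.} Your claim that ``each factor either sits in $\cO_\cC^{<\gamma}$, is a pair of type $\rm{II}$-$\rm{max}$, or is a pair of type $\rm{I}$-$\rm{max}$'' is not what Lemma~\ref{lem: special case 1} gives. The factor $F_\xi^{(\Omega',\Omega^-)}|_\cC$ is indeed of type $\rm{II}$-$\rm{max}$ (since $\Omega^-=\Omega^{\rm{max}}_{(\al,j),\Lambda}$ and $\widehat{\Omega}'\neq\{(\al,j)\}$), but for $F_\xi^{(\Omega^+,\Omega')}|_\cC$ Lemma~\ref{lem: special case 1} only yields: either $\cO_\cC^{<\gamma}$, or a pseudo $\Omega''$ with $(\Omega^+,\Omega'')$ a \emph{constructible type~$\rm{II}$} lift and $F_\xi^{(\Omega'',\Omega')}|_\cC\in\cO_\cC^{<\gamma}$. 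The pair $(\Omega^+,\Omega'')$ is \emph{not} of type $\rm{II}$-$\rm{max}$, because $\Omega^+$ is $\Lambda$-exceptional and $\neq\Omega^{\rm{max}}_{(\al,j),\Lambda}$ (type $\rm{II}$-$\rm{max}$ demands the first entry be maximal or extremal). The paper therefore applies Proposition~\ref{prop: type II exceptional} to $(\Omega^+,\Omega'')$, producing a second sum over $\Omega_5^+>\Omega^+$, and then runs a case analysis parallel to the one in the second alternative: the exceptional ordinary case contributes to $\cO_\cC^{\Omega^\pm,<}$ (via the identity $(F_\xi^{(\Omega^-,\Omega')})^{-1}F_\xi^{(\Omega'',\Omega')}F_\xi^{(\Omega_5^+,\Omega'')}\in\cO_\cC^{\Omega^\pm,<}$), while the non-ordinary and extremal cases contribute to $\cO_\cC^{\rm{ps},\rm{III},\gamma}$ via Lemma~\ref{lem: reduce to ordinary} and Proposition~\ref{prop: special type II} combined with Proposition~\ref{prop: type II extremal}. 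So the third alternative is essentially as involved as the second, and all the generators in the statement arise there too.

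\textbf{Extremal subcase in the second alternative.} You cannot ``reapply Proposition~\ref{prop: maximal vers exceptional: ord}'' to $(\widetilde\Omega_0,\Omega^-)$ when $\widetilde\Omega_0$ is $\Lambda$-extremal: that proposition's hypothesis requires the first entry to be $\Lambda$-\emph{exceptional}. The paper instead invokes the argument in the proof of Theorem~\ref{thm: pair of decompositions} (the paragraph treating $\Omega^+$ $\Lambda$-extremal, $\Lambda$-ordinary and $\Omega^-=\Omega^{\rm{max}}_{(\al,j),\Lambda}$ $\Lambda$-ordinary), which checks Conditions $\rm{I}$-(i)--(vii) directly: either all hold and the pair is constructible of type $\rm{I}$ (so Proposition~\ref{prop: type I extremal} gives $F_\xi|_\cC\in\cO_\cC$), or one fails and $F_\xi|_\cC\in\cO_\cC^{<\gamma}$ (here the extremality of the top entry is precisely what makes Lemmas~\ref{lem: reduce non ord to smaller norm} and~\ref{lem: reduce non sep to smaller norm} yield $\cO_\cC^{<\gamma}$ without a residual constructible factor). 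Your diagnosis of the ``principal obstacle'' is correct, but the resolution is via this direct verification, not via the proposition you cite.
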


\begin{proof}
We write $(\al,j)\in\widehat{\Lambda}\cap\mathrm{Supp}_{\xi,\cJ}^\gamma$ for the element such that $\Omega^+,\Omega^-\in\mathbf{D}_{(\al,j),\Lambda}$. According to Proposition~\ref{prop: maximal vers exceptional: ord}, there are four possibilities as follow:
\begin{itemize}
\item $F_\xi^{\Omega^\pm}|_\cC\in\cO_\cC^{<\gamma}$;
\item $\Omega^\pm$ is a constructible $\Lambda$-lift of type $\rm{I}$;
\item there exists a pseudo $\Lambda$-decomposition $\Omega'$ of $(\al,j)$ such that the balanced pair $\Omega^+,\Omega'$ (resp. the balanced pair $\Omega^-,\Omega'$) satisfies the conditions of Lemma~\ref{lem: special case 1} (resp. of Proposition~\ref{prop: special type II});
\item there exists $\Omega'\in\mathbf{D}_{(\al,j),\Lambda}$ such that the balanced pair $\Omega',\Omega^-$ satisfies the conditions of Lemma~\ref{lem: special case 3} and $F_\xi^{\Omega_0^\pm}|_\cC\in\cO_\cC^{<\gamma}$ for the balanced pair $\Omega_0^\pm$ defined by $\Omega_0^+\defeq \Omega'$ and $\Omega_0^-\defeq \Omega^+$.
\end{itemize}
If $F_\xi^{\Omega^\pm}|_\cC\in \cO_\cC^{<\gamma}$, then we have nothing to prove. If $\Omega^\pm$ falls into the fourth case of Proposition~\ref{prop: maximal vers exceptional: ord}, then we deduce that $F_\xi^{\Omega^\pm}|_\cC\in \cO_\cC^{\rm{I},\rm{max},\gamma}\cdot\cO_\cC^{<\gamma}$.

Now we treat the case when $\Omega^\pm$ falls into the second case of Proposition~\ref{prop: maximal vers exceptional: ord}. It follows from Proposition~\ref{prop: type I exceptional} that $F_\xi^{\Omega^\pm}|_\cC$ belongs to the subring of $\cO(\cC)$ generated by $\cO_\cC$ and $F_\xi^{\Omega_1^\pm}|_\cC$ for all balanced pairs $\Omega_1^\pm$ satisfying $\Omega_1^-=\Omega^-$, $\Omega_1^+\in\mathbf{D}_{(\al,j),\Lambda}$ and $\Omega^+<\Omega_1^+$. We choose an arbitrary such $\Omega_1^\pm$. According to Lemma~\ref{lem: reduce to extremal}, it is harmless to consider only those $\Omega_1^\pm$ with $\Omega_1^+$ being either $\Lambda$-exceptional or $\Lambda$-extremal. If $\Omega_1^+$ is not $\Lambda$-ordinary, then the balanced pair $\Omega_1^-,(\Omega_1^+)_\dagger$ is of type \rm{II}-\rm{max}, which implies that $F_\xi^{\Omega_1^\pm}|_\cC\in \cO_\cC^{\rm{II},\rm{max},\gamma}\cdot\cO_\cC^{<\gamma}$. If $\Omega_1^+$ is $\Lambda$-extremal and $\Lambda$-ordinary, then it follows from the proof of Theorem~\ref{thm: pair of decompositions} that either $\Omega_1^\pm$ is a constructible $\Lambda$-lift of type $\rm{I}$ or $F_\xi^{\Omega_1^\pm}|_\cC\in\cO_\cC^{<\gamma}$, which together with Proposition~\ref{prop: type I extremal} implies that $F_\xi^{\Omega_1^\pm}|_\cC\in\langle\cO_\cC^{<\gamma}\cdot\cO_\cC\rangle_+$. If $\Omega_1^+$ is $\Lambda$-exceptional and $\Lambda$-ordinary, then we clearly have $F_\xi^{\Omega_1^\pm}|_\cC\in\cO_\cC^{\Omega^\pm,<}$. In all, we always have
$$F_\xi^{\Omega_1^\pm}|_\cC\in\langle\cO_\cC^{\Omega^\pm,<}\cdot\cO_\cC^{\rm{II},\rm{max},\gamma}\cdot\cO_\cC^{<\gamma}\cdot\cO_\cC\rangle_+$$
when $\Omega^\pm$ falls into the second case of Proposition~\ref{prop: maximal vers exceptional: ord}.

Finally, we treat the case when $\Omega^\pm$ falls into the third case of Proposition~\ref{prop: maximal vers exceptional: ord}. It is clear that we have $F_\xi^{\Omega_2^\pm}|_\cC\in \cO_\cC^{\rm{II},\rm{max},\gamma}$ for the balanced pair $\Omega_2^\pm$ defined by $\Omega_2^+\defeq \Omega^-$ and $\Omega_2^-\defeq \Omega'$. Applying Lemma~\ref{lem: special case 1} to the balanced pair $\Omega^+,\Omega'$, there exists a pseudo $\Lambda$-decomposition $\Omega''$ of $(\al,j)$ such that
\begin{itemize}
\item either $\Omega''=\Omega^+$ or the balanced pair $\Omega^+,\Omega''$ is a constructible $\Lambda$-lift of type $\rm{II}$;
\item $F_\xi^{\Omega_3^\pm}|_\cC\in \cO_\cC^{<\gamma}$ for the balanced pair defined by $\Omega_3^+\defeq \Omega''$ and $\Omega_3^-\defeq \Omega'$.
\end{itemize}
If $\Omega''=\Omega^+$, then we clearly have $F_\xi^{\Omega^\pm}|_\cC\in \cO_\cC^{\rm{II},\rm{max},\gamma}\cdot\cO_\cC^{<\gamma}$. Therefore, we assume from now on that the balanced pair $\Omega_4^\pm$ defined by $\Omega_4^+\defeq\Omega^+$ and $\Omega_4^-\defeq\Omega''$ is a constructible $\Lambda$-lift of type $\rm{II}$. It follows from Proposition~\ref{prop: type II exceptional} that $F_\xi^{\Omega_4^\pm}|_\cC$ belongs to the subring of $\cO(\cC)$ generated by $\cO_\cC$ and $F_\xi^{\Omega_5^\pm}|_\cC$ for all balanced pairs $\Omega_5^\pm$ satisfying $\Omega_5^-=\Omega_4^-$, $\Omega_5^+\in\mathbf{D}_{(\al,j),\Lambda}$ and $\Omega_4^+<\Omega_5^+$. We choose an arbitrary such $\Omega_5^\pm$. According to Lemma~\ref{lem: reduce to extremal}, it is harmless to consider only those $\Omega_5^\pm$ with $\Omega_5^+$ being either $\Lambda$-exceptional or $\Lambda$-extremal. If $\Omega_5^+$ is not $\Lambda$-ordinary, then we deduce from Lemma~\ref{lem: reduce to ordinary} that $F_\xi^{\Omega_5^\pm}|_\cC\in \langle\cO_\cC^{\rm{ps},\rm{III},\gamma}\cdot\cO_\cC^{<\gamma}\cdot \cO_\cC\rangle_+$. If $\Omega_5^+$ is $\Lambda$-extremal and $\Lambda$-ordinary, then we deduce from Proposition~\ref{prop: special type II} together with Proposition~\ref{prop: type II extremal} that $F_\xi^{\Omega_5^\pm}|_\cC\in \langle\cO_\cC^{\rm{ps},\rm{III},\gamma}\cdot\cO_\cC^{<\gamma}\cdot \cO_\cC\rangle_+$. Finally, if $\Omega_5^+$ is $\Lambda$-exceptional and $\Lambda$-ordinary then we clearly have
$$(F_\xi^{\Omega_2^\pm}|_\cC)^{-1}\cdot F_\xi^{\Omega_3^\pm}|_\cC\cdot F_\xi^{\Omega_5^\pm}|_\cC \in\cO_\cC^{\Omega^\pm,<},$$
which implies that
$$F_\xi^{\Omega_5^\pm}|_\cC\in\cO_\cC^{\Omega^\pm,<}\cdot \cO_\cC^{\rm{II},\rm{max},\gamma}\cdot \cO_\cC^{<\gamma}.$$
In all, we always have
$$F_\xi^{\Omega_1^\pm}|_\cC\in\langle\cO_\cC^{\Omega^\pm,<}\cdot\cO_\cC^{\rm{II},\rm{max},\gamma} \cdot\cO_\cC^{\rm{ps},\rm{III},\gamma}\cdot\cO_\cC^{<\gamma}\cdot\cO_\cC\rangle_+$$
when $\Omega^\pm$ falls into the third case of Proposition~\ref{prop: maximal vers exceptional: ord}. Hence, the proof is finished.
\end{proof}

\begin{lemma}\label{lem: II exp}
If $\Omega^\pm$ is a balanced pair of type \rm{II}-\rm{exp} with $|\Omega^\pm|=\gamma$ for some $\gamma\in\widehat{\Lambda}^\square$, then we have
$$F_\xi^{\Omega^\pm}|_\cC\in \langle\cO_\cC^{\Omega^\pm,<}\cdot\cO_\cC^{\rm{ps},\rm{III},\gamma}\cdot\cO_\cC^{<\gamma}\cdot\cO_\cC\rangle_+.$$
\end{lemma}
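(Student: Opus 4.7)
The strategy mirrors the proof of Lemma~\ref{lem: I exp}, but with Proposition~\ref{prop: maximal vers exceptional: non ord} replacing Proposition~\ref{prop: maximal vers exceptional: ord}. Let $(\al,j)\in\widehat{\Lambda}\cap\mathrm{Supp}_{\xi,\cJ}^\gamma$ be the element attached to the type \rm{II}-\rm{exp} balanced pair $\Omega^\pm$, so that $\Omega^-=(\Omega_{(\al,j),\Lambda}^{\rm{max}})_\dagger$ and $\Omega^+\in\mathbf{D}_{(\al,j),\Lambda}$ is $\Lambda$-exceptional and $\Lambda$-ordinary. Applying Proposition~\ref{prop: maximal vers exceptional: non ord}, either $F_\xi^{\Omega^\pm}|_\cC\in\cO_\cC^{<\gamma}$ (in which case we are done) or there exists a pseudo $\Lambda$-decomposition $\Omega'$ of $(\al,j)$ such that $(\Omega^+,\Omega')$ is a constructible $\Lambda$-lift of type~\rm{II} and $F_\xi^{(\Omega',\Omega^-)}|_\cC\in\cO_\cC^{<\gamma}$. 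Writing $F_\xi^{\Omega^\pm}\sim F_\xi^{(\Omega^+,\Omega')}\cdot F_\xi^{(\Omega',\Omega^-)}$, we reduce to controlling $F_\xi^{(\Omega^+,\Omega')}|_\cC$.

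Since $\Omega^+$ is $\Lambda$-exceptional, Proposition~\ref{prop: type II exceptional} yields
\[
F_\xi^{(\Omega^+,\Omega')}|_\cC\;+\;\sum_{\Omega_2^\pm}\varepsilon(\Omega_2^\pm)\,F_\xi^{(\Omega_2^+,\Omega')}|_\cC\;\in\;\cO_\cC,
\]
where $\Omega_2^+$ runs over elements of $\mathbf{D}_{(\al,j),\Lambda}$ satisfying $\Omega^+<\Omega_2^+$. By Lemma~\ref{lem: reduce to extremal}, modulo a factor in $\cO_\cC^{<\gamma}$ we may further assume that $\Omega_2^+$ is either $\Lambda$-exceptional or $\Lambda$-extremal. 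The key decomposition is then
\[
F_\xi^{(\Omega_2^+,\Omega')}\;\sim\;F_\xi^{(\Omega_2^+,\Omega^-)}\cdot F_\xi^{(\Omega^-,\Omega')},
\]
with the last factor lying in $\cO_\cC^{<\gamma}$ because $F_\xi^{(\Omega',\Omega^-)}|_\cC\in\cO_\cC^{<\gamma}$. Thus it remains to analyze $F_\xi^{(\Omega_2^+,\Omega^-)}|_\cC$ according to the nature of $\Omega_2^+$.

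There are four sub-cases. If $\Omega_2^+$ is $\Lambda$-exceptional and $\Lambda$-ordinary, then $(\Omega_2^+,\Omega^-)$ is itself of type \rm{II}-\rm{exp} with $\Omega^+<\Omega_2^+$, so $F_\xi^{(\Omega_2^+,\Omega^-)}|_\cC\in\cO_\cC^{\Omega^\pm,<}$. If $\Omega_2^+$ is $\Lambda$-extremal and $\Lambda$-ordinary, then $(\Omega_2^+,\Omega^-)$ is of type \rm{II}-\rm{max} at level $\gamma$, giving $F_\xi^{(\Omega_2^+,\Omega^-)}|_\cC\in\cO_\cC^{\rm{II},\rm{max},\gamma}$. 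If $\Omega_2^+$ is not $\Lambda$-ordinary, we further split
\[
F_\xi^{(\Omega_2^+,\Omega^-)}\;\sim\;F_\xi^{(\Omega_2^+,(\Omega_2^+)_\dagger)}\cdot F_\xi^{((\Omega_2^+)_\dagger,\Omega^-)};
\]
the first factor lies in $\langle\cO_\cC^{<\gamma}\cdot\cO_\cC\rangle_+$ by Lemma~\ref{lem: ps III} (applied with $\Omega_2^+\in\mathbf{D}_{(\al,j),\Lambda}$ not $\Lambda$-ordinary), while the second factor has both entries pseudo $\Lambda$-decompositions of $(\al,j)$ with hatted sets distinct from $\{(\al,j)\}$, hence lies in $\cO_\cC^{\rm{ps},\rm{III},\gamma}$.

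Combining all contributions, $F_\xi^{\Omega^\pm}|_\cC$ lies in the subring generated by $\cO_\cC$, $\cO_\cC^{\Omega^\pm,<}$, $\cO_\cC^{\rm{II},\rm{max},\gamma}$, $\cO_\cC^{\rm{ps},\rm{III},\gamma}$, and $\cO_\cC^{<\gamma}$, localized as needed. By Lemma~\ref{lem: II max} we have $\cO_\cC^{\rm{II},\rm{max},\gamma}\subseteq\langle\cO_\cC^{<\gamma}\cdot\cO_\cC\rangle_+$, so the $\cO_\cC^{\rm{II},\rm{max},\gamma}$ contribution is absorbed into the $\cO_\cC^{<\gamma}\cdot\cO_\cC$ factor, yielding the desired conclusion
\[
F_\xi^{\Omega^\pm}|_\cC\in\langle\cO_\cC^{\Omega^\pm,<}\cdot\cO_\cC^{\rm{ps},\rm{III},\gamma}\cdot\cO_\cC^{<\gamma}\cdot\cO_\cC\rangle_+.
\]
The principal obstacle is the bookkeeping in the non-$\Lambda$-ordinary subcases, where one must verify the applicability of Lemma~\ref{lem: ps III} and confirm that the residual pseudo-pseudo pair lies in the more restrictive $\cO_\cC^{\rm{ps},\rm{III},\gamma}$ rather than merely $\cO_\cC^{\rm{ps}}$.
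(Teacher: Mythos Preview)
Your proposal is correct and follows essentially the same approach as the paper: apply Proposition~\ref{prop: maximal vers exceptional: non ord} to produce $\Omega'$, then apply Proposition~\ref{prop: type II exceptional} to $(\Omega^+,\Omega')$, reduce the error terms via Lemma~\ref{lem: reduce to extremal}, and treat the three sub-cases of $\Omega_2^+$ (exceptional/ordinary, extremal/ordinary, not ordinary) separately.

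The only organizational difference is that you first factor $F_\xi^{(\Omega_2^+,\Omega')}$ through $\Omega^-$ and then invoke the packaged Lemmas~\ref{lem: II max} and~\ref{lem: ps III}, whereas the paper works directly with $(\Omega_2^+,\Omega')$ and cites the underlying Proposition~\ref{prop: special type II}, Proposition~\ref{prop: type II extremal}, and Lemma~\ref{lem: reduce to ordinary}. In the exceptional/ordinary case the two routes literally coincide (the paper's product $F_\xi^{\Omega_0^\pm}\cdot F_\xi^{\Omega_2^\pm}$ is exactly your $F_\xi^{(\Omega_2^+,\Omega^-)}$); in the other two cases your route is a harmless repackaging that yields the same bound. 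Your note that ``there are four sub-cases'' should read ``three''.
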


\begin{proof}
We write $(\al,j)\in\widehat{\Lambda}\cap\mathrm{Supp}_{\xi,\cJ}^\gamma$ for the element such that $\Omega^+\in\mathbf{D}_{(\al,j),\Lambda}$. If $F_\xi^{\Omega^\pm}|_\cC\in \cO_\cC^{<\gamma}$, then we have nothing to prove. Otherwise, from Proposition~\ref{prop: maximal vers exceptional: non ord} there exists a pseudo $\Lambda$-decomposition $\Omega'$ of $(\al,j)$ such that
      \begin{itemize}
      \item the balanced pair $\Omega^+,\Omega'$ is a constructible $\Lambda$-lift of type $\rm{II}$;
      \item $F_\xi^{\Omega_0^\pm}|_\cC\in\cO_\cC^{<\gamma}$ for the balanced pair $\Omega_0^\pm$ defined by $\Omega_0^+\defeq \Omega'$ and $\Omega_0^-\defeq \Omega^-$.
      \end{itemize}
If we let $\Omega_1^+\defeq \Omega^+$ and $\Omega_1^-\defeq \Omega'$, then it follows from Proposition~\ref{prop: type II exceptional} that $F_\xi^{\Omega_1^\pm}|_\cC$ belongs to the subring of $\cO(\cC)$ generated by $\cO_\cC$ and $F_\xi^{\Omega_2^\pm}|_\cC$ for all balanced pairs $\Omega_2^\pm$ satisfying $\Omega_2^-=\Omega_1^-$, $\Omega_2^+\in\mathbf{D}_{(\al,j),\Lambda}$ and $\Omega_1^+<\Omega_2^+$. We choose an arbitrary such $\Omega_2^\pm$. According to Lemma~\ref{lem: reduce to extremal}, it is harmless to consider only those $\Omega_2^\pm$ with $\Omega_2^+$ being either $\Lambda$-exceptional or $\Lambda$-extremal. If $\Omega_2^+$ is not $\Lambda$-ordinary then we deduce from Lemma~\ref{lem: reduce to ordinary} that $F_\xi^{\Omega_2^\pm}|_\cC\in \langle\cO_\cC^{\rm{ps},\rm{III},\gamma}\cdot\cO_\cC^{<\gamma}\cdot \cO_\cC\rangle_+$. If $\Omega_2^+$ is $\Lambda$-extremal and $\Lambda$-ordinary then we deduce from Proposition~\ref{prop: special type II} together with Proposition~\ref{prop: type II extremal} that $F_\xi^{\Omega_2^\pm}|_\cC\in \langle\cO_\cC^{\rm{ps},\rm{III},\gamma}\cdot\cO_\cC^{<\gamma}\cdot \cO_\cC\rangle_+$. Finally, if $\Omega_2^+$ is $\Lambda$-exceptional and $\Lambda$-ordinary then we clearly have
$$F_\xi^{\Omega_0^\pm}|_\cC\cdot F_\xi^{\Omega_2^\pm}|_\cC\in\cO_\cC^{\Omega^\pm,<},$$
which implies that
$$F_\xi^{\Omega_2^\pm}|_\cC\in\cO_\cC^{\Omega^\pm,<}\cdot \cO_\cC^{<\gamma}.$$
In all, we always have
$$F_\xi^{\Omega_1^\pm}|_\cC\in\langle\cO_\cC^{\Omega^\pm,<} \cdot\cO_\cC^{\rm{ps},\rm{III},\gamma}\cdot\cO_\cC^{<\gamma}\cdot\cO_\cC\rangle_+.$$
Hence, the proof is finished.
\end{proof}

\begin{prop}\label{prop: ps and inv}
We have $$\cO_\cC^{\rm{ps}}\subseteq\cO_\cC.$$
\end{prop}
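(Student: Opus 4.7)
The plan is to prove Proposition~\ref{prop: ps and inv} by induction on $\gamma \in \N\Lambda^\square$ with respect to the partial order inherited from $\mathrm{Supp}_{\xi}^\square \subseteq \Phi^+_{\GL_{r_\xi}}$, with inductive hypothesis that $\cO_\cC^{<\gamma} \subseteq \cO_\cC$. The base case (when $\gamma$ is minimal in $\widehat{\Lambda}^\square$, so every $(\al,j) \in \widehat{\Lambda} \cap \mathrm{Supp}_{\xi,\cJ}^\gamma$ in fact lies in $\Lambda$) is trivial since then the only pseudo $\Lambda$-decomposition of $(\al,j)$ is $\{(\al,j)\}$ itself. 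For the inductive step, since $\cO_\cC$ is already closed under taking its own localization at units and under the operations defining $\langle \cdot \rangle_+$, the hypothesis $\cO_\cC^{<\gamma} \subseteq \cO_\cC$ immediately gives $\langle \cO_\cC^{<\gamma} \cdot \cO_\cC \rangle_+ \subseteq \cO_\cC$.

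Given the inductive hypothesis, I would first apply Lemma~\ref{lem: ps III}, Lemma~\ref{lem: I max}, and Lemma~\ref{lem: II max} to conclude that $\cO_\cC^{\mathrm{ps},\mathrm{III},\gamma}$, $\cO_\cC^{\mathrm{I},\mathrm{max},\gamma}$, and $\cO_\cC^{\mathrm{II},\mathrm{max},\gamma}$ all lie in $\cO_\cC$. Next, I would combine Lemma~\ref{lem: I exp} and Lemma~\ref{lem: II exp} with a secondary induction on the partial order on $\mathbf{D}_{(\al,j),\Lambda}$ from Definition~\ref{def: partial order}. The secondary induction is well-founded because the partial order has the maximal element $\Omega^{\rm max}_{(\al,j),\Lambda}$, and when $\Omega^+ = \Omega^{\rm max}_{(\al,j),\Lambda}$ the corresponding I-exp (resp. II-exp) pair has trivial $F_\xi^{\Omega^\pm}$. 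Hence the subgroups $\cO_\cC^{\Omega^\pm,<}$ appearing in the two lemmas are generated by strictly larger I-exp (resp. II-exp) pairs, letting us conclude $\cO_\cC^{\mathrm{I},\mathrm{exp},\gamma}, \cO_\cC^{\mathrm{II},\mathrm{exp},\gamma} \subseteq \cO_\cC$.

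Finally, for an arbitrary generator $F_\xi^{\Omega^\pm}|_\cC$ of $\cO_\cC^{\rm ps}$ with $|\Omega^\pm| = \gamma$, where both $\Omega^+, \Omega^-$ are pseudo $\Lambda$-decompositions of some $(\al,j) \in \widehat{\Lambda} \cap \mathrm{Supp}_{\xi,\cJ}^\gamma$, I would split into three cases. If $\widehat{\Omega}^+ \neq \{(\al,j)\} \neq \widehat{\Omega}^-$, Lemma~\ref{lem: ps III} finishes immediately. If exactly one of the sides is trivial (say $\widehat{\Omega}^+ = \{(\al,j)\}$, so $\Omega^+ \in \mathbf{D}_{(\al,j),\Lambda}$), then depending on whether $\Omega^+$ is $\Lambda$-exceptional, $\Lambda$-extremal, or neither, $\Omega^\pm$ is either a II-max pair, a II-ext constructible lift (handled by Lemma~\ref{lem: I II ext}), a II-exp pair, or reduces to these via Lemma~\ref{lem: reduce to extremal} and Lemma~\ref{lem: reduce to ordinary} modulo $\cO_\cC^{<\gamma}$. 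If both are trivial ($\Omega^+, \Omega^- \in \mathbf{D}_{(\al,j),\Lambda}$), I would factor $F_\xi^{\Omega^\pm} \sim F_\xi^{\Omega_1^\pm} \cdot F_\xi^{\Omega_2^\pm}$ with $\Omega_1^- = \Omega_2^+ = \Omega^{\rm max}_{(\al,j),\Lambda}$, and handle each factor separately using Proposition~\ref{prop: maximal vers exceptional: non ord} (if $\Omega^{\rm max}_{(\al,j),\Lambda}$ is not $\Lambda$-ordinary, inserting the intermediate pseudo decomposition $(\Omega^{\rm max}_{(\al,j),\Lambda})_\dagger$ whose interaction with $\Omega^{\rm max}_{(\al,j),\Lambda}$ lies in $\cO_\cC^{\mathrm{ps},\mathrm{III},\gamma}$) and Proposition~\ref{prop: maximal vers exceptional: ord} otherwise.

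The main difficulty will be the bookkeeping in the last case, namely checking that the factorization through $\Omega^{\rm max}_{(\al,j),\Lambda}$ really exhausts all reductions: one must verify that the intermediate objects produced by the various reduction steps (which may themselves be pseudo decompositions that are neither $\Lambda$-ordinary nor $\Lambda$-extremal) can always be rewritten as products of elements in $\cO_\cC^{\mathrm{ps},\mathrm{III},\gamma} \cdot \cO_\cC^{\mathrm{I},\mathrm{max},\gamma} \cdot \cO_\cC^{\mathrm{II},\mathrm{max},\gamma} \cdot \cO_\cC^{\mathrm{I},\mathrm{exp},\gamma} \cdot \cO_\cC^{\mathrm{II},\mathrm{exp},\gamma} \cdot \cO_\cC^{\mathrm{I},\mathrm{ext}} \cdot \cO_\cC^{\mathrm{II},\mathrm{ext}} \cdot \cO_\cC^{<\gamma}$, without producing any genuinely new type of generator that needs separate treatment. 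This amounts to verifying that the classification scheme assembled in \S\ref{sub: cons lifts} (used already in the proofs of Theorem~\ref{thm: pair of decompositions} and Theorem~\ref{thm: reduce to constructible}) is closed enough under the operations invoked here.
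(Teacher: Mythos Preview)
Your proposal is correct and follows essentially the same approach as the paper: induction on $\gamma$ to get $\cO_\cC^{<\gamma}\subseteq\cO_\cC$, then Lemmas~\ref{lem: ps III}, \ref{lem: I max}, \ref{lem: II max}, \ref{lem: I II ext} for the auxiliary subgroups, a secondary induction via Lemmas~\ref{lem: I exp} and \ref{lem: II exp} for the exp-subgroups, and finally the product decomposition
\[
F_\xi^{\Omega^\pm}|_\cC\in\cO_\cC^{\rm{ps},\rm{III},\gamma}\cdot\cO_\cC^{\rm{I},\rm{max},\gamma}\cdot\cO_\cC^{\rm{II},\rm{max},\gamma}\cdot\cO_\cC^{\rm{I},\rm{exp},\gamma}\cdot\cO_\cC^{\rm{II},\rm{exp},\gamma}\cdot\cO_\cC^{\rm{I},\rm{ext}}\cdot\cO_\cC^{\rm{II},\rm{ext}}\cdot\cO_\cC^{<\gamma}.
\]
The only difference is packaging: where you sketch the case analysis for the last step explicitly, the paper simply cites this product decomposition as ``a crucial observation from the proof of Theorem~\ref{thm: pair of decompositions}'', since that proof already performs exactly the reductions you outline. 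One small correction: your base case for the secondary induction on II-exp pairs is misstated (when $\Omega_{(\al,j),\Lambda}^{\rm max}$ is not $\Lambda$-ordinary it cannot itself serve as $\Omega^+$), but the induction is still well-founded since $\cO_\cC^{\Omega^\pm,<}$ is generated by strictly $<$-larger pairs of the same type.
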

\begin{proof}
Let $\Omega^\pm$ be a balanced pair with both $\Omega^+$ and $\Omega^-$ pseudo $\Lambda$-decompositions of $(\al,j)\in\widehat{\Lambda}\cap\mathrm{Supp}_{\xi,\cJ}^\gamma$. We argue by induction on $\gamma$ and thus can assume that $\cO_\cC^{<\gamma}\subseteq \cO_\cC$. It follows immediately from Lemma~\ref{lem: ps III}, Lemma~\ref{lem: I max} and Lemma~\ref{lem: II max} that we have
$$\cO_\cC^{\rm{ps},\rm{III},\gamma},\,\,\cO_\cC^{\rm{I},\rm{max},\gamma},\,\,\cO_\cC^{\rm{II},\rm{max},\gamma}\subseteq\cO_\cC.$$
Consequently, if $\Omega^\pm$ is of type \rm{I}-\rm{exp} (resp.~of type \rm{II}-\rm{exp}) for some $(\al,j)\in\widehat{\Lambda}\cap\mathrm{Supp}_{\xi,\cJ}^\gamma$, it follows from Lemma~\ref{lem: I exp} (resp.~Lemma~\ref{lem: II exp}) and an induction on the partial order $<$ on the set $\mathbf{D}_{(\al,j),\Lambda}$ that $F_\xi^{\Omega^\pm}|_\cC\in\cO_\cC$. In particular, we have
$$\cO_\cC^{\rm{I},\rm{exp},\gamma},\,\,\cO_\cC^{\rm{II},\rm{exp},\gamma}\subseteq\cO_\cC.$$

Now we return to a general $\Omega^\pm$ with both $\Omega^+$ and $\Omega^-$ pseudo $\Lambda$-decompositions of $(\al,j)\in\widehat{\Lambda}\cap\mathrm{Supp}_{\xi,\cJ}^\gamma$. A crucial observation from the proof of Theorem~\ref{thm: pair of decompositions} (upon restriction to $\cC$) is that
$$F_\xi^{\Omega^\pm}|_\cC\in\cO_\cC^{\rm{ps},\rm{III},\gamma}\cdot\cO_\cC^{\rm{I},\rm{max},\gamma}\cdot\cO_\cC^{\rm{II},\rm{max},\gamma}\cdot\cO_\cC^{\rm{I},\rm{exp},\gamma}\cdot\cO_\cC^{\rm{II},\rm{exp},\gamma}\cdot\cO_\cC^{\rm{I},\rm{ext}}\cdot\cO_\cC^{\rm{II},\rm{ext}}\cdot\cO_\cC^{<\gamma},$$
which together with Lemma~\ref{lem: I II ext} and previous discussion clearly implies that $F_\xi^{\Omega^\pm}|_\cC\in\cO_\cC$. The proof is thus finished.
\end{proof}

Now we are ready to prove Statement~\ref{state: goal}.
\begin{thm}\label{thm: constructible and inv fun}
For each $\Lambda$-lift $\Omega^\pm$, we have $$F_\xi^{\Omega^\pm}|_\cC\in\cO_\cC.$$
\end{thm}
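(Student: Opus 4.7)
The plan is to combine Theorem~\ref{thm: reduce to constructible}, the explicit formulas of \S~\ref{sub:exp:for}, and Proposition~\ref{prop: ps and inv} via an induction on the norm of the $\Lambda$-lift. By Theorem~\ref{thm: reduce to constructible}, $F_\xi^{\Omega^\pm}\in \cO_{\xi,\Lambda}^{\rm{con}}$; restricting to $\cC$ and noting that $\cO_\cC^{\rm{ss}}\subseteq \cO_\cC$ by construction, it therefore suffices to prove $F_\xi^{\Omega_0^\pm}|_\cC\in \cO_\cC$ for every constructible $\Lambda$-lift $\Omega_0^\pm$. I will argue by induction on $\gamma\defeq|\Omega_0^\pm|\in \N\Lambda^\square$, so that the induction hypothesis supplies $\cO_\cC^{<\gamma}\subseteq \cO_\cC$.

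Having fixed a constructible $\Lambda$-lift $\Omega_0^\pm$ of norm $\gamma$, I split into cases according to its type. If $\Omega_0^\pm$ is of Type~$\rm{I}$ or Type~$\rm{II}$ with $\Omega_0^+$ being $\Lambda$-extremal, then Proposition~\ref{prop: type I extremal} or Proposition~\ref{prop: type II extremal} yields $F_\xi^{\Omega_0^\pm}|_\cC\in \cO_\cC$ immediately. If $\Omega_0^\pm$ is of Type~$\rm{III}$ with both $\Omega_0^+$ and $\Omega_0^-$ pseudo $\Lambda$-decompositions of some $(\al,j)\in\widehat{\Lambda}$, the first clause of Proposition~\ref{prop: type III} gives the conclusion directly. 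If $\Omega_0^\pm$ is of Type~$\rm{III}$ otherwise, the second clause of Proposition~\ref{prop: type III} gives
\[
F_\xi^{\Omega_0^\pm}|_\cC\in \langle \cO_\cC^{\rm{ps}}\cdot \cO_\cC^{<\gamma}\cdot \cO_\cC\rangle_+;
\]
then Proposition~\ref{prop: ps and inv} provides $\cO_\cC^{\rm{ps}}\subseteq \cO_\cC$, the induction hypothesis provides $\cO_\cC^{<\gamma}\subseteq \cO_\cC$, and since $\cO_\cC$ is closed under inverting units of $\cO(\cC)$ (cf.~Definition~\ref{def: similar and below a block}), the claim follows.

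The delicate case is Type~$\rm{I}$ or Type~$\rm{II}$ with $\Omega_0^+$ being $\Lambda$-exceptional. Here Proposition~\ref{prop: type I exceptional} or Proposition~\ref{prop: type II exceptional} yields a relation
\[
F_\xi^{\Omega_0^\pm}|_\cC+\sum_{\Omega_1^\pm} \varepsilon(\Omega_1^\pm)\,F_\xi^{\Omega_1^\pm}|_\cC\in \cO_\cC,
\]
where each $\Omega_1^\pm$ is a balanced pair with $\Omega_1^-=\Omega_0^-$ and $\Omega_1^+\in \mathbf{D}_{(\al,j),\Lambda}$ satisfying $\Omega_0^+<\Omega_1^+$; in particular both components of $\Omega_1^\pm$ are pseudo $\Lambda$-decompositions of the same element of $\widehat{\Lambda}\cap\mathrm{Supp}_{\xi,\cJ}^\gamma$, so $F_\xi^{\Omega_1^\pm}|_\cC\in \cO_\cC^{\rm{ps}}$. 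Proposition~\ref{prop: ps and inv} then gives $F_\xi^{\Omega_1^\pm}|_\cC\in \cO_\cC$ for every such $\Omega_1^\pm$, and rearranging the displayed relation yields $F_\xi^{\Omega_0^\pm}|_\cC\in \cO_\cC$, which closes the induction. The anticipated subtlety is precisely that in these exceptional cases the ``error terms'' $\Omega_1^\pm$ share the norm $\gamma$ rather than dropping below it, so a naive induction on $\gamma$ alone would not close; the resolution is that those error terms already lie in $\cO_\cC^{\rm{ps}}$, whose containment in $\cO_\cC$ is exactly what Proposition~\ref{prop: ps and inv} has proved, itself by a careful downward recursion through the types $\rm{I}\text{-}\rm{max}$, $\rm{II}\text{-}\rm{max}$, $\rm{I}\text{-}\rm{exp}$, $\rm{II}\text{-}\rm{exp}$ of \S~\ref{sub:main:criterions}.
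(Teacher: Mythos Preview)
Your proof is correct and follows essentially the same strategy as the paper: reduce to constructible lifts via Theorem~\ref{thm: reduce to constructible}, invoke Proposition~\ref{prop: ps and inv} together with the explicit formulas of \S~\ref{sub:exp:for}, and close by induction on the norm. One streamlining is available: for a constructible $\Lambda$-lift of Type~$\rm{I}$ or Type~$\rm{II}$, both $\Omega_0^+$ and $\Omega_0^-$ are already pseudo $\Lambda$-decompositions of the same element $(\al,j)\in\widehat{\Lambda}$ (by Conditions~$\rm{I}$-\ref{it: I 1} and $\rm{II}$-\ref{it: II 1}), so $F_\xi^{\Omega_0^\pm}|_\cC\in\cO_\cC^{\rm{ps}}\subseteq\cO_\cC$ by Proposition~\ref{prop: ps and inv} directly, bypassing the extremal/exceptional split and the explicit-formula Propositions~\ref{prop: type I exceptional}--\ref{prop: type II extremal}; this is the route the paper takes, leaving only the genuinely new Type~$\rm{III}$ case for the induction.
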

\begin{proof}
If there exists $(\al,j)\in\widehat{\Lambda}$ such that both $\Omega^+$ and $\Omega^-$ are pseudo $\Lambda$-decompositions of $(\al,j)$, then we clearly have $F_\xi^{\Omega^\pm}|_\cC\in\cO_\cC$ thanks to Proposition~\ref{prop: ps and inv}. According to Theorem~\ref{thm: reduce to constructible}, it suffices to treat the case when $\Omega^\pm$ is a constructible $\Lambda$-lift of type $\rm{III}$. Then it follows from Proposition~\ref{prop: type III} that
$$F_\xi^{\Omega^\pm}|_\cC\in\langle\cO_\cC^{\rm{ps}}\cdot\cO_\cC^{<|\Omega^\pm|}\cdot\cO_\cC\rangle_+,$$
which together with Proposition~\ref{prop: ps and inv} and an induction on $|\Omega^\pm|$ finishes the proof.
\end{proof}

\begin{cor}\label{cor: separate points}
Statement~\ref{state: separate points prime} is true for each $\cC\in\cP_\cJ$.
\end{cor}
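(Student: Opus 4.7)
The plan is to obtain Corollary~\ref{cor: separate points} as an immediate combination of Theorem~\ref{thm: constructible and inv fun} with Lemma~\ref{lem: reduce to sec state}, once we have placed $\cC$ inside a suitable $\cN_{\xi,\Lambda}$. Concretely, given $\cC\in\cP_\cJ$, I would first invoke Remark~\ref{rmk: new partition} (resp.~Lemma~\ref{lem:union:partition} and Proposition~\ref{prop: niveau partition}) to find $w_\cJ\in \un{W}$, $\xi\in\Xi_{w_\cJ}$, and $\Lambda\subseteq \mathrm{Supp}_{\xi,\cJ}$ with $\cC\subseteq \cN_{\xi,\Lambda}$. This is the setup required to apply all of \S~\ref{sub:stack:scheme}--\S~\ref{sec:inv:cons}; in particular Proposition~\ref{prop: quotient of strata} guarantees the geometric quotient $\cC\slash{\sim_{\un{T}\text{-sh.cnj}}}$ exists as a scheme.

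Next, I would verify Statement~\ref{state: goal} for this $\cC$: for every $\Lambda$-lift $\Omega^\pm$ one must have $F_\xi^{\Omega^\pm}|_\cC\in \cO_\cC$. This is exactly Theorem~\ref{thm: constructible and inv fun}, whose proof is already completed in the preceding subsection by combining the reduction Theorem~\ref{thm: reduce to constructible} (which expresses arbitrary $\Lambda$-lifts in terms of constructible ones) with Proposition~\ref{prop: ps and inv} (handling the pseudo-decomposition case) and Proposition~\ref{prop: type III} (handling the type~III constructible case via induction on $|\Omega^\pm|$).

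Finally, I would invoke Lemma~\ref{lem: reduce to sec state}, which asserts that Statement~\ref{state: goal} implies Statement~\ref{state: separate points prime}. Unwinding that argument: using Proposition~\ref{prop: pass to scheme} and the transitive $\un{T}$-action on $\bG_m^{fn-r_\xi}\times \bG_m^{\#\cB}$, one reduces two $R$-points $x_1,x_2\in \cC(R)$ with identical invariant-function values to a pair satisfying \eqref{equ: two point with same image}; the functions $f_{w_\cJ,I_\cJ^m}\in\Inv(\cC)$ then force equality of all $D_{\xi,\ell}^{(j)}$-coordinates, and Statement~\ref{state: goal} applied to the $\Lambda$-lifts defining $F_\xi^{(\al,j),\cB}$ forces equality of the remaining $u_\xi^{(\al,j)}$-coordinates, which via \eqref{equ: std section Lambda} gives $x_1=x_2$.

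Since the genuine technical work has been distributed across \S~\ref{sec:comb:lifts}--\S~\ref{sec:inv:cons}, no new obstacle arises at the level of the corollary itself: the only thing to check is that the choice of $(\xi,\Lambda)$ containing $\cC$ is canonical enough that the conclusion is independent of it, which is automatic since the statement involves only $\cC$ and $\Inv(\cC)$, and any two such pairs yield the same $\cN_{\xi,\Lambda}\supseteq \cC$ by Lemma~\ref{lem: new partition}.
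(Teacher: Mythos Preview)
Your proposal is correct and follows essentially the same approach as the paper, which simply cites Lemma~\ref{lem: reduce to sec state} and Theorem~\ref{thm: constructible and inv fun}. The extra details you provide (locating $\cC$ inside some $\cN_{\xi,\Lambda}$ via Remark~\ref{rmk: new partition}, unwinding Lemma~\ref{lem: reduce to sec state}, and checking independence of the choice of $(\xi,\Lambda)$ via Lemma~\ref{lem: new partition}) are all valid elaborations of what the paper leaves implicit.
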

\begin{proof}
This follows directly from Lemma~\ref{lem: reduce to sec state} and Theorem~\ref{thm: constructible and inv fun}.
\end{proof}

\subsection{Summary and Examples}\label{sub: examples}
In this section, we illustrate some key ideas in the proofs of results from \S\,\ref{sec:comb:lifts}, \S\,\ref{sec:const:inv} and \S\,\ref{sec:inv:cons} through examples. We fix a choice of $w_\cJ\in\un{W}$, $\xi\in \Xi_{w_\cJ}$, $\Lambda\subseteq \mathrm{Supp}_{\xi,\cJ}$ and $\cC\in\cP$ satisfying $\cC\subseteq\cN_{\xi,\Lambda}$.

The set of constructible $\Lambda$-lifts (see Definition~\ref{def: constructible lifts}) satisfies two crucial properties: it generates all $\Lambda$-lifts or equivalently all balanced pairs (see \S\,\ref{sub: cons lifts} and \S\,\ref{sub:main:criterions}), and at the same time it is generated by the set of invariant functions (in the sense of \S\,\ref{sub:exp:for}). The conditions in Definition~\ref{def: constructible lifts} are carefully chosen to balance these two properties and are exactly the ones used in the proofs in \S\,\ref{sec:inv:cons}.
Although the logic of \S\,\ref{sec:comb:lifts}, \S\,\ref{sec:const:inv} and \S\,\ref{sec:inv:cons} is to define the set of constructible $\Lambda$-lifts, construct an invariant function for each constructible $\Lambda$-lift and then compute their restrictions to $\cC$, we suggest the readers read these sections simultaneously in the order type \rm{I}, type \rm{II} and then finally type \rm{III}.

For each constructible $\Lambda$-lift $\Omega^\pm$ and each $(k,j)\in\mathbf{n}_\cJ$, we attach the data $v_{\cJ}^{\Omega^\pm}$, $I_{\cJ}^{\Omega^\pm}$, $\al^{\Omega^\pm}_{k,j}$ and $\mathbf{D}^{\Omega^\pm}_{k,j}$ as in \S\,\ref{sec:const:inv} and \S\,\ref{sub: notation for each k j}. The element $v_{\cJ}^{\Omega^\pm}$ determines a growing sequence of minors $f_{S_{\bullet}^{j,\Omega^\pm},j}$ for each embedding $j\in\cJ$, which altogether generate the group of invertible sections of the open Bruhat cell $\cM^\circ_{v_{\cJ}^{\Omega^\pm}}=\un{U}\backslash\un{U}\un{T}w_0\un{U}w_0v_{\cJ}^{\Omega^\pm}$. For each $(k,j)\in\mathbf{n}_\cJ$, the restriction $f_{S_k^{j,\Omega^\pm},j}|_{\cN_{\xi,\Lambda}}$ can be read off from the set $\mathbf{D}^{\Omega^\pm}_{k,j}$ using Lemma~\ref{lem: from sets to formula}. Note that $f_{S_k^{j,\Omega^\pm},j}|_{\cN_{\xi,\Lambda}}$ is a monomial with variables $\{D_{\xi,k}^{(j)}\mid (k,j)\in\mathbf{n}_\cJ\}$ and $\{u_\xi^{(\al,j)}\mid (\al,j)\in\Lambda\}$ (or equivalently invertible on $\cN_{\xi,\Lambda}$) if and only if $\#\mathbf{D}^{\Omega^\pm}_{k,j}=1$. Assuming the first and third families of conditions in Definition~\ref{def: constructible lifts} (in the sense of Remark~\ref{rmk: motivation of def}), we actually prove in \S\,\ref{sub: exp type I}, \S\,\ref{sub: exp type II} and \S\,\ref{sub: exp type III} that
\begin{itemize}
\item $\mathbf{D}^{\Omega^\pm}_{k,j}\neq \emptyset$ for each $(k,j)\in\mathbf{n}_\cJ$;
\item if there exists $(k,j)\in\mathbf{n}_\cJ$ such that $\#\mathbf{D}^{\Omega^\pm}_{k,j}\geq 2$, then there exists $n\geq k_\star>k_\star'\geq 1$ and $j_0\in\cJ$ such that $\#\mathbf{D}^{\Omega^\pm}_{k,j}\geq 2$ if and only if $j=j_0$ and $k_\star\geq k>k_\star'$.
\end{itemize}
Note that $\#\mathbf{D}^{\Omega^\pm}_{k,j}=1$ for each $(k,j)\in\mathbf{n}_\cJ$ if and only if $\cN_{\xi,\Lambda}\subseteq \cM^\circ_{v_{\cJ}^{\Omega^\pm}}$.
This control of $\mathbf{D}^{\Omega^\pm}_{k,j}$ (or the relative position between $\cN_{\xi,\Lambda}$ and $\cM^\circ_{v_{\cJ}^{\Omega^\pm}}$) relies on the construction (see \S\,\ref{sub:comb:gen}) of the data $d_{\psi}$, $c_{\psi}^s$ and $i_\psi^{s,e}$ for each $1\leq s\leq d_\psi$, $1\leq e\leq e_{\psi,s}$ and each pair $\psi=(\Omega,\Lambda)$ with $\Omega$ a $\Lambda$-decomposition (of some $(\al,j)\in\widehat{\Lambda}$ determined by $\Omega$). This key construction is motivated by the following example.
\begin{exam}\label{exam:maximally nonsplit niveau one}
Assume that $n\geq 3$, $\cJ=\{j_0\}$, $w_{j_0}=1$ and $u_{j_0}=w_0$, which implies that $\un{W}\cong W$, $M_\xi=T$, $\cN_\xi=\cM_{1}^\circ$ and $\mathrm{Supp}_{\xi,\cJ}=\Phi^+\times\{j_0\}$. We choose a subset $\Lambda\subseteq \mathrm{Supp}_{\xi,\cJ}$ that contains $((i,i+1),j_0)$ for all $1\leq i\leq n-1$ as well as $((1,n),j_0)$. This implies that $\rhobar_{x,\lambda+\eta}$ is \emph{maximally nonsplit} (namely does not have reducible semisimple subquotient) for each $x\in\cN_{\xi,\Lambda}$. Note that for each $\Lambda$-decomposition $\Omega^+$ of $((1,n),j_0)$ with $\Omega^+\neq\Omega^-\defeq \Omega_{\{((1,n),j_0)\}}^{\rm{max}}=\{((1,n),j_0)\}$, $\Omega^\pm$ is a $\Lambda$-lift. We wish to find a suitable $\Omega^+$ such that $\Omega^\pm$ is $\Lambda$-constructible and then construct an invariant function $f_\xi^{\Omega^\pm}$. Recall that $\mathbf{D}_{((1,n),j_0),\Lambda}$ is the set of all $\Lambda$-decompositions of $((1,n),j_0)$, equipped with a partial order (see Definition~\ref{def: partial order}). We choose $\Omega^+$ to be the unique maximal element of the set $\mathbf{D}_{((1,n),j_0),\Lambda}\setminus \{((1,n),j_0)\}$. Let $\psi_1\defeq (\Omega^+,\Lambda)$ and $\psi_2\defeq (\Omega^-,\Lambda)$. We recall from \S\,\ref{sub:comb:gen} the data $d_{\psi}$, $c_{\psi}^s$ and $i_\psi^{s,e}$ for each $\psi\in\{\psi_1,\psi_2\}$, $1\leq s\leq d_\psi$ and $1\leq e\leq e_{\psi,s}$. It is easy to check that both $\Omega^+$ and $\Omega^-$ are $\Lambda$-ordinary (using $M_\xi=T$), and $\Omega^+$ is either $\Lambda$-exceptional or $\Lambda$-extremal. We deduce easily from $i_{\psi_2}^{1,1}=i_{\Omega^+,1}$ that $\Omega^\pm$ is $\Lambda$-constructible of type~\rm{I}. Hence, we can attach to $\Omega^\pm$ an invariant function $f_\xi^{\Omega^\pm}=f_{v_{\cJ}^{\Omega^\pm},I_{\cJ}^{\Omega^\pm}}$ as in \S\,\ref{sub: type I}. For each $a=1,2$, we recall the shortened notation $k_{a,c}$ for each $0\leq c\leq c_a$ and $k_a^{s,e}$ for each $1\leq s\leq d_a$ and $1\leq e\leq e_{a,s}$ from the beginning of \S\,\ref{sec:const:inv} (with $c_2=1$, $k_{1,0}=k_{2,0}=n$, $k_{1,c_1}=k_{2,c_2}=1$ and $k_2^{1,1}=k_{1,c_1-1}$). Then we have $v_{j_0}^{\Omega^\pm}=(k_{1,c_1-1},\cdots,k_{1,2},k_{1,1})(k_{1,0},k_1^{1,1},\cdots,k_1^{1,e_{1,1}},\cdots,k_1^{d_1,e_{1,d_1}},k_{1,c_1})$ and $I_{j_0}^{\Omega^\pm}=\{k_{1,c}\mid 1\leq c\leq c_1-1\}$ if $\Omega^+$ is $\Lambda$-exceptional, and $v_{j_0}^{\Omega^\pm}=(k_{1,c_1-1},\cdots,k_{1,2},k_{1,1},k_{1,0})(k_1^{1,1},\cdots,k_1^{1,e_{1,1}},\cdots,k_1^{d_1,e_{1,d_1}},k_{1,c_1})$ and $I_{j_0}^{\Omega^\pm}=\{k_{1,c}\mid 0\leq c\leq c_1-1\}$ if $\Omega^+$ is $\Lambda$-extremal.
\end{exam}

Let $\Omega^\pm$ be a constructible $\Lambda$-lift. Now that $v_{\cJ}^{\Omega^\pm}$ is chosen and $\mathbf{D}^{\Omega^\pm}_{k,j}$ is well understood, we need to find a subset $I_{\cJ}^{\Omega^\pm}\subseteq\mathbf{n}_\cJ$ such that
\begin{itemize}
\item $I_{\cJ}^{\Omega^\pm}$ is a union of $(v_{\cJ}^{\Omega^\pm},1)$-orbits;
\item the study of $f_\xi^{\Omega^\pm}=f_{v_{\cJ}^{\Omega^\pm},I_{\cJ}^{\Omega^\pm}}$ relates $F_\xi^{\Omega^\pm}|_\cC$ with $\cO_\cC$ (see \S\,\ref{sub:exp:for} for precise statements).
\end{itemize}
When $\cN_{\xi,\Lambda}\subseteq \cM^\circ_{v_{\cJ}^{\Omega^\pm}}$ and thus $f_\xi^{\Omega^\pm}$ is invertible along $\cN_{\xi,\Lambda}$, we can even find $I_{\cJ}^{\Omega^\pm}$ such that $f_\xi^{\Omega^\pm}|_{\cN_{\xi,\Lambda}}\sim F_\xi^{\Omega^\pm}$. In order to compute $f_\xi^{\Omega^\pm}|_\cC$ (if exists) using Lemma~\ref{lem: from sets to formula}, we only need to control the subset $I_{\cJ}^{\Omega^\pm,\star}\subseteq I_{\cJ}^{\Omega^\pm}$ of $(k,j)$ when $\mathbf{D}^{\Omega^\pm}_{k,j}\neq \mathbf{D}^{\Omega^\pm}_{k+1,j}$. Recall from the beginning of \S\,\ref{sec:const:inv} that we have a decomposition of index $\Z/t=(\Z/t)^+\sqcup(\Z/t)^-$, which induces a decomposition $\al^{\Omega^\pm}_{k,j}=\al^{\Omega^\pm}_{+,k,j}+\al^{\Omega^\pm}_{-,k,j}$ with $\al^{\Omega^\pm}_{\bullet,k,j}\defeq \sum_{a\in(\Z/t)^\bullet}\al^{\Omega^\pm}_{a,k,j}$ for each $\bullet\in\{+,-\}$ (see \S\,\ref{sub: exp type I}, \S\,\ref{sub: exp type II} and \S\,\ref{sub: exp type III} for the definition of $\al^{\Omega^\pm}_{a,k,j}$ for type \rm{I}, \rm{II} and \rm{III} respectively). Our choice of $v_{\cJ}^{\Omega^\pm}$ ensures that either $\al^{\Omega^\pm}_{+,k,j}=\al^{\Omega^\pm}_{+,k+1,j}$ or $\al^{\Omega^\pm}_{-,k,j}=\al^{\Omega^\pm}_{-,k+1,j}$ or both, for each $(k,j)\in\mathbf{n}_\cJ$. We can thus define a subset $\mathbf{n}_\cJ^{\Omega^\pm,+}$ (resp.~$\mathbf{n}_\cJ^{\Omega^\pm,-}$) of $\mathbf{n}_\cJ$ by considering all pairs $(k,j)$ satisfying $\al^{\Omega^\pm}_{+,k,j}>\al^{\Omega^\pm}_{+,k+1,j}$ or $\al^{\Omega^\pm}_{-,k,j}<\al^{\Omega^\pm}_{-,k+1,j}$ (resp.~$\al^{\Omega^\pm}_{+,k,j}<\al^{\Omega^\pm}_{+,k+1,j}$ or $\al^{\Omega^\pm}_{-,k,j}>\al^{\Omega^\pm}_{-,k+1,j}$). We wish to choose $I_{\cJ}^{\Omega^\pm}$ as the union of $(v_{\cJ}^{\Omega^\pm},1)$-orbits inside $\mathbf{n}_\cJ$ generated by $\mathbf{n}_\cJ^{\Omega^\pm,+}$ such that $I_{\cJ}^{\Omega^\pm,\star}=\mathbf{n}_\cJ^{\Omega^\pm,+}$, but this relies on a crucial fact that the $(v_{\cJ}^{\Omega^\pm},1)$-orbits generated by $\mathbf{n}_\cJ^{\Omega^\pm,+}$ are disjoint from those generated by $\mathbf{n}_\cJ^{\Omega^\pm,-}$. This is why we need the second family of conditions in Definition~\ref{def: constructible lifts} (in the sense of Remark~\ref{rmk: motivation of def}). For example, if $\Omega^\pm$ is constructible of type \rm{III} with $\Omega^+\sqcup\Omega^-$ being circular, then we have
\begin{itemize}
\item $\mathbf{n}_\cJ^{\Omega^\pm,+}=\mathbf{n}_{\Omega^+\sqcup\Omega^-,1}\times\{j_{\Omega^+\sqcup\Omega^-}\}$ and $\mathbf{n}_\cJ^{\Omega^\pm,-}=\mathbf{n}_{\Omega^+\sqcup\Omega^-,-1}\times\{j_{\Omega^+\sqcup\Omega^-}\}$;
\item $I_{\cJ}^{\Omega^\pm}=\bigcup_{k\in\mathbf{n}_{\Omega^+\sqcup\Omega^-,1}}](k,j_{\Omega^+\sqcup\Omega^-}),(k,j_{\Omega^+\sqcup\Omega^-})]_{w_\cJ}$ is the $(v_{\cJ}^{\Omega^\pm},1)$-orbit generated by $\mathbf{n}_\cJ^{\Omega^\pm,+}$ which is disjoint from
$\bigcup_{k\in\mathbf{n}_{\Omega^+\sqcup\Omega^-,-1}}](k,j_{\Omega^+\sqcup\Omega^-}),(k,j_{\Omega^+\sqcup\Omega^-})]_{w_\cJ}$.
\end{itemize}

As part of the second family of conditions in Definition~\ref{def: constructible lifts} (see Condition~\rm{I}-\ref{it: I 3}, \rm{II}-\ref{it: II 3} and \rm{III}-\ref{it: III 3}), it is crucial for us to consider $\Lambda$-ordinary $\Lambda$-decompositions (see Definition~\ref{def: ordinary decomposition}) and ordinarization (see Lemma~\ref{lem: reduce to ordinary}).
\begin{exam}\label{exam:non ord}
Assume that $n\geq 4$, $\cJ=\{j_0\}$, $u_{j_0}=w_0$ and $w_{j_0}$ restricts to a $(n-2)$-cycle of the set $\{2,3,\cdots,n-1\}$. This implies that $\un{W}\cong W$, $r_\xi=3$, $M_\xi=\GL_1\times\GL_{n-2}\times\GL_1$ and $\mathrm{Supp}_{\xi,\cJ}\subseteq\Phi^+\times\{j_0\}$ is the subset consisting of $(\al,j_0)$ satisfying either $i_\al=1$ or $i_\al^\prime=n$. Let $\Lambda\subseteq\mathrm{Supp}_{\xi,\cJ}$ be a subset containing $((1,n),j_0)$ and thus $\widehat{\Lambda}=\Lambda$. Then there exists a unique $\cC\in\cP_\cJ$ such that $\cC\subseteq \cN_{\xi,\Lambda}$ is open. For each $(\al,j_0)\in\Lambda$, we note that $\mathbf{D}_{(\al,j_0),\Lambda}\neq\{\{(\al,j_0)\}\}$ forces $\al=(1,n)$. It is obvious that $\{((1,n),j_0)\}$ is a $\Lambda$-ordinary $\Lambda$-decomposition of $((1,n),j_0)$. Let $\Omega\in\mathbf{D}_{((1,n),j_0),\Lambda}\setminus\{\{((1,n),j_0)\}\}$ be a $\Lambda$-decomposition, which implies that $2\leq i_{\Omega,1}\leq n-1$ and $\Omega=\{((1,i_{\Omega,1}),j_0),((i_{\Omega,1},n),j_0)\}$. We write $\psi=(\Omega,\Lambda)$ and observe that $d_\psi=1$, $c_\psi^1=1$ and thus $\Omega$ is $\Lambda$-exceptional. Note that $r_\xi=3$, $[1]_\xi=\{1\}$, $[2]_\xi=\{2,\cdots,n-1\}$ and $[3]_\xi=\{n\}$. Hence, $\Omega$ is $\Lambda$-ordinary if and only if $e_{\psi,1}=0$ (namely $u_{j_0}(i_{\Omega,1})=\min\{u_{j_0}(i_{\Omega',1})\mid \Omega'\in\mathbf{D}_{((1,n),j_0),\Lambda}\}$). Moreover, if $\Omega$ is not $\Lambda$-ordinary, then $e_{\psi,1}\geq 1$ and $\Omega_\dagger=\{((1,i_\psi^{1,1}),j_0),((i_{\Omega,1},n),j_0)\}$ (see (\ref{eq: dagger})). In fact, we have the following two possibilities:
\begin{itemize}
\item if $\Omega$ is $\Lambda$-ordinary, then the pair $\Omega, \{((1,n),j_0)\}$ forms a constructible $\Lambda$-lift of type \rm{I} with $$f_\xi^{\Omega, \{((1,n),j_0)\}}|_{\cN_{\xi,\Lambda}}\sim \frac{u_\xi^{((1,n),j_0)}-u_\xi^{((1,i_{\Omega,1}),j_0)}u_\xi^{((i_{\Omega,1},n),j_0)}}{u_\xi^{((1,n),j_0)}};$$
\item if $\Omega$ is not $\Lambda$-ordinary, then $\{((1,n),j_0)\}, \Omega_\dagger$ is a constructible $\Lambda$-lift of type \rm{II} and $$f_\xi^{\{((1,n),j_0)\}, \Omega_\dagger}|_{\cN_{\xi,\Lambda}}\sim \frac{u_\xi^{((1,n),j_0)}}{u_\xi^{((1,i_\psi^{1,1}),j_0)}u_\xi^{((i_{\Omega,1},n),j_0)}}.$$
\end{itemize}
For each pair of integers $2\leq i\neq i'\leq n-1$, we also observe that
\begin{itemize}
\item if $((1,i),j_0),((1,i'),j_0)\in\Lambda$, then the pair $\{((1,i),j_0)\},\{((1,i'),j_0)\}$ forms a constructible $\Lambda$-lift of type \rm{III} with $f_\xi^{\{((1,i),j_0)\},\{((1,i'),j_0)\}}|_{\cN_{\xi,\Lambda}}\sim \frac{u_\xi^{((1,i),j_0)}}{u_\xi^{((1,i'),j_0)}}$;
\item if $((i,n),j_0),((i',n),j_0)\in\Lambda$, then the pair $\{((i,n),j_0)\},\{((i',n),j_0)\}$ forms a constructible $\Lambda$-lift of type \rm{III} with $f_\xi^{\{((i,n),j_0)\},\{((i',n),j_0)\}}|_{\cN_{\xi,\Lambda}}\sim \frac{u_\xi^{((i,n),j_0)}}{u_\xi^{((i',n),j_0)}}$.
\end{itemize}
We can easily prove Theorem~\ref{thm: constructible and inv fun} for $\cC$ by combining the invariant functions listed above. Now we specialize to the case when $n=4$ and $\Lambda=\mathrm{Supp}_{\xi,\cJ}$, which implies that the $\Lambda$-decomposition $\{((1,2),j_0),((2,4),j_0)\}$ is not $\Lambda$-ordinary. One can actually check that there does not exist $g\in \Inv(\cC)$ such that $g|_{\cN_{\xi,\Lambda}}$ is similar to either $\frac{u_\xi^{((1,2),j_0)}u_\xi^{((2,4),j_0)}}{u_\xi^{((1,4),j_0)}}$ or $\frac{u_\xi^{((1,4),j_0)}-u_\xi^{((1,2),j_0)}u_\xi^{((2,4),j_0)}}{u_\xi^{((1,4),j_0)}}$.
\end{exam}

The following example gives another lower bound for the amount of combinatorics necessary for the proof of Theorem~\ref{thm: constructible and inv fun}.
\begin{exam}\label{exam:one loop}
Assume that $n\geq 3$, $\cJ=\{j_0\}$, $w_{j_0}=1$, $u_{j_0}=w_0$, which implies that $\un{W}\cong W$, $M_\xi=T$, $\cN_\xi=\cM_{1}^\circ$ and $\mathrm{Supp}_{\xi,\cJ}=\Phi^+\times\{j_0\}$.
We choose a $n$-cycle $w\in W$ and define $\Lambda_{w}\defeq \Omega_{w}^+\sqcup\Omega_{w}^-\subseteq\mathrm{Supp}_{\xi,\cJ}$ by
$$
\left\{\begin{array}{c}
\Omega_{w}^+\defeq \{(k,w(k))\mid 1\leq k\leq n,\,k<w(k)\}\times\{j_0\};\\
\Omega_{w}^-\defeq \{(w(k),k)\mid 1\leq k\leq n,\,k>w(k)\}\times\{j_0\}.
\end{array}\right.
$$
It turns out that
\begin{itemize}
\item
$\cN_{\xi,\Lambda_{w}}\slash{{\sim}_{\un{T}\text{-\textnormal{cnj}}}} \cong \Spec \F[(D_{\xi,k}^{(j_0)})^{\pm1}\mid 1\leq k\leq n][(F_\xi^{\Omega_{w}^\pm})^{\pm1}]$;
\item there exists a unique element $\cC_{w}\in\cP$ which is an open subscheme of $\cN_{\xi,\Lambda_{w}}$.
\end{itemize}
One can check that $f_{1,\{k\}}$ is invertible on $\cN_{\xi,\Lambda_{w}}$ and $f_{1,\{k\}}|_{\cN_{\xi,\Lambda_{w}}}=D_{\xi,k}^{(j_0)}$ for each $1\leq k\leq n$. To prove Theorem \ref{thm: constructible and inv fun} for $\cC_{w}$ we look for a permutation $v_{j_0}^{\Omega_{w}^\pm}\in W$ and a subset $I_{j_0}^{\Omega_{w}^\pm}\subseteq\{1,\dots,n\}$ that satisfies the following
\begin{itemize}
\item $v_{j_0}^{\Omega_{w}^\pm}(I_{j_0}^{\Omega_{w}^\pm})=I_{j_0}^{\Omega_{w}^\pm}$ and $f_\xi^{\Omega_{w}^\pm}=f_{v_{j_0}^{\Omega_{w}^\pm},I_{j_0}^{\Omega_{w}^\pm}}\in \Inv(\cC_{w})$;
\item $F_\xi^{\Omega_{w}^\pm}|_{\cC_{w}}$ can be generated from $f_\xi^{\Omega_{w}^\pm}|_{\cC_{w}}$ and $\{(D_{\xi,k}^{(j_0)}|_{\cC_{w}})^{\pm1}\mid 1\leq k\leq n\}$.
\end{itemize}
In particular, we see that to prove Theorem \ref{thm: constructible and inv fun} for $\cC_{w}$ we need to construct a pair $(v_{j_0}^{\Omega_{w}^\pm},I_{j_0}^{\Omega_{w}^\pm})$ for each $n$-cycle $w\in W$. This delicate combinatorial construction is done in \S\,\ref{sub: type III}. The properties of $f_\xi^{\Omega_{w}^\pm}=f_{v_{j_0}^{\Omega_{w}^\pm},I_{j_0}^{\Omega_{w}^\pm}}$ mentioned above are checked in \S\,\ref{sub: exp type III}.
\end{exam}
\clearpage{}%
\clearpage{}%
\section{$\tld{\cF\cL}_\cJ$, $\tld{\Fl}_\cJ$ and Serre weights}
\label{sec:FL:SW}
In this section we compare the Fontaine--Laffaille moduli space with moduli of Breuil--Kisin modules with tame descent data, inside the Emerton--Gee stack, and interpret the partition $\cP_\cJ$ on $\tld{\cF\cL}_\cJ$ using \emph{shapes} and \emph{extremal weights}. We extensively use the theory of local models introduced in \cite[\S\,4 and \S\,5]{MLM}.

\vspace{3mm}

We recall the notion of alcoves, admissible set and certain subsets in the extended affine Weyl group for $\un{G}$. \emph{Only in this section, we omit the subscript $\cJ$ from the notation of elements of $\un{W}$ or $\tld{\un{W}}$ for simplicity.} An alcove is a connected component of
the complement $X^*(\un{T})\otimes_{\Z}\R\ \setminus\ \big(\bigcup_{(\alpha,n)}H_{\alpha,n}\big)$ where we write $H_{\alpha,n}\defeq \{\mu:\ \langle\mu,\alpha^\vee\rangle=n\}$ for the root hyperplane associated to $(\alpha,n)\in \un{\Phi}^+\times \Z$. We say that an alcove $\un{A}$ is \emph{restricted} (resp.~\emph{dominant}) if $0<\langle\mu,\alpha^\vee\rangle<1$ (resp.~$\langle\mu,\alpha^\vee\rangle>0$) for all simple roots $\alpha\in \un{\Delta}$ and $\mu\in \un{A}$.
If $\un{A}_0 \subset X^*(\un{T})\otimes_{\Z}\R$ is the alcove defined by the condition $0<\langle\mu,\alpha^\vee\rangle<1$ for all positive roots $\alpha\in \un{\Phi}^+$, we let
\[\tld{\un{W}}^+\defeq\{\tld{w}\in \tld{\un{W}}\mid\tld{w}(\un{A}_0) \textrm{ is dominant}\}\]
and
\begin{equation*}
\tld{\un{W}}^+_1\defeq\{\tld{w}\in \tld{\un{W}}^+\mid\tld{w}(\un{A}_0) \textrm{ is restricted}\}.
\end{equation*}
Note that $\mu\in X^*(\un{T})\cap (p\un{A}_0)$ if and only if $\mu$ is $0$-generic Fontaine--Laffaille (cf.~Definition~\ref{defn:mGenFL}).
We fix an injection $\un{W}\hookrightarrow\tld{\un{W}}$ whose composition with the surjection $\tld{\un{W}}\twoheadrightarrow\un{W}$ is the identity map. We also write $\tld{w}_h  =( \tld{w}_{h,i}) \in \tld{\un{W}}^+_1$ for the element $w_0 t_{-\eta}$.

The alcove $\un{A}_0$ defines a Bruhat order on $\un{W}_a$ denoted by $\leq$.  By letting $\un{\Omega}$ denote the stabilizer of $\un{A}_0$, we have $\tld{\un{W}} = \un{W}_a \rtimes \un{\Omega}$ and so $\tld{\un{W}}$ inherits a Bruhat order as well: for $\tld{w}_1, \tld{w}_2\in \un{W}_a$ and $\tld{w}\in \un{\Omega}$, $\tld{w}_1\tld{w}\leq \tld{w}_2\tld{w}$ if and only if $\tld{w}_1\leq \tld{w}_2$, and elements in different right $\un{W}_a$-cosets are incomparable.
We extend the Coxeter length function $\ell$ on $\un{W}_a$ to $\tld{\un{W}}$ by setting $\ell(\tld{w}\delta)\defeq \ell(\tld{w})$ if $\tld{w}\in\un{W}_a$, $\delta\in\un{\Omega}$.
If $\lambda \in X^*(\un{T})$ we define
\[
\Adm(\lambda) \defeq \left\{ \tld{w} \in \tld{\un{W}} \mid \tld{w} \leq t_{w(\lambda)} \text{ for some } w \in \un{W}\right\}.
\]
We define an involution $\tld{w}\mapsto \tld{w}^*$ of $\tld{\un{W}}$ by $((wt_\nu)^*)_j\defeq t_{\nu_{j}} w_{j}^{-1}$. This involution does not preserve the Bruhat order on $\tld{\un{W}}$ fixed above. Note that \cite[Definition 2.1.2]{LLL} writes $\tld{\un{W}}^{\vee}$ for the group $\tld{\un{W}}$ equipped with the Bruhat order defined by the \emph{antidominant} base alcove, which makes $\tld{w}\mapsto \tld{w}^*$ an order preserving, involutive anti-isomorphism between $\tld{\un{W}}^{\vee}$ and $\tld{\un{W}}$.

\subsection{Serre weights and Galois representations}
\label{sec:SWandGR}
In this section, we recall some background on Serre weights together with their relation to the Emerton--Gee stack. Then we use the notion \emph{specialization} to define the set of \emph{extremal weights} for Fontaine--Laffaille Galois representations (see equation (\ref{equ: obv wt})).

An absolutely irreducible $\F$-representation of $\GL_n(k)$ will be called a \emph{Serre weight}.
The set $X_1(\un{T})$ of $p$-restricted dominant weights is defined as
\[
X_1(\un{T}):=\{
\mu\in X(\un{T})\mid 0\leq \langle \mu,\alpha^\vee\rangle \leq p-1\text{ for all $\alpha^\vee\in \un{\Delta}^\vee$}
\}
\]
and by \cite[Lemma 9.2.4]{GHS}) we have a bijection
\begin{align}
\label{ex:bij:SW}
\nonumber
X_1(\un{T})/(p-\pi)X^0(\un{T})&\stackrel{\sim}{\longrightarrow}\big\{\mathrm{Serre\ Weights}\big\}_{/\sim}\\
\mu+ (p-\pi)X^0(\un{T})&\longmapsto
F(\mu)
\end{align}
Given an integer $m\geq 0$, we say that a Serre weight $F$ is \emph{$m$-generic Fontaine--Laffaille} if $F\cong F(\mu)$ with $\mu+\eta\in X^*(\un{T})$ being $m$-generic Fontaine--Laffaille (cf.~Definition~\ref{defn:mGenFL}). Note that this condition implies $\mu\in X_1(\un{T})$ and does not depend on the class of $\mu$ modulo $(p-\pi)X^0(\un{T})$.

\begin{defn}[\cite{MLM}, \S\,2.2]
\label{defn:LAP:SW}
A \emph{lowest alcove presentation} for  a Serre weight $V$ is an equivalence class of pairs $(\tld{w}_1,\omega)$, where $\tld{w}_1\in \tld{\un{W}}^+_1$ and $\omega\in X^*(\un{T})$ is a $0$-generic
Fontaine--Laffaille weight
(with equivalence relation $(\tld{w}_1,\omega)\sim (t_\nu\tld{w}_1,\omega-\nu)$ for $\nu\in X^0(\un{T})$) such that
\[
V\cong F_{(\tld{w}_1,\omega)}\defeq F(\pi^{-1}(\tld{w}_1)\cdot (\omega-\eta)).
\]
We say that a lowest alcove presentation $(\tld{w}_1,\omega)$ of a Serre weight $F_{(\tld{w}_1,\omega)}$ is \emph{compatible with an algebraic central character $\zeta\in X^*(\un{Z})$} if $t_{\omega-\eta}\tld{w}_1\un{W}_a$ corresponds to $\zeta$ via the isomorphism $\tld{\un{W}}/\un{W}_a\stackrel{\sim}{\ra} X^*(\un{Z})$.
\end{defn}

We let $\cX_n$ denote the Noetherian formal algebraic stack over $\Spf\cO$ defined in \cite[Definition 3.2.1]{EGstack}.
Its restriction to a complete local Noetherian $\cO$-algebra $R$ with finite residue field is equivalent to the groupoid of continuous $G_K$-representations over rank $n$ projective $R$-modules. In \cite[Theorem 6.5.1]{EGstack} the authors establish a bijection between Serre weights and the irreducible components of $\cX_{n,\mathrm{red}}$, the latter denoting the reduced structure underlying the special fibre of $\cX_n$.
For a Serre weight $V$ we define $\cC_V$ as the irreducible component of $\cX_{n,\mathrm{red}}$ corresponding via \cite[Theorem 6.5.1]{EGstack} to the Serre weight $V^\vee\otimes{\det}^{n-1}$.
This is compatible with \cite[\S\,7.4]{MLM}.

\vspace{2mm}

Let $\rhobar:G_K\ra\GL_n(\F)$ be a continuous Galois representation which we consider as an $\F$-point in $|\cX_n(\F)|$.
We define
\[
W^g(\rhobar)\defeq\big\{V \,\, |\,\, \rhobar\in |\cC_V(\F)|\big\}.
\]

If $\taubar$ is a tame inertial $\F$-type such that $[\taubar]$ has a lowest alcove presentation $(s, \mu)$ where $\mu+\eta$ is $n$-generic Fontaine--Laffaille, we have a set of Serre weights $W^?(\taubar)$ associated with it as in \cite[Definition 9.2.5]{GHS} (cf.~\cite[Definition 2.2.11]{LLL}).

\vspace{2mm}

Let $\lambda\in X^*_+(\un{T})$ be a dominant weight with $\lambda+\eta$ being Fontaine--Laffaille (cf.~Definition~\ref{defn:mGenFL}). Recall from \S\,\ref{subsub:FLT} the scheme $\tld{\cF\cL}_{\cJ}=\un{U}\backslash \un{G}$.
We have a formally smooth morphism $\tld{\cF\cL}_{\cJ}\ra \un{B}\backslash \un{G}$ which makes $\tld{\cF\cL}_{\cJ}$ a $\un{T}$-torsor over $\un{B}\backslash \un{G}$.
The $\un{T}$-action on $\un{G}$ induced by right multiplication descends to a $\un{T}$-action on $\tld{\cF\cL}_{\cJ}$ and on $\un{B}\backslash \un{G}$.
Let $x\in\tld{\cF\cL}_{\cJ}(\F)$ be an element such that $\rhobar_{x,\lambda+\eta}\cong \rhobar$ and write $\ovl{x}$ for its image in $\un{B}\backslash \un{G}(\F)$.
A \emph{specialization $\rhobar^{\speci}$ of $\rhobar$} is a tame inertial $\F$-type which corresponds to a $\un{T}$-fixed point in the Zariski closure of $\ovl{x}\cdot \un{T}$.
We write $\rhobar\leadsto \rhobar^{\speci}$ to mean that $\rhobar^{\speci}$ is a specialization of $\rhobar$.

We can characterize the representations $\rhobar$ which have a given specialization.
\begin{lemma}\label{lem: specialization and open cell}
Let $x\in\tld{\cF\cL}_\cJ(\F)$ be a point and $w\in\un{W}$ be an element.
Then $\rhobar_{x,\lambda+\eta}\leadsto \taubar(w^{-1},\lambda+\eta)$ if and only if $x\in \cM^\circ_{w}(\F)$.
\end{lemma}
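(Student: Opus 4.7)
The plan is to work in the flag variety $\un{B}\backslash \un{G}$ (via the smooth map $\tld{\cF\cL}_\cJ\to \un{B}\backslash \un{G}$) and exhibit the image of $\cM_w^\circ$ as a $\un{T}$-stable open neighborhood of the $\un{T}$-fixed point $\un{B}w$, after which the lemma follows from standard torus-orbit geometry.

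First, I would identify the image of $\cM_w^\circ$ in $\un{B}\backslash \un{G}$. Combining Lemma~\ref{lem: hypersurfaces are Schubert} with the identities $w_0\un{B}w_0=\un{B}^-$ and $\un{U}\un{B}^-=\un{B}\un{U}^-$ (the big open cell of $\un{G}$), one computes $\un{U}w_0\un{B}w_0 w=\un{B}\un{U}^-\cdot w$, so the image is
\[
\cV_w\defeq \un{B}\backslash \un{B}\un{U}^-\cdot w\subseteq \un{B}\backslash \un{G},
\]
an open subscheme isomorphic to $\un{U}^-$ via $u^-\mapsto \un{B}u^-w$. Under this isomorphism, the image of $\overline{\cM}_w$ corresponds to $u^-=1$, i.e.~the point $\un{B}w$, which is a $\un{T}$-fixed point of $\un{B}\backslash \un{G}$. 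The set $\cV_w$ is stable under the right $\un{T}$-action: using that $w\un{T}w^{-1}=\un{T}$ normalizes $\un{U}^-$,
\[
\un{B}u^-w\cdot t=\un{B}u^-(wtw^{-1})\cdot w=\un{B}(wtw^{-1})^{-1}u^-(wtw^{-1})\cdot w\in\cV_w.
\]
Finally, by Lemma~\ref{lem: class of irr} the fixed point $\un{B}w$ corresponds to the tame inertial type $\taubar(w^{-1},\lambda+\eta)$, and the assignment $w\mapsto\taubar(w^{-1},\lambda+\eta)$ is injective on $\un{W}$, because by Proposition~\ref{prop: niveau partition} the niveau strata $\cN_{w_\cJ}$ form a partition characterized by these inertial types.

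For the $(\Leftarrow)$ direction, take $x\in\cM_w^\circ(\F)$, so $\ovl{x}=\un{B}u^-w$ with $u^-\in \un{U}^-(\F)$. For a dominant regular cocharacter $\nu:\bG_m\to\un{T}$, set $t=(w^{-1}\nu w)(s)$; then the above formula gives $\ovl{x}\cdot t=\un{B}\bigl(\nu(s)^{-1}u^-\nu(s)\bigr)\cdot w$, and as $s\to 0$ each negative-root coordinate of $\nu(s)^{-1}u^-\nu(s)$ scales by a positive power of $s$, so $\nu(s)^{-1}u^-\nu(s)\to 1$ and the limit is $\un{B}w$. Thus $\un{B}w\in\overline{\ovl{x}\cdot\un{T}}$ and $\rhobar_{x,\lambda+\eta}\leadsto\taubar(w^{-1},\lambda+\eta)$. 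For the $(\Rightarrow)$ direction, the injectivity above reduces the hypothesis to $\un{B}w\in\overline{\ovl{x}\cdot\un{T}}$. A standard fact about torus actions on varieties (degeneration to a fixed point in the orbit closure via a one-parameter subgroup) provides $\mu:\bG_m\to \un{T}$ with $\lim_{s\to 0}\ovl{x}\cdot\mu(s)=\un{B}w$; openness of $\cV_w$ gives $\ovl{x}\cdot\mu(s)\in\cV_w$ for small $s\neq 0$, and the $\un{T}$-stability of $\cV_w$ then lets us apply $\mu(s)^{-1}$ to conclude $\ovl{x}\in\cV_w$, i.e.~$x\in\cM_w^\circ(\F)$.

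The main technical point is the injectivity of $w\mapsto \taubar(w^{-1},\lambda+\eta)$ used to identify the $\un{T}$-fixed points of $\un{B}\backslash \un{G}$ with the inertial types they specialize to; this is guaranteed by the Fontaine--Laffaille genericity of $\lambda+\eta$ through Proposition~\ref{prop: niveau partition}. Granted that, the rest of the proof is a clean piece of torus-orbit geometry on the flag variety.
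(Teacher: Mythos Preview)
Your proof is correct and follows essentially the same approach as the paper: reduce to the flag variety $\un{B}\backslash\un{G}$, identify the image of $\cM_w^\circ$ as the $\un{T}$-stable open cell $\cV_w=\un{B}\backslash\un{B}\un{U}^-w$ containing the fixed point $\un{B}w$, and use a regular dominant cocharacter for the $(\Leftarrow)$ direction. The one notable difference is in $(\Rightarrow)$: the paper's argument is more direct than yours, observing simply that the complement of $\cV_w$ is closed, $\un{T}$-stable, and does not contain $\un{B}w$, so if $\ovl{x}\notin\cV_w$ then the entire orbit closure $\overline{\ovl{x}\cdot\un{T}}$ lies in this complement and cannot contain $\un{B}w$ --- this avoids invoking the heavier one-parameter-subgroup degeneration fact (whose statement over a not-necessarily-algebraically-closed field would otherwise merit a word of justification). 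Your explicit handling of the injectivity of $w\mapsto\taubar(w^{-1},\lambda+\eta)$ via Proposition~\ref{prop: niveau partition} is a point the paper leaves implicit in its appeal to Lemma~\ref{lem: class of irr}.
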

\begin{proof}
According to Lemma~\ref{lem: class of irr}, it suffices to show that, given $\ovl{x}\in\un{B}\backslash \un{G}(\F)$, the Zariski closure of $\ovl{x}\cdot \un{T}(\F)$ contains $\un{B}\backslash \un{B}w$ if and only if $\ovl{x}\in\un{B}\backslash \un{B}w_0\un{B}w_0w$. As the complement of $\un{B}\backslash \un{B}w_0\un{B}w_0w$ is Zariski closed in $\un{B}\backslash \un{G}$ and does not contain $\un{B}\backslash \un{B}w$, we just need to show that the Zariski closure of $\un{B}\backslash \un{B}w_0 A w_0w \un{T}=\un{B}\backslash \un{B}w_0 A \un{T} w_0w$ contains $\un{B}\backslash \un{B}w$, for each $A=(A^{(j)})_{j\in\cJ}\in\un{U}(\F)$. We consider the morphism
\begin{equation}\label{equ: tends to zero}
\bG_m\rightarrow\un{B}\subseteq\un{G}:~x\mapsto \Diag(x^{n-1},\dots,x,1)\cdot A\cdot \Diag(x^{-n+1},\dots,x^{-1},1)
\end{equation}
which clearly extends to a morphism $\bA^1\rightarrow\un{G}$ that contains $1$ in the image. This implies that $\un{B}\backslash \un{B}w$ is in the Zariski closure of $\un{B}\backslash \un{B}w_0 A \un{T} w_0w$ and the proof is finished.
\end{proof}

\begin{rmk}\label{rmk: specialization and strata}
As each $\cC\in\cP_\cJ$ is stable under both left and right $\un{T}$-multiplication, the proof of Lemma~\ref{lem: specialization and open cell} also shows that $\cC\subseteq\cM_{w}^\circ$ if and only if the Zariski closure $\overline{\cC}$ of $\cC$ contains $\overline{\cM}_{w}$, which is the fiber of $\tld{\cF\cL}_\cJ\twoheadrightarrow\un{B}\backslash\un{G}$ over $w$. In particular, for each $x\in\cC(\F)$, we have
$$\{\rhobar^{\speci}\mid\rhobar_{x,\lambda+\eta}\leadsto \rhobar^{\speci}\}=\{\taubar(w^{-1},\lambda+\eta)\mid \overline{\cM}_{w}\subseteq\overline{\cC}\}.$$
\end{rmk}

If $\taubar$ is a tame inertial $\F$-type such that $[\taubar]$ has a lowest alcove presentation $(s, \mu)$ where $\mu+\eta$ is $n$-generic Fontaine--Laffaille, we have a subset $W_{\textrm{extr}}(\taubar)\subseteq W^?(\taubar)$ defined in \cite[Definition 7.1.3]{GHS} (see also \cite[Definition 2.6.3]{MLM}).
The choice of the lowest alcove presentation $(s, \mu)$ of $\taubar$ gives a bijection $\un{W}\ra W_\obv(\taubar)$ defined by $w\mapsto F_{(\tld{w}, \tld{w}(\taubar) \tld{w}^{-1}(0))}$ where $\tld{w}$ in the image of $w$ under our fixed injection $\un{W}\hookrightarrow\tld{\un{W}}^+_1$.
When the lowest alcove presentation of $\taubar$ is understood, the image of $w\in\un{W}$ via this bijection will be denoted by $V_{\taubar,w}$, and called the extremal weight of $\taubar$ corresponding to $w$.

Let $\lambda+\eta$ be $n$-generic Fontaine--Laffaille. Let $\rhobar:G_K\ra\GL_n(\F)$ be a continuous Galois representation satisfying $\rhobar\cong\rhobar_{x,\lambda+\eta}$ for some $x\in\tld{\cF\cL}_\cJ(\F)$ (and thus $\rhobar$ is $n$-generic, cf.~Definition~\ref{def:gen:Gal}). Then for each specialization $\rhobar^{\speci}$ of $\rhobar$, there exists $w\in\un{W}$ such that $\rhobar^{\speci}\cong\tau(w^{-1},\lambda+\eta)$, and thus $W_{\obv}(\rhobar^{\speci})$ is defined.
We define the set $W_{\obv}(\rhobar)$ of extremal weights of~$\rhobar$ as follows:
\begin{equation}\label{equ: obv wt}
W_{\obv}(\rhobar)\defeq W^g(\rhobar)\cap \bigcup_{\substack{\rhobar\leadsto \rhobar^{\speci}}}W_{\obv}(\rhobar^{\speci}).
\end{equation}

\subsection{Local models for the Emerton--Gee stacks and their components}
\label{subsec:LMforEG}
In this section, we recall some results on the local models of \cite[\S\,4 and \S\,5]{MLM}, and describe how the moduli of Fontaine--Laffaille modules fit into the theory (Propositions~\ref{prop:rel:cat} and~\ref{prop:rel:cat:1})

We fix a $(3n-1)$-generic Fontaine--Laffaille weight $\lambda+\eta\in X^*(\un{T})$.
Since all schemes are defined over $\Spec \F$ we omit the subscript $\bullet_{\F}$ from the notation when considering the base change to $\F$ of an object $\bullet$ defined over $\cO$ (e.g.~$\GL_{n,\F}$ will be denoted by $\GL_n$ and so on).
This shall cause no confusion.

We write $L\GL_n$ for the loop group on Noetherian $\F$-algebras $R\mapsto \GL_n(R(\!(v)\!))$
and $\Iw$ (resp.~$\Iw_1$) for its Iwahori (resp.~pro-$v$ Iwahori) subgroup
\begin{align*}
R\mapsto&\left\{A\in \GL_n(R[\![v]\!])\mid \text{$A$ is upper triangular modulo $v$}\right\}
\\
(\text{resp.~}
R\mapsto&\left\{A\in \GL_n(R[\![v]\!])\mid\text{$A$ is unipotent upper triangular modulo $v$}\right\}).
\end{align*}
We define an affine flag variety $\Fl$ (resp.~$\tld{\Fl}$) as the (fpqc) sheafification of the presheaf $$R\mapsto \Iw(R)\backslash L\GL_n(R)\quad (\mbox{resp.}~R\mapsto \Iw_1(R)\backslash L\GL_n(R)).$$
Then $\Fl$ is an ind-proper ind-scheme, and the natural map $\tld{\Fl}\ra\Fl$ is a $T$-torsor.
We write the products (over $\F$)
\[
\Fl_{\cJ}\defeq \prod_{j\in\cJ} \Fl\quad\mbox{ and }\quad \tld{\Fl}_{\cJ}\defeq \prod_{j\in\cJ} \tld{\Fl},
\]
and have a $\un{T}$-torsor $\tld{\Fl}_\cJ\ra \Fl_\cJ$, which is the product over $\cJ$ of the $T$-torsor $ \tld{\Fl}\ra \Fl$ above.

Let $a\leq b$ be integers.
If in the definition of $L\GL_n$ we impose the further conditions $v^{-a}A,\ v^b A^{-1}\in \mathrm{M}_n(R[\![v]\!])$, we have the subfunctor $L^{[a,b]}\GL_n$ of $L\GL_n$ which induces finite type subschemes ${\Fl}^{[a,b]}$, $\tld{\Fl}^{[a,b]}$ in ${\Fl}$ and $\tld{\Fl}$ respectively.  We define ${\Fl}^{[a,b]}_\cJ$ and $\tld{\Fl}^{[a,b]}_\cJ$ analogously.

For $\tld{w}\in \tld{W}$, we define $$S^\circ_\F(\tld{w})\defeq\Iw\backslash \Iw\tld{w}\Iw \subset \Fl \quad(\mbox{resp}.~\tld{S}^\circ_\F(\tld{w})\defeq \Iw_1\backslash \Iw\tld{w}\Iw\subset \tld{\Fl})$$ that are called the open affine Schubert cell associated to $\tld{w}$.
For $\tld{z}_\cJ=(\tld{z}_{j})_{j\in\cJ}\in\tld{\un{W}}$ write
\[
S^{\circ}_\F(\tld{z}_\cJ)=\prod_{j\in\cJ}S^\circ_\F(\tld{z}_{j})\quad\mbox{ and }\quad \tld{S}^{\circ}_\F(\tld{z}_\cJ)=\prod_{j\in\cJ}\tld{S}^\circ_\F(\tld{z}_{j}).
\]

We consider the closed sub ind-scheme ${\Fl}^{\nabla_0}$ of $\Fl$  which is the (fpqc) sheafification of the functor
\begin{equation}
\label{eq:Loop:nabla}
R\mapsto\left\{\Iw(R) A\in \Iw(R)\backslash \GL_n(R(\!(v)\!))\ |\
\Big(v\frac{d}{dv} A\Big)A^{-1}\in\frac{1}{v}\Lie \Iw_1(R)
\right\}
\end{equation}
By taking products over $\cJ$ we obtain the closed sub ind-scheme ${\Fl}^{\nabla_0}_\cJ$ of ${\Fl}_\cJ$.
We define $\tld{\Fl}^{\nabla_0}_\cJ$ as the pull back  of $\Fl_\cJ^{\nabla_0}$ along $ \tld{\Fl}_\cJ\ra \Fl_\cJ$.

Denote by $Y^{[0,n-1],\tau}_{\F}$ the base change to $\F$ of the groupoid of Kisin modules with height in $[0,n-1]$ and type $\tau$ (cf.~Defninition~\ref{defn:FCris:dd}).
Given a lowest alcove presentation $(s,\mu)$ of $\tau$ where $\mu+\eta$ is $n$-generic Fontaine--Laffaille, %
\cite[Corollary 5.2.3]{MLM} gives a natural map
\[\pi_{(s,\mu)}:Y^{[0,n-1],\tau}_{\F}\cong [(\tld{\Gr}^{[0,n-1]}_{\cG,\F})^{\cJ}/_{(s,\mu)}\un{T}]\into [\tld{\Fl}^{[0,n-1]}_{\cJ}/_{(s,\mu)}\un{T}] \]
given by sending $\fM$ to the class of $A_{\fM,\beta}$, for any choice of eigenbasis $\beta$ (cf.~\emph{loc.~cit.}~for the definition of $(\tld{\Gr}^{[0,n-1]}_{\cG,\F})^{\cJ}$ and the $\un{T}$-action involved).
We define $\tld{Y}^{[0,n-1],\tau}_\F$ to be the pull-back of $Y^{[0,n-1],\tau}_\F$ under the $\un{T}$-torsor $\tld{\Fl}^{[0,n-1]}_{\cJ}\to [\tld{\Fl}^{[0,n-1]}_{\cJ}/_{(s,\mu)}\un{T}]$.
In particular we have a natural map
\begin{equation}\label{eq: pi tilde s mu}
\tld{\pi}_{(s,\mu)}: \tld{Y}^{[0,n-1],\tau}_\F \to \tld{\Fl}^{[0,n-1]}_{\cJ}.
\end{equation}

Let $\lambda+\eta$ be the weight fixed at the beginning of this section.
Let $\zeta_\lambda\in X^*(\un{Z})$ correspond to the class $t_\lambda \un{W}_a\in \tld{\un{W}}/\un{W}_a\cong X^*(\un{Z})$.
Let $V$ be a Serre weight with lowest alcove presentation $(\tld{w}_1,\omega)$ compatible with $\zeta_\lambda$, such that $\omega$ is $(n-1)$-generic Fontaine--Laffaille.
We define ${C}^{\zeta_\lambda}_{V}$ as the Zariski closure of ${{S}^\circ_\F(\tld{w}_1^* w_0)\tld{s}^*\cap {\Fl}_\cJ^{\nabla_0}}$ for an arbitrary $\tld{s}\in \tld{\un{W}}$ satisfying $\tld{s}(0)=\omega$ \cite[equation (4.9)]{MLM}.
It is a closed irreducible subvariety of $\Fl^{\nabla_0}_\cJ$ of dimension $\binom{n}{2}[K:\Qp]$, which does not depend on the equivalence class of the lowest alcove presentation of $V$ compatible with $\zeta_\lambda$.
We define $\tld{C}^{\zeta_\lambda}_{V}$ as the pullback of ${C}^{\zeta_\lambda}_{V}$ along $ \tld{\Fl}_\cJ\ra \Fl_\cJ$.
If $V\cong F(\mu)$ for some $\mu\in X_1(\un{T})$ which is $(3n-1)$-deep (in the sense of \cite[Definition 2.1.10]{MLM}) we obtain from \cite[Theorem 7.4.2]{MLM} (see also \emph{loc.~cit.}~Remark 7.4.3(2)) a $\un{T}$-torsor
\begin{equation}
\label{eq:LM:component}
\tld{C}^{\zeta_\lambda}_{V}\stackrel{f.s.}{\longrightarrow}\cC_{V}\subseteq \cX_{n,\mathrm{red}}.
\end{equation}

Fix once and for all the lowest alcove presentation $(1,\lambda+\eta)$ for the Serre weight $F(\lambda)$, so that $F_{(1,\lambda+\eta)}=F(\lambda)$.
We have an action of $\un{T}$ on $\Fl_\cJ$ by right multiplication.
This action induces actions on $\Fl_\cJ^{\nabla_0}$, $C_V^{\zeta_\lambda}$ and $\tld{C}_V^{\zeta_\lambda}$.

We now want to relate the groupoids from \S\,\ref{subsubBKD}, \S\,\ref{subsub:Phi-mod}, and \S\,\ref{sub:MFSandGR}, the scheme $\tld{\cF\cL}_\cJ$, and the objects introduced above. Recall from Definition~\ref{def: Sch var} and \S\,\ref{subsubsec:PS} that given $w=(w_j)_{j\in\cJ},u=(u_j)_{j\in\cJ}\in\un{W}$ we have the Schubert cell $\tld{\cS}^\circ(w,u)$ and the Schubert variety $\tld{\cS}(w,u)$ in $\tld{\cF\cL}_\cJ$ associated with $(w, u)\in \un{W}\times\un{W}$. We also write $\cS^\circ(w,u)$ (resp.~$\cS(w,u)$) for the corresponding Schubert cell (resp.~Schubert variety) in $\un{B}\backslash \un{G}$.

We can now state the proposition resuming the relations among the objects introduced so far.
\begin{prop}
\label{prop:rel:cat}
Let $\lambda\in X^*_+(\un{T})$ be a dominant weight with $\lambda+\eta$ being Fontaine--Laffaille. Let $\tau$ be a tame inertial type with a lowest alcove presentation $(s,\mu)$ where $\mu+\eta$ is $2n$-generic Fontaine--Laffaille, such that $(s,\mu)$ is compatible with $\zeta_\lambda$ and satisfies $\tld{\Fl}^{[0,n-1]}_{\cJ}\tld{w}^*(\tau)\subseteq\tld{\Fl}^{[1-n,n-1]}_{\cJ}t_{\lambda+\eta}$.
Then we have a commutative diagram of groupoid-valued functors over Noetherian $\F$-algebras:
\begin{equation}
\label{diagram:rel:gpds}
\xymatrix{
&\tld{Y}^{[0,n-1], \tau}_\F\ar_{r_{\tld{w}^*(\tau)}}[d]\ar@/^1.0pc/^{\eps_\tau}[rrd]&&&\\
\tld{\cF\cL}_{\cJ}\ar^{r_{\lambda+\eta}\qquad}[r]&\tld{\Fl}^{[1-n,n-1]}_\cJ t_{\lambda+\eta}\ar[r]&\big[\tld{\Fl}^{[1-n,n-1]}_\cJ t_{\lambda+\eta}\slash{{\sim}_{\un{T}\text{-\textnormal{sh.cnj}}}}\big]\ar_-{\iota}[r]& \Phi\text{-}\Mod^{\text{\emph{\'et}},n}
}
\end{equation}
where the maps are described as follows:
\begin{enumerate}
\item the map $r_{\tld{w}^*(\tau)}$  is given by composing the map $\tld{\pi}_{(s,\mu)}:\tld{Y}^{[0,n-1],\tau}_\F\to \tld{\Fl}^{[0,n-1]}_{\cJ} $ with right multiplication by $\tld{w}^*(\tau)$ (which lands in $\tld{\Fl}^{[1-n,n-1]}_{\cJ}t_{\lambda+\eta}$ by assumption);
\item the top diagonal map is the composition of the natural map $\tld{Y}^{[0,n-1],\tau}_\F\to Y^{[0,n-1],\tau}_\F$ with the map $\eps_\tau$ defined in (\ref{eq:KMtoPHI});
\item the map $\iota$ is induced by the map sending $(A^{(j)}v^{\lambda_{j}+\eta_j})_{j\in\cJ}\in \prod_{j\in\cJ} L^{[1-n,n-1]}\GL_nv^{\lambda_{j}+\eta_j}$ to the free \'etale $\varphi$-module $\cM$ of rank $n$, such that the matrix of $\phi^{(j)}_\cM$ in the standard basis is given by $A^{(j)}v^{\lambda_{j}+\eta_j}$;
\item
the map $r_{\lambda+\eta}$ is induced by right multiplication by $t_{\lambda+\eta}$.
\end{enumerate}
Moreover, the map $\iota$ is a monomorphism of stacks and the image of $r_{\lambda+\eta}$ is contained in $\tld{\Fl}^{\nabla_0}_\cJ$ and equals $\tld{C}^{\zeta_\lambda}_{F(\lambda)}$.
In particular, the $\un{T}$-fixed points of $C^{\zeta_\lambda}_{F(\lambda)}$ are the elements $\un{W}t_{\lambda+\eta}$.
\end{prop}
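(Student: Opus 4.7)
The plan is to establish in turn: commutativity of (\ref{diagram:rel:gpds}); that $\iota$ is a monomorphism of stacks; that the image of $r_{\lambda+\eta}$ lies in $\tld{\Fl}^{\nabla_0}_\cJ$ and equals $\tld{C}^{\zeta_\lambda}_{F(\lambda)}$; and finally the description of the $\un{T}$-fixed points of $C^{\zeta_\lambda}_{F(\lambda)}$. For commutativity, I would work with an $R$-point $(\fM,\phi_\fM,\beta)$ of $\tld{Y}^{[0,n-1],\tau}_\F$ equipped with an eigenbasis: by definition $r_{\tld{w}^*(\tau)}\circ\tld{\pi}_{(s,\mu)}$ sends it to the class of $(A_{\fM,\beta}^{(j')}\tld{w}^*(\tau))_{j'}$. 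On the other hand, $\eps_\tau(\fM)=(\fM\otimes_{W(k')[\![u']\!]}\cO_{\cE,L'})^{\Delta=1}$ is computed by expanding the matrix $C^{(j')}_{\fM,\beta}$ of $\phi_\fM$ in the eigenbasis via (\ref{equ: explicit formula for the morphism}) and taking $\Delta$-invariants inside $\cO_{\cE,L'}\otimes_{\Zp}R$ through the identity $v=(u')^{p^{f'}-1}$. The resulting Frobenius matrix in the induced standard basis of $\eps_\tau(\fM)$ is precisely $(A^{(j)}_{\fM,\beta}\tld{w}^*(\tau))_j$, matching the definition of $\iota$.

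To prove that $\iota$ is a monomorphism, one reduces to the fact that two representatives of the form $M=Av^{\lambda+\eta}$ with $A\in L^{[1-n,n-1]}\GL_n$ giving isomorphic étale $\varphi$-modules must differ by the combined action of $\Iw_1$ on the left and shifted $\un{T}$-conjugation on the right. This rigidity is essentially the same as the closed immersion property of $\eps_\tau$ proved in \cite[Proposition 5.4.1, 5.4.3]{MLM}, which applies since $\mu+\eta$ is $n$-generic Fontaine--Laffaille (implied by $2n$-genericity); the argument transports through the commutative diagram using that $r_{\tld{w}^*(\tau)}$ is a closed immersion and that the quotient by $\un{T}$-shifted conjugation is fpqc-surjective on representatives. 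For the image of $r_{\lambda+\eta}$, specialise to the standard lowest alcove presentation $(1,\lambda+\eta)$ of $F(\lambda)$ with $\tld{s}=t_{\lambda+\eta}$ (so $\tld{s}^*=t_{\lambda+\eta}$, $\tld{s}(0)=\lambda+\eta$, and $\tld{w}_1^*w_0=w_0$), giving $\tld{C}^{\zeta_\lambda}_{F(\lambda)}$ as the Zariski closure of $\tld{S}^\circ_\F(w_0)t_{\lambda+\eta}\cap\tld{\Fl}^{\nabla_0}_\cJ$. For any representative $M=Av^{\lambda+\eta}$ with $A\in\un{G}(R)$ constant in $v$, the direct computation $(v\tfrac{d}{dv}M)M^{-1}=A\cdot\mathrm{diag}(\lambda+\eta)\cdot A^{-1}$ gives a constant matrix, which automatically lies in $v^{-1}\Lie\Iw_1(R)$. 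Hence $r_{\lambda+\eta}$ lands in $\tld{\Fl}^{\nabla_0}_\cJ$. Moreover the image of the open Bruhat cell $\un{U}\backslash\un{U}\un{T}w_0\un{U}\cdot t_{\lambda+\eta}$ lies inside $\tld{S}^\circ_\F(w_0)t_{\lambda+\eta}$, and since both $r_{\lambda+\eta}(\tld{\cF\cL}_\cJ)$ and $\tld{C}^{\zeta_\lambda}_{F(\lambda)}$ are closed irreducible of the same dimension $\binom{n+1}{2}[K:\Qp]$, the two must coincide.

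For the classification of the $\un{T}$-fixed points of $C^{\zeta_\lambda}_{F(\lambda)}$, one descends the identification $\tld{C}^{\zeta_\lambda}_{F(\lambda)}=\un{U}\backslash\un{G}\cdot t_{\lambda+\eta}$ along the $\un{T}$-torsor $\tld{\Fl}_\cJ\to\Fl_\cJ$ to obtain $C^{\zeta_\lambda}_{F(\lambda)}=\un{B}\backslash\un{G}\cdot t_{\lambda+\eta}$. The $\un{T}$-fixed points of $\un{B}\backslash\un{G}$ under right multiplication are precisely the Weyl group elements by the Bruhat decomposition $\un{G}=\bigsqcup_{w\in\un{W}}\un{B}w\un{B}$, yielding the fixed points as $\un{W}t_{\lambda+\eta}$. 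The main technical obstacle I anticipate is in the monomorphism step: one must show that, over a general test ring $R$, the combined $\Iw_1$-left-action and shifted $\un{T}$-right-action exhaust the indeterminacy of the matrix representation $Av^{\lambda+\eta}$ modulo étale $\varphi$-module isomorphism. This amounts to a careful bookkeeping of basis changes that preserve the form $Av^{\lambda+\eta}$ while being compatible with $\varphi$, and leverages the genericity of $\mu+\eta$ exactly as in the proof of the closed immersion property of $\eps_\tau$ in \cite{MLM}.
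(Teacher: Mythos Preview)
Your approach is essentially the same as the paper's, with only cosmetic differences in the level of detail. The paper dispatches the commutativity of the top triangle in one line by citing \cite[Proposition~5.4.7]{MLM} (with $\nu=\lambda$, $a=1-n$, $b=n-1$), whereas you sketch the underlying eigenbasis computation; your $\nabla_0$ verification, the open-cell/dimension argument for $r_{\lambda+\eta}(\tld{\cF\cL}_\cJ)=\tld{C}^{\zeta_\lambda}_{F(\lambda)}$, and the fixed-point description via $\un{B}\backslash\un{G}$ all match the paper's proof almost verbatim.

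One caution: your argument that $\iota$ is a monomorphism does not quite work as written. Knowing that $\eps_\tau$ is a closed immersion and that the diagram commutes only constrains $\iota$ on the image of $r_{\tld{w}^*(\tau)}$, which is not all of $\tld{\Fl}^{[1-n,n-1]}_\cJ t_{\lambda+\eta}$; you cannot ``transport'' the monomorphism property to $\iota$ through a non-surjective map. The paper's own proof in fact does not address this claim explicitly either (it is presumably absorbed into the cited material from \cite{MLM}), so this is not a defect relative to the paper, but if you want a self-contained argument you should instead argue directly that any isomorphism of \'etale $\varphi$-modules whose Frobenii have the prescribed bounded form is realised by the combined $\Iw_1$-left and shifted-$\un{T}$-conjugation action, along the lines of \cite[Proposition~5.4.1]{MLM}.
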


\begin{proof}
The commutativity of the top triangle is \cite[Proposition 5.4.7]{MLM} (where we take $\nu$ to be $\lambda$ in the notation of \emph{loc.~cit}. and $a=1-n$, $b=n-1$).

Note that $\tld{\cF\cL}_{\cJ}$ naturally embeds in $\tld{\Fl}_{\cJ}$, by thinking of $\un{G}$ as constant matrices in the loop group. Thus its translate $r_{\lambda+\eta}(\tld{\cF\cL}_\cJ)$ is an irreducible closed subscheme of $\tld{\Fl}^{\nabla_0}_\cJ$, and contains $\Iw_{1,\cJ}\backslash \Iw_{1,\cJ}w_0\un{B}t_{\lambda+\eta}=\tld{S}_{\F}^{\circ}(w_0)t_{\lambda+\eta}=r_{\lambda+\eta}(\tld{\cS}^\circ(w_0,1))$ as an open dense subscheme. Since $\tld{C}^{\zeta_\lambda}_{F(\lambda)}$ is by definition the closure of $\tld{S}_\F^{\circ}(w_0)t_{\lambda+\eta}$, we conclude by dimension reasons.

The last assertion follows by passing to the quotient $\tld{C}^{\zeta_\lambda}_{F(\lambda)}\onto C^{\zeta_\lambda}_{F(\lambda)}$ and noting that the set of $\un{T}$-fixed points of $\un{B}\backslash \un{G}$ is precisely the image of $\un{W}\into \un{B}\backslash \un{G}$.
\end{proof}

Recall that we have a formally smooth morphism $\tld{\cF\cL}_{\cJ}\ra \mathrm{FL}_n^{\lambda+\eta}$.
We emphasize that there is a shift of the indices in $\cJ$ when we pass from $\mathrm{FL}_n^{\lambda+\eta}$ to $\Phi\text{-}\Mod^{\text{\'et},n}$ via the maps in Proposition~\ref{prop:rel:cat}. More precisely, if $A^{(j)}\in\GL_n(R)$ is the matrix of the Frobenius map $\phi^{(j)}: \gr^{\bullet}(M^{(j)}) \rightarrow M^{(j+1)}$ of a Fontaine--Laffaille module $M$ (with respect to a given basis of $M$), then $A^{(j)}v^{\lambda_{j}+\eta_j}$ is the matrix of $\phi^{(j)}_\cM:\cM^{(j-1)}\rightarrow\cM^{(j)}$ (with respect to an appropriate basis on the \'etale $\varphi$-module $\cM$ attached to $M$).

\begin{defn}
Let $\tld{w}\in\Adm(\eta)$.
Let $\tau$ be a tame inertial type with a lowest alcove presentation $(s,\mu)$ where $\mu+\eta$ is $2n$-generic Fontaine--Laffaille.
We define the locally closed substack $\tld{Y}^{[0,n-1],\tau}_{\tld{w}^*}$ of $\tld{Y}^{[0,n-1],\tau}_\F$ as the inverse image, via $\tld{\pi}_{(s,\mu)}$, of the open Schubert cell $\tld{S}_\F^\circ(\tld{w}^*)$ in $\tld{\Fl}_\cJ^{[0,n-1]}$.
\end{defn}

\begin{rmk}
\label{rmk:shape:strata}
\begin{enumerate}
\item
We note that for any finte extension $\F'$ of $\F$ and $\tld{w}\in\Adm(\eta)$, the objects of $\tld{Y}^{[0,n-1],\tau}_{\tld{w}^*}(\F')$ are exactly the Breuil--Kisin modules of shape $\tld{w}^*$, cf.~\cite[Definition 5.1.9]{MLM}.
\item
\label{it:open:substacks:2}
For each $\tld{w}\in\Adm(\eta)$, we recall from \cite[Definition 5.2.4(i)]{MLM} the open substack $Y^{[0,n-1],\tau}_{\F}(\tld{w}^*)\subset Y^{[0,n-1],\tau}_\F$, and
let $\tld{Y}^{[0,n-1]}_\F(\tld{w}^*)$ be its pre-image in $\tld{Y}^{[0,n-1]}_\F$.
Then there is an inclusion $\tld{Y}^{[0,n-1],\tau}_{\tld{w}^*}\subseteq \tld{Y}^{[0,n-1],\tau}_\F(\tld{w}^*)$, but this is not an equality in general since $\tld{Y}^{[0,n-1],\tau}_\F(\tld{w}^*)\subseteq \tld{Y}^{[0,n-1],\tau}_\F$ is always open, while $\tld{Y}^{[0,n-1],\tau}_{\tld{w}^*}$ is only locally closed.
\end{enumerate}
\end{rmk}

\begin{prop}
\label{prop:rel:cat:1}
Under the notation and hypotheses of Proposition~\ref{prop:rel:cat}, the diagram (\ref{diagram:rel:gpds}) can be completed as follows over the category of local Artinian $\F$-algebras with residue field $\F$:
\begin{equation*}
\xymatrix{
\tld{Y}^{[0,n-1], \tau}_\F\ar@/^2.0pc/^{T_{dd}^*}[rrrd]\ar_{r_{\tld{w}^*(\tau)}}[d]\ar@/^1.0pc/^{\eps_\tau}[rrd]&&&\\
\tld{\Fl}^{[1-n,n-1]}_\cJ t_{\lambda+\eta}\ar[r]&\big[\tld{\Fl}^{[1-n,n-1]}_\cJ t_{\lambda+\eta}\slash{{\sim}_{\un{T}\text{-\textnormal{sh.cnj}}}}\big]\ar_-{\iota}[r]&\Phi\text{-}\Mod^{\text{\emph{\'et}},n}\ar^{\bV^*_K}_{\sim}[r]& \Rep^n(G_{K_{\infty}})\\
\tld{\cF\cL}_{\cJ}\ar^{r_{\lambda+\eta}}[u]\ar^{\rhobar_{\bullet,\lambda+\eta}}[rrr]&&&
\Rep^n(G_{K}).\ar_{\Res}[u]
}
\end{equation*}
\end{prop}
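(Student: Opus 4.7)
The diagram has three parts to check for commutativity: the top triangle through $\tld{Y}^{[0,n-1],\tau}_\F$ involving $\eps_\tau$ and $r_{\tld{w}^*(\tau)}$, the triangle involving $T^*_{dd}$ and $\bV^*_K\circ \eps_\tau$, and the bottom rectangle connecting $\tld{\cF\cL}_\cJ$ to $\Rep^n(G_{K})$. The first is precisely Proposition~\ref{prop:rel:cat} (established via Proposition~5.4.7 of \cite{MLM} and the closed immersion property of~\eqref{eq: pi tilde s mu}). The second is essentially tautological from the definition of $T^*_{dd}$ in \S~\ref{sub:MFSandGR} as the composite of $\eps_\tau$ with $\bV^*_K$ (the latter being sensible after one identifies $\iota$ as the closed immersion that encodes the passage to $\Delta$-invariants).

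The content of the proposition therefore lies in the bottom square. My plan is, for a local Artinian $\F$-algebra $R$ with residue field $\F$ and a point $x\in \tld{\cF\cL}_\cJ(R)$, to produce explicitly a Breuil--Kisin module $\fM\in \tld{Y}^{[0,n-1],\tau}_\F(R)$ satisfying $r_{\tld{w}^*(\tau)}(\fM)=r_{\lambda+\eta}(x)$ and then to identify $T^*_{dd}(\fM)$ with $\rhobar_{x,\lambda+\eta}|_{G_{K_\infty}}$. The construction of $\fM$ is explicit: fixing a compatible basis $\beta$ for the Fontaine--Laffaille module $M$ attached to $x$ (cf.~Definition~\ref{def: compatible basis}) with Frobenius matrices $(A^{(j)})_{j\in\cJ}$, we declare the matrix of $\phi_\fM^{(j')}$ in an eigenbasis for $\fM$ to be obtained from $A^{(j)}$ via the formula~\eqref{equ: explicit formula for the morphism}, where the factor $v^{\lambda_j+\eta_j}$ (encoded in $r_{\lambda+\eta}$) matches the $(u')^{\bf{a}_{(s_\cJ,\mu)}^{\prime\, (j')}}$ twist by the descent data. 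The genericity assumption on $(s,\mu)$ ensures that this rule indeed defines an object of $Y^{[0,n-1],\tau}_\F(R)$ and that the map $\tld{\pi}_{(s,\mu)}$ of~\eqref{eq: pi tilde s mu} is a closed immersion, making the lift unique up to the $\un{T}$-torsor datum encoded in the choice of eigenbasis.

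Given the existence of $\fM$, commutativity of the upper portion of the diagram yields $\bV^*_K(\iota(r_{\lambda+\eta}(x)))=T^*_{dd}(\fM)$, so it remains to identify the latter with $\rhobar_{x,\lambda+\eta}|_{G_{K_\infty}}$. This is the compatibility between Fontaine--Laffaille theory and Breuil--Kisin theory with tame descent. My strategy is to first treat the case $R=\F$ by lifting: deform $M$ to a Fontaine--Laffaille module $M^\circ$ over $\cO$ (possible since the relevant deformation spaces are formally smooth, by the regularity of $\tld{\cF\cL}_\cJ$), whose attached crystalline representation $\rho^\circ \defeq \Tcris(M^\circ)$ has parallel Hodge--Tate weights $\lambda+\eta$ and inertial type $\tau$ by the choice of a type compatible with $\zeta_\lambda$. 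Since the Breuil--Kisin module associated with $\rho^\circ$ via the functor of \cite[\S~5.4]{MLM} is uniquely determined by its restriction to the open substack $Y^{[0,n-1],\tau}_\F(\tld{w}^*)$ (cf.~Remark~\ref{rmk:shape:strata}\eqref{it:open:substacks:2}) and agrees with our explicit construction after reduction modulo $\varpi$, we obtain $T^*_{dd}(\fM)\cong \rho^\circ|_{G_{K_\infty}}\otimes_{\cO}\F\cong \rhobar_{x,\lambda+\eta}|_{G_{K_\infty}}$. The Artinian case follows by a deformation-theoretic argument, since both sides are functorial in $R$ and agree on the residue field.

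The main obstacle will be making the lift-and-reduce step rigorous in the integral setting. In characteristic zero the compatibility between Fontaine--Laffaille and Breuil--Kisin functors is well known, but passing back down to torsion coefficients requires tracking that the explicit matrix description of $\fM$ indeed matches the reduction of the Breuil--Kisin module attached to the crystalline lift. A clean way to handle this is to observe that both functors $x \mapsto \bV^*_K(\iota(r_{\lambda+\eta}(x)))|_{G_{K_\infty}}$ and $x\mapsto \rhobar_{x,\lambda+\eta}|_{G_{K_\infty}}$ factor through the closed immersion of $\tld{\cF\cL}_\cJ$ into the Emerton--Gee stack $\cX_{n,\mathrm{red}}$ (identifying $\tld{C}^{\zeta_\lambda}_{F(\lambda)}$ as a $\un{T}$-torsor over $\cC_{F(\lambda)}$ by~\eqref{eq:LM:component}), and then invoke the universality of the Emerton--Gee stack: the two natural transformations agree on $\F$-points (by the characteristic zero comparison after lifting) and are each determined by their underlying $\F$-valued functor on the reduced substack, hence coincide as functors on local Artinian $\F$-algebras.
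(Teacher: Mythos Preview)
You correctly identify that only the bottom square needs checking, and this matches the paper. The paper, however, dispatches it in one line by citing \cite[Lemma~2.2.8]{HLM}, which directly shows that for a Fontaine--Laffaille module $M$ with Frobenius matrices $(A^{(j)})$, the $G_{K_\infty}$-representation $\bV^*_K$ of the \'etale $\varphi$-module with Frobenius $A^{(j)}v^{\lambda_j+\eta_j}$ agrees with $\Tcris(M)|_{G_{K_\infty}}$. This is purely a comparison between Fontaine--Laffaille theory and \'etale $\varphi$-modules; Breuil--Kisin modules with type-$\tau$ descent data play no role.

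Your detour through such an $\fM$ does not help: invoking the top triangle merely rewrites $\bV^*_K(\iota(r_{\lambda+\eta}(x)))$ as $T^*_{dd}(\fM)$, and you are back to proving the same identification. Your substantive step is the lifting argument, and it contains a genuine error. The lift $\rho^\circ=\Tcris(M^\circ)$ is \emph{crystalline} over the unramified field $K$ and therefore has \emph{trivial} inertial type, not the generic tame type $\tau$ fixed in the proposition; the phrase ``inertial type $\tau$ by the choice of a type compatible with $\zeta_\lambda$'' conflates the algebraic central character bookkeeping with the Weil--Deligne inertial type. Consequently the Breuil--Kisin module naturally attached to $\rho^\circ$ carries no nontrivial descent data and is not directly comparable to your $\fM\in\tld{Y}^{[0,n-1],\tau}_\F$; matching them would already require the Fontaine--Laffaille/\'etale-$\varphi$-module compatibility you are trying to establish. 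Finally, the closing claim that two morphisms to $\cX_{n,\mathrm{red}}$ agree as functors on Artinian $\F$-algebras because they agree on $\F$-points is unjustified: morphisms of stacks are not determined by their values on closed points.
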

\begin{proof}
Given Proposition~\ref{prop:rel:cat}, we only need to check the commutativity of the bottom square, and this follows easily from \cite[Lemma 2.2.8]{HLM}.
\end{proof}

\subsection{Relevant types and Serre weights}
\label{subsec:RTandSW}
We fix a $(3n-1)$-generic Fontaine--Laffaille weight $\lambda+\eta\in X^*(\un{T})$, and the lowest alcove presentation $(1,\lambda+\eta)$ for the Serre weight $F(\lambda)$. In this section, we introduce the set of \emph{$F(\lambda)$-relevant inertial types} and compute the pullback (to $\tld{\cF\cL}_\cJ$) of the \emph{shape stratification} on $\tld{Y}^{[0,n-1],\tau}_{\F}$ with $\tau$ being a $F(\lambda)$-relevant inertial type (see Proposition~\ref{prop:rel:cat}). Then for each $x\in\tld{\cF\cL}_\cJ$, we use the set of extremal weights to control the shape of $\rhobar_{x,\lambda+\eta}$ with respect to $F(\lambda)$-relevant types (see Lemma~\ref{lemma:shape:detect}).

\begin{defn}
\label{defn:relevant:types}
Assume that $\lambda+\eta$ is $(3n-1)$-generic Fontaine--Laffaille.
A tame inertial type is called \emph{$F(\lambda)$-relevant} if it admits a lowest alcove presentation of the form $(s,\lambda-s(\eta))$, for some $s\in\un{W}$.
\end{defn}
Given a $F(\lambda)$-relevant $\tau$ attached to some $s\in\un{W}$, we clearly have $\lambda+\eta-s(\eta)$ is $2n$-generic Fontaine--Laffaille and
\begin{equation}\label{equ:relevant:inclusion}
\tld{\Fl}^{[0,n-1]}_{\cJ}\tld{w}^*(\tau)=\tld{\Fl}^{[0,n-1]}_{\cJ}s^{-1}t_{\lambda+\eta-s(\eta)}\subseteq\tld{\Fl}^{[1-n,n-1]}_{\cJ}t_{\lambda+\eta}.
\end{equation}

\begin{rmk}
\label{rmk:relevant:outer}
Recall from \cite[\S\,2.3.1]{MLM} that given a suitably generic tame inertial type $\tau$ we have a set $\JH_\out(\ovl{\sigma(\tau)})$ of outer weights in $\JH(\ovl{\sigma(\tau)})$.
One can check that the set of $F(\lambda)$-relevant types is precisely the set of tame inertial type $\tau$ such that $\JH_\out(\ovl{\sigma(\tau)})$ is defined and contains $F(\lambda)$. In particular, $\ovl{\sigma(\tau)}$ has $F(\lambda)$ as a Jordan--H\"older factor with multiplicity one (cf. \cite[Proposition~10.1.2]{GHS}, \cite[\S\,II 8.19, 9.14, 9.16]{RAGS}).
\end{rmk}

\begin{lemma}
\label{lem:shape:Schubert}
Assume that $\lambda+\eta$ is $(3n-1)$-generic Fontaine--Laffaille.
Let $s\in \un{W}$ and $\tau=\tau(s, \lambda+\eta-s(\eta))$ be a $F(\lambda)$-relevant type.
Then for each $u\in \un{W}$ we have
\begin{equation}
\label{eq:intersec:cmp}
r_{\tld{w}^*(\tau)}\Big(\tld{Y}^{[0,n-1],\tau}_{u t_\eta}\Big)\cap \tld{C}_{F(\lambda)}^{\zeta_\lambda}=
r_{\lambda+\eta}\Big(\tld{\cS}^\circ(u w_0,w_0s^{-1})\Big).
\end{equation}
In particular, the stratification $\big\{\tld{Y}^{[0,n-1],\tau}_{\tld{w}^*}\big\}_{\tld{w}\in\Adm(\eta)}$ induces the stratification $\big\{\tld{\cS}^\circ(u w_0,w_0s^{-1})\big\}_{u\in\un{W}}$ on $\tld{\cF\cL}_\cJ$ and for any closed point $x\in \tld{\cF\cL}_\cJ(\F)$ the shape of $\rhobar_{x,\lambda+\eta}$ with respect to $\tau$ \emph{(}as in \cite[Definition 3.2.19]{LLL}\emph{)} is defined.
\end{lemma}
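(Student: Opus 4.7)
The plan is to reduce the equality (\ref{eq:intersec:cmp}) to a concrete matrix calculation in the affine flag variety. By Proposition~\ref{prop:rel:cat}, $r_{\lambda+\eta}$ is a closed immersion of $\tld{\cF\cL}_\cJ$ with image $\tld{C}^{\zeta_\lambda}_{F(\lambda)}$, and $\tld{Y}^{[0,n-1],\tau}_{ut_\eta}$ is by definition the preimage under the closed immersion $\tld{\pi}_{(s,\lambda+\eta-s(\eta))}$ of the Schubert cell $\tld{S}^\circ_\F(ut_\eta)$. Hence (\ref{eq:intersec:cmp}) reduces to the assertion that the preimage under $r_{\lambda+\eta}$ of $\tld{S}^\circ_\F(ut_\eta)\cdot\tld{w}^*(\tau) = \tld{S}^\circ_\F(ut_\eta)\cdot s^{-1}t_{\lambda+\eta-s(\eta)}$ equals $\tld{\cS}^\circ(uw_0, w_0 s^{-1})$. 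Concretely, for $A\in \un{G}(R)$ representing a point $[A]\in \tld{\cF\cL}_\cJ(R)$, multiplying $At_{\lambda+\eta}$ on the right by the inverse of $s^{-1}t_{\lambda+\eta-s(\eta)}$ and using $s^{-1}t_{s(\eta)}s = t_\eta$, the condition becomes $(As)\cdot t_\eta \in \Iw\cdot(ut_\eta)\cdot\Iw$ componentwise in $\cJ$.

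I will carry out the key forward inclusion via the Bruhat decomposition of $\un{G}$. Unwinding $\tld{\cS}^\circ(uw_0, w_0 s^{-1}) = \un{U}\backslash\un{U}\cdot uw_0\un{B}\cdot w_0 s^{-1}$ and using $w_0\un{B}w_0 = \un{B}^-$, the condition $[A]\in \tld{\cS}^\circ(uw_0, w_0 s^{-1})$ is equivalent to $As \in \un{U}\cdot u\cdot \un{B}^-$. Writing $As = nuh\un{n}^-$ with $n\in\un{U}$, $h\in \un{T}$, $\un{n}^-\in\un{U}^-$, the crucial observation is that $\un{n}^- t_\eta \in t_\eta\cdot\Iw_1$: for each negative root $\alpha = \epsilon_i-\epsilon_j$ (with $i>j$), the conjugate $t_{-\eta}u_\alpha(c)t_\eta = u_\alpha(v^{i-j}c)$ lies in $\Iw_1$ since $v^{i-j}$ has positive valuation. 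Combined with $ut_\eta = t_{u(\eta)}u$ and the fact that $\un{T}$ commutes with $t_\eta$, this yields
\[
(As)t_\eta = nuh\cdot (\un{n}^- t_\eta) = nu\cdot t_\eta \cdot (h\iota_1) = n\cdot (ut_\eta)\cdot h\iota_1 \in \Iw\cdot(ut_\eta)\cdot\Iw
\]
for some $\iota_1\in\Iw_1$, using $n\in\un{U}\subseteq\Iw$ and $h\iota_1\in\un{T}\cdot\Iw_1=\Iw$. This establishes the inclusion $r_{\lambda+\eta}(\tld{\cS}^\circ(uw_0, w_0 s^{-1}))\subseteq r_{\tld{w}^*(\tau)}(\tld{Y}^{[0,n-1],\tau}_{ut_\eta})\cap \tld{C}^{\zeta_\lambda}_{F(\lambda)}$.

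The reverse inclusion, together with the remaining assertions, will follow from a partition argument. The family $\{\tld{\cS}^\circ(uw_0, w_0 s^{-1})\}_{u\in\un{W}}$ partitions $\tld{\cF\cL}_\cJ$ (from the Bruhat decomposition $\un{G}=\bigsqcup_u\un{U}\cdot u\un{B}$, via right multiplication by $w_0 s^{-1}$), while the family $\{\tld{S}^\circ_\F(\tld{w}^*)\cdot\tld{w}^*(\tau)\}_{\tld{w}\in\Adm(\eta)}$ is pairwise disjoint in $\tld{\Fl}^{[1-n,n-1]}_\cJ t_{\lambda+\eta}$ since distinct Iwahori double cosets are disjoint. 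The forward inclusions established above, combined with the fact that $\bigsqcup_u r_{\lambda+\eta}(\tld{\cS}^\circ(uw_0, w_0 s^{-1}))=\tld{C}^{\zeta_\lambda}_{F(\lambda)}$ via the isomorphism $r_{\lambda+\eta}$, force the strata to match up bijectively with those indexed by $ut_\eta$ for $u\in\un{W}$. This simultaneously gives (\ref{eq:intersec:cmp}), shows that strata indexed by $\tld{w}\notin\{ut_\eta:u\in\un{W}\}$ have empty intersection with $\tld{C}^{\zeta_\lambda}_{F(\lambda)}$, and ensures that every point of $\tld{\cF\cL}_\cJ(\F)$ has a well-defined shape with respect to $\tau$. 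The main technical obstacle is in the second paragraph: one must carefully manage Iwahori factors in the product $(As)t_\eta$, and recognize that it is the Bruhat cell of the ``opposite'' type $\un{U}\cdot u\un{B}^-$ for $As$ (rather than $\un{U}\cdot u\un{B}$ for $A$) that governs the shape, a subtlety introduced by the right factor $w_0 s^{-1}$ in the Schubert cell.
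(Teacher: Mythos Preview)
Your proof is correct and follows essentially the same approach as the paper. Both arguments establish the forward inclusion $r_{\lambda+\eta}(\tld{\cS}^\circ(uw_0,w_0s^{-1}))\subseteq r_{\tld{w}^*(\tau)}(\tld{Y}^{[0,n-1],\tau}_{ut_\eta})$ by the observation that $t_{-\eta}\un{U}^-t_\eta\subseteq\Iw_1$ (the paper states this as the containment $u(w_0\un{B}w_0)\subseteq \Iw\cdot ut_\eta\cdot\Iw\cdot t_{-\eta}$ without spelling out the conjugation computation you give), and then both deduce equality from the disjointness of the shape strata together with the Bruhat partition of $\tld{\cF\cL}_\cJ$.
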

\begin{proof}
The assumption in Proposition~\ref{prop:rel:cat} holds by (\ref{equ:relevant:inclusion}). By Proposition~\ref{prop:rel:cat}, we have
\begin{equation}
\label{eq:inc:2}
\tld{C}^{\zeta_\lambda}_{F(\lambda)}=r_{\lambda+\eta}(\tld{\cF\cL}_\cJ) =\underset{u\in \un{W}}{\bigsqcup} r_{\lambda+\eta}\Big(\tld{\cS}^\circ(u w_0,w_0s^{-1})\Big).
\end{equation}
Now we observe that
\[r_{\tld{w}^*(\tau)}\Big(\tld{Y}^{[0,n-1],\tau}_{u t_\eta}\Big)=\Iw_{1,\cJ} \backslash \Iw_{1,\cJ}u t_{\eta}\Iw_{\cJ} t_{-\eta}s^{-1}t_{\lambda+\eta},\]
which contains $\Iw_{1,\cJ}\backslash\Iw_{1,\cJ}u (w_0 \un{B} w_0)s^{-1}t_{\lambda+\eta}=r_{\lambda+\eta}\Big(\tld{\cS}^\circ(u w_0,w_0s^{-1})\Big)$.
It follows that the left-hand side in equation (\ref{eq:intersec:cmp}) contains the right-hand side. The fact that the $\tld{Y}^{[0,n-1],\tau}_{\tld{w}^*}$ are disjoint for distinct $\tld{w}\in\Adm(\eta)$, together with equation (\ref{eq:inc:2}) implies the above containment is an equality.
\end{proof}

We recall the $\uparrow$ order on $p$-alcoves defined in \cite[\S\,II.6.5]{RAGS}, which induces the $\uparrow$ order on $\tld{\un{W}}$ (cf.~the discussion at the beginning of \cite[\S\,4]{LLL}).
\begin{lemma}
\label{lemma:shape:detect}
Assume that $\lambda+\eta$ is $(3n-1)$-generic Fontaine--Laffaille
and let $s\in\un{W}$. Let $\rhobar\defeq \rhobar_{x,\lambda+\eta}$ for some $x\in \tld{\cF\cL}_\cJ(\F)$.
Suppose that $\tau=\tau(s, \lambda+\eta-s(\eta))$ is $F(\lambda)$-relevant and $\tld{w}^*(\rhobar,\tau)\neq t_\eta$. Then $\# \Big(W_\obv(\rhobar)\cap \JH\big(\ovl{\sigma(\tau)}\big)\Big)>1$.
\end{lemma}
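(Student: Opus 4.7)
The plan is to exhibit two distinct elements of $W_\obv(\rhobar)\cap\JH(\ovl{\sigma(\tau)})$. The first candidate is $F(\lambda)$ itself: by the $F(\lambda)$-relevance of $\tau$ (Remark~\ref{rmk:relevant:outer}), $F(\lambda)\in\JH(\ovl{\sigma(\tau)})$, and by Proposition~\ref{prop:rel:cat} identifying $\mathrm{FL}_n^{\lambda+\eta}$ with $\cC_{F(\lambda)}$, we get $F(\lambda)\in W^g(\rhobar)$. To conclude $F(\lambda)\in W_\obv(\rhobar)$, I would pick any $w_0'\in\un{W}$ with $x\in\cM^\circ_{w_0'}(\F)$ --- such a $w_0'$ exists by Lemma~\ref{lem: open cover}\ref{it: open cover:ii} --- yielding a specialization $\taubar((w_0')^{-1},\lambda+\eta)$ of $\rhobar$, and verify via the combinatorics of lowest alcove presentations (Definitions~\ref{defi:gen} and~\ref{defn:LAP:SW}) that $F(\lambda)\in W_\obv(\taubar((w_0')^{-1},\lambda+\eta))$.

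For the second element, I would invoke the shape hypothesis via Lemma~\ref{lem:shape:Schubert}: $x\in\tld{\cS}^\circ(uw_0,w_0 s^{-1})(\F)$ for a unique $u\neq 1$. The natural candidate is $V'\defeq V_{\taubar,u}\in W_\obv(\taubar)\subseteq\JH(\ovl{\sigma(\tau)})$, distinct from $F(\lambda)$ by the injectivity of the bijection $\un{W}\to W_\obv(\taubar)$ (after identifying which index corresponds to $F(\lambda)$ as an obvious weight of $\taubar$). To prove $V'\in W_\obv(\rhobar)$ I would: (i) deduce $V'\in W^g(\rhobar)$ from the local model of Proposition~\ref{prop:rel:cat:1} combined with the $\un{T}$-torsor~\eqref{eq:LM:component} --- the shape $ut_\eta$ places $r_{\lambda+\eta}(x)$ in the Schubert closure corresponding to $\tld{C}_{V'}^{\zeta_\lambda}$ via \cite[Theorem 7.4.2]{MLM}, hence $\rhobar\in|\cC_{V'}(\F)|$; and (ii) exhibit a specialization $\taubar(w^{-1},\lambda+\eta)$ of $\rhobar$ among whose obvious weights $V'$ appears, by locating an appropriate $\un{T}$-fixed point in the Schubert closure $\tld{\cS}(uw_0,w_0 s^{-1})$ and matching the two obvious-weight parametrizations.

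The main obstacle is step (ii): finding the right $\un{T}$-fixed point in $\tld{\cS}(uw_0,w_0s^{-1})$ --- equivalently, an appropriate $w\in\un{W}$ with $x\in\cM^\circ_{w}(\F)$ --- such that $V_{\taubar,u}$ occurs among the obvious weights of $\taubar(w^{-1},\lambda+\eta)$. This requires reconciling two parametrizations of Serre weights as obvious weights of two different tame types through their lowest alcove presentations, and chasing the $\pi$-twisted $p$-dot action on $X^*(\un{T})$ to match the indices. Granted that matching, the two steps combine to give $V'\in W_\obv(\rhobar)\cap\JH(\ovl{\sigma(\tau)})$, completing the proof.
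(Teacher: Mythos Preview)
Your two-element strategy and your treatment of $F(\lambda)$ are correct and match the paper's (implicit) argument for the first element. The gap is in the second element.

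You take $V'\defeq V_{\ovl{\tau},u}$, the obvious weight of $\ovl{\tau}$ indexed by $u$; the paper instead takes the obvious weight of the \emph{specialization} $\rhobar^{\speci}\defeq\taubar(su^{-1},\lambda+\eta)$ indexed by $w_1\defeq u^{-1}$. These are different Serre weights unless $u^2=1$, and your step~(i) fails for your choice. Indeed, by Proposition~\ref{prop: wt and Sch var} (with $w_1=u$), the intersection $C_{V'}^{\zeta_\lambda}\cap C_{F(\lambda)}^{\zeta_\lambda}$ equals $r_{\lambda+\eta}(\cS(u^{-1}w_0,w_0(s')^{-1}))$ for the appropriate $s'$, whereas the shape $ut_\eta$ only places $x$ in $\tld{\cS}^\circ(uw_0,w_0s^{-1})$. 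These Schubert varieties are indexed by $u^{-1}w_0$ versus $uw_0$ and there is no containment in general, so your claim ``the shape $ut_\eta$ places $r_{\lambda+\eta}(x)$ in the Schubert closure corresponding to $\tld{C}_{V'}^{\zeta_\lambda}$'' is not justified.

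The paper's route reverses the emphasis. From the shape one first reads off a specialization: $\tld{\cS}^\circ(uw_0,w_0s^{-1})\subseteq\cM^\circ_{us^{-1}}$, hence $\rhobar\leadsto\rhobar^{\speci}=\taubar(su^{-1},\lambda+\eta)$. Setting $w_1=u^{-1}$, the weight $F_{(\tld{w}_1,\omega)}=V_{\rhobar^{\speci},w_1}$ then lies in $W_\obv(\rhobar^{\speci})$ by construction, and in $W^g(\rhobar)$ directly via Proposition~\ref{prop: wt and Sch var} since now $w_1^{-1}w_0=uw_0$ matches the shape cell. The genuine work is showing $F_{(\tld{w}_1,\omega)}\in\JH(\ovl{\sigma(\tau)})$---the step you got for free---which the paper handles by an alcove-order computation (Lemma~\ref{lem:Bruhat_order_vertex}). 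In short, your ``main obstacle'' (step~(ii)) dissolves once you use the index $u^{-1}$ rather than $u$, and the real content migrates to the $\JH$-membership.
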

\begin{proof}
By Lemma~\ref{lem:shape:Schubert}, we must have $\tld{w}^*(\rhobar,\tau)=u t_\eta$ with $u\neq 1$ (or equivalently, we have $\ell\big(\tld{w}^*(\rhobar,\tau)\big)<\ell(t_\eta)$). Furthermore, by the same Lemma, we have $\rhobar \leadsto \rhobar^{\speci}\defeq \ovl{\tau}(s u^{-1},\lambda+\eta)$.

Set $w_1=u^{-1}\neq 1$ and $\tld{w}_1=t_{\eta_{w_1}}w_1\in \tld{\un{W}}_1^+$ be the image of $w_1$ under our fixed injection $\un{W}\hookrightarrow \tld{\un{W}}_1^+$ (see \cite[(5.1)]{herzig-duke} for the definition of $\eta_{w_1}\in X^*(\un{T})$).
Set $\omega=t_{\lambda+\eta}s w_1\tld{w}_1^{-1}(0)$.
We claim that
\[F_{({\tld{w}_1,\omega})}\in W_\obv(\rhobar)\cap \JH\big(\ovl{\sigma(\tau)}\big)\]
which would finish the proof.

It suffices to check the following:
\begin{enumerate}[label=(\alph*)]
\item
\label{en:a:pf}
$F_{({\tld{w}_1,\omega})}\in W_\obv(\rhobar^{\speci})$ is the extremal weight of $\rhobar^{\speci}$ corresponding to $w_1$.
\item
\label{en:b:pf}
$F_{({\tld{w}_1,\omega})}\in W^g(\rhobar)$.
\item
\label{en:c:pf}
$F_{({\tld{w}_1,\omega})}\in \JH\big(\ovl{\sigma(\tau)}\big)$.
\end{enumerate}

Item~\ref{en:a:pf} follows from \cite[Proposition 2.6.2]{MLM}.

As $\tau=\tau(s, \lambda+\eta-s(\eta))$ and $\lambda+\eta-s(\eta)$ is $2n$-generic Fontaine--Laffaille, we deduce from \cite[Remark 7.4.3(2)]{MLM} that (\ref{eq:LM:component}) is a $\un{T}$-torsor for $V=F_{(\tld{w}_1,\omega)}$.
Hence, item~\ref{en:b:pf} follows from the fact that
\[\tld{C}^{\zeta_\lambda}_{F_{(\tld{w}_1,\omega)}}\supseteq \tld{S}_\F^\circ(\tld{w}_1^*w_0)w_0t_{-\eta_{w_1}}s^{-1}t_{\lambda+\eta}\cap \tld{\Fl}_\cJ^{\nabla_0}\supseteq r_{\lambda+\eta}\Big(\tld{\cS}^\circ(u w_0,w_0s^{-1})\Big)
\]
(which implies that $\rhobar\in|\cC_{F_{(\tld{w}_1,\omega)}}(\F)|$).

Finally, to check the item \ref{en:c:pf} by \cite[Proposition 2.3.7]{MLM}, we need to find $\tld{w}_2=\kappa t_{\nu}\in \tld{\un{W}}^+$ such that
\begin{itemize}
\item $t_{\lambda+\eta-s(\eta)}s(-\nu)=\omega$
\item $\tld{w}_2\uparrow \tld{w}_h\tld{w}_1$.
\end{itemize}
The first item is equivalent to $\nu=\eta_{w_1}-\eta$.
Since $\eta_{w_1}-\eta\equiv w_0(\eta_{w_0w_1})$ modulo $X^0(\un{T})$
the second item is equivalent to
\[\kappa t_{w_0(\eta_{w_0w_1})}\uparrow t_{\eta_{w_0w_1}}w_0w_1,\]
but this follows from Lemma~\ref{lem:Bruhat_order_vertex} below applied to $\sigma=w_0w_1$. This finishes the proof. %
\end{proof}

\begin{lemma}\label{lem:Bruhat_order_vertex} Let $\sigma\in \un{W}$ and $\tld{\sigma}=t_{\eta_{\sigma}}\sigma\in \tld{\un{W}}^+_1$ is the image of $\sigma$ under our fixed injection $\un{W}\hookrightarrow \tld{\un{W}}^+_1$. Let $\sigma'\in \un{W}$ be the unique element such that $\sigma't_{w_0(\eta_{\sigma})}\in \tld{\un{W}}^+$. Then
\[\sigma't_{w_0(\eta_{\sigma})} \uparrow \tld{\sigma}\]
\end{lemma}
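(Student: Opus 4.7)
\medskip
\noindent\textbf{Proof plan for Lemma~\ref{lem:Bruhat_order_vertex}.}

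The plan is to reduce the $\uparrow$ comparison to a positivity statement about the difference $\tilde{\sigma}\cdot 0 - \sigma' t_{w_0(\eta_\sigma)}\cdot 0$ computed via the $p$-dot action, and then verify that positivity via an explicit identification of $\sigma'$ in terms of $\sigma$.

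First, I would verify that $\sigma' t_{w_0(\eta_\sigma)}$ and $\tilde\sigma$ lie in the same linkage class under the dot action of $\underline{W}_a$. Since the $\tilde{\underline{W}}/\underline{W}_a$-orbit is detected by the image of the translation part in $X^*(\underline{T})/\underline{\Lambda}_R$, this reduces to the observation that $\eta_\sigma \equiv w_0(\eta_\sigma) \pmod{\underline{\Lambda}_R}$ (true for any $w\in\underline{W}$ and any weight). Granted this, by the classical characterization of the $\uparrow$ order (cf.~\cite[\S II.6.5]{RAGS}), for two elements in the same linkage class with both images in the dominant chamber, $\tilde{w}_1 \uparrow \tilde{w}_2$ is equivalent to $\tilde{w}_2\cdot 0 - \tilde{w}_1\cdot 0 \in \N\underline{\Phi}^+$.

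Second, I would pin down $\sigma'$ precisely. The requirement $\sigma' t_{w_0(\eta_\sigma)}(A_0)$ is dominant unpacks to: for every positive root $\alpha$, $\langle \sigma'(w_0(\eta_\sigma)) + \sigma'(\mu), \alpha^\vee\rangle > 0$ for all $\mu\in A_0$. Using the explicit description $\eta_\sigma = \sum_{\alpha\in\underline{\Delta},\,\sigma^{-1}(\alpha)<0}\omega_\alpha$ (which is forced by the restricted dominance of $t_{\eta_\sigma}\sigma(A_0)$), a direct check shows that $w_0(\eta_\sigma)$ is antidominant and that $\sigma'$ is characterized by $\sigma'(w_0(\eta_\sigma))$ being the dominant $\underline{W}$-orbit representative of $w_0(\eta_\sigma)$, which equals $\eta_\sigma$ itself (since $\eta_\sigma$ is already dominant with all simple-coroot pairings in $\{0,1\}$). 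The fine structure of $\sigma'$ (when $\eta_\sigma$ sits on walls) is then determined by the perturbation $\sigma'(A_0)$, which forces $\sigma'$ to extend the inversion pattern of $\sigma$ across those walls.

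Third, I would compute the key difference. Using $\sigma'\cdot w_0(\eta_\sigma) = \eta_\sigma$ and the $p$-dot action formula,
\[
\tilde\sigma\cdot 0 - \sigma' t_{w_0(\eta_\sigma)}\cdot 0 \;=\; p\bigl(\eta_\sigma - \sigma'w_0(\eta_\sigma)\bigr) + \bigl(\sigma(\eta) - \sigma'(\eta)\bigr) \;=\; \sigma(\eta) - \sigma'(\eta).
\]
The claim then reduces to $\sigma(\eta) - \sigma'(\eta) \in \N\underline{\Phi}^+$, equivalently that the inversion set of $\sigma$ is contained in that of $\sigma'$ (using the standard identity $\eta - w(\eta) = \sum_{\alpha>0,\,w^{-1}(\alpha)<0}\alpha$ applied to both sides). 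The main obstacle is this combinatorial inclusion: I would establish it by a componentwise analysis over $\cJ$ (since $\underline{W}\cong W^\cJ$), within each factor tracking the additional simple reflections that arise when extending $\sigma$ to $\sigma'$ to accommodate the dominance condition at walls where $\langle w_0(\eta_\sigma),\alpha_i^\vee\rangle=0$. A brief sanity check in the simplest nontrivial case ($n=3$, $\sigma=s_1$, giving $\sigma'=s_1s_2$ and difference $\alpha_1+\alpha_2$) confirms the shape of the argument.
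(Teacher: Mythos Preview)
Your first reduction step invokes an equivalence that is not the content of \cite[\S II.6.5]{RAGS}: that section gives the \emph{definition} of $\uparrow$ via chains of affine reflections, not the criterion ``$\tld{w}_1\uparrow\tld{w}_2$ iff $\tld{w}_2\cdot 0-\tld{w}_1\cdot 0\in\N\un{\Phi}^+$'' for dominant alcoves. The forward implication is immediate, but the converse---which is what you actually need---requires independent justification; on $\tld{\un{W}}^+$ the $\uparrow$ order coincides with the restricted \emph{Bruhat} order, and whether that in turn agrees with dominance of $\tld{w}\cdot 0$ is a separate, nontrivial question that you have not addressed. Your downstream computations are correct: one does have $\sigma'(w_0(\eta_\sigma))=\eta_\sigma$, and in fact $\sigma'=w_{0,J}w_0$ where $J=\{\alpha\in\un{\Delta}:\sigma^{-1}(\alpha)>0\}$ and $w_{0,J}$ is the longest element of the parabolic subgroup $\un{W}_J$; from this the inversion-set inclusion and hence $\sigma(\eta)-\sigma'(\eta)\in\N\un{\Phi}^+$ follow cleanly. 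But this positivity alone does not exhibit the chain of reflections required for $t_{\eta_\sigma}\sigma'\uparrow t_{\eta_\sigma}\sigma$, so the argument has a gap at its crucial step.

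The paper argues entirely in the Bruhat order and thereby avoids this issue. Both $t_{w_0(\eta_\sigma)}$ and $w_0\tld{\sigma}=t_{w_0(\eta_\sigma)}w_0\sigma$ lie in $t_{w_0(\eta_\sigma)}\un{W}$, the set of alcoves sharing the vertex $w_0(\eta_\sigma)$; a lemma of Haines--Ng\^o identifies $t_{w_0(\eta_\sigma)}$ as the Bruhat-minimal element of that set, giving $t_{w_0(\eta_\sigma)}\leq w_0\tld{\sigma}$. Passing to minimal length representatives in $\un{W}\backslash\un{W}_a$ (a Bruhat-order-preserving operation) then yields the comparison between $\sigma't_{w_0(\eta_\sigma)}$ and $\tld{\sigma}$, both lying in $\tld{\un{W}}^+$, where Bruhat order and $\uparrow$ agree.
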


\begin{proof}
Consider the set of elements $t_{w_0(\eta_\sigma)}\un{W}\defeq\{t_{w_0(\eta_\sigma)}w\mid w\in \un{W}\}$ whose corresponding alcoves $\{t_{w_0(\eta_\sigma)}w(\un{A}_0) \mid w\in \un{W}\}$ all contain $v_0\defeq w_0(\eta_{\sigma})$ as a vertex.
By \cite[Lemma 7.5]{Haines-Ngo}, the set $t_{w_0(\eta_\sigma)}\un{W}$ has a unique minimal and maximal element.
We claim that $t_{w_0(\eta_{\sigma})}w_0$ is the maximal element. In fact, for each alcove $t_{w_0(\eta_{\sigma})}w_1(\un{A}_0)$, there exists an anti-dominant alcove $t_{w_0(\eta_{\sigma})}w_2(\un{A}_0)$ for some $w_2\in\un{W}$ such that $t_{w_0(\eta_{\sigma})}w_1 \leq t_{w_0(\eta_{\sigma})}w_2$. We clearly have $t_{w_0(\eta_{\sigma})}w_0 \uparrow t_{w_0(\eta_{\sigma})}w_2$.  As the $\uparrow$ order is opposite to the Bruhat order in the anti-dominant chamber by Wang's Theorem \cite[Theorem 4.1.1]{LLL}, we deduce that $t_{w_0(\eta_\sigma)}w_2\leq t_{w_0(\eta_\sigma)}w_0$ and thus $t_{w_0(\eta_\sigma)}w_1\leq t_{w_0(\eta_\sigma)}w_0$. Hence, $t_{w_0(\eta_{\sigma})}w_0$ is the maximal element in $t_{w_0(\eta_\sigma)}\un{W}$.

It follows from \cite[Lemma 7.5]{Haines-Ngo} that $t_{w_0(\eta_\sigma)}$ is the minimal element in $t_{w_0(\eta_\sigma)}\un{W}$. In particular $t_{w_0(\eta_\sigma)}\leq t_{w_0(\eta_\sigma)}w_0\sigma=w_0t_{\eta_\sigma}\sigma=w_0 \tld{\sigma}$.
We conclude from the fact that $\sigma't_{w_0(\eta_{\sigma})}$ and $\tld{\sigma}$ are the minimal length representatives in $\un{W}\backslash \un{W}_a$ of $t_{w_0(\eta_\sigma)}$ and $w_0 \tld{\sigma}$.
\end{proof}

\subsection{The partition $\cP_\cJ$ and extremal weights}
\label{subsec:PandOW}

In this section, we interpret the partition $\cP_\cJ$ defined in \S\,\ref{sub:finest:elements} in terms of extremal weights and specializations (see Theorem~\ref{thm: obv wt and partition}).
We fix a $n$-generic Fontaine--Laffaille weight $\lambda+\eta\in X^*(\un{T})$, and the lowest alcove presentation $(1,\lambda+\eta)$ for $F(\lambda)$. Hence, $W_{\obv}(\taubar)$ is defined whenever $\taubar=\taubar(w^{-1},\lambda+\eta)$ for some $w\in\un{W}$.

\begin{defn}
We define the following set of Serre weights
\[
SW(\lambda)\defeq \left\{
F_{(\tld{w}_1, t_{\lambda+\eta}s w_1\tld{w}_1^{-1}(0))},\ (s,\tld{w}_1)\in \un{W}\times \tld{\un{W}}_1^+
\right\}
\]
where we write $\tld{w}_1=t_{\eta_{w_1}}w_1$.
\end{defn}

We write $r_{\lambda+\eta}:\un{B}\backslash\un{G}\hookrightarrow\Fl_\cJ$ for the map induced from $r_{\lambda+\eta}$ in (\ref{diagram:rel:gpds}) (by abuse of notation).
We clearly have
$$SW(\lambda)=\underset{w\in\un{W}}{\bigcup}W_{\obv}(\taubar(w^{-1},\lambda+\eta)),$$
and thus $SW(\lambda)$ is exactly the union of $W_{\obv}(\rhobar_{x,\lambda+\eta})$ for $\ovl{x}$ running in the set of $\un{T}$-fixed $\F$-points of~$\un{B}\backslash\un{G}=r_{\lambda+\eta}^{-1}(C_{F(\lambda)}^{\zeta_\lambda})$, and $x$ any choice of lift of $\ovl{x}$ in $\tld{\cF\cL}_\cJ(\F)$.

\begin{prop}\label{prop: wt and Sch var}
Assume that $\lambda+\eta$ is $n$-generic Fontaine--Laffaille. For each $(s,\tld{w}_1)\in \un{W}\times \tld{\un{W}}_1^+$, we have
\begin{equation}\label{equ: intersection of component}
C_{F_{(\tld{w}_1, t_{\lambda+\eta}s w_1\tld{w}_1^{-1}(0))}}^{\zeta_\lambda}\cap C_{F(\lambda)}^{\zeta_\lambda}=r_{\lambda+\eta}(\cS(w_1^{-1} w_0,w_0s^{-1})).
\end{equation}
Moreover, the map
$$V\mapsto r_{\lambda+\eta}^{-1}\left(C_{V}^{\zeta_\lambda}\cap C_{F(\lambda)}^{\zeta_\lambda}\right)$$
induces a bijection between $SW(\lambda)$ and the set of Schubert varieties in $\un{B}\backslash\un{G}$.
\end{prop}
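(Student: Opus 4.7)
The plan is to first prove the equality (\ref{equ: intersection of component}) by establishing both inclusions, and then deduce the bijection statement as a combinatorial consequence.

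For the forward inclusion $\supseteq$, I would leverage the $F(\lambda)$-relevant tame inertial type $\tau=\tau(s,\lambda+\eta-s(\eta))$, which satisfies the hypotheses of Proposition~\ref{prop:rel:cat} and Lemmas~\ref{lem:shape:Schubert}--\ref{lemma:shape:detect} (since $\lambda+\eta$ is at least $n$-generic Fontaine--Laffaille and we may base change via the compatibility afforded by Remark~\ref{rmk:relevant:outer}). If $w_1=1$ then $V=F(\lambda)$ and both sides equal $C^{\zeta_\lambda}_{F(\lambda)}$, so we may assume $w_1\neq 1$. By Lemma~\ref{lem:shape:Schubert}, the Schubert cell $\tld{\cS}^\circ(w_1^{-1}w_0,w_0 s^{-1})$ is precisely the shape-$w_1^{-1}t_\eta$ stratum of $\tld{Y}^{[0,n-1],\tau}_\F$ pulled back along $r_{\lambda+\eta}$, and the proof of Lemma~\ref{lemma:shape:detect} shows that for any $x\in\cS^\circ(w_1^{-1}w_0,w_0 s^{-1})(\F)$ the Serre weight $V=F_{(\tld{w}_1,\omega)}$ belongs to $W^g(\rhobar_{x,\lambda+\eta})$, i.e.~$\rhobar_{x,\lambda+\eta}\in|\cC_V(\F)|$. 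Via the local model isomorphism (\ref{eq:LM:component}) this translates to $r_{\lambda+\eta}(x)\in C^{\zeta_\lambda}_V\cap C^{\zeta_\lambda}_{F(\lambda)}$. Since this latter intersection is closed, taking Zariski closures gives $r_{\lambda+\eta}(\cS(w_1^{-1}w_0,w_0 s^{-1}))\subseteq C^{\zeta_\lambda}_V\cap C^{\zeta_\lambda}_{F(\lambda)}$.

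For the reverse inclusion $\subseteq$, I would unwind the definition of $C^{\zeta_\lambda}_V$ by choosing the specific lift $\tld{s}=t_{\lambda+\eta}sw_1\tld{w}_1^{-1}$ of $\omega$. An elementary computation in $\tld{\un{W}}$ gives $\tld{s}^*=t_\nu s^{-1}$ with $\nu=s^{-1}(\lambda+\eta)-\eta_{w_1}$, and $\tld{w}_1^*w_0=t_{w_1^{-1}(\eta_{w_1})}w_1^{-1}w_0$. Combined with Proposition~\ref{prop:rel:cat}, which identifies $C^{\zeta_\lambda}_{F(\lambda)}$ with the image of $r_{\lambda+\eta}$, a direct Iwahori-level calculation shows that the intersection $r_{\lambda+\eta}^{-1}\bigl(S^\circ_\F(\tld{w}_1^*w_0)\tld{s}^*\cap C^{\zeta_\lambda}_{F(\lambda)}\bigr)$ lies inside the classical Schubert cell $\cS^\circ(w_1^{-1}w_0,w_0 s^{-1})$: morally, restricting the affine Schubert cell $\Iw_\cJ w_1^{-1}w_0\Iw_\cJ\cdot (\text{right translate})$ to the subvariety of constant matrices selects exactly the classical Bruhat cell of $w_1^{-1}w_0$, with right multiplication by $w_0 s^{-1}$ recovering the second index. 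Taking Zariski closures and combining with the forward inclusion, using that the Schubert variety on the right is irreducible, yields the desired equality. Alternatively, once the forward inclusion is established, it suffices to bound $\dim\bigl(C^{\zeta_\lambda}_V\cap C^{\zeta_\lambda}_{F(\lambda)}\bigr)$ by $\ell(w_1^{-1}w_0)$ using the explicit cellular decomposition of both components.

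For the bijection statement, once (\ref{equ: intersection of component}) is established, the assignment $V\mapsto r_{\lambda+\eta}^{-1}(C^{\zeta_\lambda}_V\cap C^{\zeta_\lambda}_{F(\lambda)})$ sends $V=F_{(\tld{w}_1,\omega)}\in SW(\lambda)$ to $\cS(w_1^{-1}w_0,w_0 s^{-1})$. Surjectivity is immediate: any Schubert variety $\cS(w,u)\subseteq\un{B}\backslash\un{G}$ is realised by taking $s=u^{-1}w_0$ and $w_1=w_0 w^{-1}$. Injectivity follows from the definition of $SW(\lambda)$ together with the uniqueness (up to $X^0(\un{T})$-equivalence) of lowest alcove presentations compatible with $\zeta_\lambda$ of a fixed $n$-generic Fontaine--Laffaille Serre weight, since distinct pairs $(s,\tld{w}_1)$ yield pairs $(w_1^{-1}w_0,w_0 s^{-1})$ that differ as elements of $\un{W}\times\un{W}$. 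The main technical obstacle is the reverse inclusion in (\ref{equ: intersection of component}), specifically the explicit matching between the affine Schubert cell $S^\circ_\F(\tld{w}_1^*w_0)\tld{s}^*$ and the classical Schubert cell $\cS^\circ(w_1^{-1}w_0,w_0 s^{-1})$ after translation by $t_{\lambda+\eta}$; this requires a delicate matrix computation using the Iwahori decomposition of $L^{[1-n,n-1]}\GL_n$ compatible with the $\nabla_0$-condition, and draws heavily on the local model framework of \cite[\S~4--5]{MLM}.
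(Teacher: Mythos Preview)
Your overall strategy is on the right track, and the direct Iwahori computation you describe for the reverse inclusion is essentially what the paper does (the paper establishes the open-cell equality
\[
S_\F^\circ(\tld{w}_1^*w_0)w_0t_{-\eta_{w_1}}s^{-1}t_{\lambda+\eta}\cap r_{\lambda+\eta}(\un{B}\backslash\un{G})=r_{\lambda+\eta}\big(\cS^\circ(w_1^{-1} w_0,w_0s^{-1})\big),
\]
citing the display in the proof of item~\ref{en:b:pf} of Lemma~\ref{lemma:shape:detect}, and then takes closures). There are however two genuine gaps.

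First, for the forward inclusion your detour through $W^g(\rhobar)$ and $\cC_V$ is both unnecessary and logically backwards. The map in (\ref{eq:LM:component}) goes from $\tld{C}^{\zeta_\lambda}_V$ to $\cC_V$, so knowing $\rhobar\in|\cC_V(\F)|$ does not directly yield $r_{\lambda+\eta}(x)\in C^{\zeta_\lambda}_V$. Rather, the proof of Lemma~\ref{lemma:shape:detect} item~\ref{en:b:pf} proves the containment $r_{\lambda+\eta}(\tld{\cS}^\circ)\subseteq \tld{C}^{\zeta_\lambda}_V$ \emph{first} (a pure local-model statement), and only \emph{then} deduces membership in $\cC_V$ via (\ref{eq:LM:component}). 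You should cite that local-model containment directly. Your route also implicitly needs $(3n-1)$-genericity for (\ref{eq:LM:component}) to be a torsor, which exceeds the proposition's $n$-generic hypothesis.

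Second, and more seriously, the bijection argument is incomplete. The assignment $(s,\tld{w}_1)\mapsto V=F_{(\tld{w}_1,\lambda+\eta-s(\eta_{w_1}))}$ is \emph{not} injective: two elements $s,s'\in\un{W}$ with $s(\eta_{w_1})=s'(\eta_{w_1})$ give the same Serre weight $V$, and this stabilizer is nontrivial whenever $w_1\neq w_0$. Thus the observation that distinct pairs $(s,\tld{w}_1)$ give distinct pairs $(w_1^{-1}w_0,w_0s^{-1})$ proves nothing about $SW(\lambda)$. What is needed---both for well-definedness of $V\mapsto r_{\lambda+\eta}^{-1}(C^{\zeta_\lambda}_V\cap C^{\zeta_\lambda}_{F(\lambda)})$ and for its injectivity---is the paper's key combinatorial fact: for fixed $w_1$, one has $s(\eta_{w_1})=s'(\eta_{w_1})$ if and only if $\cS(w_1^{-1}w_0,w_0s^{-1})=\cS(w_1^{-1}w_0,w_0(s')^{-1})$. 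This is the substance of the bijection and requires a separate argument. Finally, the alternative dimension-bound approach for the reverse inclusion is insufficient without a priori irreducibility of $C_V^{\zeta_\lambda}\cap C_{F(\lambda)}^{\zeta_\lambda}$.
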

\begin{proof}
We write $\omega=t_{\lambda+\eta}s w_1\tld{w}_1^{-1}(0)$ and $\tld{w}_1=t_{\eta_{w_1}}w_1$. The equation (\ref{equ: intersection of component}) follows directly from (cf.~the proof of item~\ref{en:b:pf} of Lemma~\ref{lemma:shape:detect})
\begin{multline*}
\left(S_\F^\circ(\tld{w}_1^*w_0)w_0t_{-\eta_{w_1}}s^{-1}t_{\lambda+\eta}\cap \Fl_\cJ^{\nabla_0}\right)\cap r_{\lambda+\eta}(\un{B}\backslash\un{G})\\=S_\F^\circ(\tld{w}_1^*w_0)w_0t_{-\eta_{w_1}}s^{-1}t_{\lambda+\eta}\cap r_{\lambda+\eta}(\un{B}\backslash\un{G})=r_{\lambda+\eta}\Big(\cS^\circ(w_1^{-1} w_0,w_0s^{-1})\Big)
\end{multline*}
and the fact that $C_{F_{(\tld{w}_1, \omega)}}^{\zeta_\lambda}$ is the closure of $S_\F^\circ(\tld{w}_1^*w_0)w_0t_{-\eta_{w_1}}s^{-1}t_{\lambda+\eta}\cap \Fl_\cJ^{\nabla_0}$. The desired bijection follows from the fact that, given two elements $s,s'\in\un{W}$, we have $s(\eta_{w_1})=s'(\eta_{w_1})$ if and only if $\cS(w_1^{-1} w_0,w_0s^{-1})=\cS(w_1^{-1} w_0,w_0(s')^{-1})$.
\end{proof}

\begin{rmk}
It follows from Proposition~\ref{prop: refinement of Bruhat partition} and Proposition~\ref{prop: wt and Sch var} that $\cP_\cJ$ is the coarsest partition on $\tld{\cF\cL}_\cJ$ such that $r_{\lambda+\eta}^{-1}\left(\tld{C}_{V}^{\zeta_\lambda}\cap \tld{C}_{F(\lambda)}^{\zeta_\lambda}\right)$ is a union of elements in the partition, for each $V\in SW(\lambda)$.
\end{rmk}

Let $\rhobar\defeq \rhobar_{x,\lambda+\eta}$ for some $x\in\tld{\cF\cL}_\cJ(\F)$, and recall from \S\,\ref{sec:SWandGR} the set $W_\obv(\rhobar)$.
We consider the following enhancement $SP(\rhobar)$ of it: the elements of $SP(\rhobar)$ are \emph{pairs} $(V,\rhobar^{\speci})$ where $V\in W^g(\rhobar)$ and $\rhobar^{\speci}$ is a specialization of $\rhobar$ such that $V\in W_{\obv}(\rhobar^{\speci})$. We have a natural surjective map $SP(\rhobar)\onto W_\obv(\rhobar)$.
For each $(V,\rhobar^{\speci})\in SP(\rhobar)$, there exists a unique pair $s,w_1\in\un{W}$ such that $(s w_1,\lambda)$ is a lowest alcove presentation of $\rhobar^{\speci}$ and $V=V_{\rhobar^{\speci},w_1}$ is the extremal weight of $\rhobar^{\speci}$ corresponding to $w_1$ (with respect to $(s w_1,\lambda)$).
Note that the pair $(F(\lambda),\rhobar^{\speci})$ is an element of $SP(\rhobar)$ for each specialization $\rhobar\leadsto \rhobar^{\speci}$. For each $(V_{\rhobar^{\speci},w_1},\rhobar^{\speci})\in SP(\rhobar)$ with $\rhobar^{\speci}=\taubar(s w_1,\lambda+\eta)$, we set $\theta_{\rhobar}((V_{\rhobar^{\speci},w_1},\rhobar^{\speci}))\defeq s$. This defines a map $\theta_{\rhobar}:SP(\rhobar)\rightarrow \un{W}$.

If $\lambda+\eta$ is $(3n-1)$-generic Fontaine--Laffaille, then $\lambda+\eta-s(\eta)$ is $2n$-generic Fontaine--Laffaille for each $s\in\un{W}$, and we deduce from \cite[Remark 7.4.3(2)]{MLM} (cf.~the proof of item~\ref{en:b:pf} of Lemma~\ref{lemma:shape:detect}) that (\ref{eq:LM:component}) is a $\un{T}$-torsor for each $V\in SW(\lambda)$.

\begin{lemma}\label{lem: SP and Sch cell}
Assume that $\lambda+\eta$ is $(3n-1)$-generic Fontaine--Laffaille. Let $x\in\tld{\cF\cL}_\cJ(\F)$ be a point, and $\rhobar\defeq\rhobar_{x,\lambda+\eta}$. Then $(V_{\rhobar^{\speci},w_1},\rhobar^{\speci})\in SP(\rhobar)$ for some $\rhobar^{\speci}=\taubar(s w_1,\lambda+\eta)$ if and only if $x\in\tld{\cS}^\circ(w_1^{-1}w_0,w_0 s^{-1})$.
\end{lemma}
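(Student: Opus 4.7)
The plan is to prove the two directions separately, using Lemma~\ref{lem:shape:Schubert} to identify $\tld{\cS}^\circ(w_1^{-1}w_0,w_0s^{-1})$ as the locus of $x\in\tld{\cF\cL}_\cJ(\F)$ whose associated Galois representation has shape $w_1^{-1}t_\eta$ with respect to the $F(\lambda)$-relevant tame inertial type $\tau\defeq\tau(s,\lambda+\eta-s(\eta))$. This reduces the whole statement to the computation of this shape.

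For ($\Leftarrow$), if $w_1=1$ the shape is maximal, $\tld{\cS}^\circ(w_0,w_0s^{-1})=\cM^\circ_{s^{-1}}$ and the assertion follows at once from Lemma~\ref{lem: specialization and open cell} (giving $\rhobar\leadsto\rhobar^\speci=\taubar(s,\lambda+\eta)$) together with the fact that $V_{\rhobar^\speci,1}=F(\lambda)$ lies automatically in $W^g(\rhobar)\cap W_\obv(\rhobar^\speci)$. For $w_1\neq 1$, the argument inside the proof of Lemma~\ref{lemma:shape:detect}, applied with $u\defeq w_1^{-1}\neq 1$, directly produces the required data: the specialization $\rhobar\leadsto\taubar(sw_1,\lambda+\eta)$, the obvious weight membership $V_{\rhobar^\speci,w_1}\in W_\obv(\rhobar^\speci)$, and the modularity $V_{\rhobar^\speci,w_1}\in W^g(\rhobar)$ are exactly items (a) and (b) in that proof.

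For ($\Rightarrow$), let $ut_\eta$ denote the shape of $\rhobar$ with respect to $\tau$, so $x\in\tld{\cS}^\circ(uw_0,w_0s^{-1})$ by Lemma~\ref{lem:shape:Schubert}; it suffices to prove $u=w_1^{-1}$, and I pin it down with two inequalities. The first, $u\geq w_1^{-1}$, follows from $V_{\rhobar^\speci,w_1}\in W^g(\rhobar)$ translated via Proposition~\ref{prop: wt and Sch var} to $\ovl{x}\in\cS(w_1^{-1}w_0,w_0s^{-1})=\bigsqcup_{v\leq w_1^{-1}w_0}\cS^\circ(v,w_0s^{-1})$; since $\ovl{x}\in\cS^\circ(uw_0,w_0s^{-1})$, Bruhat uniqueness forces $uw_0\leq w_1^{-1}w_0$. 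For the reverse inequality $u\leq w_1^{-1}$, I exploit the specialization condition. By Lemma~\ref{lem: class of irr} the $\un{T}$-fixed point $\ovl{v}^\speci\defeq \un{B}\cdot w_1^{-1}s^{-1}\in\un{B}\backslash\un{G}$ represents the semisimple inertial type $\taubar(sw_1,\lambda+\eta)=\rhobar^\speci$, and by the very definition of specialization recalled in \S~\ref{sec:SWandGR} it lies in the Zariski closure $\overline{\ovl{x}\cdot\un{T}}$. A short check shows that every Schubert cell $\cS^\circ(v,w_0s^{-1})$ is stable under right multiplication by $\un{T}$ (reducing to $BwBt=BwB$ for $t\in\un{T}$, which is immediate from $tU=Ut$), so $\overline{\ovl{x}\cdot\un{T}}\subseteq\overline{\cS^\circ(uw_0,w_0s^{-1})}=\cS(uw_0,w_0s^{-1})$. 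Writing $\ovl{v}^\speci=\un{B}\cdot w_1^{-1}w_0\cdot w_0s^{-1}\in\cS^\circ(w_1^{-1}w_0,w_0s^{-1})$ and applying Bruhat uniqueness once more yields $w_1^{-1}w_0\leq uw_0$, i.e.~$u\leq w_1^{-1}$. Combining the two inequalities gives $u=w_1^{-1}$, as desired.

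The main subtlety is the reverse inequality $u\leq w_1^{-1}$: the Serre weight condition alone only pins $u$ down to the Bruhat interval $[w_1^{-1},w_0]$, and one really needs to exploit the $\un{T}$-stability of Schubert cells in combination with the fact that the $\un{T}$-fixed point encoding the semisimple specialization $\rhobar^\speci$ sits in the specific open Schubert cell $\cS^\circ(w_1^{-1}w_0,w_0s^{-1})$.
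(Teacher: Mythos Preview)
Your proof is correct, but it takes a more circuitous route than the paper's. The paper argues in one stroke by observing the identity
\[
\tld{\cS}^\circ(w_1^{-1}w_0,w_0s^{-1})=\tld{\cS}(w_1^{-1}w_0,w_0s^{-1})\cap\cM^\circ_{(sw_1)^{-1}},
\]
and then translating each factor on the right into one of the two conditions defining $SP(\rhobar)$: by Proposition~\ref{prop: wt and Sch var} together with~\eqref{eq:LM:component} the inclusion $x\in\tld{\cS}(w_1^{-1}w_0,w_0s^{-1})$ is equivalent to $V_{\rhobar^{\speci},w_1}\in W^g(\rhobar)$, and by Lemma~\ref{lem: specialization and open cell} the inclusion $x\in\cM^\circ_{(sw_1)^{-1}}$ is equivalent to $\rhobar\leadsto\taubar(sw_1,\lambda+\eta)$. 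This handles both directions simultaneously and avoids any case split.

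Your argument instead introduces the shape parameter $u$ via Lemma~\ref{lem:shape:Schubert} and pins it down by two Bruhat inequalities; the inequality $uw_0\leq w_1^{-1}w_0$ is exactly the content of $x\in\tld{\cS}(w_1^{-1}w_0,w_0s^{-1})$, while your inequality $w_1^{-1}w_0\leq uw_0$ (deduced from the $T$-orbit closure argument) is a hands-on reproof of the inclusion $\tld{\cS}^\circ(uw_0,w_0s^{-1})\subseteq\cM^\circ_{us^{-1}}$ combined with Lemma~\ref{lem: specialization and open cell}. For $(\Leftarrow)$ you recycle the proof of Lemma~\ref{lemma:shape:detect}, which is fine but forces the artificial case split $w_1=1$ versus $w_1\neq 1$. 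The paper's approach buys brevity and uniformity; yours makes the Bruhat-order mechanism behind the identity explicit, which has some expository value but is not needed here.
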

\begin{proof}
It is easy to see that $\tld{\cS}^\circ(w_1^{-1}w_0,w_0 s^{-1})=\tld{\cS}(w_1^{-1}w_0,w_0 s^{-1})\cap \cM_{(s w_1)^{-1}}^\circ$. Now this follows immediately from (\ref{eq:LM:component}), Lemma~\ref{lem: specialization and open cell} and Proposition~\ref{prop: wt and Sch var}.
\end{proof}

\begin{thm}\label{thm: obv wt and partition}
Assume that $\lambda+\eta$ is $(3n-1)$-generic Fontaine--Laffaille. For each $x\in\tld{\cF\cL}_\cJ(\F)$, the map $\theta_{\rhobar_{x,\lambda+\eta}}$ is bijective. Moreover, the following conditions are equivalent for two points $x,x'\in\tld{\cF\cL}_\cJ(\F)$:
\begin{enumerate}
\item \label{it: obv partition 1}
there exists $\cC\in\cP_\cJ$ such that $x,x'\in\cC(\F)$;
\item \label{it: obv partition 2}
$SP(\rhobar_{x,\lambda+\eta})=SP(\rhobar_{x',\lambda+\eta})$;
\item \label{it: obv partition 3}
$\{\rhobar^{\speci}\mid \rhobar_{x,\lambda+\eta}\leadsto \rhobar^{\speci}\}=\{\rhobar^{\speci}\mid \rhobar_{x',\lambda+\eta}\leadsto \rhobar^{\speci}\}$.
\end{enumerate}
\end{thm}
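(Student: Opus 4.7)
The plan is to prove bijectivity of $\theta_{\rhobar}$ first, then establish (1)$\Leftrightarrow$(2) by a direct combinatorial translation via Lemma~\ref{lem: SP and Sch cell}, and finally prove the equivalence with (3) using the closure characterization in Remark~\ref{rmk: specialization and strata} together with Lemma~\ref{lem: open cover}. Throughout, I fix $x \in \tld{\cF\cL}_\cJ(\F)$, set $\rhobar := \rhobar_{x,\lambda+\eta}$, and let $\cC_x \in \cP_\cJ$ denote the unique stratum containing $x$.

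For the bijectivity of $\theta_{\rhobar}$, I would show that for each $s \in \un{W}$ there is exactly one pair in $SP(\rhobar)$ mapping to $s$. Indeed, the family $\{\tld{\cS}^\circ(w_1^{-1}w_0,\, w_0 s^{-1})\}_{w_1\in\un{W}}$ is a partition of $\tld{\cF\cL}_\cJ$, so there exists a unique $w_1 = w_1(x,s) \in \un{W}$ with $x \in \tld{\cS}^\circ(w_1^{-1}w_0, w_0 s^{-1})$. By Lemma~\ref{lem: SP and Sch cell}, this is equivalent to $(V_{\rhobar^{\speci},w_1},\rhobar^{\speci}) \in SP(\rhobar)$ with $\rhobar^{\speci} = \taubar(sw_1,\lambda+\eta)$, yielding a unique preimage of $s$ under $\theta_{\rhobar}$.

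For the equivalence (1)$\Leftrightarrow$(2), I would use the same parameterization. By what was just shown, $SP(\rhobar_{x,\lambda+\eta}) = SP(\rhobar_{x',\lambda+\eta})$ is equivalent to $w_1(x,s) = w_1(x',s)$ for every $s \in \un{W}$, equivalently $x$ and $x'$ lie in the same cell of the Schubert stratification $\{\tld{\cS}^\circ(w_1^{-1}w_0,u)\}_{w_1\in\un{W}}$ for every $u = w_0 s^{-1}$. By Proposition~\ref{prop: refinement of Bruhat partition}, $\cP_\cJ$ is precisely the coarsest common refinement of these partitions as $u$ varies, so this is equivalent to $\cC_x = \cC_{x'}$, proving (1)$\Leftrightarrow$(2).

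The implication (2)$\Rightarrow$(3) is immediate since the set of specializations is the image of $SP(\rhobar)$ under the second projection. For (3)$\Rightarrow$(1), by Lemma~\ref{lem: specialization and open cell}, the set of specializations of $\rhobar_{x,\lambda+\eta}$ determines and is determined by the set $\{w_\cJ \in \un{W} : x \in \cM^\circ_{w_\cJ}(\F)\}$. Via the bijection (\ref{eq:bjc:sis}) $w_\cJ \leftrightarrow S_{\bullet,w_\cJ}$, and by the definition $\cM^\circ_{w_\cJ} = \bigcap_{j,\,k} \cH_{S_{k,w_j},j}^{\rm c}$, this is the same as knowing, componentwise in $j \in \cJ$, which strictly decreasing sequences $S_\bullet$ satisfy $f_{S,j}(x) \neq 0$ for all $S \in S_\bullet$. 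By Proposition~\ref{prop: union of open elements}, this in turn determines the subset $\{S \subseteq \mathbf{n} : f_{S,j}(x) \neq 0\}$ for each $j$, hence $\wp(\mathbf{n})\setminus K_j$ for each $j$, hence the tuple $K_\cJ$ defining the stratum $\cC_x = \cC_{K_\cJ}$ (using Lemma~\ref{lem: open cover}\ref{it: open cover:iii}). Thus (3) forces $\cC_x = \cC_{x'}$, closing the chain of equivalences. No step here is an obstacle: the whole argument is a bookkeeping translation between the $SP$-parameterization by Schubert cells (Lemma~\ref{lem: SP and Sch cell}) and the definition of $\cP_\cJ$ as the refinement of all translated Bruhat partitions.
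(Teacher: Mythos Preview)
Your proposal is correct and follows essentially the same approach as the paper: bijectivity of $\theta_{\rhobar}$ via Lemma~\ref{lem: SP and Sch cell} and the Schubert partition, then (1)$\Leftrightarrow$(2) via Proposition~\ref{prop: refinement of Bruhat partition} and Lemma~\ref{lem: SP and Sch cell}, and (1)$\Leftrightarrow$(3) via Lemma~\ref{lem: specialization and open cell} and Lemma~\ref{lem: open cover}\ref{it: open cover:iii}. The only cosmetic difference is that the paper proves (1)$\Leftrightarrow$(3) directly, while you route through (2)$\Rightarrow$(3)$\Rightarrow$(1); your (2)$\Rightarrow$(3) step implicitly uses that every specialization $\rhobar^{\speci}$ appears in $SP(\rhobar)$ (via the pair $(F(\lambda),\rhobar^{\speci})$), which the paper notes explicitly just before the theorem.
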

\begin{proof}
We first check the bijectivity of $\theta_{\rhobar_{x,\lambda+\eta}}$. Let $\cC$ be an element of $\cP_\cJ$ with $x\in\cC(\F)$. For a given $s\in\un{W}$, there exists a unique $w_1\in\un{W}$ such that $\cC\subseteq \tld{\cS}^\circ(w_1^{-1}w_0,w_0 s^{-1})$ by Proposition~\ref{prop: refinement of Bruhat partition}. Now by Lemma~\ref{lem: SP and Sch cell} we have a map $\un{W}\rightarrow SP(\rhobar)$, which can be readily checked to be the inverse map of $\theta_{\rhobar_{x,\lambda+\eta}}$.

We now check the equivalence. The equivalence between item~(\ref{it: obv partition 1}) and item~(\ref{it: obv partition 2}) follows immediately from Proposition~\ref{prop: refinement of Bruhat partition} and Lemma~\ref{lem: SP and Sch cell}. The equivalence between item~(\ref{it: obv partition 1}) and item~(\ref{it: obv partition 3}) follows from item~\ref{it: open cover:iii} of Lemma~\ref{lem: open cover} and Lemma~\ref{lem: specialization and open cell} (and thus is true for any dominant weight $\lambda\in X^*_+(\un{T})$ with $\lambda+\eta$ being Fontaine--Laffaille).
\end{proof}

\newpage

\section{Canonical lifts of invariant functions}
\label{sec:CLIF}
In this section, we interpret invariant functions on $\tld{\cF\cL}_\cJ$ as (normalized) mod-$p$ reduction of functions on the moduli of Weil--Deligne representations with $F(\lambda)$-relevant inertial types, for each $\lambda+\eta$ being $(3n-1)$-generic Fontaine--Laffaille. Our main result is Theorem~\ref{thm:Feval-2}.
\subsection{Sub inertial type and index set}\label{sub:dd}
Let $\tau$ be a $1$-generic tame inertial type for $K$.
In particular we have a lowest alcove presentation $(s_\cJ, \mu)\in\un{W}\times X^*(\un{T})$ for it, where $\mu+\eta$ is $1$-generic Fontaine--Laffaille and the characters $\{\chi_i\mid 1\leq i\leq n\}$ appearing in the decomposition (\ref{eq:def:type}) are pairwise distinct.
Recall the set $\mathbf{n}_\cJ$ from (\ref{equ: general index}) equipped with a right action of $\un{W}\rtimes\Z/f$.
We also recall the definition of $s_{\tau}$ from Definition~\ref{defn:tau}.
In this section, we prove in Lemma~\ref{lem: bijection tau} that there exists a bijection between the set of subsets $I_\cJ\subseteq\mathbf{n}_\cJ$ satisfying $I_\cJ\cdot(s_\cJ^{-1},1)=I_\cJ$, with the set of sub inertial types $\tau_1\subseteq\tau$ over $K$.

\vspace{2mm}

Given the inertial type $\tau$, we write $\tld{X}_\tau$ for the set $\{\chi_i\mid 1\leq i\leq n\}$ where the characters $\chi_i$ are defined in (\ref{eq:def:type}).
We fix a bijection $\tld{X}_\tau\xrightarrow{\sim}\mathbf{n}$ by sending $\chi_i$ to $i$. It follows from Definition~\ref{defn:tau} that $\chi_i^{p^f}=\chi_{s_{\tau}^{-1}(i)}$ for each $i\in\mathbf{n}$, and thus the bijection
$$\tld{X}_\tau\rightarrow \tld{X}_\tau:~\chi\mapsto \chi^{p^f}$$
corresponds to the permutation $s_{\tau}^{-1}$ on $\mathbf{n}$, under the fixed bijection $\tld{X}_\tau\xrightarrow{\sim}\mathbf{n}$. Let $\tau_1\subseteq \tau$ be a sub $I_K$-representation, then there exists a subset $\tld{X}_{\tau_1}\subseteq\tld{X}_\tau$ such that $\tau_1\cong\bigoplus_{\chi\in \tld{X}_{\tau_1}}\chi$. We notice that $\tau_1\subseteq\tau$ is a sub inertial type over $K$ if and only if $\tau_1^{p^f}\defeq \bigoplus_{\chi\in \tld{X}_{\tau_1}}\chi^{p^f}\cong \tau_1$, if and only if $\tld{X}_{\tau_1}$ corresponds to a union of orbits of $s_{\tau}$ under the bijection $\tld{X}_\tau\xrightarrow{\sim}\mathbf{n}$. We write $ X_\tau\defeq \mathbf{n}/s_{\tau}$ for the set of orbits of $s_{\tau}$, and then write $ X_{\tau_1}$ for the image of $\tld{X}_{\tau_1}$ under $\tld{X}_{\tau_1}\hookrightarrow \tld{X}_\tau\twoheadrightarrow  X_\tau$. Hence we obtain a natural bijection between the set of sub inertial types over $K$ and the power set of $ X_\tau$, given by $\tau_1\mapsto  X_{\tau_1}$. Note that we also understand $ X_\tau$  as a subset of the power set of $\tld{X}_\tau$, by viewing each $\Lambda\in  X_\tau$ as a single orbit inside $\tld{X}_\tau\xrightarrow{\sim}\mathbf{n}$ under the action of $s_{\tau}$.

\vspace{2mm}

We consider (in analogy with $\mathbf{n}_\cJ$) the set $\mathbf{n}_{\cJ'}\defeq\mathbf{n}\times\cJ'$ equipped with a right action of $\un{W}^r\rtimes\Z/f'$. We use the notation $I_\cJ=(I_j)_{j\in\cJ}$ for a subset of $\mathbf{n}_\cJ$ with each $I_j\subseteq\mathbf{n}$, and similarly the notation $I_{\cJ'}=(I_{j'})_{j'\in\cJ'}$ for a subset of $\mathbf{n}_{\cJ'}$. We write $s_{\cJ'}\in\un{W}^r$ for the image of $s_\cJ$ under the diagonal embedding.

Given a sub $I_K$-representation $\tau_1\subseteq\tau$, we obtain a subset $\tld{X}_{\tau_1}\subseteq\tld{X}_\tau$ such that $\tau_1\cong\bigoplus_{\chi\in\tld{X}_{\tau_1}}\chi$. To $\tau_1$ one can attach the set $I_{f'-1}\subseteq \mathbf{n}$ by the condition $$\tld{X}_{\tau_1}=\{\chi_i\mid i\in I_{f'-1}\}$$ and then define $I_{j'}\defeq (s'_{\mathrm{or}, j'})^{-1}(I_{f'-1})$ for each $j'\in\cJ'$. We observe that
$$I_{j'-1}=(s'_{\mathrm{or}, j'-1})^{-1} s'_{\mathrm{or}, j'}(I_{j'})=s_{j'}(I_{j'})$$
for each $j'\in\cJ'$, so that we have $I_{\cJ'}\cdot(s_{\cJ'}^{-1},1)=I_{\cJ'}$, where $I_{\cJ'}=(I_{j'})_{j'\in\cJ'}$.

Hence, the associations $\tau_1\mapsto I_{f'-1}\mapsto I_{\cJ'}$ gives rise to bijections between the following three sets:
\begin{equation}\label{equ: bijections, types}
\{\mbox{sub $I_K$-representations of $\tau$}\}\stackrel{\sim}{\longleftrightarrow}\{\mbox{subsets of }\mathbf{n}\}\stackrel{\sim}{\longleftrightarrow}\{I_{\cJ'}\subseteq\mathbf{n}_{\cJ'}\mid I_{\cJ'}\cdot(s_{\cJ'}^{-1},1)=I_{\cJ'}\}.
\end{equation}
Moreover, these bijections are compatible with the action of $s_{\tau}$ on $\mathbf{n}$. More precisely, we have the following.

\begin{lemma}\label{lem: bijection tau}
The maps in (\ref{equ: bijections, types}) induce bijections between the following three sets:
\begin{enumerate}[label=(\roman*)]
\item
\label{it: bijection tau:i}
the set of sub inertial types $\tau_1\subseteq\tau$ over $K$;
\item
\label{it: bijection tau:ii}
the set of subsets $I\subseteq\mathbf{n}$ satisfying $s_\tau(I)=I$;
\item
\label{it: bijection tau:iii}
the set of subsets $I_\cJ\subseteq\mathbf{n}_\cJ$ satisfying $I_\cJ\cdot(s_\cJ^{-1},1)=I_\cJ$.
\end{enumerate}
\end{lemma}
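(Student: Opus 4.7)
The plan is to refine the existing chain of bijections in (\ref{equ: bijections, types}), which handles sub $I_K$-representations of $\tau$, by imposing the extra condition that cuts out sub inertial types over $K$ rather than merely over $K'$. The two refinements amount to the same combinatorial restriction read from two different perspectives, namely $s_\tau$-invariance of a subset of $\mathbf{n}$ and periodicity (under $j' \mapsto j'-f$) of a tuple in $\mathbf{n}_{\cJ'}$.

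First I would handle the bijection between \ref{it: bijection tau:i} and \ref{it: bijection tau:ii}. Under the bijection $\tld{X}_\tau\stackrel{\sim}{\to}\mathbf{n}$ sending $\chi_i\mapsto i$, an arbitrary sub $I_K$-representation $\tau_1\subseteq\tau$ corresponds to the subset $I\subseteq\mathbf{n}$ with $\tau_1\cong\bigoplus_{i\in I}\chi_i$. Such a $\tau_1$ is a sub inertial type for $K$ precisely when it is stable under the Frobenius of $W_K$, which acts on the characters of $I_K$ by $\chi\mapsto \chi^{p^f}$. Since by Definition~\ref{defn:tau} we have $\chi_i^{p^f}=\chi_{s_\tau^{-1}(i)}$, this stability is equivalent to $s_\tau^{-1}(I)=I$, hence to $s_\tau(I)=I$. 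Combined with the first bijection in (\ref{equ: bijections, types}), this establishes \ref{it: bijection tau:i}$\leftrightarrow$\ref{it: bijection tau:ii}.

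Next I would handle \ref{it: bijection tau:ii}$\leftrightarrow$\ref{it: bijection tau:iii}. Recall that $s_{\cJ'}\in\un{W}^r$ is the diagonal image of $s_\cJ$, so that $s_{j'}=s_{j'\bmod f}$ for every $j'\in\cJ'$. By the second bijection in (\ref{equ: bijections, types}), an $I\subseteq\mathbf{n}$ corresponds to the tuple $I_{\cJ'}=(I_{j'})_{j'\in\cJ'}$ with $I_{f'-1}=I$ and $I_{j'-1}=s_{j'}(I_{j'})$, which automatically satisfies $I_{\cJ'}\cdot(s_{\cJ'}^{-1},1)=I_{\cJ'}$. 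The key observation is that $I_{\cJ'}$ descends to a tuple $I_\cJ=(I_j)_{j\in\cJ}$ (with $s_j(I_j)=I_{j-1}$ for each $j\in\cJ$) if and only if $I_{j'}$ depends only on the class of $j'$ modulo $f$. Iterating the recursion $f$ times gives
\[
I_{j'-f}=s_{j'-f+1}^{-1}\cdots s_{j'-1}^{-1}s_{j'}^{-1}(I_{j'}),
\]
and specializing to $j'=f-1$ reduces the periodicity condition $I_{j'-f}=I_{j'}$ to the single identity $s_0s_1\cdots s_{f-1}(I_{f-1})=I_{f-1}$, i.e.\ $s_\tau(I)=I$. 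Thus the restriction of the bijection in (\ref{equ: bijections, types}) exhibits \ref{it: bijection tau:ii}$\leftrightarrow$\ref{it: bijection tau:iii}.

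The only point requiring care is getting the direction of the cyclic composition to match the convention $s_\tau=s_0s_1\cdots s_{f-1}$ (rather than its reverse or a non-trivial cyclic rotation); once the indexing is tracked correctly this is purely formal, and I do not anticipate any genuine obstacle.
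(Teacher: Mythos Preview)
Your proposal is correct and follows essentially the same route as the paper: reduce to checking that the extra conditions (sub inertial type over $K$; $s_\tau$-stability; descent from $\mathbf{n}_{\cJ'}$ to $\mathbf{n}_\cJ$) match under the bijections of (\ref{equ: bijections, types}), and identify the latter with $f$-periodicity of $I_{\cJ'}$. One small slip: iterating $I_{j'-1}=s_{j'}(I_{j'})$ gives $I_{j'-f}=s_{j'-f+1}\cdots s_{j'}(I_{j'})$ without the inverses you wrote, and the natural specialization is $j'=f'-1$ (so that $I=I_{f'-1}$); but as you note this only affects which cyclic conjugate of $s_\tau$ appears and does not change the conclusion.
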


\begin{proof}
Since the maps between~\ref{it: bijection tau:i},~\ref{it: bijection tau:ii} and~\ref{it: bijection tau:iii} are induced from the ones in equation (\ref{equ: bijections, types}), we only need to show that the extra conditions are compatible. The bijection between~\ref{it: bijection tau:i} to~\ref{it: bijection tau:ii} follows directly from the discussion at the beginning of this section. %
The key observation for the bijection between~\ref{it: bijection tau:ii} and~\ref{it: bijection tau:iii} is that $s_\tau (I)=I$ if and only if $I_{\cJ'}$ is $f$-periodic (using (\ref{primeorient})). Then we finish the proof by noting that there is a natural bijection between the set of subsets $I_\cJ\subseteq\mathbf{n}_\cJ$ satisfying $I_\cJ\cdot(s_\cJ^{-1},1)=I_\cJ$, and the set of $f$-periodic subsets $I_{\cJ'}\subseteq\mathbf{n}_{\cJ'}$ satisfying $I_{\cJ'}\cdot(s_{\cJ'}^{-1},1)=I_{\cJ'}$.
\end{proof}

Given a sub inertial type $\tau_1\subseteq \tau$ over $K$, we write $I_\cJ^{\tau_1}$ for the subset of $\mathbf{n}_\cJ$ corresponding to $\tau_1$ via Lemma~\ref{lem: bijection tau}.

\subsection{Extremal shape}\label{sub:ext:shape}
In this section, we use the results from \S\,\ref{subsec:RTandSW} and \S\,\ref{sub:dd} to prove a comparison result, Proposition~\ref{prop: inv and Frob of Weil}.
Let $R$ be a Noetherian $\cO$-algebra.

\vspace{2mm}
Recall from \cite[\S\,5.2 and \S\,5.3]{MLM} the $\cO$-schemes of finite type $\tld{U}(t_\eta,\leq\!\eta) \subseteq \tld{U}^{[0,n-1]}(t_\eta)$. By \cite[Proposition 3.2.8, \S\,5.2 before Definition 5.2.4]{MLM} an element of $\tld{U}(t_\eta, \leq\!\eta)(R)$ is a collection $A=(A^{(j)})_{j\in\cJ}$ of $n\times n$ matrices with entries in $R[v+p]$ such that for each $j\in\cJ$
\begin{equation}
\label{eq:expl:matrix}
A_{ik}^{(j)}=v^{\delta_{i>k}}\Bigg(\delta_{i\geq k}\sum_{\ell=n-i}^{n-k-\delta_{i\neq k}}c_{ik,\ell}^{(j)}(v+p)^{\ell}\Bigg)
\end{equation}
for all $1\leq i,k\leq n$ and for all $n-i\leq \ell\leq n-k-\delta_{i\neq k}$, with moreover $c_{kk,n-k}^{(j)}\in R^\times$ for all $1\leq k\leq n$.
For each $(k,j)\in\mathbf{n}_\cJ$, we define
$$\varphi_{k,j}(A)\defeq\frac{1}{(n-k)!}\frac{d^{(n-k)}(A^{(j)}_{kk})}{dv^{(n-k)}}=c_{kk,n-k}^{(j)}
$$
for each $A\in \tld{U}(t_\eta, \leq\!\eta)(R)$ and each Noetherian $\cO$-algebra $R$, which gives a morphism $$\varphi_{k,j}:~\tld{U}(t_\eta, \leq\!\eta)\twoheadrightarrow \bG_{m,\cO}.$$ It is obvious that if $A\in \tld{U}(t_\eta, \leq\!\eta)(R)$, then
\begin{equation}\label{eq: extremal shape}
A^{(j)}_{kk}|_{v=0}=p^{n-k}\varphi_{k,j}(A)\in p^{n-k} R^\times
\end{equation}
for each $(k,j)\in\mathbf{n}_\cJ$.

Using equation~(\ref{eq:expl:matrix}), it is not difficult to see that there is a natural isomorphism $\tld{S}^\circ_\F(t_\eta) \simeq \tld{U}(t_\eta, \leq\!\eta)_{\F}$ where $\tld{U}(t_\eta, \leq\!\eta)_\F$ denotes the special fibre of $\tld{U}(t_\eta, \leq\!\eta)$, so that we have a closed immersion
\begin{equation}\label{eq: i t eta}
{i_{t_\eta}}:\ \tld{S}^\circ_\F(t_\eta)\simeq \tld{U}(t_\eta, \leq\!\eta)_{\F}\hookrightarrow \tld{U}(t_\eta, \leq\!\eta)
\end{equation}
where the latter is the canonical closed immersion of the special fibre.
We consider the natural projection to the $j$-th factor $\mathrm{Proj}_j:~\tld{S}^\circ_\F(t_\eta)\twoheadrightarrow \tld{S}^\circ_\F(t_{\eta_j})=\Iw_1\backslash\Iw_1 T t_{\eta_j}\Iw_1$ and define $\overline{\varphi}_{k,j}$ by the following composition
$$\tld{S}^\circ_\F(t_\eta)\xrightarrow{\mathrm{Proj}_j}\Iw_1\backslash\Iw_1 T t_{\eta_j}\Iw_1\cong T\times\Iw_1\backslash\Iw_1t_\eta\Iw_1\twoheadrightarrow T\xrightarrow{\varepsilon_k} \bG_{m,\F}$$
for each $(k,j)\in\mathbf{n}_\cJ$. Then it is clear that for each $(k,j)\in\mathbf{n}_\cJ$ we have
\begin{equation}\label{equ: reduction of diagonal}
\varphi_{k,j}\circ i_{t_\eta}=\iota\circ\overline{\varphi}_{k,j}
\end{equation}
where $\iota:\bG_{m,\F}\hookrightarrow\bG_{m,\cO}$ is the canonical closed immersion.

In what follows, given $\tld{u}_\cJ=u_\cJ t_\nu\in\tld{\un{W}}^\vee$, we write $r_{\tld{u}_\cJ}$ for the map
\begin{equation}\label{eq: r u tilde}
r_{\tld{u}_\cJ}\,:\,\tld{\cF\cL}_\cJ\rightarrow \tld{\Fl}_\cJ
\end{equation}
defined by right multiplication by $\tld{u}_\cJ$.

\begin{lemma}\label{lem: pull pack Frob eigenvalue prime}
Let $\tld{u}_\cJ=u_\cJ t_\eta\in\un{W}t_\eta$. Then we have $r_{\tld{u}_\cJ}(\cM_{u_\cJ^{-1}}^\circ)\subseteq \tld{S}^\circ_\F(t_\eta)$ and
\begin{equation*}
\overline{\varphi}_{k,j}\circ r_{\tld{u}_\cJ}=f_{S_{u_j^{-1},k},j}f_{S_{u_j^{-1},k+1},j}^{-1}:~\cM_{u_\cJ^{-1}}^\circ\twoheadrightarrow \bG_{m,\F}
\end{equation*}
for each  $(k,j)\in\mathbf{n}_\cJ$ \emph{(}with the convention $f_{S_{u_j^{-1},n+1},j}\defeq 1$\emph{)}.
\end{lemma}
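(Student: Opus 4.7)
The plan is to verify both assertions by a direct matrix computation, using the explicit description of $\cM^{\circ}_{u_\cJ^{-1}}$ from Lemma~\ref{lem: hypersurfaces are Schubert} and the definition of $\tld{S}^\circ_\F(t_{\eta_j})$ as $\Iw_1\backslash\Iw_1 t_{\eta_j}\Iw_1$.

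First I would handle the inclusion. A point of $\cM^{\circ}_{u_j^{-1}}\cong U\backslash U w_0 B w_0 u_j^{-1}$ admits a representative $A^{(j)}\in w_0 B w_0 u_j^{-1}$. Writing $A^{(j)}u_j=w_0(t\,u_+)w_0$ with $t\in T$ and $u_+\in U$, one has
\[
A^{(j)}\tld{u}_j = A^{(j)}u_j t_{\eta_j} = (w_0 t w_0)\,t_{\eta_j}\,\big(t_{-\eta_j}(w_0 u_+ w_0)t_{\eta_j}\big).
\]
Since $w_0 u_+ w_0$ is lower triangular unipotent, its $\alpha$-entry (for $\alpha=\eps_{i'}-\eps_i$, $i>i'$) is multiplied under conjugation by $t_{-\eta_j}$ by $v^{\langle\eta_j,\alpha^\vee\rangle}=v^{i-i'}$, a strictly positive power of $v$; hence $t_{-\eta_j}(w_0 u_+ w_0)t_{\eta_j}\in \Iw_1$. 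Thus $A^{(j)}\tld{u}_j\in T t_{\eta_j}\Iw_1$, whose class lies in $\Iw_1\backslash\Iw_1 T t_{\eta_j}\Iw_1=\tld{S}^\circ_\F(t_{\eta_j})$, establishing the inclusion in each embedding $j\in\cJ$.

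Second, I would compute $\overline{\varphi}_{k,j}$ at the image. By the definition of $\overline{\varphi}_{k,j}$ (projection to $T$ via the isomorphism $T\times(\Iw_1\backslash\Iw_1 t_{\eta_j}\Iw_1)\stackrel{\sim}{\to}\tld{S}^\circ_\F(t_{\eta_j})$ followed by the $k$-th character $\eps_k$), and the decomposition above, the $T$-component equals $w_0 t w_0$, whose $k$-th diagonal entry is $t_{n+1-k}$, where $t=\mathrm{diag}(t_1,\dots,t_n)$ is the diagonal part of the factorization $A^{(j)}u_j=w_0(tu_+)w_0$. On the other hand, $A^{(j)}u_j\in w_0 B w_0$ is lower triangular with diagonal $(t_n,t_{n-1},\dots,t_1)$, so for each $1\leq k\leq n+1$, the submatrix of $A^{(j)}u_j$ with rows and columns indexed by $\{k,\dots,n\}$ is itself lower triangular with diagonal entries $(t_{n+1-k},\dots,t_1)$, and hence has determinant $\prod_{k'=1}^{n+1-k} t_{k'}$. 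Translating back via right multiplication by $u_j^{-1}$, which permutes columns, this determinant equals $f_{S_{k,u_j^{-1}},j}(A^{(j)})$ up to a sign depending only on $u_j$. The ratio $f_{S_{k,u_j^{-1}},j}/f_{S_{k+1,u_j^{-1}},j}$ therefore equals $t_{n+1-k}=\overline{\varphi}_{k,j}(r_{\tld{u}_\cJ}(A))$, as claimed (the sign ambiguities cancel, consistently with (\ref{equ: comparison of sections})).

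The only real obstacle is bookkeeping: carefully tracking the signs arising from the column permutation $u_j$ in the minor computation and from the factor $w_0$ in the identification of diagonal entries. Both steps are otherwise standard calculations in the loop group, relying solely on the explicit matrix descriptions of $\cM^{\circ}_{u_j^{-1}}$, the opposite Borel factorization, and the explicit formula for the minors $f_{S,j}$ from \S~\ref{sub:finest:elements}.
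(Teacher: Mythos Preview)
Your proof is correct and takes essentially the same approach as the paper: both are direct matrix computations, with the paper compressing the argument into a commutative diagram citing Lemmas~\ref{lem: hypersurfaces are Schubert} and~\ref{lem: explicit projection}, while you unpack the factorization $A^{(j)}u_j = w_0(t\,u_+)w_0$ explicitly and verify by hand that $t_{-\eta_j}(w_0 u_+ w_0)t_{\eta_j}\in\Iw_1$. One small correction: the nonzero off-diagonal entries of the lower unipotent $w_0 u_+ w_0$ sit at positions $(i,i')$ with $i>i'$, i.e.\ at roots $\eps_i-\eps_{i'}$ rather than $\eps_{i'}-\eps_i$; your conjugation computation of the $v$-exponent $i-i'>0$ is unaffected, and the sign bookkeeping you flag is exactly the $\pm$ already present in Lemma~\ref{lem: explicit projection}, which the paper's diagram treats at the same level of precision.
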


\begin{proof}
It is clear that $r_{\tld{u}_\cJ}(\cM_{u_\cJ^{-1}}^\circ)\subseteq \tld{S}^\circ_\F(t_\eta)$ by definition. From Lemmas~\ref{lem: hypersurfaces are Schubert},~\ref{lem: explicit projection} and the definition of $r_{\tld{u}_\cJ}$ we have
\[
\xymatrix{
\mathcal{M}^\circ_{u^{-1}_\cJ}\ar@{->>}_-{\mathrm{Proj}_j}[r]\ar@{^{(}->}_{r_{\tld{u}_\cJ}}[d] \ar@/^1.5pc/^{f_{S_{u_j^{-1},k},j}f_{S_{u_j^{-1},k+1},j}^{-1}}[rrrr]&U\backslash UTw_0Uw_0 u^{-1}_j\ar^-{\sim}[r]\ar@{^{(}->}[d]&T\times U\backslash U w_0Uw_0u^{-1}_j\ar@{->>}[r]&T\ar@{=}[d]\ar_{\varepsilon_k}[r]&\bG_{m,\F}\ar@{=}[d]\\
\tld{S}^\circ_\F(t_\eta)\ar@{->>}^-{\mathrm{Proj}_j}[r]\ar@/_1pc/_{\overline{\varphi}_{k,j}}[rrrr]&\Iw_1\backslash\Iw_1 T t_{\eta_j}\Iw_1\ar^-{\sim}[r]&T\times\Iw_1\backslash\Iw_1t_{\eta_j}\Iw_1.2\ar@{->>}[r]&T\ar^{\varepsilon_k}[r]&\bG_{m,\F}
}
\]
which is easily checked to be commutative.
\end{proof}

We fix a tame inertial type together with a lowest alcove presentation: $\tau=\tau(s_\cJ,\lambda+\eta-s_\cJ(\eta))$ where $\lambda+\eta$ is $n$-generic Fontaine--Laffaille. Then for each sub inertial type $\tau_1\subseteq \tau$ we define
\begin{equation}\label{eq: varphi tau tau1}
\varphi_{\tau,\tau_1}\defeq \prod_{(k,j)\in I_\cJ^{\tau_1}}\varphi_{k,j}:~ \tld{U}(t_\eta, \leq\!\eta)\twoheadrightarrow\bG_{m,\cO}.
\end{equation}

\begin{lemma}\label{lem: pull pack Frob eigenvalue primeprime}
Let $\lambda+\eta$ be $n$-generic Fontaine--Laffaille
and let $\tau= \tau(s_\cJ,\lambda+\eta-s_{\cJ}(\eta))$ be a tame inertial type.
Then we have $t_{\lambda+\eta}\tld{w}^*(\tau)^{-1}=s_{\cJ}t_\eta$, and for each sub inertial type $\tau_1\subseteq\tau$
$$\varphi_{\tau,\tau_1}\circ i_{t_\eta}\circ r_{s_{\cJ}t_{\eta}}=\iota\circ f_{s_\cJ^{-1},I_\cJ^{\tau_1}} :~\cM_{s_\cJ^{-1}}^\circ\rightarrow\bG_{m,\cO}.$$
\end{lemma}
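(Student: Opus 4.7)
The plan is to prove the two assertions of the lemma in sequence, both by direct computation from the definitions of all the relevant objects.

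For the first assertion $t_{\lambda+\eta}\tld{w}^*(\tau)^{-1}=s_{\cJ}t_\eta$, I would unravel Definition~\ref{defi:gen}: since $\tau=\tau(s_\cJ,\lambda+\eta-s_\cJ(\eta))$ has lowest alcove presentation $(s_\cJ,\lambda-s_\cJ(\eta))$, we have $\tld{w}(\tau)=t_{\lambda+\eta-s_\cJ(\eta)}s_\cJ$. Rewriting this in the form $w t_\nu$ with $w=s_\cJ$ and $\nu=s_\cJ^{-1}(\lambda+\eta)-\eta$, the involution gives $\tld{w}^*(\tau)=t_{s_\cJ^{-1}(\lambda+\eta)-\eta}s_\cJ^{-1}$, whose inverse is $s_\cJ t_{\eta-s_\cJ^{-1}(\lambda+\eta)}$. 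Left-multiplying by $t_{\lambda+\eta}$ and commuting the translation past $s_\cJ$ then yields $s_\cJ t_\eta$ directly.

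For the second assertion, the computation reduces to applying previously established pieces in sequence. Setting $\tld{u}_\cJ\defeq s_\cJ t_\eta$ with $u_\cJ=s_\cJ$, Lemma~\ref{lem: pull pack Frob eigenvalue prime} applied to $\tld{u}_\cJ$ gives both the factorization of $r_{\tld{u}_\cJ}$ through $\tld{S}^\circ_\F(t_\eta)$ and the identity
\[
\overline{\varphi}_{k,j}\circ r_{s_\cJ t_\eta}=\frac{f_{S_{k,s_j^{-1}},j}}{f_{S_{k+1,s_j^{-1}},j}}
\]
on $\cM^\circ_{s_\cJ^{-1}}$ for every $(k,j)\in\mathbf{n}_\cJ$. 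Composing with $i_{t_\eta}$ and invoking the compatibility~(\ref{equ: reduction of diagonal}) $\varphi_{k,j}\circ i_{t_\eta}=\iota\circ\overline{\varphi}_{k,j}$ promotes this equality to an $\cO$-valued statement
\[
\varphi_{k,j}\circ i_{t_\eta}\circ r_{s_\cJ t_\eta}=\iota\circ\bigl(f_{S_{k,s_j^{-1}},j}\,f_{S_{k+1,s_j^{-1}},j}^{-1}\bigr).
\]
Taking the product over $(k,j)\in I_\cJ^{\tau_1}$ on both sides and using the definition~(\ref{eq: varphi tau tau1}) of $\varphi_{\tau,\tau_1}$ on the left and the definition~(\ref{equ: def of inv fun}) of $f_{s_\cJ^{-1},I_\cJ^{\tau_1}}$ on the right yields the desired equality. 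The invariance condition $I_\cJ^{\tau_1}\cdot(s_\cJ^{-1},1)=I_\cJ^{\tau_1}$ needed for $f_{s_\cJ^{-1},I_\cJ^{\tau_1}}$ to belong to $\Inv$ is exactly the content of the bijection in Lemma~\ref{lem: bijection tau}.

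I do not anticipate any genuine obstacle here: the lemma is a bookkeeping result that records, via a chain of immersions and a shift by $\tld{u}_\cJ$, that the product of the top-degree Taylor coefficients of the diagonal entries of the local model matrix pulls back, on the Fontaine--Laffaille locus, to the rational monomial defining the invariant function $f_{s_\cJ^{-1},I_\cJ^{\tau_1}}$. The only point requiring some care is keeping track of the $*$-involution and the direction of the translation in $\tld{\un{W}}$ when verifying the first assertion, but this is a short explicit manipulation.
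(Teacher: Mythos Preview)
Your proposal is correct and takes essentially the same approach as the paper, which dispatches the lemma in two lines by invoking the definition of $f_{s_\cJ^{-1},I_\cJ^{\tau_1}}$ and Lemma~\ref{lem: pull pack Frob eigenvalue prime}. Your write-up is simply a more explicit unfolding of these same ingredients, together with the compatibility~(\ref{equ: reduction of diagonal}) needed to pass through $i_{t_\eta}$; there is no genuine difference in strategy.
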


\begin{proof}
It is clear that we have $\tld{w}^*(\tau)t_{-\lambda-\eta}=t_{-\eta} s_{\cJ}^{-1}$. Now, the other identity follows directly from the definition of $f_{s_\cJ^{-1},I_\cJ^{\tau_1}}$ in \S\,\ref{sub:def:inv} and Lemma~\ref{lem: pull pack Frob eigenvalue prime}.
\end{proof}

From now on we assume that $\lambda+\eta$ is $(3n-1)$-generic Fontaine--Laffaille, in order to use the results of \S\,\ref{subsec:LMforEG}.
Consider the $p$-adic formal scheme $ \tld{Y}^{\leq \eta,\tau}(t_\eta)$ defined in \cite[Definition 5.2.4(2)]{MLM}.
We have an isomorphism
\begin{equation}\label{eq: pi tile tau}
\tld{Y}^{\leq \eta,\tau}(t_\eta)\xrightarrow{\sim} \tld{U}(t_\eta, \leq \eta)^{\wedge_p},
\end{equation}
which we denote $\tld{\pi}_\tau$ in what follows (and which lifts the restriction to $\tld{Y}^{\leq \eta,\tau}_\F(t_\eta)$ of the morphism $\tld{\pi}_{(s_\cJ,\mu)}: \tld{Y}^{[0,n-1],\tau}_\F \to \tld{\Fl}^{[0,n-1]}_{\cJ}$ defined in \S\,\ref{subsec:LMforEG}).
We let $\tld{Y}^{\leq \eta,\tau}$ be the closed $p$-adic formal algebraic substack of $\tld{Y}^{[0,n-1],\tau}$ characterized by the two itemized properties in \cite[p.~79]{MLM}.
By letting $\tld{Y}^{\leq \eta,\tau}_{t_\eta}\defeq \tld{Y}^{[0,n-1],\tau}_{t_\eta}\cap \tld{Y}^{\leq \eta,\tau}_{\F}$, we note that we have $\tld{Y}^{\leq \eta,\tau}_{\F}(t_\eta)=\tld{Y}^{\leq \eta,\tau}_{t_\eta}$ as $\tld{S}^\circ_\F(t_\eta)\simeq\tld{U}(t_\eta, \leq\!\eta)_{\F}$ and (\ref{eq: pi tile tau}) (cf. Remark~\ref{rmk:shape:strata} (\ref{it:open:substacks:2})).

\vspace{2mm}

Let $R$ be a $p$-adically complete Noetherian $\cO$-algebra and let $(\fM,\phi_\fM)\in \tld{Y}^{\leq \eta,\tau}(t_\eta)(R)$ be a $R$-point.
Assume $(\fM,\phi_\fM)$ admits a $t_\eta$-gauge basis $\beta$ (in the sense of \cite[Definition 5.2.6]{MLM}) and let  $C_{\fM,\beta}^{(j')}$, $A_{\fM,\beta}^{(j')}$ be the matrices associated to $(\fM,\phi_\fM)$ and $\beta$ as in  Definition~\ref{defn:eigenbasis} and equation (\ref{equ: explicit formula for the morphism}). (See \cite[Proposition 5.2.7]{MLM} for the existence.)
Then, by definition, the isomorphism $\tld{\pi}_\tau:~\tld{Y}^{\leq \eta,\tau}(t_\eta)\xrightarrow{\sim} \tld{U}(t_\eta, \leq \eta)^{\wedge_p}$ sends
$(\fM,\phi_\fM,\beta)$ to $(A_{\fM,\beta}^{(j)})_{j\in\cJ}$.
For each $\chi\in\tld{X}_\tau$ and each $j'\in\cJ'$, we write $\beta_\chi^{(j')}\in\beta^{(j')}$ for the element where $\Delta'$ acts by $\chi$, and if we write $\phi_\fM^{(j')}\Big(\phz^*\big(\beta^{(j'-1)}_\chi\big)\Big)$ as a linear combination with respect to $\beta^{(j')}$, then we set $C^{(j')}_{\fM,\beta, \chi}\in R[\![u']\!]$ as the coefficient of $\beta^{(j')}_\chi$. Hence, $\{C^{(j')}_{\fM,\beta,\chi}\}_{\chi\in\tld{X}_\tau}$ exhausts the diagonal entries of $C^{(j')}_{\fM,\beta}$. We define $A^{(j')}_{\fM,\beta,\chi}$ as the diagonal entry of $A^{(j')}_{\fM,\beta}$ which equals $C^{(j')}_{\fM,\beta,\chi}$, via the relation (\ref{equ: explicit formula for the morphism}). It follows from the definitions that
\begin{equation}\label{equ: relation between entries}
C^{(j')}_{\fM,\beta,\chi}=C^{(j'+f)}_{\fM,\beta,\chi^{p^f}}
\end{equation}
for each $\chi\in\tld{X}_\tau$ and each $j'\in\cJ'$.

Let $\tau_1\subseteq \tau$ be a sub inertial type over $K$, and set $r_\Lambda\defeq \#\Lambda$ for each $\Lambda\in X_\tau$. We define
\begin{equation}\label{equ: frob crys formula}
\phi_{\tau,\tau_1}((\fM,\phi_\fM,\beta))\defeq \prod_{\chi\in\tld{X}_{\tau_1}}\prod_{j'=0}^{f-1}C^{(j')}_{\fM,\beta,\chi}|_{u'=0}
\end{equation}
which gives a morphism of $p$-adic formal schemes
\begin{equation*}
\phi_{\tau,\tau_1}:~\tld{Y}^{\leq \eta,\tau}(t_\eta)\rightarrow\bA_\cO^{1,\wedge_p}.
\end{equation*}

\begin{lemma}\label{lem: Frob crys}
The morphism $\phi_{\tau,\tau_1}$ does not depend on the choice of the basis $\beta$.
\end{lemma}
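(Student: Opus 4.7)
The plan is to show that any two $t_\eta$-gauge bases of $(\fM,\phi_\fM)$ differ by a diagonal change of basis whose effect on $\phi_{\tau,\tau_1}$ telescopes to $1$, once combined with the $\iota_\fM$-compatibility of the eigenbasis condition (\ref{eq:eigenbasis}) and the fact that $\tld{X}_{\tau_1}$ is stable under $\chi\mapsto\chi^{p^f}$.

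First, I would note that any two $t_\eta$-gauge bases $\beta,\beta'$ are in particular eigenbases in the sense of Definition~\ref{defn:eigenbasis}. Because $\tau$ is multiplicity free (as $\mu+\eta$ is $1$-generic Fontaine--Laffaille), the change of basis preserves each isotypic line, so there exist units $d^{(j')}_\chi\in R[[u']]^\times$ with $\beta'^{(j')}_\chi=d^{(j')}_\chi\cdot\beta^{(j')}_\chi$ for each $\chi\in\tld{X}_\tau$ and $j'\in\cJ'$. The requirement that both sides have the same $\Delta'$-isotypic character forces $d^{(j')}_\chi$ to be $\Delta'$-invariant, i.e.~$d^{(j')}_\chi\in R[[v]]^\times$ for $v=(u')^{p^{f'}-1}$; in particular $a^{(j')}_\chi\defeq d^{(j')}_\chi|_{u'=0}\in R^\times$.

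Next, I would apply the standard Frobenius change of basis formula to the diagonal entries. Using that $\varphi$ acts as the identity on $R$ and sends $u'\mapsto (u')^p$ (so $\varphi(d)|_{u'=0}=d|_{u'=0}$, understood as an element of the appropriate component), one obtains
\[
C^{(j')}_{\fM,\beta',\chi}\big|_{u'=0}=(a^{(j')}_\chi)^{-1}\cdot a^{(j'-1)}_\chi\cdot C^{(j')}_{\fM,\beta,\chi}\big|_{u'=0}.
\]
Substituting into (\ref{equ: frob crys formula}), the ratio $\phi_{\tau,\tau_1}((\fM,\phi_\fM,\beta'))/\phi_{\tau,\tau_1}((\fM,\phi_\fM,\beta))$ collapses to
\[
\prod_{\chi\in\tld{X}_{\tau_1}}\prod_{j'=0}^{f-1}\frac{a^{(j'-1)}_\chi}{a^{(j')}_\chi}=\prod_{\chi\in\tld{X}_{\tau_1}}\frac{a^{(f'-1)}_\chi}{a^{(f-1)}_\chi},
\]
where the identification $a^{(-1)}_\chi=a^{(f'-1)}_\chi$ uses the cyclic indexing on $\cJ'=\Z/f'$.

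The main (though routine) step is to derive the periodicity $a^{(j'+f)}_{\chi^{p^f}}=a^{(j')}_\chi$ from the isomorphism $\iota_\fM:(\sigma^f)^*\fM\stackrel{\sim}{\to}\fM$ and the compatibility condition (\ref{eq:eigenbasis}) for \emph{both} $\beta$ and $\beta'$. Since $\sigma^f$ fixes $\pi'$, it acts as the identity on $R[[u']]$, so $(\sigma^f)^*(d^{(j'-f)}_\chi)=d^{(j'-f)}_\chi$ as a power series; applying $\iota_\fM\circ(\sigma^f)^*$ to $\beta'^{(j'-f)}_\chi=d^{(j'-f)}_\chi\beta^{(j'-f)}_\chi$ and comparing with $\iota_\fM((\sigma^f)^*\beta'^{(j'-f)})=\beta'^{(j')}$ gives $d^{(j')}_{\chi^{p^f}}=d^{(j'-f)}_\chi$ (recall from equation (\ref{eq:pull:back}) that the character shift is by $p^f$). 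Evaluating at $u'=0$ yields the stated periodicity.

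Iterating this periodicity $r-1$ times gives $a^{(f'-1)}_\chi=a^{(f-1)}_{\chi^{p^{f}}}$ (using $p^{rf}\equiv 1$ on $\tld{X}_\tau$). Finally, by Lemma~\ref{lem: bijection tau}, the hypothesis that $\tau_1$ is an inertial type \emph{over $K$} (not only over $K'$) means precisely that $\tld{X}_{\tau_1}$ is stable under $\chi\mapsto\chi^{p^f}$, so the substitution $\chi\mapsto\chi^{p^f}$ is a bijection of $\tld{X}_{\tau_1}$ to itself, and
\[
\prod_{\chi\in\tld{X}_{\tau_1}}\frac{a^{(f-1)}_{\chi^{p^f}}}{a^{(f-1)}_\chi}=1,
\]
completing the proof. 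The only non-bookkeeping part is the derivation of the periodicity in the previous paragraph, where one has to track carefully the interaction between $(\sigma^f)^*$, the $\Delta'$-action and the character indexing from Definition~\ref{defn:tau}.
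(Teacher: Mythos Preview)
Your proof is correct and uses the same essential ingredients as the paper's: the diagonal form of the change of eigenbasis, the periodicity relation coming from the compatibility condition~(\ref{eq:eigenbasis}), and a telescoping argument exploiting the $s_\tau$-orbit structure of $\tld{X}_{\tau_1}$. The only difference is organizational: the paper fixes a section $\theta:X_\tau\hookrightarrow\tld{X}_\tau$ and uses~(\ref{equ: relation between entries}) to rewrite $\prod_{\chi\in\Lambda}\prod_{j'=0}^{f-1}$ as $\prod_{j'=0}^{fr_\Lambda-1}$ for each orbit $\Lambda$, then telescopes over this longer range using the $fr_\Lambda$-periodicity of the change-of-basis scalars; you instead telescope over the short range $0\le j'\le f-1$ first, leaving boundary terms, and then cancel those via the bijection $\chi\mapsto\chi^{p^f}$ of $\tld{X}_{\tau_1}$. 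Both routes are equivalent, and your version has the minor advantage of working with general eigenbasis changes $d^{(j')}_\chi\in R[\![v]\!]^\times$ rather than assuming the gauge condition forces the scalars into $R^\times$.
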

\begin{proof}
We fix an arbitrary section $\theta:~ X_\tau\hookrightarrow\tld{X}_\tau$ of the quotient map $\tld{X}_\tau\twoheadrightarrow X_\tau$ (namely the choice of a character $\theta(\Lambda)\in \Lambda$ for each orbit $\Lambda\in X_\tau$). For each $\Lambda\in  X_\tau$, we deduce from (\ref{equ: relation between entries}) that
$$\prod_{\chi\in\Lambda}C^{(j')}_{\fM,\beta,\chi}=\prod_{k=0}^{r_\Lambda-1}C^{(j'+k f)}_{\fM,\beta,\theta(\Lambda)}$$
which implies that
\begin{equation*}
\prod_{\chi\in\Lambda}\prod_{j'=0}^{f-1}C^{(j')}_{\fM,\beta,\chi}=\prod_{j'=0}^{fr_\Lambda-1}C^{(j')}_{\fM,\beta,\theta(\Lambda)}.
\end{equation*}
Suppose $\beta^\prime$ is another choice of eigenbasis, then there exists $t_{\fM,\chi}^{(j')}\in R^\times$ such that
$$\beta_{\chi}^{\prime,(j')}=t_{\fM,\chi}^{(j')}\beta_{\chi}^{(j')}$$
for each $j'\in\cJ$ and $\chi\in\tld{X}_\tau$.
Note that $t_{\fM,\chi}^{(j')}=t_{\fM,\chi^{p^f}}^{(j'-f)}$ for each $\chi\in\tld{X}_\tau$ and each $j'\in\cJ'$. In particular, $(t_{\fM,\chi}^{(j')})_{j'\in\cJ'}$ is $fr_\Lambda$-periodic for each $\chi\in\Lambda\in  X_\tau$. Hence we deduce that
\begin{align*}
\prod_{\chi\in\Lambda}\prod_{j'=0}^{f-1}C^{(j')}_{\fM,\beta^\prime,\chi}& =\prod_{j'=0}^{fr_\Lambda-1}C^{(j')}_{\fM,\beta^\prime,\theta(\Lambda)}\\ &=\prod_{j'=0}^{fr_\Lambda-1}(t_{\fM,\theta(\Lambda)}^{(j')})^{-1}C^{(j')}_{\fM,\beta,\theta(\Lambda)}t_{\fM,\theta(\Lambda)}^{(j'-1)}\\ &=\prod_{j'=0}^{fr_\Lambda-1}C^{(j')}_{\fM,\beta,\theta(\Lambda)}=\prod_{\chi\in\Lambda}\prod_{j'=0}^{f-1}C^{(j')}_{\fM,\beta,\chi},
\end{align*}
which finishes the proof by taking product over $\Lambda\in X_{\tau_1}$.
\end{proof}

\vspace{2mm}

For each sub inertial type $\tau_1\subseteq\tau$ over $K$, we set
$$d_{\tau,\tau_1}\defeq \sum_{(k,j)\in I_\cJ^{\tau_1}}(n-k)\in\N$$
where $I_\cJ^{\tau_1}\subseteq\mathbf{n}_\cJ$ is the subset associate with $\tau_1$ via Lemma~\ref{lem: bijection tau}. We abuse the notation $\varphi_{\tau,\tau_1}$ for the associated morphism of $p$-adic formal schemes $\tld{U}^{\leq\eta}(t_\eta)^{\wedge_p}\twoheadrightarrow \bG_{m,\cO}^{\wedge_p}$ given by $p$-adic completion.

\begin{prop}\label{prop: inv and Frob of Weil}
We have
$$\phi_{\tau,\tau_1}=p^{d_{\tau,\tau_1}}\varphi_{\tau,\tau_1}\circ \tld{\pi}_\tau:~\tld{Y}^{\leq \eta,\tau}(t_\eta)\rightarrow \bA_\cO^{1,\wedge_p},$$
for each sub inertial type $\tau_1\subseteq\tau$ over $K$.
\end{prop}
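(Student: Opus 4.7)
The plan is to work on $R$-points of $\tld{Y}^{\leq \eta,\tau}(t_\eta)\cong\tld{U}(t_\eta,\leq\eta)^{\wedge_p}$ that admit a $t_\eta$-gauge basis $\beta$ (which exist Zariski-locally, cf.~\cite[Proposition 5.2.7]{MLM}), compute both sides explicitly in terms of the matrices $C^{(j')}_{\fM,\beta}$ and $A^{(j')}_{\fM,\beta}\defeq\tld{\pi}_\tau(\fM,\phi_\fM,\beta)^{(j')}$, and verify equality entry-by-entry. The key inputs are the change-of-basis formula (\ref{equ: explicit formula for the morphism}) and the explicit description (\ref{eq:expl:matrix}) of elements of $\tld{U}(t_\eta,\leq\eta)$; the well-definedness of $\phi_{\tau,\tau_1}$ as a morphism out of $\tld{Y}^{\leq\eta,\tau}(t_\eta)$ is already supplied by Lemma~\ref{lem: Frob crys}.

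First I would analyze the diagonal entries of $C^{(j')}_{\fM,\beta}$. Since $(u')^{\bf{a}_{(s_\cJ,\mu)}^{\prime,(j')}}$ is diagonal, the conjugation $(u')^{\bf{a}'}A^{(j')}_{\fM,\beta}(u')^{-\bf{a}'}$ preserves the diagonal of $A^{(j')}_{\fM,\beta}$, while the subsequent conjugation by the permutation matrix $s'_{\mathrm{or},j'}$ merely permutes these diagonal entries: a direct computation gives $(sDs^{-1})_{ii}=D_{s^{-1}(i),s^{-1}(i)}$ for any diagonal $D$ and permutation $s$. Combined with the $\Delta'$-eigenbasis condition $\beta^{(j')}_{\chi_i}=\beta^{(j')}_i$, this yields
\[
C^{(j')}_{\fM,\beta,\chi_{s'_{\mathrm{or},j'}(k)}}=A^{(j')}_{\fM,\beta,kk}\quad\text{for each }1\leq k\leq n.
\]

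Next, specializing (\ref{eq:expl:matrix}) to the diagonal ($i=k$, so $\delta_{i>k}=\delta_{i\neq k}=0$) gives $A^{(j')}_{\fM,\beta,kk}=c^{(j')}_{kk,n-k}(v+p)^{n-k}$, and since $v=(u')^{p^{f'}-1}$ vanishes at $u'=0$ we obtain
\[
C^{(j')}_{\fM,\beta,\chi_{s'_{\mathrm{or},j'}(k)}}\big|_{u'=0}=p^{n-k}c^{(j')}_{kk,n-k}=p^{n-k}\varphi_{k,j}\big(\tld{\pi}_\tau(\fM,\phi_\fM,\beta)\big),
\]
where $j\in\cJ$ is the residue of $j'$ modulo $f$ (recalling from \S~\ref{subsubBKD} that both matrices depend only on $j'\bmod f$).

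Finally, I would match indices: unwinding \S~\ref{sub:dd} and Lemma~\ref{lem: bijection tau}, the map $(k,j)\mapsto(\chi_{s'_{\mathrm{or},j}(k)},j)$ is a bijection between $I_\cJ^{\tau_1}$ and $\tld{X}_{\tau_1}\times\{0,\ldots,f-1\}$, since each $s'_{\mathrm{or},j}$ is a permutation and $k\in I_j\Leftrightarrow s'_{\mathrm{or},j}(k)\in I_{f'-1}\Leftrightarrow\chi_{s'_{\mathrm{or},j}(k)}\in\tld{X}_{\tau_1}$. Substituting into (\ref{equ: frob crys formula}) and applying the two preceding steps gives
\[
\phi_{\tau,\tau_1}(\fM,\phi_\fM,\beta)=\prod_{(k,j)\in I_\cJ^{\tau_1}}p^{n-k}\varphi_{k,j}\big(\tld{\pi}_\tau(\fM,\phi_\fM,\beta)\big)=p^{d_{\tau,\tau_1}}\varphi_{\tau,\tau_1}\big(\tld{\pi}_\tau(\fM,\phi_\fM,\beta)\big),
\]
which is the claim. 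No essential difficulty arises beyond the bookkeeping needed to track how $s'_{\mathrm{or}}$ intertwines the $\tld{X}_\tau$-indexing in (\ref{equ: frob crys formula}) with the $\mathbf{n}$-indexing in (\ref{eq: varphi tau tau1}).
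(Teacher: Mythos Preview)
Your proposal is correct and follows essentially the same approach as the paper's proof: compute on $R$-points, use (\ref{equ: explicit formula for the morphism}) and (\ref{eq: extremal shape}) to relate the diagonal entries $C^{(j')}_{\fM,\beta,\chi}|_{u'=0}$ to $p^{n-k}\varphi_{k,j}(A_{\fM,\beta})$, and then invoke the index bijection from \S~\ref{sub:dd} and Lemma~\ref{lem: bijection tau} to pass from the product over $\tld{X}_{\tau_1}\times\{0,\ldots,f-1\}$ to the product over $I_\cJ^{\tau_1}$. You have simply made explicit the two computations (the permutation conjugation on diagonals and the bijection $(k,j)\mapsto(\chi_{s'_{\mathrm{or},j}(k)},j)$) that the paper compresses into the phrase ``comes from Lemma~\ref{lem: bijection tau} together with the identity~(\ref{equ: explicit formula for the morphism})''.
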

\begin{proof}
Let $R$ be a $p$-adically complete Noetherian $\cO$-algebra and we check the equality for an arbitrary object $(\fM,\phi_\fM,\beta)\in \tld{Y}^{\leq \eta,\tau}(t_\eta)(R)$. Then it follows from (\ref{eq: extremal shape}) that
\begin{align*}
\phi_{\tau,\tau_1}((\fM,\phi_\fM,\beta)) &=\prod_{\chi\in\tld{X}_{\tau_1}}\prod_{j'=0}^{f-1}C^{(j')}_{\fM,\beta,\chi}|_{u'=0} \\ %
&=\prod_{(k,j)\in I_\cJ^{\tau_1}}p^{n-k}\varphi_{k,j}(A_{\fM,\beta}^{(j)}).
\end{align*}
Here, we emphasize that $A_{\fM,\beta}^{(j')}$ is $f$-periodic in $j'$ and thus we can write $A_{\fM,\beta}^{(j)}$ instead. Moreover, the second equality comes from Lemma~\ref{lem: bijection tau} together with the identity~(\ref{equ: explicit formula for the morphism}). Hence, we conclude that $$\phi_{\tau,\tau_1}((\fM,\phi_\fM,\beta))=p^{d_{\tau,\tau_1}}\varphi_{\tau,\tau_1}(A_{\fM,\beta}),$$ which finishes the proof.
\end{proof}

We end this subsection by summarizing the results discussed in \S\,\ref{sec:FL:SW} and \S\,\ref{sub:ext:shape}: for each sub inertial type $\tau_1\subseteq\tau=\tau(s_{\cJ},\mu)$ with $\mu=\lambda+\eta-s_{\cJ}(\eta)$ where $\lambda+\eta$ is $(3n-1)$-generic Fontaine--Laffaille, we have the following commutative diagram
\begin{equation*}
\xymatrix{
\tld{Y}^{[0,n-1],\tau}_{\F}\ar^{\tld{\pi}_{(s_{\cJ},\mu)}}[dd]\ar@/_3.0pc/_{r_{\tld{w}^*(\tau)}}[ddd]& \tld{Y}^{\leq \eta,\tau}_{\F}(t_\eta)\ar@{_{(}->}[l] \ar@{^{(}->}[r] &\tld{Y}^{\leq \eta,\tau}(t_\eta)\ar[dr]_{\tld{\pi}_\tau}^{\simeq} \ar@/^1.0pc/^{p^{-d_{\tau,\tau_1}}\phi_{\tau,\tau_1}}[drrr]  &&&\\
& \tld{Y}^{\leq \eta,\tau}_{t_\eta}\ar^{\tld{\pi}_{(s_{\cJ},\mu)}}[d]\ar[u]_{=} && \tld{U}(t_\eta, \leq\!\eta)^{\wedge_p}\ar@{_{(}->}[d]\ar^{\phz_{\tau,\tau_1}}[rr] &&\bG_{m,\cO}^{\wedge_p}\ar@{_{(}->}[d]\\
\tld{\Fl}_\cJ^{[0,n-1]}\ar^{\cdot \tld{w}^\ast(\tau)}[d]&\tld{S}^\circ_\F(t_\eta)\ar@{_{(}->}[l]\ar@{^{(}->}[rr]^{i_{t_{\eta}}} &&\tld{U}(t_\eta, \leq\!\eta)\ar^{\phz_{\tau,\tau_1}}[rr]&&\bG_{m,\cO}\\
\tld{\Fl}_\cJ^{[1-n,n-1]} v^{\lambda+\eta}& && &&\bG_{m,\F}\ar@{^{(}->}[u]_{\iota}\\
\tld{\cF\cL}_{\cJ}\ar_{r_{\lambda+\eta}}[u]\ar@/^4.0pc/^{r_{s_{\cJ}t_{\eta}}}[uu] &\cM_{s_\cJ^{-1}}^\circ\ar@{_{(}->}[l]\ar_{r_{s_{\cJ}t_\eta}}[uu] \ar@/_1.0pc/_{f_{s_{\cJ}^{-1},I_{\cJ}^{\tau_1}}}[urrrr] && &&
}
\end{equation*}
where
\begin{itemize}
\item $r_{\lambda+\eta}$ and $r_{\tld{w}^*(\tau)}$ are described in Proposition~\ref{prop:rel:cat};
\item $r_{s_{\cJ}t_\eta}$ is defined in (\ref{eq: r u tilde}), and $\cdot\tld{w}^*(\tau)$ is the right multiplication by $\tld{w}^*(\tau)$;
\item $\tld{\pi}_\tau$ is described in (\ref{eq: pi tile tau}), and $\tld{\pi}_{(s_{\cJ},\mu)}$ is defined in (\ref{eq: pi tilde s mu});
\item $i_{t_\eta}$ is defined in (\ref{eq: i t eta});
\item $\phi_{\tau,\tau_1}$ is defined in (\ref{equ: frob crys formula}) and $\varphi_{\tau,\tau_1}$ is in (\ref{eq: varphi tau tau1});
\item $\tld{Y}^{\leq \eta,\tau}_{\F}(t_\eta)=\tld{Y}^{\leq \eta,\tau}_{t_\eta}$ is illustrated in the paragraph of (\ref{eq: pi tile tau});
\item commutativity of the diagram follows from Lemma~\ref{lem: pull pack Frob eigenvalue prime}, Proposition~\ref{prop: inv and Frob of Weil}, and the paragraph of (\ref{eq: pi tile tau}).
\end{itemize}

\subsection{Invariant functions and Weil--Deligne representations}\label{sub:inv WD}
In this section, we apply the results of \S\,\ref{sub:ext:shape} to prove Theorem~\ref{thm:Feval-2}, which relates invariant function with some normalized mod-$p$ reduction of (product of) Frobenius eigenvalues of Weil--Deligne representations.

We denote by $\Rep^n_{E}(W_K)$ the groupoid of Weil--Deligne representations of $W_K$ over $n$-dimensional vector spaces over $E$. (In particular, if $\varsigma\in \Rep^n_{E}(W_K)$ then the restriction $\varsigma|_{I_K}$ is by definition an inertial type.) Let $g_p\in W_K$ be an element whose image in $W_{K}^{\mathrm{ab}}$ corresponds to $p$ via $\mathrm{Art}_{K}: K^\times \stackrel{\sim}{\ra}W_{K}^{\mathrm{ab}}$.
Then for a given $\varsigma\in \Rep^n_{E}(W_K)$, $g_p$ acts on $\wedge^n(\varsigma)$ by a scalar $\alpha_{\varsigma}\in E^\times$, which is often called the \emph{Frobenius eigenvalue} of $\varsigma$.

\vspace{2mm}

As at the beginning of \S\,\ref{sub:dd}, let $\tau$ be a tame inertial type with lowest alcove presentation $(s_\cJ,\mu)$ such that $\mu+\eta$ is $1$-generic Fontaine--Laffaille.
We have a covariant functor
\[
D^\tau:\ Y^{[0,n-1],\tau}(\cO)\ra \Rep^n_{E}(W_K)
\]
whose image lands in representations $\varsigma^\vee$ such that $\varsigma|_{I_K}\cong \tau$, and which is defined as follows.
If $(\fM,\phi_\fM)\in Y^{[0,n-1],\tau}(\cO)$, then $D(\fM)\defeq \big(\fM/u'\fM\big)\otimes_{\cO}E$ is a free $K'\otimes_{\Qp}E$-module of rank $n$, endowed with a $\phz$-semilinear and $E$-linear automorphism $\phi_{D(\fM)}\defeq \big(\phi_{\fM}\circ\phz^*\pmod{u'}\big)\otimes_{\cO}\Id_E$, and with a $K'$-semilinear and $E$-linear action of $\Delta=\Gal(L'/K)$, compatible with $\phi_{D(\fM)}$.
We thus define an action of $W_K$ on $D(\fM)$ by the following rule: $g\in W_K$ acts by the automorphism $\ovl{g}\phi_{D(\fM)}^{-\val(g)}$ where $\ovl{g}$ is the image of $g\in W_K$ in $\Delta$ (via the natural surjection $W_K\onto W_K/W_{L'}= \Delta$), and $\val(g)\in\Z$ is defined via $g\equiv \phz^{\val(g)}$ modulo $I_K$. Note that $W_K/I_K$ is generated by $\phz^f$.
This action of $W_K$ preserves the $E$-linear decomposition $D(\fM)=\oplus_{j'\in \cJ'}D(\fM)^{(j')}$ and it is easily seen (cf.~\cite[Lemme\ 2.2.1.2]{BM}) that the  $W_K$-representations $D(\fM)^{(j')}$ are all isomorphic.
We define $D^\tau(\fM)$ as the isomorphism class of $D(\fM)^{(j')}$ and, by construction, we have $D^\tau(\fM)|_{I_K}\cong (\tau^{\vee})\otimes_{\cO}E$.

\begin{lemma}\label{lem: Frob formula}
Let $(\fM,\phi_\fM,\beta)\in \tld{Y}^{\leq \eta,\tau}(t_\eta)(\cO)$ and $\varsigma_1$ be a $W_K$-representation satisfying $\varsigma_1^\vee\hookrightarrow D^\tau(\fM)$ and $\varsigma_1|_{I_K}\cong \tau_1$. Then we have
$$\al_{\varsigma_1}^{-1}=\alpha_{\varsigma_1^\vee}=\phi_{\tau,\tau_1}((\fM,\phi_\fM,\beta)).$$
\end{lemma}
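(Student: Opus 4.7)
The plan is to reduce to the case of irreducible $\tau_1$ and then compute the Frobenius eigenvalue directly via the eigenbasis $\beta$ of $\fM$.

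First, I would establish multiplicativity of both sides under direct sum decompositions of $\tau_1$. Because $\mu+\eta$ is $1$-generic Fontaine--Laffaille, the inertial type $\tau$ is multiplicity free, so the isotypical decomposition of $D^\tau(\fM)|_{I_K}\cong\tau^\vee$ is canonical, and the inclusion $\varsigma_1^\vee\hookrightarrow D^\tau(\fM)$ corresponding to $\tau_1$ is uniquely determined. If $\tau_1=\tau_{1,a}\oplus\tau_{1,b}$ as sub inertial types over $K$, the corresponding $W_K$-stable subspaces $\varsigma_{1,a}^\vee,\varsigma_{1,b}^\vee$ of $D^\tau(\fM)$ are independent, and the identity $\wedge^{\dim\varsigma_1}\varsigma_1\cong\wedge^{\dim\varsigma_{1,a}}\varsigma_{1,a}\otimes\wedge^{\dim\varsigma_{1,b}}\varsigma_{1,b}$ gives $\alpha_{\varsigma_1}=\alpha_{\varsigma_{1,a}}\alpha_{\varsigma_{1,b}}$. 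On the algebraic side, $I_\cJ^{\tau_1}=I_\cJ^{\tau_{1,a}}\sqcup I_\cJ^{\tau_{1,b}}$ by Lemma~\ref{lem: bijection tau}, and hence \eqref{equ: frob crys formula} factors accordingly. This reduces the statement to the case where $\tau_1$ is irreducible over $K$, i.e., $\tld{X}_{\tau_1}$ is a single $s_\tau$-orbit $\Lambda\subseteq\tld{X}_\tau$ of size $r_\Lambda$.

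Second, I would identify $\varsigma_1^\vee$ inside $D^\tau(\fM)$ using the eigenbasis. Fix $j_0'\in\cJ'$ (say $j_0'=0$) and use the isomorphism $D^\tau(\fM)\cong D(\fM)^{(0)}$ of $W_K$-representations, where the $W_K$-action is recalled from the construction of $D^\tau$: $g\in W_K$ acts by $\ovl{g}\phi_{D(\fM)}^{-\val(g)}$. Under this identification, the images $\ovl{\beta}_\chi^{(0)}\defeq\beta_\chi^{(0)}\bmod u'$ for $\chi\in\tld{X}_\tau$ form an $E$-basis of $D(\fM)^{(0)}$ diagonalizing $\Delta'$, each $\ovl{\beta}_\chi^{(0)}$ having $\Delta'$-eigencharacter $\chi^{-1}$. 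Since $\tau$ is multiplicity free, $\varsigma_1^\vee=\operatorname{span}_E\{\ovl{\beta}_\chi^{(0)}\mid\chi\in\Lambda\}$ is the unique $\tau_1^\vee$-isotypical subspace.

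Third, I would compute $\alpha_{\varsigma_1^\vee}=\det(g_p\mid\varsigma_1^\vee)$. The element $g_p\in W_K$ corresponds under $\mathrm{Art}_K$ to $p$, hence is a geometric Frobenius, so $\val(g_p)=-1$ and $g_p$ acts on $D(\fM)$ by $\ovl{g}_p\phi_{D(\fM)}$. An appropriate power $g_p^{fr_\Lambda}$ stabilizes both $D(\fM)^{(0)}$ and $\varsigma_1^\vee$, since the projection of $\ovl{g}_p$ to $\Delta/\Delta'\cong\Gal(K'/K)$ acts on $\cJ'$ by a cyclic shift of size $f$, and $\phi_{D(\fM)}$ shifts by $1$. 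Iterating across the $fr_\Lambda$ embeddings from $j'=0$ through $j'=fr_\Lambda-1$ and using the periodicity $C^{(j')}_{\fM,\beta,\chi}=C^{(j'+f)}_{\fM,\beta,\chi^{p^f}}$ from \eqref{equ: relation between entries}, one finds $\det(g_p\mid\varsigma_1^\vee)=\prod_{j'=0}^{fr_\Lambda-1}C^{(j')}_{\fM,\beta,\theta(\Lambda)}|_{u'=0}$ for any choice $\theta(\Lambda)\in\Lambda$, which is precisely $\phi_{\tau,\tau_1}((\fM,\phi_\fM,\beta))$. The key linear-algebraic input here is that $\det(C^{(j')}_{\fM,\beta}|_{u'=0})=\prod_{\chi\in\tld X_\tau}C^{(j')}_{\fM,\beta,\chi}|_{u'=0}$, which follows from \eqref{equ: explicit formula for the morphism} and \eqref{eq:expl:matrix}: the matrix $A^{(j')}_{\fM,\beta}|_{u'=0}$ is upper triangular by the shape $\leq\eta$ condition, conjugation by $(u')^{\bm\alpha'^{(j')}}$ preserves the diagonal, and multiplication by the permutation $s'_{\mathrm{or},j'}$ does not affect the determinant up to sign. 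The same triangularity implies that passing to the $\Lambda$-isotypical subspace picks out exactly the $\chi\in\Lambda$ factors.

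The main obstacle will be the careful bookkeeping in Step~3: tracking how the $\phz$-semilinear Frobenius $\phi_{D(\fM)}$ (shifting $j'\mapsto j'+1$) and the action of $\ovl g_p$ (shifting $j'\mapsto j'\pm f$ on $D(\fM)$ and acting by a character on the $\Delta'$-eigenlines) combine so that $g_p^{fr_\Lambda}$ preserves $\varsigma_1^\vee$, and verifying that the contributions of $\ovl g_p$ to the eigenvalue cancel out over the full cycle through the orbit $\Lambda$. This parallels the proof of \cite[Lemme 2.2.1.2]{BM}, but has to be executed in the explicit eigenbasis framework of \S~\ref{subsubBKD}, together with the normalization of $\mathrm{Art}_K$ fixed in \S~\ref{subsec:notation}.
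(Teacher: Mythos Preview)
Your Step~3 contains a genuine error: you claim $\val(g_p)=-1$, but the paper's convention (stated in the construction of $D^\tau$) defines $\val(g)$ via $g\equiv\phz^{\val(g)}\pmod{I_K}$, where $\phz$ is the $p$-power Frobenius on $W(k')$. Since $W_K/I_K$ is generated by $\phz^f$ and $g_p$ is a geometric Frobenius of $K$, one has $\val(g_p)=-f$. Consequently $g_p$ acts on $D(\fM)$ by $\ovl{g}_p\phi_{D(\fM)}^f$; as $\ovl{g}_p$ is $\sigma^{-f}$-semilinear it shifts embeddings by $-f$, while $\phi_{D(\fM)}^f$ shifts by $+f$, so $g_p$ already preserves each $D(\fM)^{(j')}$. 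Your proposed iteration over $fr_\Lambda$ embeddings via powers of $g_p$ is thus built on a wrong premise and is unnecessary. The paper simply fixes $j'=f'-1$, writes $g_p|_{D(\fM)^{(f'-1)}}$ as $\ovl{g}_p\circ\phi_{D(\fM)}^{(f-1)}\circ\cdots\circ\phi_{D(\fM)}^{(0)}$, and uses that $g_p$ acts trivially on $L=K(\pi)$ (so $\omega_f(g_p)=1$) to see that $\ovl{g}_p$ contributes trivially.

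There is also a gap in your linear-algebraic input. You argue that $A_{\fM,\beta}^{(j')}|_{u'=0}$ is upper triangular and assert that ``the same triangularity implies that passing to the $\Lambda$-isotypical subspace picks out exactly the $\chi\in\Lambda$ factors''. Triangularity alone gives the full determinant as the product of diagonal entries but does not justify the same for the restriction to an arbitrary $\Lambda$-indexed coordinate subspace. What one actually needs, and what the paper uses, is that $B_{\fM,\beta}^{(j')}\defeq C_{\fM,\beta}^{(j')}|_{u'=0}$ is \emph{diagonal}: from \eqref{eq:expl:matrix} the strictly off-diagonal entries of $A_{\fM,\beta}^{(j')}$ are divisible by $v=(u')^{e'}$, and the change of variable \eqref{equ: explicit formula for the morphism} preserves this (the entries of $\bm{a}^{\prime(j')}$ lie in $[0,e'-1]$, so conjugation by $(u')^{\bm a^{\prime(j')}}$ cannot cancel the factor $(u')^{e'}$, and conjugation by the permutation $s'_{\mathrm{or},j'}$ preserves diagonality). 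Diagonality makes the restriction to $\varsigma_1^\vee=\mathrm{span}_E\{\ovl\beta_\chi^{(f'-1)}\mid\chi\in\tld X_{\tau_1}\}$ transparent and also renders the reduction to irreducible $\tau_1$ in your Step~1 unnecessary.
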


\begin{proof}
It is clear that $\al_{\varsigma_1}^{-1}=\alpha_{\varsigma_1^\vee}$. To compute the Weil--Deligne representation from the given Breuil--Kisin module $\fM$, we choose $j'=f'-1$. As $\val(g_p)=-f$, $g_p$ acts on $D(\fM)^{(f'-1)}$ by $\ovl{g}_p\phi_{D(\fM)}^{f}$. More precisely, if we write $\phi_{D(\fM)}^{(j')}:D(\fM)^{(j'-1)}\rightarrow D(\fM)^{(j')}$ for the induced map from $\phi_{D(\fM)}$ via $D(\fM)=\oplus_{j'\in \cJ'}D(\fM)^{(j')}$ then the following diagram describes the action of $g_p$ on $D(\fM)^{(f'-1)}$:
\begin{equation*}
\xymatrix{
D(\fM)^{(f'-1)}\ar[rr]^{\phi_{D(\fM)}^{(0)}}&&D(\fM)^{(0)}\ar[rr]^{\phi_{D(\fM)}^{(1)}} &&D(\fM)^{(1)}\ar[rr]^{\phi_{D(\fM)}^{(2)}}&&\cdots\cdots\ar[rr]^{\phi_{D(\fM)}^{(f-1)}}&& D(\fM)^{(f-1)}.\ar@/^1.5pc/^{\ovl{g}_p}[llllllll]
}
\end{equation*}

By abuse of notation, we write $\beta^{(j')}$ for the basis of $D(\fM)^{(j')}$ induced from the $\beta^{(j')}$ of $\fM^{(j')}$. We also write $B_{\fM,\beta}^{(j')}$ for the matrix of $\phi_{D(\fM)}^{(j')}:D(\fM)^{(j'-1)}\rightarrow D(\fM)^{(j')}$ with respect to $\beta^{(j'-1)}$ and $\beta^{(j')}$. It is easy to see that $B_{\fM,\beta}^{(j')}=C_{\fM,\beta}^{(j')}|_{u'=0}$, so that $B_{\fM,\beta}^{(j')}$ is a diagonal matrix from (\ref{equ: explicit formula for the morphism}), which implies that $\{\beta^{(f'-1)}_{\chi}\mid\chi\in \tld{X}_{\tau_1}\}$ forms a basis for $\varsigma_1$. We further let $B_{\fM,\beta,\chi}^{(j')}\defeq C_{\fM,\beta,\chi}^{(j')}|_{u'=0}$ which is a diagonal entry of $B_{\fM,\beta}^{(j')}$.

We set $r_1\defeq \Dim_E\varsigma_1$ and note that $g_p$ acts trivially on $L=K(\pi)$, due to our choice of $g_p$, so that we have $\omega_f(g_p)=1$. Hence, from the description of $\ovl{g}_p\phi_{D(\fM)}^{f}$, we have
$$\alpha_{\varsigma_1^\vee}=\wedge^{r_1}\varsigma_1^\vee(g_p)= \prod_{\chi\in\tld{X}_{\tau_1}}\prod_{j'=0}^{f-1}B^{(j')}_{\fM,\beta,\chi} =\prod_{\chi\in\tld{X}_{\tau_1}}\prod_{j'=0}^{f-1}C^{(j')}_{\fM,\beta,\chi}|_{u'=0} =\phi_{\tau,\tau_1}((\fM,\phi_\fM,\beta)),$$
which finishes the proof.
\end{proof}

Let $\Rep_{\cO}^{[0,n-1],\tau}(G_K)$ denote the groupoid of $\cO$-lattices in potentially crystalline representations of $G_K$ over $n$-dimensional $E$-vector spaces, becoming crystalline over $L'$ with Hodge--Tate weights in $[0,n-1]$ and inertial type $\tau$.
We have a \emph{contravariant} functor of groupoids
\[
\Rep_{\cO}^{[0,n-1],\tau}(G_K)\ra Y^{[0,n-1],\tau}(\cO)
\]
constructed in \cite[Proposition 7.2.3]{MLM} (compatible with the results in \cite[\S\,1.3]{KisinFcrys}), noting that $\Rep_{\cO}^{[0,n-1],\tau}(G_K)=\cX_{n}^{[0,n-1],\tau}(\cO)$. We also note that if $\fM_\rho$ is the Kisin module associated to an $\cO$-lattice in a potentially crystalline representation $\rho$ then we have $\big(\fM_\rho/u'\fM_\rho\big)\otimes_\cO E=D^*_{\mathrm{cris}}(\rho|_{G_{L'}})$ (see \cite[Proposition 2.1.5]{KisinFcrys} for a version when $E=\Qp$). Here, we write $D_{\mathrm{cris}}$ for the covariant functor $D_{\mathrm{st}}^{L'}$ in \cite[Proposition~2.9]{savitt-CDT}.

Finally, recall from \S\,\ref{sec:notation:GT} the covariant functor $\mathrm{WD}: \Rep_{\cO}^{[0,n-1],\tau}(G_K)\ra \Rep^n_{E}(W_K)$ obtained from \cite[Appendix B.1]{CDT} (after extending to $E$ the coefficients of  the objects in $\Rep_{\cO}^{[0,n-1],\tau}(G_K)$).
We write $\mathrm{WD}^*$ for the composite of $\mathrm{WD}$ followed by taking the dual Weil--Deligne representation.

\begin{lemma}
\label{lem:frob:EV}
Let $\tau$ be a tame inertial type with lowest alcove presentation $(s_\cJ,\mu)$ such that $\mu+\eta$ is $1$-generic Fontaine--Laffaille. We have a commutative diagram of groupoids
\[
\xymatrix{
&Y^{[0,n-1],\tau}(\cO)\ar^{D^\tau}[r]
&\Rep^n_{E}(W_K)\\
&\Rep_{\cO}^{[0,n-1],\tau}(G_K)\ar[u]\ar_{\WD^*}[ur]&
}.
\]
\end{lemma}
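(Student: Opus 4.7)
The plan is to unwind both constructions on a common object and identify them directly. Fix $\rho \in \Rep_{\cO}^{[0,n-1],\tau}(G_K)$ and let $\fM_{\rho}$ denote the associated Breuil--Kisin module with descent datum. The main input, recorded right after the statement of the lemma, is the comparison
\[
\big(\fM_{\rho}/u'\fM_{\rho}\big)\otimes_{\cO}E \;\cong\; D^{*}_{\mathrm{cris}}(\rho|_{G_{L'}})
\]
as $K'\otimes_{\Qp}E$-modules equipped with a $\varphi$-semilinear bijection and a compatible semilinear action of $\Delta = \Gal(L'/K)$ (the Frobenius, the $K'$-linear structure and the descent data on the left-hand side matching those of $D^{*}_{\mathrm{cris}}$ on the right). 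So both sides of the diagram can be rewritten as outputs of Fontaine-style recipes applied to the same underlying $(\varphi,\Delta)$-module.

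First I would recall how $\WD^*(\rho)$ is constructed in the potentially crystalline case. Via $D_{\mathrm{pst}}(\rho) = D_{\mathrm{cris}}(\rho|_{G_{L'}})$ (covariant) one obtains a free $L'_{0}\otimes_{\Qp}E$-module with a Frobenius $\varphi$, nilpotent monodromy $N=0$ (since $\rho$ is crystalline over $L'$) and a compatible $\Delta$-action. After fixing an embedding $L'_{0}\hookrightarrow E$ to isolate a single isotypic component, the action of $g\in W_{K}$ on $\WD(\rho)$ is defined by the standard formula
\[
g\cdot d \;=\; \overline{g}\circ\varphi^{-\val(g)}(d),
\]
where $\overline{g}\in\Delta$ is the image of $g$ and $\val(g)\in\Z$ is characterised by $g\equiv \varphi^{\val(g)}$ modulo $I_{K}$. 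The isomorphism class of the resulting $W_{K}$-representation is independent of the chosen embedding, and $\WD^{*}(\rho)$ is its dual.

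Next I would compare this with the construction of $D^{\tau}$. By definition, the same formula $g\mapsto \overline{g}\circ \varphi^{-\val(g)}$ is used to put a $W_{K}$-action on $D(\fM_{\rho}) = (\fM_{\rho}/u'\fM_{\rho})\otimes_{\cO}E$, preserving the decomposition into $j'$-components, and $D^{\tau}(\fM_{\rho})$ is then the common isomorphism class of the components $D(\fM_{\rho})^{(j')}$. Combining with the identification of the underlying $(\varphi,\Delta)$-module with $D^{*}_{\mathrm{cris}}(\rho|_{G_{L'}})$ and the functoriality of duality on $(\varphi,\Delta)$-modules, one sees that $D^{\tau}(\fM_{\rho})$ is canonically the dual of the Weil--Deligne representation obtained from $D_{\mathrm{pst}}(\rho)$ by Fontaine's recipe, i.e.\ $D^{\tau}(\fM_{\rho}) \cong \WD^{*}(\rho)$. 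The commutativity of the diagram follows, with functoriality of all constructions giving naturality in $\rho$.

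The main obstacle is bookkeeping: carefully matching the covariant vs.\ contravariant conventions (so that $D^{*}_{\mathrm{cris}}$ gets dualised consistently on both sides), the placement of the residual $K'\otimes E$-decomposition in the two recipes, and verifying that the rule $g\mapsto \overline{g}\varphi^{-\val(g)}$ used by the construction of $D^{\tau}$ in \S~\ref{sub:MFSandGR} is literally the one that produces $\WD$ from $D_{\mathrm{pst}}$ via \cite[Appendix B.1]{CDT}. All of this is standard but tedious; no new ingredient beyond the cited comparison between $\fM_{\rho}/u'\fM_{\rho}$ and $D^{*}_{\mathrm{cris}}(\rho|_{G_{L'}})$ is needed.
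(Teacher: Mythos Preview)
Your proposal is correct and follows essentially the same approach as the paper: both rely on the identification $\big(\fM_\rho/u'\fM_\rho\big)\otimes_\cO E \cong D^*_{\mathrm{cris}}(\rho|_{G_{L'}})$ and then check that the $W_K$-action defined via $g\mapsto \overline{g}\,\varphi^{-\val(g)}$ matches Fontaine's recipe for $\WD$, keeping track of the descent data from $L'$ to $K$. The one point the paper makes explicit that you leave implicit is that the $1$-genericity hypothesis on $\mu+\eta$ is what ensures the characters $\chi_i$ are pairwise distinct, so that the representations are genuinely potentially \emph{crystalline} (monodromy $N=0$) rather than merely potentially semistable.
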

\begin{proof}
The vertical functor sends a $\cO$-lattice in a potentially crystalline representation $\rho$ to the associated Breuil-Kisin module $\fM_\rho$. The result follows from $\big(\fM_\rho/u'\fM_\rho\big)\otimes_\cO E=D^*_{\mathrm{cris}}(\rho|_{G_{L'}})$ and keeping track of the descent data from $L'$ to $K$ (cf.~\cite[\S\,4.6]{EGstack}).
The genericity assumption on $\tau$ guarantees that the Galois representations under consideration are potentially \emph{crystalline} since the characters $\chi_i$ appearing in (\ref{eq:def:type}) are pairwise distinct.
\end{proof}

\vspace{2mm}

We consider a lift $\rho_{x,\lambda+\eta}^\circ\in \Rep_{\cO}^{[0,n-1],\tau}(G_K)$ of $\rhobar_{x,\lambda+\eta}$
to which we can associate a Weil--Deligne representation $\varsigma\defeq \WD(\rho^\circ_{x,\lambda+\eta}): W_K\rightarrow\mathrm{GL}_n(E)$ satisfying $\varsigma|_{I_K}\cong \tau$. %
Each sub inertial type $\tau_1\subseteq \tau$ for $K$ determines to a subrepresentation
$\varsigma_1\subseteq \varsigma$
satisfying $\varsigma_1|_{I_K}\cong \tau_1$. We consider $\al_{\varsigma_1}\in E^\times$ (the Frobenius eigenvalue of $\varsigma_1$ defined at the beginning of \S\,\ref{sub:inv WD}) which clearly depends on the choice of $\rho_{x,\lambda+\eta}^\circ$ and $\tau_1$.

\begin{thm}
\label{thm:Feval-2}
Let $\lambda+\eta$ be $(3n-1)$-generic Fontaine--Laffaille. Let $x\in\tld{\cF\cL}_\cJ(\F)$ and $s_\cJ\in\un{W}$ with $x\in\cM_{s_\cJ^{-1}}(\F)$. Then for each sub inertial type $\tau_1\subseteq\tau=\tau(s_\cJ,\lambda+\eta-s_\cJ(\eta))$ and each lift $\rho_{x,\lambda+\eta}^\circ$ of $\rhobar_{x,\lambda+\eta}$ as above, we have
$\val_p(\al_{\varsigma_1}^{-1})=d_{\tau,\tau_1}$ and
$$\frac{\al_{\varsigma_1}^{-1}}{p^{d_{\tau,\tau_1}}}\equiv f_{s_\cJ^{-1},I_\cJ^{\tau_1}}(x)\in \F^\times.$$
\end{thm}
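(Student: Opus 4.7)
The strategy is to read everything off the big commutative diagram at the end of \S~\ref{sub:ext:shape}, combining Lemma~\ref{lem: Frob formula}, Proposition~\ref{prop: inv and Frob of Weil}, and Lemma~\ref{lem: pull pack Frob eigenvalue primeprime}.

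First I would associate to the lift $\rho^{\circ}_{x,\lambda+\eta}\in\Rep_{\cO}^{[0,n-1],\tau}(G_{K})$ a Breuil--Kisin module $\fM$ via the vertical functor of Lemma~\ref{lem:frob:EV}, so that $\fM\in Y^{[0,n-1],\tau}(\cO)$ and $\WD^{*}(\rho^{\circ}_{x,\lambda+\eta})\cong D^{\tau}(\fM)$. The crucial technical point is to show that $\fM$ actually lives in the refined substack $\tld{Y}^{\leq\eta,\tau}(t_\eta)(\cO)$ (so a $t_\eta$-gauge basis exists and Lemma~\ref{lem: Frob formula} applies). This reduces to showing that the reduction $\fM\otimes_\cO\F$ has shape $t_\eta$, because $\tld{Y}^{\leq\eta,\tau}$ is a closed $p$-adic formal substack and $\tld{Y}^{\leq\eta,\tau}_{\F}(t_\eta)=\tld{Y}^{\leq\eta,\tau}_{t_\eta}$ is the $t_\eta$-stratum of its special fibre. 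But $x\in\cM^{\circ}_{s_\cJ^{-1}}(\F)=\tld{\cS}^{\circ}(w_0,w_0s_\cJ^{-1})$ by hypothesis, so taking $u=1$ in Lemma~\ref{lem:shape:Schubert} forces $\tld{w}^{*}(\rhobar_{x,\lambda+\eta},\tau)=t_\eta$; and the image of $\fM\otimes_\cO\F$ in $\Rep^{n}(G_{K_\infty})$ is $\rhobar_{x,\lambda+\eta}|_{G_{K_\infty}}$ by Proposition~\ref{prop:rel:cat:1}, so that $\fM\otimes_\cO\F$ has shape $t_\eta$, as required.

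Now I would choose a $t_\eta$-gauge eigenbasis $\beta$ of $\fM$. Lemma~\ref{lem: Frob formula} gives
\[
\alpha_{\varsigma_{1}}^{-1}=\phi_{\tau,\tau_{1}}\bigl((\fM,\phi_\fM,\beta)\bigr),
\]
and Proposition~\ref{prop: inv and Frob of Weil} rewrites this as
\[
\alpha_{\varsigma_{1}}^{-1}=p^{d_{\tau,\tau_{1}}}\,\varphi_{\tau,\tau_{1}}\bigl(\tld{\pi}_{\tau}(\fM,\phi_\fM,\beta)\bigr)\in p^{d_{\tau,\tau_{1}}}\cO.
\]
The image $A\defeq\tld{\pi}_{\tau}(\fM,\phi_\fM,\beta)\in\tld{U}(t_\eta,\leq\eta)^{\wedge_{p}}(\cO)$ reduces modulo $\varpi$ to a point $\bar A\in\tld{S}^{\circ}_{\F}(t_\eta)(\F)$ whose image in $\tld{\Fl}^{[1-n,n-1]}_\cJ t_{\lambda+\eta}$ (via right multiplication by $\tld{w}^{*}(\tau)$) equals $r_{\lambda+\eta}(x)$ by construction of $\tld{\pi}_\tau$ and Proposition~\ref{prop:rel:cat:1}; equivalently, $\bar A=r_{s_\cJ t_\eta}(x)$. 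Combining the mod-$p$ identity $\varphi_{\tau,\tau_{1}}\circ i_{t_\eta}=\iota\circ\overline{\varphi}_{\tau,\tau_{1}}$ (from (\ref{equ: reduction of diagonal}) and the definition (\ref{eq: varphi tau tau1})) with Lemma~\ref{lem: pull pack Frob eigenvalue primeprime} yields
\[
\varphi_{\tau,\tau_{1}}(\bar A)=\varphi_{\tau,\tau_{1}}\circ i_{t_\eta}\circ r_{s_\cJ t_\eta}(x)=\iota\bigl(f_{s_\cJ^{-1},I_\cJ^{\tau_1}}(x)\bigr)\in\F^{\times}.
\]
Therefore the $\cO$-point $\varphi_{\tau,\tau_{1}}(A)$ is a unit reducing to $f_{s_\cJ^{-1},I_\cJ^{\tau_1}}(x)\in\F^{\times}$, which simultaneously forces $\val_{p}(\alpha_{\varsigma_{1}}^{-1})=d_{\tau,\tau_{1}}$ and
\[
\frac{\alpha_{\varsigma_{1}}^{-1}}{p^{d_{\tau,\tau_{1}}}}\equiv f_{s_\cJ^{-1},I_\cJ^{\tau_1}}(x)\pmod{\varpi}.
\]

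The main obstacle, in my view, is verifying that the Breuil--Kisin module attached to the integral lift has shape $t_\eta$ (so that $t_\eta$-gauge bases exist and the identification $\tld{Y}^{\leq\eta,\tau}(t_\eta)\xrightarrow{\sim}\tld{U}(t_\eta,\leq\eta)^{\wedge_{p}}$ can be used). Once this closed/open matching of the $t_\eta$-stratum in the formal and special-fibre pictures is in hand, the rest is a diagram chase through the final display of \S~\ref{sub:ext:shape}. The remaining mod-$p$ comparison is essentially formal, as both sides have already been expressed in terms of the same minors $f_{S_{k,u_j^{-1}},j}$ through (\ref{equ: comparison of sections}) and Lemma~\ref{lem: pull pack Frob eigenvalue prime}.
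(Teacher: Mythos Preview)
Your proof is correct and follows essentially the same approach as the paper: pick a Breuil--Kisin module $(\fM,\phi_\fM,\beta)\in\tld{Y}^{\leq\eta,\tau}(t_\eta)(\cO)$ corresponding to $\rho_{x,\lambda+\eta}^\circ|_{G_{K_\infty}}$, recover $\varsigma^\vee$ via Lemma~\ref{lem:frob:EV}, and then combine Lemma~\ref{lem: Frob formula}, Proposition~\ref{prop: inv and Frob of Weil}, equation~(\ref{equ: reduction of diagonal}), and Lemma~\ref{lem: pull pack Frob eigenvalue primeprime}. You actually spell out more than the paper does: the paper simply asserts that such an $(\fM,\phi_\fM,\beta)$ exists, whereas you justify the $t_\eta$-shape via Lemma~\ref{lem:shape:Schubert} (with $u=1$) and the open nature of the $t_\eta$-locus, and you make the reduction $\bar A=r_{s_\cJ t_\eta}(x)$ explicit. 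One small caveat: the identification $\bar A=r_{s_\cJ t_\eta}(x)$ only holds up to the $\un{T}$-shifted conjugation action (depending on the choice of $\beta$), but this is harmless since both $\phi_{\tau,\tau_1}$ (Lemma~\ref{lem: Frob crys}) and $f_{s_\cJ^{-1},I_\cJ^{\tau_1}}$ (Lemma~\ref{lem: descend to quotient}) are invariant under this action.
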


\begin{proof}
We pick up an object $(\fM,\phi_\fM,\beta)\in \tld{Y}^{\leq \eta,\tau}(t_\eta)(\cO)$ whose image under $T^*_{dd}$ is isomorphic to $\rho_{x,\lambda+\eta}^\circ|_{G_{K_{\infty}}}$. Note that we can recover $\varsigma^\vee=\WD^*(\rho_{x,\lambda+\eta})$ from $(\fM,\phi_\fM,\beta)$ via Lemma~\ref{lem:frob:EV}. The result follows immediately from (\ref{equ: reduction of diagonal}), Lemma~\ref{lem: pull pack Frob eigenvalue primeprime}, Proposition~\ref{prop: inv and Frob of Weil} and Lemma~\ref{lem: Frob formula}.
\end{proof}

\clearpage{}%
\clearpage{}%
\section{Local--global compatibility}
\label{sec:LGC}

In this section, we use a set of \emph{normalized Hecke operators} and results from previous sections to prove our main result on local-global compatibility (see Theorem~\ref{thm:main:LGC:prime}).
\subsection{Hecke actions}
\label{subsub:Prel:HO}
In this section, we introduce the set of \emph{normalized $U_p$-operators} which will be eventually related to the set of invariant functions using suitably normalized local Langlands correspondence.
For this section, let $R$ be a commutative ring.
For a topological group $G$, we denote by $\mathrm{Rep}^{\rm{sm}}_R(G)$ the abelian category of smooth (i.e.~locally constant) representations of $G$ over $R$.

\subsubsection{Compact induction and Hecke algebras}\label{subsub:ind:Hecke}

Given a closed subgroup $H$ of a topological group~$G$ and an object $(\sigma,V)\in \mathrm{Rep}^{\rm{sm}}_R(H)$, we define the smooth induction
$$\Ind_H^{G}(\sigma)\defeq \{f: G\rightarrow V\mid f\textnormal{ is locally constant and } f(hg)=\sigma(h)f(g)\textnormal{ for all } h\in H,g\in G\}$$
and smooth compact induction $\ind_H^{G}(\sigma)$ which is the subspace of $\Ind_H^{G}(\sigma)$ consisting of functions with compact support modulo $H$.
(We often omit $V$ from the notation for simplicity.)
This induces two exact functors
$$\Ind_H^{G},\ind_H^{G}:\mathrm{Rep}^{\rm{sm}}_R(H)\rightarrow \mathrm{Rep}^{\rm{sm}}_R(G)$$
which are called the induction and the compact induction, respectively.
These are the right and left adjoints, respectively, of the restriction functor $\cdot\,\,|_{H}:\mathrm{Rep}^{\rm{sm}}_R(G)\rightarrow \mathrm{Rep}^{\rm{sm}}_R(H)$ by \emph{Frobenius reciprocity}.
Note that $\ind_H^{G}=\Ind_H^{G}$ %
when $H\backslash G$ is compact.
Let $v \in V$ and $g \in G$.
There is a unique map $[H,g\mapsto v]: G \ra V$ supported on $Hg$ such that $[H,g\mapsto v](hg) = \sigma(h)v$ for all $h\in H$, $g\in G$.
If $H$ is open and compact, then $[H,g\mapsto v] \in \ind_H^{G}(\sigma)$ and elements of this form span $\ind_H^{G}(\sigma)$.

Given two closed subgroups $H_1,H_2\subseteq G$ and $\sigma_{i}\in \mathrm{Rep}^{\rm{sm}}_R(H_i)$ for $i=1,2$, we consider the $R$-module
$$\cH_{H_2,H_1}^G(\sigma_{2},\sigma_{1})\defeq \Hom_G\left(\ind_{H_2}^G\sigma_{2},\ind_{H_1}^G\sigma_{1}\right)\cong \Hom_{H_2}(\sigma_{2},(\ind_{H_1}^G\sigma_{1})|_{H_2}).$$
The map
\begin{align}
\nonumber \Hom_{H_2}(\sigma_{2},(\ind_{H_1}^G\sigma_{1})|_{H_2}) &\ra \{\textnormal{loc.~const. }\phi: G \ra \Hom_R(\sigma_{2},\sigma_{1}) \mid \\
\label{eqn:heckemodule} & \hspace{1cm}\phi(h_1gh_2)=\sigma_{1}(h_1)\phi(g)\sigma_{2}(h_2) \textnormal{ for all } h_1\in H_1,g\in G,h_2\in H_2\}
\\
\nonumber \psi &\mapsto (g\mapsto \psi(-)(g))
\end{align}
is an isomorphism, giving another description of $\cH_{H_2,H_1}^G(\sigma_{2},\sigma_{1})$.
For a closed subgroup $H \subset G$, the space $\cH_{H,H}^G(\sigma,\sigma) = \End_G(\ind_H^G \sigma)$, which we simply denote by $\cH_H^G(\sigma)$, is naturally an $R$-algebra via composition.

Given $\phz\in \Hom_R(\sigma_2,\sigma_1)$, there is at most one function $f: G \ra \Hom_R(\sigma_2,\sigma_1)$ supported on the double coset $H_1 g H_2$ such that $f(h_1 g h_2) = \sigma_1(h_1) \phz \sigma_2(h_2)$ for all $h_1\in H_1$ and $h_2\in H_2$.
If this function exists, we denote it by $[H_1,g\mapsto \phz,H_2]$.
If $H_1$ and $H_2$ are furthermore compact and open, then $[H_1,g \mapsto \phz,H_2] \in \cH_{H_2,H_1}^G(\sigma_{2},\sigma_{1})$ (if it exists) using the identification~\eqref{eqn:heckemodule}.
Under this identification, we have
\begin{equation}\label{eq: formula of Hecke basis}
[H_1,g\mapsto \phz,H_2]([H_2,g'\mapsto v]) = \sum_{h_2 \in I} [H_1,gh_2g'\mapsto \phz(\sigma_2(h_2)(v))]
\end{equation}
where $H_1gH_2 = \bigsqcup_{h_2\in I} H_1gh_2$.

\subsubsection{$U_p$-operators}
\label{subsub:Up:act}

Recall that $K$ denotes a finite unramified extension of $\Qp$ of degree $f$, with ring of integers $\cO_K$ and residue field $k$. We write $q=p^f=\# k$.

We define the compact open subgroups $\mathbf{K}\defeq \GL_n(\cO_K)$ and $\mathbf{K}_1\defeq \mathrm{Ker}(\GL_n(\cO_K)\twoheadrightarrow \GL_n(k))$ of  $\GL_n(K)$ (equipped with the $p$-adic topology). We recall from \S\,\ref{intro:LAG} the pairing $X^\ast(T)\times X_\ast(T)\rightarrow\Z$ and the basis $(\varepsilon_1,\dots,\varepsilon_n)$ of $X^\ast(T)$.
Then $\{\al_1,\dots,\al_{n-1},\varepsilon_n\}$, where $\al_i=\varepsilon_i-\varepsilon_{i+1}$ for $1\leq i\leq n-1$, forms a basis of $X^\ast(T)$, and we denote by $\{\omega^{(1)},\dots,\omega^{(n)}\}$ the corresponding dual basis of $X_\ast(T)$.
Then $\omega^{(i)}=\sum_{k=1}^i\varepsilon_k^\vee$ for each $1\leq i\leq n$, where $\{\varepsilon_1^\vee,\dots,\varepsilon_n^\vee\}$ is the dual basis of $\{\varepsilon_1,\dots,\varepsilon_n\}$.
We abuse $\omega^{(i)}$ for the induced map $K^\times\rightarrow T(K)\subseteq \GL_n(K)$. (For instance, $\omega^{(i)}(p)\in T(K)$).

Let $L \subseteq\GL_n$ be the standard Levi subgroup with diagonal blocks $\GL_i\times\GL_{n-i}$ and consider the standard maximal parabolic subgroup $P^+\defeq L B$ with its opposite parabolic subgroup $P^-\defeq L w_0 B w_0$.
We denote the unipotent radical of $P^+$ (resp.~of $P^-$) by $N^+$ (resp.~by $N^-$).
Let $\mathbf{P}^+\subset \mathbf{K}$ and $\mathbf{P}^- \subset \mathbf{K}$ be the preimage of $P^+(k)$ and $P^-(k)$, respectively, under the reduction map.
Let $\mathbf{P}^+_1\subset \mathbf{K}$ (resp.~$\mathbf{P}^-_1 \subset \mathbf{K}$) be the unique maximal pro-$p$ subgroup of $\mathbf{P}^+$ (resp. of~$\mathbf{P}^-$).
Then the quotients $\mathbf{P}^+/\mathbf{P}^+_1$ and $\mathbf{P}^-/\mathbf{P}^-_1$ are naturally identified with $L(k)$.

Let $\sigma$ be an $R[L(k)]$-module which is a smooth $\mathbf{P}^+$-representation (resp. $\mathbf{P}^{-}$-representation) over $R$ by inflation.
Observing that $\omega^{(i)}(p)$ centralizes $L(\cO_K)$ and $\omega^{(i)}(p)\mathbf{P}^+ \omega^{(i)}(p)^{-1}=\mathbf{P}^{-}$, we have three elements
\begin{align*}
\mathbf{U}^{(i)}&\defeq [\mathbf{P}^+, \omega^{(i)}(p)^{-1} \mapsto \mathrm{id}_\sigma, \mathbf{P}^+] \in \cH_{\mathbf{P}^+}^{\GL_n(K)}(\sigma);\\
t_i&\defeq[\mathbf{P}^+, \omega^{(i)}(p)^{-1} \mapsto \mathrm{id}_\sigma,\mathbf{P}^-] \in \cH_{\mathbf{P}^-,\mathbf{P}^+}^{\GL_n(K)}(\sigma,\sigma);\\
S_\sigma &\defeq  [\mathbf{P}^-, 1 \mapsto \mathrm{id}_\sigma, \mathbf{P}^+] \in \cH_{\mathbf{P}^+,\mathbf{P}^-}^{\GL_n(K)}(\sigma,\sigma).
\end{align*}
Note that $\mathbf{P}^+ \omega^{(i)}(p)^{-1}\mathbf{P}^+= \omega^{(i)}(p)^{-1}\mathbf{P}^-\mathbf{P}^+=\big(\mathbf{P}^+ \omega^{(i)}(p)^{-1}\mathbf{P}^-\big)\big(\mathbf{P}^-\mathbf{P}^+\big)$.

\begin{lemma}\label{lemma:Upfactor}
Let $\omega^{(i)}(p)$, $\mathbf{P}^+$, $\mathbf{P}^-$, and $\sigma$ be as above.
Then
\[
\mathbf{U}^{(i)} = t_i \circ S_\sigma.
\]
\end{lemma}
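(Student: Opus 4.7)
The strategy is to test both sides on a generator $[\mathbf{P}^+, g'\mapsto v]$ of $\ind_{\mathbf{P}^+}^{\GL_n(K)}\sigma$ and compare the two outputs via the explicit formula \eqref{eq: formula of Hecke basis}. The identity $\omega^{(i)}(p)\mathbf{P}^+\omega^{(i)}(p)^{-1} = \mathbf{P}^-$ is the combinatorial heart of everything; it will collapse one of the two double cosets into a single coset, making the factorization transparent.

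The plan is as follows. First, I would pick a set of representatives $I\subseteq \mathbf{P}^+$ for the finite quotient $\mathbf{P}^+/(\mathbf{P}^-\cap\mathbf{P}^+)$, so that $\mathbf{P}^-\mathbf{P}^+ = \bigsqcup_{h\in I}\mathbf{P}^- h$. Applying \eqref{eq: formula of Hecke basis} to $S_\sigma$ with $\phz=\mathrm{id}_\sigma$, $H_1=\mathbf{P}^-$, $g=1$, $H_2=\mathbf{P}^+$, yields
\[
S_\sigma\big([\mathbf{P}^+,g'\mapsto v]\big) \,=\, \sum_{h\in I} [\mathbf{P}^-,\, hg'\mapsto \sigma(h)v].
\]
Next, I would use the identity $\omega^{(i)}(p)^{-1}\mathbf{P}^+ = \mathbf{P}^-\omega^{(i)}(p)^{-1}$ to rewrite
\[
\mathbf{P}^+\omega^{(i)}(p)^{-1}\mathbf{P}^- \,=\, \omega^{(i)}(p)^{-1}\mathbf{P}^-\mathbf{P}^- \,=\, \omega^{(i)}(p)^{-1}\mathbf{P}^-,
\]
which is a single right $\mathbf{P}^+$-coset. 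Consequently \eqref{eq: formula of Hecke basis} shows that $t_i$ acts on $[\mathbf{P}^-,g''\mapsto v'']$ simply by $[\mathbf{P}^+,\omega^{(i)}(p)^{-1}g''\mapsto v'']$. Composing with the previous step gives
\[
t_i\circ S_\sigma\big([\mathbf{P}^+,g'\mapsto v]\big) \,=\, \sum_{h\in I}[\mathbf{P}^+,\,\omega^{(i)}(p)^{-1}hg'\mapsto \sigma(h)v].
\]

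Finally, the same identity $\omega^{(i)}(p)^{-1}\mathbf{P}^+=\mathbf{P}^-\omega^{(i)}(p)^{-1}$ gives
\[
\mathbf{P}^+\omega^{(i)}(p)^{-1}\mathbf{P}^+ \,=\, \omega^{(i)}(p)^{-1}\mathbf{P}^-\mathbf{P}^+ \,=\, \bigsqcup_{h\in I}\mathbf{P}^+\omega^{(i)}(p)^{-1}h,
\]
so \eqref{eq: formula of Hecke basis} directly yields
\[
\mathbf{U}^{(i)}\big([\mathbf{P}^+,g'\mapsto v]\big) \,=\, \sum_{h\in I}[\mathbf{P}^+,\,\omega^{(i)}(p)^{-1}hg'\mapsto \sigma(h)v],
\]
which matches the expression for $t_i\circ S_\sigma$ and finishes the argument.

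The only even mildly subtle point is verifying that the same indexing set $I$ serves both for the decomposition of $\mathbf{P}^-\mathbf{P}^+$ into $\mathbf{P}^-$-cosets and for the decomposition of $\mathbf{P}^+\omega^{(i)}(p)^{-1}\mathbf{P}^+$ into $\mathbf{P}^+$-cosets; this is immediate from conjugation by $\omega^{(i)}(p)^{-1}$, which sends $\mathbf{P}^+$ to $\mathbf{P}^-$ and is a bijection on cosets, so there is no real obstacle and the proof is essentially a two-line bookkeeping computation once the key conjugation identity is in hand.
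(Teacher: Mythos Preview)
Your proof is correct and essentially identical to the paper's: both evaluate on generators via \eqref{eq: formula of Hecke basis} and the same coset decompositions arising from the conjugation relation between $\mathbf{P}^+$ and $\mathbf{P}^-$. Note one harmless slip in the prose---you twice write the identity as $\omega^{(i)}(p)^{-1}\mathbf{P}^+ = \mathbf{P}^-\omega^{(i)}(p)^{-1}$, but your displayed computations actually (and correctly) use $\mathbf{P}^+\omega^{(i)}(p)^{-1} = \omega^{(i)}(p)^{-1}\mathbf{P}^-$; the paper also makes the slightly cleaner choice $I\subseteq\mathbf{P}^+_1$, so that $\sigma(h)=\mathrm{id}_\sigma$ and the $\sigma(h)$ factors disappear.
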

\begin{proof}
By Frobenius reciprocity, it suffices to check that
\[
\mathbf{U}^{(i)}([\mathbf{P}^+,1\mapsto v]) = (t_i \circ S_\sigma)([\mathbf{P}^+,1\mapsto v])
\]
for all $v\in \sigma$.
Let $I$ be a set of representatives for $\mathbf{K}_1 \backslash \mathbf{P}^+_1$. Then we have $\mathbf{P}^+ \omega^{(i)}(p)^{-1}\mathbf{P}^+=\bigsqcup_{h\in I}\mathbf{P}^+ \omega^{(i)}(p)^{-1}h$
which implies that by (\ref{eq: formula of Hecke basis})
\begin{equation}\label{equ: exp Up}
\mathbf{U}^{(i)}([\mathbf{P}^+,1\mapsto v]) = \sum_{h\in I} [\mathbf{P}^+, \omega^{(i)}(p)^{-1}h\mapsto v].
\end{equation}
On the other hand, we also have $\mathbf{P}^-\mathbf{P}^+=\bigsqcup_{h\in I}\mathbf{P}^-h$, which implies that by (\ref{eq: formula of Hecke basis})
$$(t_i \circ S_\sigma)([\mathbf{P}^+,1\mapsto v])= t_i\big(\sum_{h\in I}[\mathbf{P}^-,h\mapsto v]\big)= \sum_{h\in I}[\mathbf{P}^+, \omega^{(i)}(p)^{-1}h\mapsto v].$$
(The last equality follows from $\mathbf{P}^+ \omega^{(i)}(p)^{-1}\mathbf{P}^-=\mathbf{P}^+ \omega^{(i)}(p)^{-1}$.) This completes the proof.
\end{proof}

Let $\tau_1$ and $\tau_2$ be tame inertial types of dimension $i$ and $n-i$, respectively, such that $\tau = \tau_1 \oplus \tau_2$ is a regular tame inertial type.
In the context when $\sigma = \sigma(\tau_1) \otimes_E \sigma(\tau_2)$, we denote $\mathbf{U}^{(i)}$ by $\mathbf{U}^{\tau_1}_\tau$.
Then the $\mathbf{K}$-type $\sigma(\tau)$ is isomorphic to $\Ind_{\mathbf{P}^+}^{\mathbf{K}} \sigma(\tau_1) \otimes_E \sigma(\tau_2)$.
The claimed isomorphism follows from \cite[Proposition 2.5.5]{MLM}, noting that by \cite[Lemma~4.7]{herzig-duke} the induction $\Ind_{\mathbf{P}^+}^{\mathbf{K}} \sigma(\tau_1) \otimes \sigma(\tau_2)$ is isomorphic to the Deligne--Lusztig representation attached to $\tau$ by \cite[Propostion~9.2.1]{GHS} and latter is irreducible (cf.~\cite[Corollary~2.3.5]{LLL}) since $\tau$ is regular.
Let $\varsigma$ be a Frobenius-semisimple representation $W_K \ra \GL_n(E)$ with $\varsigma|_{I_K} \cong \tau$.
Then $\varsigma$ is a direct sum of representations $\varsigma_1 \oplus \varsigma_2$ with $\varsigma_i|_{I_K} \cong \tau_i$. Recall from the beginning of \S\,\ref{sub:inv WD} that $\al_{\varsigma_1}$ is the Frobenius eigenvalue of $\varsigma_1$. Recall the map $r_p$ from \cite[\S\,1.8]{CEGGPS}. The isomorphism $\cH_{\mathbf{P}^+}^{\GL_n(K)}(\sigma)\cong \cH_{\mathbf{K}}^{\GL_n(K)}(\sigma(\tau))$ identify $\mathbf{U}^{\tau_1}_\tau$ with an element in $\cH_{\mathbf{K}}^{\GL_n(K)}(\sigma(\tau))$.

\begin{prop} \label{prop:LLC}
Let $\tau$, $\varsigma$, and $\varsigma_1$ be as above.
The operator $\mathbf{U}^{\tau_1}_\tau$ acts on
\[
\Hom_{\mathbf{P}^+}(\sigma(\tau_1) \otimes_E \sigma(\tau_2),r_p^{-1}(\varsigma)|_{\mathbf{P}^+}) \cong \Hom_{\mathbf{K}}(\sigma(\tau),r_p^{-1}(\varsigma)|_{\mathbf{K}})
\]
by the scalar $q^{\frac{i(2n-i-1)}{2}}\al_{\varsigma_1}^{-1}$.
\end{prop}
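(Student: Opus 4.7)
The plan is to decompose the computation using Lemma~\ref{lemma:Upfactor}, passing through parabolic induction and Jacquet functors. First I would observe that, since $\tau$ is regular and $\varsigma$ is Frobenius-semisimple with $\varsigma|_{I_K}\cong \tau$, the representation $\pi\defeq r_p^{-1}(\varsigma)$ is an irreducible principal series of the form $\mathrm{n-Ind}_{P^+(K)}^{\GL_n(K)}\big(r_p^{-1}(\varsigma_1)\otimes r_p^{-1}(\varsigma_2)\big)$ (normalized induction), because $\varsigma_1$ and $\varsigma_2$ have disjoint inertial types. In particular the Hom space is one-dimensional (by Frobenius reciprocity, multiplicity one, and the fact that $\sigma(\tau_1)\otimes\sigma(\tau_2)$ contains the Jacquet module $\pi_{N^+}$-type with multiplicity one), so it suffices to compute the scalar by which $\mathbf{U}^{\tau_1}_\tau$ acts.

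Next, I would apply Lemma~\ref{lemma:Upfactor} to factor $\mathbf{U}^{\tau_1}_\tau=t_i\circ S_\sigma$. The action of $t_i$ on
\[
\Hom_{\mathbf{P}^+}(\sigma,\pi|_{\mathbf{P}^+})\xrightarrow{\sim}\Hom_{L(\cO_K)}(\sigma,\pi^{\mathbf{P}_1^+})
\]
is precisely the intertwiner induced by left translation by $\omega^{(i)}(p)$, because $\omega^{(i)}(p)\mathbf{P}^+\omega^{(i)}(p)^{-1}=\mathbf{P}^-$ and this element centralizes $L(\cO_K)$. Similarly $S_\sigma$ is induced by the natural $\GL_n(K)$-equivariant map $\ind_{\mathbf{P}^+}^{\GL_n(K)}\sigma\to\ind_{\mathbf{P}^-}^{\GL_n(K)}\sigma$ defined by extension by zero across the Bruhat cell. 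By Casselman's canonical pairing (see e.g.~\cite[\S~4]{Casselman-book} in combination with \cite[\S~6]{herzig-duke}), the composition $t_i\circ S_\sigma$ acts on the $\sigma$-isotypic component of $\pi^{\mathbf{P}_1^+}$ through the $(L(\cO_K)$-equivariant) embedding $\pi^{\mathbf{P}_1^+}\hookrightarrow \pi_{N^+}^{L(\cO_K)\cap \mathbf{P}_1^+/\mathbf{P}_1^+\cap L(\cO_K)}$, by the action of $\omega^{(i)}(p)$ on the Jacquet module $\pi_{N^+}$ (see for example the argument in \cite[Lemma~2.7.2]{EmertonOrd1} or \cite[Lemma~1.6]{CEGGPS}).

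Now by construction of normalized parabolic induction, $\pi_{N^+}\cong r_p^{-1}(\varsigma_1)\otimes r_p^{-1}(\varsigma_2)$ as an $L(K)$-representation, where the identification carries the half-sum of positive roots correction implicit in $\mathrm{n-Ind}$. The central character of the irreducible $L(K)$-representation $r_p^{-1}(\varsigma_1)\otimes r_p^{-1}(\varsigma_2)$ at $\omega^{(i)}(p)$ equals the central character of $r_p^{-1}(\varsigma_1)$ evaluated at the scalar matrix $p\cdot\mathrm{Id}_i\in \GL_i(K)$, since the second factor is unaffected. By the normalization of the local Langlands correspondence in \cite[\S~1.8]{CEGGPS}, namely $r_p(\pi)=\mathrm{rec}(\pi\otimes|\det|^{\frac{1-i}{2}})$ for the $\GL_i$-factor, this central character at $p\cdot\mathrm{Id}_i$ equals $q^{\frac{i(i-1)}{2}}\det(\varsigma_1)(\mathrm{Frob})^{-1}=q^{\frac{i(i-1)}{2}}\alpha_{\varsigma_1}^{-1}$. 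Finally, converting from unnormalized to normalized Jacquet module introduces the modulus character $\delta_{P^+}^{1/2}(\omega^{(i)}(p))=q^{i(n-i)/2}$ applied twice (once from the unnormalized induction giving $\pi$, and once from comparing the natural evaluation at $\omega^{(i)}(p)$ on $\pi^{\mathbf{P}_1^+}$ with that on $\pi_{N^+}$), yielding an extra factor $q^{i(n-i)}$. Combining, the scalar is
\[
q^{i(n-i)}\cdot q^{\binom{i}{2}}\cdot \alpha_{\varsigma_1}^{-1}=q^{\frac{i(2n-i-1)}{2}}\alpha_{\varsigma_1}^{-1},
\]
as required, using the identity $i(n-i)+\binom{i}{2}=\frac{i(2n-i-1)}{2}$.

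The main obstacle is the bookkeeping of normalization factors: one must carefully distinguish between (i) normalized versus unnormalized parabolic induction, (ii) the covariant versus contravariant Jacquet functor, (iii) the half-twist in the definition of $r_p$ (which differs between conventions for $\mathrm{rec}$), and (iv) the effect on the Hecke operator of the shift between $\mathbf{P}^+$ and $\mathbf{P}^-$ introduced by $S_\sigma$. A clean way to verify the final exponent is to test it against a base case ($n=2$, $i=1$, where the formula reduces to the classical relation between the $T_p$-operator and the Frobenius eigenvalue on an unramified principal series) and then invoke multiplicativity in the Levi decomposition, which reduces the general case to iterated application of the $\GL_2$-computation after passing to an intermediate parabolic.
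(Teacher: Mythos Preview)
Your strategy---reduce the action of $\mathbf{U}^{\tau_1}_\tau$ on the isotypic Hom-space to a central-character computation on the Levi---is exactly the paper's idea, but the paper executes it differently. Rather than working through Jacquet modules and Casselman's theory by hand, the paper invokes the Bernstein--Dat isomorphism of Hecke algebras $\cH_{L(\cO_K)}^{L(K)}(\sigma)\cong\cH_{\mathbf{K}}^{\GL_n(K)}(\sigma(\tau))$ (from \cite{Dat99}) together with its module-theoretic companion from \cite[Theorem~3.7]{CEGGPS}: under these, $\mathbf{U}^{\tau_1}_\tau$ corresponds to the Levi operator $[L(\cO_K),\omega^{(i)}(p)^{-1},L(\cO_K)]$, and the computation reduces to evaluating the central character of the unnormalized inducing data $r_p^{-1}(\varsigma_1)|\det_i|^{n-i}\otimes r_p^{-1}(\varsigma_2)$ at the central element $\omega^{(i)}(p)^{-1}$. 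All normalization is absorbed into the cited results, and the factorization $\mathbf{U}^{\tau_1}_\tau=t_i\circ S_\sigma$ plays no role at this point.

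Your execution, by contrast, has sign and normalization issues that do not obviously cancel. First, the central character of $r_p^{-1}(\varsigma_1)$ at $p\cdot\mathrm{Id}_i$ is $\alpha_{\varsigma_1}\cdot q^{-i(i-1)/2}$, not $\alpha_{\varsigma_1}^{-1}\cdot q^{i(i-1)/2}$ as you claim: under the geometric-Frobenius normalization of $\mathrm{Art}_K$ used in the paper one has $\omega_{\mathrm{rec}^{-1}(\varsigma_1)}(p)=\alpha_{\varsigma_1}$, and $|p^i|^{(i-1)/2}=q^{-i(i-1)/2}$. Second, your explanation of the factor $q^{i(n-i)}$ as ``$\delta_{P^+}^{1/2}$ applied twice'' is not correct: one genuine contribution is the index $|N^+(k)|=q^{i(n-i)}$ arising from the coset sum in Lemma~\ref{lemma:Upfactor}, and the other is the twist $|\det_i|^{n-i}$ in the unnormalized inducing data; getting these to combine to the right answer requires knowing whether $\omega^{(i)}(p)$ or its inverse is the element that actually acts, which is precisely where the Casselman-type identification is delicate (note that $\omega^{(i)}(p)^{-1}$ is $P^-$-positive, not $P^+$-positive). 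Your closing suggestion to sanity-check against $n=2,i=1$ is sound advice but not a proof. The cleanest fix is to do what the paper does: cite the Dat isomorphism and the CEGGPS compatibility, which package exactly this bookkeeping.
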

\begin{proof}
We write $\det_i: \GL_i\rightarrow\bG_m$ for the determinant.
The regularity of $\tau$ and the decomposition $\varsigma = \varsigma_1\oplus \varsigma_2$ imply that
\[
r_p^{-1}(\varsigma) = \Ind_{P^+(K)}^{\GL_n(K)} r_p^{-1}(\varsigma_1)|\det_i|^{n-i} \otimes r_p^{-1}(\varsigma_2).
\]
According to the discussion before \cite[Theorem~3.7]{CEGGPS} based on \cite[Proposition~2.1 and Theorem~4.1]{Dat99}, we deduce an isomorphism of commutative algebras (using $\tau$ is regular)
\begin{equation}\label{equ: isom of Hecke}
\cH_{L(\cO_K)}^{L(K)}(\sigma)\cong \cH_{\mathbf{K}}^{\GL_n(K)}(\sigma(\tau))
\end{equation}
which sends $[L(\cO_K), \omega^{(i)}(p)^{-1} \mapsto \mathrm{id}_\sigma, L(\cO_K)]$ to $\mathbf{U}^{\tau_1}_\tau$. In fact, we have an isomorphism
\[
\Hom_{L(\cO_K)}(\sigma(\tau_1) \otimes_E \sigma(\tau_2),r_p^{-1}(\varsigma_1)|\det_i|^{n-i} \otimes r_p^{-1}(\varsigma_2)) \cong \Hom_{\mathbf{K}}(\sigma(\tau),r_p^{-1}(\varsigma)|_{\mathbf{K}})
\]
which is $\cH_{L(\cO_K)}^{L(K)}(\sigma)$-equivariant under (\ref{equ: isom of Hecke}).
As $\omega^{(i)}(p)^{-1}$ centralizes $L(\cO_K)$, $[L(\cO_K), \omega^{(i)}(p)^{-1} \mapsto \mathrm{id}_\sigma, L(\cO_K)]$ acts on $\Hom_{L(\cO_K)}(\sigma(\tau_1) \otimes_E \sigma(\tau_2),r_p^{-1}(\varsigma_1)|\det_i|^{n-i} \otimes r_p^{-1}(\varsigma_2))$ by the same scalar as $\omega^{(i)}(p)^{-1}$ acting on $r_p^{-1}(\varsigma_1)|\det_i|^{n-i} \otimes r_p^{-1}(\varsigma_2)$. This equals the scalar by which $p^{-1}\mathrm{Id}_i$ acts on $r_p^{-1}(\varsigma_1)|\det_i|^{n-i}$ which is $q^{i(n-i)}q^{\frac{i(i-1)}{2}}\al_{\varsigma_1}^{-1}$ as $r_p^{-1}(\varsigma_1)\cong \mathrm{rec}_p^{-1}(\varsigma_1)\otimes_E |\det_i|^{(i-1)/2}$ (see \cite[\S\,1.8]{CEGGPS}) and $|p| = q^{-1}$. It is then clear that $\mathbf{U}^{\tau_1}_\tau$ acts on $\Hom_{\mathbf{K}}(\sigma(\tau),r_p^{-1}(\varsigma)|_{\mathbf{K}})$ by the same scalar.
\end{proof}

\subsubsection{Normalized $U_p$-operators}
\label{subsub:Up:act:Ltc}

We keep the notation $\mathbf{K}$, $\mathbf{P}^\pm$, $L$, $\tau_1$, $\tau$, $\sigma$ and $\mathbf{U}^{\tau_1}_\tau$ from \S\,\ref{subsub:Up:act}.
In particular, $\sigma=\sigma(\tau_1)\otimes_E\sigma(\tau_2)$ is an irreducible $L(k)$-representation over $E$ with $\sigma(\tau)\cong\ind_{\mathbf{P}^+}^{\mathbf{K}} \sigma$ being irreducible as well. Fix a $L(k)$-stable $\cO$-lattice $\sigma^\circ \subset \sigma$. It is clear that $S_\sigma=[\mathbf{P}^-, 1 \mapsto \mathrm{id}_\sigma, \mathbf{P}^+]$ is obtained from an embedding
\begin{equation}\label{equ: K-level of S_sigma}
\ind_{\mathbf{P}^+}^{\mathbf{K}} \sigma^\circ \ra \ind_{\mathbf{P}^-}^{\mathbf{K}} \sigma^\circ
\end{equation}
by applying $\ind_{\mathbf{K}}^{\GL_n(K)}$, and this embedding is an isomorphism after inverting $p$ as $\ind_{\mathbf{P}^+}^{\mathbf{K}} \sigma$ is irreducible. Hence, we obtain two $\mathbf{K}$-stable $\cO$-lattices $\ind_{\mathbf{P}^+}^{\mathbf{K}} \sigma^\circ\subseteq\ind_{\mathbf{P}^-}^{\mathbf{K}} \sigma^\circ$ inside $\sigma(\tau)$.

Let $\sigma(\tau)^\circ \subset \ind_{\mathbf{P}^+}^{\mathbf{K}} \sigma^\circ$ be another $\mathbf{K}$-stable $\cO$-lattice in $\sigma(\tau)$, and $\kappa \in \Z$
be the maximal integer such that $p^{-\kappa} \sigma(\tau)^\circ \subset \ind_{\mathbf{P}^-}^{\mathbf{K}} \sigma^\circ$ via (\ref{equ: K-level of S_sigma}). We set
\begin{equation}\label{equ: embedding of lattices}
S_{\sigma(\tau)}^+:\sigma(\tau)^\circ \hookrightarrow \ind_{\mathbf{P}^+}^{\mathbf{K}} \sigma^\circ\,\,\mbox{ and }\,\, S_{\sigma(\tau)}^-:\sigma(\tau)^\circ \hookrightarrow \ind_{\mathbf{P}^-}^{\mathbf{K}} \sigma^\circ
\end{equation}
where $S_{\sigma(\tau)}^+$ is the inclusion $\sigma(\tau)^\circ \subset \ind_{\mathbf{P}^+}^{\mathbf{K}} \sigma^\circ$ and $S_{\sigma(\tau)}^-$ is the composition $\sigma(\tau)^\circ \overset{p^{-\kappa}}{\ra} p^{-\kappa} \sigma(\tau)^\circ \subset \ind_{\mathbf{P}^-}^{\mathbf{K}} \sigma^\circ$.

Upon abusing the same notation for the maps induced from applying $\ind_{\mathbf{K}}^{\GL_n(K)}$, the maps $S_{\sigma(\tau)}^+$ and $S_{\sigma(\tau)}^-$ can be inserted into the following commutative diagram involving $\mathbf{U}^{(i)}$:
\begin{equation}\label{eqn:Up}
\xymatrix{
\ind_{\mathbf{P}^+}^{\GL_n(K)}\sigma^\circ\ar^{S_\sigma}[rr] \ar@/^2pc/^{\mathbf{U}^{\tau_1}_\tau}[rrrr] &&\ind_{\mathbf{P}^-}^{\GL_n(K)}\sigma^\circ\ar^{ t_i}_{\sim}[rr] &&\ind_{\mathbf{P}^{+}}^{\GL_n(K)}\sigma^\circ\\
\ind_{\mathbf{K}}^{\GL_n(K)} \sigma(\tau)^\circ \ar^{S_{\sigma(\tau)}^+}[u] \ar^{p^\kappa}[rr] &&\ind_{\mathbf{K}}^{\GL_n(K)}\sigma(\tau)^\circ\ar^{S_{\sigma(\tau)}^-}[u] && \ind_{\mathbf{K}}^{\GL_n(K)} \sigma(\tau)^\circ. \ar^{S_{\sigma(\tau)}^+}[u]
}
\end{equation}
The commutativity of the diagram follows from Lemma~\ref{lemma:Upfactor} and the definitions of~$S_{\sigma(\tau)}^\pm$.

We consider the following condition:
\begin{cond}\label{cond:isom0}
For an $\cO[\GL_n(K)]$-module $\pi$ the map
\[
\Hom_{\cO[\GL_n(K)]}(\ind_{\mathbf{P}^{+}}^{\GL_n(K)}\sigma^\circ,\pi) \ra \Hom_{\cO[\GL_n(K)]}(\ind_{\mathbf{K}}^{\GL_n(K)} \sigma(\tau)^\circ,\pi)
\]
induced by $S_{\sigma(\tau)}^+$ is an isomorphism.
\end{cond}

If Condition~\ref{cond:isom0} holds on an $\cO[\GL_n(K)]$-module $\pi$, then we can apply $\Hom_{\cO[\GL_n(K)]}(-,\pi)$ to the diagram~\eqref{eqn:Up} and complete it to a commutative diagram
\begin{equation}\label{eqn:Uppi}
\xymatrix{
\Hom(\ind_{\mathbf{P}^+}^{\GL_n(K)}\sigma^\circ,\pi)\ar_{S_{\sigma(\tau)}^+}[d] &\Hom(\ind_{\mathbf{P}^-}^{\GL_n(K)}\sigma^\circ,\pi)\ar_{S_\sigma}[l]\ar_{S_{\sigma(\tau)}^-}[d] &\Hom(\ind_{\mathbf{P}^{+}}^{\GL_n(K)}\sigma^\circ,\pi) \ar_{ t_i}^{\sim}[l] \ar_{S_{\sigma(\tau)}^+}[d]\ar@/_2pc/_{\mathbf{U}^{\tau_1}_\tau}[ll]\\
\Hom(\ind_{\mathbf{K}}^{\GL_n(K)} \sigma(\tau)^\circ,\pi) &\Hom(\ind_{\mathbf{K}}^{\GL_n(K)}\sigma(\tau)^\circ,\pi) \ar_{p^\kappa}[l] & \Hom(\ind_{\mathbf{K}}^{\GL_n(K)} \sigma(\tau)^\circ,\pi) \ar@{-->}_{\tld{\mathbf{U}}^{\tau_1}_\tau}[l]
}
\end{equation}
by setting $\tld{\mathbf{U}}^{\tau_1}_\tau\defeq S_{\sigma(\tau)}^-\circ  t_i \circ (S_{\sigma(\tau)}^+)^{-1}$. Here $\Hom$ denotes $\Hom_{\cO[\GL_n(K)]}$ and we abuse the same notation for various maps induced by applying $\Hom_{\cO[\GL_n(K)]}(-,\pi)$ to~\eqref{eqn:Up}.

\subsection{Axiomatic approach to recover Galois representations}
\label{subsub:AxSetup}

In this section, we give a general axiomatic approach to recover the local Galois representation using patching formalism and modularity of extremal weights. The main result is Theorem~\ref{thm:main:LGC}.
\subsubsection{Modules with arithmetic actions}\label{sec:arithmod}

We introduce an axiomatic context, based on \cite[\S\,3]{CEGGPS2}, in which we deduce our main local-global compatibility result.
Let $\rhobar: G_K\ra\GL_n(\F)$ be a continuous homomorphism and let $R_{\rhobar}^\Box$ be the universal $\cO$-lifting ring of $\rhobar$.
Let $R^v$ be a complete equidimensional local Noetherian $\cO$-flat algebra with residue field $\F$ and set
\[
R_{\infty}\defeq R_{\rhobar}^\Box\widehat{\otimes}_{\cO}R^v.
\]
(We suppress the dependence of $R_\infty$ on $R^v$.) We write $\fm \subset R_\infty$ for the maximal ideal of $R_\infty$, and let $\mu\defeq -w_0(\eta)$.
If $\tau$ is a tame inertial type, we let $R_{\rhobar}^{\tau,\mu}$ denote the potentially crystalline $\cO$-lifting ring of $\rhobar$ of type $\tau$ and parallel Hodge--Tate weights $\mu$ as defined in \cite{kisinPSS}, and set
\[
R_{\infty}(\tau)\defeq R_{\rhobar}^{\tau,\mu}\widehat{\otimes}_{R_{\rhobar}^\Box}R_\infty.
\]
If $\lambda \in X^*(\un{T})$ is dominant, we let $R_{\rhobar}^{\lambda}$ denote the crystalline $\cO$-lifting ring of $\rhobar$ of Hodge--Tate weights $\lambda$, and set
\[
R_{\infty}(\lambda)\defeq R_{\rhobar}^{\lambda+\mu}\widehat{\otimes}_{R_{\rhobar}^\Box}R_\infty.
\]

If $\theta$ is an $\cO[\![\mathbf{K}]\!]$-module which is finite over $\cO$ and $M$ is a pseudocompact $\cO[\![\mathbf{K}]\!]$-module with a compatible action of $\GL_n(K)$, then the tensor product
\begin{equation}
\label{eq:patch:fct}
M(\theta)\defeq M \otimes_{\cO[\![\mathbf{K}]\!]}\theta
\end{equation}
is an $\cH_{\mathbf{K}}^{\GL_n(K)}(\theta)$-module via the natural isomorphism
\begin{equation}\label{eq: Frob rec isomorph}
(M\otimes_{\cO[\![\mathbf{K}]\!]}\theta)^\vee\cong \Hom_{\cO[\mathbf{K}]}(\theta,M^\vee)\cong
\Hom_{\cO[\GL_n(K)]}(\ind_{\mathbf{K}}^{\GL_n(K)}\theta,M^\vee)
\end{equation}
where $(\,\cdot\,)^\vee\defeq \Hom^{\mathrm{cts}}_\cO(\,\,\cdot\,\,,E/\cO)$.
(The first isomorphism follows e.g.~from \cite[Lemma B.3]{gee-newton} and the second from Frobenius reciprocity).

Recall from \S\,\ref{sec:notation:GT} that $\varepsilon:G_{\Q_p}\rightarrow\Z_p^\times$ is the cyclotomic character with mod-$p$ reduction $\omega$ and that $\tld{\omega}$ is the Teichm\"uller lift of $\omega$. We use the same notation for their restriction to $G_K$ and the corresponding characters of $K^\times$ via the normalized Artin's reciprocity map (cf. \S\,\ref{subsec:notation}).

\begin{defn}
\label{defn:patch}
An \emph{arithmetic $R_\infty[\GL_n(K)]$-module} (or an $\cO[\GL_n(K)]$-module with an arithmetic action of $R_\infty$) is a non-zero $\cO$-module $M_\infty$ with commuting actions of $R_\infty$ and $\GL_n(K)$ satisfying the following axioms:
\begin{enumerate}
\item
\label{it:patch:1}
the $R_\infty[\mathbf{K}]$-action on $M_\infty$ extends to $R_\infty[\![\mathbf{K}]\!]$ making $M_\infty$ a finitely generated $R_\infty[\![\mathbf{K}]\!]$-module;
\item
\label{it:patch:2}
$M_\infty$ is projective in the category of pseudocompact $\cO[\![\mathbf{K}]\!]$-modules;
\item
\label{it:patch:2:5}
if $\tau$ is a tame inertial type and $\sigma(\tau)^\circ \subset \sigma(\tau)$ is an $\cO$-lattice, the $R_\infty$-action on $M_\infty(\sigma(\tau)^\circ)$ factors through $R_\infty(\tau)$, and $M_\infty(\sigma(\tau)^\circ)$ is a maximal Cohen--Macaulay $R_\infty(\tau)$-module;
\item
\label{it:patch:3}
if $\lambda \in X^*(\un{T})$, the $R_\infty$-action on $M_\infty(F(\lambda))$ factors through $R_\infty(\lambda)$;
\item
\label{it:patch:4}
the action of $\cH_{\mathbf{K}}^{\GL_n(K)}(\sigma(\tau)) \cong \cH_{\mathbf{K}}^{\GL_n(K)}(\sigma(\tau)^\circ)[1/p]$ on $M_\infty(\sigma(\tau)^\circ)[1/p]$ factors through the composite
\[
\cH_{\mathbf{K}}^{\GL_n(K)}(\sigma(\tau))\stackrel{\eta_\infty}{\longrightarrow}R_{\rhobar}^{\tau,\mu}[1/p]\longrightarrow R_{\infty}(\tau)[1/p]
\]
where the map $\eta_\infty$ is the map denoted by $\eta$ in \cite[Theorem 4.1]{CEGGPS};
\item
\label{it:patch:5}
if $V$ is a Serre weight and $\tau$ is a tame inertial type over $E$ such that $V\in \JH(\ovl{\sigma(\tau)})$, then the $R_\infty$-action on $M_\infty(V)$ factors through $R_\infty(\tau)_\F$, and $M_\infty(V)$ is a maximal Cohen--Macaulay $R_\infty(\tau)_\F$-module.
\end{enumerate}
\end{defn}
An arithmetic $R_\infty[\GL_n(K)]$-module $M_\infty$ thus defines a functor $\theta\mapsto M_\infty(\theta)$ from the category of $\cO[\![\mathbf{K}]\!]$-modules which are finite over $\cO$ to the category of finite $R_\infty$-modules.
The functor $M_\infty$ obtained this way is a weak patching functor in the sense of \cite[Definition 6.2.1]{MLM} (though we only consider trivial algebraic factors here).
We say that an arithmetic $R_\infty[\GL_n(K)]$-module $M_\infty$ is \emph{minimal} if the weak patching functor it represents is minimal in the sense of \cite[Definition 6.2.1(I)]{MLM}.

\begin{lemma}\label{lemma:minimalM}
Given a Fontaine--Laffaille Galois representation $\rhobar: G_K \ra \GL_n(\F)$, there exists a minimal arithmetic $R_\infty[\GL_n(K)]$-module $M_\infty$.
\end{lemma}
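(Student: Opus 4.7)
The plan is to construct $M_\infty$ via the Taylor--Wiles--Kisin patching method applied to spaces of algebraic automorphic forms on a compact unitary group, following the framework of \cite{CEGGPS,CEGGPS2}. This construction will be carried out in detail in \S~\ref{sub:GSetup}; here I outline the strategy and indicate the key technical points.

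First, I would globalize $\rhobar$: using a standard existence result (as in e.g.~\cite{CEGGPS,CEGGPS2}), choose a CM extension $F/F^+$ in which $p$ splits, a place $\tld{v}|v|p$ of $F/F^+$ with $F_{\tld{v}}\cong K$, a definite unitary group $G_{/F^+}$ split over $F$, and a continuous representation $\rbar: G_F \to \GL_n(\ovl{\F}_p)$ satisfying suitable Taylor--Wiles hypotheses such that $\rbar|_{G_{F_{\tld{v}}}}\cong \rhobar$ and $\rbar$ is automorphic of some tame level $U^v$. The definition of $R^v$ will incorporate local deformation rings at the places in $\Sigma\setminus\{\tld{v}\}$ (where $\Sigma$ is the finite set of bad primes) together with a power series factor of the right dimension to match the numerology of Taylor--Wiles patching. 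One then defines $M_\infty$ as the patched module obtained by taking an inverse limit over an auxiliary sequence of sets of Taylor--Wiles primes of the duals of the spaces of $\cO$-valued algebraic automorphic forms on $G(F^+)\backslash G(\A_{F^+}^\infty)/U^v U_n$ localized at the maximal ideal $\fm_{\rbar}$, following the axiomatic patching setup of \cite{CEGGPS,CEGGPS2}.

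Next, I would verify the axioms of Definition~\ref{defn:patch}. Axioms~\ref{it:patch:1} and \ref{it:patch:2} are built into the Taylor--Wiles construction: the projectivity in (\ref{it:patch:2}) comes from choosing $U^v$ to be sufficiently small so that the level structures are torsion-free and the relevant spaces of automorphic forms are projective over the Hecke algebra at each finite level (cf.~\cite[Proposition~2.10]{CEGGPS}). Axioms~\ref{it:patch:2:5} and \ref{it:patch:3} regarding the factorization through $R_\infty(\tau)$ or $R_\infty(\lambda)$ follow from classical local-global compatibility (which identifies the action of the Galois deformation ring on spaces of automorphic forms with given $v$-type), combined with maximal Cohen--Macaulayness: the latter is deduced from the fact that the support of $M_\infty(\sigma(\tau)^\circ)$ has the expected dimension by (a variant of) the Diamond--Fujiwara patching argument together with the flatness and Cohen--Macaulayness of the local deformation rings. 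Axiom~\ref{it:patch:4} on the compatibility with the Hecke algebra action via $\eta_\infty$ is precisely the content of \cite[Theorem~4.1]{CEGGPS}. Axiom~\ref{it:patch:5} on the factorization through $R_\infty(\tau)_\F$ and Cohen--Macaulayness in the mod-$p$ setting follows from the integral Breuil--M\'ezard conjecture in the Fontaine--Laffaille range, which in the generic situation has been established in the literature (see~\cite{MLM}), or equivalently from the fact that $M_\infty(V)$ is a quotient of $M_\infty(\sigma(\tau)^\circ)/\varpi$ for appropriate lattices.

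Finally, for minimality in the sense of \cite[Definition~6.2.1(I)]{MLM}, the main point is to arrange the globalization so that at each bad place $w\in\Sigma\setminus\{\tld{v}\}$, the local lifting ring cut out from $R^v$ is formally smooth (or has the expected minimal structure) and the local level $U_w$ is a well-chosen compact open subgroup for which the corresponding module of automorphic forms is free of rank one over the local Hecke algebra. This requires choosing the auxiliary primes carefully using Taylor's ``Ihara avoidance'' trick and/or a minimal level-raising argument, as in \cite{CEGGPS2}. The key technical obstacle will be ensuring simultaneously that the global residual representation $\rbar$ can be chosen satisfying all the standard Taylor--Wiles hypotheses (in particular adequacy of $\rbar(G_{F(\zeta_p)})$) while having the prescribed local restriction at $\tld{v}$ and ramification only at tame auxiliary primes where multiplicity one can be enforced---this is the content of the standard potential automorphy and globalization results which we invoke from the literature. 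The verification of minimality then reduces to showing that the patched module, after quotienting by the augmentation ideal of the formally smooth factor of $R_\infty$, recovers the original space of automorphic forms, which has been arranged to be free of rank one over its Hecke algebra.
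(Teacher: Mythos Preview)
Your approach is essentially the same as the paper's: both invoke the Taylor--Wiles--Kisin patching construction of \cite{CEGGPS,CEGGPS2} after globalizing $\rhobar$. The paper's proof is a two-line citation of \cite[Proposition~6.2.4]{MLM} and \cite[Lemma~4.17(2)]{CEGGPS}, so your outline is more expansive but tracks the same argument.

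That said, you gloss over the two specific technical inputs the paper singles out. First, the globalization step (your ``standard existence result'') requires that $\rhobar$ admit a potentially diagonalizable lift; the paper notes this holds because all Fontaine--Laffaille lifts are potentially diagonalizable by \cite[Lemma~1.4.3(2)]{BLGGT}. Without this, the potential automorphy theorems do not apply and you cannot produce an automorphic $\rbar$ with the prescribed local behavior. Second, minimality in the sense of \cite[Definition~6.2.1(I)]{MLM} requires not just good behavior at auxiliary places but that the relevant local deformation ring at $v$ itself be formally smooth---this is where the Fontaine--Laffaille hypothesis on $\rhobar$ enters again. Your discussion of minimality focuses on auxiliary places and mentions Ihara avoidance, which is not the relevant mechanism here (Ihara avoidance is for non-minimal modularity lifting, not for producing minimal patching functors).
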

\begin{proof}
This follows from the proof of \cite[Proposition 6.2.4]{MLM} and \cite[Lemma 4.17(2)]{CEGGPS} using that Fontaine--Laffaille deformation rings are formally smooth, and all Fontaine--Laffaille lifts are potentially diagonalizable by \cite[Lemma 1.4.3(2)]{BLGGT}.
\end{proof}

Given an arithmetic $R_\infty[\GL_n(K)]$-module $M_\infty$, we define
\begin{equation}
W_{M_\infty}(\rhobar)\defeq \{V \mid V\text{ is a Serre weight such that $M_\infty(V\otimes_{\F}\omega^{n-1}\circ\det)\neq 0$} \}.
\end{equation}
For any $\cO[\![\mathbf{K}]\!]$-module $\theta$ which is finite over $\cO$, the finitely generated $R_\infty$-module $M_\infty(\theta)$ is nonzero if and only if $M_\infty(\theta)/\fm$ is nonzero by Nakayama's lemma.
Let $\pi_\infty$ be the admissible $\GL_n(K)$-representation $(M_\infty/\fm)^\vee$ over $\F$.
Then for an $\cO[\![\mathbf{K}]\!]$-module $\theta$ which is finite over $\cO$, $(M_\infty(\theta)/\fm)^\vee$ is isomorphic to $\Hom_{\mathbf{K}}(\theta,\pi_\infty|_{\mathbf{K}})$.
In particular, for a Serre weight $V$, $V\in W_{M_\infty}(\rhobar)$ if and only if $\Hom_{\mathbf{K}}(V\otimes_{\F}\omega^{n-1}\circ\det,\pi_\infty|_{\mathbf{K}}) \neq 0$.

If $S$ is a set of Serre weights, we write $S\otimes_{\F}\omega^{n-1}\circ\det\defeq \{V\otimes_{\F}\omega^{n-1}\circ\det\mid V\in S\}$. For each tame inertial type $\tau$, we clearly have $$\sigma(\tau\otimes_{\cO}\tld{\omega}^{n-1})\cong \sigma(\tau)\otimes_E\tld{\omega}^{n-1}\circ\det$$
and so
\begin{equation}\label{equ: det twist}
\JH(\ovl{\sigma(\tau\otimes_{\cO}\tld{\omega}^{n-1})})=\JH(\ovl{\sigma(\tau)})\otimes_{\F} \omega^{n-1}\circ\det.
\end{equation}
Here we recall that $\JH(\ovl{\sigma(\tau)})$ is the set of Jordan--H\"older factors of the mod-$p$ reduction of an arbitrary $\mathbf{K}$-stable $\cO$-lattice in $\sigma(\tau)$.

Recall the set $W_{\obv}(\rhobar)$ defined in (\ref{equ: obv wt}). The following condition is important for us.
\begin{cond}\label{cond:modularity of obv wt}
There exists $x\in\tld{\cF\cL}_\cJ(\F)$ such that $\rhobar\cong\rhobar_{x,\lambda+\eta}$ for a $(3n-1)$-generic Fontaine--Laffaille weight $\lambda+\eta$. Moreover, we have an inclusion $W_{\obv}(\rhobar)\subseteq W_{M_\infty}(\rhobar)$.
\end{cond}
Note that Condition~\ref{cond:modularity of obv wt} implies that $F(\lambda)\in W_{M_\infty}(\rhobar)$.
In fact, the converse is true under a stronger genericity hypothesis.

\begin{lemma}\label{lemma:condition}
If $\lambda\in X_1(\un{T})$ where $\lambda+\eta$ is $5n$-generic Fontaine--Laffaille and $M_\infty(F(\lambda)\otimes_{\F}\omega^{n-1}\circ\det) \neq 0$, then Condition~\ref{cond:modularity of obv wt} holds.
\end{lemma}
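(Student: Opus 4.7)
The conclusion consists of two assertions: (i) $\rhobar \cong \rhobar_{x,\lambda+\eta}$ for some $x\in\tld{\cF\cL}_\cJ(\F)$, and (ii) $W_{\obv}(\rhobar)\subseteq W_{M_\infty}(\rhobar)$.

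For (i), the plan is to extract a Fontaine--Laffaille structure on $\rhobar$ from the patching axioms. The hypothesis $M_\infty(F(\lambda)\otimes_\F\omega^{n-1}\circ\det)\neq 0$ combined with axiom~(\ref{it:patch:3}) of Definition~\ref{defn:patch} forces the $R_\infty$-action on this nonzero module to factor through a crystalline lifting ring of $\rhobar$ of parallel Hodge--Tate weights $\lambda'+\mu$, where $\lambda'$ differs from $\lambda$ only by a Tate twist in $X^0(\un{T})$. Since $\lambda+\eta$ is $5n$-generic Fontaine--Laffaille (hence, in particular, Fontaine--Laffaille), these Hodge--Tate weights lie in the Fontaine--Laffaille range, so the existence of such a crystalline lift forces $\rhobar$ to be Fontaine--Laffaille of weight $\lambda+\eta$. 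Translating via the identification of $\mathrm{FL}_n^{\lambda+\eta}$ with the irreducible component $\cC_{F(\lambda)}\subseteq \cX_{n,\mathrm{red}}$ (Proposition~\ref{prop:rel:cat}) then yields $\rhobar\in |\mathrm{FL}_n^{\lambda+\eta}(\F)|$, i.e.~$\rhobar\cong \rhobar_{x,\lambda+\eta}$ for some closed point $x$.

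For (ii), the plan is to propagate modularity from $F(\lambda)$ to every obvious weight via the $F(\lambda)$-relevant tame inertial types of Definition~\ref{defn:relevant:types}. Fix $V\in W_{\obv}(\rhobar)$: by (\ref{equ: obv wt}) there is a specialization $\rhobar\leadsto\rhobar^{\speci}=\taubar(sw^{-1},\lambda+\eta)$ for some $s,w\in\un{W}$ such that $V=V_{\rhobar^{\speci},w}$. Set $\tau^\dagger\defeq\tau(s,\lambda+\eta-s(\eta))\otimes_\cO\tld{\omega}^{n-1}$. By Lemma~\ref{lem:shape:Schubert} and the local model identifications of \S~\ref{subsec:LMforEG}, the specialization data produce a potentially crystalline lift of $\rhobar$ of type $\tau^\dagger$ with parallel Hodge--Tate weights $\mu$, so $R_\infty(\tau^\dagger)\neq 0$. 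Moreover, both $F(\lambda)\otimes_\F\omega^{n-1}\circ\det$ and $V\otimes_\F\omega^{n-1}\circ\det$ lie in $\JH(\ovl{\sigma(\tau^\dagger)})$, the former as an outer Jordan--H\"older factor by Remark~\ref{rmk:relevant:outer}, and the latter by the obvious-weight analysis of \S~\ref{subsec:RTandSW} combined with Theorem~\ref{thm: obv wt and partition}. Applying axiom~(\ref{it:patch:5}) of Definition~\ref{defn:patch}, the patched modules $M_\infty(F(\lambda)\otimes_\F\omega^{n-1}\circ\det)$ and $M_\infty(V\otimes_\F\omega^{n-1}\circ\det)$ are each either zero or a maximal Cohen--Macaulay module over the \emph{same} ring $R_\infty(\tau^\dagger)_\F$. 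Under the $5n$-genericity hypothesis this latter ring is irreducible of the expected dimension, so any nonzero maximal Cohen--Macaulay quotient has full support; together with the known nonvanishing of the first module, this forces nonvanishing of the second, i.e.~$V\in W_{M_\infty}(\rhobar)$.

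The main obstacle is the final propagation step in (ii): deducing $M_\infty(V\otimes_\F\omega^{n-1}\circ\det)\neq 0$ from nonvanishing of $M_\infty(F(\lambda)\otimes_\F\omega^{n-1}\circ\det)$ and the geometry of $\mathrm{Spec}\,R_\infty(\tau^\dagger)_\F$. This requires the expected irreducibility and domain property of the components of $R_\infty(\tau^\dagger)_\F$, together with a cyclicity/multiplicity-one statement for the patched module on each such component. Both inputs rely on the detailed local model theory of \S~\ref{subsec:LMforEG} and the Emerton--Gee stack analysis culminating in the forthcoming work \cite{OBW}. The $5n$-genericity of $\lambda+\eta$ is dictated precisely by the genericity thresholds needed for these geometric inputs (in particular, the $2n$-genericity required throughout \S~\ref{subsec:LMforEG} for every relevant twist $\tau^\dagger$), and by the obvious-weight characterization of Theorem~\ref{thm: obv wt and partition} being available in this range.
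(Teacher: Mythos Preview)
Your argument for part (i) is essentially correct and matches the paper's approach: the nonvanishing of $M_\infty(F(\lambda)\otimes_\F\omega^{n-1}\circ\det)$ forces the relevant crystalline lifting ring to be nonzero via axiom~(\ref{it:patch:3}), and in the Fontaine--Laffaille range this places $\rhobar$ on the component $\cC_{F(\lambda)}$, i.e.~gives the desired $x\in\tld{\cF\cL}_\cJ(\F)$.

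Your argument for part (ii), however, has a genuine gap at the propagation step. The key claim that $R_\infty(\tau^\dagger)_\F$ is \emph{irreducible} is false precisely in the situation you need it. Indeed, by the local model theory of \S~\ref{subsec:LMforEG} the irreducible components of $\Spec R_\infty(\tau^\dagger)_\F$ are indexed by the Serre weights in $\JH(\ovl{\sigma(\tau^\dagger)})\cap \big(W^g(\rhobar)\otimes_\F\omega^{n-1}\circ\det\big)$. When $V\neq F(\lambda)$ is an obvious weight, both $F(\lambda)$ and $V$ lie in this set (the former by hypothesis, the latter because $V\in W_\obv(\rhobar)\subseteq W^g(\rhobar)$ by definition), so there are at least two components. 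The shape $\tld{w}^*(\rhobar,\tau_0)$ is $t_\eta$---and hence the ring is a domain---exactly when $F(\lambda)$ is the \emph{only} such weight (Lemma~\ref{lem: criterion for shape}), which is the case you are not in.

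Even setting aside irreducibility, the inference ``$M_\infty(F(\lambda)\otimes\cdots)$ is nonzero MCM over $R$ and $M_\infty(V\otimes\cdots)$ is MCM over $R$, therefore $M_\infty(V\otimes\cdots)\neq 0$'' is not valid: the two modules are a priori unrelated, and axiom~(\ref{it:patch:5}) permits the second to be zero. What is actually needed is a Breuil--M\'ezard-type cycle identity matching the support of $M_\infty(\sigma(\tau^\dagger)^\circ)/\varpi$ component-by-component with the modules $M_\infty(V')$; this is a substantial geometric input, not a formal consequence of the patching axioms.

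The paper does not attempt this route. Instead it observes that $F(\lambda)\in W_\obv(\rhobar)$, so the hypothesis $M_\infty(F(\lambda)\otimes_\F\omega^{n-1}\circ\det)\neq 0$ says exactly that $M_\infty$ is \emph{obvious} in the sense of \cite[Definition~5.4.2]{OBW}, and then invokes \cite[Theorem~5.4.6]{OBW} as a black box to conclude $W_\obv(\rhobar)\subseteq W_{M_\infty}(\rhobar)$. The $5n$-genericity hypothesis is dictated by the thresholds required in that theorem, not by any irreducibility statement for $R_\infty(\tau^\dagger)_\F$.
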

\begin{proof}
That $M_\infty(F(\lambda)\otimes_{\F}\omega^{n-1}\circ\det) \neq 0$ implies that $R_{\rhobar}^{\lambda+\eta}$ is nonzero by Definition \ref{defn:patch}~\eqref{it:patch:3}.
This implies the first part of Condition \ref{cond:modularity of obv wt}.
That $M_\infty(F(\lambda)\otimes_{\F}\omega^{n-1}\circ\det) \neq 0$ also implies that $M_\infty$ is \emph{extremal} in the sense of \cite[Definition 5.4.2]{OBW} (as $F(\lambda)\in W_{\obv}(\rbar|_{G_{F_{\tld{v}}}})$). We thus deduce from \cite[Theorem 5.4.6]{OBW} the second part of Condition \ref{cond:modularity of obv wt}.
\end{proof}

Given $\lambda+\eta$ which is $(3n-1)$-generic Fontaine--Laffaille, we recall the notion of $F(\lambda)$-relevant types from Definition~\ref{defn:relevant:types}.
\begin{lemma}\label{lem: criterion for shape}
Assume that $\rhobar$ satisfies Condition~\ref{cond:modularity of obv wt}. Let $\tau_0$ be a $F(\lambda)$-relevant inertial type. Then $\tld{w}^*(\rhobar,\tau_0)=t_\eta$ if and only if
$$\JH(\ovl{\sigma(\tau_0)}) \cap W_{M_\infty}(\rhobar)=\{F(\lambda)\}.$$ In particular, the set $W_{M_\infty}(\rhobar)$ determines the set of $F(\lambda)$-relevant types $\tau_0$ such that $\tld{w}^*(\rhobar,\tau_0)=t_\eta$.
\end{lemma}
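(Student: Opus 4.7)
The proof splits into two directions. For the forward direction, assume $\tld{w}^*(\rhobar, \tau_0) \neq t_\eta$: Lemma~\ref{lemma:shape:detect} yields $\#\bigl(W_{\obv}(\rhobar) \cap \JH(\ovl{\sigma(\tau_0)})\bigr) \geq 2$, and the containment $W_{\obv}(\rhobar) \subseteq W_{M_\infty}(\rhobar)$ furnished by Condition~\ref{cond:modularity of obv wt} immediately upgrades this to $\#\bigl(W_{M_\infty}(\rhobar) \cap \JH(\ovl{\sigma(\tau_0)})\bigr) \geq 2$, so the intersection is strictly larger than the singleton $\{F(\lambda)\}$.

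For the backward direction, assume $\tld{w}^*(\rhobar, \tau_0) = t_\eta$ with $\tau_0 = \tau(s, \lambda+\eta - s(\eta))$. First, $F(\lambda)\in \JH(\ovl{\sigma(\tau_0)})$ by Remark~\ref{rmk:relevant:outer}. Next, Lemma~\ref{lem:shape:Schubert} places $x$ in the open cell $\tld{\cS}^\circ(w_0, w_0 s^{-1}) = \cM_{s^{-1}}^\circ$, so Lemma~\ref{lem: specialization and open cell} yields the specialization $\rhobar \leadsto \taubar(s, \lambda+\eta)$, and $F(\lambda)$ arises as the obvious weight $V_{\taubar(s,\lambda+\eta),1}$ (corresponding to $w_1 = 1$) of this specialization; Condition~\ref{cond:modularity of obv wt} then places $F(\lambda)$ in $W_{M_\infty}(\rhobar)$. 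To establish the reverse containment, take any $V \in \JH(\ovl{\sigma(\tau_0)}) \cap W_{M_\infty}(\rhobar)$: by the fifth axiom of Definition~\ref{defn:patch}, $M_\infty(V\otimes_{\F}\omega^{n-1}\circ\det)$ is a nonzero maximal Cohen--Macaulay module over $R_\infty(\tau_0)_\F$, so the geometric Breuil--M\'ezard content of \cite{EGstack} forces $V \in W^g(\rhobar)$. Combining Proposition~\ref{prop: wt and Sch var} with \cite[Proposition 2.3.7]{MLM} (which parameterises \emph{all} weights of $\JH(\ovl{\sigma(\tau_0)})$ by Schubert data), the hypothesis $x \in \tld{\cS}^\circ(w_0, w_0 s^{-1})$ together with the Bruhat-closure equivalence $\tld{\cS}^\circ(w_0, w_0 s^{-1}) \subseteq \tld{\cS}(w_1^{-1}w_0, w_0 s^{-1}) \Longleftrightarrow w_1 = 1$ force $V = F(\lambda)$.

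The ``in particular'' assertion is then immediate: the biconditional just proved expresses the shape condition $\tld{w}^*(\rhobar,\tau_0) = t_\eta$ as a test on the set $W_{M_\infty}(\rhobar)$ together with the combinatorial data of $\tau_0$ and $\lambda$.

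The main obstacle lies in the backward direction, specifically in two steps: upgrading the patching-theoretic statement $V \in W_{M_\infty}(\rhobar)$ to the Emerton--Gee geometric statement $V \in W^g(\rhobar)$, and uniformly placing every weight (outer \emph{and} inner) of $\JH(\ovl{\sigma(\tau_0)})$ into the Schubert framework of Proposition~\ref{prop: wt and Sch var} so that the concluding Bruhat-order argument applies to the full intersection rather than only to obvious weights in $SW(\lambda)$.
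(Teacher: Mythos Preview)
Your argument for the direction $\tld{w}^*(\rhobar,\tau_0)\neq t_\eta \Rightarrow \JH(\ovl{\sigma(\tau_0)})\cap W_{M_\infty}(\rhobar)\neq\{F(\lambda)\}$ is correct and matches the paper's: Lemma~\ref{lemma:shape:detect} plus $W_{\obv}(\rhobar)\subseteq W_{M_\infty}(\rhobar)$.

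For the other direction, however, the two ``obstacles'' you flag at the end are genuine gaps, not minor technicalities, and your sketch does not close them. First, axiom~(\ref{it:patch:5}) of Definition~\ref{defn:patch} only tells you that $M_\infty(V\otimes_{\F}\omega^{n-1}\circ\det)$ is supported on $\Spec R_\infty(\tau_0)_\F$; it does not single out which irreducible component of $\cX_{n,\mathrm{red}}$ contains $\rhobar$, so invoking ``the geometric Breuil--M\'ezard content of \cite{EGstack}'' does not yield $V\in W^g(\rhobar)$ without substantial further input. Second, even granting $V\in W^g(\rhobar)$, Proposition~\ref{prop: wt and Sch var} only computes $C_V^{\zeta_\lambda}\cap C_{F(\lambda)}^{\zeta_\lambda}$ as a Schubert variety when $V\in SW(\lambda)$; for the many $V\in\JH(\ovl{\sigma(\tau_0)})$ lying outside $SW(\lambda)$ there is no reason for this intersection to be $r_{\lambda+\eta}(\cS(w_1^{-1}w_0,w_0s^{-1}))$, so the Bruhat-closure equivalence you write down is simply unavailable for those weights.

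The paper bypasses both issues by routing through $W^?$ instead of $W^g$. Given $\tld{w}^*(\rhobar,\tau_0)=t_\eta$, Lemma~\ref{lem:shape:Schubert} and Lemma~\ref{lem: specialization and open cell} produce the specialization $\rhobar\leadsto\rhobar^{\speci}=\taubar(s,\lambda+\eta)$. The paper then invokes \cite[Theorem~5.1.2]{OBW} for the containment $W_{M_\infty}(\rhobar)\subseteq W^?(\rhobar^{\speci})$ and \cite[Proposition~4.3.2, Remark~4.3.3]{LLL} for the purely combinatorial identity $\JH(\ovl{\sigma(\tau_0)})\cap W^?(\rhobar^{\speci})=\{F(\lambda)\}$. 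This gives the desired containment in one line, with no need to control individual components $\cC_V$ geometrically.
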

\begin{proof}
Let $\tau_0$ be a $F(\lambda)$-relevant inertial type. If $\tld{w}^*(\rhobar,\tau_0)=t_\eta$, then it follows from Lemma~\ref{lem: specialization and open cell}, \cite[Theorem 5.1.1]{OBW} and \cite[Proposition 4.3.2, Remark 4.3.3]{LLL} that there exists a specialization $\rhobar \leadsto \rhobar^{\speci}$ such that
$$\JH(\ovl{\sigma(\tau_0)}) \cap W_{M_\infty}(\rhobar)\subseteq \JH(\ovl{\sigma(\tau_0)}) \cap W^?(\rhobar^{\speci})=\{F(\lambda)\},$$
which together with Condition~\ref{cond:modularity of obv wt} implies that $\JH(\ovl{\sigma}(\tau_0)) \cap W_{M_\infty}(\rhobar)=\{F(\lambda)\}$.
If $\tld{w}^*(\rhobar,\tau_0)\neq t_\eta$, then we have $\ell(\tld{w}^*(\rhobar,\tau_0))<\ell(t_\eta)$ (cf. Lemma~\ref{lem:shape:Schubert}) and deduce from Lemma~\ref{lemma:shape:detect} that there exists $V\in \Big(W_\obv(\rhobar)\cap \JH(\ovl{\sigma(\tau_0)}\Big)\setminus\{F(\lambda)\}$, which together with Condition~\ref{cond:modularity of obv wt} implies that $\{F(\lambda),V\}\subseteq \JH(\ovl{\sigma(\tau_0)}) \cap W_{M_\infty}(\rhobar)$. This shows that $\tld{w}^*(\rhobar,\tau_0)=t_\eta$ if and only if $\JH(\ovl{\sigma(\tau_0)}) \cap W_{M_\infty}(\rhobar)=\{F(\lambda)\}$, and so $W_{M_\infty}(\rhobar)$ determines the set of relevant types $\tau_0$ such that $\tld{w}^*(\rhobar,\tau_0)=t_\eta$.
\end{proof}

\subsubsection{Normalized $U_p$-action}\label{sec:norm-Up-act}
We keep the notation $\mathbf{K}$, $\mathbf{P}^\pm$, $L$, $\tau_1$, $\tau$, $\sigma$ as well as $\mathbf{U}^{\tau_1}_\tau$ from \S\,\ref{subsub:Up:act} and $S_{\sigma(\tau)}^\pm$ from (\ref{equ: embedding of lattices}).

We consider the following condition:
\begin{cond}\label{cond:isom}
\begin{enumerate}
\item \label{item:S+} $\JH(\coker S_{\sigma(\tau)}^+ \otimes_{\cO} \F) \cap \big(W_{M_\infty}(\rhobar)\otimes_{\F}\omega^{n-1}\circ\det\big) = \emptyset$.
\item \label{item:S-} $\JH(\coker S_{\sigma(\tau)}^- \otimes_{\cO} \F) \cap \big(W_{M_\infty}(\rhobar)\otimes_{\F}\omega^{n-1}\circ\det\big) = \emptyset$.
\end{enumerate}
\end{cond}

Assuming Condition~\ref{cond:isom}~\eqref{item:S+}, we have that the map $M_\infty(\sigma(\tau)^\circ) \ra M_\infty(\Ind_{\mathbf{P}^+}^{\mathbf{K}} \sigma^\circ)$ induced from $S_{\sigma(\tau)}^+$ is an isomorphism, or equivalently that Condition~\ref{cond:isom0} holds for $M_\infty^\vee$ (cf. (\ref{eq: Frob rec isomorph})).
This implies that the induced map $M_\infty(\sigma(\tau)^\circ)/\fm \ra M_\infty(\Ind_{\mathbf{P}^+}^{\mathbf{K}} \sigma^\circ)/\fm$ is an isomorphism, or equivalently that Condition~\ref{cond:isom0} holds for $\pi_\infty \defeq (M_\infty/\fm)^\vee$ (cf. (\ref{eq: Frob rec isomorph})).
Then we get an endomorphism $$\tld{\mathbf{U}}^{\tau_1}_\tau \in \End(\Hom_{\cO[\GL_n(K)]}(\ind_{\mathbf{K}}^{\GL_n(K)}\sigma(\tau)^\circ,\pi_\infty))\cong \End(\Hom_{\cO[\mathbf{K}]}(\sigma(\tau)^\circ,\pi_\infty))$$ as in~\eqref{eqn:Uppi}. As in~\eqref{eqn:Uppi}, we abuse the notation $S_{\sigma(\tau)}^+,S_{\sigma(\tau)}^-$ for the maps induced from (\ref{equ: embedding of lattices}) by applying $\Hom_{\cO[\mathbf{K}]}(-,\pi_\infty)$.

\begin{prop}\label{prop:pi-inv}
Assume Condition~\ref{cond:isom}~\eqref{item:S+} and that $R_{\rhobar}^{\tau,\mu}$ is regular.
Then $\eta_{\infty}(\mathbf{U}^{\tau_1}_\tau) \in p^\kappa R_{\rhobar}^{\tau,\mu}$ and $\tld{\mathbf{U}}^{\tau_1}_\tau \in \End(\Hom_{\cO[\mathbf{K}]}(\sigma(\tau)^\circ,\pi_\infty))$ acts by the scalar $p^{-\kappa} \eta_{\infty}(\mathbf{U}^{\tau_1}_\tau) \pmod{\fm} \in \F$.
If furthermore Condition~\ref{cond:isom}~\eqref{item:S-} holds, then $p^{-\kappa} \eta_{\infty}(\mathbf{U}^{\tau_1}_\tau) \pmod{\fm} \in \F^\times$.
\end{prop}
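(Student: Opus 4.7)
The plan is to apply the functor $M_\infty(-)\defeq M_\infty\otimes_{\cO[\![\mathbf{K}]\!]}(-)$ to the commutative diagram~\eqref{eqn:Up}, transport its output through the isomorphism induced by $S_{\sigma(\tau)}^+$, and compare with the axiomatic Hecke factorization in Definition~\ref{defn:patch}\eqref{it:patch:4}. First I would reinterpret Condition~\ref{cond:isom}\eqref{item:S+} as saying that $M_\infty(S_{\sigma(\tau)}^+):N\defeq M_\infty(\sigma(\tau)^\circ)\xrightarrow{\sim} N_+\defeq M_\infty(\ind_{\mathbf{P}^+}^{\mathbf{K}}\sigma^\circ)$ is an isomorphism: the cokernel $C^+$ of $S_{\sigma(\tau)}^+$ is a finite $\cO$-torsion $\cO[\mathbf{K}]$-module whose $\F$-reduction has all Jordan--H\"older factors outside $W_{M_\infty}(\rhobar)\otimes_\F\omega^{n-1}\circ\det$, and a Jordan--H\"older induction (using the exactness of $M_\infty(-)$ from Definition~\ref{defn:patch}\eqref{it:patch:2}) yields $M_\infty(C^+)=0$.

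Writing $\phi^+\defeq M_\infty(S_{\sigma(\tau)}^+)^{-1}$, I would then introduce the integral endomorphism
\[
\mathbf{U}_N\defeq \phi^+\circ M_\infty(t_i)\circ M_\infty(S_{\sigma(\tau)}^-)\in\End_{\cO}(N).
\]
The left square of~\eqref{eqn:Up} yields $S_\sigma\circ S_{\sigma(\tau)}^+=S_{\sigma(\tau)}^-\circ p^\kappa$, and combining this with $\mathbf{U}^{\tau_1}_\tau=t_i\circ S_\sigma$ (Lemma~\ref{lemma:Upfactor}) shows that $\mathbf{U}^{\tau_1}_\tau$ acts on $N$ (transported via $\phi^+$) as $p^\kappa\mathbf{U}_N$. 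On the other hand, Definition~\ref{defn:patch}\eqref{it:patch:4} makes $\mathbf{U}^{\tau_1}_\tau$ act on $N[1/p]$ as the scalar $\eta_\infty(\mathbf{U}^{\tau_1}_\tau)\in R_{\rhobar}^{\tau,\mu}[1/p]$, so the two descriptions force $p^\kappa\mathbf{U}_N=\eta_\infty(\mathbf{U}^{\tau_1}_\tau)$ as endomorphisms of $N[1/p]$.

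The first assertion $\eta_\infty(\mathbf{U}^{\tau_1}_\tau)\in p^\kappa R_{\rhobar}^{\tau,\mu}$ then follows from a commutative algebra step: the regularity of $R_{\rhobar}^{\tau,\mu}$ together with Definition~\ref{defn:patch}\eqref{it:patch:2:5} and Auslander--Buchsbaum--Serre makes $N$ free over $R_\infty(\tau)=R_{\rhobar}^{\tau,\mu}\widehat{\otimes}_\cO R^v$, so the $R_\infty(\tau)$-linear endomorphism $\mathbf{U}_N$ is itself scalar multiplication by an element of $R_\infty(\tau)$; this element necessarily equals $\eta_\infty(\mathbf{U}^{\tau_1}_\tau)/p^\kappa$ after inverting $p$, and the $\cO$-flatness of $R^v$ yields the identity $R_{\rhobar}^{\tau,\mu}[1/p]\cap R_\infty(\tau)=R_{\rhobar}^{\tau,\mu}$ inside $R_\infty(\tau)[1/p]$. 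Granted this, for the second assertion I would reduce modulo $\fm$: $\mathbf{U}_N$ acts on $N/\fm N$ as the scalar $(p^{-\kappa}\eta_\infty(\mathbf{U}^{\tau_1}_\tau))\bmod\fm\in\F$, and unwinding the definition of $\tld{\mathbf{U}}^{\tau_1}_\tau$ from~\eqref{eqn:Uppi} identifies it with the transpose of this scalar operator on $\Hom_{\cO[\mathbf{K}]}(\sigma(\tau)^\circ,\pi_\infty)\cong(N/\fm N)^\vee$.

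Finally, under the additional Condition~\ref{cond:isom}\eqref{item:S-} the same Jordan--H\"older argument applied to the cokernel of $S_{\sigma(\tau)}^-$ gives that $M_\infty(S_{\sigma(\tau)}^-)$ is an isomorphism, so $\mathbf{U}_N$ is a composition of three isomorphisms and hence itself an automorphism of $N$; its reduction modulo $\fm$ is then a nonzero scalar on $N/\fm N$, whence the scalar lies in $\F^\times$. The main obstacle I expect is the commutative-algebra step used to promote $\eta_\infty(\mathbf{U}^{\tau_1}_\tau)/p^\kappa$ from an integral endomorphism of $N$ to a genuine element of $R_{\rhobar}^{\tau,\mu}$: the regularity hypothesis must be invoked carefully to guarantee the freeness of $N$ over $R_\infty(\tau)$ and to identify $R_{\rhobar}^{\tau,\mu}$ with $R_{\rhobar}^{\tau,\mu}[1/p]\cap R_\infty(\tau)$.
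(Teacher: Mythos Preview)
Your overall strategy is the same as the paper's, but the step you yourself flag as the main obstacle is a genuine gap. The assertion that the regularity of $R_{\rhobar}^{\tau,\mu}$ together with Auslander--Buchsbaum--Serre makes $N=M_\infty(\sigma(\tau)^\circ)$ free over $R_\infty(\tau)$ is not justified: Serre's criterion requires $R_\infty(\tau)=R_{\rhobar}^{\tau,\mu}\widehat{\otimes}_\cO R^v$ itself to be regular, and $R^v$ is only assumed to be a complete equidimensional local Noetherian $\cO$-flat algebra, not regular. Without freeness of $N$ you cannot conclude that the integral endomorphism $\mathbf{U}_N$ is given by a scalar in $R_\infty(\tau)$, and the Cayley--Hamilton alternative only yields integrality of $p^{-\kappa}\eta_\infty(\mathbf{U}^{\tau_1}_\tau)$ over $R_\infty(\tau)$, which is not enough without normality.

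The paper circumvents this by invoking Lemma~\ref{lemma:minimalM} to produce an auxiliary \emph{minimal} arithmetic $R_\infty'[\GL_n(K)]$-module $M_\infty'$, for which $R_\infty'$ is formally smooth; then $R_\infty'(\tau)$ is regular (as $R_{\rhobar}^{\tau,\mu}$ is regular by hypothesis), so $M_\infty'(\sigma(\tau)^\circ)$ is free and, by minimality, of rank one. On this rank-one module the endomorphism $\tld{\mathbf{U}}^{\tau_1}_\tau$ is automatically a scalar in $R_\infty'(\tau)$, hence equals $p^{-\kappa}\eta_\infty(\mathbf{U}^{\tau_1}_\tau)\in R_{\rhobar}^{\tau,\mu}$. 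Once this integrality is established, the paper transfers back to the original $M_\infty$ using only that $M_\infty(\sigma(\tau)^\circ)$ is $p$-torsion free and that $p^\kappa\tld{\mathbf{U}}^{\tau_1}_\tau=\eta_\infty(\mathbf{U}^{\tau_1}_\tau)$---no freeness of $N$ over $R_\infty(\tau)$ is ever needed. Your intersection identity $R_{\rhobar}^{\tau,\mu}[1/p]\cap R_\infty(\tau)=R_{\rhobar}^{\tau,\mu}$ is correct but becomes unnecessary once the minimal module is used.
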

\begin{proof}
From~\eqref{eqn:Uppi}, there is an endomorphism $\tld{\mathbf{U}}^{\tau_1}_\tau$ of $\Hom_{\cO[\mathbf{K}]}(\sigma(\tau)^\circ, M_\infty^\vee)$ with the property that $p^\kappa \cdot \tld{\mathbf{U}}^{\tau_1}_\tau=S_{\sigma(\tau)}^+\circ\mathbf{U}^{\tau_1}_\tau\circ (S_{\sigma(\tau)}^+)^{-1}$. Moreover, if Condition~\ref{cond:isom}~\eqref{item:S-} holds, then the map $M_\infty(\sigma(\tau)^\circ) \ra M_\infty(\Ind_{\mathbf{P}^-}^{\mathbf{K}} \sigma^\circ)$ induced from $S_{\sigma(\tau)}^-$ is an isomorphism, and thus $\tld{\mathbf{U}}^{\tau_1}_\tau=S_{\sigma(\tau)}^-\circ t_i\circ (S_{\sigma(\tau)}^+)^{-1}$ is also an isomorphism.
If we denote by $\tld{\mathbf{U}}^{\tau_1}_\tau$ and $\mathbf{U}^{\tau_1}_\tau$ the respective Pontrjagin dual endomorphisms, then $\tld{\mathbf{U}}^{\tau_1}_\tau$ acts on $M_\infty(\sigma(\tau)^\circ)$ (intertwining the $R_\infty$-action) and $p^\kappa \cdot \tld{\mathbf{U}}^{\tau_1}_\tau$ acts on $M_\infty(\sigma(\tau)^\circ)$ by $\eta_{\infty}(\mathbf{U}^{\tau_1}_\tau)$.

There exists a minimal arithmetic $R_\infty'[\GL_n(K)]$-module $M_\infty'$ by Lemma~\ref{lemma:minimalM}.
Serre's theorem on finiteness of projective dimensions of finite $R_\infty'(\tau)$-modules and the Auslander--Buchsbaum formula imply that the maximal Cohen--Macaulay $R_\infty'(\tau)$-modules such as $M_\infty'(\sigma(\tau)^\circ)$ are free.
Then the $R_\infty'(\tau)$-rank of $M_\infty'(\sigma(\tau)^\circ)$ is one.
The above considerations apply, and we conclude that $\tld{\mathbf{U}}^{\tau_1}_\tau$ acts on $M_\infty'(\sigma(\tau)^\circ)$ by an element of $R_\infty'(\tau) \cong \End_{R_\infty}(M_\infty'(\sigma(\tau)^\circ)$.
Since $R_\infty'(\tau)$ is $p$-torsion free, this element must be $p^{-\kappa} \eta_{\infty}(\mathbf{U}^{\tau_1}_\tau)$.
In particular, $p^{-\kappa}\eta_{\infty}(\mathbf{U}^{\tau_1}_\tau) \in R_{\rhobar}^{\tau,\mu}$ and if Condition~\ref{cond:isom}~\eqref{item:S-} holds then $p^{-\kappa}\eta_{\infty}(\mathbf{U}^{\tau_1}_\tau) \in (R_{\rhobar}^{\tau,\mu})^\times$.

Using that $M_\infty(\sigma(\tau)^\circ)$ is $p$-torsion free and that $p^\kappa \cdot \tld{\mathbf{U}}^{\tau_1}_\tau$ acts on $M_\infty(\sigma(\tau)^\circ)$ by $\eta_{\infty}(\mathbf{U}^{\tau_1}_\tau)$, $\tld{\mathbf{U}}^{\tau_1}_\tau$ acts on $M_\infty(\sigma(\tau)^\circ)$ by $p^{-\kappa}\eta_{\infty}(\mathbf{U}^{\tau_1}_\tau) \in R_{\rhobar}^{\tau,\mu}$.
Then $\tld{\mathbf{U}}^{\tau_1}_\tau$ acts on $M_\infty(\sigma(\tau)^\circ)/\fm$ by $p^{-\kappa}\eta_{\infty}(\mathbf{U}^{\tau_1}_\tau) \pmod{\fm}$, and the result follows by applying Pontryagin duals.
\end{proof}

We now fix $\lambda+\eta\in X^*(\un{T})$ which is $(3n-1)$-generic Fontaine--Laffaille and let $\tau\otimes_{\cO}\tld{\omega}^{1-n}$ be a $F(\lambda)$-relevant type. Note that for an arbitrary choice of a $\mathbf{K}$-stable $\cO$-lattice $\sigma(\tau)^\circ\subseteq\sigma(\tau)$, $\ovl{\sigma}(\tau)\defeq \sigma(\tau)^\circ\otimes_{\cO}\F$ contains $F(\lambda)\otimes_{\F}\omega^{n-1}\circ\det$ as a Jordan--H\"older factor with multiplicity one (see Remark~\ref{rmk:relevant:outer} and (\ref{equ: det twist})).
Since $\sigma(\tau_1)$ and $\sigma(\tau_2)$ are defined over $E_0 \defeq W(\F)[p^{-1}]$, we can and do choose $E_0$-rational structures $\sigma(\tau_1)_{E_0}$ and $\sigma(\tau_2)_{E_0}$. We choose $\sigma_{E_0}^\circ \subset \sigma(\tau_1)_{E_0}\otimes_{E_0}\sigma(\tau_2)_{E_0}$ to be an arbitrary $L(k)$-stable $W(\F)$-lattice.
Then we choose $\sigma(\tau)_{E_0}^\circ \subset \Ind_{\mathbf{P}^+}^{\mathbf{K}} \sigma_{E_0}^\circ$ to be the unique $W(\F)$-lattice whose cosocle is isomorphic to $F(\lambda)\otimes_{\F}\omega^{n-1}\circ\det$ and whose image in $\Ind_{\mathbf{P}^+}^{\mathbf{K}} \sigma_{E_0}^\circ\otimes_{W(\F)} \F$ is nonzero. We fix the choice $\sigma^\circ\defeq \sigma_{E_0}^\circ\otimes_{W(\F)}\cO$ and $\sigma(\tau)^\circ\defeq \sigma(\tau)_{E_0}^\circ\otimes_{W(\F)}\cO$. As $\kappa\in \Z$ is the maximal integer such that $\sigma(\tau)^\circ \subset p^\kappa \Ind_{\mathbf{P}^-}^{\mathbf{K}} \sigma^\circ$ and $E_0/\Q_p$ is unramified, we deduce that the image of $\sigma(\tau)^\circ$ in $(p^\kappa \Ind_{\mathbf{P}^-}^{\mathbf{K}} \sigma^\circ )\otimes_{\cO} \F$ is nonzero.
\begin{lemma}\label{lem: check cond}
Assume that $\rhobar$ satisfies Condition~\ref{cond:modularity of obv wt}. Let $\tau\otimes_{\cO}\tld{\omega}^{1-n}$ be a $F(\lambda)$-relevant type such that $\tld{w}^*(\rhobar,\tau\otimes_{\cO}\tld{\omega}^{1-n})=t_\eta$. Then the choice of $\sigma^\circ$ and $\sigma(\tau)^\circ$ above satisfies Condition~\ref{cond:isom}~\eqref{item:S+} and Condition~\ref{cond:isom}~\eqref{item:S-}.
\end{lemma}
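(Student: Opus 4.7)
The plan is to reduce Condition~\ref{cond:isom} to a structural statement about the outer weight $V\defeq F(\lambda)\otimes_\F\omega^{n-1}\circ\det$ and then verify it using the MLM lattice formalism. First I would apply Lemma~\ref{lem: criterion for shape} to the hypothesis $\tld{w}^*(\rhobar,\tau\otimes_\cO\tld{\omega}^{1-n})=t_\eta$. Combined with the twist identity~(\ref{equ: det twist}), this yields $\JH(\ovl{\sigma(\tau)})\cap\bigl(W_{M_\infty}(\rhobar)\otimes_\F\omega^{n-1}\circ\det\bigr)=\{V\}$. Both parts of Condition~\ref{cond:isom} therefore reduce to showing $V\not\in \JH(\coker S_{\sigma(\tau)}^{\pm}\otimes_\cO \F)$.

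The next step will be to recast both vanishings via a short exact sequence analysis. Setting $L_1\defeq\sigma(\tau)^\circ$ and $L_0^\bullet\defeq \Ind_{\mathbf{P}^\bullet}^{\mathbf{K}}\sigma^\circ$ for $\bullet\in\{+,-\}$, applying $-\otimes_\cO\F$ to $0\to L_1\to L_0^+\to q^+\to 0$ (with $q^+\defeq\coker S_{\sigma(\tau)}^+$) produces the four-term exact sequence
\begin{equation*}
0\to q^+[p]\to L_1\otimes_\cO\F\to L_0^+\otimes_\cO\F\to q^+\otimes_\cO\F\to 0.
\end{equation*}
The elementary divisor decomposition of the finite $\cO$-torsion module $q^+$ identifies $\JH(q^+[p])$ with $\JH(q^+\otimes_\cO\F)$ (with the same multiplicities). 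By Remark~\ref{rmk:relevant:outer}, $V$ has multiplicity one in $\ovl\sigma(\tau)$; additivity of multiplicities in the above sequence therefore makes the vanishing $V\not\in\JH(q^+\otimes_\cO\F)$ equivalent to $V\in\JH(\image(L_1\otimes_\cO\F\to L_0^+\otimes_\cO\F))$. The case of $S_{\sigma(\tau)}^-$ is entirely parallel, with $L_1$ replaced by $p^{-\kappa}L_1$ inside $L_0^-$: the maximality of $\kappa$ translates into the analogous nonzero-reduction condition $p^{-\kappa}L_1\not\subset pL_0^-$, and the isomorphism $p^{-\kappa}L_1\cong L_1$ as $\mathbf{K}$-modules preserves the cosocle $V$.

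The final and most delicate step will be to verify that the image of $L_1\otimes_\cO\F$ inside $L_0^\pm\otimes_\cO\F$ contains $V$ as a Jordan-H\"older factor. Our normalization characterizes $L_1$ (up to $W(\F)^\times$-scaling) as the unique sublattice of $L_0^+$ whose cosocle is the outer weight $V$ and whose reduction is nonzero, hence identifies $L_1$ with the outer lattice associated to $V$ in the formalism of \cite[\S~2.3]{MLM}. The desired statement then follows from the MLM analysis of the radical filtration of outer lattices and their inclusions into induced principal series: concretely, one verifies that the kernel $q^+[p]=(L_1\cap pL_0^+)/pL_1$ lies in the radical of $L_1\otimes_\cO\F$, from which the multiplicity-one cosocle position of $V$ in $L_1\otimes_\cO\F$ forces $V\in\JH(\image)$. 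The main obstacle is precisely this structural input, which rests on the explicit outer-weight theory from \cite{MLM}; once it is established, the rest of the argument is formal bookkeeping with Jordan-H\"older multiplicities.
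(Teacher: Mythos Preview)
Your approach is essentially the paper's, just with more explicit bookkeeping via the four-term exact sequence. The paper argues more directly: once $\JH(\ovl{\sigma(\tau)})\cap\bigl(W_{M_\infty}(\rhobar)\otimes_\F\omega^{n-1}\circ\det\bigr)=\{V\}$ and $V$ has multiplicity one, it simply observes that the image of $S_{\sigma(\tau)}^{\pm}\otimes_\cO\F$ is a \emph{nonzero} quotient of $\sigma(\tau)^\circ\otimes_\cO\F$ (nonzero by the lattice normalization for $S^+$, and by maximality of $\kappa$ together with $E_0$-rationality for $S^-$), hence surjects onto the simple cosocle $V$; multiplicity one then forces $V\notin\JH(\coker S_{\sigma(\tau)}^\pm\otimes_\cO\F)$.

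Your appeal to ``the MLM analysis of the radical filtration'' for the statement that the kernel lies in the radical is unnecessary and obscures how elementary this step is: the kernel is a proper submodule of $L_1\otimes_\cO\F$ precisely because the image is nonzero (which you have already recorded as part of the lattice normalization), and any proper submodule of a module with simple cosocle lies in its radical. No structural input from \cite{MLM} is required here. Two small points: (i) the $\mathrm{Tor}$ term in your four-term sequence should be $q^+[\varpi]$ rather than $q^+[p]$, since you are tensoring with $\cO/\varpi$; (ii) for $S^-$, maximality of $\kappa$ only gives $p^{-\kappa}L_1\not\subset pL_0^-$, which is weaker than $\not\subset\varpi L_0^-$ when $E/E_0$ is ramified --- this is exactly where the $E_0$-rationality of the chosen lattices enters (as the paragraph preceding the lemma makes explicit), and you should invoke it rather than leave it implicit.
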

\begin{proof}
It follows from Lemma~\ref{lem: criterion for shape} (applied to $\tau_0=\tau\otimes_{\cO}\tld{\omega}^{1-n}$) and Remark~\ref{rmk:relevant:outer} that $$\JH(\ovl{\sigma(\tau\otimes_{\cO}\tld{\omega}^{1-n})}) \cap W_{M_\infty}(\rhobar)=\{F(\lambda)\}$$ and $\ovl{\sigma(\tau\otimes_{\cO}\tld{\omega}^{1-n})}$ contains $F(\lambda)$ as a Jordan--H\"older factor with multiplicity one. The choice of $\sigma(\tau)^\circ$ above forces the image of $S_{\sigma(\tau)}^+\otimes_{\cO}\F$ (resp.~ of $S_{\sigma(\tau)}^-\otimes_{\cO}\F$) to contain $F(\lambda)\otimes_{\F}\omega^{n-1}\circ\det$ as a Jordan--H\"older factor, which implies that $\coker S_{\sigma(\tau)}^+ \otimes_{\cO} \F$ (resp.~$\coker S_{\sigma(\tau)}^+ \otimes_{\cO} \F$) does not contain $F(\lambda)\otimes_{\F}\omega^{n-1}\circ\det$ as a Jordan--H\"older factor. Hence, Condition~\ref{cond:isom}~\eqref{item:S+} and Condition~\ref{cond:isom}~\eqref{item:S-} follow.
\end{proof}

\begin{prop}\label{prop:Up inv}
Assume that $\rhobar$ satisfies Condition~\ref{cond:modularity of obv wt}. Let $\tau\otimes_{\cO}\tld{\omega}^{1-n} = \tau(s_{\cJ},\lambda+\eta-s_{\cJ}(\eta))$ be a $F(\lambda)$-relevant type and $\tau_1\subseteq\tau$ a sub inertial type. Suppose that $I_{\cJ}$ is the set corresponding to $\tau_1\otimes_{\cO}\tld{\omega}^{1-n}$ via Lemma~\ref{lem: bijection tau} and that $\rhobar\cong\rhobar_{x,\lambda+\eta}$ for some $x\in \cM_{s_\cJ^{-1}}(\F)$. Then there exists a unique $\kappa \in \Z$ depending only on $\tau$ and $\tau_1$ such that $p^{-\kappa} \eta_{\infty}(\mathbf{U}_\tau^{\tau_1}) \pmod{\fm} = f_{s_{\cJ}^{-1},I_{\cJ}}(x)$.
\end{prop}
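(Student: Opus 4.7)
The plan is to reduce the statement to a residual computation on a single potentially crystalline lift of $\rhobar$, after which Proposition~\ref{prop:LLC} and Theorem~\ref{thm:Feval-2} deliver the identification. First I would observe that, by Lemma~\ref{lem:shape:Schubert} applied with $u=1$ to the $F(\lambda)$-relevant type $\tau\otimes_{\cO}\tld{\omega}^{1-n}$, the assumption $x\in\cM_{s_\cJ^{-1}}^\circ(\F)$ forces the shape $\tld{w}^*(\rhobar,\tau\otimes_{\cO}\tld{\omega}^{1-n})=t_\eta$. Combined with Condition~\ref{cond:modularity of obv wt}, Lemma~\ref{lem: check cond} shows that the canonical lattices $\sigma^\circ\subset\sigma$ and $\sigma(\tau)^\circ\subset\sigma(\tau)$ introduced at the end of \S~\ref{subsub:Up:act:Ltc} satisfy Condition~\ref{cond:isom}. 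At shape $t_\eta$, the tame potentially crystalline lifting ring $R_\rhobar^{\tau,\mu}$ is formally smooth: indeed, by Proposition~\ref{prop:rel:cat} together with Proposition~\ref{prop:rel:cat:1} and the local-model isomorphism $\tld{\pi}_\tau$, the completion of $R_\rhobar^{\tau,\mu}$ at $\rhobar$ is a twist of the local ring of $\tld{U}(t_\eta,\leq\!\eta)^{\wedge_p}$ at $x$, which is a $p$-adically complete regular local ring by \eqref{eq:expl:matrix}. Hence Proposition~\ref{prop:pi-inv} applies and produces an integer $\kappa$ depending only on $\sigma^\circ$ and $\sigma(\tau)^\circ$ (hence only on $\tau,\tau_1$) such that $\eta_\infty(\mathbf{U}_\tau^{\tau_1})\in p^\kappa R_\rhobar^{\tau,\mu}$ and $p^{-\kappa}\eta_\infty(\mathbf{U}_\tau^{\tau_1})\pmod\fm\in \F^\times$; uniqueness of $\kappa$ is immediate from this non-vanishing.

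To pin down the residue, I would evaluate $p^{-\kappa}\eta_\infty(\mathbf{U}_\tau^{\tau_1})$ at a chosen $\cO$-point of $\Spec R_\rhobar^{\tau,\mu}$, i.e.~a potentially crystalline lift $\rho$ of $\rhobar$ of type $\tau$ and parallel Hodge--Tate weights $\mu=-w_0(\eta)$. The twist $\rho'\defeq\rho\otimes\tld{\omega}^{1-n}$ is then a potentially crystalline lift of $\rhobar\otimes\tld{\omega}^{1-n}$ of type $\tau'\defeq\tau\otimes_{\cO}\tld{\omega}^{1-n}$ with parallel Hodge--Tate weights $(n-1,\dots,1,0)$, precisely the normalization appearing in Theorem~\ref{thm:Feval-2}. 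Writing $\varsigma\defeq\WD(\rho)$, $\varsigma'\defeq\WD(\rho')$, and letting $\varsigma_1\subset\varsigma$, $\varsigma_1'\subset\varsigma'$ be the sub-Weil--Deligne representations with inertial restrictions $\tau_1$ and $\tau_1'\defeq\tau_1\otimes_{\cO}\tld{\omega}^{1-n}$ respectively, the compatibility of $\WD$ with twisting together with the normalizations of local class field theory give $\al_{\varsigma_1}= \al_{\varsigma_1'}\cdot p^{-i(n-1)}$, where $i=\Dim\varsigma_1$. Proposition~\ref{prop:LLC} evaluates $\eta_\infty(\mathbf{U}_\tau^{\tau_1})$ at $\rho$ as $q^{i(2n-i-1)/2}\al_{\varsigma_1}^{-1}$, and Theorem~\ref{thm:Feval-2} applied to $\rho'$ and $\tau_1'$ (using that $x\in\cM_{s_\cJ^{-1}}^\circ(\F)$ and $I_\cJ=I_\cJ^{\tau_1'}$ by construction) yields $\val_p(\al_{\varsigma_1'}^{-1})=d_{\tau',\tau_1'}$ and $p^{-d_{\tau',\tau_1'}}\al_{\varsigma_1'}^{-1}\equiv f_{s_\cJ^{-1}, I_\cJ}(x)\pmod\varpi$ in $\F^\times$.

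Combining these identities, the value of $\eta_\infty(\mathbf{U}_\tau^{\tau_1})$ at $\rho$ equals $p^{N}$ times a unit whose reduction is $f_{s_\cJ^{-1}, I_\cJ}(x)$, for the explicit integer $N=d_{\tau',\tau_1'}+i(n-1)f+\tfrac{i(2n-i-1)}{2}f$ (writing $q=p^f$). Since this value lies in $p^\kappa(R_\rhobar^{\tau,\mu})^\times$ by Proposition~\ref{prop:pi-inv}, matching $p$-adic valuations forces $\kappa=N$, and reducing modulo $\fm$ gives the required formula $p^{-\kappa}\eta_\infty(\mathbf{U}_\tau^{\tau_1})\equiv f_{s_\cJ^{-1}, I_\cJ}(x)\in\F^\times$. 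The main obstacle will be the bookkeeping in the last two steps: one must track carefully how the $\tld{\omega}^{n-1}$-twist and the Tate-style normalization $|\cdot|^{(i-1)/2}$ entering the local Langlands correspondence combine with the factor $q^{i(2n-i-1)/2}$ from Proposition~\ref{prop:LLC}, and verify that the resulting valuation agrees on the nose with the integer $\kappa$ coming from the inclusion of lattices $\sigma(\tau)^\circ\subseteq p^\kappa\ind_{\mathbf{P}^-}^{\mathbf{K}}\sigma^\circ$; once this is confirmed, the integrality of $\eta_\infty$ and the non-vanishing from Proposition~\ref{prop:pi-inv} force uniqueness and the residual equality automatically.
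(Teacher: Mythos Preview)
Your overall strategy matches the paper's: establish shape $t_\eta$, invoke Condition~\ref{cond:isom} via Lemma~\ref{lem: check cond}, use regularity of $R_\rhobar^{\tau,\mu}$ and Proposition~\ref{prop:pi-inv} to get $\kappa$, then evaluate at a single $\cO$-point and compare Proposition~\ref{prop:LLC} with Theorem~\ref{thm:Feval-2}. The paper obtains regularity by citing \cite[Theorem 4.2.1]{OBW}; your local-model sketch points in the right direction, though ``the completion of $R_\rhobar^{\tau,\mu}$ is a twist of the local ring of $\tld{U}(t_\eta,\leq\!\eta)^{\wedge_p}$'' is not literally true---what one has is a formally smooth correspondence, not an isomorphism of local rings.

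The substantive error is the twist. You set $\rho'=\rho\otimes\tld{\omega}^{1-n}$ and claim this has Hodge--Tate weights $(n-1,\dots,0)$ and reduces to a lift to which Theorem~\ref{thm:Feval-2} applies. But $\tld{\omega}$ is a finite-order character, so it has Hodge--Tate weight $0$; your $\rho'$ still has weights $\mu=-w_0(\eta)\in[-(n-1),0]$, not in $[0,n-1]$, and it reduces to $\rhobar\otimes\omega^{1-n}$ rather than to $\rhobar\cong\rhobar_{x,\lambda+\eta}$. Theorem~\ref{thm:Feval-2} therefore does not apply to $\rho'$. The paper instead twists by $\varepsilon^{n-1}\tld{\omega}^{1-n}$, which shifts the Hodge--Tate weights by $n-1$ (into $[0,n-1]$) while being trivial mod $p$, so the resulting representation is still a lift of $\rhobar$. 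This also explains why your eigenvalue relation $\al_{\varsigma_1}=\al_{\varsigma_1'}\cdot p^{-i(n-1)}$ is wrong on two counts: with your twist, the Frobenius eigenvalues differ only by a root of unity (since $\tld{\omega}(g_p)$ is a root of unity), whereas with the correct twist one finds $\al_{\varsigma_1}^{-1}=|p|^{i(n-1)}\al_{\varsigma_{0,1}}^{-1}=p^{-fi(n-1)}\al_{\varsigma_{0,1}}^{-1}$, coming from the $|\cdot|^{n-1}$ part of $\WD(\varepsilon^{n-1}\tld{\omega}^{1-n})$. Your formula $N=d_{\tau',\tau_1'}+fi(n-1)+\tfrac{fi(2n-i-1)}{2}$ inherits this sign error; the correct value is $\kappa=d_{\tau',\tau_1'}-\tfrac{fi(i-1)}{2}$. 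Once the twist is corrected to $\varepsilon^{n-1}\tld{\omega}^{1-n}$, your argument goes through and coincides with the paper's.
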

\begin{proof}
The fact that $\rhobar\cong\rhobar_{x,\lambda+\eta}$ for some $x\in \cM_{s_\cJ^{-1}}(\F)$ implies that $\tld{w}^*(\rhobar,\tau\otimes_{\cO}\tld{\omega}^{1-n})=t_\eta$.
It follows from \cite[Theorem 4.1.1]{OBW} that $R_{\rhobar}^{\tau,\mu}$ is formally smooth over $\cO$, and in particular regular. Then it follows from Proposition~\ref{prop:pi-inv} and Lemma~\ref{lem: check cond} that there exists $\kappa\in\Z$ depending only on $\tau$ and $\tau_1$ such that $p^{-\kappa} \eta_{\infty}(\mathbf{U}_\tau^{\tau_1})\in (R_{\rhobar}^{\tau,\mu})^\times$. We consider a $\rho^\circ$ associated with an arbitrary homomorphism $R_{\rhobar}^{\tau,\mu}\twoheadrightarrow\cO$ (with kernel $\fp$) and observe that $\rho^\circ_0\defeq \rho^\circ\otimes_{\cO}(\varepsilon^{n-1}\tld{\omega}^{1-n})$ is a potentially crystalline lift of $\rhobar$ with inertial type $\tau\otimes_{\cO}\tld{\omega}^{1-n}$ and Hodge--Tate weights $n-1,\dots,1,0$. We set $\varsigma\defeq \WD(\rho^\circ)$, $\varsigma_0\defeq \WD(\rho^\circ_0)$ and note that $\varsigma_0\cong\varsigma\otimes_{E}(|\mathrm{Art}_K^{-1}|^{n-1}\tld{\omega}^{1-n})$. We write $\varsigma_{0,1}\subseteq\varsigma_0$ for the unique subrepresentation satisfying $\varsigma_{0,1}|_{I_K}\cong \tau_1\otimes_{\cO}\tld{\omega}^{1-n}$. Hence, we may apply Theorem~\ref{thm:Feval-2} to $\varsigma_0$ and deduce that $\val_p(\al_{\varsigma_{0,1}}^{-1})=d_0\defeq d_{\tau\otimes_{\cO}\tld{\omega}^{1-n},\tau_1\otimes_{\cO}\tld{\omega}^{1-n}}$ and
$$\frac{\al_{\varsigma_{0,1}}^{-1}}{p^{d_0}}\equiv f_{s_\cJ^{-1},I_\cJ}(x)\in \F^\times.$$
It follows from Proposition~\ref{prop:LLC} and \cite[Theorem 4.1]{CEGGPS} that
$$\eta_{\infty}(\mathbf{U}_\tau^{\tau_1}) \pmod{\fp}=p^{\frac{fi(2n-i-1)}{2}}\al_{\varsigma_1}^{-1},$$
which together with the identity $\al_{\varsigma_1}^{-1}=|p|^{i(n-1)}\al_{\varsigma_{0,1}}^{-1}=p^{-fi(n-1)}\al_{\varsigma_{0,1}}^{-1}$ finishes the proof
\end{proof}

Note that the above proof also shows that
$$\kappa=\frac{fi(2n-i-1)}{2}+\val_p(\al_{\varsigma_1}^{-1})=\frac{fi(2n-i-1)}{2}+d_0-fi(n-1)=d_0-\frac{fi(i-1)}{2}.$$

\begin{rmk}\label{rmk:extr-isom}
Our choice of $\rhobar$ and $\tau$ satisfies the crucial condition that $\JH(\ovl{\sigma(\tau)}) \cap \big(W_{M_\infty}(\rhobar)\otimes_{\F}\omega^{n-1}\circ\det\big)$ contains a unique Serre weight $V$ which has multiplicity one in $\ovl{\sigma(\tau)}$. It is clear that Lemma~\ref{lem: check cond} admits an immediate generalization for any pair $\rhobar$, $\tau$ satisfying this condition. In fact, the map $\tld{\mathbf{U}}_\tau^{\tau_1}$ in~\eqref{eqn:Uppi} exists for such $\rhobar$ and $\tau$ (with $\pi_\infty=(M_\infty/\fm)^\vee$ and the choice of $\sigma^\circ$ and $\sigma(\tau)^\circ$ similar to that of Lemma~\ref{lem: check cond}).
\end{rmk}

\subsubsection{Recovering the Galois representations from extremal weights and arithmetic actions}
\label{sec:divide}
Let $\rhobar: G_K \ra \GL_n(\F)$ be a Galois representation.
Fix an arithmetic $R_\infty[\GL_n(K)]$-module $M_\infty$ for $\rhobar$.

\begin{lemma}
\label{lem:glob:algorithm}
Assume that $\rhobar$ satisfies Condition~\ref{cond:modularity of obv wt} for some $x\in\tld{\cF\cL}_\cJ(\F)$. Then the set $W_{M_\infty}(\rhobar)$ determines the unique $\cC\in\cP_\cJ$ satisfying $x\in\cC(\F)$.
\end{lemma}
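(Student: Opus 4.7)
The plan is to combine Theorem~\ref{thm: obv wt and partition}, which characterizes the element $\cC\in\cP_\cJ$ containing $x$ purely in terms of the set of specializations of $\rhobar_{x,\lambda+\eta}$, with Lemma~\ref{lem: criterion for shape}, which shows that $W_{M_\infty}(\rhobar)$ recovers the set of shapes $\tld{w}^*(\rhobar,\tau_0)$ with respect to any $F(\lambda)$-relevant inertial type $\tau_0$ up to the condition of being equal to $t_\eta$.

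More precisely, first I would invoke the equivalence between conditions~\eqref{it: obv partition 1} and~\eqref{it: obv partition 3} of Theorem~\ref{thm: obv wt and partition} to reduce the problem to showing that $W_{M_\infty}(\rhobar)$ determines the set
$$\cS(x)\defeq \big\{w\in \un{W}\ \big|\ \rhobar_{x,\lambda+\eta}\leadsto \taubar(w^{-1},\lambda+\eta)\big\}.$$
By Lemma~\ref{lem: specialization and open cell}, this set coincides with $\{w\in\un{W}\mid x\in\cM^\circ_w(\F)\}$. Using the identification $\cM^\circ_w=\tld{\cS}^\circ(w_0, w_0 w)$ from Lemma~\ref{lem: hypersurfaces are Schubert} together with Lemma~\ref{lem:shape:Schubert}, for each $s\in\un{W}$ and the $F(\lambda)$-relevant type $\tau_s\defeq \tau(s,\lambda+\eta-s(\eta))$, one has
$$\tld{w}^*(\rhobar,\tau_s)=t_\eta \iff x\in\tld{\cS}^\circ(w_0, w_0 s^{-1})(\F)=\cM^\circ_{s^{-1}}(\F).$$
Setting $w=s^{-1}$, the membership $w\in\cS(x)$ is thus equivalent to $\tld{w}^*(\rhobar,\tau_{w^{-1}})=t_\eta$.

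Next, I would apply Lemma~\ref{lem: criterion for shape} with $\tau_0=\tau_{w^{-1}}$ for each $w\in\un{W}$. Under Condition~\ref{cond:modularity of obv wt} (which is exactly the hypothesis of the lemma and is available thanks to our assumption), that lemma asserts that $\tld{w}^*(\rhobar,\tau_0)=t_\eta$ if and only if $\JH(\ovl{\sigma(\tau_0)})\cap W_{M_\infty}(\rhobar)=\{F(\lambda)\}$. The right-hand side is a condition that depends only on the set $W_{M_\infty}(\rhobar)$ (together with the inertial local Langlands assignment $\tau_0 \mapsto \sigma(\tau_0)$, which is intrinsic). Running through all $w\in\un{W}$, we recover $\cS(x)$ from $W_{M_\infty}(\rhobar)$.

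Combining the steps, we conclude that $W_{M_\infty}(\rhobar)$ determines $\cS(x)$ and hence, by Theorem~\ref{thm: obv wt and partition}, the unique element $\cC\in\cP_\cJ$ containing $x$. The only subtle point is the implicit verification that the bijection between $F(\lambda)$-relevant types and elements of $\un{W}$ (through the lowest alcove presentation $(s,\lambda+\eta-s(\eta))$) aligns with the parametrization of specializations $\taubar(w^{-1},\lambda+\eta)$ used in Remark~\ref{rmk: specialization and strata}; this is a matching of combinatorial parameters that poses no real difficulty, since both sides are indexed by $\un{W}$ via the natural injection $\un{W}\hookrightarrow\tld{\un{W}}$ fixed at the beginning of the paper, and the genericity of $\lambda+\eta$ ensures that distinct $w$ give distinct semisimple inertial types. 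The main substantive ingredient is Lemma~\ref{lem: criterion for shape}, whose proof in turn relied on the detection of non-maximal shapes through additional obvious weights and on the modularity result \cite[Theorem 5.4.6]{OBW}; here we just reap the benefit of that input.
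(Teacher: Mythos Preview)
Your proposal is correct and follows essentially the same route as the paper: both use Lemma~\ref{lem: criterion for shape} to detect which $F(\lambda)$-relevant types have shape $t_\eta$, then Lemma~\ref{lem:shape:Schubert} to translate this into the set $\{w\in\un{W}\mid x\in\cM^\circ_w(\F)\}$. The only cosmetic difference is the final step: you invoke Theorem~\ref{thm: obv wt and partition} (equivalence of \eqref{it: obv partition 1} and \eqref{it: obv partition 3}) to conclude that this set determines $\cC$, whereas the paper cites item~\ref{it: open cover:iii} of Lemma~\ref{lem: open cover} directly---but the former is proved via the latter, so the arguments coincide.
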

\begin{proof}
We write $\cC\in\cP_\cJ$ for the unique element such that $x\in\cC(\F)$. It follows from Lemma~\ref{lem: criterion for shape} that $W_{M_\infty}(\rhobar)$ determines the set of $F(\lambda)$-relevant inertial types $\tau_0$ such that $\tld{w}^*(\rhobar,\tau_0)=t_\eta$. Then we apply Lemma~\ref{lem:shape:Schubert} and observe that $W_{M_\infty}(\rhobar)$ determines the set $\{w_\cJ\in\un{W}\mid \cC\subseteq \cM_{w_\cJ}^\circ\}$, which determines $\cC$ by the item~\ref{it: open cover:iii} of Lemma~\ref{lem: open cover} (applied to $\cC_{K_j}$ for each $j\in\cJ$, if $\cC=(\cC_{K_j})_{j\in\cJ}$).
\end{proof}

\begin{thm}
\label{thm:main:LGC}
Assume that $\rhobar$ satisfies Condition~\ref{cond:modularity of obv wt}. Let $M_\infty$ be an arithmetic $R_\infty[\GL_n(K)]$-module, and $\pi_\infty$ be the $\GL_n(K)$-representation $(M_\infty/\fm)^\vee$.
Then the conjugacy class of $\rhobar$ can be recovered from the isomorphism class of $\pi_\infty$ as an $\F[\GL_n(K)]$-module.
\end{thm}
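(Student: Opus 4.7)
The plan is to extract from $\pi_\infty$ all the data needed to identify $\rhobar$, using the strategy outlined in \S\ref{subsub:intro:conclusion}. First, from $\pi_\infty|_{\mathbf{K}}$ I can directly read off the set
\[
W_{M_\infty}(\rhobar)=\big\{V \mid \Hom_{\mathbf{K}}\big(V\otimes_{\F}\omega^{n-1}\circ\det,\pi_\infty|_{\mathbf{K}}\big)\neq 0\big\}.
\]
By Condition~\ref{cond:modularity of obv wt}, there exists $x\in\tld{\cF\cL}_\cJ(\F)$ such that $\rhobar\cong\rhobar_{x,\lambda+\eta}$; since $\Tcris$ is fully faithful, recovering $\rhobar$ is equivalent to recovering the $\un{T}$-shifted conjugacy class of $x$ in $\tld{\cF\cL}_\cJ(\F)$. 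Lemma~\ref{lem:glob:algorithm} then produces, from $W_{M_\infty}(\rhobar)$, the unique $\cC\in\cP_\cJ$ with $x\in\cC(\F)$.

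Next, I will show that for every $f_{w_\cJ,I_\cJ}\in\Inv(\cC)$ the scalar $f_{w_\cJ,I_\cJ}(x)\in\F^\times$ is determined by $\pi_\infty$. Given such an invariant function, Lemma~\ref{lem: covering regular locus} lets me replace $w_\cJ$ by some $w_\cJ'\in w_\cJ\cdot W_{I_\cJ}$ with $\cC\subseteq\cM^\circ_{w_\cJ'}$ and $f_{w_\cJ',I_\cJ}=f_{w_\cJ,I_\cJ}$. Setting $s_\cJ\defeq (w_\cJ')^{-1}$ I form the $F(\lambda)$-relevant tame inertial type $\tau\otimes_{\cO}\tld{\omega}^{1-n}=\tau(s_\cJ,\lambda+\eta-s_\cJ(\eta))$, and via the bijection of Lemma~\ref{lem: bijection tau} the subset $I_\cJ$ corresponds to a sub inertial type $\tau_1\subseteq \tau$. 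Lemma~\ref{lem: check cond} guarantees that Condition~\ref{cond:isom} holds for the canonical choice of lattices $\sigma^\circ$ and $\sigma(\tau)^\circ$ fixed in \S\ref{sec:norm-Up-act}; hence the normalized $U_p$-operator
\[
\tld{\mathbf{U}}_{\tau}^{\tau_1}\in\End\big(\Hom_{\cO[\mathbf{K}]}(\sigma(\tau)^\circ,\pi_\infty)\big)
\]
is well defined and depends only on $\pi_\infty$ together with combinatorial data read off from $\cC$. By Proposition~\ref{prop:Up inv}, $\tld{\mathbf{U}}_{\tau}^{\tau_1}$ acts on this finite-dimensional $\F$-space by the scalar $f_{w_\cJ',I_\cJ}(x)=f_{w_\cJ,I_\cJ}(x)$, so this scalar is recovered from $\pi_\infty$.

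Finally, I apply Corollary~\ref{cor: separate points} (the culmination of \S\S\ref{sec:comb:lifts}--\ref{sec:inv:cons}): the collection
\[
\{g(x)\mid g\in\Inv(\cC)\}\subseteq\F^\times
\]
distinguishes $\un{T}$-shifted-conjugacy classes in $\cC(\F)$. Since each of these scalars has just been recovered from $\pi_\infty$, the shifted conjugacy class of $x$ in $\cC$ is determined, and therefore so is the conjugacy class of $\rhobar$ via the identification $\big[\tld{\cF\cL}_\cJ/\sim_{\un{T}\text{-sh.cnj}}\big]\cong\mathrm{FL}_n^{\lambda+\eta}$ of Proposition~\ref{thm:repr:FLgp}. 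The main technical obstacle in this proof is already absorbed into the previously established results: the verification that the normalized Hecke eigenvalues really do reduce mod $p$ to the invariant functions $f_{w_\cJ,I_\cJ}(x)$ (Proposition~\ref{prop:Up inv}, which rests on Theorem~\ref{thm:Feval-2} and the patching axioms), and the combinatorial heart Statement~\ref{state: goal}/Corollary~\ref{cor: separate points} ensuring that these scalars genuinely separate $T$-conjugacy classes.
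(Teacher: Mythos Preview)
Your proof is correct and follows essentially the same route as the paper's own argument: read off $W_{M_\infty}(\rhobar)$ from $\pi_\infty|_{\mathbf{K}}$, apply Lemma~\ref{lem:glob:algorithm} to pin down $\cC$, use Lemma~\ref{lem: covering regular locus} to realize each $g\in\Inv(\cC)$ as $f_{s_\cJ^{-1},I_\cJ}$ with $x\in\cM^\circ_{s_\cJ^{-1}}$, invoke Proposition~\ref{prop:Up inv} to identify $g(x)$ with the eigenvalue of $\tld{\mathbf{U}}^{\tau_1}_\tau$, and conclude via Corollary~\ref{cor: separate points}. The only cosmetic differences are that you explicitly cite Lemma~\ref{lem: check cond} (which the paper absorbs into the proof of Proposition~\ref{prop:Up inv}) and that the paper notes one may assume $I_\cJ\neq\emptyset$.
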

\begin{proof}
We can write $\rhobar = \rhobar_{x,\lambda+\eta}$ for some $x\in \tld{\cF\cL}_{\cJ}(\F)$.
As a Serre weight $V$ satisfies $V\in W_{M_\infty}(\rhobar)$ if and only if $\Hom_{\mathbf{K}}(V\otimes_{\F}\omega^{n-1}\circ\det,\pi_\infty|_{\mathbf{K}}) \neq 0$, we deduce from Lemma~\ref{lem:glob:algorithm} that the $\mathbf{K}$-action on $\pi_\infty$ determines the unique $\cC\in\cP_\cJ$ such that $x \in \cC(\F)$.
Moreover, for each $g \in \Inv(\cC)$, we deduce from Lemma~\ref{lem: covering regular locus} that there exists $s_\cJ\in\un{W}$ and $I_\cJ\subseteq\mathbf{n}_\cJ$ such that $I_\cJ\cdot(s_\cJ^{-1},1)=I_\cJ$, $x\in\cM_{s_\cJ^{-1}}(\F)$ and $g=f_{s_\cJ^{-1},I_\cJ}$. Assume without loss of generality that $I_\cJ\neq \emptyset$. %
We consider the $F(\lambda)$-relevant inertial type $\tau\otimes_{\cO}\tld{\omega}^{1-n}=\tau(s_{\cJ},\lambda+\eta-s_{\cJ}(\eta))$ together with the sub inertial type $\tau_1\subseteq\tau$ such that $\tau_1\otimes_{\cO}\tld{\omega}^{1-n}$ corresponds to $I_\cJ$ via Lemma~\ref{lem: bijection tau}. Then we deduce from Proposition~\ref{prop:Up inv} that there exists $\kappa\in\Z$ depending only on $\tau$ and $\tau_1$ such that $\tld{\mathbf{U}}^{\tau_1}_{\tau}$ acts on $\Hom_{\mathbf{K}}(\sigma(\tau)^\circ,\pi_\infty|_{\mathbf{K}})$ by $g(x)$.
Since this action of $\tld{\mathbf{U}}^{\tau_1}_{\tau}$ only depends on the $\F[\GL_n(K)]$-action on $\pi_\infty$ and $(g(x))_{g\in \Inv(\cC)}$ determines $x$ by Corollary~\ref{cor: separate points}, the result follows.
\end{proof}

\subsection{Local-global compatibility for Hecke eigenspaces}
\label{sub:GSetup}
In this section, we apply Theorem~\ref{thm:main:LGC} to a favorable global setup and deduce our main result on local-global compatibility (see Theorem~\ref{thm:main:LGC:prime}).
We now fix the global setup for the main arithmetic application.
We follow the exposition and setup of \cite[\S\,7.1, \S\,4.2]{EGH}, (see also \cite[\S\,4.1, \S\,4.2 and \S\,4.5]{HLM}).

Let $F/\Q$ be a CM field and $F^+$ its maximal totally real subfield.
Assume that $F^+\neq \Q$, and that all places of $F^+$ above $p$ split in $F$.

We let $G_{/F^+}$ be a reductive group, which is an outer form of $\GL_n$ which splits over~$F$, such that $G(F^{+}_{v})\cong U_n(\R)$ for all $v\mid \infty$.
Then (cf.~\cite[\S\,7.1]{EGH}) $G$ admits a reductive model $\cG$ over $\cO_{F^+}[1/N]$, for some $N\in\N$ prime to $p$, together with an isomorphism $\iota:\cG_{/\cO_{F}[1/N]}\ra {\GL_n}_{/\cO_F[1/N]}$.
If $v\nmid N$ is a place of $F^+$ which is split in $F$, with decomposition $v=w w^c$, we get an isomorphism
\begin{equation}\label{eq: isom of auto groups}
\iota_w:\,\cG(\cO_{F^+_v})\stackrel{\sim}{\longrightarrow}\GL_n(\cO_{F_w}).
\end{equation}

For a compact open subgroup $U\leq G(\bA_{F^+}^{\infty})$ and a finite $\cO$-module $W$, the space of algebraic automorphic forms on $G$ of level $U$ and coefficients $W$ is defined to be
\begin{equation*}
S(U,W)\defeq \left\{f:\,G(F^{+})\backslash G(\bA^{\infty}_{F^{+}})/U \rightarrow W\right\}.
\end{equation*}
For a finite place $v\nmid N$ of $F^+$ we say that $U$ is \emph{unramified} at $v$ if one has a decomposition $U=\cG(\cO_{F_v^+})U^{v}$ for some compact open subgroup $U^v\leq G(\bA^{\infty,v}_{F^+})$.

Let $\cP_U$ denote the set consisting of finite places $w$ of $F$ such that $v\defeq w\vert_{F^+}$ is split in~$F$, $w\nmid pN$, and $U$ is unramified at $v$. For a subset $\cP\subseteq \cP_U$ of finite complement and closed with respect to complex conjugation we write $\bT^{\cP}=\cO[T^{(i)}_w\mid w\in\cP,\,i\in\{0,1,\cdots,n\}]$ for the abstract Hecke algebra on $\cP$, where the Hecke operator $T_w^{(i)}$ acts on $S(U,W)$ via the usual double coset operator
$$
\iota_w^{-1}\left[ \GL_n(\cO_{F_w}) \left(\begin{matrix}
      \varpi_{w}\mathrm{Id}_i &  \cr  & \mathrm{Id}_{n-i} \end{matrix} \right)
\GL_n(\cO_{F_w}) \right],
$$
where $\varpi_w$ denotes a uniformizer of $F_w$.

\vspace{2mm}

If $\rbar:G_F\rightarrow \GL_n(\F)$ is a continuous absolutely irreducible Galois representation, we write $\mathfrak{m}_{\rbar}$ for the ideal of $\bT^{\cP}$ with residue field $\F$ defined by the formula
$$
\det\left(1-\overline{r}(\mathrm{Frob}_w)X\right)=\sum_{j=0}^n (-1)^j(\mathbf{N}_{F/\Q}(w))^{\binom{j}{2}}(T_w^{(j)}\bmod \mathfrak{m}_{\rbar})X^j\quad \forall w\in \cP
$$
(and $\mathbf{N}_{F/\Q}(w)$ denotes the norm from $F$ to $\Q$ of the place $w$).
We emphasize that the ideal $\mathfrak{m}_{\rbar}$ above is as defined in \cite[\S\,2.3]{CEGGPS}, and \emph{differs from the ideal associated to $\rbar$ in} \cite[\S\,4.2]{EGH, HLM} (our ideal would have been denoted as $\mathfrak{m}_{\rbar^\vee}$ in \emph{loc.~cit.}).
We say that $\rbar$ is \emph{automorphic} if $S(U,\F)[\mathfrak{m}_{\rbar}]\neq 0$ for some level $U$.

We now assume that there is a place $\tld{v}|p$ of $F$ which is unramified and let $v \defeq \tld{v}|_{F^+}$.
We specialize the terminology and notation of \S\,\ref{sec:prel} with the unramified $p$-adic field $K$ taken to be $F_{\tld{v}}$. (In particular, $\un{G}$ is now $\big(\Res_{\cO_{F_{\tld{v}}}/\Zp}\GL_n\big)\otimes_{\Zp}\cO$ and $\mathbf{K}=\GL_n(\cO_{F_{\tld{v}}})\cong \cG(\cO_{F_v^+}$).)
Fix a level $U^v$ away from $v$, and consider the smooth $\cG(F^+_{v})$-representation
\[
\pi(\rbar)\defeq  \underset{\substack{\longrightarrow\\U_v\leq \cG(\cO_{F_v^+})}}{\lim}\, S(U^v U_v,  \F))[\fm_{\rbar}].
\]

We say that a compact open subgroup $U \subset \cG(\bA^\infty_{F^+})$ is \emph{sufficiently small} if for all $t\in \cG(\bA^\infty_{F^+})$, the order of the finite group $t \cG(F^+) t^{-1} \cap U$ is prime to $p$

\begin{prop}\label{prop:patch}
Suppose that $\cG(\cO_{F_v^+}) U^v$ is sufficiently small, $\pi(\rbar)$ is nonzero, and the image of $\rbar|_{G_{F(\zeta_p)}}$ is adequate.
Then there exist a local ring $(R_\infty,\fm)$ as in \S\,\ref{sec:arithmod}, an arithmetic $R_\infty$-module $M_\infty$, and an isomorphism
\begin{equation}
\label{it:Minfty}
(M_\infty/\fm)^\vee\stackrel{\sim}{\longrightarrow}\pi(\rbar)
\end{equation}
of $\GL_n(F_{\tld{v}})$-representations over $\F$.
\end{prop}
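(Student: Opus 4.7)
The plan is to construct $M_\infty$ by the Taylor--Wiles--Kisin patching construction, in the form developed in \cite{CEGGPS,CEGGPS2} and later refined in the context of CM unitary groups in e.g.~\cite{EGH,HLM,MLM}. The input is the system of spaces $S(U^v U_v, W)_{\fm_{\rbar}}$ as $U_v$ shrinks and as one imposes Taylor--Wiles conditions at a sequence of carefully chosen auxiliary primes. The assumption that $\cG(\cO_{F_v^+})U^v$ is sufficiently small together with adequacy of $\rbar|_{G_{F(\zeta_p)}}$ gives the existence, for each integer $N\geq 1$, of a set $Q_N$ of Taylor--Wiles primes of the required size (cardinality $q=\dim_{\F}H^1(G_{F^+,S},\operatorname{ad}^0\rbar(1))$) and a compatible system of auxiliary level subgroups $U_1(Q_N)\supseteq U_0(Q_N)$ unramified outside $Q_N\cup\{v\}\cup\operatorname{Supp}(U^v)$. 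The global deformation ring $R^{\mathrm{univ}}_{S\cup Q_N}$ for deformations of $\rbar$ of prescribed type, written as a power series ring over the completed tensor product of local deformation rings at $S\cup Q_N$, acts naturally on the $\fm_{\rbar}$-localised cohomology at level $U_1(Q_N)$; in particular its restriction away from $v$ factors through a quotient of the local framed deformation ring at $\tld{v}$.

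The first step is to work with the projective system of $\cO[\![\cG(\cO_{F^+_v})]\!]$-modules $\{M_N\}$, where $M_N$ is the $\fm_{\rbar}$-localised Pontryagin dual of the completed cohomology at auxiliary level $U_1(Q_N)$, together with its commuting actions of $\mathbf{K}=\cG(\cO_{F^+_v})$ and of the auxiliary local framed and global deformation rings. The key finiteness input, forced by the sufficiently small hypothesis together with a standard unobstructedness calculation at Taylor--Wiles primes, is that $M_N$ is finite free as a module over the completed group algebra of a quotient of a fixed pro-$p$ abelian group $\Z_p^q$, uniformly in $N$. Using an ultrafilter argument (as in \cite[\S~2.8]{CEGGPS}) or Kisin's original inverse limit argument, we patch the system $(M_N)_N$ to produce a module $M_\infty$ with commuting actions of $R_\infty=R_{\rbar|_{G_{F_{\tld{v}}}}}^\Box\wdht{\otimes}_{\cO}R^v$, where $R^v$ is a power series ring over the product of the local framed deformation rings at places in $S\setminus\{v|\infty\}\setminus\{\tld{v}\}$, in such a way that $M_\infty$ is finitely generated as an $R_\infty[\![\mathbf{K}]\!]$-module and satisfies the quotient identification $M_\infty/\fa_\infty \cong M_0^\vee$, where $M_0^\vee$ is the completed cohomology at level $\cG(\cO_{F^+_v})U^v$ localised at $\fm_{\rbar}$. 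Dualising this quotient identification and taking $\mathbf{K}$-smooth vectors yields the desired isomorphism~\eqref{it:Minfty}.

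The remaining step is to verify the axioms of Definition~\ref{defn:patch}. Axiom~\ref{it:patch:1} is built into the patching. Axiom~\ref{it:patch:2} (projectivity over $\cO[\![\mathbf{K}]\!]$) follows from the sufficiently small hypothesis, which makes each $M_N$ a finite free module over $\cO[\![\mathbf{K}/\mathbf{K}_N]\!]$ for appropriate subgroups $\mathbf{K}_N\subset\mathbf{K}$, as in \cite[Proposition~2.10]{CEGGPS}. Axioms \ref{it:patch:2:5}, \ref{it:patch:3}, \ref{it:patch:5} (that $M_\infty(\sigma(\tau)^\circ)$, resp.~$M_\infty(F(\lambda))$, is supported on the appropriate potentially crystalline/crystalline deformation ring and is maximal Cohen--Macaulay) are the usual consequences of classical local-global compatibility at $\tld{v}$ combined with the Auslander--Buchsbaum formula and the fact that $R_\infty$ acts on $M_\infty(\theta)$ through the quotient cut out by the relevant local deformation condition. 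Axiom~\ref{it:patch:4} (compatibility with the classical local Langlands correspondence through the map $\eta_\infty$) is precisely the content of classical local--global compatibility at $\tld{v}$ for automorphic Galois representations, proved in this CM unitary setting by the work of many authors (Harris--Taylor, Caraiani, et al., and recorded in \cite[\S~4]{CEGGPS}). The main obstacle is obtaining the Taylor--Wiles primes and verifying that the resulting local-global compatibility at $\tld{v}$ respects the normalisation by $\eta_\infty$; this is by now well documented in the literature under the adequacy hypothesis on $\rbar|_{G_{F(\zeta_p)}}$, so the proof is essentially a citation of the patching machinery specialised to the present setting.
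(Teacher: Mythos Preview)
Your overall strategy is correct and is the same as the paper's: run Taylor--Wiles--Kisin patching in the ultraproduct form of \cite{CEGGPS}, varying only the level at $v$, and then verify the axioms of Definition~\ref{defn:patch} by the usual combination of local-global compatibility and Auslander--Buchsbaum.

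There is, however, a genuine gap in your description of $R^v$. You take $R^v$ to be a power series ring over the completed tensor product of \emph{unrestricted} framed deformation rings at all places in $S\setminus\{\tld{v}\}$, including the other $p$-adic places $v'\in S_p^+\defeq\{v'\mid p,\,v'\neq v\}$. This breaks axiom~\eqref{it:patch:2:5} (maximal Cohen--Macaulayness). Since the level $U^v$ is fixed at the places in $S_p^+$, local-global compatibility forces the Hecke action, and hence the $R_\infty$-action on $M_\infty(\sigma(\tau)^\circ)$, to factor through the potentially semistable locus of bounded inertial level at each $v'\in S_p^+$. That locus has strictly smaller dimension than the unrestricted framed deformation ring, so $\dim\mathrm{Supp}_{R_\infty(\tau)}M_\infty(\sigma(\tau)^\circ)<\dim R_\infty(\tau)$ and the Cohen--Macaulay dimension count fails.

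The paper fixes this by building the correct local condition into $R^v$ from the start: at each $v'\in S_p^+$ it uses not $R^\Box_{\rbar_{\tld{v}'}}$ but the reduced union $R^{\mu,N_{\tld{v}'}}_{\rbar_{\tld{v}'}}$ of potentially semistable deformation rings of type $(\mu,\tau')$ over the finite set of inertial types $\tau'$ trivial on $I_{F_{\tld{v}'}}^{N_{\tld{v}'}-1}$, where $N_{\tld{v}'}$ is chosen so that $U^v$ contains the $N_{\tld{v}'}$-th congruence subgroup at $v'$. These rings are equidimensional of the expected dimension (by Kisin), so $R^v$ is equidimensional and the numerology matches. To check that the action actually factors through this union one needs, beyond local-global compatibility at $p$, the fact that the local Langlands correspondence for $\GL_n$ is depth-preserving (so that the inertial types appearing are exactly those in $\tau(N_{\tld{v}'})$); the paper cites \cite{ABPS} for this. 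Once you insert this modification your argument goes through essentially as written.
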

\begin{proof}
The existence follows from a modification of the Taylor--Wiles patching construction in \cite[\S\,A]{MLM} which we now summarize.
We write $S_p^+$ for the set of finite places of $F^+$ above $p$ excluding $v$.
For each $v'\in  S_p^+$ we fix a place $\tld{v}'$ of $F$ above it. %
Let $S^+$ be the union of $\{v\}$ and the set of places in $F^+$ not dividing $p$ where $U^v$ is ramified.
Let $S \defeq S_p^+ \cup S^+$.
Suppose that
\[
\left(\prod_{v' \in S_p^+} U_{v'}(N_{v'})\right)U^{S_p^+,v} \subset U^v
\]
for some compact open subgroup $U^{S_p^+,v} \in \cG(\bA_{F^+}^{S_p^+,v,\infty})$ where $\iota_{\tld{v}'}(U_{v'}(N_{v'}))$ is the $N_{\tld{v}'}$-th congruence subgroup.
For each $v' \in S_p^+$, let $\tau(N_{\tld{v}'})$ be the (finite) set of inertial types which factor through the (finite) quotient $I_{F_{\tld{v}'}}/I_{F_{\tld{v}'}}^{N_{\tld{v}'}-1}$ where $I_{F_{\tld{v}'}}^{N_{\tld{v}'}-1}$ denotes the subgroup in the upper numbering filtration.
Let $\rbar_{\tld{v}'}$ be the restriction of $\rbar$ to $G_{F_{\tld{v'}}}$ and $\Spec R^{\mu,N_{\tld{v}'}}_{\rbar_{\tld{v}'}}$ be the (reduced) scheme theoretic union of $\Spec R^{\tau,\mu}_{\rbar_{\tld{v}'}}$ for $\tau \in \tau(N_{\tld{v}'})$ where $R^{\tau,\mu}_{\rbar_{\tld{v}'}}$ denotes the semistable deformation ring of type $(\mu,\tau)$.
We let $\cG_n$ be the disconnected split reductive group scheme from \cite{CHT} (see \cite[\S\,A.3]{MLM}) and $D_{v'}^{N_{\tld{v}'}}$ denote the $\cG_n$-valued deformation problem corresponding to $R^{\mu,N_{\tld{v}'}}_{\rbar_{\tld{v}'}}$ (see \cite[\S\,A.4]{MLM}).

Let $\xi \defeq \varepsilon^{1-n}\delta_{F/F^+}^n: G_{F^+} \ra \cO^\times$ where $\delta_{F/F^+}$ is the quadratic character of $G_{F^+}/G_F$ and consider the global $\cG_n$-deformation datum
\[
\cS = (F/F^+,S,\cO,\rbar,\xi,\{D_{v'}^{\xi_{v'}}\}_{v'\in S^+} \cup \{D_{v'}^{N_{\tld{v}'}}\}_{v' \in S_p^+})
\]
in the sense of \cite[A.3.2]{MLM}.

Let $R_\infty$ be $R^{\mathrm{loc}}_{\cS,S}[\![x_1,\ldots,x_g]\!]$ where $g = q - [F:\Q]n(n-1)/2$ as in \cite[\S\,A.5]{MLM}.
We define
\[
M_\infty \defeq \varprojlim_{U_v \subset \mathbf{K},\,r} \cO \otimes_{\mathbf{R}} \prod_{m\in \N} M_{m,U_v,r}^\square
\]
where $\mathbf{R}$ and the map $\mathbf{R} \onto \cO$ are defined as in \cite[Appendix A.5]{MLM}, $W$ is taken to be $\cO$, and $M_{m,U_v,r}^\square$ is defined analogously to $M_{m,K_p,r}^\square$ varying one place dividing $p$ rather than all of them.

The result then follows from the proofs of \cite[Lemma A.1.1]{MLM} and \cite[Lemma 4.17]{CEGGPS} with the obvious modifications.
That the action of $R^{\mathrm{loc}}_{\cS,S}$ from \cite[\S\,A.5]{MLM} factors through the modified version of $R^{\mathrm{loc}}_{\cS,S}$ defined above follows from local-global compatibility at $p$ \cite{BLGGT2}
and the fact that the local Langlands correspondence for $\GL_n$ is depth preserving (see \cite[Proposition  4.2 and Lemma 4.3]{ABPS}). %
\end{proof}

\begin{thm}
\label{thm:main:LGC:prime}
Let $\rbar:G_F\rightarrow \GL_n(\F)$ be a continuous absolutely irreducible Galois representation, satisfying
\begin{enumerate}
\item
\label{it:autom:0}
$\cG(\cO_{F_v^+}) U^v$ is sufficiently small;
\item
\label{it:autom:1}
the image of $\rbar|_{G_{F(\zeta_p)}}$ is adequate;
\item
\label{it:autom:2} $\Hom_{\F[\mathbf{K}]}\big(F(\lambda)\otimes_{\F}\omega^{n-1}\circ\det,\pi(\rbar)|_{\mathbf{K}}\big)\neq 0$ for some weight $\lambda\in X_1(\un{T})$ with $\lambda+\eta$ being $5n$-generic Fontaine--Laffaille (cf.~Definition~\ref{defn:mGenFL}).
\end{enumerate}
Then the conjugacy class of $\rbar|_{G_{F_{\tld{v}}}}$ can be recovered from the isomorphism class of the $\GL_n(F_{\tld{v}})$-representation $\pi(\rbar)$.
\end{thm}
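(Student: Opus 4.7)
The strategy is to reduce Theorem~\ref{thm:main:LGC:prime} to its local counterpart, Theorem~\ref{thm:main:LGC}, by patching. The plan has three steps.

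First, I would verify the hypotheses of Proposition~\ref{prop:patch}. Hypotheses (\ref{it:autom:0}) and (\ref{it:autom:1}) of the theorem give the required sufficiently small level and adequacy conditions directly. The nonvanishing of $\pi(\rbar)$ follows from hypothesis (\ref{it:autom:2}), since the Hom space $\Hom_{\F[\mathbf{K}]}(F(\lambda)\otimes_{\F}\omega^{n-1}\circ\det,\pi(\rbar)|_{\mathbf{K}})$ is nonzero. Thus Proposition~\ref{prop:patch} produces a local ring $(R_\infty,\fm)$, an arithmetic $R_\infty[\GL_n(F_{\tld{v}})]$-module $M_\infty$, and an isomorphism $(M_\infty/\fm)^\vee \cong \pi(\rbar)$ of $\GL_n(F_{\tld{v}})$-representations.

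Second, I would check that $\rhobar\defeq\rbar|_{G_{F_{\tld{v}}}}$ satisfies Condition~\ref{cond:modularity of obv wt} with respect to $M_\infty$. By the isomorphism above and Frobenius reciprocity, $M_\infty(F(\lambda)\otimes_{\F}\omega^{n-1}\circ\det)$ has nonzero Pontryagin dual $\Hom_{\mathbf{K}}(F(\lambda)\otimes_{\F}\omega^{n-1}\circ\det,\pi(\rbar)|_{\mathbf{K}})$ by hypothesis (\ref{it:autom:2}), and is therefore itself nonzero. Since $\lambda+\eta$ is $5n$-generic Fontaine--Laffaille, Lemma~\ref{lemma:condition} applies and yields Condition~\ref{cond:modularity of obv wt}: in particular $\rhobar\cong \rhobar_{x,\lambda+\eta}$ for some $x\in \tld{\cF\cL}_\cJ(\F)$ and $W_{\obv}(\rhobar)\subseteq W_{M_\infty}(\rhobar)$.

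Third, I would apply Theorem~\ref{thm:main:LGC} to $M_\infty$, whose hypotheses are now all verified, to conclude that the conjugacy class of $\rhobar$ is determined by the isomorphism class of $\pi_\infty\defeq (M_\infty/\fm)^\vee$ as an $\F[\GL_n(F_{\tld{v}})]$-module. Combined with the isomorphism $\pi_\infty\cong \pi(\rbar)$ from Step~1, this completes the proof.

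In this final theorem there is essentially no new obstacle: the serious work has been carried out earlier, namely in the construction of the patching functor $M_\infty$ (Proposition~\ref{prop:patch}), the modularity of obvious weights used in Lemma~\ref{lemma:condition} (via \cite{OBW}), and the main local result Theorem~\ref{thm:main:LGC}, whose proof in turn rests on the combinatorial heart of the paper (Corollary~\ref{cor: separate points} and Proposition~\ref{prop:Up inv}). The only care required is in matching normalizations, most notably the twist by $\omega^{n-1}\circ\det$ between Serre weights embedding in $\pi(\rbar)|_{\mathbf{K}}$ and Serre weights appearing in $W_{M_\infty}(\rhobar)$, and the unramified twist built into the definition of $\fm_{\rbar}$, which is why the $5n$-genericity hypothesis (rather than the weaker $(3n{-}1)$-genericity of \S~\ref{subsec:RTandSW}) is needed to invoke Lemma~\ref{lemma:condition}.
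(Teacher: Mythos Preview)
Your proposal is correct and follows essentially the same approach as the paper's proof: verify the hypotheses of Proposition~\ref{prop:patch}, use Lemma~\ref{lemma:condition} together with hypothesis~(\ref{it:autom:2}) to obtain Condition~\ref{cond:modularity of obv wt}, and then apply Theorem~\ref{thm:main:LGC}. The paper's proof is more terse but logically identical; your added remarks on matching normalizations are accurate, though the specific attribution of the $5n$-genericity requirement to an ``unramified twist in $\fm_{\rbar}$'' is not quite the reason---it is rather the input from \cite{OBW} invoked in Lemma~\ref{lemma:condition} that demands this stronger genericity.
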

\begin{proof}
The hypotheses imply the hypotheses of Proposition \ref{prop:patch}.
Let $M_\infty$ be an arithmetic $R_\infty$-module as in this proposition.
Then condition~(\ref{it:autom:2}) implies Condition~\ref{cond:modularity of obv wt} holds by Lemma \ref{lemma:condition}, which together with Theorem~\ref{thm:main:LGC} finishes the proof.
\end{proof}

\clearpage{}%
\clearpage{}%

\begin{appendix}

\section{Figures}
Let $\Omega^\pm$ be a $\Lambda$-lift and $a\in\Z/t$. In the following example, a sequence of integers is listed decreasingly from left to right (from $k_{a,0}$ to $k_{a,6}$). For each choice of $n\geq k\geq 1$, the dashed black line below gives the corresponding set $\Omega_{\psi_a,k}$.
\begin{figure}[ht]
\caption{Example of $\Omega_{\psi_a,k}$ for $n\leq k\leq 1$}
\label{fig:attach:set:to:seq}
\includegraphics[scale=0.22]{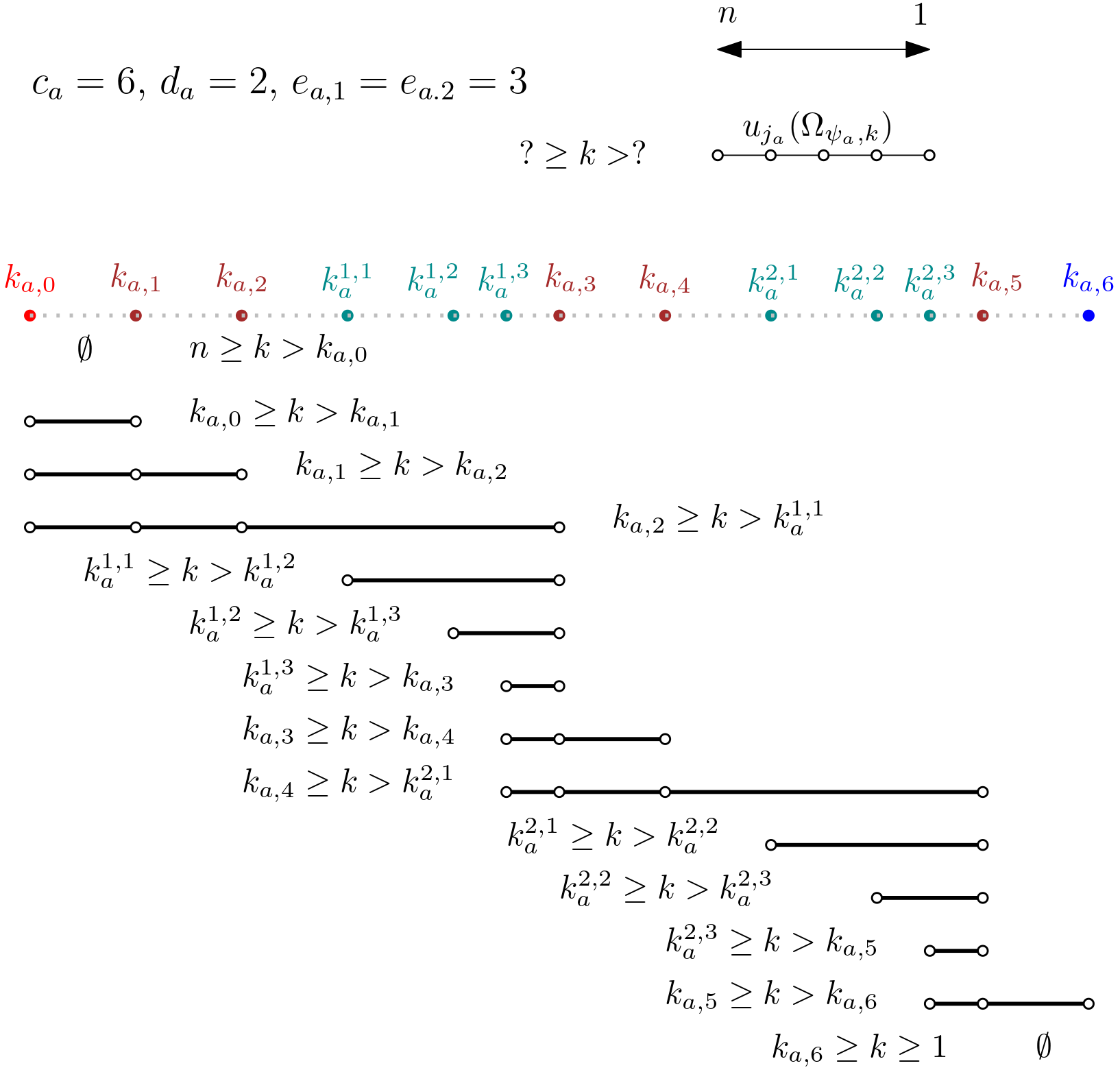}
\end{figure}%

\newpage

Let $\Omega^\pm$ be a constructible $\Lambda$-lift of type \rm{I}. The following figure illustrates $(v_{j_1}^{\Omega^\pm,\sharp})^{-1}$ and $(v_{j_1}^{\Omega^\pm,\flat})^{-1}$ explicitly. We distinguish these two permutations using two different colors. For each colored arrow, its target is the successor of its source under the corresponding permutation.
\begin{figure}[ht]
\caption{Examples of the permutations when $\Omega^\pm$ is a constructible $\Lambda$-lift of type~\rm{I}}
\label{fig:ex-TypeI}
\includegraphics[scale=0.23]{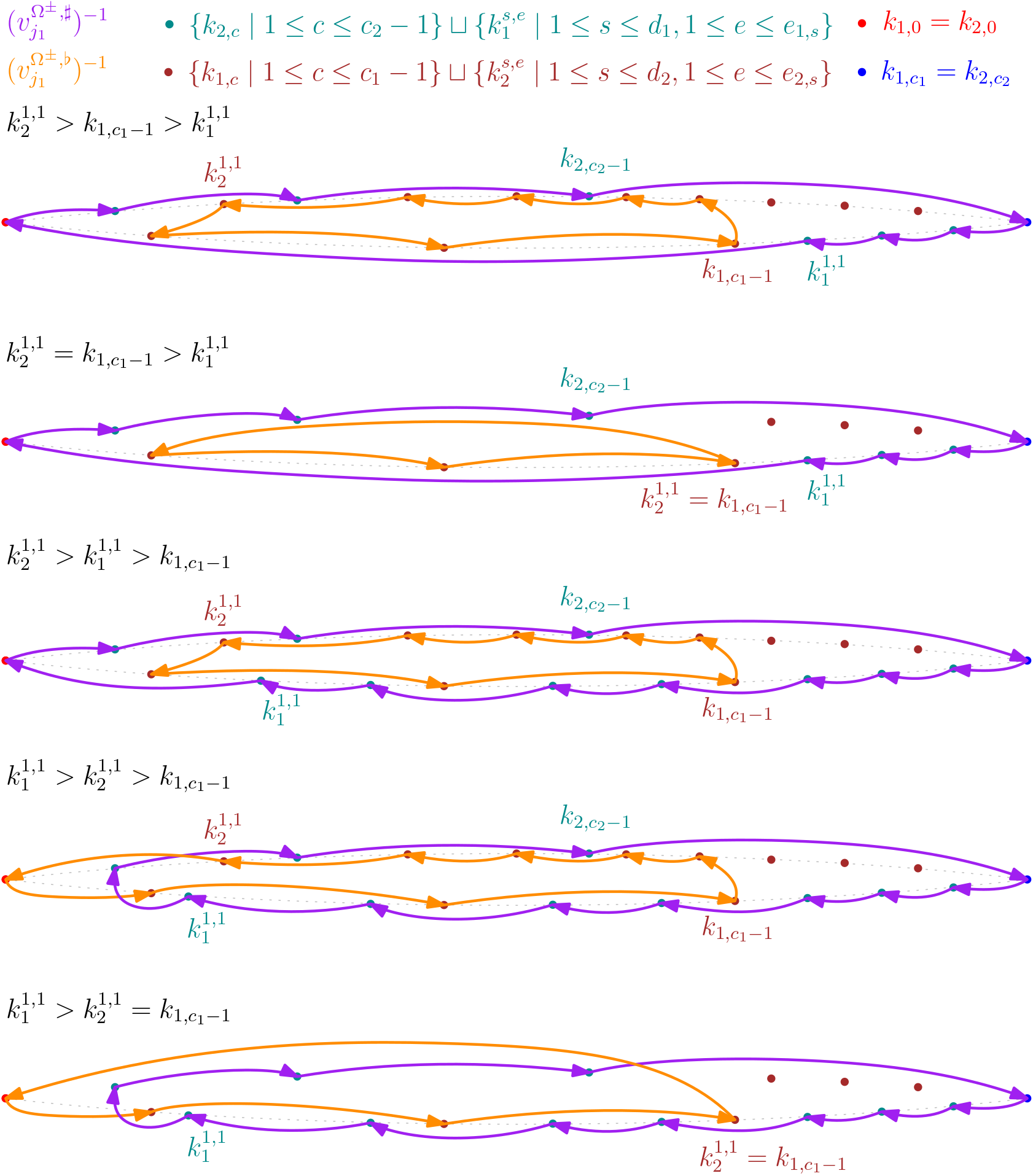}
\end{figure}%

\newpage

Let $\Omega^\pm$ be a constructible $\Lambda$-lift of type \rm{II}. The following figure illustrates $(v_{j_1}^{\Omega^\pm,\sharp})^{-1}$ explicitly. We split the orbit of this permutation into two parts and distinguish them using two different colors. For each colored arrow, its target is the successor of its source under $(v_{j_1}^{\Omega^\pm,\sharp})^{-1}$.
\begin{figure}[ht]
\caption{Examples of the permutation when $\Omega^\pm$ is a constructible $\Lambda$-lift of type~\rm{II}}
\label{fig:ex-TypeII}
\includegraphics[scale=0.23]{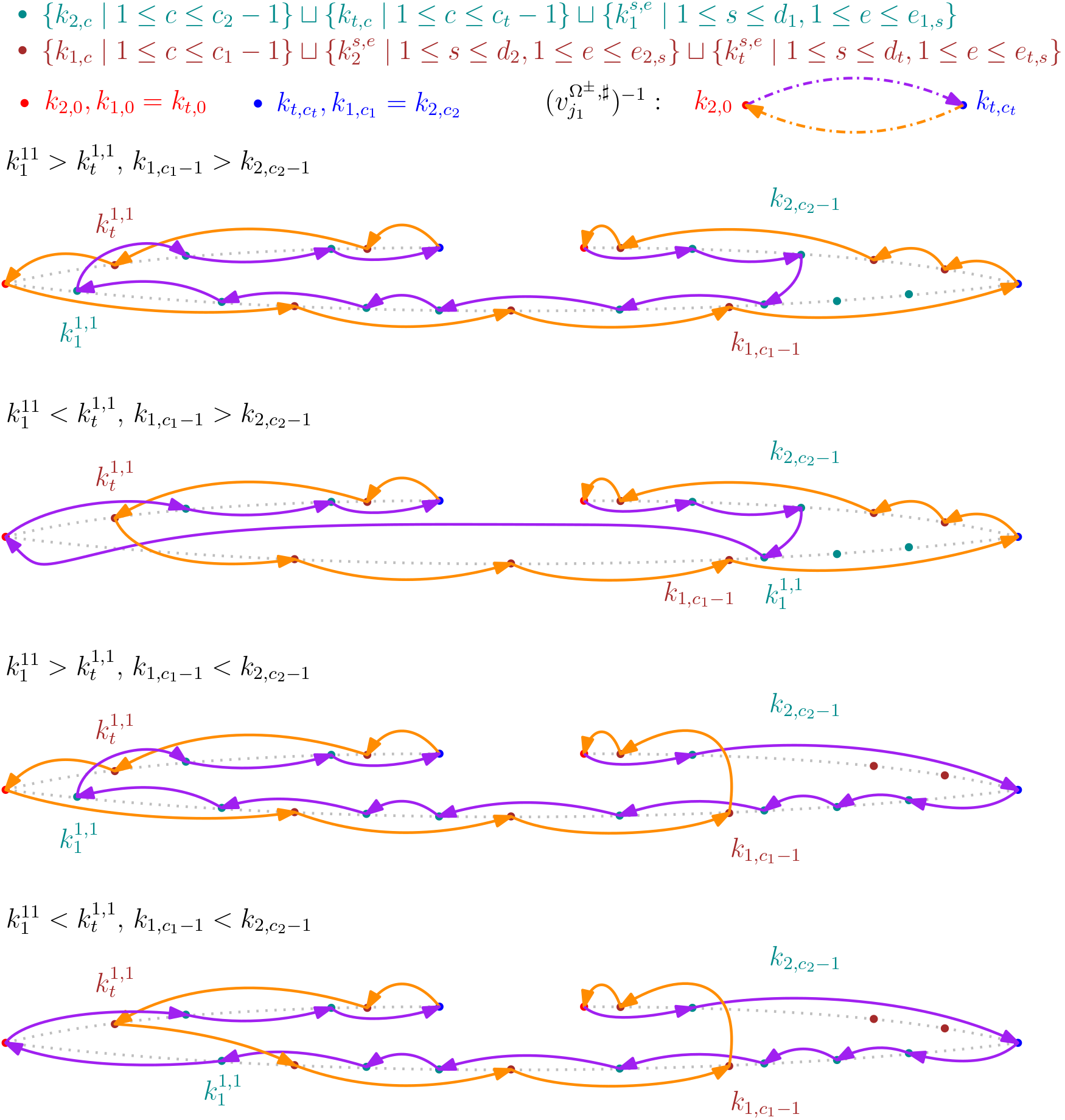}
\end{figure}%

\newpage

The following figure illustrates the partition of set $(\Z/t)_\Sigma=(\Z/t)_\Sigma^+\sqcup (\Z/t)_\Sigma^-$ for each connected component $\Sigma\in\pi_0(\Omega^\pm)$. In the figure, the index of each $\Lambda^\square$-interval is increasing as $a,a+1,a+2,\dots$ from left to right. The $+$ (resp.~$-$) sign means that $a,a+2,\dots$ are elements of $(\Z/t)_\Sigma^+$ (resp.~$a+1,a+3,\dots$ are elements of $(\Z/t)_\Sigma^-$).

\begin{figure}[ht]
\caption{Direction and sign}
\label{fig:two:dir}
\includegraphics[scale=0.18]{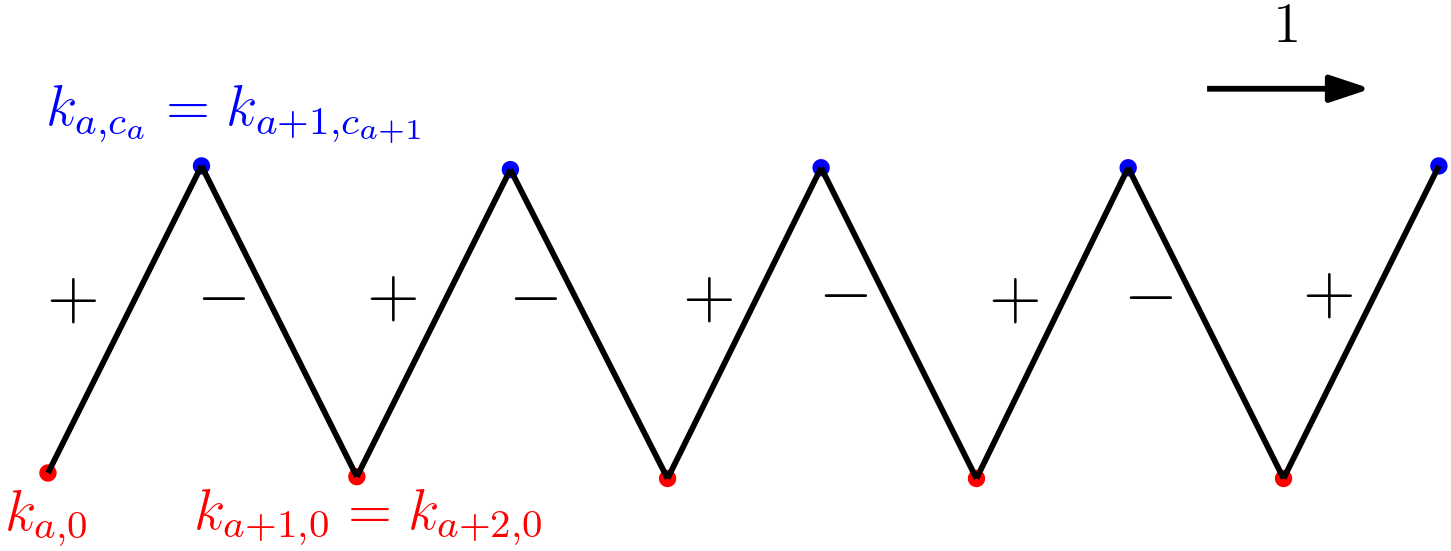}
\end{figure}%

In the following figure, we fix our choice of colors for different symbols from now on. In each figure that follows, we fix a connected component $\Sigma\in\pi_0(\Omega^\pm)$, a direction $\varepsilon\in\{1,-1\}$. Each time we treat an oriented permutation, we use orange (resp.~purple) for its fixed $\varepsilon$-tour (resp.~fixed$-\varepsilon$-tour). If $\varepsilon=1$, we use brown dots for elements of $\bigsqcup_{a'\in(\Z/t)_\Sigma^+}\mathbf{n}^{a',+}\setminus\{k_{a',c_{a'}}\}$ and $\bigsqcup_{a'\in(\Z/t)_\Sigma^-}\mathbf{n}^{a',-}\setminus\{k_{a',0}\}$,
and use green dots for elements of
$\bigsqcup_{a'\in(\Z/t)_\Sigma^-}\mathbf{n}^{a',+}\setminus\{k_{a',c_{a'}}\}$ and $\bigsqcup_{a'\in(\Z/t)_\Sigma^+}\mathbf{n}^{a',-}\setminus\{k_{a',0}\}$.
If $\varepsilon=-1$, we switch brown and green for the sets above.
\begin{figure}[ht]
\caption{Notation and color}
\label{fig:crawl}
\includegraphics[scale=0.27]{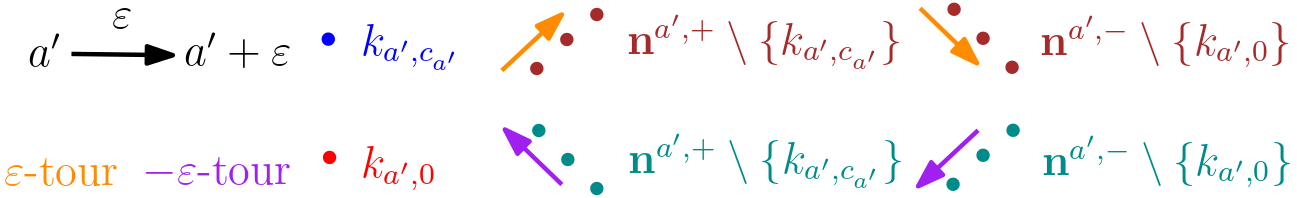}
\end{figure}%

\begin{figure}[ht]
\caption{A $\varepsilon$-crawl from $k$ to $k'$}
\label{fig:crawl}
\includegraphics[scale=0.22]{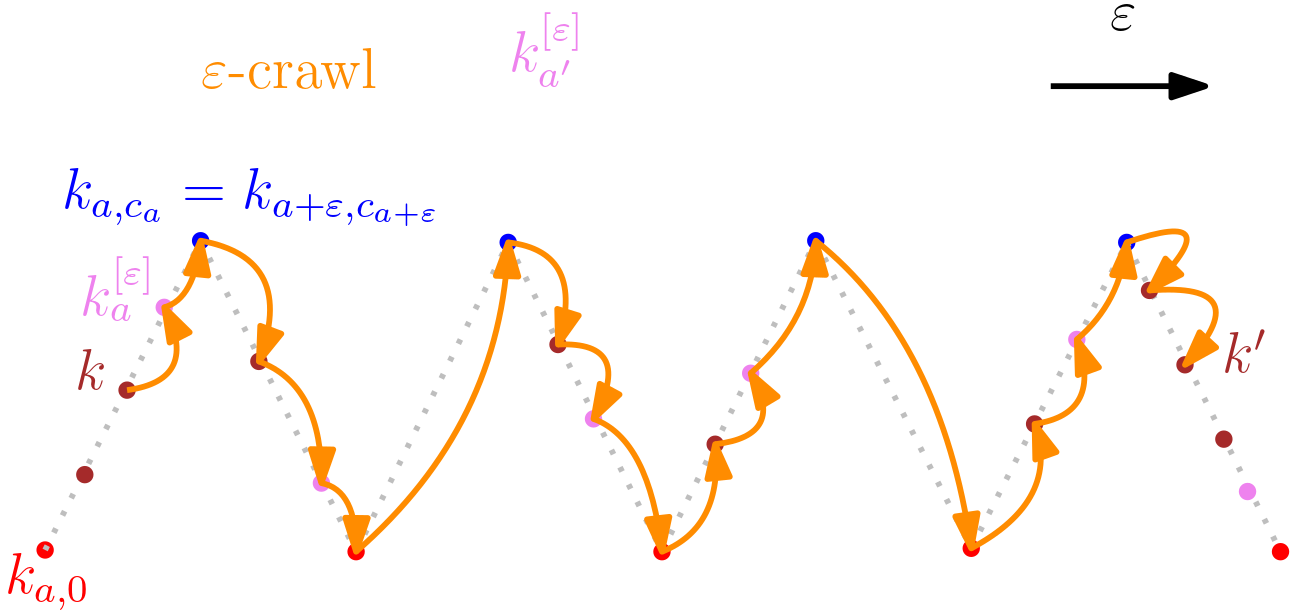}
\end{figure}%

\newpage

The following figure illustrates all four typical kinds of $\varepsilon$-jumps that appear.

\begin{figure}[ht]
\caption{A $\varepsilon$-jump at $k_a^{[\varepsilon]}$}
\label{fig:jumps}
\includegraphics[scale=0.26]{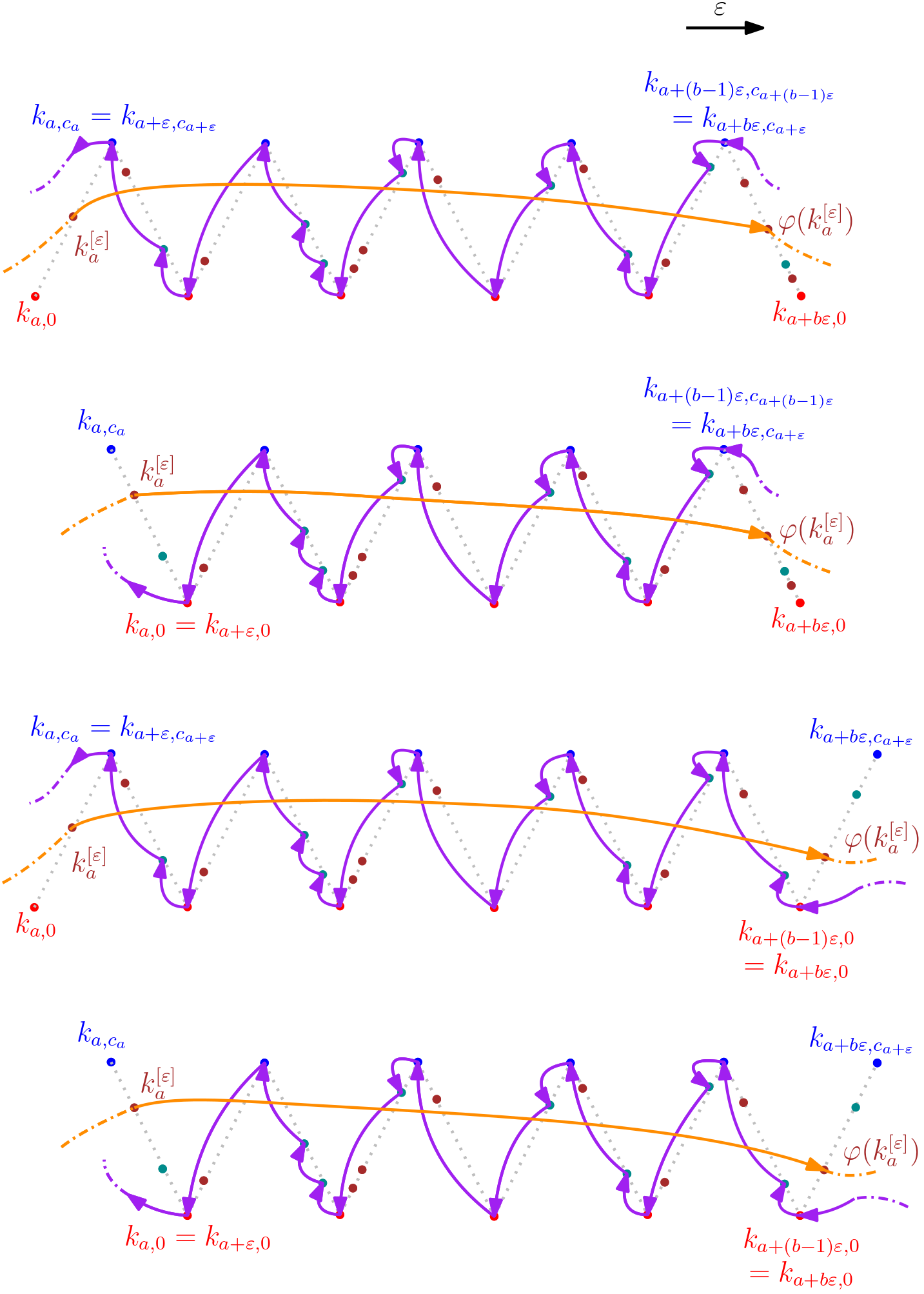}
\end{figure}%

\newpage

\begin{figure}[ht]
\caption{Examples of oriented permutations when $\Sigma$ is not circular}
\label{fig:OrPer}
\includegraphics[scale=0.25]{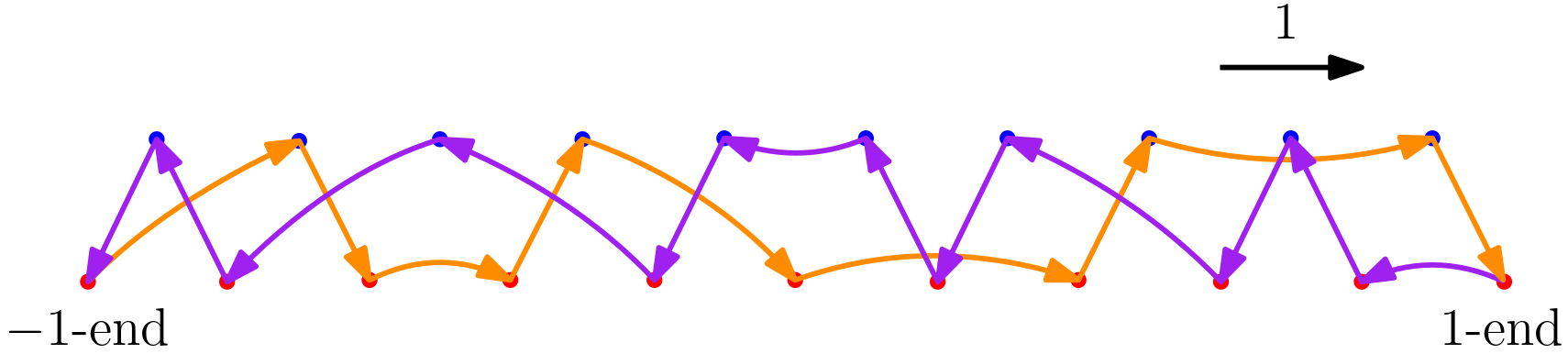}
\end{figure}%

\begin{figure}[ht]
\caption{Examples of oriented permutations when $\Sigma$ is circular}
\label{fig:OrPer}
\includegraphics[scale=0.28]{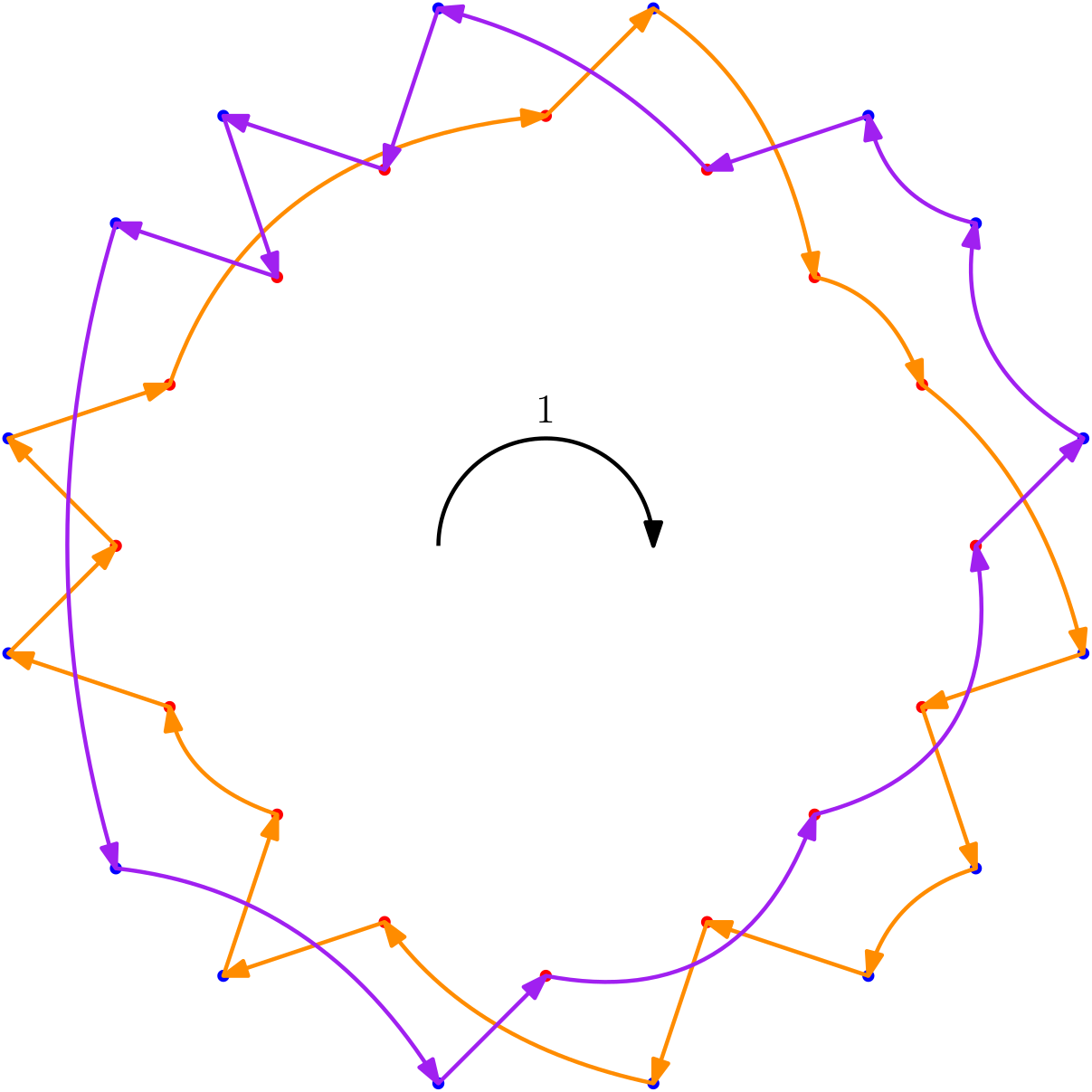}
\end{figure}%

\newpage

\begin{figure}[ht]
\caption{Item \ref{it: interaction of jumps} in Definition \ref{def: oriented permutation}}
\label{fig:item4Def}
\includegraphics[scale=0.22]{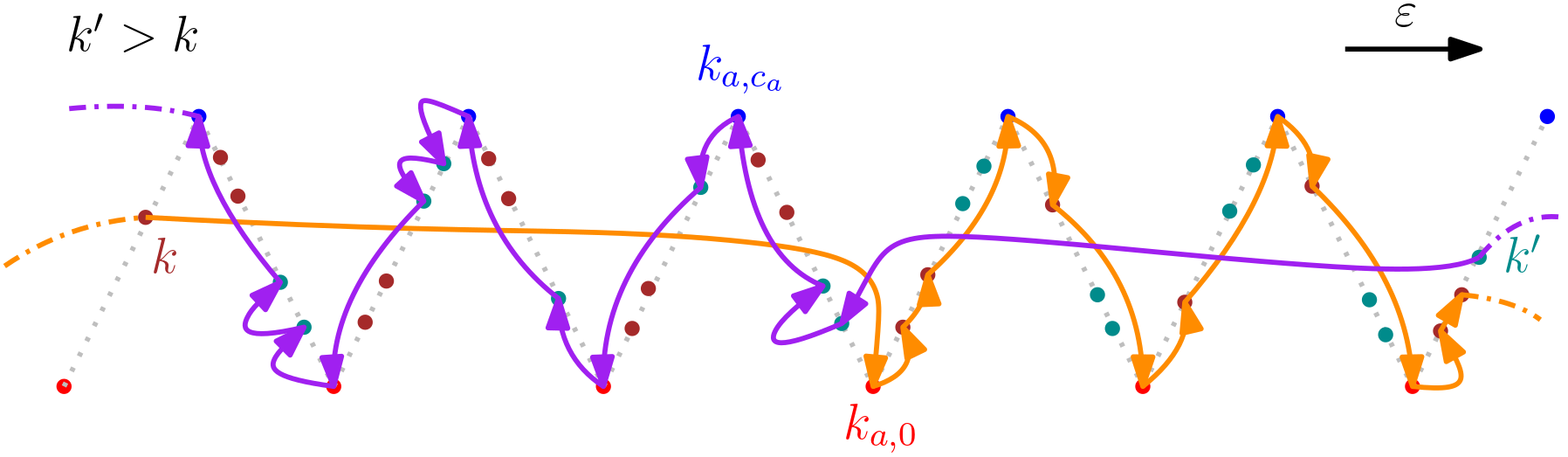}
\end{figure}%

\begin{figure}[ht]
\caption{An example of $\varphi_0$ when $\Sigma$ is not circular.}
\label{fig:step1}
\includegraphics[scale=0.20]{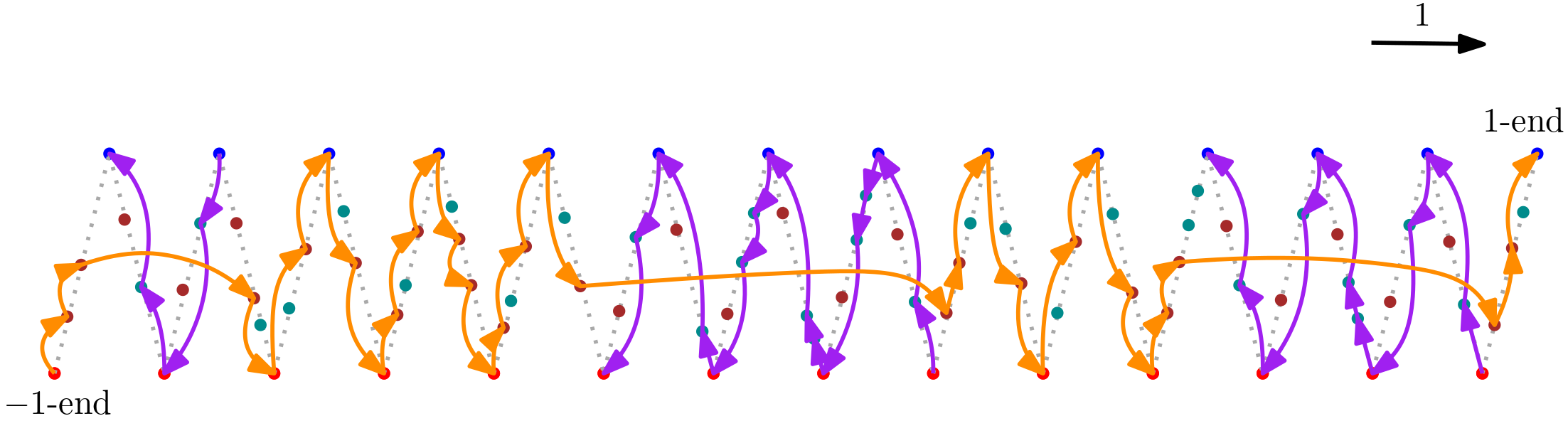}
\end{figure}%

\begin{figure}[ht]
\caption{Extend $\varphi_0$ to $\varphi_1$ by completing the $-1$-tour}
\label{fig:BJump}
\includegraphics[scale=0.26]{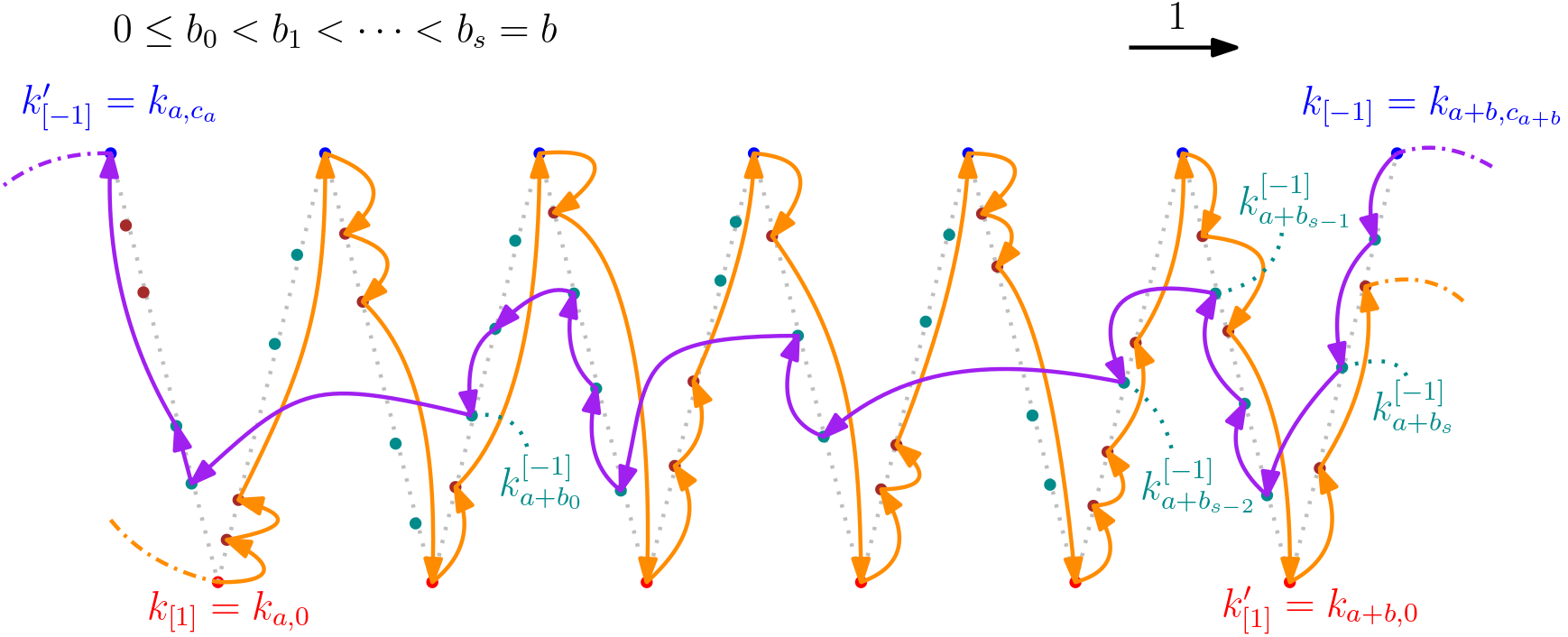}
\end{figure}%

\newpage

The definition of $k_a^\sharp$ and $k_a^\flat$ can be reduced to the cases illustrated in the following figure.

\begin{figure}[ht]
\caption{Definition of $k_a^\sharp$ and $k_a^\flat$}
\label{fig:sharp:flat}
\includegraphics[scale=0.25]{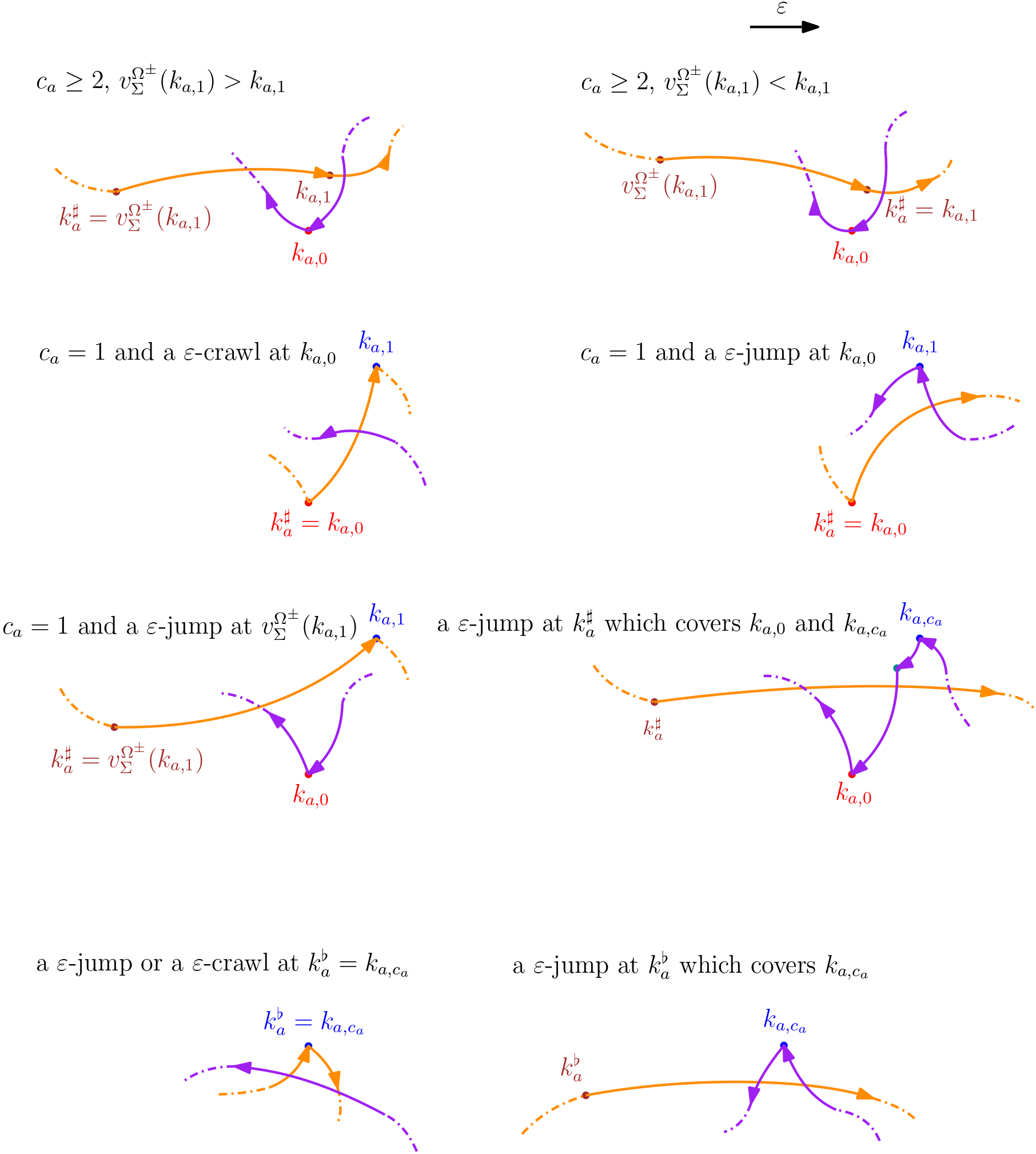}
\end{figure}%

\newpage

Let $\Omega^\pm$ be a constructible $\Lambda$-lift of type \rm{III}. When the fixed $\varepsilon$-tour of $(v_\Sigma^{\Omega^\pm})^{-1}$ contains a $\varepsilon$-jump at $k=k_a^{[\varepsilon]}$, the following figure sketches the sets $\Omega_{a,k,j}$ and $\Omega_{a,k+1,j}$ for each $a\in(\Z/t)_\Sigma$ satisfying $\Omega_{a,k,j}\neq\Omega_{a,k+1,j}$.

\begin{figure}[ht]
\caption{Comparison between $\Omega_{a,k,j}$ and $\Omega_{a,k+1,j}$}
\label{fig:fromKtoK1}
\includegraphics[scale=0.26]{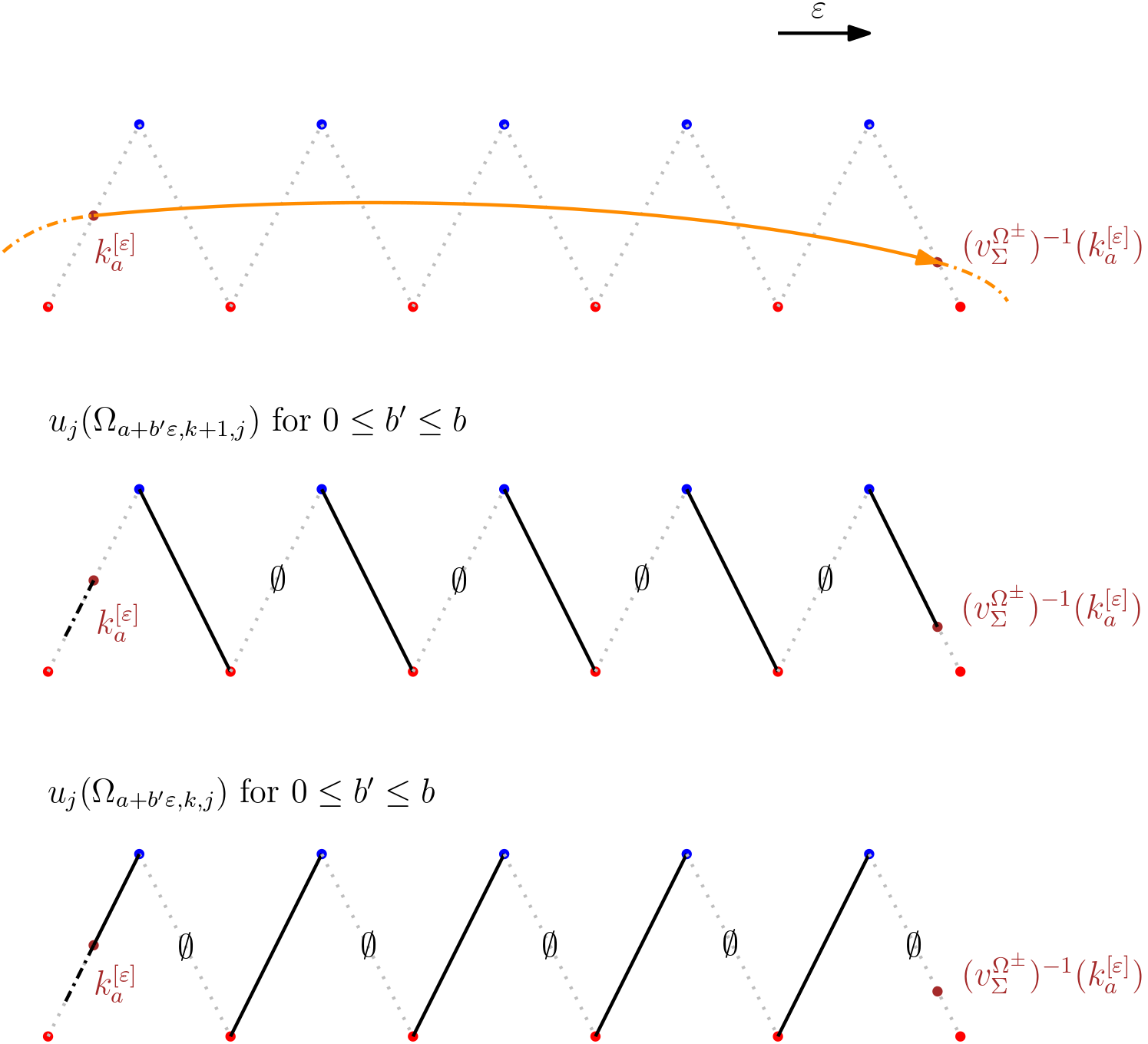}
\end{figure}%

\newpage

Let $\Omega^\pm$ be a constructible $\Lambda$-lift of type \rm{III}. The following figure illustrates the four cases in Lemma~\ref{lem: pair of connected sets}. Each grey doted line represents the image under $u_j$ of a $\Lambda^\square$-interval of $\Omega^\pm$.

\begin{figure}[ht]
\caption{Four cases in Lemma~\ref{lem: pair of connected sets}}
\label{fig:4cases}
\includegraphics[scale=0.26]{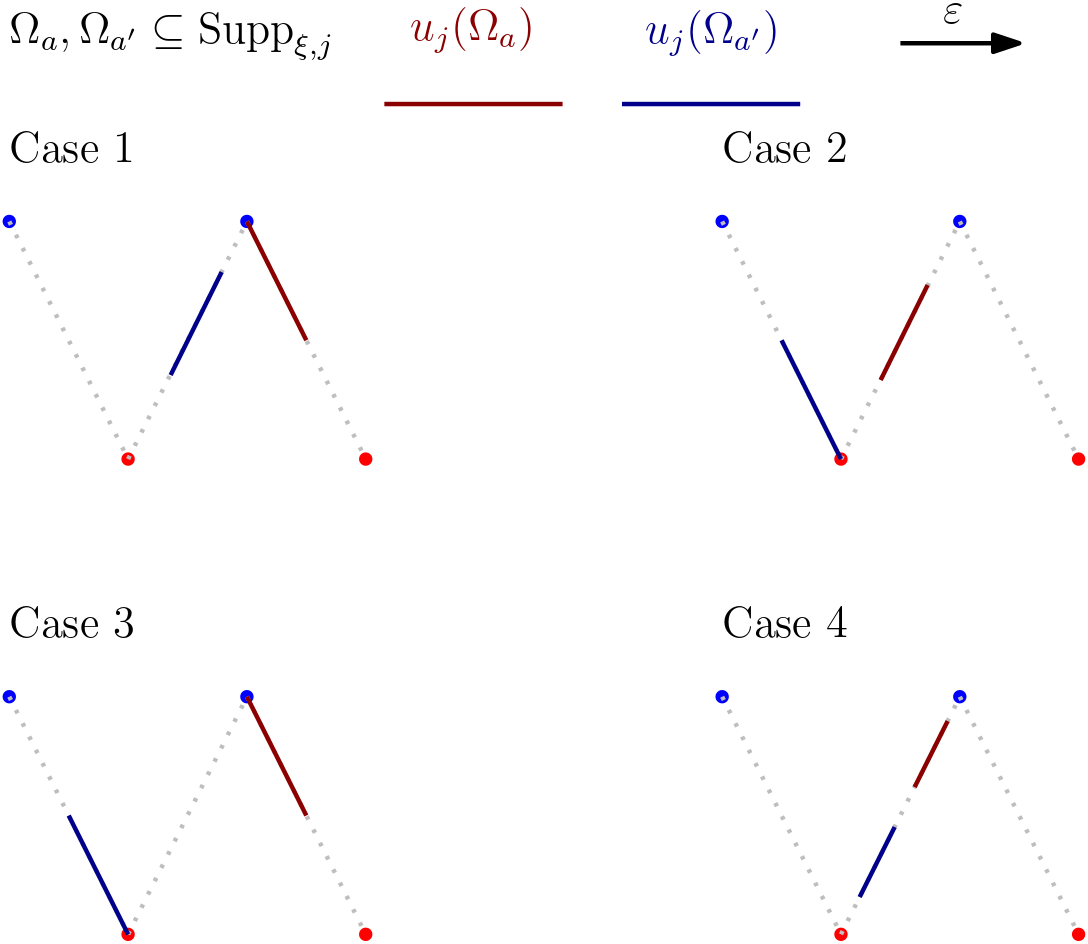}
\end{figure}%

\end{appendix} \clearpage{}%

\newpage
\bibliography{Biblio}
\bibliographystyle{amsalpha}

\end{document}